\providecommand{\tabularnewline}{\\}
\numberwithin{equation}{section}
\numberwithin{figure}{section}
\numberwithin{table}{section}
  \theoremstyle{plain}
  \newtheorem*{thm*}{\protect\theoremname}
\theoremstyle{plain}
\newtheorem{thm}{\protect\theoremname}[section]
  \theoremstyle{definition}
  \newtheorem{problem}[thm]{\protect\problemname}
  \theoremstyle{plain}
  \newtheorem*{prop*}{\protect\propositionname}
  \theoremstyle{plain}
  \newtheorem*{conjecture*}{\protect\conjecturename}
  \theoremstyle{plain}
  \newtheorem*{cor*}{\protect\corollaryname}
  \theoremstyle{definition}
  \newtheorem{defn}[thm]{\protect\definitionname}
  \theoremstyle{remark}
  \newtheorem*{rem*}{\protect\remarkname}
  \theoremstyle{plain}
  \newtheorem{prop}[thm]{\protect\propositionname}
  \theoremstyle{remark}
  \newtheorem{rem}[thm]{\protect\remarkname}
  \theoremstyle{definition}
  \newtheorem{example}[thm]{\protect\examplename}
  \theoremstyle{plain}
  \newtheorem{cor}[thm]{\protect\corollaryname}
  \theoremstyle{remark}
  \newtheorem{notation}[thm]{\protect\notationname}
  \theoremstyle{plain}
  \newtheorem{lem}[thm]{\protect\lemmaname}
  \theoremstyle{plain}
  \newtheorem{conjecture}[thm]{\protect\conjecturename}
\DeclareMathOperator{\Aff}{\textup{Aff}}
\DeclareMathOperator{\Alg}{\textup{Alg}}
\DeclareMathOperator{\Ann}{\textup{Ann}}
\DeclareMathOperator{\Arr}{\textup{Arr}}
\DeclareMathOperator{\Art}{\textup{Art}}
\DeclareMathOperator{\Ass}{\textup{Ass}}
\DeclareMathOperator{\Aut}{\textup{Aut}}
\DeclareMathOperator{\Autsh}{\underline{\textup{Aut}}}
\DeclareMathOperator{\Bi}{\textup{B}}
\DeclareMathOperator{\CAdd}{\textup{CAdd}}
\DeclareMathOperator{\CAlg}{\textup{CAlg}}
\DeclareMathOperator{\CMon}{\textup{CMon}}
\DeclareMathOperator{\CPMon}{\textup{CPMon}}
\DeclareMathOperator{\CRings}{\textup{CRings}}
\DeclareMathOperator{\CSMon}{\textup{CSMon}}
\DeclareMathOperator{\CaCl}{\textup{CaCl}}
\DeclareMathOperator{\Cart}{\textup{Cart}}
\DeclareMathOperator{\Cl}{\textup{Cl}}
\DeclareMathOperator{\Coh}{\textup{Coh}}
\DeclareMathOperator{\Coker}{\textup{Coker}}
\DeclareMathOperator{\Cov}{\textup{Cov}}
\DeclareMathOperator{\Der}{\textup{Der}}
\DeclareMathOperator{\Div}{\textup{Div}}
\DeclareMathOperator{\End}{\textup{End}}
\DeclareMathOperator{\Endsh}{\underline{\textup{End}}}
\DeclareMathOperator{\Ext}{\textup{Ext}}
\DeclareMathOperator{\Extsh}{\underline{\textup{Ext}}}
\DeclareMathOperator{\FCoh}{\textup{FCoh}}
\DeclareMathOperator{\FGrad}{\textup{FGrad}}
\DeclareMathOperator{\FMod}{\textup{FMod}}
\DeclareMathOperator{\FVect}{\textup{FVect}}
\DeclareMathOperator{\Fibr}{\textup{Fibr}}
\DeclareMathOperator{\Fix}{\textup{Fix}}
\DeclareMathOperator{\Funct}{\textup{Funct}}
\DeclareMathOperator{\GL}{\textup{GL}}
\DeclareMathOperator{\GRis}{\textup{GRis}}
\DeclareMathOperator{\GRiv}{\textup{GRiv}}
\DeclareMathOperator{\Gal}{\textup{Gal}}
\DeclareMathOperator{\Gl}{\textup{Gl}}
\DeclareMathOperator{\Grad}{\textup{Grad}}
\DeclareMathOperator{\Hilb}{\textup{Hilb}}
\DeclareMathOperator{\Hl}{\textup{H}}
\DeclareMathOperator{\Hom}{\textup{Hom}}
\DeclareMathOperator{\Homsh}{\underline{\textup{Hom}}}
\DeclareMathOperator{\ISym}{\textup{Sym}^*}
\DeclareMathOperator{\Imm}{\textup{Im}}
\DeclareMathOperator{\Irr}{\textup{Irr}}
\DeclareMathOperator{\Iso}{\textup{Iso}}
\DeclareMathOperator{\Isosh}{\underline{\textup{Iso}}}
\DeclareMathOperator{\Ker}{\textup{Ker}}
\DeclareMathOperator{\LAdd}{\textup{LAdd}}
\DeclareMathOperator{\LAlg}{\textup{LAlg}}
\DeclareMathOperator{\LMon}{\textup{LMon}}
\DeclareMathOperator{\LPMon}{\textup{LPMon}}
\DeclareMathOperator{\LRings}{\textup{LRings}}
\DeclareMathOperator{\LSMon}{\textup{LSMon}}
\DeclareMathOperator{\Left}{\textup{L}}
\DeclareMathOperator{\Loc}{\textup{Loc}}
\DeclareMathOperator{\M}{\textup{M}}
\DeclareMathOperator{\Map}{\textup{Map}}
\DeclareMathOperator{\Mod}{\textup{Mod}}
\DeclareMathOperator{\Ob}{\textup{Ob}}
\DeclareMathOperator{\Obj}{\textup{Obj}}
\DeclareMathOperator{\PDiv}{\textup{PDiv}}
\DeclareMathOperator{\PGL}{\textup{PGL}}
\DeclareMathOperator{\Pic}{\textup{Pic}}
\DeclareMathOperator{\Picsh}{\underline{\textup{Pic}}}
\DeclareMathOperator{\Pro}{\textup{Pro}}
\DeclareMathOperator{\Proj}{\textup{Proj}}
\DeclareMathOperator{\QAdd}{\textup{QAdd}}
\DeclareMathOperator{\QAlg}{\textup{QAlg}}
\DeclareMathOperator{\QCoh}{\textup{QCoh}}
\DeclareMathOperator{\QMon}{\textup{QMon}}
\DeclareMathOperator{\QPMon}{\textup{QPMon}}
\DeclareMathOperator{\QRings}{\textup{QRings}}
\DeclareMathOperator{\QSMon}{\textup{QSMon}}
\DeclareMathOperator{\R}{\textup{R}}
\DeclareMathOperator{\Riv}{\textup{Riv}}
\DeclareMathOperator{\SFibr}{\textup{SFibr}}
\DeclareMathOperator{\SchI}{\textup{SchI}}
\DeclareMathOperator{\Sh}{\textup{Sh}}
\DeclareMathOperator{\Soc}{\textup{Soc}}
\DeclareMathOperator{\Spec}{\textup{Spec}}
\DeclareMathOperator{\Specsh}{\underline{\textup{Spec}}}
\DeclareMathOperator{\Stab}{\textup{Stab}}
\DeclareMathOperator{\Supp}{\textup{Supp}}
\DeclareMathOperator{\Sym}{\textup{Sym}}
\DeclareMathOperator{\TMod}{\textup{TMod}}
\DeclareMathOperator{\Top}{\textup{Top}}
\DeclareMathOperator{\Tor}{\textup{Tor}}
\DeclareMathOperator{\Vect}{\textup{Vect}}
\DeclareMathOperator{\alt}{\textup{ht}}
\DeclareMathOperator{\car}{\textup{char}}
\DeclareMathOperator{\codim}{\textup{codim}}
\DeclareMathOperator{\degtr}{\textup{degtr}}
\DeclareMathOperator{\depth}{\textup{depth}}
\DeclareMathOperator{\divis}{\textup{div}}
\DeclareMathOperator{\et}{\textup{et}}
\DeclareMathOperator{\ffpSch}{\textup{ffpSch}}
\DeclareMathOperator{\h}{\textup{h}}
\DeclareMathOperator{\ilim}{\displaystyle{\lim_{\longrightarrow}}}
\DeclareMathOperator{\ind}{\textup{ind}}
\DeclareMathOperator{\indim}{\textup{inj dim}}
\DeclareMathOperator{\lf}{\textup{LF}}
\DeclareMathOperator{\ord}{\textup{ord}}
\DeclareMathOperator{\pd}{\textup{pd}}
\DeclareMathOperator{\plim}{\displaystyle{\lim_{\longleftarrow}}}
\DeclareMathOperator{\pr}{\textup{pr}}
\DeclareMathOperator{\pt}{\textup{pt}}
\DeclareMathOperator{\rk}{\textup{rk}}
\DeclareMathOperator{\tr}{\textup{tr}}
\DeclareMathOperator{\type}{\textup{r}}
\DeclareMathOperator*{\colim}{\textup{colim}}
  \providecommand{\conjecturename}{Conjecture}
  \providecommand{\corollaryname}{Corollary}
  \providecommand{\definitionname}{Definition}
  \providecommand{\examplename}{Example}
  \providecommand{\lemmaname}{Lemma}
  \providecommand{\notationname}{Notation}
  \providecommand{\problemname}{Problem}
  \providecommand{\propositionname}{Proposition}
  \providecommand{\remarkname}{Remark}
  \providecommand{\theoremname}{Theorem}
\providecommand{\theoremname}{Theorem}
\begin{document}
\newgeometry{top=3cm,bottom=3cm} \thispagestyle{empty} \begin{titlepage} \begin{center} \includegraphics[scale=0.2]{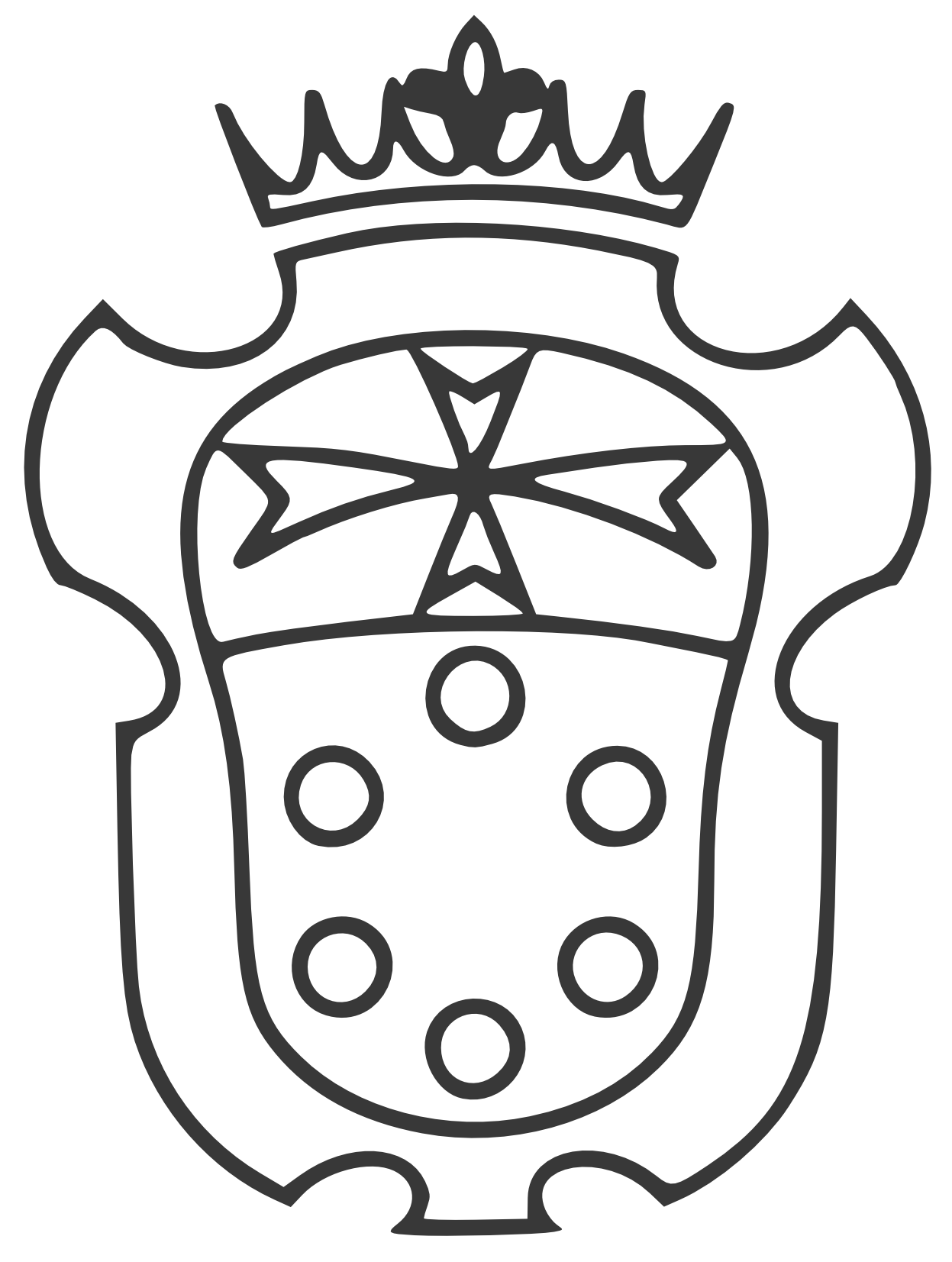} \par\end{center}
\begin{center} \textbf{\textsc{\Large Scuola Normale Superiore di Pisa}}\\ {\Large \rule[0.0066\textheight]{0.66\columnwidth}{1pt}}\textsc{\Large }\\ 
\textsc{\Large Classe di Scienze}\vspace{-1ex} \par\end{center} 
\begin{center} {\large Corso di Perfezionamento in Matematica} \par\end{center}{\large \par} {\large \medskip{} }{\large \par} \begin{center} 
\textsc{\Large Tesi di perfezionamento} \par\end{center}{\Large \par}
\begin{center} {\large \vfill{} } \par\end{center}{\large \par} \begin{center} 
\textsc{\Huge Stacks of} \par\end{center}{\Huge \par} \begin{center} \textsc{\Huge ramified Galois covers} \par\end{center}{\Huge \par} 
\begin{center} {\large \medskip{} } \par\end{center}{\large \par} \begin{center} {\Large Fabio Tonini} \par\end{center}{\Large \par} \begin{center} {\large \vfill{} } \par\end{center}{\large \par}
\begin{center} 
\textbf{\large Relatore:}{\large{} Prof.\ Angelo Vistoli}{\Large }\\ 
{\Large \rule[0.0066\textheight]{0.56\columnwidth}{1pt}}{\large }\\ {\large May 2, 2013 } \par\end{center}{\large \par}
\end{titlepage} \restoregeometry

\tableofcontents{}

\global\long\def\A{\mathbb{A}}

\global\long\def\Ab{(\textup{Ab})}

\global\long\def\C{\mathbb{C}}

\global\long\def\Cat{(\textup{cat})}

\global\long\def\Di#1{\textup{D}(#1)}

\global\long\def\E{\mathcal{E}}

\global\long\def\F{\mathbb{F}}

\global\long\def\GCov{G\textup{-Cov}}

\global\long\def\Gcat{(\textup{Galois cat})}

\global\long\def\Gfsets#1{#1\textup{-fsets}}

\global\long\def\Gm{\mathbb{G}_{m}}

\global\long\def\GrCov#1{\textup{D}(#1)\textup{-Cov}}

\global\long\def\Grp{(\textup{Grps})}

\global\long\def\Gsets#1{(#1\textup{-sets})}

\global\long\def\HCov{H\textup{-Cov}}

\global\long\def\MCov{\textup{D}(M)\textup{-Cov}}

\global\long\def\MHilb{M\textup{-Hilb}}

\global\long\def\N{\mathbb{N}}

\global\long\def\PGor{\textup{PGor}}

\global\long\def\PGrp{(\textup{Profinite Grp})}

\global\long\def\PP{\mathbb{P}}

\global\long\def\Pj{\mathbb{P}}

\global\long\def\Q{\mathbb{Q}}

\global\long\def\RCov#1{#1\textup{-Cov}}

\global\long\def\RR{\mathbb{R}}

\global\long\def\Sch{\textup{Sch}}

\global\long\def\WW{\textup{W}}

\global\long\def\Z{\mathbb{Z}}

\global\long\def\acts{\curvearrowright}

\global\long\def\alA{\mathscr{A}}

\global\long\def\alB{\mathscr{B}}

\global\long\def\arr{\longrightarrow}

\global\long\def\arrdi#1{\xlongrightarrow{#1}}

\global\long\def\catC{\mathscr{C}}

\global\long\def\catD{\mathscr{D}}

\global\long\def\catF{\mathscr{F}}

\global\long\def\catG{\mathscr{G}}

\global\long\def\comma{,\ }

\global\long\def\covU{\mathcal{U}}

\global\long\def\covV{\mathcal{V}}

\global\long\def\covW{\mathcal{W}}

\global\long\def\duale#1{{#1}^{\vee}}

\global\long\def\fasc#1{\widetilde{#1}}

\global\long\def\fsets{(\textup{f-sets})}

\global\long\def\iL{r\mathscr{L}}

\global\long\def\id{\textup{id}}

\global\long\def\la{\langle}

\global\long\def\odi#1{\mathcal{O}_{#1}}

\global\long\def\ra{\rangle}

\global\long\def\set{(\textup{Sets})}

\global\long\def\sets{(\textup{Sets})}

\global\long\def\shA{\mathcal{A}}

\global\long\def\shB{\mathcal{B}}

\global\long\def\shC{\mathcal{C}}

\global\long\def\shD{\mathcal{D}}

\global\long\def\shE{\mathcal{E}}

\global\long\def\shF{\mathcal{F}}

\global\long\def\shG{\mathcal{G}}

\global\long\def\shH{\mathcal{H}}

\global\long\def\shI{\mathcal{I}}

\global\long\def\shJ{\mathcal{J}}

\global\long\def\shK{\mathcal{K}}

\global\long\def\shL{\mathcal{L}}

\global\long\def\shM{\mathcal{M}}

\global\long\def\shN{\mathcal{N}}

\global\long\def\shO{\mathcal{O}}

\global\long\def\shP{\mathcal{P}}

\global\long\def\shQ{\mathcal{Q}}

\global\long\def\shR{\mathcal{R}}

\global\long\def\shS{\mathcal{S}}

\global\long\def\shT{\mathcal{T}}

\global\long\def\shU{\mathcal{U}}

\global\long\def\shV{\mathcal{V}}

\global\long\def\shW{\mathcal{W}}

\global\long\def\shX{\mathcal{X}}

\global\long\def\shY{\mathcal{Y}}

\global\long\def\shZ{\mathcal{Z}}

\global\long\def\st{\ | \ }

\global\long\def\stA{\mathcal{A}}

\global\long\def\stB{\mathcal{B}}

\global\long\def\stC{\mathcal{C}}

\global\long\def\stD{\mathcal{D}}

\global\long\def\stE{\mathcal{E}}

\global\long\def\stF{\mathcal{F}}

\global\long\def\stG{\mathcal{G}}

\global\long\def\stH{\mathcal{H}}

\global\long\def\stI{\mathcal{I}}

\global\long\def\stJ{\mathcal{J}}

\global\long\def\stK{\mathcal{K}}

\global\long\def\stL{\mathcal{L}}

\global\long\def\stM{\mathcal{M}}

\global\long\def\stN{\mathcal{N}}

\global\long\def\stO{\mathcal{O}}

\global\long\def\stP{\mathcal{P}}

\global\long\def\stQ{\mathcal{Q}}

\global\long\def\stR{\mathcal{R}}

\global\long\def\stS{\mathcal{S}}

\global\long\def\stT{\mathcal{T}}

\global\long\def\stU{\mathcal{U}}

\global\long\def\stV{\mathcal{V}}

\global\long\def\stW{\mathcal{W}}

\global\long\def\stX{\mathcal{X}}

\global\long\def\stY{\mathcal{Y}}

\global\long\def\stZ{\mathcal{Z}}

\global\long\def\then{\ \Longrightarrow\ }

\global\long\def\l{\textup{l}}

\chapter{Introduction. }

Let $G$ be a flat, finite group scheme finitely presented over a
base scheme $S$. In this thesis we study $G$-Galois covers of very
general schemes. A morphism $f\colon X\arr Y$ is called a \emph{cover}
if it is finite, flat and of finite presentation. We define a (ramified)
$G$\emph{-cover} as a cover $f\colon X\arr Y$ with an action of
$G$ on $X$ such that $f$ is $G$-invariant and $f_{*}\odi X$ is
fppf-locally isomorphic to the regular representation $\odi Y[G]$
as $\odi Y[G]$-comodule. This definition is a natural one: it generalizes
the notion of $G$-torsors and, under suitable hypothesis, coincides
with the usual definition of Galois cover when the group $G$ is constant
(see for example \cite{Pardini1991,Alexeev2011,Easton2008}). Moreover,
as explained below, in the abelian case $G$-covers are tightly related
to the theory of equivariant Hilbert schemes (see for example \cite{Nakamura2001,Stillman2002,Haiman2002,Alexeev2003}).
We call $\GCov$ the stack of $G$-covers and the aim of this thesis
will be to describe its structure. 

We denote by $\mu_{n}$ the diagonalizable group over $\Z$ with character
group $\Z/n\Z$. In many concrete problems, one is interested in a
more direct and concrete description of a $G$-cover $f\colon X\arr Y$.
This is very simple and well known when $G=\mu_{2}$: such a cover
$f$ is given by an invertible sheaf $\shL$ on $Y$ with a section
of $\shL^{\otimes2}$. Similarly, when $G=\mu_{3}$, a $\mu_{3}$-cover
$f$ is given by a pair $(\shL_{1},\shL_{2})$ of invertible sheaves
on $Y$ with maps $\shL_{1}^{\otimes2}\arr\shL_{2}$ and $\shL_{2}^{\otimes2}\arr\shL_{1}$
(see \cite[§ 6]{Arsie2004}).

In general, however, there is no comparable description of $G$-covers.
Very little is known when $G$ is not abelian, except for $G=S_{3}$
(see \cite{Easton2008}) and the case of Galois covers with groups
$G=D_{n},A_{4},S_{4}$ having regular total space (see \cite{Tokunaga1994,Tokunaga2002,Reimpell1999}).
In the non-Galois case, there exist a general description of covers
of degree $3$ and $4$ (see \cite{Miranda1985,Pardini1989,Hahn1999})
and of Gorenstein covers of degree $d\geq3$ (see \cite{Casnati1996,Casnati1996a}).

Even in the abelian case, the situation becomes complicated very quickly
when the order of $G$ grows. The paper that inspires our work is
\cite{Pardini1991}; here the author describes $G$-covers $X\arr Y$
when $G$ is an abelian group, $Y$ is a smooth variety over an algebraically
closed field of characteristic prime to $|G|$ and $X$ is normal,
in terms of certain invertible sheaves on $Y$, generalizing the description
given above for $G=\mu_{2}$ and $G=\mu_{3}$.

We now outline the content of this thesis remarking the results obtained
and we will follow the division in chapters.

\section{Preliminaries on Galois covers.}

Chapter $2$ is dedicated to explaining the basic properties of $G$-covers.
Our first result is:
\begin{thm*}
{[}\ref{pro:BG open in GCov},\ref{cor:GCov is algebraic}{]} The
stack $\GCov$ is algebraic and finitely presented over $S$. Moreover
$\Bi G$, the stack of $G$-torsors, is an open substack of $\GCov$.
\end{thm*}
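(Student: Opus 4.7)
My plan is to realize $\GCov$ as a quotient stack $[U/\Gamma]$ where $U$ is an affine scheme of finite presentation and $\Gamma$ is a smooth affine group scheme of finite presentation over $S$, which immediately yields algebraicity and finite presentation; then to identify $\Bi G\hookrightarrow\GCov$ as the open locus where the shearing map is an isomorphism. The first step is to reformulate a $G$-cover $f\colon X\arr Y$ as a pair $(\shA,m)$, where $\shA=f_{*}\odi X$ is an $\odi Y[G]$-comodule fppf-locally isomorphic to the regular representation $\odi Y[G]$, and $m\colon\shA\otimes_{\odi Y}\shA\arr\shA$ is a morphism of comodules endowing $\shA$ with a commutative, associative, unital $\odi Y$-algebra structure; the comodule condition already forces $\shA$ to be finite locally free.

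For algebraicity, I would set $\Gamma=\Autsh_{\odi S[G]\textup{-comod}}(\odi S[G])$. This is a finitely presented smooth affine group scheme over $S$: the comodule endomorphism functor $\Endsh_{\odi S[G]\textup{-comod}}(\odi S[G])$ is representable by a vector bundle (identified, via the regular representation structure, with the dual Hopf algebra $\odi S[G]^{\vee}$), and $\Gamma$ is its open subscheme of invertible elements. The forgetful morphism $\GCov\arr\Bi\Gamma$, $(\shA,m)\mapsto\shA$, becomes representable after pulling back along the universal $\Gamma$-torsor $S\arr\Bi\Gamma$: the fibre $U$ parametrizes $\odi S$-linear maps $m\colon\odi S[G]\otimes\odi S[G]\arr\odi S[G]$ that are commutative, associative, respect the standard unit, and are morphisms of $\odi S[G]$-comodules. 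These are all closed conditions inside the affine space $\Homsh_{\odi S}(\odi S[G]^{\otimes2},\odi S[G])$, so $U$ is affine of finite presentation over $S$, and $\GCov\simeq[U/\Gamma]$ is algebraic and of finite presentation.

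For the openness of $\Bi G$, I would use that a $G$-cover $X\arr Y$ is a $G$-torsor precisely when the shearing morphism $\sigma\colon G\times_{S}X\arr X\times_{Y}X$, $(g,x)\mapsto(gx,x)$, is an isomorphism. Source and target are finite locally free $\odi X$-algebras of the same rank, so the locus in $X$ where $\sigma$ is an isomorphism is the complement of the support of the cokernel of the associated $\odi X$-algebra map, hence open. Its closed complement is pushed under the finite (and thus closed) morphism $f$ to a closed subset of $Y$, whose open complement $V\subset Y$ is the largest open over which $f^{-1}(V)\arr V$ is a torsor. Applied to the universal $G$-cover on $\GCov$, this exhibits $\Bi G$ as an open substack.

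The main technical subtlety is ensuring that $\Gamma$ is flat (indeed smooth) over $S$ so that $[U/\Gamma]$ is a bona fide algebraic stack; the remaining assertions reduce to recognizing open and closed conditions and invoking faithfully flat descent.
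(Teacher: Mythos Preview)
Your approach is essentially the paper's: it presents $\GCov$ as $[X_G/\Autsh^G\WW(\alA)]$, where $X_G$ is the affine finitely presented scheme of $G$-comodule algebra structures on the regular representation $\alA$, and proves smoothness of the group by identifying $\Endsh^G\WW(\alA)\simeq\WW(\duale{\odi S[G]})$. For openness of $\Bi G$ the paper likewise uses the vanishing locus of the cokernel of the shearing map, though it works directly over the base with the $\odi Y$-linear map $h\colon\alB\otimes\alB\arr\alB\otimes\odi{}[G_U]$, avoiding the extra step of pushing down from $X$.

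One bookkeeping slip to fix: if your $U$ parametrizes multiplications respecting the \emph{standard} unit, then the full group $\Gamma=\Autsh^G\WW(\alA)$ does not act on $U$, because a general comodule automorphism rescales the invariant line $\alA^G=\odi S$ and hence moves the unit. The paper handles this by letting the unit vary, including $e\colon\odi T\arr\alA_T$ as part of the data (so $X_G\subseteq\Homsh(\WW(\alA\otimes\alA),\WW(\alA))\times\mathbb G_m$), on which $\Gamma$ genuinely acts. Alternatively you could keep your $U$ and quotient instead by the stabilizer of $1\in\alA^G$; either choice gives the same quotient stack.
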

As particular cases, we will study $G$-covers for the groups $G=\mu_{2},\mu_{3},\alpha_{p}$,
where $p$ is a prime and $\alpha_{p}$ is the kernel of the Frobenius
map $\mathbb{G}_{a}\arr\mathbb{G}_{a}$ over $\F_{p}$. Example \ref{ex: Romagny example},
suggested by Prof. Romagny, shows that covers that are generically
$\alpha_{p}$-torsors are not $\alpha_{p}$-cover in general. Moreover,
we will prove an unexpected result, that is that $\Bi\alpha_{p}=\RCov{\alpha_{p}}$
or, in other words, that every $\alpha_{p}$-cover is an $\alpha_{p}$-torsor
(see \ref{prop:Balphap alphapCov}). As we will see, the situation
is completely different for Galois covers of linearly reductive groups
(introduced below), even in the diagonalizable case. For instance
we will show that $\GCov$ is almost never irreducible. This motivates
the following definition. The \emph{main irreducible component of
$\GCov$, }denoted by $\stZ_{G}$, is the schematic closure of $\Bi G$
in $\GCov$.\emph{ }Notice that, if $S$ is irreducible, then $\Bi G$
is irreducible as well and therefore $\stZ_{G}$ is an irreducible
component of $\GCov$. 

In the last part of the chapter we study examples of isomorphisms
$\GCov\simeq\HCov$ using the notion of bitorsor: given finite, flat
and finitely presented group schemes $G$ and $H$ over a scheme $S$
a $(G,H)$-bitorsor is an $S$-scheme which is simultaneously a left
$G$-torsor and a right $H$-torsor and such that the actions are
compatible. Notice that the existence of a $(G,H)$-bitorsor implies
that $G$ and $H$ are locally isomorphic, but the converse is false.
The $(G,H)$-bitorsors correspond to isomorphisms $\Bi G\simeq\Bi H$
(see \cite[Chapter III, Remarque 1.6.7]{Giraud1971}). We will give
a proof of this fact and we will also show that they induce isomorphisms
$\GCov\simeq\HCov$ (see \ref{prop:bitorsors and isomorphisms BG - BH}
and \ref{thm:bitorsors and GCov}). Moreover if the $G$-cover $X\arr Y$
is sent to the $H$-cover $X'\arr Y$ through one of these isomorphisms,
then $X$ and $X'$ are fppf locally isomorphic over $S$, étale locally
if $G$ or $H$ is étale and therefore they share many geometric properties,
like reduceness, smoothness, geometrical connectedness and irreducibility
and, in the étale case, regularity and regularity in codimension $1$.
We will use this construction when we will study $(\mu_{3}\rtimes\Z/2\Z)$-covers
and $S_{3}$-covers in the last chapter.

\section{Galois covers under diagonalizable group schemes.}

Chapter $3$ of this thesis essentially coincides with the article
\cite{Tonini2011}. In it, we concentrate on the case when $G$ is
a finite diagonalizable group scheme over $\Z$; thus, $G$ is isomorphic
to a finite direct product of group schemes of the form $\mu_{d}$
for $d\geq1$. We consider the dual finite abelian group $M=\Hom(G,\Gm)$
so that, by standard duality results (see \cite{Grothendieck1970}),
$G$ is the fppf sheaf of homomorphisms $M\arr\Gm$ and a decomposition
of $M$ into a product of cyclic groups yields the decomposition of
$G$ into a product of $\mu_{d}$'s. Although Chapter $4$ study Galois
covers for general groups, we have decided to consider the diagonalizable
case first, because in this case $G$-covers have a very explicit
description in terms of sequences of invertible sheaves. Indeed a
$G$-cover over $Y$ is of the form $X=\Spec\alA$ where $\alA$ is
a quasi-coherent sheaf of algebras over $Y$ with a decomposition
\begin{equation}
\alA=\bigoplus_{m\in M}\shL_{m}\text{ s.t. }\shL_{0}=\odi Y\comma\shL_{m}\text{ invertible}\text{ and }\shL_{m}\shL_{n}\subseteq\shL_{m+n}\text{ for all }m,n\in M\label{eq:decomposition for diagonalizable groups introduction}
\end{equation}
So a $G$-cover corresponds to a sequence of invertible sheaves $(\shL_{m})_{m\in M}$
with maps $\psi_{m,n}\colon\shL_{m}\otimes\shL_{n}\arr\shL_{m+n}$
satisfying certain rules and our principal aim will be to simplify
the data necessary to describe such covers. For instance $G$-torsors
correspond to sequences where all the maps $\psi_{m,n}$ are isomorphisms.
Therefore, if $G=\mu_{l}$, a $G$-torsor is simply given by an invertible
sheaf $\shL=\shL_{1}$ and an isomorphism $\shL^{\otimes l}\simeq\odi{}$. 

When $G=\mu_{2}$ or $G=\mu_{3}$ the description given above shows
that the stack $\GCov$ is smooth, irreducible and very easy to describe.
In the general case its structure turns out to be extremely intricate.
For instance, as we will see, $\GCov$ is almost never irreducible.
The existence of the 'special' irreducible component $\stZ_{G}$ parallels
what happens in the theory of $M$-equivariant Hilbert schemes (see
\cite[Remark 5.1]{Haiman2002}). It turns out that this theory and
the theory of $G$-covers are deeply connected: given an action of
$G$ on $\A^{r}$, induced by elements $\underline{m}=m_{1},\dots,m_{r}\in M$,
the equivariant Hilbert scheme $M\text{-Hilb }\A^{r}$, that we will
denote by $\MHilb^{\underline{m}}$ to underline the dependence on
the sequence $\underline{m}$, can be viewed as the functor whose
objects are $G$-covers with an equivariant closed immersion in $\A^{r}$.
The forgetful map $\vartheta\colon\MHilb^{\underline{m}}\arr\GCov$
is smooth with geometrically irreducible fibers onto an open substack
$U_{\underline{m}}$ of $\GCov$. Moreover it is surjective, that
is an atlas, provided that $\underline{m}$ contains all the elements
in $M-\{0\}$ (\ref{pro:MHlb --> MCov has irreducible fibers}). This
means that $U_{\underline{m}}$ and $\MHilb^{\underline{m}}$ share
several geometric properties, like connectedness, irreducibility,
smoothness or reduceness. Moreover $\vartheta^{-1}(\stZ_{G})$ coincides
with the main irreducible component of $\MHilb^{\underline{m}}$,
first studied by Nakamura in \cite{Nakamura2001}.

We will prove the following results on the structure of $\GCov$.
\begin{thm*}
{[}\ref{thm:Mcov geom connected},\ref{cor:Mcov reducible},\ref{pro:smooth DMCov},\ref{rem:MCov for M=00003DZfour}{]}
When $G$ is a finite diagonalizable group scheme over $\Z$, the
stack $\GCov$ is
\begin{itemize}
\item flat and of finite type with geometrically connected fibers,
\item smooth if and only if $G\simeq0,\mu_{2},\mu_{3},\mu_{2}\times\mu_{2}$,
\item normal if $G\simeq\mu_{4}$,
\item reducible if $|G|\geq8$ and $G\not\simeq(\mu_{2})^{3}$.
\end{itemize}
The above properties continue to hold if we replace $\GCov$ by $\MHilb^{\underline{m}}$,
provided that each nonzero element of $M$ belongs to the sequence
$\underline{m}$.
\end{thm*}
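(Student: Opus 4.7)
The plan is to transfer every property between $\GCov$ and $\MHilb^{\underline{m}}$ via the smooth surjective atlas $\vartheta\colon\MHilb^{\underline{m}}\arr\GCov$, which is available whenever $\underline{m}$ enumerates $M\setminus\{0\}$. Since smooth surjections both preserve and reflect flatness over the base, geometric connectedness of fibers, smoothness, normality and irreducibility/reducibility, it suffices to verify each claim on the scheme $\MHilb^{\underline{m}}$. The first step is to give an explicit affine presentation: fixing trivializations of the line bundles in \ref{eq:decomposition for diagonalizable groups introduction}, a point of $\MHilb^{\underline{m}}$ corresponds to a collection of structure constants $c_{m,n}$ with $\psi_{m,n}(e_m\otimes e_n)=c_{m,n}e_{m+n}$, subject to the unitality relations $c_{0,m}=1$ and the associativity relations $c_{m,n}c_{m+n,p}=c_{m,n+p}c_{n,p}$. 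This presents $\MHilb^{\underline{m}}$ as a closed subscheme of $\A^{N}_{\Z}$ cut out by explicit binomial-type equations, on which the $\Gm$-action $c_{m,n}\mapsto tc_{m,n}$ acts with the trivial cover as its unique fixed point.

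Flatness and finite presentation over $\Z$ then reduce to checking that the defining ideal has a compatible Gr\"obner-like structure whose leading terms give a flat quotient, and geometric connectedness of every geometric fiber is immediate from the $\Gm$-contraction to the trivial cover. For the smoothness dichotomy, each of the four exceptional groups $0,\mu_{2},\mu_{3},\mu_{2}\times\mu_{2}$ admits a direct realization of $\GCov$ as a global quotient of a vector bundle by a torus: for instance a $(\mu_{2}\times\mu_{2})$-cover is exactly a triple $\shL_{a},\shL_{b},\shL_{a+b}$ together with three independent maps $\shL_{a}^{\otimes2}\arr\odi Y$, $\shL_{b}^{\otimes2}\arr\odi Y$ and $\shL_{a}\otimes\shL_{b}\arr\shL_{a+b}$ (the fourth multiplication being forced by associativity), with no further relations. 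For the converse direction, a tangent space calculation at the trivial $G$-cover over a field, linearizing the $c_{m,n}$, shows that $\dim T_{0}\GCov>\dim\stZ_{G}$ in every case outside this list, so $\GCov$ is singular at the trivial cover.

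For normality when $G=\mu_{4}$, I would write out the structure constants $c_{1,1},c_{1,2},c_{2,2},c_{1,3}$ of $\MHilb^{(1,2,3)}$ and the handful of nontrivial associativity relations they satisfy, then apply Serre's criterion $R_{1}+S_{2}$: the scheme turns out to be a complete intersection, hence Cohen--Macaulay so that $S_{2}$ is automatic, while the singular locus can be read off the equations and seen to have codimension at least $2$. For the reducibility statement, when $|G|\geq 8$ and $G\not\simeq(\mu_{2})^{3}$, the strategy is to exhibit in each case a ``degenerate'' $G$-cover whose local deformation space at the trivial cover strictly exceeds $\dim\stZ_{G}$. The natural candidates are covers whose multiplication factors through the subalgebra supported on a proper subgroup $M'\subsetneq M$; a tangent-space count at the origin shows that such covers produce a family not contained in $\stZ_{G}$, forcing a second irreducible component through the trivial cover.

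The main obstacle is this last step: producing the extra components uniformly across all diagonalizable $G$ of order $\geq 8$ except $(\mu_{2})^{3}$, and doing so with a dimension count refined enough to distinguish the borderline case $(\mu_{2})^{3}$, which remains irreducible even though it is not smooth. The smoothness dichotomy in the previous step is similarly delicate, since it must treat an infinite family of groups through a single tangent-space argument pinpointing exactly the four exceptional cases; here the combinatorics of the associativity relations among the $c_{m,n}$ is what singles out $0,\mu_{2},\mu_{3}$ and $\mu_{2}\times\mu_{2}$ as the groups for which no nontrivial relation survives.
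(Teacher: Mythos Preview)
Your framework is sound where it overlaps with the paper: the smooth surjection $\vartheta$ does transfer all four properties, and the $\Gm$-contraction (equivalently, the $\N$-grading on the coordinate ring $R_M$ with $(R_M\otimes k)_0=k$) gives geometric connectedness of fibers exactly as the paper argues. For $\mu_4$ the paper likewise proceeds by direct computation, finding $R_{\Z/4\Z}\simeq\Z[x_{1,2},x_{3,3},x_{2,3},x_{1,1}]/(x_{1,2}x_{3,3}-x_{2,3}x_{1,1})$, an affine quadric cone, hence normal. One minor correction: your $(\mu_2\times\mu_2)$ data is misidentified; the three free parameters are the maps $\shL_a\otimes\shL_b\to\shL_{a+b}$, $\shL_a\otimes\shL_{a+b}\to\shL_b$, $\shL_b\otimes\shL_{a+b}\to\shL_a$, and the squares $\shL_m^{\otimes 2}\to\odi Y$ are determined from these by associativity, not the other way around.

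The reducibility step is where your approach genuinely fails. A tangent-space inequality at the trivial cover detects \emph{singularity}, not reducibility: $\mu_4$ is already the counterexample, since $\GCov$ is integral and normal there yet has excess tangent space at the cone point. So exhibiting families with large deformation space gives you nothing beyond what the non-smoothness step already gave, and cannot separate the reducible cases from $\mu_4,\mu_5,\mu_6,\mu_7,(\mu_2)^3$. (Incidentally, the paper does \emph{not} assert that $(\mu_2)^3\text{-Cov}$ is irreducible; its status is explicitly left open along with $\mu_5,\mu_6,\mu_7$.) The paper's argument is purely algebraic and of a different nature: for suitable $m,n,t,a\in M$ satisfying a finite list of avoidance conditions one writes down two explicit degree-three monomials $\underline{x}^\alpha,\underline{x}^\beta\in R_M$, each a product of three variables $x_{u,v}$ with $u+v\in\{m,n,t\}$. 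A computation in the universal algebra $S_M=\bigoplus_m R_Mv_m$ shows that $(\prod x_{u,v})(\underline{x}^\alpha-\underline{x}^\beta)=0$, so $z=\underline{x}^\alpha-\underline{x}^\beta$ lies in the minimal prime $P=\ker(R_M\to\Z[K_+])$ cutting out $\stZ_G$. On the other hand the avoidance conditions on $a$ guarantee that no defining associativity relation $e_{u,v}+e_{u+v,w}$ can be subtracted from any $\N$-combination of $\alpha$ and $\beta$, so $\underline{x}^\alpha$ and $\underline{x}^\beta$ are algebraically independent in $R_M$ and $z$ is not nilpotent. A non-nilpotent element of a minimal prime forces a second minimal prime. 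Producing the quadruple $(m,n,t,a)$ is then a short case analysis on the structure of $M$.

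For the non-smoothness direction your tangent-space idea is plausible (the associativity relations have no linear part at the origin, so one is comparing $|J|$ against $\rk K=|M|-1$), but the paper takes a different and cleaner route: if $k[K_+]$ were smooth it would be a polynomial ring times a torus, hence a UFD, making each degree-one generator $x_{m,n}$ prime; any nontrivial associativity relation $x_{m,n}x_{m+n,t}=x_{n,t}x_{n+t,m}$ with the four variables pairwise distinct then forces an impossible equality of primes. Exhibiting one such relation inside each of $\Z/4\Z$, $\Z/l\Z$ for $l\geq 5$, $(\Z/2\Z)^3$ and $(\Z/3\Z)^2$ handles every non-exceptional $M$ by passage to a subgroup.
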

We do not know whether $\GCov$ is integral for $G\simeq\mu_{5},\mu_{6},\mu_{7},(\mu_{2})^{3}$.
So $\GCov$ is usually reducible, its structure is extremely complicated
and we have little hope of getting to a real understanding of the
components not containing $\Bi G$. Therefore we will focus on the
main irreducible component $\stZ_{G}$ of $\GCov$. The main idea
behind the study of $G$-covers when $G$ is diagonalizable, inspired
by the results in \cite{Pardini1991}, is to try to decompose the
multiplications $\psi_{m,n}\in\shL_{m+n}\otimes\shL_{m}^{-1}\otimes\shL_{n}^{-1}$
as a tensor product of sections of other invertible sheaves. Following
this idea we will construct parametrization maps $\pi_{\underline{\E}}\colon\stF_{\underline{\E}}\arr\stZ_{G}\subseteq\GCov$,
where $\stF_{\underline{\E}}$ are 'nice' stacks, for example smooth
and irreducible, whose objects are those decompositions. 

This construction can be better understood locally, where a $G$-cover
over $Y=\Spec R$ is just $X=\Spec A$, where $A$ is an $R$-algebra
with an $R$-basis $\{v_{m}\}_{m\in M}$, $v_{0}=1$ $(\shL_{m}=\odi Yv_{m})$,
so that the multiplications are elements $\psi_{m,n}\in R$ such that
$v_{m}v_{n}=\psi_{m,n}v_{m+n}$.

Consider $a\in R$, a collection of natural numbers $\E=(\E_{m,n})_{m,n\in N}$
and set $\psi_{m,n}=a^{\E_{m,n}}$. The condition that the product
structure on $A=\oplus_{m}Rv_{m}$ defined by the $\psi_{m,n}$ yields
an associative, commutative $R$-algebra, i.e. makes $\Spec A$ into
a $G$-cover over $\Spec R$, translates into some additive relations
on the numbers $\E_{m,n}$. Call $\duale K_{+}$ the set of collections
$\E$ satisfying those relations. More generally given $\underline{\E}=\E^{1},\dots,\E^{r}\in\duale K_{+}$
we can define a parametrization
\[
R^{r}\ni(a_{1},\dots,a_{r})\arr\psi_{m,n}=a_{1}^{\E_{m,n}^{1}}\cdots a_{r}^{\E_{m,n}^{r}}
\]
This is essentially the local behavior of the map $\pi_{\underline{\E}}\colon\stF_{\underline{\E}}\arr\GCov$.
In the global case the elements $a_{i}$ will be sections of invertible
sheaves.

From this point of view the natural questions are: given a $G$-cover
over a scheme $Y$ when does there exist a lift to an object of $\stF_{\underline{\E}}(Y)$?
Is this lift unique? How can we choose the sequence $\underline{\E}$?

The key point is to give an interpretation to $\duale K_{+}$ (that
also explains this notation). Consider $\Z^{M}$ with canonical basis
$(e_{m})_{m\in M}$ and define $v_{m,n}=e_{m}+e_{n}-e_{m+n}\in\Z^{M}/\langle e_{0}\rangle$.
If $p\colon\Z^{M}/\langle e_{0}\rangle\arr M$ is the map $p(e_{m})=m$,
the $v_{m,n}$ generate $\Ker p$. Now call $K_{+}$ the submonoid
of $\Z^{M}/\langle e_{0}\rangle$ generated by the $v_{m,n}$, $K=\Ker p$
its associated group and also consider the torus $\shT=\Homsh(\Z^{M}/\langle e_{0}\rangle,\Gm)$,
which acts on $\Spec\Z[K_{+}]$. By construction we have that a collection
of natural numbers $(\E_{m,n})_{m,n\in M}$ belongs to $\duale K_{+}$
if and only if the association $v_{m,n}\arr\E_{m,n}$ defines an additive
map $K_{+}\arr\N$. Therefore, as the symbol suggests, we can identify
$\duale K_{+}$ with $\Hom(K_{+},\N)$, the dual monoid of $K_{+}$.
Its elements will be called \emph{rays}. More generally a monoid map
$\psi\colon K_{+}\arr(R,\cdot)$, where $R$ is a ring, yields a multiplication
$\psi_{m,n}=\psi(v_{m,n})$ on $\bigoplus_{m\in M}Rv_{m}$ and therefore
we obtain a map $\Spec\Z[K_{+}]\arr\stZ_{G}$. We will prove that
(see \ref{cor:MCov as global quotient}):
\begin{thm*}
We have $\stZ_{G}\simeq[\Spec\Z[K_{+}]/\shT]$ and $\Bi G\simeq[\Spec\Z[K]/\shT]$.
\end{thm*}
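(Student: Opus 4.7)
My plan is to write down an explicit $\shT$-equivariant morphism $\Spec\Z[K_+]\arr\GCov$ from the monoid structure of $K_+$, and then identify the two stacks by realising $\Spec\Z[K_+]$ as the schematic closure of the open torus $\Spec\Z[K]$ inside an affine atlas for $\GCov$.

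For the construction I will interpret an $R$-point of $\Spec\Z[K_+]$ as a monoid homomorphism $\phi\colon K_+\arr(R,\cdot)$ and attach to it the $R$-algebra $\alA_\phi=\bigoplus_{m\in M}R\,v_m$ with product $v_m\cdot v_n=\phi(v_{m,n})\,v_{m+n}$ and the natural $M$-grading. Because $v_{m,n}=v_{n,m}$, $v_{0,m}=0$ and $v_{m,n}+v_{m+n,p}=v_{n,p}+v_{m,n+p}$ hold in $\Z^M/\langle e_0\rangle$, hence in $K_+$, the algebra $\alA_\phi$ is commutative, unital and associative, so it defines a $G$-cover of $\Spec R$. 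A character $\chi\in\shT(R)$ twists $\phi$ into $\chi\phi$ and yields an isomorphic $G$-cover through $v_m\mapsto\chi(e_m)v_m$, so the resulting map is $\shT$-invariant and descends to $\pi\colon[\Spec\Z[K_+]/\shT]\arr\GCov$.

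To recognise the image I will consider the auxiliary stack $\GCov^{\mathrm{triv}}$ parametrising $G$-covers endowed with a trivialization of each $\shL_m$. Since such an object admits no non-trivial automorphism, $\GCov^{\mathrm{triv}}$ is representable by the affine scheme $\Spec A$, where $A=\Z[\psi_{m,n}]/I$ and $I$ is the ideal generated by the commutativity, unitality and associativity relations. The forgetful morphism $\Spec A\arr\GCov$ is fppf-locally trivial with structure group $\shT$, yielding $\GCov\simeq[\Spec A/\shT]$. The open locus in $\Spec A$ on which every $\psi_{m,n}$ is a unit classifies $G$-torsors with trivialized components; its $R$-points are exactly group homomorphisms $K\to R^*$, so it equals $\Spec\Z[K]$ and the first isomorphism $\Bi G\simeq[\Spec\Z[K]/\shT]$ follows. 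Moreover, the assignment $\psi_{m,n}\mapsto v_{m,n}$ defines a surjection $A\twoheadrightarrow\Z[K_+]$ whose composition with $\Z[K_+]\hookrightarrow\Z[K]$ is the localization at all $\psi_{m,n}$; hence $\Spec\Z[K_+]\subseteq\Spec A$ is precisely the schematic closure of $\Spec\Z[K]$. As schematic closure commutes with faithfully flat pullback, this shows that $[\Spec\Z[K_+]/\shT]$ is the schematic closure of $\Bi G$ in $\GCov$, that is, $\stZ_G$.

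The step that will demand the most care is the explicit description of $\GCov^{\mathrm{triv}}$ as $\Spec A$: one must verify that giving a $G$-cover with trivializations of its components is equivalent to giving a tuple $(\psi_{m,n})$ satisfying exactly the commutativity, unitality and associativity equations, and that the forgetful map to $\GCov$ is an fppf-locally trivial $\shT$-torsor. Once this representability is in place, the rest is formal: the further binomial identities valid in $K_+\subseteq\Z^M/\langle e_0\rangle$ beyond the three listed families are precisely the relations imposed by passing to the schematic image inside the integral domain $\Z[K]$, which is exactly the geometric content of the identification of $\Spec\Z[K_+]$ with the closure of $\Spec\Z[K]$ in $\Spec A$.
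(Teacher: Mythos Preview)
Your overall strategy matches the paper's: realize $\GCov$ as $[\Spec R_M/\shT]$ with $R_M=\Z[\psi_{m,n}]/(\text{relations})$ (your $A$), identify the torsor locus as the open where all $\psi_{m,n}$ are units, and recover $\stZ_G$ as its schematic closure. The paper packages this in its general $\stX_\phi$ formalism, but the underlying content is the same.

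There is, however, a genuine gap at a step you treat as obvious. You assert that the $R$-points of the open locus are ``exactly group homomorphisms $K\to R^*$'', i.e.\ that $(R_M)_\mu\simeq\Z[K]$ where $\mu=\prod\psi_{m,n}$. Since $(R_M)_\mu$ is the group ring of the group completion $\tilde K_+^{\mathrm{gp}}$ of the monoid presented by your symmetry, unitality and associativity relations, this amounts to showing that the surjection $\tilde K_+^{\mathrm{gp}}\twoheadrightarrow K$ is an isomorphism---that those three families already generate \emph{all} $\Z$-linear relations among the $v_{m,n}$ inside $K$. This is not automatic: a priori $\tilde K_+^{\mathrm{gp}}$ could carry torsion that dies in the free abelian group $K$. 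The paper produces the missing inverse by working inside the universal $M$-graded algebra $S_M=\bigoplus_m R_M\, w_m$ with $w_m w_n=\psi_{m,n}w_{m+n}$: after localizing at $\mu$ every $w_m$ is a unit, so $e_m\mapsto w_m$ defines a homomorphism $\Z^M/\langle e_0\rangle\to(S_M)_\mu^*$; its restriction to $K$ lands in degree zero, hence in $(R_M)_\mu^*$, and sends $v_{m,n}\mapsto\psi_{m,n}$, yielding the required section $\Z[K]\to(R_M)_\mu$.

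Both conclusions depend on this. Without $(R_M)_\mu\simeq\Z[K]$ you have not identified $\Bi G$, and your schematic-closure computation of $\stZ_G$ needs $\Ker(R_M\to(R_M)_\mu)=\Ker(R_M\to\Z[K_+])$, which reduces to the injectivity of $(R_M)_\mu\to\Z[K]$. You correctly note that ``further identities'' in $K_+$ beyond the listed families are absorbed by taking schematic image in an integral target; the point is that the analogous issue one level up, for the group completion, is where the actual argument lies. By contrast, the step you flagged as delicate---representing $\GCov^{\mathrm{triv}}$ by $\Spec R_M$ and exhibiting it as a $\shT$-torsor over $\GCov$---is routine.
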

Notice that the whole $\GCov$ has a similar description as quotient,
but we have to consider non cancellative monoids. We introduce the
following notation: given $\alpha\in\N$, we set $0^{\alpha}=1$ if
$\alpha=0$ and $0^{\alpha}=0$ otherwise. Given $\underline{\E}=\E^{1},\dots,\E^{r}\in\duale K_{+}$
we have defined a map $\pi_{\underline{\E}}\colon\stF_{\underline{\E}}\arr\stZ_{G}$.
Notice that if $\underline{\gamma}$ is a subsequence of $\underline{\E}$
then $\stF_{\underline{\gamma}}$ is an open substack of $\stF_{\underline{\E}}$
and $(\pi_{\underline{\E}})_{|\stF_{\underline{\gamma}}}=\pi_{\underline{\gamma}}$.
The lifting problem for the maps $\pi_{\underline{\E}}$ clearly depends
on the choice of the sequence $\underline{\E}$. Considering larger
$\underline{\E}$ allows us to parametrize more covers, but also makes
uniqueness of the lifting unlikely. In this direction we have proved
that:
\begin{thm*}
{[}\ref{pro:characterization of points of Zphi}{]} Let $k$ be an
algebraically closed field and suppose we have a collection $\underline{\E}$
whose rays generate the rational cone $\duale K_{+}\Q$. Then the
map of groupoids $\stF_{\underline{\E}}(k)\arr\stZ_{G}(k)$ is essentially
surjective. In other words a $G$-cover of $\Spec k$ in the main
component $\stZ_{G}$ has a multiplication of the form $\psi_{m,n}=0^{\E_{m,n}}$
for some $\E\in\duale K_{+}$.
\end{thm*}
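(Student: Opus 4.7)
The plan is to use the presentation $\stZ_G\simeq[\Spec\Z[K_+]/\shT]$ and reduce essential surjectivity to a convex-geometric statement about the finitely generated rational cones $\sigma:=\Q_{\ge 0}K_+$ and $\sigma^\vee=\duale K_+\otimes\Q$. A $k$-point of $\stZ_G$ is, up to the $\shT(k)$-action with $\shT(k)=\Hom(\Z^M/\la e_0\ra,k^*)$, a monoid homomorphism $\phi\colon K_+\arr(k,\cdot)$. The support $P:=\phi^{-1}(k^*)\subseteq K_+$ is a face (closed under summands, since $k$ is a domain) and is saturated in $K_+$: if $nv\in P$ then $\phi(v)^n\ne 0$ forces $\phi(v)\ne 0$, so $v\in P$.

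First I would normalize $\phi$ within its $\shT(k)$-orbit. The restriction $\phi|_P\colon P\arr k^*$ extends uniquely to a group homomorphism $P-P\arr k^*$. Because $k$ is algebraically closed, $k^*$ is divisible and hence injective as an abelian group, so this extends further to an element $t\in\Hom(\Z^M/\la e_0\ra,k^*)=\shT(k)$. Replacing $\phi$ by $t^{-1}\cdot\phi$, we may assume $\phi(v)=1$ for $v\in P$ and $\phi(v)=0$ otherwise.

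The key convex-geometric step is to produce $\E\in\duale K_+$ with $\E^{-1}(0)\cap K_+=P$ together with a lift to $\stF_{\underline{\E}}(k)$. Set $I:=\{i\st\E^i|_P=0\}$ and $\E:=\sum_{i\in I}\E^i\in\duale K_+$. By hypothesis the $\E^i$ generate $\sigma^\vee$ as a cone, and a standard fact about finitely generated rational cones states that every face is spanned by the subset of the generators it contains; applied to the dual face $P^*:=\{u\in\sigma^\vee\st u|_P=0\}$ this shows that $P^*$ is spanned by $\{\E^i\}_{i\in I}$. Hence $\E$ lies in the relative interior of $P^*$, and by biduality of the face correspondence for finitely generated rational cones we obtain $\E^{-1}(0)\cap\sigma=\Q_{\ge 0}P$; intersecting with $K_+$ and using saturation of $P$ yields $\E^{-1}(0)\cap K_+=P$. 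In particular $\phi(v)=0^{\E(v)}$ for every $v\in K_+$, proving the "in other words" form of the statement.

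Finally, setting $a_i:=0$ for $i\in I$ and $a_i:=1$ for $i\notin I$ gives $\prod_i a_i^{\E^i(v)}=0^{\E(v)}=\phi(v)$, so $\phi=\pi_{\underline{\E}}(a_1,\dots,a_r)$ up to $\shT(k)$, proving essential surjectivity. The main obstacle is the convex geometry in the third paragraph: invoking that every face of $\sigma^\vee$ is generated by the subset of the $\E^i$ it contains is exactly where the hypothesis that $\underline{\E}$ generates $\duale K_+\otimes\Q$ enters in an essential way. Divisibility of $k^*$ used in the normalization is the other place where the algebraic closure of $k$ is needed.
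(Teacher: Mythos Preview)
Your proof is correct, but it takes a genuinely different route from the paper's.

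The paper's proof of Theorem~\ref{pro:characterization of points of Zphi} proceeds by commutative algebra: given $a\colon K_+\arr k$, it looks at the prime ideal $p=\Ker(k[K_+]\arr k)$ and invokes Lemma~\ref{lem:properties of pE} (which in turn cites Ogus and Kreuzer--Robbiano) to write the ``monomial part'' $p^{om}$ as $p_\E$ for some $\E\in\duale K_+$. Only then does it normalize $a$ by a character $\lambda\colon K\arr k^*$ using divisibility of $k^*$. The essential surjectivity of $\pi_{\underline\E}$ is left implicit: once $a=\lambda 0^\E$ and $\underline\E$ generates the dual cone, one writes $\E=\sum c_i\E^i$ with $c_i\in\Q_{\ge 0}$ and lifts by setting $z_i=0$ for $c_i>0$, $z_i=1$ otherwise.

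You instead bypass the monoid-algebra lemma entirely. You identify $P=\phi^{-1}(k^*)$ as a saturated face of $K_+$, normalize first (same divisibility argument), and then construct $\E=\sum_{i\in I}\E^i$ directly from the given generators via the face--dual-face correspondence for polyhedral cones. The one point that deserves a line of justification (which you do essentially supply) is that $\sigma^\vee=\duale K_+\otimes\Q$ is pointed, since $K_+$ generates $K$, so the ``every face is generated by the generators it contains'' fact applies.

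What each approach buys: the paper's route is more modular---Lemma~\ref{lem:properties of pE} is reused later, for instance in the description of the smooth locus (Lemma~\ref{pro:smooth locus of Z[T +]})---and produces $\E$ without reference to any chosen $\underline\E$. Your route is more self-contained, avoids the external citations, and makes the lift to $\stF_{\underline\E}$ explicit in the same stroke, so it is arguably the more direct argument for the theorem as stated in the introduction.
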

On the other hand small sequences $\underline{\E}$ can guarantee
uniqueness but not existence. The solution we have found is to consider
a particular class of rays, called extremal, that have minimal non
empty support. Set $\underline{\eta}$ for the sequence of all extremal
rays (that is finite). Notice that extremal rays generate $\duale K_{+}\Q$.
We prove that:
\begin{thm*}
{[}\ref{pro:smooth locus of Z[T +]}, \ref{thm:fundamental theorem for the smooth locus of ZM}{]}
The smooth locus $\stZ_{G}^{\textup{sm}}$ of $\stZ_{G}$ is of the
form $[X_{G}/\shT]$ where $X_{G}$ is a smooth toric variety over
$\Z$ (whose maximal torus is $\Spec\Z[K]$). Moreover $\pi_{\underline{\eta}}\colon\stF_{\underline{\eta}}\arr\stZ_{G}$
induces an isomorphism of stacks
\[
\pi_{\underline{\eta}}^{-1}(\stZ_{G}^{\textup{sm}})\arrdi{\simeq}\stZ_{G}^{\textup{sm}}
\]

\end{thm*}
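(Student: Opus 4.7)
The plan is to translate everything into classical toric geometry using \ref{cor:MCov as global quotient}, which identifies $\stZ_G$ with $[\Spec\Z[K_+]/\shT]$. The atlas $\Spec\Z[K_+]\to\stZ_G$ being smooth and surjective, the preimage of $\stZ_G^{\textup{sm}}$ in the atlas is exactly the smooth locus $X_G:=(\Spec\Z[K_+])^{\textup{sm}}$. Since $\Spec\Z[K_+]$ is the affine toric $\Z$-scheme attached to the finitely generated submonoid $K_+\subseteq K$, and since $\Bi G\simeq[\Spec\Z[K]/\shT]$ is the open substack of $\stZ_G$ corresponding to the dense torus $\Spec\Z[K]\subseteq\Spec\Z[K_+]$, this torus is contained in $X_G$; thus $X_G$ is an open $\shT$-invariant subscheme of a toric variety, hence itself a smooth toric variety with maximal torus $\Spec\Z[K]$. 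This yields $\stZ_G^{\textup{sm}}=[X_G/\shT]$.

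For the second assertion I would realise $\pi_{\underline{\eta}}$ as a toric morphism. Writing $\underline{\eta}=\eta^1,\dots,\eta^r$, the combined evaluation $v\mapsto(\eta^1(v),\dots,\eta^r(v))$ is a monoid morphism $K_+\to\N^r$, which induces a $\shT$-equivariant map $\A^r=\Spec\Z[\N^r]\to\Spec\Z[K_+]$. By the very construction of $\stF_{\underline{\eta}}$ one has $\stF_{\underline{\eta}}=[\A^r/\shT]$ with $\pi_{\underline{\eta}}$ the induced map on quotient stacks, so the claimed isomorphism reduces to showing that this equivariant morphism restricts to an isomorphism between the preimage of $X_G$ in $\A^r$ and $X_G$ itself.

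The combinatorial heart is then the following. The smooth locus $X_G$ is covered by the affine toric charts $U_\tau$ attached to the \emph{smooth} faces $\tau$ of $K_+$, namely those for which the monoid localisation is of the form $\N^s\oplus\Z^t$; and I would prove a lemma to the effect that for each such $\tau$ the extremal rays of $\duale K_+$ nonvanishing on $\tau$ restrict to the canonical $\Z$-basis of $\Hom(\N^s\oplus\Z^t,\Z)$ dual to the $\N^s$-factor, while the remaining extremal rays become units on $U_\tau$. Granted the lemma, the monomial map $\Z[K_+]\to\Z[\N^r]$ becomes, on each such chart, a toric isomorphism onto an explicit open of $\A^r$; $\shT$-equivariance and compatibility with inclusions of faces allow these local isomorphisms to be glued into the required global isomorphism of quotient stacks. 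The main obstacle is precisely this combinatorial lemma: one must check, from the abstract minimal-support definition of an extremal ray in the specific monoid $\duale K_+$ cut out by the relations $v_{m,n}=e_m+e_n-e_{m+n}$, that the $\eta^i$ recover exactly the canonical dual coordinates on every smooth face of $K_+$ and nothing more. This is delicate because $K_+$ need not be saturated, the full set of extremal rays need not form a $\Z$-basis of $\Hom(K,\Z)$ globally, and the assertion must be verified integrally over $\Z$ rather than merely at geometric points.
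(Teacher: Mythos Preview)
Your overall strategy is sound and close to the paper's, but there are two concrete gaps.

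First, the identification $\stF_{\underline\eta}=[\A^r/\shT]$ is wrong. By definition $\stF_{\underline\eta}=\stX_{\sigma_{\underline\eta}}$ with $\sigma_{\underline\eta}\colon \N^r\oplus K\to\Z^r\oplus\Z^{M}/\langle e_0\rangle$, so its atlas is $\Spec\Z[\N^r\oplus K]\simeq\A^r\times\Spec\Z[K]$ and the acting torus is $\Di{\Z^r}\times\shT$, not $\shT$ alone. Correspondingly, $\pi_{\underline\eta}$ is \emph{not} induced by the monoid map $K_+\to\N^r$ you write down; looking at the defining square for $\pi_{\underline\E}$, the relevant monoid map is $T_+\ni t\mapsto(\underline\eta(t),-t)\in\N^r\oplus K$, and on objects it sends $(\underline z,\lambda)$ to the multiplication $\psi_{m,n}=\underline z^{\underline\eta(v_{m,n})}/\lambda(v_{m,n})$. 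The extra torus factor is essential: without it there is no hope to match on the nose with the open charts of $\Spec\Z[K_+]$, whose maximal torus is $\Spec\Z[K]$.

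Second, your combinatorial lemma is misstated in a way that makes it false. It is not true that on a smooth chart \emph{all} extremal rays either sit in the canonical dual basis or become units: there exist extremal rays that are not smooth, and these need not be in the $\N$-span of any smooth sequence (they are indecomposable), yet their associated sections must still become invertible on every smooth chart. The paper handles this via the notion of \emph{smooth sequence}: a tuple $\underline\E=(\E^1,\dots,\E^q)\subset\duale K_+$ admitting $v_j\in K_+$ with $\E^i(v_j)=\delta_{ij}$ and $K_+\cap\Ker\underline\E$ generating $\Ker\underline\E$. One proves (i) the smooth locus of $\Spec\Z[K_+]$ is exactly $\bigcup_{\underline\E\text{ smooth}}\Spec\Z[K_+^{\underline\E}]$ with $K_+^{\underline\E}\simeq\Ker\underline\E\oplus\N^q$, and (ii) for any sequence $\chi$ of distinct indecomposable rays containing all smooth extremal rays, the preimage $\pi_\chi^{-1}(\stX_\phi^{\underline\E})$ equals the open $\stF_{\underline\E}\subseteq\stF_\chi$ (i.e.\ the $z_\delta$ for $\delta\notin\underline\E$ are units there), and $\stF_{\underline\E}\to\stX_\phi^{\underline\E}$ is an isomorphism. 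Point (ii) uses exactly that the remaining $\delta\in\chi$ are indecomposable, so $\delta\in\langle\E^1,\dots,\E^q\rangle_\N$ forces $\delta=\E^i$; this is the correct replacement for your ``nonvanishing on $\tau$'' dichotomy. Point (i) is the classical toric computation you allude to, but carried out integrally by analysing $p^{om}=p_\E$ for a smooth prime and building the smooth sequence from a minimal generating set of the localized monoid. Once you correct the presentation of $\stF_{\underline\eta}$ and reformulate the lemma in terms of smooth sequences, your gluing argument goes through and coincides with the paper's.
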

Among the extremal rays there are special rays, called smooth, that
can be defined as extremal rays $\E$ whose associated multiplication
$\psi_{m,n}=0^{\E_{m,n}}$ yields a cover in $\stZ_{G}^{\textup{sm}}$.
Set $\underline{\xi}$ for the sequence of smooth extremal rays. It
turns out that the theorem above holds if we replace $\underline{\eta}$
with $\underline{\xi}$.

If, given a scheme $X$, we denote by $\Picsh X$ the category whose
objects are invertible sheaves on $X$ and whose arrows are arbitrary
maps of sheaves, we also have: 
\begin{thm*}
{[}\ref{cor:lift when Picsh is the same}{]} Consider a $2$-commutative
diagram    \[   \begin{tikzpicture}[xscale=2.0,yscale=-1.0]     \node (A0_0) at (0, 0) {$X$};     \node (A0_1) at (1, 0) {$\stF_{\underline \E}$};     \node (A1_0) at (0, 1) {$Y$};     \node (A1_1) at (1, 1) {$\GCov$};     \path (A0_0) edge [->]node [auto] {$\scriptstyle{}$} (A0_1);     \path (A1_0) edge [->,dashed]node [auto] {$\scriptstyle{}$} (A0_1);     \path (A0_0) edge [->]node [auto,swap] {$\scriptstyle{f}$} (A1_0);     \path (A0_1) edge [->]node [auto] {$\scriptstyle{\pi_{\underline \E}}$} (A1_1);     \path (A1_0) edge [->]node [auto] {$\scriptstyle{}$} (A1_1);   \end{tikzpicture}   \] where
$X,Y$ are schemes and $\underline{\E}$ is a sequence of elements
of $\duale K_{+}$. If $\Picsh Y\arrdi{f^{*}}\Picsh X$ is fully faithful
(resp. an equivalence) the dashed lifting is unique (resp. exists
and is unique).
\end{thm*}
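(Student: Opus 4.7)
The plan is to reduce the lifting problem for the given diagram to an analogous lifting problem inside the category $\Picsh$. First, I would unfold what an object of $\stF_{\underline{\E}}(Y)$ really is: from the local description above, such an object consists of a $G$-cover on $Y$ (equivalently, line bundles $\{\shL_m\}_{m\in M}$ with multiplications $\psi_{m,n}$), together with invertible sheaves $\shM_1,\dots,\shM_r$ on $Y$, global sections $a_i\colon\odi Y\arr\shM_i$, and isomorphisms $\bigotimes_{i=1}^{r}\shM_i^{\otimes\E^i_{m,n}}\simeq\shL_{m+n}\otimes\shL_m^{-1}\otimes\shL_n^{-1}$ identifying $\prod_i a_i^{\E^i_{m,n}}$ with $\psi_{m,n}$. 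A morphism above a morphism of covers is an $r$-tuple of isomorphisms $\shM_i\simeq\shM'_i$ compatible with the sections and the identifications. The key observation is that every piece of data and every relation appearing here lives naturally in $\Picsh$: sections are arrows $\odi{}\arr\shM$, identifications of tensor products are isomorphisms, and compatibility conditions are equalities of arrows.

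\textbf{Uniqueness.} Assume $f^*\colon\Picsh Y\arr\Picsh X$ is fully faithful, and let $\lambda,\lambda'\in\stF_{\underline{\E}}(Y)$ be two lifts whose pullbacks to $\stF_{\underline{\E}}(X)$ are isomorphic via some $\varphi$. Then $\varphi$ consists of isomorphisms $f^*\shM_i\arrdi{\sim}f^*\shM'_i$ in $\Picsh X$, which by full faithfulness descend uniquely to isomorphisms $\shM_i\arrdi{\sim}\shM'_i$ in $\Picsh Y$. The compatibilities these descended arrows must satisfy with the $a_i,a'_i$ and with the cover identifications are all equalities of morphisms in $\Picsh Y$; since they hold after applying $f^*$, full faithfulness forces them to hold on $Y$. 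Hence $\lambda\simeq\lambda'$, and the isomorphism is unique since its pullback $\varphi$ is specified.

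\textbf{Existence.} Now assume $f^*$ is an equivalence. The lift on $X$ gives invertible sheaves $\shN_i$ on $X$, sections $b_i\colon\odi X\arr\shN_i$, and identifications $\bigotimes_i\shN_i^{\otimes\E^i_{m,n}}\simeq f^*(\shL_{m+n}\otimes\shL_m^{-1}\otimes\shL_n^{-1})$ matching $\prod_i b_i^{\E^i_{m,n}}$ with $f^*\psi_{m,n}$. Essential surjectivity of $f^*$ produces invertible sheaves $\shM_i$ on $Y$ with $f^*\shM_i\simeq\shN_i$; full faithfulness then lets the $b_i$ descend uniquely to sections $a_i\colon\odi Y\arr\shM_i$, and it lets each cover identification descend uniquely to an isomorphism $\bigotimes_i\shM_i^{\otimes\E^i_{m,n}}\simeq\shL_{m+n}\otimes\shL_m^{-1}\otimes\shL_n^{-1}$ on $Y$. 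The defining equality $\prod_i a_i^{\E^i_{m,n}}=\psi_{m,n}$ is again an equality of arrows in $\Picsh Y$ that holds after pullback, hence holds on $Y$. This assembles into an object of $\stF_{\underline{\E}}(Y)$ that lifts the diagram, and uniqueness is given by the previous paragraph.

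The main obstacle is essentially bookkeeping: one needs to verify carefully that the stack $\stF_{\underline{\E}}$ is described, objects and morphisms alike, purely in terms of $\Picsh$-data. Once this reformulation is in place, the statement becomes a formal consequence of the $2$-categorical universal property of a fully faithful (resp. essentially surjective) functor between categories. The only other subtlety is checking that the associativity and commutativity constraints of the cover, together with the product relations governed by the rays $\E^i$, all descend — but these too are equalities of morphisms in $\Picsh Y$ holding on $X$, and so are immediately handled by full faithfulness.
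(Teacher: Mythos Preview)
Your approach is correct and follows the same underlying idea as the paper: all the data defining objects and morphisms of $\stF_{\underline{\E}}$ and of $\GCov$ are expressible purely in terms of invertible sheaves, sections, and maps between tensor products of them, i.e.\ purely in $\Picsh$; hence full faithfulness (resp.\ equivalence) of $f^*$ on $\Picsh$ transfers.

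The paper packages the argument more uniformly. It first proves a general proposition (Proposition~\ref{pro:the map X(Y) --> X(X) is fully faithful (equivalence) is it so between Pic}) valid for any stack of the form $\stX_\phi$: if $f^*\colon\Picsh Y\to\Picsh X$ is fully faithful (resp.\ an equivalence) then so is $f^*\colon\stX_\phi(Y)\to\stX_\phi(X)$. Since both $\GCov\simeq\stX_{\phi_M}$ and $\stF_{\underline{\E}}=\stX_{\sigma_{\underline{\E}}}$ are stacks of this shape, the proposition applies to both rows of the square $\stF_{\underline{\E}}(Y)\to\stF_{\underline{\E}}(X)$, $\stX_\phi(Y)\to\stX_\phi(X)$, and the lifting statement is then a formal diagram chase. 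Your argument is essentially the specialization of that proposition to the concrete description of $\stF_{\underline{\E}}$ in the $\GCov$ setting, done by hand. The paper's route avoids re-verifying the bookkeeping for $\stF_{\underline{\E}}$ separately and makes clear that nothing specific to $G$-covers is used; your route has the advantage of being self-contained without needing to recognize $\stF_{\underline{\E}}$ as an instance of the general construction.
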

In particular the theorems above allow us to conclude that:
\begin{thm*}
{[}\ref{thm:fundamental theorem for the smooth locus of ZM}, \ref{thm:fundamental theorem for locally factorial schemes}{]}
Let $Y$ be a locally noetherian and locally factorial scheme. A cover
$\chi\in\GCov(Y)$ such that $\chi_{|k(p)}\in\stZ_{G}^{\textup{sm}}(k(p))$
for any $p\in Y$ with $\codim_{p}Y\leq1$ lifts uniquely to $\stF_{\underline{\xi}}(Y)$.
\end{thm*}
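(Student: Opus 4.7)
The plan is to reduce the global lifting problem to an open subscheme whose complement has codimension $\geq 2$, lift there using the isomorphism $\pi_{\underline{\xi}}^{-1}(\stZ_{G}^{\textup{sm}})\simeq\stZ_{G}^{\textup{sm}}$, and then extend over the small bad locus by invoking the criterion of \ref{cor:lift when Picsh is the same}.

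First I would observe that, since $\stZ_{G}$ is closed in $\GCov$, the preimage $\chi^{-1}(\stZ_{G})\subseteq Y$ is closed. It contains every point of codimension $\leq 1$, in particular all generic points of $Y$, hence it equals $Y$ and $\chi$ factors through $\stZ_{G}$. Set $U=\chi^{-1}(\stZ_{G}^{\textup{sm}})\subseteq Y$, an open subscheme which by hypothesis contains every point of codimension $\leq 1$; since $Y$ is locally noetherian, its complement has codimension $\geq 2$. Applying \ref{thm:fundamental theorem for the smooth locus of ZM} to $\chi|_{U}$ produces a unique lift $\tilde{\chi}_{U}\colon U\arr\stF_{\underline{\xi}}$ of $\chi|_{U}$ along $\pi_{\underline{\xi}}$.

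To extend $\tilde{\chi}_{U}$ over all of $Y$ I would apply \ref{cor:lift when Picsh is the same} to the inclusion $f\colon U\hookrightarrow Y$, together with the $2$-commutative square whose top arrow is $\tilde{\chi}_{U}$ and whose bottom arrow is $\chi$. The assumption to verify is that $f^{*}\colon\Picsh Y\arr\Picsh U$ is an equivalence. Essential surjectivity follows because local factoriality of $Y$ identifies Weil and Cartier divisors, so every invertible sheaf on $U$ extends to $Y$ by taking the Zariski closure of (a representative of) its divisor class. Full faithfulness reduces to the statement that for any invertible sheaf $\shN$ on $Y$ the restriction $\Gamma(Y,\shN)\arr\Gamma(U,\shN)$ is bijective, which is the classical Hartogs-type extension valid for reflexive sheaves on a locally noetherian normal scheme; note that local factoriality implies normality, and $\shN$ is reflexive since it is locally free. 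The criterion then delivers a unique lift $Y\arr\stF_{\underline{\xi}}$ whose restriction to $U$ is $\tilde{\chi}_{U}$.

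The main technical ingredient is the verification that $f^{*}$ is an equivalence of the enlarged Picard categories (invertible sheaves with \emph{arbitrary} sheaf maps), and this is precisely the point at which the locally factorial hypothesis is indispensable: without it, invertible sheaves on $U$ might fail to extend across a codimension $\geq 2$ locus, or sections of invertible sheaves could fail to be uniquely determined by their restriction to $U$. Once this Picard equivalence is granted, both existence and uniqueness of the global lift are formal consequences of the isomorphism statement in \ref{thm:fundamental theorem for the smooth locus of ZM} and the criterion \ref{cor:lift when Picsh is the same}, which is why the chapter was organized to establish those two tools first.
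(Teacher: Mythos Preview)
Your proposal is correct and follows essentially the same route as the paper's proof of \ref{thm:fundamental theorem for locally factorial schemes}: pass to the open $U=\chi^{-1}(\stZ_{G}^{\textup{sm}})$ whose complement has codimension $\geq 2$, lift there via the isomorphism of \ref{thm:fundamental theorem for the smooth locus of ZM}, and extend using \ref{cor:lift when Picsh is the same} together with the equivalence $\Picsh Y\simeq\Picsh U$ for locally factorial $Y$. One small point worth making explicit: your step ``$\chi^{-1}(\stZ_{G})$ contains all generic points, hence it equals $Y$'' needs $Y$ reduced to pass from a topological equality to a scheme-theoretic factorization through $\stZ_{G}$; this is exactly how the paper argues (using that locally factorial implies normal, hence reduced), and you do invoke normality later, so the ingredient is present but should be placed at this earlier step.
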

An interesting problem is to describe all (smooth) extremal rays.
This seems very difficult and it is related to the problem of finding
$\Q$-linearly independent sequences among the $v_{m,n}\in K_{+}$.
A natural way of obtaining extremal rays is trying to describe $G$-covers
with special properties. The first examples of them arise looking
at covers with normal total space. Indeed in \cite{Pardini1991} the
author is able to describe the multiplications yielding regular $G$-covers
of a discrete valuation ring. This description, using the language
introduced above, yields a sequence $\underline{\delta}=(\E^{\phi})_{\phi\in\Phi_{M}}$
of smooth extremal rays, where $\Phi_{M}$ is the set of surjective
maps $M\arr\Z/d\Z$ with $d>1$. We will define a stratification of
$\GCov$ by open substacks $\Bi G=U_{0}\subseteq U_{1}\subseteq\cdots\subseteq U_{|G|-1}=\GCov$
and we will prove that there exists an explicitly given sequence $\underline{\E}$
of smooth  extremal rays (defined in \ref{pro:classification sm int ray htwo})
containing $\underline{\delta}$ such that:
\begin{thm*}
{[}\ref{thm:fundamental theorem for hleqone}, \ref{thm:fundamental thm for hleqtwo}{]}
We have $U_{2}\subseteq\stZ_{G}^{\textup{sm}}$ and $\pi_{\underline{\E}}\colon\stF_{\underline{\E}}\arr\stZ_{G}$
induces isomorphisms of stacks
\[
\pi_{\underline{\E}}^{-1}(U_{2})\arrdi{\simeq}U_{2}\comma\pi_{\underline{\delta}}^{-1}(U_{1})=\pi_{\underline{\E}}^{-1}(U_{1})\arrdi{\simeq}U_{1}
\]

\end{thm*}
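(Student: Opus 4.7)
The plan is to combine the already proven fundamental theorem for the smooth locus, namely $\pi_{\underline{\xi}}^{-1}(\stZ_G^{\textup{sm}}) \arrdi{\simeq} \stZ_G^{\textup{sm}}$, with the lifting criterion for locally factorial schemes and a careful combinatorial analysis of which smooth extremal rays are needed in each stratum. I would carry out the argument in three stages: first the inclusion $U_2 \subseteq \stZ_G^{\textup{sm}}$; then the isomorphism $\pi_{\underline{\E}}^{-1}(U_2) \arrdi{\simeq} U_2$; and finally the $U_1$ statement, as a restriction of the previous one combined with a triviality argument for the rays in $\underline{\E} \setminus \underline{\delta}$.

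For the inclusion, since $\stZ_G^{\textup{sm}}$ is open in $\stZ_G$ and $\GCov$ is flat and of finite presentation over $S$, it suffices to check containment on geometric points. Fixing an algebraically closed field $k$ and a cover in $U_2(k)$, the characterization of points of $\stZ_G(k)$ shows that its multiplication has the form $\psi_{m,n} = 0^{\E_{m,n}}$ for some ray $\E \in \duale K_+$. The definition of the stratum $U_2$ constrains the support of $\E$ (it measures how far the cover is from being a torsor in small codimension), and by the classification of smooth extremal rays in \ref{pro:classification sm int ray htwo} this constraint forces $\E$ to decompose as a sum of rays belonging to the explicit list $\underline{\E}$. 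Since each such ray is by definition smooth, and $\stZ_G^{\textup{sm}}$ is the open image of the corresponding parametrization, the cover lies in $\stZ_G^{\textup{sm}}(k)$.

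For the isomorphism $\pi_{\underline{\E}}^{-1}(U_2) \arrdi{\simeq} U_2$, given $\chi \in U_2(Y)$ we have $\chi \in \stZ_G^{\textup{sm}}(Y)$ by the previous step, and so it lifts uniquely to a datum $\widetilde{\chi} \in \stF_{\underline{\xi}}(Y)$ consisting of invertible sheaves $\shM_{\E}$ and compatible sections for every smooth extremal ray $\E \in \underline{\xi}$. The key point is then to prove that for every $\E \in \underline{\xi} \setminus \underline{\E}$ the datum $(\shM_{\E}, \text{section})$ is canonically trivial, so that the lift actually factors through $\stF_{\underline{\E}}(Y)$. The support locus of such an $\E$ (the vanishing scheme of its section) has codimension $\geq 3$ in $Y$ by the very definition of $U_2$, and the vanishing/descent step combined with the explicit shape of these rays given by \ref{pro:classification sm int ray htwo} forces both the sheaf and the section to be canonically trivial; uniqueness of the factorization is then automatic from the uniqueness in the $\underline{\xi}$-lift.

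The $U_1$ statement follows the same pattern. The equality $\pi_{\underline{\delta}}^{-1}(U_1) = \pi_{\underline{\E}}^{-1}(U_1)$ is obtained by applying the same triviality argument to the rays in $\underline{\E} \setminus \underline{\delta}$, whose support loci have codimension $\geq 2$ on $U_1$; the resulting isomorphism with $U_1$ then comes from restricting Stage 2 and invoking Pardini's description of regular $G$-covers of discrete valuation rings, which is exactly what guarantees that $\underline{\delta}$ suffices in codimension one. I expect the main obstacle to be the combinatorial heart: the precise classification in \ref{pro:classification sm int ray htwo} of the smooth extremal rays relevant to the codimension-$2$ and codimension-$1$ regimes, and the vanishing argument needed to kill the rays outside $\underline{\E}$ (respectively $\underline{\delta}$) under the codimension hypotheses defining each stratum. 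Once this combinatorial input is in place, the geometric content of the theorem reduces cleanly to the smooth-locus isomorphism and the lifting criterion for locally factorial schemes.
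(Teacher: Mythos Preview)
Your proposal contains a genuine conceptual error. The open substack $U_2 = \{h \leq 2\}$ is defined by requiring $h_f(q) \leq 2$ at \emph{every} point $q$ of the base; the invariant $h$ counts the minimal number of degrees needed to generate the fiber algebra after factoring out the maximal torsor, and carries no codimension information about the base $Y$. Your claim that for $\E' \in \underline{\xi} \setminus \underline{\E}$ the vanishing locus $V(z_{\E'})$ has ``codimension $\geq 3$ in $Y$ by the very definition of $U_2$'' is therefore unfounded, and the same goes for the ``codimension $\geq 2$'' claim in the $U_1$ step. You appear to be conflating $U_2(Y)$ with the category $\catD_Y^2$ of \ref{thm:fundamental thm locally factoria hleqtwo}, which does involve a codimension hypothesis---but that theorem is a \emph{consequence} of the present one, not an input to it.

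To rescue your detour through $\underline{\xi}$ you would need $V(z_{\E'}) = \emptyset$ outright for $\E' \in \underline{\xi} \setminus \underline{\E}$, and this is a combinatorial fact about extremal rays rather than a codimension argument: if $z_{\E'}$ vanished at some geometric point $p$, then $\Supp \E' \subseteq \Supp \E$ for the $\E \in \langle \underline{\delta} \rangle_\N$ (with $\underline{\delta} \in \Theta_M^2$) describing $\chi_p$, and indecomposability of $\E'$ forces $\E'$ to be one of the rays in $\underline{\delta} \subseteq \underline{\E}$. But this is exactly the mechanism of \ref{lem:fundamental lemma for the smooth locus of X phi}, and the paper applies it more directly, without routing through $\underline{\xi}$. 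Once one establishes the equality $\{h \leq 2\} = \bigcup_{\underline{\delta} \in \Theta_M^2} \stZ_M^{\underline{\delta}} = \stX_\phi^{\Theta_M^2}$ via the classification of two-generated $M$-graded algebras (\ref{pro:Universal algebras generated by m,n with overline q fixed}, \ref{pro:general algebra for m,n}, \ref{pro:classification sm int ray htwo}), the general isomorphism $\pi_{\underline{\E}}^{-1}(\stX_\phi^\Theta) \simeq \stX_\phi^\Theta$ of \ref{pro:piE for theta isomorphism} with $\Theta = \Theta_M^2$ gives the result immediately; the inclusion $U_2 \subseteq \stZ_G^{\textup{sm}}$ then follows from \ref{lem:decomposition of T+E and open smooth subscheme}. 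The $U_1$ case is identical with $\Theta = \{(\E^\eta)\}_{\eta \in \Phi_M}$.
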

Theorem above implies that $\MHilb\A^{2}$ is smooth and irreducible
(\ref{cor:MHilbmn smooth and irreducible}). In this way we get an
alternative proof of the result in \cite{Maclagan2002} (later generalized
in \cite{Maclagan2010}) in the particular case of equivariant Hilbert
schemes.
\begin{thm*}
{[}\ref{thm:for hleqone}, \ref{thm:fundamental thm locally factoria hleqtwo}{]}
Let $Y$ be a locally noetherian and locally factorial scheme and
$\chi\in\GCov(Y)$. If $\chi_{|k(p)}\in U_{1}$ (resp. $\chi_{|k(p)}\in U_{2}$)
for all $p\in Y$ with $\codim_{p}Y\leq1$, then $\chi$ lifts uniquely
to $\stF_{\underline{\delta}}(Y)$ (resp. $\stF_{\underline{\E}}(Y)$).
\end{thm*}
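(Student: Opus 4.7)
The plan is to reduce to the previous theorem by an extension-from-codimension-two argument, then apply the lifting criterion \ref{cor:lift when Picsh is the same}. I write the proof for the $U_{2}/\underline{\E}$ case; the $U_{1}/\underline{\delta}$ case is identical.

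First I would cut down to the open subscheme where the previous theorem applies directly. Let $V\subseteq Y$ be the preimage of $U_{2}$ under the morphism $\chi\colon Y\arr\GCov$. Since $U_{2}$ is open in $\GCov$, $V$ is open in $Y$, and the hypothesis that $\chi_{|k(p)}\in U_{2}(k(p))$ for every point $p$ of codimension $\leq 1$ means exactly that $V$ contains all such points, so the complement $Y\setminus V$ has codimension $\geq 2$ in $Y$. By the isomorphism of stacks $\pi_{\underline{\E}}^{-1}(U_{2})\arrdi{\simeq}U_{2}$ of theorem \ref{thm:fundamental thm for hleqtwo}, the restriction $\chi_{|V}$ lifts uniquely (up to a unique isomorphism) to an object $\tilde{\chi}_{V}\in\stF_{\underline{\E}}(V)$.

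Next I would extend $\tilde{\chi}_{V}$ from $V$ to all of $Y$. The open immersion $j\colon V\hookrightarrow Y$, together with $\tilde{\chi}_{V}\colon V\arr\stF_{\underline{\E}}$, $\chi\colon Y\arr\GCov$ and the $2$-isomorphism $\pi_{\underline{\E}}\circ\tilde{\chi}_{V}\simeq\chi\circ j$, sits in exactly the diagram to which \ref{cor:lift when Picsh is the same} applies. Therefore existence and uniqueness of a lift $\tilde{\chi}\in\stF_{\underline{\E}}(Y)$ of $\chi$ will follow as soon as I check that the pullback functor $j^{*}\colon\Picsh Y\arr\Picsh V$ is an equivalence of categories.

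This last verification is where the hypothesis on $Y$ enters. Local factoriality implies normality, so for any line bundles $\shL,\shM$ on $Y$ the sheaf $\shL^{-1}\otimes\shM$ is reflexive, and sections of a reflexive sheaf on a locally noetherian normal scheme extend uniquely across a closed subset of codimension $\geq 2$; this gives full faithfulness of $j^{*}$. For essential surjectivity, on a locally factorial locally noetherian scheme one has $\Pic=\Cl$, and the restriction $\Cl(Y)\arr\Cl(V)$ is an isomorphism when $Y\setminus V$ has codimension $\geq 2$, because Weil divisors on $Y$ and on $V$ are in bijection via closure. Hence $j^{*}$ is an equivalence and \ref{cor:lift when Picsh is the same} produces the desired $\tilde{\chi}\in\stF_{\underline{\E}}(Y)$, unique up to unique isomorphism.

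I do not expect a real obstacle here: the hard content lives in the previous theorem (the stacky isomorphism over $U_{1}$, $U_{2}$), while the present statement is obtained from it by a routine codimension-two extension of invertible sheaves on a locally factorial scheme combined with the general lifting criterion. The only point that requires any care is making sure the $2$-commutativity of the diagram survives the extension step, which is guaranteed by the uniqueness clause of \ref{cor:lift when Picsh is the same}.
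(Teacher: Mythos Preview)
Your proof is correct and follows essentially the same route as the paper: the paper's proofs of \ref{thm:for hleqone} and \ref{thm:fundamental thm locally factoria hleqtwo} both just invoke the general \ref{thm:fundamental theorem for locally factorial schemes}, whose argument is exactly what you wrote---restrict to the open $V=\chi^{-1}(U_{i})$ whose complement has codimension $\geq 2$, lift there via the stacky isomorphism of \ref{thm:fundamental theorem for hleqone}/\ref{thm:fundamental thm for hleqtwo}, and then extend across the codimension-two locus using \ref{cor:lift when Picsh is the same} together with $\Picsh Y\simeq\Picsh V$ for locally factorial $Y$. The only cosmetic difference is that the paper factors $\chi$ through $\stZ_{\phi}$ (using that locally factorial implies reduced) before pulling back the open substack, whereas you use directly that $U_{i}=\{h\leq i\}$ is open in $\GCov$ by upper semicontinuity of $h$; both work.
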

Notice that $\underline{\E}=\underline{\delta}$ if and only if $G\simeq(\mu_{2})^{l}$
or $G\simeq(\mu_{3})^{l}$ (\ref{pro:when sigmaM is empty}). Finally
we prove:
\begin{thm*}
{[}\ref{thm:regular in codimension 1 covers}, \ref{thm:NC in codimension one}{]}
Let $Y$ be a locally noetherian and locally factorial integral scheme
with $\dim Y\geq1$ and such that $|M|\in\odi Y^{*}$. Let also $f\colon X\arr Y$
be a $G$-cover. If $X$ is regular in codimension $1$ (resp. normal
crossing in codimension $1$ (see \ref{def:normal crossing codimesion one}))
then $f$ comes from a unique object of $\stF_{\underline{\delta}}(Y)$
(resp. $\stF_{\underline{\gamma}}(Y)$, where $\underline{\delta}\subseteq\underline{\gamma}\subseteq\underline{\E}$
is an explicitly given sequence).
\end{thm*}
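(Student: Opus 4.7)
The plan is to reduce both statements to the lifting theorems for locally factorial schemes stated previously as Theorem \ref{thm:for hleqone} and Theorem \ref{thm:fundamental thm locally factoria hleqtwo}: any $\chi\in\GCov(Y)$ whose restriction $\chi_{|k(p)}$ lands in $U_1$ (resp.\ $U_2$) for every $p\in Y$ with $\codim_p Y\leq 1$ lifts uniquely to $\stF_{\underline\delta}(Y)$ (resp.\ $\stF_{\underline\E}(Y)$). Hence the real work is to verify this pointwise stratification condition from the regularity (resp.\ normal-crossing) hypothesis on $X$, and to show that the lift automatically lies in the smaller subsequence $\stF_{\underline\gamma}$ in the normal-crossing case.

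First I would dispose of the generic point: since $Y$ is integral of positive dimension and $|M|\in\odi Y^{*}$, the group scheme $G=\mu_{M}$ is \'etale, so the generic fibre of $f$ is \'etale over $k(\eta)$ and the $G$-cover condition forces it to be a $G$-torsor; thus $\chi_{|k(\eta)}\in\Bi G\subseteq U_{1}\subseteq U_{2}$. Next, fix a codimension-one point $p\in Y$ and localize to the DVR $R=\odi{Y,p}$. The induced cover $X_{p}=X\times_{Y}\Spec R\arr\Spec R$ is finite flat of degree $|M|$, and the points of $X$ lying over $p$ are of codimension $\leq 1$ in $X$. If $X$ is regular in codimension $1$, then $X_{p}$ is a regular semilocal one-dimensional scheme, hence a finite product of DVRs. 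Pardini's analysis of abelian covers of a DVR with regular total space (applied in the present language) shows that the multiplications $\psi_{m,n}$ of $\chi_{p}$ are, up to units, monomials in a single uniformizer whose exponents form exactly one of the rays $\E^{\phi}\in\underline\delta$ indexed by a surjection $\phi\colon M\arr\Z/d\Z$ describing the ramification of the branch over $p$. In particular $\chi_{|k(p)}\in U_{1}$ and the regular-in-codimension-one theorem follows from Theorem \ref{thm:for hleqone}.

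For the normal-crossing part I would mirror the argument. After \'etale localization on $Y$ at $p$ the total space is, by assumption, defined by a single monomial relation in a regular system of parameters, and the $G$-action can be diagonalized simultaneously. Expanding the algebra in the Picard grading (\ref{eq:decomposition for diagonalizable groups introduction}), the multiplications $\psi_{m,n}$ of $\chi_{p}$ become monomials in at most two parameters; a direct check verifies that the vector of exponents satisfies the support and minimality conditions defining the smooth extremal rays in the explicit list of \ref{pro:classification sm int ray htwo}, and more precisely lies in the subsequence $\underline\gamma\subseteq\underline\E$. Hence $\chi_{|k(p)}\in U_{2}$, and Theorem \ref{thm:fundamental thm locally factoria hleqtwo} provides a unique lift to $\stF_{\underline\E}(Y)$; the pointwise information above forces this lift to factor through the open substack $\stF_{\underline\gamma}\subseteq\stF_{\underline\E}$.

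The main obstacle is the second part: one must verify that the codimension-one normal-crossing condition corresponds \emph{precisely} to the explicit subsequence $\underline\gamma$ and not to a strictly larger set of extremal rays. This amounts to a bookkeeping argument identifying, for every decomposition of the inertia determined by a normal-crossing divisor, which smooth extremal rays of $\duale K_{+}$ actually arise; the classification in \ref{pro:classification sm int ray htwo} is used here in an essential way. Uniqueness in both statements is automatic from the uniqueness clause of the lifting theorem, which applies because a locally factorial integral $Y$ and a finite flat $f$ ensure that $f^{*}\colon\Picsh Y\arr\Picsh X$ is fully faithful on the relevant opens, as required by Theorem \ref{cor:lift when Picsh is the same}.
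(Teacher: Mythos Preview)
Your overall strategy matches the paper's: both cases are reduced to the lifting theorems for locally factorial schemes (Theorems \ref{thm:for hleqone} and \ref{thm:fundamental thm locally factoria hleqtwo}) by verifying $h_f(p)\leq 1$ (resp.\ $\leq 2$) at codimension-one points, and the paper obtains this bound directly from $h_f(p)\leq\dim_{k(q)}m_q/m_q^2$ for $q\in f^{-1}(p)$, which is more efficient than your detour through Pardini's analysis.

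There are two genuine problems. First, your final paragraph on uniqueness is wrong: the uniqueness clause in the lifting theorems comes from the equivalence $\Picsh Y\simeq\Picsh U$ for open $U\subseteq Y$ with $\codim_Y(Y\setminus U)\geq 2$ (this is how Theorem \ref{thm:fundamental theorem for locally factorial schemes} is proved), and the cover map $f\colon X\to Y$ plays no role; invoking $f^*\colon\Picsh Y\to\Picsh X$ is a confusion of roles. Second, you correctly flag the NC classification as the main obstacle but materially underestimate it. Your claim that ``the $G$-action can be diagonalized simultaneously'' is precisely the content of Lemma \ref{lem:classification of actions for NC}, whose proof is nontrivial (it uses Weierstrass preparation and a delicate normal-form argument to show that any finite abelian action on $k[[s,t]]/(st)$ containing a swap is, up to equivariant automorphism, one of two explicit families). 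Computing $A^M\simeq k[[z]]$ and identifying the multiplication as $z^{\E}$ for a specific ray then occupies Proposition \ref{pro:NC complete description of invariants, algebras multiplication}, a case-by-case computation producing the table that \emph{defines} $\underline\gamma$. Your phrase ``monomials in at most two parameters'' is also off: the $\psi_{m,n}$ live in the DVR $\odi{Y,p}$, so they are monomials in a single uniformizer; what varies is which ray $\E\in\duale K_+$ (possibly expressible as a sum of two rays from $\underline\gamma$) controls the exponents. Without carrying out this classification you cannot name $\underline\gamma$, so the ``direct check'' you propose is not a check but the heart of the proof.
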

Note that one can replace {}``regular in codimension $1$'' with
{}``normal'' in the above theorem because $G$-covers have Cohen-Macaulay
fibers. The part concerning covers that are regular in codimension
$1$ is essentially a rewriting of Theorem $2.1$ and Corollary $3.1$
of \cite{Pardini1991} extended to locally noetherian and locally
factorial schemes, while the last part generalizes Theorem $1.9$
of \cite{Alexeev2011}.

\section{Equivariant affine maps and monoidality.}

In Chapter $4$ we focus on the problem of describing Galois covers
for a general finite, flat and of finite presentation group scheme
$G$ over a given base ring $R$. This problem can be stated as follows.
\begin{problem}
\label{prob: data for general G covers introduction} Given an $R$-scheme
$T$, describe $G$-covers over $T$ in terms of locally free sheaves
over $T$ (without an action of $G$), maps among them and the representation
theory of $G$ over the base ring $R$. 
\end{problem}
Denote by $\Loc T$ ($\Loc^{G}T$) the category of locally free sheaves
of finite rank over $T$ (with an action of $G$). Similarly define
$\QCoh T$ and $\QCoh^{G}T$ replacing locally free sheaves with arbitrary
quasi-coherent sheaves. Given an $R$-linear functor $\Omega\colon\Loc^{G}R\arr\QCoh T$
a \emph{monoidal} structure on it is given by a natural transformation
$\iota_{V,W}\colon\Omega_{V}\otimes\Omega_{W}\arr\Omega_{V\otimes W}$
and an identity $1\in\Omega_{R}$ satisfying certain natural conditions.
This structure is called \emph{strong }if $\iota$ is an isomorphism
and $\Omega_{R}=\odi T1$. 

A common result of Tannaka's theory is that the category of $G$-torsors
over $T$ is equivalent to the category of strong monoidal, symmetric,
$R$-linear and exact functors $\Loc^{G}R\arr\Loc T$. This is part
of the so called {}``reconstruction problem'': reconstruct a group
$G$ from its category of representations. See \cite{Deligne1982,Rivano1972}
for the classical case when $R$ is a field and \cite{Lurie2004}
for the general one. This gives an answer to problem \ref{prob: data for general G covers introduction}
in the case of $G$-torsors. In this chapter we provide a similar
answer for $G$-covers and, more generally, for equivariant affine
maps. The idea is to extend the above correspondence to the case of
(non exact) non strong monoidal functors. In order to consider general
equivariant affine maps and not only $G$-covers, it is also necessary
to consider monoidal functors with values in the whole category of
quasi-coherent sheaves.

This functorial point of view for $G$-covers arises naturally when
trying to answer problem \ref{prob: data for general G covers introduction}.
As seen in the case of diagonalizable groups, the problem of describing
$G$-covers is equivalent to the problem of understanding the possible
algebra structures on the regular representation $R[G]$. This task
should be easier when $R[G]$ is a sum of smaller parts, as it happens
when $G$ is diagonalizable or a constant group over the complex numbers.
Therefore the first problem to solve is to determine a class of group
schemes for which this simplification is possible. Over a field $k$,
such question has already an answer: the regular representation $k[G]$
decomposes into a product of irreducible representations if and only
if the group $G$ is linearly reductive. This property is usually
taken as definition of a linearly reductive group over a field. There
is an alternative definition, which has the advantage of working over
any base scheme: a group scheme $G$ over $R$ is called \emph{linearly
reductive} if the functor of invariants $\QCoh^{G}R\arr\QCoh R$,
$\shF\longmapsto\shF^{G}$ is exact. For an introduction to this subject
see \cite{Abramovich2007}. What we are looking for is a collection
$I$ of objects in $\Loc^{G}R$ for which the $G$-equivariant maps
\[
\eta_{I,\shF}\colon\bigoplus_{V\in I}\Homsh^{G}(V,\shF)\otimes V\arr\shF
\]
are isomorphisms for all $\shF\in\QCoh T$ and all $R$-schemes $T$.
Clearly this implies that $G$ is linearly reductive and, over an
algebraically closed field $k$, that $I$ is the set of the irreducible
representations. Assume that $G$ is a linearly reductive group. The
result is:
\begin{prop*}
{[}\ref{prop:generating irreducible representations}{]} The maps
$\eta_{I,\shF}$ are isomorphisms for all $\shF\in\QCoh T$ and all
$R$-schemes $T$ if and only if, for all algebraically closed fields
$k$ and geometric points $\Spec k\arr\Spec R$, the representations
$V\otimes k$ are irreducible for all $V\in I$ and the restriction
$-\otimes k$ yields a one to one correspondence between $I$ and
the set of irreducible representations of $G\times k$ up to isomorphisms.
When $\Spec R$ is connected, the previous condition can be checked
at a fixed geometric point.
\end{prop*}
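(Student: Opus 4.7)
The proof splits naturally into the two directions plus the final connectedness argument.

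For the forward direction, suppose each $\eta_{I,\shF}$ is an isomorphism. Fix a geometric point $\Spec k\arr\Spec R$ and any irreducible representation $W$ of $G_{k}:=G\times_{R}\Spec k$, viewed as a quasi-coherent sheaf on $\Spec k$. Applying the hypothesis gives a canonical isomorphism $\bigoplus_{V\in I}\Hom^{G_{k}}(V_{k},W)\otimes V_{k}\simeq W$. Since $W$ is irreducible and $G_{k}$ is linearly reductive, exactly one summand can be nonzero, and that summand $\Hom^{G_{k}}(V_{k},W)\otimes V_{k}\simeq W$ forces $V_{k}$ to be irreducible and isomorphic to $W$. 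Letting $W$ range over isomorphism classes of irreducibles of $G_{k}$ yields the desired bijection.

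For the reverse direction, assume the geometric condition. The first step is that the formation of $\eta_{I,\shF}$ commutes with arbitrary base change $T'\arr T$: each $V\in I$ is locally free of finite rank, so $\Homsh^{G}(V,\shF)=(V^{\vee}\otimes\shF)^{G}$, and linear reductivity of $G$ (preserved under base change) provides a functorial Reynolds projection that commutes with pullback. To verify $\eta_{I,\shF}$ is an isomorphism one reduces to finitely presented $\shF$ by colimits (both sides commute with filtered colimits, since $V$ is finitely presented) and then checks fibrewise by Nakayama at each point of $T$. At a geometric fibre $x\colon\Spec k\arr T\arr\Spec R$, the linearly reductive group $G_{k}$ decomposes $x^{*}\shF$ canonically into isotypic components indexed by irreducibles of $G_{k}$; by hypothesis this indexing set is $\{V_{k}\mid V\in I\}$, so $x^{*}\eta_{I,\shF}$ is precisely the canonical decomposition, hence an isomorphism.

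For the connected case, the point is that the two conditions are locally constant on $\Spec R$. For any $V\in\Loc^{G}R$ the invariant endomorphism module $\End^{G}(V)$ is locally free on $\Spec R$ with formation commuting with base change (same reason), so Schur's lemma implies that $V_{k}$ is irreducible iff $\rk_{k}\End^{G}(V_{k})=1$, a locally constant condition. The bijection with irreducibles can be detected by decomposing the regular representation $R[G]=\bigoplus_{V\in I}V^{\oplus n_{V}}$ with $n_{V}=\rk\Homsh^{G}(V,R[G])$ locally constant; if the bijection holds at one geometric point of a connected $\Spec R$, it holds throughout.

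The main obstacle is the clean verification that $\Homsh^{G}(V,-)$ commutes with arbitrary base change on $\QCoh^{G}$, which is precisely the place where linear reductivity enters in an essential way; once this is granted, the global statement reduces to the routine semisimplicity of representations over an algebraically closed field.
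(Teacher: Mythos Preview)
Your forward direction and the connected-base argument are essentially correct and close in spirit to the paper's proof. The genuine gap is in the reverse implication.

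You write: ``one reduces to finitely presented $\shF$ by colimits \dots\ and then checks fibrewise by Nakayama at each point of $T$.'' The colimit reduction is fine, and Nakayama does give surjectivity of $\eta_{I,\shF}$ from the fibrewise statement. But fibrewise isomorphism of a map between finitely presented modules does \emph{not} imply injectivity. A clean counterexample: over $R=k[t]$ the canonical surjection $R/(t^{2})\arr R/(t)$ is an isomorphism on every fibre (at $t\neq0$ both sides vanish, at $t=0$ both are $k$ and the map is the identity), yet it has nonzero kernel. So the argument as written does not establish that $\eta_{I,\shF}$ is injective when $\shF$ is merely finitely presented.

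The paper repairs exactly this point. It first treats the case where $\shF$ is locally free of finite rank: then $\Homsh^{G}(V,\shF)$ is locally free (Lemma \ref{lem:base change of invariants}), hence both source and target of $\eta_{I,\shF}$ are locally free, and in that situation a fibrewise isomorphism \emph{is} a global isomorphism. In particular $\eta_{I,\duale{R[G]}}$ is an isomorphism. For general $\shF$ one then takes the $G$-equivariant presentation $(\duale{R[G]})^{\oplus J}\arr(\duale{R[G]})^{\oplus I}\arr\shF\arr0$ of Lemma \ref{lem:presentation of G equivariant co modules} and uses that both $\shF\longmapsto\bigoplus_{V}\Homsh^{G}(V,\shF)\otimes V$ and the identity are right exact (the first because $(-)^{G}$ is exact for a linearly reductive $G$); the five-lemma then forces $\eta_{I,\shF}$ to be an isomorphism. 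You have all the ingredients for this fix---you already invoke linear reductivity for base-change compatibility---so the correction is to replace the single Nakayama step by this two-stage argument.
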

We will say that a group $G$ admitting a set $I$ as above has a
\emph{good }representation theory and that the pair $(G,I$) is a
\emph{good linearly reductive group }(abbreviated with \emph{glrg}).
For such a pair we will also write $I=I_{G}$. Notice that any $V\in I_{G}$
is not only irreducible, but also geometrically irreducible, that
is it is irreducible after base changing to all the geometric points,
and that $\Endsh^{G}(V)=\odi S$, while if $W\in I_{G}$ and $W\neq V$,
then $\Homsh^{G}(V,W)=0$. In particular over a field $k$, $G$ has
a good representation theory if and only if all irreducible representations
$V$ have trivial endomorphism rings, that is $\End^{G}(V)\simeq k$,
and in this case $I_{G}$ is the set of irreducible representations.
When the base is connected, $I_{G}$ is uniquely determined up to
tensorization by invertible sheaves (with trivial action). Other examples
of good linearly reductive groups are the diagonalizable groups over
$R=\Z$: if $M=\Hom(G,\Gm)$ and we denote by $\Z_{m}$ the representation
induced by $m\in M$, it is enough to consider the sequence $I_{G}=(\Z_{m})_{m\in M}$.
Notice that there exist linearly reductive that are not good, for
instance $\Z/p\Z$ over a field not containing a primitive $p$-root
of unity. On the other hand we prove that (étale) linearly reductive
groups have a good representation theory locally in the (étale) fppf
topology (see \ref{prop:G has locally a good representation theory}).
Moreover, any constant group $G$ has a good representation theory
over a strictly Henselian ring $R$, provided that the characteristic
of the residue field of $R$ does not divide the order of $G$ (see
\ref{thm:etale linearly reductive over sctrictly are glrg}).

When $G$ is a good linearly reductive group and the base scheme $\Spec R$
is connected, a $G$-comodule $\shF$ over an $R$-scheme $T$ which
is fppf locally the regular representation is of the form
\begin{equation}
\shF\simeq\bigoplus_{V\in I_{G}}\shF_{V}\otimes\duale V\text{ where }\shF_{V}\text{ is locally free of rank }\rk V\label{eq:decomposition fppf locally regular representation introduction}
\end{equation}
Thus $\shF$ is determined by a sequence of locally free sheaves with
prescribed ranks, namely $(\shF_{V})_{V\in I_{G}}$. Now the problem
is to understand what additional data are needed and what conditions
such data have to satisfy in oder to induce a structure of algebra
over $\shF$. A non associative ring structure on $\shF$ is given
by a collection of maps between sheaves obtained starting from the
$\shF_{V},V$ for $V\in I_{G}$, whose form depends on how the tensor
products $V\otimes W$ for $V,W\in I_{G}$ decompose into representations
in $I_{G}$. Moreover it is not difficult to convince oneself that
the conditions those maps have to satisfy in order to have a commutative
and associative algebra strongly depend on the two ways one can decompose
$(V\otimes W)\otimes Z\simeq V\otimes(W\otimes Z)$ for $V,W,Z\in I_{G}$
into representations in $I_{G}$. I have to admit that I have never
been brave enough to write down those last conditions, although this
should be an elementary task: it seems pretty clear that there is
no hope to simplify those conditions for a general group, obtaining
a really meaningful set of data. The diagonalizable case is much more
simple than the general one because tensor products of representations
are very easy.

The approach I propose to work around this situation is to associate
with a sheaf $\shF$ not only the sequence $(\shF_{V})_{V\in I_{G}}$,
but a whole functor $\Loc^{G}R\arr\Loc T$. With a $G$-comodule $\shF$
we associate the functor $\Omega^{\shF}\colon\Loc^{G}R\arr\Loc T$
given by
\[
\Omega_{V}^{\shF}=(\shF\otimes V)^{G}
\]
Notice that, with notation from (\ref{eq:decomposition fppf locally regular representation introduction}),
$\shF_{V}=\Omega_{V}^{\shF}$ for all $V\in I_{G}$. Although we do
not have a finite set of data, this approach has, at least, two advantages.
The first is that, as we will see, a structure of algebra on $\shF$
translates into natural properties on $\Omega^{\shF}$. The second
is that this point of view, without additional technicalities, allows
us to consider and describe any $G$-equivariant affine map, that
is any affine map $f\colon X\arr T$ with an action of $G$ on $X$
for which $f$ is invariant, and that the theory can be developed
for any finite, flat and finitely presented group scheme.

So assume that $G$ is a finite, flat and finitely presented group
scheme over a ring $R$. Given an $R$-scheme $T$ define $\QAdd^{G}T$
as the category of $R$-linear functors $\Loc^{G}R\arr\QCoh T$. Denote
also by $\QAdd_{R}^{G}$ (resp. $\QCoh_{R}^{G}$) the stack (not in
groupoids) whose fiber over an $R$-scheme $T$ is $\QAdd^{G}T$ (resp.
$\QCoh^{G}T$). Given $\Omega\in\QAdd^{G}T$, we will show that $\Omega_{R[G]}\in\QCoh T$
has a natural structure of $G$-comodule and the first result we will
prove is:
\begin{thm*}
{[}\ref{thm:additive functors are equivariant sheaves}{]} Given an
$R$-scheme $T$, the functors   \[   \begin{tikzpicture}[xscale=3.7,yscale=-0.6]     
\node (A0_0) at (0, 0) {$\shF_\Omega=\Omega_{R[G]}$};     
\node (A0_1) at (1, 0) {$\Omega$};     
\node (A1_0) at (0, 1) {$\QCoh^G T$};     
\node (A1_1) at (1, 1) {$\QAdd^G T$};     
\node (A2_0) at (0, 2) {$\shF$};     
\node (A2_1) at (1, 2) {$\Omega^{\shF}=(-\otimes \shF)^G$};     
\path (A0_0) edge [<-|,gray]node [auto] {$\scriptstyle{}$} (A0_1);     \path (A1_0) edge [->]node [auto] {$\scriptstyle{}$} (A1_1);     \path (A2_1) edge [<-|,gray]node [auto] {$\scriptstyle{}$} (A2_0);   \end{tikzpicture}   \] yield an equivalence between $\QCoh^{G}T$ and the full subcategory
of $\QAdd^{G}T$ of left exact functors.
\end{thm*}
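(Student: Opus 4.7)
The plan is to show that the two functors are quasi-inverse equivalences between $\QCoh^G T$ and the full subcategory of left exact objects in $\QAdd^G T$. First I would verify well-definedness: for $\Omega^\shF = (-\otimes \shF)^G$, $R$-linearity is immediate from the bilinearity of $\otimes$, and left exactness follows because for any short exact sequence $0 \to V' \to V \to V'' \to 0$ in $\Loc^G R$ all three terms are locally free (hence flat) over $R$, so tensoring with $\shF$ remains exact, after which $(-)^G$ is left exact as a kernel. For $\shF_\Omega = \Omega_{R[G]}$, the right regular action of $G$ on $R[G]$ commutes with the left regular one used to view $R[G] \in \Loc^G R$, hence defines automorphisms of this object to which $\Omega$ applies functorially, endowing $\Omega_{R[G]}$ with a natural $G$-action.

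The easier direction is the natural isomorphism $\shF \simeq \shF_{\Omega^\shF} = (R[G] \otimes \shF)^G$. Using the coaction $\rho_\shF$ and the antipode $S$ of $R[G]$, the map $f \mapsto \sum S(f_{(1)}) \otimes f_{(0)}$ lands in the diagonal $G$-invariants by a standard Hopf-theoretic computation, is $G$-equivariant for the residual right regular action on the $R[G]$-factor (which is precisely the action giving $\shF_{\Omega^\shF}$ its $G$-structure), and admits as inverse the restriction of $\epsilon \otimes \id_\shF$ to the invariants, where $\epsilon$ is the counit.

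The harder direction is to construct, for each left exact $\Omega$ and each $V \in \Loc^G R$, a natural isomorphism $(V \otimes \shF_\Omega)^G \simeq \Omega_V$. The key tool is the Hopf-algebraic untwisting isomorphism in $\Loc^G R$,
\[
  \phi_V \colon V_{\textup{triv}} \otimes R[G] \arrdi{\sim} V \otimes R[G], \quad v \otimes h \mapsto \textstyle\sum v_{(0)} \otimes v_{(1)} h,
\]
which additionally intertwines the residual right regular action on the $R[G]$-factor. Combined with $R$-linearity of $\Omega$ and a gluing argument (needed when $V$ is only locally free), this yields a projection-type isomorphism $\Omega_{V \otimes R[G]} \simeq V \otimes \shF_\Omega$, with $V$ carrying the trivial $G$-action and $\shF_\Omega$ its natural one. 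One then writes $V$ as the equalizer in $\Loc^G R$ of the two cobar maps $V \otimes R[G] \rightrightarrows V \otimes R[G] \otimes R[G]$ arising from $\rho_V \otimes \id$ and $\id_V \otimes \Delta$; applying the left exactness of $\Omega$ preserves this equalizer and, via the projection isomorphism, identifies it with the diagonal invariants $(V \otimes \shF_\Omega)^G$, where $V$ now carries its original $G$-action (which emerges naturally from the shape of the cobar differentials).

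The main obstacle is this last construction: checking that the various $G$-actions (left and right regular on $R[G]$, diagonal and trivial on tensor products, original on $V$ and $\shF$) all match correctly under the untwisting map, establishing the projection isomorphism $\Omega_{V_{\textup{triv}} \otimes R[G]} \simeq V \otimes \shF_\Omega$ when $V$ is only locally free over $R$ (since $\Omega$ is not itself a sheaf on $\Spec R$, some care with Zariski-local trivializations and descent in $\QCoh T$ is required), and finally confirming naturality in both $V$ and $\Omega$ so that the two constructed natural transformations are mutually inverse.
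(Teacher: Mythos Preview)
Your plan is correct and shares the paper's broad outline (verify well-definedness, then build natural transformations in both directions), but the harder direction is organized differently.

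The paper does not use the cobar complex. It first proves a projection formula $V \otimes \Omega_W \simeq \Omega_{\underline V \otimes W}$ for $V \in \Loc R$ (with trivial $G$-action) purely from $R$-linearity: both sides are $R$-linear functors $\Loc R \to \QCoh T$ agreeing at $V = R$, hence agree for all $V$ by taking a free presentation $R^{I_1} \to R^{I_0} \to V \to 0$. No Zariski gluing or descent is needed, so your concern there is misplaced. With this in hand, the paper applies $\Omega$ to the $G$-equivariant map $\eta_V \colon V \to R[G] \otimes \underline V$ (built from the coaction and the antipode) to obtain $\Omega_V \to \Omega_{R[G]} \otimes V$, and then shows this factors through invariants and is an isomorphism for left exact $\Omega$ by reducing to $V = R[G]$. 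The reduction rests on a single structural lemma: every $V \in \Loc^G R$ embeds $G$-equivariantly into a direct sum of copies of $R[G]$ (via the isomorphism $\Hom^G(\duale{R[G]}, M) \simeq M$), and iterating gives a presentation $0 \to V \to V_0 \to V_1$ in $\Loc^G R$ with $V_i$ sums of $R[G]$. Left exactness of the target then forces any natural transformation to be determined by, and reconstructible from, its value at $R[G]$, where one checks directly.

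Your cobar/untwisting route reaches the same conclusion; the trade-off is that you must track several $G$-actions at once (as you note), whereas the paper's reduction to sums of $R[G]$ concentrates the comodule-theoretic content in one lemma and makes naturality in $V$ automatic. One technical point applies equally to both approaches: since left exactness is defined here only for \emph{short} exact sequences in $\Loc^G R$, the three-term kernel diagram ($0 \to V \to V \otimes R[G] \to V \otimes R[G]^{\otimes 2}$ in your case, $0 \to V \to V_0 \to V_1$ in the paper's) must be broken into two short exact sequences whose middle term is again locally free. This works because the sequences in question are $R$-split (by counit-type retractions in your case, by local freeness of $V$ in the paper's), but you should make this explicit.
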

The group $G$ is linearly reductive over $R$ if and only if the
functors in $\QAdd^{G}T$ are left exact for all $R$-schemes $T$
(see \ref{prop:additive functors are equivariant sheaves glrg}).
In this case we get an equivalence of stacks $\QCoh_{R}^{G}\simeq\QAdd_{R}^{G}$
and similar equivalences exist when we consider the category of finitely
presented quasi-coherent sheaves or locally free sheaves of finite
rank instead of the whole category of quasi-coherent sheaves. Note
that the functor associated with the regular representation $\odi T[G]$
is the forgetful functor $V\longmapsto V\otimes\odi T$. In the particular
case where $G$ is a good linearly reductive group, the functor $\Omega\longmapsto\Omega_{R[G]}$
has a more explicit description: given $\Omega\in\QAdd^{G}T$ there
exists a natural, $G$-equivariant isomorphism
\[
\Omega_{R[G]}\simeq\bigoplus_{V\in I_{G}}\duale V\otimes\Omega_{V}
\]
This shows how the above construction generalizes the isomorphism
(\ref{eq:decomposition fppf locally regular representation introduction}).

Now that we have a way to associate with a $G$-equivariant quasi-coherent
sheaf $\shF$ a functor $\Omega^{\shF}$, the next question is what
additional data $\Omega^{\shF}$ must have to induce a structure of
equivariant sheaf of algebras on $\shF$. The answer is a symmetric,
monoidal structure. Given an $R$-scheme $T$, denote by $\QMon^{G}T$
the category of functors $\Omega\in\QAdd^{G}T$ with a symmetric monoidal
structure and by $\QAlg^{G}T$ the category of quasi-coherent sheaves
of algebras with an action of $G$. Denote also by $\QMon_{R}^{G}$
(resp. $\QAlg_{R}^{G}$) the stack (not in groupoids) whose fiber
over an $R$-scheme $T$ is $\QMon^{G}T$ (resp. $\QAlg^{G}T$).
\begin{thm*}
{[}\ref{thm:G equivariant ring are monoidal functors}{]} Given an
$R$-scheme $T$, the functors   \[   \begin{tikzpicture}[xscale=2,yscale=-0.2]     
\node (aA0_0) at (3, 0) {};     
\node (aA0_1) at (4, 0) {};     
\node (aA1_0) at (3, 1) {$\QAlg^G T \ \ \ \ \ \ \ \ \ \ \ \ \ \ \ \ $};     
\node (aA1_1) at (4, 1) {$\ \ \ \ \ \ \ \ \ \ \ \ \ \ \ \ \ \QMon^G T$};     
\node (aA2_0) at (3, 2) {};     
\node (aA2_1) at (4, 2) {};   

\path (aA0_0) edge [->]node [auto] {$\scriptstyle{\Omega^*}$} (aA0_1);     
\path (aA2_1) edge [->]node [auto] {$\scriptstyle{*_{R[G]}}$} (aA2_0);   
\end{tikzpicture}   \]  yield an equivalence between $\QAlg^{G}T$ and the full subcategory
of $\QMon^{G}T$ of left exact functors.
\end{thm*}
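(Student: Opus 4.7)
The plan is to promote the equivalence of the previous theorem (between left exact functors in $\QAdd^{G}T$ and objects of $\QCoh^{G}T$) from the linear level to the monoidal/algebra level. The key observation is that a symmetric monoidal refinement on one side should correspond precisely to an algebra refinement on the other, and both constructions should commute with the underlying equivalence already established.

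Given a $G$-equivariant algebra $\shA \in \QAlg^{G}T$ with multiplication $\mu\colon\shA\otimes\shA\arr\shA$ and unit $u\colon\odi T\arr\shA$, I equip $\Omega^{\shA}=(-\otimes\shA)^{G}$ with the monoidal structure
\[
\iota_{V,W}\colon (V\otimes\shA)^{G}\otimes(W\otimes\shA)^{G}\arr (V\otimes W\otimes\shA\otimes\shA)^{G}\xrightarrow{(\id\otimes\mu)^{G}}(V\otimes W\otimes\shA)^{G},
\]
where the first arrow is the natural inclusion and the second uses that $\mu$ is $G$-equivariant with respect to the diagonal action on $\shA\otimes\shA$. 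The unit element is $u\in\shA^{G}=\Omega^{\shA}_{R}$. Commutativity and associativity of $\mu$ translate directly into the symmetric and coherence axioms for $\iota$; unitality follows from the unit axiom of $\shA$. This defines a functor $\QAlg^{G}T\arr\QMon^{G}T$ landing in left exact functors by the previous theorem.

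Conversely, for $\Omega\in\QMon^{G}T$ left exact, I endow $\shF_{\Omega}=\Omega_{R[G]}$ with the multiplication
\[
\Omega_{R[G]}\otimes\Omega_{R[G]}\xrightarrow{\iota_{R[G],R[G]}}\Omega_{R[G]\otimes R[G]}\xrightarrow{\Omega(\mu_{R[G]})}\Omega_{R[G]},
\]
where $\mu_{R[G]}\colon R[G]\otimes R[G]\arr R[G]$ is the Hopf algebra multiplication, which is $G$-equivariant for the diagonal regular action on the source. The unit is $\Omega(u_{R[G]})$ applied to the monoidal unit, where $u_{R[G]}\colon R\arr R[G]$ is the unit of $R[G]$. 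The symmetry and associativity axioms for $\iota$, together with the corresponding properties of $(R[G],\mu_{R[G]},u_{R[G]})$, imply that $\shF_{\Omega}$ is a commutative associative unital algebra, and the $G$-equivariance of the structure maps of $R[G]$ ensures that the multiplication is $G$-equivariant.

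To conclude, one checks that the two constructions are quasi-inverse. Since the underlying natural isomorphisms $\shA\simeq\Omega^{\shA}_{R[G]}$ and $\Omega\simeq\Omega^{\Omega_{R[G]}}$ are supplied by the previous theorem, it remains to verify they are compatible with the monoidal/algebra structures introduced above. The hard part, and the main technical obstacle, is exactly this compatibility: one must unwind $\iota_{R[G],R[G]}$ and the recovery isomorphism for $\Omega^{\shA}$ and match them against $\mu_{R[G]}$ and $\mu_{\shA}$ respectively. This amounts to a diagram chase that combines naturality of $\iota$ in both variables, the fact that $R[G]$ represents the forgetful (regular representation) functor in $\QAdd^{G}T$, and the Hopf-algebra identities relating the coproduct and product on $R[G]$; once these are unpacked, the coherence axioms of the symmetric monoidal structure become exactly the axioms of a commutative algebra, closing the loop.
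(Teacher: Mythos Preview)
Your proposal is correct and follows essentially the same strategy as the paper. The paper organizes the technical verifications you allude to into a sequence of explicit lemmas: the $G$-equivariance of the multiplication on $\Omega_{R[G]}$ (which hinges on $\Delta_G$ being a ring homomorphism together with the compatibility of $\iota$ with trivial tensor factors), the correspondence between symmetry/associativity of $\iota$ and commutativity/associativity of $\alA_\Omega$, the monoidality of the natural transformation $\Omega\to\Omega^{\alA_\Omega}$, and the preservation of unities; these are precisely the diagram chases you describe informally at the end.
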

When $G$ is a linearly reductive group we obtain an equivalence of
stacks $\QAlg_{R}^{G}\simeq\QMon_{R}^{G}$ and similar equivalences
are defined if we consider finitely presented quasi-coherent sheaves
or locally free sheaves of finite rank instead of all the quasi-coherent
sheaves.

Note that the regular representation $\odi T[G]$ corresponds to the
forgetful functor $V\arr V\otimes\odi T$ with the obvious monoidal
structure. Denote by $\LMon_{R}^{G}$ the substack of $\QMon_{R}^{G}$
composed of functors with values in the category of locally free sheaves
of finite rank. The answer to the initial problem \ref{prob: data for general G covers introduction}
is the following.
\begin{thm*}
{[}\ref{thm:description of GCov with monoidal functors}{]} The association
\[
\GCov\arr\LMon_{R}^{G}\comma(X\arrdi fT)\longmapsto\Omega^{f_{*}\odi X}=(f_{*}\odi X\otimes-)^{G}
\]
induces an equivalence onto the substack in groupoids of $\LMon_{R}^{G}$
of functors that, as $R$-linear functors, are fppf locally isomorphic
to the forgetful functor. If $G$ is a good linearly reductive group
and $\Spec R$ is connected, this is the substack of functors $\Omega$
such that $\rk\Omega_{V}=\rk V$ for all $V\in\Loc^{G}R$ (or all
$V\in I_{G}$).
\end{thm*}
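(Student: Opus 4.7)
The strategy is to combine Theorem~\ref{thm:G equivariant ring are monoidal functors} (equivalence between $\QAlg^G T$ and left exact objects of $\QMon^G T$) with the defining property of a $G$-cover: $f\colon X\arr T$ is a $G$-cover iff $f_{*}\odi X$ is a $G$-equivariant $\odi T$-algebra whose underlying $G$-comodule is fppf locally isomorphic to $\odi T[G]$. Under the equivalence, this descends to a condition on the associated monoidal functor. First I would observe that $\Omega^{\odi T[G]}$ is isomorphic to the forgetful functor $V\mapsto V\otimes\odi T$: this is the standard computation of invariants of a tensor product with the regular representation, using that $V\otimes\odi T[G]$ is $G$-equivariantly isomorphic to the module $V\otimes\odi T[G]$ with $G$ acting only on the second factor. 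Hence if $\shF=f_{*}\odi X$ is fppf locally isomorphic to $\odi T[G]$ as $G$-comodule, $\Omega^{\shF}$ is fppf locally isomorphic, as an $R$-linear functor, to the forgetful functor; in particular it takes values in $\Loc T$ and so lies in $\LMon_R^G$. This shows the map is well defined into the prescribed substack.

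For the reverse direction, let $\Omega\in\LMon_R^G$ be fppf locally isomorphic, as an $R$-linear functor, to the forgetful functor. Since the forgetful functor is exact, fppf descent shows $\Omega$ is exact; hence Theorem~\ref{thm:G equivariant ring are monoidal functors} produces a $G$-equivariant $\odi T$-algebra $\shF_{\Omega}=\Omega_{R[G]}$, and the local trivialization yields $\shF_{\Omega}\simeq\odi T[G]$ as $G$-comodule fppf locally on $T$---exactly the $G$-cover condition, with finiteness, flatness and finite presentation inherited from $\odi T[G]$. The two constructions are quasi-inverse via Theorem~\ref{thm:G equivariant ring are monoidal functors}; since monoidal natural isomorphisms correspond bijectively to isomorphisms of $G$-equivariant algebras, and hence to isomorphisms of $G$-covers, the equivalence restricts to the substack in groupoids as claimed. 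The main delicate point I anticipate is the bookkeeping needed to match the fppf-local triviality of the module structure on one side with monoidal isomorphism on the other, but this is already encoded in Theorem~\ref{thm:G equivariant ring are monoidal functors}, both functors and natural transformations being sheaves for the fppf topology.

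For the last assertion, assume $(G,I_G)$ is a glrg and $\Spec R$ is connected. Every $V\in\Loc^G R$ decomposes canonically as $V\simeq\bigoplus_{W\in I_G}\Homsh^G(W,V)\otimes W$, where $\Homsh^G(W,V)$ is locally free of finite rank with trivial $G$-action by Proposition~\ref{prop:generating irreducible representations}. By $R$-linearity and additivity of $\Omega$, this gives $\Omega_V\simeq\bigoplus_{W\in I_G}\Homsh^G(W,V)\otimes\Omega_W$. Consequently, $\Omega$ is fppf locally isomorphic to the forgetful functor iff $\Omega_W$ is fppf locally isomorphic to $W\otimes\odi T$ for each $W\in I_G$, iff $\rk\Omega_W=\rk W$ for every $W\in I_G$ (two locally free sheaves of equal rank on $T$ are fppf locally isomorphic); and by the decomposition this is in turn equivalent to $\rk\Omega_V=\rk V$ for all $V\in\Loc^G R$.
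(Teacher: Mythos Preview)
Your proposal is correct and follows essentially the same approach as the paper: both rely on Theorem~\ref{thm:G equivariant ring are monoidal functors}, the identification of $\Omega^{\odi T[G]}$ with the forgetful functor (Remark~\ref{rem:regular representation and forgetful functor}), and, for the glrg case, the decomposition of any $V\in\Loc^G R$ in terms of $I_G$ (Propositions~\ref{prop:generating irreducible representations} and~\ref{prop:equivariant coherent sheaves are collection of coherent sheves for glrg}).

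One point the paper makes explicit that you gloss over: the association $\alA\mapsto\Omega^{\alA}$ is \emph{not} in general a map of stacks, since $(-)^G$ need not commute with base change (see the remark after Theorem~\ref{thm:additive functors are equivariant sheaves}). The paper checks that for $\alA\in\GCov(T)$ the functor $\Omega^{\alA}$ lands in $\Loc T$, and that for any base change $h\colon T'\to T$ the pullback $h^*\circ\Omega^{\alA}$ remains exact (short exact sequences in $\Loc T$ are locally split), whence $h^*\circ\Omega^{\alA}\simeq\Omega^{h^*\alA}$; this is what makes the map of stacks well defined. Your phrase ``both functors and natural transformations being sheaves for the fppf topology'' does not quite capture this subtlety, though the ingredients needed are present in your argument.
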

Notice that the last part of the above Theorem is no longer true even
for linearly reductive groups without a good representation theory.
It fails in the simplest possible case: $G=\Z/3\Z$, $R=\Q$ and $T=\Spec\overline{\Q}$.
In the above correspondence the stack $\Bi G$ of $G$-torsors is
sent to the stack of symmetric, strong monoidal, $R$-linear and exact
functors (see \ref{thm:torsors are isomorphism}). We retrieve in
this way the classical Tannaka's correspondence, which was also the
starting point of the discussion about $G$-covers for general groups
$G$ at the beginning of this section.

The above Theorem shows clearly how the constructions we have made
are just a generalization of the description of $G$-covers when $G$
is a diagonalizable group (see \ref{eq:decomposition for diagonalizable groups introduction}).
If $M=\Hom(G,\Gm)$, $G$ is a good linearly reductive group with
$I_{G}=(\Z_{m})_{m\in M}$ and a functor $\Omega\in\QAdd^{G}T$ for
which $\Omega_{V}$ is locally free of rank $\rk V$ for all $V\in I_{G}$
is just given by a collection of invertible sheaves $\shL_{m}=\Omega_{\Z_{m}}$,
while a monoidal structure on $\Omega$, that is a ring structure
over $\Omega_{\Z[G]}\simeq\oplus_{m\in M}\duale{\Z_{m}}\otimes\shL_{m}$,
is just given by maps 
\[
\Omega_{\Z_{m}}\otimes\Omega_{\Z_{n}}=\shL_{m}\otimes\shL_{n}\arr\shL_{m+n}=\Omega_{\Z_{m+n}}\simeq\Omega_{\Z_{m}\otimes\Z_{n}}
\]
satisfying certain conditions.

From now on $G$ will be a linearly reductive group scheme over a
base ring $R$ and, as always, we will assume that it is flat, finite
and of finite presentation. We want to discuss some applications of
the functorial point of view introduced above.

When $G$ is a diagonalizable group, $M=\Hom(G,\Gm)$ and the sequence
$(\shL_{m},\psi_{m,n})_{m,n\in M}$ defines a $G$-cover (see \ref{eq:decomposition for diagonalizable groups introduction}),
a classical result is that this cover is a $G$-torsor if and only
the maps $\psi_{m,-m}\colon\shL_{m}\otimes\shL_{-m}\arr\shL_{0}=\odi{}$
are isomorphisms for all $m\in M$ (see \cite[Exposé VIII, Proposition 4.1 and 4.6]{Grothendieck1970}).
In this thesis we generalize this property for more general groups.
A\emph{ }linearly reductive group $G$ over an algebraically closed
field is \emph{solvable }(\emph{super solvable}) if it admits a filtration
by closed subgroups $0=H_{0}\triangleleft H_{1}\triangleleft\dots\triangleleft H_{n}=G$
such that, for all $i$, $H_{i+1}/H_{i}\simeq\mu_{p}$ for some prime
$p$ (and $H_{i}\triangleleft G$). A linearly reductive group over
a ring $R$ is solvable (super solvable) if it is so over any geometric
point of $\Spec R$. Denote also by $\LAlg_{R}^{G}$ the full substack
of $\QAlg_{R}^{G}$ of algebras that are locally free of finite rank,
which is isomorphic to $\LMon_{R}^{G}$ via the functor $\Omega^{*}$.
The result we prove is the following:
\begin{thm*}
{[}\ref{thm:torsors when omega surjective}{]} Let $G$ be a super
solvable good linearly reductive group over a ring $R$ and let $\alA\in\LAlg_{R}^{G}T$,
for an $R$-scheme $T$. Then $\alA\in\Bi G$ if and only if $\Omega_{R}^{\alA}(=\alA^{G})\simeq\odi T$
and for all $V\in I_{G}$ the maps
\begin{equation}
\Omega_{V}^{\alA}\otimes\Omega_{\duale V}^{\alA}\arr\Omega_{V\otimes\duale V}^{\alA}\arr\Omega_{R}^{\alA}\simeq\odi T\label{eq:omega torsors introduction}
\end{equation}
are surjective.
\end{thm*}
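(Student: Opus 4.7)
The \emph{only if} direction is direct: if $\alA$ is a $G$-torsor, it is fppf-locally isomorphic to $\odi T[G]$, whose associated functor is the forgetful one $V\mapsto V\otimes\odi T$ with its tautological strong monoidal structure. Under this identification the map
\[
\Omega_V^\alA\otimes\Omega_{\duale V}^\alA\arr\Omega_{V\otimes\duale V}^\alA\arr\Omega_R^\alA
\]
becomes the evaluation $V\otimes\duale V\arr R$, which is surjective for any nonzero $V$; surjectivity then descends along the fppf cover.

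For the converse, being a $G$-torsor is fppf-local on $T$, so I would reduce to $T=\Spec R'$ strictly Henselian local with all groups in sight good linearly reductive over $R'$. I would then induct on the length $n$ of a super-solvable filtration $0=H_0\triangleleft H_1\triangleleft\cdots\triangleleft H_n=G$. For the base case $n=1$, $G=\mu_p$, one has $\alA=\bigoplus_{m\in\Z/p\Z}\shL_m$ with $\shL_m=\Omega_{\Z_m}^\alA$ and $\shL_0=\odi T$. Over the local ring the surjectivity of $\shL_1\otimes\shL_{-1}\arr\odi T$ yields $s\in\shL_1$, $t\in\shL_{-1}$ with $st=1$, so $s$ is a unit in $\alA$. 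For any $u\in\shL_1$ the element $u-(ut)s$ is killed by the unit $t$, hence vanishes, giving $\shL_1=\odi T\cdot s$ and iteratively $\shL_m=\odi T\cdot s^m$. With $c=s^p\in\odi T^\times$ one obtains $\alA\simeq\odi T[y]/(y^p-c)$, a $\mu_p$-torsor.

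For the inductive step I would take $H=H_{n-1}$, so $G/H\simeq\mu_p$, and set $\alB=\alA^H$. Characters $\chi\in I_{G/H}$ inflate to characters in $I_G$ trivial on $H$, and for them
\[
\Omega_\chi^\alB=(\chi\otimes\alA^H)^{G/H}=(\chi\otimes\alA)^G=\Omega_\chi^\alA,
\]
so the pairings for $\alB$ as a $\mu_p$-algebra coincide with the hypothesized ones for $\alA$; since $\alB^{G/H}=\alA^G\simeq\odi T$, the base case gives that $\alB$ is a $\mu_p$-torsor over $T$. It then remains to verify the hypothesis for $\alA$ viewed as an $H$-equivariant algebra over $\alB$: applying the inductive hypothesis produces an $H$-torsor $\Spec\alA\arr\Spec\alB$, and combining with the $(G/H)$-torsor $\Spec\alB\arr T$ yields the desired $G$-torsor structure on $\Spec\alA\arr T$.

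The \textbf{main obstacle} lies precisely in this last verification: proving, for every $V\in I_H$, that
\[
(V\otimes\alA)^H\otimes_\alB(\duale V\otimes\alA)^H\arr\alB
\]
is surjective, starting only from surjectivity over $\odi T$ for representations in $I_G$. The intended input is a Clifford-type analysis — each $V\in I_H$ embeds in $W|_H$ for some $W\in I_G$, and $(W\otimes\alA)^G$ equals the $G/H$-invariants of $(W\otimes\alA)^H$ — together with descent of the pairing along the $\mu_p$-torsor $\Spec\alB\arr T$ established in the previous step. Executing this transfer cleanly, while keeping track of how the multiplicities of the $V_i$ in $W|_H$ interact with the $G/H$-action on $(W\otimes\alA)^H$, is the technical heart of the argument; everything else is either formal Tannakian bookkeeping or reduction to the diagonalizable case already understood in Chapter 3.
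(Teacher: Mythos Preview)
Your approach is genuinely different from the paper's, and the obstacle you flag is a real gap, not just missing bookkeeping. The paper does \emph{not} induct along the super-solvable filtration. Instead it reduces at once to $T=\Spec k$ with $k$ algebraically closed, writes $A=\ind_{H_p}^G A_p$ where $A_p$ is the \emph{local} ring at a closed point and $H_p$ is the stabilizer of its connected component, and then proves two things: first that $\underline{H_p}=1$ (so $H_p=G_1$), using the group-theoretic lemma that for a constant super-solvable group, if every irreducible has nonzero $H$-fixed vectors then $H=0$; second that $A_p$ is a $G_1=\Di M$-torsor, using that for any proper subgroup $Q<M$ the $\underline G$-orbits in $M$ do not all meet $Q$ (this is where the $\underline G$-stable filtration of $M$ with cyclic quotients, i.e.\ super-solvability, enters). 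The locality of $A_p$ is essential: it forces $\omega_\Delta^{A_p}=0$ for all nontrivial $\Delta\in I_{\underline H_p}$, which is how one pins down $\Delta=R$ and hence $\underline{H_p}=1$.

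Your Clifford transfer, as sketched, does not go through. When $W|_H$ splits as $\bigoplus_i V^{g^i}$, the $G$-level surjectivity of $\omega_W^{\alA/\odi T}$ only yields $1\in\sum_i\Imm(\omega_{V^{g^i}}^{\alA/\alB})$, hence that this \emph{sum} of ideals is all of $\alB$; it does not give each summand equal to $\alB$. Over the algebraically closed field reduction, $\alB\simeq k^p$ with $G/H$ permuting the factors, and the condition becomes: for each $V\in I_H$, \emph{some} conjugate $V^{g^j}$ has $\omega_{V^{g^j}}^{A_0}$ surjective (where $A_0$ is one factor of $\alA$). This is exactly the content of the lemma on $\omega$ for inductions, and the counterexample for merely solvable groups shows it does not upgrade to ``all $V$'' without further input. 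Your inductive scheme uses only $H_{n-1}\triangleleft G$ at the transfer step; the deeper normalities $H_i\triangleleft G$ are invoked only to say $H$ is again super-solvable, so the transfer itself cannot see the difference between solvable and super-solvable.

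To salvage the inductive approach you would need, at minimum, to first reduce to a situation where the relevant algebra is local (so that ``sum of ideals equals the ring'' forces one of them to equal the ring, and then use conjugacy). That is essentially what the paper's route via $A=\ind_{H_p}^G A_p$ accomplishes, and once you are there the two special lemmas about super-solvable groups do the real work rather than any induction on filtration length.
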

Notice that the above Theorem is no longer true if we consider solvable
groups, even in the constant case (see \ref{rem:counterexample for omega surjective implies torsors}).
The above criterion will be applied in the study of $(\mu_{3}\rtimes\Z/2\Z)$-covers. 

When $G$ is diagonalizable, we have seen that $\GCov$ is almost
never irreducible. This bad behaviour continues in the non abelian
case:
\begin{thm*}
{[}\ref{thm:GCov reducible when G not abelian}{]} If $G$ is a finite,
non abelian and linearly reductive group then $\GCov$ is reducible.
\end{thm*}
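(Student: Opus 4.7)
The plan is to reduce to a geometric fiber and exhibit an explicit $G$-cover not lying in the closure $\stZ_{G}$ of $\Bi G$.

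First, I would base change to an algebraically closed field $k$ along a geometric point $\Spec k \to \Spec R$. Since both $\Bi G \hookrightarrow \GCov$ and the schematic closure $\stZ_{G}$ commute with base change, it suffices to prove $\GCov_{k} \neq \stZ_{G,k}$. Over $k$ the group $G$ remains non-abelian and linearly reductive, hence admits an irreducible representation $W$ with $d := \dim W \geq 2$.

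The candidate exotic cover is the square-zero extension of $k$ by the augmentation ideal. Let $V := \Ker(k[G]\to k)$, and equip $A := k\oplus V$ with the commutative multiplication defined by $V\cdot V=0$ (with $1\in k$ as identity). This is a commutative $k$-algebra on which $G$ acts by algebra automorphisms (the relation $V\cdot V=0$ is $G$-stable), and $A\simeq k[G]$ as $G$-representations, so $\Spec A\to \Spec k$ is an object of $\GCov(k)$.

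To show $\Spec A\notin \stZ_{G,k}$, I would exploit the non-toric nature of its automorphism group. Any $G$-equivariant algebra automorphism of $A$ fixes $1$ and restricts to an arbitrary $G$-equivariant linear automorphism of $V$ (since $V\cdot V=0$ imposes no further compatibility). Decomposing $V\simeq \bigoplus_i V_i^{\oplus d_i}$ into isotypic components for the non-trivial irreducibles $V_i$ of dimension $d_i$, one obtains
\[
\Aut_{\GCov(k)}(\Spec A)\simeq \prod_i \GL_{d_i}(k),
\]
which for non-abelian $G$ contains a non-toric factor $\GL_{d_i}$ with $d_i\geq 2$. Via the monoidal functor description from Chapter~4, this translates into the assertion that the monoidal functor $\Omega^{A}$, whose structural multiplications $\Omega^{A}_{U}\otimes\Omega^{A}_{U^{\vee}}\to\Omega^{A}_{R}$ all vanish for nontrivial irreducible $U$, cannot be a specialization of the strong monoidal functors corresponding to torsors.

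The main obstacle is precisely this last step. In the abelian case $\Aut_G(V)=\Gm^{|G|-1}$ is a torus that fits inside the toric description $\stZ_{G}=[\Spec\Z[K_{+}]/\shT]$, and indeed the analogous square-zero cover does lie in $\stZ_{G}$. The non-abelian case must be distinguished by a rigorous incompatibility argument, most likely via a tangent-space dimension computation at $\Spec A$ showing that the nontrivial multiplicities $d_i\geq 2$ of irreducibles in the regular representation produce extra first-order deformations in $\GCov$ that cannot be absorbed by any specialization from $\Bi G$.
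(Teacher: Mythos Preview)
Your candidate cover is actually in $\stZ_{G}$, so the proposal fails at its central step. Take the one-parameter subgroup $t\mapsto (1,t\cdot\id_{\Omega_{V}})_{V\in I_{G}\setminus\{R\}}$ inside $\underline{\GL}_{f}=\prod_{V\in I_{G}}\GL_{f_{V}}$ and act on the monoidal structure of the trivial torsor $k[G]$. For nontrivial irreducibles $V,W$ the map $\iota_{V,W}$ gets rescaled by $t^{-1}$ or $t^{-2}$, while $\iota_{R,W}$ is fixed; hence as $t\to 0$ along $\A^{1}_{k}$ one obtains a flat family in $\GCov$ whose generic fiber is a torsor and whose special fiber is precisely your square-zero algebra $A=k\oplus V$. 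Thus $A\in\stZ_{G}(k)$. This is visible explicitly for $G=S_{3}$ in the paper: the zero cover lies in the intersection $\stZ_{G}\cap\stZ$ of the two irreducible components of $\RCov{S_{3}}$, and in the diagonalizable case it is the vertex of the cone $\Spec\Z[K_{+}]$. Your observation that $\Aut^{G}(A)$ contains a non-toric $\GL_{d}$ is correct, and a tangent-space count at this point would indeed reveal that it is singular on $\stZ_{G}$; but a singular point of the main component is still a point of the main component, and this argument cannot yield reducibility.

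The paper's proof proceeds along an entirely different line. One first reduces (using that a minimal non-abelian group is solvable) to the situation of a normal abelian subgroup $H\lhd G$ with cyclic quotient $G/H\simeq\Z/p\Z$. The conjugation action of $\Z/p\Z$ on $\Hom(H,\Gm)$ has a non-fixed orbit because $G$ is non-abelian, and from this one constructs a rank function $f\colon I_{H}\to\N$ satisfying $f_{\R_{H}V}=\rk V$ for all $V\in I_{G}$ but $f_{\Delta}\neq\rk\Delta$ for some $\Delta\in I_{H}$. One then takes the square-zero $H$-algebra $B=k\oplus F$ with $\rk^{B}=f$; by the rank discrepancy $B\notin\HCov(k)$, yet the induced algebra $A=\ind_{H}^{G}B$ lies in $\GCov(k)$ by the condition on $f$. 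The key Proposition~\ref{prop:ind B in Z if B in Z} shows $\ind_{H}^{G}B\in\stZ_{G}$ if and only if $B\in\stZ_{H}\subseteq\HCov$, so $A\notin\stZ_{G}$. The essential idea you are missing is that the witness to reducibility is not a highly degenerate $G$-cover but an \emph{induction} of an $H$-algebra whose rank function is wrong for $H$ yet becomes right after inducing to $G$.
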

The methods used in the proof of the above Theorem neither reduce,
nor  are applicable to the diagonalizable case and they involve the
study of more general $G$-equivariant algebras than the ones inducing
$G$-covers. Moreover they allow us to construct $G$-covers outside
the main irreducible component $\stZ_{G}$, while the same problem
is more difficult in the diagonalizable case.

Another interesting question about the theory of $G$-covers, and
one of the most important from the point of view of classical algebraic
geometry, is the question about regularity of $G$-covers and, more
generally, about the preservation of geometrical properties: given
a regular (resp. normal, regular in codimension $1$) integral scheme
$Y$ and a cover $f\colon X\arr Y$, understand when $X$ is regular
(resp. normal, regular in codimension $1$). When this happens we
will call this cover regular (resp. normal, regular in codimension
$1$). Notice that a cover of a normal scheme that is regular in codimension
$1$ is normal, because a cover has Cohen-Macaulay fibers. The problem
of detecting the regularity of a cover arises together with the problem
of constructing such a cover. We have to admit that this last problem
seems very difficult to handle in this generality, but one can hope
to be able to find at least families of regular covers. Anyway in
this thesis we will concentrate only on the first problem and only
on the case of regularity in codimension $1$. We generalize what
happens in the diagonalizable case and the leading idea is the following.
If $\alA$ is a locally free algebra of finite rank over a scheme
$Y$, denote by $\hat{\tr}_{\alA}\colon\alA\arr\duale{\alA}$ the
map $x\longmapsto\tr_{\alA}(x\cdot-)$, where $\tr_{\alA}$ is the
trace map $\alA\arr\odi Y$. A classical result is that the algebra
$\alA$ is étale over $Y$ if and only if $\hat{\tr}_{\alA}$ is an
isomorphism (see \cite[Proposition 4.10]{SGA1}). The idea is that,
the less degenerate $\hat{\tr}_{\alA}$ is, the more regular the algebra
$\alA$ should be. If $f\colon\Spec\alA\arr Y$ is the associated
cover, denote by $s_{f}\in(\det\alA)^{-2}$ the determinant of $\hat{\tr}_{\alA}$,
also called the discriminant section. This section is important because
its zero locus is the complement of the locus where $f$ is étale.
If $\alA\in\GCov Y$ and $G$ has a good representation theory, given
$V\in I_{G}$ the map (\ref{eq:omega torsors introduction}) induces
a map $\Omega_{V}^{\alA}\arr\duale{(\Omega_{\duale V}^{\alA})}$ and
we will denote by $s_{f,V}\in\det(\Omega_{V}^{\alA})^{-1}\otimes\det(\Omega_{\duale V}^{\alA})^{-1}$
the section associated with its determinant. When $G$ is an étale,
good linearly reductive group the relation between the sections just
introduced is given by the following isomorphism (see \ref{prop:decomposition of discriminant})

\[
(\det\alA)^{-2}\simeq\bigotimes_{V\in I_{G}}(\det(\Omega_{V}^{f})^{-1}\otimes\det(\Omega_{\duale V}^{f})^{-1})^{\rk V}\text{ such that }s_{f}\longmapsto\bigotimes_{V\in I_{G}}s_{f,V}^{\otimes\rk V}
\]
If we denote by $Y^{(1)}$ the set of codimension $1$ points of $Y$
and by $v_{q}$ the valuation for $q\in Y^{(1)}$, the result we will
prove is the following:
\begin{thm*}
{[}\ref{thm:equivalent conditions for regularity for glrg, global version}{]}
Let $G$ be a finite and étale linearly reductive group over a ring
$R$. Let also $Y$ be an integral, noetherian and regular in codimension
$1$ (resp. normal) $R$-scheme and $f\colon X\arr Y$ be a cover
with a generically faithful action (see \ref{def: generically faithful})
of $G$ on $X$ such that $f$ is $G$-invariant and $X/G=Y$. Then
the following are equivalent:
\begin{enumerate}
\item $X$ is regular in codimension $1$ (resp. normal);
\item the geometric stabilizers of the codimension $1$ points of $X$ are
solvable and for all $q\in Y^{(1)}$ we have $v_{q}(s_{f})<\rk G$
($=\rk f$).
\end{enumerate}
In this case $f$ is generically a $G$-torsor, $f\in\stZ_{G}(Y)$
and the stabilizers of the codimension $1$ points of $X$ are cyclic.
Moreover, if $G$ has a good representation theory, the above conditions
are also equivalent to
\begin{enumerate}
\item [3)]the geometric stabilizers of the codimension $1$ points of $X$
are solvable, $f\in\GCov$ and for all $q\in Y^{(1)}$ and $V\in I_{G}$
we have $v_{q}(s_{f,V})\leq\rk V$.
\end{enumerate}
\end{thm*}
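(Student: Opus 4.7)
The plan is to reduce the statement to a codimension one question on $Y$ and then to exploit the explicit structure of tamely ramified covers of a strictly henselian discrete valuation ring. First I would observe that every condition in the theorem -- regularity in codimension one, solvability of the stabilizers at codimension one points of $X$, membership in $\GCov$, and the vanishing orders of $s_f$ and $s_{f,V}$ at codimension one points of $Y$ -- is of local nature at points $q\in Y^{(1)}$, so I may replace $Y$ by $\Spec\shO_{Y,q}$, a DVR. Next I would pass to the strict henselization $\shO_{Y,q}^{sh}$: the base change is faithfully flat, the full data of the theorem descend along it, and both regularity in codimension one and normality can be checked after such a base change. Over the resulting strictly henselian DVR $R$, the étale group scheme $G$ becomes constant, and by the previous result on étale linearly reductive groups over strictly henselian local rings, $G$ automatically has a good representation theory. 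In particular it suffices to prove $(1)\Leftrightarrow(3)$, since $(3)\Rightarrow(2)$ follows by inserting the product decomposition $s_f=\bigotimes_V s_{f,V}^{\otimes \rk V}$ supplied by the discriminant decomposition quoted before the theorem.

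In this local setup, write $X=\Spec A$ and factor $A=A_1\times\cdots\times A_n$ as a product of finite local $R$-algebras, with $G$ permuting the factors. The hypotheses $X/G=Y$ and generic faithfulness force this permutation to be transitive and the generic fiber to be a $G$-torsor, so $\sum_i\rk_R A_i=\rk G$ and the stabilizer $H$ of $A_1$ satisfies $|H|=\rk_R A_1$. Being an étale subgroup of the étale linearly reductive group $G$, $H$ is itself linearly reductive, so $|H|$ is invertible in the residue field $k$ and the $H$-action on $A_1$ is tame. I would then prove the key local equivalence: $A_1$ is a DVR if and only if $H$ is cyclic and acts faithfully via a character on $\mathfrak{m}_{A_1}/\mathfrak{m}_{A_1}^{2}$. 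One direction uses that every linearly reductive faithful action on a line factors through $\Gm$; the other direction uses Abhyankar's explicit regular model $t^{|H|}=\pi$. Combined with the fact that a solvable linearly reductive étale group of order prime to the residue characteristic which admits a faithful one-dimensional character is cyclic, this yields the equivalence between regularity of $A_1$ and the solvability of the geometric stabilizer $H$ at the corresponding point of $X$.

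The discriminant bounds then follow from explicit computation on the tame cyclic extension $A_1/R$. In Abhyankar's model the $H$-representation on $A_1$ is the regular one, and for each $V\in I_H$ the restricted multiplication $\Omega^{A_1}_V\otimes\Omega^{A_1}_{\duale V}\to \shO_R$ has valuation exactly $\rk V$ or $\rk V-1$, depending on whether $V$ occurs in the ramification; an elementary summation gives $v_q(s_{f,V})\le \rk V$ and $v_q(s_f)=\sum_V\rk V\cdot v_q(s_{f,V})<\sum_V(\rk V)^2=\rk G$. Conversely, if some $A_i$ fails to be a DVR -- either because of nilpotents or because the $H$-action is not by a faithful character -- then the different at that factor has valuation at least $e_i$, and summing over the factors yields $v_q(s_f)\ge \rk G$. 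This proves $(1)\Leftrightarrow(2)\Leftrightarrow(3)$ and also delivers cyclicity of the stabilizers of codimension one points of $X$ as well as $f\in\stZ_G(Y)$, the latter because Abhyankar's cyclic model is itself the standard deformation of a $G$-torsor through $\stZ_G$ near $q$.

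The main obstacle is that $f$ is not assumed a priori to lie in $\GCov$ and $A$ need not even be reduced; one must therefore deduce from the solvability of stabilizers and the discriminant bound alone that $A$ is reduced at codimension one, that each local factor is of the standard tame cyclic form, and that the resulting $G$-algebra structure on $A$ is fppf-locally isomorphic to the regular representation. The technical input here is the rigidity of faithful one-dimensional representations of linearly reductive groups combined with the functorial description of equivariant algebras in terms of symmetric monoidal functors: once the pointwise structure at codimension one is pinned down, this functorial description forces $\Omega^A$ to be fppf locally isomorphic to the forgetful functor in a neighbourhood of $q$, giving $f\in\GCov$ for free.
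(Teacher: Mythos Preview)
Your reduction to a strictly henselian DVR, the decomposition $A=\prod_i A_i$, and the identification of each local factor with an $H$-algebra for the stabilizer $H$ all match the paper's setup, and the direction $(1)\Rightarrow(2),(3)$ is essentially as you describe. The gap is in the converse. You assert that ``if some $A_i$ fails to be a DVR then the different at that factor has valuation at least $e_i$'', but you give no argument, and this is precisely the hard implication. Your key local equivalence (regularity of $A_1$ $\Leftrightarrow$ $H$ cyclic and the cotangent lies in a single faithful character) is correct, but it does not by itself link the cotangent condition to the discriminant bound $e^{A_1}<|H|$; in particular, when $H$ is solvable but not cyclic, your equivalence only says $A_1$ is not a DVR, not that $e^{A_1}\geq|H|$. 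Your appeal to Abhyankar for the backward direction of the equivalence is also circular, since Abhyankar's lemma presupposes normality; the actual reason that direction holds is that the single-character hypothesis forces the cotangent to be one-dimensional, via the $\mu_n$-grading.

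The paper closes the gap by an induction on $|H|$ using solvability (Lemma~\ref{lem:regularity for local rings with action of solvable groups}): pick a proper normal subgroup $N\triangleleft H$ and set $B=A_1^{N}$. One checks that the trace pairing on $A_1$ restricts, up to a unit, to the trace pairing on $B$, so that $\shQ^{B/R}$ embeds as a summand of $\shQ^{A_1/R}$; hence $B$ inherits the condition ``$\shQ$ defined over the closed point'' and, by induction on $|H/N|$, is a DVR. One then transfers the bound from $A_1/R$ to $A_1/B$ using the factorization $\tr_{A_1/R}=\tr_{B/R}\circ\tr_{A_1/B}$ together with $\tr_{B/R}(m_B)\subseteq m_R$, and applies induction on $|N|$. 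The base case $|H|=p$ prime is a direct combinatorial argument with the structure constants $\psi_{i,j}$. Without this inductive machinery (or a substitute), the implication you need is exactly the missing piece of the non-equivariant conjecture the paper records after the proof.
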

I am strongly convinced that the above statement is still true without
the hypothesis of solvability on the geometric stabilizers. Actually
I am also convinced that, with some minor modifications, the first
part of the statement continues to be true without the existence of
a generically faithful action of a group. I think that the statement
which should be true is:
\begin{conjecture*}
Let $R$ be a discrete valuation ring with residue field $k$ and
$A$ be a finite and flat $R$-algebra. Then 
\[
v_{R}(\det\hat{\tr}_{A})\geq\rk A-|\Spec A\otimes_{R}\overline{k}|
\]
and equality holds if and only if $A$ is regular, generically étale
with separable residue fields and the localizations of $A\otimes_{R}\overline{k}$
have ranks prime to the characteristic of $k$.
\end{conjecture*}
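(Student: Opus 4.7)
The plan is to reduce, via a faithfully flat base change and a product decomposition, to the case where $A$ is local over a strictly henselian DVR with residue field $k$ equal to that of $R$, and then to attack the statement by combining a short matrix computation with the K\"ahler--differential description of the discriminant. First, replace $R$ by the completion of its strict henselization: all three quantities $v_R(\det\hat\tr_A)$, $\rk_R A$ and $|\Spec A\otimes_R\overline k|$ are preserved, and the properties appearing in the equality clause descend. Then $A$ decomposes as $A=\prod_{i=1}^r A_i$ into local finite flat $R$-algebras with $r=|\Spec A\otimes_R\overline k|$, and both sides of the inequality are additive: $v_R(\det\hat\tr_A)=\sum_i v_R(\det\hat\tr_{A_i})$ and $n-r=\sum_i(n_i-1)$. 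Hence it suffices to treat a single $A$ local with residue field $k$, and to show $v_R(\det\hat\tr_A)\geq n-1$ with equality iff $A$ is a totally, tamely ramified DVR extension of $R$.

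If $A$ is not reduced or generically non-\'etale the trace form is degenerate and $\det\hat\tr_A=0$, so the inequality is trivial; assume $A$ reduced and generically \'etale. Choose an $R$-basis $1=e_1,e_2,\dots,e_n$ with $e_i\in\mathfrak m_A$ for $i\geq 2$. Elements of $\mathfrak m_B$ (where $B=A\otimes_R k$) are nilpotent, so their traces vanish, giving $\tr_A(e_ie_j)\in\pi R$ whenever $(i,j)\neq(1,1)$, while $\tr_A(1)=n$. The trace matrix thus has the block form
\[
M=\begin{pmatrix} n & \pi u^T\\ \pi u & \pi W\end{pmatrix}
\]
with $u\in R^{n-1}$ and symmetric $W\in R^{(n-1)\times(n-1)}$. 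When $n$ is a unit in $R$, Schur--complement expansion gives $\det M=n\pi^{n-1}\det(W-\pi uu^T/n)$, so $v_R(\det M)\geq n-1$ with equality iff $\overline W\in M_{n-1}(k)$ is invertible; when $\car k\mid n$ every entry of $M$ lies in $\pi R$ and the stronger bound $v_R(\det M)\geq n$ holds. Hence $v_R(\det\hat\tr_A)\geq n-1$, and equality forces $\car k\nmid n$ together with non-degeneracy of $\overline W$.

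To upgrade the equality condition to the stated structural conclusion, I would use the identification $v_R(\det\hat\tr_A)=\mathrm{length}_R\Omega_{A/R}$ (from the $0$-th Fitting ideal, in the local residue-field-$k$ situation, valid for generically \'etale $A$) together with the Nakayama-type bound $\mathrm{length}_R\Omega_{A/R}\geq\dim_k\Omega_{B/k}$, which is saturated iff $\pi$ annihilates $\Omega_{A/R}$. The equality then reduces to a purely Artinian lemma: for $B$ a local Artin $k$-algebra of length $n$ with residue field $k$, one has $\dim_k\Omega_{B/k}\geq n-1$, with equality iff $B\cong k[x]/(x^n)$ and $\car k\nmid n$. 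Such a lemma can be approached by induction on $n$ using the conormal sequence of $B\twoheadrightarrow B/(z)$ for $z\in\mathrm{soc}(B)\setminus\{0\}$; the step is easy when one can choose $z\notin\mathfrak m_B^2$, and the equality case pins down precisely the monogenic Artin rings. Once $B\cong k[x]/(x^n)$, Nakayama lifts a generator to $\tilde x\in A$ and gives $A=R[\tilde x]/(f(\tilde x))$ with $f\equiv x^n\pmod\pi$; the saturation $\pi\cdot\Omega_{A/R}=0$ then forces $\pi\in(f'(\tilde x))$, which pins $f$ down to being Eisenstein and makes $A$ a tamely ramified DVR. The converse (that a tame Eisenstein $A$ has $v_R(\det\hat\tr_A)=n-1$) is a direct matrix computation on $R[x]/(x^n-\pi u)$.

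The main obstacle is the equality case of the Artinian lemma, and specifically handling $B$ with $\mathrm{soc}(B)\subseteq\mathfrak m_B^2$ (which happens already for $k[x]/(x^n)$ when $n\geq 3$). In that sub-case the naive conormal induction does not close and one must argue differently: either via the Hilbert function of $B$, to show that embedding dimension $\geq 2$ actually forces the strict inequality $\dim_k\Omega_{B/k}\geq n$, or by a direct analysis of the auxiliary form $\overline W$ on $\mathfrak m_B$, exploiting the multiplicative structure of $B$ to produce isotropic subspaces whenever $\mathfrak m_B$ is non-principal. This is the heart of the matter and the place where a careful, self-contained argument is required.
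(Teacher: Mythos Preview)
This statement is recorded in the paper as a \emph{conjecture}, not a theorem: the author explicitly says he can prove everything except the implication ``equality $\Rightarrow$ regularity''. Your block-matrix argument for the inequality, together with the reduction to a strictly henselian base and a local $A$ with residue field $k$, is correct and matches what the paper can do (compare the paper's treatment of $\shQ^A$ in the equivariant setting). The direction ``regular and tame $\Rightarrow$ equality'' is likewise routine.

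The genuine gap lies in your attack on ``equality $\Rightarrow$ regularity'', and it is more serious than the unfinished Artinian lemma. The identification
\[
v_R(\det\hat{\tr}_A)\;=\;\textup{length}_R\,\Omega_{A/R}
\]
that you invoke is \emph{false} without a hypothesis such as $A/R$ being a local complete intersection. Take $R=k[[t]]$ with $\car k\neq 2,3$ and let $A\subset R^3$ be the subalgebra of triples congruent modulo $t$, with $R$-basis $1=(1,1,1)$, $x=(t,0,0)$, $y=(0,t,0)$; equivalently $A=R[x,y]/(x^2-tx,\,y^2-ty,\,xy)$. Then $A$ is local with residue field $k$, generically \'etale, and $A\otimes_R k\cong k[x,y]/(x^2,xy,y^2)$ is the non-Gorenstein Artin local ring of length $3$. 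The trace matrix in the basis $1,x,y$ is
\[
\begin{pmatrix} 3 & t & t \\ t & t^2 & 0 \\ t & 0 & t^2 \end{pmatrix},
\]
with determinant $t^4$, so $v_R(\det\hat{\tr}_A)=4$; but a direct computation gives $\Omega_{A/R}\cong R/(t^2)\oplus R/(t^2)\oplus R/(t)$, of length $5$. This is the usual discrepancy between the Dedekind and K\"ahler differents outside the lci range. Hence your chain $v_R(\det\hat{\tr}_A)=\textup{length}_R\Omega_{A/R}\geq\dim_k\Omega_{B/k}\geq n-1$ breaks at the very first link precisely in the non-lci situations one must rule out.

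Your alternative suggestion---to show directly that nondegeneracy of the reduced form $\overline W$ on $\mathfrak m_A\otimes k$ forces $\mathfrak m_B$ to be principal by exhibiting isotropic subspaces otherwise---is the more honest formulation, and is equivalent to the paper's condition that $\shQ^A$ be supported on the closed point. But you do not carry it out, and neither does the paper: this is exactly where the conjecture remains open.
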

Except for the implication {}``equality $\then$ regularity'', I
am able to prove the rest of the statement. When we have a generically
faithful action of, say, a solvable group $G$ on $A$ one can argue
by induction on $\rk A=\rk G$ considering the invariant algebra for
a normal subgroup of $G$. The base case in this induction is $G=\mu_{p}$,
for some prime $p$, where the result can be easily deduced from the
theory developed for diagonalizable groups.

\section{$(\mu_{3}\rtimes\Z/2\Z)$-covers and $S_{3}$-covers.}

In the last Chapter of this thesis we will study $G$-covers for the
non abelian group scheme $G=\mu_{3}\rtimes\Z/2\Z$ and $S_{3}$-covers.
In oder to simplify the exposition we work over the ring $\stR=\Z[1/6]$.
Denote by $\sigma\in\Z/2\Z(\stR)$ the generator and consider it also
as a section of $G$ and, after choosing a transposition, as a section
of $S_{3}$. The groups $G$ and $S_{3}$ are linearly reductive over
$\stR$ and they also have a good representation theory. We can choose
$I_{G}=\{\stR,A,V\}$, where $A=\stR$ with the action induced by
the non trivial character of $\Z/2\Z$ and $V=\ind_{\mu_{3}}^{G}V_{1}$,
where $V_{1}$ is the $\mu_{3}$ representation associated with the
character $1\in\Z/3\Z$. In the second chapter we prove that, if $H$
and $H'$ are étale locally isomorphic group schemes, then $ $we
have an isomorphism $\Bi(H\rtimes\Autsh H)\simeq\Bi(H'\rtimes\Autsh H')$
(of stacks classifying fppf torsors) (see \ref{prop:bitorsors and semidirect products}).
In particular, considering $H=\mu_{3}$ and $H'=\Z/3\Z$ over $\stR$,
we obtain an isomorphism $\Bi G\simeq\Bi S_{3}$. By the general theory
of bitorsors described in the second chapter, we also obtain an isomorphism
$\GCov\simeq\RCov{S_{3}}$ over $\stR$. Thus the study of $G$-covers
coincides with the study of $S_{3}$-covers, and, due to the nature
of the isomorphism $\GCov\simeq\RCov{S_{3}}$, the problems of regularity
of covers also coincide. Anyway we will describe the structure of
$G$-equivariant algebras only, because the representation theory
of $G$ has a simpler explicit description and all the theory works
over $\Z[1/2]$, instead of $\Z[1/6]$. The groups $G$ and $S_{3}$
can be considered the simplest non abelian linearly reductive groups.
This is essentially the motivation for a detailed study of $G$-covers
and $S_{3}$-covers.

A similar analysis of $S_{3}$-covers is conducted in \cite{Easton2008},
where the author describes the data needed to build them in terms
of linear algebra. Here, using a different approach, we recover this
result and we expand it, describing particular families of $S_{3}$-covers,
characterizing the regular ones and computing the invariants of the
total space of a regular $S_{3}$-cover of a surface.

Using the theory developed above, a $G$-cover over an $\stR$-scheme
$T$ corresponds to an $\stR$-linear, symmetric and monoidal functor
$\Omega\colon\Loc^{G}\stR\arr\Loc T$ such that $\rk\Omega_{W}=\rk W$
for all $W\in I_{G}$. It is easy to deduce the data needed to build
a $G$-cover. Since $\Omega_{\stR}=\odi T$ for general reasons, we
need an invertible sheaf $\shL=\Omega_{A}$ and a locally free sheaf
$\shF=\Omega_{V}$ of rank $2$ in order to have a functor $\Omega\in\QAdd_{\stR}^{G}$.
For the monoidal structure, for all $W_{1},W_{2}\in I_{G}$ we need
maps $\Omega_{W_{1}}\otimes\Omega_{W_{2}}\arr\Omega_{W_{1}\otimes W_{2}}$.
Since we are interested in commutative algebras with unity and we
have relations $A\otimes A\simeq\stR$, $A\otimes V\simeq V$ and
$V\otimes V\simeq\stR\oplus A\oplus V$, a monoidal structure on $\Omega$
is given by maps
\[
\shL\otimes\shL\arrdi m\odi T\comma\;\shL\otimes\shF\arrdi{\alpha}\shF\comma\;\shF\otimes\shF\arrdi{(-,-)\oplus\la-,-\ra\oplus\beta}\odi T\oplus\shL\oplus\shF
\]
satisfying certain conditions, required for the associativity of $\Omega$.
As it happens in the diagonalizable case, this is the hard part. Such
conditions imply that $(-,-),\beta$ are symmetric, $\la-,-\ra$ is
antisymmetric and that $(-,-)$ is uniquely determined by the other
maps. In conclusion it turns out that a $G$-cover over $T$ is associated
with a sequence $\chi=(\shL,\shF,m,\alpha,\beta,\la-,-\ra)$ where
$\shL$ is an invertible sheaf, $\shF$ is a locally free sheaf of
rank $2$ and $m,\alpha,\beta,\la-,-\ra$ are maps
\[
\shL^{2}\arrdi m\odi T\comma\;\shL\otimes\shF\arrdi{\alpha}\shF\comma\;\Sym^{2}\shF\arrdi{\beta}\shF\comma\;\det\shF\arrdi{\la-,-\ra}\shL
\]
that satisfy certain conditions. The above association will be formulated
in terms of isomorphism of stacks (see \ref{thm:global data for Sthree}).
I do not think that further simplifications are possible in this generality.
Although the data above are directly associated to a $G$-cover, they
also correspond to an $S_{3}$-cover, as remarked above. We will identify
$\GCov$ and $\RCov{S_{3}}$ with the stack of data defined as above
and all the results cited below, including the ones regarding the
geometry of covers, continue to be true if we replace $\GCov$ by
$\RCov{S_{3}}$ and $G$-covers by $S_{3}$-covers. Anyway some general
results will be stated for both $G$ and $S_{3}$. The main idea followed
in order to get to a better understanding of $G$-covers and $S_{3}$-covers
is to look at particular loci of $\GCov$, that is to look at data
as above satisfying additional conditions. All those loci are interesting
because they will allow to understand the geometry of $\GCov$ and
$\RCov{S_{3}}$ and also to describe regular $G$-covers and $S_{3}$-covers.

It is convenient at this point to introduce more notation. Denote
by $\stC_{3}$ the stack of pairs $(\shF,\delta)$ where $\shF$ is
a locally free sheaf of rank $2$ and $\delta$ is a map $\Sym^{3}\shF\arr\det\shF$
and by $\Cov_{3}$ the stack of degree $3$ covers, also called triple
covers. It is a well known result of the theory of triple covers (see
\cite{Miranda1985,Pardini1989,Bolognesi2009}) that there exists an
isomorphism of stacks $\stC_{3}\arr\Cov_{3}$ so defined: an object
$\Phi=(\shF,\delta)\in\shC_{3}(T)$, where $T$ is an $\stR$-scheme,
induces maps $\eta_{\delta}\colon\Sym^{2}\shF\arr\odi T$ and $\beta_{\delta}\colon\Sym^{2}\shF\arr\shF$
which define an algebra structure on the sheaf $\alA_{\Phi}=\odi T\oplus\shF$.
Taking invariants by $\sigma\in\Z/2\Z$ we obtain a map $\pi\colon\GCov\arr\Cov_{3}\simeq\stC_{3}$.
Notice that the same procedure yields a map $\RCov{S_{3}}\arr\Cov_{3}$
and it is possible to prove that $\GCov$ and $\RCov{S_{3}}$ are
isomorphic over $\Cov_{3}$ (see \ref{rem:invariants by sigma for Sthree and GCov}).
The first result on the geometry of $\GCov$ we prove is the following:
\begin{thm*}
{[}\ref{thm:The-locus when omega is invertible}{]} The map $\pi\colon\GCov\arr\Cov_{3}$
restricts to an isomorphism of stacks $\stU_{\omega}\arr\Cov_{3}$,
where $\stU_{\omega}$ is the open substack of $\GCov$ where $\la-,-\ra\colon\det\shF\arr\shL$
is an isomorphism.
\end{thm*}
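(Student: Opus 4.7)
The plan is to produce an explicit quasi-inverse functor $\stC_{3}\arr\stU_{\omega}$ (recall that $\stC_{3}\simeq\Cov_{3}$) and verify it is inverse to $\pi|_{\stU_{\omega}}$. First, $\stU_{\omega}$ is open in $\GCov$: the condition that $\la-,-\ra\colon\det\shF\to\shL$ be an isomorphism is the non-vanishing of the section of the invertible sheaf $\shL\otimes(\det\shF)^{-1}$ determined by $\la-,-\ra$.

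Next I would extract from the associativity of the multiplication on $\alA=\odi{T}\oplus A\otimes\shL\oplus\duale{V}\otimes\shF$ the identities linking $m,\alpha,\beta,(-,-)$ and $\la-,-\ra$. The key ones come from associativity applied to a triple in $\shL\otimes\shF\otimes\shF$: matching the $\odi{T}$-component yields $(\alpha(a,v),w)=m(a,\la v,w\ra)$, while matching the $\shL$-component yields $\la\alpha(a,v),w\ra=(v,w)\,a$. When $\la-,-\ra$ is invertible, the second identity uniquely determines $\alpha$: $\alpha(a,v)$ is characterised by $\alpha(a,v)\wedge w=(v,w)\,\la-,-\ra^{-1}(a)$ in $\det\shF$ for every $w\in\shF$, which makes sense via the natural rank-$2$ identification $\shF\simeq\duale{\shF}\otimes\det\shF$. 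Once $\alpha$ is fixed, the first identity analogously determines $m$.

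For the inverse functor, given $(\shF,\delta)\in\stC_{3}(T)$ let $\beta_{\delta}\colon\Sym^{2}\shF\to\shF$ and $\eta_{\delta}\colon\Sym^{2}\shF\to\odi{T}$ be the maps produced by the isomorphism $\stC_{3}\simeq\Cov_{3}$, and set $\shL:=\det\shF$, $\la-,-\ra:=\id_{\det\shF}$, $\beta:=\beta_{\delta}$, $(-,-):=\eta_{\delta}$, with $\alpha$ and $m$ defined by the formulas dictated above. I would then verify the remaining associativity relations (from $\shL^{\otimes 3}$, $\shL^{\otimes 2}\otimes\shF$, $\shF^{\otimes 3}$, and the residual $\shF$-component of $\shL\otimes\shF^{\otimes 2}$); after substituting the explicit formulas for $\alpha$ and $m$ they reduce to the associativity of the triple cover algebra $\odi{T}\oplus\shF$, which holds by hypothesis. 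This yields a functor $\stC_{3}\arr\stU_{\omega}$ that is manifestly functorial in $T$. Applying $\pi$ (the $\sigma$-invariants functor) to its output recovers $\odi{T}\oplus\shF$ with multiplication governed by $\beta_{\delta}$ and $\eta_{\delta}$, hence $(\shF,\delta)$; in the other direction, the uniqueness established above shows that any object of $\stU_{\omega}$ coincides with the one produced from its image under $\pi$.

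The main obstacle is the verification that, once $\alpha$ and $m$ are fixed by the formulas dictated by associativity on $\shL\otimes\shF^{\otimes 2}$, the remaining associativity relations follow automatically from those of the triple cover. The computations are elementary but require careful tracking of signs coming from the antisymmetry of $\la-,-\ra$ and of the scalar factors introduced by the natural rank-$2$ identifications on $\shF$.
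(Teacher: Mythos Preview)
Your approach is correct and is essentially the same as the paper's: construct an explicit inverse $\stC_{3}\to\stU_{\omega}$ by sending $(\shF,\delta)$ to $(\det\shF,\shF,m_{\delta},\alpha_{\delta},\beta_{\delta},\id_{\det\shF})$, and check the two composites are the identity. The paper carries out the verification by choosing a local basis $y,z$ of $\shF$, writing $\delta$ in coordinates, reading off the parameters $a,b,c,d,e,f,\omega,A,B,C,D$ of the resulting object, and checking the list of associativity relations~(\ref{eq:loc com and ass conditions}) directly; it also notes that $\omega$ invertible forces $\tr\beta=0$ (so $(\shF,\delta_{\beta})\in\stC_{3}$), and exhibits $(\la-,-\ra,\id_{\shF})$ as the isomorphism $\Lambda\circ\Delta(\chi)\simeq\chi$.

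One small caution about your reduction step: you say the remaining associativity relations ``reduce to the associativity of the triple cover algebra $\odi{T}\oplus\shF$''. This is morally right but not literally what happens: some of the relevant identities, e.g.\ $\beta(\beta(u\otimes v)\otimes w)=\la\alpha(w),v\ra u-\la w,v\ra\alpha(u)$, involve $\alpha$ and so are not associativity constraints for $\odi{T}\oplus\shF$ per se. After substituting your formulas for $\alpha$ (built from $\eta_{\delta}$, hence from $\beta_{\delta}$) they become polynomial identities in the coefficients of $\beta_{\delta}$ that one simply checks; this is exactly the direct computation the paper performs. Also note a harmless normalisation: in the paper $\eta_{\delta}=2(-,-)_{\chi}$, so your ``$(-,-):=\eta_{\delta}$'' differs from the paper's by a factor of $2$, absorbed by the $1/2$ in the definition of $\alpha_{\delta}$.
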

In particular this gives a functorial way of extending triple covers
to $G$-covers or $S_{3}$-covers. Looking at the global geometry
we prove that:
\begin{thm*}
{[}\ref{thm:description of Sthree torsors}, \ref{thm:Geometry of Sthree cov},
\ref{thm:description of the main component}{]} The stacks $\GCov$
and $\RCov{S_{3}}$ are connected, non-reduced and have two irreducible
components, the main one $\stZ_{G}$, which coincides with the zero
locus of the maps $\shL\arr\duale{\shF}\otimes\shF\arr\odi T$ and
$\shF\arr\duale{\shF}\otimes\shF\arr\odi T$ induced by $\alpha$
and $\beta$ respectively, and the closed locus of $\GCov$ where
$\beta=\la-,-\ra=0$ and $\alpha$ is fppf locally a multiple of the
identity. Moreover $\Bi G\subseteq\GCov$ is the open substack where
$\la-,-\ra$ and $m$ are isomorphisms.
\end{thm*}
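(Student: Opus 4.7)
The plan is to exploit the explicit description of $\GCov$ as the stack of tuples $(\shL,\shF,m,\alpha,\beta,\la-,-\ra)$ satisfying the associativity relations, proceeding in four stages.

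The first step is to identify $\Bi G$. The group $G=\mu_{3}\rtimes\Z/2\Z$ is super solvable via the filtration $0\triangleleft\mu_{3}\triangleleft G$, whose subquotients are of $\mu_{p}$-type, so the earlier surjectivity criterion for torsors over super solvable good linearly reductive groups applies: an algebra $\alA\in\LAlg_{\stR}^{G}$ is a torsor if and only if each contraction $\Omega_{V}^{\alA}\otimes\Omega_{\duale V}^{\alA}\to\odi T$ is surjective for $V\in I_{G}$. Since $A$ and $V$ are both self-dual, this reduces to surjectivity of $m\colon\shL^{\otimes 2}\to\odi T$ and of the symmetric pairing $(-,-)\colon\shF\otimes\shF\to\odi T$. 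Because $(-,-)$ is uniquely determined by the remaining structure maps (as noted in the construction of the data), its surjectivity translates to surjectivity of $\la-,-\ra$, and a surjective map between invertible sheaves is an isomorphism; this yields the stated description of $\Bi G$.

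Next one determines $\stZ_{G}$. Over $\Bi G$ the algebra is locally the regular representation, which decomposes as $\stR\oplus A\oplus V\oplus V$; computing the two trace maps $\shL\to\duale\shF\otimes\shF\to\odi T$ and $\shF\to\duale\shF\otimes\shF\to\odi T$ induced by $\alpha$ and $\beta$ directly on this decomposition, both vanish identically. Being closed conditions, they persist on the closure $\stZ_{G}$. For the converse inclusion, one verifies that the closed substack $\stZ\subseteq\GCov$ cut out by the two trace conditions is irreducible and contains $\Bi G$ as a dense open, whence $\stZ=\stZ_{G}$; irreducibility comes from the fact that, on this locus, the associativity relations are rigid enough to exhibit $\stZ$ as a limit of its open torsor stratum.

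Third, the second component $\stY\subseteq\GCov$ is constructed as the closed substack given by $\beta=\la-,-\ra=0$ and $\alpha$ fppf-locally a scalar multiple of $\id_{\shF}$. Writing $\alpha=\lambda\cdot\id_{\shF}$ locally for a map $\lambda\colon\shL\to\odi T$, the associativity conditions collapse to $m=\lambda^{\otimes 2}$, so that $\stY$ is fppf-locally parametrized by $(\shL,\shF,\lambda)$ and is therefore irreducible. The generic $\alpha$-trace on $\stY$ is $2\lambda\ne 0$ (using $2\in\stR^{*}$), hence $\stY\not\subseteq\stZ_{G}$. Conversely, if $\chi\in\GCov$ violates one of the two trace conditions at some point, a local analysis of the associativity relations forces $\chi$ to lie in $\stY$ near that point, producing the decomposition $\GCov=\stZ_{G}\cup\stY$ into irreducible components. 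Connectedness then follows since both components contain the cover with $\shL=\odi T$, $\shF=\odi T^{2}$ and all structure maps zero; non-reducedness is detected by a tangent space computation at this degenerate point, where the linearization of the quadratic associativity relations vanishes and the tangent space strictly exceeds the dimension of $\stZ_{G}\cup\stY$. The transfer of all statements to $\RCov{S_{3}}$ is automatic from the bitorsor isomorphism $\GCov\simeq\RCov{S_{3}}$ established earlier. The main expected obstacle is the forcing argument in step three, showing that any cover failing a trace condition must lie in $\stY$: the associativity relations are nonlinear, and reducing the generic failure of a trace condition to the very specific local shape of $\stY$ requires delicate case analysis.
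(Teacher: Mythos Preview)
Your overall shape is right, and steps one and three track the paper closely. But you have misidentified where the real work lies, and the step you treat as routine is the one that actually carries the weight.

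\textbf{The genuine gap is in step two.} Your argument that the trace-zero locus $\stZ$ equals $\stZ_G$ reduces to showing $\stZ$ is irreducible (or equivalently that $\Bi G$ is dense in it), and your justification---``the associativity relations are rigid enough to exhibit $\stZ$ as a limit of its open torsor stratum''---is not a proof but a restatement of the goal. The paper handles this by first building, in separate sections, smooth open substacks $\stU_\alpha$ (where $\alpha$ is nowhere a scalar) and $\stU_\beta$ (where $\beta$ is nowhere zero), each shown explicitly to admit a smooth surjection from $\A^3$ and hence to be irreducible. Since $\Bi G$ meets both, one gets $\overline{|\stU_\alpha|}=\overline{|\stU_\beta|}=|\stZ_G|$, and a further topological covering argument via an auxiliary atlas (Lemma on the degenerate locus) finishes the count of components. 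For the scheme-theoretic equality $\stZ=\stZ_G$ (i.e.\ that the ideal $(a+d,c+f,A+D)$ is actually prime, not merely with prime radical), the paper in fact resorts to a Macaulay2 verification. None of this machinery appears in your sketch.

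\textbf{The step you flag as the obstacle is actually the easy one.} Your ``forcing argument''---that failure of a trace condition at a point forces the cover into $\stY$---is a short case split on the local relations. Over any domain one already has $a+d=c+f=0$ (this is immediate from the relations $b(a+d)=c(a+d)=(a-d)(a+d)=0$ etc.), so only $\tr\alpha$ can fail at a prime. If $A+D$ is a unit, the relations $\omega(A+D)=(A-D)(A+D)=B(A+D)=C(A+D)=0$ and their companions force $\omega,A-D,B,C,a,b,c,d,e,f$ all to vanish: exactly the description of $\stY$. This is two lines, not a delicate analysis.

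\textbf{A secondary issue.} Your non-reducedness argument via tangent spaces is incomplete: a tangent space larger than the dimension at a point of a reducible scheme can simply reflect the intersection of components, not non-reducedness. The paper instead exhibits explicit nilpotents: since $a+d,c+f$ lie in every prime (by the domain argument above) but an explicit $G$-cover over $k[x]/(x^2)$ with $a+d\neq 0$ shows they are nonzero in the universal ring, the stack is universally non-reduced. Connectedness is obtained not from a shared point but from an $\N$-grading on the coordinate ring with degree-zero piece the base ring.
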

For covers in $\stZ_{G}$ are possible two further simplifications
of the data associated with them, one for the whole $\stZ_{G}$ and
one that regards particular objects of $\stZ_{G}$. We want to describe
only the second simplification, because it will be the one used in
the description of regular $G$-covers. Given $\chi=(\shL,\shF,m,\alpha,\beta,\la-,-\ra)\in\GCov$
set $\shM=\shL\otimes\det\shF^{-1}$ and $\omega\in\shM$ the section
corresponding to $\la-,-\ra$. Moreover, given an $\stR$-scheme $T$
denote by $\stZ_{\omega}(T)$ the full subcategory of $\stZ_{G}(T)$
where $\odi T\arrdi{\omega}\shM$ is injective, which means that $\omega$
yields a Cartier divisor over $T$. We will prove that $\stZ_{\omega}(T)$
is isomorphic to the category whose objects are sequences $(\shM,\shF,\delta,\omega)$
where $(\shF,\delta)\in\stC_{3}(T)$, $\shM$ is an invertible sheaf
and $\omega\in\shM$ is a section such that $\odi T\arrdi{\omega}\shM$
is injective and its image contains the image of $\eta_{\delta}\colon\Sym^{2}\shF\arr\odi T$
(see \ref{thm:description of ZG when omega in Cartier}). In particular
we see that the extensions of a triple cover $(\shF,\delta)$ to a
$G$-cover in $\stZ_{\omega}(T)$ correspond bijectively to the effective
Cartier divisors contained in the locus where $\eta_{\delta}$ is
zero.

The last part of this thesis is dedicated to the study of regular
$G$-covers and $S_{3}$-covers. Notice that it is possible to apply
directly the result previously obtained on covers that are regular
in codimension $1$ for general groups (see \ref{thm:application to Sthree of theory on regular in codimension 1 covers}),
but what we get is a particular case of the description of regular
$G$-covers we want to explain. Let $Y$ be an integral, noetherian
and regular scheme. Given $\chi=(\shL,\shF,m,\alpha,\beta,\la-,-\ra)\in\GCov(Y)$
we define: $D_{m}$ and $D_{\omega}$ as the closed subschemes of
$Y$ where $m\colon\shL^{2}\arr\odi Y$ and $\la-,-\ra\colon\det\shF\arr\shL$
are zero respectively; $Y_{\alpha}$ as the vanishing locus of the
map $\alpha\colon\shL\otimes\shF\arr\shF$. Notice that we have an
inclusion $Y_{\alpha}\subseteq D_{m}$. Given $\Phi=(\shF,\delta)\in\shC_{3}(Y)$
we define: $Y_{\delta}$ and $D_{\delta}$ as the closed subschemes
of $Y$ defined by $\eta_{\delta}\colon\Sym^{2}\shF\arr\odi Y$ and
the discriminant $\Delta_{\Phi}\colon(\det\shF)^{2}\arr\odi Y$ respectively,
where the last map is induced by the determinant of $\hat{\tr}_{\alA_{\Phi}}\colon\alA_{\Phi}\arr\duale{\alA_{\Phi}}$.
Finally, given a proper, closed subscheme $Z$ of $Y$, denote by
$D(Z)$ the divisorial component of $Z$ in $Y$, that is the maximum
among the effective Cartier divisors contained in $Z$. The Theorem
we will prove is the following.
\begin{thm*}
{[}\ref{thm:regular Sthree covers first}, \ref{thm:gamma for regular Sthree covers},
\ref{thm:regular G covers and triple}{]} Let $Y$ be a regular, noetherian
and integral scheme such that $\dim Y\geq1$ and $6\in\odi Y^{*}$.
If $\chi=(\shL,\shF,m,\alpha,\beta,\la-,-\ra)\in\GCov(Y)$ then the
associated $G$-cover ($S_{3}$-cover) $X_{\chi}\arr Y$ is regular
if and only if the following conditions hold:
\begin{enumerate}
\item $D_{m},D_{\omega}$ are Cartier divisors and $D_{m}\cap D_{\omega}=\emptyset$;
\item $Y_{\alpha}=\emptyset$ or $Y_{\alpha}$ is regular of pure codimension
$2$ in $Y$;
\item $D_{\omega}$ is regular and $D_{m}$ is regular outside $Y_{\alpha}$.
\end{enumerate}
In this case the triple cover $X_{\chi}/\sigma\arr Y$ (which does
not depend on whether we see $X$ as a $G$-cover or $S_{3}$-cover)
is regular and, if $(\shF,\delta)\in\shC_{3}$ is its associated object,
we have: $D_{\omega}=D(Y_{\delta})$, $D_{\delta}=2D_{\omega}+D_{m}$
and $Y_{\delta}=D_{\omega}\sqcup Y_{\alpha}$. If $f\colon X\arr Y$
is a regular triple cover associated with $(\shF,\delta)\in\shC_{3}(Y)$,
then $Y_{\delta}=D(Y_{\delta})\sqcup Y'_{\delta}$, where $Y'_{\delta}$
is a closed subscheme of pure codimension $2$ if not empty and $D(Y_{\delta})$
is regular. Finally the maps   \[   \begin{tikzpicture}[xscale=7.0,yscale=-0.6]     \node (A0_0) at (0, 0) {$X$};     \node (A0_1) at (1, 0) {$X/\sigma$};     \node (A1_0) at (0, 1) {$\{\text{regular }G\text{-covers over }Y\}$};     \node (A2_1) at (1, 2) {$\left\{ \begin{array}{c} \text{regular triple covers }(\shF,\delta)\text{ over }Y\\ \text{such that }Y_{\delta}\text{ is regular} \end{array}\right\}$};     \node (A3_0) at (0, 3) {$\{\text{regular }S_3\text{-covers over }Y\}$};     \node (A4_0) at (0, 4) {$(\odi Y(D(Y_\delta)),\shF,\delta,1)$};     \node (A4_1) at (1, 4) {$(\shF,\delta)$};     \path (A0_0) edge [|->,gray]node [auto] {$\scriptstyle{}$} (A0_1);     \path (A3_0) edge [->]node [auto] {$\scriptstyle{}$} (A2_1);     \path (A4_1) edge [|->,gray]node [auto] {$\scriptstyle{}$} (A4_0);     \path (A1_0) edge [->]node [auto] {$\scriptstyle{}$} (A2_1);     \path (A1_0) edge [->]node[rotate=-90] [above] {$\scriptstyle{\simeq}$} (A3_0);   \end{tikzpicture}   \] are
inverses of each other.
\end{thm*}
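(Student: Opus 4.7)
My plan is to transfer everything to the $G$-side using the isomorphism $\GCov\simeq\RCov{S_{3}}$ over $\stR$ recalled in (\ref{rem:invariants by sigma for Sthree and GCov}), and then prove regularity of $X_{\chi}=\Spec\alA_{\chi}$ stalk by stalk, since $X_\chi\to Y$ is finite. The argument splits naturally into a codimension-one analysis, a codimension-$\geq 2$ analysis concentrated on the critical locus $Y_{\alpha}$, and the reconstruction of $\chi$ from the triple cover $X_{\chi}/\sigma$.

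For the codimension-one part, since $G$ (and hence $S_3$) is solvable, I apply the general criterion (\ref{thm:equivalent conditions for regularity for glrg, global version}). Using $I_G=\{\stR,A,V\}$ with $V\simeq\duale V$, the discriminant sections $s_{f,W}$ for $W\in I_G$ are canonically identified, via the monoidal structure of $\Omega^{\alA_\chi}$, with $m\colon\shL^{\otimes 2}\to\odi Y$ and $\la-,-\ra\colon\det\shF\to\shL$, whose zero loci are $D_m$ and $D_\omega$. The inequality $v_q(s_{f,W})\leq\rk W$ at every codimension-one point $q$ is then exactly the statement that $D_m,D_\omega$ are locally principal, disjoint, and each regular where it appears; this is the codimension-one shadow of conditions (1) and (3).

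For codimension $\geq 2$, the only obstruction is at points of $D_m$. Outside $Y_\alpha$ the map $\alpha$ gives a local splitting of $\shF$ into eigenlines for the $\Z/2\Z$-action, and the regularity question reduces to the codimension-one analysis already performed on $D_m\setminus Y_\alpha$. At a point $q\in Y_\alpha$, choose local bases of $\shL$ and $\shF$ and write the algebra structure in coordinates; combining the four associativity identities among $m,\alpha,\beta,\la-,-\ra$ yields a local normal form for $\alA_{\chi,q}$ in which the defining equations cut out a regular local ring of the expected dimension precisely when $Y_\alpha$ is regular of pure codimension $2$ at $q$. Producing and analysing this normal form, and checking that condition (1) forces $q\notin D_\omega$ so the normal form is the only one that occurs, is the main technical step and the part I expect to be most delicate.

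For the triple-cover correspondence, decomposing $\alA_\chi$ under the $\sigma$-action gives $\alA_\chi^{\sigma}=\odi Y\oplus\shF$, and the induced multiplications furnish a triple cover classified by some $(\shF,\delta)\in\stC_{3}(Y)$. A direct computation of $\eta_\delta$ and of the discriminant $\Delta_\Phi$ in terms of $m,\alpha,\beta,\la-,-\ra$ yields the identities $D_\delta=2D_\omega+D_m$ and $Y_\delta=D_\omega\sqcup Y_\alpha$, and identifies $D_\omega$ as the maximal effective Cartier divisor inside $Y_\delta$, so $D_\omega=D(Y_\delta)$. Regularity of $X_\chi/\sigma$ and the decomposition $Y_\delta=D(Y_\delta)\sqcup Y'_\delta$ for a regular triple cover follow from the same local normal form plus the classical criterion for regularity of triple covers. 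For the final bijection I invoke the description of $\stZ_\omega$ recalled before the theorem: given a regular triple cover $(\shF,\delta)$ with $Y_\delta$ regular, the inclusion $\Imm\eta_\delta\subseteq\odi Y(-D(Y_\delta))$ coming from $Y_\delta=D(Y_\delta)\sqcup Y'_\delta$ is exactly the hypothesis needed for $(\shM=\odi Y(D(Y_\delta)),\shF,\delta,\omega=1)$ to lie in $\stZ_\omega(Y)$ and hence define a $G$-cover. Taking $\sigma$-invariants of this $G$-cover returns $(\shF,\delta)$ tautologically; conversely, conditions (1)--(3) force $\shL\simeq\det\shF\otimes\odi Y(D_\omega)$ and determine $\omega$ uniquely up to unit, so applying the inverse construction to $X_\chi/\sigma$ recovers the original $\chi$.
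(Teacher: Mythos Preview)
Your plan has the right overall shape, and the triple-cover portion matches the paper's approach. Two steps in the regularity analysis need correction.

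First, $s_{f,V}$ is not $\la-,-\ra$: it is the determinant of the map $\shF\to\duale\shF$ induced by $(-,-)_\chi$, and one has $s_{f,V}=s_\omega^{\otimes 2}\otimes m$ with $s_\omega=\la-,-\ra$ (cf.\ the proof of \ref{thm:application to Sthree of theory on regular in codimension 1 covers}). The inequalities $v_q(m)\leq 1$ and $v_q(s_{f,V})\leq 2$ at codimension-one points therefore yield only $\codim_Y D_m\cap D_\omega\geq 2$ and regularity of $D_m,D_\omega$ \emph{in codimension one}; the global disjointness $D_m\cap D_\omega=\emptyset$ and the full regularity of $D_\omega$ do not follow from this. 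Second, your dichotomy ``outside $Y_\alpha$'' versus ``at $Y_\alpha$'' is not the right one, and the claim that $\alpha$ splits $\shF$ into eigenlines outside $Y_\alpha$ is false: since $\tr\alpha=0$ one has $\alpha^2=m\cdot\id_\shF$, so when $m\in m_R$ the reduction $\alpha\otimes k$ is a nonzero nilpotent and no such splitting exists. The paper instead distinguishes the cases ``$m$ invertible or $(-,-)_\chi$ surjective'' (Lemmas \ref{lem:Split of A where m is invertible}, \ref{lem:split when omega alpha are not zero}, \ref{lem:regularity of codimension one type}) and ``neither'' (Lemma \ref{lem:regularity outside codimension one}). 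In the second case $\alA_\chi$ is local, one takes the basis $y,z=\beta(y^2)$ (available by \ref{lem:local basis for beta nowhere zero}, parameters as in \ref{lem:associated parameters for beta nowhere zero}) and computes $m_{\alA_\chi}/m_{\alA_\chi}^2$ directly. This computation shows that regularity forces $\omega\in R^*$ and $\alpha\otimes k=0$, and that it then holds iff $A,C$ are independent in $m_R/m_R^2$. The first conclusion is what gives the \emph{global} emptiness of $D_m\cap D_\omega$; the last is what makes $Y_\alpha=V(A,C)$ regular of pure codimension $2$.

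For the triple-cover correspondence your outline is correct and coincides with the paper's: in the same basis $\alA_\chi^\sigma\simeq R[t]/(t^3+3\omega Ct-2\omega A)$ with $\Delta_r=4\cdot 27\,\omega^2 m$, and Miranda's local criterion (\ref{rem:local regular conditions triple covers}) together with the identities of \ref{rem: notation for Ydelta Ddelta} yield $D_\delta=2D_\omega+D_m$, $Y_\delta=D_\omega\sqcup Y_\alpha$, $D_\omega=D(Y_\delta)$, and the mutual inverse via $\stZ_\omega$.
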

After this classification of regular $G$-covers and $S_{3}$-covers
two questions naturally arise: how to construct them and how the cohomological
invariants of the total space related to those of the base. If $Z$
is a scheme and $\E$ is a coherent sheaf over it, we will say that
$\E$ is strongly generated if, for any closed point $q\in Z$, the
map $\Hl^{0}(Z,\E)\arr\E\otimes(\odi{Z,p}/m_{p}^{2})$ is surjective.
For the first question, we will prove the following result.
\begin{thm*}
{[}\ref{thm:construction of Sthree covers}{]} Let $k$ be an infinite
field with $\car k\neq2,3$, $Y$ be a smooth, irreducible and proper
$k$-scheme with $\dim Y\geq1$, $\shF$ be a locally free sheaf of
rank $2$ over $Y$ and set $\E=\Homsh(\Sym^{3}\shF,\det\shF)$. If
$\E\otimes\overline{k}$ is strongly generated (over $Y\times\overline{k}$)
then there exists $\delta\in\E$ such that the triple cover associated
with $(\shF,\delta)\in\shC_{3}(Y)$ extends to a $G$-cover ($S_{3}$-cover)
$X_{\delta}\arr Y$ with $X_{\delta}$ smooth and $Y_{\delta}=\emptyset$
or $\codim_{Y}Y_{\delta}=2$. Moreover, if $Y$ is geometrically connected,
then $X_{\delta}$ is geometrically connected if and only if $\det\shF\not\simeq\odi Y$
and $\Hl^{0}(Y,\shF)=0$.
\end{thm*}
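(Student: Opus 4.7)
The plan is to combine the previous classification of regular ($=$ smooth, since $Y$ is smooth) $G$-covers with a Bertini-type argument driven by the strong generation hypothesis. By that classification it suffices to find $\delta\in\Hl^{0}(Y,\E)$ such that (i)~the associated triple cover $X_{\delta}/\sigma\to Y$ is regular and (ii)~$Y_{\delta}$ is either empty or smooth of pure codimension $2$; then $D(Y_{\delta})=\emptyset$, the tuple $(\odi Y,\shF,\delta,1)$ yields a smooth $G$-cover, and the $S_{3}$-cover statement follows from the isomorphism $\GCov\simeq\RCov{S_{3}}$ over $\stR$ established in Chapter~$2$.

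First I would reduce to $k=\overline{k}$: smoothness, codimensions and triviality of line bundles descend, and since $k$ is infinite with $\Hl^{0}(Y,\E)\otimes\overline{k}\simeq\Hl^{0}(Y_{\overline{k}},\E\otimes\overline{k})$, a section generic over $\overline{k}$ can be chosen inside $\Hl^{0}(Y,\E)$ itself. Set $V=\Hl^{0}(Y,\E)$ and consider the incidence
\[
B=\{(\delta,y)\in V\times Y\st X_{\delta}\text{ is singular at a point above }y,\text{ or }Y_{\delta}\text{ is not smooth of codimension }\geq 2\text{ at }y\}.
\]
Strong generation of $\E$ means exactly that, for every closed point $y\in Y$, the evaluation $V\to\E\otimes_{\odi Y}(\odi{Y,y}/m_{y}^{2})$ is surjective, so the conditions cutting out the fiber $B_{y}\subset V$ translate into linear conditions on the $2$-jet of $\delta$ at $y$. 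Using the jacobian criterion applied to the explicit local equations of $\alA_{\Phi,y}=\odi{Y,y}\oplus\shF_{y}$ coming from $\beta_{\delta}$ and $\eta_{\delta}$, one checks that $B_{y}$ has codimension at least $\dim Y+1$ in $V$; hence $\dim B<\dim V$, and by properness of $Y$ the projection $B\to V$ has closed image, so a general $\delta$ lies in its complement.

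For such $\delta$, the previous theorem produces a smooth $G$-cover (hence $S_{3}$-cover) $X_{\delta}\to Y$. For geometric connectedness, the Tannakian description from Chapter~$4$ gives
\[
f_{*}\odi{X_{\delta}}\simeq\odi Y\oplus\duale A\otimes\shL\oplus\duale V\otimes\shF;
\]
because $D(Y_{\delta})=\emptyset$ forces $\shM=\odi Y$ and hence $\shL=\det\shF$, taking global sections over $\overline{k}$ yields
\[
\Hl^{0}(X_{\delta,\overline{k}},\odi{X_{\delta,\overline{k}}})\simeq\overline{k}\oplus\Hl^{0}(Y_{\overline{k}},\det\shF)\oplus\duale V\otimes\Hl^{0}(Y_{\overline{k}},\shF),
\]
and geometric connectedness is equivalent to the vanishing of the last two summands, which on a proper integral variety is precisely $\det\shF\not\simeq\odi Y$ together with $\Hl^{0}(Y,\shF)=0$.

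\emph{Main obstacle.} The crux is the codimension estimate for $B_{y}$ in the Bertini step: one has to convert the geometric conditions (failure of the jacobian criterion for smoothness, and $\codim_{y}Y_{\delta}\geq 2$) into concrete linear constraints on the $2$-jet of $\delta$ at $y$, verify that they impose independent conditions, and check that the total number of such conditions exceeds $\dim Y$. The rest is a standard dimension count made possible by strong generation.
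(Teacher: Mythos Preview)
Your approach is correct and essentially the same as the paper's: build the universal family over $V=\Hl^{0}(Y,\E)$, use strong generation to identify the bad fiber $B_{y}\subseteq V$ with the preimage of a locus in the $2$-jet space $\E\otimes(\odi{Y,y}/m_{y}^{2})$ of codimension at least $\dim Y+1$, and conclude by a dimension count. The paper packages your ``main obstacle'' as a separate key lemma that analyzes regularity of the $G$-cover directly rather than your equivalent splitting into (triple cover regular)$+$($Y_{\delta}$ smooth of codimension~$2$) --- in particular your second clause in $B$ is redundant once $X_{\delta}$ denotes the $G$-cover via $(\odi Y,\shF,\delta,1)\in\stZ_{\omega}$ --- and you should be aware that the constraints on the $2$-jet are polynomial, not linear: in the normalization $y,\,z=\beta(y^{2})$ the bad locus reduces to a piece of the discriminantal hypersurface $\{e^{2}=4c^{3}\}$ together with the degeneracy locus $\{c,e\in m_{R}\text{ and linearly dependent in }m_{R}/m_{R}^{2}\}$, and verifying the codimension bound requires an honest computation rather than a count of independent linear conditions.
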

When $Y$ is projective, it is possible to prove that, if $\E(-1)$
is globally generated, then $\shF$ satisfies the hypothesis of strong
generation in the above theorem (see \ref{prop:when F admits regular Sthree covers}).
For instance $\shF=\odi Y(-1)^{2}$ satisfies such hypothesis and
$\det\shF\not\simeq\odi Y$ and $\Hl^{0}(Y,\shF)=0$. Therefore
\begin{cor*}
{[}\ref{cor:Sthree regular covers over projective smooth}{]} Let
$k$ be an infinite field with $\car k\neq2,3$. Then any smooth,
projective and irreducible (resp. geometrically connected) $k$-scheme
$Y$ with $\dim Y\geq1$ has a $G$-cover ($S_{3}$-cover) $X\arr Y$
with $X$ smooth (resp. smooth and geometrically connected).
\end{cor*}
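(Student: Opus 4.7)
The plan is to apply the previous theorem (cited as \ref{thm:construction of Sthree covers}) to a carefully chosen sheaf $\shF$ on $Y$, using the sufficient criterion noted just before the corollary.

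Since $Y$ is projective, fix a very ample invertible sheaf $\odi Y(1)$ coming from a closed immersion $Y\hookrightarrow\PP_k^N$, and set $\shF=\odi Y(-1)^{2}$. This is a locally free sheaf of rank $2$ on $Y$. To apply \ref{thm:construction of Sthree covers} I need to verify three conditions: that $\shE:=\Homsh(\Sym^3\shF,\det\shF)$, after base change to $\overline{k}$, is strongly generated; that $\det\shF\not\simeq\odi Y$; and that $\Hl^0(Y,\shF)=0$.

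A direct computation gives $\Sym^3\shF\simeq\odi Y(-3)^{4}$ and $\det\shF\simeq\odi Y(-2)$, hence
\[
\shE\simeq\Homsh(\odi Y(-3)^{4},\odi Y(-2))\simeq\odi Y(1)^{4},
\]
so that $\shE(-1)\simeq\odi Y^{4}$ is (trivially) globally generated. By the proposition cited in the paragraph immediately preceding the corollary (\ref{prop:when F admits regular Sthree covers}), this implies that $\shF$ satisfies the strong generation hypothesis required in \ref{thm:construction of Sthree covers}, and the same remains true after base change to $\overline{k}$ since global generation is preserved by flat base change of the base field. For the remaining two conditions: $\det\shF\simeq\odi Y(-2)$, which is nontrivial because $\odi Y(1)$ is very ample on a projective variety of positive dimension (any isomorphism $\odi Y(-2)\simeq\odi Y$ would give a nowhere vanishing global section of $\odi Y(2)$, contradicting ampleness as soon as $\dim Y\geq 1$); and $\Hl^{0}(Y,\shF)=\Hl^{0}(Y,\odi Y(-1))^{2}=0$ since any global section of the negative very ample line bundle $\odi Y(-1)$ on a positive-dimensional projective $k$-scheme must vanish (its zero locus would be a nonempty effective divisor of negative degree against any ample class).

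Applying \ref{thm:construction of Sthree covers} then produces $\delta\in\shE$ such that the triple cover associated with $(\shF,\delta)\in\shC_{3}(Y)$ extends to a $G$-cover (equivalently, an $S_3$-cover) $X_\delta\arr Y$ with $X_\delta$ smooth, giving the first part of the statement. For the second part, assume $Y$ is geometrically connected; the last sentence of the cited theorem gives that $X_\delta$ is geometrically connected precisely when $\det\shF\not\simeq\odi Y$ and $\Hl^0(Y,\shF)=0$, both already verified above. I expect no real obstacle here beyond the bookkeeping of the three hypotheses; the substantive work has been carried out in \ref{thm:construction of Sthree covers} and \ref{prop:when F admits regular Sthree covers}, and the corollary consists essentially in exhibiting the explicit choice $\shF=\odi Y(-1)^{2}$ as a universal example on any projective target.
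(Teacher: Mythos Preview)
Your proof is correct and follows exactly the paper's approach: take $\shF=\odi Y(-1)^2$, observe that $\E(-1)$ is globally generated so \ref{prop:when F admits regular Sthree covers} gives the strong generation hypothesis, and then apply \ref{thm:construction of Sthree covers} with the easy verifications $\det\shF\not\simeq\odi Y$ and $\Hl^0(Y,\shF)=0$. Your computation $\E\simeq\odi Y(1)^4$ is in fact correct (the paper's text has a harmless typo writing $\odi Y(1)^3$).
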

Finally, when $Y$ is a surface over an algebraically closed field,
we will compute the invariants of the total space of a regular $S_{3}$-cover
of $Y$. The result is:
\begin{thm*}
{[}\ref{thm:invariants of regular Sthree covers}{]} Let $Y$ be a
smooth, projective, integral surface over an algebraically closed
field $k$ such that $\car k\neq2,3$ and $f\colon X\arr Y$ be a
regular $S_{3}$-cover associated with $(\shF,\delta)\in\shC_{3}(Y)$.
The closed subscheme $Y_{\delta}$ of $Y$ is the disjoint union of
a smooth divisor $D$ and a finite set $Y_{0}$ of rational points
and the surface $X$ is connected if and only if $\Hl^{0}(\shF)=0$
and $\odi Y(-D)\not\simeq\det\shF$. In this case the invariants of
$X$ are given by
\begin{alignat*}{1}
K_{X}^{2}= & \;6K_{Y}^{2}+6c_{1}(\shF)^{2}-12c_{1}(\shF)K_{Y}-\frac{10}{3}D^{2}-4DK_{Y}\\
p_{g}(X)= & \; p_{g}(Y)+2h^{2}(\shF)+h^{2}(\odi Y(D)\otimes\det\shF)\\
\chi(\odi X)= & \;6\chi(\odi Y)-2c_{2}(\shF)+\frac{1}{2}(3c_{1}(\shF)^{2}-3c_{1}(\shF)K_{Y}-DK_{Y}-D^{2})\\
|Y_{0}|= & \;3c_{2}(\shF)-\frac{2}{3}D^{2}
\end{alignat*}

\end{thm*}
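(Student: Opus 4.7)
The plan is to compute each invariant by combining the structure theorems for regular $S_3$-covers with Hirzebruch--Riemann--Roch on $Y$. First, from the previous regularity theorem, $Y_\delta=D\sqcup Y_0$ with $D=D(Y_\delta)$ a smooth Cartier divisor and $Y_0=Y_\alpha$ a reduced $0$-dimensional subscheme (whose closed points are $k$-rational since $k=\overline k$); the inverse map from triple covers to regular $S_3$-covers gives $\shM=\odi Y(D)$, and hence $\shL=\det\shF\otimes\odi Y(D)$. Since the three irreducible $G$-representations $\stR$, $A$ and $V=\ind_{\mu_3}^GV_1$ are free $\stR$-modules of ranks $1,1,2$ respectively, the general equivariant decomposition of $f_*\odi X$ from the Tannakian description specializes to the $\odi Y$-module isomorphism
\[
f_*\odi X\simeq\odi Y\oplus\shL\oplus\shF^{\oplus 2}.
\]

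From this splitting the cohomological invariants come out term by term. Hirzebruch--Riemann--Roch applied to each summand, with $c_1(\shL)=D+c_1(\shF)$, produces $\chi(\odi X)$, and taking $h^2$ summand by summand yields $p_g(X)=p_g(Y)+2h^2(\shF)+h^2(\odi Y(D)\otimes\det\shF)$. The connectedness criterion comes from analyzing the $G$-equivariant algebra structure on $H^0(\odi X)$ induced by $(m,\alpha,\beta,\langle-,-\rangle)$: the sub-$\Z/2\Z$-cover $X/\mu_3\to Y$ with structure sheaf $\odi Y\oplus\shL$ splits as $Y\sqcup Y$ whenever $\shL\simeq\odi Y$ (using that $m$ is nonzero by the regularity hypotheses and that $k=\overline k$), and analogously a nonzero global section of $\shF$ produces a non-trivial idempotent in the $V$-isotypic part; this shows $X$ is connected exactly when $\shL\not\simeq\odi Y$ (equivalently $\det\shF\not\simeq\odi Y(-D)$) and $H^0(\shF)=0$.

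For $K_X^2$ I would use Noether's formula $K_X^2=12\chi(\odi X)-e(X)$ and compute $e(X)$ from the ramification of $f$. The cover is \'etale outside $D\cup D_m\cup Y_0$; the relation $D_\delta=2D+D_m$ together with the discriminant identity $\odi Y(D_\delta)=(\det\shF)^{-2}$ yields the class $D_m\sim -2c_1(\shF)-2D$ in $\Pic(Y)$, which eliminates $D_m$ from the final formula. The local \'etale models of a regular $S_3$-cover above $D$ (stabilizer $\langle\sigma\rangle$), above a point of $D_m$ (stabilizer $\mu_3$) and above a point of $Y_0$ (stabilizer $S_3$) determine the Euler-characteristic contribution at each locus, so that $e(X)$ becomes an explicit $\Z$-linear combination of $e(Y)$, $c_1(\shF)^2$, $c_1(\shF)\cdot K_Y$, $c_2(\shF)$, $D^2$, $D\cdot K_Y$ and $|Y_0|$; combining with the formula for $\chi(\odi X)$ then yields $K_X^2$. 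Finally, $|Y_0|$ is obtained from
\[
0\to\shK\to\Sym^2\shF\to\odi Y\to\odi{Y_\delta}\to 0,
\]
where $\shK=\ker\eta_\delta$ is reflexive of rank $2$ and hence locally free on the smooth surface $Y$: Hirzebruch--Riemann--Roch for $\shK$ and $\Sym^2\shF$, combined with $\chi(\odi{Y_\delta})=\chi(\odi D)+|Y_0|$, expresses $|Y_0|$ in terms of $c_1(\shF), c_2(\shF), D$ and $K_Y$ once $c_2(\shK)$ is identified from the Miranda--Pardini description of the triple cover $(\shF,\delta)$.

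The main obstacle I expect is the ramification bookkeeping for $e(X)$: one must work out the explicit \'etale local models of a regular $S_3$-cover at the three distinct ramification loci $D$, $D_m$ and $Y_0$, and correctly attribute and sum the Euler-characteristic contributions. A secondary difficulty is the identification of $c_2(\shK)$ via the Miranda--Pardini classification of triple covers, needed to close the computation of $|Y_0|$; both steps reduce ultimately to making the passage from the local data $(m,\alpha,\beta,\langle-,-\rangle)$ at a codimension-$2$ point explicit enough to read off Chern numbers.
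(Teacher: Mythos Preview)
Your computation of $p_g(X)$ and $\chi(\odi X)$ from the splitting $f_*\odi X\simeq\odi Y\oplus\shL\oplus\shF^{\oplus2}$ matches the paper. For connectedness your idempotent argument is not quite right (a nonzero section of $\shF$ does not directly produce an idempotent), but the conclusion follows more simply: since $X$ is regular it is normal, so it is connected iff $h^0(\odi X)=1$, i.e.\ $h^0(\shL)=h^0(\shF)=0$, and $h^0(\shL)\neq0$ forces $\shL\simeq\odi Y$ via the embedding $\shL\hookrightarrow\shL^{-1}$ coming from $m$.

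The real gap is in $|Y_0|$. Your sequence $0\to\shK\to\Sym^2\shF\xrightarrow{\eta_\delta}\odi Y\to\odi{Y_\delta}\to0$ is correct and $\shK$ is indeed locally free of rank~$2$, but applying Riemann--Roch together with the Whitney formula is a tautology: computing $c_2(\shK)$ by Whitney brings in $-|Y_0|$ (from $c_2(\odi{Y_0})=-|Y_0|$), and this cancels exactly against the $|Y_0|$ in $\chi(\odi{Y_\delta})=\chi(\odi D)+|Y_0|$. No equation for $|Y_0|$ results, and the appeal to the Miranda--Pardini description does not supply the missing relation. With $|Y_0|$ undetermined, your Noether-formula route to $K_X^2$ is also incomplete, since $|Y_0|$ enters $e(X)$ through the fibre over $Y_0$.

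The paper proceeds differently. For $|Y_0|$ it uses the map $\beta$ rather than $\eta_\delta$: there is an exact sequence
\[
0\longrightarrow(\det\shF)^{2}\otimes\shM\xrightarrow{\ \zeta\ }\Sym^{2}\shF\xrightarrow{\ \beta\ }\shF\longrightarrow i_*\shH\oplus j_*\shQ\longrightarrow0,
\]
where $i\colon D\hookrightarrow Y$, $j\colon Y_0\hookrightarrow Y$, and $\shH,\shQ$ are line bundles on $D$ and $Y_0$ with $\shH^{3}\simeq i^*\det\shF$. Here the kernel of $\beta$ is a \emph{line} bundle with known class, and the cube constraint on $\shH$ pins down $\chi(i_*\shH)$; applying $\chi$ then determines $|Y_0|$ directly. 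For $K_X^2$ the paper does not go through Noether and $e(X)$ at all: it factors $X\to X'\to Y$ with $X'=X/\sigma$, uses the triple-cover formula for $K_{X'}^2$ and then the double-cover formula, and reduces the remaining unknown to $\chi(\Delta\otimes_{\alB}\Delta)$ on $Y$, which is handled by a second pair of short exact sequences.
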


\section{Acknowledgments.}

The first person I would like to thank is surely my advisor Angelo
Vistoli, first of all for having proposed me the problem I will discuss
and, above all, for his continuous support and encouragement. I also
acknowledge Professors Rita Pardini, Matthieu Romagny, Diane Maclagan
and Bernd Sturmfels for the useful conversations we had and all the
suggestions I have received from them. Special thanks go to Tony Iarrobino,
who first suggested me the relation between $G$-covers and equivariant
Hilbert schemes and to the referee of my paper \cite{Tonini2011},
who helped me in the exposition, in particular in translating this
thesis from the Italian English language to something closer to English.
Finally I want to thank Mattia Talpo, John Calabrese and Dajano Tossici
for the countless times we found ourselves staring at the blackboard
trying to answer some mathematical question.

\section{Notation.}

\subsection*{General.}

A \emph{cover} is a map of schemes $f\colon X\arr Y$ which is finite,
flat and of finite presentation or, equivalently, which is affine
with $f_{*}\odi X$ locally free of finite rank. We will say that
the cover $f$ is regular (resp. regular in codimension $1$, normal,
normal crossing in codimension $1$) if the total space $X$ has the
same property. (The definition of normal crossing in codimension $1$
will be introduced later.)

If $X$ is a scheme and $p\in X$ we set $\codim_{p}X=\dim\odi{X,p}$
and we will denote by $X^{(1)}=\{p\in X\st\codim_{p}X=1\}$ the set
of codimension $1$ points of $X$.

Given $\alpha\in\N$, we will use the following convention
\[
0^{\alpha}=\begin{cases}
1 & \alpha=0\\
0 & \alpha>0
\end{cases}
\]

We denote by $\set$ the category of sets, by $\Sch/S$ the category
of schemes over a base scheme $S$ and by $\Grp$ the category of
groups. Given a (fppf) stack $\stX$ over a scheme $S$ we will denote
by $\stX^{\text{gr}}$ the associated stack of groupoids and, if $\stX$
is an algebraic stack, we denote by $|\stX|$ its associated topological
space. 

By a Henselian ring we always mean a noetherian local ring which is
Henselian. If $A$ is a local ring we will often denote by $m_{A}$
is maximal ideal. A DVR will be a local discrete valuation ring.

\subsection*{Sheaf Theory.}

Let $S$ be a scheme. We will denote by $\QCoh_{S}$, $\FCoh_{S}$,
$\Loc_{S}$ the stacks of quasi-coherent sheaves, finitely presented
quasi-coherent sheaves, locally free sheaves of finite rank over $S$
respectively. Let $\shF\in\QCoh S$. We define the functor $\WW(\shF)\colon(\Sch/S)^{\textup{op}}\arr\set$
as
\[
\WW(\shF)(U\arrdi fS)=\Hl^{0}(U,f^{*}\shF)
\]
Notice that if $\shF$ is a locally free sheaf of finite rank, then
$\WW(\shF)$ is smooth and affine over $S$. The expression $s\in\shF$
will always mean $s\in\shF(S)=\Hl^{0}(S,\shF)$. Moreover we will
denote by $V(s)$ the zero locus of $s$ in $S$, i.e. the closed
subscheme associated with the sheaf of ideals $\Ker(\odi S\arrdi s\shF)$.
Given an element $f=(a_{1},\dots,a_{r})\in\Z^{r}$ and invertible
sheaves $\shL_{1},\dots,\shL_{r}$ on a scheme we will use the notation
\[
\underline{\shL}^{f}=\bigotimes_{i}\shL_{i}^{\otimes a_{i}}\comma\ISym\underline{\shL}=\ISym(\shL_{1},\dots,\shL_{r})=\bigoplus_{g\in\Z^{r}}\underline{\shL}^{g}
\]
Notice also that, if $\shL_{i}=\odi S$ for all $i$, then there is
a canonical isomorphism $\underline{\shL}^{f}\simeq\odi{}$.

\subsection*{Representation theory.}

Let $S$ be a scheme. Given an affine group scheme $f\colon G\arr S$,
we will denote by $\odi S[G]=f_{*}\odi G$ its associated Hopf algebra
and by
\[
\Delta_{G}\colon\odi S[G]\arr\odi S[G]\otimes\odi S[G]\comma\varepsilon_{G}\colon\odi S[G]\arr\odi S\comma\sigma_{G}\colon\odi S[G]\arr\odi S[G]
\]
the co-multiplication, the co-unity and the co-inverse of $G$ respectively.
By an action of $G$ on a quasi-coherent sheaf $\shF$ over $S$ we
mean a left action of $G$ on $\WW(\shF)$, which corresponds to a
structure of right $\odi S[G]$-comodule $\shF\arr\shF\otimes\odi S[G]$.
We will often also call it a $G$-comodule structure or call $\shF$
a $G$-equivariant sheaf. By an action of $G$ on a $S$-scheme $X$
we mean a right action $X\times G\arr X$. If $X=\Spec\alA$, for
some $\odi S$-algebra $\alA$, this means that we have a $G$-comodule
structure $\alA\arr\alA\otimes\odi S[G]$ which is an algebra homomorphism,
or, equivalently, such that the multiplication $\alA\otimes\alA\arr\alA$
is $G$-equivariant and $1\in\alA^{G}$. 

Given functors $F\comma H\colon(\Sch/S)^{\text{op}}\arr\set$, left
actions of $G$ on $F$ and $H$ induce a left action on $\Homsh(F,H)$
given by   \[   \begin{tikzpicture}[xscale=5.0,yscale=-0.7]     \node (A0_0) at (0, 0) {$G\times \Homsh(F,H)$};     \node (A0_1) at (1, 0) {$\Homsh(F,H)$};     \node (A1_0) at (0, 1) {$(g,\varphi)$};     \node (A1_1) at (1, 1) {$g\varphi g^{-1}$};     \path (A0_0) edge [->]node [auto] {$\scriptstyle{}$} (A0_1);     \path (A1_0) edge [|->,gray]node [auto] {$\scriptstyle{}$} (A1_1);   \end{tikzpicture}   \] 
Let $\shF$ be a locally free sheaf of finite rank over $S$ and $\shH\in\QCoh S$
($\FCoh S$). Then $\Homsh(\shF,\shH)\in\QCoh S$ ($\FCoh S$) and
we have a natural isomorphism
\[
\WW(\Homsh(\shF,\shH))\arr\Homsh(\WW(\shF),\WW(\shH))
\]
In particular, actions of $G$ on $\shF$ and $\shH$ yield an action
of $G$ on $\Homsh(\shF,\shH)$. We denote by $\Homsh^{G}(\WW(\shF),\WW(\shH))$
(resp. $\Endsh^{G}\WW(\shF)$, $\Autsh^{G}\WW(\shF)$) the subfunctor
of $\Homsh(\WW(\shF),\WW(\shH))$ (resp. $\Endsh\WW(\shF)$, $\Autsh\WW(\shF)$)
given by the $G$-invariant elements, that are exactly the $G$-equivariant
morphisms. In particular we have 
\[
\WW(\Homsh(\shF,\shH))^{G}\simeq\Homsh(\WW(\shF),\WW(\shH))^{G}\simeq\Homsh^{G}(\WW(\shF),\WW(\shH))
\]
The subsheaf of $G$-invariants of $\Homsh(\shF,\shH)$, denoted by
$\Homsh^{G}(\shF,\shH)$, coincides with the subsheaf of morphisms
preserving the $G$-comodule structures. Finally set $\Endsh^{G}(\shF)=\Homsh^{G}(\shF,\shF)$.

We will denote by $\QCoh_{S}^{G}$, $\FCoh_{S}^{G}$, $\Loc_{S}^{G}$
the stacks over $S$ of $G$-equivariant quasi-coherent sheaves, finitely
presented quasi-coherent sheaves, locally free sheaves of finite rank
respectively. If $\shF\in\QCoh S$ we will denote by $\underline{\shF}\in\QCoh^{G}S$
the quasi-coherent sheaf $\shF$ with the trivial action of $G$.
Moreover if $\shF\in\QCoh^{G}S$ and $\delta\colon H\arr G$ is a
morphism from a group scheme $H$ over $S$ we will denote by $\R_{H}\shF\in\QCoh^{H}S$
the sheaf $\shF$ with the $H$-action induced by $\delta$.

By a subgroup scheme $H$ of a flat and finitely presented group scheme
$G$ we will always mean a subgroup which is a closed subscheme of
$G$ and it is flat and finitely presented over the base. If $N$
is an abelian group we set $\Di N=\Homsh_{\textup{groups}}(N,\Gm)$
for the diagonalizable group associated with it.

\chapter{Preliminaries on Galois covers.}

We fix a base scheme $S$ and a flat and finite group scheme $G$
finitely presented over $S$. In this chapter we want to introduce
some basic definitions about $G$-covers and prove some general results.
This is how the chapter is divided.

\emph{Section 1. }We define the notion of $G$-covers and we introduce
the stack $\GCov$ of $G$-covers. We will then prove that $\GCov$
is an algebraic stack containing $\Bi G$ as open substack and, as
examples, we will describe $\GCov$ for the groups $G=\mu_{2},\mu_{3},\alpha_{p}$.

\emph{Section 2. }We define the main irreducible component $\stZ_{G}$
of $\GCov$ as the schematic closure of $\Bi G$ in $\GCov$.

\emph{Section 3. }We show that the isomorphisms $\Bi G\simeq\Bi H$
correspond to $(G,H)$-bitorsors and we will explain how they induce
isomorphisms $\GCov\simeq\HCov$.

\section{The stack $\GCov$.}

We start defining the regular representation of a group on itself.
\begin{defn}
\label{def:regular representation}The (right) \emph{regular }action
of $G$ on itself is the action given by 
\[
G\times G\arr G\comma(x,g)\arr x\star g=g^{-1}x
\]
The \emph{regular representation of $G$ }over $S$ is the sheaf $\odi S[G]$
endowed with the co-module structure $\odi S[G]\arrdi{\mu_{G}}\odi S[G]\otimes\odi S[G]$
induced by the right regular action of $G$ on itself. By definition
$\mu_{G}$ is the composition
\[
\odi S[G]\arrdi{\Delta}\odi S[G]\otimes\odi S[G]\arrdi{\text{swap}}\odi S[G]\otimes\odi S[G]\arrdi{\id\otimes\sigma}\odi S[G]\otimes\odi S[G]
\]
\end{defn}
\begin{rem*}
We have chosen to define the regular action of $G$ on itself by $x\star g=g^{-1}x$
instead of the more usual $x\star g=xg$ because this makes computations
natural in other situations. Note that however these two actions are
isomorphic.
\end{rem*}
In what follows, we will denote by $\alA$ the regular representation.
\begin{defn}
\label{def:Gcovers}Given a scheme $T$ over $S$, a \emph{ramified
Galois cover of group $G$}, or simply a $G$\emph{-cover,} over it
is a cover $X\arrdi fT$ together with an action of $G_{T}$ on it
such that there exists an fppf covering $\{U_{i}\arr T\}$ and isomorphisms
of $G$-comodules
\[
(f_{*}\odi X)_{|U_{i}}\simeq\alA_{|U_{i}}
\]
We will call $\GCov(T)$ the groupoid of $G$-covers over $T$, where
the arrows are the $G$-equivariant isomorphisms of schemes over $T$.
\end{defn}
The $G$-covers form a stack $\GCov$ over $S$. Moreover any $G$-torsor
is a $G$-cover and more precisely we have:
\begin{prop}
\label{pro:BG open in GCov}$\Bi G$ is an open substack of $\GCov$.\end{prop}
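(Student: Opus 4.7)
The plan is to show that $\Bi G \hookrightarrow \GCov$ is representable by open immersions, i.e.\ for any $S$-scheme $T$ and any $G$-cover $f\colon X\to T$ the locus of $T$ over which $f$ is a $G$-torsor is an open subscheme of $T$.

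First I would recall the standard characterization: a $G$-cover $f\colon X\to T$ is a $G$-torsor if and only if the ``shearing'' map
\[
\Psi\colon X\times_{S} G \arr X\times_{T} X\comma (x,g)\longmapsto (x,xg)
\]
is an isomorphism of $T$-schemes. The ``only if'' direction is immediate from the fppf-local triviality of a torsor, and the ``if'' direction is the usual descent argument: working fppf-locally where $f$ admits a section, $\Psi$ being an isomorphism produces an equivariant trivialization $X\simeq G_T$. Since both being a $G$-cover and $\Psi$ being an isomorphism are fppf-local on $T$, this criterion genuinely detects the torsor locus.

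Next, I would observe that both $X\times_{S} G$ and $X\times_{T} X$ are finite, flat, and finitely presented over $T$: indeed $f$ is a cover, hence $f_*\odi X$ is locally free of finite rank on $T$, and $G$ is flat, finite and finitely presented over $S$. Moreover the two schemes have the same rank over $T$, namely $(\rk f)(\rk G)$; the equality holds fppf-locally (since a $G$-cover is fppf-locally $\alA$ of rank $\rk G$), and since both sheaves involved are already locally free on $T$, the ranks agree globally. Therefore $\Psi$ corresponds to a morphism
\[
\Psi^{\sharp}\colon f_*\odi X \otimes_{\odi T} f_*\odi X \arr f_*\odi X \otimes_{\odi T} \odi T[G]
\]
of locally free $\odi T$-modules of the same (finite) rank.

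Finally, the locus where a morphism of locally free sheaves of the same rank is an isomorphism is the complement of the vanishing locus of its determinant section (a section of a line bundle), hence open in $T$. Call this open subscheme $U\subseteq T$. By the criterion above $U$ represents the $2$-fibre product $T\times_{\GCov}\Bi G$, proving that $\Bi G\hookrightarrow\GCov$ is an open immersion. The main subtlety is just matching the ranks of $X\times_T X$ and $X\times_S G$ over $T$ and checking the characterization of torsors via $\Psi$; once these are in place the openness is a purely formal consequence of the determinantal criterion.
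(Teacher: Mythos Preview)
Your proof is correct and follows essentially the same approach as the paper: both use the characterization of a $G$-torsor via the shearing map $X\times_S G\to X\times_T X$ being an isomorphism, and then observe that the locus where a map of locally free sheaves of the same finite rank is an isomorphism is open. The only cosmetic difference is that the paper phrases openness as the vanishing locus of $\Coker h$ (i.e.\ surjectivity, which suffices once ranks agree), whereas you use the determinant section; these are equivalent, and you spell out more explicitly why the ranks coincide.
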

\begin{proof}
Given a scheme $U$ over S and a $G$-cover $X=\Spec\alB$ over $U$,
$X$ is a $G$-torsor if and only if the map $G\times X\arr X\times X$
is an isomorphism. This map is induced by a map $\alB\otimes\alB\arrdi h\alB\otimes\odi{}[G_{U}]$
and so the locus over which $X$ is a $G$-torsor is given by the
vanishing of $\Coker h$, which is an open subset.
\end{proof}
In order to prove that $\GCov$ is an algebraic stack we will present
it as a quotient stack by a smooth group scheme.
\begin{prop}
The functor   \[   \begin{tikzpicture}[xscale=4.5,yscale=-0.7]     \node (A0_0) at (0, 0) {$(\Sch/S)^{\textup{op}}$};     \node (A0_1) at (1, 0) {$\set$};     \node (A1_0) at (0, 1) {$T$};     \node (A1_1) at (1, 1) {$
\left\{ \begin{array}{c}
\text{algebra structures on }\alA_{T}\\
\text{in the category of }G\text{-comodules}
\end{array}\right\}
$};     \path (A0_0) edge [->] node [auto] {$\scriptstyle{X_G}$} (A0_1);     \path (A1_0) edge [|->,gray] node [auto] {$\scriptstyle{}$} (A1_1);   \end{tikzpicture}   \] is an affine scheme finitely presented over $S$.\end{prop}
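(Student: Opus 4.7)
The plan is to realize $X_G$ as a closed subscheme of a naturally occurring affine space of morphisms between locally free sheaves. Since $G\to S$ is finite, flat and finitely presented, the sheaf $\alA=\odi S[G]$ is locally free of finite rank; therefore so are $\alA\otimes\alA$ and $\Homsh(\alA\otimes\alA,\alA)$. The functor
\[
\WW\bigl(\Homsh(\alA\otimes\alA,\alA)\bigr)\colon T\longmapsto \Hom_{\odi T}(\alA_T\otimes\alA_T,\alA_T)
\]
is then represented by a vector bundle over $S$, which is in particular affine and of finite presentation over $S$. I would start by regarding $X_G$ as the subfunctor of this vector bundle consisting of those $\odi T$-linear maps $m$ that, together with the canonical unit $1\in\alA$ (a global $G$-invariant section defined on the base), define a commutative, associative, unital and $G$-equivariant algebra structure.

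The next step is to check that each of the four conditions (associativity, commutativity, unitality, $G$-equivariance) cuts out a closed subscheme of $\WW(\Homsh(\alA\otimes\alA,\alA))$. In each case the condition is the equality of two morphisms between locally free sheaves: for instance associativity is $m\circ(m\otimes\id)=m\circ(\id\otimes m)$ as maps $\alA^{\otimes 3}\to\alA$; commutativity is $m=m\circ\tau$; unitality amounts to $m\circ(1\otimes\id)=\id=m\circ(\id\otimes 1)$; and $G$-equivariance is the equality of two maps $\alA\otimes\alA\to\alA\otimes\odi S[G]$ built from $m$ and the comodule structures. Each such equality is the vanishing of a section of a locally free sheaf on the total space of $\WW(\Homsh(\alA\otimes\alA,\alA))$, hence a closed condition. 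Intersecting the four closed subschemes yields a closed subscheme, which is affine and of finite presentation over $S$; this is the representing scheme for $X_G$.

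There is essentially no main obstacle: the only points to be careful about are (a) that the canonical unit $1\in\alA$ is a globally defined, $G$-invariant section, so that the unit can be fixed once and for all and we do not need to enlarge the parameter space, and (b) that each of the associativity/commutativity/unitality/equivariance conditions is expressed as an equality of maps between \emph{locally free} sheaves, which is what makes the corresponding locus a closed (and not merely locally closed) subscheme and keeps finite presentation intact. Once these verifications are made, the representability statement follows formally from the fact that a closed subscheme of an affine, finitely presented $S$-scheme is again affine and finitely presented.
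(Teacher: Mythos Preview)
Your approach is essentially the paper's---embed $X_G$ into a vector bundle of $\odi T$-linear maps and cut out the algebra axioms as zero loci of sections of finite-rank locally free sheaves---but there is one discrepancy about the unit that you should fix. You declare that the unit can be taken to be the canonical $1\in\alA$ once and for all, and then parametrize only the multiplication $m$. The paper, by contrast, reads ``algebra structure'' as a pair $(m,e)$ with the unit map $e\colon\odi T\to\alA_T$ part of the data; since $e$ must be a $G$-equivariant isomorphism onto $\alA_T^G=\odi T$, it amounts to an element of $\Gm(T)$, and the paper therefore embeds $X_G$ into $\Homsh(\WW(\alA\otimes\alA),\WW(\alA))\times\Gm$ rather than into the first factor alone. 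This is not merely cosmetic: in the next proposition the full group $\Autsh^G\WW(\alA)$, which does \emph{not} fix $1$, acts on $X_G$, and that action needs the extra $\Gm$ direction. Your scheme is the fibre over $1\in\Gm$; the paper's $X_G$ is isomorphic to your scheme times $\Gm$ via $(m,e)\mapsto(e(1)\cdot m,\,e(1))$, so your argument does yield the proposition after this one-line observation, but as written you are representing a strictly smaller functor than the one in the statement.

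One further imprecision: your final sentence claims that a closed subscheme of an affine, finitely presented $S$-scheme is again finitely presented. Over a non-noetherian base this is false in general (take a non-finitely-generated ideal). What you actually use, and what is true, is that each of your conditions is the vanishing of a section of a locally free sheaf of \emph{finite rank}, hence is cut out locally by finitely many equations; intersecting finitely many such loci stays finitely presented. State it that way and the conclusion is clean.
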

\begin{proof}
Let $T$ be a scheme over $S$. An element of $X_{G}(T)$ is given
by maps
\[
\alA_{T}\otimes\alA_{T}\arrdi m\alA_{T}\comma\odi T\arrdi e\alA_{T}
\]
for which $\alA$ becomes a sheaf of algebras with multiplication
$m$ and identity $e(1)$ and such that $\mu$ is a homomorphism of
algebras over $\odi T$. In particular $e$ has to be an isomorphism
onto $\alA^{G}=\odi T$. Therefore we have an inclusion $X_{G}\subseteq\Homsh(\WW(\alA\otimes\alA),\WW(\alA))\times\mathbb{G}_{m}$,
which turns out to be a closed immersion, since locally, after we
choose a basis of $\alA$, the above conditions translate into the
vanishing of certain polynomials.\end{proof}
\begin{prop}
$\Autsh^{G}\WW(\alA)$ is a smooth group scheme finitely presented
over $S$.\end{prop}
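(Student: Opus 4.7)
The plan is to exhibit $\Autsh^{G}\WW(\alA)$ as an open subscheme of an affine bundle over $S$ associated with a locally free sheaf of finite rank, from which smoothness and finite presentation follow immediately.

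First, since $G\arr S$ is finite, flat and finitely presented, the sheaf $\alA=\odi S[G]$ is locally free of finite rank, so $\WW(\alA)$ is a geometric vector bundle over $S$ and $\Endsh(\WW(\alA))\simeq\WW(\Endsh_{\odi S}(\alA))$ is affine and smooth of finite presentation. The functor $\Autsh(\WW(\alA))=\GL(\alA)$ is the open locus of invertible endomorphisms. Now $\Endsh^{G}(\WW(\alA))$ sits inside $\Endsh(\WW(\alA))$ and, by the description of the conjugation $G$-action recalled just before the statement, coincides with $\WW(\Endsh^{G}(\alA))$, where $\Endsh^{G}(\alA)$ denotes the sheaf of $G$-equivariant $\odi S$-linear endomorphisms.

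The key step is to identify $\Endsh^{G}(\alA)$ with a locally free sheaf of finite rank; the natural candidate is $\alA$ itself, via a Frobenius-reciprocity isomorphism for the regular representation. Explicitly, I would construct a natural map $\alA\arr\Endsh^{G}(\alA)$ sending a section $a$ to ``right multiplication by $a$'', and a map in the opposite direction sending an equivariant endomorphism $\varphi$ to $(\varepsilon_{G}\otimes\id)\circ\mu_{G}\circ\varphi(1)$, i.e. the component of $\varphi(1)$ picked out by the counit. A direct computation using the definition of $\mu_{G}$ in terms of $\Delta_{G}$ and $\sigma_{G}$ (Definition~\ref{def:regular representation}) shows that these are mutually inverse isomorphisms of $\odi S$-modules; this is the standard fact that $\Homsh^{G}(\alA,\shF)\simeq\shF$ naturally for any $G$-comodule $\shF$. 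The verification is a purely formal Hopf-algebra manipulation, and I expect this to be the main (though routine) obstacle.

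Granting the isomorphism $\Endsh^{G}(\alA)\simeq\alA$, we obtain an isomorphism of $S$-schemes $\Endsh^{G}(\WW(\alA))\simeq\WW(\alA)$, so $\Endsh^{G}(\WW(\alA))$ is affine, smooth and of finite presentation over $S$. Finally, $\Autsh^{G}\WW(\alA)$ is exactly the locus inside $\Endsh^{G}(\WW(\alA))$ where the determinant is invertible, which is open; hence $\Autsh^{G}\WW(\alA)$ is open in a smooth affine $S$-scheme of finite presentation, therefore itself smooth, affine and of finite presentation. The group scheme structure is immediate from composition of equivariant automorphisms.
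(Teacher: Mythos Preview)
Your overall strategy is exactly the paper's: show that $\Endsh^{G}\WW(\alA)$ is a vector bundle over $S$ and then take the open locus of invertibles. The gap is in the specific isomorphism you propose.

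The map $a\mapsto r_a$ (``right multiplication by $a$'' in the commutative ring $\odi S[G]$) is \emph{not} $G$-equivariant for the regular comodule structure $\mu_G$ unless $a$ is $G$-invariant. Indeed $\mu_G$ is a ring homomorphism (it is built from $\Delta_G$, the swap, and $\sigma_G$, all ring maps), so $\mu_G(xa)=\mu_G(x)\mu_G(a)$, and this equals $(r_a\otimes\id)\mu_G(x)=\mu_G(x)(a\otimes 1)$ only when $\mu_G(a)=a\otimes 1$, i.e.\ $a\in\alA^{G}$. Your inverse map has the mirror problem: since $\mu_G(1)=1\otimes 1$, every equivariant $\varphi$ sends $1$ into $\alA^{G}$, so the assignment $\varphi\mapsto(\varepsilon_G\otimes\id)\mu_G(\varphi(1))=\sigma_G(\varphi(1))$ factors through the rank-one submodule $\odi S\cdot 1$ and cannot be injective. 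The ``standard fact'' $\Homsh^{G}(\alA,\shF)\simeq\shF$ you invoke is true for the \emph{dual} of the regular representation, not for $\alA$ with $\mu_G$; you may be conflating the function algebra $\odi S[G]$ with the group algebra $\duale{\odi S[G]}$.

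The paper fixes this by identifying $\Endsh^{G}(\alA)$ with $\duale{\odi S[G]}$ rather than with $\alA$: the maps $\varphi\mapsto\varepsilon_G\circ\varphi$ and $f\mapsto(f\otimes\id)\circ\Delta_G$ are mutually inverse, and the counit axiom makes the verification immediate. Once you have $\Endsh^{G}\WW(\alA)\simeq\WW(\duale{\odi S[G]})$, your concluding paragraph goes through unchanged.
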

\begin{proof}
If $T$ is an $S$-scheme, the morphisms   \[   \begin{tikzpicture}[xscale=3,yscale=-0.6]     \node (A0_0) at (0, 0) {$\varepsilon \circ \phi$};     \node (A0_1) at (1, 0) {$\phi$};     \node (A1_0) at (0, 1) {$\duale{\odi{T}[G]}$};     \node (A1_1) at (1, 1) {$\Endsh^G (\alA\otimes\odi T)$};     \node (A2_0) at (0, 2) {$f$};     \node (A2_1) at (1, 2) {$(f \otimes \id) \circ \Delta$};     \path (A1_0) edge [->]node [auto] {$\scriptstyle{}$} (A1_1);     \path (A2_0) edge [|->,gray]node [auto] {$\scriptstyle{}$} (A2_1);     \path (A0_1) edge [|->,gray]node [auto] {$\scriptstyle{}$} (A0_0);   \end{tikzpicture}   \] 
where $\Delta$ and $\varepsilon$ are respectively the co-multiplication
and the co-unit of $\odi T[G]$, are inverses of each other. Since
\[
\WW(\duale{\odi S[G]})\simeq\Homsh(\WW(\odi S[G]),\WW(\odi S))
\]
we obtain an isomorphism $\Endsh^{G}\WW(\alA)\simeq\WW(\duale{\odi S[G]})$,
so that $\Endsh^{G}\WW(\alA)$ and its open subscheme $\Autsh^{G}\WW(\alA)$
are smooth and finitely presented over $S$.\end{proof}
\begin{rem}
$\Autsh^{G}\WW(\alA)$ acts on $X_{G}$ in the following way. Given
a scheme $T$ over $S$, a $G$-equivariant automorphism $f\colon\alA_{T}\arr\alA_{T}$
and $(m,e)\in X_{G}(T)$ we can set $f(m,e)$ for the unique structure
of sheaf of algebras on $\alA_{T}$ such that $f\colon(\alA_{T},m,e)\arr(\alA_{T},f(m,e))$
is an isomorphism of $\odi T$-algebras.\end{rem}
\begin{prop}
\label{pro:GCov is a quotient stack}The map $X_{G}\arrdi{\pi}\GCov$,
which sends a structure of algebra $\chi\in X_{G}(T)$ on $\alA_{T}$
to the cover $\Spec(\alA_{T},\chi)\arr T$ is an $\Autsh^{G}\WW(\alA)$-torsor.
In particular 
\[
\GCov\simeq[X_{G}/\Autsh^{G}\WW(\alA)]
\]
\end{prop}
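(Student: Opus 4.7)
The plan is to verify directly that $\pi\colon X_G\to\GCov$ is an fppf $\Autsh^G\WW(\alA)$-torsor over $\GCov$; the final assertion $\GCov\simeq[X_G/\Autsh^G\WW(\alA)]$ is then automatic from the definition of the quotient stack.

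First, I note that $\pi$ is $\Autsh^G\WW(\alA)$-invariant essentially by construction. Given a scheme $T$ over $S$, a $G$-equivariant automorphism $f\in\Autsh^G\WW(\alA)(T)$ and $\chi\in X_G(T)$, the element $f(\chi)\in X_G(T)$ is defined precisely so that $f\colon(\alA_T,\chi)\to(\alA_T,f(\chi))$ becomes a $G$-equivariant isomorphism of $\odi T$-algebras, hence $\Spec(\alA_T,\chi)\simeq\Spec(\alA_T,f(\chi))$ as $G$-covers of $T$.

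Next I show fppf surjectivity of $\pi$. If $X\arrdi hT$ is any $G$-cover, the definition of $G$-cover yields an fppf covering $\{U_i\to T\}$ together with isomorphisms $(h_*\odi X)_{|U_i}\simeq\alA_{|U_i}$ of $G$-comodules. Transporting the $\odi{U_i}$-algebra structure of $(h_*\odi X)_{|U_i}$ through this isomorphism produces a well-defined element $\chi_i\in X_G(U_i)$ whose image under $\pi$ is the restriction of $h\colon X\to T$ to $U_i$. This shows $\pi$ is an epimorphism in the fppf topology.

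The key step is checking that the action map
\[
\Phi\colon\Autsh^G\WW(\alA)\times_S X_G\arr X_G\times_{\GCov}X_G\comma (f,\chi)\longmapsto(\chi,f(\chi))
\]
is an isomorphism. An inverse is produced as follows: a $T$-point of the right-hand side is a pair $(\chi_1,\chi_2)\in X_G(T)^2$ together with a $G$-equivariant isomorphism $\varphi\colon\Spec(\alA_T,\chi_2)\to\Spec(\alA_T,\chi_1)$ of $T$-schemes, which on global sections is exactly a $G$-equivariant $\odi T$-algebra isomorphism $f\colon(\alA_T,\chi_1)\to(\alA_T,\chi_2)$. Forgetting the algebra structure, $f$ is a $G$-equivariant automorphism of $\alA_T$, and by definition $f(\chi_1)=\chi_2$; sending $(\chi_1,\chi_2,\varphi)$ to $(f,\chi_1)$ gives an inverse to $\Phi$. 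The main point I have to be careful about here is that the transport-of-structure definition of $f(\chi)$ makes the assignment $(\chi_1,\chi_2,\varphi)\mapsto f$ genuinely functorial in $T$ and really an inverse; this is a formal verification from the definitions. Once $\pi$ is an fppf torsor, the canonical map $[X_G/\Autsh^G\WW(\alA)]\to\GCov$ is an equivalence of stacks, giving the claimed presentation of $\GCov$.
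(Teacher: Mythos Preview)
Your proof is correct and follows essentially the same approach as the paper: both verify the torsor condition by identifying a $T$-point of $X_G\times_{\GCov}X_G$ with a pair of algebra structures on $\alA_T$ together with a $G$-equivariant algebra isomorphism between them, which is exactly an element of $\Autsh^G\WW(\alA)(T)$ carrying one structure to the other. The paper phrases this via an arbitrary base change $U\to\GCov$ and then reduces to the case where $U$ factors through $X_G$, while you work directly with the action map $\Autsh^G\WW(\alA)\times_S X_G\to X_G\times_{\GCov}X_G$; these are equivalent formulations of the same verification.
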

\begin{proof}
Consider a cartesian diagram   \[   \begin{tikzpicture}[xscale=1.8,yscale=-1.2]     \node (A0_0) at (0, 0) {$P$};     \node (A0_1) at (1, 0) {$X_G$};     \node (A1_0) at (0, 1) {$U$};     \node (A1_1) at (1, 1) {$\GCov$};     \path (A0_0) edge [->] node [auto] {$\scriptstyle{}$} (A0_1);     \path (A1_0) edge [->] node [auto,swap] {$\scriptstyle{f}$} (A1_1);     \path (A0_1) edge [->] node [auto] {$\scriptstyle{\pi}$} (A1_1);     \path (A0_0) edge [->] node [auto] {$\scriptstyle{}$} (A1_0);   \end{tikzpicture}   \] 
where $U$ is a scheme and $f\colon Y\arr U$ is a $G$-cover. We
want to prove that $P$ is an $\Autsh^{G}\WW(\alA)$ torsor over $U$
and that the map $P\arr X_{G}$ is equivariant. Since $\pi$ is an
fppf epimorphism, we can assume that $f$ comes from $X_{G}$, i.e.
$f_{*}\odi Y=\alA_{U}$ with multiplication $m$ and neutral element
$e$. It is now easy to prove that   \[   \begin{tikzpicture}[xscale=2.1,yscale=-0.5]     \node (A0_0) at (0, 0) {$\Autsh^G \WW(\alA_U)$};     \node (A0_1) at (1, 0) {$P$};     \node (A1_0) at (0, 1) {$h$};     \node (A1_1) at (1, 1) {$h(m,e)$};     \path (A0_0) edge [->]node [auto] {$\scriptstyle{\simeq}$} (A0_1);     \path (A1_0) edge [|->,gray]node [auto] {$\scriptstyle{}$} (A1_1);   \end{tikzpicture}   \] 
is a bijection and that all the other claims hold.
\end{proof}
Using above propositions we can conclude that:
\begin{thm}
\label{cor:GCov is algebraic}The stack $\GCov$ is algebraic and
finitely presented over $S$.
\end{thm}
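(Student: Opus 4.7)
The plan is to deduce the theorem directly from the preceding propositions, since essentially all of the real content has already been assembled: we have the concrete presentation
\[
\GCov \simeq \bigl[X_G / \Autsh^G \WW(\alA)\bigr]
\]
coming from the torsor map $\pi\colon X_G \arr \GCov$, together with the facts that $X_G$ is an affine scheme of finite presentation over $S$ and $\Autsh^G \WW(\alA)$ is a smooth group scheme of finite presentation over $S$. So my strategy is to invoke the standard principle that a quotient stack of a finitely presented algebraic space by a smooth finitely presented group scheme is an algebraic stack of finite presentation, and then check the individual hypotheses.

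Concretely, I would proceed in three short steps. First, I would note that $\pi\colon X_G \arr \GCov$ is an fppf $\Autsh^G\WW(\alA)$-torsor by the previous proposition; since the group scheme is smooth over $S$, and smoothness is fppf-local on the target, $\pi$ is in particular a smooth surjection. Thus $X_G$ plays the role of a smooth atlas, once we know the diagonal is representable. Second, for the diagonal of $\GCov$: given two $G$-covers $X_1, X_2$ over an $S$-scheme $T$, the sheaf $\Isosh^G_T(X_1, X_2)$ of $G$-equivariant $T$-isomorphisms is an open subscheme of $\Homsh^G_T((f_2)_*\odi{X_2}, (f_1)_*\odi{X_1})$, which is representable by an affine scheme of finite presentation (arguing exactly as in the representability of $X_G$). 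Hence the diagonal $\GCov \arr \GCov \times_S \GCov$ is representable, affine and of finite presentation. Combined with the existence of the smooth atlas $X_G$, this gives that $\GCov$ is an algebraic stack in the sense of Artin.

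Finally, for finite presentation: $X_G$ is finitely presented over $S$, and the smooth atlas $X_G \arr \GCov$ is also finitely presented (it is an $\Autsh^G\WW(\alA)$-torsor and the group is finitely presented over $S$, so the map is quasi-compact, quasi-separated and locally of finite presentation). Therefore $\GCov$ is itself of finite presentation over $S$.

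The main obstacle, honestly, is not really an obstacle at all at this stage: the entire difficulty has been absorbed into the construction of $X_G$ and the identification $\GCov \simeq [X_G/\Autsh^G\WW(\alA)]$. The only thing one has to be a little careful about is the representability and finiteness properties of the diagonal, which I would handle as above by observing that $\Isosh^G$ between two $G$-covers is an open subscheme of a scheme of $G$-equivariant morphisms of locally free sheaves, just as was done when constructing $X_G$ and $\Autsh^G\WW(\alA)$.
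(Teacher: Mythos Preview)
Your proposal is correct and follows exactly the paper's approach: the paper simply writes ``Using above propositions we can conclude that'' and deduces the theorem immediately from the quotient presentation $\GCov \simeq [X_G/\Autsh^G\WW(\alA)]$, together with the finite presentation of $X_G$ and the smoothness and finite presentation of $\Autsh^G\WW(\alA)$. Your added detail on the representability of the diagonal is fine but not strictly needed here, since for quotient stacks $[X/H]$ with $H$ a smooth affine group scheme this is automatic.
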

We want now to discuss some examples of $G$-covers. The simplest
possible case, is the trivial group $G$ over $\Z$: clearly, in this
case, the $G$-covers are the isomorphisms and $\GCov\simeq\Spec\Z$.
Probably the following examples are more interesting.
\begin{example}
$G=\mu_{2}=\Di{\Z/2\Z}$ over $\Z$. This is very classical. A $\mu_{2}$-cover
over a scheme $T$ is given by an invertible sheaf $\shL$ over $T$
with a morphism $\shL^{2}\arr\odi T$, where the induced $\mu_{2}$-cover
is $\Spec\alA$, $\alA=\odi T\oplus\shL$. In particular
\[
\RCov{\mu_{2}}\simeq[\A^{1}/\Gm]
\]
is smooth and irreducible.
\end{example}

\begin{example}
$G=\mu_{3}=\Di{\Z/3\Z}$ over $\Z$. In \cite[Lemma 6.2]{Arsie2004},
the authors prove that the data consisting of invertible sheaves $\shL_{1},\shL_{2}$
over a scheme $T$ and maps $\shL_{1}^{2}\arr\shL_{2},\shL_{2}^{2}\arr\shL_{1}$
yields a unique algebra structure on $\odi T\oplus\shL_{1}\oplus\shL_{2}$.
It is not difficult to see (and we will prove in the next chapter)that
all $\mu_{3}$-covers can be built in this way. In particular 
\[
\RCov{\mu_{3}}\simeq[\A^{2}/\mathbb{G}_{m}^{2}]
\]
is smooth and irreducible.
\end{example}
In the next chapter, we will see that those cases and the case $\mu_{2}\times\mu_{2}$
are the unique ones for which $\GCov$ has a similar description if
$G$ is diagonalizable (see \ref{pro:smooth DMCov} and \ref{cor:description of mutwotimesmutwo Cov}).
As a last example, we want to describe $\alpha_{p}$-covers. Remember
that $\alpha_{p}$ is the group scheme over $\F_{p}$ representing
the functor
\[
\alpha_{p}\colon(\Sch/\F_{p})^{op}\arr\sets\comma\alpha_{p}(X)=\{x\in\odi X\st x^{p}=0\}<\mathbb{G}_{a}(X)
\]
or, equivalently, the kernel of the Frobenius map $\mathbb{G}_{a}\arr\mathbb{G}_{a}$.
The result in this case is quite unexpected from the definition.
\begin{prop}
\label{prop:Balphap alphapCov}Let $p$ be a prime. We have an isomorphism
of $\F_{p}$-stacks 
\[
\Bi\alpha_{p}=\RCov{\alpha_{p}}\simeq[\A^{1}/\mathbb{G}_{a}]
\]
where the action of $\mathbb{G}_{a}$ on $\A^{1}$ is given by $\A^{1}\times\mathbb{G}_{a}\arr\A^{1}$,
$(x,y)\longmapsto x+y^{p}$. In particular every $\alpha_{p}$-cover
is an $\alpha_{p}$-torsor and $\RCov{\alpha_{p}}$ is smooth and
irreducible.\end{prop}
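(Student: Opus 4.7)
The plan is to produce an explicit local description of $\alpha_{p}$-covers and use it to identify the stack. Given $f\colon X\arr T$ in $\RCov{\alpha_{p}}$, I work fppf-locally so $T=\Spec R$ and fix a comodule isomorphism $\shB:=f_{*}\odi X\simeq\alA_{R}=R[t]/(t^{p})$. A direct computation with the definition of the regular representation gives $\mu_{\alA}(t)=t\otimes 1-1\otimes t$; let $v\in\shB$ be the element corresponding to $t$. The crucial step is the characteristic-$p$ identity: since the coaction is an $R$-algebra homomorphism,
\[
\mu_{\shB}(v^{p})=\mu_{\shB}(v)^{p}=(v\otimes 1-1\otimes t)^{p}=v^{p}\otimes 1-1\otimes t^{p}=v^{p}\otimes 1,
\]
using $t^{p}=0$ in $\alA$. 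Hence $v^{p}$ is $\alpha_{p}$-invariant, so $v^{p}=a$ for some $a\in\shB^{\alpha_{p}}=R$ (the last equality being the standard computation $\alA^{\alpha_{p}}=R$). Denoting by $e_{k}\in\shB$ the basis element corresponding to $t^{k}$, an easy induction comparing $\mu_{\shB}(v^{k})=(v\otimes 1-1\otimes t)^{k}$ with the analogous expansion of $\mu_{\shB}(e_{k})$ shows that $v^{k}-e_{k}$ is invariant, hence lies in $R$. The change of basis $\{e_{k}\}\leadsto\{v^{k}\}$ is therefore upper unitriangular, giving $\shB\simeq R[v]/(v^{p}-a)$ as $R$-algebras with the stated coaction.

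Next I will verify that every such cover is automatically an $\alpha_{p}$-torsor. Setting $w=v_{2}-v_{1}$,
\[
\shB\otimes_{R}\shB=R[v_{1},v_{2}]/(v_{1}^{p}-a,\,v_{2}^{p}-a)=R[v_{1},w]/(v_{1}^{p}-a,\,w^{p}),
\]
since $w^{p}=v_{2}^{p}-v_{1}^{p}=0$, and the right-hand side is $\shB\otimes_{R}\F_{p}[\alpha_{p}]$. One checks that this isomorphism realizes the canonical map $\alpha_{p}\times X\arr X\times_{T}X$, so $X$ is an $\alpha_{p}$-torsor, giving the equality $\Bi\alpha_{p}=\RCov{\alpha_{p}}$.

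To identify the stack with $[\A^{1}/\mathbb{G}_{a}]$, the family $a\longmapsto R[v]/(v^{p}-a)$ defines a morphism $\A^{1}\arr\Bi\alpha_{p}$ which is fppf-surjective by the first step. An $\alpha_{p}$-equivariant $R$-algebra isomorphism $\phi\colon R[v]/(v^{p}-a)\arr R[v']/(v'^{p}-a')$ is determined by $\phi(v)$, which satisfies $\mu(\phi(v))=\phi(v)\otimes 1-1\otimes t$; by the same inductive comparison used above, $\phi(v)=v'+b$ for a unique $b\in R$, and the algebra condition then forces $(v'+b)^{p}=a$, that is $a=a'+b^{p}$. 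Thus the isomorphism groupoid of $\A^{1}\arr\Bi\alpha_{p}$ is precisely that of the $\mathbb{G}_{a}$-action $(x,y)\longmapsto x+y^{p}$ on $\A^{1}$, so $\Bi\alpha_{p}\simeq[\A^{1}/\mathbb{G}_{a}]$; smoothness and irreducibility are immediate. The main obstacle is really the starting computation $v^{p}\in R$: it is the characteristic-$p$ miracle that collapses the binomial expansion and makes $\alpha_{p}$-covers rigid enough to be torsors, after which everything else (the triangular change of basis, the tensor-product computation, and the automorphism analysis) is routine bookkeeping.
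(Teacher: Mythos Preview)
Your overall strategy is sound and parallels the paper's own proof. Both arguments establish that, after trivializing the comodule, the powers $1,v,\dots,v^{p-1}$ form an $R$-basis of $\shB$, whence $\shB\simeq R[v]/(v^{p}-a)$; the paper reaches the same conclusion via the equivalent derivation description (an $\alpha_{p}$-action on an algebra being a derivation $\partial$ with $\partial^{p}=0$), proving the analogous triangularity $y_{1}^{k}-y_{k}\in\la 1,y_{1},\dots,y_{1}^{k-1}\ra_{R}$.

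There is, however, a genuine error in your intermediate step: the claim ``$v^{k}-e_{k}$ is invariant, hence lies in $R$'' is false for $k\geq 3$. Writing $c_{j}=v^{j}-e_{j}$, your comparison gives
\[
\mu_{\shB}(v^{k}-e_{k})=(v^{k}-e_{k})\otimes 1+\sum_{j=0}^{k-1}\tbinom{k}{j}(-1)^{k-j}\,c_{j}\otimes t^{\,k-j},
\]
and while $c_{0}=c_{1}=0$ (so the claim does hold for $k=2$, producing some $c_{2}\in R$), already at $k=3$ the tail carries the term $-3c_{2}\otimes t$, which need not vanish. Thus $v^{3}-e_{3}$ is in general not invariant, and your induction breaks.

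What is true, and is all you need, is the weaker upper-unitriangular statement $v^{k}\in e_{k}+\la e_{0},\dots,e_{k-1}\ra_{R}$. This follows directly, without induction: write $v^{k}=\sum_{i}a_{k,i}e_{i}$ and compare the coefficient of $e_{0}\otimes t^{m}$ on both sides of $\mu_{\shB}(v^{k})=\sum_{i}a_{k,i}\mu_{\shB}(e_{i})$ for $m\geq k$. The left side contributes $(-1)^{k}$ when $m=k$ (from the $j=0$ term of $(v\otimes 1-1\otimes t)^{k}$) and $0$ when $m>k$, while the right side gives $(-1)^{m}a_{k,m}$; hence $a_{k,k}=1$ and $a_{k,m}=0$ for $m>k$. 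With this correction the remainder of your argument --- the torsor check via $w=v_{2}-v_{1}$ and the automorphism computation $\phi(v)=v'+b$ --- is correct.
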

\begin{proof}
If $S$ is an $\F_{p}$-scheme an $\alpha_{p}$-action on a quasi-coherent
sheaf $\shF$ is given by a morphism $\gamma\colon\shF\arr\shF$ such
that $\gamma^{p}=0$ (see \cite[II, § 2, 2.7]{Demazure1970}). If
$(\shF,\gamma)$ and $(\shF',\gamma')$ are $\alpha_{p}$-equivariant
quasi-coherent sheaves, the representation $\shF\otimes\shF'$ is
given by $\gamma\otimes\id+\id\otimes\gamma'$. In particular if $\shF$
has an algebra structure, the multiplication is $\alpha_{p}$-equivariant
if and only if $\gamma$ is an $\odi S$-derivation. In conclusion
an $\alpha_{p}$-action on an affine scheme $X=\Spec\alA$ over $S$
is given by an $\odi S$-derivation $\partial\colon\alA\arr\alA$
such that $\partial^{p}=0$. Moreover it is easy to check that the
regular representation is $\odi S[\alpha_{p}]=\odi S[x]/(x^{p})$
with the usual derivation of polynomials.

We define the map $\phi\colon\A^{1}\arr\RCov{\alpha_{p}}$ induced
by the $\alpha_{p}$-cover over $\A^{1}=\Spec\F_{p}[z]$ given by
$A=\F_{p}[z,y]/(y^{p}-z)$ with the derivation $\partial/\partial y$.
This is an $\alpha_{p}$-torsor because the ring homomorphism 
\[
A[\alpha_{p}]=A[x]/(x^{p})\arr A\otimes_{\F_{p}[z]}A\simeq A[x]/(x^{p}-z)\text{ given by }x\longmapsto x-y
\]
is an $\alpha_{p}$-equivariant isomorphism. Moreover note that, if
$R$ is a ring and $a,b\in R$, then the $\alpha_{p}$-equivariant
isomorphisms 
\[
R[x]/(x^{p}-a)\arrdi{\psi}R[x]/(x^{p}-b)
\]
are all of the form $\psi(x)=x+u$, where $u\in R$ is such that $a=b+u^{p}$.
Therefore it remains to prove that $\phi$ is an epimorphism. The
question is local. So let $R$ be a ring and $A\in\RCov{\alpha_{p}}(R)$
with an $R$-derivation $\partial$ such that $A$, as $\alpha_{p}$-module,
is the regular representation. In particular there exists a basis
$y_{0},y_{1,}\dots,y_{p-1}$ such that $y_{0}=1$ and $\partial y_{i}=iy_{i-1}$,
where we have set $y_{-1}=0$. It is easy to prove by induction that
\[
y_{1}^{k}-y_{k}\in\la1,y_{1},\dots,y_{1}^{k-1}\ra_{R}\text{ for all }k=0,\dots,p-1
\]
In particular we can write $A\simeq R[y]/(y^{p}-f(y))$ with $\partial y=1$
and $\deg f<p$. Moreover the relation $\partial y^{p}=0=\partial f(y)$
tells us that $f\equiv b\in R$, as required.\end{proof}
\begin{example}
\label{ex: Romagny example} Let $k$ be a field of characteristic
$p>0$. We construct a cover $f\colon\A_{k}^{1}\arr\A_{k}^{1}$ and
actions of $\mu_{p}$ and $\alpha_{p}$ on $\A_{k}^{1}$ such that
$f$ is invariant, it is a torsor for both groups over $\mathbb{G}_{m,k}$,
but $f$ is not an $\alpha_{p}$-cover. This shows that for Galois
covers the acting group is not uniquely determined by the cover, as
it happens in the étale case. Moreover, the property of being a $G$-cover
is not closed in general, while this is true, as we will see, for
linearly reductive groups (see \ref{lem:G covers property closed for linearly reductive groups}).
This example has been suggested by Prof. Romagny.

As map $f$ consider the inclusion $k[x^{p}]\subseteq k[x]=A$. In
particular $A\simeq k[x^{p}][y]/(y^{p}-x^{p})$ and the action of
$\mu_{p}$ is given by setting $\deg y=1\in\Z/p\Z$. It is easy to
check by a direct computation that $f$ is a $\mu_{p}$-torsor over
$\Gm$. The right action of $\alpha_{p}$ on $\A_{k}^{1}$ is functorially
given by the expression
\[
z\star s=\frac{z}{1-sz}\text{ for }z\in\A_{k}^{1}(T)\comma s\in\alpha_{p}(T)\comma T\in\Sch/k
\]
Note that the expression $z\star s=z'$ for $z,z'\in\Gm(T)$ is equivalent
to $s=(z'-z)/zz'$ and such $s$ belongs to $\alpha_{p}(T)$ if and
only if $z^{p}=z'^{p}$, that is $f(z)=f(z')$. In particular $f$
is an $\alpha_{p}$-torsor over $\Gm$. The map $f$ is not an $\alpha_{p}$-cover,
or, equivalently, not an $\alpha_{p}$-torsor, because $0\star s=0$
for $0\in\A_{k}^{1}(T),s\in\alpha_{p}(T),T\in\Sch/k$.
\end{example}

\section{The main irreducible component $\stZ_{G}$.}

In this subsection we want to introduce what we will call the main
irreducible component of $\GCov$. In order to do that we recall what
is a schematic closure and some of its properties.
\begin{defn}
Let $\stX$ be an algebraic stack and $\stU\subseteq\stX$ be an open
substack. We will say that $\stU$ is schematically dense in $\stX$
if for any factorization
\[
\stU\arr\stZ\arrdi j\stX
\]
where $j$ is a closed immersion, $j$ is an isomorphism.
\end{defn}
Taking into account \cite[Theorem 11.10.5]{EGAIV-3} and extending
this result to algebraic stacks by taking an atlas, we get
\begin{prop}
\label{prop:properties of schematic closure}Let $\stX$ be an algebraic
stack and $\stU$ be a quasi-compact open substack. Then there exists
a minimum closed substack $\stZ$ of $\stX$ containing $\stU$ and
$\stU$ is schematically dense in $\stZ$. Moreover $\stZ$ is the
closed substack defined by the ideal $\Ker(\odi{\stX}\arr\odi{\stU})$
and $\stU$ is topologically dense in $\stZ$. Finally, if $f\colon\stX'\arr\stX$
is flat, then $\stX'\times_{\stX}\stU$ is schematically dense in
$\stX'\times_{\stX}\stZ$.\end{prop}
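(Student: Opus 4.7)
The plan is to reduce everything to the scheme-theoretic statement \cite[Theorem 11.10.5]{EGAIV-3} via a smooth presentation of $\stX$. I would begin by choosing a smooth groupoid presentation $s,t\colon X_1 \rightrightarrows X_0$ of $\stX$, writing $p\colon X_0 \arr \stX$ for the atlas. The open substack $\stU$ pulls back to open subschemes $U_0 \subseteq X_0$ and $U_1 \subseteq X_1$ that form a sub-groupoid, and because $\stU$ is quasi-compact I can arrange $U_0$, and hence $U_1 = s^{-1}(U_0) = t^{-1}(U_0)$, to be quasi-compact (restricting to a quasi-compact piece of the atlas if necessary).

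Next I would apply the EGA theorem at each level: it gives a minimum closed subscheme $Z_i \subseteq X_i$ containing $U_i$, defined by the ideal $\Ker(\odi{X_i} \arr \odi{U_i})$, and asserts schematic and topological density of $U_i$ in $Z_i$. The crucial point is to check that $Z_1 \rightrightarrows Z_0$ is again a groupoid; this is exactly where the last clause of the proposition enters at the scheme level. Since $s,t$ are smooth, hence flat, the scheme $s^{-1}(Z_0)$ is the schematic closure of $s^{-1}(U_0) = U_1$ in $X_1$, and therefore equals $Z_1$ (and similarly for $t$); an analogous flat-base-change argument handles the composition map $X_1 \times_{X_0} X_1 \arr X_1$ and the identity section, so the collection $\{Z_i\}$ is a closed sub-groupoid. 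The quotient stack it presents is the desired closed substack $\stZ \subseteq \stX$.

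I would then verify the claimed properties by descent from the atlas. Minimality of $\stZ$ follows because any closed substack containing $\stU$ pulls back to a closed subscheme of $X_0$ containing $U_0$, forcing it to contain $Z_0$. The description of $\stZ$ by the ideal sheaf $\Ker(\odi{\stX} \arr \odi{\stU})$ is obtained by smooth (faithfully flat) descent of quasi-coherent ideals from $X_0$ to $\stX$. Schematic density and topological density in $\stZ$ also descend from $X_0$, the latter using that $|\stX|$ has the quotient topology with respect to $|X_0|$. For the final statement, pulling back the atlas along the flat morphism $f\colon\stX'\arr\stX$ yields a smooth presentation of $\stX'$, and the scheme-level compatibility of schematic closure with flat base change gives the result.

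The main obstacle I anticipate is the verification that the $Z_i$ assemble into a sub-groupoid; everything else is bookkeeping. That step, however, is precisely the flat-base-change clause of the proposition applied at the scheme level, so in the end the entire proof rests on (and in fact mildly generalizes) the cited EGA result.
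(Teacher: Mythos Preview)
Your proposal is correct and follows exactly the approach the paper indicates: the paper's own ``proof'' is nothing more than the sentence ``Taking into account \cite[Theorem 11.10.5]{EGAIV-3} and extending this result to algebraic stacks by taking an atlas, we get'', and your sketch simply unpacks what this extension entails. In fact you have supplied considerably more detail than the paper does, including the key observation that flat base change of schematic closures is what guarantees the $Z_i$ form a sub-groupoid.
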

\begin{defn}
If $\stX$, $\stU$ and $\stZ$ are as in \ref{prop:properties of schematic closure},
we will call $\stZ$ the \emph{schematic closure }of $\stU$ in $\stX$.
\end{defn}

\begin{defn}
\label{def:the main component of GCov} We define the stack $\stZ_{G}$
as the schematic closure of $\Bi G$ in $\GCov$ and we will call
it the \emph{main irreducible component of $\GCov$.}
\end{defn}
Notice that, when the base scheme is irreducible, then $\stZ_{G}$
is an irreducible component because $\Bi G\subseteq\GCov$ is an irreducible
open substack. The formation of $\stZ_{G}$ commutes with flat base
changes of the base. Thus, if $G$ is defined over a field, $\stZ_{G}$
commutes with arbitrary base changes and it is therefore geometrically
integral.

\section{Bitorsors and Galois covers.}

It is a well known result that $G$-equivariant quasi-coherent sheaves
can also be thought of as quasi-coherent sheaves on the stack $\Bi G$.
In this section we want to use this point of view in order to show
examples of groups $G$ and $H$ for which $\GCov\simeq\HCov$. The
idea is that such an isomorphism can be defined as soon as we have
an isomorphism $\phi\colon\Bi G\arr\Bi H$, using the push-forward
$\phi_{*}$ of quasi-coherent sheaves. We will meet this situation
when we will study $(\mu_{3}\rtimes\Z/2\Z)$-covers and $S_{3}$-covers:
in this section we will prove that, over the ring $\Z[1/6]$, we have
isomorphisms
\[
\Bi(\mu_{3}\rtimes\Z/2\Z)\simeq\Bi S_{3}\text{ and }\RCov{(\mu_{3}\rtimes\Z/2\Z)}\simeq\RCov{S_{3}}
\]
The content of this section can also be found in \cite[Chapter III, Remarque 1.6.7]{Giraud1971}.
We start showing that isomorphisms $\Bi G\simeq\Bi H$ correspond
to what we will call $(G,H)$-bitorsors. This correspondence works
in great generality, that is as soon as we can talk about torsors
and we will present it from a categorical point of view. We refer
to \cite[Part 1]{Fantechi2007} for definitions and properties used
in this section. In what follows we consider given a site $\catC$,
that is a category endowed with a Grothendieck topology. We assume
that the site $\catC$ satisfies the following condition: given an
object $S\in\catC$ there exists a set of coverings $\stU$ of $S$
such that any covering of $S$ is refined by some covering in $\stU$.
This condition insures that any functor $\catC^{op}\arr\set$ can
be sheafified and it is satisfied by the site $\Sch/S$, where $S$
is a scheme, with the fppf topology or the étale topology, which is
the case in which we will apply the theory explained below.

We introduce now the concept of $(G,H)$-biactions and $(G,H)$-bitorsors.
\begin{defn}
\label{def:bitorsors}Let $G\colon\catC^{op}\arr\Grp$ be a sheaf
of groups. We will denote by $\Sh^{G}/\catC$ the fibered category
of sheaves over $\catC$ with a right $G$-action. The left (resp.
right) regular action of $G$ on itself is the left (resp. right)
action given by 
\[
G\times G\arr G\comma(g,h)\arr gh
\]
A left (resp. right) $G$-torsor is a sheaf $P\colon\catC^{op}\arr\set$
with a left action $G\times P\arr P$ (resp. right action $P\times G\arr P$)
for which $P$ is locally isomorphic to $G$ endowed with the left
(resp. right) regular action.

If $H$ is another sheaf of groups $\catC^{op}\arr\Grp$, a $(G,H)$-biaction
on a sheaf $P\colon\catC^{op}\arr\set$ is a pair $(G\times P\arrdi uP,P\times H\arrdi vP)$
where $u$ and $v$ are, respectively, a left $G$-action and a right
$H$-action on $P$, such that the following diagram is commutative.
  \[   \begin{tikzpicture}[xscale=3.0,yscale=-1.2]     \node (A0_0) at (0, 0) {$G\times P \times H$};     \node (A0_1) at (1, 0) {$P\times H$};     \node (A1_0) at (0, 1) {$G\times P$};     \node (A1_1) at (1, 1) {$P$};     \path (A0_0) edge [->]node [auto] {$\scriptstyle{u\times \id_H}$} (A0_1);     \path (A0_0) edge [->]node [auto] {$\scriptstyle{\id_G \times v}$} (A1_0);     \path (A0_1) edge [->]node [auto] {$\scriptstyle{v}$} (A1_1);     \path (A1_0) edge [->]node [auto] {$\scriptstyle{u}$} (A1_1);   \end{tikzpicture}   \] 
A $(G,H)$-bitorsor is a sheaf $P\colon\catC^{op}\arr\set$ with a
$(G,H)$-biaction for which $P$ is both a left $G$-torsor and a
right $H$-torsor. Denote by $\Bi(G,H)$ the fibered category over
$\catC$ of $(G,H)$-bitorsors.\end{defn}
\begin{rem}
Note that the right regular representation introduced above differs
from the one introduced in \ref{def:regular representation}. The
above definition will be used only in this section because it will
simplify the exposition. On the other hand, since the two actions
are isomorphic, it is clear that the results obtained below are independent
of the choice of the regular representation to use.
\end{rem}

\begin{rem}
The fibered category $\Bi(G,H)$ is a stack over $\catC$. This is
easy to prove directly, using the fact that $(\Sh/\catC)$, the fibered
category of sheaves of sets over $\catC$, is a stack (see \cite[Part 1, Example 4.11]{Fantechi2007}).
Otherwise this can be seen as consequence of the isomorphism proved
in \ref{prop:bitorsors and isomorphisms BG - BH}.
\end{rem}

\begin{rem}
Given a left $G$-action $u\colon P\times G\arr P$ and a right $H$-action
$v\colon P\times H\arr P$ , the pair $(u,v)$ is a $(G,H)$-biaction
if and only if the homomorphism $G\arr\Autsh P$ induced by $u$ factors
through $\Autsh^{H}P$, that is if $G$ acts through $H$-equivariant
isomorphisms.
\end{rem}

\begin{rem}
Let $G$ be a sheaf of groups over $\catC$. If $X\in(\Sh^{G}/\catC)$
there always exists a categorical quotient $X\arr X/G$, that is a
map through which all the $G$-invariant maps $X\arr Y$, where $Y$
is a sheaf, uniquely factor. Indeed the quotient $X/G$ is the sheafification
of the functor $\catC^{op}\arr\set$ that associates with an object
$S$ the set $X(S)/G(S)$. In particular, if $H$ is another sheaf
of groups, $T\in\catC$ and $P\in\Bi(G,H)(T)$, $X\times P$ has a
right action of $G$ given by $(x,p)g=(xg,g^{-1}p)$ and we can consider
the quotient $(X\times P)/G$, which has a right $H$-action induced
by the one of $P$.\end{rem}
\begin{prop}
\label{prop:bitorsors and isomorphisms SHG - SHH}Let $G$ and $H$
be sheaves of groups over $\catC$. Then the association   \[   \begin{tikzpicture}[xscale=5.0,yscale=-0.6]     \node (A0_0) at (0, 0) {$\Bi(G,H)$};     \node (A0_1) at (1, 0) {$\Isosh_{\catC}((\Sh^G/\catC),(\Sh^H/\catC))$};     \node (A1_0) at (0, 1) {$P$};     \node (A1_1) at (1, 1) {$(X\longmapsto (X\times P)/G)$};     \path (A0_0) edge [->]node [auto] {$\scriptstyle{\Lambda}$} (A0_1);     \path (A1_0) edge [|->,gray]node [auto] {$\scriptstyle{}$} (A1_1);   \end{tikzpicture}   \] 
is a functor of fibered categories. If $T\in\catC$, $X\in(\Sh^{G}/\catC)(T)$,
$G\times T\simeq H\times T$ and $P$ is a trivial $(G,H)$-bitorsor
over $T$, then there exists a natural isomorphism $\Lambda_{P}(X)\simeq X$
as objects of $\catC$.
\end{prop}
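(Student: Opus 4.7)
The plan is to first verify that $\Lambda_{P}$ is well-defined and functorial in $P$, then to produce an explicit quasi-inverse showing each $\Lambda_{P}$ is an isomorphism of fibered categories, and finally to read off the trivialization statement as an immediate consequence.

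First, I would check that for $P\in\Bi(G,H)(T)$ and $X\in(\Sh^{G}/\catC)(T)$ the categorical quotient $(X\times P)/G$, where $G$ acts on the right by $(x,p)\cdot g=(xg,g^{-1}p)$, carries a canonical right $H$-action descended from the right $H$-action on $P$: this uses precisely the commutativity of the left $G$- and right $H$-actions on $P$ required in Definition \ref{def:bitorsors}. A morphism of $G$-sheaves $X\arr X'$ induces a map of $H$-quotients, giving a functor $\Lambda_{P}\colon(\Sh^{G}/\catC)_{|T}\arr(\Sh^{H}/\catC)_{|T}$, and a morphism of bitorsors $P\arr P'$ yields a natural transformation $\Lambda_{P}\Rightarrow\Lambda_{P'}$. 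This defines $\Lambda$ as a morphism of fibered categories landing a priori in the stack of functors between $(\Sh^{G}/\catC)$ and $(\Sh^{H}/\catC)$.

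Second, to show $\Lambda$ actually takes values in the substack $\Isosh_{\catC}$, I would introduce the opposite bitorsor $P^{-1}$, namely $P$ viewed as an $(H,G)$-bitorsor under the new actions $h\cdot p:=ph^{-1}$ and $p\cdot g:=g^{-1}p$. The same construction applied to $P^{-1}$ produces a functor in the opposite direction, and the natural candidates for unit and counit
\[
X\arr\Lambda_{P^{-1}}(\Lambda_{P}(X))\comma Y\arr\Lambda_{P}(\Lambda_{P^{-1}}(Y))
\]
are easily written down. Checking that they are isomorphisms is a local question in $\catC$, so it suffices to verify it after passing to a cover that trivializes $P$, whereupon everything reduces to the tautological identity $(Z\times G)/G\simeq Z$ valid for any right $G$-sheaf $Z$.

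Third, for the trivialization claim, a section $p_{0}\in P(T)$ identifies $P\simeq G_{T}$ as a left $G$-torsor via $g\longmapsto gp_{0}$; hence the map $x\longmapsto[x,p_{0}]$ fits in a chain $X\simeq(X\times G_{T})/G\simeq(X\times P)/G=\Lambda_{P}(X)$, which is the desired isomorphism of sheaves on $\catC$ and is manifestly natural in $X$. I expect the only real obstacle in writing out the proof to be purely notational: one must keep track consistently of the passages between left and right actions, both in the definition of the $G$-action on $X\times P$ and in the construction of $P^{-1}$, but once these sign conventions are fixed every verification reduces either to the trivial case $P=G_{T}$ or to a direct application of the universal property of the categorical quotient.
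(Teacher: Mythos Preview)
Your proof plan is correct and complete. The paper does not actually prove this proposition; it states ``The proof of the above statement is not difficult and left to the reader,'' so your argument supplies precisely what was omitted, via the standard route (check well-definedness, build the quasi-inverse from the opposite bitorsor $P^{-1}$, reduce to the trivial case locally, and for the last claim use a section of $P$).
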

The proof of the above statement is not difficult and left to the
reader.
\begin{prop}
\label{prop:bitorsors and isomorphisms BG - BH}The functor $\Lambda$
of \ref{prop:bitorsors and isomorphisms SHG - SHH} induces an isomorphism
$\Bi(G,H)\arr\Isosh(\Bi G,\Bi H)$ whose inverse is given by $\phi\longmapsto\phi(G)$,
where the left $G$-action on $\phi(G)$ is given by $G\simeq\Aut^{G}G\simeq\Aut^{H}\phi(G)$.
In particular $(G,H)$-bitorsors are $H$-torsors $P$ with an isomorphism
$G\simeq\Autsh^{H}P$.\end{prop}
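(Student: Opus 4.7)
The plan is to prove the proposition by constructing an explicit inverse $\Psi$ to $\Lambda$, defined by $\Psi(\phi) = \phi(G)$ with the bitorsor structure described in the statement, and then verifying that the compositions $\Psi\circ\Lambda$ and $\Lambda\circ\Psi$ are naturally isomorphic to the identity. Throughout, I will view $G$ both as a sheaf of groups and as the trivial right $G$-torsor acted on the left by $G$ via left multiplication.

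First I would show that $\Lambda$ restricts to a functor $\Bi(G,H)\arr\Isosh(\Bi G,\Bi H)$. Given $P\in\Bi(G,H)(T)$, the previous Proposition produces an isomorphism $\Lambda_P\colon(\Sh^G/\catC)_{|T}\arr(\Sh^H/\catC)_{|T}$, and over a covering trivializing $P$ both as a left $G$-torsor and as a right $H$-torsor we have $\Lambda_P(X)\simeq X$ naturally; since the property of being a torsor is local, $\Lambda_P$ sends $\Bi G$ to $\Bi H$. Next I would define the candidate inverse $\Psi$: given an isomorphism $\phi\colon\Bi G\arr\Bi H$ over $T$, let $\Psi(\phi)=\phi(G)$ (with $G$ the trivial right $G$-torsor). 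The canonical identification $G\simeq\Autsh^G G$ by left multiplication, composed with the functor $\phi$, yields $G\simeq\Autsh^G G\simeq\Autsh^H\phi(G)$; this provides a left $G$-action on $\phi(G)$ through $H$-equivariant automorphisms, i.e. a $(G,H)$-biaction on $\phi(G)$. Since this left action is simply transitive locally (it becomes the left regular action after any trivialization of $\phi(G)$), $\phi(G)$ is a $(G,H)$-bitorsor.

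Next I would verify the two composites are isomorphic to identities. For $\Psi\circ\Lambda$, given $P\in\Bi(G,H)$, the map
\[
G\times P\arr P\comma (g,p)\longmapsto gp
\]
is $G$-invariant for the action $(g,p)\cdot g'=(gg',g'^{-1}p)$ and descends to a map $(G\times P)/G\arr P$ which is an isomorphism of $(G,H)$-bitorsors (one checks this locally, after trivializing $P$). For $\Lambda\circ\Psi$, given $\phi$ and $P=\phi(G)$, we need a natural isomorphism $\Lambda_P(X)=(X\times P)/G\simeq\phi(X)$ for $X\in\Bi G$. Working locally on a covering that trivializes $X$, the isomorphism of right $G$-torsors $G\arr X$ determined by a chosen section induces, via $\phi$, an isomorphism $P=\phi(G)\arr\phi(X)$; combined with the isomorphism $(G\times P)/G\simeq P$ from the previous step this produces a local isomorphism $(X\times P)/G\simeq\phi(X)$. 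The remaining task is to show this local isomorphism is independent of the trivialization, which follows because two trivializations differ by left multiplication by some $g\in G$, and $\phi$ sends this automorphism to the element of $\Autsh^H P$ coming from the left $G$-action on $P$, exactly compensating the change of the quotient presentation.

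The last assertion is then a rewriting: a $(G,H)$-bitorsor is precisely a right $H$-torsor $P$ equipped with a left $G$-action by $H$-equivariant automorphisms that is simply transitive, and the latter condition is equivalent to the induced morphism $G\arr\Autsh^H P$ being an isomorphism. The main obstacle will not be any individual step but rather the bookkeeping: one must track the left/right conventions carefully, and in particular verify that all the identifications (the descent of $(g,p)\mapsto gp$, the compatibility of local trivializations under $\phi$, and the comparison $G\simeq\Autsh^G G\simeq\Autsh^H\phi(G)$) are simultaneously $G$-equivariant on the left and $H$-equivariant on the right.
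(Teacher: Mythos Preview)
Your proposal is correct and follows the same overall strategy as the paper: construct the inverse $\Psi(\phi)=\phi(G)$ and verify both composites are the identity. The paper, however, handles the verification of $\Lambda_{\phi(G)}\simeq\phi$ more directly. Instead of trivializing $X$ locally, choosing a section, and then checking that the resulting local isomorphisms glue, the paper observes that there is a canonical global identification $Q\simeq\Homsh^{G}(G,Q)$ (a point of $Q$ is the same as a right-$G$-equivariant map $G\to Q$), and then the map
\[
Q\times\phi(G)\simeq\Homsh^{G}(G,Q)\times\phi(G)\arr\phi(Q),\qquad (f,p)\longmapsto\phi(f)(p)
\]
is automatically $G$-invariant and functorial in $Q$, so it descends to the desired natural isomorphism $(Q\times\phi(G))/G\simeq\phi(Q)$ without any local-to-global argument. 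Your gluing verification is essentially unpacking this same map after choosing a local section, so the content is identical; the paper's formulation just packages it more cleanly and avoids the bookkeeping you flagged as the main obstacle.
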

\begin{proof}
The only non trivial point is showing the existence of an isomorphism
$\Lambda_{\phi(G)}\simeq\phi$. This is induced by the map
\[
Q\times\phi(G)\simeq\Homsh^{G}(G,Q)\times\phi(G)\arr\phi(Q)
\]
for $Q\in(\Sh^{G}/\catC)$, functorially in $Q$.\end{proof}
\begin{cor}
Let $G$ be a sheaf of groups over $\catC$. Then the sheaves of groups
$H$ for which there exists an isomorphism $\Bi H\simeq\Bi G$ are
the sheaves $\Autsh^{G}P$ for $P\in\Bi G$.
\end{cor}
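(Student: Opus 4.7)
The plan is to read this corollary as an almost immediate consequence of Proposition \ref{prop:bitorsors and isomorphisms BG - BH}, once one notes that the notion of $(G,H)$-bitorsor is symmetric under simultaneously swapping left with right and interchanging the roles of $G$ and $H$. Applying the proposition with the roles of $G$ and $H$ exchanged gives an equivalence $\Bi(H,G) \simeq \Isosh(\Bi H, \Bi G)$, and the obvious identification $\Bi(G,H) \simeq \Bi(H,G)$ (send $P$ to $P$ with the left $H$-action $h \cdot p := p h^{-1}$ and the right $G$-action $p \cdot g := g^{-1} p$) yields the symmetric description: a $(G,H)$-bitorsor is the same thing as a $G$-torsor $P$ together with an isomorphism of sheaves of groups $H \simeq \Autsh^{G} P$.

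Next I would verify the two implications. For the ``if'' direction, given $P \in \Bi G$, set $H := \Autsh^{G} P$. The tautological action of $H$ on $P$ commutes with the $G$-action by definition, so $(P, H)$ is naturally a $(G,H)$-bitorsor with the identity as the required isomorphism $H \simeq \Autsh^{G} P$. Proposition \ref{prop:bitorsors and isomorphisms BG - BH} then produces an isomorphism $\Bi G \simeq \Bi H$. For the ``only if'' direction, any isomorphism $\Bi H \simeq \Bi G$ corresponds, via the proposition, to some $P \in \Bi(G,H)$; by the symmetric description of the first paragraph, this $P$ is a $G$-torsor with $H \simeq \Autsh^{G} P$, exhibiting $H$ in the claimed form.

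There is essentially no hard step here; the whole content is a formal repackaging of Proposition \ref{prop:bitorsors and isomorphisms BG - BH}. The only thing to be careful about is keeping the left/right bookkeeping consistent when invoking the symmetric description of bitorsors, and checking that the tautological $H$-action on $P$ and the identification $H = \Autsh^{G} P$ actually give back, under the functor $\Lambda$, an isomorphism inverse to the one one started with --- but this is precisely the content of the inverse $\phi \longmapsto \phi(G)$ already written down in Proposition \ref{prop:bitorsors and isomorphisms BG - BH}, applied in the symmetric setting.
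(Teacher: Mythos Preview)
Your argument is correct and is exactly the intended one: the paper states this corollary without proof, regarding it as immediate from Proposition~\ref{prop:bitorsors and isomorphisms BG - BH}. Your explicit unwinding of the left/right symmetry $\Bi(G,H)\simeq\Bi(H,G)$ and the resulting identification of $(G,H)$-bitorsors with $G$-torsors $P$ equipped with $H\simeq\Autsh^{G}P$ is precisely what is needed, and both directions follow as you describe.
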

We now want to describe two examples of non trivial bitorsors.
\begin{example}
If $G$ and $H$ are sheaves of groups of $\catC$ set $P=\Isosh(G,H)$.
The maps
\[
\Autsh G\times P\arr P\comma P\times\Autsh(H)\arr P\text{, both given by }(\phi,\psi)\longmapsto\phi\circ\psi
\]
induce a $(\Autsh G,\Autsh H)$-action on $\Isosh(G,H)$ and, if $G$
and $H$ are locally isomorphic, then $\Isosh(G,H)$ is a $(\Autsh G,\Autsh H)$-bitorsor.
In particular, in this case, we obtain an isomorphism
\[
\Bi\Autsh(G)\simeq\Bi\Autsh(H)
\]

\end{example}
The second bitorsor we want to describe is a refinement of the previous
one.
\begin{prop}
\label{prop:bitorsors and semidirect products}Let $G$ and $H$ be
sheaves of groups $\catC^{op}\arr\Grp$ and set $P=G\times\Isosh(H,G)$.
The maps
\[
P\times(H\rtimes\Autsh H)\arr P\comma(G\rtimes\Autsh G)\times P\arr P\text{, both given by }(x,\phi)\cdot(y,\psi)=(x\phi(y),\phi\psi)
\]
define a $((G\rtimes\Autsh G),(H\rtimes\Autsh H))$-action on $P$
and, if $G$ and $H$ are locally isomorphic, then $P$ is a $((G\rtimes\Autsh G),(H\rtimes\Autsh H))$-bitorsor.
In particular, in this case, we have an isomorphism
\[
\Bi(G\rtimes\Autsh G)\simeq\Bi(H\rtimes\Autsh H)
\]
and, if $\Lambda_{P}\colon(\Sh^{G\rtimes\Autsh G}/\catC)\arr(\Sh^{H\rtimes\Autsh H}/\catC)$
is the functor defined in \ref{prop:bitorsors and isomorphisms SHG - SHH},
we have a canonical isomorphism
\[
(X/\Autsh G)\simeq(\Lambda_{P}(X)/\Autsh H)\text{ for all }X\in(\Sh^{G\rtimes\Autsh G}/\catC)
\]
\end{prop}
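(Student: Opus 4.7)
My plan has three parts. First, I would verify the formula $(x,\phi)\cdot(y,\psi) = (x\phi(y), \phi\psi)$ defines a right action of $H \rtimes \Autsh H$ and a left action of $G \rtimes \Autsh G$ on $P$. Using the semidirect-product rule $(y_1, \psi_1)(y_2, \psi_2) = (y_1\psi_1(y_2), \psi_1\psi_2)$, associativity of either action reduces to the identity $\phi(y_1 \psi_1(y_2)) = \phi(y_1)\phi(\psi_1(y_2)) = \phi(y_1)(\phi\psi_1)(y_2)$, which just says $\phi$ is a homomorphism. The biaction axiom between the two actions similarly reduces to $\phi'(x\phi(y)) = \phi'(x)\phi'(\phi(y))$.

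Second, to establish the torsor property under the hypothesis that $G$ and $H$ are locally isomorphic, I would work fppf locally where $\Isosh(H, G)$ admits a section $\phi_0$, so $(1, \phi_0) \in P$. The orbit map $H \rtimes \Autsh H \arr P$ sending $(y, \psi)$ to $(\phi_0(y), \phi_0\psi)$ is a bijection, since a point $(a, \beta) \in P$ is hit uniquely by $\psi = \phi_0^{-1}\beta$, $y = \phi_0^{-1}(a)$. The symmetric argument with $(x, \phi) \longmapsto (x, \phi\phi_0)$ shows $P$ is a left $G \rtimes \Autsh G$-torsor. The isomorphism $\Bi(G \rtimes \Autsh G) \simeq \Bi(H \rtimes \Autsh H)$ then follows by invoking \ref{prop:bitorsors and isomorphisms BG - BH}.

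For the canonical quotient isomorphism, the plan is to compute $\Lambda_P(X) = (X \times P)/(G \rtimes \Autsh G)$ in stages, using that $G \triangleleft G \rtimes \Autsh G$. Since $G$ acts freely on $P$ through the first factor, every $G$-orbit in $X \times P$ has a unique representative of the form $(x', 1, \psi)$, giving $(X \times P)/G \simeq X \times \Isosh(H, G)$ via $(x, a, \psi) \longmapsto (xa, \psi)$. A direct computation shows that on this model the residual right $\Autsh G$ action is the twisted diagonal $(x, \psi)\cdot\theta = (x\theta, \theta^{-1}\psi)$, while the right $H \rtimes \Autsh H$ action inherited from $P$ becomes $(x, \psi)\cdot(y, \eta) = (x\psi(y), \psi\eta)$; restricting to $\Autsh H$ gives $(x, \psi)\cdot\eta = (x, \psi\eta)$, which commutes with the $\Autsh G$ action. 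Quotienting first by $\Autsh H$, which acts on $\Isosh(H, G)$ as a torsor locally, identifies $(X \times \Isosh(H, G))/\Autsh H$ with $X$ via the first projection; the residual $\Autsh G$ action then descends to the original right action of $\Autsh G \subset G \rtimes \Autsh G$ on $X$, so a further quotient yields $X/\Autsh G$. The main obstacle is the bookkeeping in this last step, namely tracking how the original single action on $X \times P$ decomposes into commuting pieces on the intermediate model and verifying the $\Autsh G$-equivariance of the identification with $X$.
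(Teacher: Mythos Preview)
Your proposal is correct, and the first two parts match the paper's treatment almost verbatim. For the quotient isomorphism, however, the paper takes a shorter route than your staged computation: it simply writes down the map
\[
\pi\colon X\times P = X\times G\times\Isosh(H,G)\arr X,\qquad \pi(x,g,\phi)=x\cdot(g,\id_G),
\]
and checks directly the two identities $\pi(z\cdot(u,\psi))=\pi(z)\cdot(1_G,\psi)$ for $(u,\psi)\in G\rtimes\Autsh G$ and $\pi(z\cdot(1_H,\delta))=\pi(z)$ for $\delta\in\Autsh H$. These immediately show that $\pi$ descends to a map $\Lambda_P(X)/\Autsh H\arr X/\Autsh G$, and a local inverse $x\longmapsto(x,1_G,\phi_0)$ finishes the argument. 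Your staged quotient (first by the normal $G$, then swap the commuting $\Autsh G$ and $\Autsh H$ actions, then collapse the $\Autsh H$-torsor) is really the same map unfolded: your composite $(x,a,\psi)\longmapsto(x\cdot(a,\id_G),\psi)\longmapsto x\cdot(a,\id_G)$ is exactly $\pi$. The paper's direct formula sidesteps the bookkeeping you flagged as the main obstacle, at the cost of having to guess the right $\pi$; your approach explains structurally why this $\pi$ is the natural candidate.
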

\begin{proof}
A direct computation shows that the maps in the statement yield compatible
actions. Moreover, if $\gamma\colon G\arr H$ is an isomorphism, it
is also straightforward to check that the maps $g\longmapsto g\cdot\gamma$
and $h\longmapsto\gamma\cdot h$ are equivariant isomorphisms $G\rtimes\Autsh G\arr P$
and $H\rtimes\Autsh H\arr P$ respectively. Finally consider the map
\[
\pi\colon X\times P=X\times G\times\Isosh(H,G)\arr X\text{ given by }\pi(x,g,\phi)=x(g,\id_{G})
\]
It is easy to check that $\pi(z(u,\psi))=\pi(z)(1_{G},\psi)$ and
$\pi(z(1_{H},\delta))=\pi(z)$ for all $(u,\psi)\in G\rtimes\Autsh G$
and $(1_{H},\delta)\in H\rtimes\Autsh H$. In particular $\pi$ yields
a map $(\Lambda_{P}(X)/\Autsh H)\arr(X/\Autsh G)$. This is an isomorphism
since it is so locally, i.e. when we have an isomorphism $H\arrdi{\phi}G$:
in this case the inverse is given by $x\arr(x,1_{G},\phi)$.\end{proof}
\begin{example}
\label{ex:SThree covers and the other group}Consider the group $G=\mu_{n}$
and $H=\Z/n\Z$ over $\Z[1/n]$: they are étale locally isomorphic
and therefore
\[
\Bi(\mu_{n}\rtimes(\Z/n\Z)^{*})\simeq\Bi(\Z/n\Z\rtimes(\Z/n\Z)^{*})
\]
In particular for $n=3$ we get $\Bi(\mu_{3}\rtimes\Z/2\Z)\simeq\Bi S_{3}$.
\end{example}
This is the connection with the theory of Galois covers.
\begin{thm}
\label{thm:bitorsors and GCov}Let $G$ and $H$ be flat, finite and
finitely presented group schemes over a base scheme $S$. If $P$
is a fppf $(G,H)$-bitorsor over $S$, the functor $\Lambda_{P}$
of \ref{prop:bitorsors and isomorphisms SHG - SHH} induces an isomorphism
  \[   \begin{tikzpicture}[xscale=3.0,yscale=-0.6]     \node (A1_0) at (0, 1) {$\GCov$};     \node (A1_1) at (1, 1) {$\HCov$};     \node (A2_0) at (0, 2) {$X$};     \node (A2_1) at (1, 2) {$(X\times P)/G$};     \path (A1_0) edge [->]node [auto] {$\scriptstyle{\Lambda_P}$} (A1_1);     \path (A2_0) edge [|->,gray]node [auto] {$\scriptstyle{}$} (A2_1);   \end{tikzpicture}   \] In
particular, given an $S$-scheme $Y$ and $X\in\GCov(Y)$, the cover
$X\arr Y$ (resp. the $S$-scheme $X$) is fppf locally isomorphic
to the cover $\Lambda_{P}(X)\arr Y$ (resp. the $S$-scheme $\Lambda_{P}(X)$)
and therefore they share all the properties that are local and satisfy
descent in the fppf topology. Moreover if $G$ or $H$ is étale, the
same conclusion follows for the étale topology.\end{thm}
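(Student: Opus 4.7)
The strategy is to show that $\Lambda_{P}$ restricts to an equivalence $\GCov\arr\HCov$ by a direct descent argument, and then to deduce the local isomorphism assertion as a formal corollary. First I would verify that if $X\in\GCov(T)$, then $\Lambda_{P}(X)=(X\times_{S}P)/G$ is representable by an $S$-scheme and defines an $H$-cover of $T$. Since $P$ is a left $G$-torsor, the diagonal $G$-action $(x,p)\cdot g=(xg,g^{-1}p)$ on $X\times_{S}P$ is free; after choosing an fppf cover $\{T_{j}\arr T\}$ which trivializes both $X$ as a $G$-cover and the pullback of $P$ as a bitorsor, the trivialization-induced natural isomorphism $\Lambda_{P}(X)_{|T_{j}}\simeq X_{|T_{j}}$ from Proposition \ref{prop:bitorsors and isomorphisms SHG - SHH} shows that $\Lambda_{P}(X)_{|T_{j}}$ is representable and is a trivial $H$-torsor. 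By fppf descent of schemes and of the property of being an $H$-cover, $\Lambda_{P}(X)\in\HCov(T)$. For the inverse $\HCov\arr\GCov$, I would use the opposite $(H,G)$-bitorsor $P^{\text{op}}$ (same underlying sheaf, with actions $h\cdot p=ph^{-1}$ and $p\cdot g=g^{-1}p$); the canonical natural isomorphisms $\Lambda_{P^{\text{op}}}\circ\Lambda_{P}\simeq\id$ and $\Lambda_{P}\circ\Lambda_{P^{\text{op}}}\simeq\id$ are verified fppf-locally by trivialization and hence globally by descent.

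For the \emph{in particular} statement, fix $Y\arr S$ and $X\in\GCov(Y)$ and choose any fppf covering $\{S_{i}\arr S\}$ trivializing $P$. Pulling back along $Y\arr S$ yields an fppf covering of $Y$ over which Proposition \ref{prop:bitorsors and isomorphisms SHG - SHH} produces natural isomorphisms $\Lambda_{P}(X)\simeq X$ of $S$-schemes, compatible with the projections to $Y$. Thus $\Lambda_{P}(X)$ and $X$ coincide fppf locally both as $S$-schemes and as covers of $Y$, so they share every property which is local and satisfies fppf descent.

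For the \'etale refinement, when $G$ is \'etale the $(G,H)$-bitorsor $P\arr S$ is itself a finite \'etale map (being locally $G$), and it trivializes itself as a left $G$-torsor via the standard isomorphism $P\times_{S}P\simeq G\times_{S}P$ over $P$; any such trivialization of $P$ as a left $G$-torsor promotes automatically to a trivialization as a bitorsor, because the compatible right $H$-action on the trivial $G$-torsor is forced by the canonical identification $H\simeq\Autsh^{G}(P)\simeq\Autsh^{G}(G)$ given by right translation. If instead $H$ is \'etale, apply the same reasoning to $P^{\text{op}}$. The main conceptual point, which requires a little care, is that the local identifications $\Lambda_{P}(X)\simeq X$ are isomorphisms of underlying $S$-schemes rather than of equivariant objects --- the very purpose of $\Lambda_{P}$ is to change the acting group from $G$ to $H$, and the content of the theorem is precisely that this change is locally nothing more than a relabelling via the trivialization of $P$.
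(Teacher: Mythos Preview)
Your approach is essentially the paper's: both reduce everything to the case where $P$ is trivial, where a section induces an isomorphism $\mu\colon H\arr G$ and $\Lambda_{P}(X)$ is simply $X$ equipped with the $H$-action transported through $\mu$; the paper compresses this into a single sentence, while you spell out the descent, the inverse via $P^{\text{op}}$, and the \'etale refinement (via the self-trivialization of $P$ when $G$ or $H$ is \'etale) that the paper leaves implicit. One slip to fix: when you write that $\Lambda_{P}(X)_{|T_{j}}$ ``is a trivial $H$-torsor'', you mean that its $H$-comodule structure is fppf-locally the regular representation --- a $G$-cover is not a torsor in general, and trivializing $P$ does not make $X$ into one, so the conclusion you actually need (and which your argument does give) is that $\Lambda_{P}(X)_{|T_{j}}$ is an $H$-cover.
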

\begin{proof}
It is enough to note that, if $X\in\GCov(Y)$ and $P$ is trivial,
a section of $P$ induces an isomorphism $\mu\colon H\arr G$ and
$\Lambda_{P}(X)$ is isomorphic to $X$ with the $H$-action obtained
from its $G$-action through $\mu$.\end{proof}
\begin{rem}
Let $G$ and $H$ be affine group schemes with an isomorphism $\phi\colon\Bi G\arr\Bi H$.
Since $\QCoh_{S}^{G}\simeq\QCoh_{\Bi G}$ and similarly for $H$,
the pushforward $\phi_{*}$ induces an isomorphism $\QCoh_{S}^{G}\simeq\QCoh_{S}^{H}$.
If $\phi$ corresponds to the $(G,H)$-bitorsor $P=\Spec\alA_{P}$
over $S$ it is possible to check that we have an isomorphism
\[
\phi_{*}\shF\simeq(\shF\otimes\alA_{P})^{G}
\]
where the $H$-action is induced by the one over $\alA_{P}$.
\end{rem}

\chapter{Galois Covers under diagonalizable group schemes.}

The aim of this chapter is to study the theory of $G$-covers in the
diagonalizable case. Let $G$ be a finite diagonalizable group scheme
over $\Z$. We now briefly summarize how this chapter is divided.

\emph{Section 1. }The stack $\GCov$ and some of its substacks, like
$\stZ_{G}$ and $\Bi G$, share a common structure, i.e. they are
all of the form $\stX_{\phi}=[\Spec\Z[T_{+}]/\shT]$, where $T_{+}$
is a finitely generated commutative monoid whose associated group
is free of finite rank, $\shT$ is a torus over $\Z$ and $\phi\colon T_{+}\arr\Z^{r}$
is an additive map, that induces the action of $\shT$ on $\Spec\Z[T_{+}]$.
The first section will be dedicated to the study of such stacks. As
we will see many facts about $\GCov$ are just applications of general
results about such stacks. For instance the existence of a special
irreducible component $\stZ_{\phi}$ of $\stX_{\phi}$ as well as
the use of $\duale T_{+}=\Hom(T_{+},\N)$ for the study of the smooth
locus of $\stZ_{\phi}$ are properties that can be stated in this
setting.

\emph{Section 2.} We will explain how $\GCov$ can be viewed as a
stack of the form $\stX_{\phi}$ and how it is related to the equivariant
Hilbert schemes. Then we will study the properties of connectedness,
irreducibility and smoothness for $\GCov$. Finally we will introduce
the stratification $\Bi G=U_{0}\subseteq U_{1}\subseteq\cdots\subseteq U_{|G|-1}=\GCov$
and we will characterize the locus $U_{1}$. 

\emph{Section 3. }We will study the locus $U_{2}$ and $G$-covers
whose total space is normal crossing in codimension $1$.

\section{\label{sec:stack Xphi}The stack $\stX_{\phi}$.}

In the following sections we will study the stack $\GCov$ when $G=\Di M$,
the diagonalizable group of a finite abelian group $M$. The structure
of this stack and of some of its substacks is somehow special and
in this section we will provide general constructions and properties
that will be used later. To a monoid map $T_{+}\arrdi{\phi}\Z^{r}$,
we will associate a stack $\stX_{\phi}$ whose objects are sequences
of invertible sheaves with additional data and we will study particular
'parametrization' of these objects, defined by a map of stacks $\stF_{\underline{\E}}\arrdi{\pi_{\underline{\E}}}\stX_{\phi}$,
where $\stF_{\underline{\E}}$ will be a 'nice' stack, for instance
smooth.

In this section we will consider given a commutative monoid $T_{+}$
together to a monoid map $\phi\colon T_{+}\arr\Z^{r}$. 
\begin{defn}
We define the stack $\stX_{\phi}$ over $\Z$ as follows.
\begin{itemize}
\item \emph{Objects. }An object over a scheme $S$ is a pair $(\underline{\shL},a)$
where:

\begin{itemize}
\item $\underline{\shL}=\shL_{1},\dots,\shL_{r}$ are invertible sheaves
on $S$;
\item $T_{+}\arrdi a\ISym\underline{\shL}$ is an additive map such that
$a(t)\in\underline{\shL}^{\phi(t)}$ for any $t\in T_{+}$.
\end{itemize}
\item \emph{Arrows. }An isomorphism $(\underline{\shL},a)\arrdi{\underline{\sigma}}(\underline{\shL}',a')$
of objects over $S$ is given by a sequence $\underline{\sigma}=\sigma_{1},\dots,\sigma_{r}$
of isomorphisms $\sigma_{i}\colon\shL_{i}\arrdi{\simeq}\shL_{i}'$
such that 
\[
\underline{\sigma}^{\phi(t)}(a(t))=a'(t)\text{ for any }t\in T_{+}
\]

\end{itemize}
\end{defn}
\begin{example}
Let $f_{1},\dots,f_{s},g_{1},\dots,g_{t}\in\Z^{r}$ and consider the
stack $\stX_{\underline{f},\underline{g}}$ of invertible sheaves
$\shL_{1},\dots,\shL_{r}$ with maps $\odi{}\arr\underline{\shL}^{f_{i}}\quad\text{and}\quad\odi{}\arrdi{\simeq}\underline{\shL}^{g_{j}}$.
If $T_{+}=\N^{s}\times\Z^{t}$ and $\phi\colon T_{+}\arr\Z^{r}$ is
the map given by the matrix $(f_{1}|\cdots|f_{s}|g_{1}|\cdots|g_{t})$
then $\stX_{\underline{f},\underline{g}}=\stX_{\phi}$.\end{example}
\begin{notation}
We set
\[
\Z[T_{+}]=\Z[x_{t}]_{t\in T_{+}}/(x_{t}x_{t'}-x_{t+t'},x_{0}-1)
\]
 and $\odi S[T_{+}]=\Z[T_{+}]\otimes_{\Z}\odi S$. The scheme $\Spec\odi S[T_{+}]$
over $S$ represents the functor that associates to any scheme $U/S$
the set of additive maps $T_{+}\arr(\odi U,\cdot)$, where $\cdot$
denotes the multiplication on $\odi U$. The group $\Di{\Z^{r}}$
acts on $\Spec\Z[T_{+}]$ by the graduation $\deg x_{t}=\phi(t)$.\end{notation}
\begin{prop}
\label{pro:atlas for the stack associated to a monoid map}Set $X=\Spec\Z[T_{+}]$.
The choice $\shL_{i}=\odi X$ and   \[   \begin{tikzpicture}[xscale=1.8,yscale=-0.6]     \node (A0_0) at (0, 0) {$\underline{\shL}^{\phi(t)}$};     \node (A0_1) at (1, 0) {$\odi{X}$};     \node (A1_0) at (0, 1) {$a(t)$};     \node (A1_1) at (1, 1) {$x_t$};     \path (A0_0) edge [->] node [auto] {$\scriptstyle{\simeq}$} (A0_1);     \path (A1_0) edge [<->,gray] node [auto] {$\scriptstyle{}$} (A1_1);   \end{tikzpicture}   \] induces
a smooth epimorphism $X\arr\stX_{\phi}$ such that $\stX_{\phi}\simeq[X/\Di{\Z^{r}}]$.
In particular $\stX_{\phi}$ is an algebraic stack.\end{prop}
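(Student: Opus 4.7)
The plan is to produce the claimed object of $\stX_\phi(X)$ explicitly, identify the fiber product $X\times_{\stX_\phi} X$ with $X\times\Di{\Z^r}$ so that the map is a $\Di{\Z^r}$-torsor, and then deduce everything else.

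First, I would write down the tautological object. Taking $\shL_i=\odi X$ gives canonical identifications $\underline{\shL}^{\phi(t)}\simeq\odi X$ for every $t\in T_+$, so the recipe $a(t)=x_t\in\Z[T_+]=\Hl^0(X,\odi X)$ makes sense, and the defining relations $x_{t+t'}=x_tx_{t'}$, $x_0=1$ of $\Z[T_+]$ are exactly the additivity of $a\colon T_+\to\ISym\underline{\shL}$. Hence $(\odi X^{r},a)\in\stX_\phi(X)$ and this yields a morphism $u\colon X\to\stX_\phi$.

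Second, I would check that $u$ is a Zariski-local epimorphism, which will also give smoothness once combined with the torsor structure. Given any object $(\underline{\shL}',a')$ over a scheme $S$, after trivializing the finitely many $\shL'_i$ on a Zariski open cover of $S$ we may assume $\shL'_i=\odi S$; then $a'$ becomes a monoid map $T_+\to(\odi S,\cdot)$, which by the universal property of $\Z[T_+]$ corresponds to a morphism $S\to X$ pulling back $u$ to $(\underline{\shL}',a')$. So every object of $\stX_\phi$ comes Zariski-locally from $X$.

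Third, I would identify $X\times_{\stX_\phi}X$. An $S$-point consists of two monoid maps $a,a'\colon T_+\to\odi S$ together with an isomorphism in $\stX_\phi$ between the two tautological objects over $S$, i.e.\ a tuple $\underline{\sigma}=(\sigma_1,\dots,\sigma_r)$ of automorphisms $\odi S\to\odi S$, equivalently a point $\underline{\sigma}\in\mathbb{G}_m^r(S)=\Di{\Z^r}(S)$, satisfying $\underline{\sigma}^{\phi(t)}a(t)=a'(t)$ for every $t\in T_+$. This is exactly the data of a point $(a,\underline{\sigma})\in X\times\Di{\Z^r}$ together with the prescription $a'=\underline{\sigma}\cdot a$, where the action of $\Di{\Z^r}$ on $X$ is the one induced by the grading $\deg x_t=\phi(t)$. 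Thus we have a natural isomorphism
\[
X\times_{\stX_\phi}X\;\simeq\;X\times\Di{\Z^r},
\]
exhibiting $u$ as a $\Di{\Z^r}$-torsor.

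Finally, since $\Di{\Z^r}$ is smooth over $\Z$, the map $u$ is smooth (being a torsor under a smooth group), and the standard quotient-stack argument gives $\stX_\phi\simeq[X/\Di{\Z^r}]$; in particular $\stX_\phi$ is algebraic. The only mildly delicate step is bookkeeping in the third one, namely checking that the monoid-map condition on $a'=\underline{\sigma}\cdot a$ matches the arrow condition $\underline{\sigma}^{\phi(t)}a(t)=a'(t)$; everything else is essentially formal from the universal property of $\Z[T_+]$.
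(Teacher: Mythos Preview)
Your argument is correct, but it takes a different route from the paper. The paper works in the other direction: it observes that an object of $[X/\Di{\Z^r}](U)$ is, by the standard description of quotient stacks by tori, a $\Di{\Z^r}$-torsor over $U$ (i.e.\ invertible sheaves $\shL_1,\dots,\shL_r$) together with a $\Di{\Z^r}$-equivariant morphism $\Spec\ISym\underline{\shL}\to X=\Spec\Z[T_+]$; unwinding the equivariance condition on the graded-ring side shows this is exactly an additive map $T_+\to\ISym\underline{\shL}$ with $a(t)\in\underline{\shL}^{\phi(t)}$, i.e.\ an object of $\stX_\phi(U)$. One then checks that the composite $X\to[X/\Di{\Z^r}]\to\stX_\phi$ is the morphism described in the statement.

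By contrast, you start from $\stX_\phi$ and show $X\to\stX_\phi$ is a $\Di{\Z^r}$-torsor by identifying $X\times_{\stX_\phi}X$ with $X\times\Di{\Z^r}$. Your approach is more hands-on and makes the torsor structure explicit (and incidentally verifies the description of isomorphisms that the paper records separately in the remark following the proposition), whereas the paper's approach is shorter because it leverages the known unwinding of torus-quotient stacks. Both land in the same place.
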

\begin{proof}
It is enough to note that an object of $[X/\Di{\Z^{r}}](U)$ is given
by invertible sheaves $\shL_{1},\dots,\shL_{r}$ with a $\Di{\Z^{r}}$-equivariant
map $\Spec\ISym\underline{\shL}\arr\Spec\Z[T_{+}]$ which exactly
corresponds to an additive map $T_{+}\arr\ISym\underline{\shL}$ as
in the definition of $\stX_{\phi}$. It is easy to check that the
map $X\arr[X/\Di{\Z^{r}}]\arr\stX_{\phi}$ is the one defined in the
statement.\end{proof}
\begin{rem}
\label{rem:description of isomorphism for local objects of X phi}Given
a map $U\arrdi aX=\Spec\Z[T_{+}]$, i.e. a monoid map $T_{+}\arrdi a\odi U$,
the induced object $U\arrdi aX\arr\stX_{\phi}$ is the pair $(\underline{\shL},\tilde{a})$
where $\shL_{i}=\odi U$ and for any $t\in T_{+}$   \[   \begin{tikzpicture}[xscale=1.8,yscale=-0.5]     \node (A0_0) at (0, 0) {$\odi{U}$};     \node (A0_1) at (1, 0) {$\underline{\shL}^{\phi(t)}$};     \node (A1_0) at (0, 1) {$a(t)$};     \node (A1_1) at (1, 1) {$\tilde{a}(t)$};     \path (A0_0) edge [->]node [auto] {$\scriptstyle{\simeq}$} (A0_1);     \path (A1_0) edge [|->,gray]node [auto] {$\scriptstyle{}$} (A1_1);   \end{tikzpicture}   \]
We will denote by $a$ also the object $(\underline{\shL},\tilde{a})\in\stX_{\phi}(U)$.

Given two elements $a,b\colon T_{+}\arr\odi U\in\stX_{\phi}(U)$ we
have
\[
\Iso_{\stX_{\phi}(U)}(a,b)=\{\sigma_{1},\dots,\sigma_{r}\in\odi U^{*}\;|\;\underline{\sigma}^{\phi(t)}a(t)=b(t)\;\forall t\in T_{+}\}
\]
\end{rem}
\begin{lem}
\label{lem:morphisms of stack Xphi} Consider a commutative diagram
  \[   \begin{tikzpicture}[xscale=1.7,yscale=-1.2]     \node (A0_0) at (0, 0) {$T_+$};     \node (A0_1) at (1, 0) {$T_+'$};     \node (A1_0) at (0, 1) {$\Z^r$};     \node (A1_1) at (1, 1) {$\Z^s$};     \path (A0_0) edge [->]node [auto] {$\scriptstyle{h}$} (A0_1);     \path (A0_0) edge [->]node [auto,swap] {$\scriptstyle{\phi}$} (A1_0);     \path (A0_1) edge [->]node [auto] {$\scriptstyle{\psi}$} (A1_1);     \path (A1_0) edge [->]node [auto] {$\scriptstyle{g}$} (A1_1);   \end{tikzpicture}   \] 
where $T{}_{+},T{}_{+}'$ are commutative  monoids and $\phi\comma\psi\comma h\comma g$
are additive maps. Then we have a $2$-commutative diagram \begin{align}\label{diag:commutativity fro stack X}
\begin{tikzpicture}[xscale=3.7,yscale=-0.7]     \node (A0_0) at (0, 0) {$\Spec \Z[T_+']$};     \node (A0_1) at (1, 0) {$\Spec \Z[T_+]$};     \node (A2_0) at (0, 2) {$\stX_\psi$};     \node (A2_1) at (1, 2) {$\stX_\phi$};     \node (A3_0) at (0, 3) {$(\underline{\shL},T_+'\arrdi{a}\ISym \underline{\shL})$};     \node (A3_1) at (1, 3) {$(\underline{\shM},T_+ \arrdi{b}\ISym \underline{\shM})$};     \path (A0_0) edge [->] node [auto] {$\scriptstyle{h^*}$} (A0_1);     \path (A0_1) edge [->] node [auto] {$\scriptstyle{}$} (A2_1);     \path (A0_0) edge [->] node [auto] {$\scriptstyle{}$} (A2_0);     \path (A2_0) edge [->] node [auto] {$\scriptstyle{\Lambda}$} (A2_1);     \path (A3_0) edge [|->,gray] node [auto] {$\scriptstyle{}$} (A3_1);   \end{tikzpicture}   
\end{align} where, for $i=1,\dots,r$, $\shM_{i}=\underline{\shL}^{g(e_{i})}$
and $b$ is the unique map such that \begin{center}
  \[   \begin{tikzpicture}[xscale=1.2,yscale=-1.2]     
\node (A0_0) at (0, 0) {$T_+$};     
\node (A0_1) at (1, 0) {};     
\node (A0_2) at (2, 0) {$\ISym \underline{\shM}$};     
\node (A0_3) at (3, 0) {$\underline{\shM}^v$};     
\node (A0_4) at (4, 0) {$\underline{\shL}^{g(v)}$};     
\node (A1_0) at (0, 1) {$T_+'$};     
\node (A1_2) at (2, 1) {$\ISym \underline{\shL}$};     
\node (A1_3) at (3, 1) {$\underline{\shL}^{g(v)}$};    
\node (simeq) at (3.43, 0) {$\simeq$}; 
\path (A1_0) edge [->] node [auto,swap] {$\scriptstyle{a}$} (A1_2);     
\path (A0_3) edge [->] node [auto] {$\scriptstyle{}$} (A1_3);     
\path (A0_2) edge [->] node [auto] {$\scriptstyle{}$} (A1_2);     
\path (A0_4) edge [->] node [auto] {$\scriptstyle{\id}$} (A1_3);     
\path (A0_0) edge [->] node [auto] {$\scriptstyle{b}$} (A0_2);     
\path (A0_0) edge [->] node [auto,swap] {$\scriptstyle{h}$} (A1_0);   
\end{tikzpicture}   \] 
\par\end{center}\end{lem}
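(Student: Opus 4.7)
The plan is to construct the functor $\Lambda\colon \stX_\psi \arr \stX_\phi$ explicitly along the lines suggested by the bottom row of (\ref{diag:commutativity fro stack X}), and then verify the $2$-commutativity of the outer square by comparing both compositions on the universal object supplied by Proposition \ref{pro:atlas for the stack associated to a monoid map}.

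First I would define $\Lambda$ on objects. Given $(\underline{\shL},a)\in\stX_\psi(U)$, with $\underline{\shL}=\shL_1,\dots,\shL_s$ and $a\colon T_+'\arr \ISym\underline{\shL}$, set $\shM_i=\underline{\shL}^{g(e_i)}$ for $i=1,\dots,r$. For every $v\in T_+$ the relation $\psi\circ h=g\circ\phi$ yields canonical identifications
\[
\underline{\shM}^{\phi(v)}=\underline{\shL}^{g(\phi(v))}=\underline{\shL}^{\psi(h(v))},
\]
and one sets $b(v)\in\underline{\shM}^{\phi(v)}$ to be the image of $a(h(v))$ under this identification. Additivity of $b$ is immediate from additivity of $a$ and $h$, so $(\underline{\shM},b)\in\stX_\phi(U)$. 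On arrows, send $\underline{\sigma}=(\sigma_1,\dots,\sigma_s)\colon (\underline{\shL},a)\arrdi\sim(\underline{\shL}',a')$ to $\underline{\tau}=(\underline{\sigma}^{g(e_1)},\dots,\underline{\sigma}^{g(e_r)})$; the compatibility $\underline{\tau}^{\phi(v)}(b(v))=b'(v)$ translates, under the identification above, to $\underline{\sigma}^{\psi(h(v))}(a(h(v)))=a'(h(v))$, which holds by hypothesis on $\underline{\sigma}$. Functoriality and descent are formal, so $\Lambda$ is a morphism of stacks.

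Next I would check $2$-commutativity of the square using Remark \ref{rem:description of isomorphism for local objects of X phi}. A map $U\arrdi{a}\Spec\Z[T_+']$ corresponds to a monoid map $a\colon T_+'\arr(\odi U,\cdot)$; by Proposition \ref{pro:atlas for the stack associated to a monoid map}, its image in $\stX_\psi(U)$ is the pair $(\odi U^s,\tilde a)$ with $\tilde a(t)\in\underline{\shL}^{\psi(t)}\simeq\odi U$ equal to $a(t)$. Applying $\Lambda$ yields the object $(\underline{\shM},b)$ with $\shM_i=\odi U$ and $b(v)\in\underline{\shM}^{\phi(v)}\simeq\odi U$ equal to $a(h(v))$. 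Composing the other way, $h^*\colon\Spec\Z[T_+']\arr\Spec\Z[T_+]$ corresponds to precomposition by $h$, so $h^*\circ a$ is the monoid map $v\longmapsto a(h(v))$, and its image in $\stX_\phi(U)$ is exactly the pair $(\odi U^r,\widetilde{a\circ h})$ with $\widetilde{a\circ h}(v)=a(h(v))$. The identity isomorphism of these two objects provides the $2$-cell making the square commute.

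Finally, to promote this from a $2$-cell on the distinguished atlas to a genuine $2$-isomorphism of the two compositions of $1$-morphisms, I would note that both sides are functors of $1$-stacks and that equality on a smooth atlas, together with the compatibility with arrows checked above, determines such a $2$-isomorphism uniquely by descent (alternatively one may repeat the same identification on any pair $(\underline{\shL},a)$ with $\shL_i$ not necessarily trivial). I expect no real obstacle here; the only point requiring care is the bookkeeping of the canonical isomorphism $\underline{\shM}^{\phi(v)}\simeq\underline{\shL}^{\psi(h(v))}$, which must be shown to be natural in $(\underline{\shL},a)$ so that $\Lambda$ respects composition of arrows and so that the $2$-cell is compatible with further base change.
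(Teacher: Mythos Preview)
Your proposal is correct and follows essentially the same approach as the paper: define $\Lambda$ via the canonical identification $\underline{\shM}^{\phi(v)}\simeq\underline{\shL}^{g(\phi(v))}$, then verify the square by computing both compositions on monoid maps $T_+'\to\odi U$. Your final paragraph is unnecessary, however: the $2$-commutativity in question concerns the two compositions $\Spec\Z[T_+']\to\stX_\phi$, which are morphisms from a \emph{scheme}, so the required $2$-cell is nothing more than an isomorphism in $\stX_\phi(\Spec\Z[T_+'])$---precisely what you exhibited (the identity)---and no descent or ``promotion'' is needed.
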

\begin{proof}
An easy computation shows that there is a canonical isomorphism $\underline{\shM}^{v}\simeq\underline{\shL}^{g(v)}$
for all $v\in\Z^{r}$ and so $b(t)$ corresponds under this isomorphism
to $a(h(t))\in\underline{\shL}^{\psi(h(t))}=\underline{\shL}^{g(\phi(t))}\simeq\underline{\shM}^{\phi(t)}$.
So the functor $\Lambda$ is well defined and we have only to check
the commutativity of the second diagram in the statement. The map
$\Spec\Z[T{}_{+}']\arr\Spec\Z[T{}_{+}]\arr\stX_{\phi}$ is given by
trivial invertible sheaves and the additive map   \[   \begin{tikzpicture}[xscale=3.6,yscale=-0.7]     \node (A0_0) at (0.3, 0) {$T_+$};     \node (A0_1) at (1, 0) {$\Z[T_+][x_1,\dots,x_r]_{\prod_{i}x_{i}}$};     \node (A0_2) at (2, 0) {$\Z[T_+'][x_1,\dots,x_r]_{\prod_{i}x_{i}}$};     \node (A1_0) at (0.3, 1) {$t$};     \node (A1_1) at (1, 1) {$x_t x^{\phi(t)}$};     \node (A1_2) at (2, 1) {$x_{h(t)} x^{\phi(t)}$};     \path (A0_0) edge [->] node [auto] {$\scriptstyle{}$} (A0_1);     \path (A1_0) edge [|->,gray] node [auto] {$\scriptstyle{}$} (A1_1);     \path (A0_1) edge [->] node [auto] {$\scriptstyle{}$} (A0_2);     \path (A1_1) edge [|->,gray] node [auto] {$\scriptstyle{}$} (A1_2);   \end{tikzpicture}   \] 
Instead the map $\Spec\Z[T{}_{+}']\arr\stX_{\psi}\arr\stX_{\phi}$
is given by trivial invertible sheaves and the map $b$ that makes
the following diagram commutative   \[   \begin{tikzpicture}[xscale=2.0,yscale=-0.7]     \node (A0_0) at (0, 0) {$T_+$};     \node (A0_2) at (2, 0) {$\Z[T_+'][x_1,\dots,x_r]_{\prod_{i}x_{i}}$};     \node (A0_3) at (3, 0) {$x^v$};     \node (A2_0) at (0, 2) {$T_+'$};     \node (A2_2) at (2, 2) {$\Z[T_+'][y_1,\dots,y_s]_{\prod_{i}y_{i}}$};     \node (A2_3) at (3, 2) {$y^{g(v)}$};     \node (A3_0) at (0, 3) {$t$};     \node (A3_2) at (2, 3) {$x_t y^{\psi(t)}$};     \path (A0_3) edge [|->,gray] node [auto] {$\scriptstyle{}$} (A2_3);     \path (A0_0) edge [->] node [auto] {$\scriptstyle{b}$} (A0_2);     \path (A0_2) edge [->] node [auto] {$\scriptstyle{}$} (A2_2);     \path (A2_0) edge [->] node [auto] {$\scriptstyle{a}$} (A2_2);     \path (A0_0) edge [->] node [auto,swap] {$\scriptstyle{h}$} (A2_0);     \path (A3_0) edge [|->,gray] node [auto] {$\scriptstyle{}$} (A3_2);   \end{tikzpicture}   \] 
Since $x_{h(t)}x^{\phi(t)}$ is sent to $x_{h(t)}y^{g(\phi(t))}=x_{h(t)}y^{\psi(h(t))}=a(h(t))$
we find again $b(t)=x_{h(t)}x^{\phi(t)}$.\end{proof}
\begin{rem}
\label{rem: description of functors of Xphi on local objects}The
functor $\stX_{\psi}\arr\stX_{\phi}$ sends an element $a\colon T_{+}'\arr\odi U\in\stX_{\psi}(U)$
to the element $a\circ h\in\stX_{\phi}(U)$. Moreover, taking into
account the description given in \ref{rem:description of isomorphism for local objects of X phi},
if $a,b\colon T_{+}'\arr\odi U\in\stX_{\psi}(U)$ we have   \[   \begin{tikzpicture}[xscale=3.0,yscale=-0.6]     \node (A0_0) at (0, 0) {$\Iso_U(a,b)$};     \node (A0_1) at (1, 0) {$\Iso_U(a\circ h,b\circ h)$};     \node (A1_0) at (0, 1) {$\underline\sigma$};     \node (A1_1) at (1, 1) {$\underline{\sigma}^{g(e_1)},\dots,\underline{\sigma}^{g(e_r)}$};     \path (A0_0) edge [->] node [auto] {$\scriptstyle{}$} (A0_1);     \path (A1_0) edge [|->,gray] node [auto] {$\scriptstyle{}$} (A1_1);   \end{tikzpicture}   \]

\end{rem}

\subsection{The main irreducible component $\stZ_{\phi}$ of $\stX_{\phi}$.}
\begin{notation}
\label{not:notation for a monoid}A monoid will be called \emph{integral}
if it satisfies the cancellation law, i.e. 
\[
\forall a,b,c\comma\quad a+b=a+c\then b=c
\]
Let $T_{+}$ be a monoid. There exists, up to a unique isomorphism,
a group $T$ (resp. integral monoid $T_{+}^{int}$) such that any
monoid map $T_{+}\arr S_{+}$, where $S_{+}$ is a group (resp. integral
monoid), factors uniquely through $T$ (resp. $T_{+}^{int}$). We
call it the \emph{associated group} (resp. \emph{associated integral
monoid}) of $T_{+}$. Notice that if $T$ is the associated group
of $T_{+}$, then $\Imm(T_{+}\arr T)$ can be chosen as the associated
integral monoid of $T_{+}$. We will continue to denote by $T$ the
associated group of $T_{+}$ and we set $T_{+}^{int}=\Imm(T_{+}\arr T)\subseteq T$.
In particular $\langle T_{+}^{int}\rangle_{\Z}=T$.

From now on $T_{+}$ will be a finitely generated monoid whose associated
group is a free $\Z$-module of finite rank. In order to simplify
notation, we will often write $\phi\colon T\arr\Z^{r}$, meaning the
extension of $\phi\colon T_{+}\arr\Z^{r}$ to $T$. Anyway, the stack
$\stX_{\phi}$ will always be the stack $\stX_{T_{+}\arr\Z^{r}}$
and when we will have to consider the stack $\stX_{T\arr\Z^{r}}$,
we will always specify a different symbol for the induced map $T\arr\Z$.\end{notation}
\begin{rem}
\label{lem:the domain monoid and group monoid associated to T +: ring}If
$D$ is a domain, then $\Spec D[T]$ is an open subscheme of $\Spec D[T_{+}]$,
while $\Spec D[T_{+}^{int}]$ is one of its irreducible components.
In particular we have\end{rem}
\begin{prop}
\label{cor:the domain monoid and group monoid associated to T +: stack}Let
$\hat{\phi}\colon T\arr\Z^{r}$ be the extension of $\phi$ and set
$\phi^{int}=\hat{\phi}_{|T_{+}^{int}}$. Then $\stB_{\phi}=\stX_{\hat{\phi}}\arr\stX_{\phi}$
is an open immersion, while $\stZ_{\phi}=\stX_{\phi^{int}}\arr\stX_{\phi}$
is a closed one. Moreover $\stZ_{\phi}$ is the reduced closed stack
associated to the closure of $\stB_{\phi}$, it is an irreducible
component of $\stX_{\phi}$ and 
\[
\stB_{\phi}\simeq[\Spec\Z[T]/\Di{\Z^{r}}]\text{ and }\stZ_{\phi}\simeq[\Spec\Z[T_{+}^{int}]/\Di{\Z^{r}}]
\]
\end{prop}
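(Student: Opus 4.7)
The plan is to deduce everything from Proposition \ref{pro:atlas for the stack associated to a monoid map} applied to the three stacks in sight, transferring ring-theoretic facts through the common $\Di{\Z^r}$-quotient. Both $\stB_\phi\to\stX_\phi$ and $\stZ_\phi\to\stX_\phi$ fit into the setup of Lemma \ref{lem:morphisms of stack Xphi} with $g=\id_{\Z^r}$: the former uses the canonical monoid map $T_+\to T$ (with $\psi=\hat\phi$), the latter the surjection $T_+\twoheadrightarrow T_+^{int}$ (with $\psi=\phi^{int}$). The resulting atlas-level maps are $\Spec\Z[T]\to\Spec\Z[T_+]$ and $\Spec\Z[T_+^{int}]\to\Spec\Z[T_+]$, equivariant for the gradings $\deg x_t=\phi(t)$. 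Once these are shown to be an open and a closed immersion respectively, passing to $\Di{\Z^r}$-quotients delivers both the open/closed immersion statements and the presentations at the end of the proposition.

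First I would settle the two atlas-level immersions. The surjection $T_+\twoheadrightarrow T_+^{int}$ induces a surjective ring map $\Z[T_+]\twoheadrightarrow\Z[T_+^{int}]$, hence a closed immersion on $\Spec$. For the open immersion, choose finitely many monoid generators $g_1,\dots,g_N$ of $T_+$; since every $t\in T_+$ is a non-negative $\Z$-combination of the $g_i$, the elements $x_t$ become invertible after inverting $x_{g_1},\dots,x_{g_N}$, and the universal property of $\Z[T]$ identifies it with $\Z[T_+][x_{g_1}^{-1},\dots,x_{g_N}^{-1}]$. Thus $\Spec\Z[T]=D(x_{g_1}\cdots x_{g_N})\subseteq\Spec\Z[T_+]$ is open.

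Next I would identify $\stZ_\phi$ with the reduced schematic closure of $\stB_\phi$. The factorization $\Z[T_+]\twoheadrightarrow\Z[T_+^{int}]\hookrightarrow\Z[T]$ (the second arrow injective because $T_+^{int}\hookrightarrow T$) gives
\[
\Ker\bigl(\Z[T_+]\to\Z[T]\bigr)=\Ker\bigl(\Z[T_+]\twoheadrightarrow\Z[T_+^{int}]\bigr),
\]
so $\Spec\Z[T_+^{int}]$ is precisely the scheme-theoretic image of the open immersion $\Spec\Z[T]\hookrightarrow\Spec\Z[T_+]$. Because $T$ is free of finite rank, $\Z[T]$ is a Laurent polynomial ring, hence an integral domain, and its subring $\Z[T_+^{int}]$ is therefore also a domain, making $\Spec\Z[T_+^{int}]$ integral. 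Using Proposition \ref{prop:properties of schematic closure} together with the flatness of the atlas $\Spec\Z[T_+]\to\stX_\phi$, these properties descend to the torus quotient: $\stZ_\phi$ is reduced, irreducible, and equals the schematic closure of $\stB_\phi$ in $\stX_\phi$. That $\stZ_\phi$ is an irreducible component is then formal: any irreducible component of $\stX_\phi$ meeting the nonempty open $\stB_\phi$ contains its closure $\stZ_\phi$, and irreducibility of $\stZ_\phi$ forces equality.

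The only real obstacle is bookkeeping: one must verify that the equivariant open and closed immersions of atlases, as well as the identification of the schematic closure, really do descend to the quotient stacks in the stacky sense. This is routine once one invokes faithful flatness of $\Spec\Z[T_+]\to\stX_\phi$ and the last assertion of Proposition \ref{prop:properties of schematic closure} on compatibility of schematic closure with flat base change.
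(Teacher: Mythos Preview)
Your proof is correct and follows the same line as the paper, which simply records the ring-theoretic facts in Remark~\ref{lem:the domain monoid and group monoid associated to T +: ring} and then states the proposition as an immediate consequence via the quotient presentation of Proposition~\ref{pro:atlas for the stack associated to a monoid map}. You have merely made explicit the descent through the $\Di{\Z^r}$-quotient that the paper leaves to the reader; the one phrase that could be tightened is the irreducible-component step, where it is the irreducibility of $\stB_\phi$ (hence density of $W\cap\stB_\phi$ in both $W$ and $\stB_\phi$) rather than of $\stZ_\phi$ that yields $W=\stZ_\phi$.
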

\begin{defn}
With notation above we will call respectively $\stB_{\phi}$ and $\stZ_{\phi}$
the \emph{principal open substack} and the \emph{main irreducible
component} of $\stX_{\phi}$.\end{defn}
\begin{notation}
We set
\[
\duale T_{+}=\Hom(T_{+},\N)=\{\E\in\Hom_{\text{groups}}(T,\Z)\:|\:\E(T_{+})\subseteq\N\}
\]
We will call it the \emph{dual monoid} of $T_{+}$ and we will call
its elements the\emph{ rays }for $T_{+}$. Note that $\duale T_{+}=\duale{T_{+}^{int}}$.
Given $\underline{\E}=\E^{1},\dots,\E^{s}\in\duale T_{+}$ we will
denote by $\underline{\E}$ also the induced map $T\arr\Z^{s}$. Moreover
we set
\[
\Supp\underline{\E}=\{v\in T_{+}\;|\;\exists i\;\E^{i}(v)>0\}
\]
Finally notice that the dual monoid of a group is always $0$. Therefore,
when $H$ is an abelian group, the dual $\duale H$ of $H$ will always
be the dual as $\Z$-module.\end{notation}
\begin{defn}
Given a sequence $\underline{\E}=\E^{1},\dots,\E^{s}\in\duale T_{+}$
set   \[   \begin{tikzpicture}[xscale=3.3,yscale=-0.6]     \node (A0_0) at (0, 0) {$\N^s \oplus T$};     \node (A0_1) at (1, 0) {$\Z^s \oplus \Z^r$};     \node (A1_0) at (0, 1) {$(e_i,0)$};     \node (A1_1) at (1, 1) {$(e_i,0)$};     \node (A2_0) at (0, 2) {$(0,t)$};     \node (A2_1) at (1, 2) {$\displaystyle(\underline\E(t),- \phi(t))$};     \path (A0_0) edge [->] node [auto] {$\scriptstyle{\sigma_{\underline \E}}$} (A0_1);     \path (A1_0) edge [|->,gray] node [auto] {$\scriptstyle{}$} (A1_1);     \path (A2_0) edge [|->,gray] node [auto] {$\scriptstyle{}$} (A2_1);   \end{tikzpicture}   \] where
$e_{1},\dots,e_{s}$ is the canonical basis of $\Z^{s}$. We will
call $\stF_{\underline{\E}}=\stX_{\sigma_{\underline{\E}}}$.\end{defn}
\begin{rem}
\label{rem:description of objects of FE}An object of $\stF_{\underline{\E}}$
over a scheme $U$ is given by a sequence $(\underline{\shL},\underline{\shM},\underline{z},\lambda)$
where:
\begin{itemize}
\item $\underline{\shL}=\shL_{1},\dots,\shL_{r}$ and $\underline{\shM}=(\shM_{\E})_{\E\in\underline{\E}}=\shM_{1},\dots,\shM_{s}$
are invertible sheaves on $U$;
\item $\underline{z}=(z_{\E})_{\E\in\underline{\E}}=z_{1},\dots,z_{s}$
are sections $z_{i}\in\shM_{i}$;
\item for any $t\in T$, $\lambda(t)=\lambda_{t}$ is an isomorphism $\underline{\shL}^{\phi(t)}\arrdi{\simeq}\underline{\shM}^{\underline{\E}(t)}$
additive in $t$.
\end{itemize}
An isomorphism $(\underline{\shL},\underline{\shM},\underline{z},\lambda)\arr(\underline{\shL}',\underline{\shM}',\underline{z}',\lambda')$
is a pair $(\underline{\omega},\underline{\tau})$ where $\underline{\omega}=\omega_{1},\dots,\omega_{r}\comma\underline{\tau}=\tau_{1},\dots,\tau_{s}$
are sequences of isomorphisms $\shL_{i}\arrdi{\omega_{i}}\shL_{i}'\comma\shM_{j}\arrdi{\tau_{j}}\shM_{j}'$
such that $\tau_{j}(z_{j})=z_{j}'$ and for any $t\in T$ we have
a commutative diagram   \[   \begin{tikzpicture}[xscale=1.9,yscale=-1.2]     \node (A0_0) at (0, 0) {$\underline{\shL}^{\phi(t)}$};     \node (A0_1) at (1, 0) {$\underline{\shM}^{\E(t)}$};     \node (A1_0) at (0, 1) {$\underline{\shL}'^{\phi(t)}$};     \node (A1_1) at (1, 1) {$\underline{\shM}'^{\E(t)}$};     \path (A0_0) edge [->]node [auto] {$\scriptstyle{\lambda_t}$} (A0_1);     \path (A1_0) edge [->]node [auto] {$\scriptstyle{\lambda_t'}$} (A1_1);     \path (A0_1) edge [->]node [auto] {$\scriptstyle{\underline{\tau}^{\phi(t)}}$} (A1_1);     \path (A0_0) edge [->]node [auto,swap] {$\scriptstyle{\underline{\omega}^{\phi(t)}}$} (A1_0);   \end{tikzpicture}   \] An
object over $U$ coming from the atlas $\Spec\Z[\N^{s}\oplus T]$
is a pair $(\underline{z},\lambda)$ where $\underline{z}=z_{1},\dots,z_{s}\in\odi U$
and $\lambda\colon T\arr\odi U^{*}$ is a group homomorphism. Given
$(\underline{z},\lambda),(\underline{z}',\lambda')\in\stF_{\underline{\E}}(U)$
we have 
\[
\Iso_{U}((\underline{z},\lambda),(\underline{z}',\lambda'))=\{(\underline{\omega},\underline{\tau})\in(\odi U^{*})^{r}\times(\odi U^{*})^{s}\st\tau_{i}z_{i}=z_{i}'\comma\underline{\tau}^{\underline{\E}(t)}\lambda(t)=\underline{\omega}^{\phi(t)}\lambda'(t)\}
\]
\end{rem}
\begin{defn}
Given a sequence $\underline{\E}=\E^{1},\dots,\E^{s}$ of elements
of $\duale T_{+}$ we define the map
\[
\pi_{\underline{\E}}\colon\stF_{\underline{\E}}\arr\stX_{\phi}
\]
 induced by the commutative diagram   \[   \begin{tikzpicture}[xscale=1.6,yscale=-1.5]     \node (A0_0) at (0, 0) {$\scriptstyle t$};     \node (A0_1) at (1, 0) {$T_+$};     \node (A0_3) at (3, 0) {$\Z^r$};     \node (A1_0) at (0, 1) {$\scriptstyle(\underline \E(t),-t)$};     \node (A1_1) at (1, 1) {$\N^s \oplus T$};     \node (A1_3) at (3, 1) {$\Z^s \oplus \Z^r$};     \path (A0_1) edge [->] node [auto] {$\scriptstyle{}$} (A1_1);     \path (A0_0) edge [|->,gray] node [auto] {$\scriptstyle{}$} (A1_0);     \path (A0_3) edge [right hook->] node [auto] {$\scriptstyle{}$} (A1_3);     \path (A0_1) edge [->] node [auto] {$\scriptstyle{\phi}$} (A0_3);     \path (A1_1) edge [->] node [auto,swap] {$\scriptstyle{\sigma_{\underline \E}}$} (A1_3);   \end{tikzpicture}   \] \end{defn}
\begin{rem}
\label{rem:description of piE}We can describe the functor $\pi_{\underline{\E}}$
explicitly. So suppose we have an object $\chi=(\underline{\shL},\underline{\shM},\underline{z},\lambda)\in\stF_{\underline{\E}}(U)$.
We have $\pi_{\underline{\E}}(\chi)=(\underline{\shL},a)\in\stX_{\phi}(U)$
where $a$ is given, for any $t\in T_{+}$, by   \[   \begin{tikzpicture}[xscale=2.0,yscale=-0.6]     \node (A0_0) at (0, 0) {$\underline{\shL}^{\phi(t)}$};     \node (A0_1) at (1, 0) {$\underline{\shM}^{\E(t)}$};     \node (A1_0) at (0, 1) {$a(t)$};     \node (A1_1) at (1, 1) {$\underline{z}^{\E(t)}$};     \path (A0_0) edge [->]node [auto] {$\scriptstyle{\lambda_t}$} (A0_1);     \path (A1_0) edge [|->,gray]node [auto] {$\scriptstyle{}$} (A1_1);   \end{tikzpicture}   \] 
Moreover, if $(\underline{\omega},\underline{\tau})$ is an isomorphism
in $\stF_{\underline{\E}}$, then $\pi_{\underline{\E}}(\underline{\omega},\underline{\tau})=\underline{\omega}$.

If $(\underline{z},\lambda)\in\stF_{\underline{\E}}(U)$ then $a=\pi_{\underline{\E}}(\underline{z},\lambda)\in\stX_{\phi}(U)$
is given by   \[   \begin{tikzpicture}[xscale=3.2,yscale=-0.6]     \node (A0_0) at (0, 0) {$T_+$};     \node (A0_1) at (1, 0) {$\odi{U}$};     \node (A1_0) at (0, 1) {$t$};     \node (A1_1) at (1, 1) {$ {\underline z}^{\underline \E(t)}/\lambda_t= z_1^{\E^1(t)} \cdots z_s^{\E^s(t)}/\lambda_t$};     \path (A0_0) edge [->] node [auto] {$\scriptstyle{}$} (A0_1);     \path (A1_0) edge [|->,gray] node [auto] {$\scriptstyle{}$} (A1_1);   \end{tikzpicture}   \] 
\end{rem}

\begin{rem}
\label{rem:Fdelta is an open substack of Fepsilon se delta sottosequenza di epsilon}If
$\underline{\E}=(\E^{i})_{i\in I}$ is a sequence of elements of $\duale T_{+}$,
$J\subseteq I$ and we set $\underline{\delta}=(\E^{j})_{j\in J}$
we can define a map over $\stX_{\phi}$ as   \[   \begin{tikzpicture}[xscale=2.8,yscale=-0.3]     \node (A0_0) at (0, 0) {$\stF_{\underline{\delta}}$};     \node (A0_1) at (1, 0) {$\stF_{\underline{\E}}$};     
\node (A1_3) at (2.6, 1) {$\shM_{i}'=\left\{ \begin{array}{cc} \shM_{i} & i\in J\\ \odi{} & i\notin J\end{array}\right. z_{i}'=\left\{ \begin{array}{cc} z_{i} & i\in J\\ 1 & i\notin J\end{array}\right.$};     \node (A2_0) at (0, 2) {$(\underline{\shL},\underline{\shM},\underline{z},\lambda)$};     \node (A2_1) at (1, 2) {$(\underline{\shL},\underline{\shM}',\underline{z}',\lambda)$};     \path (A0_0) edge [->]node [auto] {$\scriptstyle{\rho}$} (A0_1);     \path (A2_0) edge [|->,gray]node [auto] {$\scriptstyle{}$} (A2_1);   \end{tikzpicture}   \]  In fact $\rho$ comes from the monoid map $T\oplus\N^{I}\arr T\oplus\N^{J}$
induced by the projection. Moreover $\rho$ is an open immersion,
whose image is the open substack of $\stF_{\underline{\E}}$ of objects
$(\underline{\shL},\underline{\shM},\underline{z},\lambda)$ such
that $z_{i}$ generates $\shM_{i}$ for all $i\notin J$. We will
often consider $\stF_{\underline{\delta}}$ as an open substack of
$\stF_{\underline{\E}}$.\end{rem}
\begin{defn}
\label{def:T+epsilon sottolineato}Given a sequence $\underline{\E}=\E^{1},\dots,\E^{s}$
of elements of $\duale T_{+}$ we define
\[
T_{+}^{\underline{\E}}=T_{+}^{\E^{1},\dots,\E^{s}}=\{v\in T\:|\:\forall i\:\E^{i}(v)\geq0\}
\]
 We also consider the case $s=0$, so that $T_{+}^{\underline{\E}}=T$.
If we denote by $\hat{\phi}\colon T_{+}^{\underline{\E}}\arr\Z^{r}$
the extension of $\phi$, we also define $\stX_{\phi}^{\underline{\E}}=\stZ_{\phi}^{\underline{\E}}=\stX_{\hat{\phi}}$.\end{defn}
\begin{rem}
\label{rem:change of monoid for stack X} Assume we have a monoid
map $T_{+}\arr T_{+}'$ (compatible with $\phi$ and $\phi'$) inducing
an isomorphism on the associated groups. If $\underline{\E}=\E^{1},\dots,\E^{s}\in\duale{T'}_{+}\subseteq\duale T_{+}$,
then we have a $2$-commutative diagram   \[   \begin{tikzpicture}[xscale=1.8,yscale=-1.3]     \node (A0_0) at (0, 0) {$\stF_{\underline \E}'$};     \node (A0_1) at (1, 0) {$\stF_{\underline \E}$};     \node (A1_0) at (0, 1) {$\stX_{\phi'}$};     \node (A1_1) at (1, 1) {$\stX_\phi$};     \path (A0_0) edge [->] node [auto] {$\scriptstyle{\simeq}$} (A0_1);     \path (A0_0) edge [->] node [auto,swap] {$\scriptstyle{\pi_{\underline \E}'}$} (A1_0);     \path (A0_1) edge [->] node [auto] {$\scriptstyle{\pi_{\underline \E}}$} (A1_1);     \path (A1_0) edge [->] node [auto] {$\scriptstyle{}$} (A1_1);   \end{tikzpicture}   \] 
where $\stF_{\underline{\E}}'$ is the stack obtained from $T_{+}'$
with respect to $\underline{\E}$.\end{rem}
\begin{prop}
The map $\pi_{\underline{\E}}\colon\stF_{\underline{\E}}\arr\stX_{\phi}$
has a natural factorization
\[
\stF_{\underline{\E}}\arr\stX_{\phi}^{\underline{\E}}\arr\stZ_{\phi}\arr\stX_{\phi}
\]
\end{prop}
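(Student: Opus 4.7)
The plan is to exhibit the factorization at the level of monoid maps, since by Lemma \ref{lem:morphisms of stack Xphi} this automatically produces the corresponding factorization of stacks. Recall that $\pi_{\underline{\E}}$ is induced by the monoid map
\[
 h\colon T_+\arr \N^s\oplus T\comma t\longmapsto(\underline{\E}(t),-t),
\]
together with the inclusion $\Z^r\hookrightarrow\Z^s\oplus\Z^r$ in the second factor (compatibility was already checked in the definition of $\sigma_{\underline{\E}}$). Thus to produce the claimed chain, I need to factor $h$ through the inclusions $T_+\hookrightarrow T_+^{int}\hookrightarrow T_+^{\underline{\E}}$, where each subsequent piece maps to $\N^s\oplus T$ compatibly with $\phi$ and $\sigma_{\underline{\E}}$.

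First I would observe that the assignment $v\longmapsto(\underline{\E}(v),-v)$ is already a well-defined group homomorphism $T\arr\Z^s\oplus T$, since each $\E^i\in\duale T_+$ extends canonically to a group homomorphism $T\arr\Z$. Restricting to $T_+^{int}\subseteq T$ (the image of $T_+$ in $T$), the values of the $\E^i$ remain in $\N$ because they do so on $T_+$; hence the restriction lands in $\N^s\oplus T$ and clearly extends $h$. Restricting further to $T_+^{\underline{\E}}=\{v\in T\st\E^i(v)\geq0\;\forall i\}$, non-negativity of $\underline{\E}$ on this submonoid is the defining condition, so the map again lands in $\N^s\oplus T$. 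Thus I obtain a commutative diagram of monoids
\[
 T_+\arr T_+^{int}\arr T_+^{\underline{\E}}\arr\N^s\oplus T,
\]
each triangle compatible via $\phi$ (respectively its extensions) with the inclusion $\Z^r\hookrightarrow\Z^s\oplus\Z^r$.

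Finally, applying Lemma \ref{lem:morphisms of stack Xphi} to each of these compatible squares yields the sequence of stack maps
\[
 \stF_{\underline{\E}}=\stX_{\sigma_{\underline{\E}}}\arr\stX_{\hat\phi|_{T_+^{\underline{\E}}}}=\stX_\phi^{\underline{\E}}\arr\stX_{\phi^{int}}=\stZ_\phi\arr\stX_\phi,
\]
and the $2$-commutativity of the composite with $\pi_{\underline{\E}}$ is a formal consequence of the pasting of $2$-commutative squares. There is essentially no obstacle here: the content is purely in checking that the monoid map $h$ is defined on the larger submonoids $T_+^{int}$ and $T_+^{\underline{\E}}$, which is immediate from the definitions of $\duale T_+$ and $T_+^{\underline{\E}}$.
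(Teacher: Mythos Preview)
Your argument is correct and is essentially the same as the paper's: both factor the defining monoid map $t\mapsto(\underline{\E}(t),-t)$ through the chain $T_+\to T_+^{int}\hookrightarrow T_+^{\underline{\E}}\to\N^s\oplus T$ and invoke the functoriality of the $\stX_{-}$ construction. The paper phrases this via Remark~\ref{rem:change of monoid for stack X}, which is precisely the special case of Lemma~\ref{lem:morphisms of stack Xphi} needed here; you invoke that lemma directly. One minor wording issue: you write ``restricting further to $T_+^{\underline{\E}}$'' when in fact $T_+^{int}\subseteq T_+^{\underline{\E}}$, so you are extending rather than restricting---but the mathematics is fine.
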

\begin{proof}
The factorization follows from \ref{rem:change of monoid for stack X}
taking monoid maps $T_{+}\arr T_{+}^{int}\arr T_{+}^{\underline{\E}}$.\end{proof}
\begin{rem}
This shows that $\pi_{\underline{\E}}$ has image in $\stZ_{\phi}$.We
will call with the same symbol $\pi_{\underline{\E}}$ the factorization
$\stF_{\underline{\E}}\arr\stZ_{\phi}$.
\end{rem}
We want now to show how the rays of $T_{+}$ can be used to describe
the objects of $\stZ_{\phi}$ over a field. Using notation from \ref{rem:description of isomorphism for local objects of X phi},
the result is:
\begin{thm}
\label{pro:characterization of points of Zphi}Let $k$ be a field
and $T_{+}\arrdi ak\in\stX_{\phi}(k)$. Then $a\in\stZ_{\phi}(k)$
if and only if there exists a group homomorphism $\lambda:T\arr\overline{k}^{*}$
and $\E\in\duale T_{+}$ such that 
\[
a(t)=\lambda_{t}0^{\E(t)}
\]
In particular if $\underline{\E}=\E^{1},\dots,\E^{r}$ generate $\duale T_{+}\otimes\Q$
then $\pi_{\underline{\E}}\colon\stF_{\underline{\E}}(\overline{k})\arr\stZ_{\phi}(\overline{k})$
is essentially surjective and so $\pi_{\underline{\E}}\colon|\stF_{\underline{\E}}|\arr|\stZ_{\phi}|$
is surjective. Finally, if the map $\phi\colon T\arr\Z^{r}$ is injective,
we have a one to one correspondence   \[   \begin{tikzpicture}[xscale=4.3,yscale=-0.5]     \node (A0_0) at (0, 0) {$\stZ_\phi(\overline k)/\simeq$};     \node (A0_1) at (1, 0) {$\{X\subseteq T_+\st X=\Supp \E \text{ for } \E\in\duale{T}_+\}$};     \node (A1_0) at (0, 1) {$a$};     \node (A1_1) at (1, 1) {$\{a=0\}$};     \path (A0_0) edge [->] node [auto] {$\scriptstyle{\gamma}$} (A0_1);     \path (A1_0) edge [|->,gray] node [auto] {$\scriptstyle{}$} (A1_1);   \end{tikzpicture}   \] 
In particular $|\stZ_{\phi}|=(\stZ_{\phi}(\overline{\Q})/\simeq)\bigsqcup\:[\bigsqcup_{\textup{primes }p}(\stZ_{\phi}(\overline{\F_{p}})/\simeq)]$.
\end{thm}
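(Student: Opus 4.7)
The plan is to work with the explicit description $\stZ_\phi \simeq [\Spec\Z[T_+^{int}]/\Di{\Z^r}]$ from~\ref{cor:the domain monoid and group monoid associated to T +: stack}, which identifies a $k$-point of $\stZ_\phi$ with a monoid map $T_+ \to k$ that factors through $T_+ \to T_+^{int}$, taken modulo the torus action. Since $\stZ_\phi \subseteq \stX_\phi$ is a closed substack, this factorization can be checked after base change to $\overline{k}$.

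For the main equivalence, the ``if'' direction is immediate, since any $a(t) = \lambda_t 0^{\E(t)}$ depends only on the image of $t$ in $T$ (both $\lambda$ and $\E$ factor through it), and so descends to $T_+^{int}$. For the converse, I pass to $\overline{k}$ and consider the factorization $\bar a \colon T_+^{int} \to \overline{k}$. Let $S = \{v \in T_+^{int} : \bar a(v) \neq 0\}$ and $P = T_+^{int} \setminus S$; since $\overline{k}$ is a domain, $P$ is a prime ideal of the monoid and $S$ is a face (a sub-monoid with $s+s' \in S \Rightarrow s, s' \in S$). The key combinatorial input is that every such face arises as $\{v \in T_+^{int} : \E(v) = 0\}$ for some $\E \in \duale T_+$: viewing $\RR_{\geq 0} T_+^{int} \subseteq T \otimes \RR$ as a rational polyhedral cone, the face spanned by $S$ is cut out by a nonnegative rational (and, after clearing denominators, integer) linear form, which is the desired $\E$. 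The restriction $\bar a|_S \colon S \to \overline{k}^*$ then extends canonically to the subgroup $\langle S \rangle \subseteq T$, and further to a group homomorphism $\lambda \colon T \to \overline{k}^*$, using that $\overline{k}^*$, being divisible, is injective as an abelian group. The identity $\bar a(t) = \lambda_t 0^{\E(t)}$ is verified separately on $S$ and $P$.

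For the essential surjectivity of $\pi_{\underline \E}(\overline{k})$, recall from~\ref{rem:description of piE} that atlas points $(\underline z, \lambda') \in \overline{k}^s \times \Hom(T, \overline{k}^*)$ of $\stF_{\underline \E}$ are sent to the monoid map $t \mapsto \underline z^{\underline \E(t)}/\lambda'(t)$. Setting $z_i = 0$ for $i$ in a subset $I$ and $z_i \in \overline{k}^*$ otherwise makes this vanish exactly on $\bigcup_{i \in I} \Supp \E^i$. Given $a(t) = \lambda_t 0^{\E(t)}$, the hypothesis that $\underline \E$ generates $\duale T_+ \otimes \Q$ lets one write $\E = \sum_i q_i \E^i$ with $q_i \geq 0$; taking $I = \{i : q_i > 0\}$ gives $\Supp \E = \bigcup_{i \in I} \Supp \E^i$. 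The remaining $z_i$'s and $\lambda'$ are then chosen to absorb $\lambda$ on the complement of $\Supp \E$, again via divisibility of $\overline{k}^*$. Surjectivity on underlying topological spaces is a formal consequence: every point of $|\stZ_\phi|$ has a geometric representative, which by the above lies in the image.

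For the bijection in the case $\phi$ injective, surjectivity of $\gamma$ is witnessed by the canonical lift $a(t) = 0^{\E(t)}$. For injectivity, if $a, a' \in \stZ_\phi(\overline{k})$ share vanishing locus $X$, the ratio $a'/a$ is a well-defined monoid map $T_+ \setminus X \to \overline{k}^*$; it extends to the subgroup $\langle T_+ \setminus X \rangle \subseteq T$ and then to $T$ by divisibility, and then (via the injectivity of $\phi \colon T \hookrightarrow \Z^r$ together with the injectivity of $\overline{k}^*$ as an abelian group) to a character $\underline \sigma \colon \Z^r \to \overline{k}^*$ with $\underline \sigma^{\phi(t)} a(t) = a'(t)$ for all $t$, realizing the desired isomorphism. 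The disjoint decomposition of $|\stZ_\phi|$ by characteristic follows at once since each $\overline{k}$-point determines the characteristic of its base field. The main obstacle, as I see it, is the combinatorial step in the main equivalence --- proving that every face of the monoid $T_+^{int}$ is cut out by an element of $\duale T_+$ --- since this is the bridge between ``being in $\stZ_\phi$'' and the explicit form $\lambda_t 0^{\E(t)}$ in the statement.
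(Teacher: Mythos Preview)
Your proposal is correct and follows essentially the same route as the paper. The paper packages the key combinatorial step into a preliminary lemma (\ref{lem:properties of pE}), phrased in terms of monomial prime ideals $p^{om}=p_\E$ of $k[T_+^{int}]$ and citing external references; your formulation via faces of the cone $\RR_{\geq 0}T_+^{int}$ being cut out by elements of $\duale T_+$ is the same statement in different clothing. For the injectivity of $\gamma$ when $\phi$ is injective, the paper writes $a=\lambda 0^\E$, $a'=\mu 0^\E$ and applies Remark~\ref{rem:differ by torsor implies iso}, which is exactly your ratio-and-extend argument using divisibility of $\overline{k}^*$.
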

Before proving this Theorem we need some preliminary results, that
will be useful also later.
\begin{defn}
If $T_{+}$ is integral, $\E\in\duale T_{+}$ and $k$ is a field
we define
\[
p_{\E}=\bigoplus_{v\in T_{+},\E(v)>0}kx_{v}\subseteq k[T_{+}]
\]
If $p\in\Spec k[T_{+}]$ we set $p^{om}=\bigoplus_{x_{v}\in p}kx_{v}$.
\end{defn}
The suffix $(-)^{om}$ here stays for 'homogeneous', since, when $T_{+}=\N^{r}$
and $k[T_{+}]=k[x_{1},\dots,x_{r}]$, $p^{om}$ is a homogeneous ideal,
actually a monomial ideal.
\begin{lem}
\label{lem:properties of pE}Let $k$ be a field and assume that $T_{+}$
is integral. Then:
\begin{enumerate}
\item if $\E\in\duale T_{+}$, $p_{\E}$ is prime and $k[\{v\in T_{+}\st\E(v)=0\}]\arr k[T_{+}]\arr k[T_{+}]/p_{\E}$
is an isomorphism.
\item If $p\in\Spec k[T_{+}]$ then $p^{om}=p_{\E}$ for some $\E\in\duale T_{+}$.
\end{enumerate}
\end{lem}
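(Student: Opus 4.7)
For part (1), I would proceed by a direct verification. First, $p_\E$ is an ideal: if $v\in T_+$ with $\E(v)>0$ and $w\in T_+$, then $\E(v+w)=\E(v)+\E(w)\geq\E(v)>0$ since $\E$ is a ray, so $x_vx_w=x_{v+w}\in p_\E$. The decomposition $k[T_+]=\bigoplus_{v:\E(v)=0}kx_v\oplus\bigoplus_{v:\E(v)>0}kx_v$ shows that the stated composition $k[\{v\in T_+\st\E(v)=0\}]\to k[T_+]/p_\E$ is an isomorphism. To conclude that $p_\E$ is prime it then suffices to see that $k[\{v\in T_+\st\E(v)=0\}]$ is a domain: the submonoid $\{v\in T_+\st\E(v)=0\}$ inherits cancellation from $T_+$, so it embeds in its associated group, which is a subgroup of $T$ and therefore free of finite rank; the group algebra of a free abelian group is a Laurent polynomial ring, hence a domain, and our ring is a subring of it.

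For part (2), the plan is to reconstruct a ray from the combinatorial shadow of $p$. Set
\[
S=\{v\in T_+\st x_v\notin p\}.
\]
Since $x_0=1\notin p$ and $p$ is prime, $S$ is a submonoid of $T_+$ containing $0$, and $p^{om}=\bigoplus_{v\in T_+\setminus S}kx_v$. Let $T_S=\langle S\rangle_\Z\subseteq T$ and $\bar T=T/T_S$, with $\bar T_+\subseteq\bar T$ the image of $T_+$. The key geometric claim is:
\[
\bar T_+\cap(-\bar T_+)=\{0\}.
\]
Indeed, given $v\in T_+$ with $\bar v\in -\bar T_+$, choose $w\in T_+$ with $v+w\in T_S$, so $v+w=s_1-s_2$ with $s_1,s_2\in S$ (using that $S$ is a submonoid). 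Integrality of $T_+$ lets us rewrite this as an equation $v+w+s_2=s_1$ in $T_+$, giving $x_v x_w x_{s_2}=x_{s_1}\notin p$; primality of $p$ forces $x_v\notin p$, i.e.\ $v\in S$, so $\bar v=0$.

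The main obstacle, and the point where I would spend the most care, is producing an integer-valued functional from this pointedness. Write $\bar T=\Z^n\oplus F$ with $F$ finite, and let $\pi\colon\bar T\to\Z^n$ be the projection; the same argument as above (applied to a multiple $mv$ killed by torsion) shows $\pi(\bar T_+)\cap(-\pi(\bar T_+))=\{0\}$ as well. Now $\pi(\bar T_+)$ is a finitely generated submonoid of $\Z^n$ (finite generation descends from $T_+$) generating a strongly convex rational polyhedral cone in $\R^n$. Its dual cone has nonempty interior, so I can choose a rational linear functional $\hat\E\colon\Z^n\to\Q$ strictly positive on $\pi(\bar T_+)\setminus\{0\}$, clear denominators to land in $\Z$, and define $\E\colon T\to\bar T\xrightarrow{\pi}\Z^n\xrightarrow{\hat\E}\Z$. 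By construction $\E\in\duale T_+$, $\E(v)=0$ for $v\in S$, and $\E(v)>0$ for $v\in T_+\setminus S$, whence $p_\E=p^{om}$.
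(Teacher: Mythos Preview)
Your proof is correct. Part (1) simply makes explicit what the paper dismisses as ``obvious''. For part (2) your approach is genuinely different from the paper's: the paper argues in two cited steps, first invoking \cite[Proposition 1.7.12]{Kreuzer2005} to see that $p^{om}$ is prime, and then appealing to \cite[Chapter I, Corollary 2.2.4]{Ogus2006} (a structure result on monoid ideals/faces) to identify any monomial prime with some $p_\E$. You instead bypass both references and construct $\E$ directly from $p$: forming the face $S=\{v:x_v\notin p\}$, quotienting by the group it generates, and using strong convexity of the image cone to produce a strictly positive integral functional. This is more self-contained and arguably more illuminating, at the cost of redoing a small piece of convex geometry.

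One small point of exposition to tighten: when you conclude ``$\E(v)>0$ for $v\in T_+\setminus S$'', you are using that $\pi(\bar v)\neq 0$ for such $v$. Your stated claim $\pi(\bar T_+)\cap(-\pi(\bar T_+))=\{0\}$ does not literally say this, but the argument you give for it does: the ``same argument'' you invoke actually proves the stronger statement that $\pi(\bar v)\in -\pi(\bar T_+)$ forces (a multiple of) $v$ into $S$, hence $v\in S$ by primality. It would be cleaner to state this stronger conclusion explicitly rather than only the pointedness of $\pi(\bar T_+)$.
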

\begin{proof}
$(1)$ It is obvious.

$(2)$ $p^{om}$ is a prime thanks to \cite[Proposition  1.7.12]{Kreuzer2005}
and therefore $p^{om}=p_{\E}$ for some $\E\in\duale T_{+}$ thanks
to \cite[Chapter I, Corollary 2.2.4]{Ogus2006}.\end{proof}
\begin{rem}
\label{rem:differ by torsor implies iso}If $k$ is an algebraically
closed field, $\phi\colon T\arr\Z^{r}$ is injective and $a,b\in\stX_{\phi}(k)$
differ by a torsor, i.e. there exists $\lambda\colon T_{+}\arr k^{*}$
such that $a=\lambda b$, then $a\simeq b$ in $\stZ_{\phi}(k)$.
Indeed $\lambda$ extends to a map $T\arr k^{*}$ and, since $k$
is algebraically closed, it extends again to a map $\lambda\colon\Z^{r}\arr k^{*}$.\end{rem}
\begin{proof}
(of Theorem \ref{pro:characterization of points of Zphi}) We can
assume that $k$ is algebraically closed and that $T_{+}$ is integral,
since if $a$ has an expression as in the statement then clearly $a\in\stZ_{\phi}(k)$.
Consider $p=\Ker(k[T_{+}]\arrdi ak)$. Thanks to \ref{lem:properties of pE},
we can write $p^{om}=p_{\E}$ for some $\E\in\duale T_{+}$. Set $T_{+}'=\{v\in T_{+}\st\E(v)=0\}$
and $T'=\langle T_{+}'\rangle_{\Z}$. Since $a$ maps $T'_{+}$ to
$k^{*}$, there exists an extension $\lambda\colon T'\arr k^{*}$.
On the other hand, since $k$ is algebraically closed, the inclusion
$T'\arr T$ yields a surjection
\[
\Hom(T,k^{*})\arr\Hom(T',k^{*})
\]
and so we can extend again to an element $\lambda\colon T\arr k^{*}$.
Since one has $\Supp\E=\{a=0\}$ by construction, it is easy to check
that $a(t)=\lambda_{t}0^{\E(t)}$ for all $t\in T_{+}$.

Now consider the last part of the statement and so assume $\phi\colon T\arr\Z^{r}$
injective. The map $\gamma$ is well defined thanks to above and surjective
since, given $\E\in\duale T_{+}$, one can always define $a(t)=0^{\E(t)}$.
For the injectivity, let $a,b\in\stZ_{\phi}(k)$ be such that $\{a=0\}=\{b=0\}$.
We can write $a(t)=\lambda_{t}0^{\E(t)}\comma b(t)=\mu_{t}0^{\E(t)}$,
where $\lambda,\mu\colon T\arr k^{*}$, so that $a,b$ differ by a
torsor and are therefore isomorphic thanks to \ref{rem:differ by torsor implies iso}.
Finally, since any point of $|\stZ_{\phi}|$ comes from an object
of $\stZ_{\phi}(\Z)$, we also have the last equality.
\end{proof}
In some cases the description of the objects of $\stF_{\underline{\E}}$
can be simplified, regardless of $\underline{\E}$, in the sense that
there exist a stack of reduced data $\stF_{\underline{\E}}^{\textup{red}}$,
whose objects can be described by less data, and an isomorphism $\stF_{\underline{\E}}\simeq\stF_{\underline{\E}}^{\textup{red}}$.
This kind of simplification could be very useful when we have to deal
with an explicit map of monoids $\phi\colon T_{+}\arr\Z^{r}$, as
we will see in \ref{pro:stack of reduced data for M-covers}. The
idea is that in order to define an object $(\underline{\shL},\underline{\shM},\underline{z},\lambda)\in\stF_{\underline{\E}}$,
we do not really need all the invertible sheaves $\shL_{1},\dots,\shL_{r}$,
because they are uniquely determined by a subset of them and the other
data.
\begin{defn}
\label{def:stack of reduced data}Assume $T\arrdi{\phi}\Z^{r}$ injective.
Let $V\subseteq\Z^{r}$ be a submodule with a given basis $v_{1},\dots,v_{q}$
and $\sigma\colon\Z^{r}\arr V$ be a map such that $(\id-\sigma)\Z^{r}\subseteq T$
(or equivalently $ $$\pi=\pi\circ\sigma$ where $\pi$ is the projection
$\Z^{r}\arr\Coker\phi$). Define $W=\langle(\id-\sigma)V,\sigma T\rangle\subseteq V$.
Given $\underline{\E}=\E^{1},\dots\E^{l}\in\duale T_{+}$ consider
the map   \[   \begin{tikzpicture}[xscale=2.8,yscale=-0.5]     \node (A0_0) at (0, 0) {$W\oplus \N^l$};     \node (A0_1) at (1, 0) {$\Z^q \oplus \Z^l$};     \node (A1_0) at (0, 1) {$(w,z)$};     \node (A1_1) at (1, 1) {$(-w,\underline{\E}(w)+z)$};     \path (A0_0) edge [->]node [auto] {$\scriptstyle{\psi_{\underline \E,\sigma}}$} (A0_1);     \path (A1_0) edge [|->,gray]node [auto] {$\scriptstyle{}$} (A1_1);   \end{tikzpicture}   \] We
define $\stF_{\underline{\E}}^{\textup{red},\sigma}=\stX_{\psi_{\underline{\E},\sigma}}$
and we call it the stack of reduced data of $\underline{\E}$.\end{defn}
\begin{lem}
\label{lem:for the stack of reduced data}Consider a submodule $U\subseteq\Z^{p}$,
a map $\underline{\E}\colon U\arr\Z^{l}$ and $\tau\colon\Z^{p}\arr\Z^{p}$
such that $(\id-\tau)\Z^{p}\subseteq U$. Consider the commutative
diagram   \[   \begin{tikzpicture}[xscale=1.9,yscale=-1.4]     
\node (A0_0) at (0, 0) {$(u,z)$};     
\node (A0_1) at (1, 0) {$U\oplus \N^l$};     
\node (A0_3) at (2.5, 0) {$U\oplus \N^l$};     
\node (A1_0) at (0, 1) {$(-u,\underline{\E}(u)+z)$};     
\node (A1_1) at (1, 1) {$\Z^p \oplus \Z^l$};     
\node (A1_3) at (2.5, 1) {$\Z^p \oplus \Z^l$};     
\node (A2_1) at (1, 1.4) {$(u,z)$};    
\node (A2_3) at (2.5, 1.4) {$(\tau u,\underline{\E}(u-\tau u)+z)$};     

\path (A0_1) edge [->]node [auto] {$\scriptstyle{\tau\oplus \id}$} (A0_3);     \path (A0_3) edge [->]node [auto] {$\scriptstyle{\psi}$} (A1_3);     \path (A1_1) edge [->]node [auto] {$\scriptstyle{}$} (A1_3);     \path (A2_1) edge [|->,gray]node [auto] {$\scriptstyle{}$} (A2_3);     \path (A0_0) edge [|->,gray]node [auto] {$\scriptstyle{}$} (A1_0);     \path (A0_1) edge [->]node [auto,swap] {$\scriptstyle{\psi}$} (A1_1);   \end{tikzpicture}   \] Then the induced map $\varphi\colon\stX_{\psi}\arr\stX_{\psi}$ is
isomorphic to $\id_{\stX_{\psi}}$.\end{lem}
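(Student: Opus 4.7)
My plan is to exhibit an explicit natural 2-isomorphism $\eta \colon \id_{\stX_\psi} \Rightarrow \varphi$. By Remark \ref{rem: description of functors of Xphi on local objects} applied to the commutative square in the statement, the functor $\varphi$ acts on a local object $a \colon U \oplus \N^l \to (\odi U,\cdot)$ of $\stX_\psi(U)$ by precomposition with the top arrow, namely $\varphi(a) = a \circ (\tau \oplus \id_{\N^l})$; note that $\tau$ does restrict to an endomorphism of $U$ because, for $u \in U \subseteq \Z^p$, one has $\tau u = u - (u - \tau u) \in U$.

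The key observation is that, since $(\id - \tau)\Z^p \subseteq U$ and $U$ is a subgroup of $\Z^p$, for every $u \in U$ both $(\id-\tau)u$ and $-(\id-\tau)u$ lie in $U$, and hence $a((\id-\tau)u,0) \in \odi U^*$. I therefore define
\[
\eta_a := \bigl(a((\id-\tau)e_1, 0),\ \ldots,\ a((\id-\tau)e_p, 0),\ 1,\ \ldots,\ 1\bigr) \in (\odi U^*)^{p+l},
\]
where $e_1,\dots,e_p$ is the canonical basis of $\Z^p$. Using the isomorphism formula of Remark \ref{rem:description of isomorphism for local objects of X phi}, the tuple $\eta_a$ represents a morphism $a \to \varphi(a)$ precisely when $\eta_a^{\psi(u,z)} \cdot a(u,z) = a(\tau u, z)$ for every $(u,z) \in U \oplus \N^l$; with the choice above the left hand side computes to $a(-(\id-\tau)u,0) \cdot a(u,z)$, and the identity then follows from $u = \tau u + (\id-\tau)u$ together with the additivity of $a$.

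For naturality with respect to a morphism $\underline\sigma \colon a \to a'$ in $\stX_\psi(U)$, the rule in Remark \ref{rem: description of functors of Xphi on local objects} expresses the $k$-th entry of $\varphi(\underline\sigma)$ as $\underline\sigma^{g(e_k)}$, where $g \colon \Z^p \oplus \Z^l \to \Z^p \oplus \Z^l$ is the bottom arrow $(v,w) \mapsto (\tau v, \underline\E(v-\tau v)+w)$. The required equality $\eta_a \cdot \varphi(\underline\sigma) = \underline\sigma \cdot \eta_{a'}$ in $(\odi U^*)^{p+l}$ (composition being multiplication in this abelian group) reduces, coordinate by coordinate, to the defining relation $\underline\sigma^{\psi(t)} a(t) = a'(t)$ evaluated at $t = ((\id-\tau)e_k, 0) \in U \oplus \N^l$, which holds because $\underline\sigma$ is a morphism. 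The main obstacle is purely notational bookkeeping between the multiplicative structure of $(\odi U^*)^{p+l}$ and the additive structure of the exponents in $\Z^p \oplus \Z^l$; no further ingredient is required beyond the standing hypothesis $(\id - \tau)\Z^p \subseteq U$.
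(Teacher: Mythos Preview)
Your proposal is correct and follows essentially the same approach as the paper: both construct the natural isomorphism by setting the $\Z^p$-components of $\eta$ to be $\lambda((\id-\tau)x_i)$ for a basis $x_i$ of $\Z^p$ and the $\Z^l$-components to be $1$, then verify the morphism and naturality conditions via the defining relation of $\underline\sigma$ evaluated at $((\id-\tau)x_i,0)\in U\oplus\N^l$. The only cosmetic difference is that the paper first picks a basis of $\Z^p$ adapted to $U$, whereas you work directly with the standard basis; your choice is in fact slightly cleaner, since the hypothesis $(\id-\tau)\Z^p\subseteq U$ already guarantees $(\id-\tau)e_i\in U$ without any adaptation.
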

\begin{proof}
Let $x_{1},\dots,x_{p}$ be a $\Z$-basis of $\Z^{p}$ with $a_{1},\dots,a_{k}\in\N$
such that $a_{1}x_{1},\dots,a_{k}x_{k}$ is a $\Z$-basis of $U$.
We want to define a natural isomorphism $\id_{\stX_{\psi}}\arrdi{\omega}\varphi$.
First note that it is enough to define it on the objects of $\stX_{\psi}$
coming from the atlas $\Spec\Z[U\oplus\N^{l}]$, prove the naturality
between such objects on a fixed scheme $T$ and for the restrictions.
An object coming from the atlas is of the form $(\lambda,\underline{z})$
where $\lambda\colon U\arr\odi T^{*}$ is an additive map and $\underline{z}=z_{1},\dots,z_{l}\in\odi T$.
Moreover $\varphi(\lambda,\underline{z})=(\tilde{\lambda},\underline{z})$
where $\tilde{\lambda}=\lambda\circ\tau$. Let $\underline{\eta}\in\Di{\Z^{p}}(T)$
the only elements such that $\underline{\eta}^{x_{i}}=\lambda(x_{i}-\tau x_{i})$
for $i=1,\dots,p$. These objects are well defined since $(\id-\tau)\Z^{p}\subseteq U$.
We claim that $\omega_{T,(\lambda,\underline{z})}=(\underline{\eta},\underline{1})$
is an isomorphism $(\lambda,\underline{z})\arr\varphi(\lambda,\underline{z})$
and define a natural transformation. It is an isomorphism since $1z_{j}=z_{j}$
and the condition
\[
\underline{\eta}^{-u}\underline{1}^{\underline{\E}(u)}\lambda(u)=\lambda(\tau u)\ \forall u\in U
\]
holds by construction checking it on the basis $a_{1}x_{1},\dots,a_{k}x_{k}$
of $U$ (see \ref{rem:description of isomorphism for local objects of X phi}).
It is also easy to check that this isomorphisms commute with the change
of basis. So it remains to prove that, if $(\underline{\sigma},\underline{\mu})$
is an isomorphism $(\lambda,\underline{z})\arr(\lambda',\underline{z}')$
then we have a commutative diagram   \[   \begin{tikzpicture}[xscale=2.8,yscale=-1.4]     \node (A0_0) at (0, 0) {$(\lambda,\underline z)$};     \node (A0_1) at (1, 0) {$(\lambda',\underline z')$};     \node (A1_0) at (0, 1) {$\varphi(\lambda,\underline z)$};     \node (A1_1) at (1, 1) {$\varphi(\lambda',\underline z')$};     \path (A0_0) edge [->]node [auto] {$\scriptstyle{(\underline \sigma,\underline \mu)}$} (A0_1);     \path (A0_1) edge [->]node [auto] {$\scriptstyle{\omega_{T,(\lambda',\underline z')}}$} (A1_1);     \path (A1_0) edge [->]node [auto] {$\scriptstyle{\varphi(\underline \sigma,\underline \mu)}$} (A1_1);     \path (A0_0) edge [->]node [auto,swap] {$\scriptstyle{\omega_{T,(\lambda,\underline z)}}$} (A1_0);   \end{tikzpicture}   \] 
We have $\varphi(\underline{\sigma},\underline{\mu})=(\tilde{\underline{\sigma}},\tilde{\underline{\mu}})$
with $\tilde{\underline{\mu}}=\underline{\mu}$ and $\tilde{\underline{\sigma}}^{x_{i}}=\underline{\sigma}^{\tau x_{i}}\underline{\mu}^{\underline{\E}(x_{i}-\tau x_{i})}$
(see \ref{rem: description of functors of Xphi on local objects}).
So it is easy to check that the commutativity in the second member
holds. For the first, the condition is $\tilde{\underline{\sigma}}\underline{\eta}=\underline{\eta}'\underline{\sigma}$,
which is equivalent to
\[
(\tilde{\underline{\sigma}}\underline{\eta})^{x_{i}}=\underline{\sigma}^{\tau x_{i}}\underline{\mu}^{\underline{\E}(x_{i}-\tau x_{i})}\lambda(x_{i}-\tau x_{i})=(\underline{\eta}'\underline{\sigma})^{x_{i}}=\lambda'(x_{i}-\tau x_{i})\underline{\sigma}^{x_{i}}
\]
 and to $\underline{\sigma}^{-(x_{i}-\tau x_{i})}\underline{\mu}^{\underline{\E}(x_{i}-\tau x_{i})}\lambda(x_{i}-\tau x_{i})=\lambda'(x_{i}-\tau x_{i})$
for any $i$. But, since $(\underline{\sigma},\underline{\mu})$ is
an isomorphism $(\lambda,\underline{z})\arr(\lambda',\underline{z}')$,
the condition 
\[
\underline{\sigma}^{-u}\underline{\mu}^{\underline{\E}(u)}\lambda(u)=\lambda'(u)\ \forall u\in U
\]
has to be satisfied.\end{proof}
\begin{prop}
\label{pro:isomorphism with the stack of reduced data}Assume $T\arrdi{\phi}\Z^{r}$
injective and let $\underline{\E}=\E^{1},\dots\E^{r}\in\duale T_{+}$
and $\sigma\comma V\comma v_{1},\dots,v_{q}$ be as in \ref{def:stack of reduced data}.
For appropriate choices of isomorphisms $\tilde{\lambda}$ given by
\ref{lem:morphisms of stack Xphi}, the functors   \[   \begin{tikzpicture}[xscale=5.6,yscale=-0.5]     \node (A0_0) at (0, 0) {$((\underline \shN^{\sigma e_i}\otimes \underline \shM^{\underline \E(e_i-\sigma e_i)})_{i=1,\dots,r},\underline \shM, \underline z,\tilde \lambda)$};     \node (A0_1) at (1, 0) {$(\underline \shN,\underline \shM, \underline z,\lambda)$};     \node (A1_0) at (0, 1) {$\stF_{\underline \E}$};     \node (A1_1) at (1, 1) {$\stF_{\underline \E}^{red,\sigma}$};     \node (A2_0) at (0, 2) {$(\underline \shL,\underline \shM, \underline z,\lambda)$};     \node (A2_1) at (1, 2) {$((\underline \shL^{v_i})_{i=1,\dots,q},\underline \shM, \underline z, \lambda_{|W})$};     \path (A1_1) edge [->]node [auto] {$\scriptstyle{}$} (A1_0);     \path (A1_0) edge [->]node [auto] {$\scriptstyle{}$} (A1_1);     \path (A2_0) edge [|->,gray]node [auto] {$\scriptstyle{}$} (A2_1);     \path (A0_1) edge [|->,gray]node [auto] {$\scriptstyle{}$} (A0_0);   \end{tikzpicture}   \] 
are inverses of each other.\end{prop}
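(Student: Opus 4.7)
The strategy is to realize both functors as instances of Lemma \ref{lem:morphisms of stack Xphi} coming from explicit commutative squares of monoid maps, and then to show that the two compositions are naturally isomorphic to the identity by appealing to Lemma \ref{lem:for the stack of reduced data}. The hypothesis $(\id-\sigma)\Z^{r}\subseteq T$ together with $W=\langle(\id-\sigma)V,\sigma T\rangle$ is precisely what makes all the monoid maps that appear below well defined.

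First I will construct the ``restriction'' functor $\stF_{\underline{\E}}\to\stF_{\underline{\E}}^{\textup{red},\sigma}$ from the commutative square with top row the inclusion $h_{A}\colon W\oplus\N^{l}\hookrightarrow\N^{l}\oplus T$ (valid since $W\subseteq T$) and bottom row $g_{A}\colon\Z^{q}\oplus\Z^{l}\to\Z^{l}\oplus\Z^{r}$ sending the standard generators of $\Z^{q}$ to $v_{1},\dots,v_{q}$ and acting as the identity on $\Z^{l}$; commutativity is immediate, and the formula $\shN_{i}=\underline{\shL}^{g_{A}(e_{i})}=\underline{\shL}^{v_{i}}$ matches the statement. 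For the ``extension'' functor $\stF_{\underline{\E}}^{\textup{red},\sigma}\to\stF_{\underline{\E}}$ I will use the commutative square whose top row is $h_{B}\colon\N^{l}\oplus T\to W\oplus\N^{l}$, $(\underline{z},t)\mapsto(\sigma t,\underline{z})$, landing in $W$ because $\sigma T\subseteq W$, and whose bottom row $g_{B}\colon\Z^{l}\oplus\Z^{r}\to\Z^{q}\oplus\Z^{l}$ is forced by commutativity to satisfy $g_{B}(e_{j},0)=(0,e_{j})$ and $g_{B}(0,e_{i})=(\sigma e_{i},\underline{\E}(e_{i}-\sigma e_{i}))$; the second formula makes sense because $e_{i}-\sigma e_{i}\in T$ by hypothesis. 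Then $\shL_{i}=\underline{N}^{g_{B}(0,e_{i})}=\underline{\shN}^{\sigma e_{i}}\otimes\underline{\shM}^{\underline{\E}(e_{i}-\sigma e_{i})}$ as in the statement, and the required isomorphisms $\tilde{\lambda}$ are those supplied by Lemma \ref{lem:morphisms of stack Xphi}.

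For the compositions, the monoid self-maps $h_{A}\circ h_{B}\colon(\underline{z},t)\mapsto(\underline{z},\sigma t)$ of $\N^{l}\oplus T$ and $h_{B}\circ h_{A}\colon(w,\underline{z})\mapsto(\sigma w,\underline{z})$ of $W\oplus\N^{l}$ (well defined because $\sigma w\in\sigma T\subseteq W$ for $w\in W\subseteq T$) produce functors that coincide on atlas objects with those treated in Lemma \ref{lem:for the stack of reduced data}, taking $\tau=\sigma\colon\Z^{r}\to\Z^{r}$ in the first case (the hypothesis $(\id-\tau)\Z^{r}\subseteq T$ being precisely our assumption on $\sigma$) and $\tau=\sigma_{|V}\colon V\to V$ in the second case (the required inclusion $(\id-\tau)V\subseteq W$ being the very definition of $W$). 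Consequently both compositions are naturally isomorphic to the identity, with natural isomorphisms given on the atlas by the units $\eta_{i}=\lambda(e_{i}-\sigma e_{i})$ and $\omega_{i}=\mu(v_{i}-\sigma v_{i})$ respectively, and at the level of general objects by the sheaf isomorphisms $\lambda_{e_{i}-\sigma e_{i}}\colon\underline{\shL}^{e_{i}-\sigma e_{i}}\to\underline{\shM}^{\underline{\E}(e_{i}-\sigma e_{i})}$ and $\lambda_{v_{i}-\sigma v_{i}}\colon\underline{\shN}^{v_{i}-\sigma v_{i}}\to\underline{\shM}^{\underline{\E}(v_{i}-\sigma v_{i})}$ which are part of the data of the object in question.

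The main obstacle is the bookkeeping between my explicit composites $g_{A}\circ g_{B}$ and $g_{B}\circ g_{A}$ and the bottom maps implicit in Lemma \ref{lem:for the stack of reduced data}: they differ by the ``twist'' contributions $\underline{\E}(e_{i}-\sigma e_{i})$ and $\underline{\E}(v_{i}-\sigma v_{i})$, and one must check that the resulting line bundle identifications ${\shL}^{\text{new}}_{i}\simeq\shL_{i}$ and ${\shN}^{\text{new}}_{j}\simeq\shN_{j}$ provided by these twists are compatible with the morphisms $\lambda$ and $\tilde{\lambda}$, so that the whole package assembles into a genuine natural transformation of functors of stacks. This reduces ultimately to the compatibility statement verified inside the proof of Lemma \ref{lem:for the stack of reduced data}.
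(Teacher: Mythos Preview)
Your proposal is correct and follows essentially the same route as the paper: both functors are produced via Lemma~\ref{lem:morphisms of stack Xphi} from the two commutative squares you wrote down (inclusion $W\oplus\N^{l}\hookrightarrow T\oplus\N^{l}$ and $\sigma\oplus\id\colon T\oplus\N^{l}\to W\oplus\N^{l}$), and both composites are identified with the identity via Lemma~\ref{lem:for the stack of reduced data} applied with $\tau=\sigma$ on $\Z^{r}$ and $\tau=\sigma_{|V}$ on $V$. Your ``main obstacle'' is in fact no obstacle: if you compute $g_{A}\circ g_{B}$ and $g_{B}\circ g_{A}$ directly you get $(x,y)\mapsto(\sigma x,\underline{\E}(x-\sigma x)+y)$ on $\Z^{r}\oplus\Z^{l}$ and on $V\oplus\Z^{l}$ respectively, which is literally the bottom map in Lemma~\ref{lem:for the stack of reduced data}, so no further twist-compatibility check is needed beyond what that lemma already provides.
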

\begin{proof}
Consider the commutative diagrams   \[   \begin{tikzpicture}[xscale=3.0,yscale=-1.4]     \node (A1_0) at (0, 1) {$W\oplus \N^s$};     \node (A1_1) at (1, 1) {$T\oplus \N^s$};     \node (A1_2) at (2, 1) {$T\oplus \N^s$};     \node (A1_3) at (3, 1) {$W\oplus \N^s$};     \node (A2_0) at (0, 2) {$\Z^q\oplus \Z^s$};     \node (A2_1) at (1, 2) {$\Z^r\oplus \Z^s$};     \node (A2_2) at (2, 2) {$\Z^r\oplus \Z^s$};     \node (A2_3) at (3, 2) {$\Z^q\oplus \Z^s$};     
\node (A3_2) at (2, 2.4) {$(x,y)$};     
\node (A3_3) at (3, 2.4) {$(\sigma x,\underline \E(x-\sigma x)+y)$};     
\path (A1_0) edge [right hook->]node [auto] {$\scriptstyle{}$} (A1_1);     \path (A1_3) edge [->]node [auto] {$\scriptstyle{\psi}$} (A2_3);     \path (A2_2) edge [->]node [auto] {$\scriptstyle{}$} (A2_3);     \path (A1_0) edge [->]node [auto,swap] {$\scriptstyle{\psi}$} (A2_0);     \path (A1_1) edge [->]node [auto] {$\scriptstyle{\phi_{\underline \E}}$} (A2_1);     \path (A1_2) edge [->]node [auto] {$\scriptstyle{\sigma\oplus \id}$} (A1_3);     \path (A2_0) edge [right hook->]node [auto] {$\scriptstyle{}$} (A2_1);     \path (A3_2) edge [|->,gray]node [auto] {$\scriptstyle{}$} (A3_3);     \path (A1_2) edge [->]node [auto,swap] {$\scriptstyle{\phi_{\underline \E}}$} (A2_2);   \end{tikzpicture}   \] They induce functors $\Lambda\colon\stF_{\underline{\E}}\arr\stF_{\underline{\E}}^{\textup{red},\sigma}$
and $\Delta\colon\stF_{\underline{\E}}^{\textup{red},\sigma}\arr\stF_{\underline{\E}}$
respectively, that behave as the functors of the statement thanks
to the description given in \ref{lem:morphisms of stack Xphi}. Finally,
applying \ref{lem:for the stack of reduced data}, we obtain that
$\Lambda\circ\Delta\simeq\id$ and $\Delta\circ\Lambda\simeq\id$.
\end{proof}

\subsection{Extremal rays and smooth sequences.}

We continue to use notation from \ref{not:notation for a monoid}.
We have seen that given a collection $\underline{\E}=\E^{1},\dots,\E^{r}\in\duale T_{+}$
we can associate to it a stack $\stF_{\underline{\E}}$ and a 'parametrization'
map $\stF_{\underline{\E}}\arr\stX_{\phi}$. The stack $\stF_{\underline{\E}}$
could be 'too big' if we do not make an appropriate choice of the
collection $\underline{\E}$. This happens for example if the rays
in $\underline{\E}$ are not distinct or, more generally, if a ray
in $\underline{\E}$ belongs to the submonoid generated by the other
rays in $\underline{\E}$. Thus we want to restrict our attention
to a special class of  rays, called extremal and to special sequences
of them.
\begin{defn}
An \emph{extremal} ray for $T_{+}$ is an element $\E\in\duale T_{+}$
such that
\begin{itemize}
\item $\E$ has minimal non empty support, i.e. the set $\Supp\E\subseteq T_{+}$
is minimal in
\[
(\{X\subseteq T_{+}\st X\neq\emptyset\text{ and }X=\Supp\delta\text{ for some }\delta\in\duale{T_{+}}\},\subseteq)
\]

\item $\E$ is normalized, i.e. $\E\colon T\arr\Z$ is surjective.
\end{itemize}
\end{defn}
\begin{lem}
Assume that $T_{+}$ is an integral monoid and let $v_{1},\dots,v_{l}$
be a system of generators of $T_{+}$. Then the extremal rays are
the normalized $\E\in\duale T_{+}-\{0\}$ such that $\Ker\E$ contains
$\rk T-1$ $\Q$-independent vectors among the $v_{1},\dots,v_{l}$.
In particular they are finitely many and they generate $\Q_{+}\duale T_{+}$.\end{lem}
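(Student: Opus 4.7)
Because $v_1,\dots,v_l$ generate $T_+$, the support $\Supp\E$ of a ray $\E\in\duale T_+$ is determined by the subset $S_\E=\{i\st\E(v_i)>0\}$: a generic $v=\sum a_iv_i$ satisfies $\E(v)>0$ iff $a_i>0$ for some $i\in S_\E$. Containment of supports therefore corresponds to containment of the $S_\E$, so minimality of $\Supp\E$ among non-empty supports of rays is equivalent to maximality of the ``kernel set'' $Z_\E=\{v_i\st\E(v_i)=0\}$ (among those $\E$ with $Z_\E\neq\{v_1,\dots,v_l\}$). My plan is to show that extremality and the hypothesis ``$Z_\E$ contains $\rk T-1$ $\Q$-linearly independent vectors'' are equivalent, using a linear-programming style pivoting argument.

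For the forward direction, suppose $\E$ is extremal and, by contradiction, that $\dim_\Q\la Z_\E\ra_\Q\leq\rk T-2$. Then the space of $\Q$-linear functionals on $T\otimes\Q$ vanishing on $Z_\E$ has $\Q$-dimension at least $2$, so one can pick $\delta'\in\Hom(T,\Z)$ vanishing on $Z_\E$ and $\Q$-linearly independent from $\E$. The rationals $r_i=\delta'(v_i)/\E(v_i)$ for $v_i\in\Supp\E$ are not all equal (otherwise $\delta'=r\E$ on every generator, hence on $T$, contradicting independence), so setting $r=\min_ir_i$ a suitable positive integer multiple of $\delta'-r\E$ lies in $\duale T_+$, is non-zero, and has support strictly contained in $\Supp\E$ yet still non-empty (strict because some $r_j=r$; non-empty because some $r_k>r$). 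This contradicts the minimality of $\Supp\E$.

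For the converse, assume $\E$ is normalized, non-zero, and $Z_\E$ contains $\rk T-1$ $\Q$-independent vectors. Given any $\delta\in\duale T_+\setminus\{0\}$ with $\Supp\delta\subseteq\Supp\E$ we have $Z_\E\subseteq Z_\delta$, so $\Ker\delta\otimes\Q$ contains a subspace of dimension $\rk T-1$; since $\delta\neq 0$, its kernel has exactly that dimension, forcing $\Ker\delta\otimes\Q=\la Z_\E\ra_\Q=\Ker\E\otimes\Q$. Hence $\delta=\lambda\E$ with $\lambda\in\Q$, and positivity on any $v_i\in\Supp\delta$ forces $\lambda>0$, so $\Supp\delta=\Supp\E$ and $\E$ is extremal.

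Finiteness is then immediate: each extremal $\E$ is determined (up to sign) by the $(\rk T-1)$-element $\Q$-independent subset $Z_\E\cap\{v_1,\dots,v_l\}$, of which there are at most $\binom{l}{\rk T-1}$; the sign is fixed by $\E\in\duale T_+$ and the normalization makes $\E$ unique. For generation of $\Q_+\duale T_+$, the cone $\duale T_+\otimes\Q$ is a strictly convex rational polyhedral cone in $\Hom(T,\Q)$ (cut out by the finitely many inequalities $\E(v_i)\geq 0$, with strict convexity coming from the fact that $T_+$ spans $T$), so by the Minkowski--Weyl theorem it is generated by its convex-geometric extremal rays; the pivoting argument above shows these coincide with our extremal rays up to positive scaling. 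The main technical obstacle will be the pivoting step: one must choose $\delta'$ carefully and clear denominators so as to remain inside $\duale T_+$ while guaranteeing that the new functional has a strictly smaller yet non-empty support.
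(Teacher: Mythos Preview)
Your proof is correct, but it is organized differently from the paper's. The paper first invokes the standard toric fact (\cite[Section 1.2, (9)]{Fulton1993}) that the set $\Omega$ of normalized rays vanishing on $\rk T-1$ independent generators satisfies $\Q_+\Omega=\Q_+\duale T_+$, then uses this generation result to prove the forward direction: writing an extremal $\E$ as $\sum_{\delta\in\Omega}\lambda_\delta\delta$ with $\lambda_\delta\geq0$, any $\delta$ with $\lambda_\delta>0$ has $\Supp\delta\subseteq\Supp\E$, hence $\Supp\delta=\Supp\E$ by minimality, hence $\E=\mu\delta\in\Omega$. The converse direction in the paper is essentially your argument.

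You instead prove the forward direction by a self-contained pivoting argument (choosing an auxiliary functional $\delta'$ and subtracting the minimal ratio to shrink the support), and only afterwards appeal to Minkowski--Weyl for generation. This buys you an elementary, reference-free proof of the characterization itself; the paper's route is shorter but leans on the cited generation result to do double duty. Both approaches ultimately rely on essentially the same polyhedral input for the ``generate $\Q_+\duale T_+$'' claim. One small remark: your opening sentence about when a ``generic $v=\sum a_iv_i$'' lies in $\Supp\E$ is loosely phrased (elements of $T_+$ need not have unique such expressions), but the conclusion you actually use---that $\Supp\delta\subseteq\Supp\E$ is equivalent to $Z_\E\subseteq Z_\delta$---is correct and follows from non-negativity of the $a_i$ and of $\E(v_i)$.
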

\begin{proof}
Denote by $\Omega\subseteq\duale T_{+}$ the set of elements defined
in the statement. From \cite[Section 1.2, (9)]{Fulton1993} it follows
that $\Q_{+}\Omega=\Q_{+}\duale T_{+}$. If $\E\in\Omega$ then it
is an  extremal ray. Indeed 
\[
\emptyset\neq\Supp\E'\subseteq\Supp\E\then\exists\lambda\in\Q_{+}\text{ s.t. }\E'=\lambda\E\then\Supp\E'=\Supp\E
\]

Conversely let $\E$ be an  extremal ray and consider an expression
\[
\E=\sum_{\delta\in\Omega}\lambda_{\delta}\delta\qquad\text{with }\lambda_{\delta}\in\Q_{\geq0}
\]
 There must exists $\delta$ such that $\lambda_{\delta}\neq0$. So
\[
\Supp\delta\subseteq\Supp\E\then\Supp\delta=\Supp\E\then\exists\mu\in\Q_{+}\text{ s.t. }\E=\mu\delta\then\E=\delta
\]
\end{proof}
\begin{cor}
For an  extremal ray $\E$ and $\E'\in\duale T_{+}$ we have
\[
\Supp\E'=\Supp\E\iff\exists\lambda\in\Q_{+}\text{ s.t. }\E'=\lambda\E\iff\exists\lambda\in\N_{+}\text{ s.t. }\E'=\lambda\E
\]
\end{cor}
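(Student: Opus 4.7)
The plan is to verify the chain of equivalences by handling the easy implications first and then reducing the main one to the preceding lemma. The implications $(\exists \lambda \in \N_{+}\ \E' = \lambda\E) \Rightarrow (\exists \lambda \in \Q_{+}\ \E'=\lambda\E) \Rightarrow \Supp\E' = \Supp\E$ are immediate: a positive scalar multiple has the same zero locus, and $\N_{+} \subseteq \Q_{+}$.

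For the main implication $\Supp\E' = \Supp\E \Rightarrow \exists\lambda\in\Q_{+}$ with $\E' = \lambda\E$, I would appeal to an argument already appearing inside the proof of the previous lemma, where it is observed that the extremality of $\E$ forces any $\delta \in \duale T_{+}$ with $\emptyset \neq \Supp\delta \subseteq \Supp\E$ to be a nonnegative rational multiple of $\E$. Concretely, extremality of $\E$ means (via that lemma's characterization) that $\Ker\E$ contains $\rk T - 1$ $\Q$-independent elements among any chosen generating set $v_1,\dots,v_l$ of $T_{+}$. The hypothesis $\Supp\E' = \Supp\E$ implies $\E'$ vanishes on exactly the same generators, hence $\E'$ and $\E$ agree on a $\Q$-subspace of $T_{\Q}$ of codimension $1$; thus $\E' = \lambda\E$ in $\Hom(T,\Q)$ for some $\lambda \in \Q$, and the equality $\Supp\E' = \Supp\E \neq \emptyset$ forces $\lambda > 0$.

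Finally, to upgrade $\lambda \in \Q_{+}$ to $\lambda \in \N_{+}$, I would use the normalization of $\E$: since $\E \colon T \arr \Z$ is surjective, pick $t \in T$ with $\E(t) = 1$. Then $\lambda = \lambda\E(t) = \E'(t) \in \Z$, and combined with $\lambda > 0$ this gives $\lambda \in \N_{+}$, closing the loop.

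There is no real obstacle here; the statement is a short consequence of the preceding lemma. The only point to be careful about is making the normalization hypothesis explicit when passing from $\Q_{+}$ to $\N_{+}$, since without it $\lambda$ would only be a positive rational.
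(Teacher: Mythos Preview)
Your proof is correct and follows essentially the same line the paper intends: the corollary is placed right after the lemma characterizing extremal rays, and you extract exactly the right ingredients from it. One small wording fix: when you say $\E'$ and $\E$ ``agree on a $\Q$-subspace of codimension $1$'' you really mean they both \emph{vanish} on that subspace (namely the $\Q$-span of the generators in $\Ker\E$), hence $\Ker\E \subseteq \Ker\E'$ and so $\E' = \lambda\E$; otherwise the argument is exactly right, and your use of normalization to force $\lambda \in \Z$ is the intended step.
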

\begin{defn}
An element $v\in T_{+}$ is called \emph{indecomposable} if whenever
$v=v'+v''$ with $v',v''\in T_{+}$ it follows that $v'=0$ or $v''=0$.\end{defn}
\begin{prop}
$\duale T_{+}$ has a unique minimal system of generators composed
by the indecomposable elements. Moreover any  extremal ray is indecomposable.\end{prop}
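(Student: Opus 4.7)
The plan is to handle the two claims separately. For the first, I would begin by establishing two structural facts about the monoid $\duale T_+$: it is finitely generated and sharp (meaning it has no nonzero invertible elements). Finite generation follows from Gordan's lemma, since $\duale T_+$ is the intersection of $\N^n$ with a subgroup of $\Z^n$ (after choosing generators $v_1,\dots,v_n$ of $T_+$ and embedding $\E \mapsto (\E(v_1),\dots,\E(v_n))$). Sharpness is easy: if $\E+\E'=0$ in $\duale T_+$, then $\E(v)+\E'(v)=0$ in $\N$ for every $v\in T_+$, forcing $\E(v)=\E'(v)=0$, and since $T_+$ generates $T$ as a group we conclude $\E=\E'=0$.

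Once these are in hand, I would introduce the divisibility preorder on $\duale T_+$ defined by $\E'\leq\E$ iff $\E-\E'\in\duale T_+$. Sharpness makes this a partial order, and finite generation (together with the embedding into $\N^n$) guarantees the descending chain condition. Standard well-founded induction then shows that every nonzero element is a sum of indecomposables: if $\E$ is not indecomposable, write $\E=\E'+\E''$ with both nonzero, and by induction both summands are sums of indecomposables. For the uniqueness and minimality of the generating set of indecomposables, observe that if $S$ generates $\duale T_+$ and $\E$ is indecomposable, then writing $\E$ as a sum of elements of $S$ forces one summand to equal $\E$; hence every indecomposable lies in $S$. Conversely, the indecomposables generate by the preceding step, and they form a minimal generating set because no indecomposable can be expressed via the others (again by the indecomposability definition). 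This simultaneously shows existence, minimality, and uniqueness of the generating set.

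For the second claim, let $\E$ be an extremal ray and suppose for contradiction that $\E=\E'+\E''$ with $\E',\E''\in\duale T_+$ both nonzero. Then $\emptyset\neq\Supp\E'\subseteq\Supp\E$ and similarly for $\E''$, so by minimality of $\Supp\E$ among nonempty supports of rays, we obtain $\Supp\E'=\Supp\E''=\Supp\E$. Invoking the preceding corollary (which identifies rays with the same support as positive rational multiples of one another), we get $\E'=\lambda'\E$ and $\E''=\lambda''\E$ with $\lambda',\lambda''\in\N_+$. Then $(\lambda'+\lambda''-1)\E=0$, and because $\E\colon T\arr\Z$ is surjective (hence nonzero) this forces $\lambda'+\lambda''=1$, contradicting $\lambda',\lambda''\geq 1$.

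The conceptual core is the first claim, and the only genuine obstacle is verifying that the divisibility order on $\duale T_+$ is well-founded — the point where finite generation and sharpness must both be used. Once that is established, the rest reduces to clean bookkeeping. The second claim is a short direct argument relying on the already-proved corollary characterizing rays by their supports.
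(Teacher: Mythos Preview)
Your proposal is correct and matches the paper's approach. For the first claim the paper simply cites \cite[Chapter I, Proposition 2.1.2]{Ogus2006} after checking that $\duale T_{+}$ is sharp (your argument unpacks the same standard proof, supplying the finite-generation step via Gordan's lemma and the well-founded induction explicitly), and for the second claim your contradiction argument is essentially identical to the paper's direct one, both hinging on the preceding corollary to write $\E'=\lambda\E$, $\E''=\mu\E$ with $\lambda+\mu=1$.
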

\begin{proof}
The first claim of the statement follows from \cite[Chapter I, Proposition 2.1.2]{Ogus2006}
since $\duale T_{+}$ is sharp, i.e. it does not contain invertible
elements. For the second consider an extremal ray $\E$ and assume
$\E=\E'+\E''$. We have
\[
\Supp\E',\Supp\E''\subseteq\Supp\E\then\E'=\lambda\E,\E''=\mu\E\text{ with }\lambda,\mu\in\N
\]
 and so $\E=(\lambda+\mu)\E\then\lambda+\mu=1\then\lambda=0\text{ or }\mu=0\then\E'=0\text{ or }\E''=0$.\end{proof}
\begin{defn}
\label{def:definition of smooth sequence and smooth elements}A \emph{smooth
sequence} for $T_{+}$ is a sequence $\underline{\E}=\E^{1},\dots,\E^{s}\in\duale T_{+}$
for which there exist elements $v_{1},\dots,v_{s}$ in the associated
integral monoid $T_{+}^{int}$ of $T_{+}$ such that 
\[
T_{+}^{int}\cap\Ker\underline{\E}\:\text{ generates }\Ker\underline{\E}\qquad\text{and}\qquad\E^{i}(v_{j})=\delta_{i,j}\text{ for all }i,j
\]

We will also say that a ray $\E\in\duale T_{+}-\{0\}$ is \emph{smooth}
if there exists a smooth sequence as above such that $\E\in\langle\E^{1},\dots,\E^{s}\rangle_{\N}$
or, equivalently, such that $\Supp\E\subseteq\Supp\underline{\E}$.\end{defn}
\begin{rem}
\label{rem:the equivalently in definition of smooth sequence}If $T_{+}$
is integral and $\Omega$ is a system of generators, one can always
assume that $v_{i}\in\Omega.$ Moreover we also have that $\Omega\cap\Ker\underline{\E}$
generates $\Ker\underline{\E}$. 

Finally the equivalence in the last sentence of Definition \ref{def:definition of smooth sequence and smooth elements}
follows from the fact that, since $\Ker\underline{\E}$ is generated
by elements in $T_{+}^{int}$, then the inclusion of the supports
implies that $\E_{|\Ker\underline{\E}}=0$ and therefore $\E=\sum_{i}\E(v_{i})\E^{i}$.\end{rem}
\begin{lem}
\label{lem:decomposition of T+E and open smooth subscheme}Let $\underline{\E}=\E^{1},\dots,\E^{r}$
be a smooth sequence. Then
\[
T_{+}^{\underline{\E}}=\Ker\underline{\E}\oplus\langle v_{1},\dots,v_{r}\rangle_{\N}\subseteq T\text{ where }v_{1}\dots,v_{r}\in T_{+}^{int}\comma\E^{i}(v_{j})=\delta_{i,j}
\]
Moreover, if $z_{1},\dots,z_{s}\in T_{+}^{int}$ generate $T_{+}^{int}$,
then $\Z[T_{+}^{\underline{\E}}]=\Z[T_{+}^{int}]_{\prod_{\underline{\E}(z_{i})=0}x_{z_{i}}}$
so that $\Spec\Z[T_{+}^{\underline{\E}}]$ $(\stX_{\phi}^{\underline{\E}})$
is a smooth open subscheme (substack) of $\Spec\Z[T_{+}^{int}]$ $(\stZ_{\phi})$.\end{lem}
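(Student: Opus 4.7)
\medskip

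\textbf{Proof plan.} The plan is to prove the direct-sum decomposition first, use it to identify $\Z[T_+^{\underline{\E}}]$ with a localization of $\Z[T_+^{int}]$, and then read off the smoothness from the resulting explicit presentation.

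First, for the decomposition, pick $v_1,\dots,v_r\in T_+^{int}$ with $\E^i(v_j)=\delta_{i,j}$, as guaranteed by the definition of smooth sequence. The inclusion $\Ker\underline{\E}\oplus\langle v_1,\dots,v_r\rangle_{\N}\subseteq T_+^{\underline{\E}}$ is immediate: $v_j\in T_+^{int}\subseteq T_+^{\underline{\E}}$ and $\Ker\underline{\E}\subseteq T_+^{\underline{\E}}$ by definition. Conversely, given $v\in T_+^{\underline{\E}}$, set $a_j=\E^j(v)\in\N$ (these are non-negative since $v\in T_+^{\underline{\E}}$) and $k=v-\sum_j a_j v_j$. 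Then $\E^i(k)=a_i-\sum_j a_j\delta_{i,j}=0$, so $k\in\Ker\underline{\E}$, proving the other inclusion. The sum is direct because applying $\E^i$ to a relation $k+\sum a_j v_j=0$ forces $a_i=0$ for all $i$, hence $k=0$. In particular, uniqueness of the expression $v=k+\sum a_j v_j$ holds as a monoid decomposition.

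Next, for the localization formula, the key observation is that $T_+^{int}\cap\Ker\underline{\E}$ is the sub-monoid of $T_+^{int}$ generated by $\{z_i\st\underline{\E}(z_i)=0\}$: indeed, writing any element of $T_+^{int}$ as $\sum b_i z_i$ with $b_i\in\N$ and applying $\sum\E^j$, which has non-negative values on all $z_i$, forces $b_i=0$ whenever $\underline{\E}(z_i)\neq 0$. The natural map $\Z[T_+^{int}]\arr\Z[T_+^{\underline{\E}}]$ sends each $x_{z_i}$ with $\underline{\E}(z_i)=0$ to an invertible element (its inverse is $x_{-z_i}$, which lies in $\Z[T_+^{\underline{\E}}]$ because $-z_i\in\Ker\underline{\E}$). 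This gives a map $\Z[T_+^{int}]_{\prod_{\underline{\E}(z_i)=0} x_{z_i}}\arr\Z[T_+^{\underline{\E}}]$; to show it is an isomorphism, I use the decomposition from the previous paragraph: any $v\in T_+^{\underline{\E}}$ splits as $v=k+\sum a_j v_j$, and in turn $k=k_+-k_-$ with $k_\pm$ in the sub-monoid generated by $\{z_i\st\underline{\E}(z_i)=0\}$, so $x_v=(x_{k_+}\prod x_{v_j}^{a_j})/x_{k_-}$ lies in the image. Injectivity is just the standard fact that the monoid-algebra of a localized monoid is the localized monoid-algebra.

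Finally, for smoothness, the decomposition $T_+^{\underline{\E}}\simeq\Ker\underline{\E}\oplus\N^r$ gives
\[
\Z[T_+^{\underline{\E}}]\simeq\Z[\Ker\underline{\E}][t_1,\dots,t_r].
\]
Since $\Ker\underline{\E}$ is a subgroup of the finitely generated free abelian group $T$, it is itself free, so $\Z[\Ker\underline{\E}]$ is a Laurent polynomial ring over $\Z$, hence smooth over $\Z$; adjoining the polynomial variables $t_1,\dots,t_r$ preserves smoothness. This shows $\Spec\Z[T_+^{\underline{\E}}]$ is a smooth open subscheme of $\Spec\Z[T_+^{int}]$, and the corresponding statement for $\stX_\phi^{\underline{\E}}=[\Spec\Z[T_+^{\underline{\E}}]/\Di{\Z^r}]\subseteq\stZ_\phi$ follows at once, since quotienting by the smooth group scheme $\Di{\Z^r}$ preserves smoothness. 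The only real subtlety is the monoid-generation claim in the middle paragraph; once that is in place, everything else is formal.
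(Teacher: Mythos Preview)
Your proof is correct and follows essentially the same route as the paper: the decomposition $T_+^{\underline{\E}}=\Ker\underline{\E}\oplus\langle v_1,\dots,v_r\rangle_{\N}$ via $v\mapsto (v-\sum\E^j(v)v_j)+\sum\E^j(v)v_j$, the key monoid-generation claim that $T_+^{int}\cap\Ker\underline{\E}$ is generated by the $z_i$ with $\underline{\E}(z_i)=0$ (proved by the same non-negativity argument), and the identification $\Z[T_+^{\underline{\E}}]\simeq\Z[\Ker\underline{\E}][t_1,\dots,t_r]$ for smoothness. The only cosmetic difference is that the paper phrases the localization step at the level of monoids (showing $\langle T_+^{int},-z_i\rangle=T_+^{\underline{\E}}$) whereas you work directly with the ring map; these are equivalent.
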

\begin{proof}
We have $T=\Ker\underline{\E}\oplus\langle v_{1},\dots,v_{r}\rangle_{\Z}$
and clearly $\Ker\underline{\E}\oplus\langle v_{1},\dots,v_{q}\rangle_{\N}\subseteq T_{+}^{\underline{\E}}$.
Conversely if $v\in T_{+}^{\underline{\E}}$ we can write 
\[
v=z+\sum_{i}\E^{i}(v)v_{i}\text{ with }z\in\Ker\underline{\E}\then v\in\Ker\underline{\E}\oplus\langle v_{1},\dots,v_{q}\rangle_{\N}
\]
In particular $\Spec\Z[T_{+}^{\underline{\E}}]\simeq\A_{\Z}^{r}\times D_{\Z}(\Ker\underline{\E})$
and so both $\Spec\Z[T_{+}^{\underline{\E}}]$ and $\stX_{\phi}^{\underline{\E}}$
are smooth. Now let 
\[
I=\{i\st\underline{\E}(z_{i})=0\}\text{ and }S_{+}=\langle T_{+}^{int},-z_{i}\text{ for }i\in I\rangle\subseteq T
\]
We need to prove that $S_{+}=T_{+}^{\underline{\E}}$. Clearly we
have the inclusion $\subseteq$. For the reverse inclusion, it is
enough to prove that $-\Ker\underline{\E}\cap T_{+}^{int}\subseteq S_{+}$.
But if $v\in\Ker\underline{\E}\cap T_{+}^{int}$ then
\[
v=\sum_{j=1}^{s}a_{j}z_{j}=\sum_{j\in I}a_{j}z_{j}\then-v\in S_{+}
\]
\end{proof}
\begin{rem}
\label{rem:subsequences of smooth sequences are smooth too}Any subsequence
of a smooth sequence is smooth too. Indeed let $\underline{\delta}=\E^{1},\dots,\E^{s}$
a subsequence of a smooth sequence $\underline{\E}=\E^{1},\dots,\E^{r}$,
with $r>s$. We have to prove that $\langle\Ker\underline{\delta}\cap T_{+}^{int}\rangle_{\Z}=\Ker\underline{\delta}$.
Take $v\in\Ker\underline{\delta}$. So
\[
v-\sum_{j=s+1}^{r}\E^{j}(v)v_{j}\in\Ker\underline{\E}=\langle\Ker\underline{\E}\cap T_{+}^{int}\rangle_{\Z}\subseteq\langle\Ker\underline{\delta}\cap T_{+}^{int}\rangle_{\Z}\then v\in\langle\Ker\underline{\delta}\cap T_{+}^{int}\rangle_{\Z}
\]
\end{rem}
\begin{prop}
\label{lem:equivalent condition for a smooth integral extremal ray}Let
$\E\in\duale T_{+}$. Then $\E$ is a smooth  extremal ray if and
only if $\E$ is a smooth sequence composed of one element, i.e. $\Ker\E\cap T_{+}^{int}$
generates $\Ker\E$ and there exists $v\in T_{+}$ such that $\E(v)=1$. 

In particular any element of a smooth sequence is a smooth  extremal
ray.\end{prop}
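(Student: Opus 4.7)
I will treat the biconditional directly and then deduce the ``in particular'' clause. The core observation is that under either hypothesis, any ray $\delta$ whose support sits inside $\Supp\E$ must vanish on $\Ker\E$, hence be a rational multiple of $\E$.

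\emph{For the direction $(\Leftarrow)$}, suppose $\Ker\E\cap T_{+}^{int}$ generates $\Ker\E$ and there is $v\in T_{+}$ with $\E(v)=1$. Then $\E$ alone is a smooth sequence in the sense of \ref{def:definition of smooth sequence and smooth elements}, so $\E$ is smooth. Since $\E(v)=1$, the map $\E\colon T\to\Z$ is surjective, i.e.\ $\E$ is normalized. For extremality, take any $\delta\in\duale T_{+}$ with $\emptyset\neq\Supp\delta\subseteq\Supp\E$. For $u\in\Ker\E\cap T_{+}$ we have $u\notin\Supp\E$, hence $u\notin\Supp\delta$, so $\delta(u)=0$. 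By the generation hypothesis (passing to images in $T_{+}^{int}$) this forces $\delta|_{\Ker\E}=0$, so $\delta=\lambda\E$ for some $\lambda\in\Q$. Since $\delta(v)\geq0$ and $\E(v)=1$, $\lambda\geq0$; and $\lambda\neq0$ because $\Supp\delta\neq\emptyset$. Therefore $\Supp\delta=\Supp\E$, proving minimality.

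\emph{For the direction $(\Rightarrow)$}, suppose $\E$ is a smooth extremal ray. By definition there exists a smooth sequence $\underline{\E}=\E^{1},\dots,\E^{s}$ with witnesses $v_{1},\dots,v_{s}\in T_{+}^{int}$ (satisfying $\E^{i}(v_{j})=\delta_{i,j}$) such that $\Supp\E\subseteq\Supp\underline{\E}$, equivalently $\E=\sum_{i}a_{i}\E^{i}$ with $a_{i}\in\N$ (by the parenthetical remark in \ref{rem:the equivalently in definition of smooth sequence}). Then $\Supp\E=\bigcup_{a_{i}>0}\Supp\E^{i}$, and by extremality of $\E$, every $i$ with $a_{i}>0$ must satisfy $\Supp\E^{i}=\Supp\E$. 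The key combinatorial point is that distinct $\E^{i}$ have distinct supports: lifting $v_{i}$ to $u_{i}\in T_{+}$ gives $u_{i}\in\Supp\E^{i}$ but $\E^{k}(u_{i})=\E^{k}(v_{i})=0$ for $k\neq i$, so $u_{i}\notin\Supp\E^{k}$. Consequently at most one $a_{i}$ is nonzero, say $\E=a_{i_{0}}\E^{i_{0}}$. Since $\E$ and $\E^{i_{0}}$ are both surjective onto $\Z$ (the latter because $\E^{i_{0}}(v_{i_{0}})=1$), the multiplier $a_{i_{0}}$ must equal $1$, whence $\E=\E^{i_{0}}$. Finally, by \ref{rem:subsequences of smooth sequences are smooth too} the single-element subsequence $\E^{i_{0}}$ is itself a smooth sequence, which is exactly the desired condition on $\E$.

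\emph{For the final sentence}, let $\E^{i}$ be a member of a smooth sequence $\underline{\E}$. By \ref{rem:subsequences of smooth sequences are smooth too}, $\E^{i}$ alone (with witness $v_{i}\in T_{+}^{int}$, say lifted to $u_{i}\in T_{+}$ with $\E^{i}(u_{i})=1$) is a smooth sequence, i.e.\ satisfies the hypotheses of the $(\Leftarrow)$ direction just proven. Hence $\E^{i}$ is a smooth extremal ray.

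I expect no serious obstacle; the main subtlety is keeping track of the distinction between $T_{+}$ and $T_{+}^{int}$ when arguing about supports (elements of $T_{+}^{int}$ must be lifted to $T_{+}$ before they can be compared with $\Supp$), and the fact that the needed closure property $\Ker\E^{i}\cap T_{+}^{int}=\langle\cdot\rangle_{\Z}\Ker\E^{i}$ for a single $\E^{i}$ in a smooth sequence is not immediate from the definition but is exactly the content of \ref{rem:subsequences of smooth sequences are smooth too}.
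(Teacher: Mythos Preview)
Your proof is correct. The $(\Leftarrow)$ direction matches the paper's argument essentially verbatim (the paper compresses your support computation by citing \ref{rem:the equivalently in definition of smooth sequence}). For $(\Rightarrow)$, the paper takes a shorter route: having already proved that every extremal ray is indecomposable, it observes that $\E\in\langle\E^{1},\dots,\E^{q}\rangle_{\N}$ together with indecomposability immediately forces $\E=\E^{i}$ for some $i$, then invokes \ref{rem:subsequences of smooth sequences are smooth too}. Your argument instead works directly with supports, showing that the $\E^{i}$ have pairwise distinct supports via the witnesses $v_{i}$, and then using extremality to pin down a single index. This is a legitimate alternative that avoids the indecomposability proposition at the cost of a few extra lines; the paper's version is slicker because it has that structural fact already in hand.
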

\begin{proof}
We can assume $T_{+}$ integral. If $\E$ is smooth and extremal,
then there exists a smooth sequence $\E^{1},\dots,\E^{q}$ such that
$\E\in\langle\E^{1},\dots,\E^{q}\rangle_{\N}$. Since $\E$ is indecomposable,
it follows that $\E=\E^{i}$ for some $i$. Conversely assume that
$\E$ is a smooth sequence. So it is smooth by definition and it is
normalized since $\E(v)=1$ for some $v$. Finally an inclusion $\Supp\delta\subseteq\Supp\E$
for $\delta\in\duale T_{+}$ means that $\delta\in\langle\E\rangle_{\N}$,
as remarked in \ref{rem:the equivalently in definition of smooth sequence},
and so $\Supp\delta=\emptyset$ or $\Supp\delta=\Supp\E$.
\end{proof}
We conclude with a lemma that will be useful later.
\begin{lem}
\label{lem:comparison smooth sequences for different monoids}Let
$T_{+}\comma T_{+}'$ be integral monoids and $h\colon T\arr T'$
be a homomorphism such that $h(T_{+})=T_{+}'$ and $\Ker h=\langle\Ker h\cap T_{+}\rangle$.
If $\underline{\E}=\E^{1},\dots\E^{r}\in\duale{T_{+}'}$ then
\[
\underline{\E}\text{ smooth sequence for }T_{+}'\iff\underline{\E}\circ h\text{ smooth sequence for }T_{+}
\]
\end{lem}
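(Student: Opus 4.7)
The plan is to unwind both conditions in the definition of a smooth sequence and transfer them along $h$ using the two hypotheses $h(T_+)=T_+'$ and $\Ker h=\langle\Ker h\cap T_+\rangle_\Z$. Since both $T_+$ and $T_+'$ are integral, $T_+^{\textup{int}}=T_+$ and $(T'_+)^{\textup{int}}=T_+'$, so a smooth sequence just asks for elements $v_i$ in the monoid dual to the $\E^i$ via Kronecker delta, plus the condition that the common kernel is generated by its positive part.

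Assume first that $\underline{\E}$ is smooth for $T_+'$, with witnesses $v_1',\dots,v_r'\in T_+'$. Using $h(T_+)=T_+'$, lift each $v_j'=h(v_j)$ with $v_j\in T_+$; then $(\E^i\circ h)(v_j)=\E^i(v_j')=\delta_{i,j}$. For the kernel condition, take $v\in\Ker(\underline\E\circ h)\subseteq T$. Then $h(v)\in\Ker\underline\E$, so by hypothesis $h(v)=\sum_i a_i w_i'$ with $w_i'\in T_+'\cap\Ker\underline\E$; lifting $w_i'=h(w_i)$ with $w_i\in T_+$ gives $w_i\in T_+\cap\Ker(\underline\E\circ h)$, and $v-\sum_i a_i w_i\in\Ker h\subseteq\Ker(\underline\E\circ h)$. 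By the second hypothesis this difference lies in $\langle\Ker h\cap T_+\rangle_\Z\subseteq\langle T_+\cap\Ker(\underline\E\circ h)\rangle_\Z$, so $v$ does as well.

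Conversely, assume $\underline\E\circ h$ is smooth for $T_+$ with witnesses $v_1,\dots,v_r\in T_+$. Set $v_j'=h(v_j)\in T_+'$; then $\E^i(v_j')=\delta_{i,j}$. For the kernel condition, note first that $h(T)=T'$: any $v'\in T'$ is a $\Z$-combination of elements of $T_+'$ (since $T_+'$ generates $T'$), and each of those lifts to $T_+$. So given $v'\in\Ker\underline\E$, choose $w\in T$ with $h(w)=v'$; then $(\underline\E\circ h)(w)=\underline\E(v')=0$, so by hypothesis $w=\sum_j b_j u_j$ with $u_j\in T_+\cap\Ker(\underline\E\circ h)$, and applying $h$ gives $v'=\sum_j b_j h(u_j)$ with $h(u_j)\in T_+'\cap\Ker\underline\E$.

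There is no serious obstacle here; the only point that requires care is the reverse direction, where one must observe that although $v'\in\Ker\underline\E$ need not itself lie in $T_+'$, any preimage $w\in T$ automatically lies in $\Ker(\underline\E\circ h)$, so the smoothness hypothesis for $T_+$ applies directly and then $h$ pushes the decomposition into the required form in $T_+'$.
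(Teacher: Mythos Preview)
Your proof is correct and follows essentially the same approach as the paper: the paper packages the kernel-condition equivalence as showing that the induced map $\Ker(\underline\E\circ h)/\langle\Ker(\underline\E\circ h)\cap T_+\rangle_\Z\to\Ker\underline\E/\langle\Ker\underline\E\cap T_+'\rangle_\Z$ is an isomorphism, but the surjectivity and injectivity arguments for that map are precisely your two kernel computations. The witness transfer is handled identically.
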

\begin{proof}
Clearly there exist $v_{i}\in T_{+}'$ such that $\E^{i}(v_{j})=\delta_{i,j}$
if and only if there exist $w_{i}\in T_{+}$ such that $\E^{i}\circ h(w_{j})=\delta_{i,j}$.
On the other hand we have a surjective morphism
\[
\Ker\underline{\E}\circ h/\langle\Ker\underline{\E}\circ h\cap T_{+}\rangle_{\Z}\arr\Ker\underline{\E}/\langle\Ker\underline{\E}\cap T_{+}'\rangle_{\Z}
\]
In order to conclude it is enough to prove that this map is injective.
So let $v\in T$ such that 
\[
h(v)=\sum_{j}a_{j}z_{j}\text{ with }a_{j}\in\Z\comma z_{j}\in T_{+}'\comma\underline{\E}(z_{j})=0
\]
Since $h(T_{+})=T_{+}'$, there exist $y_{j}\in T_{+}$ such that
$h(y_{j})=z_{j}$. In particular $y=\sum_{j}a_{j}y_{j}\in\langle\Ker\underline{\E}\circ h\cap T_{+}\rangle_{\Z}$
and 
\[
v-y\in\Ker h=\langle\Ker h\cap T_{+}\rangle\subseteq\langle\Ker\underline{\E}\circ h\cap T_{+}\rangle
\]

\end{proof}

\subsection{The smooth locus $\stZ_{\phi}^{\textup{sm}}$ of the main component
$\stZ_{\phi}$.}
\begin{lem}
\label{lem:fundamental lemma for the smooth locus of X phi} Let $\underline{\E}=\E^{1},\dots,\E^{q}$
be a smooth sequence and $\chi$ be a finite sequence of elements
of $\duale T_{+}$. Assume that all the elements of $\chi$ are distinct,
each $\E^{i}$ is an element of $\chi$ and that for any $\delta$
in $\chi$ we have
\[
\delta\in\langle\E^{1},\dots,\E^{q}\rangle_{\N}\then\exists i\;\delta=\E^{i}
\]
 As usual denote by $\pi_{\chi}$ the map $\stF_{\chi}\arr\stX_{\phi}$.
Then we have an equivalence
\[
\stF_{\underline{\E}}=\pi_{\chi}^{-1}(\stX_{\phi}^{\underline{\E}})\arrdi{\simeq}\stX_{\phi}^{\underline{\E}}
\]
\end{lem}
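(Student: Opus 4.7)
My plan is to prove the two assertions separately: first I will show that the two open substacks $\stF_{\underline{\E}}$ (viewed as an open of $\stF_{\chi}$ via Remark~\ref{rem:Fdelta is an open substack of Fepsilon se delta sottosequenza di epsilon}) and $\pi_{\chi}^{-1}(\stX_{\phi}^{\underline{\E}})$ of $\stF_{\chi}$ coincide; then I will verify that the restriction $\pi_{\underline{\E}}\colon\stF_{\underline{\E}}\arr\stX_{\phi}^{\underline{\E}}$ is an equivalence. Write $\chi=(\E^{1},\dots,\E^{q},\gamma^{1},\dots,\gamma^{m})$ where the $\gamma^{j}$'s are the elements of $\chi$ not among the $\E^{i}$; an object of $\stF_{\chi}(U)$ is then a tuple $(\underline{\shL},\underline{\shM},\underline{z},\underline{w},\lambda)$ with $z_{i}$ a section of the line bundle indexed by $\E^{i}$ and $w_{j}$ the one indexed by $\gamma^{j}$, and $\stF_{\underline{\E}}$ is the open locus where every $w_{j}$ is a generator of its sheaf. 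By Lemma~\ref{lem:decomposition of T+E and open smooth subscheme}, $\stX_{\phi}^{\underline{\E}}\subseteq\stZ_{\phi}$ is the open substack on which $a(v)$ is invertible for every $v$ in a generating set of $T_{+}^{int}\cap\Ker\underline{\E}$ (equivalently, for every such $v$).

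For the first assertion, by Remark~\ref{rem:description of piE} the image $a=\pi_{\chi}(\xi)$ of $\xi=(\underline{\shL},\underline{\shM},\underline{z},\underline{w},\lambda)\in\stF_{\chi}(U)$ satisfies $a(t)=\underline{z}^{\underline{\E}(t)}\underline{w}^{\underline{\gamma}(t)}/\lambda_{t}$. Taking $v\in T_{+}^{int}\cap\Ker\underline{\E}$, represented by some $t\in T_{+}$, gives $a(v)=\underline{w}^{\underline{\gamma}(v)}/\lambda_{v}$, so $\xi\in\pi_{\chi}^{-1}(\stX_{\phi}^{\underline{\E}})(U)$ precisely when $\underline{w}^{\underline{\gamma}(v)}$ is nowhere vanishing for every such $v$. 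The inclusion $\stF_{\underline{\E}}\subseteq\pi_{\chi}^{-1}(\stX_{\phi}^{\underline{\E}})$ is then clear. For the reverse inclusion, the hypothesis that no $\gamma^{j}$ lies in $\langle\E^{1},\dots,\E^{q}\rangle_{\N}$ is the key: using the elements $v_{1},\dots,v_{q}\in T_{+}^{int}$ with $\E^{i}(v_{k})=\delta_{i,k}$ provided by smoothness of $\underline{\E}$, any $\gamma\in\duale{T_{+}}$ vanishing on $T_{+}^{int}\cap\Ker\underline{\E}$ (equivalently, on $\Ker\underline{\E}$, since the latter is generated by the former) equals $\sum_{i}\gamma(v_{i})\E^{i}$, with $\gamma(v_{i})\geq0$ because $v_{i}\in T_{+}^{int}$; hence $\gamma^{j}$ cannot vanish on $T_{+}^{int}\cap\Ker\underline{\E}$, so one can pick $v_{j}^{\circ}\in T_{+}^{int}\cap\Ker\underline{\E}$ with $\gamma^{j}(v_{j}^{\circ})>0$. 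Invertibility of $\underline{w}^{\underline{\gamma}(v_{j}^{\circ})}$, where $w_{j}$ occurs to a strictly positive power, then forces $w_{j}$ itself to be nowhere vanishing pointwise.

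For the equivalence $\pi_{\underline{\E}}\colon\stF_{\underline{\E}}\arr\stX_{\phi}^{\underline{\E}}$, I pass to the atlas level. By Lemma~\ref{lem:decomposition of T+E and open smooth subscheme}, $T_{+}^{\underline{\E}}=\Ker\underline{\E}\oplus\langle v_{1},\dots,v_{q}\rangle_{\N}$ and $T=\Ker\underline{\E}\oplus\langle v_{1},\dots,v_{q}\rangle_{\Z}$, so the monoid map $T_{+}^{\underline{\E}}\arr\N^{q}\oplus T$, $v\longmapsto(\underline{\E}(v),-v)$, corresponds to a ring embedding $\Z[T_{+}^{\underline{\E}}]\hookrightarrow\Z[\N^{q}\oplus T]$ whose image is the $\Di{\Z^{q}}$-invariant subring for the first $\Di{\Z^{q}}$-factor of $\Di{\Z^{q+r}}$; an explicit computation with the basis $v_{1},\dots,v_{q}$ shows $\Z[\N^{q}\oplus T]\simeq\Z[T_{+}^{\underline{\E}}]\otimes_{\Z}\Z[\Z^{q}]$ as $\Di{\Z^{q}}$-equivariant rings. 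Hence $\Spec\Z[\N^{q}\oplus T]\arr\Spec\Z[T_{+}^{\underline{\E}}]$ is a trivial $\Di{\Z^{q}}$-torsor, and taking the further quotient by $\Di{\Z^{r}}=\Di{\Z^{q+r}}/\Di{\Z^{q}}$ identifies $\stF_{\underline{\E}}=[\Spec\Z[\N^{q}\oplus T]/\Di{\Z^{q+r}}]$ with $\stX_{\phi}^{\underline{\E}}=[\Spec\Z[T_{+}^{\underline{\E}}]/\Di{\Z^{r}}]$; a direct check confirms that the residual $\Di{\Z^{r}}$-grading on $\Z[T_{+}^{\underline{\E}}]$ is the one given by $\hat{\phi}$.

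The main obstacle is the reverse inclusion in the first assertion: one needs precisely the full hypothesis on $\chi$ (distinctness together with no $\gamma^{j}\in\langle\E^{i}\rangle_{\N}$) in order to translate invertibility of the composite sections $\underline{w}^{\underline{\gamma}(v)}$ into the individual invertibility of each $w_{j}$, and this in turn rests on the smoothness of $\underline{\E}$ guaranteeing enough elements of $T_{+}^{int}\cap\Ker\underline{\E}$. The second assertion is essentially a toric-quotient computation once the decomposition of $T_{+}^{\underline{\E}}$ is in hand.
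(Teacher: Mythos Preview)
Your argument is correct. The first half---showing $\stF_{\underline{\E}}=\pi_{\chi}^{-1}(\stX_{\phi}^{\underline{\E}})$---is essentially the paper's proof, only packaged slightly differently: the paper reduces to an algebraically closed field and argues by contradiction that $z_{\gamma^{j}}\neq 0$, whereas you work directly with sections and produce the witness $v_{j}^{\circ}\in T_{+}^{int}\cap\Ker\underline{\E}$ beforehand; the underlying mechanism (namely that $\gamma^{j}\notin\langle\E^{i}\rangle_{\N}$ forces $\gamma^{j}$ not to vanish on $T_{+}^{int}\cap\Ker\underline{\E}$, by the smoothness of $\underline{\E}$) is identical.

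For the equivalence $\pi_{\underline{\E}}\colon\stF_{\underline{\E}}\arrdi{\simeq}\stX_{\phi}^{\underline{\E}}$ you take a genuinely different route. The paper constructs an explicit inverse $s\colon\stX_{\phi}^{\underline{\E}}\arr\stF_{\underline{\E}}$ by writing down a commutative diagram of monoids and then verifies by hand that $\pi_{\underline{\E}}\circ s\simeq\id$ and $s\circ\pi_{\underline{\E}}\simeq\id$ (the second via an explicit natural isomorphism on local objects). Your approach is more structural: you recognise that the atlas map $\Spec\Z[\N^{q}\oplus T]\arr\Spec\Z[T_{+}^{\underline{\E}}]$ is a trivial $\Di{\Z^{q}}$-torsor (via the change of coordinates afforded by the splitting $T=\Ker\underline{\E}\oplus\langle v_{1},\dots,v_{q}\rangle_{\Z}$), and then invoke the general principle that for $G=K\times H$ with $X\arr X/K$ a $K$-torsor one has $[X/G]\simeq[(X/K)/H]$. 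This is cleaner and avoids the explicit isomorphism check; the paper's computation, on the other hand, makes the inverse functor concrete, which can be useful elsewhere. One small point worth making explicit in your write-up is that the resulting isomorphism of quotient stacks really is $\pi_{\underline{\E}}$: this follows because the atlas map you use is precisely the one underlying $\pi_{\underline{\E}}$ (via Lemma~\ref{lem:morphisms of stack Xphi}), and the group homomorphism is the projection $\Di{\Z^{q+r}}\arr\Di{\Z^{r}}$.
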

\begin{proof}
Set $\chi=\E^{1},\dots,\E^{q},\eta^{1},\dots,\eta^{l}=\underline{\E},\underline{\eta}$.
We first prove that $\pi_{\chi}^{-1}(\stX_{\phi}^{\underline{\E}})\subseteq\stF_{\underline{\E}}$.
Since they are open substacks, we can check this over an algebraically
closed field $k$. Let $(\underline{z},\lambda)\in\pi_{\chi}^{-1}(\stX_{\phi}^{\underline{\E}}$)
so that $a=\pi_{\chi}(\underline{z},\lambda)=\underline{z}^{\underline{\E}}/\lambda\colon T_{+}\arr k$
by \ref{rem:description of piE}. We have to prove that $z_{\eta_{j}}\neq0$.
Assume by contradiction that $z_{\eta_{j}}=0$. Since we can write
$a=b0^{\eta_{j}}$ and since $a$ extends to $T_{+}^{\underline{\E}}$
so that $a(t)\neq0$ if $t\in T_{+}\cap\Ker\underline{\E}$, we have
that $\eta_{j}$ is $0$ on $T_{+}\cap\Ker\underline{\E}$. In particular
\[
\Supp\eta^{j}\subseteq\Supp\underline{\E}\then\eta^{j}\in\langle\E^{1},\dots,\E^{q}\rangle_{\N}\then\exists i\;\eta^{j}=\E^{i}
\]

Thanks to \ref{rem:Fdelta is an open substack of Fepsilon se delta sottosequenza di epsilon},
it is enough to prove that if $\underline{\E}$ is a smooth sequence
such that $T_{+}=T_{+}^{\underline{\E}}$ then $\pi_{\underline{\E}}$
is an isomorphism. By \ref{lem:decomposition of T+E and open smooth subscheme}
we can write $T_{+}=W\oplus\N^{q}$, where $W$ is a free $\Z$-module
such that $\underline{\E}_{|W}=0$ and, if we denote by $v_{1},\dots,v_{q}$
the canonical base of $\N^{q}$, $\E^{j}(v_{i})=\delta_{i,j}$. Consider
the diagram   \[   \begin{tikzpicture}[xscale=3.0,yscale=-0.8]     
\node (A0_0) at (0, 0) {$\N^q\oplus T$};     
\node (A0_1) at (1, 0) {$T_+$};     
\node (A1_0) at (0, 1) {$\N^q \oplus W \oplus \Z^q$};     
\node (A1_1) at (1, 1) {$W \oplus \N^q$};     
\node (A1_3) at (2.5, 1.5) {$\gamma(e_i)=v_i\comma \gamma_{|W}=-\id_W\comma \gamma(v_i)=0$};     
\node (A2_3) at (2.5, 2.5) {$\delta(e_i)=\phi(v_i)\comma\delta_{|\Z^r}=\id_{\Z^r}$};     
\node (A3_0) at (0, 3) {$\Z^q\oplus\Z^r$};     
\node (A3_1) at (1, 3) {$\Z^r$};     
\node[rotate=-90] (u) at (0, 0.5) {$=$};     
\node[rotate=-90] (uu) at (1, 0.5) {$=$};     

\path (A1_0) edge [->]node [auto,swap] {$\scriptstyle{\sigma_{\underline \E}}$} (A3_0);     \path (A1_0) edge [->]node [auto] {$\scriptstyle{\gamma}$} (A1_1);     \path (A3_0) edge [->]node [auto] {$\scriptstyle{\delta}$} (A3_1);     \path (A1_1) edge [->]node [auto] {$\scriptstyle{\phi}$} (A3_1);   \end{tikzpicture}   \]  One can check directly its commutativity. In this way we get a map
$s\colon\stX_{\phi}\arr\stF_{\underline{\E}}$. Again a direct computation
on the diagrams defining $s$ and $\pi_{\underline{\E}}$ shows that
$\pi_{\underline{\E}}\circ s\simeq\id_{\stX_{\phi}}$ and that the
diagram inducing $G=s\circ\pi_{\underline{\E}}$ is   \[   \begin{tikzpicture}[xscale=3.0,yscale=-0.8]     \node (A0_0) at (0, 0) {$\N^q \oplus W \oplus \Z^q$};     \node (A0_1) at (1, 0) {$\N^q \oplus W \oplus \Z^q$};     \node (A0_3) at (2.5, 0.5) {$\alpha(e_i)=e_i-v_i,\alpha_{|W}=\id_{W}\comma \alpha_{|\Z^q}=0$};     \node (A1_3) at (2.5, 1.5) {$\beta(e_i)=\phi(v_i)\comma\beta_{|\Z^r}=\id_{\Z^r}$};     \node (A2_0) at (0, 2) {$\Z^q\oplus\Z^r$};     \node (A2_1) at (1, 2) {$\Z^q\oplus\Z^r$};     \path (A0_0) edge [->]node [auto] {$\scriptstyle{\alpha}$} (A0_1);     \path (A0_1) edge [->]node [auto] {$\scriptstyle{\sigma_{\underline \E}}$} (A2_1);     \path (A0_0) edge [->]node [auto,swap] {$\scriptstyle{\sigma_{\underline \E}}$} (A2_0);     \path (A2_0) edge [->]node [auto] {$\scriptstyle{\beta}$} (A2_1);   \end{tikzpicture}   \] We
will prove that $G\simeq\id_{\stF_{\underline{\E}}}$. An object of
$\stF_{\underline{\E}}(A)$, where $A$ is a ring, coming from the
atlas is given by $a=(\underline{z},\lambda,\underline{\mu})\colon\N^{q}\oplus W\oplus\Z^{q}\arr A$
where $\underline{z}=(a(e_{i}))_{i}=z_{1},\dots,z_{q}\in A$, $\lambda=a_{|W}\colon W\arr A^{*}$
is a homomorphism and $\underline{\mu}=(\mu(v_{i}))_{i}=\mu_{1}\dots,\mu_{q}\in A^{*}$.
Moreover $Ga=a\circ\alpha$ is $((z_{i}/\mu_{i})_{i},\lambda,\underline{1})$.
It is now easy to check that $(\underline{\mu},1)\colon Ga\arr a$
is an isomorphism and that this map defines an isomorphism $G\arr\id_{\stF_{\underline{\E}}}$.\end{proof}
\begin{cor}
\label{thm:toric open substack of Z phi via smooth sequence}If $\underline{\E}$
is a smooth sequence then $\pi_{\underline{\E}}\colon\stF_{\underline{\E}}\arr\stZ_{\phi}$
is an open immersion with image $\stX_{\phi}^{\underline{\E}}$.
\end{cor}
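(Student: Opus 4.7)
The plan is to deduce this corollary as a direct application of Lemma \ref{lem:fundamental lemma for the smooth locus of X phi} with $\chi = \underline{\E}$. First I would verify that the elements of a smooth sequence are automatically distinct: if $\E^i = \E^j$ with $i \neq j$, then the defining relations $\E^i(v_i) = 1$ and $\E^j(v_i) = 0$ would force $1 = 0$. Hence $\underline{\E}$ fulfills the distinctness hypothesis of the lemma. The remaining hypotheses are trivial in this case: every $\E^i$ is by definition an element of $\chi = \underline{\E}$, and the implication $\delta \in \langle \E^1,\dots,\E^q\rangle_\N \Rightarrow \delta = \E^i$ for some $i$ is automatic because the only candidates for $\delta \in \chi$ are precisely the $\E^i$.

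Applying the lemma then gives an equivalence of stacks
\[
\pi_{\underline{\E}}^{-1}(\stX_\phi^{\underline{\E}}) = \stF_{\underline{\E}} \xrightarrow{\;\simeq\;} \stX_\phi^{\underline{\E}}.
\]
The equality on the left encodes the fact that the image of $\pi_{\underline{\E}}$ already lies in $\stX_\phi^{\underline{\E}}$, so the map factors through this substack and the factorization is an equivalence.

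To conclude, I would invoke Lemma \ref{lem:decomposition of T+E and open smooth subscheme}, which identifies $\stX_\phi^{\underline{\E}}$ as a (smooth) open substack of $\stZ_\phi$. Composing, $\pi_{\underline{\E}}\colon \stF_{\underline{\E}} \to \stZ_\phi$ is the composite of an equivalence with an open immersion, hence is itself an open immersion with image exactly $\stX_\phi^{\underline{\E}}$, as claimed. There is no real obstacle here beyond bookkeeping; the substantive content was already concentrated in the preceding lemma, and this corollary is essentially a clean packaging of its statement in the simplest possible choice of $\chi$.
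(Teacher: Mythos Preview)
Your argument is correct and is precisely the intended deduction: the paper states this as a corollary of Lemma \ref{lem:fundamental lemma for the smooth locus of X phi} without further proof, and specializing that lemma to $\chi = \underline{\E}$ together with Lemma \ref{lem:decomposition of T+E and open smooth subscheme} is exactly what is meant. Your verification that the elements of a smooth sequence are automatically distinct (via $\E^i(v_j)=\delta_{i,j}$) is the only small thing one needs to check, and you have done so.
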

It turns out that if $\underline{\E}$ is a smooth sequence, then
$\stX_{\phi}^{\underline{\E}}$ has a more explicit description:
\begin{prop}
\label{pro:points of XphiE}Let $\underline{\E}=\E^{1},\dots,\E^{r}$
be a smooth sequence, $k$ be a field and $a\in\stX_{\phi}(k)$. Then
\[
a\in\stX_{\phi}^{\underline{\E}}(k)\iff\exists\E\in\langle\E^{1},\dots,\E^{r}\rangle_{\N}\comma\lambda\colon T\arr\overline{k}^{*}\text{ s.t. }a=\lambda0^{\E}
\]
Moreover if $\lambda0^{\E}\in\stX_{\phi}^{\underline{\E}}(k)$, for
some $\E\in\duale T_{+}$, $\lambda\colon T\arr\overline{k}^{*}$,
then $\E\in\langle\E^{1},\dots,\E^{r}\rangle_{\N}$.\end{prop}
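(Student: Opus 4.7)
The plan is to reduce everything to the explicit description of the smooth open $\stX_{\phi}^{\underline{\E}}$ provided by Lemma \ref{lem:decomposition of T+E and open smooth subscheme}, which gives a decomposition
\[
T_{+}^{\underline{\E}}=\Ker\underline{\E}\oplus\langle v_{1},\dots,v_{r}\rangle_{\N}
\]
for some $v_{1},\dots,v_{r}\in T_{+}^{int}$ with $\E^{i}(v_{j})=\delta_{i,j}$, so that $\Spec k[T_{+}^{\underline{\E}}]\simeq\Spec k[\Ker\underline{\E}]\times\A_{k}^{r}$. A $k$-point of $\stX_{\phi}^{\underline{\E}}$ is then the same data as a homomorphism $\mu\colon\Ker\underline{\E}\arr k^{*}$ together with $z_{1},\dots,z_{r}\in k$, and the associated monoid map is $w+\sum_{j}\E^{j}(t)v_{j}\longmapsto\mu(w)\prod_{j}z_{j}^{\E^{j}(t)}$.

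For the implication $\Leftarrow$, suppose $a(t)=\lambda(t)0^{\E(t)}$ with $\E=\sum_{j}n_{j}\E^{j}$, $n_{j}\in\N$. On $\Ker\underline{\E}$ we have $\E=0$, hence $a$ extends to a homomorphism $\Ker\underline{\E}\arr\overline{k}^{*}$ via $\lambda$, and setting $a(v_{j})=\lambda(v_{j})0^{n_{j}}$ gives the extension to $T_{+}^{\underline{\E}}$; thus $a\in\stX_{\phi}^{\underline{\E}}(k)$ (after base changing to $\overline{k}$, which is harmless since $\stX_{\phi}^{\underline{\E}}$ is open in $\stZ_{\phi}$). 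Conversely, given $a\in\stX_{\phi}^{\underline{\E}}(k)$, write it via the parametrization above as a pair $(\mu,\underline{z})$, let $S=\{j\st z_{j}=0\}$, and set $\E=\sum_{j\in S}\E^{j}\in\langle\E^{1},\dots,\E^{r}\rangle_{\N}$. Define $\lambda\colon T=\Ker\underline{\E}\oplus\langle v_{1},\dots,v_{r}\rangle_{\Z}\arr\overline{k}^{*}$ by $\lambda_{|\Ker\underline{\E}}=\mu$, $\lambda(v_{j})=z_{j}$ for $j\notin S$ and $\lambda(v_{j})=1$ for $j\in S$. A direct verification then shows $a(t)=\lambda(t)0^{\E(t)}$ for all $t\in T_{+}$, splitting cases according to whether $\E^{j}(t)$ vanishes for every $j\in S$.

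For the final statement, suppose $a=\lambda0^{\E}\in\stX_{\phi}^{\underline{\E}}(k)$ and extend $a$ to the monoid map $T_{+}^{\underline{\E}}\arr k$ coming from the hypothesis. The restriction of this extension to the group $\Ker\underline{\E}\subseteq T_{+}^{\underline{\E}}$ factors through $k^{*}$ (any monoid map from a group to $k$ lands in $k^{*}$). Hence for every $t\in T_{+}\cap\Ker\underline{\E}$ we have $a(t)\neq0$, which forces $\E(t)=0$, i.e.\ $\Supp\E\cap\Ker\underline{\E}=\emptyset$. Equivalently $\Supp\E\subseteq\Supp\underline{\E}$, and by Remark \ref{rem:the equivalently in definition of smooth sequence} this implies $\E=\sum_{j}\E(v_{j})\E^{j}\in\langle\E^{1},\dots,\E^{r}\rangle_{\N}$.

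The only mildly delicate point is the definition of $\lambda$ in the converse implication: one must ensure that the chosen $\lambda$ actually lives on the ambient group $T$, not merely on $T_{+}^{\underline{\E}}$, which is why the splitting of $T$ induced by the smooth sequence and the freedom to choose arbitrary values on $v_{j}$ with $j\in S$ is essential.
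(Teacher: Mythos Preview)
Your proof is correct. The main difference from the paper is in the $\Rightarrow$ direction: the paper applies Theorem~\ref{pro:characterization of points of Zphi} directly to the integral monoid $T_{+}^{\underline{\E}}$, obtaining $a=\lambda 0^{\E}$ with $\E\in\duale{(T_{+}^{\underline{\E}})}$, and then simply observes from the decomposition of Lemma~\ref{lem:decomposition of T+E and open smooth subscheme} that $\duale{(T_{+}^{\underline{\E}})}=\langle\E^{1},\dots,\E^{r}\rangle_{\N}$. You instead bypass that theorem entirely and read off $\E$ and $\lambda$ by hand from the coordinates $(\mu,\underline z)$ of the point, setting $\E=\sum_{j:z_{j}=0}\E^{j}$ and patching $\lambda$ together on the direct summands of $T$. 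Your route is more elementary and self-contained (it does not rely on the monomial-prime machinery behind \ref{pro:characterization of points of Zphi}), at the cost of the explicit case-split verification; the paper's route is a two-line reduction to earlier work. For the $\Leftarrow$ direction and the final statement your argument is essentially the paper's, just written out in more detail.
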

\begin{proof}
We can assume $k$ algebraically closed and $T_{+}$ integral. In
this case $a\in\stX_{\phi}^{\underline{\E}}(k)$ if and only if $a\colon T_{+}\arr k$
extends to a map $\Ker\underline{\E}\oplus\N^{r}=T_{+}^{\underline{\E}}\arr k$.
So $\Leftarrow$ holds. Conversely, from \ref{pro:characterization of points of Zphi},
we can write $a=\lambda0^{\E}$ where $\lambda\colon T\arr k^{*}$
and $\E\in\duale{(T_{+}^{\underline{\E}})}$. From \ref{lem:decomposition of T+E and open smooth subscheme}
we see that $\duale{T_{+}^{\underline{\E}}}=\langle\E^{1},\dots,\E^{r}\rangle_{\N}$.
Finally, if $\lambda0^{\E}\in\stX_{\phi}^{\underline{\E}}$ for some
$\E$, then $\Supp\E\subseteq\Supp\underline{\E}$ and we are done.\end{proof}
\begin{lem}
\label{lem:fundamental lemma for all the classification for h}Let
$\underline{\E}=(\E^{i})_{i\in I}$ be a sequence of distinct smooth
extremal rays and $\Theta$ be a collection of smooth sequences with
rays in $\underline{\E}$. Set
\[
\stF_{\underline{\E}}^{\Theta}=\left\{ (\underline{\shL},\underline{\shM},\underline{z},\delta)\in\stF_{\underline{\E}}\left|\begin{array}{c}
V(z_{i_{1}})\cap\cdots\cap V(z_{i_{s}})\neq\emptyset\\
\text{iff }\exists\underline{\delta}\in\Theta\text{ s.t. }\E^{i_{1}},\dots,\E^{i_{s}}\subseteq\underline{\delta}
\end{array}\right.\right\} 
\]
Then, taking into account the identification made in \ref{rem:Fdelta is an open substack of Fepsilon se delta sottosequenza di epsilon},
we have 
\[
\stF_{\underline{\E}}^{\Theta}=\bigcup_{\underline{\delta}\in\Theta}\stF_{\underline{\delta}}
\]
\end{lem}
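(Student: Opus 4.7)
The proof will establish the equality of two sub-prestacks of $\stF_{\underline{\E}}$ by checking it on $T$-valued points for an arbitrary scheme $T$. Throughout, under the identification made in Remark \ref{rem:Fdelta is an open substack of Fepsilon se delta sottosequenza di epsilon}, an object of $\stF_{\underline{\E}}$ given by $(\underline{\shL},\underline{\shM},\underline{z},\lambda)$ lies in the open substack $\stF_{\underline{\delta}}$ precisely when $z_{i}$ generates $\shM_{i}$, i.e.\ $V(z_{i})=\emptyset$, for every $i\notin\underline{\delta}$. Using the atlas of Proposition \ref{pro:atlas for the stack associated to a monoid map} we may further reduce to the local situation where the object comes from a map to $\Spec\Z[\N^{I}\oplus T]$, so that $\underline{z}=(z_{i})_{i\in I}$ is a collection of elements of $\odi{T}(T)$ and $\lambda\colon T\arr\odi{T}^{*}$ is a group homomorphism.

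For the inclusion $\bigcup_{\underline{\delta}\in\Theta}\stF_{\underline{\delta}}\subseteq\stF_{\underline{\E}}^{\Theta}$, fix $\underline{\delta}_{0}\in\Theta$ and take an object of $\stF_{\underline{\delta}_{0}}$. By the above, $V(z_{i})=\emptyset$ whenever $i\notin\underline{\delta}_{0}$. If $V(z_{i_{1}})\cap\cdots\cap V(z_{i_{s}})\neq\emptyset$ then each individual $V(z_{i_{j}})$ is non-empty, forcing $i_{j}\in\underline{\delta}_{0}$ for every $j$, and we take $\underline{\delta}_{0}$ itself as the required witness $\underline{\delta}\in\Theta$. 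The converse direction of the defining biconditional is checked on the universal object over the atlas of $\stF_{\underline{\delta}_{0}}$: there the sections $z_{i}$ for $i\in\underline{\delta}_{0}$ are the coordinates of an affine space $\A^{|\underline{\delta}_{0}|}$, so any subcollection of them vanishes simultaneously at the origin, producing the required non-empty common zero locus. Remark \ref{rem:subsequences of smooth sequences are smooth too} ensures we may always work with subsequences of $\underline{\delta}_{0}$, which are themselves smooth.

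For the reverse inclusion $\stF_{\underline{\E}}^{\Theta}\subseteq\bigcup_{\underline{\delta}\in\Theta}\stF_{\underline{\delta}}$, take $(\underline{z},\lambda)\in\stF_{\underline{\E}}^{\Theta}(T)$ and form the finite set
\[
S=\{i\in I\st V(z_{i})\neq\emptyset\}.
\]
Applying the defining biconditional of $\stF_{\underline{\E}}^{\Theta}$ to the collection of indices $S=\{i_{1},\dots,i_{s}\}$ — once we establish that $\bigcap_{i\in S}V(z_{i})\neq\emptyset$ — yields a single $\underline{\delta}\in\Theta$ with $\{\E^{i}\}_{i\in S}\subseteq\underline{\delta}$. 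Hence $V(z_{i})=\emptyset$ for every $i\notin\underline{\delta}$, which is exactly the condition placing our object in $\stF_{\underline{\delta}}$, as required.

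The main obstacle is the verification, inside this last step, that $\bigcap_{i\in S}V(z_{i})\neq\emptyset$ as soon as each individual $V(z_{i})$ is non-empty. I expect to handle this by reducing to the local situation over the atlas $\Spec\Z[\N^{I}\oplus T]$: in the universal situation the $z_{i}$ are independent coordinates, so the zero locus of any subcollection is a non-empty coordinate subscheme; the issue in an arbitrary family is addressed by passing to an appropriate geometric point of the base and exploiting that the defining condition of $\stF_{\underline{\E}}^{\Theta}$ forces the individual zero loci $V(z_{i})$ to be compatibly supported, so that their common intersection is realized at a single fibre. Once this local non-emptiness is in hand, the remainder of the argument is purely combinatorial and uses only the finite enumeration of $I$.
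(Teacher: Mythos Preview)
There is a genuine gap in your reverse inclusion $\stF_{\underline{\E}}^{\Theta}\subseteq\bigcup_{\underline{\delta}\in\Theta}\stF_{\underline{\delta}}$. Your strategy is to take the global set $S=\{i\in I\st V(z_i)\neq\emptyset\}$ over an arbitrary base $T$ and find a single $\underline{\delta}\in\Theta$ containing all of $S$. This cannot work: if $T$ is disconnected with $\chi|_{T_1}\in\stF_{\underline{\delta}_1}$, $\chi|_{T_2}\in\stF_{\underline{\delta}_2}$ for distinct $\underline{\delta}_1,\underline{\delta}_2\in\Theta$, then $S$ may contain indices from both $\underline{\delta}_1$ and $\underline{\delta}_2$ with no common $\underline{\delta}$ in $\Theta$, and indeed $\bigcap_{i\in S}V(z_i)$ will be empty even though each $V(z_i)$ is not. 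Your proposed workaround (that the defining condition ``forces the individual zero loci to be compatibly supported'') is not justified by anything in the hypotheses. The paper sidesteps this entirely: since $\bigcup_{\underline{\delta}}\stF_{\underline{\delta}}$ is an \emph{open} substack of $\stF_{\underline{\E}}$, membership can be checked on geometric points. Over an algebraically closed field $k$ one sets $J=\{i\st z_i=0\}$; then $\bigcap_{j\in J}V(z_j)=\Spec k\neq\emptyset$ trivially, the defining condition produces $\underline{\delta}\in\Theta$ with $(\E^j)_{j\in J}\subseteq\underline{\delta}$, and hence $\chi\in\stF_{\underline{\delta}}(k)$. This reduction to fields is the missing idea.

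A secondary point: the ``iff'' in the definition of $\stF_{\underline{\E}}^{\Theta}$ should be read as the forward implication only (compare the one-directional formulations in \ref{thm:fundamental theorem for the smooth locus of ZM} and \ref{thm:fundamental theorem for locally factorial schemes}). The reverse direction is false for general objects --- already an object over a field with all $z_i$ units lies in every $\stF_{\underline{\delta}}$ but has all $V(z_i)=\emptyset$ --- and the paper does not attempt to prove it. Your argument for it via the atlas only treats the universal object, not an arbitrary one. Your forward-direction argument for the first inclusion, on the other hand, is correct and in fact more direct than the paper's route through $\pi_{\underline{\E}}$ and supports of rays.
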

\begin{proof}
Let $\chi=(\underline{\shL},\underline{\shM},\underline{z},\lambda)\in\bigcup_{\underline{\delta}\in\Theta}\stF_{\underline{\delta}}(T)$,
for some scheme $T$ and let $p\in V(z_{i_{1}})\cap\cdots\cap V(z_{i_{s}})$.
This means that the pullback of $\pi_{\underline{\E}}(\chi)$ to $\overline{k(p)}$
is given by $a=b0^{\E^{i_{1}}+\cdots+\E^{i_{r}}}$ for some $b\colon T_{+}\arr\overline{k(p)}$.
By definition there exists $\underline{\delta}\in\Theta$ such that
$a\in\stF_{\underline{\delta}}(\overline{k(p)})$, i.e. $a=\mu0^{\delta}$
for some $\delta\in\langle\underline{\delta}\rangle_{\N}$, $\mu\colon T\arr\overline{k(p)}^{*}$.
So
\[
\Supp\E^{i_{j}}\subseteq\{a=0\}=\Supp\delta\subseteq\Supp\underline{\delta}\then\E^{i_{j}}\in\langle\underline{\delta}\rangle_{\N}
\]

For the other inclusion, since all the $\stF_{\underline{\delta}}$
are open substacks of $\stF_{\underline{\E}}$, we can reduce the
problem to the case of an algebraically closed field $k$. So let
$(\underline{z},\lambda)\in\stF_{\underline{\E}}^{\Theta}(k)$ and
set $J=\{i\in I\st z_{i}=0\}$. By definition of $\stF_{\underline{\E}}^{\Theta}$
there exists $\underline{\delta}\in\Theta$ such that $\underline{\eta}=(\E^{j})_{j\in J}\subseteq\underline{\delta}$
and, taking into account \ref{rem:Fdelta is an open substack of Fepsilon se delta sottosequenza di epsilon},
this means that $a\in\stF_{\underline{\eta}}(k)\subseteq\stF_{\underline{\delta}}(k)$.\end{proof}
\begin{defn}
Let $\Theta$ be a collection of smooth sequences. We define
\[
X_{\phi}^{\Theta}=\bigcup_{\underline{\delta}\in\Theta}\Spec\Z[T_{+}^{\underline{\delta}}]\subseteq\Spec\Z[T_{+}]\text{ and }\stX_{\phi}^{\Theta}=\bigcup_{\underline{\delta}\in\Theta}\stX_{\phi}^{\underline{\delta}}\subseteq\stZ_{\phi}
\]
\end{defn}
\begin{thm}
\label{pro:piE for theta isomorphism}Let $\underline{\E}=(\E^{i})_{i\in I}$
be a sequence of distinct smooth extremal rays and $\Theta$ be a
collection of smooth sequences with rays in $\underline{\E}$. Then
we have an isomorphism
\[
\stF_{\underline{\E}}^{\Theta}=\pi_{\underline{\E}}^{-1}(\stX_{\phi}^{\Theta})\arrdi{\simeq}\stX_{\phi}^{\Theta}
\]
\end{thm}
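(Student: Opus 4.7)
The plan is to combine Lemma~\ref{lem:fundamental lemma for all the classification for h}, Lemma~\ref{lem:fundamental lemma for the smooth locus of X phi}, and Corollary~\ref{thm:toric open substack of Z phi via smooth sequence}. First, from the definition $\stX_{\phi}^{\Theta}=\bigcup_{\underline{\delta}\in\Theta}\stX_{\phi}^{\underline{\delta}}$ one immediately obtains
\[
\pi_{\underline{\E}}^{-1}(\stX_{\phi}^{\Theta})=\bigcup_{\underline{\delta}\in\Theta}\pi_{\underline{\E}}^{-1}(\stX_{\phi}^{\underline{\delta}}),
\]
so it suffices to describe each piece $\pi_{\underline{\E}}^{-1}(\stX_{\phi}^{\underline{\delta}})$ individually and to verify that $\pi_{\underline{\E}}$ restricts to an isomorphism on each of them.

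Next I would fix $\underline{\delta}\in\Theta$ and apply Lemma~\ref{lem:fundamental lemma for the smooth locus of X phi} to the smooth sequence $\underline{\delta}$ with $\chi=\underline{\E}$. The hypotheses that the rays of $\underline{\E}$ are distinct, and that each ray of $\underline{\delta}$ appears in $\underline{\E}$, are part of the assumptions of the theorem. The remaining hypothesis requires that any $\E\in\underline{\E}$ which happens to lie in $\langle\underline{\delta}\rangle_{\N}$ must already be one of the rays of $\underline{\delta}$. This is the only non-formal point, and it is exactly where the assumption that the elements of $\underline{\E}$ are extremal intervenes: since $\E$ is a normalized extremal ray, it is indecomposable, so in an expression $\E=\sum_{j}a_{j}\delta_{j}$ with $a_{j}\in\N$ and $\delta_{j}\in\underline{\delta}$ a repeated use of indecomposability forces exactly one summand to survive, and the normalization of both $\E$ and that $\delta_{j}$ then forces $a_{j}=1$, hence $\E=\delta_{j}\in\underline{\delta}$. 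The lemma therefore gives $\pi_{\underline{\E}}^{-1}(\stX_{\phi}^{\underline{\delta}})=\stF_{\underline{\delta}}$ as open substacks of $\stF_{\underline{\E}}$.

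Combining this with Lemma~\ref{lem:fundamental lemma for all the classification for h} yields
\[
\pi_{\underline{\E}}^{-1}(\stX_{\phi}^{\Theta})=\bigcup_{\underline{\delta}\in\Theta}\stF_{\underline{\delta}}=\stF_{\underline{\E}}^{\Theta},
\]
which is the first equality of the statement. To upgrade it to an isomorphism, Remark~\ref{rem:Fdelta is an open substack of Fepsilon se delta sottosequenza di epsilon} tells us that the restriction of $\pi_{\underline{\E}}$ to each open substack $\stF_{\underline{\delta}}\subseteq\stF_{\underline{\E}}$ agrees with $\pi_{\underline{\delta}}$, and Corollary~\ref{thm:toric open substack of Z phi via smooth sequence} tells us that each $\pi_{\underline{\delta}}\colon\stF_{\underline{\delta}}\arr\stX_{\phi}^{\underline{\delta}}$ is an open immersion, hence an isomorphism onto its image. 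Since the $\stF_{\underline{\delta}}$ cover $\stF_{\underline{\E}}^{\Theta}$ and the $\stX_{\phi}^{\underline{\delta}}$ cover $\stX_{\phi}^{\Theta}$ in a compatible way, these local isomorphisms glue to a single isomorphism $\stF_{\underline{\E}}^{\Theta}\arrdi{\simeq}\stX_{\phi}^{\Theta}$. The only non-bookkeeping ingredient is the indecomposability argument in the middle paragraph; everything else is a direct assembly of the preceding results.
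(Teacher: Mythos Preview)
Your proof is correct and follows essentially the same route as the paper: decompose $\stX_{\phi}^{\Theta}$ as $\bigcup_{\underline{\delta}\in\Theta}\stX_{\phi}^{\underline{\delta}}$, apply Lemma~\ref{lem:fundamental lemma for the smooth locus of X phi} to each $\underline{\delta}$ to identify $\pi_{\underline{\E}}^{-1}(\stX_{\phi}^{\underline{\delta}})$ with $\stF_{\underline{\delta}}$, and conclude via Lemma~\ref{lem:fundamental lemma for all the classification for h}. Your explicit verification of the indecomposability hypothesis of Lemma~\ref{lem:fundamental lemma for the smooth locus of X phi} (using that extremal rays are indecomposable and normalized) is a detail the paper leaves implicit but which is indeed needed.
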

\begin{proof}
Taking into account \ref{lem:fundamental lemma for all the classification for h},
it is enough to note that
\[
\pi_{\underline{\E}}^{-1}(\stX_{\phi}^{\Theta})=\pi_{\underline{\E}}^{-1}(\bigcup_{\underline{\delta}\in\Theta}\stX_{\phi}^{\underline{\delta}})=\bigcup_{\underline{\delta}\in\Theta}\stF_{\underline{\E}\cap\underline{\delta}}=\bigcup_{\underline{\delta}\in\Theta}\stF_{\underline{\delta}}\arrdi{\simeq}\stX_{\phi}^{\Theta}
\]
\end{proof}
\begin{prop}
\label{pro:Xphitheta is a smooth toric stack}Let $\underline{\E}=(\E^{i})_{i\in I}$
be a sequence of distinct smooth extremal rays and $\Theta$ be a
collection of smooth sequences with rays in $\underline{\E}$. Then
the set
\[
\Delta^{\Theta}=\{\langle\eta_{1},\dots,\eta_{r}\rangle_{\Q_{+}}\st\exists\underline{\delta}\in\Theta\text{ s.t. }\eta_{1},\dots,\eta_{r}\subseteq\underline{\delta}\}
\]
is a toric fan in $\duale T\otimes\Q$ whose associated toric variety
over $\Z$ is $X_{\phi}^{\Theta}$. Moreover
\[
\stX_{\phi}^{\Theta}\simeq[X_{\phi}^{\Theta}/\Di{\Z^{r}}]
\]
\end{prop}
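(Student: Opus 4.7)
The plan is to verify in turn that $\Delta^{\Theta}$ is a fan in $\duale{T}\otimes\Q$, that the associated toric scheme over $\Z$ coincides with $X_{\phi}^{\Theta}$, and that the stack $\stX_{\phi}^{\Theta}$ is its quotient by $\Di{\Z^{r}}$.

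Fix a smooth sequence $\underline{\delta}=(\E^{1},\dots,\E^{p})\in\Theta$, with dual elements $v_{1},\dots,v_{p}\in T_{+}^{int}$ satisfying $\E^{i}(v_{j})=\delta_{ij}$. By Lemma~\ref{lem:decomposition of T+E and open smooth subscheme} the monoid $T_{+}^{\underline{\delta}}$ splits as $\Ker\underline{\delta}\oplus\langle v_{1},\dots,v_{p}\rangle_{\N}$, so $\Spec\Z[T_{+}^{\underline{\delta}}]\simeq\A_{\Z}^{p}\times\Di{\Ker\underline{\delta}}$ is a smooth affine toric scheme over $\Z$ with character lattice $T$ and dense torus $\Di{T}=\Spec\Z[T]$. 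The identification $T_{+}^{\underline{\delta}}=\{v\in T\st\underline{\delta}(v)\geq0\}=\sigma(\underline{\delta})^{\vee}\cap T$ shows that the cone attached to this affine toric scheme in $\duale{T}\otimes\Q$ is exactly $\sigma(\underline{\delta})=\langle\underline{\delta}\rangle_{\Q_{+}}$. The linear independence of $\E^{1},\dots,\E^{p}$ (witnessed by the $v_{i}$) makes $\sigma(\underline{\delta})$ a smooth simplicial cone whose faces are precisely the $\sigma(\underline{\delta}')$ for subsequences $\underline{\delta}'\subseteq\underline{\delta}$; by Remark~\ref{rem:subsequences of smooth sequences are smooth too} such a $\underline{\delta}'$ is itself smooth, so each face belongs to $\Delta^{\Theta}$.

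Next, $X_{\phi}^{\Theta}=\bigcup_{\underline{\delta}\in\Theta}\Spec\Z[T_{+}^{\underline{\delta}}]$ is an open subscheme of the affine, hence separated, scheme $\Spec\Z[T_{+}^{int}]$. To check the fan axiom on pairwise intersections, fix $\underline{\delta}_{1},\underline{\delta}_{2}\in\Theta$ and set $U=\Spec\Z[T_{+}^{\underline{\delta}_{1}}]\cap\Spec\Z[T_{+}^{\underline{\delta}_{2}}]$. Separatedness makes $U$ affine; it is $\Di{T}$-invariant and contains the dense torus. As a $\Di{T}$-invariant affine open of the smooth affine toric scheme $\Spec\Z[T_{+}^{\underline{\delta}_{2}}]$, $U$ is the affine toric scheme attached to a face of $\sigma(\underline{\delta}_{2})$; by symmetry it corresponds also to a face of $\sigma(\underline{\delta}_{1})$. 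This common face must be generated by the rays it shares with each, i.e.\ it equals $\sigma(\underline{\delta}_{1}\cap\underline{\delta}_{2})\in\Delta^{\Theta}$. Hence $\Delta^{\Theta}$ is a fan, and the gluing of the smooth affine toric pieces along these common faces exhibits $X_{\phi}^{\Theta}$ as its toric scheme over $\Z$.

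Finally, Proposition~\ref{cor:the domain monoid and group monoid associated to T +: stack} gives $\stZ_{\phi}\simeq[\Spec\Z[T_{+}^{int}]/\Di{\Z^{r}}]$, and the same argument yields $\stX_{\phi}^{\underline{\delta}}\simeq[\Spec\Z[T_{+}^{\underline{\delta}}]/\Di{\Z^{r}}]$ for every $\underline{\delta}\in\Theta$. Each $\Spec\Z[T_{+}^{\underline{\delta}}]$ is a $\Di{\Z^{r}}$-invariant open subscheme of $\Spec\Z[T_{+}^{int}]$, so taking open unions commutes with the stacky quotient and $\stX_{\phi}^{\Theta}\simeq[X_{\phi}^{\Theta}/\Di{\Z^{r}}]$. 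The main obstacle is the fan axiom on pairwise intersections; it is handled via the separatedness of $\Spec\Z[T_{+}^{int}]$ together with the classification of $\Di{T}$-invariant affine opens of a smooth affine toric scheme as toric affines of faces.
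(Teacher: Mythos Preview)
Your proof is correct and follows essentially the same approach as the paper's: identify each $\Spec\Z[T_{+}^{\underline{\delta}}]$ with the affine toric piece for the cone $\langle\underline{\delta}\rangle_{\Q_{+}}$ via Lemma~\ref{lem:decomposition of T+E and open smooth subscheme}, glue, and pass to the quotient. The paper dispatches the fan axiom with ``it is then easy to check'', whereas you supply an actual argument using separatedness of $\Spec\Z[T_{+}^{int}]$ together with the classification of torus-invariant affine opens of a smooth affine toric scheme; this is a genuine addition of detail rather than a different strategy.
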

\begin{proof}
We know that if $\underline{\eta}$ is a smooth sequence then $\Spec\Z[T_{+}^{\underline{\eta}}]$
is a smooth open subset of $\Spec\Z[T_{+}^{int}]$ and it is the affine
toric variety associated to the cone $\langle\underline{\eta}\rangle_{\Q_{+}}$.
It is then easy to check that $\Delta^{\Theta}$ is a fan whose associated
toric variety is $X_{\phi}^{\Theta}$. Since $\Spec\Z[T_{+}^{\underline{\eta}}]$
is the equivariant open subset of $\Spec\Z[T_{+}^{int}]$ inducing
$\stX_{\phi}^{\underline{\eta}}$ in $\stZ_{\phi}$, then $X^{\Theta}$
is the equivariant open subset of $\Spec\Z[T_{+}^{int}]$ inducing
$\stX_{\phi}^{\Theta}$. In particular we obtain the last isomorphism.\end{proof}
\begin{lem}
\label{pro:smooth locus of Z[T +]} Assume $T_{+}$ integral and set
$\Theta$ for the set of all smooth sequences. Then $X_{\phi}^{\Theta}$
is the smooth locus of $\Spec\Z[T_{+}]$. In particular $\stZ_{\phi}^{\textup{sm}}=\stX_{\phi}^{\Theta}\simeq[X_{\phi}^{\Theta}/\Di{\Z^{r}}]$.\end{lem}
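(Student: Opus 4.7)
The easy inclusion $X_\phi^\Theta \subseteq (\Spec\Z[T_+])^{\textup{sm}}$ is immediate from Lemma \ref{lem:decomposition of T+E and open smooth subscheme}, which exhibits $\Spec\Z[T_+^{\underline\delta}]\simeq\A^s_\Z \times \Di{\Ker\underline\delta}$ for any smooth sequence $\underline\delta$ of length $s$; this is manifestly smooth over $\Z$. Granted the equality $X_\phi^\Theta = (\Spec\Z[T_+])^{\textup{sm}}$, the stack assertion $\stZ_\phi^{\textup{sm}}\simeq\stX_\phi^\Theta\simeq[X_\phi^\Theta/\Di{\Z^r}]$ follows from Proposition \ref{pro:Xphitheta is a smooth toric stack} together with the isomorphism $\stZ_\phi\simeq[\Spec\Z[T_+]/\Di{\Z^r}]$ of Proposition \ref{cor:the domain monoid and group monoid associated to T +: stack}, once we recall that the action of $\Di{\Z^r}$ is smooth, so the smooth locus of the quotient stack coincides with the quotient of the (torus-invariant) smooth locus of the atlas.

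For the reverse inclusion, let $p$ be a smooth point of $\Spec\Z[T_+]$. By Lemma \ref{lem:properties of pE}(2) the homogenization $p^{\textup{om}}$ equals $p_\E$ for some $\E\in\duale T_+$, and the inclusion $p_\E\subseteq p$ exhibits $p_\E$ as a generization of $p$. Because the smooth locus is open and hence closed under generization, $p_\E$ is itself smooth. It therefore suffices to prove: whenever $p_\E$ is smooth, there is a smooth sequence $\underline\delta=\E^1,\dots,\E^s$ with $\Supp\underline\delta\subseteq\Supp\E$, equivalently (by Remark \ref{rem:the equivalently in definition of smooth sequence}) with $\E\in\langle\E^1,\dots,\E^s\rangle_\N$; this will give $p_\E\in\Spec\Z[T_+^{\underline\delta}]\subseteq X_\phi^\Theta$.

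To construct such $\underline\delta$, I would first invert all $x_v$ with $v\in T_+\cap\Ker\E$, thereby replacing $T_+$ by the larger integral monoid $T_+'$ generated by $T_+$ together with the negatives of $T_+\cap\Ker\E$. It has the same associated group $T$, and now $T_+'\cap\Ker\E$ generates $\Ker\E$ as a $\Z$-module; the point $p_\E$ remains smooth. Passing to a geometric fiber $\Spec\kappa$ over $\Spec\Z$ (smoothness over $\Z$ is fiberwise, by freeness of $\Z[T_+]$ over $\Z$), and replacing $T_+'$ by its saturation, which coincides with $T_+'$ on the normal locus and hence on the smooth locus, the classical smoothness criterion for affine toric varieties shows that the face of $\Q_+\duale T_+'$ consisting of those $\delta$ with $\Supp\delta\subseteq\Supp\E$ is a smooth cone, i.e.\ simplicial and generated by part of a $\Z$-basis of $\duale T$. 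Its extremal generators $\E^1,\dots,\E^s$ are my candidate $\underline\delta$: the dual vectors $v_i$ with $\E^i(v_j)=\delta_{ij}$ initially live in $T_+'$ but can be adjusted by elements of $\Ker\E\subseteq\bigcap_i\Ker\E^i$ to lie in $T_+$, while $T_+\cap\Ker\underline\delta$ generates $\Ker\underline\delta$ over $\Z$ because every generator of $T_+'\cap\Ker\underline\delta$ differs from an element of $T_+\cap\Ker\underline\delta$ by an element of $\Ker\E\subseteq\Ker\underline\delta$. Hence $\underline\delta$ is a smooth sequence for $T_+$, and $\E$ lies in $\langle\E^1,\dots,\E^s\rangle_\N$ since it lies in this smooth cone.

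The main obstacle I expect is the toric step in the third paragraph: translating the algebraic smoothness of the local ring $\Z[T_+]_{p_\E}$ over $\Z$ at a torus-invariant prime into the combinatorial statement that the corresponding face of the dual cone is smooth. This is standard over a field (Fulton, \S 2.1), and the freeness of $\Z[T_+]$ as a $\Z$-module reduces the situation to the field case; the rest of the argument is bookkeeping to ensure the smooth sequence obtained for the saturated, localized monoid actually descends to one for the original $T_+$, which works because all enlargements were made inside $\Ker\E$.
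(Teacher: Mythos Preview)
Your strategy is sound, but two technical claims are wrong and the argument does not go through as written. First, the assertion that after localizing ``$T_+'\cap\Ker\E$ generates $\Ker\E$ as a $\Z$-module'' is false: take $T_+=\N^2\subseteq T=\Z^2$ and $\E=(1,1)$; then $T_+\cap\Ker\E=\{0\}$, so $W=0$ and $T_+'=T_+$, yet $\Ker\E=\Z(1,-1)$. Here $p_\E=(x,y)\subseteq k[x,y]$ is a smooth point, so smoothness does not rescue the claim. Second, and relatedly, the inclusion ``$\Ker\E\subseteq\bigcap_i\Ker\E^i$'' goes the wrong way: from $\E=\sum a_i\E^i$ with $a_i>0$ one only gets $\bigcap_i\Ker\E^i\subseteq\Ker\E$. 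In the same example the smooth sequence is $\E^1=(1,0),\E^2=(0,1)$, and $\Ker\underline\delta=0\subsetneq\Z(1,-1)=\Ker\E$. The repair is to carry out the descent using $W=\langle T_+\cap\Ker\E\rangle_\Z$ rather than $\Ker\E$: what is true and sufficient is $W\subseteq\bigcap_i\Ker\E^i$, because each $\E^i\in\duale{T_+'}$ must vanish on the group of units $\pm W\subseteq T_+'$. You also elide one further step: the dual vectors $v_i$ live a priori only in the saturation $(T_+')^{sat}$, not in $T_+'$; to move them into $T_+$ you need that the normalization $k[T_+']_{p_\E}\to k[(T_+')^{sat}]_{p_\E}$ is an isomorphism (regular implies normal), which unwinds to the statement that every $m\in(T_+')^{sat}$ satisfies $m+w\in T_+$ for some $w\in W$. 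With these corrections your argument works.

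For comparison, the paper avoids both the toric smoothness criterion and the saturation step. After the same localization to $T_+'=T_++W$, it chooses a minimal set $v_1,\dots,v_q\in T_+$ with $\E(v_i)>0$ such that $T_+'=\langle v_1,\dots,v_q\rangle_\N+W$, computes the cotangent module $p_\E/p_\E^2\simeq k[W]^q$ directly, and uses regularity to get $q=\alt p_\E=\rk T-\rk W$; this forces $T=\langle v_i\rangle_\Z\oplus W$, and the $\E^i$ are then simply the dual basis to the $v_j$, extended by zero on $W$. This is more elementary and self-contained, while your (corrected) route has the merit of making the link to the classical toric picture explicit.
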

\begin{proof}
From \ref{lem:decomposition of T+E and open smooth subscheme} we
know that $\Spec\Z[T_{+}^{\underline{\E}}]$ is smooth over $\Z$
and it is an open subset of $\Spec\Z[T_{+}]$. So we focus on the
converse. Since $\Spec\Z[T_{+}]$ is flat over $\Z$, we can replace
$\Z$ by an algebraically closed field $k$. Let $p\in\Spec k[T_{+}]$
be a smooth point. In particular $p^{om}$ is smooth too. If $p^{om}=0$
then $p\in\Spec k[T]$ and we have done. So we can assume $p^{om}=p_{\E}$
for some $0\neq\E\in\duale T_{+}$ thanks to \ref{lem:properties of pE}.
We claim that there exist a smooth sequence $\E^{1},\dots,\E^{q}$
such that $\E\in\langle\E^{1},\dots,\E^{q}\rangle_{\N}$. This is
enough to conclude that $p\in\Spec k[T_{+}^{\underline{\E}}]$ . Indeed
if $x_{w}\in p$ for some $w\in\Ker\underline{\E}\cap T_{+}$ then
it belongs to $p^{om}=p_{\E}$ and so $\E(w)>0$, which is not our
case.

So assume we have $\E\in\duale T_{+}$ such that $p_{\E}$ is a regular
point. Set $W=\langle\Ker\E\cap T_{+}\rangle_{\Z}$ and $T_{+}'=T_{+}+W$.
Note that $\Spec k[T_{+}']$ is an open subset of $\Spec k[T_{+}]$
that contains $p_{\E}$. Moreover $k[T_{+}']/p_{\E}=k[W]$. Let $v_{1},\dots,v_{q}\in T_{+}$
be elements such that
\[
T_{+}'=\langle v_{1},\dots,v_{q}\rangle_{\N}+W\qquad\text{and}\qquad\E(v_{i})>0
\]
with $q$ minimal. We claim that $M=p_{\E}/p_{\E}^{2}\simeq k[W]^{q}$,
where $p_{\E}$ is thought in $k[T_{+}']$. Indeed $M$ is a $k$-vector
space over the $x_{v}$, $v\in T_{+}'$ that satisfies: $\E(v)>0$
and whenever we have $v=v'+v''$ with $v',v''\in T_{+}'$ it follows
that $\E(v')=0$ or $\E(v'')=0$. A simple computation shows that
such a $v$ must be of the form $v_{i}+W$ for some $i$. But since
we have chosen $q$ minimal we have $(v_{i}+W)\cap(v_{j}+W)=\emptyset$
if $i\neq j$. This implies that $M$ is a free $k[W]$-module with
basis $x_{v_{1}},\dots,x_{v_{q}}$. This shows that $q=\alt p_{\E}$.

Now set $V=\langle v_{1},\dots,v_{q}\rangle_{\Z}$. Since $V+W=T$,
$\rk V\leq q$ and 
\[
k[W]\simeq k[T_{+}']/p_{\E}\then\rk T=\dim k[T_{+}']=\alt p_{\E}+\dim k[W]=q+\rk W
\]
we obtain that $v_{1},\dots,v_{q}$ are independent. Let $\E^{1},\dots,\E^{q}$
given by $\E^{i}(v_{j})=\delta_{i,j}$ and $\E_{|W}^{i}=0$. In particular
$W=\Ker\underline{\E}$ and it is generated by elements in $T_{+}$.
Since $\E_{|W}=0$ we have 
\[
\E=\sum_{i=1}^{q}\E(v_{i})\E^{i}\qquad\E(v_{i})>0
\]
 Moreover since $T_{+}\subseteq T_{+}'$ and $\E^{i}\in\duale{T'}_{+}$
we get that $\E^{i}\in\duale T_{+}$, as required.\end{proof}
\begin{thm}
\label{thm:fundamental theorem for the smooth locus of ZM} If $\underline{\E}$
is a sequence of distinct indecomposable rays containing the smooth
 extremal rays then $\pi_{\underline{\E}}$ induces an equivalence
\[
\left\{ (\underline{\shL},\underline{\shM},\underline{z},\delta)\in\stF_{\underline{\E}}\left|\begin{array}{c}
V(z_{i_{1}})\cap\cdots\cap V(z_{i_{s}})=\emptyset\\
\text{if }\E^{i_{1}},\dots\E^{i_{s}}\text{ is not a}\\
\text{smooth sequence}
\end{array}\right.\right\} =\pi_{\underline{\E}}^{-1}(\stZ_{\phi}^{\textup{sm}})\arrdi{\simeq}\stZ_{\phi}^{\textup{sm}}
\]
\end{thm}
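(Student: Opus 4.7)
The plan is to reduce to Theorem \ref{pro:piE for theta isomorphism} by peeling off the rays of $\underline{\E}$ that are not smooth extremal. Let $\underline{\eta}\subseteq\underline{\E}$ be the subsequence of smooth extremal rays, and let $\Theta$ denote the collection of \emph{all} smooth sequences for $T_{+}$. By Proposition \ref{lem:equivalent condition for a smooth integral extremal ray} every ray in a smooth sequence is a smooth extremal ray, so every $\underline{\delta}\in\Theta$ is a subsequence of $\underline{\eta}$. By Lemma \ref{pro:smooth locus of Z[T +]} we have $\stZ_{\phi}^{\textup{sm}}=\stX_{\phi}^{\Theta}$, and by Theorem \ref{pro:piE for theta isomorphism} applied to $\underline{\eta}$ and $\Theta$ the map $\pi_{\underline{\eta}}$ induces an equivalence
\[
\stF_{\underline{\eta}}^{\Theta}=\pi_{\underline{\eta}}^{-1}(\stX_{\phi}^{\Theta})\arrdi{\simeq}\stZ_{\phi}^{\textup{sm}},
\]
where $\stF_{\underline{\eta}}^{\Theta}$ is the substack of $\stF_{\underline{\eta}}$ described in Lemma \ref{lem:fundamental lemma for all the classification for h}. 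It therefore suffices to identify $\pi_{\underline{\E}}^{-1}(\stZ_{\phi}^{\textup{sm}})$ with $\stF_{\underline{\eta}}^{\Theta}$, viewed as an open substack of $\stF_{\underline{\E}}$ via the open immersion $\stF_{\underline{\eta}}\hookrightarrow\stF_{\underline{\E}}$ of Remark \ref{rem:Fdelta is an open substack of Fepsilon se delta sottosequenza di epsilon}, and to verify that this open substack coincides with the substack described by the vanishing conditions.

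The core technical step is to show that any $(\underline{\shL},\underline{\shM},\underline{z},\delta)\in\stF_{\underline{\E}}$ whose image lies in $\stZ_{\phi}^{\textup{sm}}$ automatically satisfies $V(z_{i})=\emptyset$ for every index $i$ with $\E^{i}\notin\underline{\eta}$. I will check this on geometric points. Using the explicit formula $a(t)=\underline{z}^{\underline{\E}(t)}/\delta_{t}$ from Remark \ref{rem:description of piE}, at a geometric point $p$ one has $\{a_{p}=0\}=\bigcup_{j:z_{j}(p)=0}\Supp\E^{j}$. If $p\in V(z_{i})$ and $a_{p}\in\stX_{\phi}^{\underline{\delta}}$ for some smooth sequence $\underline{\delta}\in\Theta$, then by Proposition \ref{pro:points of XphiE} we can write $a_{p}=\lambda\cdot 0^{\E}$ with $\E\in\langle\underline{\delta}\rangle_{\N}$, and in particular $\Supp\E^{i}\subseteq\Supp\E\subseteq\Supp\underline{\delta}$. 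By Remark \ref{rem:the equivalently in definition of smooth sequence} this forces $\E^{i}\in\langle\underline{\delta}\rangle_{\N}$, i.e.\ $\E^{i}=\sum_{k}n_{k}\E^{\delta_{k}}$ with $n_{k}\in\N$. The indecomposability of $\E^{i}$ then forces exactly one $n_{k}$ to equal $1$ (and the rest to be $0$), so $\E^{i}$ is itself an element of the smooth sequence $\underline{\delta}$, hence a smooth extremal ray, contradicting $\E^{i}\notin\underline{\eta}$. Thus $z_{i}$ is nowhere vanishing on the base, and $(\underline{\shL},\underline{\shM},\underline{z},\delta)$ lies in the open substack $\stF_{\underline{\eta}}\subseteq\stF_{\underline{\E}}$; conversely, the vanishing condition of the statement, applied with $s=1$ to any non-smooth-extremal indecomposable $\E^{i}$ (which by Proposition \ref{lem:equivalent condition for a smooth integral extremal ray} cannot be a smooth sequence of length one), forces precisely the same thing.

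Once we are inside $\stF_{\underline{\eta}}$, the remaining vanishing conditions involve only indices in $\underline{\eta}$. For a subsequence $\E^{i_{1}},\dots,\E^{i_{s}}$ of $\underline{\eta}$, the condition ``$\E^{i_{1}},\dots,\E^{i_{s}}$ is not a smooth sequence'' is equivalent to ``there is no $\underline{\delta}\in\Theta$ containing this subsequence'', since by Remark \ref{rem:subsequences of smooth sequences are smooth too} any subsequence of a smooth sequence is again smooth, and conversely a smooth sequence is contained in itself. Thus the substack in the statement, once restricted to $\stF_{\underline{\eta}}$, coincides with $\stF_{\underline{\eta}}^{\Theta}$ as defined in Lemma \ref{lem:fundamental lemma for all the classification for h}. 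Combining these identifications with the equivalence $\pi_{\underline{\eta}}\colon\stF_{\underline{\eta}}^{\Theta}\arrdi{\simeq}\stZ_{\phi}^{\textup{sm}}$ and observing that $\pi_{\underline{\E}}$ restricts to $\pi_{\underline{\eta}}$ on $\stF_{\underline{\eta}}$ (a consequence of Lemma \ref{lem:morphisms of stack Xphi} applied to the projection $\N^{\underline{\E}}\oplus T\arr\N^{\underline{\eta}}\oplus T$) yields the theorem. The main obstacle is the indecomposability argument in the middle paragraph; everything else is an unwinding of previous results.
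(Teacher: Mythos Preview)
Your proof is correct and follows essentially the same route as the paper: identify $\stZ_{\phi}^{\textup{sm}}=\stX_{\phi}^{\Theta}$ via Lemma~\ref{pro:smooth locus of Z[T +]}, peel off the non-smooth-extremal rays, and then invoke Theorem~\ref{pro:piE for theta isomorphism} on the remaining sequence $\underline{\eta}$. The only difference is cosmetic: the paper packages your middle paragraph (the geometric-point plus indecomposability argument showing $\pi_{\underline{\E}}^{-1}(\stZ_{\phi}^{\textup{sm}})\subseteq\stF_{\underline{\eta}}$) as a direct citation of Lemma~\ref{lem:fundamental lemma for the smooth locus of X phi}, whose proof is exactly what you wrote out.
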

\begin{proof}
Lemma \ref{pro:smooth locus of Z[T +]} tells us that $\stZ_{\phi}^{\textup{sm}}=\stX_{\phi}^{\Theta}$,
where $\Theta$ is the collection of all smooth sequences, while \ref{lem:fundamental lemma for the smooth locus of X phi}
allows us to replace $\underline{\E}$ with the sequence of all smooth
extremal rays. Therefore it is enough to apply \ref{pro:piE for theta isomorphism}
and \ref{pro:Xphitheta is a smooth toric stack}.\end{proof}
\begin{prop}
\label{pro:equivalent condition for belonging in the smooth locus of the main irreducible component of X phi}Let
$a\colon T_{+}\arr k\in\stX_{\phi}(k)$, where $k$ is a field. Then
$a$ lies in $\stZ_{\phi}^{\textup{sm}}$ if and only if there exists
a smooth ray $\E\in\duale T_{+}$ and $\lambda\colon T\arr\overline{k}^{*}$
such that $a=\lambda0^{\E}$. \end{prop}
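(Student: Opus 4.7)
The plan is to deduce this proposition as a direct combination of three earlier results: the characterization of $\stZ_\phi^{\textup{sm}}$ as $\stX_\phi^\Theta$ where $\Theta$ is the collection of all smooth sequences (Lemma \ref{pro:smooth locus of Z[T +]}), the characterization of the $k$-points of $\stX_\phi^{\underline{\E}}$ for a smooth sequence $\underline{\E}$ (Proposition \ref{pro:points of XphiE}), and the very definition of a smooth ray as an element of $\langle \E^1,\dots,\E^s\rangle_{\N}$ for some smooth sequence $\E^1,\dots,\E^s$.

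For the "if" direction, I would start from $a=\lambda 0^{\E}$ with $\E\in\duale T_+$ a smooth ray and $\lambda\colon T\arr\overline{k}^*$. By the definition of smoothness of $\E$, there is a smooth sequence $\underline{\delta}=\E^{1},\dots,\E^{r}$ with $\E\in\langle\E^{1},\dots,\E^{r}\rangle_{\N}$ (in the degenerate case $\E=0$, the element $a=\lambda$ lies in $\stB_\phi\subseteq\stZ_\phi^{\textup{sm}}$ directly). Proposition \ref{pro:points of XphiE} then gives $a\in\stX_{\phi}^{\underline{\delta}}(k)$, and by Lemma \ref{pro:smooth locus of Z[T +]} this open substack is contained in $\stZ_\phi^{\textup{sm}}$, so $a\in\stZ_\phi^{\textup{sm}}(k)$.

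For the "only if" direction, assume $a\in\stZ_\phi^{\textup{sm}}(k)$. By Lemma \ref{pro:smooth locus of Z[T +]}, $\stZ_\phi^{\textup{sm}}=\stX_{\phi}^{\Theta}$ where $\Theta$ is the set of all smooth sequences, and by definition of $\stX_{\phi}^{\Theta}=\bigcup_{\underline{\delta}\in\Theta}\stX_{\phi}^{\underline{\delta}}$, there exists a smooth sequence $\underline{\delta}=\E^{1},\dots,\E^{r}$ with $a\in\stX_{\phi}^{\underline{\delta}}(k)$. Applying Proposition \ref{pro:points of XphiE}, we obtain $\E\in\langle\E^{1},\dots,\E^{r}\rangle_{\N}$ and $\lambda\colon T\arr\overline{k}^*$ with $a=\lambda 0^{\E}$. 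By construction $\E$ lies in the $\N$-span of a smooth sequence, hence is a smooth ray (allowing the trivial case $\E=0$, which corresponds to $a$ being a torsor and is harmless).

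There is essentially no hard step: all the substantial content—the toric description of $\stZ_\phi^{\textup{sm}}$ via smooth sequences and the explicit form of $k$-points of $\stX_\phi^{\underline{\E}}$—has already been established. The proposition simply packages these two facts together using the definition of a smooth ray, so it serves as a clean summary at the level of $k$-points.
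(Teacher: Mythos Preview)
Your proof is correct and follows essentially the same approach as the paper, which simply cites Theorem~\ref{thm:fundamental theorem for the smooth locus of ZM} and Proposition~\ref{pro:points of XphiE}; you go one step upstream and invoke Lemma~\ref{pro:smooth locus of Z[T +]} directly, which is the content underlying that theorem. Your handling of the edge case $\E=0$ (corresponding to $a\in\stB_\phi$) is appropriate, since strictly speaking a smooth ray is defined to be nonzero, but this is a harmless boundary case covered by the empty smooth sequence.
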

\begin{proof}
Apply \ref{thm:fundamental theorem for the smooth locus of ZM} and
\ref{pro:points of XphiE}.
\end{proof}

\subsection{Extension of objects from codimension $1$.}

In this subsection we want to explain how it is possible, in certain
cases, to check that an object of $\stX_{\phi}$ over a sufficiently
regular scheme $X$ comes (uniquely) from $\stF_{\underline{\E}}$
only checking what happens in codimension $1$.
\begin{notation}
Given a scheme $X$ we will denote by $\Picsh X$ the category whose
objects are invertible sheaves and whose arrows are maps between them.\end{notation}
\begin{prop}
\label{pro:the map X(Y) --> X(X) is fully faithful (equivalence) is it so between Pic}Let
$X\arrdi fY$ be a map of schemes. If $\Picsh Y\arrdi{f^{*}}\Picsh X$
is fully faithful (resp. an equivalence) then $\stX_{\phi}(Y)\arrdi{f^{*}}\stX_{\phi}(X)$
has the same property.\end{prop}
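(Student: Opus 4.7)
The plan is to unwind the definitions of $\stX_{\phi}$ and use essentially only two consequences of $f^{*}\colon\Picsh Y\arr\Picsh X$ being fully faithful: that for invertible sheaves $\shL,\shL'$ on $Y$ the map $\Hom_{Y}(\shL,\shL')\arr\Hom_{X}(f^{*}\shL,f^{*}\shL')$ is bijective, and, taking $\shL=\odi Y$, that global sections of any invertible sheaf transfer bijectively under $f^{*}$.

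\emph{Fully faithful part.} I would fix two objects $(\underline{\shL},a),(\underline{\shL}',a')\in\stX_{\phi}(Y)$ and consider the map of $\Iso$-sets induced by $f^{*}$. An arrow is a tuple $\underline{\sigma}=(\sigma_{i})$ of isomorphisms $\sigma_{i}\colon\shL_{i}\arr\shL_{i}'$ together with the compatibility $\underline{\sigma}^{\phi(t)}a(t)=a'(t)$ for all $t\in T_{+}$. Since $\Picsh Y\arrdi{f^{*}}\Picsh X$ is fully faithful, any $\underline{\sigma}'$ on $X$ lifts uniquely to a tuple $\underline{\sigma}$ on $Y$, and $\underline{\sigma}$ is automatically made of isomorphisms (its two-sided inverse is the lift of $(\underline{\sigma}')^{-1}$, using faithfulness to verify $\underline{\sigma}\underline{\tau}=\id,\underline{\tau}\underline{\sigma}=\id$). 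The compatibility condition is an equality of two global sections of the invertible sheaf $\underline{\shL}'^{\phi(t)}$ on $Y$; since global sections are preserved and reflected bijectively by $f^{*}$, the relation holds on $Y$ iff it holds on $X$. This gives bijectivity of $\Iso$-sets.

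\emph{Essential surjectivity part.} Now assume $f^{*}\colon\Picsh Y\arr\Picsh X$ is an equivalence, and take $(\underline{\shL}',a')\in\stX_{\phi}(X)$. Essential surjectivity provides invertible sheaves $\shM_{i}$ on $Y$ with chosen isomorphisms $\tau_{i}\colon f^{*}\shM_{i}\arr\shL_{i}'$, whence induced isomorphisms $\underline{\tau}^{v}\colon f^{*}\underline{\shM}^{v}\arr\underline{\shL}'^{v}$ for every $v\in\Z^{r}$. For each $t\in T_{+}$, the element $\underline{\tau}^{-\phi(t)}(a'(t))$ is a global section of $f^{*}\underline{\shM}^{\phi(t)}$, hence by fully faithfulness descends uniquely to a global section $b(t)\in\underline{\shM}^{\phi(t)}$. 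Additivity $b(t+s)=b(t)\,b(s)$ and normalization $b(0)=1$ are equalities of sections of an invertible sheaf on $Y$ whose pullbacks to $X$ hold (because $a'$ is additive with $a'(0)=1$ and $\underline{\tau}$ is multiplicative on tensor powers); again by faithfulness on global sections they hold on $Y$. Thus $(\underline{\shM},b)\in\stX_{\phi}(Y)$, and the tuple $\underline{\tau}$ furnishes an isomorphism $f^{*}(\underline{\shM},b)\simeq(\underline{\shL}',a')$.

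No real obstacle is expected: the argument is formal bookkeeping once one observes that all the data defining an object of $\stX_{\phi}$ (the invertible sheaves, the sections $a(t)$, and the compatibility and additivity relations) are encoded entirely in the category $\Picsh Y$ plus global sections of invertible sheaves, both of which are controlled by the hypothesis on $f^{*}$. The only point to be mildly careful about is making sure that a lifted tuple $\underline{\sigma}$ really is by isomorphisms and that additivity of the descended $b$ is checked as an equality of sections rather than of morphisms, but both are handled by faithfulness on sections.
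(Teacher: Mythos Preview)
Your proposal is correct and follows essentially the same approach as the paper: lift the individual $\sigma_{i}$ (resp.\ the $\shL_{i}'$ and the sections $a'(t)$) using the hypothesis on $\Picsh$, then verify the defining equations of $\stX_{\phi}$ by reflecting equalities of global sections along $f^{*}$. You are in fact slightly more careful than the paper in explicitly noting that the lifted $\sigma_{i}$ are isomorphisms and in keeping track of the identifying isomorphisms $\tau_{i}$ rather than silently replacing $\shL_{i}'$ by $f^{*}\shM_{i}$, but the substance of the argument is identical.
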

\begin{proof}
Let $(\underline{\shL},a),(\underline{\shL}',a')\in\stX_{\phi}(Y)$
and $\underline{\sigma}\colon f^{*}(\underline{\shL},a)\arr f^{*}(\underline{\shL}',a')$
be a map in $\stX_{\phi}(X)$. Any map $\sigma_{i}\colon f^{*}\shL_{i}\arr f^{*}\shL_{i}$
comes from a unique map $\tau_{i}\colon\shL_{i}\arr\shL_{i}$, i.e.
$\sigma_{i}=f^{*}\tau_{i}$. Since 
\[
f^{*}(\underline{\tau}^{\phi(t)}(a(t)))=\underline{\sigma}^{\phi(t)}(f^{*}a(t))=f^{*}(a'(t))\then\underline{\tau}^{\phi(t)}(a(t))=a'(t)
\]
$\underline{\tau}$ is a map $(\underline{\shL},a)\arr(\underline{\shL}',a')$
such that $f^{*}\underline{\tau}=\underline{\sigma}$. We can conclude
that $f^{*}\colon\stX_{\phi}(Y)\arr\stX_{\phi}(X)$ is fully faithful.

Now assume that $\Picsh Y\arrdi{f^{*}}\Picsh X$ is an equivalence.
We have to prove that $\stX_{\phi}(Y)\arrdi{f^{*}}\stX_{\phi}(X)$
is essentially surjective. So let $(\underline{\shM},b)\in\stX_{\phi}(X)$.
Since $f^{*}$ is an equivalence we can assume $\shM_{i}=f^{*}\shL_{i}$
for some invertible sheaf $\shL_{i}$ on $Y$. Since for any invertible
sheaf $\shL$ on Y one has that $\shL(Y)\simeq(f^{*}\shL)(X)$, any
section $b(t)\in\underline{\shM}^{\phi(t)}$ extends to a unique section
$a(t)\in\underline{\shL}^{\phi(t)}$. Since
\[
f^{*}(a(t)\otimes a(s))=b(t)\otimes b(s)=b(t+s)=f^{*}(a(t+s))\then a(t)\otimes a(s)=a(t+s)
\]
for any $t,s\in T_{+}$ and $a(0)=1$, it follows that $(\underline{\shL},a)\in\stX_{\phi}(Y)$
and $f^{*}(\underline{\shL},a)=(\underline{\shM},b)$.\end{proof}
\begin{cor}
\label{cor:lift when Picsh is the same}Let $X\arrdi fY$ be a map
of schemes and consider a commutative diagram    \[   \begin{tikzpicture}[xscale=1.3,yscale=-1.0]     \node (A0_0) at (0, 0) {$X$};     \node (A0_1) at (1, 0) {$\stF_{\underline \E}$};     \node (A1_0) at (0, 1) {$Y$};     \node (A1_1) at (1, 1) {$\stX_\phi$};     \path (A0_0) edge [->]node [auto] {$\scriptstyle{}$} (A0_1);     \path (A1_0) edge [->,dashed]node [auto] {$\scriptstyle{}$} (A0_1);     \path (A0_0) edge [->]node [auto,swap] {$\scriptstyle{f}$} (A1_0);     \path (A0_1) edge [->]node [auto] {$\scriptstyle{\pi_{\underline \E}}$} (A1_1);     \path (A1_0) edge [->]node [auto] {$\scriptstyle{}$} (A1_1);   \end{tikzpicture}   \] where
$\underline{\E}$ is a sequence of elements of $\duale T_{+}$. Then
if $\Picsh X\arrdi{f^{*}}\Picsh Y$ is fully faithful (resp. an equivalence)
the dashed lifting is unique (resp. exists).\end{cor}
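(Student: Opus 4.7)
The key observation is that $\stF_{\underline{\E}}$ is, by its very definition, a stack of the form $\stX_{\psi}$ (namely $\psi = \sigma_{\underline{\E}}\colon \N^s\oplus T\to \Z^s\oplus\Z^r$). Consequently, Proposition \ref{pro:the map X(Y) --> X(X) is fully faithful (equivalence) is it so between Pic} applies to $\stF_{\underline{\E}}$ as well. Therefore, from the hypothesis on $f^{*}\colon\Picsh Y\to\Picsh X$, I obtain that both
\[
f^{*}\colon \stX_{\phi}(Y)\arr\stX_{\phi}(X)\quad\text{and}\quad f^{*}\colon \stF_{\underline{\E}}(Y)\arr\stF_{\underline{\E}}(X)
\]
are fully faithful (resp.\ equivalences) under the corresponding hypothesis. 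The entire argument then reduces to a formal manipulation with these two functors and the natural transformation $\pi_{\underline{\E}}$.

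I will denote the given objects by $\eta\in\stX_{\phi}(Y)$ and $\xi\in\stF_{\underline{\E}}(X)$, together with the $2$-isomorphism $\theta\colon \pi_{\underline{\E}}(\xi)\xrightarrow{\sim} f^{*}\eta$ that witnesses the commutativity of the outer square. A dashed lifting is the datum of an object $\tilde{\eta}\in\stF_{\underline{\E}}(Y)$, an isomorphism $\alpha\colon\pi_{\underline{\E}}(\tilde{\eta})\to\eta$ and an isomorphism $\beta\colon f^{*}\tilde{\eta}\to\xi$ such that $\pi_{\underline{\E}}(\beta)$ agrees with $\theta\circ f^{*}(\alpha)^{-1}$ after applying $f^{*}$ to $\alpha$.

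For uniqueness, assume two such lifts $(\tilde{\eta}_1,\alpha_1,\beta_1)$ and $(\tilde{\eta}_2,\alpha_2,\beta_2)$ exist. Then $\beta_2^{-1}\circ\beta_1\colon f^{*}\tilde{\eta}_1\to f^{*}\tilde{\eta}_2$ is an isomorphism in $\stF_{\underline{\E}}(X)$. By full faithfulness of $f^{*}\colon\stF_{\underline{\E}}(Y)\to\stF_{\underline{\E}}(X)$, this isomorphism is of the form $f^{*}\gamma$ for a unique $\gamma\colon\tilde{\eta}_1\to\tilde{\eta}_2$, and the compatibility of $\alpha_1$ with $\alpha_2\circ\pi_{\underline{\E}}(\gamma)$ follows because their pullbacks under $f^{*}$ agree (both correspond to $\theta$ via the $\beta_i$) and $f^{*}\colon \stX_{\phi}(Y)\to\stX_{\phi}(X)$ is fully faithful.

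For existence, assume that $f^{*}\colon \Picsh Y\to\Picsh X$ is an equivalence, so that $f^{*}\colon\stF_{\underline{\E}}(Y)\to\stF_{\underline{\E}}(X)$ is essentially surjective. Choose $\tilde{\eta}\in\stF_{\underline{\E}}(Y)$ together with an isomorphism $\beta\colon f^{*}\tilde{\eta}\xrightarrow{\sim}\xi$. Then the composition
\[
f^{*}\pi_{\underline{\E}}(\tilde{\eta})=\pi_{\underline{\E}}(f^{*}\tilde{\eta})\xrightarrow{\pi_{\underline{\E}}(\beta)}\pi_{\underline{\E}}(\xi)\xrightarrow{\theta} f^{*}\eta
\]
is an isomorphism in $\stX_{\phi}(X)$; by full faithfulness of $f^{*}\colon\stX_{\phi}(Y)\to\stX_{\phi}(X)$ it descends to a unique $\alpha\colon\pi_{\underline{\E}}(\tilde{\eta})\xrightarrow{\sim}\eta$. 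The triple $(\tilde{\eta},\alpha,\beta)$ is then the required lift. The only real verifications are the bookkeeping of the $2$-isomorphisms — there is no genuine obstruction, since the serious content (transferring Pic-level data between $Y$ and $X$) is already packaged in Proposition \ref{pro:the map X(Y) --> X(X) is fully faithful (equivalence) is it so between Pic}; I therefore expect this verification, rather than any conceptual step, to be the only thing that needs care.
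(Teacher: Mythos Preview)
Your proposal is correct and follows exactly the paper's approach: recognize that $\stF_{\underline\E}=\stX_{\sigma_{\underline\E}}$ is itself a stack of the required form, apply Proposition~\ref{pro:the map X(Y) --> X(X) is fully faithful (equivalence) is it so between Pic} to both rows of the square, and then observe that the lifting problem becomes purely formal. The paper's proof simply draws the $2$-commutative square of groupoids and notes that both horizontal $f^*$'s are fully faithful (resp.\ equivalences); you have written out the formal $2$-categorical bookkeeping that the paper leaves implicit, but there is no difference in strategy.
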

\begin{proof}
It is enough to consider the $2$-commutative diagram   \[   \begin{tikzpicture}[xscale=2.0,yscale=-1.0]     \node (A0_0) at (0, 0) {$\stF_{\underline \E}(Y)$};     \node (A0_1) at (1, 0) {$\stF_{\underline \E}(X)$};     \node (A1_0) at (0, 1) {$\stX_\phi(Y)$};     \node (A1_1) at (1, 1) {$\stX_\phi(X)$};     \path (A0_0) edge [->]node [auto] {$\scriptstyle{f^*}$} (A0_1);     \path (A0_0) edge [->]node [auto,swap] {$\scriptstyle{\pi_{\underline \E}}$} (A1_0);     \path (A0_1) edge [->]node [auto] {$\scriptstyle{\pi_{\underline \E}}$} (A1_1);     \path (A1_0) edge [->]node [auto] {$\scriptstyle{f^*}$} (A1_1);   \end{tikzpicture}   \] and
note that $f^{*}$ is fully faithful (resp. an equivalence) in both
cases.\end{proof}
\begin{thm}
\label{thm:fundamental theorem for locally factorial schemes}Let
$X$ be a locally noetherian and locally factorial scheme, $\underline{\E}=(\E^{i})_{i\in I}$
be a sequence of distinct smooth extremal rays and $\Theta$ be a
collection of smooth sequences with rays in $\underline{\E}$. Consider
the full subcategories 
\[
\catC_{X}^{\Theta}=\left\{ (\underline{\shL},\underline{\shM},\underline{z},\delta)\in\stF_{\underline{\E}}(X)\left|\begin{array}{c}
\codim_{X}V(z_{i_{1}})\cap\cdots\cap V(z_{i_{s}})\geq2\\
\text{if }\nexists\underline{\delta}\in\Theta\text{ s.t. }\E^{i_{1}},\dots\E^{i_{s}}\subseteq\underline{\delta}
\end{array}\right.\right\} \subseteq\shF_{\underline{\E}}(X)
\]
and 
\[
\catD_{X}^{\Theta}=\left\{ \chi\in\stX_{\phi}(X)\left|\begin{array}{c}
\forall p\in X\text{ with }\codim_{p}X\leq1\\
\chi_{|\overline{k(p)}}\in\stX_{\phi}^{\Theta}
\end{array}\right.\right\} \subseteq\stX_{\phi}(X)
\]
Then $\pi_{\underline{\E}}$ induces an equivalence of categories
\[
\catC_{X}^{\Theta}=\pi_{\underline{\E}}^{-1}(\catD_{X}^{\Theta})\arrdi{\simeq}\catD_{X}^{\Theta}
\]
\end{thm}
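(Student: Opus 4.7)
The plan is to first identify $\pi_{\underline{\E}}^{-1}(\catD_X^\Theta)$ with $\catC_X^\Theta$ set-theoretically, then use the codimension-one-versus-codimension-two dichotomy together with Corollary \ref{cor:lift when Picsh is the same} to deduce the equivalence.

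First I would unwind the equality $\catC_X^\Theta = \pi_{\underline{\E}}^{-1}(\catD_X^\Theta)$. Given $\chi = (\underline{\shL},\underline{\shM},\underline{z},\delta) \in \stF_{\underline{\E}}(X)$ and a point $p$ with $\codim_p X \leq 1$, Theorem \ref{pro:piE for theta isomorphism} says that $\pi_{\underline{\E}}(\chi)|_{\overline{k(p)}} \in \stX_\phi^\Theta(\overline{k(p)})$ if and only if $\chi|_{\overline{k(p)}}$ lies in $\stF_{\underline{\E}}^\Theta(\overline{k(p)})$, which by definition means that the set $J_p = \{i : z_i(p) = 0\}$ is contained in the index set of some $\underline{\delta} \in \Theta$. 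Negating, $\chi \notin \pi_{\underline{\E}}^{-1}(\catD_X^\Theta)$ exactly when some codimension-$\leq 1$ point $p$ contains a tuple $\E^{i_1},\dots,\E^{i_s}$ not sitting inside any sequence of $\Theta$ with all $z_{i_j}(p)=0$, i.e.\ exactly when some such intersection $V(z_{i_1})\cap\cdots\cap V(z_{i_s})$ has a component of codimension $\leq 1$. This is the negation of the defining condition of $\catC_X^\Theta$.

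Next, to prove that $\pi_{\underline{\E}} \colon \catC_X^\Theta \arr \catD_X^\Theta$ is an equivalence, I would produce a canonical open $U \subseteq X$ of codimension $\geq 2$ complement on which the theorem is already known, and then extend across this complement using the Pic equivalence. Concretely, given $\chi \in \catD_X^\Theta$, let $U$ be the preimage in $X$ of $\stX_\phi^\Theta \subseteq \stX_\phi$; by definition of $\catD_X^\Theta$, $U$ contains every codimension-$\leq 1$ point, so $Z = X \setminus U$ has codimension $\geq 2$. Theorem \ref{pro:piE for theta isomorphism} supplies a unique lift $\tilde\chi_U \in \stF_{\underline{\E}}^\Theta(U) \subseteq \stF_{\underline{\E}}(U)$ of $\chi|_U$. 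Because $X$ is locally noetherian and locally factorial, the restriction functor $\Picsh X \arr \Picsh U$ is an equivalence (Weil and Cartier divisors agree, and divisors extend uniquely across codimension $\geq 2$). Applying Proposition \ref{pro:the map X(Y) --> X(X) is fully faithful (equivalence) is it so between Pic} to $\stF_{\underline{\E}} = \stX_{\sigma_{\underline{\E}}}$ yields that $\stF_{\underline{\E}}(X) \arr \stF_{\underline{\E}}(U)$ is an equivalence, so $\tilde\chi_U$ extends to a unique $\tilde\chi \in \stF_{\underline{\E}}(X)$; the same principle applied to $\stX_\phi$ gives $\pi_{\underline{\E}}(\tilde\chi) = \chi$. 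By Step~1, $\tilde\chi \in \catC_X^\Theta$ since the bad intersection loci are contained in $Z$. Full faithfulness follows by the identical argument applied to isomorphisms: any isomorphism $\pi_{\underline{\E}}(\chi_1) \arr \pi_{\underline{\E}}(\chi_2)$ in $\catD_X^\Theta$ restricts to an isomorphism over $U$, where $\pi_{\underline{\E}}$ is already an equivalence by \ref{pro:piE for theta isomorphism}, and the resulting isomorphism in $\stF_{\underline{\E}}^\Theta(U)$ extends uniquely to $X$ via Corollary \ref{cor:lift when Picsh is the same}.

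The main obstacle I foresee is not logical but bookkeeping: one must verify that the open $U$ defined as the preimage of $\stX_\phi^\Theta$ really is open in $X$ (which follows because $\stX_\phi^\Theta$ is open in $\stZ_\phi \subseteq \stX_\phi$ by Proposition \ref{pro:Xphitheta is a smooth toric stack}) and that \emph{every} codimension-$\leq 1$ point lies in $U$, which requires the characterization of points of $\stX_\phi^\Theta$ in terms of the support condition --- a fact implicit in Proposition \ref{pro:points of XphiE}. Once these purely topological checks are in place, the rest of the argument is a clean application of the Pic-equivalence machinery encoded in Proposition \ref{pro:the map X(Y) --> X(X) is fully faithful (equivalence) is it so between Pic} combined with the local isomorphism of Theorem \ref{pro:piE for theta isomorphism}.
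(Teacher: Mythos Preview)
Your proposal is correct and follows the same overall strategy as the paper: identify $\catC_X^\Theta=\pi_{\underline{\E}}^{-1}(\catD_X^\Theta)$, then pass to a codimension-$\geq 2$ open $U$ where Theorem~\ref{pro:piE for theta isomorphism} applies, and extend across the complement using the equivalence $\Picsh X\simeq\Picsh U$ via Proposition~\ref{pro:the map X(Y) --> X(X) is fully faithful (equivalence) is it so between Pic}. The paper packages this slightly differently, first rewriting \emph{both} $\catC_X^\Theta$ and $\catD_X^\Theta$ as ``there exists an open $U$ with $\codim_X(X\setminus U)\geq 2$ such that the restriction lies in $\stF_{\underline{\E}}^\Theta(U)$ (resp.\ $\stX_\phi^\Theta(U)$)'', but the substance is the same.

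There is, however, one point you label ``bookkeeping'' that is in fact a genuine step. You define $U$ as the preimage of $\stX_\phi^\Theta$ and justify its openness by saying that $\stX_\phi^\Theta$ is open in $\stZ_\phi$. But $\stZ_\phi$ is a \emph{closed} substack of $\stX_\phi$, so $\stX_\phi^\Theta$ is only locally closed in $\stX_\phi$, and its preimage under $g\colon X\to\stX_\phi$ is a priori only locally closed. To conclude it is open you must first show that $g$ factors through $\stZ_\phi$. The paper does this explicitly: the generic points of $X$ (having codimension $0$) land in $|\stX_\phi^\Theta|\subseteq|\stZ_\phi|$ by the definition of $\catD_X^\Theta$, hence $g(|X|)\subseteq|\stZ_\phi|$; since $X$ is locally factorial, it is reduced, and $\stZ_\phi$ is reduced by construction, so $g$ factors scheme-theoretically through $\stZ_\phi$. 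Only then is $U=g^{-1}(\stX_\phi^\Theta)$ an honest open subscheme of $X$. This reducedness argument is not captured by Proposition~\ref{pro:Xphitheta is a smooth toric stack} or~\ref{pro:points of XphiE} alone and should be made explicit.
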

\begin{proof}
We claim that
\[
\catC_{X}^{\Theta}=\{\chi\in\stF_{\underline{\E}}(X)\st\exists U\subseteq X\text{ open subset s.t. }\codim_{X}X-U\geq2\comma\chi_{|U}\in\stF_{\underline{\E}}^{\Theta}(U)\}
\]

$\subseteq$ Taking into account the definition of $\stF_{\underline{\E}}^{\Theta}$
in \ref{lem:fundamental lemma for all the classification for h},
it is enough to consider 
\[
U=X-\bigcup_{\nexists\underline{\delta}\in\Theta\text{ s.t. }\E^{i_{1}},\dots\E^{i_{s}}\subseteq\underline{\delta}}V(z_{i_{1}})\cap\cdots\cap V(z_{i_{s}})
\]

$\supseteq$ If $p\in V(z_{i_{1}})\cap\cdots\cap V(z_{i_{s}})$ and
$\codim_{p}X\leq1$ then $p\in U$ and again by definition of $\stF_{\underline{\E}}^{\Theta}$
there exists $\underline{\delta}\in\Theta$ such that $\E^{i_{1}},\dots,\E^{i_{s}}\subseteq\underline{\delta}$.

We also claim that 
\[
\catD_{X}^{\Theta}=\{\chi\in\stX_{\phi}(X)\st\exists U\subseteq X\text{ open subset s.t. }\codim_{X}X-U\geq2\comma\chi_{|U}\in\stX_{\phi}^{\Theta}(U)\}
\]

$\supseteq$ Such a $U$ contains all the codimension $1$ or $0$
points of $X$.

$\subseteq$ Let $\chi\in\catD_{X}^{\Theta}$ and $X\arrdi g\stX_{\phi}$
be the induced map. If $\xi$ is a generic point of $X$, we know
that $f(\xi)\in|\stX_{\phi}^{\Theta}|\subseteq|\stZ_{\phi}|$. In
particular $f(|X|)\subseteq|\stZ_{\phi}|$. Since both $X$ and $\stZ_{\phi}$
are reduced $g$ factors through a map $X\arrdi g\stZ_{\phi}$. Since
$\stX_{\phi}^{\Theta}$ is an open substack of $\stZ_{\phi}$, it
follows that $U=g^{-1}(\stX_{\phi}^{\Theta})$ is an open subscheme
of $X$, $\chi_{|U}\in\stX_{\phi}^{\Theta}(U)$ and, by definition
of $\catD_{X}^{\Theta}$, $\codim_{X}X-U\geq2$.

Taking into account \ref{pro:piE for theta isomorphism} it is clear
that $\catC_{X}^{\Theta}=\pi_{\underline{\E}}^{-1}(\catD_{X}^{\Theta})$.
We will make use of the fact that if $U\subseteq X$ is an open subscheme
such that $\codim_{X}X-U\geq2$ then the restriction yields an equivalence
$\Picsh X\simeq\Picsh U$. The map $\catC_{X}^{\Theta}\arr\catD_{X}^{\Theta}$
is essentially surjective since, given an object of $\catD_{X}^{\Theta}$,
the associated map $X\arrdi g\stX_{\phi}$ fits in a $2$-commutative
diagram   \[   \begin{tikzpicture}[xscale=1.5,yscale=-1.2]     \node (A0_0) at (0, 0) {$U$};     \node (A0_1) at (1, 0) {$\stF_{\underline \E}^\Theta\subseteq \stF_{\underline \E}$};     \node (A1_0) at (0, 1) {$X$};     \node (A1_1) at (1, 1) {$\stX_\phi$};     \path (A0_0) edge [->]node [auto] {$\scriptstyle{}$} (A0_1);     \path (A1_0) edge [->]node [auto] {$\scriptstyle{g}$} (A1_1);     \path (A0_1) edge [->]node [auto] {$\scriptstyle{\pi_{\underline \E}}$} (A1_1);     \path (A0_0) edge [->]node [auto] {$\scriptstyle{}$} (A1_0);   \end{tikzpicture}   \] 
and so lifts to a map $X\arr\stF_{\underline{\E}}$ thanks to \ref{cor:lift when Picsh is the same}.

It remains to show that $\catC_{X}^{\Theta}\arr\catD_{X}^{\Theta}$
is fully faithful. Let $\chi,\chi'\in\catC_{X}^{\Theta}$ and $U,U'$
be the open subscheme given in the definition of $\catC_{X}^{\Theta}$.
Set $V=U\cap U'$. Taking into account \ref{pro:the map X(Y) --> X(X) is fully faithful (equivalence) is it so between Pic}
and \ref{pro:piE for theta isomorphism} we have   \[   \begin{tikzpicture}[xscale=4.3,yscale=-0.9]     \node (A0_0) at (0, 0) {$\Hom_{\stF_{\underline \E}(X)}(\chi,\chi')$};     \node (A0_1) at (1, 0) {$\Hom_{\stX_\phi(X)}(\chi,\chi')$};     \node (A1_0) at (0, 1) {$\Hom_{\stF_{\underline \E}(V)}(\chi_{|V},\chi'_{|V})$};     \node (A1_1) at (1, 1) {$\Hom_{\stX_\phi(V)}(\chi_{|V},\chi'_{|V})$};     \node (A2_0) at (0, 2) {$\Hom_{\stF_{\underline \E}^\Theta(V)}(\chi_{|V},\chi'_{|V})$};     \node (A2_1) at (1, 2) {$\Hom_{\stX_\phi^\Theta(V)}(\chi_{|V},\chi'_{|V})$};     
\node[rotate=-90] (s) at (0, 0.5) {$\simeq$};
\node[rotate=-90] (ss) at (1, 0.5) {$\simeq$};
\node[rotate=-90] (s2) at (0, 1.5) {$\simeq$};
\node[rotate=-90] (ss2) at (1, 1.5) {$\simeq$};

\path (A0_0) edge [->]node [auto] {$\scriptstyle{}$} (A0_1);     \path (A1_0) edge [->]node [auto] {$\scriptstyle{}$} (A1_1);     \path (A2_0) edge [->]node [auto,swap] {$\scriptstyle{\simeq}$} (A2_1);   \end{tikzpicture}   \] 
\end{proof}

\section{Galois covers for a diagonalizable group.}

In this section we will fix a finite diagonalizable group scheme $G$
over $\Z$ and we will call $M=\Hom(G,\Gm)$ its character group.
So $M$ is a finite abelian group and $G=\Di M$. With abuse of notation
we will write $\odi U[M]=\odi U[G_{U}]$ and $\stZ_{M}=\stZ_{\Di M}$,
the main component of $\MCov$. It turns out that in this case $\Di M$-covers
have a nice and more explicit description.

In the first subsection we will show that $\MCov\simeq\stX_{\phi}$
for an explicit map $T_{+}\arrdi{\phi}\Z^{M}/\langle e_{0}\rangle$
and that this isomorphism preserves the main irreducible components
of both stacks. Moreover we will study the connection between $\MCov$
and the equivariant Hilbert schemes $\MHilb^{\underline{m}}$ and
prove some results about their geometry.

Then we will introduce an upper semicontinuous map $|\MCov|\arrdi h\N$
that yields a stratification by open substacks of $\MCov$. We will
also see that $\{h=0\}$ coincides with the open substack of $\Di M$-torsors,
while $\{h\leq1\}$ lies in the smooth locus of $\stZ_{M}$ and can
be described by a particular set of smooth  extremal rays. This will
allow us to describe normal $\Di M$-covers over a locally noetherian
and locally factorial scheme $X$ with $(\car X,|M|)=1$.

\subsection{The stack $\MCov$ and its main irreducible component $\stZ_{M}$.}

Consider a scheme $U$ and a cover $X=\Spec\alA$ on it. An action
of $\Di M$ on it consists of a decomposition 
\[
\alA=\bigoplus_{m\in M}\alA_{m}
\]
 such that $\odi U\subseteq\alA_{0}$ and the multiplication maps
$\alA_{m}\otimes\alA_{n}$ into $\alA_{m+n}$. If $X/U$ is a $\Di M$-cover
there exists an fppf covering $\{U_{i}\arr U\}$ such that $\alA_{|U_{i}}\simeq\odi{U_{i}}[M]$
as $\Di M$-comodules. This means that for any $m\in M$ we have
\[
\forall i\;(\alA_{m})_{|U_{i}}\simeq\odi{U_{i}}\then\alA_{m}\text{ invertible}
\]
Conversely any $M$-graded quasi-coherent algebra $\alA=\bigoplus_{m\in M}\alA_{m}$
with $\alA_{0}=\odi U$ and $\alA_{m}$ invertible for any $m$ yields
a $\Di M$-cover $\Spec\alA$.

So the stack $\MCov$ can be described as follows. An object of $\MCov(U)$
is given by a collection of invertible sheaves $\shL_{m}$ for $m\in M$
with maps
\[
\psi_{m,n}\colon\shL_{m}\otimes\shL_{n}\arr\shL_{m+n}
\]
and an isomorphism $\odi U\simeq\shL_{0}$ satisfying the following
relations:   \[   \begin{tikzpicture}[xscale=1.5,yscale=-1.2]     
\node (A0_1) at (1, 0.4) {$\textup{Commutativity}$};     
\node (A0_5) at (5, 0.4) {$\textup{Associativity}$};     
\node (A1_0) at (0, 1) {$\shL_m\otimes \shL_n$};     
\node (A1_2) at (2, 1) {$\shL_n \otimes \shL_m$};     
\node (A1_4) at (4, 1) {$\shL_m \otimes \shL_n \otimes \shL_t$};     
\node (A1_6) at (6, 1) {$\shL_m \otimes \shL_{n+t}$};     \node (A2_1) at (1, 2) {$\shL_{m+n}$};     
\node (A2_4) at (4, 2) {$\shL_{m+n}\otimes \shL_t$};     
\node (A2_6) at (6, 2) {$\shL_{m+n+t}$};
\node (name) at (0.2, 3.3) {$\begin{array}{c}
\textup{Neutral}\\
\textup{Element}
\end{array}
$};
\node (A3_3) at (1.2, 3) {$\shL_m$};     
\node (A3_4) at (2.7, 3) {$\shL_m \otimes \odi{U}$};     
\node (A3_5) at (4.2, 3) {$\shL_m \otimes \shL_0$};     
\node (A3_6) at (5.7, 3) {$\shL_m$};

\path (A3_4) edge [->]node [auto] {$\scriptstyle{\simeq}$} (A3_5);     \path (A1_0) edge [->]node [auto,swap] {$\scriptstyle{\psi_{m,n}}$} (A2_1);     \path (A1_6) edge [->]node [auto] {$\scriptstyle{\psi_{m,n+t}}$} (A2_6);     \path (A1_0) edge [->]node [auto] {$\scriptstyle{\simeq}$} (A1_2);     \path (A1_2) edge [->]node [auto] {$\scriptstyle{\psi_{n,m}}$} (A2_1);     \path (A3_5) edge [->]node [auto] {$\scriptstyle{\psi_{m,0}}$} (A3_6);     \path (A2_4) edge [->]node [auto] {$\scriptstyle{\psi_{m+n,t}}$} (A2_6);     \path (A1_4) edge [->]node [auto] {$\scriptstyle{\id \otimes \psi_{n,t}}$} (A1_6);     \path (A3_3) edge [->]node [auto] {$\scriptstyle{\simeq}$} (A3_4);     \path (A1_4) edge [->]node [auto,swap] {$\scriptstyle{\psi_{m,n}\otimes \id}$} (A2_4);   
\path (A3_3) edge [->,bend left=20]node [auto,swap] {$\scriptstyle{\id}$} (A3_6);\end{tikzpicture}   \] 

If we assume that $\shL_{m}=\odi Uv_{m}$, i.e. that we have sections
$v_{m}$ generating $\shL_{m}$, the maps $\psi_{m,n}$ can be thought
of as elements of $\odi U$ and the algebra structure is given by
$v_{m}v_{n}=\psi_{m,n}v_{m+n}$. In this case we can rewrite the above
conditions obtaining 
\begin{equation}
\psi_{m,n}=\psi_{n,m},\quad\psi_{m,0}=1,\quad\psi_{m,n}\psi_{m+n,t}=\psi_{n,t}\psi_{n+t,m}\label{eq:condition on psi}
\end{equation}
The functor that associates to a scheme $U$ the functions $\psi\colon M\times M\arr\odi U$
satisfying the above conditions is clearly representable by the spectrum
of the ring
\begin{equation}
R_{M}=\Z[x_{m,n}]/(x_{m,n}-x_{n,m},x_{m,0}-1,x_{m,n}x_{m+n,t}-x_{n,t}x_{n+t,m})\label{eq:writing of RM}
\end{equation}
In this way we obtain a Zariski epimorphism $\Spec R_{M}\arr\MCov$,
that we will prove to be smooth. We now want to prove that the stack
$\MCov$ is isomorphic to a stack of the form $\stX_{\phi}$. 
\begin{defn}
Define $\tilde{K}_{+}$ as the quotient monoid of $\N^{M\times M}$
by the equivalence relation generated by 
\[
e_{m,n}\sim e_{n,m},\quad e_{m,0}\sim0,\quad e_{m,n}+e_{m+n,t}\sim e_{n,t}+e_{n+t,m}
\]
Also define $\phi_{M}\colon\tilde{K}_{+}\arr\Z^{M}/\langle e_{0}\rangle$
by $\phi_{M}(e_{m,n})=e_{m}+e_{n}-e_{m+n}$. \end{defn}
\begin{prop}
$R_{M}\simeq\Z[\tilde{K}_{+}]$ and there exists an isomorphism 
\begin{equation}
\stX_{\phi_{M}}\simeq\MCov\label{eq:MCov isomorphic to Xphi}
\end{equation}
 such that $\Spec\Z[\tilde{K}_{+}]\simeq\Spec R_{M}\arr\MCov\simeq\stX_{\phi_{M}}$
is the map defined in \ref{pro:atlas for the stack associated to a monoid map}.
In particular
\[
\MCov\simeq[\Spec R_{M}/\Di{\Z^{M}/\langle e_{0}\rangle}]
\]
\end{prop}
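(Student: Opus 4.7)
The identification $R_{M}\simeq\Z[\tilde{K}_{+}]$ is essentially immediate from the two presentations: by construction $\Z[\tilde{K}_{+}]$ is the quotient of the polynomial ring $\Z[x_{m,n}]_{(m,n)\in M\times M}$ (with $x_{m,n}$ the image of the generator $e_{m,n}$ of $\N^{M\times M}$) by the monoid relations, and these are precisely $x_{m,n}=x_{n,m}$, $x_{m,0}=1$ and $x_{m,n}x_{m+n,t}=x_{n,t}x_{n+t,m}$, which is the presentation \eqref{eq:writing of RM} of $R_{M}$.

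Next I would construct the comparison functor $\Lambda\colon\stX_{\phi_{M}}\arr\MCov$. An object $(\underline{\shL},a)\in\stX_{\phi_{M}}(U)$ consists of invertible sheaves $\shL_{m}$ for $m\in M/\{0\}$ (setting $\shL_{0}=\odi{U}$ to match the quotient $\Z^{M}/\langle e_{0}\rangle$) together with, for each pair $(m,n)$, a section $a(e_{m,n})\in\underline{\shL}^{\phi_{M}(e_{m,n})}=\shL_{m}\otimes\shL_{n}\otimes\shL_{m+n}^{-1}$, i.e.\ a map $\psi_{m,n}\colon\shL_{m}\otimes\shL_{n}\arr\shL_{m+n}$. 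The three defining relations of $\tilde{K}_{+}$ translate exactly into the three diagrams describing commutativity, the neutral element condition and associativity of the multiplication on $\alA=\bigoplus_{m\in M}\shL_{m}$; thus $\Lambda(\underline{\shL},a)=\Spec\alA$ is a $\Di{M}$-cover. A morphism $\underline{\sigma}=(\sigma_{m})_{m}$ in $\stX_{\phi_{M}}(U)$ is automatically $M$-graded and compatible with the $\psi_{m,n}$, hence gives a $\Di{M}$-equivariant isomorphism of algebras.

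For a quasi-inverse, given a $\Di{M}$-cover $X=\Spec\alA\arr U$, the $M$-grading $\alA=\bigoplus_{m}\alA_{m}$ produces invertible sheaves $\shL_{m}=\alA_{m}$ (by the local triviality recalled at the start of the section) and the multiplications $\alA_{m}\otimes\alA_{n}\arr\alA_{m+n}$ give the data $\psi_{m,n}$, from which one recovers an additive map $a\colon\tilde{K}_{+}\arr\ISym\underline{\shL}$ because the associativity, commutativity and unit of $\alA$ are precisely the relations imposed in $\tilde{K}_{+}$. The two constructions are readily seen to be mutually inverse, and to define an equivalence of stacks (not just of fibres) since the constructions are functorial in $U$.

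Finally, to see that the atlas map of Proposition~\ref{pro:atlas for the stack associated to a monoid map} corresponds under \eqref{eq:MCov isomorphic to Xphi} to the obvious map $\Spec R_{M}\arr\MCov$ classifying the $\Di{M}$-cover with trivial $\shL_{m}=\odi{}$ and multiplication $\psi_{m,n}=x_{m,n}$, one simply unwinds the definition: in \ref{pro:atlas for the stack associated to a monoid map} the atlas sends the universal monoid map $\tilde{K}_{+}\arr\Z[\tilde{K}_{+}]$, $e_{m,n}\longmapsto x_{m,n}$, exactly to the object with $\shL_{m}=\odi{}$ and $a(e_{m,n})=x_{m,n}$, which under $\Lambda$ is the cover whose algebra structure on $\odi{}[M]$ has $\psi_{m,n}=x_{m,n}$. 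The quotient description $\MCov\simeq[\Spec R_{M}/\Di{\Z^{M}/\langle e_{0}\rangle}]$ is then immediate from Proposition~\ref{pro:atlas for the stack associated to a monoid map}. The only real bookkeeping is matching the three monoid relations with the three axioms for a commutative, unital, associative graded algebra; this is routine but is where one must be careful about signs and the role of the cancelled generator $e_{0}$.
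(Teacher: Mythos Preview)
Your overall approach is the same as the paper's, and the checks you sketch (matching the monoid relations with commutativity, unit, associativity, and verifying the atlas map) are exactly what is needed. However, there is a genuine sign/direction slip in your construction of $\Lambda$.

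You write that $a(e_{m,n})\in\underline{\shL}^{\phi_{M}(e_{m,n})}=\shL_{m}\otimes\shL_{n}\otimes\shL_{m+n}^{-1}$ is ``a map $\psi_{m,n}\colon\shL_{m}\otimes\shL_{n}\arr\shL_{m+n}$''. But a section of $\shL_{m}\otimes\shL_{n}\otimes\shL_{m+n}^{-1}$ is a morphism $\odi{U}\arr\shL_{m}\otimes\shL_{n}\otimes\shL_{m+n}^{-1}$, which corresponds to a map $\shL_{m+n}\arr\shL_{m}\otimes\shL_{n}$, i.e.\ the \emph{wrong} direction for a multiplication. Equivalently, it gives a map $\shL_{m}^{-1}\otimes\shL_{n}^{-1}\arr\shL_{m+n}^{-1}$. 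This is precisely why the paper's (one-line) proof sets the invertible sheaves of the $\Di{M}$-cover to be $\shL'_{m}=\shL_{m}^{-1}$ and then takes $\psi_{m,n}=\psi(e_{m,n})$: after inverting, the section does define a multiplication $\shL'_{m}\otimes\shL'_{n}\arr\shL'_{m+n}$ in the correct direction. Once you insert this inversion, everything else in your sketch goes through unchanged; in particular, the compatibility with the atlas map and the quotient presentation are exactly as you describe.
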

\begin{proof}
The required isomorphism sends $(\underline{\shL},\tilde{K}_{+}\arrdi{\psi}\ISym\underline{\shL})\in\stX_{\phi_{M}}$
to the object of $\MCov$ given by invertible sheaves $(\shL'_{m}=\shL_{m}^{-1})$
and $\psi_{m,n}=\psi(e_{m,n})$.
\end{proof}
We want to prove that the isomorphism \ref{eq:MCov isomorphic to Xphi}
sends $\stZ_{\phi_{M}}$ to $\stZ_{M}$ (see def. \ref{def:the main component of GCov})
and $\stB_{\phi_{M}}$ to $\Bi\Di M$. We need the following classical
result on the structure of a $\Di M$-torsor (see \cite[Exposé VIII, Proposition 4.1 and 4.6]{Grothendieck1970}):
\begin{prop}
\label{pro:equivalent conditions for a D(M)-torsor}Let $M$ be a
finite abelian group and $P\arr U$ a $\Di M$-equivariant map. Then
$P$ is an fppf $\Di M$-torsor if and only if $P\in\MCov(U)$ and
all the multiplication maps $\psi_{m,n}$ are isomorphisms.
\end{prop}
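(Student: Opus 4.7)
My plan is to prove the two implications directly, using the explicit algebra description of $\Di M$-covers.

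For the forward direction, suppose $P \to U$ is an fppf $\Di M$-torsor. Then fppf-locally on $U$, $P$ is isomorphic to the trivial torsor $\Di M \times U$, whose coordinate sheaf is $\odi{U}[M] = \bigoplus_m \odi{U} e_m$ with $e_m\cdot e_n = e_{m+n}$. Under this local trivialization each $\shL_m = \odi{U}$ and each $\psi_{m,n}$ is the canonical multiplication map $\odi{U}\otimes \odi{U} \arr \odi{U}$, hence an isomorphism. Since being an isomorphism between invertible sheaves can be checked fppf-locally, all the $\psi_{m,n}$ are isomorphisms on $U$.

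For the backward direction, assume $\chi = (\shL_m,\psi_{m,n})_{m,n\in M} \in \MCov(U)$ and all the $\psi_{m,n}$ are isomorphisms. Write $P = \Spec \alA$ with $\alA = \bigoplus_{m\in M}\shL_m$. The coaction of $\Di M$ on $\alA$ is given by $\shL_m \ni a_m \longmapsto a_m \otimes e_m$, and the action map $P \times \Di M \arr P \times_U P$, $(p,g)\longmapsto(p,p\cdot g)$, corresponds to the $\odi{U}$-algebra homomorphism
\[
h\colon \alA\otimes_{\odi U}\alA \arr \alA\otimes_{\odi U}\odi{U}[M],\qquad a\otimes b_n \longmapsto (a\otimes 1)\cdot(b_n\otimes e_n) = ab_n\otimes e_n.
\]
Decomposing both sides into isotypical components one sees that $h$ restricts on the summand $\shL_m\otimes\shL_n$ to $\psi_{m,n}\otimes \id_{\odi{U}e_n}\colon \shL_m\otimes\shL_n \arr \shL_{m+n}\otimes \odi{U}e_n$. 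For each fixed $n\in M$ the reindexing $m\mapsto m+n$ is a bijection of $M$, so $h$ is an isomorphism if and only if every $\psi_{m,n}$ is. By our hypothesis $h$ is an isomorphism, and since $\shL_0 = \odi{U}$ is a direct summand of $\alA$ the map $P\arr U$ is faithfully flat (and of finite presentation), so $P$ is an fppf $\Di M$-torsor.

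The only mildly delicate step is the explicit identification of $h$ on the graded pieces, which amounts to unwinding the right regular coaction convention fixed in Definition \ref{def:regular representation}; once this is done, both directions reduce to the single observation that the torsor map and the collection of multiplication maps $\psi_{m,n}$ are described by the same $M\times M$-indexed family of morphisms of invertible sheaves.
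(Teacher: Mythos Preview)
Your proof is correct. The paper does not actually supply its own argument for this proposition; it merely cites \cite[Expos\'e VIII, Proposition 4.1 and 4.6]{Grothendieck1970} as a classical result. Your approach---identifying the torsor map $P\times\Di M\to P\times_U P$ on the level of algebras and decomposing it summand by summand as $\psi_{m,n}\colon\shL_m\otimes\shL_n\to\shL_{m+n}$---is exactly the computation underlying the cited SGA3 result, and it is correct as written. The forward direction is fine as well: once $P$ is fppf-locally the trivial torsor, the pullback of each $\psi_{m,n}$ becomes the canonical multiplication $\odi{}\otimes\odi{}\to\odi{}$, and invertibility of a map of invertible sheaves descends along fppf covers. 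One small remark: for the faithful flatness of $P\to U$ you do not even need to invoke the direct summand $\shL_0=\odi U$; since $\alA$ is locally free of positive rank $|M|$, the map $\Spec\alA\to U$ is automatically finite, flat, finitely presented, and surjective.
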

Now consider the exact sequence   \[   \begin{tikzpicture}[xscale=1.5,yscale=-0.5]     
\node (A0_0) at (0.3, 0) {$0$};     
\node (A0_1) at (1, 0) {$K$};     \node (A0_2) at (2, 0) {$\Z^M/\langle e_0\rangle$};     \node (A0_3) at (3, 0) {$M$};     
\node (A0_4) at (3.7, 0) {$0$};     \node (A1_2) at (2, 1) {$e_m$};     \node (A1_3) at (3, 1) {$m$};     \path (A0_0) edge [->] node [auto] {$\scriptstyle{}$} (A0_1);     \path (A0_2) edge [->] node [auto] {$\scriptstyle{}$} (A0_3);     \path (A0_3) edge [->] node [auto] {$\scriptstyle{}$} (A0_4);     \path (A0_1) edge [->] node [auto] {$\scriptstyle{}$} (A0_2);     \path (A1_2) edge [|->,gray] node [auto] {$\scriptstyle{}$} (A1_3);   \end{tikzpicture}   \] 
\begin{defn}
For $m,n\in M$ we define
\[
v_{m,n}=\phi_{M}(e_{m,n})=e_{m}+e_{n}-e_{m+n}\in K
\]
 and $K_{+}$ as the submonoid of $K$ generated by the $v_{m,n}$.
We will set $x_{m,n}=x^{v_{m,n}}\in\Z[K_{+}]$ and, for $\E\in\duale K_{+}$,
$\E_{m,n}=\E(v_{m,n})$.\end{defn}
\begin{lem}
The map   \[   \begin{tikzpicture}[xscale=2.2,yscale=-0.5]     \node (A0_0) at (0, 0) {$\tilde K_+$};     \node (A0_1) at (1, 0) {$K$};     \node (A1_0) at (0, 1) {$e_{m,n}$};     \node (A1_1) at (1, 1) {$v_{m,n}$};     \path (A0_0) edge [->] node [auto] {$\scriptstyle{}$} (A0_1);     \path (A1_0) edge [|->,gray] node [auto] {$\scriptstyle{}$} (A1_1);   \end{tikzpicture}   \] 
is the associated group of $\tilde{K}_{+}$ and $K_{+}$ is its associated
integral monoid. In particular we have a $2$-cartesian diagram   \[   \begin{tikzpicture}[xscale=2.6,yscale=-1.5]     \node (A0_0) at (0, 0) {$\Spec \Z[K]$};     \node (A0_1) at (1, 0) {$\Spec \Z[K_+]$};     \node (A0_2) at (2, 0) {$\Spec R_M$};     \node (A1_0) at (0, 1) {$\Bi \Di{M}$};     \node (A1_1) at (1, 1) {$\stZ_M$};     \node (A1_2) at (2, 1) {$\MCov$};     \path (A0_0) edge [->] node [auto] {$\scriptstyle{}$} (A0_1);     \path (A0_1) edge [->] node [auto] {$\scriptstyle{}$} (A0_2);     \path (A1_0) edge [->] node [auto] {$\scriptstyle{}$} (A1_1);     \path (A0_2) edge [->] node [auto] {$\scriptstyle{}$} (A1_2);     \path (A1_1) edge [->] node [auto] {$\scriptstyle{}$} (A1_2);     \path (A0_0) edge [->] node [auto] {$\scriptstyle{}$} (A1_0);     \path (A0_1) edge [->] node [auto] {$\scriptstyle{}$} (A1_1);   \end{tikzpicture}   \] \end{lem}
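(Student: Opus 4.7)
The statement has a monoid-theoretic part — that $e_{m,n}\mapsto v_{m,n}$ exhibits $K$ as the associated group of $\tilde K_+$ and $K_+$ as its associated integral monoid — and a geometric deduction of the $2$-cartesian diagram. I will prove the monoid part first, then derive the diagram by applying Proposition~\ref{cor:the domain monoid and group monoid associated to T +: stack} and using the isomorphism~\eqref{eq:MCov isomorphic to Xphi}.

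Well-definedness of $\tilde K_+\to K$ is a direct check against the three defining relations: symmetry holds by inspection, $v_{m,0}=e_m+e_0-e_m=0$ in $\Z^M/\langle e_0\rangle$, and $v_{m,n}+v_{m+n,t}=e_m+e_n+e_t-e_{m+n+t}$ is manifestly symmetric in $m,n,t$. Extending to associated groups gives $\alpha\colon\tilde K\to K$, which is surjective since any element of $K$ can be reduced to $0$ by repeatedly replacing $e_m+e_n$ with $e_{m+n}+v_{m,n}$.

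Injectivity of $\alpha$ is the main technical point; I will establish it via Yoneda by showing that $\Hom(K,H)\to\Hom(\tilde K,H)$ (precomposition with $\tilde K_+\to K$) is a bijection for every abelian group $H$. By construction $\Hom(\tilde K,H)$ is the group $Z^2_{\mathrm{sym,norm}}(M,H)$ of normalized symmetric $2$-cocycles, and the map $\Hom(K,H)\to Z^2_{\mathrm{sym,norm}}(M,H)$, $f\mapsto((m,n)\mapsto f(v_{m,n}))$, is injective since the $v_{m,n}$ generate $K$. For surjectivity, given a cocycle $c$ I form the abelian extension $E=H\times M$ with twisted addition $(h,m)+(h',m')=(h+h'+c(m,m'),m+m')$ and consider the group homomorphism $r\colon\Z^M/\langle e_0\rangle\to E$ determined on the free basis by $r(e_m)=(0,m)$; a direct computation in $E$ yields $r(v_{m,n})=(c(m,n),0)\in H$, so $r|_K\colon K\to H$ is the required lift of $c$. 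Hence $\alpha$ is an isomorphism, and the integral-monoid statement follows immediately: $\tilde K_+^{int}$, being by definition the image of $\tilde K_+$ in $\tilde K\simeq K$, is the submonoid $K_+=\langle v_{m,n}\rangle_{\N}$.

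The cartesian diagram is then formal: Proposition~\ref{cor:the domain monoid and group monoid associated to T +: stack} applied to $\phi_M$ gives $\stB_{\phi_M}\simeq[\Spec\Z[K]/\Di{\Z^M/\langle e_0\rangle}]$ as an open substack and $\stZ_{\phi_M}\simeq[\Spec\Z[K_+]/\Di{\Z^M/\langle e_0\rangle}]$ as the main irreducible component of $\stX_{\phi_M}$. Under $\MCov\simeq\stX_{\phi_M}$ the substack $\stB_{\phi_M}$ corresponds to objects for which all $\psi_{m,n}$ are invertible, which by Proposition~\ref{pro:equivalent conditions for a D(M)-torsor} is precisely $\Bi\Di M$; passing to schematic closures identifies $\stZ_M$ with $\stZ_{\phi_M}$, and the two squares become $2$-cartesian. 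The only nontrivial step is the cocycle-lifting construction of $r$; everything else is either a direct verification of relations or a formal consequence of the general theory already established.
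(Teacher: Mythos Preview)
Your proof is correct, but the route you take for the monoid-theoretic core differs from the paper's. The paper argues at the level of rings: it observes that $(R_M)_x=\Z[\tilde K_+][x_{m,n}^{-1}]$ is the group algebra of the associated group of $\tilde K_+$, and constructs an explicit inverse to $(R_M)_x\to\Z[K]$ by using the universal $M$-graded algebra $S_M=\bigoplus_m R_M w_m$ with $w_m w_n=x_{m,n}w_{m+n}$. After inverting $x=\prod x_{m,n}$ this becomes a $\Di M$-torsor, so every $w_m$ is a unit, and $e_m\mapsto w_m$ defines a group homomorphism $\Z^M/\langle e_0\rangle\to (S_M)_x^*$ which, restricted to $K$, lands in the degree-zero part $(R_M)_x$ and sends $v_{m,n}\mapsto x_{m,n}$; this is the inverse. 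Your argument instead interprets $\Hom(\tilde K,H)$ as symmetric normalized $2$-cocycles and lifts a cocycle to $K$ via the central extension it classifies, then concludes by Yoneda. The two constructions are cousins---both produce a homomorphism out of $\Z^M/\langle e_0\rangle$ by sending $e_m$ to an element lying over $m$ and restrict to $K$---but the paper works with the single universal object $(S_M)_x$, while you vary the target $H$. The paper's version is shorter and ties in directly with the surrounding description of $\MCov$ as $[\Spec R_M/\shT]$; yours gives the cleaner conceptual statement that $K$ is universal for symmetric $2$-cocycles on $M$, which is worth knowing in its own right. For the cartesian diagram your deduction from Proposition~\ref{cor:the domain monoid and group monoid associated to T +: stack} and the identification $\stB_{\phi_M}\simeq\Bi\Di M$ is the same as the paper's.
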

\begin{proof}
Set $x=\prod_{m,n}x_{m,n}$. Since an object $\psi\in\Spec R_{M}(U)$
is a torsor if and only if $\psi_{m,n}\in\odi U^{*}$ for all $m,n$,
it follows that $(\Spec R_{M})_{x}=\Bi\Di M\times_{\MCov}\Spec R_{M}$.
We want to define an inverse to $(R_{M})_{x}\arr\Z[K]$. Consider
the algebra $S_{M}$ over $R_{M}$ induced by the atlas map $\Spec R_{M}\arr\MCov$,
i.e. 
\[
S_{M}=\bigoplus_{m\in M}R_{M}w_{m}\text{ with }w_{0}=1\comma w_{m}w_{n}=x_{m,n}w_{m+n}
\]
The algebra $(S_{M})_{x}$ is a $\Di M$-torsor over $(R_{M})_{x}$
and so $w_{m}\in(S_{M})_{x}^{*}$ for all $m$. In particular we can
define a group homomorphism   \[   \begin{tikzpicture}[xscale=2.6,yscale=-0.7]     \node (A0_0) at (0, 0) {$\Z^M/\langle e_0 \rangle$};     \node (A0_1) at (1, 0) {$(S_M)_x^*$};     \node (A1_0) at (0, 1) {$e_m$};     \node (A1_1) at (1, 1) {$w_m$};     \path (A0_0) edge [->] node [auto] {$\scriptstyle{}$} (A0_1);     \path (A1_0) edge [|->,gray] node [auto] {$\scriptstyle{}$} (A1_1);   \end{tikzpicture}   \] 
which restricts to a map $K\arr(R_{M})_{x}$ that sends $v_{m,n}$
to $x_{m,n}$. In particular the map $\tilde{K}_{+}\arr K$ defined
in the statement gives the associated group of $\tilde{K}_{+}$ and
has as image exactly $K_{+}$, which means that $K_{+}$ is the integral
monoid associated to $\tilde{K}_{+}$.

In order to conclude the proof it is enough to apply \ref{lem:the domain monoid and group monoid associated to T +: ring}
and \ref{cor:the domain monoid and group monoid associated to T +: stack}.\end{proof}
\begin{cor}
\label{cor:MCov as global quotient}The isomorphism $\stX_{\phi_{M}}\simeq\MCov$
(\ref{eq:MCov isomorphic to Xphi}) induces isomorphisms $\stB_{\phi_{M}}\simeq\Bi\Di M$
and $\stZ_{\phi_{M}}\simeq\stZ_{M}$. In particular $\stZ_{M}$ is
an irreducible component of $\MCov$ and
\[
\Bi\Di M\simeq[\Spec\Z[K]/\Di{\Z^{M}/\langle e_{0}\rangle}]\text{ and }\stZ_{M}\simeq[\Spec\Z[K_{+}]/\Di{\Z^{M}/\langle e_{0}\rangle}]
\]

\end{cor}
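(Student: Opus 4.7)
The plan is essentially to assemble the pieces already established and invoke Proposition \ref{cor:the domain monoid and group monoid associated to T +: stack} applied to the monoid map $\phi_M \colon \tilde K_+ \to \Z^M/\langle e_0\rangle$. The preceding lemma identifies the associated group of $\tilde K_+$ with $K$ and the associated integral monoid with $K_+$, so Proposition \ref{cor:the domain monoid and group monoid associated to T +: stack} yields directly that
\[
\stB_{\phi_M} \simeq [\Spec\Z[K]/\Di{\Z^M/\langle e_0\rangle}] \quad\text{and}\quad \stZ_{\phi_M} \simeq [\Spec\Z[K_+]/\Di{\Z^M/\langle e_0\rangle}],
\]
as an open and a closed substack of $\stX_{\phi_M}$ respectively, with $\stZ_{\phi_M}$ an irreducible component.

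What remains is to transport these identifications across the isomorphism $\Lambda \colon \stX_{\phi_M} \arrdi{\simeq} \MCov$ of (\ref{eq:MCov isomorphic to Xphi}), checking that $\Lambda(\stB_{\phi_M}) = \Bi\Di M$ and $\Lambda(\stZ_{\phi_M}) = \stZ_M$. First I would verify the torsor case: an object of $\stB_{\phi_M}$ over $U$ is one whose associated map $\tilde K_+ \to \ISym\underline\shL$ factors through $K$, i.e.\ for which each section $\psi(e_{m,n})$ is a trivialization of $\underline\shL^{v_{m,n}}$. Under $\Lambda$ this translates, via the description before (\ref{eq:condition on psi}), into the statement that every multiplication map $\psi_{m,n}\colon\shL_m\otimes\shL_n\to\shL_{m+n}$ is an isomorphism, which by Proposition \ref{pro:equivalent conditions for a D(M)-torsor} is precisely the condition defining $\Bi\Di M \subseteq \MCov$. (This is also visible from the $2$-cartesian square in the preceding lemma, whose top-left corner $\Spec\Z[K]$ simultaneously pulls back both open substacks from the atlas $\Spec R_M$.)

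For the main irreducible component, since $\stZ_M$ is by definition the schematic closure of $\Bi\Di M$ in $\MCov$, and schematic closure is stable under isomorphisms of algebraic stacks, the identification $\Lambda(\stB_{\phi_M}) = \Bi\Di M$ together with the fact that $\stZ_{\phi_M}$ is the schematic closure of $\stB_{\phi_M}$ in $\stX_{\phi_M}$ (Proposition \ref{cor:the domain monoid and group monoid associated to T +: stack}) force $\Lambda(\stZ_{\phi_M}) = \stZ_M$. In particular $\stZ_M$ is an irreducible component of $\MCov$ because $\stZ_{\phi_M}$ is an irreducible component of $\stX_{\phi_M}$.

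There is no real obstacle here, as all the hard work was done in the preceding lemma (computing the associated group and integral monoid of $\tilde K_+$ via the universal $\Di M$-torsor $S_M$ over $(R_M)_x$) and in Proposition \ref{cor:the domain monoid and group monoid associated to T +: stack} (relating associated monoids to open/closed substacks of $\stX_\phi$). The mildly delicate point, if any, is making explicit that the open substack of $\stX_{\phi_M}$ where the structure map $\tilde K_+ \to \ISym\underline\shL$ extends to $K$ coincides with the locus where each $\psi(e_{m,n})$ is invertible — but this is immediate from the fact that the $v_{m,n}$ generate $K_+$ and hence $K$ as a group.
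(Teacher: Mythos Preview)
Your argument is correct and follows the same route as the paper, which treats this as an immediate consequence of the preceding lemma and Proposition~\ref{cor:the domain monoid and group monoid associated to T +: stack} without giving a separate proof. One small point of care: Proposition~\ref{cor:the domain monoid and group monoid associated to T +: stack} describes $\stZ_{\phi_M}$ as the \emph{reduced} closed substack on the closure of $\stB_{\phi_M}$, not literally as the schematic closure; your schematic-closure transport argument still goes through because $\Z[K_+]$ is a domain (being a submonoid algebra of $\Z[K]$), so the schematic closure of $\Spec\Z[K]$ in $\Spec R_M$ is already reduced and equals $\Spec\Z[K_+]$.
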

Note that the induced map $\phi_{M}\colon K\arr\Z^{M}/\langle e_{0}\rangle$
is just the inclusion and so it is injective. This means that any
result obtained in section \ref{sec:stack Xphi} applies naturally
in the context of $\Di M$-covers. In particular now we show how we
can describe the objects of $\stF_{\underline{\E}}$, for a sequence
of rays in $\duale{\tilde{K}}_{+}$, in a simpler way.
\begin{prop}
\label{pro:stack of reduced data for M-covers}Let $M\simeq\prod_{i=1}^{n}\Z/l_{i}\Z$
be a decomposition and let $m_{1},\dots,m_{n}$ be the associated
generators. Given $\underline{\E}=\E^{1},\dots,\E^{r}\in\duale K_{+}$
define $\stF_{\underline{\E}}^{\textup{red}}$ as the stack whose
objects over a scheme $X$ are sequences $\underline{\shL}=\shL_{1},\dots,\shL_{n},\underline{\shM}=\shM_{1},\dots,\shM_{r},\underline{z}=z_{1},\dots,z_{r},\underline{\mu}=\mu_{1},\dots,\mu_{n}$
where $\underline{\shL}\comma\underline{\shM}$ are invertible sheaves
over $X$, $z_{i}\in\shM_{i}$ and $\underline{\mu}$ are isomorphisms
\[
\mu_{i}\colon\shL_{i}^{-l_{i}}\arrdi{\simeq}\underline{\shM}^{\underline{\E}(l_{i}e_{m_{i}})}=\shM_{1}^{\E^{1}(l_{i}e_{m_{i}})}\otimes\cdots\otimes\shM_{r}^{\E^{r}(l_{i}e_{m_{i}})}
\]

Then we have an isomorphism of stacks   \[   \begin{tikzpicture}[xscale=4.7,yscale=-0.6]     \node (A0_0) at (0, 0) {$\stF_{\underline{\E}}$};     \node (A0_1) at (1, 0) {$\stF_{\underline{\E}}^{\textup{red}}$};     \node (A1_0) at (0, 1) {$(\underline \shL,\underline \shM, \underline z,\lambda)$};     \node (A1_1) at (1, 1) {$( (\shL_{m_i})_{i=1,\dots,n},\underline \shM, \underline z,(\lambda(l_ie_{m_i}))_{i=1,\dots,n})$};     \path (A0_0) edge [->]node [auto] {$\scriptstyle{}$} (A0_1);     \path (A1_0) edge [|->,gray]node [auto] {$\scriptstyle{}$} (A1_1);   \end{tikzpicture}   \] \end{prop}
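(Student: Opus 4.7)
The plan is to deduce this from the general reduction procedure of Proposition \ref{pro:isomorphism with the stack of reduced data}. We may assume without loss of generality that each $l_i \ge 2$. Take the sublattice $V = \bigoplus_{i=1}^n \Z\, e_{m_i} \subseteq \Z^M/\langle e_0\rangle$ with basis $v_i = e_{m_i}$, and define the retraction $\sigma \colon \Z^M/\langle e_0\rangle \to V$ by $\sigma(e_m) = \sum_{i=1}^n a_i(m)\, e_{m_i}$, where $m = \sum_i a_i(m)\, m_i$ is the unique expression in $M$ with $0 \le a_i(m) < l_i$. If $\pi \colon \Z^M/\langle e_0\rangle \twoheadrightarrow M$ denotes the projection (whose kernel is $K$), then $\pi \circ \sigma = \pi$, equivalently $(\id - \sigma)(\Z^M/\langle e_0\rangle) \subseteq K$, which is the hypothesis needed in \ref{def:stack of reduced data}.

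Next I compute the sublattice $W = \langle (\id - \sigma)V,\, \sigma K\rangle \subseteq V$. Since $\sigma(e_{m_i}) = e_{m_i}$, the map $\sigma$ restricts to the identity on $V$, hence $(\id - \sigma)V = 0$. On the other hand, for $v \in K$ one has $\pi(\sigma(v)) = \pi(v) = 0$, so writing $\sigma(v) = \sum b_i e_{m_i}$ forces $\sum b_i m_i = 0$ in $M$ and therefore $l_i \mid b_i$ for each $i$; hence $\sigma K \subseteq \bigoplus_i \Z\, l_i e_{m_i}$. The reverse inclusion follows from the direct computation $\sigma(v_{(l_i - 1)m_i,\, m_i}) = (l_i-1)e_{m_i} + e_{m_i} = l_i e_{m_i}$. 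Thus $W = \bigoplus_i \Z\, l_i e_{m_i}$, with canonical basis $w_i = l_i e_{m_i}$ and $q = n$.

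Proposition \ref{pro:isomorphism with the stack of reduced data} then furnishes an equivalence $\stF_{\underline{\E}} \simeq \stF_{\underline{\E}}^{\textup{red},\sigma}$. Unwinding Definition \ref{def:stack of reduced data} with these choices, an object of the reduced stack consists of invertible sheaves $\shN_1, \dots, \shN_n$ indexed by the $v_i$, invertible sheaves $\underline{\shM}$ and sections $\underline{z}$ as in $\stF_{\underline{\E}}$, together with an additive family of isomorphisms $\underline{\shN}^{-w} \simeq \underline{\shM}^{\underline{\E}(w)}$ indexed by $w \in W$. Since $W$ is free on $w_1, \dots, w_n$ and $\underline{\shN}^{-w_i} = \shN_i^{-l_i}$, such a family is equivalent to the datum of $n$ isomorphisms $\mu_i \colon \shN_i^{-l_i} \simeq \underline{\shM}^{\underline{\E}(l_i e_{m_i})}$, which is precisely the description of $\stF_{\underline{\E}}^{\textup{red}}$ (setting $\shL_i = \shN_i$). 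The explicit functor $(\underline{\shL}, \underline{\shM}, \underline{z}, \lambda) \mapsto ((\shL_{m_i})_i, \underline{\shM}, \underline{z}, (\lambda(l_i e_{m_i}))_i)$ is then read off from the description of the equivalence in \ref{pro:isomorphism with the stack of reduced data}.

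The main (and essentially only) obstacle is the bookkeeping: one must track the sign and labeling conventions in the isomorphism $\MCov \simeq \stX_{\phi_M}$ and in \ref{pro:isomorphism with the stack of reduced data} and verify that they produce exactly the functor displayed in the statement. Once $V$ and $\sigma$ are chosen as above, the identification $W = \bigoplus_i \Z\, l_i e_{m_i}$ is immediate and all remaining content is formal.
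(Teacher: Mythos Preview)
Your proof is correct and follows essentially the same route as the paper: you choose the same $V=\bigoplus_i \Z e_{m_i}$ and the same retraction $\sigma$ (the paper writes your coefficients $a_i(m)$ as $\delta^i_m$), identify $W=\bigoplus_i l_i\Z e_{m_i}$, and then invoke Proposition~\ref{pro:isomorphism with the stack of reduced data}. The only cosmetic difference is that you verify $\sigma K\subseteq\bigoplus_i l_i\Z e_{m_i}$ via the abstract observation $\pi\circ\sigma=\pi$, whereas the paper checks it on the generators $v_{m,n}$ of $K_+$ by noting $\delta^i_{m,n}\in\{0,l_i\}$.
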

\begin{proof}
We want to find $\sigma\comma V\comma v_{1},\dots,v_{q}$ as in \ref{def:stack of reduced data}
such that $\stF_{\underline{\E}}^{\textup{red},\sigma}=\stF_{\underline{\E}}^{\textup{red}}$
and that the map in the statement coincides with the one defined in
\ref{pro:isomorphism with the stack of reduced data}. Set $\delta^{i}\colon M\arr\{0,\dots,l_{i}-1\}$
as the map such that $\pi_{i}(m)=\pi_{i}(\delta_{m}^{i}m_{i})$, where
$\pi_{i}\colon M\arr\Z/l_{i}\Z$ is the projection, and think of it
also as a map $\delta^{i}\colon\Z^{M}/\langle e_{0}\rangle\arr\Z$.
Set $V=\bigoplus_{i=1}^{n}\Z e_{m_{i}}$, $v_{i}=e_{m_{i}}$ and $\sigma\colon\Z^{M}/\langle e_{0}\rangle\arr V$
as $\sigma(e_{m})=\sum_{i=1}^{n}\delta_{m}^{i}v_{i}$. Clearly $(\id-\sigma)\Z^{M}/\langle e_{0}\rangle\subseteq K$
and $(\id-\sigma)V=0$. So $W=\sigma K$. We have
\[
\sigma(v_{m,n})=\sum_{i=1}^{n}\delta_{m,n}^{i}v_{i}\in\bigoplus_{i=1}^{n}l_{i}\Z v_{i}
\]
since $\delta_{m,n}^{i}\in\{0,l_{i}\}$ for all $i$. On the other
hand $\sigma(v_{(l_{i}-1)m_{i},m_{i}})=l_{i}v_{i}$. Therefore we
have $W=\bigoplus_{i=1}^{n}l_{i}\Z v_{i}$. It is now easy to check
that all the definitions agree.
\end{proof}
We now want to express the relation between $\MCov$ and the equivariant
Hilbert scheme, that can be defined as follows. Given $\underline{m}=m_{1},\dots,m_{r}\in M$,
so that $\Di M$ acts on $\A_{\Z}^{r}=\Spec\Z[x_{1},\dots,x_{r}]$
with graduation $\deg x_{i}=m_{i}$, we define $\MHilb^{\underline{m}}\colon\Sch^{\textup{op}}\arr\set$
as the functor that associates to a scheme $Y$ the set of pairs $(X\arrdi fY,j)$
where $X\in\MCov(Y)$ and $j\colon X\arr\A_{Y}^{r}$ is an equivariant
closed immersion over $Y$. Such a pair can be also thought of as
a coherent sheaf of algebras $\alA\in\MCov(Y)$ together with a graded
surjective map $\odi Y[x_{1},\dots,x_{r}]\arr\alA$. This functor
is proved to be a scheme of finite type in \cite{Haiman2002}.
\begin{prop}
\label{pro:MHlb --> MCov has irreducible fibers}Let $\underline{m}=m_{1},\dots,m_{r}\in M$.
The forgetful map $\vartheta_{\underline{m}}\colon\MHilb^{\underline{m}}\arr\MCov$
is a smooth Zariski epimorphism onto the open substack $\MCov^{\underline{m}}$
of $\MCov$ of sheaves of algebras $\alA$ such that, for all $y\in Y$,
$\alA\otimes k(y)$ is generated in the degrees $m_{1},\dots,m_{r}$
as a $k(y)$-algebra. Moreover $\MHilb^{\underline{m}}$ is an open
subscheme of a vector bundle over $\MCov^{\underline{m}}$.\end{prop}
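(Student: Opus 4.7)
The plan is to exhibit $\MHilb^{\underline{m}}$ as an open subscheme of the total space of the locally free sheaf $\shE = \bigoplus_{i=1}^{r}\shL_{m_i}$ on $\MCov$ (where $\shL_m$ denotes the universal $m$-graded piece of the universal algebra $\alA$). Over an $S$-scheme $Y$ mapping to $\MCov$ via $\alA = \bigoplus_{m\in M}\shL_m$, a section of $\WW(\shE)$ is a tuple $\underline{s}=(s_1,\dots,s_r)$ with $s_i\in\shL_{m_i}$; such a tuple determines a unique graded $\odi Y$-algebra homomorphism $\varphi_{\underline{s}}\colon\odi Y[x_1,\dots,x_r]\arr\alA$ by $x_i\longmapsto s_i$, and conversely any graded algebra map arises this way. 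So $\MHilb^{\underline{m}}$ sits inside $\WW(\shE)$ as the subfunctor where $\varphi_{\underline{s}}$ is surjective.

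First I would check that the surjectivity locus is open in $\WW(\shE)$. Since $\alA$ is locally free of finite rank, the image of $\varphi_{\underline{s}}$ is generated by the images of $\Sym^{\leq N}(\shL_{m_1}\oplus\cdots\oplus\shL_{m_r})\arr\alA$ for some $N$ bounded by $\rk\alA$; in particular $\Coker\varphi_{\underline{s}}$ is a finitely presented quasi-coherent sheaf, and its vanishing locus is open by Nakayama. Applying this to the universal tuple over $\WW(\shE)$ gives the open immersion $\MHilb^{\underline{m}}\hookrightarrow\WW(\shE)$. The image of $\vartheta_{\underline{m}}$ is then exactly the points of $\MCov$ over which there fiberwise exist generating sections $\underline{s}$, which is by definition $\MCov^{\underline{m}}$. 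The same openness argument applied to $\Coker(\Sym^{\leq N}(\shE)\arr\alA)$ on $\MCov$ itself shows $\MCov^{\underline{m}}$ is open in $\MCov$.

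For smoothness and the Zariski-epimorphism property: $\WW(\shE)\arr\MCov$ is a vector bundle, hence smooth, so the open subscheme $\MHilb^{\underline{m}}\arr\MCov^{\underline{m}}$ is smooth. To see that $\vartheta_{\underline{m}}$ is a Zariski epimorphism onto $\MCov^{\underline{m}}$, take $\alA\in\MCov^{\underline{m}}(Y)$ and a point $y\in Y$. Zariski-locally around $y$ we may trivialize each $\shL_{m_i}$; lifting a system of algebra generators of $\alA\otimes k(y)$ in the given degrees to sections near $y$ and using Nakayama (applicable because $\alA$ is finitely generated as an $\odi Y$-module), we obtain generating sections on an open neighborhood, giving a section of $\vartheta_{\underline{m}}$ there.

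The main technical point is the interplay between fiberwise generation and openness: one must be careful that "generated in degrees $m_1,\dots,m_r$" for a graded algebra whose underlying module is locally free of fixed rank reduces to the vanishing of a single finitely presented cokernel (after bounding the degree of monomials needed by $\rk\alA$). Once this is in place, the result is essentially formal from the standard fact that total spaces of vector bundles are smooth and that the surjectivity locus of a map to a finitely presented sheaf is open.
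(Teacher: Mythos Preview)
Your proposal is correct and follows essentially the same approach as the paper: both identify the fiber of $\vartheta_{\underline{m}}$ over $\alA$ with graded surjections $\odi Y[x_1,\dots,x_r]\arr\alA$, hence with tuples $(s_1,\dots,s_r)\in\bigoplus_i\alA_{m_i}$ for which the induced algebra map is surjective, and then observe that this is the open locus of surjectivity inside the vector bundle $\WW(\bigoplus_i\alA_{m_i})\simeq\Spec\Sym(\bigoplus_i\alA_{m_i}^{-1})$. Your treatment of the Zariski-epimorphism claim (lifting fiberwise generators via Nakayama) is slightly more explicit than the paper's, which simply asserts it is easy to check.
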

\begin{proof}
Let $\alA=\oplus_{m\in M}\alA_{m}\in\MCov$ and consider the map 
\[
\eta_{\alA}\colon\Sym(\alA_{m_{1}}\oplus\cdots\oplus\alA_{m_{r}})\arr\alA
\]
induced by the direct sum of the inclusions $\alA_{m_{i}}\arr\alA$.
It is easy to check that $\eta_{\alA}$ is surjective if and only
if $\alA\in\MCov^{\underline{m}}$. Therefore $\MCov^{\underline{m}}$
is an open substack of $\MCov$ and clearly contains the image of
$\vartheta_{\underline{m}}$. Consider now the cartesian diagram   \[   \begin{tikzpicture}[xscale=2.0,yscale=-1.2]     \node (A0_0) at (0, 0) {$F$};     \node (A0_1) at (1, 0) {$\MHilb^{\underline m}$};     \node (A1_0) at (0, 1) {$T$};     \node (A1_1) at (1, 1) {$\MCov^{\underline m}$};     \path (A0_0) edge [->] node [auto] {$\scriptstyle{}$} (A0_1);     \path (A1_0) edge [->] node [auto] {$\scriptstyle{\alA}$} (A1_1);     \path (A0_1) edge [->] node [auto] {$\scriptstyle{\vartheta_{\underline m}}$} (A1_1);     \path (A0_0) edge [->] node [auto] {$\scriptstyle{}$} (A1_0);   \end{tikzpicture}   \] and
let $U\arrdi{\phi}T$ be a map. The objects of $F(U)$ are pairs composed
by a graded surjection $\odi U[x_{1},\dots,x_{r}]\arr\alB$ and an
isomorphism $\alB\simeq\phi^{*}\alA$. This is equivalent to giving
a graded surjection $\odi U[x_{1},\dots,x_{r}]\arr\phi^{*}\alA$.
In this way we obtain a map
\[
F\arrdi{g_{T}}\prod_{i}\Homsh_{T}(\odi T,\alA_{m_{i}})\simeq\Spec\Sym(\bigoplus_{i}\alA_{m_{i}}^{-1})
\]
 We claim that this is an open immersion. Indeed given $(a_{i})_{i}\colon U\arr\prod_{i}\Homsh_{T}(\odi T,\alA_{m_{i}})$,
the fiber product with $F$ is the locus where the induced graded
map $\odi U[x_{1},\dots,x_{r}]\arr\alA\otimes\odi U$ is surjective,
that is an open subscheme of $U$. In particular $F$ is smooth over
$T$ and so $\vartheta_{\underline{m}}$ is smooth too. It is easy
to check that it is also a Zariski epimorphism. Finally the vector
bundle $\shN$ of the statement is defined over any $U\arr\MCov^{\underline{m}}$
given by $\alA=\bigoplus_{m}\alA_{m}$ by $\shN_{|U}=\oplus_{i}\alA_{m_{i}}^{-1}$.\end{proof}
\begin{rem}
\label{rem:relation MHilb MCov}If the sequence $\underline{m}$ contains
all elements of $M-\{0\}$, then $\MCov^{\underline{m}}=\MCov$. Therefore
in this case $\MHilb^{\underline{m}}$ is an atlas for $\MCov$.
\end{rem}

\begin{rem}
\label{rem: irreducibility and connectedness preserved by particular fppf epimorphism}
Let $X$ be a scheme, $\stX$ be an irreducible (resp. connected)
algebraic stack and $X\arrdi{\pi}\stX$ be a fppf epimorphism such
that the fiber over the generic point of $\stX$ is irreducible (resp.
such that $\pi$ is geometrically connected). Then $X$ is irreducible
(resp. connected). For the connectedness, if $X=U\cup V$, since $\pi$
is open and $|\stX|=|\pi(U)|\cup|\pi(V)|$, we have $\pi(U)\cap\pi(V)\neq\emptyset$.
In particular $U$ and $V$ meet a common fiber $Z$ of $\pi$. Since
$Z$ is connected we can conclude that $Z\cap U\cap V\neq\emptyset$.
For the irreducibility, consider a generic point $\xi\colon\Spec k\arr\stX$,
with $k$ algebraically closed, and denote by $Z$ the (topological)
image of $\Spec k\times_{\stX}X\arr X$. Note that $Z$ does not depend
on the choice of the generic point and it is irreducible by hypothesis.
If $V\subseteq X$ is a non-empty open subset of $X$, since $\pi$
is an open map, we can conclude that $V\cap Z\neq\emptyset$. Therefore
$Z$ is dense in $X$ and $X$ is irreducible.
\end{rem}

\begin{rem}
\label{rem:relation Mhilbm MCovm}The map $\vartheta_{\underline{m}}\colon\MHilb^{\underline{m}}\arr\MCov^{\underline{m}}$
of \ref{pro:MHlb --> MCov has irreducible fibers} is a smooth epimorphism
with geometrically connected and irreducible fibers. In particular,
taking into account \ref{rem: irreducibility and connectedness preserved by particular fppf epimorphism},
if $\stX$ is an algebraic stack, $\stX\arr\MCov^{\underline{m}}$
is a map and we denote by $\vartheta_{\underline{m}}^{-1}(\stX)$
the base change of $\vartheta_{\underline{m}}$ we have that: $\stX$
is connected (resp. geometrically connected, irreducible, geometrically
irreducible, smooth, reduced) if and only if $\vartheta_{\underline{m}}^{-1}(\stX)$
has the same property. The same conclusions hold if we consider the
atlas $\Spec R_{M}\arr\MCov$ instead of $\vartheta_{\underline{m}}$.

In particular, since $\Bi\Di M\subseteq\MCov^{\underline{m}}$, we
can conclude that $\vartheta_{\underline{m}}^{-1}(\stZ_{M})$ is the
main irreducible component of $\MHilb^{\underline{m}}$.
\end{rem}

We want now study some geometrical properties of the stack $\MCov$
and, therefore, of the equivariant Hilbert schemes.
\begin{rem}
\label{rem:reduced relations for RM}The ring $R_{M}$ can be written
as quotient of the ring $\Z[x_{m,n}]_{(m,n)\in J}$, where $J$ is
$\{(m,n)\in M^{2}\st m,n,m+n\neq0\}$ divided by the equivalence relation
$(m,n)\sim(n,m)$, by the ideal
\[
I=\left(\begin{array}{c}
x_{m,n}x_{m+n,t}-x_{n,t}x_{n+t,m}\text{ with }m,n,t,m+n,n+t,m+n+t\neq0\text{ and }m\neq t,\\
x_{-m,t}x_{-m+t,m}-x_{-m,s}x_{-m+s,m}\text{ with }m,s,t\neq0\text{ and distinct}
\end{array}\right)
\]
Indeed the first relations are trivial when one of $m,n,t$ is zero
or $m=t$, while if $m+n=0$ yield relations $x_{m,-m}=x_{-m,t}x_{-m+t,m}$.
Using these last relations we can remove all the variables $x_{m,n}$
with $0\in\{m,n,m+n\}$.
\end{rem}

\begin{rem}
\label{rem:N-graduation of RM}There exists a map $f\colon\tilde{K}_{+}\arr\N$
such that for any $m,n\neq0$ we have $f(e_{m,n})=1$ if $m+n\neq0$,
$f(e_{m,-m})=2$ otherwise. In particular $f(v)=0$ only if $v=0$.
Moreover $f$ induces an $\N$-graduation on both $(R_{M}\otimes A)$
and $\Z[K_{+}]\otimes A$, where $A$ is a ring, such that the degree
zero part is $A$ and that the elements $x_{m,n}$ with $m+n\neq0$
are homogeneous of degree $1$. The map $f$ is obtained as the composition
$\tilde{K}_{+}\arr K\subseteq\Z^{M}/\langle e_{0}\rangle\arrdi h\Z$,
where $h(e_{m})=1$ if $m\neq0$.
\end{rem}
One of the open problems in the theory of equivariant Hilbert schemes
is whether those schemes are connected. As said above $\MHilb^{\underline{m}}$
is connected if and only if $\MCov^{\underline{m}}$ is so. What we
can say here is:
\begin{thm}
\label{thm:Mcov geom connected}The stack $\MCov$ is connected with
geometrically connected fibers. If any non zero element of $M$ belongs
to the sequence $\underline{m}$, then $\MHilb^{\underline{m}}$ has
the same properties.\end{thm}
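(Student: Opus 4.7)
The plan is to produce an $\A^1$-deformation retraction of the atlas $\Spec R_M$ onto a single $\Z$-point, and then to transfer connectedness across the two smooth epimorphisms $\Spec R_M \to \MCov$ and $\MHilb^{\underline m} \to \MCov^{\underline m}$.

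For any ring $A$, the $\N$-grading on $R_M \otimes A$ described in Remark \ref{rem:N-graduation of RM} has degree zero part equal to $A$, because the map $f\colon\tilde K_+ \to \N$ vanishes only at $0$. Consequently the inclusion $A \hookrightarrow R_M \otimes A$ admits a ring-theoretic retraction given by the quotient modulo the irrelevant ideal, and the homomorphism
\[
R_M \otimes A \longrightarrow (R_M \otimes A)[t], \qquad r \in (R_M\otimes A)_n \longmapsto t^n r,
\]
yields a morphism $H\colon \A^1_A \times_A \Spec(R_M \otimes A) \to \Spec(R_M \otimes A)$ equal to the identity at $t=1$ and factoring through $\Spec A$ at $t=0$. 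This forces the map $\pi_0(\Spec A) \to \pi_0(\Spec(R_M \otimes A))$ induced by the section to be bijective.

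Specializing to $A=\Z$ gives that $\Spec R_M$ is connected, and specializing to $A$ an algebraically closed field gives that every geometric fibre of $\Spec R_M \to \Spec\Z$ is connected. Since the atlas $\Spec R_M \to \MCov$ is surjective on underlying topological spaces and compatible with base change, both properties descend to $\MCov$ and to its geometric fibres over $\Spec\Z$, which establishes the first assertion.

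For the statement on the equivariant Hilbert scheme, the hypothesis that $\underline m$ contains every non-zero element of $M$ yields $\MCov^{\underline m} = \MCov$ by Remark \ref{rem:relation MHilb MCov}. Proposition \ref{pro:MHlb --> MCov has irreducible fibers} together with the biconditional in Remark \ref{rem:relation Mhilbm MCovm} (which uses the geometric irreducibility, hence geometric connectedness, of the fibres of $\vartheta_{\underline m}$) then transfers connectedness and geometric connectedness from $\MCov$ to $\MHilb^{\underline m}$. The only step needing genuine verification beyond unpacking definitions is the identity $(R_M \otimes A)_0 = A$, which I expect to be the main but mild obstacle; it is forced by the property of $f$ quoted above.
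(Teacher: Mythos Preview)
Your proof is correct and takes essentially the same route as the paper: both arguments exploit the $\N$-grading on $R_M\otimes A$ from Remark~\ref{rem:N-graduation of RM} with degree-zero part $A$, and deduce connectedness from this. The paper phrases the conclusion as ``such an algebra has no nontrivial idempotents'', while you phrase it geometrically via the $\A^1$-contraction $r\mapsto t^{\deg r}r$; these are two standard and equivalent ways to extract connectedness from a positive grading.

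One small remark: you flag the identity $(R_M\otimes A)_0 = A$ as ``the main but mild obstacle'', but in fact Remark~\ref{rem:N-graduation of RM} states this explicitly (it says the degree zero part is $A$, as a consequence of $f(v)=0\Rightarrow v=0$). So there is nothing left to verify there; your argument is complete as written.
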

\begin{proof}
It is enough to prove that $\Spec R_{M}\otimes k$ is connected for
any field $k$. But $R_{M}\otimes k$ has an $\N$-graduation such
that $(R_{M}\otimes k)_{0}=k$ by \ref{rem:N-graduation of RM} and
it is a general fact that such an algebra does not contain non trivial
idempotents.
\end{proof}
We now want to discuss the problem of the reducibility of $\MCov$.
\begin{defn}
\label{def:universally reducible}Let $S$ be a scheme. An algebraic
stack $\stX$ is called \emph{universally reducible} over $S$ if,
for any base change $S'\arr S$, the stack $\stX\times_{S}S'$ is
reducible. An algebraic stack is universally reducible if it is so
over $\Z$.\end{defn}
\begin{rem}
\label{rem:universally reducible over fibers}It is easy to check
that $\stX$ is universally reducible over $S$ if and only if all
the fibers are reducible.\end{rem}
\begin{lem}
\label{lem:necessary condition MCov reducible}If there exist $m,n,t,a\in M$
such that
\begin{enumerate}
\item $m,n,t$ are distinct and not zero;
\item $a\neq0,m,n,t,m-n,n-m,n-t,t-n,m-t,2m-t,2n-t,m+n-t,m+n-2t$;
\item $2a\neq m+n-t$;
\end{enumerate}
then $\Spec R_{M}$ is universally reducible.\end{lem}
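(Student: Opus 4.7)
The plan is to show, for every field $k$, that $\Spec R_M\otimes k$ is reducible; by Remark \ref{rem:universally reducible over fibers} this implies universal reducibility. Concretely, I will exhibit an explicit algebra structure $\psi$ on the free $k$-module $k[M]$ --- i.e.\ a $k$-point of $\Spec R_M$ --- whose isomorphism class does not lie in the main component $\stZ_M\otimes k=[\Spec k[K_+]/\Di{\Z^M/\langle e_0\rangle}]$.

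By Proposition \ref{pro:characterization of points of Zphi}, every geometric point of $\stZ_M$ has the form $\psi_{p,q}=\lambda_{p,q}\cdot 0^{\phi(p)+\phi(q)-\phi(p+q)}$ for some character $\lambda$ and some subadditive $\phi\colon M\arr\N$ with $\phi(0)=0$ (the correspondence $\E\leftrightarrow\phi$ coming from $\E(e_p)=\phi(p)$, under which $\duale K_+$ is identified with the monoid of such $\phi$). Thus the vanishing pattern $V(\psi)=\{(p,q)\st p,q,p+q\neq 0,\ \psi_{p,q}=0\}$ of any point of $\stZ_M$ is of the form $\{(p,q)\st\phi(p)+\phi(q)>\phi(p+q)\}$. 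Hence the task reduces to producing an associative $\psi$ whose vanishing pattern is \emph{not} of this form.

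A natural choice is to set $\psi_{p,0}=\psi_{0,p}=1$ for all $p$, $\psi_{p,-p}=0$ for all $p\neq 0$, and among the remaining ``essential'' pairs to declare that $\psi_{p,q}$ is nonzero only on the pair $(m,n)$ (and its transpose) and the pair $(a,m+n-t-a)$ (and its transpose), where on these distinguished pairs $\psi$ takes values $1$ and a free parameter respectively. The associativity relations \eqref{eq:condition on psi} which could be spoiled by this assignment reduce to numerical identities whose non-obstruction is exactly what the hypotheses on $a$ encode: the exclusions $a\neq 0,m,n,t,\pm(m-n),\pm(n-t),m-t$ rule out the ``three-body'' overlaps between the pair $(m,n)$ and the pair $(a,m+n-t-a)$ under the triple rewrites $(p,q,r)\mapsto(q,r,p)$, while $a\neq 2m-t,2n-t,m+n-t,m+n-2t$ together with $2a\neq m+n-t$ rule out the ``two-body'' coincidences among $(a,m+n-t-a)$ and its own symmetries. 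The verification is a bookkeeping case analysis over triples $(p,q,r)\in M^3$; in each triple either both sides of \eqref{eq:condition on psi} are zero, or the two sides match tautologically.

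Once associativity is secured, the final step is to observe that the resulting vanishing pattern cannot be written as $\{(p,q)\st\phi(p)+\phi(q)>\phi(p+q)\}$ for any subadditive $\phi\colon M\arr\N$: the simultaneous nonvanishing of $\psi_{m,n}$ and of $\psi_{a,m+n-t-a}$ forces $\phi$ to be additive at both pairs, yielding $\phi(m+n)=\phi(m)+\phi(n)=\phi(a)+\phi(m+n-t-a)$, which combined with the vanishing of $\psi_{p,q}$ at the specific pairs discarded above becomes inconsistent with subadditivity; the condition $2a\neq m+n-t$ is what prevents the two distinguished pairs from collapsing onto each other and thus is essential for producing the contradiction. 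I expect the main obstacle to be the case analysis for associativity rather than the contradiction argument itself: each forbidden value of $a$ in the statement corresponds essentially to one bad triple $(p,q,r)$ that would otherwise make the construction non-associative.
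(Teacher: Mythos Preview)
Your overall strategy---producing a $k$-point of $\Spec R_M$ whose vanishing pattern is not of the form $\Supp\E$ for any $\E\in\duale K_+$---is a legitimate route to reducibility, but the specific construction and argument do not go through.

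The most concrete error is in the final paragraph: additivity at the pair $(a,m+n-t-a)$ yields $\phi(m+n-t)=\phi(a)+\phi(m+n-t-a)$, not $\phi(m+n)=\phi(a)+\phi(m+n-t-a)$, since $a+(m+n-t-a)=m+n-t$. So the two equalities you write down concern \emph{different} elements of $M$, and no contradiction with subadditivity is visible. Even after correcting this, you would still owe an argument that the system $\E_{m,n}=\E_{a,m+n-t-a}=0$ together with $\E_{p,q}>0$ for all other essential $(p,q)$ is infeasible; this is asserted but never shown.

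The associativity claim is also unsupported. You assert that conditions 2)--3) are precisely the obstructions to associativity of your $\psi$, but those conditions were designed for a different construction and do not match yours. For instance, taking $(p,q,r)=(m,m,n)$ in $\psi_{p,q}\psi_{p+q,r}=\psi_{q,r}\psi_{q+r,p}$, the left side vanishes while the right side equals $\psi_{m,n}\psi_{m+n,m}$; so you need $\psi_{m+n,m}=0$, which fails whenever $(m+n,m)$ coincides with your second distinguished pair or its transpose---and nothing in 1)--3) rules this out in general. More broadly, the element $t$ plays no structural role in your two chosen pairs, which already signals that the stated hypotheses cannot be ``exactly'' your associativity constraints.

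The paper's argument is entirely different and does not attempt to build an algebra at all. It works directly in the coordinate ring $R_M\otimes k$: one writes down two degree-$3$ monomials $\underline{x}^{\alpha}$ and $\underline{x}^{\beta}$, each a product of three variables $x_{u,v}$ whose index pairs give two distinct ways of factoring $v_m v_n v_t$, so that $\alpha=\beta$ in the integral monoid $K_+$ and hence $z=\underline{x}^{\alpha}-\underline{x}^{\beta}$ lies in the minimal prime $P=\Ker(R_M\otimes k\to k[K_+])$. Conditions 1)--2) guarantee that none of the six indices appearing in $\alpha$ or $\beta$ lies in $\{m,n,t\}$; this makes every $\N$-combination $a\alpha+b\beta$ ``non-transformable'' by the defining binomial relations of $R_M$, forcing $\underline{x}^{\alpha}$ and $\underline{x}^{\beta}$ to be algebraically independent, so $z$ is not nilpotent. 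Condition 3) is precisely what ensures $\alpha\neq\beta$ in $\N^J$. A non-nilpotent element in a minimal prime forces the existence of another minimal prime, hence reducibility.
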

\begin{proof}
Let $k$ be a field and $I=(\underline{x}^{\alpha_{i}}-\underline{x}^{\beta_{i}})$
be an ideal of $k[x_{1},\dots,x_{r}]=k[\underline{x}]$. We will say
that $\alpha\in\N^{r}$ is transformable (with respect to $I$) if
there exists $i$ such that $\alpha_{i}\leq\alpha$ or $\beta_{i}\leq\alpha$.
Here by $\alpha\leq\beta\in\N^{r}$ we mean $\alpha_{j}\leq\beta_{j}$
for all $j$. A direct computation shows that if $\underline{x}^{\alpha}-x^{\beta}\in I$
and $\alpha\neq\beta$, then both $\alpha$ and $\beta$ are transformable. 

We will use the above notation for the ideal $I$ defining $R_{M}\otimes k$
as in \ref{rem:reduced relations for RM}. In particular the elements
$\alpha_{i},\beta_{i}\in\N^{J}$ associated to the ideal $I$ are
of the form $e_{u,v}+e_{u+v,w}$ with $u,v,u+v,w,u+v+w\neq0$. 

Set $\mu=\prod_{m,n}x_{m,n}$. Since $R_{M}\otimes k\arr k[K_{+}]\subseteq k[K]=(R_{M}\otimes k)_{\mu}$,
there exists $N>0$ such that $P=\Ker(R_{M}\otimes k\arr k[K_{+}])=\Ann\mu^{N}$.
Our strategy will be to find an element of $P$ which is not nilpotent.
Since $P$ is a minimal prime, being $\Spec k[K_{+}]$ an irreducible
component of $\Spec R_{M}\otimes k$, it follows that $R_{M}\otimes k$
is reducible. Now consider $\alpha=e_{a,m-a}+e_{m+n-t-a,t+a-m}+e_{t+a-n,n-a}$,
$\beta=e_{m+n-t-a,t+a-n}+e_{a,n-a}+e_{m-a,t+a-m}\in\N^{J}$ and $z=\underline{x}^{\alpha}-\underline{x}^{\beta}$.
We will show that $\mu z=0$, i.e. $z\in P$ and that $z$ is not
nilpotent. First of all note that $z$ is well defined since for any
$e_{u,v}$ in $\alpha$ or $\beta$ we have $u,v\neq0$ and $0\neq u+v\in\{m,n,t\}$
thanks to $1)$, $2)$. Let $S_{M}$ be the universal algebra over
$R_{M}$, i.e. $S_{M}=\bigoplus_{m\in M}R_{M}v_{m}$ with $v_{m}v_{n}=x_{m,n}v_{m+n}$
and $v_{0}=1$. By construction we have
\begin{alignat*}{1}
(v_{a}v_{m-a})(v_{m+n-t-a}v_{t+a-m})(v_{t+a-n}v_{n-a}) & =\underline{x}^{\alpha}v_{m}v_{n}v_{t}=\\
(v_{m+n-t-a}v_{t+a-n})(v_{a}v_{n-a})(v_{m-a,t+a-m}) & =\underline{x}^{\beta}v_{m}v_{n}v_{t}
\end{alignat*}
So $\underline{x}^{\alpha}x_{m,n}x_{m+n,t}v_{m+n+t}=\underline{x}^{\beta}x_{m,n}x_{m+n,t}v_{m+n+t}$
and therefore $z\mu=0$, i.e. $z\in P$.

Now we want to prove that any linear combination $\gamma=a\alpha+b\beta\in\N^{J}$
with $a,b\in\N$ is not transformable. First remember that each $e_{u,v}$
in $\gamma$ is such that $u+v\in\{m,n,t\}$. If we have $e_{u,v}+e_{u+v,w}\leq\gamma$
then there must exist $e_{i,j}\leq\gamma$ such that $i\in\{m,n,t\}$
or $j\in\{m,n,t\}$. Condition $2)$ is exactly what we need to avoid
this situation and can be written as $\{a,m-a,m+n-t-a,t+a-m,t+a-n,n-a\}\cap\{m,n,t\}=\emptyset$.

In particular, if we think of $\tilde{K}_{+}$ as a quotient of $\N^{J}$,
we have $a\alpha+b\beta=a'\alpha+b'\beta$ in $\tilde{K}_{+}$ if
and only if they are equal in $\N^{J}$. Assume for a moment that
$\alpha\neq\beta$ in $\N^{J}$. Clearly this means that $\alpha$
and $\beta$ are $\Z$-independent in $\Z^{J}$. Since any linear
combination of $\alpha$ and $\beta$ is not transformable, it follows
that $\underline{x}^{\alpha},\underline{x}^{\beta}$ are algebraically
independent over $k$ in $R_{M}\otimes k$ and, in particular, that
$z=\underline{x}^{\alpha}-\underline{x}^{\beta}$ cannot be nilpotent.
So it remains to prove that $\alpha\neq\beta$ in $\N^{J}$. Note
that for any $i\in\{m,n,t\}$ there exists only one $e_{u,v}$ in
$\alpha$ such that $u+v=i$ and the same happens for $\beta$. So,
if $\alpha=\beta$ and since $m,n,t$ are distinct, those terms have
to be equal, for instance $e_{a,m-a}=e_{m+n-t-a,t+a-n}$. But $a\neq m+n-t-a$
by $3)$, while $a\neq t+a-n$ since $t\neq n$. Therefore $\alpha\neq\beta$.\end{proof}
\begin{cor}
\label{cor:Mcov reducible}If $|M|>7$ and $M\not\simeq(\Z/2\Z)^{3}$
then $\MCov$ is universally reducible and the same holds for $\MHilb^{\underline{m}}$,
provided that $\underline{m}$ contains all elements of $M-\{0\}$.\end{cor}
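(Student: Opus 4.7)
The plan is to invoke Lemma \ref{lem:necessary condition MCov reducible} directly: the reducibility of $\Spec R_{M}$ over any field (equivalently, the universal reducibility of $\MCov$, since $\MCov\simeq[\Spec R_{M}/\Di{\Z^{M}/\langle e_{0}\rangle}]$) will follow as soon as, for each abelian group $M$ with $|M|\geq8$ and $M\not\simeq(\Z/2\Z)^{3}$, we exhibit four elements $m,n,t,a\in M$ satisfying the three numerical conditions (1)--(3) of the lemma. Once $\MCov$ is shown to be universally reducible, the corresponding statement for $\MHilb^{\underline{m}}$ is immediate from Remark \ref{rem:relation Mhilbm MCovm}: under the hypothesis that $\underline{m}$ contains every nonzero element of $M$, one has $\MCov^{\underline{m}}=\MCov$ and $\vartheta_{\underline{m}}\colon\MHilb^{\underline{m}}\arr\MCov$ is a smooth epimorphism with geometrically irreducible fibers, so reducibility transfers. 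Universality is then reduced to a fiberwise statement by Remark \ref{rem:universally reducible over fibers}, but actually the lemma already produces reducibility of $\Spec R_{M}\otimes k$ for \emph{every} field $k$, since the nonzero element $z\in\Ker(R_{M}\otimes k\arr k[K_{+}])$ built there does not depend on the characteristic.

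The forward step is then a structural case analysis on $M$. The easy regime is $|M|$ large: condition (2) excludes at most $13$ values of $a$ and condition (3) excludes at most $|M[2]|$ more, so for $|M|>13+|M[2]|$ a uniform choice of $m,n,t$ in general position (e.g.\ three $\Z$-independent elements if $M$ has rank $\geq3$, or $m=1,n=2,t=4$ in a large cyclic summand) works. The real work is the finite list of small groups: $\Z/8\Z$, $\Z/4\Z\times\Z/2\Z$, $\Z/9\Z$, $\Z/3\Z\times\Z/3\Z$, $\Z/10\Z$, $\Z/2\Z\times\Z/6\Z$, $\Z/11\Z$, $\Z/12\Z$, $\Z/2\Z\times\Z/2\Z\times\Z/3\Z$, $\Z/4\Z\times\Z/4\Z$, $(\Z/2\Z)^{4}$, and so on up to the threshold. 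I will handle these by writing down explicit quadruples. For instance in $\Z/8\Z$ the choice $(m,n,t,a)=(1,4,5,2)$ satisfies all three conditions: the excluded set in (2) is $\{0,1,3,4,5,7\}$ and $m+n-t=0$ while $2a=4\neq0$. Similar explicit choices can be produced in $\Z/4\Z\times\Z/2\Z$, $\Z/9\Z$ and $(\Z/3\Z)^{2}$, and in every $(\Z/2\Z)^{k}$ with $k\geq4$ one can just take $m,n,t,a$ to be four independent generators, so that all the linear combinations appearing in condition (2) are distinct from $a$ and $2a=0\neq m+n-t$ (which is nonzero because $m,n,t,m+n-t$ are independent).

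To organize the case analysis uniformly, I would distinguish three structural cases rather than group-by-group. Case A: $M$ contains an element of order $\geq 4$; use the cyclic subgroup $\Z/\ell\Z\subseteq M$ and the prototype $(m,n,t,a)=(1,4,5,2)$ (checking that these work modulo $\ell$ and therefore in $M$, possibly after adjusting $a$ by a summand in a complementary subgroup to gain room). Case B: $M$ has exponent $3$, i.e.\ $M\simeq(\Z/3\Z)^{k}$ with $k\geq2$; take $m,n,t,a$ to be $\Z/3\Z$-independent (possible because $k\geq2$ gives at least $9$ elements and the proof of independence uses only rank-$2$ subgroups once $k\geq3$, with a direct check for $k=2$). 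Case C: $M$ has exponent $2$, i.e.\ $M\simeq(\Z/2\Z)^{k}$ with $k\geq4$; take $m,n,t,a$ four independent elements, and note that $2a=0$ while $m+n-t=m+n+t\neq 0$ by independence, so (3) holds automatically, and (2) holds because each of the $13$ excluded expressions is a nonzero $\F_{2}$-linear combination of $\{m,n,t\}$, none of which can equal the fourth independent vector $a$. Case B with $k=2$ and Case A with $\ell=4$ and no room for adjustment are the degenerate sub-cases that require an ad hoc quadruple.

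The main obstacle is exactly this bookkeeping in the degenerate small cases: the excluded set in condition (2) has $13$ entries and for $|M|=8,9$ these entries cover most of the group, so one has to choose $(m,n,t)$ carefully so that enough collisions occur among the excluded values to leave an admissible $a$, and simultaneously $m+n-t$ is not of the form $2a$ for any such $a$. There is no shortcut here beyond direct verification; however, once a single quadruple is exhibited in each of the finitely many small groups left out by the uniform arguments of Cases A--C, the lemma applies verbatim and the corollary is proved. The final sentence about $\MHilb^{\underline{m}}$ is then a one-line invocation of \ref{rem:relation Mhilbm MCovm}.
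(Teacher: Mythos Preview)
Your approach is exactly the paper's: invoke Lemma \ref{lem:necessary condition MCov reducible} by producing, for each admissible $M$, a quadruple $(m,n,t,a)$ satisfying (1)--(3), and then transfer to $\MHilb^{\underline{m}}$ via \ref{rem:relation Mhilbm MCovm}. The difference is only in how the case analysis is organized, and the paper's is considerably tighter. It covers in one stroke every $M$ that splits as $C\times T$ with $C$ cyclic of order $\geq 4$ and $T\neq 0$: take $m$ a generator of $C$, $n=3m$, $t=2m$, and $a$ any nonzero element of $T$. All the values forbidden by (2) then lie in $C$, so $a$ automatically avoids them, and $m+n-t=2m\neq 0$ in $C$ gives (3). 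This leaves only four residual families, each handled by a single explicit quadruple: $\Z/8\Z$ via $(2,4,6,1)$; cyclic of order $>8$ and $\neq 10$ via $(1,2,3,5)$; $(\Z/2\Z)^{l}$ with $l\geq 4$ via $(e_{1},e_{2},e_{3},e_{4})$; and $(\Z/3\Z)^{l}$ with $l\geq 2$ via $(e_{1},2e_{1},e_{2},e_{1}+e_{2})$.

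Your Case~A/B/C scheme is less efficient and, as written, has real holes. The prototype $(1,4,5,2)$ in $\Z/\ell\Z$ only satisfies (1)--(3) for $\ell\geq 8$: for $\ell=4,5$ one of $n,t$ is zero; for $\ell=6$ one gets $a=m-t$; for $\ell=7$ one gets $a=m+n-2t$. So groups like $\Z/4\Z\times\Z/2\Z$ or $\Z/6\Z\times\Z/2\Z$ fall through your uniform Case~A argument entirely (you cannot even place $m,n,t$ in a $\Z/4\Z$ summand with that prototype), and the promised ``adjustment by a complementary summand'' does not rescue them without choosing new $m,n,t$. In Case~B you cannot pick four $\F_{3}$-independent elements in $(\Z/3\Z)^{k}$ for $k\leq 3$; what is actually needed, as the paper does, is one explicit quadruple in $(\Z/3\Z)^{2}$, which then works for all $k\geq 2$ by inclusion. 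These are precisely the ``degenerate small cases'' you flag, but your uniform arguments leave more of them open than you acknowledge; the paper's $C\times T$ trick avoids this proliferation.
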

\begin{proof}
We have to show that $R_{M}$ is universally reducible and so we will
apply \ref{lem:necessary condition MCov reducible}. If $M=C\times T$,
where $C$ is cyclic with $|C|\geq4$ and $T\neq0$ we can choose:
$m$ a generator of $C$, $n=3m$, $t=2m$ and $a\in T-\{0\}$. If
$M$ cannot be written as above, there are four remaining cases. $1)$
$M\simeq\Z/8\Z$: choose $m=2\comma n=4\comma t=6\comma a=1$. $2)$
$M$ cyclic with $|M|>8$ and $|M|\neq10$: choose $m=1\comma n=2\comma t=3\comma a=5$.
$3)$ $M\simeq(\Z/2\Z)^{l}$ with $l\geq4$: choose $m=e_{1}\comma n=e_{2}\comma t=e_{3}\comma a=e_{4}$.
$4)$ $M\simeq(\Z/3\Z)^{l}$ with $l\geq2$: choose $m=e_{1}\comma n=2e_{1}\comma t=e_{2}\comma a=m+t=e_{1}+e_{2}$.\end{proof}
\begin{prop}
\label{pro:smooth DMCov}$\MCov$ is smooth if and only if $\stZ_{M}$
is so. This happens if and only if $M\simeq\Z/2\Z,\Z/3\Z,\Z/2\Z\times\Z/2\Z$
and in these cases $\MCov=\stZ_{M}$. To be more precise $R_{M}=\Z[x_{m,n}]_{(m,n)\in J}$,
where $J$ is the set defined in \ref{rem:reduced relations for RM}.

In particular $\MHilb^{\underline{m}}$ is smooth and irreducible
for any sequence $\underline{m}$ if $M$ is as above. Otherwise,
if any non zero element of $M$ belongs to the sequence $\underline{m}$,
$\MHilb^{\underline{m}}$ is not smooth.\end{prop}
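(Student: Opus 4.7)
The plan is to reduce everything to the atlas presentations $\MCov\simeq[\Spec R_M/T]$ and $\stZ_M\simeq[\Spec\Z[K_+]/T]$ from Corollary \ref{cor:MCov as global quotient}, where $T=\Di{\Z^M/\langle e_0\rangle}$ is a smooth torus; smoothness of the stacks is then equivalent to smoothness of the atlases. Since $\MCov$ is geometrically connected by Theorem \ref{thm:Mcov geom connected}, smoothness of $\MCov$ forces its connected and irreducible components to coincide, hence $\MCov=\stZ_M$. Thus the equivalence $\MCov\text{ smooth}\iff\stZ_M\text{ smooth}$ reduces to checking, $M$ by $M$, that both properties hold or both fail simultaneously.

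For the three candidate groups $M\in\{\Z/2\Z,\Z/3\Z,(\Z/2\Z)^2\}$ I would compute $R_M$ explicitly. For $\Z/2\Z$ a direct inspection of (\ref{eq:writing of RM}) shows that only the variable $x_{1,1}$ survives, so $R_{\Z/2\Z}=\Z[x_{1,1}]$. For $\Z/3\Z$ and $(\Z/2\Z)^2$ I use the reduced presentation of Remark \ref{rem:reduced relations for RM}: enumerating $J$ gives respectively $\{(1,1),(2,2)\}$ and $\{(a,b),(a,c),(b,c)\}$, and a routine check shows both families of generators of $I$ become vacuous (in the cyclic case admissibility forces $m+n+t=0$ or $m=t$ for every candidate triple, and no three distinct nonzero elements exist for the second family; in the Klein case the unique second-type relation collapses via $x_{m,n}=x_{n,m}$, and no admissible triple exists for the first family). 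Hence $R_M=\Z[x_{m,n}]_{(m,n)\in J}$ is a polynomial ring, $\Spec R_M\simeq\A^{|J|}_\Z$, and $\MCov$ is smooth and irreducible; combined with the first paragraph this yields $\MCov=\stZ_M$ and the explicit shape of $R_M$.

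For $M$ outside this list I need to produce a singular point of $\stZ_M$; by the first paragraph this will simultaneously imply $\MCov$ is not smooth. When $|M|\geq 8$ and $M\not\simeq(\Z/2\Z)^3$, Corollary \ref{cor:Mcov reducible} makes $\MCov$ universally reducible while Theorem \ref{thm:Mcov geom connected} keeps it geometrically connected, so some other irreducible component meets $\stZ_M$. To upgrade this into a singularity of $\stZ_M$ itself (rather than merely of the union $\MCov$), I would invoke Proposition \ref{pro:equivalent condition for belonging in the smooth locus of the main irreducible component of X phi}: a geometric point of $\stZ_M$ lies in the smooth locus only if it has the form $\lambda\cdot 0^{\E}$ for some smooth extremal ray $\E\in\duale K_+$, so exhibiting any non-smooth extremal ray (equivalently, one whose kernel is not generated by elements of $K_+$) produces the required singularity. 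For the five remaining small cases $\Z/4\Z,\Z/5\Z,\Z/6\Z,\Z/7\Z,(\Z/2\Z)^3$ I would compute $\Z[K_+]$ by hand and exhibit a singular point directly; for instance for $M=\Z/4\Z$ the reduced presentation gives $R_M\simeq\Z[K_+]\simeq\Z[a,b,c,d]/(ab-cd)$, a quadric cone singular at the origin.

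The claims about $\MHilb^{\underline m}$ follow formally from Remark \ref{rem:relation Mhilbm MCovm}: the forgetful map $\vartheta_{\underline m}\colon\MHilb^{\underline m}\arr\MCov^{\underline m}$ is a smooth fppf epimorphism with geometrically connected and irreducible fibers, so smoothness and irreducibility transfer in both directions. When $M$ belongs to the good list, $\MCov=\stZ_M$ is smooth and irreducible, hence so are its open substacks $\MCov^{\underline m}$ and hence so is $\MHilb^{\underline m}$. When $M$ is outside the good list and $\underline m$ meets every nonzero element of $M$, Remark \ref{rem:relation MHilb MCov} gives $\MCov^{\underline m}=\MCov$, which is not smooth. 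The main obstacle I anticipate is the non-smoothness step for the leftover small cases, particularly $(\Z/2\Z)^3$: the combinatorics of $K_+$ grows quickly (e.g.\ $21$ variables for $(\Z/2\Z)^3$), so either an explicit presentation of $\Z[K_+]$ or a hand-built non-smooth extremal ray is needed, and this is where the real work lies.
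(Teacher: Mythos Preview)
Your handling of the three small groups and of the $\MHilb^{\underline m}$ consequences matches the paper: both verify that the ideal $I$ of Remark~\ref{rem:reduced relations for RM} vanishes for $M\in\{\Z/2\Z,\Z/3\Z,(\Z/2\Z)^2\}$, whence $R_M$ is a polynomial ring, and both transfer smoothness and irreducibility along the smooth epimorphism of Remark~\ref{rem:relation Mhilbm MCovm}. Your opening reduction (smooth plus connected forces $\MCov=\stZ_M$) is also in line with the paper's chain $\MCov$ smooth $\Leftrightarrow R_M$ smooth $\Rightarrow\stZ_M$ smooth $\Rightarrow k[K_+]$ smooth.

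The bad direction is where your plan is incomplete and misses the key idea. Your strategy---reducibility for large $|M|$ plus ad hoc computation of $\Z[K_+]$ for $\Z/4\Z,\Z/5\Z,\Z/6\Z,\Z/7\Z,(\Z/2\Z)^3$---never produces a singularity uniformly: as you yourself note, reducibility of $\MCov$ says nothing about smoothness of $\stZ_M$, and invoking Proposition~\ref{pro:equivalent condition for belonging in the smooth locus of the main irreducible component of X phi} merely shifts the burden to exhibiting a non-smooth ray, which you do not do. The paper bypasses all of this with a single observation: if $k[K_+]$ were smooth over a field $k$, then $K_+\simeq\N^r\oplus\Z^s$, so $k[K_+]$ is a UFD; since each $x_{m,n}$ with $m,n,m+n\neq 0$ has degree $1$ in the grading of Remark~\ref{rem:N-graduation of RM}, it is irreducible and hence prime. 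Any associativity relation $x_{m,n}x_{m+n,t}=x_{n,t}x_{n+t,m}$ with $m,n,t,m+n,n+t,m+n+t\neq 0$ and $m\neq t$ then forces $x_{m,n}$ to equal one of the right-hand factors, which these hypotheses rule out. Thus $k[K_+]$ is singular whenever such a triple exists; since the conditions pass verbatim to subgroups, one only needs to exhibit triples in the four minimal bad groups $\Z/4\Z$, cyclic of order $\geq 5$, $(\Z/2\Z)^3$, and $(\Z/3\Z)^2$. This dispatches every $M$ outside the good list---including your problematic $(\Z/2\Z)^3$, via $(m,n,t)=(e_1,e_2,e_3)$---in one stroke, with no residual casework.
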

\begin{proof}
Let $k$ be a field. Note that 
\[
\MCov\text{ smooth}\iff R_{M}\text{ smooth}\then\stZ_{M}\text{ smooth }\then k[K_{+}]/k\text{ smooth}
\]
We first prove that if $k[K_{+}]$ is smooth then $M$ has to be one
of the groups of the statement. We have $K_{+}\simeq\N^{r}\oplus\Z^{s}$
and therefore $k[K_{+}]$ is UFD. We will consider $k[K_{+}]$ endowed
with the $\N$-graduation defined in \ref{rem:N-graduation of RM}.
Since any of the $x_{m,n}$ has degree $1$, it is irreducible and
so prime. If we have a relation $x_{m,n}x_{m+n,t}=x_{n,t}x_{n+t,m}$
with $m,n,t,m+n,n+t,m+n+t\neq0$ and $m\neq t$, then $x_{m,n}\mid x_{n,t}x_{n+t,m}$
implies that $x_{m,n}=x_{n,t}$ or $x_{m,n}=x_{n+t,m}$, which is
impossible thanks to our assumptions. We will prove that if $M$ is
not isomorphic to one of the group in the statement, then such a relation
exists. Clearly it is enough to find this relation in a subgroup of
$M$. So it is enough to consider the following cases. $1)$ $M$
cyclic with $|M|\geq5$: choose $m=n=1\comma t=2$. $2)$ $M\simeq\Z/4\Z$:
choose $m=1\comma n=2\comma t=3$. $3)$ $M\simeq(\Z/2\Z)^{3}$: choose
$m=e_{1}\comma n=e_{2}\comma t=e_{3}$. $4)$ $M\simeq(\Z/3\Z)^{2}$:
choose $m=n=e_{1}\comma t=e_{2}$.

We now want to prove that when $M$ is as in the statement, then the
ideal $I$ of \ref{rem:reduced relations for RM} is zero. If we have
a relation as in the first row, since $m\neq t$ we have $|M|\geq3$.
If $M\simeq\Z/3\Z$ then $t=2m$ and $m+t=0$. If $M\simeq(\Z/2\Z)^{2}$,
if $m,n,t$ are distinct then $m+n+t=0$, otherwise $m=n$ and $m+n=0$.
If we have a relation as in the second row, since $m,t,s$ are distinct,
we must have $M\simeq(\Z/2\Z)^{2}$. Therefore $m+t=s$ and the relation
become trivial.\end{proof}
\begin{cor}
\label{cor:description of mutwotimesmutwo Cov}The stack $\GrCov{\Z/2\Z\times\Z/2\Z}$
is isomorphic to the stack of sequences $(\shL_{i},\psi_{i})_{i=1,2,3}$,
where $\shL_{1},\shL_{2},\shL_{3}$ are invertible sheaves and $\psi_{1}\colon\shL_{2}\otimes\shL_{3}\arr\shL_{1}$,
$\psi_{2}\colon\shL_{1}\otimes\shL_{3}\arr\shL_{2}$, $\psi_{3}\colon\shL_{1}\otimes\shL_{2}\arr\shL_{3}$
are maps.\end{cor}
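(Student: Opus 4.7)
The plan is to deduce this corollary as a direct unpacking of Proposition \ref{pro:smooth DMCov} in the specific case $M=\Z/2\Z\times\Z/2\Z$. Recall that by the preamble of this section, a $\Di M$-cover over a scheme $U$ is the same as a collection $(\shL_m)_{m\in M}$ of invertible sheaves together with multiplication maps $\psi_{m,n}\colon\shL_m\otimes\shL_n\arr\shL_{m+n}$ and an identification $\shL_0\simeq\odi U$, satisfying the commutativity, associativity and neutral element axioms displayed after Definition \ref{def:Gcovers}. So the task is simply to cut down the data.

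First I would use the neutral element axiom together with the isomorphism $\shL_0\simeq\odi U$ to eliminate all $\psi_{m,n}$ with $0\in\{m,n\}$: those maps are forced. Next, using the commutativity axiom $\psi_{m,n}=\psi_{n,m}$, it is enough to retain one representative per unordered pair. This matches exactly the reduction to the variables $x_{m,n}$ indexed by the set $J$ of Remark \ref{rem:reduced relations for RM}, namely pairs $(m,n)$ with $m,n,m+n\neq0$ taken up to swap. Since for $M=\Z/2\Z\times\Z/2\Z$ every non-zero element is self-inverse, the condition $m+n\neq0$ is equivalent to $m\neq n$, so $J$ consists precisely of the three unordered pairs of distinct nonzero elements: $\{m_1,m_2\}$, $\{m_1,m_3\}$, $\{m_2,m_3\}$, where $m_1,m_2,m_3$ is an enumeration of $M\setminus\{0\}$. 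Each such pair $\{m_i,m_j\}$ gives a unique multiplication map $\shL_{m_i}\otimes\shL_{m_j}\arr\shL_{m_i+m_j}=\shL_{m_k}$ with $\{i,j,k\}=\{1,2,3\}$, which, after setting $\shL_i=\shL_{m_i}$, is precisely $\psi_k$ of the corollary.

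The key step is that no further relations survive: here one invokes Proposition \ref{pro:smooth DMCov}, which asserts that for $M\simeq\Z/2\Z\times\Z/2\Z$ the ideal $I$ in Remark \ref{rem:reduced relations for RM} vanishes, so $R_M$ is the free polynomial algebra on $\{x_{m,n}\}_{(m,n)\in J}$. Thus, after the two eliminations above, the associativity axioms are automatically satisfied and impose no additional constraint on $(\shL_1,\shL_2,\shL_3,\psi_1,\psi_2,\psi_3)$. Translating this at the level of stacks yields the claimed isomorphism.

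I do not expect any serious obstacle: the corollary is purely a combinatorial translation of the statement $I=0$ from Proposition \ref{pro:smooth DMCov} into the language of sequences of invertible sheaves and maps. The only point that requires a line of care is verifying that the three associativity-type relations one might naively write down for $\Z/2\Z\times\Z/2\Z$ (of the two shapes appearing in Remark \ref{rem:reduced relations for RM}) indeed become trivial, either because some argument equals $0$ or because $m+n+t=0$ — but this is exactly what is already checked in the proof of Proposition \ref{pro:smooth DMCov} for this group.
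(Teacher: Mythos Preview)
Your proof is correct and follows essentially the same approach as the paper: both reduce to the fact, proved in Proposition \ref{pro:smooth DMCov}, that $R_M$ is free on the three variables indexed by $J$ (equivalently, $\tilde{K}_+\simeq\N^3$), so the associativity relations are vacuous and an object of $\MCov$ is just the three invertible sheaves and three maps. The paper phrases this via the monoid $\tilde{K}_+=K_+\simeq\N v_{e_1,e_2}\oplus\N v_{e_1,e_1+e_2}\oplus\N v_{e_2,e_1+e_2}$, while you unpack the same combinatorics directly in terms of the index set $J$.
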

\begin{proof}
Set $M=(\Z/2\Z)^{2}$. Thanks to \ref{pro:smooth DMCov}, we know
that $\tilde{K}_{+}=K_{+}\simeq\N v_{e_{1},e_{2}}\oplus\N v_{e_{1},e_{1}+e_{2}}\oplus\N v_{e_{2},e_{1}+e_{2}}$.
So an object of $\MCov$ is given by invertible sheaves $\shL_{1}=\shL_{e_{1}}\comma\shL_{2}=\shL_{e_{2}}\comma\shL_{3}=\shL_{e_{1}+e_{2}}$
and maps $\psi_{1}=\psi_{e_{2},e_{1}+e_{2}}\comma\psi_{2}=\psi_{e_{1},e_{1}+e_{2}}\comma\psi_{3}=\psi_{e_{1},e_{2}}$.\end{proof}
\begin{rem}
\label{rem:MCov for M=00003DZfour}$\GrCov{\Z/4\Z}$ and $\Z/4\Z\textup{-Hilb}^{\underline{m}}$,
for any sequence $\underline{m}$, are integral and normal since one
can check directly that $R_{\Z/4\Z}=\Z[x_{1,2},x_{3,3},x_{2,3},x_{1,1}]/(x_{1,2}x_{3,3}-x_{2,3}x_{1,1})$.
I am not able to prove that $\MCov$ is irreducible when $M$ is one
of $\Z/5\Z\comma\Z/6\Z$, $\Z/7\Z\comma(\Z/2\Z)^{3}$. Anyway the
first two cases seem to be integral thanks to a computer program,
while for the last ones there are some techniques that can be used
to study this problem but they are too complicated to be explained
here.
\end{rem}

\subsection{The invariant $h\colon|\MCov|\arr\N$.}

In this subsection we investigate the local structure of a $\Di M$-cover,
especially over a local ring. In particular we will define an upper
semicontinuous map $h\colon|\MCov|\arr\N$ that measures how much
a cover fails to be a torsor: the open locus $\Bi\Di M\subseteq\MCov$
will be exactly the locus $\{h=0\}$.
\begin{notation}
Given a ring $A$, we will write $B\in\Spec R_{M}(A)$ meaning that
$B$ is an $M$-graded $A$-algebra with a given $M$-graded basis,
usually denoted by $\{v_{m}\}_{m\in M}$ with $v_{0}=1$, and a given
multiplication $\psi$ such that
\[
B=\bigoplus_{m\in M}Av_{m}\comma v_{m}v_{n}=\psi_{m,n}v_{m+n}
\]
We will also denote by $A^{*}$ the group of invertible elements of
$A$. If $f\colon X\arr Y$ is an affine map of schemes and $q\in Y$,
we will use the notation $\odi{X,q}=f_{*}\odi X\otimes_{\odi Y}\odi{Y,q}$.
In particular $X\times_{Y}\Spec\odi{Y,q}\simeq\Spec\odi{X,q}$. Notice
that, although $\odi{X,q}$ is written as a localization in a point,
this ring is not local in general.\end{notation}
\begin{lem}
\label{lem:factorization of covers through torsors on local rings}Let
$A$ be a ring and $B\in\Spec R_{M}(A)$, with graded basis $v_{m}$
and multiplication map $\psi$. Then the set
\[
H_{\psi}=H_{B/A}=\{m\in M\;|\; v_{m}\in B^{*}\}=\{m\in M\;|\;\psi_{m,-m}\in A^{*}\}
\]
 is a subgroup of $M$. Moreover if $m,n\in M$ and $h\in H_{\psi}$
then $\psi_{m,n}$ and $\psi_{m,n+h}$ differs by an element of $A^{*}$.
If $H$ is a subgroup of $H_{\psi}$ then $C=\bigoplus_{m\in H}Av_{m}$
is an element of $\Bi\Di H(A)$. Moreover if $\sigma\colon M/H\arr M$
gives representatives of $M/H$ in $M$ and we set $w_{m}=v_{\sigma(m)}$
for $m\in M/H$ we have
\[
B=\bigoplus_{m\in M/H}Cw_{m}\in\Spec R_{M/H}(C)
\]
 Finally if we denote by $\psi'$ the induced multiplication on $B$
over $C$ we have $H_{\psi'}=H_{\psi}/H$ and for any $m,n\in M$
$\psi'_{m,n}$ and $\psi_{m,n}$ differ by an element of $C^{*}$.\end{lem}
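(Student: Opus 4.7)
The plan is to proceed by extracting information from the associativity relations $\psi_{m,n}\psi_{m+n,t}=\psi_{n,t}\psi_{n+t,m}$ of \eqref{eq:condition on psi}. First I would establish the equality $\{v_m\in B^*\}=\{\psi_{m,-m}\in A^*\}$. The direction $\Leftarrow$ is immediate from $v_m(\psi_{m,-m}^{-1}v_{-m})=1$; for $\Rightarrow$, writing an inverse of $v_m$ in the graded basis and reading off the degree $0$ component of the relation $v_m b=1$ yields $\psi_{m,-m}\in A^*$. The key preliminary step is then to show that for every $n\in M$ and $h\in H_\psi$ one has $\psi_{n,h}\in A^*$: multiplication by $v_h$ is an $A$-linear automorphism of $B$ sending $Av_n$ into $Av_{n+h}$, and composing with multiplication by $v_{-h}$ acts on $Av_n$ as the scalar $\psi_{n,h}\psi_{n+h,-h}=\psi_{h,-h}\in A^*$, forcing $\psi_{n,h}\in A^*$.

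With this in hand, I would prove (1) and (2) together. For (2), the associativity relation (after a commutation) gives $\psi_{n,h}\psi_{m,n+h}=\psi_{m,n}\psi_{m+n,h}$, and since both $\psi_{n,h}$ and $\psi_{m+n,h}$ are units by the previous paragraph, $\psi_{m,n+h}/\psi_{m,n}=\psi_{m+n,h}/\psi_{n,h}\in A^*$. For the subgroup property in (1): $0\in H_\psi$ is trivial, and if $m\in H_\psi$ then $v_m v_{-m}=\psi_{m,-m}\in A^*$ together with $v_m\in B^*$ gives $v_{-m}\in B^*$, so $-m\in H_\psi$; closure under addition follows from $v_{m+n}=\psi_{m,n}^{-1}v_m v_n\in B^*$, using that $\psi_{m,n}$ is a unit for $m,n\in H_\psi$.

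For (3), when $H\subseteq H_\psi$ the subspace $C=\bigoplus_{m\in H}Av_m$ is closed under multiplication (all structure constants $\psi_{m,n}$ with $m,n\in H$ are units in $A$), and every such $\psi_{m,n}$ being a unit is exactly the criterion of \ref{pro:equivalent conditions for a D(M)-torsor} for $C$ to be a $\Di H$-torsor over $A$. For (4), fix representatives $\sigma\colon M/H\to M$ with $\sigma(0)=0$, set $w_m=v_{\sigma(m)}$, and compute $w_m w_n=\psi_{\sigma(m),\sigma(n)}v_{\sigma(m)+\sigma(n)}$. Writing $h(m,n)=\sigma(m)+\sigma(n)-\sigma(m+n)\in H$ and using $v_{\sigma(m+n)+h(m,n)}=\psi_{\sigma(m+n),h(m,n)}^{-1}v_{\sigma(m+n)}v_{h(m,n)}$ (legitimate since $h(m,n)\in H\subseteq H_\psi$ makes $\psi_{\sigma(m+n),h(m,n)}\in A^*$), one gets
\[
\psi'_{m,n}=\psi_{\sigma(m),\sigma(n)}\,\psi_{\sigma(m+n),h(m,n)}^{-1}\,v_{h(m,n)}\in C^*,
\]
and checking commutativity, unitality and the $M/H$-associativity for $\psi'$ is then formal from the corresponding relations for $\psi$.

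Finally (5) is a consequence of the above formula: $w_m=v_{\sigma(m)}\in B^*$ iff $\sigma(m)\in H_\psi$, i.e.\ iff $m\in H_\psi/H$, giving $H_{\psi'}=H_\psi/H$; and for arbitrary $m,n\in M$ with classes $\bar m,\bar n\in M/H$, iterating (2) shows $\psi_{m,n}$ differs from $\psi_{\sigma(\bar m),\sigma(\bar n)}$ by a unit in $A\subseteq C$, which combined with the displayed formula for $\psi'_{\bar m,\bar n}$ gives that $\psi_{m,n}$ and $\psi'_{\bar m,\bar n}$ differ by an element of $C^*$. The only mildly delicate point—and the step I expect to require the most care—is the passage from $h\in H_\psi$ to $\psi_{n,h}\in A^*$ for arbitrary $n$, since everything else in the lemma rests on it; once that identity is secured, the remaining arguments are essentially bookkeeping with associativity.
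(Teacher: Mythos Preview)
Your proposal is correct and follows essentially the same approach as the paper: both arguments hinge on extracting from the associativity relation the identity $\psi_{h,-h}=\psi_{h,u}\psi_{h+u,-h}$ (which you phrase as composing multiplication by $v_h$ and $v_{-h}$), then using $\psi_{m,n}\psi_{m+n,h}=\psi_{n,h}\psi_{m,n+h}$ to compare $\psi_{m,n}$ with $\psi_{m,n+h}$, and finally writing down the explicit formula for $\psi'_{m,n}$ in terms of $\psi_{\sigma(m),\sigma(n)}$, an $A$-unit, and $v_{h}$ with $h\in H$. Your formula and the paper's differ only by the sign convention on $h=\pm(\sigma(m)+\sigma(n)-\sigma(m+n))$ and are equivalent via the same associativity identity; you simply spell out a few steps (closure of $H_\psi$, the normalization $\sigma(0)=0$) that the paper leaves implicit.
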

\begin{proof}
From the relations $v_{m}v_{-m}=\psi_{m,-m}$, $v_{m}^{|M|-1}=\lambda v_{-m}$,
$v_{m}^{|M|}=\lambda\psi_{m,-m}$, where $\lambda\in B$ and $v_{m}v_{n}=\psi_{m,n}v_{m+n}$
we see that $v_{m}\in B^{*}\iff\psi_{m,-m}\in A^{*}$ and that $H_{\psi}<M$.
From \ref{eq:condition on psi} we get the relations $\psi_{-h,h}=\psi_{h,u}\psi_{h+u,-h}$
and $\psi_{m,n}\psi_{m+n,h}=\psi_{n,h}\psi_{m,n+h}$. So if $h\in H$
then $\psi_{h,u}\in A^{*}$ for any $u$ and $\psi_{m,n}$ and $\psi_{m,n+h}$
differ by an element of $A^{*}$.

Now consider the second part of the statement. From \ref{pro:equivalent conditions for a D(M)-torsor}
we know that $C$ is a torsor over $A$. Since for any $m$ we have
$v_{m}=(\psi_{h,m}/v_{h})v_{\sigma(\overline{m})}$, where $h=\sigma(\overline{m})-m\in H$
we obtain the expression of $B$ as $M/H$ graded $C$-algebra and
that
\[
\psi'_{m,n}=\psi_{\sigma(m),\sigma(n)}(\psi_{h,\sigma(m)+\sigma(n)}/v_{h})\text{ where }h=\sigma(m+n)-\sigma(m)-\sigma(n)
\]
From the above equation it is easy to conclude the proof.\end{proof}
\begin{defn}
\label{def:of the maximal torsor and the subgroup associated}Given
a ring $A$ and $B\in\Spec R_{M}(A)$ we continue to use the notation
$H_{B/A}$ introduced in \ref{lem:factorization of covers through torsors on local rings}
and we will call the algebra $C$ obtained for $H=H_{B/A}$ the \emph{maximal
torsor} of the extension $B/A$. If $k$ is a field and $\E\in\duale K_{+}$
we will write $H_{\E}=H_{B/k}$, where $B$ is the algebra induced
by the multiplication $0^{\E}$. In particular
\[
H_{\E}=\{m\in M\st\E_{m,-m}=0\}
\]
Finally if $f\colon X\arr Y\in\MCov(Y)$ and $q\in Y$ we define $\shH_{f}(q)=H_{\odi{X,q}/\odi{Y,q}}$.\end{defn}
\begin{prop}
\label{pro:the H* is well defined as a map from MCov}We have a map
  \[   \begin{tikzpicture}[xscale=3.5,yscale=-0.5]     \node (A0_0) at (0, 0) {$|\MCov|$};     \node (A0_1) at (1, 0) {$\{\text{subgroups of }M\}$};     \node (A1_0) at (0, 1) {$B/k$};     \node (A1_1) at (1, 1) {$H_{B/k}$};     \path (A0_0) edge [->]node [auto] {$\scriptstyle{\shH}$} (A0_1);     \path (A1_0) edge [|->,gray]node [auto] {$\scriptstyle{}$} (A1_1);   \end{tikzpicture}   \] such
that, if $Y\arrdi u\MCov$ is given by $X\arrdi fY$, then $\shH_{f}=\shH\circ|u|$.\end{prop}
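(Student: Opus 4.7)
The plan is to verify two things: that the subgroup $H_{B/k}$ depends only on the point of $|\MCov|$ represented by $B/k$, and that when $B/k$ arises as the fiber of a cover $f\colon X\arr Y$ at $q\in Y$, one has $H_{B/k}=\shH_f(q)=H_{\odi{X,q}/\odi{Y,q}}$. Both parts will reduce to the elementary observation that, for any scheme $T$ on which $\Di M$ acts and any $B\in\MCov(T)$ with graded decomposition $B=\bigoplus_{m\in M}B_m$, a section $v\in B_m(T)$ generating $B_m$ is a unit in $B$ iff the product $v\cdot w\in\odi T$, for $w$ a generator of $B_{-m}$, is a unit. Thus the condition ``$m\in H$'' is intrinsic (independent of the choice of graded basis) and can be phrased as: the multiplication map $B_m\otimes B_{-m}\arr\odi T$ is an isomorphism of line bundles.

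\textbf{Well-definedness on $|\MCov|$.} Two objects $B/k,B'/k'\in\MCov$ define the same point of $|\MCov|$ iff there exist a field $L$ and field embeddings $k,k'\hookrightarrow L$ such that $B\otimes_k L\simeq B'\otimes_{k'}L$ as $\Di M$-covers over $L$. The isomorphism respects the $M$-grading, so it suffices to prove (a) $H$ is invariant under $\Di M$-equivariant isomorphisms over a fixed field, and (b) $H_{B/k}=H_{B\otimes_k L/L}$ for every field extension $k\subseteq L$. Part (a) is immediate from the intrinsic formulation above. For (b), a graded basis $\{v_m\}$ of $B$ over $k$ extends to a graded basis of $B\otimes_k L$, and the multiplication constants $\psi_{m,-m}\in k$ are the same as those of the extended algebra; since $k\arr L$ is injective, $\psi_{m,-m}\neq 0$ in $k$ iff $\psi_{m,-m}\otimes 1\neq 0$ in $L$.

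\textbf{Compatibility with $\shH_f$.} Given $f\colon X\arr Y\in\MCov(Y)$ and $q\in Y$, let $R=\odi{Y,q}$, $A=\odi{X,q}$, $\mathfrak m=\mathfrak m_q R$, and write $A=\bigoplus_{m\in M}Rv_m$ with $v_mv_{-m}=\psi_{m,-m}$. The fiber $X_q=\Spec A/\mathfrak m A$ is the cover over $k(q)$ representing $|u|(q)\in|\MCov|$. By definition $m\in H_{A/R}$ iff $\psi_{m,-m}\in R^{*}$, i.e.\ iff $\psi_{m,-m}\notin\mathfrak m$, which is exactly the condition that the image $\overline\psi_{m,-m}\in k(q)$ is nonzero, i.e.\ $m\in H_{X_q/k(q)}$. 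Hence $\shH_f(q)=\shH(|u|(q))$, as required. In particular this shows $\shH_f$ does not depend on the local choice of graded basis made to write down $\psi$.

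The only delicate point is making sure that the equivalence relation defining $|\MCov|$ really is generated by isomorphisms and field extensions — this is a general fact about points of algebraic stacks, and no further ingredient is needed. No essential obstacle arises; the content of the proposition is the compatibility between the pointwise construction of $H$ on algebras and the globally defined sheaf-theoretic invariant $\shH_f$, and this is transparent once $H_{B/k}$ is recast as the set of $m$ for which $B_m\otimes B_{-m}\arr\odi{}$ is an isomorphism.
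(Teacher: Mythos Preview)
Your proof is correct and follows essentially the same approach as the paper: the key observation in both is that for a local ring $R$ with residue field $k(q)$, one has $\psi_{m,-m}\in R^{*}$ if and only if its image in $k(q)$ is nonzero, which simultaneously handles well-definedness under field extensions and the compatibility $\shH_f(q)=\shH(|u|(q))$. The paper compresses this into a single sentence, while you spell out the intrinsic reformulation and the two verifications separately, but the content is the same.
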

\begin{proof}
It is enough to note that if $A$ is a local ring, $B\in\MCov(A)$
is given by multiplications $\psi$ and $\pi\colon A\arr A/m_{A}\arr k$
is a morphism, where $k$ is a field, then $\psi_{m,-m}\in A^{*}\iff\pi(\psi_{m,-m})\neq0$.\end{proof}
\begin{rem}
\label{rem: H is zero implies the algebra is local}Let $(A,m_{A})$
be a local ring and $B\in\Spec R_{M}(A)$ with $M$-graded basis $\{v_{m}\}_{m\in M}$.
Then $H_{B/A}=\shH_{B/A}(m_{A})$. If $H_{B/A}=0$ then any $v_{m}$,
with $m\neq0$, is nilpotent in $B\otimes k$ and therefore $B$ is
local with maximal ideal
\[
m_{B}=m_{A}\oplus\bigoplus_{m\in M-\{0\}}Av_{m}
\]
and residue field $B/m_{B}=A/m_{A}$. In particular $m_{B}/m_{B}^{2}$
is $M$-graded.\end{rem}
\begin{lem}
\label{lem:equivalent condition for an algebra to be generated in degrees m1,...,mr}Let
$A$ be a local ring and $B=\bigoplus_{m\in M}Av_{m}\in\MCov(A)$
such that $H_{B/A}=0$. If $m_{1},\dots,m_{r}\in M$ then $B$ is
generated in degrees $m_{1},\dots,m_{r}$ as an $A$-algebra if and
only if $m_{B}=(m_{A},v_{m_{1}},\dots,v_{m_{r}})_{B}$.\end{lem}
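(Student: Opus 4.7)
The implication $(\Rightarrow)$ is immediate: if $B=A[v_{m_1},\dots,v_{m_r}]$, then any element of $m_B$ can clearly be written as an $A$-linear combination of monomials in the $v_{m_i}$ with all coefficients or monomials lying in $m_A$ or in the ideal generated by the $v_{m_i}$, giving $m_B=(m_A,v_{m_1},\dots,v_{m_r})_B$.

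For the interesting direction $(\Leftarrow)$, my approach is to pass to the residue field via Nakayama's lemma. Let $C$ be the $A$-subalgebra of $B$ generated by $v_{m_1},\dots,v_{m_r}$; I want to prove $C=B$. Since $B$ is a finitely generated $A$-module (being a cover), so is $B/C$, and by Nakayama it suffices to prove
\[
B=C+m_A B,\quad\text{i.e.}\quad B\otimes_A k=\overline{C},
\]
where $k=A/m_A$. This reduces the problem to showing that $B\otimes_A k$ is generated as a $k$-algebra by the images $\overline{v_{m_1}},\dots,\overline{v_{m_r}}$.

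Now the key observation: by Remark \ref{rem: H is zero implies the algebra is local}, since $H_{B/A}=0$ the algebra $B$ is local with residue field $A/m_A=k$; passing to the fiber, $B\otimes_A k$ is a finite-dimensional local $k$-algebra with residue field $k$ (so in particular Artinian), and its maximal ideal is $\overline{m_B}=\bigoplus_{m\neq 0}k\overline{v_m}$. The hypothesis $m_B=(m_A,v_{m_1},\dots,v_{m_r})_B$ reduces modulo $m_A B$ to the statement that $\overline{m_B}$ is generated as an ideal of $B\otimes_A k$ by $\overline{v_{m_1}},\dots,\overline{v_{m_r}}$.

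The final step is the general fact: if $(R,\mathfrak{m})$ is an Artinian local ring whose residue field $k$ embeds as a subring of $R$, and $\mathfrak{m}=(x_1,\dots,x_r)$ as an ideal, then $R=k[x_1,\dots,x_r]$ as a $k$-algebra. This follows by induction on $n$ with $\mathfrak{m}^n=0$: for each $j\geq 1$, the quotient $\mathfrak{m}^j/\mathfrak{m}^{j+1}$ is spanned over $k$ by monomials of degree $j$ in the $x_i$, so by induction every element of $\mathfrak{m}$ is a polynomial in $x_1,\dots,x_r$ without constant term, and finally $R=k\oplus\mathfrak{m}=k[x_1,\dots,x_r]$. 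Applying this to $R=B\otimes_A k$ gives $B\otimes_A k=k[\overline{v_{m_1}},\dots,\overline{v_{m_r}}]$, hence $B=C+m_A B$ and Nakayama yields $B=C$. There is no serious obstacle in this argument; the only point to handle with care is the lift of $k$ into $B\otimes_A k$, which is provided precisely by the condition $H_{B/A}=0$ via Remark \ref{rem: H is zero implies the algebra is local}.
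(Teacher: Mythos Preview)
Your proof is correct. The $(\Leftarrow)$ direction via Nakayama's lemma and the structure of Artinian local $k$-algebras with coefficient field is clean and goes through without difficulty; the key inputs (that $B\otimes_A k$ is Artinian local with residue field $k$, and that $k$ sits inside it as the degree-$0$ piece) are exactly what Remark \ref{rem: H is zero implies the algebra is local} provides.

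However, the route is genuinely different from the paper's. The paper does not pass to the residue field or invoke Nakayama. For $(\Leftarrow)$ it argues directly in $B$ with the $M$-grading: suppose some $v_l$ with $l\neq 0$ lies outside $A[\underline{v}]$. From $v_l\in m_B=(m_A,\underline{v})_B$ and the graded decomposition one extracts a relation $v_l=\lambda\, v_{l'}v_{m_i}$ with $\lambda\in A^*$, $l'=l-m_i\neq 0$, and $v_{l'}\notin A[\underline{v}]$. Iterating produces $v_l=\mu\, v_{n_1}\cdots v_{n_s}$ with $\mu\in A^*$, $n_j\neq 0$, and $s$ arbitrarily large (one reaches $s\geq |M|^2$). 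By pigeonhole some $v_m$ appears at least $|M|$ times in this product, and since $v_m^{|M|}\in A\cap m_B=m_A$ this forces $v_l\in m_A B$, contradicting the fact that $v_l$ is part of a free $A$-basis.

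Your argument is more structural: it isolates the general fact about Artinian local rings with a coefficient field and lets Nakayama do the lifting. The paper's argument is more hands-on with the grading and actually produces, up to a unit, an explicit monomial expression $v_l=\mu\,\underline{v}^{\alpha}$ for each $v_l$ (this is also how it handles the $(\Rightarrow)$ direction, slightly more precisely than your ``clearly'').
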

\begin{proof}
We can write $m_{B}=m_{A}\oplus\bigoplus_{m\in M-\{0\}}Av_{m}$. Denote
$\underline{v}=v_{m_{1}},\dots,v_{m_{r}}$ and $\pi(\alpha)=\sum_{i}\alpha_{i}m_{i}$
for $\alpha\in\N^{r}$. The {}``only if'' follows since given $l\in M-\{0\}$
there exists a relation of the form $v_{l}=\mu\underline{v}^{\alpha}$
with $\mu\in A^{*}$ and $\alpha\neq0$ and so $v_{l}\in(m_{A},v_{m_{1}},\dots,v_{m_{r}})_{B}$.
For the converse note that, given $l\in M-\{0\}$, $v_{l}\in m_{B}=(m_{A},v_{m_{1}},\dots,v_{m_{r}})$
means that we have a relation $v_{l}=\lambda v_{l'}v_{m_{i}}$ for
some $i$, $\lambda\in A^{*}$ and $l'=l-m_{i}$. Moreover $v_{l}\notin A[\underline{v}]$
implies that $v_{l'}\notin A[\underline{v}]$ and $l'\neq0$. If,
by contradiction, we have such an element $l$ we can write $v_{l}=\mu v_{n_{1}}\cdots v_{n_{s}}$
with $n_{i}\in M-\{0\}$ and $s\geq|M|^{2}$. In particular there
must exist $i$ such that $m=n_{i}$ appears at least $|M|$ times
in this product. So $m_{A}\ni v_{m}^{|M|}\mid v_{l}$ and $v_{l}\in m_{A}B$,
which is not the case.
\end{proof}
Assume we have a cover $X\arrdi fY\in\MCov(Y)$. We want to define,
for any $m\in M$ a map $h_{f,m}=h_{X/Y,m}\colon Y\arr\{0,1\}$. Let
$q\in Y$ and denote by $C$ the 'maximal torsor' of $\odi{X,q}/\odi{Y,q}$
(see \ref{def:of the maximal torsor and the subgroup associated}).
Also let $p\in f^{-1}(q)$ and set $p_{C}=p\cap C$. Taking into account
\ref{rem: H is zero implies the algebra is local}, we know that $B=(\odi{X,q})_{p}=(\odi{X,q})_{p_{C}}$
and that $B\in\GrCov{M/\shH_{f}(q)}(C_{p_{C}})$ with $H_{B/C_{p_{C}}}=0$.
Moreover $B$ is local, $B/m_{B}=C_{p_{C}}/p_{C}$ and $m_{B}/m_{B}^{2}$
is $(M/\shH_{f}(q))$-graded. If we denote by $\overline{m}$ the
image of $m\in M$ in $M/\shH_{f}(q)$ and by $(m_{B}/m_{B}^{2})_{t}$
the graded pieces of $m_{B}/m_{B}^{2}$, where $t\in M/\shH_{f}(q)$,
we can define:
\begin{defn}
With notation above we set 
\[
h_{f,m}(q)=\left\{ \begin{array}{cc}
0 & \text{ if }m\in\shH_{f}(q)\\
\dim_{C_{p_{C}}/p_{C}}(m_{B}/m_{B}^{2})_{\overline{m}} & \text{otherwise}
\end{array}\right.
\]
We also set 
\[
h_{f}(q)=\dim_{C_{p_{C}}/p_{C}}(m_{B}/m_{B}^{2})-\dim_{C_{p_{C}}/p_{C}}(m_{B}/m_{B}^{2})_{0}=(\sum_{m\in M}h_{f,m}(q))/|\shH_{f}(q)|
\]
If $\E\in\duale K_{+}$ we set $h_{\E,m}=h_{f,m}\comma h_{\E}=h_{f}\in\N$
where $f$ is the cover $\Spec A\arr\Spec k$ and $A$ is the algebra
given by multiplication $0^{\E}$ over some field $k$.
\end{defn}
The following lemma shows that the value of $h_{f,m}(q)$ does not
depend on the choice of the point $p\in X$ over $q\in Y$. 
\begin{lem}
\label{lem:showing that hm and h are well defined}Let $(A,m_{A})$
be a local ring, $B\in\MCov(A)$ given by the multiplication $\psi$
and $t\in M$. Set also $h_{B/A,t}=h_{B/A,t}(m_{A})$, for some choice
of a prime of $B$ over $m_{A}$. Then $h_{B/A,t}=1$ if and only
if the following conditions are satisfied:
\begin{itemize}
\item $t\notin H_{B/A}$;
\item for all $u,n\in M-H_{B/A}$ such that $u+n\equiv t\text{ mod }H_{B/A}$
we have $\psi_{u,n}\notin A^{*}$.
\end{itemize}
\end{lem}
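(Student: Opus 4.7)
\medskip

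\noindent\textbf{Proof plan.} The strategy is to reduce to the graded local structure of $B$ produced by Lemma \ref{lem:factorization of covers through torsors on local rings} and then read off the $\overline{t}$-graded piece of the cotangent space $m_B/m_B^2$. Write $H=H_{B/A}$. If $t\in H$, then by definition $h_{B/A,t}=0$, so the equivalence reads $0=1\iff\text{false}$, which is trivially correct; hence we reduce to the case $t\notin H$, i.e.\ $\overline{t}\neq 0$ in $M/H$. Applying Lemma \ref{lem:factorization of covers through torsors on local rings} with $H=H_{B/A}$ itself, the maximal torsor $C=\bigoplus_{h\in H}Av_h$ gives $B=\bigoplus_{s\in M/H}Cw_s$ with $w_s=v_{\sigma(s)}$, and the induced multiplications $\psi'$ satisfy $H_{B/C}=H/H=0$. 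After localizing at $p_C=p\cap C$, Remark \ref{rem: H is zero implies the algebra is local} applies and $B=(\odi{X,q})_{p_C}$ is local with maximal ideal $m_B=p_C C_{p_C}\oplus\bigoplus_{s\neq 0}C_{p_C}w_s$ and residue field $\kappa=C_{p_C}/p_C$.

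\medskip

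Next I would compute the $\overline{t}$-graded piece of $m_B/m_B^2$ explicitly. Since $(m_B)_0=p_CC_{p_C}$ and $(m_B)_s=C_{p_C}w_s$ for $s\neq 0$, a direct expansion yields
\[
(m_B^2)_{\overline{t}}=\Bigl(p_C+(\psi'_{\overline{u},\overline{n}}:\overline{u},\overline{n}\in M/H-\{0\},\ \overline{u}+\overline{n}=\overline{t})\Bigr)\,w_{\sigma(\overline{t})},
\]
using that $w_{\overline{u}}w_{\overline{n}}=\psi'_{\overline{u},\overline{n}}w_{\overline{t}}$ and that $(m_B)_0\cdot(m_B)_{\overline{t}}=p_CC_{p_C}w_{\sigma(\overline{t})}$. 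Consequently $(m_B/m_B^2)_{\overline{t}}$ is the quotient of the $1$-dimensional $\kappa$-vector space $\kappa\,w_{\sigma(\overline{t})}$ by the image of the ideal generated by those $\psi'_{\overline{u},\overline{n}}$. Hence its dimension is either $0$ or $1$, and equals $1$ precisely when every such $\psi'_{\overline{u},\overline{n}}$ lies in $p_CC_{p_C}$.

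\medskip

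Finally I would translate this condition from $\psi'$ back to $\psi$. By Lemma \ref{lem:factorization of covers through torsors on local rings}, each $\psi'_{\overline{u},\overline{n}}$ differs from $\psi_{\sigma(\overline{u}),\sigma(\overline{n})}$ by an element of $C^{*}$, so membership in $p_CC_{p_C}$ is unchanged; moreover the same lemma shows that $\psi_{u,n}$ depends on $(\overline{u},\overline{n})$ only up to a unit of $A$, so we may replace the representatives $\sigma(\overline{u}),\sigma(\overline{n})$ by arbitrary $u,n\in M-H$ with $u+n\equiv t\pmod{H}$. Since $\psi_{u,n}\in A$ and $p_C\cap A=m_A$, we have $\psi_{u,n}\in p_CC_{p_C}$ iff $\psi_{u,n}\in m_A$ iff $\psi_{u,n}\notin A^{*}$ (here $A$ being local is essential). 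Combining these steps gives the claimed equivalence.

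\medskip

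The main obstacle is not conceptual but bookkeeping: one has to carefully chase the different identifications between $\psi$, $\psi'$, the representatives $\sigma(\overline{u})$, and the prime $p_C$, and verify that the $M/H$-graded submodule $(m_B^2)_{\overline{t}}$ really has the described shape (in particular that no cross-terms from higher powers contribute beyond what is listed). Once these verifications are made, the dimension count follows immediately from the fact that a quotient of a one-dimensional space by an ideal is either $0$ or itself, and the criterion for non-vanishing is exactly the non-existence of a unit $\psi_{u,n}$ among the relevant pairs.
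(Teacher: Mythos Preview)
Your argument is correct and follows essentially the same route as the paper: reduce to the maximal torsor, use the graded local structure from Remark \ref{rem: H is zero implies the algebra is local}, and read off the $\overline{t}$-piece of the cotangent space. Your presentation is slightly more direct in that you compute $(m_B^2)_{\overline{t}}$ explicitly as $(p_CC_{p_C}+(\psi'_{\overline{u},\overline{n}}))\,w_{\overline{t}}$ and then take the quotient, whereas the paper handles the two implications separately (exhibiting $w_{\overline{t}}\in m_{B_p}^2$ when some $\psi_{u,n}$ is a unit, and conversely expanding $w_{\overline{t}}\in m_{B_p}^2$ to extract a unit among the $\psi'$); but the content is the same.
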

\begin{proof}
Let $C$ be the maximal torsor of the extension $B/A$ and $p$ be
a maximal prime of $B$. We use notation from \ref{lem:factorization of covers through torsors on local rings}.
For any $l\in M-H_{B/A}$ we have a surjective map
\[
k(p)=(m_{B_{p}}/pC_{p})_{\overline{l}}\arr(m_{B_{p}}/m_{B_{p}}^{2})_{\overline{l}}
\]
and so $\dim_{k(p)}(m_{B_{p}}/m_{B_{p}}^{2})_{\overline{l}}\in\{0,1\}$,
where $\overline{l}$ is the image of $l$ under the projection $M\arr M/H_{A/B}$.
If we prove the last part of the statement clearly we will also have
that $h_{B/A,t}$ is well defined. If $t\in H_{B/A}$ then $h_{B/A,t}=0$,
while if there exist $u,n$ as in the statement such that $\psi_{u,n}\in A^{*}$,
then $w_{\overline{t}}\in C_{p}^{*}w_{\overline{u}}w_{\overline{n}}\subseteq m_{B_{p}}^{2}$
and again $h_{B/A,t}=0$. On the other hand if $h_{B/A,t}=0$ and
$t\notin H_{B/A}$ then $w_{\overline{t}}\in m_{B_{p}}^{2}$ and therefore
we have an expression
\[
w_{\overline{t}}=bx+\sum_{\overline{u},\overline{n}\neq0}b_{\overline{u},\overline{n}}w_{\overline{u}}w_{\overline{n}}\text{ with }b,b_{\overline{u},\overline{n}}\in B_{p},x\in m_{C_{p}}
\]
The second sum splits as a sum of products of the form $c_{s,\overline{u},\overline{n}}w_{s}w_{\overline{u}}w_{\overline{n}}$
with $s+\overline{u}+\overline{n}=\overline{t}$ and $c_{s,\overline{u},\overline{n}}\in C_{p}$.
Since $C_{p}$ is local, one of these monomials generates $C_{p}w_{\overline{t}}$.
In this case, if $s+\overline{u}=0$ then $\overline{u}\in H_{B_{p}/C_{p}}=0$
which is not the case. So we have an expression 
\[
w_{\overline{t}}=\lambda w_{\overline{u}}w_{\overline{n}}=\lambda\psi'_{\overline{u},\overline{n}}w_{\overline{t}}\then\psi'_{\overline{u},\overline{n}}\in C_{p}^{*}
\]
where $\overline{u},\overline{n}\neq0$ and $\overline{u}+\overline{n}=\overline{t}$.
Since $\psi'_{\overline{u},\overline{n}}$ and $\psi_{u,n}$ differs
by an element of $C^{*}$ thanks to \ref{lem:factorization of covers through torsors on local rings},
it follows that $\psi_{u,n}\in A^{*}$.\end{proof}
\begin{prop}
We have maps   \[   \begin{tikzpicture}[xscale=3.0,yscale=-0.5]     \node (A0_0) at (0, 0) {$|\MCov|$};     \node (A0_1) at (1, 0) {$\{0,1\}$};     \node (A0_2) at (2, 0) {$|\MCov|$};     \node (A0_3) at (3, 0) {$\N$};     \node (A1_0) at (0, 1) {$B/k$};     \node (A1_1) at (1, 1) {$h_{B/k,m}$};     \node (A1_2) at (2, 1) {$B/k$};     \node (A1_3) at (3, 1) {$h_{B/k}$};     \path (A0_0) edge [->]node [auto] {$\scriptstyle{h_m}$} (A0_1);     \path (A1_0) edge [|->,gray]node [auto] {$\scriptstyle{}$} (A1_1);     \path (A0_2) edge [->]node [auto] {$\scriptstyle{h}$} (A0_3);     \path (A1_2) edge [|->,gray]node [auto] {$\scriptstyle{}$} (A1_3);   \end{tikzpicture}   \] 
such that, if $Y\arrdi u\MCov$ is given by $X\arrdi fY$, then $h_{f,m}=h_{m}\circ|u|$
and $h_{f}=h\circ|u|$.\end{prop}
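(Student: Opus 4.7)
The plan is to show that the invariants $h_{B/k,m}$ and $h_{B/k}$ depend only on the $\Di M$-cover point in $|\MCov|$, that is, are preserved by graded isomorphism and by base change of the ground field, and then that for a cover $f\colon X\to Y \in \MCov(Y)$ the local invariants $h_{f,m}(q), h_f(q)$ coincide with the invariants of the fiber at $q$. Both claims reduce to the explicit criterion of Lemma \ref{lem:showing that hm and h are well defined}, which expresses the condition $h_{B/A,t} = 1$, for $A$ local, purely in terms of the vanishing loci of the structure constants $\psi_{m,n}$.

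First I would verify the well-definedness of $h_m$ on $|\MCov|$. Over a field $k$, Lemma \ref{lem:showing that hm and h are well defined} says that $h_{B/k,m} = 1$ if and only if $m \notin H_{B/k}$ and $\psi_{u,n} = 0$ for every pair $u,n \in M - H_{B/k}$ with $u+n \equiv m \pmod{H_{B/k}}$. Both conditions depend only on which structure constants of $B$ vanish, so they are stable under graded $k$-algebra isomorphism and under extension of scalars $k \hookrightarrow k'$: a scalar is zero in $k$ if and only if it remains zero in $k'$. This is enough to descend $h_m$ from the groupoids $\MCov(k)$ to $|\MCov|$, following the pattern already used in Proposition \ref{pro:the H* is well defined as a map from MCov} for $\shH$.

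Next, for $f\colon X\to Y \in \MCov(Y)$ and $q\in Y$, I would compare the criterion for $h_{B/A,m}$ applied to $A = \odi{Y,q}$ and $B = \odi{X,q}$ with the criterion defining $h_m$ at the image of $q$ in $|\MCov|$, namely at the fiber $\bar B = B \otimes_A k(q)$. Since $A$ is local, an element of $A$ is a unit if and only if its reduction in $k(q)$ is nonzero; hence $H_{B/A}$ coincides with $H_{\bar B/k(q)}$, and the condition $\psi_{u,n} \notin A^*$ becomes $\psi_{u,n} = 0$ in $k(q)$. Lemma \ref{lem:showing that hm and h are well defined} then gives $h_{f,m}(q) = h_m(|u|(q))$, which is the first factorization. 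The claim for $h$ is then immediate from the identity $h_f(q) = \bigl(\sum_{m\in M} h_{f,m}(q)\bigr)/|\shH_f(q)|$ (a direct consequence of the definitions), since both ingredients on the right factor through $|u|$, the latter by Proposition \ref{pro:the H* is well defined as a map from MCov}; one defines $h$ on $|\MCov|$ by the same averaged formula. There is no genuine obstacle beyond unwinding the criterion of the preceding lemma; the only mildly subtle point is invariance under field extension in the first step, which is built into the elementary character of that criterion.
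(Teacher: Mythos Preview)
Your proof is correct and follows essentially the same route as the paper: both reduce the claim to the criterion of Lemma \ref{lem:showing that hm and h are well defined} together with Proposition \ref{pro:the H* is well defined as a map from MCov}, using the elementary observation that for a local ring $A$ an element of $A$ is a unit if and only if its image in any residue field is nonzero, so that the conditions ``$t\in H_{B/A}$'' and ``$\psi_{u,n}\in A^*$'' are detected on the fiber. The paper compresses this into a single sentence, while you spell out the two steps (well-definedness on $|\MCov|$, then compatibility with $|u|$) separately, but the content is the same.
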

\begin{proof}
Taking into account \ref{lem:showing that hm and h are well defined}
and \ref{pro:the H* is well defined as a map from MCov}, it is enough
to note that if $A$ is a local ring, $B\in\MCov(A)$ is given by
multiplications $\psi$ and $\pi\colon A\arr A/m_{A}\arr k$ is a
morphism, where $k$ is a field, then $\psi_{u,v}\in A^{*}\iff\pi(\psi_{u,v})\neq0$
and $H_{B/A}=H_{B\otimes_{A}k/k}$.\end{proof}
\begin{cor}
\label{cor:An algebra with H =00003D 0 is generated in the degrees where h=00003D1}Under
the hypothesis of \ref{lem:equivalent condition for an algebra to be generated in degrees m1,...,mr},
$\{m\in M\;|\; h_{B/A,m}=1\}$ is the minimum among the subsets $Q$
of $M$ such that $B$ is generated as an $A$-algebra in the degrees
$Q$. In particular $B$ is generated in $h_{B/A}$ degrees.\end{cor}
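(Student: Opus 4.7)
The plan is to combine Lemma \ref{lem:equivalent condition for an algebra to be generated in degrees m1,...,mr} with a graded Nakayama argument applied to the conormal space $m_B/m_B^2$. First I would recall that, by that lemma, $B$ is generated as an $A$-algebra in degrees $m_1,\dots,m_r$ if and only if $m_B=(m_A,v_{m_1},\dots,v_{m_r})_B$. Since $B$ is a finite $A$-module and since $H_{B/A}=0$ forces $B$ to be local with residue field $k=A/m_A$ (Remark \ref{rem: H is zero implies the algebra is local}), Nakayama's lemma reduces this to the condition that the images of $v_{m_1},\dots,v_{m_r}$, together with those of $m_A$, span $m_B/m_B^2$ as a $k$-vector space.

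The second step is to exploit the $M$-grading of $m_B/m_B^2$ that comes from Remark \ref{rem: H is zero implies the algebra is local}. The degree-$0$ piece of $m_B$ is $m_A$, so $m_AB$ already surjects onto $(m_B/m_B^2)_0$ and contributes nothing in other degrees (since $m_A\cdot v_m\subseteq(m_B^2)_m$ for $m\ne 0$). Thus the question reduces, degree by degree with $m\ne 0$, to whether the chosen generators hit $(m_B/m_B^2)_m$. A direct computation, parallel to the one in Lemma \ref{lem:showing that hm and h are well defined}, gives
\[
(m_B^2)_m=m_Av_m+\sum_{\substack{i+j=m\\ i,j\ne 0}}A\,\psi_{i,j}v_m,
\]
so $(m_B/m_B^2)_m$ is either $k\cdot\overline{v_m}$ or zero, precisely according to whether $h_{B/A,m}=1$ or $h_{B/A,m}=0$. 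Moreover, for any $m_i\ne m$ with $m_i\ne 0$, the element $v_{m_i}$ lives in a different graded component, so it cannot contribute to $(m_B/m_B^2)_m$.

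Putting these observations together, the chosen set $Q=\{m_1,\dots,m_r\}$ generates $B$ if and only if $Q$ contains every $m\in M$ with $h_{B/A,m}=1$. This proves at once that $Q^{*}:=\{m\in M\mid h_{B/A,m}=1\}$ is itself a generating set and that it is the minimum such set. The final assertion follows since $H_{B/A}=0$ gives $|H_{B/A}|=1$, so by definition $h_{B/A}=\sum_{m\in M}h_{B/A,m}=|Q^{*}|$, hence $B$ is generated in exactly $h_{B/A}$ degrees.

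The only slightly delicate point will be making sure that no mixed product $v_{m_i}\cdot b$ with $b\in B_{m-m_i}$ sneaks into $(m_B/m_B^2)_m$ for a degree $m\notin Q$; but since all such products satisfy $v_{m_i}\cdot v_{m-m_i}=\psi_{m_i,m-m_i}v_m$ and $m_i,m-m_i\ne 0$, they automatically lie in $(m_B^2)_m$ and vanish modulo $m_B^2$. Beyond this bookkeeping, the argument is a clean graded Nakayama, so I do not anticipate a serious obstacle.
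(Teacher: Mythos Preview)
Your proposal is correct and follows precisely the intended route: the corollary is stated in the paper without proof because it is meant to be an immediate consequence of Lemma~\ref{lem:equivalent condition for an algebra to be generated in degrees m1,...,mr} together with the identification $h_{B/A,m}=\dim_k (m_B/m_B^2)_m$ (which is essentially the definition of $h_{B/A,m}$ when $H_{B/A}=0$, and is made explicit in Lemma~\ref{lem:showing that hm and h are well defined}). Your graded Nakayama argument is exactly how one unpacks this, and the bookkeeping you flag at the end is handled correctly.
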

\begin{prop}
Let $(A,m_{A})$ be a local ring, $B\in\MCov(A)$ and $C$ the maximal
torsor of $B/A$. Then
\[
h_{B/A}(m_{A})=\dim_{k(p)}\Omega_{B/C}\otimes_{B}k(p)
\]
for any maximal prime $p$ of $B$. In particular if $(|H_{B/A}|,\car A/m_{A})=1$
we also have $h_{B/A}(m_{A})=\dim_{k(p)}\Omega_{B/A}\otimes_{B}k(p)$
for any maximal prime $p$ of $B$.\end{prop}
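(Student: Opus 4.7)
The plan is to reduce the computation of both sides to an explicit calculation in the local ring $B_p$, identify the cotangent space of the fiber $B_p \otimes_{C_{p_C}} k(p_C)$, and match it with the graded description of $m_{B_p}/m_{B_p}^2$ used to define $h_{B/A}$.

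First I would set up the local picture. Writing $H = H_{B/A}$ and $p_C = p \cap C$, Lemma \ref{lem:factorization of covers through torsors on local rings} exhibits $B$ as an element of $\RCov{M/H}(C)$ with $H_{B/C}=0$, and Remark \ref{rem: H is zero implies the algebra is local} applied to the local ring $C_{p_C}$ then shows that $B \otimes_C C_{p_C}$ is local with maximal ideal
\[
m_{B_p} = m_{C_{p_C}} \oplus \bigoplus_{\overline m \in M/H -\{0\}} C_{p_C} v_{\sigma(\overline m)},
\]
and residue field $k(p)=k(p_C)$. So $B_p = B\otimes_C C_{p_C}$ is local over $C_{p_C}$ with trivial residue field extension, which is exactly the setting in which the cotangent space of the geometric fiber computes the Kähler differentials at $p$.

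Next I would compute both sides and identify them. By the standard identification for a local homomorphism of local rings with trivial residue field extension,
\[
\Omega_{B/C}\otimes_B k(p) \;\simeq\; \bar m/\bar m^{2}, \qquad \bar B := B_p/m_{C_{p_C}}B_p, \quad \bar m = \text{maximal ideal of }\bar B.
\]
The algebra $\bar B$ has $k(p)$-basis $\{v_{\sigma(\overline m)}\}_{\overline m\in M/H}$ with multiplication given by the reductions $\bar\psi'_{\overline u,\overline n}$ of the multiplication constants over $C$, and since $H_{B/C}=0$ all $\bar\psi'_{\overline u,-\overline u}$ vanish. Thus $\bar m/\bar m^2$ is $M/H$-graded and the degree-$\overline t$ piece (for $\overline t\neq 0$) has dimension $1$ precisely when every $\bar\psi'_{\overline u,\overline n}$ with $\overline u,\overline n\neq 0$ and $\overline u+\overline n=\overline t$ vanishes. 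By Lemma \ref{lem:showing that hm and h are well defined} this is exactly the condition $h_{B/A,t}(m_A)=1$ for any lift $t$ of $\overline t$, so $\dim_{k(p)} \bar m/\bar m^2 = \sum_{\overline t\in M/H-\{0\}} h_{B/A,\sigma(\overline t)}(m_A)$, which by the very definition of $h_{B/A}$ equals $h_{B/A}(m_A)$. This gives the first equality.

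For the second equality, the hypothesis $(|H|, \car A/m_A)=1$ implies that $|H|$ is invertible in the residue field of the local ring $A$, hence invertible in $A$ itself. Consequently $\Di H$ is étale over $A$, so the $\Di H$-torsor $C\to \Spec A$ is étale, and in particular $\Omega_{C/A}=0$. The relative cotangent sequence
\[
\Omega_{C/A}\otimes_C B \longrightarrow \Omega_{B/A} \longrightarrow \Omega_{B/C} \longrightarrow 0
\]
then yields a surjection $\Omega_{B/A}\twoheadrightarrow \Omega_{B/C}$; since $C/A$ is étale (hence formally smooth), this sequence is in fact short exact, so $\Omega_{B/A}\simeq \Omega_{B/C}$ and the second formula follows after tensoring with $k(p)$. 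The main subtlety of the argument is purely bookkeeping: correctly matching the $M/H$-graded decomposition of $m_{B_p}/m_{B_p}^2$ used in the definition of $h_{B/A}$ with the cotangent space of the geometric fiber $\bar B$; once this identification is made, the cotangent sequence argument for the second statement is routine.
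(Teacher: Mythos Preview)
Your argument is correct. The route differs from the paper's in one technical respect: the paper writes down the explicit presentation
\[
\Omega_{B/C}=B^{M/H}/\langle e_{0},\, w_{n}e_{m}+w_{m}e_{n}-\psi'_{m,n}e_{m+n}\rangle
\]
coming from the universal property of K\"ahler differentials, tensors with $k(p)$ (killing all $w_{m}$ with $m\neq 0$), and reads off directly that the surviving generators $e_{m}$ are exactly those counted by Lemma~\ref{lem:showing that hm and h are well defined}. You instead use base change of differentials to identify $\Omega_{B/C}\otimes_B k(p)$ with $\bar m/\bar m^{2}$ for the fiber algebra $\bar B=B_p\otimes_{C_{p_C}}k(p_C)$, and then analyse that graded space. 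Both approaches reduce to the same bookkeeping over $k(p)$; yours is slightly more geometric and has the virtue of making the link with the definition of $h_{B/A}$ (via $m_{B_p}/m_{B_p}^2$) transparent, since one checks immediately that $\bar m/\bar m^{2}$ is exactly the nonzero-degree part of $m_{B_p}/m_{B_p}^{2}$. Your treatment of the ``In particular'' clause via the cotangent sequence and \'etaleness of $C/A$ is also fine (and more explicit than the paper, which leaves this step implicit). One cosmetic point: strictly speaking it is $H_{B_p/C_{p_C}}=0$ that you use, which follows from $H_{\psi'}=H_{\psi}/H=0$ together with the fact that $A\to C_{p_C}$ is a local map; your phrasing ``$H_{B/C}=0$'' is harmless since both statements hold.
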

\begin{proof}
If $A$ is any ring and $B\in\MCov(A)$ is given by basis $\{v_{m}\}_{m\in M}$
and multiplication $\psi$ one sees from the universal property that
\[
\Omega_{B/A}=B^{M}/\langle e_{0},v_{n}e_{m}+v_{m}e_{n}-\psi_{m,n}e_{m+n}\rangle
\]
Now consider $B\in\GrCov{M/H}(C)$, where $H=H_{B/A}$ and let $p$
be a maximal prime of $B$. Following the notation of \ref{lem:factorization of covers through torsors on local rings},
we have that $w_{m}\in p$ for any $m\in M/H-\{0\}$ and $\psi_{m,n}'\in p\iff\psi_{m,n}\in m_{A}$.
So $\Omega_{B/C}\otimes_{B}k(p)$ is free on the $e_{m}$ for $m\in M/H-\{0\}$
such that for any $u,n\in M/H-\{0\}$, $u+n=m$ implies $\psi_{u,n}\notin A^{*}$,
that are exactly $h_{B/A}(m_{A})$ thanks to \ref{lem:showing that hm and h are well defined}.\end{proof}
\begin{cor}
The function $h$ is upper semicontinuous.\end{cor}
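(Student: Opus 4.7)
The plan is to show that for every map $u\colon Y \arr \MCov$ classifying a cover $f\colon X \arr Y$, the composite $h_f = h \circ |u|\colon |Y| \arr \N$ is upper semicontinuous; by the definition of the topology on the algebraic stack $|\MCov|$, this is equivalent to upper semicontinuity of $h$ itself. I fix $q_0 \in Y$, set $H_0 = \shH_f(q_0)$, and aim to produce an open neighborhood of $q_0$ on which $h_f \leq h_f(q_0)$.

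First I would observe that $V = \{q \in Y \st H_0 \subseteq \shH_f(q)\}$ is an open neighborhood of $q_0$, because it is the complement of the vanishing loci of the sections $\psi_{h,-h}$ for $h \in H_0$, each of which is invertible at $q_0$. By the characterization of torsors in \ref{pro:equivalent conditions for a D(M)-torsor}, the $M$-graded subalgebra $\alA^{H_0} = \bigoplus_{m \in H_0}\shL_m$ of $\alA = f_*\odi X$ restricts on $V$ to a $\Di H_0$-torsor over $\odi V$, and $X|_V \arr \Spec\alA^{H_0}|_V$ becomes a $\Di(M/H_0)$-cover. The key coherent sheaf is then $\shF = \Omega_{X|_V / \Spec\alA^{H_0}|_V}$ on $X|_V$.

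For any $q \in V$ with $H = \shH_f(q) \supseteq H_0$, the maximal torsor of $\odi{X,q}/\odi{Y,q}$ is $\alA^H_q$, and it contains $\alA^{H_0}_q$. The induced surjection $\Omega_{\odi{X,q}/\alA^{H_0}_q} \twoheadrightarrow \Omega_{\odi{X,q}/\alA^H_q}$ combined with the preceding Proposition gives
\[
h_f(q) \ = \ \dim_{k(p)}\Omega_{\odi{X,q}/\alA^H_q}\otimes k(p) \ \leq \ \dim_{k(p)}\shF\otimes k(p)
\]
for every point $p$ of $X$ over $q$, and moreover equality holds at $q = q_0$ since $H_0 = \shH_f(q_0)$ means precisely that $\alA^{H_0}_{q_0}$ is the maximal torsor there. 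To finish I would invoke standard semicontinuity: the function $p \mapsto \dim_{k(p)}\shF\otimes k(p)$ is upper semicontinuous on $X|_V$, and because $f$ is finite, hence closed, the ``max along fibers'' $g(q) = \max_{p \in f^{-1}(q)}\dim_{k(p)}\shF\otimes k(p)$ is upper semicontinuous on $V$: the locus $\{g \geq n\}$ is exactly the $f$-image of the closed set $\{\dim\geq n\}$. Since $h_f \leq g$ on $V$ with equality at $q_0$, upper semicontinuity of $g$ at $q_0$ transfers immediately to $h_f$. The only genuinely delicate step is the identification of $\alA^{H_0}_{q_0}$ with the maximal torsor at $q_0$, but this is tautological from the definition of $H_0$.
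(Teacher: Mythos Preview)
Your proof is correct and follows essentially the same approach as the paper. Both arguments restrict to the open locus where the intermediate $\Di H_0$-subalgebra is a torsor, consider the relative K\"ahler differentials $\Omega_{X/\Spec\alA^{H_0}}$, use the surjection $\Omega_{\odi{X,q}/\alA^{H_0}_q}\twoheadrightarrow\Omega_{\odi{X,q}/\alA^{H}_q}$ together with the preceding Proposition to bound $h_f(q)$ by $\dim_{k(p)}\shF\otimes k(p)$, and then invoke standard upper semicontinuity of fiber dimensions. The only cosmetic difference is that the paper pushes forward an open neighborhood of one point $p$ via the openness of the flat finite map $f$, whereas you push forward the closed locus $\{\dim\geq n\}$ via the closedness of $f$; both packagings are valid here.
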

\begin{proof}
Let $X\arrdi fY$ be a $\Di M$-cover and $q\in Y$. Set $r=h_{f}(q)$
and $H=\shH_{f}(q)$. We can assume that $Y=\Spec A$, $X=\Spec B$
with graded basis $\{v_{m}\}_{m\in M}$ and multiplication $\psi$
and that $\psi_{m,-m}\in A^{*}$ for any $m\in H$. Set $C=A[v_{m}]_{m\in H}$.
The ring $C_{q}$ is the maximal torsor of $B_{q}/A_{q}$ and so,
if $p\in X$ is a point over $q$, we have $r=\dim_{k(p)}\Omega_{B/C}\otimes_{B}k(p)$.
Finally let $U\subseteq X$ be an open neighborhood of $p$ such that
$\dim_{k(p')}\Omega_{B/C}\otimes_{B}k(p')\leq r$ for any $p'\in U$
and $V=f(U)$. We want to prove that $h\leq r$ on $V$. Indeed given
$q'=f(p')\in V$, if $D$ is the maximal torsor of $B_{q'}/A_{q'}$,
we have $C_{q'}\subseteq D\subseteq B_{q'}$. So 
\[
h_{f}(q')=\dim_{k(p')}\Omega_{B_{q'}/D}\otimes_{B_{q'}}k(p')\leq\dim_{k(p')}\Omega_{B_{q'}/C_{q'}}\otimes_{B_{q'}}k(p')\leq r
\]
\end{proof}
\begin{rem}
\label{rem:The 0 section and the 0 algebras}The $0$ section $R_{M}\arr\Z$,
i.e. the map that sends any $x_{m,n}$ with $m,n\neq0$ to zero, induces
a closed immersion
\[
\Picsh^{|M|-1}\simeq\Bi\shT=[\Spec\Z/\shT]\subseteq[\Spec R_{M}/\shT]\simeq\MCov
\]
where $\shT=\Di{\Z^{M}/\langle e_{0}\rangle}$.\end{rem}
\begin{prop}
The following results hold:
\begin{enumerate}
\item $\{h=0\}=|\Bi\Di M|$;
\item $\{h\geq|M|\}=\emptyset$;
\item $\{h=|M|-1\}=|\Bi\Di{\Z^{M}/\langle e_{0}\rangle}|$ (see \ref{rem:The 0 section and the 0 algebras})
\end{enumerate}
\end{prop}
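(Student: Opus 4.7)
The plan is to handle all three items by reducing to the local picture used in the definition of $h$ and then doing a short computation. For a point of $|\MCov|$ given by $B/k$, write $H=\shH(B/k)$, let $C$ be the maximal torsor of $B/k$, pick a maximal $p\subseteq B$, set $p_C=p\cap C$ and $k'=C_{p_C}/p_C$. Then $B_p$ is local with residue field $k'$, and $B_p\in\GrCov{M/H}(C_{p_C})$ with $H_{B_p/C_{p_C}}=0$. Since $h_{f,m}=0$ for $m\in H$ and $h_{f,m}$ depends only on the class $\bar m\in M/H$, the definition rewrites as
\[
h(B/k)=\sum_{\bar t\in M/H-\{0\}}\dim_{k'}(m_{B_p}/m_{B_p}^{2})_{\bar t}.
\]
For $\bar t\neq 0$ the graded piece $(m_{B_p}/m_{B_p}^{2})_{\bar t}$ is generated as a $k'$-module by the image of $w_{\bar t}$, so each summand lies in $\{0,1\}$. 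This yields $h(B/k)\leq|M/H|-1\leq|M|-1$, proving (2).

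For (1), suppose $h(B/k)=0$. Then $(m_{B_p}/m_{B_p}^{2})_{\bar t}=0$ for every $\bar t\neq 0$. Set $B'=B_p/p_CB_p=\bigoplus_{\bar t\in M/H}k'w_{\bar t}$, a finite local $k'$-algebra with maximal ideal $m_{B'}=\bigoplus_{\bar t\neq 0}k'w_{\bar t}$. Reducing the vanishing condition modulo $p_CB_p$ gives $w_{\bar t}\in m_{B'}^{2}$ for every $\bar t\neq 0$, hence $m_{B'}\subseteq m_{B'}^{2}$. Because $m_{B'}$ is a finite-dimensional nilpotent ideal, Nakayama forces $m_{B'}=0$, i.e.\ $M/H=0$, so $H=M$. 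By Proposition~\ref{pro:equivalent conditions for a D(M)-torsor} applied to $B_p$, this means $B/k$ is a $\Di M$-torsor. Conversely, if $B/k\in\Bi\Di M$ then $H=M$ and the defining sum for $h$ is empty, so $h=0$.

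For (3), the equality $h(B/k)=|M|-1$ forces $|M/H|=|M|$, hence $H=0$, and also forces every summand to equal $1$. When $H=0$ the maximal torsor $C$ is trivial, so $C=k$ and the reduced multiplications $\psi'_{u,n}$ of Lemma~\ref{lem:factorization of covers through torsors on local rings} coincide, up to units, with the $\psi_{u,n}$. By Lemma~\ref{lem:showing that hm and h are well defined} applied to each $t\neq 0$, the condition $h_{B/k,t}=1$ is equivalent to: for all $u,n\in M-\{0\}$ with $u+n=t$, $\psi_{u,n}\notin k^{*}$, i.e.\ $\psi_{u,n}=0$. Combining this (over all $t\neq 0$) with $H=0$ (which says $\psi_{m,-m}=0$ for all $m\neq 0$) gives $\psi_{u,n}=0$ for every $u,n\neq 0$. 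Thus $B/k$ is the ``zero algebra'' arising from the zero section $R_M\arr\Z$, i.e.\ $B/k$ represents a point of $|\Bi\Di{\Z^M/\langle e_0\rangle}|$ as in Remark~\ref{rem:The 0 section and the 0 algebras}. Conversely, the zero algebra satisfies $H=0$ and $w_t\notin m_B^{2}$ for each $t\neq 0$ (since $m_B^{2}=0$), giving $h=|M|-1$.

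The arguments are essentially unpackings of the definitions together with one Nakayama step and one application of Lemma~\ref{lem:showing that hm and h are well defined}; the mild point to keep straight is the two-layer reduction $k\subseteq C_{p_C}\subseteq B_p$, which is needed so that the Nakayama argument in (1) is carried out in $B'=B_p/p_CB_p$ rather than directly in $B_p$ (where the degree-zero part is not just $k'$).
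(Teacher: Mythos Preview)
Your proof is correct and follows essentially the same route as the paper's. The paper handles (1) and (2) by citing Corollary~\ref{cor:An algebra with H =00003D 0 is generated in the degrees where h=00003D1} (the minimal generating set has size $h$), whereas you unpack that corollary into a direct Nakayama step on $B'=B_p/p_CB_p$; both arguments amount to the same observation that $(m_{B_p}/m_{B_p}^2)_{\bar t}\in\{0,1\}$ and that vanishing of all these pieces forces $M/H=0$. For (3) your argument is identical to the paper's. One small wording point: once you have $H=M$, the conclusion that $B/k$ is a torsor is more directly a consequence of Lemma~\ref{lem:factorization of covers through torsors on local rings} (the maximal torsor is $B$ itself) than of Proposition~\ref{pro:equivalent conditions for a D(M)-torsor} ``applied to $B_p$''.
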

\begin{proof}
If $X\arrdi fY$ is a $\Di M$-torsor, clearly $h_{f}=0$. So $1)$
and $2)$ follow from \ref{cor:An algebra with H =00003D 0 is generated in the degrees where h=00003D1}.
Finally, if $B\in\MCov(k)$ with multiplication $\psi$, $h_{B/k}=|M|-1$
if and only if $H_{B/k}=0$ and $h_{B/k,m}=1$ for all $m\in M-\{0\}$.
This means that $\psi_{m,n}=0$ for any $m,n\neq0$ by \ref{lem:showing that hm and h are well defined}.
\end{proof}
In particular, setting $U_{i}=\{h\leq i\}$, we obtain a stratification
$\Bi\Di M=U_{0}\subseteq U_{1}\subseteq\cdots\subseteq U_{|M|-1}=\MCov$
of $\MCov$ by open substacks.

\subsection{The locus $h\leq1$.}

In this subsection we want to describe $\Di M$-covers with $h\leq1$.
This means that 'up to torsors' we have a graded $M$-algebra generated
over the base ring in one degree. We will see that $\{h\leq1\}$ is
a smooth open substack of $\stZ_{M}$ determined by a special class
of explicit smooth  extremal rays of $K_{+}$. This will allow us
to give a description of normal $\Di M$-covers over locally noetherian
and locally factorial scheme $X$ with $(\car X,|M|)=1$. Such a description,
when $X$ is a smooth algebraic variety over an algebraic closed field
$k$ was already given in \cite[Theorem 2.1, Corollary 3.1]{Pardini1991}.
\begin{notation}
Given $\E\in\duale K_{+}$ we will write $\E_{m,n}=\E(v_{m,n})$.
Since $K\otimes\Q\simeq\Q^{M}/\langle e_{0}\rangle$ we will also
write $\E_{m}=\E(e_{m})\in\Q$, so that $\E_{m,n}=\E_{m}+\E_{n}-\E_{m+n}$.
When we will have to consider different abelian groups, we will write
$K_{+M}$, $K_{M}$ instead of, respectively, $K_{+}$, $K$, in order
to avoid confusion. Given a group homomorphism $\eta\colon M\arr N$
we will denote by $\eta_{*}\colon K_{M}\arr K_{N}$ the homomorphism
such that $\eta_{*}(v_{m,n})=v_{\eta(m),\eta(n)}$ for all $m,n\in M$,
where $K_{M}$ is the group associated to $K_{+}$,\end{notation}
\begin{rem}
Let $A$ be a ring and consider a sequence $\underline{\E}=\E^{1},\dots,\E^{r}\in\duale K_{+}$.
An element of $\stF_{\underline{\E}}(A)$ coming from the atlas (see
\ref{rem:description of objects of FE}) is given by a pair $(\underline{z},\lambda)$
where $\underline{z}=z_{1},\dots,z_{r}\in A$ and $\lambda\colon K\arr A^{*}$.
The image of this object under $\pi_{\underline{\E}}$ is the algebra
whose multiplication is given by $\psi_{m,n}=\lambda_{m,n}^{-1}z_{1}^{\E_{m,n}^{1}}\cdots z_{r}^{\E_{m,n}^{r}}$.\end{rem}
\begin{lem}
\label{lem:comparison smooth sequences for M covers}Let $\eta\colon M\arr N$
be a surjective morphism and $\underline{\E}$ be a sequence in $\duale{(K_{+N})}$.
Then $\underline{\E}$ is a smooth sequence for $N$ if and only if
$\underline{\E}\circ\eta_{*}$ is a smooth sequence for $M$.\end{lem}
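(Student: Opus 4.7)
The plan is to reduce the statement to Lemma \ref{lem:comparison smooth sequences for different monoids} applied with $T_+ = K_{+M}$, $T_+' = K_{+N}$, and $h = \eta_*\colon K_M \arr K_N$. Note that $\eta_*(v_{m,m'}) = v_{\eta(m),\eta(m')}$, so $\eta_*$ does restrict to a monoid map $K_{+M} \arr K_{+N}$, and the factorization $\underline\E \circ \eta_* = (\underline\E \circ \eta_*)_{|K_{+M}}$ is exactly the one in the statement. Therefore, once the hypotheses of that lemma are verified, the conclusion follows immediately.

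Two things must be checked: (i) $\eta_*(K_{+M}) = K_{+N}$, and (ii) $\Ker \eta_* = \langle \Ker \eta_* \cap K_{+M}\rangle_\Z$. Item (i) is straightforward: given a generator $v_{n,n'}$ of $K_{+N}$, the surjectivity of $\eta$ lets us lift $n,n'$ to $m,m' \in M$, and $\eta_*(v_{m,m'}) = v_{n,n'}$. Item (ii) is the core of the argument; I expect this to be the main obstacle, since one needs to write an arbitrary element of $\Ker\eta_*$ as an integral combination of generators $v_{m,m'}$ with $\eta(m) = 0$ or $\eta(m') = 0$ (these are precisely the $v_{m,m'}\in K_{+M} \cap \Ker\eta_*$, since $v_{\eta(m),\eta(m')} = 0$ in $K_N$ forces one of $\eta(m),\eta(m')$ to vanish).

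To prove (ii), set $H = \Ker\eta$ and let $G \subseteq K_M$ be the subgroup generated by $\{v_{h,m} \st h\in H,\ m\in M\}$; these generators lie in $\Ker\eta_*\cap K_{+M}$, so it suffices to show $\Ker\eta_* \subseteq G$. The strategy is to fix a set-theoretic section $s\colon N\arr M$ of $\eta$ with $s(0)=0$, so that every $m\in M$ decomposes as $m = s(\eta(m)) + h(m)$ with $h(m)\in H$. For each nonzero $n\in N$ and each $m\in\eta^{-1}(n)$, the identity
\[
e_m \;=\; e_{s(n)} + e_{h(m)} - v_{s(n),h(m)}
\]
holds in $\Z^M/\langle e_0\rangle$, and the last term lies in $G$. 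Writing a general $v = \sum a_m e_m \in \Ker\eta_*$, the fiber-wise condition $\sum_{\eta(m)=n}a_m = 0$ (for $n\neq 0$) kills the $e_{s(n)}$ contributions; the remaining pieces regroup into a $G$-term plus a sum $\sum_{h\in H} c_h e_h$ supported on $H$. Finally, the condition $v \in K_M$ (that is, $\sum a_m m = 0$ in $M$) translates, modulo the vanishing of the $e_{s(n)}$-components, into $\sum c_h h = 0$ in $H$, so this leftover lies in $K_H \subseteq K_M$; but $K_H$ is the $\Z$-span of $v_{h,h'}$ for $h,h'\in H$, all of which sit in $G$. Hence $v\in G$ and (ii) follows.

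With (i) and (ii) in hand, Lemma \ref{lem:comparison smooth sequences for different monoids} yields the equivalence: $\underline\E$ is a smooth sequence for $K_{+N}$ if and only if $\underline\E\circ\eta_*$ is a smooth sequence for $K_{+M}$. The only delicate point, as noted, is the explicit decomposition in (ii); once the bookkeeping with the section $s$ is set up, everything collapses to the tautology that $K_H$ is generated by its positive part.
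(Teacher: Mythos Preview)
Your proposal is correct and follows essentially the same approach as the paper: both reduce to Lemma~\ref{lem:comparison smooth sequences for different monoids} and both verify hypothesis (ii) by showing that $\Ker\eta_* = G := \langle v_{h,m} : h\in H,\ m\in M\rangle_\Z$, using the identity $e_{m+h}-e_m-e_h=-v_{m,h}\in G$. The only cosmetic difference is in the packaging of (ii): you fix a section $s\colon N\to M$ and explicitly decompose an arbitrary $v\in\Ker\eta_*$ into a $G$-term plus a leftover in $K_H\subseteq G$, whereas the paper passes to the larger map $f\colon\Z^M/\langle e_0\rangle\to\Z^N/\langle e_0\rangle$, notes that $\Ker f/\Ker\eta_*\simeq H$ via the projection $e_m\mapsto m$, and then observes that $h\mapsto e_h$ gives a surjection $H\twoheadrightarrow\Ker f/G$ whose composite with $\Ker f/G\twoheadrightarrow\Ker f/\Ker\eta_*\simeq H$ is the identity, forcing $G=\Ker\eta_*$. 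Your explicit decomposition and the paper's quotient comparison encode the same computation.
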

\begin{proof}
We want to apply \ref{lem:comparison smooth sequences for different monoids}.
Therefore we have to prove that $\eta_{*}(K_{+M})=K_{+N}$, which
is clear, and that $\Ker\eta_{*}=\langle\Ker\eta_{*}\cap K_{+N}\rangle$.
Consider the map $f\colon\Z^{M}/\langle e_{0}\rangle\arr\Z^{N}/\langle e_{0}\rangle$
given by $f(e_{m})=e_{\eta(m)}$ and set $H=\Ker\eta$. Clearly $f_{|K_{M}}=\eta_{*}$.
It is easy to check that $G=\langle v_{m,n}\text{ for }m\in H\rangle_{\Z}\subseteq\Ker\eta^{*}\subseteq\Ker f$
and that $\Ker f/\Ker\eta_{*}\simeq H$. So in order to conclude,
it is enough to note that the map $H\arr\Ker f/G$ sending $h$ to
$e_{h}$ is a surjective group homomorphism since we have relations
$e_{h}+e_{h'}-e_{h+h'}=v_{h,h'}$ and $e_{m+h}-e_{m}=e_{h}-v_{m,h}$
for $m\in M$ and $h,h'\in H$.\end{proof}
\begin{prop}
\label{pro:Rita's smooth integral extremal rays}Let $\eta\colon M\arr\Z/l\Z$
be a surjective homomorphism with $l>1$. Then 
\[
\E^{\eta}(v_{m,n})=\left\{ \begin{array}{cc}
0 & \text{if }\eta(m)+\eta(n)<l\\
1 & \text{otherwise}
\end{array}\right.
\]
defines a smooth  extremal ray for $K_{+}$.\end{prop}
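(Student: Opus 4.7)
The plan is to reduce the statement to the cyclic case $M=\Z/l\Z$ with $\eta=\id$ and then verify the conditions of Proposition \ref{lem:equivalent condition for a smooth integral extremal ray} by hand. Observe that the prescription for $\E^\eta$ depends on $m,n\in M$ only through their images $\eta(m),\eta(n)\in\Z/l\Z$, viewed as integers in $\{0,\dots,l-1\}$. In other words, if $\eta_*\colon K_M\to K_{\Z/l\Z}$ denotes the induced homomorphism on the $K$-groups, then $\E^\eta=\E^{\id}\circ\eta_*$. By Proposition \ref{lem:equivalent condition for a smooth integral extremal ray}, saying that $\E^{\id}$ (respectively $\E^\eta$) is a smooth extremal ray is the same as saying that the one-element sequence $\{\E^{\id}\}$ (respectively $\{\E^\eta\}$) is a smooth sequence. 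Since $\eta_*$ takes $K_{+M}$ onto $K_{+\Z/l\Z}$, Lemma \ref{lem:comparison smooth sequences for M covers} then lets us replace $(M,\eta)$ by $(\Z/l\Z,\id)$.

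So I now take $M=\Z/l\Z$ and $\eta=\id$. First, I would make $\E^{\id}$ manifestly well-defined as a ray by extending it to the rational map $\widehat{\E}\colon\Z^M/\langle e_0\rangle\otimes\Q\to\Q$ sending $e_i\mapsto i/l$ for $i\in\{0,\dots,l-1\}$. Since $K$ is the kernel of the projection $\Z^M/\langle e_0\rangle\to M$ sending $e_i$ to $\bar i$, any $\sum a_ie_i\in K$ satisfies $\sum i a_i\in l\Z$, so $\widehat{\E}$ restricts to an integer-valued additive map $K\to\Z$; a direct check on $v_{i,j}=e_i+e_j-e_{i+j}$ shows this coincides with the formula in the statement (distinguishing the cases $i+j<l$ and $i+j\geq l$, where in the latter case $e_{i+j}$ represents $e_{i+j-l}$ in $\Z^M/\langle e_0\rangle$). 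In particular $\E^{\id}\in\duale{K_+}$, and $\E^{\id}(v_{1,l-1})=1$, giving the element required by Proposition \ref{lem:equivalent condition for a smooth integral extremal ray}.

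The main computation is to exhibit an explicit $\Z$-basis of $K$ on which $\E^{\id}$ is easy to read off. I claim that
\[
K=\langle\, le_1,\;v_{1,1},\;v_{1,2},\;\dots,\;v_{1,l-2}\,\rangle_\Z.
\]
All these elements belong to $K$, there are $l-1=\rk K$ of them, and the matrix of their coordinates in the basis $e_1,\dots,e_{l-1}$ of $\Z^M/\langle e_0\rangle$ is upper triangular with diagonal $(l,-1,\dots,-1)$, hence of determinant $\pm l$; since $K$ itself has index $l$ in $\Z^M/\langle e_0\rangle$, the containment is an equality. Evaluating $\E^{\id}$ on this basis gives $\E^{\id}(le_1)=1$ and $\E^{\id}(v_{1,k})=0$ for $1\leq k\leq l-2$. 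Consequently $\Ker\E^{\id}=\langle v_{1,1},\dots,v_{1,l-2}\rangle_\Z$ is generated by elements of $K_+$, and together with $\E^{\id}(v_{1,l-1})=1$ this verifies the characterization in Proposition \ref{lem:equivalent condition for a smooth integral extremal ray}. Thus $\E^{\id}$ is a smooth extremal ray for $K_{+,\Z/l\Z}$, and by the reduction in the first paragraph, so is $\E^\eta$ for any surjective $\eta\colon M\to\Z/l\Z$. The only step that requires real work is producing the basis of $K$; once one writes down $le_1$ and the chain $v_{1,k}=e_1+e_k-e_{k+1}$, everything becomes an index computation.
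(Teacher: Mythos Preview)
Your proof is correct and follows essentially the same strategy as the paper: reduce to the cyclic case via $\E^\eta=\E^{\id}\circ\eta_*$ and Lemma \ref{lem:comparison smooth sequences for M covers}, then verify the criterion of Proposition \ref{lem:equivalent condition for a smooth integral extremal ray} using an explicit $\Z$-basis of $K$. The only cosmetic difference is your choice of basis: the paper takes $v_{1,1},\dots,v_{1,l-1}$ (all in $K_+$, with $\E^{\id}$ vanishing on all but the last), whereas you replace $v_{1,l-1}$ by $le_1=\sum_{k=1}^{l-1}v_{1,k}$ and then separately invoke $v_{1,l-1}$ to get an element of $K_+$ with $\E^{\id}$-value $1$; both lead to the same conclusion. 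Your rational extension $e_i\mapsto i/l$ is actually the precise normalization matching the formula in the statement, while the paper's integer extension $e_m\mapsto\sigma(\eta(m))$ restricts to $l\E^\eta$ on $K$ rather than $\E^\eta$ itself (still sufficient to show membership in $\duale{K_+}$).
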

\begin{proof}
$\E^{\eta}\in\duale K_{+}$ because, if $\sigma\colon\Z/l\Z\arr\N$
is the obvious section, $\E^{\eta}$ is the restriction of the map
$\Z^{M}/\langle e_{0}\rangle\arr\Z$ sending $e_{m}$ to $\sigma(\eta(m))$.
In order to conclude the proof, we will apply \ref{lem:comparison smooth sequences for M covers}
and \ref{lem:equivalent condition for a smooth integral extremal ray}.
Set $N=\Z/l\Z$. One clearly has $\E^{\eta}=\E^{\id}\circ\eta_{*}$
and so we can assume $M=\Z/l\Z$ and $\eta=\id$. In this case one
can check that $v_{1,1},v_{1,2},\dots,v_{1,l-1}$ is a $\Z$-base
of $K$ such that $\E^{\eta}(v_{1,j})=0$ if $j<l-1$, $\E^{\eta}(v_{1,l-1})=1$.
\end{proof}
Those particular rays have been already defined in \cite[Equation 2.2]{Pardini1991}.
\begin{notation}
\label{not:ZME}If $\phi\colon\tilde{K}_{+}\arr\Z^{M}/\langle e_{0}\rangle$
is the usual map we set $\stZ_{M}^{\underline{\E}}=\stX_{\phi}^{\underline{\E}}$
(see definition \ref{def:T+epsilon sottolineato}) for any sequence
$\underline{\E}$ of elements of $\duale K_{+}$. Remember that if
$\underline{\E}$ is a smooth sequence then $\stZ_{M}^{\underline{\E}}$
is a smooth open subset of $\stZ_{M}$ (see \ref{thm:toric open substack of Z phi via smooth sequence})
and its points have the description given in \ref{pro:points of XphiE}.

Set $\Phi_{M}$ for the union over all $d>1$ of the sets of surjective
maps $M\arr\Z/d\Z$.\end{notation}
\begin{thm}
\label{thm:fundamental theorem for hleqone}Let $\underline{\E}=(\E^{\eta})_{\eta\in\Phi_{M}}$.
We have
\[
\big\{ h\leq1\big\}=\bigcup_{\eta\in\Phi_{M}}\stZ_{M}^{\E^{\eta}}
\]
In particular $\{h\leq1\}\subseteq\stZ_{M}^{\textup{sm}}$ and $\pi_{\underline{\E}}$
induces an equivalence of categories
\[
\{(\underline{\shL},\underline{\shM},\underline{z},\lambda)\in\stF_{\underline{\E}}\;|\; V(z_{\eta})\cap V(z_{\mu})=\emptyset\text{ if }\eta\neq\mu\}=\pi_{\underline{\E}}^{-1}(\{h\leq1\})\arrdi{\simeq}\{h\leq1\}
\]
\end{thm}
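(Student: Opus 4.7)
The plan is to reduce the theorem to Theorem \ref{pro:piE for theta isomorphism} applied to the sequence $\underline{\E}=(\E^{\eta})_{\eta\in\Phi_{M}}$ together with the collection $\Theta=\{\{\E^{\eta}\}\}_{\eta\in\Phi_{M}}$ of singleton smooth sequences, then to match the substack $\stX_{\phi}^{\Theta}$ with the open locus $\{h\leq 1\}$ by a pointwise comparison.

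First I would check the hypotheses of Theorem \ref{pro:piE for theta isomorphism}. By Proposition \ref{pro:Rita's smooth integral extremal rays} each $\E^{\eta}$ is a smooth extremal ray, and by Proposition \ref{lem:equivalent condition for a smooth integral extremal ray} each singleton $\{\E^{\eta}\}$ is a smooth sequence. For distinctness of the $\E^{\eta}$ as $\eta$ ranges over $\Phi_{M}$, I would use that $H_{\E^{\eta}}=\ker\eta$ recovers the kernel, and that two surjections $\eta_{1},\eta_{2}\colon M\to\Z/l\Z$ sharing a kernel differ by a scalar $a\in(\Z/l\Z)^{*}$, which for $l\geq 3$ leads to a witness pair $(m,n)$ with $\eta_{1}(m)=\eta_{1}(n)=1$ where $\E^{\eta_{1}}(v_{m,n})\neq\E^{\eta_{2}}(v_{m,n})$ (and $l=2$ admits no non-trivial scaling). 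Theorem \ref{pro:piE for theta isomorphism} combined with Lemma \ref{lem:fundamental lemma for all the classification for h} then yields an equivalence $\pi_{\underline{\E}}^{-1}\bigl(\bigcup_{\eta}\stZ_{M}^{\E^{\eta}}\bigr)\simeq\bigcup_{\eta}\stZ_{M}^{\E^{\eta}}$, and identifies the preimage with the full substack $\{V(z_{\eta})\cap V(z_{\mu})=\emptyset\text{ for }\eta\neq\mu\}$; moreover Lemma \ref{lem:decomposition of T+E and open smooth subscheme} places every $\stZ_{M}^{\E^{\eta}}$ inside $\stZ_{M}^{\textup{sm}}$.

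The heart of the proof is the set-theoretic equality $\{h\leq 1\}=\bigcup_{\eta}\stZ_{M}^{\E^{\eta}}$, which, since $\{h\leq 1\}$ is open in $\MCov$ by upper semicontinuity of $h$, I would verify on geometric points. For the inclusion $\supseteq$, given $B\in\stZ_{M}^{\E^{\eta}}(k)$ with $k$ algebraically closed, Proposition \ref{pro:points of XphiE} together with Lemma \ref{lem:decomposition of T+E and open smooth subscheme} writes the multiplication as $\psi_{m,n}=\lambda_{m,n}\,0^{c\E^{\eta}_{m,n}}$ with $c\in\N$; when $c=0$ the cover is a torsor and $h_{B/k}=0$, while for $c\geq 1$ one has $H_{B/k}=\ker\eta$, and Lemma \ref{lem:showing that hm and h are well defined} yields $h_{B/k,m}=1$ precisely when $\eta(m)=1$ (since any two positive residues in $\{1,\dots,l-1\}$ summing to $1$ mod $l$ must sum to $l+1$ in $\N$), giving $h_{B/k}=1$. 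For the inclusion $\subseteq$ in the non-trivial case $h_{B/k}=1$, I would let $C$ denote the maximal torsor and $H=H_{B/k}$; by Lemma \ref{lem:factorization of covers through torsors on local rings} and Corollary \ref{cor:An algebra with H =00003D 0 is generated in the degrees where h=00003D1} there is a unique $\overline{m_{0}}\in M/H\smallsetminus\{0\}$ with $h_{B/k,m_{0}}=1$, and $B/C$ is generated as a $C$-algebra by $w_{\overline{m_{0}}}$, forcing $M/H$ cyclic of some order $l$ with $\overline{m_{0}}$ a generator and producing $\eta\colon M\twoheadrightarrow M/H\simeq\Z/l\Z$ in $\Phi_{M}$. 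Rechoosing the $C$-basis of $B/C$ as $\{1,w_{1},w_{1}^{2},\dots,w_{1}^{l-1}\}$, the algebra becomes $C[w_{1}]/(w_{1}^{l}-\mu)$ for some $\mu=w_{1}^{l}\in C$; the key step is to identify $\mu$ as an element of $k$ by noting that over algebraically closed $k$ the torsor $C$ is isomorphic to $k[\Di H]$, whose piece of $H$-degree equal to the class of $lm$ is one-dimensional generated by a unit, so $\mu=\mu_{0}\cdot u$ with $\mu_{0}\in k$ and $u\in C^{*}$; then $H_{B/C}=0$ forces $w_{1}$ not to be a unit, hence $\mu_{0}=0$. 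From $w_{1}^{l}=0$ a direct expansion shows that the original multiplication satisfies $\psi_{m,n}=0$ exactly when $\eta(m)+\eta(n)\geq l$, and the non-zero values define a homomorphism $\lambda\colon K\to k^{*}$ (extended from the relevant subgroup using that $k^{*}$ is divisible), so $\psi=\lambda\cdot 0^{\E^{\eta}}$ and Proposition \ref{pro:points of XphiE} places $B$ in $\stZ_{M}^{\E^{\eta}}(k)$.

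The remaining assertions then follow formally: the inclusion $\{h\leq 1\}\subseteq\stZ_{M}^{\textup{sm}}$ from the smoothness of each $\stZ_{M}^{\E^{\eta}}$, and the equivalence of categories from the equivalence in the first step once $\{h\leq 1\}$ has been identified with $\stX_{\phi}^{\Theta}$. The principal obstacle is the forward inclusion in the set-theoretic equality, specifically the grading argument identifying $\mu$ as a scalar, together with the book-keeping that translates the vanishing pattern on $B/C$ back to the normalised multiplication $\lambda\cdot 0^{\E^{\eta}}$ on $B$.
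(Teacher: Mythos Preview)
Your approach is essentially the paper's: establish the pointwise equality $\{h\le 1\}=\bigcup_\eta \stZ_M^{\E^\eta}$ over algebraically closed fields and then invoke Theorem~\ref{pro:piE for theta isomorphism} with $\Theta=\{(\E^\eta)\}_{\eta\in\Phi_M}$. Your treatment of the two inclusions is a mild variation of the paper's---for $\supseteq$ the paper base-changes to a geometric point of the maximal torsor to reduce to $M=\Z/d\Z$ and read off $B\simeq k[x]/(x^d)$, while you compute $h$ directly via Lemma~\ref{lem:showing that hm and h are well defined}; for $\subseteq$ the paper simply trivialises $C=k[H]$ and exhibits the basis $\{v_h v_r^i\}$, whereas you first argue $w_1^l=0$ and then unwind. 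Both routes are fine.

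One small correction: your distinctness witness does not work in general. With $\eta_2=a\eta_1$ and $\eta_1(m)=\eta_1(n)=1$ you get $\E^{\eta_1}(v_{m,n})=0$ for $l\ge 3$, but $\E^{\eta_2}(v_{m,n})=[2a\ge l]$, which is also $0$ whenever $a<l/2$ (e.g.\ $a=2$, $l=5$). A clean fix is to note that each $\E^\eta$ extends to $\Z^M/\langle e_0\rangle$ via $e_m\mapsto \sigma(\eta(m))$ with $\sigma:\Z/l\Z\to\{0,\dots,l-1\}$ the obvious section; if $\E^{\eta_1}|_K=\E^{\eta_2}|_K$ then their difference factors through the torsion group $M$, hence vanishes on all of $\Z^M/\langle e_0\rangle$, forcing $\eta_1=\eta_2$. (The paper does not spell out distinctness at all, so your instinct to check it is good.)
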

\begin{proof}
The last part of the statement follows from the first one just applying
\ref{pro:piE for theta isomorphism} with $\Theta=\{(\E^{\eta})\}_{\eta\in\Phi_{M}}$.
Let $k$ be an algebraically closed field and $B\in\MCov(k)$ with
graded basis $\{v_{m}\}_{m\in M}$ and multiplication $\psi$.

$\supseteq$. Assume $B\in\stZ_{M}^{\E^{\eta}}(k)$. If B is a torsor
we will have $h_{B/k}=0$. Otherwise we can write $\psi=\xi0^{\E^{\eta}}$
for some $\xi\colon K\arr k^{*}$. Replacing $\Spec k$ by a geometrical
point of the maximal torsor of $B/k$, we can assume that $M=\Z/d\Z$
and $\eta=\id$. In particular $H_{B/k}=0$ and, from the definition
of $\E^{\id}$, we get $B\simeq k[x]/(x^{d})$. So $h_{B/k}=\dim_{k}m_{B}/m_{B}^{2}=1$.

$\subseteq$. Assume $h_{B/k}=1$. Set $C$ for the maximal torsor
of $B/k$ (see \ref{def:of the maximal torsor and the subgroup associated}),
$H=H_{B/k}$ and $l=|M/H|$. The equality $h_{B/k}=1$ means that
there exists a unique $\overline{r}\in M/H$ (where $r\in M$) such
that $h_{B/k,r}=1$ and so $C_{q}[v_{r}]=B_{q}\simeq C_{q}[x]/(x^{l})$
for all (maximal) primes $q$ of $C$. In particular $B=C[v_{r}]\simeq C[x]/(x^{l})$
and $\overline{r}$ generates $M/H$. Let $\eta\colon M\arr M/H\simeq\Z/l\Z$
be the projection. We want to prove that $B\in\stZ_{M}^{\E^{\eta}}$.
Replacing $k$ by a geometrical point of some fppf extension of $k$,
we can assume $C=k[H]$, i.e. $v_{h}v_{h'}=v_{h+h'}$ if $h,h'\in H$.
Finally the elements $v_{h}v_{r}^{i}$ for $h\in H$ and $0\leq i<l$
define an $M$-graded basis of $B/k$ whose associated multiplication
is $0^{\E^{\eta}}$.\end{proof}
\begin{thm}
\label{thm:for hleqone}Let $\underline{\E}=(\E^{\eta})_{\eta\in\Phi_{M}}$
and let $X$ be a locally noetherian and locally factorial scheme.
Consider the full subcategories 
\[
\catC_{X}^{1}=\{(\underline{\shL},\underline{\shM},\underline{z},\lambda)\in\stF_{\underline{\E}}(X)\;|\;\codim_{X}V(z_{\eta})\cap V(z_{\mu})\geq2\text{ if }\eta\neq\mu\}\subseteq\shF_{\underline{\E}}(X)
\]
and 
\[
\catD_{X}^{1}=\{Y\arrdi fX\in\MCov(X)\st h_{f}(p)\leq1\ \forall p\in X\text{ with }\codim_{p}X\leq1\}\subseteq\MCov(X)
\]
Then $\pi_{\underline{\E}}$ induces an equivalence of categories
\[
\catD_{X}^{1}=\pi_{\underline{\E}}^{-1}(\catC_{X}^{1})\arrdi{\simeq}\catC_{X}^{1}
\]
\end{thm}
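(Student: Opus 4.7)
The plan is to derive this statement directly from Theorem \ref{thm:fundamental theorem for locally factorial schemes} by choosing the collection of smooth sequences $\Theta$ in a minimal way, namely as the set of singletons coming from the rays $\E^{\eta}$. More precisely, I would set
\[
\Theta \;=\; \bigl\{\{\E^{\eta}\} \;:\; \eta \in \Phi_{M}\bigr\},
\]
so that each element of $\Theta$ is a smooth sequence consisting of a single ray. Before invoking the general theorem, I would record two preliminary facts: first, that every $\E^{\eta}$ is a smooth extremal ray by \ref{pro:Rita's smooth integral extremal rays} together with \ref{lem:equivalent condition for a smooth integral extremal ray}, and second, that the rays $\E^{\eta}$ for distinct $\eta \in \Phi_{M}$ are pairwise distinct, which is immediate from the formula defining $\E^{\eta}$ (different $\eta$ have different kernels in $M$, hence different vanishing sets on the generators of $K_{+}$).

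Next, I would identify the two categories from \ref{thm:fundamental theorem for locally factorial schemes} with the two categories in the present statement. On the $\stF_{\underline{\E}}$-side, the condition defining $\catC_{X}^{\Theta}$ requires $\codim_{X} V(z_{i_{1}})\cap\cdots\cap V(z_{i_{s}}) \geq 2$ whenever no $\underline{\delta}\in\Theta$ contains the subsequence $\E^{i_{1}},\dots,\E^{i_{s}}$. Because every $\underline{\delta}\in\Theta$ is a singleton, this forces $s=1$ to be the only admissible case, and the only nontrivial condition is on pairs $s=2$ (for $s\geq 3$ the intersection sits inside a pairwise intersection, so the codimension bound is automatic). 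This is exactly the condition defining $\catC_{X}^{1}$, so $\catC_{X}^{\Theta} = \catC_{X}^{1}$. On the $\MCov$-side, Theorem \ref{thm:fundamental theorem for hleqone} identifies $\stX_{\phi_{M}}^{\Theta} = \bigcup_{\eta \in \Phi_{M}} \stZ_{M}^{\E^{\eta}}$ with the open substack $\{h\leq 1\}$ of $\stZ_{M}$. Therefore the pointwise condition $\chi_{|\overline{k(p)}} \in \stX_{\phi_{M}}^{\Theta}$ for every $p \in X$ with $\codim_{p} X \leq 1$ translates to $h_{f}(p) \leq 1$ on the same set of points, i.e.\ $\catD_{X}^{\Theta} = \catD_{X}^{1}$.

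With these two identifications in place, the equivalence $\catC_{X}^{1} = \pi_{\underline{\E}}^{-1}(\catD_{X}^{1}) \xrightarrow{\sim} \catD_{X}^{1}$ is a direct application of \ref{thm:fundamental theorem for locally factorial schemes}. I do not expect any genuine obstacle: all the nontrivial work (the smoothness of $\E^{\eta}$, the pointwise description of $\{h\leq 1\}$, and the codimension-one extension principle built on $\Picsh X \simeq \Picsh U$ for $\codim_{X}(X\setminus U)\geq 2$) is already contained in the previously established statements. The only mild point to handle with care is that $\underline{\E} = (\E^{\eta})_{\eta\in\Phi_{M}}$ consists of pairwise distinct smooth extremal rays and contains every ray appearing in a sequence of $\Theta$, which is the hypothesis required to invoke \ref{thm:fundamental theorem for locally factorial schemes}; this is built into our choice of $\underline{\E}$ and $\Theta$.
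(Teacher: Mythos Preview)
Your proposal is correct and follows exactly the paper's approach: the paper's proof is the single line ``Apply \ref{thm:fundamental theorem for locally factorial schemes} with $\Theta=\{(\E^{\eta})\}_{\eta\in\Phi_{M}}$,'' and you have simply unpacked the verifications (distinctness and smoothness of the $\E^{\eta}$, and the identifications $\catC_{X}^{\Theta}=\catC_{X}^{1}$, $\catD_{X}^{\Theta}=\catD_{X}^{1}$ via \ref{thm:fundamental theorem for hleqone}) that make this application go through.
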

\begin{proof}
Apply \ref{thm:fundamental theorem for locally factorial schemes}
with $\Theta=\{(\E^{\eta})\}_{\eta\in\Phi_{M}}$.\end{proof}
\begin{thm}
\label{thm:regular in codimension 1 covers}Let $\underline{\E}=(\E^{\eta})_{\eta\in\Phi_{M}}$
and let $X$ be a locally noetherian and locally factorial scheme
without isolated points and $(\car X,|M|)=1$, i.e. $1/|M|\in\odi X(X)$.
Consider the full subcategories 
\[
Reg_{X}^{1}=\{Y/X\in\MCov(X)\st Y\text{ regular in codimension }1\}\subseteq\MCov(X)
\]
and
\[
\widetilde{Reg}_{X}^{1}=\left\{ (\underline{\shL},\underline{\shM},\underline{z},\lambda)\in\stF_{\underline{\E}}(X)\left|\begin{array}{c}
\forall\E\neq\delta\in\underline{\E}\;\codim_{X}V(z_{\E})\cap V(z_{\delta})\geq2\\
\forall\E\in\underline{\E}\forall p\in X^{(1)}\; v_{p}(z_{\E})\leq1
\end{array}\right.\right\} \subseteq\shF_{\underline{\E}}(X)
\]
Then we have an equivalence of categories
\[
\widetilde{Reg}_{X}^{1}=\pi_{\underline{\E}}^{-1}(Reg_{X}^{1})\arrdi{\simeq}Reg_{X}^{1}
\]
\end{thm}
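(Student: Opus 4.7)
The plan is to bootstrap Theorem \ref{thm:for hleqone} via two local claims, after reducing to the case where $X=\Spec A$ is a DVR at a codimension-one point. The codimension-two condition defining $\widetilde{Reg}_X^1$ forces $\widetilde{Reg}_X^1\subseteq\catC_X^1$, and Claim 1 below will give $Reg_X^1\subseteq\catD_X^1$. The equivalence $\catC_X^1\simeq\catD_X^1$ from Theorem \ref{thm:for hleqone} will then restrict to the desired $\widetilde{Reg}_X^1\simeq Reg_X^1$ once we match the two subcategories (Claim 2). We work locally at $p\in X^{(1)}$: let $A=\odi{X,p}$ be the DVR with uniformizer $\varpi$, $f\colon \Spec B\arr\Spec A$ the localized cover, and $C\subseteq B$ its maximal torsor (a finite étale $A$-algebra, hence a product of DVRs $C_i$).

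\emph{Claim 1.} If every localization $B_q$ at a maximal prime is regular, then $h_f(p)\leq 1$. Since $B_q$ is a one-dimensional regular local ring, hence a DVR, and since the residue field extension $k(q)/k(C_i)$ is trivial by maximality of $C$, the relative cotangent space
\[
\Omega_{B/C}\otimes_B k(q) \;=\; \Omega_{(B_q\otimes_{C_i} k(C_i))/k(C_i)}\otimes k(q) \;=\; m_{B_q}/(m_{B_q}^2 + m_{C_i}B_q)
\]
has dimension $0$ or $1$ according as the ramification index of $B_q/C_i$ is $1$ or at least $2$. The formula $h_f(p)=\dim_{k(q)}\Omega_{B/C}\otimes_B k(q)$ then gives $h_f(p)\leq 1$.

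\emph{Claim 2.} For $\chi=(\underline\shL,\underline\shM,\underline z,\lambda)\in\catC_X^1$ the cover $f=\pi_{\underline\E}(\chi)$ is regular in codimension one iff $v_p(z_\E)\leq 1$ for every $\E,p$. Over the DVR $A$, the codimension-two condition forces all but at most one of the $z_\E$ to be a unit; if all are units, $f$ is a $\Di M$-torsor (by Proposition \ref{pro:equivalent conditions for a D(M)-torsor}), hence étale and regular. Otherwise let $\eta\colon M\twoheadrightarrow \Z/l\Z$ be the unique element of $\Phi_M$ for which $z_{\E^\eta}$ is not a unit at $p$, and set $N=v_p(z_{\E^\eta})$. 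Using the multiplication formula $\psi_{a,b}=\lambda_{a,b}^{-1}\prod_\E z_\E^{\E(v_{a,b})}$ together with $\E^\eta(v_{ir,r})=0$ for $0\leq i<l-1$ and $\E^\eta(v_{(l-1)r,r})=1$ (where $r\in M$ lifts a generator of $\Z/l\Z$), one obtains $\psi_{ir,r}\in A^*$ for $i<l-1$ and $\psi_{(l-1)r,r}=u\,z_{\E^\eta}$ with $u\in A^*$; since $lr\in\Ker\eta$ the graded element $v_{lr}$ lies in $C^*$. This yields the presentation $B\simeq C[x]/(x^l-u'z_{\E^\eta})$ for some $u'\in C^*$. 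Decomposing along $C=\prod C_i$ (étale, so $v_{C_i}(z_{\E^\eta})=N$), regularity of $B$ in codimension one reduces to regularity of $B_i=C_i[x]/(x^l-u_i\varpi_{C_i}^N)$ at its unique maximal ideal $\mathfrak m=(x,\varpi_{C_i})$ above $(\varpi_{C_i})$ (uniqueness because $B_i/\varpi_{C_i}B_i\simeq k(C_i)[x]/(x^l)$ is local Artin). A direct computation of $\mathfrak m/\mathfrak m^2$ gives $\dim\mathfrak m/\mathfrak m^2=1$ iff $N\leq 1$: for $N=0$ étale, for $N=1$ Eisenstein (so $\varpi_{C_i}\in(x^l)\subseteq\mathfrak m^2$), and for $N\geq 2$ the defining relation already lies in $\mathfrak m^2$, leaving $\bar x$ and $\bar\varpi_{C_i}$ independent in $\mathfrak m/\mathfrak m^2$.

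The principal obstacle is establishing the normal form $B\simeq C[x]/(x^l-u'z_{\E^\eta})$ at a codimension-one point: one must promote the graded multiplication calculation in the proof of Theorem \ref{thm:fundamental theorem for hleqone}, which there is done fiberwise over an algebraically closed field, to the integral level over the DVR $A$. Once that presentation is in place, the regularity assertion is an elementary cotangent computation, and the torsor case is handled by Proposition \ref{pro:equivalent conditions for a D(M)-torsor}.
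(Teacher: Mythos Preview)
Your proof is correct and follows essentially the same strategy as the paper: reduce via Theorem~\ref{thm:for hleqone} to the local problem over a DVR, pass to the maximal torsor $C$, and analyze regularity of the resulting cyclic extension $C[x]/(x^l-\text{unit}\cdot z_{\E^\eta})$. The one organizational difference is that the paper makes two further reductions---replacing $R$ by a localization $C_i$ (so that $H_{B/A}=0$ and $M\simeq\Z/l\Z$) and then by an \'etale neighbourhood to kill the torsor $\mu$---arriving at the bare $R[X]/(X^l-z^r)$; you instead keep the unit $u'\in C^*$ and the product decomposition $C=\prod C_i$ explicit, which avoids the \'etale base change at the cost of carrying that unit through the cotangent computation. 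Both routes are short and the endgame is the same Eisenstein criterion. Your Claim~1 is also slightly more elaborate than needed: the paper simply observes $h_f(q)\le\dim_{k(p)}m_p/m_p^2=1$ for any $p$ over $q$, without invoking $\Omega_{B/C}$.
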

\begin{proof}
We will make use of \ref{thm:for hleqone}. If $Y\arrdi fX\in Reg_{X}^{1}$,
$p\in Y^{(1)}$ and $q=f(p)$ then $h_{f}(q)\leq\dim_{k(p)}m_{p}/m_{p}^{2}=1$.
So $Reg_{X}^{1}\subseteq\catD_{X}^{1}$. So we have only to check
that $\widetilde{Reg}_{X}^{1}=\pi_{\underline{\E}}^{-1}(Reg_{X}^{1})\subseteq\catC_{X}^{1}$.
Since $X$ is a disjoint union of positive dimensional, integral connected
components, we can assume that $X=\Spec R$, where $R$ is a discrete
valuation ring. Let $\chi\in\catC_{X}^{1}$, $ $$A/R\in\catD_{X}^{1}$
the associated covers, $H=H_{A/R}$ and $C$ be the maximal torsor
of $A/R$. We have to prove that $\chi\in\widetilde{Reg}_{X}^{1}$
if and only if $A$ is regular in codimension $1$. Since $D_{R}(H)$
is etale over $R$ so is also $\Spec C$. It is so easy to check that,
replacing $R$ by a localization of $C$ and $M$ with $M/H$, we
can assume that $H=0$. Since $\chi\in\catC_{X}^{1}$, the multiplication
of $A$ over $R$ is of the form $\psi=\mu z^{r\E^{\phi}}$, where
$\mu\colon K\arr R^{*}$ is an $M$-torsor, $z$ is a parameter of
$A$, $\phi\colon M\arr\Z/l\Z$ is an isomorphism and $r=v_{R}(z_{\E^{\phi}})$.
Moreover $v_{R}(z_{\E^{\psi}})=0$ if $\psi\neq\phi$. Replacing $M$
by $\Z/l\Z$ through $\phi$ we can assume $\phi=\id$. Finally, since
$\mu$ induces an (fppf) torsor which is etale over $R$, replacing
$R$ by an etale neighborhood, we can assume $\mu=1$. After these
reductions we have $A=R[X]/(X^{|M|}-z^{r})$ which is regular in codimension
$1$ if and only if $r=1$.\end{proof}
\begin{rem}
In the theorem above one can replace the condition 'regular in codimension
$1$' in the definition of $Reg_{X}^{1}$ with 'normal' thanks to
Serre's conditions, since all the fibers involved are Gorenstein.
Moreover note that a locally noetherian and locally factorial scheme
$X$ is a disjoint union of integral connected components. Therefore
an isolated point is just a connected component which is $\Spec k$,
for a field $k$. We want to avoid this situation because regularity
in codimension $1$ for a cover over a field is an empty condition.
\end{rem}

\begin{rem}
Theorem \ref{thm:regular in codimension 1 covers} is a rewriting
of Theorem $2.1$ and Corollary $3.1$ of \cite{Pardini1991} extended
to locally noetherian and locally factorial schemes without isolated
points, where an object of $\stF_{\underline{\E}}(X)$ is called a
building data.
\end{rem}

\section{The locus $h\leq2$.}

In this section we want to give a characterization of the open substack
$\{h\leq2\}\subseteq\MCov$ as done in \ref{thm:for hleqone} for
$\{h\leq1\}$. The general problem we want to solve can be stated
as follows.
\begin{problem}
\label{prob: general problem for hleqtwo}Find a sequence of smooth
 extremal rays $\underline{\E}$ for $M$ and a collection $\Theta$
of smooth sequences with rays in $\underline{\E}$ such that (see
\ref{not:ZME})
\[
\{h\leq2\}=\bigcup_{\underline{\delta}\in\Theta}\stZ_{M}^{\underline{\delta}}
\]
or, equivalently, such that, for any algebraically closed field $k$,
the algebras $A\in\MCov(k)$ with $h_{A/k}\leq2$ are exactly the
algebras associated to a multiplication of the form $\psi=\omega0^{\E}$
where $\omega\colon K\arr k^{*}$ is a group homomorphism and $\E\in\langle\underline{\delta}\rangle_{\N}$
for some $\underline{\delta}\in\Theta$.
\end{problem}
For example in the case $h\leq1$ the analogous problem is solved
taking $\underline{\E}=(\E^{\phi})_{\phi\in\Phi_{M}}$ and $\Theta=\{(\E)\text{ for }\E\in\underline{\E}\}$
(see \ref{thm:fundamental theorem for hleqone}). Once we have found
a pair $\underline{\E},\Theta$ as in \ref{prob: general problem for hleqtwo}
we can formally apply theorems \ref{pro:piE for theta isomorphism}
and \ref{thm:fundamental theorem for locally factorial schemes}.
This is done in theorems \ref{thm:fundamental thm for hleqtwo} and
\ref{thm:fundamental thm locally factoria hleqtwo}.

Similarly to what happens in the case $h\leq1$, we can restrict our
attention to the case when $M$ is generated by two elements $m,n$
and the first problem to solve is to describe $M$-graded algebras
$A$ over a field $k$ generated in these degrees $m,n$ (see \ref{not:starting from a cover, m.n generate M}).
This is done associating with $A$ an invariant $\overline{q}_{A}\in\N$
(see \ref{pro:Universal algebras generated by m,n with overline q fixed})
and this solution also suggests how to proceed for the next problem,
i.e. find the sequence $\underline{\E}$ of problem \ref{prob: general problem for hleqtwo}.

When $M$ is any finite abelian group, it turns out that the extremal
rays $\E$ for $M$ such that $h_{\E}=2$ correspond to particular
sequences of the form $\chi=(r,\alpha,N,\overline{q},\phi)$, where
$r,\alpha,N,\overline{q}\in\N$ and $\phi$ is a surjective map from
$M$ to a group $M_{r,\alpha,N}$ generated by two elements (see \ref{def: M r alpha N}).
The sequence of smooth  extremal rays {}``needed'' to describe the
substack $\{h\leq2\}$ is composed by the {}``old'' rays $(\E^{\eta})_{\eta\in\Phi_{M}}$
and by these new rays. Finally the smooth sequences in the family
$\Theta$ of problem \ref{prob: general problem for hleqtwo} will
all be given by elements of the dual basis of particular $\Z$-basis
of $K$ (see \ref{lem:lambda,delta for overlineq}).

In the last subsection we will see (Theorem \ref{thm:NC in codimension one})
that the normal crossing in codimension $1$ $\Di M$-covers of a
locally noetherian and locally factorial scheme with no isolated points
and with $(\car X,|M|)=1$ can be described in the spirit of classification
\ref{thm:regular in codimension 1 covers} and extending this result.
\begin{notation}
If $m\in M$ we will denote by $o(m)$ the order of $m$ in the group
$M$.
\end{notation}

\subsection{Good sequences.}

In this subsection we provide some general technical results in order
to work with $M$-graded algebras over local rings. So we will consider
given a local ring $D$, a sequence $\underline{m}=m_{1},\dots,m_{r}\in M$
and $C\in\MCov(D)$ generated in degrees $m_{1},\dots,m_{r}$. Since
$\Pic(D)$=0 for any $u\in M$ we have $C_{u}\simeq D$. Given $u\in M$,
we will call $v_{u}$ a generator of $C_{u}$ and we will also use
the abbreviation $v_{i}=v_{m_{i}}$. Moreover, if $\underline{A}=(A_{1},\dots,A_{r})\in\N^{r}$
we will also write
\[
v^{\underline{A}}=v_{1}^{A_{1}}\cdots v_{r}^{A_{r}}
\]

\begin{defn}
A sequence for $u\in M$ is a sequence $\underline{A}\in\N^{r}$ such
that $A_{1}m_{1}+\cdots+A_{r}m_{r}=u$. Such a sequence will be called
\emph{good }if the map $C_{m_{1}}^{A_{1}}\otimes\cdots\otimes C_{m_{r}}^{A_{r}}\arr C_{u}$
is surjective, i.e. $v^{\underline{A}}$ generates $C_{u}$. If $r=2$
we will talk about pairs instead of sequences.\end{defn}
\begin{rem}
Any $u\in M$ admits a good sequence since, otherwise, we will have
$C_{u}=(D[v_{1},\dots,v_{r}])_{u}\subseteq m_{D}C_{u}$. If $\underline{A}$
is a good sequence and $\underline{B}\leq\underline{A}$, then also
$\underline{B}$ is a good sequence.\end{rem}
\begin{lem}
\label{lem:fundamental local rings and generators}Let $\underline{A}$,
$\underline{B}$ be two sequences for some element of $M$ and assume
that $\underline{A}$ is good. Set $\underline{E}=\min(\underline{A},\underline{B})=(\min(A_{1},B_{1}),\dots,\min(A_{r},B_{r}))$
and take $\lambda\in D$. Then 
\[
v^{\underline{B}}=\lambda v^{\underline{A}}\then v^{\underline{B}-\underline{E}}=\lambda v^{\underline{A}-\underline{E}}
\]
\end{lem}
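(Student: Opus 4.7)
The plan is to reduce the claimed identity between elements of the module $C_{u-w}$ (where $u = \sum A_i m_i$ and $w = \sum E_i m_i$) to a scalar identity in $D$, by tracking the structure constants of the multiplication of $C$ on chosen generators. Since $D$ is local, $\Pic D = 0$ and each graded piece $C_v$ is free of rank $1$; after fixing generators $v_u$ of $C_u$ and $v_{u'}$ of $C_{u'} = C_{u-w}$, every monomial $v^{\underline{C}}$ for a sequence $\underline{C}$ with total degree $u$ (resp.\ $u'$) can be written uniquely as $v^{\underline{C}} = \alpha_{\underline{C}} v_u$ (resp.\ $\alpha_{\underline{C}} v_{u'}$) for a unique $\alpha_{\underline{C}} \in D$. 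Under this translation, goodness of $\underline{A}$ is exactly the condition $\alpha_{\underline{A}} \in D^*$.

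Next I would exploit the factorization $v^{\underline{A}} = v^{\underline{E}} \cdot v^{\underline{A}-\underline{E}}$ (and analogously for $\underline{B}$) inside the commutative algebra $C$. Letting $c_{w,u'} \in D$ be the scalar expressing the multiplication $v_w \cdot v_{u'} = c_{w,u'} v_u$, a direct computation yields
\[
\alpha_{\underline{A}} = \alpha_{\underline{E}} \cdot \alpha_{\underline{A}-\underline{E}} \cdot c_{w,u'}, \qquad \alpha_{\underline{B}} = \alpha_{\underline{E}} \cdot \alpha_{\underline{B}-\underline{E}} \cdot c_{w,u'}.
\]
The hypothesis $v^{\underline{B}} = \lambda v^{\underline{A}}$ becomes the scalar identity $\alpha_{\underline{B}} = \lambda \alpha_{\underline{A}}$; substituting the two factorizations above gives
\[
\alpha_{\underline{E}} \cdot \alpha_{\underline{B}-\underline{E}} \cdot c_{w,u'} \;=\; \lambda \, \alpha_{\underline{E}} \cdot \alpha_{\underline{A}-\underline{E}} \cdot c_{w,u'}.
\]

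To finish, I would invoke the goodness of $\underline{A}$, which says $\alpha_{\underline{A}} = \alpha_{\underline{E}} \cdot \alpha_{\underline{A}-\underline{E}} \cdot c_{w,u'} \in D^*$; consequently each of its factors $\alpha_{\underline{E}}$, $\alpha_{\underline{A}-\underline{E}}$, $c_{w,u'}$ is itself a unit in $D$. Cancelling the unit $\alpha_{\underline{E}} \cdot c_{w,u'}$ from both sides yields $\alpha_{\underline{B}-\underline{E}} = \lambda \, \alpha_{\underline{A}-\underline{E}}$, which is precisely $v^{\underline{B}-\underline{E}} = \lambda v^{\underline{A}-\underline{E}}$. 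The whole argument is essentially bookkeeping: the only conceptually delicate point, if any, is recognizing that goodness is equivalent to a unit statement on the structure coefficient, so that the middle factor $\alpha_{\underline{E}} c_{w,u'}$, which one does not a priori know about, becomes cancellable for free.
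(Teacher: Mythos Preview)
Your proof is correct and is essentially the paper's argument rewritten in terms of explicit structure constants: the paper factors $v^{\underline{A}}=v^{\underline{E}}\cdot v^{\underline{A}-\underline{E}}$, uses goodness of $\underline{A}-\underline{E}$ to write $v^{\underline{B}-\underline{E}}=\mu v^{\underline{A}-\underline{E}}$, and then reads $(\mu-\lambda)v^{\underline{A}}=0$ with $v^{\underline{A}}$ a generator to force $\mu=\lambda$ --- exactly your cancellation of the unit $\alpha_{\underline{E}}\, c_{w,u'}$, just without naming the scalars.
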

\begin{proof}
Clearly we have $v^{\underline{E}}(v^{\underline{B}-\underline{E}}-\lambda v^{\underline{A}-\underline{E}})=0$.
On the other hand, since $\underline{A}-\underline{E}$ is a good
sequence, there exists $\mu\in D$ such that $v^{\underline{B}-\underline{E}}=\mu v^{\underline{A}-\underline{E}}$.
Since $\underline{A}$ is a good sequence, substituting we get $v^{\underline{A}}(\mu-\lambda)=0\then\mu=\lambda$.
\end{proof}

\subsection{\label{sub:algebras gen in two degrees}$M$-graded algebras generated
in two degrees.}
\begin{defn}
\label{def: M r alpha N}Given $0\leq\alpha<N$ and $r>0$ we set
\[
M_{r,\alpha,N}=\Z^{2}/\langle(r,-\alpha),(0,N)\rangle
\]
\end{defn}
\begin{prop}
\label{pro:description group generated by two elements}A finite abelian
group $M$ with two marked elements $m,n\in M$ generating it is canonically
isomorphic to $(M_{r,\alpha,N},e_{1},e_{2})$ where $r=\min\{s>0\st sm\in\langle n\rangle\}$,
$rm=\alpha n$ and $N=o(n)$. Moreover we have: $|M|=Nr$, $o(m)=rN/(\alpha,N)$
and 
\[
m,n\neq0\text{ and }m\neq n\iff N>1\text{ and }(r>1\text{ or }\alpha>1)
\]
\end{prop}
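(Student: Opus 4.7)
The plan is to build the canonical isomorphism via the universal map from $\Z^2$ and then read off every claim from a careful computation of the kernel.

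First, I would define the surjective group homomorphism
\[
\pi\colon\Z^2\arr M\comma(a,b)\longmapsto am+bn,
\]
which is surjective because $m,n$ generate $M$. The strategy is to show that $\Ker\pi$ equals the subgroup $L=\langle(r,-\alpha),(0,N)\rangle\subseteq\Z^2$ defining $M_{r,\alpha,N}$; the induced isomorphism then sends $e_1\longmapsto m$ and $e_2\longmapsto n$, giving the canonical identification. The inclusion $L\subseteq\Ker\pi$ is immediate: $(0,N)$ is killed since $Nn=0$ by definition of $N=o(n)$, and $(r,-\alpha)$ is killed since $rm=\alpha n$ by definition of $r,\alpha$.

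For the reverse inclusion $\Ker\pi\subseteq L$, suppose $am+bn=0$. I would first reduce $a$ modulo $r$: writing $a=qr+a'$ with $0\le a'<r$, we have $a'm=-bn-qrm\in\langle n\rangle$, so by minimality of $r$ it follows that $a'=0$ and $r\mid a$. Writing $a=qr$ and substituting, $qrm+bn=q\alpha n+bn=(b+q\alpha)n=0$, so $N\mid b+q\alpha$. This exhibits $(a,b)=q(r,-\alpha)+\frac{b+q\alpha}{N}(0,N)\in L$, as required. This proves the canonical isomorphism.

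The remaining statements drop out of this presentation. The index $|M|=[\Z^2:L]$ equals the absolute value of the determinant $\bigl|\det\bigl(\begin{smallmatrix}r&-\alpha\\0&N\end{smallmatrix}\bigr)\bigr|=rN$. For $o(m)$, I would compute the smallest $s>0$ with $(s,0)\in L$: the equation $(s,0)=q(r,-\alpha)+k(0,N)$ forces $s=qr$ and $q\alpha=kN$, so the minimal positive $q$ is $N/(\alpha,N)$ (with the convention $(0,N)=N$ when $\alpha=0$), giving $o(m)=rN/(\alpha,N)$.

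Finally, for the equivalence, I would argue by case analysis on the normalized data $(r,\alpha,N)$ with $0\le\alpha<N$. Since $n=0\iff N=1$, the condition $n\neq0$ is equivalent to $N>1$. Assuming $N>1$, I would check: (i) $m=0\iff o(m)=1\iff r=1$ and $N\mid\alpha$, which forces $\alpha=0$; (ii) $m=n\iff(1,-1)\in L$, and writing $(1,-1)=q(r,-\alpha)+k(0,N)$ gives $qr=1$, hence $r=q=1$ and $\alpha\equiv1\pmod N$, i.e.\ $\alpha=1$. Thus under $N>1$, the failure of $m,n\ne0$ and $m\ne n$ is exactly $r=1$ with $\alpha\in\{0,1\}$, whose negation is precisely $r>1$ or $\alpha>1$. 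The main obstacle is essentially bookkeeping: keeping the normalization $0\le\alpha<N$ straight in the degenerate cases $N=1$ and $\alpha=0$, but no deeper difficulty arises.
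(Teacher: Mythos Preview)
Your proof is correct and complete. The approach differs slightly from the paper's: you establish the isomorphism by directly computing $\Ker\pi=L$ using the minimality of $r$, whereas the paper first verifies that in $M_{r,\alpha,N}$ the elements $e_1,e_2$ have the correct invariants ($r$ minimal with $re_1\in\langle e_2\rangle$, $o(e_2)=N$), then argues by cardinality via $|M|=o(m)o(n)/|\langle m\rangle\cap\langle n\rangle|=Nr=|M_{r,\alpha,N}|$. Your kernel computation is more explicit and self-contained, and has the mild advantage of giving $o(m)=rN/(\alpha,N)$ and the final equivalence as direct by-products of the same lattice description, whereas the paper leaves those as straightforward checks. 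Both routes are elementary; neither has a real edge beyond taste.
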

\begin{proof}
We have
\[
0\arr\Z^{2}\arrdi{\scriptsize{\left(\begin{array}{cc}
r & 0\\
-\alpha & N
\end{array}\right)}}\Z^{2}\arr M_{r,\alpha,N}\arr0\text{ exact}\then|M_{r,\alpha,N}|=\left|\det\left(\begin{array}{cc}
r & 0\\
-\alpha & N
\end{array}\right)\right|=rN
\]
and clearly $e_{1},e_{2}$ generate $M$. Moreover $M_{r,\alpha,N}/\langle e_{2}\rangle\simeq\Z/r\Z$
and therefore $r$ is the minimum such that $re_{1}\in\langle e_{2}\rangle$.
Finally it is easy to check that $N=o(e_{2})$. If now $M,r,\alpha,N$
are as in the statement, there exists a unique map $ $$M_{r,\alpha,N}\arr M$
sending $e_{1},e_{2}$ to $m,n$. This map is an isomorphism since
it is clearly surjective and $|M|=o(m)o(n)/|\langle m\rangle\cap\langle n\rangle|=o(n)r=|M_{r,\alpha,N}|$.
The last equivalence in the statement is now easy to prove.\end{proof}
\begin{notation}
\label{not:for alpha,N,r e M}In this subsection we will fix a finite
abelian group $M$ generated by two elements $0\neq m,n\in M$ such
that $m\neq n$. Up to isomorphism, this means $M=M_{r,\alpha,N}$
with $m=e_{1}\comma n=e_{2}$ and with the conditions $0\leq\alpha<N\comma r>0\comma N>1\comma(r>1\text{ or }\alpha>1)$. 

We will write $d_{q}$ the only integer $0<d_{q}\leq N$ such that
$qrm+d_{q}n=0$, for $q\in\Z$, or, equivalently, $d_{q}\equiv-q\alpha\text{ mod }(N)$.
\end{notation}

\begin{problem}
\label{not:starting from a cover, m.n generate M}Let $k$ be a field.
We want to describe, up to isomorphism, algebras $A\in\MCov(k)$ such
that $A$ is generated in degrees $m,n$ and $H_{A/k}=0$. Thanks
to \ref{cor:An algebra with H =00003D 0 is generated in the degrees where h=00003D1},
this is equivalent to asking for an algebra $A$ such that $H_{A/k}=0$
and
\[
\{l\in M\st h_{A/k,l}=1\}\subseteq\{m,n\}
\]
The solution of this problem is contained in \ref{pro:Universal algebras generated by m,n with overline q fixed}. 
\end{problem}
In this subsection we will fix an algebra $A$ as in \ref{not:starting from a cover, m.n generate M},
we will consider given a graded basis $\{v_{l}\}_{l\in M}$ of $A$
and we will denote by $\psi$ the associated multiplication. Note
that $H_{A/k}=0$ means $v_{m},v_{n}\notin A^{*}$.
\begin{defn}
Define
\[
z=\min\{h>0\;|\;\exists i\in\N\comma\lambda\in k\text{ such that }v_{m}^{h}=\lambda v_{n}^{i}\text{ and }hm=in\}
\]
\[
x=\min\{h>0\;|\;\exists i\in\N\comma\mu\in k\text{ such that }v_{n}^{h}=\mu v_{m}^{i}\text{ and }hn=im\}
\]
Denote by $0\leq y<o(n)$, $0\leq w<o(m)$ the elements such that
$zm=yn$, $xn=wm$, by $\lambda,\mu\in k$ the elements such that
$v_{m}^{z}=\lambda v_{n}^{y}$, $v_{n}^{x}=\mu v_{m}^{w}$, with the
convention that $\lambda=0$ if $v_{n}^{y}=0$ and $\mu=0$ if $v_{m}^{w}=0$.
Finally set $\overline{q}=z/r$ and define the map of sets   \[   \begin{tikzpicture}[xscale=3.5,yscale=-0.6]     \node (A0_0) at (0, 0) {$\{0,1,\dots,z-1\}$};     \node (A0_1) at (1, 0) {$\{0,1,\dots,o(n)\}$};     \node (A1_0) at (0, 1) {$c$};     \node (A1_1) at (1, 1) {$\min\{d\in\N \ | \ v_{m}^{c}v_{n}^{d}=0\}$};     \path (A0_0) edge [->] node [auto] {$\scriptstyle{f}$} (A0_1);     \path (A1_0) edge [|->,gray] node [auto] {$\scriptstyle{}$} (A1_1);   \end{tikzpicture}   \] 
We will also write $\overline{q}_{A}\comma z_{A}\comma x_{A}\comma y_{A}\comma w_{A}\comma\lambda_{A}\comma\mu_{A}\comma f_{A}$
if necessary.
\end{defn}
We will see that $A$ is uniquely determined by $\overline{q}$ and
$\lambda$ up to isomorphism.
\begin{lem}
\label{lem:good pair for A<z}Given $l\in M$ there exists a unique
good pair $(a,b)$ for $l$ with $0\leq a<z$. Moreover $0\leq b<f(a)$.\end{lem}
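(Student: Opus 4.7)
The plan is to prove the statement in three stages: existence of a good pair with $a<z$, uniqueness of such a pair, and the bound $b<f(a)$.

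For \textbf{existence}, I would start from any good pair $(a_0,b_0)$ for $l$, which exists because $A$ is generated in degrees $m,n$ and $A_l\simeq k$. If $a_0<z$ there is nothing to do; otherwise I want to reduce $a_0$ using the defining relation $v_m^z=\lambda v_n^y$ together with $zm=yn$. First I would observe that $\lambda\neq 0$: if $\lambda=0$ then the convention gives $v_n^y=0$, hence $v_m^z=0$, which contradicts $v_m^{a_0}v_n^{b_0}$ being a generator of $A_l$ since $a_0\geq z$. Knowing $\lambda\neq 0$, the pair $(a_0-z,b_0+y)$ satisfies $(a_0-z)m+(b_0+y)n=l$ (since $zm=yn$), and the identity $v_m^{a_0-z}v_n^{b_0+y}=\lambda^{-1}v_m^{a_0}v_n^{b_0}$ shows that it is still good. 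Iterating yields a good pair with first coordinate in $\{0,\dots,z-1\}$.

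For \textbf{uniqueness}, let $(a,b)$ and $(a',b')$ be two good pairs for $l$ with $0\leq a,a'<z$. Since $\dim_k A_l=1$, there exists $c\in k^{*}$ with $v_m^av_n^b=c\,v_m^{a'}v_n^{b'}$. Applying Lemma~\ref{lem:fundamental local rings and generators} with $\underline{A}=(a,b)$ and $\underline{B}=(a',b')$ (after swapping if needed) and subtracting $\underline{E}=(\min(a,a'),\min(b,b'))$ reduces to one of two kinds of relations. The first kind is $v_m^h v_n^k=c\in k^{*}$ with $h,k\geq 0$: then $v_m^h$ is a factor of a unit, hence a unit in $A$, so $hm\in H_{A/k}=0$, giving $o(m)\mid h$; similarly $o(n)\mid k$. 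The second kind is $v_m^h=\lambda v_n^k$ with $h>0$ and $\lambda\neq 0$: this is incompatible with the minimality of $z$ whenever $h<z$. The key input for the first kind is the inequality $z\leq o(m)$, which holds because $H_{A/k}=0$ forces $v_m^{o(m)}=0$ (otherwise $v_m$ would be a unit), so the pair $(o(m),0,0)$ already satisfies the defining condition of $z$. Combined with $|a-a'|<z$, this yields $a=a'$, and then the argument repeated for the $n$-factor, using $v_n^{o(n)}=0$, yields $b=b'$.

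For the \textbf{bound}, once uniqueness provides $(a,b)$ with $0\leq a<z$, the fact that $v_m^av_n^b$ generates $A_l\neq 0$ means $v_m^av_n^b\neq 0$, so by definition $b<f(a)$. The \emph{main obstacle} is the case analysis in the uniqueness step: one has to combine the minimality of $z$ (which forbids short relations of the form $v_m^h=\lambda v_n^k$) with the consequences of $H_{A/k}=0$ (which force $v_m^{o(m)}=v_n^{o(n)}=0$ and hence give $z\leq o(m)$) to cover all sign patterns of $a-a'$ and $b-b'$; a careful invocation of Lemma~\ref{lem:fundamental local rings and generators} is what makes each subcase tractable.
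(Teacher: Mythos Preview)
Your proof is correct and follows the paper's approach: existence by reducing the first exponent via the relation $v_m^z=\lambda v_n^y$, uniqueness via Lemma~\ref{lem:fundamental local rings and generators} together with the minimality of $z$ and $H_{A/k}=0$, and the bound from the definition of $f$. Two minor remarks. First, the implication ``$\lambda=0\Rightarrow v_n^y=0$'' misreads the convention (which goes the other way), but this is harmless: the defining relation $v_m^z=\lambda v_n^y$ already gives $v_m^z=0$ whenever $\lambda=0$. Second, for uniqueness the paper is more direct than your detour through $z\le o(m)$ and divisibility by $o(m),o(n)$: after cancelling to a relation of the form $v_m^{h}v_n^{k}\in k^*$ with $h>0$ (your ``first kind''), the invertibility of $v_m$ already contradicts $H_{A/k}=0$ immediately, without any appeal to orders.
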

\begin{proof}
\emph{Existence. }We know that there exists a good pair $(a,b)$ for
$l$ and we can assume that $a$ is minimum. If $a\geq z$ we can
write $v_{m}^{a}v_{n}^{b}=\lambda v_{m}^{a-z}v_{n}^{b+y}$. Therefore
$\lambda\neq0$ and $(a-z,b+y)$ is a good pair for l, contradicting
the minimality of $a$. Finally $v_{m}^{a}v_{n}^{b}\neq0$ means $b<f(a)$.

\emph{Uniqueness. }Let $(a,b)$, $(a',b')$ be two good pairs for
$l$ and assume $0\leq a<a'<z$. So there exists $\omega\in k^{*}$
such that
\[
v_{m}^{a}v_{n}^{b}=\omega v_{m}^{a'}v_{n}^{b'}\then v_{n}^{b}=\omega v_{m}^{a'-a}v_{n}^{b'}
\]
If $b\geq b'$ then $a'-a\geq z$ by definition of $z$, while if
$b<b'$ then $v_{n}$ is invertible.\end{proof}
\begin{defn}
\label{not:definition of E delta starting from the cover}Given $l\in M$
we will write the associated good pair as $(\E_{l},\delta_{l})$ with
$\E_{l}<z$. We will consider $\E,\delta$ as maps $\Z^{M}/\langle e_{0}\rangle\arr\Z$
and, if necessary, we will also write $\E^{A},\delta^{A}$.\end{defn}
\begin{notation}
Up to isomorphism, we can change the given basis to 
\[
v_{l}=v_{m}^{\E_{l}}v_{n}^{\delta_{l}}
\]
so that the multiplication $\psi$ is given by
\begin{equation}
v_{a}v_{b}=v_{m}^{\E_{a}+\E_{b}}v_{n}^{\delta_{a}+\delta_{b}}=\psi_{a,b}v_{m}^{\E_{a+b}}v_{n}^{\delta_{a+b}}=\psi_{a,b}v_{a+b}\label{eq:multiplications psi for m,n first}
\end{equation}
\end{notation}
\begin{cor}
$f$ is a decreasing function and
\begin{equation}
f(0)+\cdots+f(z-1)=|M|\label{eq:the sum of f's is |M|}
\end{equation}
\end{cor}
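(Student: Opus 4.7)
The plan is to exploit Lemma \ref{lem:good pair for A<z} and the fact that $\{v_l\}_{l\in M}$ is a $k$-basis of $A$ of size $|M|$, reparametrized by good pairs $(\E_l,\delta_l)$ with $\E_l<z$.

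First I would verify monotonicity of $f$. For $0\leq c\leq z-2$, suppose $v_m^c v_n^{f(c)}=0$; then multiplying by $v_m$ gives $v_m^{c+1}v_n^{f(c)}=0$, whence $f(c+1)\leq f(c)$ by minimality in the definition of $f$. So $f$ is (weakly) decreasing on $\{0,1,\dots,z-1\}$.

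For the identity $\sum_{a=0}^{z-1}f(a)=|M|$, I would argue that the map
\[
l\longmapsto(\E_l,\delta_l)
\]
is a bijection between $M$ and the set $S=\{(a,b)\in\N^2\mid 0\leq a<z,\ 0\leq b<f(a)\}$. Indeed, Lemma \ref{lem:good pair for A<z} gives, for each $l\in M$, exactly one good pair $(\E_l,\delta_l)$ with $0\leq\E_l<z$ and it automatically satisfies $\delta_l<f(\E_l)$. Conversely, given $(a,b)\in S$, set $l=am+bn\in M$; since $v_m^a v_n^b\neq 0$ (by $b<f(a)$), the pair $(a,b)$ is a good pair for $l$ with $a<z$, and by uniqueness in Lemma \ref{lem:good pair for A<z} it must equal $(\E_l,\delta_l)$. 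Thus
\[
|M|=|S|=\sum_{a=0}^{z-1}f(a),
\]
as required.

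No step here looks like a genuine obstacle: monotonicity is immediate from the definition of $f$, and the counting formula is just a restatement of the bijection furnished by Lemma \ref{lem:good pair for A<z}. The only point that deserves a line of justification is the equality $|\{v_l\}_{l\in M}|=|M|$, which follows because $\{v_l\}_{l\in M}$ is by construction a graded $k$-basis of $A$ and $H_{A/k}=0$ ensures that the reparametrization via $(\E_l,\delta_l)$ is well defined (each $v_l$ generates $A_l\simeq k$).
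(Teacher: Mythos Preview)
Your proof is correct and follows essentially the same route as the paper. The paper's argument for the sum formula is identical to yours, namely the bijection between $M$ and $S=\{(a,b)\mid 0\leq a<z,\ 0\leq b<f(a)\}$ coming from Lemma~\ref{lem:good pair for A<z}; the paper is terser and does not spell out the inverse map, but the content is the same. For monotonicity the paper gives no argument at all (presumably regarding it as immediate from the definition of $f$), so your one-line observation that $v_m^{c}v_n^{f(c)}=0$ forces $v_m^{c+1}v_n^{f(c)}=0$ is a welcome addition rather than a deviation.
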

\begin{proof}
If $(a,b)$ is a pair such that $0\leq a<z$ and $0\leq b<f(a)$ then
$v_{m}^{a}v_{n}^{b}\neq0$, i.e. $(a,b)$ is a good pair for $am+bn$.
So
\[
\sum_{c=0}^{z-1}f(c)=|\{(a,b)\;|\;0\leq a<z,\;0\leq b<f(a)\}|=|M|
\]
\end{proof}
\begin{rem}
\label{rem:some remarks on f}The following pairs are good:
\[
(z-1)m:(z-1,0),\;(x-1)n:(0,x-1),\; zm=yn:(0,y)\comma xn=wm:(w,0)
\]
i.e. $v_{m}^{z-1},v_{n}^{x-1},v_{n}^{y},v_{m}^{w}\neq0$. In particular
$f(0)\geq x,y+1$ and $f(c)>0$ for any $c$. Indeed
\begin{alignat*}{3}
v_{m}^{z-1}=\omega v_{m}^{a}v_{n}^{b} & \then & v_{m}^{z-1-a}=\omega v_{n}^{b} & \then & a=z-1,\; b=0\\
v_{m}^{z}=\omega v_{m}^{a}v_{n}^{b} & \then & v_{m}^{z-a}=\omega v_{n}^{b} & \then & a=0,\; b=y
\end{alignat*}
where $(a,b)$ are good pairs for the given elements and, by symmetry,
we get the result.
\end{rem}

\begin{rem}
\label{rem:lambda is 0 iff mu is 0}If $\lambda\neq0$ or $\mu\neq0$
then $x=y$, $z=w$ and $\lambda\mu=1$. Assume for example $\lambda\neq0$.
If $y=0$ then $v_{m}^{z}=\lambda\neq0$ and so $v_{m}$ is invertible.
So $y>0$ and, since $v_{n}^{y}=\lambda^{-1}v_{m}^{z}$, we also have
$y\geq x$. Now
\[
0\neq v_{m}^{z}=\lambda v_{n}^{y}=\lambda\mu v_{n}^{y-x}v_{m}^{w}
\]
So $\mu\neq0$ and $(y-x,w)$ is a good pair. As before $w\geq z$
and therefore
\[
\lambda\mu v_{n}^{y-x}v_{m}^{w-z}=1\then y=x,\; w=x\text{ and }\lambda\mu=1
\]
\end{rem}
\begin{lem}
\label{lem:computation of E and delta}Let $a,b\in M$. We have:
\begin{itemize}
\item Assume $\E_{a,b}>0$. If $\delta_{a,b}\leq0$ then $\E_{a,b}\geq z\comma\delta_{a,b}\geq-y$.
Moreover $\psi_{a,b}\neq0\iff\lambda\neq0,\E_{a,b}=z,\delta_{a,b}=-y(=-x)$
and in this case $\psi_{a,b}=\lambda$.
\item Assume $\E_{a,b}<0$. Then $\E_{a,b}\geq-w,\delta_{a,b}\geq x$. Moreover
$\psi_{a,b}\neq0\iff\mu\neq0,\E_{a,b}=-w(=-z),\delta_{a,b}=x$ and
in this case $\psi_{a,b}=\mu$.
\item Assume $\E_{a,b}=0$. Then we have $\delta_{a,b}=0$ and $\psi_{a,b}=1$
or $\delta_{a,b}\geq o(n)$ and $\psi_{a,b}=0$.
\end{itemize}
\end{lem}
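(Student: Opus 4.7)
My plan is to treat all three cases as consequences of a single application of Lemma \ref{lem:fundamental local rings and generators} to the two sequences for $a+b$: the good pair $(\E_{a+b},\delta_{a+b})$, and the pair $(\E_a+\E_b,\delta_a+\delta_b)$ coming from $v_av_b$. Together with the identity
\[
v_m^{\E_a+\E_b}v_n^{\delta_a+\delta_b}=\psi_{a,b}\,v_m^{\E_{a+b}}v_n^{\delta_{a+b}}
\]
and the degree relation $\E_{a,b}m+\delta_{a,b}n=0$ in $M$, cancelling the entrywise minimum reduces the multiplication $\psi_{a,b}$ to a one-sided monomial relation whose shape is dictated by the signs of $\E_{a,b}$ and $\delta_{a,b}$.

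In Case 1, subtracting the minimum $(\E_{a+b},\delta_a+\delta_b)$ yields $v_m^{\E_{a,b}}=\psi_{a,b}v_n^{-\delta_{a,b}}$. The degree equation and the minimality built into the definition of $z$ immediately give $\E_{a,b}\geq z$; the a priori bound $\E_a+\E_b<2z$ then confines $\E_{a,b}$ to $\{z,z+1,\ldots,2z-1\}$, and applying the minimality of $z$ once more to $(\E_{a,b}-z)m=(-\delta_{a,b}-y)n$ forces $\E_{a,b}=z$. For the bound on $\delta_{a,b}$, I would combine $-\delta_{a,b}\equiv y\pmod{N}$ with $-\delta_{a,b}\leq\delta_{a+b}<f(\E_{a+b})\leq f(0)\leq N$, the last inequality because $v_n^N=0$ as $n\notin H_{A/k}$. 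This pins $\delta_{a,b}=-y$ on the nose, hence the stated inequality. The characterization of $\psi_{a,b}\neq 0$ follows from comparing $v_m^z=\psi_{a,b}v_n^y$ with the defining relation $v_m^z=\lambda v_n^y$ (and handling $\lambda=0$ separately by observing that then $v_av_b=0$, so $\psi_{a,b}=0$).

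Case 2 is entirely symmetric: one subtracts the minimum $(\E_a+\E_b,\delta_{a+b})$ to obtain $v_n^{\delta_{a,b}}=\psi_{a,b}v_m^{-\E_{a,b}}$, and runs the same argument with $x,w,\mu$ in place of $z,y,\lambda$; the equalities $z=w$ and $x=y$ needed to match the statement are provided by Remark \ref{rem:lambda is 0 iff mu is 0} under $\mu\neq 0$. In Case 3 the pair $(\E_a+\E_b,\delta_a+\delta_b)$ has first coordinate $\E_{a+b}<z$, so Lemma \ref{lem:good pair for A<z} gives a dichotomy: either this pair coincides with the good pair for $a+b$, forcing $\delta_a+\delta_b=\delta_{a+b}$ and $\psi_{a,b}=1$; or it fails to be good, whence $v_m^{\E_a+\E_b}v_n^{\delta_a+\delta_b}=0$ and $\psi_{a,b}=0$, while the degree equation $\delta_{a,b}n=0$ in $M$ combined with the strict positivity $\delta_{a,b}>0$ (coming from $\delta_a+\delta_b\geq f(\E_{a+b})>\delta_{a+b}$) yields $\delta_{a,b}\geq o(n)$.

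The hard part will be the sharpening step in Case 1, and symmetrically in Case 2: producing the tight equalities $\E_{a,b}=z$, $\delta_{a,b}=-y$ from the crude inequalities, which requires simultaneously using the a priori range of $\E_a+\E_b$, the residue of $-\delta_{a,b}$ modulo $N=o(n)$ dictated by the degree equation, and the ceiling $f(0)\leq N$ provided by $v_n^N=0$. Everything else is a bookkeeping exercise in the definitions of $z,x,y,w,\lambda,\mu$ and the function $f$.
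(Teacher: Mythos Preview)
Your reduction via Lemma~\ref{lem:fundamental local rings and generators} is the right opening move, and Case~3 is handled correctly. The gap is in Case~1 (and symmetrically Case~2), at the ``sharpening step'' you yourself flag as the hard part.

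After obtaining $v_m^{\E_{a,b}}=\psi_{a,b}\,v_n^{-\delta_{a,b}}$ you correctly deduce $\E_{a,b}\geq z$: here you have \emph{both} the degree relation $\E_{a,b}m=(-\delta_{a,b})n$ \emph{and} the multiplicative relation, which is exactly what the definition of $z$ requires. But when you then write ``applying the minimality of $z$ once more to $(\E_{a,b}-z)m=(-\delta_{a,b}-y)n$ forces $\E_{a,b}=z$'', you are invoking minimality of $z$ for a \emph{pure degree relation}; there is no accompanying identity $v_m^{\E_{a,b}-z}=\lambda'\,v_n^{j}$ available (you cannot cancel $v_m^z=\lambda v_n^y$ from $v_m^{\E_{a,b}}=\psi v_n^{-\delta_{a,b}}$, since $v_n^y$ is a non-unit). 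So this step does not go through, and in fact the conclusion $\E_{a,b}=z$ is false in the sub-case $-y<\delta_{a,b}\leq 0$. A concrete counterexample: take $M=M_{1,3,7}$ (so $m=3,\ n=1$ in $\Z/7\Z$) and $\overline q=5$, hence $z=5,\ y=1$; for $a=b=5$ one computes $\E_a=\E_b=4,\ \E_{a+b}=\E_3=1$ and $\delta_a=\delta_b=\delta_{a+b}=0$, giving $\E_{a,b}=7>z$ and $\delta_{a,b}=0$. Your congruence $-\delta_{a,b}\equiv y\pmod N$ then also fails ($0\not\equiv 1\pmod 7$), since it was derived from $\E_{a,b}=z$.

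The paper's proof avoids this by splitting $\delta_{a,b}\leq 0$ into two sub-cases. If $-y<\delta_{a,b}\leq 0$ it does \emph{not} claim $\E_{a,b}=z$; instead it substitutes $v_m^z=\lambda v_n^y$, cancels the common factor $v_n^{-\delta_{a,b}}$ using Lemma~\ref{lem:fundamental local rings and generators} (with the good pair $(0,-\delta_{a,b})$), and reads off $\psi=0$ from $\lambda v_m^{\E_{a,b}-z}v_n^{y+\delta_{a,b}}=\psi$ since $v_n$ is not invertible. If $\delta_{a,b}\leq -y$ it observes that both $(\E_{a,b}-z,0)$ and $(0,-\delta_{a,b}-y)$ are good pairs (here one needs $-\delta_{a,b}-y\geq 0$) for the same element, and the uniqueness in Lemma~\ref{lem:good pair for A<z} forces both to be $(0,0)$, whence $\E_{a,b}=z$, $\delta_{a,b}=-y$, $\psi=\lambda$. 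This two-branch argument is what your single ``minimality of $z$'' shortcut was meant to replace, but cannot. (You also do not treat the easy sub-case $\E_{a,b}>0,\ \delta_{a,b}>0$, where the reduction gives $v_m^{\E_{a,b}}v_n^{\delta_{a,b}}=\psi$ and hence $\psi=0$ immediately; this is needed for the ``Moreover'' clause.)
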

\begin{proof}
Set $\psi=\psi_{a,b}$. We start with the case $\E_{a,b}>0$. From
\ref{eq:multiplications psi for m,n first} we get
\[
v_{m}^{\E_{a,b}}v_{n}^{\delta_{a}+\delta_{b}}=\psi v_{n}^{\delta_{a+b}}
\]
If $\delta_{a,b}>0$ then $v_{m}^{\E_{a,b}}v_{n}^{\delta_{a,b}}=\psi$
and so $\psi=0$ since $v_{m}\notin A^{*}$. If $\delta_{a,b}\leq0$
we instead have $v_{m}^{\E_{a,b}}=\psi v_{n}^{-\delta_{a,b}}$ and
so $\E_{a,b}\geq z$. If $-\delta_{a,b}<y$ then $(0,-\delta_{a,b})$
is good. So we can write
\[
v_{m}^{\E_{a,b}-z}\lambda v_{n}^{y+\delta_{a,b}}=\psi\then\psi=0
\]
since $v_{n}$ is not invertible. If $\delta_{a,b}\leq-y$ we have
\[
0\leq\E_{a,b}-z<z,\;0\leq-\delta_{a,b}-y<f(0),\;(\E_{a,b}-z)m=(-\delta_{a,b}-y)n,\; v_{m}^{\E_{a,b}-z}\lambda=\psi v_{n}^{-\delta_{a,b}-y}
\]
and so both $(\E_{a,b}-z,0)$ and $(0,-\delta_{a,b}-y)$ are good
pair for the same element of $M$. Therefore we must have $\E_{a,b}=z$,
$\delta_{a,b}=-y$ and $\psi=\lambda$.

Now assume $\E_{a,b}=0$. If $\delta_{a,b}<0$ then $v_{n}^{-\delta_{a,b}}\psi=1$
which is impossible. So $\delta_{a,b}\geq0$. If $\delta_{a,b}=0$
clearly $\psi=1$. If $\delta_{a,b}>0$ then $v_{n}^{\delta_{a,b}}=\psi$
and so $\psi=0$ and $\delta_{a,b}\geq o(n)$.

Finally assume $\E_{a,b}<0$. From \ref{eq:multiplications psi for m,n first}
we get
\[
v_{n}^{\delta_{a}+\delta_{b}}=\psi v_{m}^{-\E_{a,b}}v_{n}^{\delta_{a+b}}
\]
We must have $\delta_{a,b}>0$ since $v_{m}$ is not invertible. So
$v_{n}^{\delta_{a,b}}=\psi v_{m}^{-\E_{a,b}}$ and $\delta_{a,b}\geq x$,
from which
\[
v_{n}^{\delta_{a,b}-x}\mu v_{m}^{w}=\psi v_{m}^{-\E_{a,b}}
\]
Note that, since $0\leq-\E_{a,b}\leq\E_{a+b}<z$, $(-\E_{a,b},0)$
is a good pair. If $w>-\E_{a,b}$ then $\psi=0$. So assume $w\leq-\E_{a,b}$.
Arguing as above we must have $\delta_{a,b}=x$, $\E_{a,b}=-w$ and
$\psi=\mu$.
\end{proof}

\begin{lem}
\label{lem:From lambda not zero to zero A'}Define
\[
A'=k[s,t]/(s^{z},s^{c}t^{f(c)}\:\text{for}\:0\leq c<z)
\]
Then $A'\in\MCov(k)$ with graduation $\deg s=m$, $\deg t=n$ and
it satisfies the requests of \ref{not:starting from a cover, m.n generate M},
i.e. $A'$ is generated in degrees $m,n$ and $H_{A'/k}=0$. Moreover
we have
\[
\overline{q}_{A'}=\overline{q}_{A}\comma z_{A'}=z_{A},\; y_{A'}=y_{A},\;\E^{A'}=\E^{A},\;\delta^{A'}=\delta^{A},\;\lambda_{A'}=\mu_{A'}=0\comma f_{A'}=f_{A}
\]
\end{lem}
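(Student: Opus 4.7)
The plan is to first verify that $A'$ is an $M$-graded $k$-algebra with one-dimensional graded pieces, so that $A'\in\MCov(k)$, and then to read off each invariant directly from the presentation. Since the defining ideal is monomial, $A'$ inherits a $k$-basis given by the monomials $s^{c}t^{d}$ with $0\le c<z$ and $0\le d<f(c)$, of total cardinality $|M|$ by \eqref{eq:the sum of f's is |M|}. Endowing $k[s,t]$ with the grading $\deg s=m$, $\deg t=n$ makes all the defining relations homogeneous, and Lemma \ref{lem:good pair for A<z} applied to $A$ implies that $(c,d)\mapsto cm+dn$ is a bijection from this index set onto $M$. Thus every graded piece $A'_{l}$ is one-dimensional, spanned by $s^{\E^{A}_{l}}t^{\delta^{A}_{l}}$, so $A'\in\MCov(k)$; and since $s^{z}=0$ and $t^{f(0)}=0$, neither $s$ nor $t$ is invertible, giving $H_{A'/k}=0$.

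Next I would compute the invariants $z_{A'},y_{A'},\lambda_{A'}$. The defining relation $s^{z}=0=0\cdot t^{y}$ together with $zm=yn$ shows $z_{A'}\le z$, and since $y<f(0)$ by Remark \ref{rem:some remarks on f} the monomial $t^{y}$ is a nonzero basis element, forcing $\lambda_{A'}=0$ and $y_{A'}=y$. The reverse inequality $z_{A'}\ge z$ is the one spot that needs genuine work: I would argue by contradiction that no relation $s^{h}=\lambda t^{i}$ with $hm=in$ and $0<h<z$ can hold in $A'$. Indeed $s^{h}$ is a nonzero basis monomial (since $f(h)>0$ by Remark \ref{rem:some remarks on f}), so such a relation would force $\lambda\neq 0$ and $t^{i}$ to be a nonzero basis monomial as well; but then $(h,0)$ and $(0,i)$ would be two distinct good pairs for the same element $hm\in M$, contradicting the uniqueness asserted in Lemma \ref{lem:good pair for A<z}. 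The same type of argument handles $\mu_{A'}=0$: if $\mu_{A'}\neq 0$ then $t^{x_{A'}}=\mu_{A'}s^{w_{A'}}$ would exhibit both monomials as nonzero basis elements in one and the same graded piece, producing two distinct good pairs $(0,x_{A'})$ and $(w_{A'},0)$ for one element of $M$, again a contradiction since $x_{A'}>0$ by definition.

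The remaining invariants follow essentially by construction. The bijection from the first paragraph shows $\E^{A'}=\E^{A}$ and $\delta^{A'}=\delta^{A}$, and the description of the monomial basis directly gives $f_{A'}(c)=f_{A}(c)$ for $0\le c<z$: the relation $s^{c}t^{f(c)}=0$ is defining, while $s^{c}t^{d}$ for $d<f(c)$ is a basis element and hence nonzero. Finally $\overline{q}_{A'}=z_{A'}/r=\overline{q}_{A}$ since $r$ depends only on the pair $(M,m,n)$ and not on the algebra.

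The main obstacle, as indicated, is establishing $z_{A'}\ge z$; everything else is forced either by the monomial defining relations or by the uniqueness of good pairs from Lemma \ref{lem:good pair for A<z}. The crucial combinatorial input throughout is the bound $f(0)\ge y+1$ from Remark \ref{rem:some remarks on f}, which ensures that the witness $t^{y}$ for the relation $s^{z}=0\cdot t^{y}$ is a genuine nonzero basis element rather than being killed by the defining ideal.
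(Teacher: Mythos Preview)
Your proof is correct and follows essentially the same line as the paper's: both identify the monomial basis $\{s^ct^d:0\le c<z,\ 0\le d<f(c)\}$, use the bijection with $M$ (via Lemma~\ref{lem:good pair for A<z}) to get one-dimensional graded pieces, and establish $z_{A'}\ge z$ by the uniqueness-of-good-pairs argument you describe. The only cosmetic differences are that the paper verifies nonvanishing of each basis monomial via an explicit quotient argument rather than invoking the standard fact about monomial ideals (note your identification of the basis tacitly uses that $f$ is decreasing), and the paper obtains $\mu_{A'}=0$ by citing Remark~\ref{rem:lambda is 0 iff mu is 0} rather than repeating the good-pair argument as you do.
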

\begin{proof}
Clearly the elements $s^{c}t^{d}$ for $0\leq c<z$, $0\leq d<f(c)$
generates $A'$ as a $k$-space. Since they are $\sum_{c=0}^{z-1}f(c)=|M|$
and they all have different degrees, it is enough to prove that any
of them are non-zero. So let $(c',d')$ a pair as always. It is enough
to show that $B=k[s,t]/(s^{c'+1},t^{d'+1})\arr A'/(s^{c'+1},t^{d'+1})$
is an isomorphism. But $c'<z$ implies that $s^{z}=0$ in $B$. If
$c'<c$ then $s^{c}t^{f(c)}=0$ in $B$ and finally if $c'\geq c$
then $d'+1\leq f(c')\leq f(c)$ and so $s^{c}t^{f(c)}=0$ in B.

The algebra $A'$ is clearly generated in degrees $m,n$ and $H_{A'/k}=0$
since $s^{z}=t^{f(0)}=0$ and $z,f(0)>0$. Moreover $s^{z}=0t^{y}$
implies that $z'=z_{A'}\leq z$. Assume by contradiction $z'<z$.
From $0\neq s^{z'}=\lambda't^{y'}$ we know that $t^{y'}\neq0$ so
that $y'<f(0)$. Therefore $(\E_{z'm},\delta_{z'm})=(z',0)=(0,y')$
and so $z'=0$, which is a contradiction. Then $z'=z$, $y_{A'}=y'=y$.
Also $s^{z}=0t^{y}$ and $t^{y}\neq0$ imply $\lambda_{A'}=0$ and,
thanks to \ref{rem:lambda is 0 iff mu is 0}, $\mu_{A'}=0$. Finally
by construction we also have $\E^{A'}=\E$, $\delta^{A'}=\delta$
and $f_{A'}=f$.\end{proof}
\begin{lem}
\label{lem:overline q from a cover}We have
\[
{\displaystyle d_{\overline{q}}=\max_{1\leq q\leq\overline{q}}d_{q}}
\]
\end{lem}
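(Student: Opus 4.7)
The plan is to reformulate the equality in terms of $d_q' := q\alpha \bmod N \in [0,N)$, so that $d_q = N - d_q'$ when $d_q' > 0$ and $d_q = N$ when $d_q' = 0$. Under this reparametrization the lemma becomes: either $d_{\overline{q}}' = 0$ (in which case $d_{\overline{q}} = N$ is tautologically maximal), or $d_{\overline{q}}' \le d_q'$ for every $1 \le q < \overline{q}$.

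I will first squeeze out the consequences of the minimality built into $z = \overline{q}\,r$. For each $1 \le q < \overline{q}$, no relation $v_m^{qr} = \lambda v_n^i$ with $qrm = in$ exists. Choosing $\lambda = 0$ immediately shows $v_m^{qr} \ne 0$; matching $v_m^{qr}$ against the basis of the $1$-dimensional degree-$d_q'\,n$ piece forces $v_n^{d_q'} = 0$, hence $d_q' \ge f(0)$; and the case $d_q' = 0$ would place the nonzero $v_m^{qr}$ in the degree-$0$ piece and deliver a relation with $i = 0$, another contradiction. By the uniqueness part of Lemma \ref{lem:good pair for A<z}, the pair $(qr, 0)$ is then the preferred good pair for $d_q'\,n$. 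At the top index $q = \overline{q}$, the defining relation $v_m^z = \lambda v_n^y$ splits into three subcases: if $y = 0$ the conclusion is immediate; if $y > 0$ and $v_n^y \ne 0$ (equivalently $y < f(0)$) then $y < f(0) \le d_q'$ for every $1 \le q < \overline{q}$, so $d_{\overline{q}} = N - y > N - d_q' = d_q$ and we are done.

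The hard part will be to rule out the remaining subcase $y > 0$ with $v_n^y = 0$. I plan to do so by inspecting the preferred good pair $(\E_{yn}, \delta_{yn})$ of the element $yn \in M$. Since $v_n^{y + kN} = 0$ for every $k \ge 0$, one cannot take $\E_{yn} = 0$; and $\E_{yn}\,m \in \langle n \rangle$ forces $r \mid \E_{yn}$, so $\E_{yn} = q^{*} r$ with $1 \le q^{*} < \overline{q}$. The identity $\E_{yn}\,m + \delta_{yn}\,n = yn$ in $M$ rewrites as $\delta_{yn} \equiv (\overline{q} - q^{*})\alpha \equiv d_{q^{**}}' \bmod N$, where $q^{**} := \overline{q} - q^{*} \in [1, \overline{q}-1]$. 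Combined with $\delta_{yn} \ge 0$ this yields $\delta_{yn} \ge d_{q^{**}}' \ge f(0)$ by the previous paragraph applied at index $q^{**}$, whereas goodness of $(\E_{yn}, \delta_{yn})$ demands $\delta_{yn} < f(\E_{yn}) \le f(0)$, since $f$ is decreasing. The resulting contradiction shows this last subcase cannot occur, completing the proof. The delicate bit is precisely this congruence-plus-monotonicity step, where the minimality of $\overline{q}$ is really used through the lower bound $d_{q^{**}}' \ge f(0)$ for the auxiliary index $q^{**}$.
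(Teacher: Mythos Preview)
Your proof is correct and follows essentially the same route as the paper's: both hinge on the inequality $N - d_q \geq f(0) > y = N - d_{\overline{q}}$ for $1 \leq q < \overline{q}$, derived from the minimality of $z$. Your ``hard part'' is superfluous since $v_n^y \neq 0$ is already recorded in Remark~\ref{rem:some remarks on f} (so case (c) never occurs); on the other hand you avoid the paper's preliminary reduction to $\lambda = 0$ via Lemma~\ref{lem:From lambda not zero to zero A'}, which is indeed unnecessary here since $y < f(0)$ follows directly from $v_n^y \neq 0$ without identifying $f(0)$ with $x$.
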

\begin{proof}
Thanks to \ref{lem:From lambda not zero to zero A'} we can assume
$\lambda=0$ and, therefore, $\mu=0$. So $v_{n}^{x}=0$, $v_{n}^{x-1}\neq0$
and $v_{n}^{y}\neq0$ imply $y<x=f(0)$. Let $1\leq q<\overline{q}$
and $l=qr$. We have $(\E_{l},\delta_{l})=(qr,0)$. If $N-d_{q}<x=f(0)$
then we will also have $(\E_{l},\delta_{l})=(0,N-d_{q})$ and so $q=0$,
which is not the case. So $N-d_{q}\geq x>y=N-d_{\overline{q}}\then d_{q}<d_{\overline{q}}$.\end{proof}
\begin{lem}
\label{lem:From a cover we get overline q}Define $\hat{q}$ as the
only integers $0\leq\hat{q}<\overline{q}$ such that
\[
d_{\hat{q}}=\min_{0\leq q<\overline{q}}d_{q}
\]
If $\lambda=0$ we have $d_{\hat{q}}\leq x=f(0)\text{ and }f(c)=\left\{ \begin{array}{cc}
x & \text{if }0\leq c<\hat{q}r\\
d_{\hat{q}} & \text{if }\hat{q}r\leq c<z
\end{array}\right.$\end{lem}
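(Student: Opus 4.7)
The strategy combines three ingredients: the uniqueness of good pairs (\ref{lem:good pair for A<z}), the monotonicity of $f$ (\ref{rem:some remarks on f}), and the total count $\sum_{c=0}^{z-1} f(c) = |M| = rN$ from \eqref{eq:the sum of f's is |M|}.

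First I would extract an upper bound from the identity element. For each $1 \leq q' \leq \overline{q}-1$, the pair $(q'r, d_{q'})$ represents $0 \in M$ by the very definition of $d_{q'}$, yet the unique good pair for $0$ with first coordinate below $z$ is $(0,0)$. By the uniqueness clause of \ref{lem:good pair for A<z}, the pair $(q'r, d_{q'})$ is therefore not good, i.e.\ $v_m^{q'r} v_n^{d_{q'}} = 0$, which gives $f(q'r) \leq d_{q'}$. Combining this with $f(0) = x$ (immediate from $\mu=0$, since $v_n^x = 0$ while $v_n^d \neq 0$ for $d < x$) and the monotonicity of $f$, one obtains $f(a + q'r) \leq \min(x, d_{q'})$ for every $0 \leq a < r$ and $0 \leq q' \leq \overline{q}-1$, with the convention $d_0 = N \geq x$ so that the case $q'=0$ is subsumed.

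Next I would decompose the sum $\sum_c f(c) = rN$ along cosets of $\langle n\rangle$. For each $a \in \{0,\ldots,r-1\}$ the $N$ elements $am + bn$ are in bijection with their unique good pairs $(a + q'r,\, (b - q'\alpha) \bmod N)$, so the set of residues $b$ routed into column $q'$ is the interval $I_{q'}(a) = \{q'\alpha, q'\alpha + 1, \ldots, q'\alpha + f(a + q'r) - 1\}$ in $\Z/N\Z$, and these intervals form a disjoint partition of $\Z/N\Z$. This yields $\sum_{q'=0}^{\overline{q}-1} f(a + q'r) = N$, and, since the starting positions $q'\alpha \bmod N$ are manifestly independent of $a$, the lengths $f(a + q'r)$ must be too. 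Invoking the bound from the first step and the fact that $\hat{q}$ minimizes $d_{q'}$ on $\{0,\ldots,\overline{q}-1\}$, the tiling forces $f(a + q'r) = x$ for $q' < \hat{q}$ and $f(a + q'r) = d_{\hat{q}}$ for $q' \geq \hat{q}$; the consistency relation $\hat{q}\, x + (\overline{q} - \hat{q})\, d_{\hat{q}} = N$ and the inequality $d_{\hat{q}} \leq x$ (required for monotonicity) fall out automatically.

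The principal obstacle is the final combinatorial step in which the disjointness of the $I_{q'}(a)$ and the bound $f(a+q'r) \leq \min(x,d_{q'})$ are converted into the explicit values. The crucial input is \ref{lem:overline q from a cover}, which identifies $d_{\overline{q}}$ as the maximum of $\{d_1,\ldots,d_{\overline{q}}\}$ and thereby pins down $\hat{q}$ as the unique breakpoint of the staircase pattern. The boundary cases $\overline{q} = 1$ (where the claim collapses to $f(c) = d_0 = N$) and $\hat{q} = 0$ (which forces $x = N = d_0$) I would handle by direct inspection.
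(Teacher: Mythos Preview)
Your coset/tiling argument is a genuinely different and attractive route: the observation that the good pairs for the coset $am+\langle n\rangle$ have first coordinates $a+q'r$, so that the intervals $I_{q'}(a)=[q'\alpha,\,q'\alpha+f(a+q'r))$ partition $\Z/N\Z$ with starting points independent of $a$, is correct and immediately gives both $\sum_{q'}f(q'r)=N$ and the fact that $f(a+q'r)$ depends only on $q'$. This is more conceptual than the paper's argument.

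The gap is exactly where you flag it, and it is real. Knowing that the lengths $\ell_{q'}:=f(q'r)$ are the gaps of the cyclic arrangement $\{0,\alpha,\ldots,(\overline{q}-1)\alpha\}\subset\Z/N\Z$, together with the bound $\ell_{q'}\le d_{q'}$, monotonicity, and the sum $N$, does \emph{not} force the two-value pattern. From the tiling one can read off $\ell_0=N-\max_{1\le j<\overline{q}}d_j$ (so $\max_j d_j=N-x$) and $\ell_{\hat{q}}=d_{\hat{q}}$ (the rightmost point wraps to $0$), but to obtain $\ell_{q'}=x$ for \emph{all} $q'<\hat{q}$ you must know that $d_q>x$ for every $1\le q<\hat{q}$; otherwise a small $d_q$ with $q<\hat{q}$ would produce a short gap before position $\hat{q}$. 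This inequality is precisely the content of the relation $d_{q_{n-1}}+d_q>N$ proved later in \ref{lem:fundamental for the case m,n}, and your cited input \ref{lem:overline q from a cover} (that $d_{\overline{q}}$ is the maximum) does not supply it. The paper sidesteps this combinatorics entirely: it first proves $f(c)=\min(x,\,d_q:0\le qr\le c)$ by taking the good pair for $cm+f(c)\,n$ and cancelling via \ref{lem:fundamental local rings and generators} to get the missing \emph{lower} bound on $f(c)$; it then identifies $\hat{q}$ as the index $\tilde q$ of the unique good pair $(\tilde{q}r,d_{\tilde{q}}-1)$ for $-n$, and a second application of \ref{lem:fundamental local rings and generators} yields $d_q\ge x$ for $q<\tilde{q}$ directly from the algebra. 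Your route can be salvaged, but only by importing the later combinatorial lemma or by inserting this algebraic lower-bound step.
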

\begin{proof}
We want first prove that $f(c)=\min(x,d_{q}\;\text{for}\;0\leq qr\leq c)$.
Clearly we have the inequality $\leq$ since $v_{n}^{x}=v_{m}^{qr}v_{n}^{d_{q}}=0$.
Set $d=f(c)$ and let $(a,b)$ a good pair for $cm+dn$, so that $v_{m}^{c}v_{n}^{d}=0v_{m}^{a}v_{n}^{b}$.
We cannot have $b\geq d$ since otherwise $v_{m}^{c}=0$ implies $c\geq z$.
If $a\geq c$ then $v_{n}^{d}=0$ and so $d=f(c)\geq x$. Conversely
if $a<c$ then $0\leq c-a=qr\leq c<z$ and $0<d-b=d_{q}\leq d=f(c)$.

We are now ready to prove the expression of $f$. Note that the pairs
$(qr,d_{q}-1)$, with $0\leq q<\overline{q}$, are all the possible
pairs for $-n$. So there exists a unique $0\leq\tilde{q}<\overline{q}$
such that $(\tilde{q}r,d_{\tilde{q}}-1)$ is good. In particular if
$0\leq q\neq\tilde{q}<\overline{q}$ we have an expression 
\[
v_{m}^{qr}v_{n}^{d_{q}-1}=0v_{m}^{\tilde{q}}v_{n}^{d_{\tilde{q}}-1}\then\left\{ \begin{array}{ccccc}
q<\tilde{q} & \then & v_{n}^{d_{q}-1}=0 & \then & d_{q}\geq x\\
q>\tilde{q} & \then & d_{q}>d_{\tilde{q}}
\end{array}\right.
\]
Since $v_{n}^{d_{\tilde{q}}-1}\neq0$ we must have $d_{\hat{q}}\leq x$.
This shows that $\tilde{q}=\hat{q}$ and the expression of $f$. Finally
If $\overline{q}>1$ then $\hat{q}>0$ and so $d_{\hat{q}}\leq x=f(0)$
since $f$ is a decreasing function. If $\overline{q}=1$ then $\hat{q}=0$
and so $N=d_{\hat{q}}=f(0)\leq x\leq N$.\end{proof}
\begin{defn}
We will continue to use notation from \ref{lem:From a cover we get overline q}
for $\hat{q}$ and we will also write $\hat{q}_{A}$ if necessary.
\end{defn}

\subsection{\label{sub:the invariant overlineq}The invariant $\overline{q}$.}
\begin{lem}
\label{lem:fundamental for the case m,n}Let $\beta,N\in\N$, with
$N>1$, and define $d_{q}^{\beta}=d_{q}$, for $q\in\Z$, the only
integer $0<d_{q}\leq N$ such that $d_{q}\equiv q\beta$ mod $N$.
Set
\[
\Omega_{\beta,N}=\{0<q\leq o(\beta,\Z/N\Z)=N/(N,\beta)\;|\; d_{q'}<d_{q}\text{ for any }0<q'<q\},
\]
set $q_{n}$ for the $n$-th element of it and denote by $0\leq\hat{q}<q_{n}$
the only number such that 
\[
d_{\hat{q}}=\min_{0\leq q<q_{n}}d_{q}
\]
Then we have relations $\hat{q}N+q_{n}d_{\hat{q}}-\hat{q}d_{q_{n}}=N$
and, if $n>1$, $q_{n}=q_{n-1}+\hat{q}$, $d_{q_{n}}=d_{q_{n-1}}+d_{\hat{q}}$
and $d_{q_{n-1}}+d_{q}>N$ for $q<\hat{q}$.\end{lem}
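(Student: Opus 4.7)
The plan is to use as main technical tool the symmetry
\[
d_{q_n - q} = d_{q_n} - d_q \quad \text{for } 0 < q < q_n,
\]
which would follow from the congruence $d_{q_n - q} + d_q \equiv d_{q_n} \pmod{N}$ together with the fact that both sides lie in $(0, 2N]$: the sum equals $d_{q_n}$ or $d_{q_n} + N$, but in the second case we would have $d_{q_n - q} > d_{q_n}$, contradicting that $q_n$ is a record and $0 < q_n - q < q_n$. Combined with the injectivity of $q \mapsto d_q$ on $\{0, 1, \ldots, N/(N,\beta) - 1\}$ (immediate from $d_q \equiv q\beta \pmod N$), this would imply that for $n > 1$ the maximum of $d_q$ over $0 < q < q_n$ equals $d_{q_{n-1}}$, attained uniquely at $q_{n-1}$ (strictness at $q < q_{n-1}$ follows from the definition of a record, and at $q_{n-1} < q < q_n$ from the absence of further records together with injectivity).

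From this I would deduce the relations $q_n = q_{n-1} + \hat q$ and $d_{q_n} = d_{q_{n-1}} + d_{\hat q}$: the involution $q \mapsto q_n - q$ on $(0, q_n)$ carries the unique maximum at $q_{n-1}$ to the unique minimum at $q_n - q_{n-1}$, and since $d_0 = N$ strictly exceeds $d_{q_n} - d_{q_{n-1}}$ one concludes $\hat q = q_n - q_{n-1} \neq 0$. The overflow inequality $d_{q_{n-1}} + d_q > N$ for $0 < q < \hat q$ then follows by contraposition: if $d_{q_{n-1}} + d_q \leq N$ we would have $d_{q_{n-1}+q} = d_{q_{n-1}} + d_q > d_{q_{n-1}}$ with $0 < q_{n-1} + q < q_n$, violating the maximality just established.

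For the identity $\hat q N + q_n d_{\hat q} - \hat q d_{q_n} = N$, the case $n = 1$ is immediate ($\hat q = 0$, $d_{\hat q} = d_0 = N$). For $n \geq 2$, substituting the relations above rewrites the identity as the Bezout-type equality
\[
q_n d_{q_{n-1}} - q_{n-1} d_{q_n} = (\hat q - 1) N,
\]
which I would interpret geometrically as saying that the vectors $v_{n-1} = (q_{n-1}, d_{q_{n-1}} - N)$ and $v_n = (q_n, d_{q_n} - N)$ form a $\mathbb{Z}$-basis of the lattice $L = \{(q, d) \in \mathbb{Z}^2 : d \equiv q\beta \pmod{N}\}$, which has index $N$ in $\mathbb{Z}^2$. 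I would prove this by induction on $n$, with base cases $n = 1$ (direct) and $n = 2$ (verified by a separate induction on $q$ up to $q_2$, tracking $k_q = q - 1$ until the first $q$ with $d_q > d_1$). When $\hat q_n \leq q_{n-1}$, uniqueness of minimizers forces $\hat q_n = \hat q_{n-1}$ (apart from the degenerate case $\hat q_n = q_{n-1}$ which forces $n = 2, q_2 = 2$ by injectivity and is checked directly), giving $v_n = 2 v_{n-1} - v_{n-2}$ and thereby preserving the basis property.

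The main obstacle, and the genuinely new part, is the case $\hat q_n > q_{n-1}$, where the minimizer migrates to a fresh index in $(q_{n-1}, q_n)$. Here applying the overflow inequality at $q = q_{n-1}$ yields $d_{q_{n-1}} > N/2$, and iterating this overflow along the arithmetic progression $q_{n-1}, 2 q_{n-1}, 3 q_{n-1}, \ldots$ produces the explicit formula $d_{j q_{n-1}} = j d_{q_{n-1}} - (j-1) N$ valid for $j$ in a range determined by $d_{q_{n-1}}$. Writing $\hat q_n = j_0 q_{n-1} + r$ with $0 \leq r < q_{n-1}$ and invoking the inductive hypothesis on the smaller pair $(r, d_r)$ should reduce the required identity to a Stern--Brocot mediant-type relation, verifiable by direct expansion using the formulas from Steps 1 and 2.
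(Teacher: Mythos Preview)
Your treatment of the relations for $n>1$ (that is, $q_n = q_{n-1} + \hat q$, $d_{q_n} = d_{q_{n-1}} + d_{\hat q}$, and the overflow inequality) is correct and essentially the paper's argument; your explicit use of the symmetry $d_{q_n-q} = d_{q_n} - d_q$ and the observation that $d_{q_{n-1}}$ is the unique maximum on $(0,q_n)$ in fact make the paper's contradiction argument more transparent. Your lattice reformulation of the main identity --- that $v_{n-1}=(q_{n-1},d_{q_{n-1}}-N)$ and $v_n=(q_n,d_{q_n}-N)$ form a $\Z$-basis of $L=\{(q,d):d\equiv q\beta\bmod N\}$ --- is valid, and both the easy case $\hat q_n<q_{n-1}$ (where $\hat q_n=\hat q_{n-1}$, hence $v_n=2v_{n-1}-v_{n-2}$) and the edge case $\hat q_n=q_{n-1}$ are handled cleanly; you even catch a borderline situation the paper glosses over.

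The hard case $\hat q_n > q_{n-1}$, however, has a genuine gap. Your sketch --- iterate the overflow along multiples of $q_{n-1}$, write $\hat q_n = j_0 q_{n-1} + r$, and ``invoke the inductive hypothesis on the smaller pair $(r,d_r)$'' --- does not connect with your induction on $n$: the residue $r$ need not be a record index $q_m$, so there is no inductive statement available to apply to it. The phrase ``Stern--Brocot mediant-type relation, verifiable by direct expansion'' is not a proof; you have specified neither the relation nor why the range of validity of $d_{jq_{n-1}} = jd_{q_{n-1}}-(j-1)N$ extends far enough to reach $j_0$.

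The paper handles this case by a different, clean idea you are missing. It switches to $\alpha = N - \beta$ and uses the complementarity $d_q^\beta + d_q^\alpha = N$ for $0<q<N/(N,\beta)$. Under this swap, the $\beta$-minimizer $\hat q_n^\beta$ becomes a record $q_l^\alpha$ for $\alpha$, and the previous $\beta$-record $q_{n-1}^\beta$ becomes precisely $\hat q_l^\alpha$. Since $q_l^\alpha=\hat q_n^\beta<q_n^\beta$, an induction on the \emph{value} of $q_n$ (carried out simultaneously over all $\beta$) applies, and the identity for $(\alpha,l)$ rewrites algebraically into the desired identity for $(\beta,n)$. Your induction, being on $n$ for a fixed $\beta$, cannot access this duality; that is why your argument stalls exactly where the paper's argument pivots.
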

\begin{proof}
First of all note that all is defined also in the extremal case $\beta=0$.
In this case $\Omega_{\beta,N}=\{1\}$. Assume first $n>1$. Set $\tilde{q}=q_{n}-q_{n-1}$
so that $d_{q_{n}}=d_{q_{n-1}}+d_{\tilde{q}}$ since $d_{q_{n}}>d_{q_{n-1}}$.
Assume by contradiction that $\tilde{q}\neq\hat{q}$. Since $\tilde{q}<q_{n}$
we have $d_{\hat{q}}<d_{\tilde{q}}$. Let also $q'=q_{n}-\hat{q}$
and, as above, we can write $d_{q_{n}}=d_{q'}+d_{\hat{q}}$. Now
\[
d_{q_{n}}-d_{q'}=d_{\hat{q}}<d_{\tilde{q}}=d_{q_{n}}-d_{q_{n-1}}\then d_{q_{n-1}}<d_{q'}
\]
Since $q_{n-1}\in\Omega_{\beta,N}$ we must have $q'>q_{n-1}$, which
is a contradiction because otherwise, being $q'<q_{n}$, we must have
$q'=q_{n}$. So $\tilde{q}=\hat{q}$. For the last relation note that,
since $q_{n}$ is the first $q>q_{n-1}$ such that $d_{q}>d_{q_{n-1}}$,
then $\hat{q}$ is the first such that $d_{q_{n-1}}+d_{\hat{q}}\leq N$.

Now consider the first relation. We need to do induction on all the
$\beta$. So we will write $d_{q}^{\beta}$ and $q_{n}^{\beta}$ in
order to remember that those numbers depend on to $\beta$. The induction
statement on $1\leq q<N$ is: for any $0\leq\beta<N$ and for any
$n$ such that $q_{n}^{\beta}\leq q$ the required formula holds.
The base step is $q=1$. In this case we have $n=1$, $q_{1}=1$,
$\hat{q}=0$, $d_{0}=N$ and the formula can be proven directly. For
the induction step we can assume $q>1$ and $n>1$. We will write
$\hat{q}_{n}^{\beta}$ for the $\hat{q}$ associated to $n$ and $\beta$.
First of all note that, by the relations proved above, we can write
\[
\hat{q}_{n}^{\beta}N+q_{n}^{\beta}d_{\hat{q}_{n}^{\beta}}^{\beta}-\hat{q}_{n}^{\beta}d_{q_{n}^{\beta}}^{\beta}=\hat{q}_{n}^{\beta}N+q_{n-1}^{\beta}d_{\hat{q}_{n}^{\beta}}^{\beta}-\hat{q}_{n}^{\beta}d_{q_{n-1}^{\beta}}^{\beta}
\]
and so we have to prove that the second member equals $N$. If $\hat{q}_{n}^{\beta}\leq q_{n-1}^{\beta}$
then $\hat{q}_{n-1}^{\beta}=\hat{q}_{n}^{\beta}$ and the formula
is true by induction on $q-1\geq q_{n-1}^{\beta}$. So assume $\hat{q}_{n}^{\beta}>q_{n-1}^{\beta}$
and set $\alpha=N-\beta$. Clearly we will have
\[
o=o(\alpha,\Z/N\Z)=o(\beta,\Z/N\Z)\text{ and }d_{q}^{\beta}+d_{q}^{\alpha}=N\text{ for any }0<q<o
\]
Moreover
\[
d_{\hat{q}_{n}^{\beta}}^{\beta}<d_{q}^{\beta}\text{ for any }0<q<q_{n}^{\beta}\then d_{\hat{q}_{n}^{\beta}}^{\alpha}>d_{q}^{\alpha}\text{ for any }0<q<\hat{q}_{n}^{\beta}\then\exists l\text{ s.t. }q_{l}^{\alpha}=\hat{q}_{n}^{\beta}
\]
and
\[
d_{q_{n-1}^{\beta}}^{\beta}\geq d_{q}^{\beta}\text{ for any }0<q<q_{n}^{\beta}\then d_{q_{n-1}^{\beta}}^{\alpha}\leq d_{q}^{\alpha}\text{ for any }0\leq q<q_{l}^{\alpha}=\hat{q}_{n}^{\beta}\then\hat{q}_{l}^{\alpha}=q_{n-1}^{\beta}
\]
Using induction on $q_{l}^{\alpha}=\hat{q}_{n}^{\beta}<q_{n}^{\beta}\leq q$
we can finally write
\begin{alignat*}{1}
N= & \hat{q}_{l}^{\alpha}N+q_{l}^{\alpha}d_{\hat{q}_{l}^{\alpha}}^{\alpha}-\hat{q}_{l}^{\alpha}d_{q_{l}^{\alpha}}^{\alpha}=q_{n-1}^{\beta}N+\hat{q}_{n}^{\beta}d_{q_{n-1}^{\beta}}^{\alpha}-q_{n-1}^{\beta}d_{\hat{q}_{n}^{\beta}}^{\alpha}\\
= & q_{n-1}^{\beta}N+\hat{q}_{n}^{\beta}(N-d_{q_{n-1}^{\beta}}^{\beta})-q_{n-1}^{\beta}(N-d_{\hat{q}_{n}^{\beta}}^{\beta})=\hat{q}_{n}^{\beta}N+q_{n-1}^{\beta}d_{\hat{q}_{n}^{\beta}}^{\beta}-\hat{q}_{n}^{\beta}d_{q_{n-1}^{\beta}}^{\beta}
\end{alignat*}

\end{proof}
We continue to keep notation from \ref{not:for alpha,N,r e M}. With
$d_{q}$ we will always mean $d_{q}^{N-\alpha}$ as in \ref{lem:fundamental for the case m,n}.
Lemma \ref{lem:overline q from a cover} can be restated as:
\begin{prop}
\label{pro:overlineqA is in omeganminusalpha,n}Let $A$ be an algebra
as in \ref{not:starting from a cover, m.n generate M}. Then $\overline{q}_{A}\in\Omega_{N-\alpha,N}$.
\end{prop}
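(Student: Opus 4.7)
The plan is to verify the two defining conditions for $\overline{q}=\overline{q}_A$ to belong to $\Omega_{N-\alpha,N}$: namely $0<\overline{q}\leq N/(N,N-\alpha)$ and $d_{q'}<d_{\overline{q}}$ for every $0<q'<\overline{q}$.

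First I would settle the range of $\overline{q}$. Positivity is built into the definition of $z$, and divisibility of $z$ by $r$ holds because $zm\in\langle n\rangle$ forces $r\mid z$ by \ref{pro:description group generated by two elements}; thus $\overline{q}$ is a positive integer. For the upper bound I would use the same proposition, which yields $o(m)=rN/(N,\alpha)=rN/(N,N-\alpha)$. The element $v_m^{o(m)}$ lies in $A_0=k$, so it equals $\lambda\cdot v_n^{0}$ for some $\lambda\in k$ (possibly $0$); since $o(m)\,m=0\cdot n=0$ in $M$, this is an admissible relation in the definition of $z$, giving $z\leq o(m)$ and therefore $\overline{q}\leq N/(N,N-\alpha)$.

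Second I would establish the strict inequality $d_{q'}<d_{\overline{q}}$ for $0<q'<\overline{q}$. Lemma \ref{lem:overline q from a cover} already yields $d_{q'}\leq d_{\overline{q}}$, so only equality must be excluded. I would argue by contradiction: if $d_{q'}=d_{\overline{q}}$, then both $q'rm+d_{q'}n$ and $\overline{q}rm+d_{\overline{q}}n$ vanish in $M$ by definition of the $d_q$, and subtracting gives $(\overline{q}-q')r\cdot m=0$. Hence $o(m)\mid(\overline{q}-q')r$, which using $o(m)=rN/(N,N-\alpha)$ becomes $N/(N,N-\alpha)\mid \overline{q}-q'$. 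This contradicts the strict inequalities $0<\overline{q}-q'<\overline{q}\leq N/(N,N-\alpha)$ from the first step.

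I do not foresee a substantial obstacle: the first step is a direct consequence of the structural description of $M_{r,\alpha,N}$ together with the fact that $A$ is $M$-graded with $A_0=k$, and the second reduces, via Lemma \ref{lem:overline q from a cover}, to the short divisibility argument above. The one subtle point, which I would be careful to highlight, is that the definition of $z$ allows $\lambda=0$; this is precisely what makes the upper-bound argument work without any additional hypothesis on $A$, since $v_m$ need not be invertible nor even non-nilpotent.
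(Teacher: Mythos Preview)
Your proposal is correct and follows the same route as the paper, which simply declares the proposition a restatement of Lemma~\ref{lem:overline q from a cover}. You have made explicit the two points the paper leaves to the reader: the upper bound $\overline{q}\le N/(N,N-\alpha)$, obtained from $z\le o(m)$, and the passage from the non-strict inequality $d_{q'}\le d_{\overline q}$ of Lemma~\ref{lem:overline q from a cover} to the strict one, via the divisibility argument showing the $d_q$ are pairwise distinct in the relevant range.
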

So given an algebra $A$ as in \ref{not:starting from a cover, m.n generate M}
we can associate to it the number $\overline{q}_{A}\in\Omega_{N-\alpha,N}$.
Conversely we will see that any $\overline{q}\in\Omega_{N-\alpha,N}$
admits an algebra $A$ as in \ref{not:starting from a cover, m.n generate M}
such that $\overline{q}=\overline{q}_{A}$. It turns out that all
the objects $z_{A}$, $y_{A}$, $f_{A}$, $\E^{A}$, $\delta^{A}$,
$\hat{q}_{A}$ and, if $\lambda_{A}=0$, $x_{A}$, $w_{A}$ associated
to $A$ only depend on $\overline{q}_{A}$. Therefore in this subsection,
given $\overline{q}\in\Omega_{N-\alpha,N}$, we will see how to define
such objects independently from an algebra $A$.

In this subsection we will consider given an element $\overline{q}\in\Omega_{N-\alpha,N}$.
\begin{defn}
\label{not:alpha, N, r, overq, q', hatq, f}Set $\hat{q}$ for the
only integer $0\leq\hat{q}<\overline{q}$ such that $d_{\hat{q}}=\min_{0\leq q<\overline{q}}d_{q}$,
$q'=\overline{q}-\hat{q}$, $z=\overline{q}r$, $y=N-d_{\overline{q}}$,
\[
x=\left\{ \begin{array}{cc}
N-d_{q'} & \text{if }\overline{q}>1\\
N & \text{if }\overline{q}=1
\end{array}\right.\comma w=\left\{ \begin{array}{cc}
q'r & \text{if }\overline{q}>1\\
0 & \text{if }\overline{q}=1
\end{array}\right.\comma f(c)=\left\{ \begin{array}{cc}
x & \text{if }0\leq c<\hat{q}r\\
d_{\hat{q}} & \text{if }\hat{q}r\leq c<z
\end{array}\right.
\]
We will also write $\hat{q}_{\overline{q}}\comma q_{\overline{q}}'\comma z_{\overline{q}}\comma x_{\overline{q}}\comma f_{\overline{q}}\comma y_{\overline{q}}\comma w_{\overline{q}}$
if necessary.\end{defn}
\begin{rem}
Using notation from \ref{lem:fundamental for the case m,n} we have
$\overline{q}=q_{n}$ for some $n$ and, if $n>1$, i.e. $\overline{q}>1$,
$q_{n-1}=q'$. Note that $zm=yn$, $wm=xn$, $y<x$, $w<z$. Moreover,
from \ref{lem:fundamental for the case m,n} and from a direct computation
if $\overline{q}=1$, we obtain $zx-yw=|M|$. Finally if $\overline{q}>1$
one has relations $\hat{q}r=z-w\text{ and }d_{\hat{q}}=x-y$.\end{rem}
\begin{lem}
\label{lem:definition of epsilon delta}We have that:
\begin{enumerate}
\item $f$ is a decreasing function and $\sum_{c=0}^{z-1}f(c)=|M|$;
\item \label{enu:writing of the form Am+Bn}any element $t\in M$ can be
uniquely written as 
\[
t=Am+Bn\text{ with }0\leq A<z,0\leq B<f(A)
\]

\end{enumerate}
\end{lem}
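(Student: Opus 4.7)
The overall plan is: part (1) follows by an explicit summation using the two-piece definition of $f$ and the identities in Lemma~\ref{lem:fundamental for the case m,n}; part (2) then reduces, via the resulting cardinality count, to injectivity of $(A,B)\mapsto Am+Bn$, which I would settle by a short case analysis on $s=(A_1-A_2)/r$.

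For part (1), I would first dispatch the trivial case $\overline{q}=1$: then $\hat{q}=0$, the first branch of $f$ is empty, $f\equiv d_0=N$ on $[0,r)$, so $f$ is (trivially) decreasing and $\sum f(c)=rN=|M|$. For $\overline{q}>1$ one first observes $\hat{q}>0$ (else the minimum would be $d_0=N$, forcing $\overline{q}=1$), so both branches of $f$ contribute. Decreasingness would reduce to $d_{\hat{q}}\le x$, and the calculation
\[
x-d_{\hat{q}}=(N-d_{q'})-d_{\hat{q}}=N-(d_{q'}+d_{\hat{q}})=N-d_{\overline{q}}=y\ge 0,
\]
using $d_{\overline{q}}=d_{q'}+d_{\hat{q}}$ from Lemma~\ref{lem:fundamental for the case m,n}, gives it. The sum $\hat{q}r\cdot x+q'r\cdot d_{\hat{q}}$ then simplifies, via the same lemma's identity $\hat{q}N+\overline{q}d_{\hat{q}}-\hat{q}d_{\overline{q}}=N$, to $rN=|M|$.

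For part (2), since the domain of $(A,B)\mapsto Am+Bn$ then has cardinality $|M|$, I would only need to check injectivity. Suppose two pairs $(A_1,B_1),(A_2,B_2)$ give the same class in $M=\Z^2/\langle(r,-\alpha),(0,N)\rangle$, so $A_1-A_2=sr$ and $B_1-B_2\equiv -s\alpha\pmod N$ for some $s$ with $|s|<\overline{q}$; by symmetry assume $s\ge 0$. The case $s=0$ is immediate, since $|B_1-B_2|<N$. For $0<s<\overline{q}$ one has $\delta:=B_1-B_2\in\{d_s,\,d_s-N\}$ because $\delta\in(-N,N)$ and $\delta\equiv d_s\pmod N$; I would split into four sub-cases according to the sign of $\delta$ and the position of $A_1$ (resp.\ $A_2$) relative to $\hat{q}r$. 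Three of the four sub-cases close at once using either the minimality of $d_{\hat{q}}$ in $\{d_q:0\le q<\overline{q}\}$, or the inequality $d_{q'}+d_q>N$ for $0<q<\hat{q}$ from Lemma~\ref{lem:fundamental for the case m,n}, or the fact that $d_{q'}$ is the maximum of $\{d_q:0<q<\overline{q}\}$ (by the very definition of the chain $q_1<\cdots<q_n=\overline{q}$ in $\Omega_{N-\alpha,N}$).

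The hard part is the remaining sub-case, $\delta=d_s-N$ with $A_2\ge\hat{q}r$: here injectivity would demand $d_s\le N-d_{\hat{q}}$, i.e.\ $d_s+d_{\hat{q}}\le N$, which is not provided directly by minimality. The trick I expect to use is to combine $d_s\le d_{q'}$ with the additive identity $d_{\overline{q}}=d_{\hat{q}}+d_{q'}$ of Lemma~\ref{lem:fundamental for the case m,n}, obtaining $d_s+d_{\hat{q}}\le d_{q'}+d_{\hat{q}}=d_{\overline{q}}\le N$, which closes this last sub-case.
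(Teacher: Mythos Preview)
Your argument is correct and uses exactly the same ingredients as the paper's proof: the identity $d_{\overline{q}}=d_{q'}+d_{\hat{q}}$ and the inequality $d_{q'}+d_q>N$ for $q<\hat{q}$ from Lemma~\ref{lem:fundamental for the case m,n}, together with minimality of $d_{\hat{q}}$ and maximality of $d_{q'}$ among $\{d_q:0<q<\overline{q}\}$.

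The only difference is organizational, in part (2). You run a direct four-way split on the sign of $\delta=B_1-B_2$ and the position of $A_1$ (resp.\ $A_2$) relative to $\hat{q}r$. The paper instead reduces the general collision $Am+Bn=A'm+B'n$ (with $A\ge A'$) to two base cases by subtraction: if $B\ge B'$ one lands on $(A-A')m+(B-B')n=0$ with $(A-A',B-B')$ still in the admissible range (using $f$ decreasing), and if $B<B'$ one lands on $(A-A')m=(B'-B)n$ with $B'-B<f(0)$. These two base cases are then killed by precisely your sub-case arguments. The paper's reduction is marginally cleaner in that it collapses your four sub-cases to two, but the content is identical; in particular your ``hard'' sub-case (d) is exactly the paper's first base case with $q\ge\hat{q}$, and your resolution via $d_s\le d_{q'}$ and $d_{q'}+d_{\hat{q}}=d_{\overline{q}}\le N$ matches the paper's use of $d_q<f(A)=d_{\hat{q}}$ being impossible by minimality.
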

\begin{proof}
$1)$ If $\overline{q}=1$ it is enough to note that $\hat{q}=0$,
$d_{0}=N$ and $Nr=|M|$. So assume $\overline{q}>1$. We have $x=N-d_{q'}\geq d_{\hat{q}}$
since $d_{\overline{q}}=d_{q'}+d_{\hat{q}}$ and 
\[
\sum_{c=0}^{z-1}f(c)=\hat{q}rx+(\overline{q}r-\hat{q}r)d_{\hat{q}}=(z-w)x+w(x-y)=zx-wy=|M|
\]

$2)$ First of all note that the expressions of the form $Am+Bn$
with $0\leq A<z$, $0\leq B<f(A)$ are $\sum_{c=0}^{z-1}f(c)=|M|$.
So it is enough to prove that they are all distinct. Assume we have
expressions $Am+Bn=A'm+B'n\text{ with }0\leq A'\leq A<z,0\leq B<f(A),0\leq B'<f(A')$.

$A'=B'=0$, i.e. $Am+Bn=0$. If $A=0$ then $B=0$ since $f(0)=x\leq N$.
If $A>0$, we can write $A=qr$ for some $0<q<\overline{q}$. In particular
$\overline{q}>1$ and $B=d_{q}<f(A)$. If $q<\hat{q}$ then $f(A)=x=N-d_{q'}>d_{q}$
contradicting \ref{lem:fundamental for the case m,n}, while if $q\geq\hat{q}$
then $f(A)=d_{\hat{q}}\leq d_{q}$.

$A'=B=0$, i.e. $Am=B'n$. If $A=0$ then $B'=0$ as above. If $A>0$
we can write $A=qr$ for some $0<q<\overline{q}$. Again $\overline{q}>1$.
In particular $B'=N-d_{q}<f(0)=x=N-d_{q'}$ and so $d_{q'}<d_{q}$,
while $d_{q'}=\max_{0<q<\overline{q}}d_{q}$.

\emph{General case}. We can write $(A-A')m+Bn=B'n$ and we can reduce
the problem to the previous cases since if $B\geq B'$ then $B-B'\leq B<f(A)\leq f(A-A')$,
while if $B<B'$ then $B'-B\leq B'<f(A')\leq f(0)$.\end{proof}
\begin{defn}
Given $l\in M$ we set $(\E_{l},\delta_{l})$ the unique pair for
$l$ such that $0\leq\E_{t}<z\comma0\leq\delta_{t}<f(\E_{t})$ and
we will consider $\E\comma\delta$ as maps $\Z^{M}/\langle e_{0}\rangle\arr\Z$.
We will also write $\E^{\overline{q}}\comma\delta^{\overline{q}}$
if necessary.\end{defn}
\begin{prop}
\label{pro:from an algebra to overline q}Let $A$ be an algebra as
in \ref{not:starting from a cover, m.n generate M}. Then
\[
z_{A}=z_{\overline{q}_{A}}\comma y_{A}=y_{\overline{q}_{A}}\comma\hat{q}_{A}=\hat{q}_{\overline{q}_{A}}\comma\E^{A}=\E^{\overline{q}_{A}}\comma\delta^{A}=\delta^{\overline{q}_{A}}\comma f_{A}=f_{\overline{q}_{A}}
\]
and, if $\lambda_{A}=0$, then $x_{A}=x_{\overline{q}_{A}}\comma w_{A}=w_{\overline{q}_{A}}$.\end{prop}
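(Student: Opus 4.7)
The proposition is essentially a bookkeeping statement that assembles pieces already proved in the previous subsection; the plan is to reduce each equality to either a definition or to one of the structural lemmas \ref{lem:From a cover we get overline q}, \ref{lem:From lambda not zero to zero A'}, \ref{lem:good pair for A<z}, \ref{lem:definition of epsilon delta}.

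First I will dispose of the three equalities $z_A = z_{\overline{q}_A}$, $y_A = y_{\overline{q}_A}$, $\hat{q}_A = \hat{q}_{\overline{q}_A}$ directly from the definitions. The first is immediate since $\overline{q}_A = z_A/r$ and $z_{\overline{q}} = \overline{q} r$. For the second, the defining relation $z_A m = y_A n$ together with $r m = \alpha n$ gives $y_A \equiv \overline{q}_A \alpha \pmod N$ with $0 \le y_A < N$; but $d_q \equiv -q\alpha \pmod N$ with $0 < d_q \le N$, so $N - d_{\overline{q}_A} \equiv \overline{q}_A \alpha \pmod N$ lies in $[0,N)$ and equals $y_{\overline{q}_A}$. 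The third equality is tautological: in both setups $\hat q$ is defined as the unique integer in $[0,\overline{q})$ minimising $d_q$.

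Next I will establish the central equality $f_A = f_{\overline{q}_A}$, which is the real content of the proposition and from which the remaining equalities flow. The case $\lambda_A = 0$ is precisely the formula for $f$ proved in \ref{lem:From a cover we get overline q}, which matches the definition of $f_{\overline{q}}$ in \ref{not:alpha, N, r, overq, q', hatq, f} on the nose (using $\hat q_A = \hat q_{\overline{q}_A}$, $x_A = f_A(0)$, and the piecewise description). For $\lambda_A \ne 0$ I invoke \ref{lem:From lambda not zero to zero A'} to produce the auxiliary algebra $A' = k[s,t]/(s^{z_A}, s^c t^{f_A(c)})$ which satisfies $\lambda_{A'} = 0$, $\overline{q}_{A'} = \overline{q}_A$, and $f_{A'} = f_A$. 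The already treated case applied to $A'$ then yields $f_A = f_{A'} = f_{\overline{q}_{A'}} = f_{\overline{q}_A}$.

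With $z_A = z_{\overline{q}_A}$ and $f_A = f_{\overline{q}_A}$ in hand, the equalities $\E^A = \E^{\overline{q}_A}$ and $\delta^A = \delta^{\overline{q}_A}$ follow by uniqueness of representations. Indeed, \ref{lem:good pair for A<z} produces for every $l \in M$ the expression $l = \E^A_l m + \delta^A_l n$ with $0 \le \E^A_l < z_A = z_{\overline{q}_A}$ and $0 \le \delta^A_l < f_A(\E^A_l) = f_{\overline{q}_A}(\E^A_l)$; but \ref{lem:definition of epsilon delta}\eqref{enu:writing of the form Am+Bn} guarantees that such an expression is unique, and it is this very expression that defines $(\E^{\overline{q}_A}_l, \delta^{\overline{q}_A}_l)$.

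Finally, assuming $\lambda_A = 0$ I will handle $x_A$ and $w_A$. By \ref{rem:lambda is 0 iff mu is 0} we have $\mu_A = 0$, so $v_n^{x_A} = 0$ while $v_n^{x_A-1} \ne 0$; this means $x_A = \min\{h > 0 : v_m^0 v_n^h = 0\} = f_A(0)$. A direct inspection of the definition of $f_{\overline{q}}$, treating separately the cases $\overline{q}_A = 1$ (where $\hat q = 0$ and $f_{\overline{q}_A}(0) = d_0 = N = x_{\overline{q}_A}$) and $\overline{q}_A > 1$ (where $f_{\overline{q}_A}(0) = x_{\overline{q}_A} = N - d_{q'}$) shows $f_{\overline{q}_A}(0) = x_{\overline{q}_A}$. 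For $w_A$ the congruence argument used for $y_A$ applies verbatim: $w_A$ is the unique element of $[0, o(m))$ with $w_A m = x_A n$, and the same computation modulo $o(m)$ identifies it with $w_{\overline{q}_A}$. The only nontrivial step in the entire argument is the appeal to \ref{lem:From a cover we get overline q}; the rest is organisational.
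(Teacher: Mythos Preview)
Your argument has a circular dependency between the step establishing $f_A = f_{\overline{q}_A}$ and the step establishing $x_A = x_{\overline{q}_A}$. In the former you claim that the piecewise formula for $f_A$ from Lemma~\ref{lem:From a cover we get overline q} matches the definition of $f_{\overline{q}_A}$ in \ref{not:alpha, N, r, overq, q', hatq, f} ``on the nose'', but those two piecewise formulas carry $x_A$ and $x_{\overline{q}_A}$, respectively, on their first pieces; when $\overline{q}_A > 1$ that first piece is non-empty, and to match you need $x_A = x_{\overline{q}_A}$. You defer that to the final step, where you show $x_A = f_A(0)$ and $x_{\overline{q}_A} = f_{\overline{q}_A}(0)$ --- but bridging these requires $f_A(0) = f_{\overline{q}_A}(0)$, which is a special case of the very equality you were proving. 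The parenthetical ``using \dots\ $x_A = f_A(0)$'' does not break the loop: rewriting both first pieces as $f(0)$ only restates that two step functions with the same breakpoint and the same second value agree iff they agree at $0$.

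The paper closes this gap with the one extra constraint you have available but do not use: both $f_A$ and $f_{\overline{q}_A}$ sum to $|M|$ (equation~\eqref{eq:the sum of f's is |M|} for $f_A$, and \ref{lem:definition of epsilon delta} for $f_{\overline{q}_A}$). Since both are step functions with the same breakpoint $\hat q r$, the same domain length $z$, and the same second value $d_{\hat q}$, equating
\[
r\hat q \cdot x_A + (z - \hat q r)\, d_{\hat q} \;=\; |M| \;=\; r\hat q \cdot x_{\overline{q}_A} + (z - \hat q r)\, d_{\hat q}
\]
forces $x_A = x_{\overline{q}_A}$ whenever $\hat q > 0$; the case $\overline{q}_A = 1$ (where $\hat q = 0$ and both equal $N$) is already handled. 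With $x_A = x_{\overline{q}_A}$ established first, the rest of your argument goes through as written.
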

\begin{proof}
Set $\overline{q}=\overline{q}_{A}$. Then $z_{A}=\overline{q}r=z_{\overline{q}}$
and $z_{A}m=y_{A}n=y_{\overline{q}}n$ implies $y_{A}=y_{\overline{q}}$.
Also $\hat{q}_{A}=\hat{q}_{\overline{q}}$ by definition. Taking into
account \ref{lem:From lambda not zero to zero A'} we can now assume
$\lambda_{A}=0$. We claim that all the remaining equalities follow
from $x_{A}=x_{\overline{q}}$. Indeed clearly $w_{A}=w_{\overline{q}}$.
Also by definition of $f_{\overline{q}}$ and thanks to \ref{lem:From a cover we get overline q}
we will have $f_{A}=f_{\overline{q}}$ and therefore $\E^{A}=\E^{\overline{q}}\comma\delta^{A}=\delta^{\overline{q}}$,
that conclude the proof.

We now show that $x_{A}=x_{\overline{q}}$. If $\overline{q}=1$ then
$\hat{q}=0$ and so, from \ref{lem:From a cover we get overline q},
we have $d_{\hat{q}}=N=x_{A}=x_{1}$. If $\overline{q}>1$, by definition
of $f_{\overline{q}}$ and thanks to \ref{lem:definition of epsilon delta}
and \ref{lem:From a cover we get overline q}, we can write
\[
|M|=\sum_{c=0}^{z_{\overline{q}}-1}f_{\overline{q}}(c)=r\hat{q}_{\overline{q}}x_{\overline{q}}+(z_{\overline{q}}-\hat{q}_{\overline{q}}r)d_{\hat{q}_{\overline{q}}}=\sum_{c=0}^{z_{A}-1}f_{A}(c)=r\hat{q}_{A}x_{A}+(z_{A}-\hat{q}_{A}r)d_{\hat{q}_{A}}
\]
and so $x_{A}=x_{\overline{q}}$.\end{proof}
\begin{defn}
\label{def:universal algebra}Define the $M$-graded $\Z[a,b]$-algebra
\[
A^{\overline{q}}=\Z[a,b][s,t]/(s^{z}-at^{y},t^{x}-bs^{w},s^{\hat{q}r}t^{d_{\hat{q}}}-a^{\gamma}b)\text{ where }\gamma=\left\{ \begin{array}{cc}
0 & \text{if }\overline{q}=1\\
1 & \text{if }\overline{q}>1
\end{array}\right.
\]
with $M$-graduation $\deg s=m$, $\deg t=n$. If are given elements
$a_{0},b_{0}$ of a ring $C$ we will also write $A_{a_{0},b_{0}}^{\overline{q}}=A^{\overline{q}}\otimes_{\Z[a,b]}C$,
where $\Z[a,b]\arr C$ sends $a,b$ to $a_{0},b_{0}$.\end{defn}
\begin{prop}
\label{pro:general algebra for m,n 2}$A^{\overline{q}}\in\MCov(\Z[a,b])$,
it is generated in degrees $m,n$ and $\{v_{l}=s^{\E_{l}}t^{\delta_{l}}\}_{l\in M}$
is an $M$-graded basis for it.\end{prop}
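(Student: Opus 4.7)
The plan is to verify three things: (i) the ideal defining $A^{\overline{q}}$ is $M$-homogeneous, so $A^{\overline{q}}$ inherits an $M$-grading; (ii) the elements $v_l = s^{\E_l} t^{\delta_l}$ generate $A^{\overline{q}}$ as a $\Z[a,b]$-module; (iii) they are $\Z[a,b]$-linearly independent. Generation in degrees $m,n$ and the fact that $A^{\overline{q}}\in\MCov(\Z[a,b])$ (i.e. that the direct sum decomposition by $M$ has each piece locally free of rank $1$) then follow automatically from (i)-(iii).

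For (i), I would give $a$ and $b$ the zero $M$-degree and check each relation is homogeneous. One has $\deg s^z = zm = yn = \deg t^y$ by definition of $y$, $\deg t^x = xn = wm = \deg s^w$ by definition of $w$, and $\deg(s^{\hat q r} t^{d_{\hat q}}) = \hat q r m + d_{\hat q} n = 0$ in $M$ from the defining congruence $d_q\equiv -q\alpha \pmod N$ (cf.\ \ref{not:for alpha,N,r e M}). For (ii), I would introduce the three reduction rules extracted from the relations: $(R_1)$ if $A\geq z$, replace $s^At^B$ by $a\,s^{A-z}t^{B+y}$; $(R_2)$ if $B\geq x$, replace $s^At^B$ by $b\,s^{A+w}t^{B-x}$; $(R_3)$ if $A\geq \hat q r$ and $B\geq d_{\hat q}$, replace $s^At^B$ by $a^{\gamma}b\,s^{A-\hat q r}t^{B-d_{\hat q}}$. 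Each rule strictly decreases the weight $A+B$ (since $z>y$, $x>w$, and $\hat q r + d_{\hat q}>0$), so the process terminates. After reduction we land in the standard range $0\leq A < z$, $0\leq B < f(A)$, and by Lemma \ref{lem:definition of epsilon delta}(2) that pair is necessarily $(\E_l,\delta_l)$ where $l = Am+Bn$. Hence any monomial $s^At^B$ is congruent modulo the ideal to a $\Z[a,b]$-multiple of $v_l$.

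For (iii) the cleanest approach is a Gröbner basis argument. Equip $\Z[a,b,s,t]$ with the weighted monomial order assigning weights $\mathrm{wt}(s)=x$, $\mathrm{wt}(t)=z$, $\mathrm{wt}(a)=\mathrm{wt}(b)=0$, with an arbitrary tie-breaker. Then the leading monomials of the three generators are $s^z$, $t^x$ and $s^{\hat q r}t^{d_{\hat q}}$ (using $y<x$, $w<z$), all with coefficient $\pm 1$. One checks Buchberger's criterion: for instance $S(s^z-at^y,\,t^x-bs^w) = -at^{x+y}+bs^{z+w}$ reduces to $-abs^wt^y+abs^wt^y=0$ using $(R_1)$ and $(R_2)$; the other two S-polynomials similarly reduce to $0$, making essential use of the identities $\hat q r+w=z$ and $y+d_{\hat q}=x$ (valid when $\overline{q}>1$; the case $\overline{q}=1$ collapses the third relation onto the second and requires only checking one S-polynomial). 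Consequently the three relations form a Gröbner basis, and since leading coefficients are units in $\Z$, the standard monomials $\{s^At^B : 0\leq A<z,\ 0\leq B < f(A)\}$ form a free $\Z[a,b]$-basis of $A^{\overline{q}}$. By Lemma \ref{lem:definition of epsilon delta} these are precisely the $v_l$ for $l\in M$. The expected obstacle is keeping track of the two cases $\overline q=1$ and $\overline q>1$ separately in the S-polynomial verification (the factor $a^\gamma$ and the degeneration of the third relation), together with checking that the weighted order indeed produces the advertised leading terms; both are routine but require attention to the elementary identities relating $x,y,z,w,\hat q r, d_{\hat q}$ collected in Section~\ref{sub:the invariant overlineq}.
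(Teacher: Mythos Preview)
Your Gr\"obner basis argument in step (iii) is sound and, once established, gives both freeness and generation simultaneously; the verification of the three S-polynomials goes through exactly as you outline, using the identities $\hat q r + w = z$ and $y + d_{\hat q} = x$ when $\overline q > 1$, and the collapse of the third relation onto the second when $\overline q = 1$.

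There is, however, a genuine slip in step (ii): the claim that rule $(R_1)$ decreases $A+B$ because ``$z>y$'' is false in general. For instance with $r=1$, $\alpha=3$, $N=7$, $\overline q=1$ one has $z=1$ and $y=3$. What is always true is $y<x$ and $w<z$ (cf.\ the remark following Definition~\ref{not:alpha, N, r, overq, q', hatq, f}), so the correct weight to use is $xA+zB$: then $(R_1)$ decreases it by $z(x-y)>0$, $(R_2)$ by $x(z-w)>0$, and $(R_3)$ by $x\hat q r + z d_{\hat q}>0$. Alternatively, simply drop step (ii) altogether, since the Gr\"obner basis conclusion in (iii) already yields generation by standard monomials.

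Your route is genuinely different from the paper's. The paper does not compute over $\Z[a,b]$ directly; instead it reduces to checking that $(A^{\overline q})_l = k\, v_l$ over every field $k$ with $a,b\in k$, then splits into cases. When $a,b\in k^*$ the elements $s,t$ become units, and a lattice argument (the vectors $(z,-y),(-w,x)$ have determinant $zx-wy=|M|$, hence generate $\Ker(\Z^2\to M)$) shows every monomial differs from the appropriate $v_l$ by a unit. When $a=0$ or $b=0$ the algebra simplifies and a direct dimension count over $k$ suffices. The paper's argument is more elementary (no Gr\"obner machinery, no monomial orders) but requires case analysis and the passage to fields; your argument is uniform over the universal base $\Z[a,b]$ at the cost of invoking Buchberger's criterion and being careful about the degenerate cases $\overline q=1$, $\overline q = N$.
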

\begin{proof}
We have to prove that, for any $l\in M$, $(A^{\overline{q}})_{l}=\Z[a,b]v_{l}$
and we can check this over a field $k$, i.e. considering $A=A_{a,b}^{\overline{q}}$
with $a,b\in k$. We first consider the case $a,b\in k^{*}$, so that
$s,t\in A^{*}$. Let $\pi\colon\Z^{2}\arr M$ the map such that $\pi(e_{1})=m\comma\pi(e_{2})=n$.
The set $T=\{(a,b)\in\Ker\pi\st s^{a}t^{b}\in k^{*}\}$ is a subgroup
of $\Ker\pi$ such that $(z,-y),(-w,x)\in T$. Since $\det\left(\begin{array}{cc}
z & -w\\
-y & x
\end{array}\right)=zx-wy=|M|$ we can conclude that $T=\Ker\pi$. Therefore $v_{l}$ generate $(A^{\overline{q}})_{l}$
since for any $c,d\in\N$ we have $s^{c}t^{d}/v_{cm+dn}\in k^{*}$
and $0\neq v_{l}\in A^{*}$.

Now assume $a=0$. If $\overline{q}=1$ then $\hat{q}=w=0$, $d_{\hat{q}}=x=N$
and so $A=k[s,t]/(s^{z},t^{N}-b)$ satisfies the requests. If $\overline{q}>1$
it is easy to see that $v_{l}$ generates $A_{l}$. On the other hand
$\dim_{k}A=|\{(A,B)\st0\leq A<z,0\leq B<x,A\leq\hat{q}r\text{ or }B\leq d_{\hat{q}}\}|=zx-(z-\hat{q}r)(x-d_{\hat{q}})=zx-yw=|M|$.
The case $b=0$ is similar.\end{proof}
\begin{thm}
\label{pro:Universal algebras generated by m,n with overline q fixed}Let
$k$ be a field. If $\overline{q}\in\Omega_{N-\alpha,N}$ and $\lambda\in k$,
with $\lambda=0$ if $\overline{q}=N/(\alpha,N)$, then
\[
A_{\overline{q},\lambda}=k[s,t]/(s^{z_{\overline{q}}}-\lambda t^{y_{\overline{q}}},t^{x_{\overline{q}}},s^{\hat{q}_{\overline{q}}r}t^{d_{\hat{q}_{\overline{q}}}})
\]
is an algebra as in \ref{not:starting from a cover, m.n generate M}
with $\overline{q}_{A_{\overline{q},\lambda}}=\overline{q}$ and $\lambda_{A_{\overline{q},\lambda}}=\lambda$.
Conversely, if $A$ is an algebra as in \ref{not:starting from a cover, m.n generate M}
then $\overline{q}_{A}\in\Omega_{N-\alpha,N}$, $\lambda_{A}\in k$,
$\lambda_{A}=0$ if $\overline{q}_{A}=N/(\alpha,N)$ and $A\simeq A_{\overline{q}_{A},\lambda_{A}}$.\end{thm}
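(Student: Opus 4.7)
The plan is to prove the two implications separately, using the universal algebra $A^{\overline{q}}$ from Definition \ref{def:universal algebra} for the forward direction and a direct construction of an isomorphism for the converse.

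\textbf{Forward direction.} First I would observe that $A_{\overline{q},\lambda}$ is nothing but the specialization $A^{\overline{q}}_{\lambda,0}$ (set $a=\lambda$, $b=0$ in the three defining relations of $A^{\overline{q}}$; the third relation $s^{\hat{q}r}t^{d_{\hat{q}}}-a^{\gamma}b$ becomes $s^{\hat{q}r}t^{d_{\hat{q}}}$). By Proposition \ref{pro:general algebra for m,n 2} it therefore lies in $\MCov(k)$, is generated in degrees $m,n$, and admits $\{s^{\E_{l}}t^{\delta_{l}}\}_{l\in M}$ as an $M$-graded basis. I would then verify $H_{A_{\overline{q},\lambda}/k}=0$ by showing that neither $s$ nor $t$ is a unit: the relation $t^{x_{\overline{q}}}=0$ prevents $t$ from being invertible; for $s$, if $\lambda=0$ then $s^{z_{\overline{q}}}=0$, while if $\lambda\neq 0$ the constraint forces $\overline{q}<N/(\alpha,N)$, hence $y_{\overline{q}}>0$, so invertibility of $s$ would propagate via $t^{y_{\overline{q}}}=s^{z_{\overline{q}}}/\lambda$ to invertibility of $t$, a contradiction.

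\textbf{Computing the invariants.} To show $\overline{q}_{A_{\overline{q},\lambda}}=\overline{q}$ and $\lambda_{A_{\overline{q},\lambda}}=\lambda$, I would use the basis structure: for $0<h<z_{\overline{q}}$ and $i<f_{\overline{q}}(0)$, both $s^{h}$ and $t^{i}$ are distinct basis elements, and a relation $s^{h}=\mu t^{i}$ with $hm=in$ would force both $(h,0)$ and $(0,i)$ to be good pairs for the same element, which Lemma \ref{lem:good pair for A<z} excludes. The relation $s^{z_{\overline{q}}}=\lambda t^{y_{\overline{q}}}$ of the algebra then yields $z_{A}=z_{\overline{q}}$ and $\lambda_{A}=\lambda$, so $\overline{q}_{A}=z_{A}/r=\overline{q}$.

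\textbf{Converse.} Given $A$ as in \ref{not:starting from a cover, m.n generate M}, Proposition \ref{pro:overlineqA is in omeganminusalpha,n} gives $\overline{q}_{A}\in\Omega_{N-\alpha,N}$; if $\overline{q}_{A}=N/(\alpha,N)$ then $d_{\overline{q}_{A}}=N$, hence $y_{A}=0$, and the relation $v_{m}^{z_{A}}=\lambda_{A}v_{n}^{y_{A}}=\lambda_{A}$ forces $\lambda_{A}=0$ (else $v_{m}$ is a unit). I would then define a $k$-algebra homomorphism $\phi\colon k[s,t]\arr A$ by $s\mapsto v_{m}$, $t\mapsto v_{n}$ and verify that the three defining relations of $A_{\overline{q}_{A},\lambda_{A}}$ map to zero: relation (a) $s^{z_{A}}-\lambda_{A}t^{y_{A}}$ holds by the very definition of $z_{A},y_{A},\lambda_{A}$; for (b) $t^{x_{\overline{q}_{A}}}$ and (c) $s^{\hat{q}r}t^{d_{\hat{q}}}$, I would use Proposition \ref{pro:from an algebra to overline q} together with Lemma \ref{lem:From lambda not zero to zero A'} to extract the equality $f_{A}=f_{\overline{q}_{A}}$ (independent of whether $\lambda_{A}=0$), and then invoke the definition of $f_{A}$: $v_{n}^{f_{A}(0)}=v_{n}^{x_{\overline{q}_{A}}}=0$, and $v_{m}^{\hat{q}r}v_{n}^{f_{A}(\hat{q}r)}=v_{m}^{\hat{q}r}v_{n}^{d_{\hat{q}}}=0$ (the $\overline{q}_{A}=1$ case being subsumed by (b)). The induced map $A_{\overline{q}_{A},\lambda_{A}}\arr A$ is surjective since $A$ is generated in degrees $m,n$, and both algebras are $k$-vector spaces of dimension $|M|$ by the forward direction, so it is an isomorphism.

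\textbf{Main obstacle.} The delicate point is relation (b) when $\lambda_{A}\neq 0$, since in that case $x_{A}\neq x_{\overline{q}_{A}}$ in general and one cannot directly invoke the definition of $x_{A}$. The trick is that the relevant invariant is not $x_{A}$ but rather $f_{A}(0)$: the equality $f_{A}(0)=x_{\overline{q}_{A}}$ holds unconditionally via the reduction to $\lambda=0$ provided by Lemma \ref{lem:From lambda not zero to zero A'} (which preserves $f$), and this is what makes both (b) and (c) fall out uniformly.
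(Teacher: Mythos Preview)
Your proof is correct and follows essentially the same route as the paper: identify $A_{\overline{q},\lambda}$ as $A^{\overline{q}}_{\lambda,0}$, use Proposition~\ref{pro:general algebra for m,n 2} for the basis, check $H=0$, compute the invariants, and for the converse build the map on generators and verify the three relations via $f_A=f_{\overline{q}_A}$. Your observation in the ``main obstacle'' paragraph is exactly the right one.

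There is one small circularity in your computation of $\overline{q}_A$. You cite Lemma~\ref{lem:good pair for A<z}, but that lemma gives uniqueness of good pairs with first coordinate $<z_A$, where $z_A$ is precisely what you are trying to determine; for $h\geq z_A$ (which could happen if $z_A<z_{\overline{q}}$) it says nothing. The paper instead invokes the purely combinatorial Lemma~\ref{lem:definition of epsilon delta}, whose uniqueness statement is in terms of $z_{\overline{q}}$ and $f_{\overline{q}}$ and so does not depend on the algebra. You should also make explicit why the exponent $i$ on $t$ satisfies $i<f_{\overline{q}}(0)$: since $s^h$ is a basis element it is nonzero, hence $\mu\neq 0$ and $t^i\neq 0$, and the defining relation $t^{x_{\overline{q}}}=0$ then forces $i<x_{\overline{q}}=f_{\overline{q}}(0)$. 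With these two adjustments your argument matches the paper's.
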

\begin{proof}
Consider $A=A_{\overline{q},\lambda}$, which is just $A_{\lambda,0}^{\overline{q}}$.
Clearly $t\notin A^{*}$. On the other hand $s\notin A^{*}$ since
$y=0\iff z=o(m)\iff\overline{q}=N/(\alpha,N)$. Therefore $H_{A/k}=0$
and $A$ is an algebra as in \ref{not:starting from a cover, m.n generate M}.
Moreover clearly $\overline{q}_{A}\leq\overline{q}$. If by contradiction
this inequality is strict, we will have a relation $s^{qr}=\omega t^{y'}$
with $0\leq q<\overline{q}$. Since $s^{qr}=v_{qrm}\neq0$ we will
have that $t^{y'}\neq0$ and $y'<x$, a contradiction thanks to \ref{lem:definition of epsilon delta}.
In particular $\lambda=\lambda_{A}$.

Now let $A$ be as in \ref{not:starting from a cover, m.n generate M}
and set $\overline{q}=\overline{q}_{A}$, $\lambda=\lambda_{A}$.
We already know that $\overline{q}\in\Omega_{N-\alpha,N}$ (see \ref{pro:overlineqA is in omeganminusalpha,n}).
We claim that the map $A_{\overline{q},\lambda}\arr A$ sending $s,t$
to $v_{m},v_{n}$ is well defined and so an isomorphism. Indeed we
have $v_{m}^{z}=\lambda v_{n}^{y}$ by definition and, thanks to \ref{pro:from an algebra to overline q},
we have $v_{m}^{\hat{q}r}v_{n}^{d_{\hat{q}}}=0$ since $d_{\hat{q}}=f_{A}(\hat{q}r)$
and $v_{n}^{x}=0$ since $f_{A}(0)=x$. Finally if $\overline{q}=N/(\alpha,N)$
then $y=y_{A}=0$ and $z=o(m)$, so that $\lambda_{A}=v_{m}^{o(m)}=0$.\end{proof}
\begin{cor}
If $k$ is an algebraically closed field then, up to graded isomorphism,
the algebras as in \ref{not:starting from a cover, m.n generate M}
are exactly $A_{\overline{q},1}$ if $\overline{q}\in\Omega_{N-\alpha,N}-\{N/(\alpha,N)\}$
and $A_{\overline{q},0}$ if $\overline{q}\in\Omega_{N-\alpha,N}$.\end{cor}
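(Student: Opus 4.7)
My plan is to deduce the corollary as a direct consequence of Theorem \ref{pro:Universal algebras generated by m,n with overline q fixed}. That theorem already shows that any algebra $A$ as in \ref{not:starting from a cover, m.n generate M} is graded-isomorphic to $A_{\overline{q}_A,\lambda_A}$ with $\overline{q}_A\in\Omega_{N-\alpha,N}$ and $\lambda_A\in k$, the scalar $\lambda_A$ being forced to vanish whenever $\overline{q}_A=N/(\alpha,N)$. Over an algebraically closed field, the only additional work is to normalise $\lambda_A$ and to verify pairwise non-isomorphism of the representatives listed in the statement.

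For the normalisation step I would exploit a graded rescaling of the presentation of $A_{\overline{q},\lambda}$. Since $\deg s=m$ and $\deg t=n$, every $\sigma\in k^*$ induces a graded automorphism of $k[s,t]$ sending $s\mapsto\sigma s$ and $t\mapsto t$, and under this change the defining relation $s^{z}=\lambda t^{y}$ becomes $s^{z}=\sigma^{-z}\lambda\, t^{y}$, while the relations $t^{x}=0$ and $s^{\hat{q}r}t^{d_{\hat{q}}}=0$ are preserved. Hence the automorphism descends to a graded isomorphism $A_{\overline{q},\lambda}\simeq A_{\overline{q},\sigma^{-z}\lambda}$. Because $z=\overline{q}r>0$, if $\lambda\neq 0$ the algebraic closure of $k$ supplies a $z$-th root $\sigma$ of $\lambda$, yielding $A_{\overline{q},\lambda}\simeq A_{\overline{q},1}$. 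If $\lambda=0$ there is nothing to do. Combined with the theorem, this proves that every admissible $A$ is graded-isomorphic to one of the algebras in the list.

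For the distinctness of these representatives, Proposition \ref{pro:from an algebra to overline q} shows that $\overline{q}_A$ is an intrinsic invariant of $A$, so algebras with different values of $\overline{q}$ cannot be isomorphic. Given $\overline{q}\in\Omega_{N-\alpha,N}-\{N/(\alpha,N)\}$, one has $y=y_{\overline{q}}>0$ and $y<x=f(0)$, so $t^{y}\neq 0$ in both $A_{\overline{q},0}$ and $A_{\overline{q},1}$. Since $A_{m}$ and $A_{zm}=A_{yn}$ are one-dimensional, the condition ``$v_{m}^{z}=0$'' is independent of the choice of a generator $v_{m}$ of $A_{m}$; it holds in $A_{\overline{q},0}$ but fails in $A_{\overline{q},1}$ (where $v_{m}^{z}=v_{n}^{y}\neq 0$). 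Hence $A_{\overline{q},0}\not\simeq A_{\overline{q},1}$, completing the classification.

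I do not foresee any serious obstacle: the argument is essentially a formal corollary of the preceding theorem, and the only substantive input is the existence of $z$-th roots in $k$, which is where algebraic closure enters.
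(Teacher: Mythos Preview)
Your proof is correct and follows essentially the same route as the paper: use Theorem \ref{pro:Universal algebras generated by m,n with overline q fixed} to reduce to $A_{\overline{q},\lambda}$, then normalise $\lambda$ by a graded rescaling. The only cosmetic difference is that the paper rescales $t$ by a $y$-th root of $\lambda$ rather than $s$ by a $z$-th root; your added justification of pairwise non-isomorphism (which the paper dismisses as ``clear'') is fine and in fact relies only on the definition of $\overline{q}_A$, not on Proposition \ref{pro:from an algebra to overline q}.
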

\begin{proof}
Clearly the algebras above cannot be isomorphic. Conversely if $\lambda\in k^{*}$
(and $\overline{q}<N/(\alpha,N)$) the transformation $t\arr\sqrt[y]{\lambda}t$
with $y=y_{\overline{q}}$ yields an isomorphism $A_{\overline{q},\lambda}\simeq A_{\overline{q},1}$. 
\end{proof}

\subsection{\label{sub:smooth integral rays for hleqtwo}Smooth  extremal rays
for $h\leq2$.}

In this subsection we continue to keep notation from \ref{not:for alpha,N,r e M},
i.e. $M=M_{r,\alpha,N}$ and we will considered given an element $\overline{q}\in\Omega_{N-\alpha,N}$.
\begin{rem}
We have $z=1\iff\overline{q}=r=1$ and $x=1\iff\overline{q}=N$. Indeed
the first relation is clear, while for the second one note that, by
definition of $x$ and since $N>1$, we have $x=1\iff d_{q'}=N-1\iff\overline{q}=N/(\alpha,N),(\alpha,N)=1$.\end{rem}
\begin{lem}
\label{lem:lambda,delta for overlineq}The vectors of $K_{+}$
\begin{equation}
\begin{array}{lc}
v_{cm,dn} & 0<c<z,0<d<f(c)\\
v_{m,im} & 0<i<z-1\\
v_{n,jn} & 0<j<x-1\\
v_{m,(z-1)m} & \text{if }z>1\\
v_{n,(x-1)n} & \text{if }x>1
\end{array}\label{eq:base for the case m,n}
\end{equation}
form a basis of $K$. Assume $\overline{q}r\neq1$ and $\overline{q}\neq N$,
i.e. $z,x>1$, and denote by $\Lambda,\Delta$ the last two terms
of the dual basis of \ref{eq:base for the case m,n}. Then $\Lambda,\Delta\in\duale K_{+}$
and they form a smooth sequence. Moreover $\Lambda=1/|M|(x\E+w\delta)$,
$\Delta=1/|M|(y\E+z\delta)$ and
\[
\Lambda_{m,-m}=\Delta_{n,-n}=1\comma\Lambda_{n,-n}=\left\{ \begin{array}{cc}
0 & \text{if }\overline{q}=1\\
1 & \text{otherwise}
\end{array}\right.\comma\Delta_{m,-m}=\left\{ \begin{array}{cc}
0 & \text{if }\overline{q}=N/(\alpha,N)\\
1 & \text{otherwise}
\end{array}\right.
\]
\end{lem}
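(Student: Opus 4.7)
The proof proceeds in four stages. First, a direct count using $\sum_{c=0}^{z-1}f(c)=|M|$ from Lemma~\ref{lem:definition of epsilon delta} gives $|S|=(|M|-x-z+1)+(z-2)+(x-2)+2=|M|-1$, which equals $\rk K$.

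To establish that $S$ is a $\Z$-basis of $K$, I would introduce the $\Z$-linear map $\pi\colon\Z^{M}/\la e_{0}\ra\to\Z^{2}$ sending $e_{l}\mapsto(\E_{l},\delta_{l})$. Its restriction to $K$ lands in $\Lambda_{0}=\{(c,d)\in\Z^{2}:cm+dn=0\text{ in }M\}$, and a direct calculation (using $zm=yn$, $xn=wm$, $y<x$, $w<z$) yields $\pi(v_{m,(z-1)m})=(z,-y)$ and $\pi(v_{n,(x-1)n})=(-w,x)$; since $zx-wy=|M|=[\Z^{2}:\Lambda_{0}]$, these form a $\Z$-basis of $\Lambda_{0}$, so $\pi|_{K}$ is surjective. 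The other $|M|-3$ vectors of $S$ all lie in $\Ker\pi|_{K}$, which has rank $|M|-3$. The triangular change of basis $e_{l}'=e_{l}-\E_{l}e_{m}-\delta_{l}e_{n}$ (for $l\in M\setminus\{0,m,n\}$) presents $\Ker\pi$ as the free $\Z$-module on $\{e_{l}'\}$, and ordering the remaining vectors so that $v_{m,im}$ introduces $e_{(i+1)m}'$, $v_{n,jn}$ introduces $e_{(j+1)n}'$, and $v_{cm,dn}$ introduces $e_{(cm+dn)}'$ (each with coefficient $\pm 1$, with all other coefficients involving earlier $e_{l'}'$'s) produces a unipotent transition matrix, showing these vectors form a $\Z$-basis of $\Ker\pi|_{K}$.

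The formulas for $\Lambda,\Delta$ then follow by duality. Pulling back along $\pi$, the functionals $(c,d)\mapsto(xc+wd)/|M|$ and $(c,d)\mapsto(yc+zd)/|M|$ on $\Lambda_{0}$ are $\Z$-valued and form the basis of $\Lambda_{0}^{\vee}$ dual to $((z,-y),(-w,x))$; hence $\Lambda=(x\E+w\delta)/|M|$ and $\Delta=(y\E+z\delta)/|M|$. The tabulated values of $\Lambda_{m,-m},\Lambda_{n,-n},\Delta_{m,-m},\Delta_{n,-n}$ then drop out by substituting the unique writings of $-m$ and $-n$ into these formulas, separating the boundary cases $\overline{q}=1$ and $\overline{q}=N/(\alpha,N)$.

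The main obstacle is showing $\Lambda,\Delta\in\duale{K_{+}}$. My plan is to use the universal algebra $A^{\overline{q}}$ over $R=\Z[a,b]$ of Definition~\ref{def:universal algebra}, which by Proposition~\ref{pro:general algebra for m,n 2} is an object of $\MCov(\Spec R)$ with free $R$-basis $\{s^{A}t^{B}:(A,B)\in F\}$, where $F$ is the fundamental region of Lemma~\ref{lem:definition of epsilon delta}. The multiplication $\psi_{u,v}\in R$ is determined by $s^{\E_{u}+\E_{v}}t^{\delta_{u}+\delta_{v}}=\psi_{u,v}\cdot s^{\E_{u+v}}t^{\delta_{u+v}}$, and iteratively applying $s^{z}=at^{y}$ and $t^{x}=bs^{w}$ reduces any monomial $s^{c}t^{d}$ to $a^{\alpha}b^{\beta}s^{A}t^{B}$ with $(A,B)\in F$ and $\alpha,\beta\in\N$; well-definedness of $(\alpha,\beta)$ follows from freeness of $A^{\overline{q}}$ over $R$. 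This produces a monoid map $\xi\colon K_{+}\to\N^{2}$, $v_{u,v}\mapsto(\alpha,\beta)$, and a direct check on $S$ gives $\psi_{m,(z-1)m}=a$, $\psi_{n,(x-1)n}=b$, and $\psi=1$ on the remaining basis elements, so $\xi(v_{m,(z-1)m})=(1,0)$, $\xi(v_{n,(x-1)n})=(0,1)$, and $\xi=(0,0)$ elsewhere on $S$. Hence $\xi=(\Lambda,\Delta)$ as functionals on $K$, and $\Lambda,\Delta\geq 0$ on $K_{+}$. The smooth-sequence property is then immediate: $v_{m,(z-1)m},v_{n,(x-1)n}\in K_{+}$ realize the identity pairing, and the remaining basis vectors, all in $K_{+}\cap\Ker(\Lambda,\Delta)$, $\Z$-span $\Ker(\Lambda,\Delta)$, verifying the conditions of Definition~\ref{def:definition of smooth sequence and smooth elements}.
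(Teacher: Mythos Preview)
Your first three stages are essentially the paper's argument, repackaged. The map $\pi=(\E,\delta)\colon\Z^M/\langle e_0\rangle\to\Z^2$, the identification of $\pi(v_{m,(z-1)m})=(z,-y)$ and $\pi(v_{n,(x-1)n})=(-w,x)$ as a $\Z$-basis of $\Ker(\Z^2\to M)$, and the resulting formulas $\Lambda=(x\E+w\delta)/|M|$, $\Delta=(y\E+z\delta)/|M|$ all appear in the paper; your unipotent transition matrix is a concrete rendering of the paper's diagram showing $\sigma\colon\Z^2\to K/K'$ is an isomorphism. (One small omission: your $\pi$-argument tacitly uses both special vectors, so the degenerate cases $z=1$ or $x=1$, where $M$ is cyclic, need a separate sentence.)

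The fourth stage takes a genuinely different route. The paper proves $\Lambda,\Delta\in\duale{K_+}$ by a direct case analysis: it invokes the bounds on $\E_{a,b},\delta_{a,b}$ from Lemma~\ref{lem:computation of E and delta} and checks $|M|\Lambda_{a,b}=x\E_{a,b}+w\delta_{a,b}\ge 0$ (and likewise for $\Delta$) in each of the cases $\E_{a,b}>0$, $=0$, $<0$. Your approach instead realises $(\Lambda,\Delta)$ as the $(a,b)$-exponent of the structure constants of $A^{\overline q}$, which is more conceptual and in fact yields Proposition~\ref{pro:general algebra for m,n} simultaneously. There is, however, a gap in your justification: the reduction using only $s^z=at^y$ and $t^x=bs^w$ does \emph{not} always terminate. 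When $\overline q>1$, $\hat qr\le c<z$ and $d_{\hat q}\le d<x$, neither rewrite applies (one needs $c\ge z$ or $d\ge x$), and you must use the third relation $s^{\hat qr}t^{d_{\hat q}}=ab$. A cleaner way to see that every $\psi_{u,v}$ is a monomial $a^\alpha b^\beta$ with $\alpha,\beta\ge 0$ is to note that $\Z[a,b,s,t]$ carries a $\Z^2$-grading with $\deg a=(|M|,0)$, $\deg b=(0,|M|)$, $\deg s=(x,y)$, $\deg t=(w,z)$, under which all three defining relations of $A^{\overline q}$ are homogeneous (precisely because $zx-wy=|M|$); hence $\psi_{u,v}\in\Z[a,b]$ is homogeneous, so an integer multiple of a single monomial, and specialising $a,b,s,t\mapsto 1$ forces the integer to be $1$.
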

\begin{proof}
Note that we cannot have $z=x=1$ since otherwise $|M|=f(0)=x=1$,
i.e. $M=0$. The vectors of (\ref{eq:base for the case m,n}) are
at most $\rk K$ since
\[
\sum_{c=1}^{z-1}(f(c)-1)+z-2+x-2+2=\sum_{c=0}^{z-1}(f(c)-1)+z-1=|M|-z+z-1=|M|-1=\rk K
\]
If $z=1$ then (\ref{eq:base for the case m,n}) is $v_{n,n},\dots,v_{n,(x-1)n}$.
So $x=|M|=N$, i.e. $n$ generates $M$, and (\ref{eq:base for the case m,n})
is a base of $K$. In the same way if $x=1$, then $m$ generates
$M$ and (\ref{eq:base for the case m,n}) is a base of $K$.

So we can assume that $z,x>1$. The functions $\E$ and $\delta$
define a map $\Z^{M}/\langle e_{0}\rangle\arrdi{(\E,\delta)}\Z^{2}$.
Denote by $K'$ the subgroup of $K$ generated by the vectors in (\ref{eq:base for the case m,n}),
except for the last two lines. We claim that $(\E,\delta)_{|K'}=0$.
This follows by a direct computation just observing that if we have
an expression $Am+Bn$ as in \ref{lem:definition of epsilon delta},
\ref{enu:writing of the form Am+Bn}) then $(\E,\delta)(e_{Am+Bn})=(A,B)$.
Consider the diagram   \[   \begin{tikzpicture}[xscale=2.2,yscale=-1.2]     
\node (A0_0) at (0.1, 0.6) {$\scriptstyle{\sigma(e_1)=v_{m,(z-1)m}},$};     
\node (A0_1) at (1.1, 0.6) {$\scriptstyle{\sigma(e_2)=v_{n,(x-1)n}}$};     
\node (A1_0) at (0, 1) {$\Z^2$};     
\node (A1_1) at (1, 1) {$K/K'$};     
\node (A1_2) at (2, 1) {$\Z^M/\langle e_0, K'\rangle$};
\node (A1_3) at (3, 1) {$\Z^2$};     
\node (A1_4) at (4, 1) {$\Z^M/\langle e_0, K'\rangle$};
\node (A1_5) at (5, 1) {$M$};     
\node (A2_3) at (3.2, 1.4) {$\scriptstyle{\tau(e_1)=e_m},$};     
\node (A2_4) at (3.8, 1.4) {$\scriptstyle{\tau(e_2)=e_n}$};     
\node (A2_5) at (4.7, 1.4) {$\scriptstyle{p(e_l)=l}$};     

\path (A1_4) edge [->]node [auto,swap] {$\scriptstyle{p}$} (A1_5);     \path (A1_0) edge [->]node [auto] {$\scriptstyle{\sigma}$} (A1_1);     \path (A1_0) edge [->,bend left=35]node [auto] {$\scriptstyle{U}$} (A1_3);     \path (A1_1) edge [right hook->]node [auto] {$\scriptstyle{}$} (A1_2);     \path (A1_2) edge [->]node [auto] {$\scriptstyle{(\E,\delta)}$} (A1_3);     \path (A1_3) edge [->,bend right=45]node [auto] {$\scriptstyle{\pi}$} (A1_5);     \path (A1_3) edge [->]node [auto,swap] {$\scriptstyle{\tau}$} (A1_4);   \end{tikzpicture}   \]  We have $(\E,\delta)(v_{m,(z-1)m})=(z,-y)$ since $y<x=f(0)$ and
$(\E,\delta)(v_{n,(x-1)n})=(-w,x)$ since $w<z$. So $|\det U|=zx-yw=|M|$
and, since $\pi\circ U=0$, $U$ is an isomorphism onto $\Ker\pi$.
Moreover $\tau^{-1}=(\E,\delta)$ since $e_{l}\equiv\E_{l}e_{m}+\delta_{l}e_{n}\text{ mod }K'$.
It follows that $\sigma$ is an isomorphism and so (\ref{eq:base for the case m,n})
is a basis of $K$.

Consider now the second part of the statement. Clearly $\Lambda,\Delta\in\langle\E,\delta\rangle_{\Q}$.
Therefore we have
\[
\Lambda=a\E+b\delta,\left\{ \begin{array}{c}
\Lambda(v_{m,(z-1)m})=1=az-yb\\
\Lambda(v_{n,(x-1)n})=0=xb-aw
\end{array}\then\left\{ \begin{array}{c}
a=x/|M|\\
b=w/|M|
\end{array}\right.\right.
\]
and the analogous relation for $\Delta$ follows in the same way.
Now note that, thanks to \ref{pro:Universal algebras generated by m,n with overline q fixed}
and \ref{pro:from an algebra to overline q}, we have that $\E=\E^{A}\comma\delta=\delta^{A}$
for an algebra $A$ as in \ref{not:starting from a cover, m.n generate M}
with $\overline{q}_{A}=\overline{q}$, $\lambda_{A}=0$ and sharing
the same invariants of $\overline{q}$. So we can apply \ref{lem:computation of E and delta}.
We want to prove that $\Lambda,\Delta\in\duale K_{+}$ so that they
form a smooth sequence by construction. Assume first that $\E_{a,b}>0$.
Clearly $\Lambda_{a,b},\Delta_{a,b}\geq0$ if $\delta_{a,b}\geq0$.
On the other hand if $\delta_{a,b}<0$ we know that $\E_{a,b}\geq z$
and $\delta_{a,b}\geq-y$ and so
\[
|M|\Lambda_{a,b}=x\E_{a,b}+w\delta_{a,b}\geq xz-yw=|M|\text{ and }|M|\Delta_{a,b}=y\E_{a,b}+z\delta_{a,b}\geq yz-zy=0
\]
The other cases follows in the same way. It remains to prove the last
relations. Since $-n=\hat{q}rm+(d_{\hat{q}}-1)n$, we have $\E_{n,-n}=\hat{q}r$
and $\delta_{n,-n}=d_{\hat{q}}$. Using the relation $zx-wy=|M|$
the values of $\Lambda_{n,-n}\comma\Delta_{n,-n}$ can be checked
by a direct computation. Similarly, considering the relations $-m=(\hat{q}r-1)m+d_{\hat{q}}n$
if $1<\overline{q}$, $-m=(r-1)m+(N-\alpha)n$ if $\overline{q}=1$
and $\alpha\neq0$, $-m=(r-1)m$ if $\alpha=0$, we can compute the
values of $\Lambda_{m,-m}$ and $\Delta_{m,-m}$.\end{proof}
\begin{prop}
\label{pro:general algebra for m,n}The multiplication of $A^{\overline{q}}$
(see \ref{def:universal algebra}) with respect to the basis $v_{l}=v_{m}^{\E_{l}}v_{n}^{\delta_{l}}$
is: $a^{\E^{\phi}}$ if $\overline{q}=N$, where $\phi\colon M\arrdi{\simeq}\Z/|M|\Z,$
$\phi(m)=1$; $b^{\E^{\eta}}$ if $\overline{q}r=1$, where $\eta\colon M\arrdi{\simeq}\Z/|M|\Z,$
$\phi(n)=1$; $a^{\Lambda}b^{\Delta}$ if $\overline{q}r\neq1\comma\overline{q}\neq N$,
where $\Lambda,\Delta$ are the rays defined in \ref{lem:lambda,delta for overlineq}.\end{prop}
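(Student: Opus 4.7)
My plan is to compute $\psi_{a,b}$ directly using the defining relations of $A^{\overline q}$ in Definition \ref{def:universal algebra} and the expression of the basis $v_l=s^{\E_l}t^{\delta_l}$ from Proposition \ref{pro:general algebra for m,n 2}, treating separately the generic case ($z>1$ and $x>1$, i.e.\ $\overline q r\neq 1$ and $\overline q\neq N$) and the two degenerate cases. Throughout I use that $A^{\overline q}$ is a free $\Z[a,b]$-module (Proposition \ref{pro:general algebra for m,n 2}), so since $\Z[a,b]$ is a domain the localization morphism $A^{\overline q}\to A^{\overline q}[(ab)^{-1}]$ is injective, and it suffices to verify the identity $v_av_b=\psi_{a,b}v_{a+b}$ after inverting $ab$.

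In the generic case I will first show $A^{\overline q}[(ab)^{-1}]\simeq\Z[s^{\pm 1},t^{\pm 1}]$, with $a=s^zt^{-y}$ and $b=s^{-w}t^x$: from $s^z=at^y$ and $t^x=bs^w$ one obtains $s^{|M|}=a^xb^y$ and $t^{|M|}=a^wb^z$, so $s$ and $t$ become units in the localization, and the third defining relation $s^{\hat q r}t^{d_{\hat q}}=a^{\gamma}b$ is automatic---either $\gamma=1$ with $\hat q r=z-w$, $d_{\hat q}=x-y$, or $\gamma=0$ with $\overline q=1$ (in which case both sides equal $t^N$). In this Laurent ring one has $v_av_b/v_{a+b}=s^{\E_{a,b}}t^{\delta_{a,b}}$, and the formulas $\Lambda=(x\E+w\delta)/|M|$, $\Delta=(y\E+z\delta)/|M|$ from Lemma \ref{lem:lambda,delta for overlineq}, together with $zx-yw=|M|$, yield $z\Lambda-w\Delta=\E$ and $-y\Lambda+x\Delta=\delta$, whence $a^{\Lambda_{a,b}}b^{\Delta_{a,b}}=s^{\E_{a,b}}t^{\delta_{a,b}}$ in $\Z[s^{\pm 1},t^{\pm 1}]$. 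Since $\Lambda,\Delta\in\duale K_+$, the exponents $\Lambda_{a,b},\Delta_{a,b}$ are non-negative integers, so the right-hand side is a genuine element of $\Z[a,b]$ and the identity lifts back to $A^{\overline q}$ by injectivity of the localization.

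For the degenerate case $\overline q=N$ a direct computation works: here $x=1$ and $y=0$, so $f(c)=1$ for every $c$ and thus $\delta_l=0$, giving $v_l=s^{\E_l}$; the second relation reads $t=bs^w$ and the third is redundant, leaving $A^{\overline q}\simeq\Z[a,b][s]/(s^{|M|}-a)$. Then $v_av_b=s^{\E_a+\E_b}$ reduces modulo $s^{|M|}-a$ to either $v_{a+b}$ (when $\E_a+\E_b<z$) or $a\,v_{a+b}$ (when $\E_a+\E_b\ge z$), and this matches $a^{\E^\phi_{a,b}}$ since $\phi(l)=\E_l$ in $\Z/|M|\Z$ under $\phi(m)=1$. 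The case $\overline q r=1$ is entirely analogous, with the roles of $(s,a,m)$ and $(t,b,n)$ interchanged. The only step requiring real care is the verification that the third defining relation of $A^{\overline q}$ becomes automatic in $A^{\overline q}[(ab)^{-1}]$; this obstacle dissolves once we invoke the numerical identities $\hat q r=z-w$ and $d_{\hat q}=x-y$ for $\overline q>1$ (from Lemma \ref{lem:fundamental for the case m,n}) and the direct check for $\overline q=1$.
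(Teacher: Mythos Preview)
Your proof is correct but takes a genuinely different route from the paper. The paper argues in the opposite direction: it \emph{defines} an $M$-cover $B$ over $\Z[a,b]$ by the multiplication $\psi=a^{\Lambda}b^{\Delta}$, with graded basis $\{\omega_l\}$, and then checks that $\omega_m,\omega_n$ satisfy the three defining relations of $A^{\overline q}$ (using $\psi_{m,(z-1)m}=a$, $\psi_{n,(x-1)n}=b$, and the values $\Lambda_{n,-n},\Delta_{n,-n}$ computed in Lemma~\ref{lem:lambda,delta for overlineq}); this yields a surjection $A^{\overline q}\to B$ which is an isomorphism by rank. Your approach instead stays inside $A^{\overline q}$, localizes at $ab$ so that $s,t$ become units, and reads off $\psi_{a,b}=s^{\E_{a,b}}t^{\delta_{a,b}}=a^{\Lambda_{a,b}}b^{\Delta_{a,b}}$ directly from the matrix identity inverting $\Lambda=(x\E+w\delta)/|M|$, $\Delta=(y\E+z\delta)/|M|$. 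The paper's argument is a bit slicker in that it never needs to justify the structure of the localization, whereas yours makes the linear relation between $(\Lambda,\Delta)$ and $(\E,\delta)$ do all the work. One small remark: you do not actually need the full isomorphism $A^{\overline q}[(ab)^{-1}]\simeq\Z[s^{\pm1},t^{\pm1}]$ (which would require a rank count to rule out extra relations); it suffices that $s,t$ become units there and that $a=s^zt^{-y}$, $b=s^{-w}t^x$, together with the injectivity of $A^{\overline q}\hookrightarrow A^{\overline q}[(ab)^{-1}]$ --- which you already have from freeness over the domain $\Z[a,b]$.
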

\begin{proof}
In the proof of \ref{pro:general algebra for m,n 2} we have seen
that if $x=1$ $(\overline{q}=N)$, then $M=\langle m\rangle$ and
$A^{\overline{q}}=\Z[a,b][s]/(s^{|M|}-a)$, while if $z=1$ $(\overline{q}r=1)$
then $M=\langle n\rangle$ and $A^{\overline{q}}=\Z[a,b][t]/(t^{|M|}-b)$.
So we can assume $x,z>1$. Let $B$ the $\Di M$-cover over $\Z[a,b]$
given by multiplication $\psi=a^{\Lambda}b^{\Delta}$ and denote by
$\{\omega_{l}\}_{l\in M}$ a graded basis (inducing $\psi)$. By definition
of $\Lambda,\Delta$ we have $\omega_{l}=\omega_{m}^{\E_{l}}\omega_{n}^{\delta_{l}}$
for any $l\in M$ and $\psi_{m,(z-1)m}=a,\;\psi_{n,(x-1)n}=b$. Therefore
\[
\omega_{m}^{z}=\omega_{m}\omega_{(z-1)m}=a\omega_{zm}=a\omega_{yn}=a\omega_{n}^{y}\comma\omega_{n}^{x}=\omega_{n}\omega_{(x-1)n}=b\omega_{xn}=b\omega_{wm}=b\omega_{m}^{w}
\]
and, checking both cases $\overline{q}=1$ and $\overline{q}>1$,
$\omega_{m}^{\hat{q}r}\omega_{n}^{d_{\hat{q}}}=\omega_{-n}\omega_{n}=a^{\Lambda_{n,.n}}b^{\Delta_{n,.n}}=a^{\gamma}b$.
In particular we have an isomorphism $A^{\overline{q}}\arr B$ sending
$v_{m},v_{n}$ to $\omega_{m},\omega_{n}$.\end{proof}
\begin{notation}
From now on M will be any finite abelian group. If $\phi\colon M\arr M_{r,\alpha,N}$
is a surjective map, $r,\alpha,N$ satisfy the conditions of \ref{not:for alpha,N,r e M},
$\overline{q}\in\Omega_{N-\alpha,N}$ with $\overline{q}r\neq1\comma\overline{q}\neq N$
then we set $\Lambda^{r,\alpha,N,\overline{q},\phi}=\Lambda\circ\phi_{*}\comma\Delta^{r,\alpha,N,\overline{q},\phi}=\Delta\circ\phi_{*}$,
where $\Lambda,\Delta$ are the rays defined in \ref{lem:lambda,delta for overlineq}
with respect to $r,\alpha,N,\overline{q}$. If $\phi=\id$ we will
omit it.\end{notation}
\begin{defn}
Set
\[
\Sigma_{M}=\left\{ (r,\alpha,N,\overline{q},\phi)\left|\begin{array}{c}
0\leq\alpha<N\comma r>0\comma N>1\comma(r>1\text{ or }\alpha>1)\\
\overline{q}\in\Omega_{N-\alpha,N}\comma\overline{q}r\neq1\comma\overline{q}\alpha\not\equiv1\text{ mod }N\\
\overline{q}\neq N/(\alpha,N)\comma\phi\colon M\arr M_{r,\alpha,N}\text{ surjective}
\end{array}\right.\right\} 
\]
and $\Delta^{*}\colon\Sigma_{M}\arr\{\text{smooth extremal rays of }M\}$.\end{defn}
\begin{rem}
Since $e_{2},e_{1}$ generate $M_{r,\alpha,N}$, there exist unique
$\duale r,\duale{\alpha},\duale N$ with an isomorphism $\duale{(-)}\colon M_{r,\alpha,N}\arr M_{\duale r,\duale{\alpha},\duale N}$
sending $e_{2},e_{1}$ to $e_{1},e_{2}$. One can check that $\duale r=(\alpha,N)$,
$\duale N=rN/(\alpha,N)$ and $\duale{\alpha}=\tilde{q}r$, where
$\tilde{q}$ is the only integer $0\leq\tilde{q}<N/(\alpha,N)$ such
that $\tilde{q}\alpha\equiv(\alpha,N)\text{ mod }N$. 

If $A$ is an algebra as in \ref{not:starting from a cover, m.n generate M}
for $M_{r,\alpha,N}$, then, through $\duale{(-)}$, $A$ can be thought
of as a $M_{\duale r,\duale{\alpha},\duale N}$-cover, that we will
denote by $\duale A$, and $\duale A$ is an algebra as in \ref{not:starting from a cover, m.n generate M}
with respect to $M_{\duale r,\duale{\alpha},\duale N}$, with $\overline{q}_{\duale A}=x_{A}/(\alpha,N)$,
$\lambda_{\duale A}=\mu_{A}$. We can define a bijection $\duale{(-)}\colon\Omega_{N-\alpha,N}-\{N/(N,\alpha)\}\arr\Omega_{\duale N-\duale{\alpha},\duale N}-\{\duale N/(\duale{\alpha},\duale N)\}$
in the following way. Given $\overline{q}$ take an algebra $A$ as
in \ref{not:starting from a cover, m.n generate M} for $M_{r,\alpha,N}$
with $\overline{q}_{A}=\overline{q}$ and $\lambda_{A}\neq0$, which
exists thanks to \ref{pro:Universal algebras generated by m,n with overline q fixed},
and set $\duale{\overline{q}}=\overline{q}_{\duale A}$. Taking into
account \ref{rem:lambda is 0 iff mu is 0} and \ref{pro:from an algebra to overline q},
$\duale{\overline{q}}=y_{\overline{q}}/(\alpha,N)$ since $x_{A}=y_{A}=y_{\overline{q}}$
and $\duale{(-)}$ is well defined and bijective since $\lambda_{\duale A}=\mu_{A}=\lambda_{A}^{-1}$.
Note that the condition $\overline{q}\alpha\equiv1\text{ mod }N$
is equivalent to $\duale r=1$ and $\duale{\overline{q}}=1$

Finally if $\phi\colon M\arr M_{r,\alpha,N}$ is a surjective morphism
then we set $\duale{\phi}=\duale{(-)}\circ\phi\colon M\arr M_{\duale r,\duale{\alpha},\duale N}$.
Note that in any case we have the relation $\duale{\duale{(-)}}=\id$.
In particular, since $\duale 1=\alpha/\duale r$, $\overline{q}=\duale{\alpha}/r$
is the dual of $1\in\Omega_{\duale N-\duale{\alpha},\duale N}$.\end{rem}
\begin{prop}
\label{pro:trivial ray for lambda,delta}Let $r,\alpha,N$ be as in
\ref{not:for alpha,N,r e M}, $\overline{q}\in\Omega_{N-\alpha,N}$
with $\overline{q}r\neq1\comma\overline{q}\neq N$ and $\phi\colon M\arr M_{r,\alpha,N}$
be a surjective map. Set $\chi=(r,\alpha,N,\overline{q},\phi)$. Then
\begin{enumerate}
\item $\overline{q}=N/(\alpha,N)$: $\Delta^{\chi}=\E^{\xi}$, $\xi\colon M\arrdi{\phi}M_{r,\alpha,N}\arr M_{r,\alpha,N}/\langle m\rangle\simeq\langle n\rangle\simeq\Z/(\alpha,N)\Z$;
$\overline{q}\alpha\equiv1\text{ mod }N$: $\Delta^{\chi}=\E^{\zeta}$,
$\zeta\colon M\arrdi{\phi}M_{r,\alpha,N}=\langle e_{1}\rangle$;
\item $\overline{q}=1$: $\Lambda^{\chi}=\E^{\omega}$, $\omega\colon M\arrdi{\phi}M_{r,\alpha,N}\arr M_{r,\alpha,N}/\langle n\rangle=\langle m\rangle\simeq\Z/r\Z$;\newline$w_{\overline{q}}=1$:
$\Lambda^{\chi}=\E^{\theta},\theta\colon M\arrdi{\phi}M_{r,\alpha,N}=\langle e_{2}\rangle$;
\item $\overline{q}>1$ and $w_{\overline{q}}\neq1$: $\Lambda^{\chi}=\Delta^{r,\alpha,N,\overline{q}-\hat{q},\phi}$.
\end{enumerate}
In particular in the first two cases we have $h_{\Lambda^{\chi}}=h_{\Delta^{\chi}}=1$.\end{prop}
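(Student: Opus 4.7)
The plan is to verify each identity directly as an equality of $\Z$-linear functionals on $K$. Since every ray appearing in the statement is the pullback along $\phi_{*}$ of the corresponding ray on $M_{r,\alpha,N}$, we may immediately reduce to the case $M=M_{r,\alpha,N}$ and $\phi=\id$. The main computational tool is the explicit formula from Lemma \ref{lem:lambda,delta for overlineq},
\begin{align*}
\Lambda^{\chi}=\frac{x_{\overline{q}}\,\E^{\overline{q}}+w_{\overline{q}}\,\delta^{\overline{q}}}{|M|},\qquad\Delta^{\chi}=\frac{y_{\overline{q}}\,\E^{\overline{q}}+z_{\overline{q}}\,\delta^{\overline{q}}}{|M|},
\end{align*}
combined with the canonical representation $l=\E^{\overline{q}}_{l}m+\delta^{\overline{q}}_{l}n$ from Lemma \ref{lem:definition of epsilon delta}; since both sides of each asserted equality are determined by their values on the generators $v_{u,v}\in K$, the verification is in every case a direct computation.

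For part (1), with $\overline{q}=N/(\alpha,N)$, we have $d_{\overline{q}}=N$, hence $y_{\overline{q}}=0$ and $z_{\overline{q}}=rN/(\alpha,N)$, so $\Delta^{\chi}=\delta^{\overline{q}}/(\alpha,N)$. The residue of $\delta^{\overline{q}}_{l}$ modulo $(\alpha,N)$ is precisely the representative of $\xi(l)\in\Z/(\alpha,N)\Z$ under the identification $M/\langle m\rangle\simeq\Z/(\alpha,N)\Z$ sending $n$ to $1$; comparing with the defining formula for $\E^{\xi}$ from Proposition \ref{pro:Rita's smooth integral extremal rays} on $v_{u,v}$ yields the asserted equality. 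The sub-case $\overline{q}\alpha\equiv 1\pmod{N}$ is parallel: then $(\alpha,N)=1$, $y_{\overline{q}}=1$ and $zm=n$, so $M$ is cyclic generated by $m$, and $\Delta^{\chi}=(\E^{\overline{q}}+z_{\overline{q}}\delta^{\overline{q}})/|M|$ matches $\E^{\zeta}$. Part (2) is handled symmetrically: in the sub-case $\overline{q}=1$ one has $\hat{q}=0$, $x_{\overline{q}}=N$, $w_{\overline{q}}=0$, collapsing $\Lambda^{\chi}$ to $\E^{\overline{q}}/r$, which one matches against $\E^{\omega}$; in the sub-case $w_{\overline{q}}=1$ the group $M$ is cyclic generated by $n$ and the argument is analogous.

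For part (3), set $\overline{q}'=\overline{q}-\hat{q}$. Lemma \ref{lem:fundamental for the case m,n} applied with $\overline{q}=q_{n}$ gives $\overline{q}'=q_{n-1}\in\Omega_{N-\alpha,N}$, $q'_{\overline{q}'}=\hat{q}$, and $d_{\overline{q}}=d_{\overline{q}'}+d_{\hat{q}}$; consequently $z_{\overline{q}'}=w_{\overline{q}}$ and $y_{\overline{q}'}=x_{\overline{q}}$, so that
\begin{align*}
\Delta^{r,\alpha,N,\overline{q}',\id}=\frac{x_{\overline{q}}\,\E^{\overline{q}'}+w_{\overline{q}}\,\delta^{\overline{q}'}}{|M|}.
\end{align*}
The remaining equality $x_{\overline{q}}\E^{\overline{q}}+w_{\overline{q}}\delta^{\overline{q}}=x_{\overline{q}}\E^{\overline{q}'}+w_{\overline{q}}\delta^{\overline{q}'}$ is best proven via the characterization of smooth extremal rays by their support (Lemma \ref{lem:equivalent condition for a smooth integral extremal ray} and its corollary): both sides are smooth extremal rays, so it suffices to show they have the same support in $K_{+}$. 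By Proposition \ref{pro:general algebra for m,n}, these supports are the vanishing loci of the monomial algebras $A^{\overline{q}}_{0,1}$ and $A^{\overline{q}'}_{1,0}$ respectively; the presentations of Definition \ref{def:universal algebra} together with $z_{\overline{q}'}=w_{\overline{q}}$ and $y_{\overline{q}'}=x_{\overline{q}}$ exhibit the same quotient of $k[s,t]$ on both sides. The main obstacle is exactly the reconciliation of the two ``third'' relations $s^{\hat{q}r}t^{d_{\hat{q}}}=0$ (from $\overline{q}$) and $s^{\hat{q}'r}t^{d_{\hat{q}'}}=1$ (from $\overline{q}'$) modulo the other relations, a verification that relies on the recursive structure of $\Omega_{N-\alpha,N}$ from Lemma \ref{lem:fundamental for the case m,n}. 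Finally, the last sentence $h_{\Lambda^{\chi}}=h_{\Delta^{\chi}}=1$ in the first two cases is immediate from Theorem \ref{thm:fundamental theorem for hleqone} applied to the $\E^{\eta}$'s exhibited in parts (1) and (2).
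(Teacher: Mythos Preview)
Your overall strategy---reduce to $\phi=\id$, then verify each identity as an equality of $\Z$-linear functionals on $K$---is sound, and for parts (1) and (2) it runs parallel to the paper's approach, though you phrase it via the explicit formula $\Delta=(y\E+z\delta)/|M|$ whereas the paper works through the associated monomial algebras $B_{\overline{q}},C_{\overline{q}}$. One caveat: in part (1) you assert that comparing residues of $\delta^{\overline{q}}_l$ modulo $(\alpha,N)$ with $\E^\xi$ yields the result, but this requires knowing $\delta^{\overline{q}}_l<(\alpha,N)$ for every $l$ (not merely $\delta^{\overline{q}}_l\equiv\sigma(\xi(l))$), i.e.\ that $f(c)=(\alpha,N)$ for all $c$; this follows from $d_{\hat q}=(\alpha,N)$ and $d_{q'}=N-(\alpha,N)$ via Lemma~\ref{lem:fundamental for the case m,n}, but you should say so.

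For part (3) your outline diverges from the paper, and here there is a genuine gap. First, a slip: the third relation from $A^{\overline{q}'}_{1,0}$ is $s^{\hat q' r}t^{d_{\hat q'}}=1^{\gamma'}\cdot 0=0$, not $=1$. More substantively, you correctly reduce to showing that the two presentations define the same quotient of $k[s,t]$, but then stop at naming this as ``the main obstacle'' and gesturing at Lemma~\ref{lem:fundamental for the case m,n}; you do not carry out the verification, and it is not a one-liner---one must identify $\hat q_{\overline{q}'}$, $x_{\overline{q}'}$, $d_{\hat q_{\overline{q}'}}$ and check that the extra relations match modulo $s^w=t^x$, which involves case distinctions (and incidentally the side claim $q'_{\overline{q}'}=\hat q$ is not correct in general: $q'_{\overline{q}'}=q_{n-2}$, not $\hat q=q_n-q_{n-1}$). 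The paper avoids this entirely: rather than comparing presentations, it observes that $C_{\overline{q}}$ is an algebra of the type classified in Problem~\ref{not:starting from a cover, m.n generate M}, computes its invariants $z_{C_{\overline{q}}}=w_{\overline q}$ and $\lambda_{C_{\overline q}}=1$ directly from the relation $t^{x}=s^{w}$ holding in $C_{\overline q}$, and then invokes the uniqueness part of Theorem~\ref{pro:Universal algebras generated by m,n with overline q fixed} to conclude $C_{\overline q}\simeq A_{\overline q',1}$. This bypasses the reconciliation of presentations altogether and is the cleaner route; you should adopt it.
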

\begin{proof}
We can assume $M=M_{r,\alpha,N}$ and $\phi=\id$. The algebra associated
to $0^{\Lambda^{\chi}}$, $0^{\Delta^{\chi}}$ are respectively $C_{\overline{q}}=k[s,t]/(s^{z},t^{x}-s^{w},s^{\hat{q}r}t^{d_{\hat{q}}}-0^{\gamma})$,
$B_{\overline{q}}=k[s,t]/(s^{z}-t^{y},t^{x},s^{\hat{q}r}t^{d_{\hat{q}}})$
by \ref{pro:general algebra for m,n}. 

$1)$ If $\overline{q}=N/(\alpha,N)$, then $z=o(m)$, $y=0$, $d_{\hat{q}}=(\alpha,N)$
and so $B_{\overline{q}}=k[s,t]/(s^{o(m)}-1,t^{(\alpha,N)})$, the
algebra associated to $0^{\E^{\xi}}$. If $\overline{q}\alpha\equiv1\text{ mod }N$
then $\duale r=(\alpha,N)=1$ and $\overline{q}=\duale{\alpha}/r$,
i.e. $\duale{\overline{q}}=1$. So $y=1$ and $B_{\overline{q}}\simeq k[s]/(s^{|M|})$,
the algebra associated to $0^{\E^{\gamma}}$. 

$2)$ If $\overline{q}=1$, then $z=r$, $\hat{q}=w=0$, $x=d_{\hat{q}}=N$
and so $C_{1}=k[s,t](t^{n}-1,s^{r})$, the algebra associated to $0^{\E^{\omega}}$.
If $w=1$ then $\overline{q}>1$ and so $C_{\overline{q}}=k[t]/(t^{|M|}),$
the algebra associated to $0^{\E^{\theta}}$.

$3)$ If $\overline{q}>1$ then $H_{C_{\overline{q}}}=0$ and so $C_{\overline{q}}$
is an algebra as in \ref{not:starting from a cover, m.n generate M}.
An easy computation shows that $z_{C_{\overline{q}}}=w>1$, so that
$\overline{q}_{C_{\overline{q}}}=\overline{q}-\hat{q}$ and $\lambda_{\overline{q}}=1$.
Therefore $\Lambda^{\chi}=\Delta^{r,\alpha,N,\overline{q}-\hat{q}}$
by \ref{pro:Universal algebras generated by m,n with overline q fixed}. \end{proof}
\begin{prop}
\label{pro:classification sm int ray htwo}$\duale{\Sigma}_{M}=\Sigma_{M}$
and we have a bijection 
\[
\Delta^{*}\colon\Sigma_{M}/\duale{(-)}\arr\{\text{smooth extremal rays }\E\text{ with }h_{\E}=2\}
\]
\end{prop}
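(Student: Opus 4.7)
The plan is to establish the proposition in four logical steps, relying on the explicit classification of algebras generated in two degrees given in \S\ref{sub:algebras gen in two degrees} together with the general machinery of smooth sequences for diagonalizable covers.

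First I would check that $\duale\Sigma_M=\Sigma_M$ and that $\Delta^\chi=\Delta^{\duale\chi}$, so that $\Delta^*$ descends to $\Sigma_M/\duale{(-)}$. The integrality conditions on $(r,\alpha,N)$ are manifestly symmetric under the explicit involution recalled in the remark preceding the proposition, and the bijection $\duale{(-)}\colon\Omega_{N-\alpha,N}\setminus\{N/(\alpha,N)\}\to\Omega_{\duale N-\duale\alpha,\duale N}\setminus\{\duale N/(\duale\alpha,\duale N)\}$ built there is exactly what takes care of the condition $\overline q\neq N/(\alpha,N)$; combining with the identity $\overline q\alpha\equiv1\pmod N$ iff $\duale r=\duale{\overline q}=1$, the two remaining arithmetic conditions $\overline q r\neq1$ and $\overline q\alpha\not\equiv1\pmod N$ get interchanged under duality, and $\duale\phi$ is surjective whenever $\phi$ is. For the descent, \ref{pro:general algebra for m,n} identifies $\Delta^{r,\alpha,N,\overline q}$ as the ray whose associated multiplication is that of $A_{\overline q,1}$; the involution $\duale{(-)}$ merely relabels $e_1\leftrightarrow e_2$ inside $M_{r,\alpha,N}\simeq M_{\duale r,\duale\alpha,\duale N}$, sending $A_{\overline q,1}$ to $A_{\duale{\overline q},1}$ on the dual side (this is the defining property of $\duale{(-)}$ on $\Omega$), but since the underlying $\Di M$-cover is literally unchanged, the induced ray in $\duale K_+$ is the same.

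Next I would show that $\Delta^\chi$ is a smooth extremal ray with $h_{\Delta^\chi}=2$. Lemma \ref{lem:lambda,delta for overlineq} exhibits $(\Lambda^{r,\alpha,N,\overline q},\Delta^{r,\alpha,N,\overline q})$ as a smooth sequence on $M_{r,\alpha,N}$; by \ref{rem:subsequences of smooth sequences are smooth too} its singleton subsequence $\Delta^{r,\alpha,N,\overline q}$ is still smooth, hence a smooth extremal ray by \ref{lem:equivalent condition for a smooth integral extremal ray}. Pulling back along $\phi_*$ preserves both properties by \ref{lem:comparison smooth sequences for M covers}. To compute $h_{\Delta^\chi}$ one works with the associated algebra $A_{\overline q,1}$ over an algebraic closure of the base: the three numerical conditions cutting out $\Sigma_M$ translate (using $\duale{\overline q}\duale r=y_{\overline q}$ and its dual) into $z_{\overline q},x_{\overline q}>1$ together with $y_{\overline q}>1$, and these are exactly what is needed to ensure $v_m,v_n\notin A^*$ (so $H_{A/k}=0$) and that neither canonical generator of $A_{\overline q,1}$ is a scalar multiple of a pure power of the other. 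By \ref{cor:An algebra with H =00003D 0 is generated in the degrees where h=00003D1} the minimal generating set is then $\{m,n\}$, so $h_{\Delta^\chi}=2$.

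For bijectivity, I would construct the inverse. Given a smooth extremal ray $\E$ with $h_\E=2$, the associated algebra $A_\E$ over an algebraically closed field, after passing to $A_\E/A_\E^{H_{A_\E/k}}$, is a $\Di(M/H)$-cover generated by exactly two distinct nonzero degrees $\overline m,\overline n$ (by Corollary \ref{cor:An algebra with H =00003D 0 is generated in the degrees where h=00003D1} again), hence falls squarely into the situation of \ref{not:starting from a cover, m.n generate M}. Proposition \ref{pro:description group generated by two elements} converts the unordered pair $\{\overline m,\overline n\}$ into a surjection $\phi\colon M\twoheadrightarrow M_{r,\alpha,N}$, well defined up to the swap $\duale{(-)}$, and \ref{pro:Universal algebras generated by m,n with overline q fixed} attaches an invariant $\overline q\in\Omega_{N-\alpha,N}$; the normalisation $\lambda=1$ is forced because $\E$ produces a ray (the quotient algebra is $A_{\overline q,1}$, not a nontrivial torsor twist of it). The resulting quintuple lies in $\Sigma_M$ by the previous step, and the assignment is inverse to $\chi\mapsto\Delta^\chi$ by construction of $A_{\overline q,1}$. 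The main obstacle I expect will be the $h_{\Delta^\chi}=2$ computation: one must track carefully how the invariants $x,y,z,w$---defined both from $\overline q$ alone in \ref{not:alpha, N, r, overq, q', hatq, f} and as $x_A,y_A,z_A,w_A$ from the algebra $A_{\overline q,1}$, where $\lambda_A\neq0$ forces $x_A=y_A$ and $z_A=w_A$---interact under $\duale{(-)}$, and verify that the three inequalities defining $\Sigma_M$ correspond exactly to triviality of the maximal torsor together with irredundancy of both canonical generators.
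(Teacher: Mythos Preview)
Your overall strategy matches the paper's: both reduce to $H_\E=0$, invoke the classification \ref{pro:Universal algebras generated by m,n with overline q fixed}, and identify $\E$ with some $\Delta^\chi$. Steps 1 and 2 are essentially correct, and the paper proves $h_{\Delta^\chi}=2$ by the same duality trick you sketch (generated in degree $n$ alone iff $z=1$; in degree $m$ alone iff, dually, $y_{\overline q}=1$).

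There is, however, a genuine gap in step 3, at the line ``the normalisation $\lambda=1$ is forced because $\E$ produces a ray.'' This is not a valid justification: every element of $\duale K_+$ is a ray, and in particular $A_{\overline q,0}$ has multiplication $0^{\Lambda+\Delta}$ by \ref{pro:general algebra for m,n}, which is just as much ``produced by a ray, with no torsor twist'' as $A_{\overline q,1}$ is. What actually rules out $\lambda=0$ is the \emph{extremality} of $\E$. If $\lambda=0$ then $\Supp\E=\Supp\Lambda\cup\Supp\Delta$; since $\Lambda,\Delta$ are linearly independent extremal rays by \ref{lem:lambda,delta for overlineq}, $\Supp\Lambda$ is a strictly smaller nonempty ray-support inside $\Supp\E$, contradicting minimality. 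Once $\lambda\neq0$ one gets $\Supp\E=\Supp\Delta$ and hence $\E=\Delta^{\chi}$, both being normalized extremal rays with the same support. The paper compresses this step into the joint citation of \ref{pro:Universal algebras generated by m,n with overline q fixed} and \ref{pro:trivial ray for lambda,delta}, but it is this use of extremality, not the $h=2$ bookkeeping you flag as the main obstacle, that your plan actually misses. A minor point: the expression $A_\E/A_\E^{H_{A_\E/k}}$ is not the right object (the invariants form a subalgebra, not an ideal); the reduction to $H_\E=0$ goes through the maximal-torsor factorization of \ref{lem:factorization of covers through torsors on local rings}.
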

\begin{proof}
$\duale{\Sigma}_{M}\subseteq\Sigma_{M}$ since $\overline{q}\alpha\not\equiv1\text{ mod }N$
is equivalent to $\duale{\overline{q}}\duale r\neq1$. Now let $\E$
be a smooth extremal ray such that $h_{\E}=2$ and $A$ the associated
algebra over some field $k$. We can assume $H_{A/k}=H_{\E}=0$. The
relation $h_{\E}=2$ means that there exist $0\neq m,n\in M$, $m\neq n$
such that $ $$A$ is generated in degrees $m,n$. So $M=M_{r,\alpha,N}$
as in \ref{not:for alpha,N,r e M} and $A$ is an algebra as in \ref{not:starting from a cover, m.n generate M}.
By \ref{pro:Universal algebras generated by m,n with overline q fixed}
and \ref{pro:trivial ray for lambda,delta} we can conclude that there
exist $\chi\in\Sigma_{M}$ such that $\E=\Delta^{\chi}$.

Now let $\chi=(r,\alpha,N,\overline{q},\phi)\in\Sigma_{M}$. We have
to prove that $h_{\Delta^{\chi}}=2$ and, since $M_{r,\alpha,N}\neq0$,
assume by contradiction that $h_{\Delta^{\chi}}=1$. We can assume
$M=M_{r,\alpha,N}$ and $\phi=\id$. Note that $h_{\Delta^{\chi}}=1$
means that the associated algebra $B$ is generated in degree $m$
or $n$. If $A$ is an algebra as in \ref{not:starting from a cover, m.n generate M},
then $A$ is generated in degree $n$ if and only if $z=1$, that
means $\overline{q}r=1$. So $B$ is generated in degree $m$, i.e.
$\duale B$ is generated in degree $e_{2}\in M_{\duale r,\duale{\alpha},\duale N}$,
which is equivalent to $1=z_{\duale B}=\duale{\overline{q}}\duale r=1$,
and, as we have seen, to $\overline{q}\alpha\equiv1\text{ mod N}$. 

Now let $\chi'=(r',\alpha',N',\overline{q}',\phi')\in\Sigma_{M}$
such that $\E=\Delta^{\chi}=\Delta^{\chi'}$. Again we can assume
$H_{\E}=0$ and take $B,B'$ the algebras associated respectively
to $\chi,\chi'$. By definition of $\Delta_{*}$, $\phi,\phi'$ are
isomorphisms. If $g=\phi'\circ\phi^{-1}\colon M_{r,\alpha,N}\arr M_{r',\alpha',N'}$
then we have a graded isomorphism $p\colon B\arr B'$ such that $p(B_{l})=B'_{g(l)}$.
Therefore $g(\{e_{1},e_{2}\})=\{e_{1},e_{2}\}$, i.e. $g=\id$ or
$g=\duale{(-)}$. It is now easy to show that $\chi'=\chi$ or $\chi'=\duale{\chi}$.\end{proof}
\begin{notation}
We set $\Phi_{M}=\{\phi\colon M\arr\Z/l\Z\st l>1\comma\phi\text{ surjective}\}$,
$\Theta_{M}^{2}=\{\E^{\phi}\}_{\phi\in\Phi_{M}}\cup\{(\Lambda^{\chi},\Delta^{\chi})\}_{\chi\in\overline{\Sigma}_{M}}$,
where $\overline{\Sigma}_{M}$ is the set of sequences $(r,\alpha,N,\overline{q},\phi)$
where $r,\alpha,N\in\N$ satisfy $0\leq\alpha<N,r>0,r>1\text{ or }\alpha>1$,
$\overline{q}\in\Omega_{N-\alpha,N}$ satisfy $\overline{q}r\neq1$,
$\overline{q}\neq N$ and $\phi\colon M\arr M_{r,\alpha,N}$ is a
surjective map. Finally set $\underline{\E}=(\E^{\phi},\Delta^{\chi})_{\phi\in\Phi_{M},\chi\in\Sigma_{M}/\duale{(-)}}$.\end{notation}
\begin{thm}
\label{thm:fundamental thm for hleqtwo}Let $M$ be a finite abelian
group. Then 
\[
\{h\leq2\}=(\bigcup_{\phi\in\Phi_{M}}\stZ_{M}^{\E^{\phi}})\bigcup(\bigcup_{(\Lambda,\Delta)\in\Theta_{M}^{2}}\stZ_{M}^{\Lambda,\Delta})
\]
In particular $\{h\leq2\}\subseteq\stZ_{M}^{\textup{sm}}$. Moreover
$\pi_{\underline{\E}}\colon\stF_{\underline{\E}}\arr\MCov$ induces
an equivalence of categories
\[
\left\{ (\underline{\shL},\underline{\shM},\underline{z},\lambda)\in\stF_{\underline{\E}}\left|\begin{array}{c}
V(z_{\E^{1}})\cap\cdots\cap V(z_{\E^{r}})\neq\emptyset\text{ iff}\\
r=1\text{ or }(r=2\text{ and }(\E^{1},\E^{2})\in\Theta_{M}^{2})
\end{array}\right.\right\} =\pi_{\underline{\E}}^{-1}(h\leq2)\arrdi{\simeq}\{h\leq2\}
\]
\end{thm}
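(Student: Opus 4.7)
The plan is to reduce the statement to the combination of three earlier technical results. First, once the set-theoretic equality is established, the equivalence of categories is an immediate application of Theorem \ref{pro:piE for theta isomorphism} with the collection $\Theta = \{(\E^\phi) : \phi \in \Phi_M\} \cup \Theta_M^2$ of smooth sequences, since the open locus $\pi_{\underline{\E}}^{-1}(\{h \leq 2\})$ coincides with the subcategory described on the left by unwinding definitions. Similarly, the containment $\{h \leq 2\} \subseteq \stZ_M^{\textup{sm}}$ follows from the equality together with Lemma \ref{lem:decomposition of T+E and open smooth subscheme}, which shows that each $\stZ_M^{\underline{\delta}}$ for $\underline{\delta}$ a smooth sequence is an open smooth substack of $\stZ_M$. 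So the core work is the set-theoretic equality, which I would check on geometric points $\Spec k \to \MCov$.

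For the inclusion $\supseteq$, I would combine Proposition \ref{pro:classification sm int ray htwo} with the earlier analysis. Each $\E^\phi$ has $h_{\E^\phi} \leq 1$ by Theorem \ref{thm:fundamental theorem for hleqone}, and each pair $(\Lambda^\chi, \Delta^\chi) \in \Theta_M^2$ parametrizes covers $A^{\overline{q}}$ whose multiplications are computed in Proposition \ref{pro:general algebra for m,n}; these covers are generated in two degrees after quotienting by $H$, whence $h \leq 2$. By the description of points of $\stZ_M^{\underline{\delta}}$ from Proposition \ref{pro:points of XphiE} and the upper-semicontinuity of $h$, this gives the inclusion on the entire union.

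The non-trivial direction is $\subseteq$. Given $A \in \MCov(k)$ with $k$ algebraically closed and $h_A \leq 2$, let $H = H_{A/k}$ and let $C \subseteq A$ be the maximal torsor (Definition \ref{def:of the maximal torsor and the subgroup associated}); after extending $k$ we may assume $C \simeq k[H]$, and then the quotient $B = A \otimes_C k$ is an $M/H$-cover with $H_{B/k} = 0$. If $h_A = h_B \leq 1$, Theorem \ref{thm:fundamental theorem for hleqone} lands $A$ in some $\stZ_M^{\E^\phi}$, so the remaining case is $h_A = 2$. Here $B$ is generated in two distinct non-zero degrees $\overline{m} \neq \overline{n}$ of $M/H$ by Corollary \ref{cor:An algebra with H =00003D 0 is generated in the degrees where h=00003D1}, and Proposition \ref{pro:description group generated by two elements} produces canonical integers $(r,\alpha,N)$ with $M/H \simeq M_{r,\alpha,N}$, giving a surjection $\phi : M \to M_{r,\alpha,N}$. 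Theorem \ref{pro:Universal algebras generated by m,n with overline q fixed} then forces $B \simeq A_{\overline{q},\lambda}$ for unique $\overline{q} \in \Omega_{N-\alpha,N}$ and $\lambda \in k$, and Proposition \ref{pro:general algebra for m,n} expresses the multiplication of $B$ as $a^{\Lambda} b^{\Delta}$ (or, in the degenerate boundary cases $\overline{q}r = 1$, $\overline{q} = N$, as $\E^\xi$ for a suitable $\xi \in \Phi_M$ by Proposition \ref{pro:trivial ray for lambda,delta}). Pulling back along $\phi_*$ and combining with the $\Di M$-torsor twist encoded by $C$, which contributes a factor $\mu : K \to k^*$, yields a multiplication on $A$ of the form $\mu \cdot 0^{\E}$ for some $\E \in \langle \Lambda^\chi, \Delta^\chi \rangle_\N$ with $\chi = (r,\alpha,N,\overline{q},\phi)$, and Proposition \ref{pro:points of XphiE} places $A$ in $\stZ_M^{\Lambda^\chi,\Delta^\chi}$.

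The main obstacle is the bookkeeping in the last step: one must verify cleanly that tensoring up the $M/H$-structure on $B$ by the torsor $C \simeq k[H]$ to recover the $M$-structure on $A$ converts the multiplication $a^\Lambda b^\Delta$ over $M_{r,\alpha,N}$ into exactly the pulled-back multiplication $0^{\Lambda \circ \phi_*} 0^{\Delta \circ \phi_*}$ up to a $K$-valued torsor twist, and that the degenerate cases $\overline{q} \in \{1, N/(\alpha,N)\}$ and $\overline{q}\alpha \equiv 1 \bmod N$ correctly collapse to rays of type $\E^\phi$ via Proposition \ref{pro:trivial ray for lambda,delta} so that $\chi$ can always be chosen in $\Sigma_M$ (or discarded in favour of an $\E^\phi$). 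Modulo this case analysis, which is essentially a repackaging of Proposition \ref{pro:classification sm int ray htwo}, all pieces are in place.
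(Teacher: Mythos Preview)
Your proposal is correct and follows essentially the same route as the paper's proof, which is a terse two-line argument citing Theorem~\ref{pro:Universal algebras generated by m,n with overline q fixed} and Proposition~\ref{pro:general algebra for m,n} for the set-theoretic equality, and Proposition~\ref{pro:classification sm int ray htwo} together with Theorem~\ref{pro:piE for theta isomorphism} (with $\Theta=\Theta_M^2$) for the equivalence of categories. Your expanded account of the $\subseteq$ direction (passing to the maximal torsor, quotienting to an $M/H$-cover generated in two degrees, invoking the classification, and pulling back along $\phi_*$) is precisely the content hidden behind those citations; the bookkeeping you flag about the torsor twist and the degenerate cases is real but is absorbed by Proposition~\ref{pro:trivial ray for lambda,delta} and the fact that $\Theta_M^2$ is indexed by the larger set $\overline{\Sigma}_M$ rather than $\Sigma_M$. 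One small redundancy: since $\Theta_M^2$ already contains the singletons $(\E^\phi)$, your $\Theta=\{(\E^\phi)\}_{\phi\in\Phi_M}\cup\Theta_M^2$ is just $\Theta_M^2$.
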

\begin{proof}
The expression of $\{h\leq2\}$ follows from \ref{pro:Universal algebras generated by m,n with overline q fixed}
and \ref{pro:general algebra for m,n}. Taking into account \ref{pro:classification sm int ray htwo},
the last part instead follows from \ref{pro:piE for theta isomorphism}
taking $\Theta=\Theta_{M}^{2}$.
\end{proof}
In \cite{Maclagan2002} the authors prove that the toric Hilbert schemes
associated to a polynomial algebra in two variables are smooth and
irreducible. The same result is true more generally for multigraded
Hilbert schemes, as proved later in \cite{Maclagan2010}. Here we
obtain an alternative proof in the particular case of equivariant
Hilbert schemes:
\begin{cor}
\label{cor:MHilbmn smooth and irreducible}If $M$ is a finite abelian
group and $m,n\in M$ then $\MHilb^{m,n}$ is smooth and irreducible.\end{cor}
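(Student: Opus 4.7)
The plan is to reduce the question about $\MHilb^{m,n}$ to the corresponding question about the open substack $\MCov^{m,n} \subseteq \MCov$, and then to show that $\MCov^{m,n}$ is actually contained in the open substack $\{h \leq 2\}$ of $\stZ_M^{\textup{sm}}$, which is smooth and irreducible.

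First I would invoke Remark \ref{rem:relation Mhilbm MCovm}: the forgetful map $\vartheta_{m,n} \colon \MHilb^{m,n} \to \MCov^{m,n}$ is a smooth epimorphism with geometrically connected and irreducible fibers, so $\MHilb^{m,n}$ is smooth (resp.\ irreducible) if and only if $\MCov^{m,n}$ is. It therefore suffices to prove the two properties for the open substack $\MCov^{m,n} \subseteq \MCov$ of covers locally generated in degrees $m,n$.

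Next I would show $\MCov^{m,n} \subseteq \{h \leq 2\}$. Let $A \in \MCov^{m,n}(k)$ for an algebraically closed field $k$, and let $C$ be the maximal torsor of $A/k$, so that $A$ is a $\Di(M/H_{A/k})$-cover of $C$ with $H_{A/C} = 0$. The algebra $A$ is still generated in the images of $m,n$ over $C$, so by Corollary \ref{cor:An algebra with H =00003D 0 is generated in the degrees where h=00003D1} applied fiberwise at a maximal prime, $\{l \in M/H_{A/k} \mid h_{A/C,l}=1\} \subseteq \{\overline{m}, \overline{n}\}$, giving $h_{A/k} = h_{A/C} \leq 2$. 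Thus $|\MCov^{m,n}| \subseteq \{h \leq 2\}$, and by Theorem \ref{thm:fundamental thm for hleqtwo} we get $\MCov^{m,n} \subseteq \stZ_M^{\textup{sm}}$. In particular $\MCov^{m,n}$ is smooth.

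Finally I would observe that $\MCov^{m,n}$ is a non-empty open substack of $\stZ_M$: it is open in $\MCov$ by Proposition \ref{pro:MHlb --> MCov has irreducible fibers}, and the inclusion into $\stZ_M$ just obtained shows it is actually open in the main irreducible component, which is non-empty since $\Bi\Di M \subseteq \MCov^{m,n}$. Since $\stZ_M$ is irreducible (as the schematic closure of the irreducible $\Bi\Di M$, cf.\ Corollary \ref{cor:MCov as global quotient}), any non-empty open substack of it is irreducible. Hence $\MCov^{m,n}$ is smooth and irreducible, and the same holds for $\MHilb^{m,n}$.

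There is no real obstacle here once the theory of $\{h \leq 2\}$ is in place; the only subtle point worth double-checking is the passage to the maximal torsor when verifying $h \leq 2$, which ensures that $H_{A/C} = 0$ so that the criterion of Corollary \ref{cor:An algebra with H =00003D 0 is generated in the degrees where h=00003D1} applies.
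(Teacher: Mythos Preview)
Your proof is correct and follows exactly the paper's approach, which is the one-line observation that $\MCov^{m,n}\subseteq\{h\le 2\}\subseteq\stZ_M^{\textup{sm}}$ combined with Remark~\ref{rem:relation Mhilbm MCovm}; you have simply spelled out the justification for the first inclusion via the maximal torsor and Corollary~\ref{cor:An algebra with H =00003D 0 is generated in the degrees where h=00003D1}. One small inaccuracy: the inclusion $\Bi\Di M\subseteq\MCov^{m,n}$ that you use for nonemptiness holds only when $m,n$ generate $M$, since otherwise no $\Di M$-cover over a field can be generated in those degrees and $\MCov^{m,n}$ is empty; but this is the only nontrivial case, and the paper's own proof leaves the same point implicit.
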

\begin{proof}
Taking into account the diagram in \ref{rem:relation Mhilbm MCovm}
it is enough to note that $\MCov^{m,n}\subseteq\{h\leq2\}\subseteq\stZ_{M}^{\textup{sm}}$.\end{proof}
\begin{prop}
\label{pro:when sigmaM is empty}$\Sigma_{M}=\emptyset$ if and only
if $M\simeq(\Z/2\Z)^{l}$ or $M\simeq(\Z/3\Z)^{l}$.\end{prop}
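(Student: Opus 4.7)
The plan is to show both implications by direct analysis of the definition of $\Sigma_M$, using the fact that an element of $\Sigma_M$ requires a surjection $M \arr M_{r,\alpha,N}$ together with a suitable $\overline{q} \in \Omega_{N-\alpha,N}$.

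For the implication $(\Leftarrow)$, I would argue as follows. Any quotient $M_{r,\alpha,N}$ of $M = (\Z/p\Z)^l$ must itself have exponent dividing $p$, so it is $(\Z/p\Z)^k$ with $k \in \{1,2\}$ (since $M_{r,\alpha,N}$ is generated by two elements). The relation matrix $\bigl(\begin{smallmatrix} r & 0\\ -\alpha & N\end{smallmatrix}\bigr)$ together with the exponent and order constraints $rN \in \{p, p^2\}$, $0\leq\alpha<N$, and the condition $r>1$ or $\alpha>1$ force a small list of triples: for $p=2$ only $(r,\alpha,N)=(2,0,2)$, and for $p=3$ only $(3,0,3)$ and $(1,2,3)$. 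In each of these cases I would compute $\Omega_{N-\alpha,N}$ directly. For $(2,0,2)$ and $(3,0,3)$, one has $N-\alpha\equiv 0\pmod{N}$, so $o(N-\alpha,\Z/N\Z)=1$ and $\Omega_{N-\alpha,N}=\{1\}=\{N/(\alpha,N)\}$, which violates $\overline{q}\neq N/(\alpha,N)$. For $(1,2,3)$, one finds $\Omega_{1,3}=\{1,2,3\}$ with maximum $3=N/(\alpha,N)$; the condition $\overline{q}r\neq 1$ eliminates $\overline{q}=1$, and $\overline{q}=2$ fails because $2\cdot 2\equiv 1\pmod{3}$. Hence $\Sigma_M=\emptyset$ in all these cases.

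For the implication $(\Rightarrow)$, I would prove the contrapositive: if $M\not\simeq(\Z/2\Z)^l,(\Z/3\Z)^l$, then $M$ admits a cyclic quotient of order $N\geq 4$. Indeed, if $M$ has an element of order $\geq 4$, then by the structure theorem for finite abelian groups $M$ has a cyclic summand, hence a cyclic quotient, of order $\geq 4$; otherwise $M=(\Z/2\Z)^a\oplus(\Z/3\Z)^b$ with $a,b\geq 1$, and $\Z/6\Z$ is a quotient. Given such a surjection $\phi\colon M\arr\Z/N\Z$ with $N\geq 4$, I would exhibit explicitly
\[
\chi=(1,\;N-1,\;N,\;2,\;\phi)\in\Sigma_M,
\]
where $\Z/N\Z$ is identified with $M_{1,N-1,N}$ via $e_2\mapsto 1$, $e_1\mapsto -1$. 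The verification is routine: $\alpha=N-1>1$ (since $N\geq 4$), and $\Omega_{1,N}=\{1,2,\dots,N\}$ because $d_q=q$ for all $q$; then $\overline{q}=2$ satisfies $\overline{q}\neq N=N/(\alpha,N)$, $\overline{q}r=2\neq 1$, and $\overline{q}\alpha=2(N-1)\equiv -2\not\equiv 1\pmod{N}$ (again since $N\geq 4$).

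The main obstacle is the case analysis in the $(\Leftarrow)$ direction: one has to be careful to enumerate all triples $(r,\alpha,N)$ for which the quotient $M_{r,\alpha,N}$ has the correct exponent, including ruling out the seemingly admissible $M_{1,2,3}=\Z/3\Z$ where a valid $\overline{q}$ almost exists but is killed precisely by the condition $\overline{q}\alpha\not\equiv 1\pmod{N}$. This congruence condition, which appeared in the definition of $\Sigma_M$ in order to make $\Delta^*$ a bijection in \ref{pro:classification sm int ray htwo}, is exactly what is needed here to exclude $(\Z/3\Z)^l$.
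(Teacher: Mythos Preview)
Your proof is correct and follows essentially the same approach as the paper. For the $(\Rightarrow)$ direction both you and the paper exhibit the element $(1,N-1,N,2,\phi)\in\Sigma_M$ once $M$ surjects onto $\Z/N\Z$ with $N\geq 4$; you simply spell out why such a surjection exists, which the paper leaves implicit. For the $(\Leftarrow)$ direction both arguments enumerate the admissible quotients $M_{r,\alpha,N}$ of $(\Z/p\Z)^l$ and show no valid $\overline{q}$ survives. The only real difference is in the cyclic subcase $M_{1,2,3}\simeq\Z/3\Z$: you check each $\overline{q}\in\Omega_{1,3}=\{1,2,3\}$ directly against the defining inequalities of $\Sigma_M$, whereas the paper invokes $K_{+\Z/3\Z}\simeq\N^{2}$ from \ref{pro:smooth DMCov} to conclude that all extremal rays for $\Z/3\Z$ have $h=1$, so via \ref{pro:classification sm int ray htwo} no $\chi\in\Sigma$ can arise. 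Your verification is more self-contained; the paper's is shorter given the earlier machinery.
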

\begin{proof}
For the only if, note that if $\phi\colon M\arr\Z/l\Z$ with $l>3$
is surjective, then, taking $m=l-1\comma n=1\in\Z/l\Z$, we have $\Z/l\Z\simeq M_{1,l-1,l}$
and $(1,l-1,l,2,\phi)\in\Sigma_{M}$.

For the converse set $M=(\Z/p\Z)^{l}$, where $p=2,3$ and, by contradiction,
assume we have $(r,\alpha,N,\overline{q},\phi)\in\Sigma_{M}$. In
particular $\phi$ is a surjective map $M\arr M_{r,\alpha,N}$. If
$e_{1},e_{2}\in M_{r,\alpha,N}$ are $\F_{p}$-independent then $M_{r,\alpha,N}=\langle e_{1}\rangle\times\langle e_{2}\rangle$,
$\alpha=0$, $\Omega_{N-\alpha,N}=\{1\}$ and therefore $\overline{q}=1=N/(\alpha,N)$,
which implies that $\chi\notin\Sigma_{M}$. On the other hand, if
$M_{1,\alpha,p}\simeq\Z/p\Z$, the only extremal rays for $\Z/p\Z$
are $\E^{\id}$ and, if $p=3$, $\E^{-\id}$ since $K_{+\Z/p\Z}\simeq\N^{p-1}$
by \ref{pro:smooth DMCov}.\end{proof}
\begin{thm}
\label{thm:fundamental thm locally factoria hleqtwo}Let $M$ be a
finite abelian group and $X$ be a locally noetherian and locally
factorial scheme. Consider the full subcategories 
\[
\catC_{X}^{2}=\left\{ (\underline{\shL},\underline{\shM},\underline{z},\delta)\in\stF_{\underline{\E}}(X)\left|\begin{array}{c}
\codim_{X}V(z_{i_{1}})\cap\cdots\cap V(z_{i_{s}})\geq2\\
\text{if }\nexists\underline{\delta}\in\Theta_{M}^{2}\text{ s.t. }\E^{i_{1}},\dots\E^{i_{s}}\subseteq\underline{\delta}
\end{array}\right.\right\} \subseteq\shF_{\underline{\E}}(X)
\]
and
\[
\catD_{X}^{2}=\{Y\arrdi fX\in\MCov(X)\st h_{f}(p)\leq2\ \forall p\in X\text{ with }\codim_{p}X\leq1\}\subseteq\MCov(X)
\]
Then $\pi_{\underline{\E}}$ induces an equivalence of categories
\[
\catC_{X}^{2}=\pi_{\underline{\E}}^{-1}(\catD_{X}^{2})\arrdi{\simeq}\catD_{X}^{2}
\]
\end{thm}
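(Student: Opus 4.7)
The plan is to obtain this as a direct application of Theorem \ref{thm:fundamental theorem for locally factorial schemes} to the sequence $\underline{\E} = (\E^\phi, \Delta^\chi)_{\phi \in \Phi_M,\, \chi \in \Sigma_M/\duale{(-)}}$ and the collection $\Theta = \Theta_M^2$. The previous Theorem \ref{thm:fundamental thm for hleqtwo} has already identified $\{h \leq 2\}$ with $\stX_\phi^{\Theta_M^2}$, so the substack-level equivalence is in place; what remains is to check the hypotheses of the general lifting theorem and then match the two definitions.

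First I would check that $\underline{\E}$ is a sequence of distinct smooth extremal rays. The rays $\E^\phi$ are smooth and extremal by Proposition \ref{pro:Rita's smooth integral extremal rays} and satisfy $h_{\E^\phi}=1$; the rays $\Delta^\chi$ are smooth and extremal with $h_{\Delta^\chi}=2$ by Proposition \ref{pro:classification sm int ray htwo}. Distinctness across the two families follows from the values of $h$, and within each family from the bijectivity assertions of \ref{pro:classification sm int ray htwo}. Next I would check that every element of $\Theta_M^2$ is a smooth sequence whose rays lie in $\underline{\E}$. The singletons $(\E^\phi)$ are smooth sequences by Proposition \ref{lem:equivalent condition for a smooth integral extremal ray}. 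For the pairs $(\Lambda^\chi, \Delta^\chi)$ with $\chi \in \overline{\Sigma}_M$, smoothness as a sequence is exactly the content of Lemma \ref{lem:lambda,delta for overlineq}; the ray $\Delta^\chi$ lies in $\underline{\E}$ by construction (using the identification modulo $\duale{(-)}$ if necessary), and Proposition \ref{pro:trivial ray for lambda,delta} guarantees that $\Lambda^\chi$ is itself either some $\E^\omega$ with $\omega \in \Phi_M$ or some $\Delta^{\chi'}$ with $\chi' \in \Sigma_M$, so in all cases $\Lambda^\chi$ is again among the members of $\underline{\E}$.

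Having verified the hypotheses, Theorem \ref{thm:fundamental theorem for locally factorial schemes} gives an equivalence
\[
\catC_X^{\Theta_M^2} = \pi_{\underline{\E}}^{-1}(\catD_X^{\Theta_M^2}) \arrdi{\simeq} \catD_X^{\Theta_M^2},
\]
where $\catC_X^{\Theta_M^2}$ is literally the category $\catC_X^2$ appearing in the statement, and
\[
\catD_X^{\Theta_M^2} = \{\chi \in \MCov(X) \st \chi_{|\overline{k(p)}} \in \stX_{\phi_M}^{\Theta_M^2} \ \forall p \in X \text{ with } \codim_p X \leq 1\}.
\]
The identification $\catD_X^{\Theta_M^2} = \catD_X^2$ then follows from Theorem \ref{thm:fundamental thm for hleqtwo}, which asserts exactly that $\stX_{\phi_M}^{\Theta_M^2} = \{h \leq 2\}$ as open substacks of $\stZ_M$, combined with the fact that $h$ is defined pointwise by pulling back to geometric points and agrees with the characterization given on $\overline{k(p)}$-points. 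There is no real obstacle here; all of the work has already been carried out in the preceding two theorems, and the present statement is essentially a bookkeeping corollary that combines them, with the only verification to make being that the singletons $(\E^\phi)$ and the pairs $(\Lambda^\chi,\Delta^\chi)$ do indeed assemble into a collection of smooth sequences with rays among the chosen $\underline{\E}$.
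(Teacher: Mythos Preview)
Your proposal is correct and takes essentially the same approach as the paper, which simply says ``Apply \ref{thm:fundamental theorem for locally factorial schemes} with $\Theta=\Theta_{M}^{2}$.'' You spell out the verification of the hypotheses (that $\underline{\E}$ consists of distinct smooth extremal rays and that each element of $\Theta_M^2$ is a smooth sequence with rays in $\underline{\E}$), but these were already implicitly checked in the proof of Theorem \ref{thm:fundamental thm for hleqtwo} when \ref{pro:piE for theta isomorphism} was applied with the same $\Theta$, so the paper omits them.
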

\begin{proof}
Apply \ref{thm:fundamental theorem for locally factorial schemes}
with $\Theta=\Theta_{M}^{2}$.\end{proof}
\begin{rem}
In general $\{h\leq3\}$ does not belong to the smooth locus on $\stZ_{M}$.
For example, if $M=\Z/4\Z$, $\MCov=\{h\leq3\}$ is integral but not
smooth by \ref{pro:smooth DMCov} and \ref{rem:MCov for M=00003DZfour}.
\end{rem}

\subsection{Normal crossing in codimension 1.}

In this subsection we want describe, in the spirit of classification
\ref{thm:regular in codimension 1 covers}, normal crossing in codimension
$1$ covers of a locally noetherian and locally factorial scheme with
no isolated points and with $(\car X,|M|)=1$.
\begin{defn}
\label{def:normal crossing codimesion one}A scheme $X$ is normal
crossing in codimension $1$ if for any codimension $1$ point $p\in X$
there exists a local and etale map $\widehat{\odi{}}_{X,p}\arr R$,
where $R$ is $k[[x]]$ or $k[[s,t]]/(st)$ for some field $k$ and
$ $$\widehat{\odi{}}_{X,p}$ denote the completion of $\odi{X,p}$.\end{defn}
\begin{rem}
If $X$ is locally of finite type over a perfect field $k$, one can
show that the above condition is equivalent to having an open subset
$U\subseteq X$ such that $\codim_{X}X-U\geq2$ and there exists an
etale coverings $\{U_{i}\arr U\}$ with etale maps $U_{i}\arr\Spec k[x_{1},\dots,x_{n_{i}}]/(x_{1}\cdots x_{r_{i}})$
for any $i$. Anyway we will not use this property.\end{rem}
\begin{notation}
In this subsection we will consider a field $k$ and we will set $A=k[[s,t]]/(st)$.
Given an element $\xi\in\Aut_{k}k[[x]]$ we will write $\xi_{x}=\xi(x)$
so that, if $p\in k[[x]]$ then $\xi(p)(x)=p(\xi_{x})$. We will call
$I\in\Aut_{k}k[[s,t]]$ the unique map such that $I(s)=t\comma I(t)=s$.
Given $B\in k^{*}$ we will denote by $\underline{B}$ the automorphism
of $k[[x]]$ such that $\underline{B}_{x}=Bx$.

Finally, given $f\in k[[x_{1},\dots,x_{n}]]$ and $g\in k[x_{1},\dots,x_{n}]$
the notation $f=g+\cdots$ will mean $f\equiv g\text{ mod }(x_{1},\dots,x_{r})^{\deg g+1}$.
\end{notation}
The first problem to deal with is to describe the action on $A$ of
a finite group $M$ and check when $A$ is a $\Di M$-cover over $A^{M}$,
assuming to have the $|M|$-roots of unity in $k$. We start collecting
some general facts about $A$.
\begin{prop}
We have:
\begin{enumerate}
\item $A=k\oplus sk[[s]]\oplus tk[[t]]$
\item Given $f,g\in A-\{0\}$ then $fg=0$ if and only if $f\in sk[[s]],g\in tk[[t]]$
or vice versa.
\item Any automorphism in $\Aut_{k}A$ is of the form $(\xi,\eta)$ or $I(\xi,\eta)$
where $\xi,\eta\in\Aut_{k}k[[x]]$ and $(\xi,\eta)(f(s,t))=f(\xi_{s},\eta_{t})$.
\item If $\xi\in\Aut_{k}k[[x]]$ has finite order then $\xi=\underline{B}$
where $B$ is a root of unity in $k$. In particular if $(\xi,\eta)\in\Aut_{k}A$
has finite order then $\xi=\underline{B}\comma\eta=\underline{C}$
where $B\comma C$ are roots of unity in $k$.
\item Let $f\in k[[x]]-\{0\}$, $B,C$ roots of unity in $k$. Then $f(Bx)=Cf(x)$
if and only if $C=B^{r}$ for some $r>0$ and, if we choose the minimum
$ $$r$, $f\in x^{r}k[[x^{o(B)}]]$.
\end{enumerate}
\end{prop}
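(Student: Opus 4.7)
The plan is to take the five claims in order, exploiting the very explicit description of $A=k[[s,t]]/(st)$. Claim (1) is immediate: every element is represented by a formal series $\sum a_{ij}s^{i}t^{j}$, and modulo $st$ every monomial with both $i\ge 1$ and $j\ge 1$ vanishes, leaving a unique decomposition into a constant, a series in $s$, and a series in $t$. For (2), write $f=f_{0}+f_{s}+f_{t}$ and $g=g_{0}+g_{s}+g_{t}$ as in (1); since $st=0$, all cross terms $f_{s}g_{t}$ and $f_{t}g_{s}$ vanish, so $fg$ decomposes into its $k$-part, $sk[[s]]$-part, and $tk[[t]]$-part. Setting $fg=0$ componentwise and performing a short case analysis on whether $f_{0}$ or $g_{0}$ vanishes, while using that $k[[s]]$ and $k[[t]]$ are integral domains, forces $f$ and $g$ to sit in opposite summands. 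For (3), claim (2) identifies the two minimal primes of $A$ as $(s)=sk[[s]]$ and $(t)=tk[[t]]$, so any $\phi\in\Aut_{k}A$ either fixes both or swaps them. In the fixed case $\phi$ induces $k$-algebra automorphisms of $A/(t)\simeq k[[s]]$ and $A/(s)\simeq k[[t]]$, hence a pair $(\xi,\eta)$ with $\phi(s)=\xi_{s}$ and $\phi(t)=\eta_{t}$; since $s,t$ topologically generate $A$, $\phi=(\xi,\eta)$. The swapping case reduces to the previous one after composing with $I$, giving $\phi=I(\xi,\eta)$.

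For (4), comparing linear coefficients in $\xi^{n}(x)=x$ gives $B^{n}=1$, so the leading coefficient $B$ of $\xi$ is an $n$-th root of unity. To upgrade to $\xi=\underline{B}$, one argues by induction on the smallest degree $r\ge 2$ at which $\xi$ deviates from $\underline B$. Writing $\xi(x)=Bx+a_{r}x^{r}+O(x^{r+1})$ and tracking the $x^{r}$-coefficient $c_{i}$ of $\xi^{i}(x)$ via the recursion $c_{i+1}=Bc_{i}+a_{r}B^{ir}$, one finds $c_{n}=a_{r}B^{n-1}\sum_{j=0}^{n-1}B^{j(r-1)}$, and combining the constraint $c_{n}=0$ with the implicit hypothesis that $\car k$ is coprime to $n$ (the setting relevant to the paper, since $|M|$ is invertible) forces $a_{r}=0$. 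The second sentence of (4) is immediate since $\ord(\xi,\eta)=\operatorname{lcm}(\ord\xi,\ord\eta)$. Claim (5) is a direct coefficient computation: writing $f=\sum a_{i}x^{i}$, the equation $f(Bx)=Cf(x)$ reads $a_{i}(B^{i}-C)=0$ for all $i$, so $\Supp f\subseteq\{i:B^{i}=C\}$; if this set is nonempty with minimum $r$, then $C=B^{r}$ and the set equals $r+o(B)\N$, whence $f\in x^{r}k[[x^{o(B)}]]$. The converse is obvious.

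The principal obstacle is (4). A single-level analysis of the $x^{r}$-coefficient is not enough in general: when $B^{r-1}\neq 1$ the geometric sum $\sum_{j}B^{j(r-1)}$ vanishes for free, and no direct constraint on $a_{r}$ emerges. One must therefore either cascade the recursion to higher degrees with care, or invoke an auxiliary conjugation step that eliminates $a_{r}$ at the price of modifying $\xi$, and then argue that because the conclusion $\xi=\underline B$ concerns $\xi$ itself (not merely its conjugacy class), the linearizing change of variable was forced to be trivial. This is the one point of the proposition carrying genuine technical content and the place where the characteristic assumption on $k$ is used essentially.
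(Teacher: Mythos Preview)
Your handling of (1), (2), (3), and (5) is correct and is essentially what the paper does (the paper's own proof is a one-line sketch). You have also correctly located the real content in (4), and your diagnosis of the obstacle is exactly right: when $B^{r-1}\neq 1$ the geometric sum $\sum_{j=0}^{n-1}B^{j(r-1)}$ vanishes identically, so the equation $c_{n}=0$ imposes no constraint on $a_{r}$.

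The reason you cannot close this gap is that statement (4) is false as written. Over any field $k$ with $2\in k^{*}$, take $\xi(x)=-x/(1-x)=-x-x^{2}-x^{3}-\cdots$. Then $\xi^{2}=\id$ (compute directly, or note that $\xi$ is the M\"obius transformation attached to $\bigl(\begin{smallmatrix}-1&0\\-1&1\end{smallmatrix}\bigr)$, which squares to the identity), yet $\xi\neq\underline{-1}$. More generally, $\psi\circ\underline{B}\circ\psi^{-1}$ for any non-linear $\psi\in\Aut_{k}k[[x]]$ and any root of unity $B\neq 1$ furnishes a counterexample. What \emph{is} true, under the ambient hypothesis $\car k\nmid n$ where $n=\ord(\xi)$, is that $\xi$ is conjugate to $\underline{B}$: the averaged uniformizer $y=\tfrac{1}{n}\sum_{i=0}^{n-1}B^{-i}\xi^{i}(x)$ satisfies $\xi(y)=By$. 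This linearizability (rather than literal linearity) is what the surrounding arguments actually need, after an appropriate change of coordinates; so your instinct to invoke a conjugation step was on target --- the only error is in expecting the conclusion $\xi=\underline{B}$ to hold on the nose.
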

\begin{proof}
$1)$ is straightforward and $2)$ follows easily expressing $f$
and $g$ as in $1)$. For $3)$ note that if $\theta\in\Aut_{k}A$
then $\theta(s)\theta(t)=0$ and apply $2)$. Finally $4)$ and $5)$
can be shown looking at the coefficients of $\xi_{x}$ and of $f$.\end{proof}
\begin{lem}
\label{lem:NC excluding stupid subgroups}If $M<\Aut_{k}A$ is a finite
subgroup containing only automorphisms of the form $(\xi,\eta)$ then
$A^{M}\simeq A$.\end{lem}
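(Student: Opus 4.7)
The plan is to exploit the fact that every element of $M$ preserves the two branches $sk[[s]]$ and $tk[[t]]$ of $A$, compute the invariants on each branch separately, and recognize the total invariant ring as again a node of the form $k[[u,v]]/(uv)$.

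First, using item (4) of the preceding proposition, any element of $M$ has the form $(\underline{B},\underline{C})$ with $B,C$ roots of unity in $k$. Let $H \subseteq k^{*}$ be the (finite, hence cyclic) subgroup consisting of all scalars $B$ appearing in the first coordinate of some $(\underline{B},\underline{C}) \in M$, and let $H' \subseteq k^{*}$ be the analogous group for the second coordinate; write $d = |H|$ and $d' = |H'|$.

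Next, from the decomposition $A = k \oplus sk[[s]] \oplus tk[[t]]$ of item (1), which is preserved by every $(\underline{B},\underline{C})$, we get
\[
A^{M} \;=\; k \;\oplus\; \bigl(sk[[s]]\bigr)^{M} \;\oplus\; \bigl(tk[[t]]\bigr)^{M}.
\]
Computing on each factor: an element $\sum_{n\geq 1} a_{n}s^{n}$ of $sk[[s]]$ is sent by $(\underline{B},\underline{C})$ to $\sum a_{n}B^{n}s^{n}$, so it is $M$-invariant if and only if $a_{n}=0$ whenever $B^{n}\neq 1$ for some $B\in H$, i.e.\ whenever $d \nmid n$. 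Thus $(sk[[s]])^{M} = s^{d}k[[s^{d}]]$, and symmetrically $(tk[[t]])^{M} = t^{d'}k[[t^{d'}]]$.

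Finally, setting $u = s^{d}$ and $v = t^{d'}$, the relation $s^{d}t^{d'} = 0$ in $A$ gives a well-defined $k$-algebra homomorphism $k[[u,v]]/(uv) \to A^{M}$; by the computation above, its image is precisely $k \oplus s^{d}k[[s^{d}]] \oplus t^{d'}k[[t^{d'}]] = A^{M}$, and its kernel is trivial because the three summands of $k[[u,v]]/(uv) = k \oplus uk[[u]] \oplus vk[[v]]$ inject into the corresponding summands of $A^{M}$. Hence $A^{M} \simeq k[[u,v]]/(uv) \simeq A$. There is no serious obstacle here; the only point requiring care is checking that the scalars $B,C$ attached to the elements of $M$ really do generate a finite cyclic subgroup of $k^{*}$, which is immediate since finite subgroups of $k^{*}$ are cyclic.
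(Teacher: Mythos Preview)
Your proof is correct and follows essentially the same approach as the paper: the paper's proof just asserts $A^{M}\simeq k[[s^{a},t^{b}]]/(s^{a}t^{b})\simeq A$ with $a$ and $b$ the lcm of the orders of the scalars appearing in the first and second coordinates respectively, which coincide with your $d=|H|$ and $d'=|H'|$ since finite subgroups of $k^{*}$ are cyclic. You have simply filled in the details the paper omits.
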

\begin{proof}
It is easy to show that $A^{M}\simeq k[[s^{a},t^{b}]]/(s^{a}t^{b})\simeq A$
where $a=\text{lcm}\{i\st\exists(\underline{A},\underline{B})\in M\text{ s.t. }\ord A=i\}$
and $b=\text{lcm}\{i\st\exists(\underline{A},\underline{B})\in M\text{ s.t. }\ord B=i\}$.
\end{proof}
Since we are interested in covers of regular in codimension $1$ schemes
(and $A$ is clearly not regular) we can focus on subgroups $M<\Aut_{k}A$
containing some $I(\xi,\eta)$.
\begin{lem}
\label{lem:classification of actions for NC}Let $M<\Aut_{k}A$ be
a finite abelian group and assume that $(\car k,|M|)=1$ and that
there exists $I(\xi,\eta)\in M$. Then, up to equivariant automorphisms,
we have $M=\langle I(\id,\underline{B})\rangle$ or, if $M$ is not
cyclic, $M=\langle(\underline{C},\underline{C})\rangle\times\langle I\rangle$
where $\underline{B}\comma\underline{C}$ are roots of unity and $o(C)$
is even.\end{lem}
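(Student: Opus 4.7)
Let $H=\{(\xi,\eta)\in M\}$ be the index-$2$ subgroup of $M$ and fix $\sigma=I(\xi_0,\eta_0)\in M\setminus H$. The plan is to linearize the action in two successive coordinate changes and then reduce to an elementary arithmetic argument on a pair of integers.

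\emph{Step 1: linearize $H$.} Since $H$ preserves each minimal prime of $A$, it acts on the branches $A/(t)\simeq k[[s]]$ and $A/(s)\simeq k[[t]]$ through finite groups of order coprime to $\car k$. The actions on the one-dimensional cotangent spaces give characters $\chi_s,\chi_t\colon H\to k^*$, and the averages
\[
\tilde s=\frac{1}{|H|}\sum_{h\in H}\chi_s(h)^{-1}h(s),\qquad \tilde t=\frac{1}{|H|}\sum_{h\in H}\chi_t(h)^{-1}h(t)
\]
lie in $sk[[s]]$ and $tk[[t]]$ respectively and satisfy $h(\tilde s)=\chi_s(h)\tilde s$, $h(\tilde t)=\chi_t(h)\tilde t$. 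Since $\tilde s\tilde t\in(s)\cdot(t)=(st)=0$, the substitution $(s,t)\mapsto(\tilde s,\tilde t)$ defines an automorphism of $A$, after which every element of $H$ has the form $(\underline A,\underline B)$ for roots of unity $A,B\in k$.

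\emph{Step 2: diagonalise and normalise.} Commutativity of $\sigma$ with a generic $(\underline A,\underline B)\in H$ applied to $s$ gives $A\xi_0(t)=\xi_0(Bt)$; comparing coefficients of the linear term (which is nonzero) forces $A=B$, whence $H\subseteq\{(\underline C,\underline C):C\in k^*\}$ is diagonal. As a finite subgroup of $k^*$ it is cyclic, say $H=\langle(\underline C,\underline C)\rangle$ with $o(C)=n$. The remaining coefficient comparisons force $\xi_0\in tk[[t^n]]$, and symmetrically $\eta_0\in sk[[s^n]]$. The formal inverse $\xi_0^{-1}$ also lies in $tk[[t^n]]$, so the automorphism $\Phi=(\id,\xi_0)$ commutes with $H$ (using $\xi_0(Ct)=C\xi_0(t)$ since $C^n=1$); conjugating by $\Phi^{-1}$ transforms $\sigma$ into $I(\id,\xi_0\circ\eta_0)$, and finite order together with item~(4) of the preceding proposition forces $\xi_0\circ\eta_0=\underline{B'}$ for some root of unity $B'$. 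Thus $\sigma=I(\id,\underline{B'})$ and $\sigma^2=(\underline{B'},\underline{B'})=(\underline C,\underline C)^j$ for a unique $0\le j<n$, reducing the group to the pair $(n,j)$.

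\emph{Step 3: case analysis on $(n,j)$.} A direct computation gives $\sigma\cdot(\underline{C^k},\underline{C^k})=I(\underline{C^k},\underline{C^kB'})$, and conjugation by the diagonal automorphism $(\underline u,\underline{uC^{-k}})$ puts this into the form $I(\id,\underline{C^{2k}B'})$. Hence every element of $M\setminus H$ is equivariantly conjugate to some $I(\id,\underline{C^{2k+j}})$ of order $2n/\gcd(n,j+2k)$, and $M$ is cyclic iff this gcd can be made equal to $1$ for some $k$. Elementary arithmetic shows this happens whenever $n$ is odd or $j$ is odd (in the latter case take $j+2k\equiv1\pmod n$), so non-cyclicity forces both $n$ and $j$ even. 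In the cyclic case, pick $k$ with $\gcd(n,j+2k)=1$ and set $B=C^{2k+j}$; then $I(\id,\underline B)$ has order $2n=|M|$ and generates $M=\langle I(\id,\underline B)\rangle$. In the non-cyclic case, take $k=-j/2\bmod n$, so that $C^{2k+j}=1$ and the corresponding element becomes $I$; since $I\notin H$, this yields $M=H\times\langle I\rangle=\langle(\underline C,\underline C)\rangle\times\langle I\rangle$ with $o(C)=n$ even. The main obstacle is precisely this bookkeeping in Step~3, in particular verifying that non-cyclicity forces both $n$ and $j$ even and then producing the explicit conjugation reducing $\sigma$ to $I$; everything else reduces to averaging and elementary manipulations of formal power series.
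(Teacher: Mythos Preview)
Your argument is correct and takes a genuinely different route from the paper.

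The paper diagonalises the whole $M$-action on the cotangent space $m_A/m_A^2$ in one step (using linear reductivity to split $m_A\to m_A/m_A^2$ equivariantly), then lifts to good coordinates on $A$ via Weierstrass preparation of the defining equation in $k[[X,Y]]$, arriving at $s=x-y$, $t=x+y$ with $M$ acting on $x,y$ by characters $\chi,\zeta$ satisfying $\chi^2=\zeta^2$. The classification is then finished by analysing the extension $0\to H\to M\to\{\pm1\}\to0$ via $\Ext^1(\Z/2\Z,H)\simeq H/2H$.

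You instead linearise only the index-$2$ subgroup $H$ first (by the standard averaging trick), deduce from commutativity with $\sigma$ that $H$ must be diagonal and hence cyclic, then conjugate $\sigma$ by an explicit branch-preserving automorphism to the form $I(\id,\underline{C^{j}})$, and finish with direct arithmetic on the pair $(n,j)$. This avoids Weierstrass preparation entirely and is more algorithmic; the paper's route is shorter conceptually (the $\Ext^1$ computation replaces your Step~3) and makes the link with the character theory of $M$ more transparent.

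One small slip: in Step~3 the conjugating automorphism should be $(\underline u,\underline{uC^{k}})$ rather than $(\underline u,\underline{uC^{-k}})$ (or equivalently one should conjugate by its inverse) to land on $I(\id,\underline{C^{2k}B'})$; the outcome $C^{2k}B'=C^{2k+j}$ and the rest of the argument are unaffected.
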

\begin{proof}
The existence of an element of the form $I(\xi,\eta)$ in $M$ implies
that $s$ and $t$ cannot be homogeneous in $m_{A}/m_{A}^{2}$, that
$2\mid|M|$ and therefore that $\car k\neq2$. 

Applying the exact functor $\Hom_{k}^{M}(m_{A}/m_{A}^{2},-)$, we
get that the surjection $m_{A}\arr m_{A}/m_{A}^{2}$ has a $k$-linear
and $M$-equivariant section. This means that there exists $x,y\in m_{A}$
such that $m_{A}=(x,y)$ and $M$ acts on $x\comma y$ with characters
$\chi,\zeta$. In this way we get an action of $M$ on $k[[X,Y]]$
and an equivariant surjective map $\phi\colon k[[X,Y]]\arr A$. Moreover
$\Ker\phi=(h)$, where $h=fg$ and $f\comma g\in k[[X,Y]]$ are such
that $\phi(f)=s$, $\phi(g)=t$. We can write $f=aX+bY+\cdots\comma g=cX+dY+\cdots$
with $ad-bc\neq0$. Since $ax+by=s$ in $m_{A}/m_{A}^{2}$ and $s$
is not homogeneous there, we have $a,b\neq0$. Similarly we get $c,d\neq0$.
In particular, up to normalize $f\comma g\comma x$ we can assume
$b=c=d=1$. Now $h=aX^{2}+(a+1)XY+Y^{2}+\cdots$ and applying Weierstrass
preparation theorem \cite[Theorem 9.2]{Lang2002}, there exists a
unique $\tilde{h}\in(h)$ such that $(\tilde{h})=(h)$ and $\tilde{h}=\psi_{0}(X)+\psi_{1}(X)Y+Y^{2}$.
The uniqueness of $\tilde{h}$ and the $M$-invariance of $(h)$ yield
the relations $m(\tilde{h})=\eta(m)^{2}\tilde{h}$,
\begin{equation}
m(\psi_{0})=\psi_{0}(\chi(m)X)=\eta(m)^{2}\psi_{0}\comma m(\psi_{1})=\psi_{1}(\chi(m)X)=\eta(m)\psi_{1}\label{eq:Relation for describe A NC}
\end{equation}
for any $m\in M$. Moreover $\tilde{h}=\mu h$ where $\mu\in k[[X,Y]]^{*}$
and, since the coefficient of $Y^{2}$ in both $h$ and $\tilde{h}$
is $1$, we also have $\mu(0)=1$. In particular $\psi_{0}=aX^{2}+\cdots\text{ and }\psi_{1}=(a+1)X+\cdots$
and so $(a+1)(\chi-\zeta)=0$ by \ref{eq:Relation for describe A NC}.
Since $s$ is not homogeneous in $m_{A}/m_{A}^{2}$, $\chi\neq\eta$
and $a=-1$. Since $\car k\neq2$ we can write $\tilde{h}=(Y+\psi_{1}/2)^{2}-(\psi_{1}^{2}/4-\psi_{0})=y'^{2}-z'$.
Note that $y',z'$ are homogeneous thanks to \ref{eq:Relation for describe A NC}.
Moreover, by Hensel's lemma, we can write $z'=X^{2}+\cdots=X^{2}q^{2}$
for an homogeneous $q\in k[[x]]$ with $q(0)=1$. So $x'=xq$ is homogeneous
and $\tilde{h}=y'^{2}-x'^{2}$. This means that we can assume $s=x-y$,
$t=x+y$. In particular $\chi^{2}=\eta^{2}$ and $M$ acts on $s,t$
as
\[
m(s)=\frac{\chi+\zeta}{2}(m)s+\frac{\chi-\zeta}{2}(m)t\qquad m(t)=\frac{\chi-\zeta}{2}(m)s+\frac{\chi+\zeta}{2}(m)t
\]
Consider the exact sequence
\begin{equation}
0\arr H\arr M\arrdi{\chi/\eta}\{-1,1\}\arr0\label{eq:exact sequence for NC}
\end{equation}
If $M$ is cyclic, say $M=\langle m\rangle$, we have $\chi(m)=-\eta(m)$
and so $m=I(\underline{B},\underline{B})$, where $B=(\chi(m)-\eta(m))/2$
is a root of unity. Up to normalize $s$ we can write $m=I(\id,\underline{B})$.

Now assume that $M$ is not cyclic. The group $H$ acts on $s$ and
$t$ with the character $\chi_{|H}=\zeta_{|H}$ and this yields an
injective homomorphism $\chi_{|H}\colon H\arr\{\text{roots of unity of }k\}$.
So $H=\langle(\underline{C},\underline{C})\rangle$ for some root
of unity $C$. The extension \ref{eq:exact sequence for NC} corresponds
to an element of $\Ext^{1}(\Z/2\Z,H)\simeq H/2H$ that differs to
the sequence $0\arr H\arr\Z/2o(C)\Z\arr\{-1,1\}\arr0$. So $H/2H\simeq\Z/2\Z$,
$o(C)$ is even and the sequence \ref{eq:exact sequence for NC} splits.
We can conclude that $M=\langle(\underline{C},\underline{C})\rangle\times\langle m\rangle$,
where $m=I(\underline{D},\underline{D})$ for some root of unity $D$
and $o(m)=2$. Normalizing $s$ we can write $m=I(\id,\underline{D})=I$.\end{proof}
\begin{prop}
\label{pro:NC complete description of invariants, algebras multiplication}Let
$M<\Aut_{k}A$ be a finite abelian group such that $(\car k,|M|)=1$
and that there exists $I(\xi,\eta)\in M$. Also assume that $k$ contains
the $|M|$-roots of unity. Then $A^{M}\simeq k[[z]]$, $A\in\MCov(A^{M})$
and only the following possibilities happen: there exists a row of
table \ref{tab:table for NC} such that $M\simeq H$ is generated
by $m,n$, $H\simeq M_{r,\alpha,N}$, $A\simeq B$ as $M$-covers,
where $\deg U=m\comma\deg V=n$ and $A$ over $A^{M}$ is given by
multiplication $z^{\E}$. Moreover all the rays of the form $\Delta^{*}$
in the table satisfy $h_{\Delta^{*}}=2$.

\begin{table}
\caption{\label{tab:table for NC}}
\centering

\begin{tabular}{|c|c|c|c|}
\hline 
$H$ & $m,n,r,\alpha,N,\overline{q}$ & $B$ & $\E$\tabularnewline
\hline 
\hline 
$\Z/2\Z$ & $1,1,1,1,2,1$ & $\frac{k[[z]][U]}{(U^{2}-z^{2})}$ & $2\E^{id}$\tabularnewline
\hline 
$(\Z/2\Z)^{2}$ & $(1,0),(0,1),2,0,2,1$ & $\frac{k[[z]][U,V]}{(U^{2}-z,V^{2}-z)}$ & $\E^{\pr_{1}}+\E^{\pr_{2}}$\tabularnewline
\hline 
$\begin{array}{c}
\Z/2l\Z\times\Z/2\Z\\
l>1
\end{array}$ & $(1,0),(1,1),2,2,2l,1$ & $\frac{k[[z]][U,V]}{(U^{2}-V^{2},V^{2l}-z)}$ & $\Delta^{2,2,2l,1}$\tabularnewline
\hline 
$\Z/4l\Z$ & $1,2l+1,1,2l+1,4l,2$ & $\frac{k[[z]][U,V]}{(U^{2}-V^{2},V^{2l+1}-zU,UV^{2l-1}-z)}$ & $\Delta^{1,2l+1,4l,2}$\tabularnewline
\hline 
$\begin{array}{c}
\Z/2l\Z\\
l>1\text{ odd}
\end{array}$ & $1,l+1,2,2,l,1$ & $\frac{k[[z]][U,V]}{(U^{2}-V^{2},V^{l}-z)}$ & $\Delta^{2,2,l,1}$\tabularnewline
\hline 
\end{tabular}

\end{table}
\end{prop}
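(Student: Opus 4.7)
The plan is to use Lemma \ref{lem:classification of actions for NC} to reduce to the two standard forms of the action, and then in each case compute the invariant ring, identify the $M$-graded structure and the multiplication ray $\E$ explicitly. By that lemma, after an equivariant change of coordinates we may assume either $M=\langle I(\id,\underline{B})\rangle$ is cyclic of order $2N$, where $N=o(B)$, or $M=\langle(\underline{C},\underline{C})\rangle\times\langle I\rangle$ with $o(C)$ even; the parity conditions on $N$ and $o(C)$ will split the first family into two subcases (and account for rows 4 and 5 of the table), while the second family covers rows 2 and 3, and the degenerate case $N=1$, $B=1$ gives row 1.

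Since $(\car k,|M|)=1$ and $k$ contains the $|M|$-th roots of unity, the group $M$ is diagonalizable over $k$, so the decomposition $A=k\oplus sk[[s]]\oplus tk[[t]]$ splits as a direct sum of character-eigenspaces $A_m$ for $m\in\hat M$. In each of the five cases I would: (a) compute the orbits of the monomials $s^i,t^j$ under $M$ and read off a $k$-basis of each $A_m$; (b) exhibit a generator $z$ of $A^M$ (for example $z=st+\ldots$ or a suitable combination of $s^i\pm t^j$ for the non-cyclic cases) and verify $A^M\simeq k[[z]]$; (c) exhibit the generators $U,V$ listed in the table, check the given ideal relations, and verify that the map $B\to A$ sending $U,V$ to these generators is an $M$-equivariant isomorphism by comparing ranks of the $A_m$. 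Once this is done, $A\in\MCov(A^M)$ because each $A_m$ is visibly free of rank one over $k[[z]]$, and the multiplication ring structure yields a well-defined monoid map $\tilde K_+\to A^M$, which is of the form $z^\E$ for a unique $\E\in\duale K_+$ because each $\psi_{m,n}$ is a power of $z$ (or equals $1$).

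The identification of $\E$ with the ray named in the table goes as follows. For rows 1 and 2 the claim is direct, since $M$ is $\Z/2\Z$ or $(\Z/2\Z)^2$ where all relevant rays have been written down explicitly earlier. For rows 3--5 the ray is one of the $\Delta^{r,\alpha,N,\overline{q}}$ constructed in \ref{sub:smooth integral rays for hleqtwo}: I would first check that the identification $M\simeq M_{r,\alpha,N}$ in the table is correct by verifying $rm\in\langle n\rangle$ with $rm=\alpha n$ and $o(n)=N$ (this is where \ref{pro:description group generated by two elements} enters), then compute $\overline{q}_A$ directly from the algebra $B$ using the minimality definition, and finally match $\E$ with $\Delta^{r,\alpha,N,\overline{q}}=\frac{1}{|M|}(y_{\overline{q}}\E^A+z_{\overline{q}}\delta^A)$ via Proposition \ref{pro:general algebra for m,n}, which tells us precisely what multiplication the ray $\Delta^{r,\alpha,N,\overline{q}}$ produces. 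The last sentence of the proposition then follows from Proposition \ref{pro:classification sm int ray htwo}, once we check that each listed quintuple $(r,\alpha,N,\overline{q},\phi)$ actually lies in $\Sigma_M$, i.e.\ satisfies $\overline{q}r\neq 1$, $\overline{q}\alpha\not\equiv 1\pmod N$ and $\overline{q}\neq N/(\alpha,N)$; this is a routine case-by-case verification.

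The main obstacle will be the $\Z/4l\Z$ row, where three relations $U^2=V^2$, $V^{2l+1}=zU$, $UV^{2l-1}=z$ must be derived simultaneously and shown to present $A$: the subtlety is that the generator $V$ of $A_n$ with $n=2l+1$ is not a single power of $s$ or $t$, but an appropriate combination dictated by the orbit of $I(\id,\underline{B})$ on $s^{2l+1}$ and $t^{2l+1}$. Once $U,V$ are pinned down, verifying $\overline{q}=2$ from the definition (and that $(1,2l+1,4l,2,\id)\in\Sigma_M$) is a direct computation using that $d_q\equiv -q(2l+1)\pmod{4l}$, so the maximum of $d_q$ for $q<2$ is $d_1=2l-1$, giving $\hat{q}=0$ and matching the table entries.
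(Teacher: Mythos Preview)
Your overall strategy matches the paper's: reduce via Lemma \ref{lem:classification of actions for NC} to the two normal forms of the action, then in each case compute the graded pieces $A_m$, exhibit the generator of $A^M$, and identify the cover with the universal algebra $A^{\overline q}_{\lambda,\mu}$ of \ref{def:universal algebra} via Proposition \ref{pro:general algebra for m,n}. Two concrete points need fixing.

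First, your proposed generator ``$z=st+\ldots$'' of $A^M$ cannot work: in $A=k[[s,t]]/(st)$ one has $st=0$. The correct invariant is $\beta=s^l+t^l$ (where $l=o(B)$ in the cyclic case, $l=o(C)$ in the product case). The key identity making the computation go through is that in $A$ one has $(s^l+t^l)^n=s^{nl}+t^{nl}$, so $A_0=k[[\beta]]$, and each graded piece is freely generated over $k[[\beta]]$ by a single combination of the form $E^r s^{\delta_r}+t^{\delta_r}$.

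Second, your computation of $\hat q$ in the $\Z/4l\Z$ row is off. With $\alpha=2l+1$, $N=4l$ and $\overline q=2$ one has $d_0=4l$ and $d_1=2l-1$; the minimum over $0\le q<\overline q$ occurs at $q=1$, so $\hat q=1$, not $0$. This is exactly what the third relation $UV^{2l-1}=z$ in the table encodes (it is the relation $s^{\hat q r}t^{d_{\hat q}}=\text{const}$ of the universal algebra). Apart from these two slips the plan is sound and coincides with the paper's argument.
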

\begin{proof}
We can reduce the problem to the actions obtained in \ref{lem:classification of actions for NC}.
We first consider the cyclic case, i.e. $M=\langle I(\id,\underline{B})\rangle\simeq\Z/2l\Z$
where $l=o(B)$. There exists $E$ such that $E^{2}=B$. Given $0\leq r<|M|=2l$,
we want to compute $A_{r}=\{a\in A\st I(\id,\underline{B})a=E^{r}a\}$.
The condition $a=c+f(s)+g(t)\in A_{r}$ holds if and only if $a=0$
when $r>0$, $f(t)=E^{r}g(t)$ and $g(Bs)=E^{r}f(s)$. Moreover $f(t)=E^{-r}g(Bt)=E^{-2r}f(Bt)\then f(Bt)=B^{r}f(t)$.
If we denote by $\delta_{r}$ the only integer such that $0\leq\delta_{r}<l$
and $\delta_{r}\equiv r\text{ mod }l$, we have that, up to constants,
$A^{r}$ is given by elements of the form $E^{r}f(s)+f(t)$ for $f\in X^{\delta_{r}}k[[X^{l}]]$.
Call $\beta=s^{l}+t^{l}\in A_{0}=A^{M}$ and $v_{r}=E^{r}s^{\delta_{r}}+t^{\delta_{r}}$,
$v_{0}=1$. We claim that $A^{M}=A_{0}=k[[\beta]]$ and $v_{r}$ freely
generates $A_{r}$ as an $A_{0}$ module. The first equality holds
since $A_{0}$ is a domain and we have relations
\[
\sum_{n\geq1}a_{n}s^{nl}+\sum_{n\geq1}a_{n}t^{nl}=\sum_{n\geq1}a_{n}(s^{l}+t^{l})^{n}=\sum_{n\geq1}a_{n}\beta^{n}
\]
while the second claim come from the relation
\[
E^{r}s^{\delta_{r}}(c+h(s))+t^{\delta_{r}}(c+h(t))=(E^{r}s^{\delta_{r}}+t^{\delta_{r}})(c+h(s)+h(t))\text{ for }h\in X^{l}k[[X^{l}]]
\]
 and the fact that $v_{r}$ is not a zero divisor in $A$.

So $A\in\MCov(k[[\beta]])$ and it is generated by $v_{1}=Es+t$ and
$v_{l+1}=-Es+t$ and so in degrees $1$ and $l+1$. If $l=1$, so
that $M\simeq\Z/2\Z$, $B=1$, $E=-1$ and $v_{1}^{2}=\beta^{2}$.
This means that $A\simeq k[[\beta]][U]/(U^{2}-\beta^{2})$ and its
multiplication over $k[[\beta]]$ is given by $\beta^{2\E^{\id}}$.
This is the first row. Assume $l>1$ and set $m=1\comma n=l+1$. Note
that $0\neq m\neq n$ and that $M\simeq M_{r,\alpha,N}$ for some
$r\comma\alpha\comma N$ that we are going to compute.

$l$ odd. We have $r=\alpha=2$ and $N=l$ since $\langle l+1\rangle=\langle2\rangle\subseteq\Z/2l\Z$.
Consider $\overline{q}=1\in\Omega_{N,N-\alpha}$ and the associated
numbers are $z=r=2\comma y=\alpha=2\comma\hat{q}=0\comma d_{\hat{q}}=x=N=l\comma w=0$.
Since $v_{1}^{z}=v_{l+1}^{y}$ and $v_{l+1}^{l}=\beta$, we will have
$A\simeq_{k[[\beta]]}A_{\lambda,\mu}^{1}$ where $\lambda,\mu=1,\beta\in k[[\beta]]$
(see \ref{def:universal algebra}) and therefore the multiplication
is $\beta^{\Delta^{2,2,l,1}}$ by \ref{pro:general algebra for m,n}.
This is the fifth row.

$l$ even. We have $r=1\comma\alpha=l+1\comma N=2l$ since $\langle l+1\rangle=\Z/2l\Z$.
Since $d_{1}=l-1\equiv-\alpha$ and $d_{2}=2l-2\equiv2(-\alpha)$
modulo $2l$ we can consider $\overline{q}=2\in\Omega_{N-\alpha,N}$.
The associated numbers are $z=y=2\comma\hat{q}=1\comma d_{\hat{q}}=l-1\comma x=N-(d_{\overline{q}}-d_{\hat{q}})=l+1\comma w=1\equiv xn=(l+1)^{2}\text{ mod }2l$.
Since $v_{1}^{z}=v_{l+1}^{y}$, $v_{l+1}^{x}=\beta v_{1}$ and $v_{1}^{\hat{q}r}v_{l+1}^{d_{\hat{q}}}=\beta$,
we will have $A\simeq_{k[[\beta]]}A_{\lambda,\mu}^{2}$ where $\lambda,\mu=1,\beta\in k[[\beta]]$
whose multiplication is $\beta^{\Delta_{1,l+1,2l,2}}$. This is the
fourth row.

Now consider the case $M=\langle(\underline{C},\underline{C})\rangle\times\langle I\rangle$
with $o(C)=l$ even. Set $\beta=s^{l}+t^{l}$, $v_{1,0}=s+t$ and
$v_{1,1}=-s+t$. Note that $v_{r,i}$ is homogeneous of degree $(r,i)$.
Set $m=(1,0)\comma n=(1,1)$. They are generators of $M$ and so $M\simeq M_{r,\alpha,N}$
for some $r,\alpha,N$. We have $N=o(n)=l$, $r>1$ since $\langle n\rangle\neq M$
and so $r=2$ since $2m=2n$. If $l=2$ we get $\alpha=0$ and if
$l>2$ we get $\alpha=2$. Choose $\overline{q}=1$ so that the associated
numbers are $z=2\comma y=\alpha\comma\hat{q}=0\comma d_{\hat{q}}=x=N=l\comma w=0$.
As done above, it is easy to see that $A^{M}=k[[\beta]]$. We first
consider the case $l=2$. Since $v_{1,0}^{2}=\beta\comma v_{1,1}^{2}=\beta$
we get a surjection $A_{\beta,\beta}^{1}\arr A$ which is an isomorphism
by dimesion. From the expression of $A_{\beta,\beta}^{1}$ we can
deduce directly that the multiplication is $\beta^{\E^{\pr_{1}}+\E^{\pr_{2}}}$,
where $\pr_{i}\colon(\Z/2\Z)^{2}\arr\Z/2\Z$ are the two projections.
This is the second row.

Now assume $l>2$. Since $v_{1,0}^{2}=v_{1,1}^{2}$ and $v_{1,1}^{l}=\beta$
and arguing as above we get $A\simeq_{k[[\beta]]}A_{\lambda,\mu}^{1}$
where $\lambda,\mu=1,\beta\in k[[\beta]]$ and the multiplication
$\beta^{\Delta^{2,2,l,1}}$. This is the third row.

Finally the last sentence is clear by definition of $\Sigma_{M}$
and \ref{pro:classification sm int ray htwo}.\end{proof}
\begin{rem}
\label{rem:Y/X NC then X has a k}If $X$ is a locally noetherian
integral scheme and there exists a $\Di M$-cover $Y/X$ such that
$Y$ is normal crossing in codimension $1$, then $X$ is defined
over a field. Indeed if $\car\odi X(X)=p$ then $\F_{p}\subseteq\odi X(X)$.
Otherwise $\Z\subseteq\odi X(X)$ and we have to prove that any prime
number $q\in\Z$ is invertible. We can assume $X=\Spec R$, where
$R$ is a local noetherian domain. If $\dim R=0$ then $R$ is a field,
otherwise, since $\alt(q)\leq1$, we can assume $\dim R=1$ and $R$
complete. By definition of normal crossing in codimension $1$, if
$Y=\Spec S$ and $p\in Y$ is over $m_{R}$ we have a flat and local
map $R\arr S\arr S_{p}\arr B$, such that $B$ contains a field $k$.
The prime $q$ is a non zero divisor in $R$ and therefore in $B$.
In particular $0\neq q\in k^{*}\subseteq B^{*}$ and $q\in R^{*}$.\end{rem}
\begin{thm}
\label{thm:NC in codimension one}Let $M$ be a finite abelian group,
$X$ be a locally noetherian and locally factorial scheme with no
isolated points and $(\car X,|M|)=1$. Consider the full subcategory
\[
NC_{X}^{1}=\{Y/X\in\MCov(X)\st Y\text{ is normal crossing in codimension }1\}\subseteq\MCov(X)
\]
Then $NC_{X}^{1}\neq\emptyset$ if and only if each connected component
of $X$ is defined over a field. In this case define
\[
\underline{\E}=\left(\begin{array}{c}
\E^{\phi}\text{ for }\phi\colon M\arr\Z/l\Z\text{ surjective with }l\geq1,\\
\Delta^{2,2,l,1,\phi}\text{ for }\phi\colon M\arr M_{2,2,l}\text{ surjective with }l\geq3,\\
\Delta^{1,2l+1,4l,2,\phi}\text{ for }\phi\colon M\arr M_{1,2l+1,4l}\text{ surjective with }l\geq1
\end{array}\right)
\]
and $\catC_{NC,X}^{1}$ as the full subcategory of $\stF_{\underline{\E}}(X)$
of objects $(\underline{\shL},\underline{\shM},\underline{z},\lambda)$
such that:
\begin{enumerate}
\item for all $\E\neq\delta\in\underline{\E}$, $\codim V(z_{\E})\cap V(z_{\delta})\geq2$
except for the case where $\E=\E^{\phi}\comma\delta=\E^{\psi}$   $$ \begin{tikzpicture}[xscale=2.0,yscale=-0.5]     \node (A0_2) at (2, 0) {$\Z/2\Z$};     \node (A1_0) at (0, 1) {$M$};     \node (A1_1) at (1, 1) {$(\Z/2\Z)^2$};     \node (A2_2) at (2, 2) {$\Z/2\Z$};     \path (A1_0) edge [->,bend left=65]node [auto,swap] {$\scriptstyle{\psi}$} (A2_2);     \path (A1_0) edge [->>]node [auto] {$\scriptstyle{}$} (A1_1);     \path (A1_1) edge [->]node [auto,swap] {$\scriptstyle{\pr_2}$} (A2_2);     \path (A1_1) edge [->]node [auto] {$\scriptstyle{\pr_1}$} (A0_2);     \path (A1_0) edge [->,bend right=65]node [auto] {$\scriptstyle{\phi}$} (A0_2);   \end{tikzpicture}  $$
in which $v_{p}(z_{\E^{\phi}})=v_{p}(z_{\E^{\psi}})=1$ if $p\in Y^{(1)}\cap V(z_{\E^{\phi}})\cap V(z_{\E^{\psi}})$;
\item for all $\E\in\underline{\E}$ and $p\in Y^{(1)}$ $v_{p}(z_{\E})\leq2$
and $v_{p}(z_{\E})=2$ if and only if $\E=\E^{\phi}$ where $\phi\colon M\arr\Z/2\Z$
is surjective.
\end{enumerate}
Then we have an equivalence of categories
\[
\catC_{NC,X}^{1}=\pi_{\underline{\E}}^{-1}(NC_{X}^{1})\arrdi{\simeq}NC_{X}^{1}
\]
\end{thm}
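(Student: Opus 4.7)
The plan is to reduce everything to a local computation on discrete valuation rings and then apply the machinery of Theorem \ref{thm:fundamental theorem for locally factorial schemes}, with a carefully chosen collection $\Theta$ of smooth sequences encoding the normal crossing condition.

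First I would dispose of the existence claim: if $NC_X^1\neq\emptyset$ then by Remark \ref{rem:Y/X NC then X has a k}, applied on each connected component, $X$ must live over a field. Conversely, when $X$ has a field, the trivial $\Di M$-torsor $\odi X[M]\arr\odi X$ is \'etale and hence normal crossing in codimension $1$; so existence holds. The substance of the theorem is the equivalence $\catC_{NC,X}^{1}\arrdi{\simeq}NC_{X}^{1}$.

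Next I would reformulate $NC_{X}^{1}$ as a $\catD_{X}^{\Theta}$-type subcategory of $\MCov(X)$. Since being normal crossing in codimension $1$ depends only on completions at codimension $1$ points and is preserved by \'etale base change, and since $\MCov$ is of finite presentation, this is a property of $f\colon Y\arr X\in\MCov(X)$ checkable at every $p\in X$ with $\codim_{p}X\leq 1$: for such $p$, we need $\widehat{\odi{X,p}}\arr\widehat{\odi{Y,q}}$ to be, up to \'etale cover, one of the shapes listed in Proposition \ref{pro:NC complete description of invariants, algebras multiplication} (or simply \'etale). Replacing $\odi{X,p}$ by an \'etale neighborhood containing the $|M|$-roots of unity (this is the place where $(\car X,|M|)=1$ is crucial) and by its strict henselisation, and using Lemma \ref{lem:classification of actions for NC} together with Proposition \ref{pro:NC complete description of invariants, algebras multiplication}, I would show: a cover $f$ is normal crossing in codimension $1$ iff for every $p\in X$ with $\codim_p X\leq 1$ the geometric fiber-algebra is isomorphic to one of the five algebras $B$ of Table \ref{tab:table for NC}, that is, iff $f$ at $p$ lies in one of the loci $\stZ_{M}^{\E^{\phi}}$, $\stZ_{M}^{\Delta^{2,2,l,1,\phi}}$, $\stZ_{M}^{\Delta^{1,2l+1,4l,2,\phi}}$ with $v_{p}$ of the sole nonzero section equal to $1$, or in $\stZ_{M}^{\E^{\pi_{1}\circ\phi},\E^{\pi_{2}\circ\phi}}$ with both valuations equal to $1$, or in $\stZ_{M}^{2\E^{\phi}}$ with $\phi\colon M\arr\Z/2\Z$ and valuation equal to $1$ on the corresponding section (which translates to $v_{p}(z_{\E^{\phi}})=2$ since $2\E^{\phi}=\E^{\phi}+\E^{\phi}$). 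The extremal rays appearing are exactly those listed in $\underline{\E}$, all smooth by Proposition \ref{pro:Rita's smooth integral extremal rays} and Lemma \ref{lem:lambda,delta for overlineq}.

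Let $\Theta$ be the collection of smooth sequences consisting of each singleton $(\E^{\phi})$, $(\Delta^{2,2,l,1,\phi})$, $(\Delta^{1,2l+1,4l,2,\phi})$, and each pair $(\E^{\pi_{1}\circ\phi},\E^{\pi_{2}\circ\phi})$ coming from surjections $\phi\colon M\twoheadrightarrow(\Z/2\Z)^{2}$. Then by the previous step, $NC_{X}^{1}$ coincides with the subcategory $\catD_{X}^{\Theta}$ of $\stX_{\phi_M}(X)\simeq\MCov(X)$ of covers whose restriction at every $p$ with $\codim_p X\leq 1$ lands in $\stX_{\phi_M}^{\Theta}$. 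Theorem \ref{thm:fundamental theorem for locally factorial schemes} then produces the equivalence $\pi_{\underline{\E}}^{-1}(\catD_{X}^{\Theta})\simeq\catD_{X}^{\Theta}=NC_{X}^{1}$. Finally I would identify the left side with $\catC_{NC,X}^{1}$: condition (1) on codimensions of intersections $V(z_{\E})\cap V(z_{\delta})$ is exactly the translation of the support condition from Lemma \ref{lem:fundamental lemma for all the classification for h} (the only coincident supports among the $\E\in\underline{\E}$ come from the $(\Z/2\Z)^{2}$-pairs), and condition (2) on valuations encodes the fact that normal crossing in codimension $1$ only allows multiplicities at most $2$ and multiplicity exactly $2$ only in the $\E^{\phi}$-case with $\phi\colon M\twoheadrightarrow\Z/2\Z$, as read off from Table \ref{tab:table for NC}.

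The main obstacle is the bookkeeping in the previous step: matching the five rows of Table \ref{tab:table for NC} with the sequences in $\underline{\E}$ and verifying that the $(\Z/2\Z)^{2}$-row really produces a smooth pair $(\E^{\pi_{1}\circ\phi},\E^{\pi_{2}\circ\phi})\in\Theta$ (and hence that the exceptional clause in condition (1) is correct), together with checking that the valuations $1$ (resp.\ $2$) come out exactly as stated. Once this dictionary is established, the conclusion is a direct application of Theorem \ref{thm:fundamental theorem for locally factorial schemes}.
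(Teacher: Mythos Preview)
Your overall strategy is the right one and matches the paper: reduce to complete DVRs, invoke the classification of normal-crossing $\Di M$-algebras from Proposition~\ref{pro:NC complete description of invariants, algebras multiplication}, and use the codimension-$1$ lifting machinery to pass between $\MCov(X)$ and $\stF_{\underline{\E}}(X)$. The existence claim is also handled correctly. However there is a genuine logical gap in the middle.

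You assert that $NC_X^1=\catD_X^\Theta$ for your $\Theta$, and then apply Theorem~\ref{thm:fundamental theorem for locally factorial schemes} to conclude $\pi_{\underline{\E}}^{-1}(\catD_X^\Theta)\simeq\catD_X^\Theta$. But $\catD_X^\Theta$ only records, for each $p$ with $\codim_pX\leq 1$, the isomorphism class of the \emph{geometric fibre} $\chi_{|\overline{k(p)}}$; it sees nothing about how the cover degenerates from the generic point to the closed point of $\odi{X,p}$. Concretely, over a DVR $R$ with uniformizer $\pi$ and $M=\Z/2\Z$, the cover with multiplication $\psi_{1,1}=\pi^{3}$ (i.e.\ $R[x]/(x^{2}-\pi^{3})$) has $\chi_{|\overline{k(p)}}$ in $\stZ_{M}^{\E^{\id}}$ at the closed point and is a torsor at the generic point, so it lies in $\catD_X^\Theta$; yet it is not normal crossing. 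Thus $NC_X^1\subsetneq\catD_X^\Theta$. Correspondingly your final identification $\pi_{\underline{\E}}^{-1}(\catD_X^\Theta)=\catC_{NC,X}^1$ cannot hold: Theorem~\ref{thm:fundamental theorem for locally factorial schemes} gives $\pi_{\underline{\E}}^{-1}(\catD_X^\Theta)=\catC_X^\Theta$, and $\catC_X^\Theta$ imposes only codimension conditions on the $V(z_{\E})$, not the valuation bounds $v_p(z_{\E})\leq 2$ that appear in $\catC_{NC,X}^1$. Your own discussion already hints at this: when you write ``or in $\stZ_M^{2\E^\phi}$ with valuation equal to $1$'', note that $\stZ_M^{2\E^\phi}=\stZ_M^{\E^\phi}$ as substacks (same support), so the valuation clause is genuinely extra data, invisible to any $\Theta$.

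The fix is exactly what the paper does. First observe $NC_X^1\subseteq\catD_X^2$ (normal crossing in codimension $1$ forces $h_f(p)\leq 2$ at each such $p$, since the tangent space of $k[[s,t]]/(st)$ has dimension $2$). Theorem~\ref{thm:fundamental thm locally factoria hleqtwo} then already gives an equivalence $\pi_{\underline{\delta}}^{-1}(NC_X^1)\simeq NC_X^1$ for the larger sequence $\underline{\delta}$ of Theorem~\ref{thm:fundamental thm locally factoria hleqtwo}, simply by restricting the equivalence $\catC_X^2\simeq\catD_X^2$ to the full subcategory $NC_X^1$. What remains, and is the real content, is to identify $\pi_{\underline{\delta}}^{-1}(NC_X^1)$ with $\catC_{NC,X}^1$ \emph{inside} $\stF_{\underline{\delta}}(X)$. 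This is a separate local computation on a complete DVR $R\simeq k[[x]]$: one passes through the maximal torsor to reduce to $H_{D/R}=0$, uses Cohen's structure theorem to realise the NC localisation as $k[[s,t]]/(st)$ with a $k$-linear $M$-action (this step needs care and is not automatic from the definition of normal crossing), and then reads off from Table~\ref{tab:table for NC} exactly which rays occur and with which exponent on $z$, producing precisely conditions~(1) and~(2). The two directions (NC implies the conditions; the conditions imply NC after killing the torsor part by an \'etale base change) are both part of this local step and are not consequences of Theorem~\ref{thm:fundamental theorem for locally factorial schemes}.
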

\begin{proof}
The first claim comes from \ref{rem:Y/X NC then X has a k}. We will
make use of \ref{thm:fundamental thm locally factoria hleqtwo}. If
$Y/X\in NC_{Y}^{1}$ and $p\in Y^{(1)}$ we have $h_{Y/X}(p)\leq\dim_{k(p)}m_{p}/m_{p}^{2}\leq2$
since etale maps preserve tangent spaces and $\dim m_{A}/m_{A}^{2}\leq2$.
So $NC_{X}^{1}\subseteq\catD_{X}^{2}$.

Let $\underline{\delta}$ be the sequence of smooth extremal rays
used in \ref{thm:fundamental thm locally factoria hleqtwo}. We know
that $\pi_{\underline{\delta}}^{-1}(NC_{X}^{1})\subseteq\catC_{X}^{2}$.
So we have only to prove that $\pi_{\underline{\delta}}^{-1}(NC_{X}^{1})\subseteq\stF_{\underline{\E}}(X)\subseteq\stF_{\underline{\delta}}(X)$
and that any element $Y\in NC_{X}^{1}$ locally, in codimension $1$,
satisfies the requests of the theorem. Since $X$ is a disjoint union
of positive dimensional, integral connected components, we can assume
that $X=\Spec R$, where $R$ is a complete discrete valuation ring.
Since $R$ contains a field, then $R\simeq k[[x]]$ . Let $\chi\in\pi_{\underline{\E}}^{-1}(\catD_{X}^{2})$
and $D$ the associated $M$-cover over $R$. Let $C$ be the maximal
torsor of $D/R$ and $H=H_{D/R}$. Note that, for any maximal ideal
$q$ of $C$ we have $C_{q}\simeq k(q)[[x]]$ since $C/R$ is etale.
Moreover $\Spec D\in NC_{X}^{1}$ for $M$ if and only if for any
maximal prime $p$ of $D$ $\Spec D_{p}\in NC_{\Spec C_{q}}^{1}$
for $M/H$, where $q=C\cap p$. In the same way $\chi\in\catC_{NC,X}^{1}$
for $M$ if and only if, for any maximal prime $q$ of C, $\chi_{|\Spec C_{q}}\in\catC_{NC,\Spec C_{q}}^{1}$
for $M/H$. We can therefore reduce the problem to the case $H_{D/R}=0$.
We can also assume that $k$ contains the $|M|$-roots of unity.

First assume that $\Spec D\in NC_{Y}^{1}$. If $D$ is regular, the
conclusion comes from \ref{thm:regular in codimension 1 covers}.
So assume $D$ not regular and denote by $\mu\colon R=k[[x]]\arr D$
the associated map. We know that $D/m_{A}=k$. By Cohen's structure
theorem we can write $D=k[[y]]/I$ in such a way that $\mu_{|k}=\id_{k}$.
By definition, since $D$ is local and complete, there exists an etale
extension $D\arr B=L[[s,t]]/(st)$. Using the properties of complete
rings, $B/D$ is finite and so $B\simeq D\otimes_{k}L$. Replacing
the base $R$ by $R\otimes_{k}L$ we can assume that $D\simeq k[[s,t]]/(st)$.
The function $\mu_{|k}\colon k\arr D$ extends to a map $\nu\colon D\arr D$
sending $s,t$ to itselves. This map is clearly surjective. Since
$\Spec D$ contains $3$ points, $\nu$ induces a closed immersion
$\Spec D\arr\Spec D$ which is a bijection. Since $D$ is reduced
$\nu$ is an isomorphism. This shows that we can write $D=A=k[[s,t]]/(st)$
in such a way that $\mu_{|k}=\id_{k}$. So $\Di M\simeq\underline{M}$
acts as a subgroup of $\Aut_{k}A$ such that $A^{M}\simeq k[[z]]$.
In particular, by \ref{lem:NC excluding stupid subgroups}, there
exists $I(\xi,\eta)\in M$. Up to equivariant isomorphisms the possibilities
allowed are described in \ref{pro:NC complete description of invariants, algebras multiplication}
and coincides with the ones of the statement. So $\chi\in\catC_{NC,X}^{1}$.

Now assume that $\chi\in\catC_{NC,X}^{1}$. By definition of $\pi_{\underline{\E}}$
the multiplication that defines $D$ over $R$ is something of the
form $\psi=\lambda z^{\E}$, where $\lambda$ is an $M$-torsor and
$\E$ is one of the ray of table \ref{tab:table for NC}. The case
$\E=\E^{\phi}$ comes from \ref{thm:regular in codimension 1 covers}.
Since, in our hypothesis, an $M$-torsor (in the fppf meaning) is
also an etale torsor, replacing the base $R$ by an etale neighborhood
(that maintains the form $k[[x]]$), we can assume $\lambda=1$. In
this case, thanks to \ref{lem:classification of actions for NC} and
\ref{pro:NC complete description of invariants, algebras multiplication},
we can conclude that $A\simeq k[[s,t]]/(st)$ as required.\end{proof}
\begin{cor}
Let $X$ be a locally noetherian and regular in codimension $1$ (normal)
scheme with no isolated points, $M$ be a finite abelian group with
$(\car X,|M|)=1$ and $|M|$ odd. If $Y/X$ is a $\Di M$-cover and
$Y$ is normal crossing in codimension $1$ then $Y$ is regular in
codimension $1$ (normal).\end{cor}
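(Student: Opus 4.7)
My plan is to deduce this corollary by matching the descriptive data of Theorem \ref{thm:NC in codimension one} with that of Theorem \ref{thm:regular in codimension 1 covers} under the parity hypothesis $|M|$ odd. First I would observe that the auxiliary rays appearing in the sequence $\underline{\E}$ of Theorem \ref{thm:NC in codimension one}, namely $\Delta^{2,2,l,1,\phi}$ and $\Delta^{1,2l+1,4l,2,\phi}$, require respectively a surjection $M\twoheadrightarrow M_{2,2,l}$ and $M\twoheadrightarrow M_{1,2l+1,4l}$. Since $|M_{2,2,l}|=2l$ and $|M_{1,2l+1,4l}|=4l$ are both even, while any quotient of $M$ has order dividing $|M|$, no such surjections exist when $|M|$ is odd. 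Similarly, among the rays $\E^{\phi}$ with $\phi\colon M\twoheadrightarrow\Z/l\Z$, only those with $l$ odd and $l>1$ can actually occur. Hence when $|M|$ is odd, the sequence $\underline{\E}$ of Theorem \ref{thm:NC in codimension one} coincides with the sequence $(\E^{\phi})_{\phi\in\Phi_M}$ used in Theorem \ref{thm:regular in codimension 1 covers}.

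Next I would examine the two conditions defining $\catC_{NC,X}^{1}$. The exceptional case in condition $(1)$ requires a diagram with $M\twoheadrightarrow(\Z/2\Z)^{2}$, which is impossible when $|M|$ is odd; the exceptional case in condition $(2)$ requires a surjection $\phi\colon M\twoheadrightarrow\Z/2\Z$, again impossible. Therefore, under our hypothesis, $\catC_{NC,X}^{1}$ reduces to the full subcategory of $\stF_{\underline{\E}}(X)$ consisting of tuples $(\underline{\shL},\underline{\shM},\underline{z},\lambda)$ such that $\codim_X V(z_{\E})\cap V(z_{\delta})\geq 2$ for all $\E\neq\delta$ in $\underline{\E}$ and $v_p(z_{\E})\leq 1$ for all $\E\in\underline{\E}$ and $p\in Y^{(1)}$. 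But this is precisely the definition of $\widetilde{Reg}_{X}^{1}$ appearing in Theorem \ref{thm:regular in codimension 1 covers}.

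Combining these two observations, I get an equality of full subcategories $\catC_{NC,X}^{1}=\widetilde{Reg}_{X}^{1}$ inside $\stF_{\underline{\E}}(X)$. Applying the equivalence $\pi_{\underline{\E}}$ of Theorem \ref{thm:NC in codimension one} on one side and of Theorem \ref{thm:regular in codimension 1 covers} on the other, I conclude that if $Y/X$ is a $\Di M$-cover with $Y$ normal crossing in codimension $1$, then the object of $\stF_{\underline{\E}}(X)$ lifting it lies in $\widetilde{Reg}_{X}^{1}$, so $Y$ is regular in codimension $1$. The main (very small) subtlety is simply to verify carefully the enumeration of rays and exceptional cases; no genuine obstruction arises, since the whole argument is parity bookkeeping on the two classification theorems.

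Finally, for the normal case, I would invoke Serre's criterion $R_1+S_2$. A $\Di M$-cover has Cohen--Macaulay fibers over $X$, so when $X$ is normal (in particular $S_2$), the total space $Y$ is $S_2$ as well; combined with the regularity in codimension $1$ just established, this gives normality of $Y$.
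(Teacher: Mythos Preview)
Your parity bookkeeping is exactly the heart of the matter and coincides with the paper's observation that $\widetilde{Reg}_{X}^{1}=\catC_{NC,X}^{1}$ when $|M|$ is odd. However, there is a hypothesis mismatch that you do not address: both Theorem \ref{thm:NC in codimension one} and Theorem \ref{thm:regular in codimension 1 covers} assume $X$ is \emph{locally factorial}, whereas the corollary only assumes $X$ is regular in codimension $1$ (resp.\ normal). Neither of these weaker hypotheses implies local factoriality (e.g.\ the vertex of a quadric cone is normal but not factorial), so you cannot invoke those two equivalences directly over $X$ as you do.

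The paper handles this by first reducing to the case $X=\Spec R$ with $R$ a DVR: since regularity in codimension $1$ of $Y$ is checked at codimension $1$ points of $Y$, which sit over points of codimension $\leq 1$ in $X$, and since $X$ is regular at such points by hypothesis, one may localize and assume $R$ is a DVR. A DVR is a UFD, hence locally factorial, and now both theorems apply and your parity argument goes through verbatim. With that one-line reduction inserted at the start, your proof is correct and essentially identical to the paper's.
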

\begin{proof}
Since $Y/X$ has Cohen-Macaulay fibers it is enough to prove that
$Y$ is regular in codimension $1$ by Serre's criterion. So we can
assume $X=\Spec R$, where $R$ is a discrete valuation ring, and
apply \ref{thm:regular in codimension 1 covers} just observing that
$\widetilde{Reg}_{X}^{1}=\catC_{NC,X}^{1}$.\end{proof}
\begin{rem}
We keep notation from \ref{thm:NC in codimension one} and set $\underline{\delta}=(\E^{\eta},\eta\colon M\arr\Z/d\Z\text{ surjective },d>1)$.
We have that $\pi_{\underline{\delta}}^{-1}(NC_{X}^{1})=\catC_{NC,X}^{1}\cap\stF_{\underline{\delta}}$,
i.e. the covers $Y/X\in NC_{X}^{1}$ writable only with the rays in
$\underline{\delta}$, has the same expression of $\catC_{NC,X}^{1}$
but with object in $\stF_{\underline{\delta}}$. Therefore the multiplications
that yield a not smooth but with normal crossing in codimension $1$
covers are only $\E^{\phi}+\E^{\psi}$, where $\phi\comma\psi$ are
morphism as in $1)$, and $\E^{2\phi}$, where $\phi\colon M\arr\Z/2\Z$
is surjective. This result can also be found in \cite[Theorem 1.9]{Alexeev2011}.
In particular, if $M=(\Z/2\Z)^{r}$, where $\underline{\delta}=\underline{\E}$
thanks to \ref{pro:when sigmaM is empty}, these are the only possibilities.
\end{rem}

\chapter{\label{chap:Functorial-1}Equivariant affine maps and monoidality.}

The aim of this chapter is the study of $G$-covers for general groups,
but with particular attention to the linearly reductive case. We now
briefly summarize how this chapter is divided.

\emph{Section 1. }We will introduce the definition of linearly reductive
groups and study their representation theory. Looking for an analogous
behaviour to the representation theory for groups over a field, we
will introduce the notion of good linearly reductive groups (briefly
glrg). We will then focus on linearly reductive groups over strictly
Henselian rings and their action on finite algebras. The last part
will be dedicated to the study of induction of equivariant algebras
from a subgroup.

\emph{Section 2. }We will prove the equivalence between the category
of $G$-equivariant quasi-coherent sheaves of algebras over a scheme
$T$ and the category of linear, left exact, symmetric monoidal functors
$\Loc^{G}R\arr\Loc T$. The first step will be to establish a correspondence
between $G$-equivariant quasi-coherent sheaves and functors as above,
but without any monoidal structure and then describe how the properties
of commutativity, associativity and existence of a unity translate
into properties of the associated functor. We will then determine
what functors correspond to $G$-covers and $G$-torsors and, when
$G$ is a super solvable glrg, we will also describe a simpler criterion
to distinguish $G$-torsors among $G$-equivariant algebras.

\emph{Section 3. }In this section we will prove that $\GCov$ is reducible
if $G$ is a linearly reductive and non abelian group. The proof is
based on the use of what we will call rank functions, that allow us
to distinguish $\GCov$ inside $\LAlg_{R}^{G}$ and their behaviour
under induction from a subgroup.

\emph{Section 4. }This section is dedicated to the problem of regular
in codimension $1$ $G$-covers. We will describe such covers using
the trace map associated with an algebra and we will also discuss
a possible extension of the results to the non equivariant case.

In this chapter, we will often prove statements valid over any scheme
and, in order to simplify the reading, the letter $T$, if not stated
otherwise, will denote a scheme over the given base.

\section{\label{sec:Preliminaries on linearly reductive groups}Preliminaries
on linearly reductive groups.}

In this section we will study the representation theory of finite,
linearly reductive groups. In particular we will introduce the notion
of good linearly reductive groups (glrg). This class of groups has
a very special representation theory, very close to the one of usual
linearly reductive groups over an algebraically closed field.

We will then focus on groups over strictly Henselian rings, where
their structure is simpler and finally we will discuss the properties
of induction and state some useful results.

We will consider given a base scheme $S$ and a flat, finite and finitely
presented group scheme $G$ over $S$.

\subsection{Representation theory of linearly reductive groups.}

As the section name suggests, in this section we will introduce the
notion of linearly reductive groups and discuss their representation
theory. In particular we will define the notion of good linearly reductive
group (briefly glrg): these are the groups admitting a set of geometrically
irreducible representations with which is possible to describe any
equivariant quasi-coherent sheaf, analogously to what happens over
an algebraically closed field. Let $G$ be a linearly reductive group.
We will prove that if $G$ is defined over an algebraically closed
field or if it is diagonalizable then it is a glrg. Moreover we will
show that $G$ is always fppf locally a glrg and, if $G$ is étale,
also étale locally. In particular any étale (and therefore constant),
finite linearly reductive group defined over a strictly Henselian
ring is a glrg.

In what follows $G$ will be a flat, finite and finitely presented
group scheme over the given base. Before dealing with linearly reductive
groups, we prove the following propositions, which will be very useful.
\begin{prop}
\label{prop:the structure map are the invariants}Let $X=\Spec\alA$
be an affine $S$-scheme with a (right) action of $G$ and $\shF$
be a quasi-coherent sheaf over $S$. Then we have a $G$-equivariant
isomorphism
\[
\phi\colon\Homsh_{S}(X,\WW(\shF))\arr\WW(\shF\otimes\alA)
\]
If $X=G$, with the regular action on itself, we have vertical isomorphisms
  \[   \begin{tikzpicture}[xscale=3.7,yscale=-0.8]     
\node (A0_0) at (0, 0) {$\Homsh^G(G,\WW(\shF))$};     
\node (A0_1) at (1, 0) {$\Homsh(G,\WW(\shF))$};     
\node (A1_0) at (0, 1) {$\WW(\shF)$};     
\node (A1_1) at (1, 1) {$\WW(\shF\otimes\odi{S}[G])$};     
\node[rotate=-90] (s) at (0, 0.5) {$\simeq$};     
\node[rotate=-90] (ss) at (1, 0.5) {$\simeq$};     
\path (A0_0) edge [->]node [auto] {$\scriptstyle{}$} (A0_1);     \path (A1_0) edge [->]node [auto,swap] {$\scriptstyle{\WW(\nu)}$} (A1_1);   \end{tikzpicture}   \]  where $\nu\colon\shF\arr\shF\otimes\odi S[G]$ is the structure map.
In particular $\nu$ yields an isomorphism $\shF\simeq(\shF\otimes\odi S[G])^{G}$
of sheaves (without actions).\end{prop}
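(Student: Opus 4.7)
The plan is to build $\phi$ via representability and then specialize to $X=G$. For any $T\to S$, a section $\varphi\in\Homsh_{S}(X,\WW(\shF))(T)$ is a morphism $X_{T}\to\WW(\shF)_{T}$ over $T$, which by the defining property of $\WW(\shF)$ corresponds to a global section of the pullback of $\shF$ to $X_{T}$. Since $X_{T}\to T$ is affine with structure sheaf $\alA\otimes\odi{T}$, one identifies this set of global sections with $\Gamma(T,\shF\otimes\alA\otimes\odi{T})=\WW(\shF\otimes\alA)(T)$ by the usual adjunction for affine morphisms (or the projection formula); this defines $\phi$ and shows it is an isomorphism functorially in $T$. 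For the $G$-equivariance, I would unravel both actions at the level of functors of points: the right $G$-action on $X$ induces the conjugation action on $\Homsh_{S}(X,\WW(\shF))$ (recalling that $\WW(\shF)$ carries the trivial $G$-action), whereas the right $\odi{S}[G]$-comodule structure $\alA\to\alA\otimes\odi{S}[G]$ encoding the $G$-action on $X$ induces, after tensoring with $\id_{\shF}$, precisely the corresponding action on $\WW(\shF\otimes\alA)$. Checking these match is then just a bookkeeping computation with the conventions from Definition~\ref{def:regular representation}.

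For the second part, I would argue as follows: since the regular action $x\star g=g^{-1}x$ is simply transitive, $G\to S$ is a $G$-torsor, so any $G$-equivariant morphism from $G$ (with the regular action) to the trivial $G$-scheme $\WW(\shF)$ is determined by its value at the identity section $e\colon S\to G$. Explicitly, evaluation at $e$ yields an isomorphism $\Homsh^{G}(G,\WW(\shF))\arrdi{\simeq}\WW(\shF)$, whose inverse sends $s$ to the constant map with value $s$. The commutativity of the displayed square then reduces to verifying that under $\phi$ the constant map with value $s$ corresponds to $s\otimes 1\in\shF\otimes\odi{S}[G]$; but this is exactly the image $\WW(\nu)(s)$, because $\nu$ is the comodule structure expressing the trivial action, i.e.\ $\nu(s)=s\otimes 1$.

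The main technical point where I expect to spend some care is matching conventions in the $G$-equivariance check for $\phi$: the regular representation is defined with the swap and the antipode built in, and one must see that the action on $\WW(\shF\otimes\odi{S}[G])\simeq\Homsh_{S}(G,\WW(\shF))$ coming from the right regular action on $G$ really is $\id_{\shF}\otimes\mu_{G}$ on the other side. Once the commutative square is in hand, the final statement is immediate: the top-left corner has trivial $G$-action, so taking invariants along the square identifies $\shF$ with $(\shF\otimes\odi{S}[G])^{G}$, and $\nu$ realizes this identification.
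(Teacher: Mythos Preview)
Your reading of the statement is too narrow: although the proposition is phrased a bit loosely, $\shF$ is meant to carry a (possibly nontrivial) $G$-action, and ``the structure map'' $\nu$ is its comodule map, not the trivial map $s\mapsto s\otimes 1$. This is visible both in the paper's proof, which identifies the induced $G$-action on $\shF\otimes\alA$ with ``the structure morphism of the tensor product of representations $\shF\otimes\alA$'', and in the later uses, e.g.\ in the proof of Theorem~\ref{thm:additive functors are equivariant sheaves} where the isomorphism $\shF\arrdi{\mu}(\shF\otimes R[G])^{G}$ is invoked with $\mu$ the comodule structure of $\shF\in\QCoh^{G}_{R}$, and in Remark~\ref{rem:regular representation and forgetful functor} where one gets $(V\otimes\odi T[G])^{G}\simeq V$ rather than $V^{G}$.

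Concretely, two steps in your sketch break when the action on $\shF$ is nontrivial. First, in the equivariance of $\phi$, the action on $\WW(\shF\otimes\alA)$ is the tensor product of the actions on $\shF$ and on $\alA$, not $\id_{\shF}\otimes(\text{action on }\alA)$. Second, and more seriously, the inverse of evaluation at $1$ does \emph{not} send $s$ to the constant map: a $G$-equivariant $\psi\colon G\to\WW(\shF)$ satisfies $\psi(g)=g\cdot\psi(1)$, so $\eta^{-1}(s)$ is the orbit map $g\mapsto g\cdot s$. Under $\phi$ this goes to the image of $\id_{G}$ under the orbit map, i.e.\ $\id_{G}\cdot s\in\WW(\shF)(G)=\shF\otimes\odi S[G]$, which is precisely $\nu(s)$ by the general fact that acting by $\id_{G}$ on $\WW(M)(G)$ recovers the comodule structure of $M$. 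With these corrections your outline becomes the paper's argument; as written, however, you have only established the trivial-action case and have misidentified $\nu$.
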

\begin{proof}
Notice that we will only use that $G$ is an affine scheme. Let $\pi\colon X\arr S$
be the structure morphism. Note that if $U$ is an $S$-scheme, then
$\WW(\shF)\times U\simeq\WW(\shF\otimes\odi U)$ as sheaves over $U$
and, since $\pi_{*}\pi^{*}\shF\simeq\shF\otimes\alA$, we have that
\[
(\WW(F)\times U)(X\times U)=\Hl^{0}(X\times U,\pi_{U}^{*}(\shF\otimes\odi U))=\Hl^{0}(U,\shF\otimes\alA\otimes\odi U)=\WW(\shF\otimes\alA)(U)
\]
where $\pi_{U}$ is the base change of $\pi$ to $U$. In particular,
by Yoneda's lemma, the natural transformation $\phi\colon\Homsh_{S}(X,\WW(\shF))\arr\WW(\shF\otimes\alA)$
given by 
\[
\phi_{U}(X\times U\arrdi{\delta}\WW(\shF)\times U)=\delta(\id_{X\times U})
\]
is an isomorphism. We have to show that $\phi$ is $G$-equivariant
and we can assume that $S=\Spec R$, for some ring $R$. Denote by
$ $$\xi\colon\shF\otimes\alA\arr\shF\otimes\alA\otimes R[G]$ the
action of $G$ on $\shF\otimes\alA$ induced by $\phi$. We want to
prove that $\xi$ coincides with the structure morphism of the tensor
product of representations $\shF\otimes\alA$. In general if $M$
is an $R$-module with an action of $G$ then the multiplication by
$\id_{G}$ on $\WW(M)(G)=M\otimes R[G]$ yields the structure map
$M\arr M\otimes R[G]\arrdi{\id_{G}\cdot-}M\otimes R[G]$ of $M$.
In particular, by definition we have 
\[
\xi(x)=\phi_{G}(\id_{G}\cdot\phi_{G}^{-1}(x\otimes1))=[\id_{G}\cdot\phi_{G}^{-1}(x\otimes1)](\id_{X\times G})\text{ for }x\in\shF\otimes\alA
\]
Given $\delta\colon X\times G\arr\WW(F)\times G$ we have
\[
(\id_{G}\cdot\delta)(\id_{X\times G})=\id_{G}\cdot(\delta(\id_{X\times G}\cdot\id_{G}))
\]
Moreover $(\id_{X\times G}\cdot\id_{G})\colon X\times G\arr X\times G$
is given by the (right) action of $G$ on $X$, i.e. it is the $\Spec$
of the structure map $\mu\colon\alA\otimes\odi S[G]\arr\alA\otimes\odi S[G]$.
On the other hand, given $z\in\shF\otimes R[G]\otimes\alA=\Hom(X\times G,\WW(\shF)\times G)$
then $\id_{G}\cdot z=(\overline{\nu}\otimes\id_{\alA})(z)$, where
$\overline{v}\colon\shF\otimes R[G]\arr\shF\otimes R[G]$ is the structure
map, i.e. the $R[G]$-linear map such that $\overline{\nu}(x\otimes1)=\nu(x)$.
Finally, by definition of the Yoneda's isomorphism, we have 
\[
\phi_{G}^{-1}(x\otimes1)(U\arrdi{\alpha}X\times G)=[\WW(\shF)(\alpha)](x\otimes1)\in(\WW(\shF)\times G)(U)=\WW(\shF)(U)
\]
In conclusion $\phi_{G}^{-1}(x\otimes1)(\Spec\mu)=(\id_{\shF}\otimes\mu)(x\otimes1)$.
Putting everything together we get that $\xi$ is the composition
\[
\shF\otimes\alA\otimes\odi S[G]\arrdi{\id_{\shF}\otimes\mu}\shF\otimes\alA\otimes\odi S[G]\simeq\shF\otimes\odi S[G]\otimes\alA\arrdi{\overline{\nu}\otimes\id_{\alA}}\shF\otimes\odi S[G]\otimes\alA\simeq\shF\otimes\alA\otimes\odi S[G]
\]
which induces the classical co-module structure on the tensor product
$\shF\otimes\alA$.

Now let $\alA=\odi S[G]$. The map   \[   \begin{tikzpicture}[xscale=2.8,yscale=-0.6]     \node (A0_0) at (0, 0) {$\Homsh^G(G,\WW(\shF))$};     \node (A0_1) at (1, 0) {$\WW(\shF)$};     \node (A1_0) at (0, 1) {$\psi$};     \node (A1_1) at (1, 1) {$\psi(1)$};     \path (A0_0) edge [->]node [auto] {$\scriptstyle{\eta}$} (A0_1);     \path (A1_0) edge [|->,gray]node [auto] {$\scriptstyle{}$} (A1_1);   \end{tikzpicture}   \] 
is an isomorphism. The composition
\[
\WW(F)\arrdi{\eta^{-1}}\Homsh^{G}(G,\WW(\shF))\subseteq\Homsh(G,\WW(\shF))\arrdi{\phi}\WW(\shF\otimes\odi S[G])
\]
yields a map $\omega\colon\shF\arr\shF\otimes\odi S[G]$ and we have
to prove that $\omega=\nu$. Again we can assume that $S=\Spec R$,
for a ring $R$. Given $x\in\WW(\shF)(R)=\shF$ we have
\[
\phi_{R}(\eta_{R}^{-1}(x))=[\eta_{R}^{-1}(x)](\id_{G})=\id_{G}\cdot(x\otimes1)=\overline{\nu}(x\otimes1)=\nu(x)
\]
as required.\end{proof}
\begin{lem}
\label{lem:presentation of G equivariant co modules}Let $R$ be a
ring and $M\in\QCoh^{G}R$. Then there exists a $G$-equivariant presentation
\[
(\duale{R[G]})^{\oplus J}\arr(\duale{R[G]})^{\oplus I}\arr M\arr0
\]
If $M$ is finitely presented, then we can choose $I$ and $J$ finite.\end{lem}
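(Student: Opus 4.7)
The plan is to exhibit $\duale{R[G]}$ as a ``free generator'' in $\QCoh^G R$, in the sense that it corepresents the forgetful functor to $\QCoh R$, and then to resolve $M$ by copies of it using an $R$-module generating set.

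\textbf{Step 1: the key adjunction.} Since $R[G]$ is locally free of finite rank as an $R$-module, there is a canonical $R$-linear isomorphism
\[
\Hom_R(\duale{R[G]},M)\;\simeq\;M\otimes_R R[G],
\]
and I would first verify that, equipping the left-hand side with the conjugation $G$-action induced by $M$ and by the dual of the regular action on $\duale{R[G]}$, this isomorphism is $G$-equivariant when the right-hand side carries the diagonal coaction (action on $M$ tensored with the regular coaction on $R[G]$). Taking $G$-invariants and applying Proposition~\ref{prop:the structure map are the invariants} (which gives $(M\otimes R[G])^G\simeq M$ via the structure map $\nu$) yields a natural isomorphism
\[
\Hom^G_R(\duale{R[G]},M)\;\simeq\;M.
\]
Concretely, tracing through the isomorphism, the equivariant map $\phi_m\colon\duale{R[G]}\arr M$ associated to $m\in M$ sends the counit $\varepsilon_G$ to $m$ (this is just the counit axiom $(\id\otimes\varepsilon_G)\circ\nu=\id_M$).

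\textbf{Step 2: building the surjection.} Choose any subset $I\subseteq M$ generating $M$ as an $R$-module. The family $\{\phi_m\}_{m\in I}$ assembles into a $G$-equivariant map
\[
\bigoplus_{m\in I}\duale{R[G]}\;\arr\;M.
\]
Since the image of the $m$-th copy contains $\phi_m(\varepsilon_G)=m$, and the $m\in I$ generate $M$ as an $R$-module, this map is surjective. Applying the same construction to its kernel $K\in\QCoh^G R$ produces the second piece $(\duale{R[G]})^{\oplus J}\arr K$ and hence the presentation claimed.

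\textbf{Step 3: the finitely presented case.} If $M$ is finitely presented, then it is finitely presented as an $R$-module (the $G$-action adds no additional condition here), so $I$ may be taken finite; then $(\duale{R[G]})^{\oplus I}$ is a finitely generated projective $R$-module and the kernel $K$ is finitely generated as an $R$-module by standard homological algebra, so $J$ may also be taken finite. The only nontrivial point in the whole argument is Step~1, i.e.\ matching the two natural $G$-actions so that the invariants agree; once that compatibility is in place, Steps~2 and~3 are formal.
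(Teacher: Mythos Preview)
Your proof is correct and follows essentially the same approach as the paper: both use Proposition~\ref{prop:the structure map are the invariants} to identify $\Hom^G(\duale{R[G]},M)\simeq M$ (with $\phi_m(\varepsilon_G)=m$), build the surjection as a direct sum of the maps $\phi_m$, and repeat on the kernel. The only cosmetic difference is that the paper takes $I=M$ outright rather than a chosen $R$-generating set, and phrases the finiteness of $K$ as ``locally finitely presented'' rather than invoking Schanuel-type reasoning; neither affects the argument.
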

\begin{proof}
From \ref{prop:the structure map are the invariants} we have an isomorphism
\[
M\arrdi{\phi}\Hom^{G}(\duale{R[G]},M)\simeq(R[G]\otimes M)^{G}
\]
and it is easy to check that $\phi_{m}(\varepsilon_{G})=m$, where
$m\in M$. Then
\[
\bigoplus_{m\in M}\phi_{m}\colon(\duale{R[G]})^{\oplus M}\arr M
\]
is $G$-equivariant and surjective. If $M$ is finitely presented,
obviously we can assume $I$ finite. Let
\[
K=\Ker((\duale{R[G]})^{I}\arr M)
\]
Since $R[G]$ is locally free, $K$ is locally finitely presented
and therefore finitely presented.\end{proof}
\begin{defn}
The group scheme $G$ is linearly reductive over $S$ if the functor
of invariants
\[
(-)^{G}\colon\QCoh^{G}S\arr\QCoh S
\]
is exact.
\end{defn}
From now on we will assume that $G$ is linearly reductive. Remember
that this condition is stable under base change and is local in the
fppf topology ( see \cite[Proposition 2.6]{Abramovich2007}).

The following lemmas are crucial in the study of the representation
theory of linearly reductive groups over general schemes and they
explain how invariants behave for such groups.
\begin{lem}
\label{lem:constant coherent sheaves go out invariants}Let $\shF\in\QCoh^{G}S$
and $\shH\in\QCoh S$. Then the natural map
\[
\shF^{G}\otimes\shH\arr(\shF\otimes\underline{\shH})^{G}
\]
is an isomorphism.\end{lem}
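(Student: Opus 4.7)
The statement is local on $S$, so I reduce to the affine case $S=\Spec R$, where $\shF$ becomes an $R$-module $F$ equipped with a $G$-comodule structure and $\shH$ becomes an $R$-module $H$ with trivial $G$-action. The natural map in question, which I denote
\[
\phi_{H}\colon F^{G}\otimes_{R}H\arr(F\otimes_{R}H)^{G},
\]
is the restriction of the obvious map $F\otimes H\arr F\otimes H$ and it is natural in $H\in\QCoh R$. The plan is to view $\phi_{-}$ as a natural transformation between two functors $\QCoh R\arr\QCoh R$ and to verify the isomorphism first on $H=R$ and then on arbitrary $H$ by a presentation argument.

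First I would check that both source and target, as functors in $H$, are right exact and commute with arbitrary direct sums. For the source $H\longmapsto F^{G}\otimes H$ this is immediate since tensor product has these properties. For the target $H\longmapsto(F\otimes\underline{H})^{G}$, right exactness of $H\mapsto F\otimes\underline{H}$ is clear, and $(-)^{G}\colon\QCoh^{G}R\arr\QCoh R$ is exact by linear reductivity of $G$; moreover $(-)^{G}$ trivially commutes with direct sums, because a coaction on $\bigoplus_{i}M_{i}$ is precisely the direct sum of coactions on the $M_{i}$ and an element is invariant iff each of its components is.

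Next, $\phi_{R}$ is visibly the identity map $F^{G}\arr F^{G}$. Given an arbitrary $H\in\QCoh R$, pick a free presentation $R^{\oplus J}\arr R^{\oplus I}\arr H\arr0$. Applying the two right-exact functors above and using the naturality of $\phi_{-}$ yields a commutative diagram with exact rows
\[
\begin{tikzpicture}[xscale=2.5,yscale=-0.9]
\node (A0_0) at (0,0) {$F^{G}\otimes R^{\oplus J}$};
\node (A0_1) at (1,0) {$F^{G}\otimes R^{\oplus I}$};
\node (A0_2) at (2,0) {$F^{G}\otimes H$};
\node (A0_3) at (3,0) {$0$};
\node (A1_0) at (0,1) {$(F\otimes R^{\oplus J})^{G}$};
\node (A1_1) at (1,1) {$(F\otimes R^{\oplus I})^{G}$};
\node (A1_2) at (2,1) {$(F\otimes H)^{G}$};
\node (A1_3) at (3,1) {$0.$};
\path (A0_0) edge [->] (A0_1);
\path (A0_1) edge [->] (A0_2);
\path (A0_2) edge [->] (A0_3);
\path (A1_0) edge [->] (A1_1);
\path (A1_1) edge [->] (A1_2);
\path (A1_2) edge [->] (A1_3);
\path (A0_0) edge [->] node [auto] {$\scriptstyle{\phi_{R^{\oplus J}}}$} (A1_0);
\path (A0_1) edge [->] node [auto] {$\scriptstyle{\phi_{R^{\oplus I}}}$} (A1_1);
\path (A0_2) edge [->] node [auto] {$\scriptstyle{\phi_{H}}$} (A1_2);
\end{tikzpicture}
\]
Since both sides commute with direct sums and $\phi_{R}$ is an isomorphism, the two left-hand vertical maps are isomorphisms, so $\phi_{H}$ is surjective and its domain and target have the same size against any test; more precisely, the five lemma (applied after appending a zero on the left) forces $\phi_{H}$ to be an isomorphism, completing the argument.

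There is essentially no serious obstacle: the only subtle point is the preservation of direct sums by $(-)^{G}$, which however holds for \emph{any} group, and the right exactness of $(F\otimes\underline{-})^{G}$, which is where the linear reductivity hypothesis is used.
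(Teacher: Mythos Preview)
Your proof is correct and follows essentially the same approach as the paper: reduce to the affine case, check the map is an isomorphism for $H$ free (equivalently for $H=R$, using compatibility with direct sums), and then pass to arbitrary $H$ via a free presentation, using that linear reductivity makes $(-)^G$ exact so that the target functor is right exact in $H$. The paper compresses all of this into two lines, but the content is the same; your remark about the five lemma is slightly awkwardly phrased (you do not need to append zeros---the cokernel of a map is determined up to canonical isomorphism by the map, and the two left vertical arrows being isomorphisms identifies the two first maps), but the conclusion is valid.
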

\begin{proof}
We can assume that $S=\Spec A$, for some ring $A$, and that $\shF=\widetilde{M}$,
$\shH=\widetilde{N}$, for some $A$-modules $M$, $N$. The natural
map $M^{G}\otimes_{A}N\arr(M\otimes_{A}N)^{G}$ is an isomorphism,
bacause it is so when $N$ is free and in general taking a presentation
of $N$, taking into account that $(-)^{G}$ is an exact functor.\end{proof}
\begin{lem}
\label{lem:base change of invariants}Let $\shF\in\FCoh^{G}S$. Then
the map $\shF^{G}\arr\shF$ splits locally, $\shF^{G}\in\FCoh S$
and the natural map
\[
\WW(\shF^{G})\arr\WW(\shF)^{G}
\]
is an isomorphism. In particular if $\shF$ is locally free so is
$\shF^{G}$.\end{lem}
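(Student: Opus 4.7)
The plan is to reduce to the affine case, then exploit the linear reductivity of $G$ in three distinct ways: to show that $\shF^G$ is a pure submodule of $\shF$, that it is finitely presented, and that its formation commutes with base change. Working locally, I would assume $S = \Spec R$ and $\shF = \widetilde M$ with $M \in \FCoh^G R$.

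For purity, Lemma \ref{lem:constant coherent sheaves go out invariants} provides a natural isomorphism $M^G \otimes_R N \simeq (M \otimes_R \underline N)^G$ for every $R$-module $N$. Since $(-)^G$ is left exact, $(M \otimes_R \underline N)^G$ injects into $M \otimes_R N$, so $M^G \otimes_R N \hookrightarrow M \otimes_R N$ is injective for all $N$, i.e.\ $M^G$ is a pure $R$-submodule of $M$.

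For finite presentation, I would apply Lemma \ref{lem:presentation of G equivariant co modules} to obtain a finite equivariant presentation
\[
\duale{R[G]}^{\oplus J} \to \duale{R[G]}^{\oplus I} \to M \to 0,
\]
and use exactness of $(-)^G$ to deduce a finite presentation of $M^G$ by copies of $(\duale{R[G]})^G$. The main obstacle is then to verify that $(\duale{R[G]})^G$ is finitely generated (in fact locally free of rank one); this can be handled either by exploiting the Hopf/Frobenius structure of $R[G]$ to produce a $G$-equivariant identification $\duale{R[G]} \simeq R[G]$ and combining with $R[G]^G \simeq R$ from Proposition \ref{prop:the structure map are the invariants}, or by descending the whole setup to a Noetherian subring via standard finite presentation limit arguments. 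Once $M^G$ is finitely generated, $M/M^G$ is finitely presented as a quotient of the finitely presented module $M$, and Matsumura's \cite[Theorem 7.14]{Matsumura1989} applied to a pure finitely presented submodule with finitely presented quotient yields the splitting of $M^G \hookrightarrow M$. In particular $\shF^G$ is a direct summand of $\shF$, is finite locally free whenever $\shF$ is, and lies in $\FCoh S$.

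Finally, the natural map $\WW(\shF^G) \to \WW(\shF)^G$ evaluates on $T = \Spec R'$ to the map $M^G \otimes_R R' \to (M \otimes_R R')^G$, which is exactly the isomorphism given by Lemma \ref{lem:constant coherent sheaves go out invariants} taking $\shH = \widetilde{R'}$. Globalising this over an fppf (or Zariski) cover of $T$ gives the desired isomorphism of functors, completing the argument.
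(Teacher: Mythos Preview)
Your proof is correct and uses the same core ingredients as the paper: purity via Lemma \ref{lem:constant coherent sheaves go out invariants}, the splitting via \cite[Theorem 7.14]{Matsumura1989}, and the base-change isomorphism $\WW(\shF^G)\simeq\WW(\shF)^G$ again from Lemma \ref{lem:constant coherent sheaves go out invariants}. The organization differs, however. The paper does noetherian reduction \emph{first}: since $(-)^G$ commutes with arbitrary base change, one may descend both $G$ and $M$ to a finitely generated $\Z$-subalgebra $A$, over which $M^G$ is automatically finitely generated (as a submodule of a finitely generated module over a noetherian ring) and hence $M/M^G$ is finitely presented, so Matsumura applies immediately. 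You instead work over a general ring and attack the finite presentation of $M^G$ directly via Lemma \ref{lem:presentation of G equivariant co modules}, which forces you to analyse $(\duale{R[G]})^G$; at that point you either invoke the Frobenius structure of $R[G]$ or fall back on noetherian descent anyway. Both routes work, but the paper's early noetherian reduction sidesteps the detour through $(\duale{R[G]})^G$ entirely and keeps the argument to a few lines.
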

\begin{proof}
The second map in the statement is an isomorphism thanks to \ref{lem:constant coherent sheaves go out invariants}.
For the local splitting and the finite presentation of $\shF^{G}$,
we can assume $S=\Spec A$, $\shF=\widetilde{M}$ where $A$ is a
ring and $M$ a finitely presented $A$-module. Moreover, because
$(-)^{G}$ is invariant by any base change, we can also assume that
$A$ is noetherian. In this case the splitting follows from \cite[Theorem 7.14]{Matsumura1989}.
\end{proof}
We now recover the property that usually is used as definition of
linearly reductive groups over a field:
\begin{lem}
Assume $S=\Spec k$, where $k$ is a field. Then any finite dimensional
representation of $G$ is a direct sum of irreducible representations.\end{lem}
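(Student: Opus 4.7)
My plan is to argue by induction on $\dim_k V$, reducing the question to showing that every short exact sequence of finite-dimensional $G$-representations splits.

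If $V$ has no proper nonzero subrepresentation then it is irreducible and we are done. Otherwise, pick such a subrepresentation $W \subset V$ and consider the short exact sequence of $G$-equivariant sheaves
\[
0 \arr W \arr V \arr V/W \arr 0.
\]
Since $V/W$ is a finite-dimensional $k$-vector space, the functor $\Hom_k(V/W,-)$ is exact on $k$-modules, so applying it yields an exact sequence of $G$-equivariant quasi-coherent sheaves on $\Spec k$, where the $G$-action on $\Hom_k(V/W,-)$ is the one described in the notation section. Now linear reductivity of $G$ means exactness of $(-)^G$, so I would apply it to this sequence and obtain, after identifying $\Hom_k(-,-)^G$ with $\Hom^G(-,-)$ (which is just the definition of the $G$-action on internal Homs), a short exact sequence
\[
0 \arr \Hom^G(V/W,W) \arr \Hom^G(V/W,V) \arr \Hom^G(V/W,V/W) \arr 0.
\]

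Surjectivity on the right allows me to lift $\id_{V/W}$ to a $G$-equivariant section $s\colon V/W \arr V$. Then $V = W \oplus s(V/W)$ as a direct sum of $G$-subrepresentations, and since both summands have dimension strictly smaller than $\dim_k V$, the inductive hypothesis decomposes each of them as a direct sum of irreducibles, finishing the proof.

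There is no real obstacle here: the whole argument is essentially a formal consequence of the exactness of $(-)^G$, which is the definition of linear reductivity. The only minor point to be careful about is identifying $\Hom_k(V/W,V)^G$ with the space of $G$-equivariant maps, but this is immediate from the definition of the $G$-action on $\Hom_k$ recalled in the notation, and it is implicit already in Lemma~\ref{lem:base change of invariants}.
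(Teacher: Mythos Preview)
Your proof is correct and follows essentially the same idea as the paper: both arguments use that $\Hom_k(-,-)$ sends the short exact sequence to a surjection over $k$, then invoke exactness of $(-)^G$ to lift an identity map and obtain a $G$-equivariant splitting. The only cosmetic difference is that the paper produces a retraction onto the subrepresentation (lifting $\id_V$ along $\Hom^G(W,V)\to\Hom^G(V,V)$), whereas you produce a section of the quotient; the explicit induction you spell out is left implicit in the paper.
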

\begin{proof}
Any $G$-equivariant injection $V\arr W$ of representations has a
$G$-equivariant section since the maps
\[
\Hom(W,V)\arr\Hom(V,V)\comma\Hom^{G}(W,V)\arr\Hom^{G}(V,V)
\]
are surjective.\end{proof}
\begin{defn}
Given a finite, linearly reductive group $G$ over a field we denote
by $I_{G}$ a set of representatives of the irreducible representations
of $G$. We will often say that $I_{G}$ is the {}``set'' of irreducible
representations of $G$ or refer to such a {}``set'', always meaning
that we are choosing a set of representatives.
\end{defn}
We want now to find sheaves over $S$ that play the role of the irreducible
representations over a field.
\begin{defn}
\label{def: evaluation for coherent equivariant sheaves}Given $\shF\in\QCoh_{R}^{G}$
and $V\in\Loc^{G}S$ we define
\[
\theta_{V}^{\shF}\colon\duale V\otimes\Homsh^{G}(\duale V,\shF)\arr\shF\comma\theta_{V}^{\shF}(x\otimes\psi)=\psi(x)
\]
Note that this map is $G$-equivariant.\end{defn}
\begin{rem}
If $V\in\Loc^{G}S$ we have $G$-equivariant morphisms and commutative
diagrams   \[   \begin{tikzpicture}[xscale=4.1,yscale=-0.5]     
\node[rotate=-90] (s) at (0, 1.9) {$\simeq$};  
\node[rotate=-90] (ss) at (1.07, 1.9) {$\simeq$}; 
\node (p) at (-0.5, 4) {$\psi\otimes x\otimes v$};
\node (pp) at (-0.5, 0) {$(u\arr\psi(u)x)\otimes v$};
\node (A0_1) at (1, 0) {$\phi\otimes v$};     
\node (A1_0) at (0, 1) {$\Homsh^G(V,\shF)\otimes V$};     
\node (A1_1) at (1, 1) {$\Homsh(V,\shF)\otimes V$};     
\node (A1_2) at (2, 1) {$\phi(v)$};     
\node (A2_2) at (2, 2) {$\shF$};     
\node (A3_0) at (0, 3) {$\ \ \ (\duale{V}\otimes \shF)^G\otimes V$};     
\node (A3_1) at (1.07, 3) {$\duale{V}\otimes \shF\otimes V$};     
\node (A4_1) at (1.07, 4) {$\psi\otimes x\otimes v$};     
\node (A3_2) at (2, 3) {$\psi(v)x$};     
\path (p) edge [->]node [auto] {$\scriptstyle{}$} (pp); 
\path (A0_1) edge [->,bend right=25]node [auto] {$\scriptstyle{}$} (A1_2);     \path (A3_1) edge [->,bend left=15]node [auto] {$\scriptstyle{}$} (A2_2);     \path (A1_0) edge [right hook->]node [auto] {$\scriptstyle{}$} (A1_1);     \path (A4_1) edge [->,bend left=25]node [auto] {$\scriptstyle{}$} (A3_2);     \path (A1_1) edge [->,bend right=15]node [auto] {$\scriptstyle{}$} (A2_2);     \path (A3_0) edge [right hook->]node [auto] {$\scriptstyle{}$} (A3_1);   \end{tikzpicture}   \] Given a collection $I\subseteq\Loc^{G}S$ we have a natural, $G$-equivariant
morphism
\[
\eta_{I,\shF}=\bigoplus_{V\in I}\theta_{\duale V}^{\shF}\colon\bigoplus_{V\in I}\Homsh^{G}(V,\shF)\otimes V\arr\shF\qquad\forall\:\shF\in\QCoh^{G}S
\]
\end{rem}
\begin{prop}
\label{prop:generating irreducible representations}Let $I$ be a
collection of elements of $\Loc^{G}S$. The following are equivalent:
\begin{enumerate}
\item \label{enu:generating irreducible representations natural maps 1}the
natural maps
\[
\eta_{I,\shF}\colon\bigoplus_{V\in I}\Homsh^{G}(V,\shF)\otimes V\arr\shF\qquad\forall\:\shF\in\FCoh^{G}S
\]
are isomorphisms;
\item \label{enu:generating irreducible representations natural maps quasi-coherent}same
as \ref{enu:generating irreducible representations natural maps 1},
but for any $\shF\in\QCoh^{G}T$ and any $S$-scheme $T$.
\item \label{enu:generating irreducible representations for any 2}for any
algebraically closed field $k$ and geometric point $\Spec k\arr S$
the map
\[
I\arrdi{-\otimes k}I_{G_{k}}
\]
is well defined and bijective;
\item \label{enu:generating irreducible representations is S 3}If $S$
is connected, same as \ref{enu:generating irreducible representations for any 2}
but for just one geometric point.
\end{enumerate}
If $S=\Spec k$, then $I_{G}$ satisfies the above conditions if and
only if $\End^{G}(V)\simeq k$ for any $V\in I_{G}$.\end{prop}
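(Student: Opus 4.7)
The plan is to establish the chain of implications $(2) \Rightarrow (1)$ (trivial), $(1) \Rightarrow (2)$, $(3) \Leftrightarrow (4)$, and finally $(3) \Rightarrow (2)$, treating the field-case addendum separately at the end. For $(1) \Rightarrow (2)$, I would first observe that $\eta_{I,-}$ is compatible with pullback along any $T \to S$: since each $V \in I$ is locally free of finite rank, $\Homsh^{G}(V,\shF) = (\duale V \otimes \shF)^{G}$, and Lemma \ref{lem:constant coherent sheaves go out invariants} gives base-change compatibility. To pass from $\FCoh^{G} T$ to $\QCoh^{G} T$, I would use Lemma \ref{lem:presentation of G equivariant co modules} to write any equivariant quasi-coherent sheaf as a filtered colimit of finitely presented equivariant subsheaves, and note that both sides of $\eta_{I,-}$ commute with filtered colimits, since $(-)^{G}$ is exact by linear reductivity and commutes with direct sums.

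For $(3) \Leftrightarrow (4)$, only $(4) \Rightarrow (3)$ needs work: by Lemma \ref{lem:base change of invariants} the sheaves $\Endsh^{G}(V)$ and $\Homsh^{G}(V,W)$ are locally free, hence of constant rank on the connected scheme $S$. Irreducibility of $V_{\overline{k}}$ is equivalent, by Schur applied in the semisimple category of $G_{\overline{k}}$-representations, to $\End^{G_{\overline{k}}}(V_{\overline{k}}) = \overline{k}$, and analogously a non-isomorphism $V_{\overline{k}} \not\simeq W_{\overline{k}}$ is detected by $\Hom^{G_{\overline{k}}}(V_{\overline{k}},W_{\overline{k}}) = 0$; so both conditions in $(3)$ propagate from one geometric point to all. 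The same argument, applied under the hypothesis of $(3)$ itself, yields the two key identities $\Endsh^{G}(V) \simeq \odi S$ for $V \in I$ and $\Homsh^{G}(V,W) = 0$ for $V \neq W$ in $I$: these locally free sheaves have the stated ranks at every geometric point, so the canonical map $\odi S \to \Endsh^{G}(V)$ is an iso and $\Homsh^{G}(V,W)$ vanishes. In particular $\eta_{I,V}$ reduces to the identity on each $V \in I$.

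The heart of the argument is $(3) \Rightarrow (2)$. For $\shF \in \QCoh^{G} T$, working locally so that $T$ is connected and $I$ finite, I would pick an equivariant presentation $\duale{\odi T[G]}^{\oplus J} \to \duale{\odi T[G]}^{\oplus I} \to \shF \to 0$ from Lemma \ref{lem:presentation of G equivariant co modules}. Since $\eta_{I,-}$ is right exact and commutes with direct sums, and $\duale{\odi T[G]} \simeq \odi T[G]$ as $G$-comodules, it suffices to verify that $\eta_{I, \odi T[G]}$ is an isomorphism. By Proposition \ref{prop:the structure map are the invariants} we have $\Homsh^{G}(V, \odi T[G]) \simeq \duale V$ naturally, so $\eta_{I, \odi T[G]}$ becomes the canonical evaluation map $\bigoplus_{V \in I} \duale V \otimes V \to \odi T[G]$ between finite locally free sheaves; one checks this is an iso at every geometric point, where it reduces to the classical Peter-Weyl decomposition of the regular representation of the finite linearly reductive group $G_{\overline{k}}$.

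Finally, for the addendum with $S = \Spec k$: if $\End^{G}(V) = k$ for each $V \in I_{G}$, then by Lemma \ref{lem:base change of invariants} $\End^{G_{\overline{k}}}(V_{\overline{k}}) = \End^{G}(V) \otimes \overline{k} = \overline{k}$, whence $V_{\overline{k}}$ is semisimple with a one-dimensional endomorphism algebra, hence irreducible. Bijectivity $I_{G} \to I_{G_{\overline{k}}}$ follows from Schur (injectivity) and by writing $\overline{k}[G] = k[G] \otimes \overline{k} = \bigoplus_{V \in I_{G}} V_{\overline{k}}^{m_{V}}$ and using that every irreducible of $G_{\overline{k}}$ appears in its regular representation (surjectivity); the converse implication is immediate from the same base-change identity. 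The principal obstacle throughout is the Peter-Weyl verification for $\eta_{I, \odi T[G]}$: it requires simultaneously the finiteness of $I$ (forcing the local-on-connected-components reduction) and the precise decomposition of the regular representation of $G_{\overline{k}}$, which is where the full strength of linear reductivity enters.
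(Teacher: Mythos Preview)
Your plan has a genuine logical gap: the implications you list, namely $(2)\Rightarrow(1)$, $(1)\Rightarrow(2)$, $(3)\Leftrightarrow(4)$, and $(3)\Rightarrow(2)$, do not close the cycle, since nothing carries you from $\{(1),(2)\}$ back to $\{(3),(4)\}$. The paper supplies $(1)\Rightarrow(3)$ directly: applying $\eta_{I,V}$ to each $V\in I$ forces $\Homsh^{G}(V,W)=0$ for $V\neq W$ in $I$ and $\Homsh^{G}(V,V)$ invertible, whence $\End^{G_{k}}(V\otimes k)\simeq k$ and $V\otimes k$ is irreducible at every geometric point, so $I_{k}\subseteq I_{G_{k}}$; surjectivity then follows because any irreducible $W$ of $G_{k}$ embeds in $k[G]$ via a nonzero element of $\Hom^{G}(W,k[G])\simeq\duale{W}$, and the isomorphism $\eta_{I,\odi S[G]}\otimes k=\eta_{I_{k},k[G]}$ shows that only members of $I_{k}$ occur in $k[G]$.

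There is also a smaller gap in your $(4)\Rightarrow(3)$: constancy of the ranks of $\Endsh^{G}(V)$ and $\Homsh^{G}(V,W)$ does propagate well-definedness and injectivity of $I\to I_{G_{k}}$ from $k_{0}$ to all $k$, but not surjectivity, which the paper obtains via a dimension count on $\eta_{I_{k},k[G]}$ comparing $\sum_{V\in I}(\rk V)^{2}$ at $k$ and at $k_{0}$. Finally, your $(1)\Rightarrow(2)$ is loosely argued: pullback compatibility only handles sheaves on $T$ that are pulled back from $S$, not arbitrary $\shF\in\FCoh^{G}T$; this is easily repaired by the same presentation-plus-right-exactness argument you already give for $(3)\Rightarrow(2)$, once you note that $\eta_{I,\duale{\odi S[G]}}$ is an isomorphism by $(1)$ and remains one after any base change.
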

\begin{proof}
Assume first that $S=\Spec k$, where $k$ is a field. Given $V,W\in I_{G}$,
we have that any equivariant map $V\arr W$ is either $0$ or an isomorphism.
So $\eta_{I_{G},V}$ is an isomorphism if and only if $\End^{G}(V)\simeq k$.
Conversely, if this holds for any irreducible representation of $G$,
then $\eta_{I_{G},*}$ is an isomorphism for all finite representations
since these are direct sums of irreducible representations.

Now consider the general case and given a geometric point $\Spec k\arr S$
denote $I_{k}=\{V\otimes k\st V\in I\}$. Since $G$ is linearly reductive
and thanks to \ref{lem:base change of invariants}, we can conclude
that $\eta_{I,\shF}\otimes k=\eta_{I_{k},\shF\otimes k}$ for any
$\shF\in\FCoh^{G}S$ and that $\Homsh^{G}(V,\shF)$ is locally free
if $\shF$ is so.

\ref{enu:generating irreducible representations natural maps quasi-coherent}$\then$\ref{enu:generating irreducible representations natural maps 1}
and \ref{enu:generating irreducible representations for any 2}$\then$\ref{enu:generating irreducible representations is S 3}.
Obvious.

\ref{enu:generating irreducible representations natural maps 1}$\then$\ref{enu:generating irreducible representations for any 2}.
If $V\in I$, since $\eta_{I,V}$ is an isomorphism and $\Homsh^{G}(V,V)\otimes V\arr V$
is surjective, we have that $\Homsh^{G}(V,W)=0$ if $V\neq W\in I$
and that $\Homsh^{G}(V,V)$ is an invertible sheaf. In particular
$\End^{G_{k}}(V\otimes k)\simeq k$ and $V\otimes k$ is therefore
irreducible for any geometric point $\Spec k\arr S$, so that $I_{k}\subseteq I_{G_{k}}$.
For the converse let $W\in I_{G_{k}}$. Since $\duale W\simeq(\duale W\otimes k[G])^{G}\simeq\Hom^{G}(W,k[G])$
by \ref{prop:the structure map are the invariants}, there exists
a $G$-equivariant nonzero map $W\arr k[G]\simeq\odi S[G]\otimes k$,
which is injective because $W$ is irreducible. On the other hand
the only irreducible representations of $G_{k}$ appearing in $k[G]$
are the ones in $I_{k}$ since $\eta_{I,\odi S[G]}\otimes k=\eta_{I_{k},k[G]}$.

\ref{enu:generating irreducible representations for any 2}$\then$\ref{enu:generating irreducible representations natural maps quasi-coherent}
We can assume that $T=\Spec R$, where $R$ is a ring. If $M\in\QCoh^{G}R$
is locally free of finite rank then $\eta_{I,M}\otimes k=\eta_{I_{G_{k}},M\otimes k}$
is an isomorphism for any geometric point $\Spec k\arr T$. Since
both source and target of the map $\eta_{I,M}$ are locally free thanks
to \ref{lem:base change of invariants}, we can conclude that it is
an isomorphism. In particular $\eta_{I,\duale{R[G]}}$ is an isomorphism.
If $M\in\QCoh^{G}R$, thanks to \ref{lem:presentation of G equivariant co modules}
we have a $G$-equivariant presentation $V_{1}\arr V_{0}\arr M\arr0$,
where the $V_{i}$ are a direct sum of copies of $\duale{R[G]}$.
Since $\Hom^{G}(V,-)\otimes V$ is exact when $V$ is locally free,
we have a commutative diagram  \[   \begin{tikzpicture}[xscale=4.5,yscale=-1.1]     
\node (A0_0) at (0, 0) {$\displaystyle{\bigoplus_{V\in I}\Homsh^{G}(V, V_1)\otimes V}$};     
\node (A0_1) at (1, 0) {$\displaystyle{\bigoplus_{V\in I}\Homsh^{G}(V, V_0)\otimes V}$};     
\node (A0_2) at (2, 0) {$\displaystyle{\bigoplus_{V\in I}\Homsh^{G}(V, M)\otimes V}$};     
\node (A0_3) at (2.7, 0) {$0$};     
\node (A1_3) at (2.7, 1) {$0$}; 
\node (A1_0) at (0, 1) {$V_1$};     
\node (A1_1) at (1, 1) {$V_0$};     
\node (A1_2) at (2, 1) {$M$};     
    
\path (A0_1) edge [->]node [auto] {$\scriptstyle{\eta_{I,V_0}}$} (A1_1);     \path (A0_0) edge [->]node [auto] {$\scriptstyle{}$} (A0_1);     \path (A0_1) edge [->]node [auto] {$\scriptstyle{}$} (A0_2);     \path (A1_0) edge [->]node [auto] {$\scriptstyle{}$} (A1_1);     \path (A1_1) edge [->]node [auto] {$\scriptstyle{}$} (A1_2);     \path (A0_2) edge [->]node [auto] {$\scriptstyle{\eta_{I,M}}$} (A1_2);     \path (A1_2) edge [->]node [auto] {$\scriptstyle{}$} (A1_3);     \path (A0_2) edge [->]node [auto] {$\scriptstyle{}$} (A0_3);     \path (A0_0) edge [->]node [auto] {$\scriptstyle{\eta_{I,V_1}}$} (A1_0);   \end{tikzpicture}   \] The $\eta_{I,V_{i}}$ are isomorphisms by additivity and therefore
we can conclude that $\eta_{I,M}$ is an isomorphism as well.

\ref{enu:generating irreducible representations is S 3}$\then$\ref{enu:generating irreducible representations for any 2}
Let $\Spec k_{0}\arr S$ be the given geometric point. For $V,W\in I$
we have that $\Homsh^{G}(V,W)$ are locally free and checking the
rank on $k_{0}$, we can conclude that $\Homsh^{G}(V,W)=0$ if $V\neq W$
and that $\Homsh^{G}(V,V)$ is invertible. In particular $I_{k}\subseteq I_{G_{k}}$
for any geometric point and therefore $\eta_{I_{k},k[G]}$ is injective
since $\eta_{I_{G_{k}},k[G]}$ is so. But 
\begin{alignat*}{1}
\dim_{k}\bigoplus_{V\in I}\Hom^{G}(V\otimes k,k[G])\otimes V\otimes k & =\dim_{k_{0}}\bigoplus_{V\in I}\Hom^{G}(V\otimes k_{0},k[G])\otimes V\otimes k_{0}\\
 & =\dim_{k_{0}}k_{0}[G]=\dim_{k}k[G]
\end{alignat*}
so that $\eta_{I_{k},k[G]}$ is an isomorphism and $I_{k}=I_{G_{k}}$.\end{proof}
\begin{prop}
\label{prop:behaviour of irreducible representations for general glrg}Let
$I$ be a collection of elements of $\Loc^{G}S$ satisfying the conditions
in \ref{prop:generating irreducible representations}. Then
\[
\Homsh^{G}(V,W)=0\comma\Homsh^{G}(V,V)=\odi S\id_{V}\ \text{ for all \ensuremath{V\neq W\in I}}
\]
If $S$ is connected then $I$ is uniquely determined up to tensorization
by invertible sheaves (with trivial actions).\end{prop}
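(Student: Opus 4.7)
The first two identities are essentially already proved in the argument for \ref{prop:generating irreducible representations} (\ref{enu:generating irreducible representations natural maps 1} $\then$ \ref{enu:generating irreducible representations for any 2}): applying $\eta_{I,V}$ for $V\in I$ together with the fact that the component $\Homsh^{G}(V,V)\otimes V\arr V$ is already surjective forces $\Homsh^{G}(V,W)=0$ for $V\neq W$ in $I$ and shows that $\Homsh^{G}(V,V)$ is an invertible sheaf. So the first step of my plan is simply to quote that argument.

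To upgrade $\Homsh^{G}(V,V)\simeq\odi S$ to the identification with $\odi S\,\id_V$, I consider the natural $\odi S$-linear map $\odi S\arr\Homsh^{G}(V,V)$ sending $1\longmapsto\id_V$. Both source and target are locally free of rank $1$ (using \ref{lem:base change of invariants} for the target), so it suffices to show the map is surjective; by Nakayama this can be checked at each geometric point $\Spec k\arr S$. There, condition \ref{enu:generating irreducible representations for any 2} of \ref{prop:generating irreducible representations} says that $V\otimes k$ is irreducible, and since the final line of \ref{prop:generating irreducible representations} gives $\End^{G_k}(V\otimes k)\simeq k$, the identity element generates, and the claim follows.

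For the uniqueness statement, suppose $I$ and $I'$ both satisfy the hypotheses and $S$ is connected. Condition \ref{enu:generating irreducible representations for any 2} of \ref{prop:generating irreducible representations} sets up, via restriction to any fixed geometric point $\Spec k\arr S$, a canonical bijection $V\leftrightarrow V'$ between $I$ and $I'$ characterized by $V\otimes k\simeq V'\otimes k$. Given corresponding pairs $V\in I$ and $V'\in I'$, applying $\eta_{I,V'}$ gives a $G$-equivariant isomorphism
\[
V'\simeq\bigoplus_{W\in I}\Homsh^{G}(W,V')\otimes W.
\]
Each sheaf $\Homsh^{G}(W,V')$ is locally free by \ref{lem:base change of invariants}, and its rank can be computed at $\Spec k$: by Schur's lemma applied to the irreducible representations $W\otimes k,V'\otimes k$ of $G_k$, this rank is $0$ unless $W\otimes k\simeq V'\otimes k$, i.e.\ $W=V$, in which case it is $1$. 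Connectedness of $S$ then yields $\Homsh^{G}(V,V')=\shL_V$ for an invertible sheaf $\shL_V$ on $S$ and a $G$-equivariant isomorphism $V'\simeq V\otimes\underline{\shL_V}$, where the underline records that $\shL_V=\Homsh(V,V')^{G}$ carries by construction the trivial $G$-action.

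The only non-routine step is the identification $\Homsh^{G}(V,V)=\odi S\,\id_V$, since the rest is essentially a bookkeeping exercise built on \ref{prop:generating irreducible representations} and \ref{lem:base change of invariants}; I expect no substantive obstacle, since geometric-pointwise Schur plus Nakayama handles it directly.
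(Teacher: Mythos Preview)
Your proof is correct and follows essentially the same approach as the paper's: both arguments use that $\Homsh^{G}(V,W)$ is locally free (via \ref{lem:base change of invariants}) and then compute its rank at geometric points using condition \ref{enu:generating irreducible representations for any 2} of \ref{prop:generating irreducible representations}, and both identify $\Homsh^{G}(V,V)$ with $\odi S\id_V$ by checking that $\id_V$ generates fiberwise. The only cosmetic differences are the order of exposition (the paper treats uniqueness first) and that for the vanishing of $\Homsh^{G}(V,W)$ you quote the $\eta_{I,V}$ argument from the proof of \ref{prop:generating irreducible representations} while the paper argues directly via the bijection $I\arr I_{G_k}$; these are equivalent.
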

\begin{proof}
Assume $S$ connected. If we tensor the sheaves in $I$ by invertible
sheaves, we do not change their restrictions to the geometric points.
Conversely let $I'$ be another collection satisfying the condition
in \ref{prop:generating irreducible representations}. Given $W\in I'$,
there exists $V\in I$ such that $\Homsh^{G}(V,W)\neq0$. The sheaf
$\Homsh^{G}(V,W)$ is locally free thanks to \ref{lem:base change of invariants}.
Changing the base to all the geometric points of $S$, we see that
the map $\Homsh^{G}(V,W)\otimes V\arr W$ has to be surjective, that
$\Homsh^{G}(V,W)$ has rank $1$ and that $\rk V=\rk W$. In this
way we see that $V$ and $W$ differ by an invertible sheaf and that
$V$ is uniquely determined by $W$.

Now consider the locally free sheaf $\Homsh^{G}(V,W)$ for $V,W\in I$.
If $V\neq W$ then this sheaf is $0$ because $I\arr I_{G_{k}}$ is
bijective and therefore $V\otimes k\not\simeq W\otimes k$ for any
geometric point $\Spec k\arr S$. Finally, if $V=W$, we see that
$\id_{V}$ generates $\Homsh^{G}(V,V)$ in any geometric point.\end{proof}
\begin{defn}
We will say that $G$ has a \emph{good} \emph{representation theory}
over $S$ if it admits a collection $I$ as in \ref{prop:generating irreducible representations}.
We will briefly call a \emph{glrg }(good linearly reductive group)
a pair $(G,I_{G})$ where $G$ is a finite, flat, finitely presented
and linearly reductive group scheme over $S$ and $I_{G}$ is a collection
of elements as in \ref{prop:generating irreducible representations}.
We will simply write $G$ if this will not lead to confusion. If $T\arr S$
is a map, then $G_{T}=G\times_{S}T$ with the collection of the pullbacks
of the sheaves in $I_{G}$ is a glrg and we will always consider $G_{T}$
as a glrg with this particular collection.
\end{defn}
Note that if $G$ is a glrg then any $V\in I_{G}$ is not only an
irreducible representation, but a geometrically irreducible one. We
now show two examples of glrg's.
\begin{example}
Assume $S=\Spec k$, where $k$ is a field. Then $G$ has a good representation
theory if and only if $\End^{G}(V)\simeq k$ for all the irreducible
representations of $G$ and in this case, up to isomorphism, the only
collection $I$ satisfying \ref{prop:generating irreducible representations}
is $I_{G}$, the set of irreducible representations of $G$. In particular
any linearly reductive group $G$ over an algebraically closed field
is a glrg. Indeed if $W$ is an irreducible representation of $G$
then $\Hom^{G}(V,W)\neq0$ for some $V\in I$, since $\eta_{I,W}$
is an isomorphism. So $W\simeq V$.
\end{example}

\begin{example}
If $G$ is a diagonalizable group with group of characters $M=\Hom_{\text{grp}}(G,\Gm)$,
then $G$ has a good representation theory over $\Spec\Z$ and we
can choose as $I_{G}$ the set of representations $\Z_{m}$ given
by $\Z_{m}\arr\Z_{m}\otimes\Z[G]$, $1\arr1\otimes m$.
\end{example}
The definition of good linearly reductive group is just what we need
in order to have a representation theory for which coherent sheaves
with an action of $G$ are just, functorially, a collection of coherent
sheaves. The correct statement, which easily follows from the definition
of glrg and from \ref{prop:behaviour of irreducible representations for general glrg},
is the following:
\begin{prop}
\label{prop:equivariant coherent sheaves are collection of coherent sheves for glrg}If
$G$ is a glrg, then the functors below are quasi-inverse equivalences
of categories   \[   \begin{tikzpicture}[xscale=3.6,yscale=-0.6]     \node (A0_0) at (0, 0) {$\displaystyle \bigoplus_{V\in I_G}\duale V\otimes\shF_V$};     \node (A0_1) at (1, 0) {$(\shF_V)_{V\in I_G}$};     \node (A1_0) at (0, 1) {$\QCoh^G_S$};     \node (A1_1) at (1, 1) {$\QCoh_S^{I_G}$};     \node (A2_0) at (0, 2) {$\shF$};     \node (A2_1) at (1, 2) {$((V\otimes \shF)^G)_{V \in I_G}$};     \path (A1_0) edge [->]node [auto] {$\scriptstyle{}$} (A1_1);     \path (A2_0) edge [|->,gray]node [auto] {$\scriptstyle{}$} (A2_1);     \path (A0_1) edge [|->,gray]node [auto] {$\scriptstyle{}$} (A0_0);   \end{tikzpicture}   \] The
same statement holds if we replace $\QCoh$ by $\FCoh$ or $\Loc$.\end{prop}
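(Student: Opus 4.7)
My plan is to verify that the two stated functors are well-defined, mutually quasi-inverse, and restrict to the subcategories $\FCoh_S^G$ and $\Loc_S^G$. The argument is essentially a formal combination of the three facts just established: Proposition \ref{prop:generating irreducible representations}, Proposition \ref{prop:behaviour of irreducible representations for general glrg}, and Lemma \ref{lem:constant coherent sheaves go out invariants}. Throughout, I use the canonical identification $(V\otimes\shF)^G \simeq \Homsh^G(\duale V,\shF)$, which follows since $V$ is locally free of finite rank.

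First I would check well-definedness. Given $\shF\in\QCoh_S^G$, each $(V\otimes\shF)^G$ is a plain quasi-coherent sheaf, so the functor $F\colon\shF\longmapsto((V\otimes\shF)^G)_{V\in I_G}$ lands in $\QCoh_S^{I_G}$; dually, for any family $(\shF_V)_V$ the sheaf $\bigoplus_V\duale V\otimes\shF_V$ carries the obvious $G$-action through the first factor. Since $I_G$ is finite (it is in bijection with $I_{G_k}$ for any geometric point, by condition (4) of \ref{prop:generating irreducible representations}), the direct sum is finite, so the functor $H\colon(\shF_V)_V\longmapsto\bigoplus_V\duale V\otimes\shF_V$ preserves finite presentation and local freeness of finite rank; similarly $F$ does, by \ref{lem:base change of invariants}. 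Thus both functors restrict to $\FCoh$ and $\Loc$.

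Next I would verify $F\circ H\simeq\id$. For a family $(\shF_W)_W$ and a fixed $V\in I_G$, Lemma \ref{lem:constant coherent sheaves go out invariants} (applied with $\shH=\shF_W$) gives
\[
\Bigl(V\otimes\bigoplus_{W\in I_G}\duale W\otimes\shF_W\Bigr)^{\!G}\;\simeq\;\bigoplus_{W\in I_G}(V\otimes\duale W)^G\otimes\shF_W\;\simeq\;\bigoplus_{W\in I_G}\Homsh^G(W,V)\otimes\shF_W.
\]
By Proposition \ref{prop:behaviour of irreducible representations for general glrg}, $\Homsh^G(W,V)$ is $0$ for $W\neq V$ and is $\odi S\id_V$ for $W=V$, so this reduces to $\shF_V$. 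One checks that these isomorphisms are natural in the family and in $V$, giving $F\circ H\simeq\id_{\QCoh_S^{I_G}}$.

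Finally I would verify $H\circ F\simeq\id$. The natural evaluation map
\[
\bigoplus_{V\in I_G}\duale V\otimes(V\otimes\shF)^G\;\simeq\;\bigoplus_{V\in I_G}\duale V\otimes\Homsh^G(\duale V,\shF)\;\arr\;\shF
\]
is a $G$-equivariant morphism functorial in $\shF$. Up to reindexing $I_G$ by the dual bijection $V\mapsto\duale V$ (which exists by the uniqueness in \ref{prop:behaviour of irreducible representations for general glrg} applied to the collection $\{\duale V\}_{V\in I_G}$, which again satisfies the conditions of \ref{prop:generating irreducible representations}), this map coincides with the morphism $\eta_{I_G,\shF}$, hence is an isomorphism by \ref{prop:generating irreducible representations}. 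I expect the only mildly subtle point to be this reindexing via duals: concretely, one has to observe that the collection $\{\duale V\}$ still consists of geometrically irreducible representations hitting every isomorphism class in $I_{G_k}$ exactly once, so Proposition \ref{prop:generating irreducible representations} applies to it just as well, and the isomorphism $\eta$ for $\{\duale V\}$ is exactly the map written above.
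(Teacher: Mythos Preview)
Your proof is correct and follows exactly the approach the paper intends: the paper simply states that the proposition ``easily follows from the definition of glrg and from \ref{prop:behaviour of irreducible representations for general glrg}'', and your argument spells out precisely these ingredients, together with \ref{lem:constant coherent sheaves go out invariants} and the defining isomorphism $\eta_{I_G,\shF}$. Your handling of the dual reindexing is the only point requiring care, and you address it correctly by observing that $\{\duale V\}_{V\in I_G}$ again satisfies the hypotheses of \ref{prop:generating irreducible representations}.
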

\begin{example}
When $G$ is a finite diagonalizable group with group of characters
$M=\Hom(G,\Gm)$ and $R=\Z$, since $I_{G}$ is in bijection with
$M$, we retrieve the classical equivalence between $\QCoh_{R}^{G}$
and the stack of $M$-graded quasi-coherent sheaves.
\end{example}
The following result extends the usual result for linearly reductive
groups over an algebraically closed field.
\begin{prop}
\label{prop:decomposition of OG}If $G$ is a glrg, then we have an
isomorphism
\[
\odi S[G]\simeq\bigoplus_{V\in I_{G}}\duale{\underline{V}}\otimes V
\]
\end{prop}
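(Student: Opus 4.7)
The plan is to obtain the desired decomposition as a direct application of the two main structural results of this section, namely Proposition~\ref{prop:generating irreducible representations} and Proposition~\ref{prop:the structure map are the invariants}. First I would apply Proposition~\ref{prop:generating irreducible representations}, condition~\ref{enu:generating irreducible representations natural maps quasi-coherent}, to the $G$-equivariant quasi-coherent sheaf $\odi{S}[G]$, obtaining a canonical $G$-equivariant isomorphism
\[
\eta_{I_G,\odi{S}[G]}\colon\bigoplus_{V\in I_{G}}\Homsh^{G}(V,\odi{S}[G])\otimes V\arrdi{\simeq}\odi{S}[G],
\]
where each factor $\Homsh^{G}(V,\odi{S}[G])$ is viewed as carrying the trivial $G$-action, so that the above map is equivariant simply by functoriality of evaluation on the $V$ factor.

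Next I would identify the coefficient sheaves. Since every $V\in I_{G}$ is locally free of finite rank, we have $\Homsh(V,\odi{S}[G])\simeq\duale{V}\otimes\odi{S}[G]$ as $G$-equivariant sheaves, and taking $G$-invariants yields
\[
\Homsh^{G}(V,\odi{S}[G])\simeq(\duale{V}\otimes\odi{S}[G])^{G}.
\]
Now Proposition~\ref{prop:the structure map are the invariants}, applied with $\shF=\duale{V}$, provides a natural isomorphism $\duale{V}\simeq(\duale{V}\otimes\odi{S}[G])^{G}$ given by the structure map $\nu$. Combining these two identifications, and noting that $(\duale{V}\otimes\odi{S}[G])^{G}$ inherits by construction the trivial $G$-action (hence corresponds to $\duale{\underline{V}}$), we get a canonical isomorphism $\Homsh^{G}(V,\odi{S}[G])\simeq\duale{\underline{V}}$ in $\QCoh^{G}S$.

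Substituting this into the decomposition produced by $\eta_{I_G,\odi{S}[G]}$ gives
\[
\odi{S}[G]\simeq\bigoplus_{V\in I_{G}}\duale{\underline{V}}\otimes V,
\]
as desired. There is essentially no hard step here: both the existence of the decomposition (from the glrg hypothesis) and the identification of the multiplicity spaces with $\duale{V}$ (from the general formula $\shF\simeq(\shF\otimes\odi{S}[G])^{G}$) have already been established. The only point deserving a brief verification is that the action on the $\duale{V}$ factor produced by the identification is indeed trivial, which is automatic since it arises as a sheaf of invariants; this ensures that the isomorphism is not merely one of underlying sheaves but one in $\QCoh^{G}S$.
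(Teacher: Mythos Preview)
Your proof is correct and follows essentially the same approach as the paper: apply the glrg decomposition to $\odi S[G]$ and then identify $\Homsh^{G}(V,\odi S[G])\simeq(\duale V\otimes\odi S[G])^{G}\simeq\duale V$ via Proposition~\ref{prop:the structure map are the invariants}. The paper's proof is simply the one-line version of what you wrote.
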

\begin{proof}
By \ref{prop:the structure map are the invariants}, we have
\[
\Homsh^{G}(V,\odi S[G])\simeq(\duale V\otimes\odi S[G])^{G}\simeq\duale V
\]

\end{proof}
We state here the subsequent lemma, although we will use it in the
following sections.
\begin{lem}
\label{lem:co unit description}Given $V\in\Loc^{G}R$, the composition
\[
\duale V\otimes V\arrdi{\simeq}\Hom^{G}(V,R[G])\otimes V\arrdi{\theta_{\duale V}^{R[G]}}R[G]\arrdi{\varepsilon_{G}}R
\]
is the evaluation $e_{V}\colon\duale V\otimes V\arr R$, $e_{V}(\phi\otimes v)=\phi(v)$.
In particular if $G$ is a glrg we have
\[
\varepsilon_{G}=\bigoplus_{V\in I_{G}}e_{V}\colon\bigoplus_{V\in I_{G}}\duale V\otimes V\simeq R[G]\arr R
\]
\end{lem}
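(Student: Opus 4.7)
The plan is to unwind the isomorphism $\duale V\simeq\Hom^{G}(V,R[G])$ coming from Proposition \ref{prop:the structure map are the invariants} and then chase the diagram explicitly; the content of the lemma is essentially the co-unit axiom for the comodule structure on $V$.

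First I would describe the isomorphism $\duale V\arr\Hom^{G}(V,R[G])$ concretely. Applying Proposition \ref{prop:the structure map are the invariants} with $\shF=\duale V$ and using the canonical identification $\duale V\otimes R[G]\simeq\Hom(V,R[G])$, the isomorphism sends $\phi\in\duale V$ to the map $\psi_{\phi}\colon V\arr R[G]$ defined by $\psi_{\phi}(v)=(\phi\otimes\id)\nu_{V}(v)$, where $\nu_{V}\colon V\arr V\otimes R[G]$ is the comodule structure. I would verify $G$-equivariance of $\psi_{\phi}$ using coassociativity of $\nu_{V}$, and then observe, via the counit axiom $(\id_{V}\otimes\varepsilon_{G})\circ\nu_{V}=\id_{V}$ for the comodule $V$, that
\[
(\varepsilon_{G}\circ\psi_{\phi})(v)=(\phi\otimes\varepsilon_{G})\nu_{V}(v)=\phi\bigl((\id_{V}\otimes\varepsilon_{G})\nu_{V}(v)\bigr)=\phi(v).
\]
Thus the inverse of $\phi\mapsto\psi_{\phi}$ is post-composition with $\varepsilon_{G}$.

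Next I would chase $\phi\otimes v\in\duale V\otimes V$ through the composition: it is sent to $\psi_{\phi}\otimes v$, then, by definition of $\theta_{\duale V}^{R[G]}$ (Definition \ref{def: evaluation for coherent equivariant sheaves}), to $\psi_{\phi}(v)\in R[G]$, and finally by $\varepsilon_{G}$ to $\varepsilon_{G}(\psi_{\phi}(v))=\phi(v)$ by the computation above. This is precisely $e_{V}(\phi\otimes v)$, proving the first claim.

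For the "In particular" statement, I would invoke Proposition \ref{prop:decomposition of OG}: when $G$ is a glrg, the isomorphism $R[G]\simeq\bigoplus_{V\in I_{G}}\duale V\otimes V$ is obtained by assembling, for each $V\in I_{G}$, the map $\theta_{\duale V}^{R[G]}\colon V\otimes\Hom^{G}(V,R[G])\arr R[G]$ precomposed with the isomorphism $\duale V\simeq\Hom^{G}(V,R[G])$. Hence the restriction of $\varepsilon_{G}$ to the $V$-summand is exactly the composition studied in the first half, which equals $e_{V}$. The only subtle point — and what I expect to be the main technical obstacle — is keeping track of dualities and of the distinction between the $G$-action on $R[G]$ used in Definition \ref{def:regular representation} (built from $\Delta_{G}$, the antipode $\sigma_{G}$ and a swap) versus the action on $\duale V\otimes V$ and on $V\otimes R[G]$; once the identifications are written down carefully, the counit axiom finishes everything.
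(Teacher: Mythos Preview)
Your proposal is correct and follows essentially the same route as the paper: both arguments reduce to the counit axiom $(\id_V\otimes\varepsilon_G)\circ\nu_V=\id_V$ for the comodule $V$. The paper fixes a local basis $\{w_k\}$ of $R[G]$, writes the structure map $\nu_{\duale V}$ in coordinates, and then computes; your presentation is the basis-free version of exactly the same computation, your formula $\psi_\phi(v)=(\phi\otimes\id)\nu_V(v)$ being what the paper's expression for $\nu_{\duale V}(\phi)$ becomes under the identification $\duale V\otimes R[G]\simeq\Hom(V,R[G])$.
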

\begin{proof}
The statement is local on $R$, so we can assume that $R[G]$ is free
with basis $\{w_{k}\}_{k}$. If $\mu\colon V\arr V\otimes R[G]$ is
the structure map of $V$, then the structure map $\nu\colon\duale V\arr\duale V\otimes R[G]$
has the expression
\[
\nu(\phi)=\sum_{k}(\phi\otimes w_{k}^{*}\circ\mu)\otimes w_{k}
\]
The composition in the statement can be written as 
\[
f_{V}\colon\duale V\otimes V\arrdi{\nu\otimes\id}(\duale V\otimes R[G])\otimes V\simeq(\duale V\otimes V)\otimes R[G]\arrdi{e_{V}\otimes\varepsilon_{G}}R
\]
So
\[
f_{v}(\phi\otimes v)=e_{V}\otimes\varepsilon_{G}(\sum_{k}(\phi\otimes w_{k}^{*}\circ\mu)\otimes v\otimes w_{k})=\sum_{k}\varepsilon_{G}(w_{k})\phi\otimes w_{k}^{*}(\mu(v))\in R
\]
Moreover we can write
\[
\mu(v)=\sum_{k}v_{k}\otimes w_{k}\text{ and }v=\sum_{k}\varepsilon_{G}(w_{k})v_{k}
\]
and therefore $\phi\otimes w_{k}^{*}(\mu(v))=\phi(v_{k})$ and $f_{v}(\phi\otimes v)=\phi(v)$.
\end{proof}
We do not know an explicit characterization of glrg's among the linearly
reductive groups. On the other hand we are going to prove that any
finite, flat and finitely presented linearly reductive group $G$
is fppf locally a glrg. So, up to fppf base change, we can always
assume that we have a collection $I_{G}$ of geometrically irreducible
representations and therefore a simpler representation theory. If
moreover $G$ is étale, we will show that $G$ is also étale locally
a glrg. In particular we will conclude that if $G$ is étale and defined
over a strictly Henselian ring then it is a glrg.
\begin{lem}
\label{lem:deformation of locally free sheaves}Let $\stX$ be a proper
and flat algebraic stack over a noetherian local ring $R$. Denote
by $k$ the residue field of $R$ and consider a locally free sheaf
$V_{0}$ of rank $n$ over $\stX\times k$. If $\Hl^{2}(\stX\times k,\Endsh(V_{0}))=0$,
then there exists a locally free sheaf of rank $n$ over $\stX\times\widehat{R}$
lifting $V_{0}$, where $\widehat{R}$ is the completion of $R$.\end{lem}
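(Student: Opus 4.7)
The plan is the standard obstruction-theoretic argument for deforming locally free sheaves, combined with Grothendieck's existence theorem for proper stacks. First, I would reduce to the case $R = \widehat{R}$ complete, so we can denote $R_n = R/m_R^{n+1}$ and $\stX_n = \stX \times R_n$, with $\stX_0 = \stX \times k$ and the given $V_0$ on $\stX_0$. The goal becomes to construct a compatible system $(V_n)_{n\geq 0}$ of locally free sheaves of rank $n$ with $V_n$ on $\stX_n$ and $V_n \otimes_{R_n} R_{n-1} \simeq V_{n-1}$, and then algebraize it.

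For the inductive step, I would use the standard deformation theory for locally free sheaves on the square-zero thickening $\stX_n \hookrightarrow \stX_{n+1}$ defined by the square-zero ideal $I_n = m_R^{n+1}/m_R^{n+2}$, which, as a sheaf on $\stX_0$, is just $\Endsh(V_0)$-free of rank equal to $\dim_k I_n$ (since $I_n$ is a finite-dimensional $k$-vector space). The obstruction to lifting $V_n$ to $\stX_{n+1}$ lies in
\[
\Ext^2_{\stX_0}(V_0, V_0 \otimes_k I_n) \simeq \Hl^2(\stX_0, \Endsh(V_0)) \otimes_k I_n,
\]
which vanishes by hypothesis; hence lifts exist at every stage. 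This uses that $\stX$ is flat over $R$ (so the square-zero ideal sheaf is indeed $\odi{\stX_0} \otimes_k I_n$) and that for locally free sheaves the obstruction theory is controlled by $\Ext^2(V_0, V_0 \otimes I)$ on the central fiber.

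Finally, to pass from the formal system $(V_n)$ on the formal stack $\widehat{\stX} = \varinjlim \stX_n$ to an honest locally free sheaf on $\stX \times \widehat{R}$, I would invoke Grothendieck's existence theorem for proper algebraic stacks (the formal GAGA statement, due in this generality to Olsson and others): properness and flatness of $\stX$ over $R$ ensure that the category of coherent sheaves on $\stX \times \widehat{R}$ is equivalent to the category of compatible systems of coherent sheaves on the $\stX_n$. The limit sheaf is automatically locally free of rank $n$ because this can be checked on the closed fiber $\stX_0$, where it equals $V_0$.

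The main obstacle is really just ensuring that formal GAGA is available in the generality of proper flat algebraic stacks over a complete noetherian local ring; the obstruction calculation itself is routine once one knows that deformations of a locally free sheaf on a square-zero thickening are controlled by $\Ext^1$ with obstruction in $\Ext^2$, and for locally free sheaves these $\Ext$ groups simplify to cohomology of $\Endsh(V_0)$.
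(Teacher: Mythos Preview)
Your proposal is correct and follows essentially the same two-step strategy as the paper: reduce to a single square-zero lifting problem and then invoke Grothendieck's existence theorem for proper stacks to algebraize the resulting formal system. The only real difference is that where you cite the obstruction theory for locally free sheaves as standard (obstruction in $\Ext^2(V_0,V_0\otimes I)\simeq\Hl^2(\stX_0,\Endsh(V_0))\otimes_k I$), the paper works this out explicitly in the language of gerbes: it defines the stack $\stY$ of local lifts of $\overline{V}$ over the fppf site of $\stX$, checks by hand that $\stY\to\stX_{\mathrm{fppf}}$ is a gerbe banded by $\pi_*\Endsh(V_0)$ (local existence of lifts via smoothness of $\Gl_n$, local uniqueness via surjectivity of $\Gl_n(B)\to\Gl_n(B/IB)$, and a direct computation of the automorphism sheaf), and then uses Giraud's classification of banded gerbes by $\Hl^2$ to conclude the gerbe is trivial and hence has a global section. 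Your citation of the obstruction-theoretic fact is the packaged version of exactly this argument, so the two proofs are equivalent; the paper's version is more self-contained for stacks, while yours is cleaner if one is willing to grant the deformation-theory input.
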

\begin{proof}
Taking into account Grothendieck's existence theorem for proper stacks,
we can assume that $R$ is an Artinian ring (so that $\widehat{R}\simeq R$)
and that we have a lifting $\overline{V}$ of $V_{0}$ over $\stX\times(R/I)$,
where $I$ an ideal of $R$ such that $I^{2}=0$. Define the stack
$\stY$ over the fppf site $\stX_{\text{fppf}}$ of $\stX$ whose
objects over $\Spec B\arr\stX$ are locally free sheaves $N$ of rank
$n$ over $B$ with an isomorphism $\phi\colon N\otimes(B/IB)\arr\overline{V}\otimes(B/IB)$.
A section of $\stY\arr\stX_{\text{fppf}}$ yields a lifting of $\overline{V}$
on $\stX$. We are going to prove that $\stY$ is a gerbe over $\stX_{\text{fppf}}$
banded by the sheaf of abelian groups $\pi_{*}\Endsh(V_{0})$, where
$\pi\colon\stX\times k\arr\stX$ is the obvious closed immersion.
Since $\Hl^{2}(\stX,\pi_{*}\Endsh(V_{0}))=\Hl^{2}(\stX\times k,\Endsh(V_{0}))=0$
parametrizes those gerbes (see \cite[Chapter IV, §3, Section 3.4]{Giraud1971}),
we can then conclude that $\stY\arr\stX_{\text{fppf}}$ is a trivial
gerbe, which means that it has a section as required.

I claim that $\overline{V}$ is trivial in the fppf topology of $\stX$,
which implies that $\stY\arr\stX_{\text{fppf}}$ has local sections.
Indeed if $B$ is a ring and $P\arr\Spec B/IB$ is a $\Gl_{n}$-torsor
then by standard deformation theory it extends to a smooth map $Q\arr\Spec B$.
In particular, if we base change to $Q$, we can conclude that $P$
over $Q\times(B/IB)$ has a section, which means that it is trivial.

I also claim that two objects of $\stY$ over the same object of $\stX_{\text{fppf}}$
are locally isomorphic. Replacing again locally free sheaves by $\Gl_{n}$-torsors,
given $\Gl_{n}$-torsors $P,Q$ over $\Spec B$, we have to show that
an equivariant isomorphism $P\times(B/IB)\arr Q\times(B/IB)$ locally
extends to an equivariant isomorphism $P\arr Q$. In particular we
can assume that $P$ and $Q$ are both trivial and in this case the
above property follows because $\Gl_{n}(B)\arr\Gl_{n}(B/IB)$ is surjective,
being $\Gl_{n}$ smooth.

The previous two claims show that $\stY\arr\stX_{\text{fppf}}$ is
a gerbe. We have now to check the banding and therefore to compute
the automorphism group of an object $(N,\phi)\in\stY$ over a ring
$B$. The group $\Aut(\chi)$ consists of the automorphism $N\arrdi{\lambda}N$
inducing the identity on $N/IN$. It is easy to check that the map
\[
\Hom_{B}(N,IN)\arr\Aut\chi\comma\delta\longmapsto\id_{N}+\delta
\]
is an isomorphism of groups. Since $IN=I\otimes_{R}N$ we have
\[
\Hom_{B}(N,IN)=I\otimes\End_{B}(N)\simeq I/I^{2}\otimes\End_{B}(N)\simeq\End_{B/m_{R}B}(M\otimes(B/m_{R}B))
\]
\end{proof}
\begin{lem}
\label{lem:lifting representation on henselian rings}Assume that
$S=\Spec R$, where $R$ is a Henselian ring with residue field $k$
and let $V$ be a representation of $G$ over $\overline{k}$. If
$V$ is defined over $k$ then it lifts to $R$.\end{lem}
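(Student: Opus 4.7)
The plan is to view $G$-representations as locally free sheaves on the classifying stack $BG$, lift to the completion $\widehat{R}$ via Lemma~\ref{lem:deformation of locally free sheaves}, and then descend to $R$.

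Let $V_0$ be a $G_k$-representation with $V_0\otimes_k\overline{k}\simeq V$; it suffices to lift $V_0$ to $R$. Since $G$ is flat, finite and finitely presented, the stack $\stX=BG$ is proper and flat over $R$, and locally free sheaves of rank $n$ on $\stX$ correspond to $G$-representations on locally free $R$-modules of rank $n$. For any $M\in\QCoh^{G_k}k$, cohomology on $BG_k$ is computed by $H^i(BG_k,\widetilde{M})\simeq H^i(\Spec k, M^{G_k})$, which vanishes in positive degrees because $G_k$ is linearly reductive. Taking $M=\End_k(V_0)$ gives $H^2(BG_k,\Endsh(V_0))=0$, so Lemma~\ref{lem:deformation of locally free sheaves} produces a locally free sheaf on $BG_{\widehat{R}}$ of rank $n$ restricting to $V_0$, i.e.\ a lift $\widehat V$ of $V_0$ to a $G_{\widehat R}$-representation.

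To descend from $\widehat{R}$ to $R$, I would use that comodule structures on $R^n$ over the finite free Hopf algebra $R[G]$ form a closed subscheme $Y\subseteq\Hom_R(R^n,R^n\otimes R[G])$ cut out by the coassociativity and counit equations; hence $Y$ is affine of finite type over $R$. Choosing an $\widehat{R}$-basis of $\widehat V$ lifting a $k$-basis of $V_0$ gives an $\widehat R$-point of $Y$ reducing to the $k$-point $y_0$ determined by $V_0$. The vanishing of obstructions from the deformation argument above shows that $Y$ is formally smooth at $y_0$; since $Y$ is of finite type over $R$, this implies genuine smoothness of $Y$ at $y_0$, and Hensel's lemma for the Henselian ring $R$ then produces an $R$-point of $Y$ reducing to $y_0$, giving the desired lift $V_R$.

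The main obstacle will be this final descent step: one must combine the deformation-theoretic input (vanishing of $H^2$) with the finite-type structure to upgrade formal smoothness at $y_0$ to actual smoothness, so that Hensel's lemma becomes applicable. Equivalently one can appeal to Artin approximation for the Henselian ring $R$ to produce an $R$-point of $Y$ that approximates the $\widehat R$-point sufficiently closely to share its residue $y_0$.
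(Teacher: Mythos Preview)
Your proposal follows the same core strategy as the paper: interpret representations as locally free sheaves on $\Bi G$, use linear reductivity to get $\Hl^2(\Bi G_k,\Endsh(V_0))=0$, apply Lemma~\ref{lem:deformation of locally free sheaves} to lift to $\widehat R$, and then descend to $R$. The only real difference is the descent step.

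The paper handles descent by first reducing, via finite presentation of $G$, to the case where $R$ is the Henselization of a finite-type $\Z$-scheme, and then invokes Artin approximation. Your primary route is different and in fact more elementary: you introduce the finite-type $R$-scheme $Y$ of comodule structures on $R^n$ and argue that the vanishing of obstructions makes $Y\to\Spec R$ smooth at $y_0$, so Hensel's lemma produces an $R$-point directly. This works: lifting a point of $Y$ along a square-zero extension $A'\to A$ amounts to lifting the $G_A$-representation together with a choice of basis, and the basis can always be lifted by Nakayama, so the obstruction is exactly the one killed by $\Hl^2(\Bi G_k,\Endsh(V_0))=0$. Since $R$ is noetherian (the paper's convention for ``Henselian''), finite type gives finite presentation, and unobstructedness at $y_0$ then yields genuine smoothness at $y_0$; Hensel applies. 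This avoids Artin approximation entirely.

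One caution about your alternative: invoking Artin approximation ``for the Henselian ring $R$'' is not valid for an arbitrary noetherian Henselian local ring---one needs excellence or, as the paper arranges, that $R$ be the Henselization of a finite-type $\Z$-algebra. So if you want to take that route, you must first perform the paper's reduction step. Your smoothness-plus-Hensel argument does not need it.
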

\begin{proof}
Since $G$ is finitely presented, we can assume that $R$ is the Henselization
of a scheme of finite type over $\Z$. Since $G$ is linearly reductive,
we have that $\Hl^{2}(\Bi(G\times k),-)=0$ and, thinking $G$-representations
as sheaves over $\Bi G$ and using \ref{lem:deformation of locally free sheaves},
we obtain a lifting of $V$ to a representation over the completion
$\widehat{R}$. We can then conclude using Artin approximation theorem
over $R$.\end{proof}
\begin{prop}
\label{prop:G has locally a good representation theory}There exists
an fppf coverings $\stU=\{U_{i}\arr S\}_{i\in I}$ such that $G\times_{S}U_{i}$
is a glrg over $U_{i}$. If $G$ is étale there exists an étale covering
with the same property.\end{prop}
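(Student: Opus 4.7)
The plan is to work locally on $S$ and produce, for each point $s \in S$, an fppf (respectively étale) neighborhood on which $G$ becomes a glrg; the union of such neighborhoods will then be the desired covering. The strategy rests on lifting representations from a residue field (Lemma~\ref{lem:lifting representation on henselian rings}) after first extending the residue field enough to split all geometric irreducibles.

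First, I would reduce to the case $S = \Spec R$ with $R$ local with residue field $k$ and closed point $s$. Let $V_1, \dots, V_n$ be representatives of the isomorphism classes of irreducible representations of $G_{\overline{k}}$. Since $G_{\overline{k}}$ is linearly reductive over the algebraically closed field $\overline{k}$, each $\End^{G_{\overline{k}}}(V_i) \simeq \overline{k}$ by Schur, so this collection satisfies the criterion of Proposition~\ref{prop:generating irreducible representations} at the geometric point $\Spec \overline{k} \to S$. Next, I would find a finite extension $k'/k$ over which every $V_i$ descends: this is possible because the $V_i$ can always be defined over a finite extension of the prime field, hence over a finite extension of $k$. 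When $G$ is étale, $G_{\overline{k}}$ is a constant finite group of order invertible in $k$, and its $\overline{k}$-irreducibles are defined over $\mathbb{Q}(\zeta_n) \subseteq k^{\text{sep}}$ for $n$ the exponent of $G(\overline{k})$, so $k'$ can be chosen separable over $k$.

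Second, I would realize this residue-field extension by a flat base change: in the fppf case a suitable finite free extension $R \to R[x]/(f(x))$ does the job, while in the étale case the separability of $k'/k$ allows us to produce an étale neighborhood $R \to R'$ with residue field $k'$. Passing to the Henselization $(R')^h$ (still étale-local when $G$ is étale) and invoking Lemma~\ref{lem:lifting representation on henselian rings}, each $V_i$, now defined over the residue field $k'$, lifts to a representation $\widetilde V_i$ of $G$ over $(R')^h$. Because $(R')^h$ is a filtered colimit of étale neighborhoods of $s$, and each $\widetilde V_i$ is finitely presented as an $\odi{}[G]$-comodule, these liftings are actually defined on some pointed étale neighborhood $R \to R''$ of $s$ (fppf in the fppf case).

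Third, I would check that, after possibly shrinking $R''$, the collection $\{\widetilde V_i\}$ satisfies the criterion~\ref{enu:generating irreducible representations is S 3} of Proposition~\ref{prop:generating irreducible representations} at the closed point of $\Spec R''$, which then yields a glrg structure since $\Spec R''$ is connected. By Lemma~\ref{lem:base change of invariants}, the sheaves $\Homsh^G(\widetilde V_i, \widetilde V_j)$ are locally free; at the closed point their ranks are $\delta_{ij}$ (because there $\widetilde V_i$ restricts to $V_i$ and $V_i, V_j$ are distinct irreducibles with trivial endomorphisms), so by Nakayama these ranks are $\delta_{ij}$ on a neighborhood. Similarly $\sum_i (\rk \widetilde V_i)^2 = \rk R''[G]$ on the whole neighborhood, since the identity $R[G]_{\overline{k}} \simeq \bigoplus_i \duale{V_i} \otimes V_i$ holds at the closed geometric point by Proposition~\ref{prop:decomposition of OG}. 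Once these rank identities hold at the closed point, bijectivity with $I_{G_{\overline{k}'}}$ at every geometric point $\Spec \overline{k}' \to \Spec R''$ follows from condition \ref{enu:generating irreducible representations is S 3} since $\Spec R''$ is connected.

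The main obstacle, I expect, will be the étale case of the second step: one must verify that the descent of the $V_i$ to a finite extension of $k$ can be arranged through a \emph{separable} extension. This is where the étale hypothesis on $G$ is essential, since it forces $G_{\overline{k}}$ to be a constant group whose representations are controlled by cyclotomic, and hence separable, data. A secondary technical point is confirming that the glrg criterion, checked only at the closed point, propagates to all geometric points of the (connected) neighborhood; this is what the final rank-constancy/Nakayama argument in Step~3 accomplishes via Lemma~\ref{lem:base change of invariants}.
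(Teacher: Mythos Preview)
Your proposal is correct and follows the same strategy as the paper: descend the geometric irreducibles to a finite (separable, in the \'etale case) extension of the residue field, realize this extension by an fppf/\'etale neighborhood, pass to the Henselization, lift via Lemma~\ref{lem:lifting representation on henselian rings}, and spread out. The only differences are cosmetic---your \'etale argument invokes Brauer's splitting-field theorem where the paper instead trivializes the \'etale group and descends to the perfect prime field, and your Step~3 rank/Nakayama discussion is unnecessary since criterion~\ref{enu:generating irreducible representations is S 3} of Proposition~\ref{prop:generating irreducible representations} applies directly on a connected neighborhood once the lifts $\widetilde V_i$ restrict to the $V_i$ at the closed point.
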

\begin{proof}
We first deal with the case $S=\Spec k$, where $k$ is a field. An
irreducible representation $V$ of $G_{\overline{k}}$ is given by
a group homomorphism $G_{\overline{k}}\arr\Gl(V)$. Such a morphism
is defined over a finite extension $L/k$. Since $I_{G_{\overline{k}}}$
is finite we get our extension. Now assume that $G$ is étale. If
$k$ is perfect we already have our result. So assume $\car k=p>0$.
After passing to a separable extension of $k$ we can assume $G$
constant of order prime to $p$. So $G$ is defined over $\F_{p}$,
which is perfect and again we have our claim.

Now return to the general case. Since $G$ is finitely presented,
we can assume $S$ to be of finite type over $\Z$. Let $p\in S$
and $L/k(p)$ an extension such that $G_{L}$ is a glrg and $L/k(p)$
is separable if $G$ is étale. There exists a flat finitely presented
map $h\colon U\arr S$ such that $f^{-1}(p)\simeq\Spec L$. If $L/k$
is separable we can restrict $U$ and assume $h$ to be étale. This
shows that we can assume that $G_{k(p)}$ is a glrg. Now let $R$
be the Henselization of $\odi{S,p}$. From \ref{lem:lifting representation on henselian rings}
any $G_{k(p)}$ representation lifts to $R$ and, since $R$ is a
direct limit of algebras whose spectrum is étale over $S$, we get
the required result.
\end{proof}
Putting together \ref{lem:lifting representation on henselian rings}
and \ref{prop:G has locally a good representation theory} we get:
\begin{thm}
\label{thm:etale linearly reductive over sctrictly are glrg}A constant
linearly reductive group over a strict Henselian ring has a good representation
theory.
\end{thm}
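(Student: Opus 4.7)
The plan is to combine Proposition \ref{prop:G has locally a good representation theory} with the universal property of strict Henselian rings, namely that every \'etale covering of $\Spec R$ admits a section. Since $G$ is constant, it is in particular \'etale, so Proposition \ref{prop:G has locally a good representation theory} produces an \'etale covering $\{U_{i}\arr\Spec R\}_{i\in I}$ such that $G\times_{R}U_{i}$ is a glrg over $U_{i}$ for every $i$, equipped with a collection $I_{i}\subseteq\Loc^{G\times U_{i}}U_{i}$ satisfying the conditions of Proposition \ref{prop:generating irreducible representations}.

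Because $R$ is strictly Henselian, some $U_{i}\arr\Spec R$ admits a section $s\colon\Spec R\arr U_{i}$; call it $U=U_{i}$ and $J=I_{i}$ for brevity. I would then set
\[
I_{G}=\{s^{*}V\st V\in J\}\subseteq\Loc^{G}R,
\]
which is a collection of locally free $G$-equivariant sheaves on $\Spec R$. The point is that condition (\ref{enu:generating irreducible representations is S 3}) of Proposition \ref{prop:generating irreducible representations} is purely a statement about a single geometric point, and it is preserved by pullback along $s$: indeed, if $\Spec\overline{k}\arr\Spec R$ is the geometric point over the (separably closed) residue field $k$ of $R$, then composing with $s$ gives a geometric point of $U$, and the collection $J$ satisfies the bijection condition at that geometric point by hypothesis. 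Since the restrictions $V\otimes_{U}\overline{k}$ and $(s^{*}V)\otimes_{R}\overline{k}$ agree, the collection $I_{G}$ satisfies the bijectivity condition at the chosen geometric point, and $\Spec R$ is connected (being the spectrum of a local ring), so Proposition \ref{prop:generating irreducible representations}(\ref{enu:generating irreducible representations is S 3}) applies.

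There is no genuine obstacle here since the bulk of the work has already been done in Proposition \ref{prop:G has locally a good representation theory} and in the equivalence of the conditions in Proposition \ref{prop:generating irreducible representations}; the only thing to verify explicitly is that the \'etale locality statement can be upgraded to an actual statement over $\Spec R$ by invoking strict Henselianness. The minor bookkeeping point worth checking carefully is that the hypothesis ``constant'' (rather than merely ``linearly reductive'') is used precisely to apply the \'etale-local version of Proposition \ref{prop:G has locally a good representation theory}, since the fppf-local version would not suffice to guarantee a section over a strictly Henselian base.
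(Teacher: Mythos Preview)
Your proposal is correct and amounts to the same argument as the paper's one-line proof, which simply says the theorem follows by combining Lemma \ref{lem:lifting representation on henselian rings} and Proposition \ref{prop:G has locally a good representation theory}. Your version packages this cleanly: rather than re-running the lifting argument of Lemma \ref{lem:lifting representation on henselian rings} directly, you invoke Proposition \ref{prop:G has locally a good representation theory} as a black box to produce an \'etale cover on which $G$ is a glrg, and then use the standard fact that an \'etale cover of a strictly Henselian local ring admits a section (some $U_{i}$ meets the closed point, its fibre there is \'etale over the separably closed residue field hence a disjoint union of rational points, and Henselianness lifts a rational point to a section). Pulling back $I_{i}$ along that section and checking condition (\ref{enu:generating irreducible representations is S 3}) at the unique geometric point is then immediate, exactly as you describe.
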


\subsection{Linearly reductive groups over strictly Henselian rings.}

In this section we will study the structure and the actions of a flat,
finite and finitely presented linearly reductive group $G$ in the
special case when the base scheme is the spectrum of a strictly Henselian
ring. In particular we will describe the decomposition into unions
of connected components of a finite scheme with an action of $G$
and the structure of the connected component of $G$ containing the
identity.

Through this subsection we will assume $S=\Spec R$, where $R$ is
a strictly Henselian ring. Again $G$ will be a finite, flat, linearly
reductive group over $R$. We start with:
\begin{lem}
\label{cor:base change of local ring is local for strictly henselian ring}If
$A,B$ are local $R$-algebras with $A$ finite, then $A\otimes_{R}B$
is local.\end{lem}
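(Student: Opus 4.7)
The plan is to track how maximality interacts with the finite base change along $R\to B$, reducing everything to a fact about purely inseparable extensions.

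First I would observe that since $A$ is finite over $R$, the tensor product $A\otimes_R B$ is finite over $B$, hence semilocal. Its maximal ideals are precisely the primes lying over $m_B$, which correspond bijectively to the maximal ideals of
\[
(A\otimes_R B)\otimes_B (B/m_B) = A\otimes_R k_B,
\]
where $k_B = B/m_B$. Thus the problem reduces to showing that $A\otimes_R k_B$ has a unique maximal ideal.

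Next, since $R\to B$ is local, the composition $R\to B\to k_B$ kills $m_R$ and factors through the residue field $k = R/m_R$ of $R$. Hence
\[
A\otimes_R k_B \;\simeq\; (A/m_R A)\otimes_k k_B.
\]
Now $A/m_R A$ is a finite $k$-algebra; because $A$ is local and $R\to A$ is local (so $m_R\subseteq m_A$), it is itself local with residue field $k_A = A/m_A$ and nilpotent maximal ideal. Modding out the nilpotents on both sides, what really has to be checked is that $k_A\otimes_k k_B$ is local.

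The main (and only nontrivial) step is this last one. Since $R$ is strictly Henselian, $k$ is separably closed, so the finite field extension $k_A/k$ is purely inseparable. For a purely inseparable extension, $\Spec k_A\to \Spec k$ is a universal homeomorphism, so $k_A\otimes_k k_B$ has a unique prime ideal and is in particular local. Tracing the reductions back, $A\otimes_R k_B$ is local, hence $A\otimes_R B$ is local, as desired. The only subtlety is verifying that $k_A/k$ really is purely inseparable under our hypotheses, which is immediate from $k$ being separably closed together with $k_A$ being a finite extension (obtained from $A$ finite over $R$).
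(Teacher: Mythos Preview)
Your proof is correct and follows essentially the same approach as the paper: reduce to finiteness over $B$, pass to residue fields, and use that $k_A/k$ is purely inseparable (because the strictly Henselian ring $R$ has separably closed residue field) to conclude that $k_A\otimes_k k_B$ is local. The paper's proof is a one-line compression of exactly this argument.
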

\begin{proof}
Set $k_{A}\comma k_{B}$ for their residue fields. Since $A\otimes_{R}B$
is finite over $B$ it is enough to note that $k_{A}\otimes_{k_{R}}k_{B}$
is local since $k_{A}/k_{R}$ is purely inseparable.\end{proof}
\begin{prop}
We have an exact sequence 
\[
0\arr G_{1}\arr G\arr\underline{G}\arr0
\]
where $G_{1}$ is the connected component of $G$ containing $1$
and $\underline{G}$ is a constant group. Moreover, if $p$ is the
characteristic of the residue field of $R$, then $p\nmid|\underline{G}|$,
$G_{1}$ is diagonalizable and its group of characters $\Homsh(G_{1},\Gm)$
is a $p$-group. The decomposition of $G$ into connected components
is of the form 
\[
G=\bigsqcup_{i\in\underline{G}}G_{i}
\]
If $G$ acts on a finite $R$-scheme $X$, then it acts on the connected
components of $X$ and this action factors through $\underline{G}$.
Moreover the stabilizers of the connected components of $X$ are union
of connected components of $G$.\end{prop}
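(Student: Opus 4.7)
The plan is to split the statement into four parts: (a) existence of the exact sequence with $\underline{G}$ constant, (b) connected-component decomposition, (c) the structural description of $G_{1}$ and of $\underline{G}$, and (d) the action-theoretic statements. The main obstacle will be (c), where we must pin down that $G_{1}$ is diagonalizable with $p$-group character group; everything else is essentially bookkeeping on top of standard facts about finite flat schemes over a strictly Henselian base.

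For (a) and (b): since $R$ is Henselian and $G$ is finite and flat over $R$, the sheaf $\odi{S}[G]$ splits uniquely as a finite product of local $R$-algebras corresponding to the points of the closed fiber of $G$. This yields at once a decomposition $G=\bigsqcup_{x}G_{x}$ into open-and-closed connected subschemes, each finite flat over $R$, with $G_{1}$ the piece containing the identity section. Because translation by any $R$-point of $G$ is an automorphism of $G$ as a scheme sending $G_{1}$ to another connected component, and because $\underline{G}:=\pi_{0}(G)$ inherits a group structure making $G\arr\underline{G}$ a morphism of $R$-group schemes with kernel $G_{1}$, we get the exact sequence and the fact that the $G_{x}$ are exactly the cosets of $G_{1}$. Étaleness of $\underline{G}$ follows since its structure sheaf is a product of copies of $R$ (each connected component maps isomorphically to $\Spec R$ using that $R$ is strictly Henselian), so $\underline{G}$ is constant.

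For (c): $\underline{G}$, being a quotient of the linearly reductive group $G$, is itself linearly reductive (the invariants functor for $\underline{G}$ is a retract of that for $G$, hence exact); since $\underline{G}$ is constant, Maschke's theorem applied to the residue field of $R$ forces $p\nmid|\underline{G}|$. For $G_{1}$: it is a connected, finite, flat, finitely presented closed subgroup scheme of $G$, and it is linearly reductive because the invariants functor under $G_{1}$ is exact (one way to see this: using \ref{prop:G has locally a good representation theory} we may pass fppf-locally to a splitting of $G\arr\underline{G}$, reducing linear reductivity of $G_{1}$ to that of $G$ via restriction along a finite étale map of order prime to $p$, which is exact on invariants). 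The hard step is then to prove that a connected finite flat linearly reductive group scheme $H$ over the strictly Henselian ring $R$ is diagonalizable with character $p$-group. The strategy I would follow is to reduce to the residue field via Nakayama and deformation: over the residue field $\kappa$ of $R$, the identity component of a finite linearly reductive group is of multiplicative type with $p$-group character (this is the classical Demazure--Gabriel result, or may be extracted from the Cartier-dual analysis of Hopf algebras whose augmentation ideal is nilpotent), and then deform back to $R$ using Grothendieck's theorem that diagonalizable groups are rigid over Henselian bases (or invoke \ref{lem:lifting representation on henselian rings} applied to the characters). The character group of $G_{1}$ being a $p$-group is forced by connectedness: any prime-to-$p$ cyclic diagonalizable factor $\mu_{n}$ is étale over $R$, hence would contribute a nontrivial component to $\pi_{0}$.

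For (d): let $X=\bigsqcup_{j}X_{j}$ be the decomposition into connected components; by \ref{cor:base change of local ring is local for strictly henselian ring} applied to $R$-algebras of functions on the $X_{j}$, each piece is the spectrum of a local ring and the decomposition is preserved by any $R$-automorphism of $X$. So the $G$-action permutes the $X_{j}$, giving a morphism $G\arr\underline{\Sym}(\pi_{0}(X))$ whose target is a constant group scheme; its kernel contains $G_{1}$ because $G_{1}$ is connected and any map from a connected scheme to a constant one is constant. Hence the action on $\pi_{0}(X)$ factors through $\underline{G}$. Finally, the stabilizer of a component $X_{j}$ is a closed subgroup scheme of $G$ containing $G_{1}$, so it is a union of cosets of $G_{1}$, i.e.\ a union of connected components of $G$, as claimed.
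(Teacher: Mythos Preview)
Your proof is correct and follows the paper's line for parts (a), (b), and (d): decompose $G$ and $X$ into connected components using that $R$ is Henselian, get the group structure on $\pi_0(G)$, and observe that $G_1$ acts trivially on $\pi_0(X)$ because a connected scheme maps constantly into a discrete one. The paper phrases (a) and (b) slightly more concretely, defining the group law on the index set $I$ directly via $\mu(G_j\times G_i)\subseteq G_{k_{j,i}}$, but this is the same argument.

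The genuine difference is in (c). You work hard here: you argue linear reductivity of $G_1$ via an fppf-local splitting of $G\arr\underline{G}$, then reduce diagonalizability to the residue field via Demazure--Gabriel and deform back using rigidity of tori over Henselian bases. This works, but is substantially more involved than what the paper does. The paper simply invokes \cite[Lemma 2.20]{Abramovich2007} (the structure theorem for finite linearly reductive groups), which gives in one stroke that $G_1$ is diagonalizable and $p\nmid|\underline{G}|$; linear reductivity of $G_1$ and $\underline{G}$ is taken from \cite[Proposition 2.7]{Abramovich2007}. Your approach buys self-containment at the cost of length; the paper's buys brevity by outsourcing the structural result. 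Also, your splitting argument for linear reductivity of $G_1$ is circuitous---it is cleaner to cite directly that closed subgroups of linearly reductive groups are linearly reductive. For the $p$-group claim on $\Hom(G_1,\Gm)$, your argument and the paper's coincide: a prime-to-$p$ factor $\mu_q$ would be \'etale over $R$, contradicting connectedness of $G_1$.
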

\begin{proof}
Let $G=\sqcup_{i\in I}G_{i}$ and $X=\sqcup_{j\in J}X_{j}$ be the
decomposition into connected components of $G$ and $X$ respectively
and let $\mu\colon X\times G\arr X$ the action of $G$ on $X$. Since
$X_{j}\times G_{i}$ is connected, there exists a unique $k_{j,i}\in J$
such that $\mu(X_{j}\times G_{i})\subseteq X_{k_{j,i}}$. Assume now
that $X=G$ with the regular representation. Since $\mu$ is isomorphic
to the projection $G\times G\arr G$, it is flat and finite, so $\mu(G_{j}\times G_{i})=G_{k_{j,i}}$
is a connected component of $G$. Define a product on $I$ by $i\cdot j=k_{i,j}$.
It is easy to check that $I$ is a group, whose neutral element $1\in I$
is the index of the connected component of the identity. Set $\underline{G}=I$.
The map $G\arr\underline{G}$ is surjective and the kernel is exactly
$G_{1}$. Since both $G_{1}$ and $\underline{G}$ are linearly reductive,
we can conclude that $G_{1}$ is diagonalizable and that $p\nmid|\underline{G}|$
by \cite[Lemma 2.20]{Abramovich2007}. Set $M=\Homsh(G_{1},\Gm)$
and $k$ for the residue field of $R$. If $\Z/q\Z<M$, then we have
a surjective morphism $G_{1}\arr\mu_{q,R}$. So $\mu_{q,R}$ has to
be connected and, since it is finite and flat, $\mu_{q,k}$ is connected
as well. But if $q\neq p$ then $\mu_{q,k}\simeq\Z/q\Z$. Therefore
$M$ is a $p$-group.

Now return to the general case, i.e. when $X$ is a finite $R$-scheme.
Since $\mu$ is an action, then $k_{*,*}$ defines an action of $\underline{G}$
on $J$. Moreover if $g\in G_{i}(T)$ we have that $(X_{j}\times T)g\subseteq X_{k_{j,i}}$
and therefore this is an equality since $J$ is finite. In particular
\[
\Stab X_{j}=\bigsqcup_{i\in\underline{G}\st k_{j,i}=j}G_{i}
\]
\end{proof}
\begin{notation}
We will continue to denote by $G_{1}$ the connected component of
$G$, by $\underline{G}$ the constant group $G/G_{1}$ and by $M=\Homsh(G_{1},\Gm)$
the group of characters of $G_{1}$. Given an index $i\in\underline{G}$
we will also denote by $G_{i}$ the connected component of $G$ corresponding
to such index.\end{notation}
\begin{cor}
\label{cor:well split over algebraically closed field}If $R=k$ is
an algebraically closed field, then $G\arr\underline{G}$ has a unique
section. In particular
\[
G\simeq G_{1}\ltimes\underline{G}
\]
\end{cor}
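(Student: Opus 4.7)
The plan is to show that over $k = \bar k$ each connected component $G_i$ of $G$ contains exactly one $k$-rational point, and that the map $\underline{G} \to G$ assembling these points is automatically a group scheme homomorphism. Since $\underline{G}$ is a disjoint union of copies of $\Spec k$ indexed by its underlying group, a morphism $\underline{G} \to G$ is the same as a choice of $k$-point in $G$ for each $i \in \underline{G}$, so it suffices to produce such a choice canonically in each $G_i$.

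First I would observe that each $G_i$, being a connected component of the finite $k$-scheme $G$, is of the form $\Spec A_i$ with $A_i$ a finite local $k$-algebra. Because $k$ is algebraically closed, the residue field $A_i/m_{A_i}$ is $k$ itself, and any $k$-algebra map $A_i \to k$ must kill the nilpotent ideal $m_{A_i}$. Hence there is a unique $k$-algebra morphism $A_i \to k$, i.e.\ a unique $k$-point $s_i \colon \Spec k \to G_i$. Define $s \colon \underline{G} \to G$ by assembling the $s_i$.

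Next I would verify that $s$ is a homomorphism of group schemes. The previous proposition shows that the multiplication on $G$ sends $G_i \times G_j$ into the component $G_{ij}$ corresponding to the product in $\underline{G}$. Hence $s_i \cdot s_j$ is a $k$-point of $G_{ij}$, and by the uniqueness established above it must coincide with $s_{ij}$. This gives the multiplicativity of $s$ on all of $\underline{G}$, and shows $s$ is a section of $G \to \underline{G}$. The same uniqueness of $k$-points in each $G_i$ immediately implies that $s$ is the only possible section: any morphism $\underline{G} \to G$ lifting the identity picks out one $k$-point in each $G_i$, and there is nothing to choose. Finally, having a section of the exact sequence $0 \to G_1 \to G \to \underline{G} \to 0$ gives a semidirect product decomposition $G \simeq G_1 \rtimes \underline{G}$ via the conjugation action of $\underline{G}$ on $G_1$.

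No step here is really an obstacle; the only delicate point is the reduction to $k$-algebra maps from a local Artin $k$-algebra into $k$, which relies on $k$ being algebraically closed so that the residue field of each $A_i$ is $k$ and on the fact that $k$ is reduced so $m_{A_i}$ must go to zero. Everything else is a direct consequence of the decomposition of $G$ into connected components established in the preceding proposition.
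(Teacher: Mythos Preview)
Your proof is correct and follows the same overall strategy as the paper: show that each connected component $G_i$ has a unique $k$-point, so that $G(k) \to \underline{G}$ is a bijection of groups. The difference lies in how this uniqueness is established. The paper uses the structural result from the preceding proposition that $G_1$ is diagonalizable with character group $M$ a $p$-group (where $p = \car k$), whence $G_1(k) = \Hom_{\Grp}(M, k^*) = 0$; since any $k$-point of $G_i$ gives an isomorphism $G_i \simeq G_1$, each $G_i$ has exactly one $k$-point. Your argument instead works directly from the observation that each $G_i$ is the spectrum of a local Artinian $k$-algebra with residue field $k$, which has a unique $k$-algebra map to $k$. Your route is more self-contained and does not rely on the diagonalizability of $G_1$ or the $p$-group structure of $M$; the paper's route is shorter given what has already been proved, and also makes transparent why the hypothesis that $G$ is linearly reductive is relevant. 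You also spell out the verification that the section is a group homomorphism, which the paper leaves implicit in the phrase ``$G(k) \to \underline{G}$ is an isomorphism of constant groups.''
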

\begin{proof}
Set $p=\car k$. If $p=0$ then $G=\underline{G}$. So assume $p\neq0$
and let $G_{i}$ be a connected component of $G$. If we prove that
$|G_{i}(k)|=1$ then $G(k)\arr\underline{G}$ is an isomorphism (of
constant groups) and the section is unique. Since $k$ is algebraically
closed, we have $G_{i}(k)\neq\emptyset$. In particular $G_{i}\simeq G_{1}$.
But
\[
G_{1}(k)=\Hom_{\Grp}(M,k^{*})=0
\]
since $M$ is a $p$-group. 
\end{proof}
Now we want to study the open subgroups of $G$.
\begin{rem}
If $G$ is linearly reductive as we are assuming, then a subgroup
scheme is again a finite, flat and of finite presentation linearly
reductive group scheme (see \cite[Proposition 2.7]{Abramovich2007}).\end{rem}
\begin{prop}
\label{prop:decomposition of G into H torsors}Let $H$ be an open
and closed subgroup of $G$ and set 
\[
H^{i}=\bigsqcup_{j\in\underline{H}i}G_{j}\text{ where }i\in\underline{G}
\]
The schemes $H^{i}$ are stable under the right action of $H$ on
$G$ and they are fppf $H$-torsors. Moreover if $g\in H^{i}(T)$,
then $H^{i}\times T=(H\times T)g$.\end{prop}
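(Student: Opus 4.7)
The plan is to exploit the decomposition $G = \bigsqcup_{i \in \underline{G}} G_i$ and the fact that $H$, being open and closed, contains the connected component of the identity, to reduce everything to a combinatorial statement on the constant quotient group $\underline{G}$. First I would observe that since $H$ is open, $H \supseteq G_1$, and therefore $H$ is itself a union of connected components of $G$ indexed by a subgroup $\underline{H} \subseteq \underline{G}$. Right multiplication on $G$ permutes its connected components according to the group law of $\underline{G}$, so $G_j \cdot G_k \subseteq G_{jk}$, and the interpretation of $\underline{H}i$ must be as the coset of $i$ that is stable under right multiplication by $\underline{H}$. With this in hand, stability of $H^i$ under the right $H$-action is immediate.

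Next I would prove the last assertion, as it carries the rest of the proposition. Given a $T$-point $g \in H^i(T)$, right multiplication by $g$ is an automorphism $\rho_g$ of $G \times_R T$ with inverse $\rho_{g^{-1}}$, so it restricts to a closed immersion $H \times_R T \hookrightarrow G \times_R T$ that is an isomorphism onto its image. Because $g$ factors through some component $G_j$ with $j \in \underline{H}i$, the connected components of the image are exactly $G_{kj} \times T$ for $k \in \underline{H}$; but $\underline{H}j = \underline{H}i$, so this set of components coincides with that of $H^i \times_R T$. Since $\rho_g$ is a scheme isomorphism and $H$, $H^i$ are both open and closed subschemes of $G$, the image equals $H^i \times_R T$ not merely topologically but scheme-theoretically, giving $(H \times T)g = H^i \times T$.

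From this it is easy to deduce the torsor property. The scheme $H^i$ is a non-empty open and closed subscheme of the finite, flat and finitely presented scheme $G$, hence $H^i \to \Spec R$ is itself faithfully flat and of finite presentation. Taking $T = H^i$ and $g$ the tautological section of $H^i \to H^i$, the preceding paragraph yields an $H$-equivariant isomorphism $H \times_R H^i \xrightarrow{\sim} H^i \times_R H^i$, which is precisely the trivialization needed to exhibit $H^i$ as an fppf $H$-torsor.

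The main obstacle I expect is the scheme-theoretic (rather than merely set-theoretic) identification $(H \times T)g = H^i \times T$: one needs to argue that right multiplication by $g$ sends connected components of $H \times T$ isomorphically onto connected components of $H^i \times T$, so that the resulting closed immersion is a bijection onto a union of open and closed subschemes of $G \times T$ coinciding with $H^i \times T$. Once this is handled cleanly by invoking that $\rho_g$ is an automorphism of $G \times T$ and hence sends components to components, the rest of the argument is formal.
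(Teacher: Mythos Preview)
Your approach is correct and matches the paper's: both reduce stability to coset combinatorics in $\underline{G}$, prove the last claim by tracking connected components under multiplication by $g$, and deduce the torsor property from that claim (the paper simply observes that $H^i$ has fppf-local sections because $G$ is finite flat, while you make this explicit via the tautological section over $T=H^i$).

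One small point worth straightening out: the paper's ``right action of $H$ on $G$'' is the regular action $x \star h = h^{-1}x$, under which the \emph{left} coset $\underline{H}i$ is manifestly the stable piece. This is already consistent with your own computation in the second paragraph, where $(H\times T)g$ covers the components $G_{kj}\times T$ for $k\in\underline{H}$, i.e.\ the left coset $\underline{H}j=\underline{H}i$; so there is no need to reinterpret $\underline{H}i$ as a right coset. The scheme-theoretic equality you flag as the main obstacle is dispatched in the paper in one line: since $\rho_g$ is an automorphism of $G\times T$, it carries the open-and-closed subscheme $G_h\times T$ isomorphically onto $G_{hj}\times T$, and taking the union over $h\in\underline{H}$ gives $(H\times T)g = H^i\times T$ directly.
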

\begin{proof}
If $h\in\underline{H}$ and $j\in\underline{H}i$ then $G_{j}\star G_{h}=G_{j\star h}=G_{h^{-1}j}\subseteq H^{i}$,
where $\star$ denote the regular representation, so $H^{i}$ is $H$-stable.
Since $G$ is flat and finite, $H^{i}$ has section in the fppf topology,
so we have to prove only the last claim, since the multiplication
by $g$ $H\times T\arr(H\times T)g$ is $H$-equivariant. Let $g\in H^{i}(T)$.
We can assume that $g\in G_{j}(T)$ for $j\in\underline{H}i$. In
this case it is enough to note that $(G_{h}\times T)g=G_{hj}\times T$.
\end{proof}
We state the following lemma here, although it will be used in the
following sections.
\begin{lem}
\label{lem:invariants by Gone sends local to local}Let $X$ be a
finite $R$-scheme with an action of $G$. Then $X/G_{1}$ has the
same connected components as $X$.\end{lem}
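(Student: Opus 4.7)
The plan is to split the question into two independent pieces: first reduce to the case where $X$ is connected, then show that the quotient of a connected component by $G_1$ remains connected.

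First I would invoke the proposition proved just before the lemma, which says that the action of $G$ on the set of connected components of any finite $R$-scheme with $G$-action factors through $\underline{G}=G/G_1$. Consequently, $G_1$ fixes every connected component of $X$ setwise. Writing $X=\bigsqcup_{j\in J}X_{j}$ into its connected components, each $X_{j}$ is $G_{1}$-stable, and therefore $X/G_{1}=\bigsqcup_{j\in J}X_{j}/G_{1}$ as a scheme. So it suffices to show that, for each $j$, the quotient $X_{j}/G_{1}$ is connected. That is to say, we may assume $X$ itself is connected.

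Next, since $X$ is finite over the strictly Henselian ring $R$ and connected, $X=\Spec A$ where $A$ is a local finite $R$-algebra (any finite algebra over a Henselian local ring decomposes as a finite product of local rings, and connectedness forces a single factor). Then $X/G_{1}=\Spec A^{G_1}$, and the goal becomes: $A^{G_1}$ is local. Since $A^{G_1}\subseteq A$ and $R$ is noetherian with $A$ finite over $R$, the subring $A^{G_1}$ is again a finite $R$-algebra. Any idempotent of $A^{G_1}$ is an idempotent of $A$; but $A$ is local, so its only idempotents are $0$ and $1$. Hence $A^{G_1}$ has only trivial idempotents. Applying once more the fact that a finite algebra over the Henselian local ring $R$ is a product of local rings, the absence of nontrivial idempotents in $A^{G_1}$ forces $A^{G_1}$ to be local, so $\Spec A^{G_1}$ is connected.

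Combining the two steps gives a bijection between the connected components of $X$ and those of $X/G_{1}$, as claimed. There is no real obstacle here: the only nontrivial inputs are the fact, already recorded in the excerpt, that the action on components is through $\underline{G}$, together with the standard structure of finite algebras over Henselian local rings. In particular, the diagonalizable structure of $G_{1}$ (and the associated $M$-grading $A=\bigoplus_{m\in M}A_{m}$ with $A^{G_{1}}=A_{0}$) is not needed for the argument; everything boils down to preservation of idempotents under taking invariants.
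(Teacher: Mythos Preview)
Your proof is correct, but it takes a genuinely different route from the paper's. Both arguments reduce (via the preceding proposition) to the core claim that if $A$ is a finite local $R$-algebra with $G_1$-action then $A^{G_1}$ is local. At this point the paper exploits the diagonalizable structure of $G_1$: writing $A=\bigoplus_{h\in H}A_h$ with $A^{G_1}=A_0$, it shows directly that $m_A\cap A_0$ is the unique maximal ideal by observing that if $x\in A_0\setminus m_A$ has inverse $y=\sum_h y_h$ in $A$, then $xy_0=1$ (the degree-$0$ component of $xy=1$). Your argument instead sidesteps the grading entirely: idempotents of $A^{G_1}\subseteq A$ lift to idempotents of the local ring $A$ and are therefore trivial, and a finite algebra over a Henselian local ring with only trivial idempotents is local. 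Your route is more general---it never uses that $G_1$ is diagonalizable, only that $A^{G_1}$ is a finite $R$-subalgebra---while the paper's is more explicit, pinning down the maximal ideal of $A_0$ as $m_A\cap A_0$.
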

\begin{proof}
We have to prove that if $(A,m_{A})$ is a local and finite $R$-algebra
with an action of a diagonalizable group $D(H)$, then $A_{0}$ is
local. So we have to prove that any $x\in A_{0}-m_{A}\cap A_{0}$
is invertible in $A_{0}$. Since $x\notin m_{A}$ there exists $y\in A$
such that $xy=1$. Writing $y$ with respect to the decomposition
$A=\bigoplus_{h\in H}A_{h}$ we get
\[
y=\sum_{h\in H}y_{h}\then1=xy=\sum_{h\in H}xy_{h}\then xy_{0}=1
\]

\end{proof}

\subsection{Induction and $G$-equivariant algebras.}

One of the key points in the study of $G$-covers in the following
sections is the fact that each such cover, locally (at least on a
strict Henselization), can be described from an $H$-cover, where
$H$ is a proper subgroup of $G$, having some extra properties. Algebraically,
this procedure is obtained through an induction from $H$ to $G$.
So in this section we will introduce the concept of induction from
a subgroup, state some of its properties and then we will focus on
induction of algebras.

Throughout this section we will assume $S=\Spec R$, where $R$ is
a ring and $G$ will be as always a finite, flat and finitely presented
group scheme over $R$.
\begin{rem}
Let $H$ be a subgroup of $G$ and $F\colon(\Sch/S)^{op}\arr\set$
be a functor with a left action of $H$. Regarding $G$ as a $H$-space
via the restriction of the regular representation, we define
\[
\ind_{H}^{G}F=\Homsh^{H}(G,F)
\]
We endow $\ind_{H}^{G}F$ with the following left action of $G$.
The group $G$ acts on the right on itself through the product $G\times G\arrdi mG$
and, considering the trivial action of $G$ on $F$, we get a left
action of $G$ on $\Homsh(G,F)$ that restricts to a left action of
$G$ on $\ind_{H}^{G}F$.

Concretely, given $f\colon G\arr F\in\Homsh(G,F)$ we have that 
\[
f\in\ind_{H}^{G}F=\Homsh^{H}(G,F)\iff f(hg)=hf(g)\text{ for all }h\in H
\]
and if $g\in G$ then
\[
(g\star f)(t)=f(tg)
\]
\end{rem}
\begin{defn}
If $H$ is a subgroup scheme of $G$ and $\shF\in\FCoh^{H}$ we have
(see \ref{prop:the structure map are the invariants}) 
\[
\WW((\shF\otimes\odi{}[G])^{H})\simeq\Homsh^{H}(G,\WW(\shF))=\ind_{H}^{G}\WW(\shF)
\]
So we can define
\[
\ind_{H}^{G}\shF=(\shF\otimes\odi{}[G])^{H}\in\FCoh^{G}
\]
with the action given by the isomorphism $\WW(\ind_{H}^{G}\shF)\simeq\ind_{H}^{G}\WW(\shF)$.
\end{defn}
The following is a well known property of adjunction between induction
and restriction.
\begin{prop}
\cite[section 3.3]{Jantzen2003} If $H$ is a flat subgroup scheme
of $G$ and $V\in\FCoh^{G}$, $W\in\FCoh^{H}$, we have an isomorphism
\[
\Homsh^{H}(\R_{H}V,W)\simeq\Homsh^{G}(V,\ind_{H}^{G}W)
\]

\end{prop}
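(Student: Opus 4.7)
\bigskip

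\noindent\textbf{Proof plan.} The statement is the classical Frobenius reciprocity for the adjunction between restriction and induction, formulated at the level of sheaves of equivariant morphisms. My approach is to exhibit the unit and counit of the adjunction explicitly and verify they are mutually inverse; by Yoneda it suffices to give a natural bijection on $T$-points for each $S$-scheme $T$, and throughout I will work via the $\WW$-extensions where the identifications are transparent.

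First I would identify, for any $S$-scheme $T$, the $T$-points of both sides. On the right, using the isomorphism $\WW(\ind_{H}^{G}W)\simeq \ind_{H}^{G}\WW(W)=\Homsh^{H}(G,\WW(W))$ from the definition, a $G$-equivariant map $\varphi\colon\WW(V)\arr \WW(\ind_{H}^{G}W)$ over $T$ is the same datum as a $G$-equivariant natural transformation $\varphi\colon\WW(V)_T\arr\Homsh^{H}(G_T,\WW(W)_T)$, where the $G$-action on the target is $(g\star f)(t)=f(tg)$ (trivial action on $W$, regular on $G$). On the left, a $T$-point is an $H$-equivariant map $\psi\colon\WW(V)_T\arr\WW(W)_T$ where $V$ is restricted to $H$.

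Next I would construct the two maps of the adjunction. The \emph{counit} is the evaluation $\varepsilon\colon \ind_{H}^{G}\WW(W)\arr \WW(W)$ at the identity $1\in G$, i.e.\ $\varepsilon(f)=f(1)$; this map is $H$-equivariant because $f(h\cdot 1)=hf(1)$ for $f\in\Homsh^{H}(G,\WW(W))$ and $h\in H$. Given $\varphi\in\Homsh^{G}(V,\ind_{H}^{G}W)(T)$, I set $\widetilde{\varphi}=\varepsilon\circ\varphi\colon\R_{H}\WW(V)\arr\WW(W)$, which is $H$-equivariant since both factors are. In the other direction, given $\psi\in\Homsh^{H}(\R_{H}V,W)(T)$, I define
\[
\widehat{\psi}\colon\WW(V)_T\arr\Homsh^{H}(G_T,\WW(W)_T),\qquad \widehat{\psi}(v)(g)=\psi(gv).
\]
I would then check three equivariance/wellposedness facts: (i) $\widehat{\psi}(v)$ lies in $\Homsh^{H}(G_T,\WW(W)_T)$, since for $h\in H$, $\widehat{\psi}(v)(hg)=\psi(hgv)=h\psi(gv)=h\,\widehat{\psi}(v)(g)$; (ii) $\widehat{\psi}$ is $G$-equivariant, since $\widehat{\psi}(gv)(t)=\psi(tgv)=\widehat{\psi}(v)(tg)=(g\star\widehat{\psi}(v))(t)$; (iii) naturality in $T$ is built into the formulas.

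Finally I would verify that $\psi\mapsto\widehat{\psi}$ and $\varphi\mapsto\widetilde{\varphi}$ are mutually inverse. One direction is immediate: $\widetilde{\widehat{\psi}}(v)=\widehat{\psi}(v)(1)=\psi(v)$. For the other, given $\varphi$ and setting $\psi=\widetilde{\varphi}$, the $G$-equivariance of $\varphi$ gives $\varphi(v)(g)=(g\star\varphi(v))(1)=\varphi(gv)(1)=\psi(gv)=\widehat{\psi}(v)(g)$, so $\widehat{\widetilde{\varphi}}=\varphi$. The only minor obstacle is bookkeeping the left/right action conventions adopted in Definition 2.1.1 (the regular action is $x\star g=g^{-1}x$) so that the formulas for $H$-equivariance of elements of $\Homsh^{H}(G,-)$ and the induced left $G$-action $(g\star f)(t)=f(tg)$ are used consistently; once this is fixed, all checks are a routine diagram chase with no further input.
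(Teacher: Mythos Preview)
Your argument is correct and is the standard unit/counit construction for Frobenius reciprocity. Note, however, that the paper does not prove this proposition at all: it simply cites \cite[section 3.3]{Jantzen2003} and moves on. What you have written is essentially the argument one finds there, carried out in the functorial language of $\WW$-extensions used in the paper, and the bookkeeping with the conventions $f(hg)=hf(g)$ and $(g\star f)(t)=f(tg)$ is handled correctly.
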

We now pass to the study of induction of finite algebras with an action
of $G$. From now to the end of the section $G$ will be assumed linearly
reductive.
\begin{defn}
We will denote by $\CAlg^{G}R$ the category of finite $R$-algebras
$A$ with a left action of $G$ on them, or, equivalently, a right
action of $G$ on $\Spec A$.\end{defn}
\begin{lem}
\label{lem:A is induction of the localization}If $R$ is strictly
Henselian, $H$ is an open and closed subgroup of $G$ and $A\in\CAlg^{H}R$
then 
\[
\ind_{H}^{G}A\simeq\prod_{i\in\underline{G}/\underline{H}}B_{i}
\]
as rings, where the $B_{i}$ are fppf locally isomorphic to $A$.
More precisely, if $R'$ is an $R$-algebra and $g\in G_{i}(R')$
then we have an induced isomorphism   \[   \begin{tikzpicture}[xscale=3.3,yscale=-1.2]     \node (A0_0) at (0, 0) {$\ind^G_H A\otimes R'$};     \node (A0_1) at (1, 0) {$\ind^G_H A\otimes R'$};     \node (A1_0) at (0, 1) {$B_i\otimes R'$};     \node (A1_1) at (1, 1) {$A\otimes R'$};     \path (A0_0) edge [->]node [auto] {$\scriptstyle{g}$} (A0_1);     \path (A0_0) edge [->]node [auto] {$\scriptstyle{}$} (A1_0);     \path (A0_1) edge [->]node [auto] {$\scriptstyle{}$} (A1_1);     \path (A1_0) edge [->,dashed]node [auto] {$\scriptstyle{\simeq}$} (A1_1);   \end{tikzpicture}   \] \end{lem}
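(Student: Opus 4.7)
The plan is to reduce everything to the decomposition of $G$ into open-closed $H$-stable pieces provided by Proposition \ref{prop:decomposition of G into H torsors}, and then invoke linear reductivity of $H$ to push $(-)^H$ past the resulting product.

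First, I would use Proposition \ref{prop:decomposition of G into H torsors} to write $G=\bigsqcup_{i\in\underline{G}/\underline{H}} H^{i}$, where each $H^{i}$ is open and closed in $G$ and is stable under the right action of $H$. Since the decomposition is by connected components (or unions thereof), it induces an $H$-equivariant isomorphism of rings $R[G]\simeq\prod_{i}R[H^{i}]$, and therefore an $H$-equivariant decomposition of $R$-algebras
\[
A\otimes R[G]\simeq\prod_{i\in\underline{G}/\underline{H}}(A\otimes R[H^{i}]).
\]
Next, since $H$ is linearly reductive (as a subgroup scheme of a linearly reductive group) and $(-)^{H}$ commutes with finite products, I would set $B_{i}=(A\otimes R[H^{i}])^{H}$ and obtain the ring isomorphism $\ind_{H}^{G}A\simeq\prod_{i}B_{i}$.

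For the fppf-local description of each $B_{i}$, the key input is that $H^{i}$ is an fppf $H$-torsor, so fppf locally on the base it admits a section $g\in H^{i}(R')$; by Proposition \ref{prop:decomposition of G into H torsors} such a section yields an $H$-equivariant isomorphism $H\times R'\xrightarrow{\cdot g} H^{i}\times R'$. Pulling back to $R'$ and applying this, one gets an $H$-equivariant isomorphism of $R'$-algebras
\[
A\otimes R[H^{i}]\otimes R'\xrightarrow{\simeq} A\otimes R[H]\otimes R',
\]
whose $H$-invariants, combined with the canonical isomorphism $(A\otimes R[H])^{H}\simeq A$ from Proposition \ref{prop:the structure map are the invariants}, produce the desired $B_{i}\otimes R'\simeq A\otimes R'$. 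Concretely, under the identification $B_{i}=\Homsh^{H}(H^{i},\WW(A))$ via Proposition \ref{prop:the structure map are the invariants}, this isomorphism is simply evaluation at $g$, i.e.\ $\phi\longmapsto \phi(g)$; it is well-defined and bijective precisely because $H^{i}\times R'=(H\times R')g$.

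Finally, to verify the commutative diagram in the statement, I would unwind the left action of $G$ on $\ind_{H}^{G}A=\Homsh^{H}(G,\WW(A))$, which reads $(g\star\phi)(t)=\phi(tg)$: restricted to $H^{i}$ and evaluated at $1$ this gives $\phi(g)$, which is exactly the image of the $i$-th component of $\phi$ under the evaluation-at-$g$ isomorphism $B_{i}\otimes R'\simeq A\otimes R'$. The only point that requires any care is matching conventions between the right action of $H$ on $G$ (used to define $\ind_{H}^{G}$) and the left action of $G$ on $\ind_{H}^{G}A$; this is book-keeping rather than a genuine obstacle, and I expect no serious difficulty — the real content of the lemma is already packaged in Propositions \ref{prop:decomposition of G into H torsors} and \ref{prop:the structure map are the invariants}, together with exactness of $(-)^{H}$.
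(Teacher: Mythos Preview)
Your proposal is correct and follows essentially the same route as the paper: both arguments use Proposition~\ref{prop:decomposition of G into H torsors} to split $G$ into the $H$-torsors $H^{i}$, identify $B_{i}$ with $\Homsh^{H}(H^{i},\WW(A))\simeq(A\otimes R[H^{i}])^{H}$, and then use a section $g\in H^{i}(R')$ to trivialize $H^{i}$ and obtain $B_{i}\otimes R'\simeq A\otimes R'$ by evaluation at $g$. The only cosmetic difference is that you work on the algebraic side $(A\otimes R[G])^{H}$ before passing to the functorial description, whereas the paper stays with $\Homsh^{H}(G,\WW(A))$ throughout; note also that your appeal to linear reductivity for commuting $(-)^{H}$ with a finite product is unnecessary, since invariants always commute with finite products.
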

\begin{proof}
We will make use of \ref{prop:decomposition of G into H torsors}.
The inclusions $H^{i}\arr G$ induce an isomorphism of functors
\[
\Homsh^{H}(G,\WW(A))\arr\prod_{i}\Homsh^{H}(H^{i},\WW(A))
\]
So we can set $B_{i}$ for the coherent algebra such that $\WW(B_{i})\simeq\Homsh^{H}(H^{i},\WW(A))$.
Since $H^{i}$ is an fppf $H$-torsor, $B_{i}$ is fppf locally isomorphic
to $A$. For the last claim, note that $H^{i}\times R'=(H\times R')g$
and therefore it is enough to apply $\Homsh^{H}(-,\WW(A))$ to the
commutative diagram of $H$-spaces   \[   \begin{tikzpicture}[xscale=0.9,yscale=-1.0]     \node (A0_0) at (0, 0) {$H$};     \node (A0_2) at (2, 0) {$G$};     \node (A0_3) at (3, 0) {$x$};     \node (A1_0) at (0, 1) {$Hg$};     \node (A1_2) at (2, 1) {$G$};     \node (A1_3) at (3, 1) {$xg$};     \path (A1_0) edge [->]node [auto] {$\scriptstyle{}$} (A1_2);     \path (A0_0) edge [->]node [auto] {$\scriptstyle{}$} (A1_0);     \path (A0_3) edge [|->,gray]node [auto] {$\scriptstyle{}$} (A1_3);     \path (A0_2) edge [->]node [auto] {$\scriptstyle{}$} (A1_2);     \path (A0_0) edge [->]node [auto] {$\scriptstyle{}$} (A0_2);   \end{tikzpicture}   \] 

\end{proof}
\begin{lem}
\label{lem:G acts transetively on the maximal ideals of fibers}Let
$R$ be a local ring and $A\in\CAlg^{G}R$ such that $A^{G}=R$. If
$G$ is constant then it acts transitively on the maximal ideals of
$A$.\end{lem}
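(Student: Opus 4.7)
The plan is to run the classical ``Going-up / orbit transitivity'' argument for a finite group acting on a ring with prescribed invariants, adapted to the fact that here $G$ is a constant group scheme, so its action on $A$ is literally an action of the finite abstract group $\underline{G}=G(R)$ by ring automorphisms. First I would reduce to this classical setting: since $G$ is constant, a $G$-action on $A\in\CAlg^{G}R$ is the same as an action of $\underline{G}$ on $A$ by $R$-algebra automorphisms, and $A^{G}=A^{\underline{G}}=R$ in the ordinary sense. Since $A$ is finite (hence integral) over the local ring $R$, the maximal ideals of $A$ are exactly the primes lying over $\mathfrak{m}_{R}$, and they form a finite set permuted by $\underline{G}$.

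Next I would argue by contradiction: suppose $\mathfrak{m}_{1},\mathfrak{m}_{2}$ are maximal ideals of $A$ whose $\underline{G}$-orbits $O_{1}$ and $O_{2}$ are disjoint. By the Chinese Remainder Theorem applied to the pairwise comaximal collection $O_{1}\cup O_{2}$, choose $x\in A$ with $x\in\mathfrak{n}$ for every $\mathfrak{n}\in O_{1}$ and $x\equiv 1\pmod{\mathfrak{n}}$ for every $\mathfrak{n}\in O_{2}$. Form
\[
y\;=\;\prod_{g\in\underline{G}}g(x)\in A.
\]
Then $h(y)=y$ for every $h\in\underline{G}$ (the product is reindexed), so $y\in A^{G}=R$.

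The contradiction comes from tracking $y$ in $\mathfrak{m}_{1}$ and $\mathfrak{m}_{2}$. The factor corresponding to $g=e$ lies in $\mathfrak{m}_{1}$, so $y\in\mathfrak{m}_{1}\cap R=\mathfrak{m}_{R}$. On the other hand, for any $g\in\underline{G}$ one has $g(x)\in\mathfrak{m}_{2}$ iff $x\in g^{-1}\mathfrak{m}_{2}\in O_{2}$; by the choice of $x$, this never happens, so every factor of $y$ lies outside $\mathfrak{m}_{2}$, hence $y\notin\mathfrak{m}_{2}$. But $\mathfrak{m}_{R}\subseteq\mathfrak{m}_{2}$ since $\mathfrak{m}_{2}$ lies over $\mathfrak{m}_{R}$, giving the required contradiction.

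The only real subtlety, and the main step to be careful about, is the CRT input: it requires the finitely many maximal ideals in $O_{1}\cup O_{2}$ to be pairwise distinct and pairwise coprime, which is automatic since they are all maximal in $A$ and $O_{1}\cap O_{2}=\emptyset$ by assumption. Everything else is routine, and no hypothesis on $R$ beyond locality, nor on $G$ beyond being finite and constant, is used; linear reductivity does not enter the proof of this particular lemma.
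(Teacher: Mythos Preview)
Your proof is correct and follows essentially the same classical norm argument as the paper: form $y=\prod_{g}g(x)\in A^{G}=R$ for a well-chosen $x$, then derive a contradiction from $y\in\mathfrak m_{R}$ yet $y\notin\mathfrak m_{2}$. The only cosmetic difference is that the paper selects $x$ via prime avoidance (choosing $x\in q$ with $g(x)\notin p$ for all $g$) rather than the Chinese Remainder Theorem, which spares the preliminary observation that there are only finitely many maximal ideals.
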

\begin{proof}
Let $p,q\in\Spec A$ be closed points and assume by contradiction
that for any $g\in G$, $q\neq g(p)$. In particular we cannot have
$q\subseteq\cup_{g\in G}g(p)$ and therefore there exists $x\in q$
such that $g(x)\notin p$ for any $g\in G$. But
\[
\prod_{g\in G}g(x)\in q\cap A^{G}=q\cap R=m_{R}\subseteq p\then\exists g\in G\st g(x)\in p
\]

\end{proof}
The following proposition is one of the key points in the study of
the structure of covers and we will use it many times in the following
sections. It roughly means that the whole algebra (over which $G$
acts) can be recovered from a local algebra (over which acts a particular
subgroup of $G$) through induction. In particular it allows us to
reduce problems to local algebras, when we have to deal with properties
that behave well under induction.
\begin{prop}
\label{prop:induction from a localization on henselian ring}Assume
that $R$ is strictly Henselian and let $A\in\CAlg^{G}R$ be such
that $A^{G}=R$ and $p\in\Spec A$ be a closed point. Denote by $H_{p}$
the stabilizer of the connected component $\Spec A_{p}$ of $\Spec A$.
Then we have a $G$-equivariant isomorphism
\[
A\arr\ind_{H_{p}}^{G}A_{p}
\]
\end{prop}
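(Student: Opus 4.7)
The plan is to build the $G$-equivariant map via adjunction from the projection $A \arr A_p$ and then verify it is an isomorphism by comparing explicit decompositions of source and target.

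First I would use that $R$ is strictly Henselian and $A$ finite to decompose $A = \prod_{j\in J} A_j$ into a product of local $R$-algebras, where $J$ indexes the connected components of $\Spec A$. By the structural result at the start of this subsection, the $G$-action on $J$ factors through $\underline{G}$. Combining Lemma \ref{lem:invariants by Gone sends local to local} (which tells us $A^{G_1} = \prod_j A_j^{G_1}$ still has local factors in bijection with $J$) with the hypothesis $(A^{G_1})^{\underline{G}} = A^G = R$, I would invoke Lemma \ref{lem:G acts transetively on the maximal ideals of fibers} applied to the constant group $\underline{G}$ acting on $A^{G_1}$ to conclude that $\underline{G}$ acts transitively on $J$. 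Hence $|J| = [\underline{G}:\underline{H}_p]$, where $\underline{H}_p$ denotes the image of $H_p$ in $\underline{G}$.

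Next I would define $\phi\colon A \arr \ind_{H_p}^G A_p$ as the $G$-equivariant map adjoint to the projection $\pi\colon A \arr A_p$ under the adjunction $\Hom^{H_p}(\R_{H_p} A, A_p) \simeq \Hom^G(A, \ind_{H_p}^G A_p)$. The map $\pi$ is $H_p$-equivariant precisely because by definition $H_p$ stabilizes the factor $\Spec A_p$, so this makes sense. Concretely, on points $\phi(a)(g) = \pi(g \cdot a)$.

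To show $\phi$ is an isomorphism I would compare decompositions. By Lemma \ref{lem:A is induction of the localization}, $\ind_{H_p}^G A_p \simeq \prod_{i\in \underline{G}/\underline{H}_p} B_i$, where each $B_i$ is fppf locally identified with $A_p$ via any local section $g_i$ of $G_i$. After fppf base change trivializing $G_i$ by such a $g_i$, the factor $A_{g_i\cdot p}$ of $A$ is also identified with $A_p$ via the $g_i$-action, and unwinding the adjunction shows that $\phi$, followed by projection to the $B_i$-slot, becomes the identity on $A_p$ on this factor, while it vanishes on the other factors $A_{g_{i'}\cdot p}$ with $i'\neq i$ (because then $g_i\cdot a$ has no $A_p$-component). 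Thus $\phi$ is an isomorphism fppf-locally, and therefore an isomorphism.

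The main obstacle is the final bookkeeping step: matching the index set of the decomposition of $A$ (via the $\underline{G}$-orbit of $p$) with the index set of the decomposition of $\ind_{H_p}^G A_p$ (coming from the partition $G = \bigsqcup_{i\in\underline{G}/\underline{H}_p} H_p^i$ into $H_p$-torsors) in a way compatible with $\phi$. All of this is routine once the conventions for left vs.\ right actions and for the adjunction map are fixed, so there is no serious obstruction beyond careful unpacking.
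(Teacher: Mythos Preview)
Your proposal is correct and follows essentially the same approach as the paper's proof: both construct the map via the adjunction from the $H_p$-equivariant projection $A\to A_p$, decompose the induced module using Lemma~\ref{lem:A is induction of the localization}, obtain transitivity of the $\underline{G}$-action on connected components from Lemmas~\ref{lem:invariants by Gone sends local to local} and~\ref{lem:G acts transetively on the maximal ideals of fibers}, and verify the map is an isomorphism factor by factor after an fppf base change trivializing each $H_p$-torsor $H_p^i$. The only cosmetic difference is that you establish transitivity at the outset, whereas the paper postpones it to the end of the argument.
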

\begin{proof}
Set $H=H_{p}$. The map $A\arr A_{p}$ is $H$-equivariant and therefore
we get a map $A\arrdi{\psi}\ind_{H}^{G}A_{p}$. Write $X_{q}=\Spec A_{q}$
for a closed point $q$ of $\Spec A$. Those are the connected components
of $X=\Spec A$. Let also $Y=\Spec(\ind_{H}^{G}A_{p})$, $\ind_{H}^{G}A_{p}=\prod_{i\in\underline{G}/\underline{H}}B_{i}$
and $Y_{i}=\Spec B_{i}$. Assume $X_{p}G_{i}=X_{q}$, where $i\in\underline{G}$.
Since $Y_{1}$ is mapped to $X_{p}$ and $ $$Y_{1}G_{i}=Y_{i}$ we
have a decomposition   \[   \begin{tikzpicture}[xscale=2.4,yscale=-1.0]     \node (A0_0) at (0, 0) {$A$};     \node (A0_1) at (1, 0) {$\ind^G_H A_p$};     \node (A1_0) at (0, 1) {$A_q$};     \node (A1_1) at (1, 1) {$B_i$};     \path (A0_0) edge [->]node [auto] {$\scriptstyle{\psi}$} (A0_1);     \path (A1_0) edge [->]node [auto] {$\scriptstyle{\psi_q}$} (A1_1);     \path (A0_1) edge [->]node [auto] {$\scriptstyle{}$} (A1_1);     \path (A0_0) edge [->]node [auto] {$\scriptstyle{}$} (A1_0);   \end{tikzpicture}   \] 
We have to prove that all the maps $\psi_{q}$ are isomorphisms and
that $G$ acts transitively on the connected components of $X$.

If $X_{p}G_{i}=X_{q}$, $R'$ is an fppf $R$-algebra and $g\in G(R')$
we have a commutative diagram   \[   \begin{tikzpicture}[xscale=2.9,yscale=-1.0]     \node (A0_1) at (1, 0) {$A\otimes R'$};     \node (A0_2) at (2, 0) {$A\otimes R'$};     \node (A1_1) at (1, 1) {$\ind^G_H A_p \otimes R'$};     \node (A1_2) at (2, 1) {$\ind^G_H A_p \otimes R'$};     \node (A2_0) at (0, 2) {$A_q \otimes R'$};     \node (A2_1) at (1, 2) {$B_i \otimes R'$};     \node (A2_2) at (2, 2) {$A_p \otimes R'$};     \path (A2_1) edge [->]node [auto] {$\scriptstyle{u}$} (A2_2);     \path (A0_1) edge [->]node [auto] {$\scriptstyle{g}$} (A0_2);     \path (A0_2) edge [->]node [auto] {$\scriptstyle{\psi \otimes R'}$} (A1_2);     \path (A0_1) edge [->,bend left=20]node [auto] {$\scriptstyle{}$} (A2_0);     \path (A1_1) edge [->]node [auto] {$\scriptstyle{g}$} (A1_2);     \path (A2_0) edge [->]node [auto] {$\scriptstyle{\psi_q \otimes R'}$} (A2_1);     \path (A1_1) edge [->]node [auto] {$\scriptstyle{}$} (A2_1);     \path (A0_1) edge [->]node [auto] {$\scriptstyle{\psi \otimes R'}$} (A1_1);     \path (A1_2) edge [->]node [auto] {$\scriptstyle{}$} (A2_2);   \end{tikzpicture}   \] Since
$G$ permutes the connected components of $X$, thanks to \ref{lem:A is induction of the localization},
the composition $u\circ(\psi_{q}\otimes R')$ is an isomorphism. Since
also $u$ is an isomorphism we can conclude that $\psi_{q}\otimes R'$
and therefore $\psi_{q}$ is an isomorphism. 

It remains to prove that $G$ acts transitively on the connected components
of $X$. Since $Z=X/G_{1}$ has the same connected components as $X$
for \ref{lem:invariants by Gone sends local to local}, $\underline{G}$
acts on $Z$ and $Z/G=\Spec R$, we can assume $G=\underline{G}$.
In this case the conclusion follows from \ref{lem:G acts transetively on the maximal ideals of fibers}.\end{proof}

\section{Equivariant sheaves and functors.}

Given a glrg $G$ over a ring $R$, proposition \ref{prop:generating irreducible representations}
tells us that a $G$-equivariant quasi-coherent sheaf $\shF$ over
an $R$-scheme $T$ is determined by a collection of quasi-coherent
sheaves on $T$ indexed by $I_{G}$, namely $\{(V\otimes\shF)^{G}\}_{V\in I_{G}}$.
Since we are mainly interested in affine maps of schemes, it is natural
to ask what additional structure a collection of sheaves as above
must have in order to correspond to a quasi-coherent sheaf of algebras.
We will answer this question but, in order to do that, it will be
convenient to associate to a sheaf $\shF$ not only a collection,
but a whole functor $\Omega^{\shF}=(-\otimes\shF)^{G}$ from the category
of locally free and finite $G$-representations $\Loc^{G}R$ to the
category of quasi-coherent sheaves. This has the advantage of making
sense for any finite, flat and finitely presented group scheme $G$.
The functor $\Omega^{\shF}$ is left exact and $R$-linear. We will
show that a structure of sheaf of algebras on $\shF$ corresponds
to a structure of monoidal functor on $\Omega^{\shF}$ and we will
conclude that the category of $G$-equivariant quasi-coherent sheaves
of algebras is equivalent to the category of left exact and $R$-linear
monoidal functors $\Loc^{G}R\arr\QCoh T$. When $G$ is linearly reductive,
any $R$-linear functor $\Loc^{G}R\arr\QCoh T$ is automatically exact,
and the above correspondences hold if we consider finitely presented
quasi-coherent sheaves or locally free sheaves of finite ranks instead
of all the quasi-coherent sheaves.

In the last two sections we will consider the case of $G$-torsors
and we will prove that, in the association above, they correspond
to left exact strong monoidal functors. This result is already proved
in \cite{Lurie2004}, and comes from a more general statement. On
the other hand the proof we present here is more elementary. We will
also prove a stronger result when $G$ is a super-solvable glrg (see
\ref{def: super solvable groups}), always in terms of functors.

In what follows we will consider given a flat, finite and finitely
presented group scheme $G$ over the base scheme $S$. We will also
assume that $S$ is affine, namely $S=\Spec R$, where $R$ is a ring.

\subsection{Linear functors and equivariant quasi-coherent sheaves.}

In this section we will show how we can pass from a $G$-equivariant
quasi-coherent sheaf on an $R$-scheme $T$ to a functor $\Loc^{G}R\arr\QCoh T$
and conversely.

We start defining the stack of $R$-linear functors $\Loc^{G}R\arr\QCoh(-)$.
\begin{defn}
Given an $R$-scheme $T$ we define $\QAdd^{G}T$ as the category
whose objects are $R$-linear functors 
\[
\Omega\colon\Loc^{G}R\arr\QCoh T
\]

We will denote by $\QAdd_{R}^{G}$ the stack over $\Sch/R$ whose
fibers are the categories $\QAdd^{G}T$. We define the categories
$\LAdd^{G}T$, $\CAdd^{G}T$ and the stacks $\LAdd_{R}^{G}$, $\CAdd_{R}^{G}$
replacing $\QCoh T$ by $\Loc T$, $\FCoh T$ respectively in the
above definition.
\end{defn}
The motivation of the notation $\QAdd^{G}$ is that $\text{Add}$
stands for additive functors, while $\text{Q}$ recall quasi-coherent
sheaves.

Since we have to deal with additive categories that are not abelian,
namely $\Loc^{G}R$, we specify here what we mean by (left) exact
functors.
\begin{defn}
An additive functor $F\colon\alA\arr\alB$ between additive categories
is (left, right) exact if it sends short exact sequences to (left,
right) short exact sequences.\end{defn}
\begin{rem}
Notice that, if $\alA$ is not abelian, the definition above does
not imply that an exact functor sends long exact sequences to long
exact sequences.
\end{rem}
We first state the main Theorem of this section.
\begin{thm}
\label{thm:additive functors are equivariant sheaves}Given an $R$-scheme
$T$, we have functors  \[   \begin{tikzpicture}[xscale=3.7,yscale=-0.6]     
\node (A0_0) at (0, 0) {$\shF_\Omega=\Omega_{R[G]}$};     
\node (A0_1) at (1, 0) {$\Omega$};     
\node (A1_0) at (0, 1) {$\QCoh^G T$};     
\node (A1_1) at (1, 1) {$\QAdd^G T$};     
\node (A2_0) at (0, 2) {$\shF$};     
\node (A2_1) at (1, 2) {$\Omega^{\shF}=(-\otimes \shF)^G$};     
\path (A0_0) edge [<-|,gray]node [auto] {$\scriptstyle{}$} (A0_1);     \path (A1_0) edge [->]node [auto] {$\scriptstyle{}$} (A1_1);     \path (A2_1) edge [<-|,gray]node [auto] {$\scriptstyle{}$} (A2_0);   \end{tikzpicture}   \] Moreover $\Omega^{\shF}$ is always left exact, there exist a natural
isomorphism $\shF\arr\Omega_{R[G]}^{\shF}$ and a natural transformation
$\Omega\arr\Omega^{\Omega_{R[G]}}$ which is an isomorphism if and
only if $\Omega$ is left exact. In particular $\Omega^{*}$ is an
equivalence onto the full subcategory of $\QAdd^{G}T$ of left exact
functors.\end{thm}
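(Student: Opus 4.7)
The plan is to work in four stages, the central one being the verification that the natural transformation $\eta\colon\Omega\to\Omega^{\Omega_{R[G]}}$ is an isomorphism precisely when $\Omega$ is left exact.

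First, I would make the constructions functorial. The sheaf $\shF_{\Omega}=\Omega_{R[G]}$ acquires a canonical $G$-action as follows: the left regular action of $G$ on $R[G]$ commutes with the right regular action, so each functorial $R'$-point $g$ of $G$ defines an automorphism of $R[G]\otimes R'$ as a right $G_{R'}$-module, and applying $\Omega$ yields an automorphism of $\Omega_{R[G]}\otimes R'$. This makes $\shF_{\Omega}\in\QCoh^{G}T$, functorially in $\Omega$. Conversely, $\Omega^{\shF}=(-\otimes\shF)^{G}$ is manifestly $R$-linear, so lies in $\QAdd^{G}T$, and is left exact because every object of $\Loc^{G}R$ is flat over $R$ and $(-)^{G}$ is left exact.

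Next, I would establish the natural isomorphism $\shF\simeq\Omega_{R[G]}^{\shF}=(R[G]\otimes\shF)^{G}$: this is Proposition \ref{prop:the structure map are the invariants} applied to $\shF$, given by the structure map $\shF\to\shF\otimes R[G]$. A direct check in Sweedler notation confirms it is $G$-equivariant for the action on the target induced by the left regular action on $R[G]$.

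To construct $\eta_{V}\colon\Omega_{V}\to(V\otimes\Omega_{R[G]})^{G}$ for $V\in\Loc^{G}R$, I would use the matrix-coefficient morphism $\underline{\duale V}\otimes V\to R[G]$, $\phi\otimes v\mapsto(g\mapsto\phi(gv))$, which is a morphism in $\Loc^{G}R$ when $\duale V$ carries the trivial action and $R[G]$ the right regular action. Applying $\Omega$ and using the canonical identification $\Omega_{\underline{\duale V}\otimes V}\simeq\duale V\otimes\Omega_{V}$ (valid since $\duale V$ is locally free with trivial action), then dualizing in $V$, produces an $R$-linear map $\Omega_{V}\to V\otimes\Omega_{R[G]}$. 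The commutation of left and right translations on $R[G]$ ensures that this map factors through the $G$-invariants, yielding $\eta_{V}$.

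The main step is that $\eta$ is an isomorphism iff $\Omega$ is left exact. The ``if'' direction is immediate since $\Omega^{\Omega_{R[G]}}$ is left exact. For the converse, given $V\in\Loc^{G}R$ I would apply Lemma \ref{lem:presentation of G equivariant co modules} to $\duale V$ to obtain a $G$-equivariant presentation $(\duale{R[G]})^{J}\to(\duale{R[G]})^{I}\to\duale V\to0$ with $I,J$ finite, then dualize (exact on locally free objects, with $\duale{\duale{R[G]}}\simeq R[G]$ carrying its right regular action) to produce a left exact sequence $0\to V\to R[G]^{I}\to R[G]^{J}$ in $\Loc^{G}R$. Applying the left exact functors $\Omega$ and $\Omega^{\Omega_{R[G]}}$ gives a commutative diagram with left exact rows, the two rightmost columns being isomorphisms by additivity reduced to the base case $V=R[G]$; the four lemma then forces $\eta_{V}$ to be an isomorphism. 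The main obstacle is precisely this base case: one must verify that $\eta_{R[G]}$, built from the matrix-coefficient map together with the left-translation $G$-action, coincides with the canonical isomorphism of Proposition \ref{prop:the structure map are the invariants} applied to $\shF=\Omega_{R[G]}$. This compatibility is a routine but fiddly Sweedler calculation tracing how both constructions encode the comodule structure of $R[G]$.
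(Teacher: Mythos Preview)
Your approach is essentially the same as the paper's: both construct the $G$-action on $\Omega_{R[G]}$, build the transformation $\eta$ from a universal $G$-equivariant map into $R[G]$, verify $\eta_{R[G]}$ is the structure-map isomorphism of Proposition \ref{prop:the structure map are the invariants}, and then use the presentation from Lemma \ref{lem:presentation of G equivariant co modules} together with left exactness to propagate to all $V$. Your matrix-coefficient map $\underline{\duale V}\otimes V\to R[G]$ is the dual formulation of the paper's $\eta_V\colon V\to R[G]\otimes\underline V$ (Proposition \ref{prop:natural map Omega in OmegaF}).

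One point, however, does not work as you have written it. You say that an $R'$-point $g\in G(R')$ gives a $G_{R'}$-equivariant automorphism of $R[G]\otimes R'$, and that ``applying $\Omega$ yields an automorphism of $\Omega_{R[G]}\otimes R'$''. But $\Omega$ is only a functor $\Loc^G R\to\QCoh T$; it is not defined on $\Loc^G R'$ and does not commute with base change in general (this is exactly why, as the paper remarks after the theorem statement, $\Omega^{*}$ does not extend to a map of stacks). The fix is to encode the commuting action not pointwise but as a single $G$-equivariant coaction: the comultiplication $\Delta_G\colon R[G]\to R[G]\otimes\underline{R[G]}$ is a morphism in $\Loc^G R$ (precisely because the two translations commute), and applying $\Omega$ together with Corollary \ref{cor:additivity over locally free sheaves} gives the comodule map $\Omega_{R[G]}\to\Omega_{R[G]}\otimes R[G]$. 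This is what the paper does, and with this correction your argument goes through. The same remark applies to your justification that $\eta_V$ lands in the invariants: rather than invoking ``commutation of translations'' heuristically, the paper checks it at $V=R[G]$ (where $\eta_{R[G]}=\Omega_{\Delta_G}$ is the comodule structure just defined) and then uses Lemma \ref{lem:defining natural transformations on RG} to conclude for all $V$.
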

\begin{rem}
It is part of the statement of the Theorem that for each $\Omega\in\QAdd^{G}T$
there exists a natural action of $G$ on the quasi-coherent sheaf
$\Omega_{R[G]}$. Moreover we have to warn the reader that the functor
$\Omega^{*}$ does not extend to a map of stacks, because if $\shF\in\QCoh^{G}T$
and $f\colon T'\arr T$ is a base change, then the natural map $f^{*}(\shF\otimes V)^{G}\arr((f^{*}\shF)\otimes V)^{G}$
is not an isomorphism in general. However, assuming Theorem \ref{thm:additive functors are equivariant sheaves},
we can prove the following.\end{rem}
\begin{prop}
\label{prop:additive functors are equivariant sheaves glrg}The following
conditions are equivalent:
\begin{enumerate}
\item $G$ is linearly reductive over $R$;
\item the functor of invariants $(-)^{G}\colon\Loc^{G}R\arr\QCoh R$ is
exact;
\item all the $R$-linear functors $\Omega\colon\Loc^{G}R\arr\QCoh R$ are
left exact
\end{enumerate}
In this case all the $R$-linear functors $\Loc^{G}R\arr\QCoh T$
are exact and the maps defined in \ref{thm:additive functors are equivariant sheaves}
yield isomorphisms of stacks
\[
\QCoh_{R}^{G}\simeq\QAdd_{R}^{G}\qquad\Loc_{R}^{G}\simeq\LAdd_{R}^{G}\qquad\FCoh_{R}^{G}\simeq\CAdd_{R}^{G}
\]
\end{prop}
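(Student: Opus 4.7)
The strategy is to prove the cyclic chain $(1)\Rightarrow(2)\Rightarrow(3)\Rightarrow(1)$ and then deduce the remaining statements by noting that the arguments upgrade directly to an arbitrary base $T$. The implication $(1)\Rightarrow(2)$ is immediate, since a short exact sequence in $\Loc^G R$ is in particular a short exact sequence in $\QCoh^G R$ and $(-)^G$ preserves exactness there by hypothesis.

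For $(2)\Rightarrow(3)$ I would prove the stronger statement that every $R$-linear functor $\Omega\colon\Loc^G R\arr\QCoh T$ is in fact exact, for every $R$-scheme $T$. The key observation is that under $(2)$ every short exact sequence $0\arr V_1\arr V_2\arr V_3\arr 0$ in $\Loc^G R$ splits $G$-equivariantly: since $\duale{V_3}$ is locally free the sequence stays exact after tensoring with $\duale{V_3}$, and then applying the exact functor $(-)^G$ shows that $\Homsh^G(V_3,V_2)\arr\Homsh^G(V_3,V_3)$ is surjective, so $\id_{V_3}$ lifts to a $G$-equivariant section. Any $R$-linear functor preserves split exact sequences, so $\Omega$ is exact; the splitting lives entirely in the source category $\Loc^G R$, so this works uniformly in $T$.

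For $(3)\Rightarrow(1)$ I would argue as follows. Given a surjection $M\arr N$ in $\QCoh^G R$, define $K\colon\Loc^G R\arr\QCoh R$ by $K(V)=\Coker((V\otimes M)^G\arr(V\otimes N)^G)$. This is an $R$-linear functor, so by $(3)$ it is left exact, and Theorem \ref{thm:additive functors are equivariant sheaves} then gives a natural isomorphism $K\simeq\Omega^{K(R[G])}$. But Proposition \ref{prop:the structure map are the invariants} supplies a natural isomorphism $(R[G]\otimes\shF)^G\simeq\shF$ for every $\shF\in\QCoh^G R$, so $K(R[G])=\Coker(M\arr N)=0$, whence $K=0$. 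Evaluating at $V=R$ with trivial action yields the desired surjectivity $M^G\arr N^G$.

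For the remaining assertions, the exactness over arbitrary $T$ has already been established in the splitting argument above. For the stack isomorphisms, Theorem \ref{thm:additive functors are equivariant sheaves} provides fiberwise equivalences $\Omega^*\colon\QCoh^G T\arr\QAdd^G T$ (since under $(3)$ applied to $G_T$ every functor is left exact), and compatibility with base change along $f\colon T'\arr T$ reduces to the natural isomorphism $f^*(V\otimes\shF)^G\simeq(V\otimes f^*\shF)^G$ furnished by Lemma \ref{lem:constant coherent sheaves go out invariants} with $\shH=\odi{T'}$. The restrictions to $\LAdd_R^G$ and $\CAdd_R^G$ are obtained by matching targets: on one hand $\shF\simeq\Omega_{R[G]}$ inherits local freeness (resp.\ finite presentation) from $\Omega$, and on the other, if $\shF$ is locally free (resp.\ finitely presented) then Lemma \ref{lem:base change of invariants} identifies $(V\otimes\shF)^G$ locally with a direct summand of a sheaf of the same type. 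The main point requiring care is the base change compatibility of $\Omega^*$, which is exactly where linear reductivity is used.
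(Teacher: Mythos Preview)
Your proof is correct and follows essentially the same approach as the paper: the cokernel functor $K$ for $(3)\Rightarrow(1)$ and the $G$-equivariant splitting via surjectivity of $\Hom^G(V_3,V_2)\to\Hom^G(V_3,V_3)$ for $(2)\Rightarrow(3)$ are exactly the paper's arguments. The only minor imprecision is that invoking Lemma \ref{lem:constant coherent sheaves go out invariants} ``with $\shH=\odi{T'}$'' does not literally type-check (the lemma is stated over a single base), though its content---that invariants commute with tensoring by trivial modules under linear reductivity---is indeed what yields the required base-change compatibility once one reduces to the affine case.
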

\begin{proof}
We first prove that if all functors in $\QAdd^{G}T$ are left exact,
then $(-)^{G}\colon\QCoh^{G}T\arr\QCoh T$ is exact. In particular
we will have implications $3)\then1)\then2)$. Given a surjection
$\phi\colon\shF\arr\shF'$ in $\QCoh^{G}R$ we define the functor
\[
\Omega\colon\Loc^{G}R\arr\QCoh T\comma\Omega_{V}=\Coker((\shF\otimes V)^{G}\arrdi{(\phi\otimes\id_{V})^{G}}(\shF'\otimes V)^{G})
\]
From \ref{prop:the structure map are the invariants} we see that
$\Omega_{R[G]}=0$ and from \ref{thm:additive functors are equivariant sheaves}
we can conclude that $\Omega=0$. In particular $\Omega_{R}=\Coker(\shF^{G}\arr\shF'^{G})=0$.

Now assume that $(-)^{G}\colon\Loc^{G}R\arr\QCoh R$ is exact. We
want to prove that any $\Omega\in\QAdd^{G}T$ is exact, showing, in
particular, implication $2)\then3)$. It is enough to prove that any
short exact sequence in $\Loc^{G}R$ has a $G$-equivariant splitting.
Consider a short exact sequence in $\Loc^{G}R$ 
\[
0\arr V'\arr V\arr V''\arr0
\]
This is a split sequence in $\Loc R$. In particular $\Hom^{G}(V'',-)=(-)^{G}\circ\Hom(V'',-)$
maintains the exactness of such sequence. Therefore the map 
\[
\Hom^{G}(V'',V)\arr\Hom^{G}(V'',V'')
\]
is surjective and a lifting of $\id_{V''}$ yields the required section.\end{proof}
\begin{rem}
Theorem \ref{thm:additive functors are equivariant sheaves} is no
longer true if $S$ is not affine. For instance let $S$ be a proper
scheme over $k$ such that $\Hl^{0}(\odi S)=k$ and consider $G=1$
and the $\odi S$-linear functor
\[
\Omega=\Hl^{0}(-)\otimes_{k}\odi S\colon\Loc^{G}S=\Loc S\arr\QCoh S
\]
If $\Omega\simeq(\Omega_{\odi S[G]}\otimes-)^{G}=\id_{\Loc S}$ it
will follow that any locally free sheaf is free. When $G$ is linearly
reductive, the right class of functors to consider for a general base
scheme $S$ is the one of functors $\Loc_{S}^{G}\arr\QCoh_{S}$. This
works also in general, for non linearly reductive groups, if we restrict
those stacks to the fppf site of $S$. Indeed we have to warn the
reader that in general, if $\shF\in\QCoh S$, the functor $(\shF\otimes-)^{G}$
does not yield a map of stacks $\Loc_{S}^{G}\arr\QCoh_{S}$, even
when $S$ is affine, because the invariant functor $(-)^{G}$ does
not commute with arbitrary base changes. Anyway in this exposition
we have preferred to avoid technicalities and , for instance, consider
the simplest case $S$ affine.
\end{rem}

\begin{rem}
\label{rem: additive functors uniquely determined on irreducible representations}
When $G$ is a glrg, theorem \ref{thm:additive functors are equivariant sheaves}
and \ref{prop:equivariant coherent sheaves are collection of coherent sheves for glrg}
say that, in order to define an $R$-linear functor $\Omega\colon\Loc^{G}R\arr\QCoh T$,
it is enough to give quasi-coherent sheaves $(\shF_{V})_{V\in I_{G}}$.
We can then set 
\[
\Omega_{W}=\bigoplus_{V\in I_{G}}\Homsh^{G}(V,W)\otimes\shF_{V}
\]

\end{rem}
Before proving theorem \ref{thm:additive functors are equivariant sheaves}
we need some preliminary lemmas, which will be useful also in other
situations.
\begin{lem}
\label{lem:equivalence among Coh T and additive functors Loc R to Coh T}Given
an $R$-scheme T we have equivalences of categories   \[   \begin{tikzpicture}[xscale=5.7,yscale=-0.6]     \node (A0_0) at (0, 0) {$F(R)$};     \node (A0_1) at (1, 0) {$F$};     \node (A1_0) at (0, 1) {$\QCoh T$};     \node (A1_1) at (1, 1) {$\{R\text{-linear functors }\Loc R\arr\QCoh T\}$};     \node (A2_0) at (0, 2) {$\shF$};     \node (A2_1) at (1, 2) {$-\otimes_{R} \shF$};     \path (A1_0) edge [->]node [auto] {$\scriptstyle{}$} (A1_1);     \path (A2_0) edge [|->,gray]node [auto] {$\scriptstyle{}$} (A2_1);     \path (A0_1) edge [|->,gray]node [auto] {$\scriptstyle{}$} (A0_0);   \end{tikzpicture}   \] \end{lem}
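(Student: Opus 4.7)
The plan is to exhibit the two indicated functors, show each composition is naturally isomorphic to the identity, and verify that the key natural transformation is an isomorphism by reducing from arbitrary locally free sheaves to the case $V = R$.

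First I would check that both assignments are well defined. Given $\shF \in \QCoh T$, the functor $V \mapsto V \otimes_{R} \shF$ from $\Loc R$ to $\QCoh T$ is clearly $R$-linear, and its value at $V = R$ recovers $\shF$ up to the canonical isomorphism $R \otimes_{R} \shF \simeq \shF$. This handles one composition: starting from $\shF$, applying $-\otimes_{R}\shF$, and evaluating at $R$ gives back $\shF$ naturally.

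For the other composition, given an $R$-linear functor $F \colon \Loc R \arr \QCoh T$, I would construct a natural transformation
\[
\eta_{V} \colon V \otimes_{R} F(R) \arr F(V), \qquad V \in \Loc R,
\]
as follows. Identifying $V = \Hom_{R}(R,V)$, the $R$-linearity of $F$ gives that the map $V \times F(R) \arr F(V)$ sending $(v,x)$ to $F(v)(x)$ is $R$-bilinear, hence factors through $V \otimes_{R} F(R)$. This $\eta_{V}$ is natural in $V$, and for $V = R$ it is just the canonical isomorphism $R \otimes_{R} F(R) \simeq F(R)$.

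The main step, which is also the only point requiring care, is showing that $\eta_{V}$ is an isomorphism for every $V \in \Loc R$. For $V = R^{n}$ free of finite rank this follows immediately from additivity of both $-\otimes_{R}F(R)$ and $F$, together with the case $V=R$. For general $V \in \Loc R$, $V$ is a finitely generated projective $R$-module, hence a direct summand of some $R^{n}$: writing $V \oplus W \simeq R^{n}$, both functors $V \mapsto V \otimes_{R} F(R)$ and $V \mapsto F(V)$ respect this direct sum decomposition, so the fact that $\eta_{R^{n}}$ is an isomorphism forces $\eta_{V}$ and $\eta_{W}$ to be isomorphisms as well. Naturality of $\eta$ then provides the required natural isomorphism $-\otimes_{R}F(R) \simeq F$, and a routine check shows the two natural isomorphisms are compatible, yielding the desired equivalence of categories.
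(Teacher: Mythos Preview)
Your proof is correct and follows essentially the same strategy as the paper: both construct the same natural transformation $\eta_V\colon V\otimes_R F(R)\to F(V)$ via $v\otimes x\mapsto F(v)(x)$, verify it trivially for $V=R$, and extend to free modules by additivity. The only difference is in the final reduction for general $V\in\Loc R$: the paper takes a free presentation $V_1\to V_0\to V\to 0$ and uses that projectivity of $V$ makes both rows of the resulting diagram exact, whereas you use the direct-summand characterization $V\oplus W\simeq R^n$ and deduce that $\eta_V$ is an isomorphism from $\eta_{R^n}$ being one; your route is slightly more direct since it avoids any appeal to exactness and uses only additivity.
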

\begin{proof}
Clearly $R\otimes_{R}\shF\simeq\shF$. On the other hand, since $F$
is $R$-linear, we can define   \[   \begin{tikzpicture}[xscale=3.0,yscale=-0.5]     \node (A0_0) at (0, 0) {$V\otimes F(R)$};     \node (A0_1) at (1, 0) {$F(V)$};     \node (A1_0) at (0, 1) {$v\otimes x$};     \node (A1_1) at (1, 1) {$F_v(x)$};     \path (A0_0) edge [->]node [auto] {$\scriptstyle{\gamma_{F,V}}$} (A0_1);     \path (A1_0) edge [|->,gray]node [auto] {$\scriptstyle{}$} (A1_1);   \end{tikzpicture}   \] where
$F_{v}=F(R\arrdi vV)\colon F(R)\arr F(V)$. It is straightforward
to check that the maps $\gamma_{F,*}-\otimes F(R)\arr F$ are natural
in $F$. So it remains to prove that it is an isomorphism. By additivity
of $F$, $\gamma_{F,V}$ is an isomorphism when $V$ is free. Now
let $V\in\Loc R$ and consider a presentation $V_{1}\arr V_{0}\arr V$
with $V_{1},V_{0}$ free. We have a commutative diagram   \[   \begin{tikzpicture}[xscale=2.5,yscale=-1.1]     \node (A0_0) at (0, 0) {$V_1\otimes F(R)$};     \node (A0_1) at (1, 0) {$V_0\otimes F(R)$};     \node (A0_2) at (2, 0) {$V\otimes F(R)$};     \node (A0_3) at (3, 0) {$0$};     \node (A1_0) at (0, 1) {$F(V_1)$};     \node (A1_1) at (1, 1) {$F(V_0)$};     \node (A1_2) at (2, 1) {$F(V)$};     \node (A1_3) at (3, 1) {$0$};     \path (A0_1) edge [->]node [auto] {$\scriptstyle{\gamma_{F,V_0}}$} (A1_1);     \path (A0_0) edge [->]node [auto] {$\scriptstyle{}$} (A0_1);     \path (A0_1) edge [->]node [auto] {$\scriptstyle{}$} (A0_2);     \path (A1_0) edge [->]node [auto] {$\scriptstyle{}$} (A1_1);     \path (A1_1) edge [->]node [auto] {$\scriptstyle{}$} (A1_2);     \path (A0_2) edge [->]node [auto] {$\scriptstyle{\gamma_{F,V}}$} (A1_2);     \path (A1_2) edge [->]node [auto] {$\scriptstyle{}$} (A1_3);     \path (A0_2) edge [->]node [auto] {$\scriptstyle{}$} (A0_3);     \path (A0_0) edge [->]node [auto] {$\scriptstyle{\gamma_{F,V_1}}$} (A1_0);   \end{tikzpicture}   \] Since
$V$ is projective, both rows are exact and since $V_{1},V_{0}$ are
free we can conclude that $\gamma_{F,V}$ is an isomorphism.\end{proof}
\begin{cor}
\label{cor:additivity over locally free sheaves}Let $\Omega\in\QAdd^{G}T$.
Then there exists a unique natural transformation
\[
\gamma_{V,W}\colon V\otimes\Omega_{W}\arr\Omega_{\underline{V}\otimes W}\text{ for }V\in\Loc R\comma W\in\Loc^{G}R
\]
such that $\gamma_{R,W}=\id_{\Omega_{W}}$ and it is an isomorphism.
Moreover $\gamma$ is natural also in $\Omega\in\QAdd^{G}T$.\end{cor}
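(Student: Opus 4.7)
The plan is to deduce this corollary from the preceding Lemma \ref{lem:equivalence among Coh T and additive functors Loc R to Coh T} by applying it ``pointwise in $W$''. Fix $W \in \Loc^{G}R$ and consider the two functors
\[
F_{W},\, G_{W} \colon \Loc R \arr \QCoh T, \qquad F_{W}(V) = \Omega_{\underline{V}\otimes W}, \qquad G_{W}(V) = V \otimes \Omega_{W}.
\]
Both are $R$-linear: for $G_{W}$ this is obvious, and for $F_{W}$ it follows from the $R$-linearity of $\Omega$ together with the fact that scalar multiplication by $r \in R$ on $V$ induces the map $r \cdot \id$ on $\underline{V}\otimes W$. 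Moreover both functors satisfy $F_{W}(R) = G_{W}(R) = \Omega_{W}$ canonically, since $\underline{R}\otimes W \simeq W$ in $\Loc^{G}R$.

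The first key step is then to invoke the equivalence in Lemma \ref{lem:equivalence among Coh T and additive functors Loc R to Coh T}: under it, $F_{W}$ and $G_{W}$ correspond to the same object $\Omega_{W} \in \QCoh T$, so the identity map $\Omega_{W} \to \Omega_{W}$ lifts uniquely to a natural isomorphism $\gamma_{-,W}\colon G_{W} \arr F_{W}$ of $R$-linear functors whose component at $R$ is $\id_{\Omega_{W}}$. Explicitly, from the proof of the lemma, $\gamma_{V,W}$ is the unique map making
\[
\gamma_{V,W}(v \otimes x) = \Omega_{v \otimes \id_{W}}(x)
\]
for $v \colon R \arr V$ and $x \in \Omega_{W}$, and by the lemma it is an isomorphism for every $V \in \Loc R$.

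For naturality in $W$, given a morphism $f \colon W \arr W'$ in $\Loc^{G}R$, one checks that the two $R$-linear natural transformations $G_{W} \arr F_{W'}$ obtained by composing $\gamma_{-,W}$ with $\Omega_{\underline{V}\otimes f}$ or by composing $V \otimes \Omega_{f}$ with $\gamma_{-,W'}$ both restrict to $\Omega_{f}$ at $V = R$; by the uniqueness statement in the lemma they coincide. Naturality in $\Omega$ is checked in exactly the same way: a natural transformation $\Omega \arr \Omega'$ yields two candidate transformations between the corresponding functors $G_{W}$ and $F'_{W'}$ that agree at $V = R$, hence coincide.

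Every step is a direct application of the previous lemma; there is no real obstacle, since the hard content (reconstructing an $R$-linear functor from its value at $R$) has already been done there. The only point requiring some care is the $R$-linearity of $F_{W}$, but this is immediate from the functoriality of $\Omega$ applied to the scalar action on the first tensor factor of $\underline{V}\otimes W$.
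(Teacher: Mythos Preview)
Your proof is correct and follows essentially the same approach as the paper: fix $W$, view both sides as $R$-linear functors $\Loc R\to\QCoh T$ agreeing at $R$, apply Lemma~\ref{lem:equivalence among Coh T and additive functors Loc R to Coh T} to get the unique natural isomorphism, and deduce naturality in $W$ and in $\Omega$ from the uniqueness. You have simply made explicit what the paper summarizes as ``follows by a similar trick''.
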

\begin{proof}
The functors $V\longmapsto V\otimes\Omega_{W}$ and $V\longmapsto\Omega_{\underline{V}\otimes W}$
from $\Loc R$ to $\QCoh T$ coincides on $V=R$. So $\id_{\Omega_{W}}$
extends to a unique natural transformation $\gamma_{-,W}\colon-\otimes\Omega_{W}\arr\Omega_{\underline{-}\otimes W}$,
which is an isomorphism. The naturality with respect to $W\in\Loc^{G}R$
and $\Omega\in\QAdd^{G}T$ follows by a similar trick.

\end{proof}
We are now ready to define the action of $G$ on $\Omega_{R[G]}$
for any $\Omega\in\QAdd^{G}T$.
\begin{lem}
The co-multiplication 
\[
R[G]\arrdi{\Delta_{G}}R[G]\otimes\underline{R[G]}
\]
is $G$-equivariant and, given $\Omega\in\QAdd^{G}T$, the map
\[
\Omega_{R[G]}\arrdi{\Omega_{\Delta_{G}}}\Omega_{R[G]\otimes\underline{R[G]}}\simeq\Omega_{R[G]}\otimes R[G]
\]
defines an action of $G$ on $\Omega_{R[G]}$.\end{lem}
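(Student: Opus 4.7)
The plan is to first verify the equivariance of $\Delta_{G}$, then transport the coalgebra identities of $R[G]$ through $\Omega$ using the natural isomorphism $\gamma$ of Corollary~\ref{cor:additivity over locally free sheaves}. For equivariance, the cleanest route is functorial: $\Delta_{G}$ corresponds to the multiplication $m\colon G\times G\to G$, and with the regular right action $x\star g=g^{-1}x$ on $G$, and the product action $(x_{1},x_{2})\star g=(g^{-1}x_{1},x_{2})$ on $G\times G$ (regular on the first factor, trivial on the second), one has
\[
m((x_{1},x_{2})\star g)=g^{-1}x_{1}x_{2}=m(x_{1},x_{2})\star g,
\]
so $m$ is $G$-equivariant, and dually $\Delta_{G}\colon R[G]\to R[G]\otimes\underline{R[G]}$ is a morphism of $G$-comodules. (Alternatively, one can verify this by a direct Sweedler-style computation, using $\mu_{G}=(\id\otimes\sigma_{G})\circ\tau\circ\Delta_{G}$ together with the coassociativity of $\Delta_{G}$.)

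Having this equivariance, I apply $\Omega$ to obtain $\Omega_{\Delta_{G}}\colon\Omega_{R[G]}\to\Omega_{R[G]\otimes\underline{R[G]}}$ and compose with the natural isomorphism $\Omega_{\underline{V}\otimes W}\simeq V\otimes\Omega_{W}$ of Corollary~\ref{cor:additivity over locally free sheaves}, applied to $V=W=R[G]$ after first flipping $R[G]\otimes\underline{R[G]}\simeq\underline{R[G]}\otimes R[G]$ via the tensor symmetry in $\Loc^{G}R$. This produces the candidate coaction $\alpha\colon\Omega_{R[G]}\to\Omega_{R[G]}\otimes R[G]$ of the statement.

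To show that $\alpha$ actually defines an action of $G$, I check the coassociativity axiom $(\alpha\otimes\id_{R[G]})\circ\alpha=(\id_{\Omega_{R[G]}}\otimes\Delta_{G})\circ\alpha$ and the counit axiom $(\id_{\Omega_{R[G]}}\otimes\varepsilon_{G})\circ\alpha=\id_{\Omega_{R[G]}}$. Both reduce, by functoriality of $\Omega$ and naturality of $\gamma$, to the Hopf-algebra identities $(\Delta_{G}\otimes\id)\circ\Delta_{G}=(\id\otimes\Delta_{G})\circ\Delta_{G}$ and $(\id\otimes\varepsilon_{G})\circ\Delta_{G}=\id_{R[G]}$, viewed as equalities of morphisms in $\Loc^{G}R$ after equipping $R[G]\otimes\underline{R[G]}\otimes\underline{R[G]}$ (resp.\ $R[G]\otimes\underline{R}$) with the appropriate $G$-action. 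Each arrow in these diagrams is either already shown to be equivariant, or is an $R$-linear map between trivial representations and hence automatically equivariant.

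The main obstacle is the bookkeeping needed to match $\gamma$ against these iterated tensor products: one must verify that $V\otimes\Omega_{W}\simeq\Omega_{\underline{V}\otimes W}$ is compatible both with composition of morphisms and with the symmetry of $\Loc^{G}R$, so that the two ways of expanding $\Omega_{R[G]\otimes\underline{R[G]}\otimes\underline{R[G]}}$---via $\gamma$ applied to the first two or the last two factors---are correctly intertwined by $\Omega$ of the relevant swap maps. Once this coherence of $\gamma$ with $\Omega$ is spelled out, both axioms follow immediately from the Hopf-algebra identities for $\Delta_{G}$ and $\varepsilon_{G}$.
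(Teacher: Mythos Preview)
Your proposal is correct and follows essentially the same route as the paper: equivariance of $\Delta_{G}$ is checked functorially via the multiplication map on $G$ with the regular action on the first factor and trivial action on the second (the paper writes this as $(gh)\star t=t^{-1}gh=(g\star t)h$), and the comodule axioms are obtained by applying $\Omega$ to the coassociativity and counit diagrams of $R[G]$, all of whose arrows are $G$-equivariant. The paper simply records these two diagrams and says they yield the required identities after applying $\Omega$; your extra discussion of the coherence of $\gamma$ is the bookkeeping the paper leaves implicit, and it is handled by the naturality of $\gamma$ in both variables stated in Corollary~\ref{cor:additivity over locally free sheaves}.
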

\begin{proof}
The map $\Delta_{G}$ is $G$-equivariant since $(gh)\star t=t^{-1}gh=(g\star t)h$
for any $h,g,t\in G$, where $\star$ denotes the regular action of
$G$ on itself (see \ref{def:regular representation} for the convention
used). Instead the commutative diagrams that $\Omega_{\Delta_{G}}$
has to satisfy in order to be an action come from the following commutative
diagrams of $G$-equivariant maps, after applying the functor $\Omega$.
  \[   \begin{tikzpicture}[xscale=3.6,yscale=-1.2]     
\node (A0_0) at (0, 0) {$R[G]$};     
\node (A0_1) at (1, 0) {$R[G]\otimes \underline{R[G]}$};     
\node (A0_3) at (2.5, 0) {$R[G]\otimes \underline{R[G]}$};     
\node (A1_0) at (0, 1) {$R[G]\otimes \underline{R[G]}$};     
\node (A1_1) at (1, 1) {$R[G]\otimes \underline{R[G]} \otimes \underline{R[G]}$};     
\node (A1_2) at (1.8, 1) {$R[G]$};     
\node (A1_3) at (2.5, 1) {$R[G]$};    
\path (A1_2) edge [->]node [auto] {$\scriptstyle{\Delta_G}$} (A0_3);     \path (A0_0) edge [->]node [auto] {$\scriptstyle{\Delta_G}$} (A0_1);     \path (A1_0) edge [->]node [auto] {$\scriptstyle{\Delta_G \otimes \id}$} (A1_1);     \path (A0_3) edge [->]node [auto] {$\scriptstyle{\id\otimes \varepsilon}$} (A1_3);     \path (A0_0) edge [->]node [auto] {$\scriptstyle{\Delta_G}$} (A1_0);     \path (A0_1) edge [->]node [auto] {$\scriptstyle{\id\otimes \Delta_G}$} (A1_1);     \path (A1_2) edge [->]node [auto] {$\scriptstyle{\id}$} (A1_3);   \end{tikzpicture}   \] \end{proof}
\begin{lem}
\label{lem:defining natural transformations on RG}Let $\Omega,\Gamma\in\QAdd^{G}T$,
with $\Gamma$ left exact. Then the map   \[   \begin{tikzpicture}[xscale=4.0,yscale=-0.6]     \node (A0_0) at (0, 0) {$\Hom_{\QAdd^{G}T}(\Omega,\Gamma)$};     \node (A0_1) at (1, 0) {$\Hom_T(\Omega_{R[G]},\Gamma_{R[G]})$};     \node (A1_0) at (0, 1) {$\sigma$};     \node (A1_1) at (1, 1) {$\sigma_{R[G]}$};     \path (A0_0) edge [->]node [auto] {$\scriptstyle{}$} (A0_1);     \path (A1_0) edge [|->,gray]node [auto] {$\scriptstyle{}$} (A1_1);   \end{tikzpicture}   \] is
injective and its image is composed of the morphisms $\delta\colon\Omega_{R[G]}\arr\Gamma_{R[G]}$
such that, for any $u\in\End^{G}(R[G])$, the following diagram is
commutative   \[   \begin{tikzpicture}[xscale=1.8,yscale=-1.0]     \node (A0_0) at (0, 0) {$\Omega_{R[G]}$};     \node (A0_1) at (1, 0) {$\Gamma_{R[G]}$};     \node (A1_0) at (0, 1) {$\Omega_{R[G]}$};     \node (A1_1) at (1, 1) {$\Gamma_{R[G]}$};     \path (A0_0) edge [->]node [auto] {$\scriptstyle{\delta}$} (A0_1);     \path (A1_0) edge [->]node [auto] {$\scriptstyle{\delta}$} (A1_1);     \path (A0_1) edge [->]node [auto] {$\scriptstyle{\Gamma_u}$} (A1_1);     \path (A0_0) edge [->]node [auto] {$\scriptstyle{\Omega_u}$} (A1_0);   \end{tikzpicture}   \] Moreover
$\sigma\colon\Omega\arr\Gamma$ is an isomorphism if and only if $\sigma_{R[G]}$
is an isomorphism and $\Omega$ is left exact.\end{lem}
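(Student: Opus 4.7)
The plan rests on realizing each $V \in \Loc^G R$ as a kernel of a map between copies of $R[G]$. Applying Lemma \ref{lem:presentation of G equivariant co modules} to $\duale V \in \Loc^G R$ produces a finite $G$-equivariant presentation $\duale{R[G]}^J \arr \duale{R[G]}^I \arr \duale V \arr 0$, and dualizing (everything is locally free of finite rank) yields a left-exact sequence of $G$-modules
\[
0 \arr V \arrdi{\alpha} R[G]^I \arrdi{\beta} R[G]^J, \qquad (*)
\]
in which $\beta$ is represented by a matrix whose entries lie in $\End^G(R[G])$.

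For injectivity of $\sigma \longmapsto \sigma_{R[G]}$, I would apply $\Omega$ and $\Gamma$ to $\alpha$ and invoke naturality: the resulting square pins $\sigma_V$ down as the unique lift of $\sigma_{R[G]}^I \circ \Omega_\alpha$ through the injection $\Gamma_V \hookrightarrow \Gamma_{R[G]}^I$, where injectivity of the latter uses left exactness of $\Gamma$. Necessity of the commutation condition is immediate from naturality applied to each $u \in \End^G(R[G])$. For sufficiency, given such a $\delta$, I would define $\sigma_V$ by the same recipe: the hypothesis on $\delta$ together with the matrix description of $\beta$ gives $\Gamma_\beta \circ \delta^I = \delta^J \circ \Omega_\beta$, so the composition $\Omega_V \arr \Omega_{R[G]}^I \arrdi{\delta^I} \Gamma_{R[G]}^I$ has vanishing further composition to $\Gamma_{R[G]}^J$ (because $\beta \circ \alpha = 0$ and $\Omega$ is a functor), and therefore factors uniquely through $\Gamma_V$. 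Using the same injectivity trick, one checks that $\sigma_V$ is independent of the chosen presentation and natural in $V$; the equality $\sigma_{R[G]} = \delta$ follows from the trivial presentation $0 \arr R[G] \arrdi{\id} R[G] \arr 0$.

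For the final statement, the direction $(\Rightarrow)$ is automatic, since an isomorphism of functors transports left exactness from $\Gamma$ to $\Omega$. For $(\Leftarrow)$, assuming $\Omega$ is also left exact, applying $\Omega$ and $\Gamma$ to $(*)$ produces two left-exact rows in a commutative diagram; since $\sigma_{R[G]}^I$ and $\sigma_{R[G]}^J$ are isomorphisms, a direct chase on kernels yields that $\sigma_V$ is an isomorphism as well. The main delicate point throughout is the well-definedness of the construction of $\sigma_V$ independently of the chosen presentation, which I expect to reduce, via comparison of two presentations through a common refinement, to the same injectivity argument used in the first step.
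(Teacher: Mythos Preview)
Your proposal is correct and follows essentially the same strategy as the paper: present $V$ as a kernel $0\to V\to R[G]^I\to R[G]^J$ via the dual of Lemma~\ref{lem:presentation of G equivariant co modules}, use left exactness of $\Gamma$ to pin down $\sigma_V$ uniquely, and use the commutation hypothesis on $\delta$ to see that the candidate map factors through $\Gamma_V$. The one place where the paper is more explicit is naturality and independence of the presentation: rather than invoking a common refinement, the paper observes that any $G$-map $f\colon V\to W$ lifts to maps $f_i\colon V_i\to W_i$ between the chosen presentations, because $\Hom^G(\duale{R[G]},-)\simeq(R[G]\otimes -)^G$ is the forgetful functor (so $\duale{R[G]}$ behaves as a projective), and this single lifting argument handles both issues at once.
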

\begin{proof}
Denote by $M$ the set of maps $\delta$ as in the statement. Clearly,
if $\sigma\colon\Omega\arr\Gamma$ is a natural transformation, then
$\sigma_{R[G]}\in M$. With all the $V\in\Loc^{G}R$ we associate
an exact sequence
\[
\duale{V_{1}}\arr\duale{V_{0}}\arr\duale V\arr0
\]
as in \ref{lem:presentation of G equivariant co modules}. In particular
the $V_{i}$ are direct sums of copies of the regular representation
$R[G]$. Since $V$ is locally free, the dual of the above sequence
is still exact and can be decomposed in two short exact sequences
in $\Loc^{G}R$. In particular, since $\Gamma$ is left exact, the
sequence
\[
0\arr\Gamma_{V}\arr\Gamma_{V_{0}}\arr\Gamma_{V_{1}}
\]
is exact too.

Note also that, thanks to the additivity and $R$-linearity of $\Omega$
and $\Gamma$, a $\delta\in M$ extends uniquely to a natural transformation
$\delta_{*}\colon\Omega\arr\Gamma$ if we restrict those functors
to the full subcategory of $\Loc^{G}R$ of sheaves which are direct
sums of copies of the regular representation. In particular, given
$\delta\in M$ and $V\in\Loc^{G}R$ there exists a unique $\delta_{V}$
making the following diagram commutative   \[   \begin{tikzpicture}[xscale=1.8,yscale=-1.0]     \node (A0_0) at (0, 0) {$0$};     \node (A0_1) at (1, 0) {$\Omega_{V}$};     \node (A0_2) at (2, 0) {$\Omega_{V_0}$};     \node (A0_3) at (3, 0) {$\Omega_{V_1}$};     \node (A1_0) at (0, 1) {$0$};     \node (A1_1) at (1, 1) {$\Gamma_{V}$};     \node (A1_2) at (2, 1) {$\Gamma_{V_0}$};     \node (A1_3) at (3, 1) {$\Gamma_{V_1}$};     \path (A0_0) edge [->]node [auto] {$\scriptstyle{}$} (A0_1);     \path (A0_2) edge [->]node [auto] {$\scriptstyle{}$} (A0_3);     \path (A1_0) edge [->]node [auto] {$\scriptstyle{}$} (A1_1);     \path (A0_3) edge [->]node [auto] {$\scriptstyle{\delta_{V_1}}$} (A1_3);     \path (A0_1) edge [->]node [auto] {$\scriptstyle{}$} (A0_2);     \path (A1_1) edge [->]node [auto] {$\scriptstyle{}$} (A1_2);     \path (A0_2) edge [->]node [auto] {$\scriptstyle{\delta_{V_0}}$} (A1_2);     \path (A1_2) edge [->]node [auto] {$\scriptstyle{}$} (A1_3);     \path (A0_1) edge [->]node [auto] {$\scriptstyle{\delta_V}$} (A1_1);   \end{tikzpicture}   \] Here
we use that the second row is exact. This shows that the map $*_{R[G]}$
in the statement is injective and tells how to extend a $\delta\in M$
to a map $\delta_{*}\colon\Omega\arr\Gamma$. In order to prove that
such map is natural and does not depend on the choice of the exact
sequence associated with $V\in\Loc^{G}R$, it is enough to note that
every map $f\colon V\arr W$, where $W\in\Loc^{G}R$, extends to a
diagram of $G$-equivariant maps   \[   \begin{tikzpicture}[xscale=1.8,yscale=-1.0]     \node (A0_0) at (0, 0) {$0$};     \node (A0_1) at (1, 0) {$V$};     \node (A0_2) at (2, 0) {$V_0$};     \node (A0_3) at (3, 0) {$V_1$};     \node (A1_0) at (0, 1) {$0$};     \node (A1_1) at (1, 1) {$W$};     \node (A1_2) at (2, 1) {$W_0$};     \node (A1_3) at (3, 1) {$W_1$};     \path (A0_0) edge [->]node [auto] {$\scriptstyle{}$} (A0_1);     \path (A0_2) edge [->]node [auto] {$\scriptstyle{}$} (A0_3);     \path (A1_0) edge [->]node [auto] {$\scriptstyle{}$} (A1_1);     \path (A0_3) edge [->]node [auto] {$\scriptstyle{f_2}$} (A1_3);     \path (A0_1) edge [->]node [auto] {$\scriptstyle{}$} (A0_2);     \path (A1_1) edge [->]node [auto] {$\scriptstyle{}$} (A1_2);     \path (A0_2) edge [->]node [auto] {$\scriptstyle{f_1}$} (A1_2);     \path (A1_2) edge [->]node [auto] {$\scriptstyle{}$} (A1_3);     \path (A0_1) edge [->]node [auto] {$\scriptstyle{f}$} (A1_1);   \end{tikzpicture}   \] 
Indeed it is enough to take the dual sequences and note that $\Hom^{G}(\duale{R[G]},-)\simeq(R[G]\otimes-)^{G}$
is just the forgetful functor $\QCoh^{G}R\arr\QCoh R$ by \ref{prop:the structure map are the invariants}.
The last claim follows easily from the diagram above.\end{proof}
\begin{prop}
\label{prop:natural map Omega in OmegaF}The composition
\[
\eta_{V}\colon V\arrdi{\mu_{V}}V\otimes R[G]\arrdi{\id\otimes\sigma_{G}}V\otimes R[G]\arrdi{\text{swap}}R[G]\otimes\underline{V}\text{ for }V\in\Loc^{G}R
\]
defines a natural transformation $\eta\colon\id_{\Loc^{G}R}\arr R[G]\otimes\underline{-}$
of functors $\Loc^{G}R\arr\Loc^{G}R$ and $\eta_{R[G]}=\Delta_{G}$.
Given an $R$-scheme $T$ this map induces a natural transformation
\[
\Omega_{V}\arr(\Omega_{R[G]}\otimes V)^{G}
\]
of functors $\Loc^{G}R\times\QAdd^{G}T\arr\Coh T$ which is an isomorphism
if $V=R[G]$ or $\Omega$ is left exact. Moreover the induced map
\[
\theta_{V}\colon\Omega_{V}\otimes\duale V\arr\Omega_{R[G]}
\]
is $G$-equivariant and it is given by
\[
\Omega_{V}\otimes\duale V\simeq\Omega_{V}\otimes\Hom^{G}(V,R[G])\arrdi{x\otimes\phi\arr\Omega_{\phi}(x)}\Omega_{R[G]}
\]
\end{prop}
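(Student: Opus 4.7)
My plan is to verify the claims in the order they are stated, using Proposition \ref{prop:the structure map are the invariants}, Corollary \ref{cor:additivity over locally free sheaves}, and Lemma \ref{lem:presentation of G equivariant co modules} as the main tools.

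First, I would check that $\eta_V$ is $G$-equivariant and natural in $V$. Equivariance is the statement that the composition $V \to R[G]\otimes \underline{V}$ carries the structure map $\mu_V$ of $V$ to the structure map of $R[G]\otimes\underline{V}$, which only acts on the $R[G]$ factor; unwinding definitions, this reduces to the coassociativity and antipode axioms of the Hopf algebra $R[G]$, i.e. the identity $(\Delta_G \otimes \id)\circ \mu_V = (\id \otimes \mu_V)\circ \mu_V$ combined with $(\id\otimes\sigma_G)$. Naturality in $V$ follows because $\mu$ is natural on $\Loc^G R$ and the other two maps are functorial in $V$ without involving the $G$-action. To identify $\eta_{R[G]}$ with $\Delta_G$, I would recall that the regular representation structure on $R[G]$ is defined (see Definition \ref{def:regular representation}) precisely as $\mu_{R[G]} = (\id\otimes\sigma_G)\circ(\text{swap})\circ \Delta_G$, so $\eta_{R[G]} = (\text{swap})\circ(\id\otimes\sigma_G)\circ\mu_{R[G]} = \Delta_G$ after cancellation of the swap and the square of the antipode.

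Next, applying $\Omega$ to $\eta_V$ and using the canonical isomorphism $\Omega_{R[G]\otimes\underline{V}} \simeq \Omega_{R[G]}\otimes V$ from Corollary \ref{cor:additivity over locally free sheaves}, one gets a map $\Omega_V \to \Omega_{R[G]}\otimes V$; I would verify by chasing the equivariance of $\eta_V$ that this map factors through the $G$-invariants $(\Omega_{R[G]}\otimes V)^G$. Naturality in $V$ and $\Omega$ is then automatic. For the isomorphism statement when $V = R[G]$, the map becomes $\Omega_{\Delta_G}\colon \Omega_{R[G]} \to \Omega_{R[G]}\otimes R[G]$, which lands in the invariants and coincides with the structure map of $\Omega_{R[G]}$; by Proposition \ref{prop:the structure map are the invariants} this identifies $\Omega_{R[G]}$ with $(\Omega_{R[G]}\otimes R[G])^G$. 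For the case $\Omega$ left exact, I would take a two-step $G$-equivariant resolution of $V$ by direct sums of copies of $\duale{R[G]}$ using Lemma \ref{lem:presentation of G equivariant co modules} applied to $\duale V$, and observe that both $\Omega_{-}$ and $(\Omega_{R[G]}\otimes -)^G$ are left exact (the second because $(-)^G$ is left exact on $\QCoh R$ and tensoring with a flat sheaf is exact), so the isomorphism for free summands propagates.

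Finally, for the description of $\theta_V$, I would dualize the map $\Omega_V \to (\Omega_{R[G]}\otimes V)^G \hookrightarrow \Omega_{R[G]}\otimes V$ by tensoring with $\duale V$ and composing with the evaluation $V\otimes \duale V \to R$, and then identify $\duale V \simeq \Hom^G(V, R[G])$ via the isomorphism of Proposition \ref{prop:the structure map are the invariants}. The composition $\Omega_V \otimes \Hom^G(V, R[G]) \to \Omega_{R[G]}$ sending $x\otimes\phi$ to $\Omega_\phi(x)$ is natural in $V$ and agrees with the constructed $\theta_V$ on elements coming from $V = R[G]$ and $\phi = \id$, where both reduce (via $\eta_{R[G]} = \Delta_G$ and Lemma \ref{lem:co unit description}) to the composition with $\id\otimes\varepsilon_G$; by the additivity and uniqueness argument of Lemma \ref{lem:defining natural transformations on RG} this extends to all $V$. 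The $G$-equivariance of $\theta_V$ is clear from this second description since $\Omega_\phi$ is $G$-equivariant. The main obstacle will be the bookkeeping in showing that the two descriptions of $\theta_V$ agree, which amounts to a careful use of Lemma \ref{lem:co unit description} to trace the evaluation $\duale V \otimes V \to R$ back through $\varepsilon_G \circ \Delta_G = \id$.
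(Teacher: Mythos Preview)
Your plan is broadly right and parallels the paper, but there are two places where the argument, as written, does not go through.

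First, the factorization through $(\Omega_{R[G]}\otimes V)^G$. You propose to ``chase the equivariance of $\eta_V$'', but the $G$-equivariance of $\eta_V\colon V\to R[G]\otimes\underline{V}$ only says that $\eta_V$ intertwines the $G$-structures on source and target; after applying $\Omega$ this information is lost, and the $G$-comodule structure on $\Omega_{R[G]}$ (defined afterward via $\Omega_{\Delta_G}$) is a genuinely new datum not directly governed by the equivariance of $\eta_V$. The paper's argument is precisely the one you invoke elsewhere, namely Lemma~\ref{lem:defining natural transformations on RG}: the maps $\delta_V=\Omega(\eta_V)$ assemble into a natural transformation $\delta\colon\Omega\to\Omega_{R[G]}\otimes(-)$ with $\delta_{R[G]}=\Omega_{\Delta_G}$; since this is the comodule structure of $\Omega_{R[G]}$ it factors through $(\Omega_{R[G]}\otimes R[G])^G$ by Proposition~\ref{prop:the structure map are the invariants}, and since $(\Omega_{R[G]}\otimes-)^G$ is left exact, Lemma~\ref{lem:defining natural transformations on RG} produces the factorization for all $V$. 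The same lemma (last sentence) also gives the isomorphism claim when $\Omega$ is left exact, so your separate resolution argument is unnecessary.

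Second, for the identification of $\theta_V$, invoking Lemma~\ref{lem:defining natural transformations on RG} after checking only $V=R[G]$ and $\phi=\id$ is not enough: that lemma requires agreement of the \emph{entire} map at $V=R[G]$, whereas $\duale{R[G]}$ has rank larger than one. Your check is nonetheless sufficient, but for a different reason: both candidate maps $\Omega_V\otimes\duale V\to\Omega_{R[G]}$ are natural in $V$ (covariantly in $\Omega_V$, contravariantly in $\duale V$), and for any $\phi\in\Hom^G(V,R[G])$ the naturality square for the morphism $\phi\colon V\to R[G]$ reduces the comparison to the single case $(R[G],\varepsilon_G\leftrightarrow\id)$ you verified. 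The paper avoids this naturality bookkeeping by a direct computation one level down: it shows that the adjoint map $\gamma_V\colon V\otimes\underline{\duale V}\to R[G]$ induced by $\eta_V$ is already the evaluation $x\otimes\phi\mapsto\phi(x)$ under the identification $\duale V\simeq\Hom^G(V,R[G])$, using only naturality of $\eta$ and the counit identity $m_{R[G]}\circ(\id\otimes\varepsilon_G)\circ\Delta_G=\id$; applying $\Omega$ then gives $x\otimes\phi\mapsto\Omega_\phi(x)$ immediately.
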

\begin{proof}
The first claim is a classical result, taking into account the particular
comodule structure we have put on $R[G]$. Given $\Omega\in\QAdd^{G}T$
and applying it on $\eta_{V}$ for any $V\in\Loc^{G}R$, we get a
natural map $\delta_{V}\colon\Omega_{V}\arr\Omega_{R[G]}\otimes V$
such that $\delta_{R[G]}=\Omega_{\Delta_{G}}$, the comodule structure
of $\Omega_{R[G]}$. If $V=R[G]$ we have a factorization 
\[
\Omega_{R[G]}\arrdi{\Omega_{\Delta_{G}}}(\Omega_{R[G]}\otimes R[G])^{G}\arr\Omega_{R[G]}\otimes R[G]
\]
and, since $(\Omega_{R[G]}\otimes-)^{G}\arr\Omega_{R[G]}\otimes-$
is a natural transformation of left exact functors, by \ref{lem:defining natural transformations on RG}
it follows that $\delta_{*}$ factors through a natural transformation
$\Omega\arr(\Omega_{R[G]}\otimes-)^{G}$.

The map $\theta_{V}$ in the statement can be obtained applying $\Omega$
to the map $\gamma_{V}\colon V\otimes\duale{\underline{V}}\arr R[G]$
induced by $\eta_{V}$. We have to prove that the composition
\[
f_{V}\colon V\otimes\Hom^{G}(V,R[G])\arrdi{\id\otimes\duale{\varepsilon_{G}}}V\otimes\duale V\arrdi{\gamma_{V}}R[G]
\]
is just the evaluation $x\otimes\phi\longmapsto\phi(x)$. By construction
we have $\gamma_{V}(x\otimes\psi)=m_{R[G]}\circ(\id\otimes\psi)\circ\eta_{V}(x)$.
In particular
\[
f_{V}(x\otimes\phi)=m_{R[G]}\circ[\id\otimes(\varepsilon_{G}\circ\phi)]\circ\eta_{V}(x)=m_{R[G]}\circ(\id\otimes\varepsilon_{G})\circ(\id\otimes\phi)\circ\eta_{V}(x)
\]
Since $\eta_{*}$ is natural, we have $\id\otimes\phi\circ\eta_{V}=\eta_{R[G]}\circ\phi=\Delta_{G}\circ\phi$
and, since $m_{R[G]}\circ(\id\otimes\varepsilon_{G})\circ\Delta_{G}=\id$,
that
\[
f_{V}(x\otimes\phi)=m_{R[G]}\circ(\id\otimes\varepsilon_{G})\circ\Delta_{G}(\phi(x))=\phi(x)
\]

\end{proof}

\begin{proof}
\emph{(of Theorem \ref{thm:additive functors are equivariant sheaves})}
The left exactness of $\Omega^{\shF}$ follows from the fact that
any short exact sequence in $\Loc^{G}R$ is locally split in $\Loc R$,
so that $(-\otimes\shF)$ is exact here and the fact that $(-)^{G}$
is left exact. 

Let now $\shF\in\QCoh_{R}^{G}$ with structure map $\shF\arrdi{\mu}\shF\otimes R[G]$.
Thanks to \ref{prop:the structure map are the invariants} we have
an isomorphism $\shF\arrdi{\mu}(\shF\otimes R[G])^{G}=\Omega_{R[G]}^{\shF}$
and we want to prove that it is $G$-equivariant. This is equivalent
to requiring that the dashed map $\alpha$ making the following diagram
commutative is just $\mu$.   \[   \begin{tikzpicture}[xscale=4.1,yscale=-1.2]     \node (A0_0) at (0, 0) {$\shF$};     \node (A0_1) at (1, 0) {$(\shF\otimes R[G])^G$};     \node (A0_2) at (2, 0) {$\shF\otimes R[G]$};     \node (A1_1) at (1, 1) {$(\shF\otimes R[G]\otimes\underline{R[G]})^G$};     \node (A1_2) at (2, 1) {$\shF\otimes R[G]\otimes\underline{R[G]}$};     \node (A2_0) at (0, 2) {$\shF\otimes\underline{R[G]}$};     \node (A2_1) at (1, 2) {$(\shF\otimes R[G])^G\otimes\underline{R[G]}$};     \node (A2_2) at (2, 2) {$\shF\otimes R[G]\otimes\underline{R[G]}$};     \path (A0_1) edge [->]node [auto] {$\scriptstyle{}$} (A1_1);     \path (A0_0) edge [->]node [auto] {$\scriptstyle{\mu}$} (A0_1);     \path (A0_1) edge [right hook->]node [auto] {$\scriptstyle{}$} (A0_2);     \path (A2_1) edge [right hook->]node [auto] {$\scriptstyle{}$} (A2_2);     \path (A1_1) edge [right hook->]node [auto] {$\scriptstyle{}$} (A1_2);     \path (A0_2) edge [->]node [auto] {$\scriptstyle{\id\otimes\Delta_G}$} (A1_2);     \path (A1_1) edge [->]node [auto] {$\scriptstyle{}$} (A2_1);     \path (A0_0) edge [->,dashed]node [auto] {$\scriptstyle{\alpha}$} (A2_0);     \path (A2_0) edge [->]node [auto] {$\scriptstyle{\mu\otimes\id}$} (A2_1);     \path (A1_2) edge [->]node [auto] {$\scriptstyle{\id}$} (A2_2);   \end{tikzpicture}   \] 
Note that $\mu\otimes\id\circ\alpha=\id\otimes\Delta_{G}\circ\mu=\mu\otimes\id\circ\mu$
and that $\mu\otimes\id$ is injective. We can therefore conclude
that $\alpha=\mu$.

The natural transformation $\Omega\arr(\Omega_{R[G]}\otimes-)^{G}$
and all the other claims are in \ref{prop:natural map Omega in OmegaF}.
\end{proof}
We want now to give a different description of the functor $\Omega\arr\Omega_{R[G]}=\shF_{\Omega}$
of Theorem \ref{thm:additive functors are equivariant sheaves} in
the particular case when the group $G$ is a glrg.
\begin{prop}
\label{prop:from functors to sheaves for linearly reductive groups}Assume
that $G$ is a glrg. Given $\Omega\in\QAdd_{R}^{G}$ the isomorphisms
(see \ref{prop:decomposition of OG}) 
\[
R[G]\simeq\bigoplus_{V\in I_{G}}\duale{\underline{V}}\otimes V\text{ and }\Omega_{R[G]}\arr\bigoplus_{V\in I_{G}}\duale V\otimes\Omega_{V}
\]
are $G$-equivariant and the last one defines a natural isomorphism
$(-)_{R[G]}\arr\bigoplus_{V\in I_{G}}\duale V\otimes(-)_{V}$ of functors
$\QAdd_{R}^{G}\arr\QCoh_{R}^{G}$.\end{prop}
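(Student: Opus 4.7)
The plan is to construct the map in two stages and then verify naturality and $G$-equivariance separately. First I would apply the additive functor $\Omega$ to the isomorphism $R[G]\simeq\bigoplus_{V\in I_{G}}\duale{\underline{V}}\otimes V$ from Proposition \ref{prop:decomposition of OG}, obtaining
\[
\Omega_{R[G]}\simeq\bigoplus_{V\in I_{G}}\Omega_{\duale{\underline{V}}\otimes V}.
\]
Then, for each $V\in I_{G}$, I would apply the natural isomorphism $\gamma_{\duale V,V}\colon\duale V\otimes\Omega_{V}\arrdi{\simeq}\Omega_{\duale{\underline V}\otimes V}$ provided by Corollary \ref{cor:additivity over locally free sheaves}, which is legitimate because $\duale V\in\Loc R$ and $V\in\Loc^{G}R$. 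Composing these two isomorphisms produces the required map.

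Naturality in $\Omega$ should then be essentially automatic: the decomposition of $R[G]$ does not depend on $\Omega$, and Corollary \ref{cor:additivity over locally free sheaves} explicitly asserts naturality of $\gamma$ in $\Omega$. Hence for any morphism $\tau\colon\Omega\arr\Omega'$ in $\QAdd_{R}^{G}$, the two squares arising from the construction commute, intertwining $\tau_{R[G]}$ with $\bigoplus_{V}\id_{\duale V}\otimes\tau_{V}$.

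The substantive point, and the one I expect to be the main obstacle, is the $G$-equivariance of the composite. By construction, the action on $\Omega_{R[G]}$ is induced by $\Omega_{\Delta_{G}}$, with $\Delta_{G}\colon R[G]\arr R[G]\otimes\underline{R[G]}$ equivariant for the regular coaction on the first factor and trivial on the second, while the action on $\bigoplus_{V\in I_{G}}\duale V\otimes\Omega_{V}$ is on the $\duale V$-factor only (with trivial action on $\Omega_{V}$, which carries no intrinsic $G$-structure). The key conceptual observation is that $R[G]$ carries two commuting $G$-coactions: the one used in Proposition \ref{prop:decomposition of OG} (which, under $R[G]\simeq\bigoplus_{V}\duale{\underline V}\otimes V$, is concentrated on the $V$-factor via the identification $\Homsh^{G}(V,R[G])\simeq\duale V$ of Proposition \ref{prop:the structure map are the invariants}), and the one encoded by $\Delta_{G}$, which under the same decomposition acts on the $\duale V$-factor with trivial action on $V$. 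Since $\Omega$ is defined on $\Loc^{G}R$, it can only see the coaction on the $V$-side, so the residual coaction on $\duale V$ is faithfully transported to $\Omega_{\duale{\underline V}\otimes V}$ and, through $\gamma_{\duale V,V}$, to $\duale V\otimes\Omega_{V}$.

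To make this rigorous I would, for each $V\in I_{G}$, trace the composition
\[
\duale V\otimes\Omega_{V}\arrdi{\gamma_{\duale V,V}}\Omega_{\duale{\underline V}\otimes V}\arr\Omega_{R[G]}\arrdi{\Omega_{\Delta_{G}}}\Omega_{R[G]\otimes\underline{R[G]}}\simeq\Omega_{R[G]}\otimes R[G],
\]
decompose the final $R[G]$ on the right via Proposition \ref{prop:decomposition of OG} once more, and check directly that the result coincides with the coaction on $\duale V$ tensored with $\id_{\Omega_{V}}$, followed by the embedding into the direct sum. The computation itself is a matter of carefully bookkeeping the two commuting actions and using the explicit description of $\theta_{V}^{R[G]}$ and of $\gamma$, but the subtlety of separating the two coactions and confirming that $\Omega$ transports only one of them is what will require the most care.
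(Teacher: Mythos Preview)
Your construction of the isomorphism and the naturality argument are correct, and your identification of the $G$-equivariance as the substantive point is accurate. However, your route to the $G$-equivariance is genuinely different from the paper's.

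You propose to work intrinsically with the functor $\Omega$, exploiting the fact that $R[G]$ carries two commuting $G$-coactions (the regular one, seen by $\Omega$, and the one encoded by $\Delta_G$, which becomes the action on the $\duale V$-factor under Peter--Weyl). This is a sound conceptual picture, and the verification you sketch---tracing $\Omega_{\Delta_G}$ through the decomposition and matching it against the coaction on $\duale V$---can be carried out, though as you acknowledge it requires careful bookkeeping of how $\Delta_G$ interacts with the map $\eta_{I_G,R[G]}$ and with $\gamma$. The paper takes a shortcut: since $G$ is linearly reductive, every $\Omega\in\QAdd_R^G$ is left exact and hence of the form $\Omega^{\shF}$ for some $\shF\in\QCoh_R^G$. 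Working on the sheaf side, the paper uses that the inverse of the $G$-equivariant isomorphism $\mu\colon\shF\to(\shF\otimes R[G])^G$ is the restriction of $\id\otimes\varepsilon_G$, and then invokes Lemma~\ref{lem:co unit description} to identify $\varepsilon_G$ with $\bigoplus_V e_V$ under the decomposition. The $G$-equivariance then falls out from a commutative diagram in which both rows compose to the same $G$-equivariant isomorphism $\shF\to\shF$. This avoids the explicit coaction chase entirely. Your approach is more functorial in spirit and does not rely on the equivalence $\QCoh_R^G\simeq\QAdd_R^G$; the paper's is shorter and leverages that equivalence to reduce to a concrete computation with $\varepsilon_G$.
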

\begin{proof}
We can assume that $\Omega=\Omega^{\shF}=(\shF\otimes-)^{G}$ for
some $\shF\in\QCoh_{R}^{G}$. The map $\mu\colon\shF\arr(\shF\otimes R[G])^{G}$
is a $G$-equivariant isomorphism, where $\mu$ is comodule structure
on $\shF$ and its inverse is the restriction of $\id\otimes\varepsilon_{G}$,
which is therefore $G$-equivariant. Thanks to \ref{lem:co unit description}
and using its notation, we have a commutative diagram   \[   \begin{tikzpicture}[xscale=4.0,yscale=-0.7]     \node (A0_0) at (0, 0) {$(R[G]\otimes\shF)^G$};     \node (A0_1) at (1, 0) {$R[G]\otimes\shF$};     \node (A0_2) at (2, 0) {$\shF$};     \node (A2_0) at (0, 2) {$\displaystyle\bigoplus_{V\in I_{G}}\duale V\otimes(V\otimes \shF)^G$};     \node (A2_1) at (1, 2) {$\displaystyle\bigoplus_{V\in I_{G}}\duale V\otimes V\otimes \shF$};     \node (A2_2) at (2, 2) {$\shF$};     \path (A0_0) edge [->]node [auto] {$\scriptstyle{}$} (A0_1);     \path (A0_1) edge [->]node [auto] {$\scriptstyle{\varepsilon_G\otimes \id_\shF}$} (A0_2);     \path (A2_1) edge [->]node [auto] {$\scriptstyle{\bigoplus_{V\in I_{G}}e_V\otimes \id_\shF}$} (A2_2);     \path (A0_2) edge [->]node [auto] {$\scriptstyle{\id_\shF}$} (A2_2);     \path (A0_1) edge [->]node [auto] {$\scriptstyle{}$} (A2_1);     \path (A0_0) edge [->]node [auto] {$\scriptstyle{}$} (A2_0);     \path (A2_0) edge [->]node [auto] {$\scriptstyle{}$} (A2_1);   \end{tikzpicture}   \] and
we have to prove that the first vertical map is $G$-equivariant.
But this is true because in each row the composition of the maps is
a $G$-equivariant isomorphism.
\end{proof}

\subsection{Lax monoidal functors and equivariant quasi-coherent sheaves of algebras.}

In this section we want to use the association described above in
order to describe the quasi-coherent sheaves of algebras that have
an action of $G$ on it. We will see that a (non associative) ring
structure on a sheaf $\shF\in\QCoh^{G}$, its possible commutativity
and associativity translate as natural properties of the functor $\Omega^{\shF}$.
For instance we will show that a (lax) symmetric monoidal structure
over $\Omega^{\shF}$ corresponds to a structure of associative and
commutative sheaf of algebras on $\shF$.

We start setting up some definitions:
\begin{defn}
\label{def: pseudo monoidal commutativi associative}Given an $R$-scheme
$T$, a \emph{pseudo monoidal }functor $\Omega\colon\Loc^{G}R\arr\QCoh T$
is an  $R$-linear functor together with a natural transformation
\[
\iota_{V,W}^{\Omega}\colon\Omega_{V}\otimes\Omega_{W}\arr\Omega_{V\otimes W}\text{ for any }V,W\in\Loc^{G}R
\]
A pseudo monoidal functor $\Omega\colon\Loc^{G}R\arr\QCoh T$ 
\begin{enumerate}
\item \emph{\label{enu:commutative for functors}}is \emph{symmetric} (commutative)
if for any $V,W\in\Loc^{G}R$ the following diagram is commutative$$
\begin{tikzpicture}[xscale=2.7,yscale=-1.2]     \node (A0_1) at (1, 0) {$\Omega_V\otimes \Omega_W$};     \node (A0_2) at (2, 0) {$\Omega_{V\otimes W}$};     \node (A1_1) at (1, 1) {$\Omega_W\otimes \Omega_V$};     \node (A1_2) at (2, 1) {$\Omega_{W\otimes V}$};     \path (A0_1) edge [->]node [auto] {$\scriptstyle{\iota_{V,W}^\Omega}$} (A0_2);     \path (A0_2) edge [->]node [auto] {$\scriptstyle{}$} (A1_2);     \path (A1_1) edge [->]node [auto] {$\scriptstyle{\iota_{W,V}^\Omega}$} (A1_2);     \path (A0_1) edge [->]node [auto] {$\scriptstyle{}$} (A1_1);   \end{tikzpicture}
$$where the vertical arrows are the obvious isomorphisms;
\item \emph{\label{enu:associative for functors}}is \emph{associative}
if for any $V,W,Z\in\Loc^{G}R$ the following diagram is commutative
$$
 \begin{tikzpicture}[xscale=3.1,yscale=-1.2]     \node (A0_0) at (0, 0) {$\Omega_V\otimes \Omega_W\otimes \Omega_Z$};     \node (A0_1) at (1, 0) {$\Omega_{V\otimes W}\otimes \Omega_Z$};     \node (A1_0) at (0, 1) {$\Omega_V\otimes \Omega_{W\otimes Z}$};     \node (A1_1) at (1, 1) {$\Omega_{V\otimes W\otimes Z}$};     \path (A0_0) edge [->]node [auto] {$\scriptstyle{\iota_{V,W}^\Omega \otimes \id}$} (A0_1);     \path (A1_0) edge [->]node [auto] {$\scriptstyle{\iota_{V,W\otimes Z}^\Omega}$} (A1_1);     \path (A0_0) edge [->]node [auto] {$\scriptstyle{\id\otimes \iota_{W,Z}^\Omega}$} (A1_0);     \path (A0_1) edge [->]node [auto] {$\scriptstyle{\iota_{V\otimes W,Z}^\Omega}$} (A1_1);   \end{tikzpicture}
$$
\end{enumerate}
A unity for $\Omega$ is an element $1\in\Omega_{R}$ such that, for
any $V\in\Loc^{G}R$, the following diagram is commutative   \[   \begin{tikzpicture}[xscale=2.4,yscale=-0.8]     \node (A0_1) at (1, 0) {$\Omega_R \otimes \Omega_V$};     \node (A0_2) at (2, 0) {$\Omega_{R\otimes V}$};     \node (A1_0) at (0, 1) {$\Omega_V$};     \node (A1_3) at (3, 1) {$\Omega_V$};     \node (A2_1) at (1, 2) {$\Omega_V\otimes \Omega_R$};     \node (A2_2) at (2, 2) {$\Omega_{V\otimes R}$};     \path (A1_0) edge [->]node [auto,swap] {$\scriptstyle{\id\otimes 1}$} (A2_1);     \path (A1_0) edge [->]node [auto] {$\scriptstyle{1\otimes\id}$} (A0_1);     \path (A1_0) edge [->]node [auto] {$\scriptstyle{\id}$} (A1_3);     \path (A0_1) edge [->]node [auto] {$\scriptstyle{\iota_{R,V}^\Omega}$} (A0_2);     \path (A2_2) edge [->]node [auto] {$\scriptstyle{}$} (A1_3);     \path (A0_2) edge [->]node [auto] {$\scriptstyle{}$} (A1_3);     \path (A2_1) edge [->]node [auto] {$\scriptstyle{\iota_{V,R}^\Omega}$} (A2_2);   \end{tikzpicture}   \] A
\emph{lax monoidal} functor $\Omega\colon\Loc^{G}R\arr\QCoh T$ is
a pseudo monoidal functor that is associative and has a unity $1$.
\end{defn}

\begin{defn}
Given an $R$-scheme $T$ we define the categories
\begin{itemize}
\item $\QRings T$, whose objects are $\alA\in\QCoh T$ with a map $m\colon\alA\otimes\alA\arr\alA$,
called the multiplication;
\item $\QRings^{G}T$, whose objects are $\alA\in\QCoh^{G}T$ with a $G$-equivariant
map $m\colon\alA\otimes\alA\arr\alA$;
\item $\QAlg^{G}T$, whose objects are quasi-coherent sheaves $\alA\in\QRings^{G}T$
of commutative and associative algebras with a unity $1\in\alA^{G}$;
\item $\QPMon^{G}T$, whose objects are pseudo-monoidal functors $\Omega\colon\Loc^{G}R\arr\QCoh T$.
\item $\QMon^{G}T$, whose objects are commutative lax monoidal functors
$\Omega\colon\Loc^{G}R\arr\QCoh T$. Here we require that the morphisms
maintain the unities.
\item $\Aff^{G}T$, whose objects are affine schemes $X\arrdi fT$ with
an action of $G$ on $X$ such that $f$ is $G$-invariant;
\item $\ffpSch^{G}T$ , the full subcategory of $\Aff^{G}T$ of finite and
finitely presented maps;
\item $\Cov^{G}$, the full subcategory of $\Aff^{G}T$ of covers.
\end{itemize}
Replacing $\QCoh$ with $\Loc$, $\FCoh$ we also define $\LRings T$,
$\LRings^{G}T$, $\LAlg^{G}T$, $\LPMon^{G}T$, $\LMon^{G}T$ and
$\CRings T$, $\CRings^{G}T$, $\CAlg^{G}T$, $\CPMon^{G}T$, $\CMon^{G}T$
respectively

We define the stacks $\textup{HRings}_{R}$, $\textup{HRings}_{R}^{G}$,
$\textup{HAlg}_{R}^{G}$, $\textup{HPMon}_{R}^{G}$, $\textup{HMon}_{R}^{G}$
whose fibers over an $R$-scheme $T$ is $\textup{HRings}T$, $\textup{HRings}^{G}T$,
$\textup{HAlg}^{G}T$, $\textup{HPMon}^{G}T$, $\textup{HMon}^{G}T$
respectively, where $H$ can be $\textup{Q}$, $\textup{C}$ or $\textup{L}$.
We also define $\Aff_{R}^{G}$, $\ffpSch_{R}^{G}$, $\Cov_{R}^{G}$
as the stacks whose fibers over an $R$-scheme $T$ are respectively
$\Aff^{G}T$, $\ffpSch^{G}T$, $\Cov^{G}T$.\end{defn}
\begin{rem}
The functors $\Spec\colon\QAlg_{R}^{G}\arr\Aff_{R}^{G}$ and the push
forward $\Aff_{R}^{G}\arr\QAlg_{R}^{G}$ are each other's quasi-inverse
and restrict to isomorphisms $\CAlg_{R}^{G}\simeq\ffpSch_{R}^{G}$
and $\LAlg_{R}^{G}\simeq\Cov^{G}$. Indeed, by \cite[Proposition 1.47]{EGAIV-1},
a finite quasi-coherent algebra is finitely presented as a module
if and only if it is so as an algebra.
\end{rem}

\begin{rem}
The categories $\QAlg^{G}T$, $\QMon^{G}T$ are (not full) subcategories
of $\QRings^{G}T$, $\QPMon^{G}T$ respectively, because a unity for
a ring or for a lax monoidal functor is unique.
\end{rem}
The following is another application of \ref{lem:equivalence among Coh T and additive functors Loc R to Coh T}.
\begin{lem}
\label{lem:monoidality respects triviality}Given $\Omega\in\QPMon_{R}^{G}$
and $V,W\in\Loc R,$ $V',W'\in\Loc^{G}R$ we have a commutative diagram
  \[   \begin{tikzpicture}[xscale=4.5,yscale=-1.2]     \node (A0_0) at (0, 0) {$\Omega_{\underline V \otimes V'}\otimes \Omega_{\underline W \otimes W'}$};     
\node (A0_1) at (1, 0) {$V \otimes \Omega_{V'} \otimes W \otimes \Omega_{W'}$};     
\node (A0_2) at (2, 0) {$ V \otimes  W\otimes\Omega_{V'}  \otimes \Omega_{W'}$};     
\node (A1_0) at (0, 1) {$\Omega_{\underline V \otimes V' \otimes \underline W \otimes W'}$};     \node (A1_1) at (1, 1) {$\Omega_{\underline V \otimes \underline W \otimes V'  \otimes W'}$};     \node (A1_2) at (2, 1) {$ V \otimes  W \otimes\Omega_{ V'  \otimes W'}$};     \path (A0_0) edge [->]node [auto] {$\scriptstyle{}$} (A0_1);     \path (A1_0) edge [->]node [auto] {$\scriptstyle{}$} (A1_1);     \path (A0_2) edge [->]node [auto] {$\scriptstyle{\id\otimes\iota_{V',W'}}$} (A1_2);     \path (A1_1) edge [->]node [auto] {$\scriptstyle{}$} (A1_2);     \path (A0_0) edge [->]node [auto] {$\scriptstyle{\iota_{\underline V\otimes V',\underline W\otimes W'}}$} (A1_0);     \path (A0_1) edge [->]node [auto] {$\scriptstyle{}$} (A0_2);   \end{tikzpicture}   \] 
\end{lem}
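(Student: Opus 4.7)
The plan is to reduce everything to the uniqueness statement in Corollary~\ref{cor:additivity over locally free sheaves}. Both the upper-right and lower-left paths in the diagram make sense as natural transformations between bifunctors
\[
F_1, F_2 \colon \Loc R \times \Loc R \arr \QCoh T
\]
in the pair $(V,W)$, with $V',W' \in \Loc^G R$ held fixed. Indeed, the horizontal isomorphisms $\Omega_{\underline V \otimes V'}\simeq V \otimes \Omega_{V'}$ are precisely the isomorphism $\gamma_{V,V'}$ of Corollary~\ref{cor:additivity over locally free sheaves}, which is natural in $V\in\Loc R$; the swap $V\otimes W\otimes\cdots \simeq\cdots\otimes V\otimes W\otimes\cdots$ is bifunctorial; and the vertical map $\iota_{\underline V\otimes V',\underline W\otimes W'}^\Omega$ is natural in both arguments by definition of a pseudo-monoidal functor. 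Thus the two compositions in the diagram define two natural transformations $F_1 \Rightarrow F_2$ between $R$-bilinear functors in $(V,W)$.

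Next I would check that the two compositions agree at $(V,W)=(R,R)$. All the $\gamma$-type horizontal isomorphisms collapse to the identity (since $\gamma_{R,-}=\id$ by the normalization in Corollary~\ref{cor:additivity over locally free sheaves}), so both paths reduce to $\iota_{V',W'}^\Omega \colon \Omega_{V'}\otimes\Omega_{W'}\arr\Omega_{V'\otimes W'}$.

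Finally I would invoke the uniqueness part of Corollary~\ref{cor:additivity over locally free sheaves} in each variable separately. Fixing $W=R$, both compositions restrict to natural transformations of $R$-linear functors $\Loc R \arr \QCoh T$ in $V$ which agree at $V=R$; uniqueness forces them to coincide for every $V\in\Loc R$. Fixing $V\in\Loc R$ arbitrary and varying $W$, the same argument gives equality for every $W\in\Loc R$. Hence the two compositions coincide as natural transformations of bifunctors, which is the commutativity of the diagram.

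The only mildly subtle step is the last one: we must argue bifunctorially, but this is handled cleanly by applying Corollary~\ref{cor:additivity over locally free sheaves} once in each variable, using that an $R$-bilinear functor becomes $R$-linear when one variable is frozen. No other difficulty arises, since the commutativity essentially reduces to the interplay between the canonical ``pulling out trivial factors'' isomorphism $\gamma$ and the monoidal structure $\iota$, both of which are uniquely determined by their values on the generating object $R$.
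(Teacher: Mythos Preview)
Your proposal is correct and follows essentially the same approach as the paper, which states the lemma as ``another application of Lemma~\ref{lem:equivalence among Coh T and additive functors Loc R to Coh T}'' without further details. Your argument simply unpacks this: both compositions are natural transformations between $R$-linear functors in each of $V,W\in\Loc R$, they agree at $(R,R)$, and the equivalence in Lemma~\ref{lem:equivalence among Coh T and additive functors Loc R to Coh T} forces agreement everywhere. One minor remark: the uniqueness you invoke is really that of Lemma~\ref{lem:equivalence among Coh T and additive functors Loc R to Coh T} (a natural transformation between $R$-linear functors $\Loc R\to\QCoh T$ is determined by its value at $R$), rather than the specific uniqueness of $\gamma$ stated in Corollary~\ref{cor:additivity over locally free sheaves}, but this is the same principle and the argument goes through.
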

The following proposition describes how much data is needed to define
a pseudo monoidal functor when the group $G$ is a glrg.
\begin{prop}
\label{prop:essential data for a pseudo monoidal functor}Assume that
$G$ is glrg and define the stack $\stY$ whose objects are $(\shA_{V},\iota_{V,W})_{V,W\in I_{G}}$
where $\alA_{V}\in\QCoh_{R}$ and $\iota_{V,W}$ is a map 
\[
\iota_{V,W}\colon\alA_{V}\otimes\alA_{W}\arr\bigoplus_{\Delta\in I_{G}}\Hom^{G}(\Delta,V\otimes W)\otimes\alA_{\Delta}
\]
Then the functor   \[   \begin{tikzpicture}[xscale=6.8,yscale=-0.6]     \node (A0_0) at (0, 0) {$\QPMon^G_R$};     \node (A0_1) at (1, 0) {$\stY$};     \node (A1_0) at (0, 1) {$(\Omega,\iota^\Omega)$};     \node (A1_1) at (1, 1) {$\displaystyle(\Omega_V,\Omega_V\otimes\Omega_W\arrdi{\iota^\Omega_{V,W}}\Omega_{V\otimes W}\simeq\bigoplus_{\Delta\in I_G}\Hom^G(\Delta,V\otimes W)\otimes \Omega_\Delta)_{V,W\in I_G})$};     \path (A0_0) edge [->]node [auto] {$\scriptstyle{}$} (A0_1);     \path (A1_0) edge [|->,gray]node [auto] {$\scriptstyle{}$} (A1_1);   \end{tikzpicture}   \] 
is an equivalence.\end{prop}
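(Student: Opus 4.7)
The plan is to construct a quasi-inverse to the restriction functor $\QPMon^G_R \to \stY$ sending $(\Omega, \iota^\Omega) \mapsto (\Omega_V, \iota^\Omega_{V,W})_{V,W \in I_G}$, where the identification of the target of $\iota^\Omega_{V,W}$ for $V,W \in I_G$ uses the canonical decomposition $V \otimes W \simeq \bigoplus_{\Delta \in I_G} \Hom^G(\Delta, V\otimes W) \otimes \Delta$ together with the $R$-linearity of $\Omega$ via Corollary \ref{cor:additivity over locally free sheaves}.

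To build the quasi-inverse, start from $(\alA_V, \iota_{V,W})_{V,W \in I_G} \in \stY$. First, extend $V \mapsto \alA_V$ to an $R$-linear functor $\Omega \colon \Loc^G R \to \QCoh R$ by the prescription of Remark \ref{rem: additive functors uniquely determined on irreducible representations}, setting
\[
\Omega_W = \bigoplus_{V \in I_G} \Homsh^G(V, W) \otimes \alA_V,
\]
with the obvious functoriality in $W$ (this is an equivalence by Proposition \ref{prop:equivariant coherent sheaves are collection of coherent sheves for glrg}). Next, for arbitrary $V, W \in \Loc^G R$ define $\iota^\Omega_{V,W}$ by distributing over the decomposition $\Omega_V \otimes \Omega_W = \bigoplus_{V', W' \in I_G} \Homsh^G(V',V) \otimes \Homsh^G(W',W) \otimes \alA_{V'} \otimes \alA_{W'}$, applying the given $\iota_{V',W'}$ on each factor and then landing in $\Omega_{V \otimes W}$ via the $R$-linear composition map
\[
\Homsh^G(V',V) \otimes \Homsh^G(W',W) \otimes \Homsh^G(\Delta, V'\otimes W') \longrightarrow \Homsh^G(\Delta, V \otimes W).
\]
This produces a natural transformation in $V$ and $W$ by construction, and clearly restricts to the given data on $I_G$.

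To show the two constructions are quasi-inverse, I would verify that the round trip $\stY \to \QPMon^G_R \to \stY$ is the identity (immediate, since $\Homsh^G(V', V) = \odi{R} \id$ for $V = V' \in I_G$ and vanishes otherwise, by Proposition \ref{prop:behaviour of irreducible representations for general glrg}), and that the round trip $\QPMon^G_R \to \stY \to \QPMon^G_R$ is naturally isomorphic to the identity. On the underlying $R$-linear functors this is already Remark \ref{rem: additive functors uniquely determined on irreducible representations} (essentially Lemma \ref{lem:equivalence among Coh T and additive functors Loc R to Coh T} combined with the glrg hypothesis); what remains is the compatibility of the monoidal data.

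The main obstacle is therefore the following uniqueness statement: a natural transformation $\iota^\Omega_{V,W} \colon \Omega_V \otimes \Omega_W \to \Omega_{V \otimes W}$, bifunctorial in $V,W \in \Loc^G R$, is determined by its restriction to $V, W \in I_G$. To prove this I would decompose a general $V \in \Loc^G R$ as $V \simeq \bigoplus_{V' \in I_G} \underline{\Homsh^G(V', V)} \otimes V'$, use Lemma \ref{lem:monoidality respects triviality} to reduce the components $\underline{\Homsh^G(V',V)} \otimes V'$ to factors in $I_G$ with trivial $R$-linear twists, and then apply the naturality of $\iota^\Omega$ in the trivially acted factors to identify the only possible extension; comparing with the constructed $\iota^\Omega$ yields the required natural isomorphism of pseudo monoidal functors.
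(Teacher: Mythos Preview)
Your proposal is correct and follows essentially the same approach as the paper: the paper phrases it as showing the functor is fully faithful (via Remark \ref{rem: additive functors uniquely determined on irreducible representations} and Lemma \ref{lem:monoidality respects triviality}) and then essentially surjective via the very construction you describe, while you package the same ingredients as an explicit quasi-inverse plus verification of the two round trips. The construction of the extended $\iota^\Omega$ and the references used are identical.
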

\begin{proof}
By \ref{rem: additive functors uniquely determined on irreducible representations}
and \ref{lem:monoidality respects triviality}, we see that the map
in the statement is fully faithful. We have only to show that it is
essentially surjective. For simplicity, given $\Delta,W\in\Loc^{G}R$
we will write $W_{\Delta}=\Hom^{G}(\Delta,W)$. Let $\chi=(\alA_{V},\iota_{V,W})_{V,W\in I_{G}}\in\stY$.
By \ref{rem: additive functors uniquely determined on irreducible representations},
there exists $\Omega\in\QAdd_{R}^{G}$ such that $\Omega_{V}\simeq\alA_{V}$
and it is given by 
\[
\Omega_{W}=\bigoplus_{\Delta\in I_{G}}W_{\Delta}\otimes\alA_{\Delta}
\]
By definition, the map $\iota_{V,W}$ yield maps $\iota_{V,W}\colon\Omega_{V}\otimes\Omega_{W}\arr\Omega_{V\otimes W}$
for any $V,W\in I_{G}$. Given $\Lambda,\Gamma\in\Loc^{G}R$ we define
$\iota_{\Lambda,\Delta}^{\Omega}$ as
\[
\Omega_{\Lambda}\otimes\Omega_{\Gamma}\simeq\bigoplus_{V,W\in I_{G}}\Lambda_{V}\otimes\Gamma_{W}\otimes\Omega_{V}\otimes\Omega_{W}\arrdi{\id\otimes\iota_{V,W}}\bigoplus_{V,W\in I_{G}}\Lambda_{V}\otimes\Gamma_{W}\otimes\Omega_{V\otimes W}\simeq\Omega_{\Lambda\otimes\Gamma}
\]
where the last isomorphism is induced by $\bigoplus_{V,W\in I_{G}}\Lambda_{V}\otimes\Gamma_{W}\otimes V\otimes W\simeq\Lambda\otimes\Gamma$.
It is easy to check that $\iota^{\Omega}$ is a natural transformation
and that $(\Omega,\iota^{\Omega})\in\LPMon_{R}^{G}$ is mapped to
our starting object $\chi\in\stY$.
\end{proof}
Given $\alA\in\QRings^{G}T$ with multiplication $m$, we endow $\Omega^{\alA}$
with the pseudo monoidal structure 
\[
(V\otimes\alA)^{G}\otimes(W\otimes\alA)^{G}\arr(V\otimes W\otimes\alA\otimes\alA)^{G}\arrdi{(\id\otimes m)^{G}}(V\otimes W\otimes\alA)^{G}
\]
Conversely given $\Omega\in\QPMon^{G}T$, we define the multiplication
on $\shF_{\Omega}=\Omega_{R[G]}$ by 
\[
\Omega_{R[G]}\otimes\Omega_{R[G]}\arrdi{\iota_{R[G],R[G]}^{\Omega}}\Omega_{R[G]\otimes R[G]}\arrdi{\Omega_{m_{G}}}\Omega_{R[G]}
\]
We will denote $\alA_{\Omega}$ the sheaf $\shF_{\Omega}$ together
with the multiplication map.

The following is the main Theorem of this section.
\begin{thm}
\label{thm:G equivariant ring are monoidal functors}Given an $R$-scheme
$T$, the functors $\Omega^{*}$ and $\shF_{*}$ of Theorem \ref{thm:additive functors are equivariant sheaves}
extend to functors   \[   \begin{tikzpicture}[xscale=2,yscale=-0.2]     
\node (A0_0) at (0, 0) {};     
\node (A0_1) at (1, 0) {};     
\node (A1_0) at (0, 1) {$\QRings^G T \ \ \ \ \ \ \ \ \ \ \ \ \ \ \ \ \ $};     
\node (A1_1) at (1, 1) {$\ \ \ \ \ \ \ \ \ \ \ \ \ \ \ \ \ \ \QPMon^G T$};     
\node (A2_0) at (0, 2) {};     
\node (A2_1) at (1, 2) {};     

\node (aA0_0) at (3, 0) {};     
\node (aA0_1) at (4, 0) {};     
\node (aA1_0) at (3, 1) {$\QAlg^G T \ \ \ \ \ \ \ \ \ \ \ \ \ \ \ \ $};     
\node (aA1_1) at (4, 1) {$\ \ \ \ \ \ \ \ \ \ \ \ \ \ \ \ \ \QMon^G T$};     
\node (aA2_0) at (3, 2) {};     
\node (aA2_1) at (4, 2) {};   

\path (A0_0) edge [->]node [auto] {$\scriptstyle{\Omega^*}$} (A0_1);     
\path (A2_1) edge [->]node [auto] {$\scriptstyle{\alA_*}$} (A2_0);   

\path (aA0_0) edge [->]node [auto] {$\scriptstyle{\Omega^*}$} (aA0_1);     
\path (aA2_1) edge [->]node [auto] {$\scriptstyle{\alA_*}$} (aA2_0);   
\end{tikzpicture}   \] Moreover there exist a natural isomorphism $\alA\arr\Omega_{R[G]}^{\alA}$
and a natural transformation $\Omega\arr\Omega^{\Omega_{R[G]}}$ which
is an isomorphism if and only if $\Omega$ is left exact. In particular
$\Omega^{*}$ is an equivalence onto the full subcategory of $\QPMon^{G}T$
($\QMon^{G}T)$ of left exact functors. If $G$ is linearly reductive
the above functors define isomorphisms of stacks
\[
\QRings_{R}^{G}\simeq\QPMon_{R}^{G}\comma\CRings_{R}^{G}\simeq\CPMon_{R}^{G},\;\LRings_{R}^{G}\simeq\LPMon_{R}^{G}
\]
\[
\QAlg_{R}^{G}\simeq\QMon_{R}^{G}\comma\CAlg_{R}^{G}\simeq\CMon_{R}^{G},\;\LAlg_{R}^{G}\simeq\LMon_{R}^{G}
\]

\end{thm}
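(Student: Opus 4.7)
The plan is to build directly on Theorem \ref{thm:additive functors are equivariant sheaves}, which already supplies the quasi-inverse equivalence at the level of linear functors and equivariant quasi-coherent sheaves; the task here is to check that this equivalence respects the extra multiplicative structure on both sides. First I would verify that $\Omega^*$ and $\alA_*$ are well defined as functors $\QRings^G T \leftrightarrow \QPMon^G T$. For $\Omega^*$, the pseudo-monoidal structure $\iota^{\Omega^\alA}_{V,W}\colon (V\otimes \alA)^G \otimes (W\otimes \alA)^G \to (V\otimes W\otimes \alA)^G$ is obtained by the obvious tensor swap followed by $\id\otimes m_\alA$, and naturality in $V,W$ and in $\alA$ is immediate. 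For $\alA_*$, given $\Omega\in\QPMon^G T$, the multiplication on $\Omega_{R[G]}$ is $\Omega_{m_G}\circ\iota^\Omega_{R[G],R[G]}$; one must check that this map is $G$-equivariant with respect to the comodule structure $\Omega_{\Delta_G}$ already attached to $\Omega_{R[G]}$ in Theorem \ref{thm:additive functors are equivariant sheaves}. This reduces to applying $\Omega$ and $\iota^\Omega$ to the commutative square expressing compatibility of $\Delta_G$ and $m_G$ in the Hopf algebra $R[G]$, together with naturality of $\iota^\Omega$ (in the form of Lemma \ref{lem:monoidality respects triviality}); I expect this to be the main technical obstacle.

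Next I would upgrade the natural isomorphism $\alA \simeq \Omega^\alA_{R[G]}$ from Theorem \ref{thm:additive functors are equivariant sheaves} to an isomorphism of objects of $\QRings^G T$. The underlying map is the comodule structure $\mu\colon\alA\to(\alA\otimes R[G])^G$; that it intertwines the two multiplications is a diagram chase: the product on $(\alA\otimes R[G])^G$ defined through $\iota^{\Omega^\alA}$ and $\Omega^\alA_{m_G}$ is, by construction, $(\id\otimes m_G)\circ(m_\alA\otimes\id)$ restricted to invariants, and a coassociativity argument shows that $\mu$ takes $m_\alA$ to this map. Dually, for the natural transformation $\Omega \to \Omega^{\Omega_{R[G]}}$ constructed in Proposition \ref{prop:natural map Omega in OmegaF}, one checks that the component at $V$ commutes with the monoidal structures, which again reduces to a compatibility between $\Delta_G$ and $m_G$ and to the associativity axiom for $\iota^\Omega$. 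Combined with the corresponding isomorphism assertions in Theorem \ref{thm:additive functors are equivariant sheaves}, this produces the claimed equivalence between $\QRings^G T$ and the left-exact subcategory of $\QPMon^G T$.

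Then I would treat the extra axioms. Commutativity of $\alA$ corresponds to symmetry of $\iota^{\Omega^\alA}$ (a routine diagram chase using the swap map), associativity of $\alA$ corresponds to associativity of $\iota^{\Omega^\alA}$, and a unit $1\in\alA^G=\Omega^\alA_R$ corresponds to a unit for $\Omega^\alA$; conversely, given a lax monoidal $\Omega$, the unit $1\in\Omega_R$ together with the unit of $R[G]$ and $\Omega_{m_G}$ produce a unit for $\Omega_{R[G]}$, and associativity/symmetry of $\iota^\Omega$ combined with the associativity/commutativity of $m_G$ yield the algebra axioms on $\Omega_{R[G]}$. This pins down the restricted equivalence $\QAlg^G T\simeq$ (left-exact part of) $\QMon^G T$. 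Finally, the stack-level statement in the linearly reductive case is essentially free: by Proposition \ref{prop:additive functors are equivariant sheaves glrg} every $R$-linear functor $\Loc^G R\to\QCoh T$ is automatically exact, so the left-exactness hypothesis is vacuous; the restrictions to the $\textup{C}$ and $\textup{L}$ variants follow because, by Theorem \ref{thm:additive functors are equivariant sheaves}, $\Omega^\alA_V$ is finitely presented (resp.\ locally free of finite rank) as soon as $\alA$ is so, and conversely $\Omega_{R[G]}=\bigoplus_{V\in I_G}\duale V\otimes\Omega_V$ inherits these properties from the $\Omega_V$'s via Proposition \ref{prop:from functors to sheaves for linearly reductive groups}.
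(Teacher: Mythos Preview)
Your outline is correct and follows essentially the same route as the paper: the paper packages the individual verifications as Lemmas \ref{lem:multiplication of AE is G equivariant} ($G$-equivariance of the product on $\Omega_{R[G]}$), \ref{lem:Associativity for V,W,Z} and \ref{lem:preservation of unities} (transfer of commutativity, associativity and units), and \ref{lem:the natural transformation between monoidal functor is monoidal} (monoidality of $\Omega\to\Omega^{\alA_\Omega}$), and then assembles them exactly as you describe; the ring-map property of $\mu\colon\alA\to(\alA\otimes R[G])^G$ is observed, as you say, to be literally the $G$-equivariance diagram for $m_\alA$.

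One small correction: in proving that $\Omega\to\Omega^{\Omega_{R[G]}}$ is monoidal you invoke ``the associativity axiom for $\iota^\Omega$'', but in the pseudo-monoidal category $\QPMon^G T$ no associativity is assumed, so you cannot use it. The paper avoids this by using Lemma \ref{lem:defining natural transformations on RG}: since $-\otimes U$ is exact on $\Loc^G R$ and $\Omega^{\alA_\Omega}$ is left exact, it suffices to check the monoidality square at $V=W=R[G]$, where it reduces (via \ref{lem:monoidality respects triviality}) to a diagram in $\Loc^G R$ coming from the Hopf-algebra identity that $\Delta_G$ is a ring map. This reduction-to-$R[G]$ trick is the device that makes the pseudo-monoidal case go through without any associativity hypothesis on $\iota^\Omega$.
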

This theorem will be proved at the end of this section, because we
need to collect several lemmas before.
\begin{rem}
Given an $R$-scheme $T$ we have a functor   \[   \begin{tikzpicture}[xscale=4.0,yscale=-0.6]     \node (A0_0) at (0, 0) {$\QPMon^G T \times\LRings^G R$};     \node (A0_1) at (1, 0) {$\QRings T$};     \node (A1_0) at (0, 1) {$(\Omega,(A,m))$};     \node (A1_1) at (1, 1) {$(\Omega_A,\Omega_m \circ \iota_{A, A}^\Omega)$};     \path (A0_0) edge [->]node [auto] {$\scriptstyle{}$} (A0_1);     \path (A1_0) edge [|->,gray]node [auto] {$\scriptstyle{}$} (A1_1);   \end{tikzpicture}   \] 
\end{rem}
The following lemma shows that the functor $\alA_{*}\colon\QAdd^{G}T\arr\QRings^{G}T$
is well defined.
\begin{lem}
\label{lem:multiplication of AE is G equivariant}If $\Omega\in\QPMon_{R}^{G}$
then $\alA_{\Omega}=\Omega_{R[G]}\in\QRings_{R}^{G}$, i.e. the multiplication
$\alA_{\Omega}\otimes\alA_{\Omega}\arr\alA_{\Omega}$ is $G$-equivariant.\end{lem}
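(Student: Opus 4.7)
The $G$-equivariance of $m = \Omega_{m_G} \circ \iota^\Omega_{R[G], R[G]} \colon \alA_\Omega \otimes \alA_\Omega \to \alA_\Omega$ is equivalent to the assertion that the coaction $\rho = \Omega_{\Delta_G} \colon \alA_\Omega \to \alA_\Omega \otimes R[G]$ is a ring homomorphism when $\alA_\Omega \otimes R[G]$ is endowed with the product ring structure. The plan is to derive this from the Hopf-algebra identity
\[
\Delta_G \circ m_G \;=\; m_{R[G] \otimes R[G]} \circ (\Delta_G \otimes \Delta_G)
\]
in $\Loc R$, where $m_{R[G] \otimes R[G]} = (m_G \otimes m_G) \circ (\id \otimes \tau \otimes \id)$ denotes the product-ring multiplication on $R[G] \otimes R[G]$ (with $\tau$ the swap of the middle factors).

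The first step is to identify the product-ring multiplication on $\alA_\Omega \otimes R[G]$ with the map obtained by applying $\Omega$, together with its pseudo-monoidal structure $\iota^\Omega$, to $m_{R[G] \otimes R[G]}$, under the isomorphism $\alA_\Omega \otimes R[G] \simeq \Omega_{R[G] \otimes \underline{R[G]}}$ furnished by \ref{cor:additivity over locally free sheaves}. This identification is essentially the content of \ref{lem:monoidality respects triviality}: the pseudo-monoidal structure $\iota^\Omega$ is compatible with the trivial-$G$-action tensor factor $\underline{R[G]}$, so the external copy of $R[G]$ comes along for the ride and the $\Omega$-image of the tensor-product ring structure on $R[G] \otimes R[G]$ yields precisely the product ring structure on $\alA_\Omega \otimes R[G]$.

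For the second step, apply $\Omega$ to the displayed Hopf identity and precompose with $\iota^\Omega_{R[G], R[G]}$. The left-hand side equals $\rho \circ m$ by construction, while the right-hand side is rewritten, using naturality of $\iota^\Omega$ together with the identification of step one, as the product-ring multiplication on $\alA_\Omega \otimes R[G]$ applied to $\rho \otimes \rho$. This is exactly the statement that $\rho$ is a ring map, hence that $m$ is $G$-equivariant.

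The main obstacle is the bookkeeping in the first step: one must verify that the natural isomorphism $\Omega_{R[G]^{\otimes 4}} \simeq (\alA_\Omega \otimes R[G])^{\otimes 2}$ — obtained by two applications of \ref{cor:additivity over locally free sheaves} together with a middle swap — interacts with $\iota^\Omega$ so that $\Omega$ applied to the product-ring multiplication on $R[G] \otimes R[G]$ reproduces the product-ring multiplication on $\alA_\Omega \otimes R[G]$, and not some variant twisted by an uncancelled coherence isomorphism. Once this is settled, the rest is a direct diagram chase from the Hopf algebra axiom and the naturality of $\iota^\Omega$.
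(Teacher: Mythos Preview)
Your proposal is correct and follows essentially the same approach as the paper. The paper presents the argument as a single large commutative diagram whose outer boundary expresses $G$-equivariance of the multiplication; the inner ``pentagonal'' subdiagram is handled by \ref{lem:monoidality respects triviality} (your step one bookkeeping), and the remaining rectangle is exactly the Hopf-algebra identity $\Delta_G \circ m_G = (m_G \otimes m_G)\circ(\id\otimes\tau\otimes\id)\circ(\Delta_G\otimes\Delta_G)$ pushed through $\Omega$ (your step two).
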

\begin{proof}
Set $A=R[G]$ , $\Delta=\Delta_{G}\colon A\arr A\otimes R[G]$ and
$m=m_{G}\colon A\otimes A\arr A$. We claim that the diagrams in   \[   \begin{tikzpicture}[xscale=3.0,yscale=-1.2]     
\node (A0_0) at (0, 0) {$\Omega_A\otimes\Omega_A$};     
\node (A0_2) at (2, 0) {$\Omega_{A\otimes A}$};     
\node (A0_3) at (3, 0) {$\Omega_A$};     
\node (A1_0) at (0, 1) {$\Omega_{A\otimes \underline{R[G]}}\otimes \Omega_{A\otimes \underline{R[G]}}$};     
\node (A1_2) at (2, 1) {$\Omega_{A\otimes \underline{R[G]}\otimes A\otimes \underline{R[G]}}$};     
\node (A2_0) at (0, 2) {$\Omega_A\otimes\Omega_A \otimes R[G]\otimes R[G]$};     
\node (A2_2) at (2, 2) {$\Omega_{A\otimes A\otimes \underline{R[G]\otimes  R[G]}}$};     
\node (A3_1) at (1, 3) {$\Omega_{A\otimes A} \otimes R[G]\otimes R[G]$};     
\node (A3_2) at (2, 3) {$\Omega_{A\otimes A\otimes \underline{R[G]}}$};     
\node (A3_3) at (3, 3) {$\Omega_{A \otimes \underline{R[G]}}$};     
\node (A4_0) at (0, 4) {$\Omega_A\otimes\Omega_A \otimes R[G]$};     
\node (A4_1) at (1, 4) {$\Omega_{A\otimes A} \otimes R[G]$};     
\node (A4_3) at (3, 4) {$\Omega_A \otimes R[G]$};     \path (A4_0) edge [->]node [auto,swap] {$\scriptstyle{\iota_{A,A}\otimes \id}$} (A4_1);     \path (A2_2) edge [->]node [auto] {$\scriptstyle{}$} (A3_1);     \path (A3_2) edge [->]node [auto] {$\scriptstyle{}$} (A4_1);     \path (A1_0) edge [->]node [auto] {$\scriptstyle{\iota_{A\otimes R[G],A\otimes R[G]}}$} (A1_2);     \path (A2_0) edge [->]node [auto] {$\scriptstyle{\iota_{A,A}\otimes \id}$} (A3_1);     \path (A3_1) edge [->]node [auto,swap] {$\scriptstyle{\id\otimes m_G}$} (A4_1);     \path (A4_1) edge [->]node [auto] {$\scriptstyle{\Omega_m \otimes \id}$} (A4_3);     \path (A0_2) edge [->]node [auto] {$\scriptstyle{\Omega_{\Delta\otimes\Delta}}$} (A1_2);     \path (A2_0) edge [->]node [auto,swap] {$\scriptstyle{\id\otimes m_G}$} (A4_0);     \path (A0_3) edge [->]node [auto] {$\scriptstyle{\Omega_\Delta}$} (A3_3);     \path (A2_2) edge [->]node [auto] {$\scriptstyle{\Omega_{\id\otimes m_G}}$} (A3_2);     \path (A1_0) edge [->]node [auto] {$\scriptstyle{}$} (A2_0);     \path (A0_0) edge [->]node [auto] {$\scriptstyle{\iota_{A,A}}$} (A0_2);     \path (A3_3) edge [->]node [auto] {$\scriptstyle{}$} (A4_3);     \path (A0_0) edge [->]node [auto] {$\scriptstyle{\Omega_\Delta\otimes \Omega_\Delta}$} (A1_0);     \path (A0_2) edge [->]node [auto] {$\scriptstyle{\Omega_m}$} (A0_3);     \path (A3_2) edge [->]node [auto] {$\scriptstyle{\Omega_{m\otimes \id}}$} (A3_3);     \path (A1_2) edge [->]node [auto] {$\scriptstyle{}$} (A2_2);   \end{tikzpicture}   \] are commutative. Note that the outer diagram is the one required for
the $G$-equivariancy of the multiplication $\Omega_{A}\otimes\Omega_{A}\arr\Omega_{A}$.
The pentagonal diagram is commutative thanks to \ref{lem:monoidality respects triviality}.
The only non trivially commutative diagram left is the upper right
rectangle. This is commutative because it is obtained applying $\Omega$
to the diagram   \[   \begin{tikzpicture}[xscale=3.9,yscale=-1.2]     \node (A0_0) at (0, 0) {$A\otimes A$};     \node (A0_1) at (1, 0) {$A\otimes \underline{R[G]}\otimes A \otimes \underline{R[G]}$};     \node (A0_2) at (2, 0) {$A\otimes A\otimes \underline{R[G]\otimes R[G]}$};     \node (A1_0) at (0, 1) {$A$};     \node (A1_2) at (2, 1) {$A\otimes \underline{R[G]}$};     \path (A0_0) edge [->]node [auto] {$\scriptstyle{\Delta\otimes\Delta}$} (A0_1);     \path (A1_0) edge [->]node [auto] {$\scriptstyle{\Delta}$} (A1_2);     \path (A0_0) edge [->]node [auto] {$\scriptstyle{m}$} (A1_0);     \path (A0_1) edge [->]node [auto] {$\scriptstyle{}$} (A0_2);     \path (A0_2) edge [->]node [auto] {$\scriptstyle{m\otimes m_G}$} (A1_2);   \end{tikzpicture}   \] which
is commutative since $\Delta$ is a map of rings.
\end{proof}
We have now to deal with how the properties of being commutative,
associative or having a unity translate in the context of functors.
\begin{rem}
\label{lem:characterization of muliplication by invariant hom}If
$\Omega\in\QPMon^{G}T$ and $V,W\in\Loc^{G}R$ we have a commutative
diagram   \[   \begin{tikzpicture}[xscale=5,yscale=-1.2]     
\node (A0_0) at (0, 0) {$\duale V\otimes \Omega_V\otimes \duale W\otimes \Omega_W$};     
\node (A0_1) at (1, 0) {$\Omega_{R[G]}\otimes\Omega_{R[G]}$};     
\node (A1_0) at (0, 1) {$\duale{V\otimes W} \otimes \Omega_{V\otimes W}$};     
\node (A1_1) at (1, 1) {$\Omega_{R[G]}$};     
\path (A0_0) edge [->]node [auto] {$\scriptstyle{\theta_V\otimes \theta_W}$} (A0_1);     
\path (A0_0) edge [->]node [auto] {$\scriptstyle{}$} (A1_0);     
\path (A0_1) edge [->]node [auto] {$\scriptstyle{m}$} (A1_1);     
\path (A1_0) edge [->]node [auto] {$\scriptstyle{\theta_{V\otimes W}}$} (A1_1);   \end{tikzpicture}   \] where $\theta_{*}$ are the evaluation maps defined in \ref{prop:natural map Omega in OmegaF}
and $m$ is the multiplication.
\end{rem}

\begin{rem}
\label{lem:trivial torsor functor}Given an $R$-scheme $T$, the
natural isomorphisms (see \ref{prop:the structure map are the invariants})
\[
V\otimes\odi T\simeq(V\otimes\odi T[G])^{G}\simeq\Homsh^{G}(\duale V,\odi T[G])\text{ for }V\in\Loc^{G}R
\]
are monoidal. 
\end{rem}
The following lemmas show that the functors $\alA_{*}$ and $\Omega^{*}$
are well defined on $\QMon^{G}T$ and $\QAlg^{G}T$ respectively.
\begin{lem}
\label{lem:Associativity for V,W,Z}Let $\Omega\in\QPMon^{G}T$, set
$\alA=\Omega_{R[G]}$ with multiplication $m$ and let $V,W,Z\in\Loc^{G}R$.
Using notations from \ref{prop:natural map Omega in OmegaF}, the
commutativity of the diagram \ref{enu:commutative for functors} (resp.
\ref{enu:associative for functors}) in definition \ref{def: pseudo monoidal commutativi associative}
implies the commutativity between $x,y$ (resp. associativity among
$x,y,z$) for sections $x\in\Imm\theta_{V}$, $y\in\Imm\theta_{W}$
(resp. and $z\in\Imm\theta_{Z}$). In particular if $\Omega$ is symmetric
(associative) then $\alA_{\Omega}$ is commutative (associative).
The converses to the previous statements hold if $\Omega$ is left
exact.\end{lem}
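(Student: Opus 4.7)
The bridge between the ring structure on $\alA_\Omega$ and the pseudo monoidal structure on $\Omega$ is the commutative diagram of Remark \ref{lem:characterization of muliplication by invariant hom}: for any $V,W\in\Loc^G R$, the multiplication $m$ on $\alA_\Omega=\Omega_{R[G]}$, restricted to $\Imm\theta_V\otimes\Imm\theta_W$, factors as
\[
\Omega_V\otimes\duale V\otimes\Omega_W\otimes\duale W\xrightarrow{\text{swap}\otimes\iota^{\Omega}_{V,W}}\Omega_{V\otimes W}\otimes\duale{V\otimes W}\xrightarrow{\theta_{V\otimes W}}\alA_\Omega.
\]
All four assertions will be deduced from this single factorization by iterating and exploiting the three possible constraints on $\iota^\Omega$ (symmetry, associativity, unit).

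\textbf{Forward direction.} For commutativity, given $x=\theta_V(a\otimes\phi)$ and $y=\theta_W(b\otimes\psi)$, both $xy$ and $yx$ land in $\Imm\theta_{V\otimes W}$ and $\Imm\theta_{W\otimes V}$ respectively; transporting the second through the canonical isomorphism $V\otimes W\simeq W\otimes V$ and using the naturality of $\theta$, one sees that the hypothesis that $\iota^\Omega$ is symmetric in the sense of Definition \ref{def: pseudo monoidal commutativi associative}\ref{enu:commutative for functors} is precisely what is needed to match the two expressions. For associativity one applies the bridge twice: $(xy)z$ factors through $\theta_{(V\otimes W)\otimes Z}\circ(\iota^\Omega_{V\otimes W,Z}\circ(\iota^\Omega_{V,W}\otimes\id))$, while $x(yz)$ factors through $\theta_{V\otimes(W\otimes Z)}\circ(\iota^\Omega_{V,W\otimes Z}\circ(\id\otimes\iota^\Omega_{W,Z}))$, and the associativity pentagon of Definition \ref{def: pseudo monoidal commutativi associative}\ref{enu:associative for functors}, combined with the naturality of $\theta$ under the associator $V\otimes W\otimes Z\to V\otimes W\otimes Z$, identifies them. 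Summing over all $V,W,Z$ (noting that images of $\theta_V$ as $V$ varies over, say, $R[G]$ already cover $\alA_\Omega$, since $\theta_{R[G]}$ admits a canonical section coming from $\varepsilon_G$ as in Lemma \ref{lem:co unit description}), one concludes that $\alA_\Omega$ is commutative, resp.\ associative. The unit $1\in\alA_\Omega$ is obtained by applying $\Omega$ to the unit $R\to R[G]$ of $\odi S[G]$ and using the unit of $\iota^\Omega$.

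\textbf{Converse direction (assuming $\Omega$ left exact).} By Proposition \ref{prop:natural map Omega in OmegaF}, left exactness gives a natural isomorphism $\Omega_V\simeq(\alA_\Omega\otimes V)^G$ under which $\theta_V$ becomes the canonical map $(\alA_\Omega\otimes V)^G\otimes\duale V\to\alA_\Omega$ induced by evaluation. In particular, each $\theta_V$ is a monomorphism of subsheaves of $\alA_\Omega\otimes\duale V$ (via \ref{lem:constant coherent sheaves go out invariants}), and one checks that the commutative diagram of Remark \ref{lem:characterization of muliplication by invariant hom} uniquely determines $\iota^\Omega_{V,W}$ from the multiplication $m$ on $\alA_\Omega$. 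Once $\iota^\Omega$ is recovered from $m$ in this way, commutativity (resp.\ associativity) of $m$ translates verbatim into the symmetry (resp.\ associativity) diagram for $\iota^\Omega$, since after pairing with $\duale V\otimes\duale W$ (resp.\ $\duale V\otimes\duale W\otimes\duale Z$) these diagrams become the commutativity (resp.\ associativity) of products of arbitrary sections of $\alA_\Omega$.

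\textbf{Main obstacle.} The forward direction is essentially diagram chasing. The genuine content is the converse, where one must verify that left exactness of $\Omega$ forces $\iota^\Omega$ to be reconstructible from $m$, so that no extra data beyond the ring structure on $\alA_\Omega$ remains. Making this precise requires showing that the monomorphisms $\theta_V$, together with the images of sufficiently many $V$ (e.g.\ copies of $R[G]$ appearing in a presentation of $V\otimes W$ as in Lemma \ref{lem:presentation of G equivariant co modules}), separate points of $\Omega_{V\otimes W}$; this is where left exactness enters crucially.
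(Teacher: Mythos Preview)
Your forward direction is correct and matches the paper's approach exactly: both use the bridge diagram of Remark~\ref{lem:characterization of muliplication by invariant hom} to express $xy$, $yx$, $(xy)z$, $x(yz)$ through $\theta_{V\otimes W}$ and $\theta_{V\otimes W\otimes Z}$, and then read off the required equalities from the symmetry and associativity constraints on $\iota^\Omega$. The observation that $\theta_{R[G]}$ is already surjective (so one does not need to ``sum over all $V$'') is also what the paper does.

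For the converse you have the right idea but you overcomplicate it, and one sentence is garbled. The claim ``each $\theta_V$ is a monomorphism of subsheaves of $\alA_\Omega\otimes\duale V$'' is not correct as stated: $\theta_V\colon\Omega_V\otimes\duale V\to\alA_\Omega$ is certainly not injective (it is surjective for $V=R[G]$). What \emph{is} injective, by Proposition~\ref{prop:natural map Omega in OmegaF} and left exactness, is the adjoint map $\Omega_U\to\alA_\Omega\otimes U$; equivalently, if $\theta_U(u\otimes\delta)=0$ for all $\delta\in\duale U$ then $u=0$. This single statement, applied with $U=W\otimes V$ (resp.\ $U=V\otimes W\otimes Z$), is all that is needed: in the paper's notation, commutativity of $\alA_\Omega$ gives $\theta_{W\otimes V}((\eta-\mu)\otimes\beta\otimes\alpha)=0$ for all $\alpha,\beta$, hence $\eta=\mu$, which is exactly the symmetry diagram; similarly for associativity. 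There is no need to invoke presentations via Lemma~\ref{lem:presentation of G equivariant co modules} or to talk about ``separating points of $\Omega_{V\otimes W}$'' using copies of $R[G]$---the injectivity of $\Omega_U\to\alA_\Omega\otimes U$ is already the separation statement you want, and it follows directly from left exactness.
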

\begin{proof}
Denote by $\iota_{V,W}\colon\Omega_{V}\otimes\Omega_{W}\arr\Omega_{V\otimes W}$
the monoidal structure on $\Omega$ and by $\text{ex}\colon A\otimes B\arr B\otimes A$
the exchange map. Let $v\in\Omega_{V}$, $w\in\Omega_{W}$, $z\in\Omega_{Z}$
and set also
\[
\xi=\iota_{V,W\otimes Z}(v\otimes\iota_{W,Z}(w\otimes z))\comma\zeta=\iota_{V\otimes W,Z}(\iota_{V,W}(v\otimes w)\otimes z)
\]
\[
\eta=\Omega_{\text{ex}}(\iota_{V,W}(v\otimes w))\comma\mu=\iota_{W,V}(w\otimes v)
\]
If $\alpha\in\duale V,\beta\in\duale W\comma\gamma\in\duale Z$ set
\[
x=\theta_{V}(v\otimes\alpha)(\theta_{W}(w\otimes\beta)\theta(z\otimes\gamma))\comma y=(\theta_{V}(v\otimes\alpha)\theta_{W}(w\otimes\beta))\theta(z\otimes\gamma)
\]
\[
a=\theta_{V}(v\otimes\alpha)\theta_{W}(w\otimes\beta)\comma b=\theta_{W}(w\otimes\beta)\theta_{V}(v\otimes\alpha)
\]
Thanks to \ref{lem:characterization of muliplication by invariant hom},
we see that
\[
x=\theta_{V\otimes W\otimes Z}(\xi\otimes\alpha\otimes\beta\otimes\gamma)\comma y=\theta_{V\otimes W\otimes Z}(\zeta\otimes\alpha\otimes\beta\otimes\gamma)
\]
\[
a=\theta_{W\otimes V}(\eta\otimes\beta\otimes\alpha)\comma b=\theta_{W\otimes V}(\mu\otimes\beta\otimes\alpha)
\]
The commutativity of the diagrams \ref{enu:commutative for functors}
and \ref{enu:associative for functors} coincide with the equalities
$\eta=\mu$ and $\xi=\zeta$ for any $v,w,z$ respectively, which
imply the equalities $a=b$ and $x=y$ for any $v,w,z,\alpha,\beta,\gamma$.
So the first claim holds. For the converse, it is enough to show that
if $U\in\Loc^{G}R$ and $u\in\Omega_{U}$ then
\[
\theta_{U}(u\otimes\delta)=0\;\forall\delta\in\duale U\then u=0
\]
But this is the injectivity of the induced map $\Omega_{U}\arr\Omega_{R[G]}\otimes U$,
which comes from \ref{prop:natural map Omega in OmegaF} since $\Omega$
if left exact.

For the last claims, it is enough to note that $\theta_{R[G]}$ is
surjective. Indeed taking the element $\phi\in\duale{R[G]}$ corresponding
to $\id_{R[G]}\in\End^{G}R[G]$ we have
\[
\theta_{R[G]}(x\otimes\phi)=x\text{ for }x\in\Omega_{R[G]}
\]
thanks to \ref{prop:natural map Omega in OmegaF}.\end{proof}
\begin{lem}
\label{lem:the natural transformation between monoidal functor is monoidal}If
$\Omega\in\QPMon^{G}T$, then the natural transformation $\Omega\arr\Omega^{\alA_{\Omega}}$
defined in \ref{prop:natural map Omega in OmegaF} is monoidal.\end{lem}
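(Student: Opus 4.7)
The plan is to unpack both monoidal structures and reduce the claim to a single commutative diagram in $\Loc^{G}R$ expressing the compatibility of the universal map $\eta_{V}\colon V\to R[G]\otimes\underline{V}$ with tensor products; this diagram in turn is an immediate consequence of the Hopf axioms together with the commutativity of the ring $R[G]$.

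First, I would recall from \ref{prop:natural map Omega in OmegaF} that the natural transformation in question factors as $\delta_{V}\colon\Omega_{V}\xrightarrow{\Omega_{\eta_{V}}}\Omega_{R[G]\otimes\underline{V}}\simeq\Omega_{R[G]}\otimes V$, landing in the invariants $(\alA_{\Omega}\otimes V)^{G}$, and that the induced monoidal structure on $\Omega^{\alA_{\Omega}}$ is, by definition,
\[
(\alA_{\Omega}\otimes V)^{G}\otimes(\alA_{\Omega}\otimes W)^{G}\arr(\alA_{\Omega}\otimes\alA_{\Omega}\otimes V\otimes W)^{G}\arrdi{(m_{\alA_{\Omega}}\otimes\id)^{G}}(\alA_{\Omega}\otimes V\otimes W)^{G},
\]
where the multiplication on $\alA_{\Omega}=\Omega_{R[G]}$ is $m_{\alA_{\Omega}}=\Omega_{m_{G}}\circ\iota_{R[G],R[G]}^{\Omega}$.

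Second, I would establish commutativity, in $\Loc^{G}R$, of the square
\[
\begin{tikzpicture}[xscale=3.8,yscale=-1.2]
\node (A0) at (0,0) {$V\otimes W$};
\node (B0) at (1,0) {$R[G]\otimes\underline{V\otimes W}$};
\node (A1) at (0,1) {$R[G]\otimes\underline{V}\otimes R[G]\otimes\underline{W}$};
\node (B1) at (1,1) {$R[G]\otimes R[G]\otimes\underline{V\otimes W}$};
\path (A0) edge [->] node [auto] {$\scriptstyle{\eta_{V\otimes W}}$} (B0);
\path (A0) edge [->] node [auto,swap] {$\scriptstyle{\eta_{V}\otimes\eta_{W}}$} (A1);
\path (A1) edge [->] node [auto] {$\scriptstyle{\simeq}$} (B1);
\path (B1) edge [->] node [auto,swap] {$\scriptstyle{m_{G}\otimes\id}$} (B0);
\end{tikzpicture}
\]
by a direct computation: if $\mu_{V}(v)=\sum v^{i}\otimes g^{i}$, then $\eta_{V\otimes W}(v\otimes w)=\sum\sigma_{G}(g^{i}h^{j})\otimes v^{i}\otimes w^{j}$, while the composition via the lower row yields $\sum\sigma_{G}(g^{i})\sigma_{G}(h^{j})\otimes v^{i}\otimes w^{j}$; these agree because $\sigma_{G}$ is an anti-homomorphism of the commutative ring $R[G]$, and hence a homomorphism. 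One should also check that the obvious unit element $1\in R\subseteq R[G]$ is compatible, which amounts to $\eta_{R}=$ the unit map, a direct verification.

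Third, applying the functor $\Omega$ to the above diagram and precomposing with $\iota_{V,W}^{\Omega}$, I would invoke the naturality of $\iota^{\Omega}$ together with \ref{lem:monoidality respects triviality} to identify the outer composition
\[
\Omega_{V}\otimes\Omega_{W}\xrightarrow{\Omega_{\eta_{V}}\otimes\Omega_{\eta_{W}}}(\Omega_{R[G]}\otimes V)\otimes(\Omega_{R[G]}\otimes W)\arr\Omega_{R[G]\otimes R[G]}\otimes V\otimes W
\]
with $(\iota_{R[G],R[G]}^{\Omega}\otimes\id_{V\otimes W})\circ(\text{swap})\circ(\delta_{V}\otimes\delta_{W})$; applying $\Omega_{m_{G}\otimes\id}$ then produces exactly $(m_{\alA_{\Omega}}\otimes\id)\circ(\text{swap})\circ(\delta_{V}\otimes\delta_{W})$, while the other route gives $\delta_{V\otimes W}\circ\iota_{V,W}^{\Omega}$. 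The factorization through invariants is automatic from \ref{prop:natural map Omega in OmegaF}.

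The main obstacle is purely the bookkeeping of natural isomorphisms $\Omega_{R[G]\otimes\underline{V}}\simeq\Omega_{R[G]}\otimes V$ and the various swap maps: without Lemma \ref{lem:monoidality respects triviality} to pass trivial factors through $\iota^{\Omega}$, the identifications used in the third step are not evidently compatible. Once this compatibility is in place the argument is a diagram chase driven by the commutative-Hopf-algebra identity established in the second step.
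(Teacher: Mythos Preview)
Your argument is correct and follows essentially the same architecture as the paper's proof: both reduce the monoidality of $\Omega\to\Omega^{\alA_{\Omega}}$ to the commutativity of a diagram in $\Loc^{G}R$ built from the maps $\eta_{V}$, and both invoke \ref{lem:monoidality respects triviality} to pass the trivial tensor factors through $\iota^{\Omega}$.

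The one genuine difference in route is this: the paper first uses \ref{lem:defining natural transformations on RG} (together with left exactness of $\Gamma=\Omega^{\alA_{\Omega}}$ and exactness of $-\otimes U$) to reduce to the single case $V=W=R[G]$, and then checks the resulting diagram via the functorial description of the map $\omega\colon R[G]\otimes R[G]\to R[G]\otimes\underline{R[G]}$ on points. You instead verify the compatibility square for \emph{arbitrary} $V,W$ directly, via the Hopf-algebra identity $\sigma_{G}(gh)=\sigma_{G}(g)\sigma_{G}(h)$ (valid because $R[G]$ is commutative). Your route avoids the reduction lemma entirely at the cost of a short Sweedler-notation computation; the paper's route avoids that computation at the cost of invoking an extra lemma. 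Both are equally short and the underlying reason (commutativity of $R[G]$, equivalently that $\sigma_{G}$ is a ring homomorphism) is the same. Your remark on the unit is superfluous here since the ambient category is only $\QPMon^{G}T$, but it does no harm.
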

\begin{proof}
Set $\Gamma=\Omega^{\alA_{\Omega}}$. The monoidality of the map $\Omega\arr\Gamma$
is expressed by the equalities of the two maps 
\[
\Omega_{V}\otimes\Omega_{W}\arr\Gamma_{V}\otimes\Gamma_{W}\arr\Gamma_{V\otimes W}\comma\Omega_{V}\otimes\Omega_{W}\arr\Omega_{V\otimes W}\arr\Gamma_{V\otimes W}
\]
for any $V,W\in\Loc^{G}R$. Since $-\otimes U\colon\Loc^{G}R\arr\Loc^{G}R$
is an exact functor for any $U\in\Loc^{G}R$, by \ref{lem:defining natural transformations on RG}
we have only to check that the above maps coincide when $V=W=R[G]$.
In this case, by applying $\Omega$ and thanks to \ref{lem:monoidality respects triviality},
we reduce to the problem of the commutativity of the following diagram
  \[   \begin{tikzpicture}[xscale=4.3,yscale=-1.2]     \node (A0_0) at (0, 0) {$A\otimes A$};     \node (A0_1) at (1, 0) {$A\otimes \underline A \otimes A \otimes \underline A\simeq A\otimes A\otimes \underline A\otimes \underline A$};     \node (A0_2) at (2, 0) {$A \otimes \underline A\otimes \underline A$};     \node (A1_0) at (0, 1) {$A\otimes \underline A$};     \node (A1_2) at (2, 1) {$A\otimes \underline A\otimes \underline A$};     \path (A0_0) edge [->]node [auto] {$\scriptstyle{\Delta_G\otimes\Delta_G}$} (A0_1);     \path (A1_0) edge [->]node [auto] {$\scriptstyle{\Delta_G\otimes\id}$} (A1_2);     \path (A0_0) edge [->]node [auto] {$\scriptstyle{\omega}$} (A1_0);     \path (A0_1) edge [->]node [auto] {$\scriptstyle{m_A\otimes\id}$} (A0_2);     \path (A0_2) edge [->]node [auto] {$\scriptstyle{\id\otimes\omega}$} (A1_2);   \end{tikzpicture}   \] where
$A=R[G]$ and $\omega\colon A\otimes A\arrdi{\simeq}A\otimes\underline{A}$
is the tensor product of $\Delta_{G}$ and $\id\otimes1$. But the
commutativity of such diagram can be checked directly taking spectra
and using the functorial point of view.\end{proof}
\begin{lem}
\label{lem:preservation of unities}Let $\Omega\in\QPMon^{G}T$ and
$1\in\Omega_{R}$. If $1$ is a unity for $\Omega$ then it is also
a unity for $\Omega_{R[G]}$. The converse holds is $\Omega$ is left
exact.\end{lem}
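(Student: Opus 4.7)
The plan is to identify an explicit candidate unity for the algebra $\alA_\Omega$ and then reduce both implications to a diagram-chase matching the unity axiom for $\Omega$ with the unity axiom for the multiplication on $\Omega_{R[G]}$. Write $u_G \colon \underline{R}\arr R[G]$ for the unit of the Hopf algebra structure, which is $G$-equivariant because its image lies in $R[G]^G$. Set $\tilde{1} = \Omega_{u_G}(1) \in \Omega_{R[G]}$; since $\Omega_{u_G}$ factors through invariants, $\tilde{1}\in\Omega_{R[G]}^G$, so $\tilde{1}$ is a legitimate candidate for the unity of $\alA_\Omega$.

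For the forward implication, I would use the naturality of $\iota^{\Omega}$ to assemble the square
\[
\begin{tikzpicture}[xscale=3.2,yscale=-1.2]
\node (A0_0) at (0,0) {$\Omega_R \otimes \Omega_{R[G]}$};
\node (A0_1) at (1,0) {$\Omega_{R \otimes R[G]}$};
\node (A1_0) at (0,1) {$\Omega_{R[G]} \otimes \Omega_{R[G]}$};
\node (A1_1) at (1,1) {$\Omega_{R[G] \otimes R[G]}$};
\path (A0_0) edge [->] node [auto] {$\scriptstyle \iota_{R,R[G]}$} (A0_1);
\path (A1_0) edge [->] node [auto] {$\scriptstyle \iota_{R[G],R[G]}$} (A1_1);
\path (A0_0) edge [->] node [auto,swap] {$\scriptstyle \Omega_{u_G} \otimes \id$} (A1_0);
\path (A0_1) edge [->] node [auto] {$\scriptstyle \Omega_{u_G \otimes \id}$} (A1_1);
\end{tikzpicture}
\]
and then post-compose with $\Omega_{m_G}$. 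Using $m_G \circ (u_G \otimes \id_{R[G]}) = \id_{R[G]}$, the right-hand column becomes $\id_{\Omega_{R[G]}}$, while the upper row composed with the canonical iso $\Omega_{R\otimes R[G]}\simeq\Omega_{R[G]}$ is also the identity by the assumption that $1$ is a unity for $\Omega$ (applied at $V = R[G]$). Reading off the remaining edge yields the left-unity axiom for $\tilde{1}$ in $\alA_\Omega$; a mirror argument through $\iota_{R[G],R}$ gives the right-unity axiom.

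For the converse I would exploit left-exactness exactly as in the proof of Lemma \ref{lem:Associativity for V,W,Z}. Fix $V\in\Loc^G R$; the axiom to verify is that the composite $\Omega_V\simeq\Omega_R\otimes\Omega_V\xrightarrow{\iota_{R,V}}\Omega_{R\otimes V}\simeq\Omega_V$ equals $\id_{\Omega_V}$ (and analogously on the right). By Proposition \ref{prop:natural map Omega in OmegaF}, left-exactness of $\Omega$ makes $\Omega_V\arr\Omega_{R[G]}\otimes V$ injective, so it is enough to check the equality after composing with each evaluation $\theta_V(-\otimes\alpha)\colon\Omega_V\arr\Omega_{R[G]}$ for $\alpha\in\duale V$. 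Remark \ref{lem:characterization of muliplication by invariant hom} identifies that composite with multiplying $\tilde{1}$ against $\theta_V(v\otimes\alpha)$ inside $\alA_\Omega$, which by hypothesis just returns $\theta_V(v\otimes\alpha)$. The symmetric check handles right-unity.

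The only genuinely delicate point is the converse: one must verify, via the compatibilities recorded in \ref{prop:natural map Omega in OmegaF} and \ref{lem:characterization of muliplication by invariant hom}, that the multiplication-by-$\tilde 1$ diagram on $\alA_\Omega$ is literally the image under $\theta_{V\otimes R}\circ(\theta_V\otimes\id)$ of the candidate unity diagram for $\Omega$, so that the injectivity statement really forces the desired equality in $\Omega_V$. Once that identification is made, both directions are short diagram chases; the forward direction needs only naturality of $\iota$ and the Hopf-algebra axiom $m_G\circ(u_G\otimes\id)=\id$.
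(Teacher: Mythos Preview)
Your proposal is correct. The forward direction is exactly the paper's argument, just spelled out more explicitly: the paper draws the same pentagon with vertices $\Omega_{R[G]}$, $\Omega_R\otimes\Omega_{R[G]}$, $\Omega_{R\otimes R[G]}$, $\Omega_{R[G]}\otimes\Omega_{R[G]}$, $\Omega_{R[G]\otimes R[G]}$ and reads off the left-unity axiom for $\tilde 1=\Omega_{u_G}(1)$ from naturality of $\iota$ together with $m_G\circ(u_G\otimes\id)=\id_{R[G]}$.

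For the converse you take a slightly different route. The paper simply invokes Lemma~\ref{lem:the natural transformation between monoidal functor is monoidal}: when $\Omega$ is left exact, the natural transformation $\Omega\to\Omega^{\alA_\Omega}$ is a monoidal isomorphism, and a unity for the algebra $\alA_\Omega$ visibly gives a unity for the functor $\Omega^{\alA_\Omega}=(-\otimes\alA_\Omega)^G$, which transports back. You instead follow the injectivity template from the proof of Lemma~\ref{lem:Associativity for V,W,Z}: post-compose with $\theta_V(-\otimes\alpha)$, use Remark~\ref{lem:characterization of muliplication by invariant hom} (with $(R,V)$ in place of $(V,W)$, noting $\theta_R(1\otimes 1)=\tilde 1$) to identify the result with $\tilde 1\cdot\theta_V(v\otimes\alpha)$, and then appeal to the injectivity of $\Omega_V\to\Omega_{R[G]}\otimes V$ from Proposition~\ref{prop:natural map Omega in OmegaF}. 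Both arguments are short; the paper's is a one-line transport, yours is a direct verification that avoids quoting the monoidality of $\Omega\to\Omega^{\alA_\Omega}$ at the cost of tracking $\theta$ explicitly.
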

\begin{proof}
If $1$ is a unity for $\Omega$ we will have a commutative diagram
  \[   \begin{tikzpicture}[xscale=2.6,yscale=-0.5]     \node (A0_1) at (1, 0) {$\Omega_R \otimes \Omega_{R[G]}$};     \node (A0_2) at (2, 0) {$\Omega_{R\otimes R[G]}$};     \node (A1_0) at (0, 1) {$\Omega_{R[G]}$};     \node (A1_3) at (3, 1) {$\Omega_{R[G]}$};     \node (A2_1) at (1, 2) {$\Omega_{R[G]}\otimes \Omega_{R[G]}$};     \node (A2_2) at (2, 2) {$\Omega_{R[G]\otimes R[G]}$};     \path (A1_0) edge [->]node [auto] {$\scriptstyle{}$} (A2_1);     \path (A0_1) edge [->]node [auto] {$\scriptstyle{}$} (A0_2);     \path (A0_1) edge [->]node [auto] {$\scriptstyle{}$} (A2_1);     \path (A2_2) edge [->]node [auto] {$\scriptstyle{}$} (A1_3);     \path (A0_2) edge [->]node [auto] {$\scriptstyle{}$} (A2_2);     \path (A0_2) edge [->]node [auto] {$\scriptstyle{}$} (A1_3);     \path (A1_0) edge [->]node [auto] {$\scriptstyle{1\otimes \id}$} (A0_1);     \path (A2_1) edge [->]node [auto] {$\scriptstyle{}$} (A2_2);   \end{tikzpicture}   \] 
and so $1$ is a left unity for $\Omega_{R[G]}$. Similarly it is
also a right unity. Conversely, if $\Omega$ is left exact, the result
follows easily because the isomorphism $ $$\Omega_{V}\simeq\Homsh^{G}(\duale V,\Omega_{R[G]})$
is monoidal thanks to \ref{lem:the natural transformation between monoidal functor is monoidal}.
\end{proof}
We are finally ready to prove Theorem \ref{thm:G equivariant ring are monoidal functors}\emph{.}
\begin{proof}
(\emph{of Theorem }\ref{thm:G equivariant ring are monoidal functors}\emph{)
}The functors of \ref{thm:additive functors are equivariant sheaves}
are well defined over $\QRings^{G}T$ and $\QPMon^{G}T$ thanks to
\ref{lem:multiplication of AE is G equivariant}. We claim that they
are well defined also over $\QAlg^{G}T$ and $\QMon^{G}T$. For the
unities, using their uniqueness, it is enough to apply \ref{lem:preservation of unities}
and note that the natural transformation $\Omega\arr\Omega^{\alA_{\Omega}}$
over $R\in\Loc^{G}R$ is just $\Omega$ applied to the inclusion $R\arr R[G]$.
Associativity and commutativity instead come from \ref{lem:Associativity for V,W,Z}.

The natural transformation $\Omega\arr\Omega^{\alA_{\Omega}}$ is
the one defined in \ref{prop:natural map Omega in OmegaF}, which
is monoidal thanks to \ref{lem:the natural transformation between monoidal functor is monoidal}.
It remains to prove that if $\alA\in\QRings^{G}T$ then the comodule
map $\alA\arr\Omega_{R[G]}^{\alA}=(\alA\otimes R[G])^{G}$ is a map
of rings. But the commutative diagram expressing this fact is exactly
the diagram expressing the $G$-equivariance of the multiplication
$\alA\otimes\alA\arr\alA$, since, by construction, the ring structure
on $(\alA\otimes R[G])^{G}$ is the one as subring of $\alA\otimes R[G]$.
\end{proof}

\subsection{Ramified Galois covers and the forgetful functor.}

We have seen that a quasi-coherent sheaf of (commutative, associative
and with unity) algebras, or, equivalently, an affine map, with an
action of $G$ corresponds to a monoidal functor. In this subsection
we want to describe the functors associated with $G$-covers. The
theorem we want to prove is the following.
\begin{thm}
\label{thm:description of GCov with monoidal functors}The map of
stacks   \[   \begin{tikzpicture}[xscale=2.6,yscale=-0.6]     
\node (A0_0) at (0, 0) {$\GCov$};     
\node (A0_1) at (1, 0) {$\LMon^G_R$};     
\node (A1_00) at (0, 0.87) {$X\arrdi{f}T$};     
\node (A1_0) at (0, 1) {$\ \ \ \ \ \ \ \ \ \ \ \ \ $};     
\node (A1_1) at (1, 1) {$\Omega^{f_*\odi X}$};     
\path (A0_0) edge [->]node [auto] {$\scriptstyle{}$} (A0_1);     \path (A1_0) edge [|->,gray]node [auto] {$\scriptstyle{}$} (A1_1);   \end{tikzpicture}   \] is well defined and yields an isomorphism between $\GCov$ and the
substack in groupoids of $\LMon_{R}^{G}$ of functors $\Omega$ that,
in $\LAdd_{R}^{G}$, are fppf locally isomorphic to the forgetful
functor. When $G$ is a glrg over a connected scheme, this is also
the substack in groupoids of $\Omega\in\LMon_{R}^{G}$ such that $\rk\Omega_{V}=\rk V$
for all the representations $V\in I_{G}$.\end{thm}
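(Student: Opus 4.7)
The plan is to bootstrap everything from Theorems \ref{thm:additive functors are equivariant sheaves} and \ref{thm:G equivariant ring are monoidal functors}. First I would note that Proposition \ref{prop:the structure map are the invariants} identifies the regular representation $\odi T[G]$ with the forgetful functor $V \mapsto V\otimes\odi T$ under the correspondence $\shF \mapsto \Omega^{\shF}$. Hence the defining condition for a cover $f\colon X\to T$ with $G$-action to be a $G$-cover --- namely that $f_*\odi X$ is fppf-locally isomorphic to $\odi T[G]$ as a $G$-comodule --- translates, via the functor $\alA \mapsto \Omega^\alA$ from Theorem \ref{thm:G equivariant ring are monoidal functors}, into the condition that $\Omega^{f_*\odi X}$ be fppf-locally isomorphic to the forgetful functor as an object of $\LAdd_R^G$. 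Since $\Omega^{f_*\odi X}$ carries a natural monoidal structure (from the algebra structure on $f_*\odi X$) and is always left exact, this shows the map is well defined and lands in the advertised substack.

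For the converse, suppose $\Omega \in \LMon^G T$ is fppf-locally isomorphic (in $\LAdd_R^G$) to the forgetful functor. The forgetful functor is exact, hence $\Omega$ is fppf-locally left exact; since left exactness of a functor into $\QCoh T$ is detected by the vanishing of kernels/cokernels of fixed sequences in $\QCoh T$ and these are fppf-local, $\Omega$ is left exact. By Theorem \ref{thm:G equivariant ring are monoidal functors} it corresponds to a unique $\alA \in \LAlg^G T$, and the fppf-local isomorphism $\Omega \simeq V\otimes\odi T$ in $\LAdd_R^G$ transports, again via Theorem \ref{thm:additive functors are equivariant sheaves}, to an fppf-local isomorphism $\alA \simeq \odi T[G]$ of $G$-comodules; so $\alA$ defines a $G$-cover. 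The passage to the groupoid substack on both sides is automatic since the equivalences of Theorems \ref{thm:additive functors are equivariant sheaves} and \ref{thm:G equivariant ring are monoidal functors} match isomorphisms with isomorphisms.

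For the final statement, assume $G$ is a glrg and $\Spec R$ is connected. By Proposition \ref{prop:equivariant coherent sheaves are collection of coherent sheves for glrg} and Remark \ref{rem: additive functors uniquely determined on irreducible representations}, objects of $\LAdd_R^G T$ correspond to collections $(\shF_V)_{V\in I_G}$ of locally free sheaves on $T$, and the forgetful functor corresponds to the collection $(V\otimes\odi T)_{V\in I_G}$ of ranks $(\rk V)_{V\in I_G}$. Thus $\rk \Omega_V=\rk V$ for all $V\in I_G$ is equivalent to each $\Omega_V$ being locally free of the same (constant, using connectedness of $\Spec R$) rank as $V\otimes\odi T$; but two locally free sheaves of the same finite rank are Zariski-locally --- hence fppf-locally --- isomorphic, yielding the desired trivialization of $\Omega$ in $\LAdd_R^G$. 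The main obstacle is purely bookkeeping: carefully separating isomorphisms in $\LAdd_R^G$ from those in $\LMon_R^G$ (only the former enter the local triviality condition), and checking that left-exactness and the local-trivialization condition both descend under fppf covers; the final equivalence additionally relies on constancy of the ranks $\rk V$, which is exactly what the connectedness hypothesis provides.
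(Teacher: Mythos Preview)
Your proposal is correct and follows essentially the same route as the paper: invoke Theorem~\ref{thm:G equivariant ring are monoidal functors} to pass between algebras and monoidal functors, identify the regular representation with the forgetful functor (the paper cites Remark~\ref{rem:regular representation and forgetful functor}, which in turn rests on Proposition~\ref{prop:the structure map are the invariants}), and for the glrg clause reduce to the rank condition via Proposition~\ref{prop:equivariant coherent sheaves are collection of coherent sheves for glrg}. The one point the paper makes more explicit is base-change compatibility: since $\Omega^{*}$ does not in general define a map of stacks (see the Remark after Theorem~\ref{thm:additive functors are equivariant sheaves}), the paper checks that for $\alA\in\GCov(T)$ the functor $\Omega^{\alA}$ is exact with values in $\Loc T$, so that $h^{*}\circ\Omega^{\alA}\simeq\Omega^{h^{*}\alA}$ for any base change $h$; you allude to this only in your final paragraph, but the verification is the same.
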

\begin{rem}
In general it is not true that, if $\alA\in\LAlg_{R}^{G}T$ is such
that $\rk\Omega_{V}^{\alA}=\rk V$ for all $V\in\Loc^{G}R$, then
$\alA\in\GCov(T)$, even for linearly reductive groups. A counterexample
with $G=\Z/3\Z$, $R=\Q$ and $T=\Spec k$, where $k=\overline{\Q}$,
is $\alA=k[x,y]/(x,y)^{2}$ with the action of $\mu_{3}\simeq G\times k$
given by graduation $\deg x=\deg y=1\in\Z/3\Z$. Denote by $k_{i}$
the irreducible $\mu_{3}$-representation over $k$ induced by $i\in\Z/3\Z=\Hom(\mu_{3},\Gm)$.
Note that $\Z/3\Z$ has only one non trivial irreducible representation
$W$ over $\Q$ and it satisfies $W\otimes k=k_{1}\oplus k_{2}$.
Therefore $\Z/3\Z$ is not a glrg. The functor $\delta\colon\Loc^{\mu_{3}}k\arr\Loc k$
associated with $\alA\in\LAlg^{\mu_{3}}k$ is simply given by $\delta_{k_{0}}=k$,
$\delta_{k_{1}}=k^{2}$, $\delta_{k_{2}}=0$. In particular $\alA\notin\GCov(k)$
by \ref{thm:description of GCov with monoidal functors}. $ $ On
the other hand, since $G$-representations over $\Q$ decompose into
irreducible representations, it is easy to check that $\Omega^{\alA}\colon\Loc^{G}\Q\arr\Loc k$,
which is nothing else that $\Omega_{V}^{\alA}=\delta_{V\otimes k}$,
satisfies $\rk\Omega_{V}^{\alA}=\rk V$ for all $V\in\Loc^{G}\Q$.
\end{rem}

\begin{rem}
\label{rem:regular representation and forgetful functor}Thanks to
\ref{prop:the structure map are the invariants}, the functor $\Omega^{\odi T[G]}\in\QAdd^{G}T$
associated with the regular representation is just the forgetful functor
\[
\Loc^{G}R\ni V\longmapsto V\otimes\odi T\in\Loc T
\]
\end{rem}
\begin{proof}
(\emph{of Theorem }\ref{thm:description of GCov with monoidal functors})
We will make use of \ref{thm:G equivariant ring are monoidal functors}.
Let $X\arrdi fT\in\GCov(T)$ and set $\alA=f_{*}\odi X\in\LAlg^{G}T$.
Since $\Omega^{\odi T[G]}$ is the forgetful functor and taking invariants
behaves well under flat base changes, we have that $\Omega^{\alA}\colon\Loc^{G}R\arr\QCoh T$
is fppf locally the forgetful functor. In particular $\Omega^{\alA}$
is exact and $\Omega^{\alA}\in\LMon^{G}T$, that is $\Omega^{\alA}$
has image in $\Loc T$. If $T'\arrdi hT$ is any base change, then
$h^{*}\circ\Omega^{\alA}$ is still exact because exact sequences
in $\Loc T$ split locally, and therefore
\[
h^{*}\circ\Omega^{\alA}\simeq\Omega^{h^{*}\Omega_{R[G]}^{\alA}}\simeq\Omega^{h^{*}\alA}
\]
So the map in the statement is well defined and the first equivalence
is clear from \ref{thm:additive functors are equivariant sheaves}.
Assume now that $G$ is a glrg. We have to show that $\Omega\in\LAdd_{R}^{G}$
is locally the forgetful functor if and only if $\rk\Omega_{V}=\rk V$
for all $V\in I_{G}$. This is clear from \ref{prop:equivariant coherent sheaves are collection of coherent sheves for glrg}
and \ref{prop:decomposition of OG}. 
\end{proof}

\subsection{Strong monoidal functors and $G$-torsors.}

After the description of $G$-covers in terms of functors, it arises
naturally the question of what kind of functors correspond to $G$-torsors.
We will show that the answer is strong monoidal functors. Notice that,
over a field, this is a classical result of the Tannakian theory (see
\cite{Deligne1982,Rivano1972}). Moreover such a result has already
been proved in \cite{Lurie2004}, as a particular case of a more general
theory. In this subsection we want to give a more elementary proof,
based on the results obtained in the previous sections.

Notice also that the equivalence between $G$-torsors and strong monoidal
functors, in the diagonalizable case, is another well known result
(see \ref{pro:equivalent conditions for a D(M)-torsor}) and it does
not require the machinery developed here or the Tannakian theory.

Thanks to \ref{lem:trivial torsor functor} and \ref{rem:regular representation and forgetful functor},
we have a description of the trivial $G$-torsor:
\begin{prop}
\label{prop:functor of trivial torsor}The functor associated with
the trivial $G$-torsor $\odi T[G]$ is just the forgetful functor
\[
\Loc^{G}R\ni V\longmapsto V\otimes\odi T\in\Loc T
\]
with the usual monoidal structure.
\end{prop}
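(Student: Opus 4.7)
The plan is to combine two results already established in the paper: the additive identification in Remark~\ref{rem:regular representation and forgetful functor} and the monoidal compatibility in Lemma~\ref{lem:trivial torsor functor}.

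First I would recall the construction. Under the equivalence of Theorem~\ref{thm:G equivariant ring are monoidal functors}, the lax monoidal functor $\Omega^{\odi T[G]}\in\LMon^G T$ associated with the regular representation $\odi T[G]$, viewed as a $G$-equivariant sheaf of algebras, is given by
\[
\Omega^{\odi T[G]}_V = (V\otimes\odi T[G])^G,
\]
with monoidal structure $(V\otimes\odi T[G])^G\otimes(W\otimes\odi T[G])^G\to(V\otimes W\otimes\odi T[G])^G$ induced by the multiplication $m_G\colon\odi T[G]\otimes\odi T[G]\to\odi T[G]$, and unity given by $1\in\odi T\simeq(\odi T[G])^G$ coming from $\varepsilon_G$ (or equivalently the unit of $\odi T[G]$).

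Next I would identify this as an $R$-linear functor with the forgetful functor. By Proposition~\ref{prop:the structure map are the invariants}, the structure map $\nu\colon V\to V\otimes\odi T[G]$ yields a natural isomorphism $V\otimes\odi T\simeq (V\otimes\odi T[G])^G$ for every $V\in\Loc^G R$. This proves that $\Omega^{\odi T[G]}\simeq(V\mapsto V\otimes\odi T)$ as objects of $\LAdd^G T$ -- precisely the observation already recorded in Remark~\ref{rem:regular representation and forgetful functor}.

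Finally I would upgrade this to a monoidal isomorphism. The forgetful functor $V\mapsto V\otimes\odi T$ carries its standard strong monoidal structure $(V\otimes\odi T)\otimes(W\otimes\odi T)\simeq V\otimes W\otimes\odi T$ and unit $1\in\odi T$. The assertion that the isomorphism $V\otimes\odi T\simeq(V\otimes\odi T[G])^G$ intertwines these two monoidal structures is exactly the content of Lemma~\ref{lem:trivial torsor functor} (whose proof is left to the reader there, but which boils down to the fact that $m_G\circ(\nu_V\otimes\nu_W)$ coincides with $\nu_{V\otimes W}$ composed with the canonical exchange, a direct verification on generators after reducing to the case $V=W=R$). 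Compatibility with the unit is automatic because $\varepsilon_G\circ\nu_R=\id_R$. The only real point requiring care is this monoidal compatibility; everything else is a direct citation.
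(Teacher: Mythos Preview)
Your proof is correct and follows exactly the paper's approach: the proposition is stated as a direct consequence of Remark~\ref{rem:regular representation and forgetful functor} (the additive identification via Proposition~\ref{prop:the structure map are the invariants}) and Lemma~\ref{lem:trivial torsor functor} (the monoidal compatibility), and you have cited precisely these two ingredients.
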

For general $G$-torsors we need the following definition.
\begin{defn}
Given an $R$-scheme define $\LSMon^{G}T$ as the full subcategory
of $\LMon^{G}T$ of objects $\Omega$ that are left exact, \emph{strong}
monoidal, i.e. such that for any $V,W\in\Loc^{G}R$ the map
\[
\iota_{V,W}^{\Omega}\colon\Omega_{V}\otimes\Omega_{W}\arr\Omega_{V\otimes W}
\]
is an isomorphism, and such that the map $\odi T\arr\Omega_{R}$ is
injective. Define also $\LSMon_{R}^{G}$ as the full subcategory of
$\LMon_{R}^{G}$ whose fibers over an $R$-scheme $T$ are $\LSMon^{G}T$.\end{defn}
\begin{thm}
\label{thm:torsors are isomorphism} $\LSMon_{R}^{G}$ is a substack
of $\LMon_{R}^{G}$ and the functors   \[   \begin{tikzpicture}[xscale=3.7,yscale=-0.6]     \node (A0_0) at (0, 0) {$\Spec \Omega_{R[G]}$};     \node (A0_1) at (1, 0) {$\Omega$};     \node (A1_0) at (0, 1) {$\Bi_R G$};     \node (A1_1) at (1, 1) {$\LSMon^G_R$};     \node (A2_0) at (0, 2) {$ X\arrdi{f} T$};     \node (A2_1) at (1, 2) {$ (- \otimes f_*\odi{X})^G$};     \path (A1_0) edge [->]node [auto] {$\scriptstyle{}$} (A1_1);     \path (A2_0) edge [|->,gray]node [auto] {$\scriptstyle{}$} (A2_1);     \path (A0_1) edge [|->,gray]node [auto] {$\scriptstyle{}$} (A0_0);   \end{tikzpicture}   \] 
are well defined and they are each other's inverse.
\end{thm}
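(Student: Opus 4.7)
My plan is to verify three points: \textbf{(a)} $\LSMon_R^G$ is a substack of $\LMon_R^G$; \textbf{(b)} for any $G$-torsor $X\arrdi f T$ the associated functor $\Omega^{f_*\odi X}$ lies in $\LSMon^G T$; and \textbf{(c)} for any $\Omega\in\LSMon^G T$ the sheaf $\Omega_{R[G]}$ gives rise to a $G$-torsor. Once these are established, the two functors are automatically mutual inverses: Theorem \ref{thm:G equivariant ring are monoidal functors} provides the natural isomorphism $\alA\simeq\Omega_{R[G]}^{\alA}$ unconditionally, and the isomorphism $\Omega\simeq\Omega^{\Omega_{R[G]}}$ whenever $\Omega$ is left exact---a condition built into the definition of $\LSMon^G_R$.

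Points (a) and (b) will be short. The two extra conditions defining $\LSMon^G$, namely that $\iota_{V,W}^\Omega$ is an isomorphism for all $V,W\in\Loc^G R$ and that $\odi T\arr\Omega_R$ is injective, can both be checked fppf locally, so $\LSMon_R^G$ inherits a stack structure from $\LMon_R^G$. For (b), by Proposition \ref{prop:functor of trivial torsor} the trivial torsor $\odi T[G]$ corresponds to the forgetful functor $V\mapsto V\otimes\odi T$, which is manifestly strong monoidal with $\Omega_R=\odi T$. Since any $G$-torsor is fppf locally trivial, (a) lets us conclude that the forward functor lands in $\LSMon^G_R$.

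The heart of the argument is point (c). Fixing $\Omega\in\LSMon^G T$, I will first establish $\Omega_R=\odi T$ and $\rk\Omega_V=\rk V$ for all $V\in\Loc^G R$. The monoidal unit axiom at $V=R$ factors $\id_{\Omega_R}$ as $\Omega_R\simeq\odi T\otimes\Omega_R\arrdi{u\otimes\id}\Omega_R\otimes\Omega_R\arrdi{\iota_{R,R}^\Omega}\Omega_R$; since $\iota_{R,R}^\Omega$ is an isomorphism so is $u\otimes\id$, and a rank count forces $\rk\Omega_R\leq 1$, which combined with the injectivity of $u$ gives $\Omega_R=\odi T$. For general $V\in\Loc^G R$, applying the strong monoidal $\Omega$ to the evaluation $\duale V\otimes V\arr R$ and coevaluation $R\arr V\otimes\duale V$ produces maps in $\Loc T$ satisfying the triangle identities, exhibiting $\Omega_{\duale V}$ as a dual of $\Omega_V$ and forcing $\rk\Omega_V=\rk V$. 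In particular $\alA:=\Omega_{R[G]}$ is locally free of rank $|G|$, so $\Spec\alA\arr T$ is finite and fppf.

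The remaining key step is showing $P=\Spec\alA$ is a $G$-torsor. For the trivial torsor, there is a distinguished $G$-equivariant isomorphism $\beta_{R[G]}\colon R[G]\otimes R[G]\arr R[G]\otimes\underline{R[G]}$ in $\Loc^G R$ (with the appropriate coactions on source and target, built from $\Delta_G$ and $m_G$) corresponding to the tautological isomorphism $G\times_R G\simeq G\times G$. Applying $\Omega$, strong monoidality identifies the source with $\alA\otimes_T\alA$ and, using $\Omega_{\underline V}\simeq V\otimes\Omega_R=V$, identifies the target with $\alA\otimes_R R[G]$. Since by Theorem \ref{thm:G equivariant ring are monoidal functors} the maps $\Omega_{\Delta_G}$ and $\Omega_{m_G}$ are respectively the coaction and the multiplication on $\alA$, the image is exactly the Galois map $\beta_\alA$ for $\alA$; this being an isomorphism is precisely the $G$-torsor condition. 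The main obstacle will be the careful tracking of the regular, diagonal, and trivial coactions involved in identifying $\Omega(\beta_{R[G]})$ with $\beta_\alA$, but this amounts to a routine diagram chase using naturality of the monoidal structure.
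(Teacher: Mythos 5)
Your plan follows the paper's proof almost step for step: the forward functor is handled via fppf-local triviality of torsors and Proposition \ref{prop:functor of trivial torsor}, full faithfulness comes from Theorem \ref{thm:G equivariant ring are monoidal functors}, and essential surjectivity is reduced to showing that applying $\Omega$ to the $G$-equivariant isomorphism $R[G]\otimes R[G]\arr R[G]\otimes\underline{R[G]}$ corresponding to $(g,h)\longmapsto(gh,g)$ produces the Galois map of $\alA=\Omega_{R[G]}$ --- this is exactly the map $\omega$ in the paper's proof, and your identifications of $\Omega_{\Delta_{G}}$ with the coaction and $\Omega_{m_{G}}$ with the multiplication are the same diagram chase.

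There is one step in your point (c) that fails as stated: dualizability does not force $\rk\Omega_{V}=\rk V$. Applying the symmetric strong monoidal $\Omega$ to the ($G$-equivariant) evaluation and coevaluation of $V$ does exhibit $\Omega_{\duale V}$ as a dual of $\Omega_{V}$, but comparing the composite $R\arr V\otimes\duale V\arr\duale V\otimes V\arr R$ with its image under $\Omega$ only yields an equality of categorical dimensions, i.e. $\rk\Omega_{V}\cdot1=\rk V\cdot1$ in $\Hl^{0}(T,\odi T)$; over a base of characteristic $p$ this is merely a congruence mod $p$, so you cannot conclude $\rk\Omega_{R[G]}=|G|$ this way (exterior-power tricks are also unavailable, since $\Omega$ is only assumed monoidal). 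Fortunately the step is dispensable, and the paper never computes the rank: $\alA$ is locally free of some finite rank, so $\Spec\alA\arr T$ is finite, flat and finitely presented, and it is surjective because $\odi T=\Omega_{R}\subseteq\Omega_{R[G]}$, which follows from left exactness (built into the definition of $\LSMon^{G}T$) applied to the locally split sequence $0\arr R\arr R[G]\arr R[G]/R\arr0$ together with your unit-axiom computation of $\Omega_{R}$. Faithful flatness plus your Galois-map isomorphism then give the torsor condition directly, and $\rk\alA=|G|$ follows a posteriori. Relatedly, in point (a) stability of the injectivity condition $\odi T\arr\Omega_{R}$ under arbitrary pullback is not automatic for a map of quasi-coherent sheaves; it holds here precisely because $\Omega_{R}=\odi T$ with the unit an isomorphism, which is the content of Lemma \ref{lem:left exactness for strong monoidal is exactness} and is supplied by your own computation --- so the logical order should put that computation before the substack claim.
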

We will prove Theorem above after the following lemma.
\begin{lem}
\label{lem:left exactness for strong monoidal is exactness}Let $\Omega\in\LMon^{G}T$
be a strong monoidal functor. Then $\Omega_{R}=\odi T$ and 
\[
\Omega\text{ left exact}\iff\Omega\text{ exact}\iff\Supp\Omega_{R[G]}=T
\]
In particular $\LSMon_{R}^{G}$ is a substack of $\LMon_{R}^{G}$.\end{lem}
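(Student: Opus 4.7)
The plan proceeds in three main steps. First I would show that $\Omega_{R}=\odi T$. The strong monoidal structure at $V=W=R$ gives that $\iota_{R,R}\colon\Omega_{R}\otimes\Omega_{R}\to\Omega_{R}$ is an isomorphism, and this map coincides with the ring multiplication on $\Omega_{R}$. Comparing ranks forces $\Omega_{R}$ to be locally free of rank $\leq 1$; the injectivity of the unit $\odi T\to\Omega_{R}$ forbids rank zero at any point of $T$, so $\Omega_{R}$ is invertible. In a local trivialization $\Omega_{R}\simeq\odi T\cdot e$ with $\iota_{R,R}(e\otimes e)=u\cdot e$ for some unit $u\in\odi T^{*}$, writing $1_{\Omega}=a\cdot e$ the identity $1_{\Omega}^{2}=1_{\Omega}$ reads $a^{2}u=a$. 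Since $a$ is a nonzerodivisor (from injectivity) this gives $a=u^{-1}\in\odi T^{*}$, and so the unit map is an isomorphism.

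Second, I would prove the equivalences. The implication exact $\then$ left exact is trivial. For left exact $\then\Supp\Omega_{R[G]}=T$, Theorem \ref{thm:additive functors are equivariant sheaves} gives $\Omega\simeq\Omega^{\Omega_{R[G]}}$, hence $(\Omega_{R[G]})^{G}\simeq\Omega_{R}=\odi T$; this rules out $\Omega_{R[G]}$ vanishing on any nonempty open of $T$. The crucial implication is $\Supp\Omega_{R[G]}=T\then\Omega$ exact. For this I would use the standard $G$-equivariant isomorphism $V\otimes R[G]\simeq\underline{V}\otimes R[G]$ in $\Loc^{G}R$ (trivial action on the first factor, regular on the second, natural in $V$). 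Applying $\Omega$, composing with the strong monoidal isomorphism $\iota_{V,R[G]}^{-1}$, and invoking Corollary \ref{cor:additivity over locally free sheaves} to push the trivial-action factor outside $\Omega$, I obtain a natural isomorphism of functors $\Loc^{G}R\to\QCoh T$,
\[
\psi_{V}\colon V\otimes\Omega_{R[G]}\arrdi{\simeq}\Omega_{V}\otimes\Omega_{R[G]}.
\]
Any short exact sequence in $\Loc^{G}R$ is split exact in $\Loc R$, so tensoring with $\Omega_{R[G]}$ over $R$ yields a short exact sequence; by $\psi_{*}$ this is naturally isomorphic to the sequence obtained by applying $\Omega(-)\otimes\Omega_{R[G]}$. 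Since $\Omega_{R[G]}$ is locally free with full support, it is faithfully flat over $\odi T$, so the exactness of $\Omega_{-}\otimes\Omega_{R[G]}$ descends to exactness of $\Omega$.

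Third, for the substack claim I would observe that the two conditions defining $\LSMon^{G}T$ inside $\LMon^{G}T$ are preserved under arbitrary pullback and are local: strong monoidality is preserved because $f^{*}$ commutes with tensor products of locally free sheaves, and the injectivity of $\odi T\to\Omega_{R}$ is preserved because, by the first step, it is equivalent to the identification $\Omega_{R}=\odi T$, which pulls back to $\odi{T'}=\odi{T'}$. Both conditions are also fppf-local, which yields the descent half of the substack property inherited from $\LMon^{G}_{R}$.

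The main obstacle is the construction and naturality verification of $\psi_{V}$ in step two: one must carefully chain the Hopf-algebraic iso $V\otimes R[G]\simeq\underline{V}\otimes R[G]$, the strong monoidal iso $\iota_{V,R[G]}$, and the canonical iso $\Omega_{\underline{V}\otimes R[G]}\simeq V\otimes\Omega_{R[G]}$ from Corollary \ref{cor:additivity over locally free sheaves}, and check that the resulting composite is natural in $V\in\Loc^{G}R$. Once this is established, the reduction to faithful flatness of $\Omega_{R[G]}$ is routine, and the rest of the argument is a matter of bookkeeping.
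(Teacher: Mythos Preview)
Your proposal is correct and follows essentially the same approach as the paper. The key step in both is the natural isomorphism $V\otimes R[G]\simeq\underline{V}\otimes R[G]$ combined with strong monoidality to reduce exactness of $\Omega$ to faithful flatness of $\Omega_{R[G]}$; your chain $V\otimes\Omega_{R[G]}\simeq\Omega_{\underline{V}\otimes R[G]}\simeq\Omega_{V\otimes R[G]}\simeq\Omega_V\otimes\Omega_{R[G]}$ just spells this out more explicitly, and your treatment of $\Omega_R=\odi T$ and of the left-exact $\Rightarrow$ full-support direction are slight variants (the paper uses that $\Omega_R$ is a rank-one $\odi T$-algebra, and the split inclusion $R\hookrightarrow R[G]$ to get $\odi T=\Omega_R\subseteq\Omega_{R[G]}$ directly).
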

\begin{proof}
Since $\Omega_{R}\otimes\Omega_{R}\simeq\Omega_{R}$ and $\odi T\subseteq\Omega_{R}$,
we can conclude that $\rk\Omega_{R}=1$. But $\Omega_{R}$ has a structure
of $\odi T$ algebra induced by the multiplication $R\otimes R\arr R$.
So $\Spec\Omega_{R}\arr T$ is a degree one cover, which is therefore
an isomorphism.

For the equivalences, consider a short exact sequence in $\Loc^{G}R$
\[
\shV_{*}\colon\quad0\arr V'\arr V\arr V''\arr0
\]
Since there exists a natural isomorphism $U\otimes R[G]\simeq\underline{U}\otimes R[G]$
for $U\in\Loc^{G}R$ (see \cite[Part I, Example 3.7]{Jantzen2003}),
we see that $\shV_{*}\otimes R[G]$ is a splitting sequence in $\Loc^{G}R$.
In particular $\Omega_{\shV_{*}\otimes R[G]}$ is exact. Moreover
$\Omega_{\shV_{*}\otimes R[G]}\simeq\Omega_{\shV_{*}}\otimes\Omega_{R[G]}$.
If $\Supp\Omega_{R[G]}=T$, the functor $-\otimes\Omega_{R[G]}$ is
faithful exact since $\Omega_{R[G]}$ is locally free, and therefore
$\Omega$ is exact. Conversely, if $\Omega$ is left exact we have
$\odi T=\Omega_{R}\subseteq\Omega_{R[G]}$.

For the final statement, we have to show that the subcategory $\LSMon_{R}^{G}\subseteq\LMon_{R}^{G}$
is preserved by the pullback. This follows because $\Omega_{R}=\odi T$
and the pullback of an exact sequence of locally free of finite rank
sheaves is still exact.
\end{proof}

\begin{proof}
(\emph{of Theorem }\ref{thm:torsors are isomorphism}) $\LSMon_{R}^{G}$
is a substack of $\LMon_{R}^{G}$ thanks to \ref{lem:left exactness for strong monoidal is exactness}.
Since $G$ is flat, finite and of finite presentation, the push forward
functor $\Bi_{R}G\arr\LAlg_{R}^{G}$ is fully faithful with essential
image the full subcategory of algebras $\alA$ for which there exist
$G$-equivariant isomorphisms of algebras $\alA\simeq\odi{}[G]$ locally
in the fppf topology. In what follows we identify $\Bi_{R}G$ with
this stack. Since taking invariants commutes with flat base change,
given $\alA\in\Bi_{R}G(T)$, $\Omega^{\alA}$ is locally isomorphic
to the forgetful functor $\Loc^{G}R\arr\Loc T$, which is strong monoidal
and left exact. Thanks to \ref{thm:G equivariant ring are monoidal functors},
$\Omega_{*}\colon\Bi_{R}G\arr\LSMon_{R}^{G}$ is fully faithful and
we have only to prove that, if $\Omega\in\LSMon^{G}T$, then $\Omega_{R[G]}\in\Bi_{R}G(T)$.
Note that $f\colon\Spec\Omega_{R[G]}\arr T$ is faithfully flat and
finitely presented since $\Omega_{R[G]}$ is locally free and $\odi T\subseteq\Omega_{R[G]}$.
So it clearly has sections in the fppf topology. We therefore need
to show that the map 
\[
\rho\colon\Omega_{R[G]}\otimes\Omega_{R[G]}\arr\Omega_{R[G]}\otimes R[G]\text{ given by }\rho(x\otimes y)=\mu(x)(y\otimes1)
\]
is an isomorphism, where $\mu$ is the comodule structure on $\Omega_{R[G]}$.
Set $A=R[G]$ and consider the map 
\[
\omega\colon A\otimes A\arrdi{\Delta_{G}\otimes(\id_{A}\otimes1)}(A\otimes\underline{A})\otimes(A\otimes\underline{A})\simeq A\otimes A\otimes\underline{A}\otimes\underline{A}\arrdi{m_{A}\otimes m_{A}}A\otimes\underline{A}
\]
where $m_{A}$ is the multiplication. The map $\omega$ is a $G$-equivariant
isomorphism because it corresponds to $G\times G\ni(g,h)\arr(gh,g)\in G\times G$.
Moreover it is easy to check that we have a commutative diagram   \[   \begin{tikzpicture}[xscale=3.2,yscale=-1.0]     \node (A0_0) at (0, 0) {$\Omega_{A}\otimes \Omega_{A}$};     \node (A0_1) at (1, 0) {$\Omega_{A}\otimes R[G]$};     \node (A1_0) at (0, 1) {$\Omega_{A\otimes A}$};     \node (A1_1) at (1, 1) {$\Omega_{A\otimes \underline{R[G]}}$};     \path (A0_0) edge [->]node [auto] {$\scriptstyle{\rho}$} (A0_1);     \path (A0_0) edge [->]node [auto] {$\scriptstyle{}$} (A1_0);     \path (A0_1) edge [<-]node[rotate=-90] [above] {$\scriptstyle{\simeq}$} (A1_1);     \path (A1_0) edge [->]node [auto] {$\scriptstyle{\Omega_{\omega}}$} (A1_1);   \end{tikzpicture}   \] 
Since $\Omega$ is strong monoidal we get the result. %

\end{proof}

\subsection{Super solvable groups and $G$-torsors.}

Where $G$ is a diagonalizable group and $\Omega\in\LMon^{G}$ we
know that $\Omega$ corresponds to a $G$-torsor if and only if the
maps
\[
\Omega_{m}\otimes\Omega_{n}\arr\Omega_{m+n}\quad\forall m,n\in\Hom(G,\Gm)
\]
 are isomorphisms and $\Omega_{0}=\odi{}$. Here $\Omega_{m}=\Omega_{V_{m}}$,
where $V_{m}$ is the one dimensional representation associated to
$m\in\Hom(G,\Gm)$. On the other hand this condition is also equivalent
to require that $\Omega_{0}=\odi{}$ and that the maps
\[
\Omega_{m}\otimes\Omega_{-m}\arr\Omega_{0}=\odi{}\quad\forall m\in\Hom(G,\Gm)
\]
are surjective (and therefore isomorphisms). We want to generalize
this kind of statement for a larger class of groups, namely super
solvable groups (see \ref{def: super solvable groups} for the definition).

In this section we will assume that $G$ is a glrg and we continue
to work on a base ring $R$.
\begin{defn}
\label{def: super solvable groups} We will say that a group scheme
$G$ over an algebraically closed field is \emph{super solvable} if
there exists a filtration by closed subgroups
\[
1=H_{0}<H_{1}<\dots<H_{r}=G
\]
such that $H_{i}\triangleleft G$ and $H_{i+1}/H_{i}\simeq\mu_{p}$
for some prime $p$ and for all $i$. 

A finite, flat and finitely presented group scheme $G$ over a base
$S$ will be called super solvable if it is so over any geometric
point.\end{defn}
\begin{rem}
In our hypothesis, if $G$ is constant over an algebraically closed
field $k$, then it is super solvable according to the above definition
if and only if it is so as abstract group. Indeed, since $G$ is linearly
reductive, we will have $\car k\nmid|G|$, and $\mu_{q}\simeq\Z/q\Z$
if $\car k\nmid q$.
\end{rem}

\begin{rem}
\label{Rem: action of G on diagonalizable normal subgroup}Assume
that $R$ is strictly Henselian. If $H$ is an open and closed normal
subgroup of $G$ which is diagonalizable, then the conjugacy yields
an action of $\underline{G}/\underline{H}$ on $\Hom(H,\Gm)$. In
particular if $H=G_{1}$ we get an action of $\underline{G}$ on the
group $M=$$\Hom(G_{1},\Gm)$. Indeed $G$ acts by conjugacy on $H$
and, since $H$ is abelian, it induces an action of $\underline{G}/\underline{H}=G/H$
on $H$ and therefore on $\Hom(H,\Gm)$. \end{rem}
\begin{notation}
In the situation of remark \ref{Rem: action of G on diagonalizable normal subgroup}
we will consider $\Hom(H,\Gm)$ and, in particular, $\Hom(G_{1},\Gm)$,
endowed by the left action of $\underline{G}$ defined above.
\end{notation}
The following remark gives a concrete description of what a super
solvable group is over an algebraically closed field.
\begin{rem}
Assume that $R=k$ is an algebraically closed field. Then $G$ is
super solvable if and only if $\underline{G}$ is super solvable and
there exists a filtration by subgroups 
\[
0=H_{0}<H_{1}<\cdots<H_{r}=M=\Hom(G_{1},\Gm)
\]
such that each $H_{i}$ is $\underline{G}$-stable and $H_{i+1}/H_{i}$
is cyclic of prime order. \end{rem}
\begin{notation}
Given a group $G$ over a scheme $S$ and a character $\chi\in\Hom(G,\Gm)$
we will denote by $V_{\chi}$ the representation of $G$ on $\odi S$
induced by such character.

Given $\alA\in\CAlg^{G}T$ and a representation $V\in I_{G}$ we set
\[
\omega_{V}^{\alA}\colon\Omega_{V}^{\alA}\otimes\Omega_{\duale V}^{\alA}\arr\Omega_{V\otimes\duale V}^{\alA}\arr\Omega_{R}^{\alA}=\alA^{G}
\]
We will also write simply $\omega_{V}$ instead of $\omega_{V}^{\alA}$
if this will not lead to confusion.\end{notation}
\begin{thm}
\label{thm:torsors when omega surjective}Let $G$ be a super solvable
glrg and let $\alA\in\LAlg^{G}T$. Then $\alA\in\Bi G$ if and only
if $\alA^{G}=\Omega_{R}^{\alA}\simeq\odi T$ and for any representation
$V\in I_{G}$ the map
\[
\omega_{V}^{\alA}\colon\Omega_{V}^{\alA}\otimes\Omega_{\duale V}^{\alA}\arr\Omega_{V\otimes\duale V}^{\alA}\arr\Omega_{R}^{\alA}\simeq\odi T
\]
is surjective.
\end{thm}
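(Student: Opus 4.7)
The forward direction is essentially formal from Theorem \ref{thm:torsors are isomorphism}. If $\alA \in \Bi G(T)$, then $\Omega^{\alA}$ is strong monoidal with $\Omega_R^{\alA} = \odi T$, so $\omega_V^{\alA}$ factors as the monoidal-structure isomorphism $\Omega_V^{\alA} \otimes \Omega_{\duale V}^{\alA} \simeq \Omega_{V \otimes \duale V}^{\alA}$ followed by $\Omega^{\alA}$ applied to the evaluation $V \otimes \duale V \to R$; the evaluation is a split surjection in $\Loc^G R$ (split by the coevaluation $R \to V \otimes \duale V$, which exists since $V$ is locally free), hence $\omega_V^{\alA}$ is surjective.

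For the converse, the plan is induction on the length $r$ of a super-solvable filtration $1 = H_0 \triangleleft H_1 \triangleleft \cdots \triangleleft H_r = G$, which exists over every geometric point. The base case $G \simeq \mu_p$ is essentially the classical fact recorded in Proposition \ref{pro:equivalent conditions for a D(M)-torsor}: the hypothesis becomes exactly that the multiplication $\alA_{-\chi} \otimes \alA_\chi \to \alA_0 = \odi T$ on each graded piece is surjective, and local sections $a \otimes b \mapsto 1$ then force each $\alA_\chi$ to be invertible and $\alA \simeq \odi T[\mu_p]$ fppf locally.

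For the inductive step, I would choose a normal subgroup $H \simeq \mu_p$ of $G$ (which exists by taking $H_1$ in the given filtration) and set $\overline{G} = G/H$, a super-solvable glrg of smaller filtration length. The argument then splits in two: \emph{(a)} show $\alA^H$ is a $\overline{G}$-torsor over $T$, and \emph{(b)} show $\alA$ is an $H$-torsor over $\alA^H$; combining the two yields that $\alA$ is a $G$-torsor. Step \emph{(a)} is the easier half: for $\overline{V} \in I_{\overline{G}}$, its inflation $V$ to $G$ lies in $I_G$ (up to isomorphism), and since $H$ acts trivially on $V$ one has natural identifications $\Omega_{\overline{V}}^{\alA^H} = (\alA^H \otimes \overline V)^{\overline G} = ((\alA \otimes V)^H)^{\overline G} = \Omega_V^{\alA}$ under which $\omega_{\overline V}^{\alA^H}$ coincides with $\omega_V^{\alA}$; together with $(\alA^H)^{\overline G} = \alA^G \simeq \odi T$, the inductive hypothesis applies.

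The main obstacle is step \emph{(b)}. Here one wants to apply the $\mu_p$ base case over $\Spec \alA^H$, which requires surjectivity of $\alA_{-\chi} \otimes_{\alA^H} \alA_{\chi} \to \alA^H$ for each character $\chi$ of $H$, whereas the hypothesis only gives surjectivity of $\omega_V^{\alA}$ for $V \in I_G$. To bridge this, I would reduce to a strict Henselization of $T$, where by Section \ref{sec:Preliminaries on linearly reductive groups} one has an explicit decomposition $G = G_1 \rtimes \underline{G}$ and may invoke Clifford theory: each $V \in I_G$ restricts to $H$ as a multiple of the sum over a single $\overline{G}$-orbit in $\Hom(H,\Gm)$, and conversely each character of $H$ arises in some such restriction. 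Unwinding the $H$-isotypic decomposition of $\alA \otimes V$ and using that $\alA^H$ is already known to be a $\overline{G}$-torsor from \emph{(a)}, so that transporting by elements of $\overline{G}$ identifies the components of $\alA$ indexed by characters in one orbit, one recovers surjectivity of the $H$-pairings from the surjectivity of $\omega_V^{\alA}$. Super solvability is used precisely at this step to guarantee that the relevant $V \in I_G$ can be chosen with multiplicity one in $V|_H$ (by inducing from a character of an intermediate subgroup containing $H$); this is where the analogous statement fails for merely solvable groups, in line with Remark \ref{rem:counterexample for omega surjective implies torsors}.
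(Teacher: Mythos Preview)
Your forward direction and step (a) are correct; Lemma \ref{lem:functor of algebra of invariants by a normal subgroup} gives precisely the identification $\Omega_{\overline V}^{\alA^H}\simeq\Omega_V^{\alA}$ you use, and $\overline G$ inherits super solvability from the filtration modulo $H$.

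The gap is step (b), and your sketch does not close it. The hypothesis supplies surjections $\omega_V^{\alA}$ onto $\odi T$, but the $\mu_p$-criterion over $\Spec\alA^H$ asks for surjectivity of $\alA_\chi\otimes_{\alA^H}\alA_{-\chi}\to\alA^H$, a map with a strictly larger target; you have not explained how to pass between the two. The transport idea does not do it: acting by $g\in\overline G$ carries the ideal $\alA_\chi\cdot\alA_{-\chi}\subseteq\alA^H$ to $\alA_{g\chi}\cdot\alA_{-g\chi}$, a \emph{different} ideal, so knowing one product hits a given point of $\Spec\alA^H$ does not propagate the same $\chi$ to the other points. When $H\subsetneq G_1$ the quotient $\overline G$ is not \'etale and $\alA^H$ acquires nilpotents even over an algebraically closed field, a case your sketch does not address. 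Finally, the multiplicity-one remark is neither established nor where super solvability is actually used.

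The paper takes a genuinely different route and avoids the base-change mismatch entirely. After reducing to an algebraically closed field it does not pick a prescribed normal $\mu_p$; instead it applies Proposition \ref{prop:induction from a localization on henselian ring} to write $A\simeq\ind_{H_p}^G A_p$ with $A_p$ \emph{local} and $H_p$ the stabilizer of its connected component. Lemma \ref{lem:when omega is surjective in the induction} then translates each surjective $\omega_V^A$ into the existence of some $\Delta\in I_{H_p}$ with $\Delta\subseteq V|_{H_p}$ and $\omega_\Delta^{A_p}$ surjective --- crucially, both maps still target $k$. Super solvability enters twice: Lemma \ref{lem:super solvable property} (a fact about constant super-solvable groups) forces $\underline{H_p}=1$, hence $H_p=G_1$; and Lemma \ref{lem:orbits of G in M for super solvable} (using the $\underline G$-stable filtration of $M$) guarantees that the subgroup $Q=\{m\in M:\omega_{V_m}^{A_p}\text{ surjective}\}$, which by the previous step meets every $\underline G$-orbit in $M$, must be all of $M$.
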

Before proving the above Theorem, we need some preliminary results.
\begin{lem}
\label{lem:super solvable property}Let $G$ be a constant super solvable
group, $H$ be a subgroup and $k$ be an algebraically closed field
such that $\car k\nmid|G|$. If $V^{H}\neq0$ for all the irreducible
representations $V$ of $G$ over $k$ then $H=0$.\end{lem}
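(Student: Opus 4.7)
I would argue by induction on $|G|$, the base $|G|=1$ being vacuous. Let $H_{1}\triangleleft G$ be the first nontrivial subgroup in the super solvable filtration of $G$; so $H_{1}\simeq\mu_{p}$ for some prime $p$, and since $G$ is constant and $\car k\nmid|G|$, we may identify $H_{1}\simeq\Z/p\Z$ as a subgroup of $G$ acting faithfully over $k$. The inductive step splits into two cases according to whether $H_{1}\subseteq H$ or $H\cap H_{1}=1$.

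First, suppose $H_{1}\subseteq H$. The plan here is to derive a contradiction from the hypothesis. Because $H_{1}$ is normal in $G$ and $\car k\nmid|H_{1}|$, for any irreducible representation $V$ of $G$ the restriction $V|_{H_{1}}$ decomposes as a direct sum of isotypic components $V_{\chi}$ indexed by characters $\chi\in\Hom(H_{1},k^{*})$, and $G$ permutes these via $g\cdot V_{\chi}=V_{g\cdot\chi}$ with $(g\cdot\chi)(h)=\chi(g^{-1}hg)$. Irreducibility of $V$ forces the set $\{\chi:V_{\chi}\neq0\}$ to be a single $G$-orbit. Now $V^{H}\neq0$ implies $V^{H_{1}}\neq0$, so the trivial character appears in $V|_{H_{1}}$; but the trivial character is fixed by the conjugation action, so its $G$-orbit is $\{1\}$, and hence $V|_{H_{1}}$ is trivial. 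This holds for every irreducible $V$, so $H_{1}$ acts trivially on every irreducible $G$-representation and therefore on the regular representation $k[G]$, which forces $H_{1}=1$, contradicting $p>1$.

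Second, suppose $H\cap H_{1}=1$. Set $G'=G/H_{1}$ and let $\overline{H}$ be the image of $H$ in $G'$; the intersection hypothesis gives $H\simeq\overline{H}$. The super solvable filtration of $G$ descends to one for $G'$ (quotienting each $H_{i}$ by $H_{1}$), so $G'$ is a constant super solvable group with $\car k\nmid|G'|$. Every irreducible representation $V$ of $G'$ pulls back to an irreducible representation $\widetilde{V}$ of $G$ on which $H_{1}$ acts trivially, and $V^{\overline{H}}=\widetilde{V}^{H}\neq0$ by hypothesis. Thus the inductive hypothesis applies to $(G',\overline{H})$, yielding $\overline{H}=1$ and hence $H\subseteq H_{1}$, which combined with $H\cap H_{1}=1$ gives $H=1$.

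The main conceptual point is the Clifford-type observation in the first case, that normality of $H_{1}$ together with irreducibility of $V$ forces $\{\chi:V_{\chi}\neq0\}$ to be a single $G$-orbit; once this is in place, the rest of the argument is a straightforward induction. I expect no further obstacles, since super solvability is exactly the structural property that lets us chain these $\mu_{p}$-by-$\mu_{p}$ reductions.
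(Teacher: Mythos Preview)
Your proof is correct. The inductive skeleton is the same as the paper's: both pass to $G/K$ for a nontrivial normal subgroup $K$ and apply the inductive hypothesis there, which forces $H$ into $K$. The difference is in the terminal step. The paper, having shown $H\subseteq H_{1}$ and hence $H$ normal, takes an arbitrary irreducible $H$-representation $W$, uses the Mackey decomposition $\R_{H}\ind_{H}^{G}W=\bigoplus_{g}W_{g}$, and the hypothesis $(\ind_{H}^{G}W)^{H}\neq0$ together with normality of $H$ to conclude $W^{H}\neq0$, whence $H$ has only trivial representations. You instead apply Clifford theory to the normal subgroup $H_{1}$ and the restriction $V|_{H_{1}}$: the isotypic components are permuted transitively by $G$, and the presence of the trivial character forces them all to be trivial.

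These are dual arguments (induction versus restriction), and yours is arguably a touch more direct since it only uses normality of $H_{1}$ in $G$, which is given by super solvability, rather than first having to deduce normality of $H$. One minor organizational remark: your Case~2 argument already shows $H\subseteq H_{1}$ without using the hypothesis $H\cap H_{1}=1$, so the case split could be streamlined to ``first show $H\subseteq H_{1}$ by induction, then if $H\neq1$ apply the Clifford argument to $H=H_{1}$''. But this is cosmetic.
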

\begin{proof}
We will argue by induction on $|G|$. If $G=0$ there is nothing to
prove. So assume $G\neq0$. If $K\neq0$ is a normal subgroup of $G$
and $\phi\colon G\arr G/K$ is the projection, then $\phi(H)<G/K$
satisfies the inductive hypothesis and therefore $\phi(H)=0$, i.e.
$H\subseteq K$. In particular we can choose $K$ to be cyclic since
$G$ is super solvable and we can conclude that $H$ is normal and
abelian in $G$. Let $W$ be an irreducible $H$-representation. Given
a system $\shR$ of representatives of $G/H$ we can write
\[
R_{H}\ind_{H}^{G}W=\bigoplus_{g\in\shR}W_{g}
\]
where $W_{g}$ is the representation of $H$ given by $W$ and the
action $h\star x=ghg^{-1}x$. By hypothesis, we know that $(\ind_{H}^{G}W)^{H}\neq0$.
So there exist $g\in\shR$, $x\in W_{g}$ such that $h\star x=ghg^{-1}x=x$
for any $h\in H$. Since $H$ is normal we can conclude that $W^{H}\neq0$.
So $H$ has only the trivial representation and therefore $H=0$.\end{proof}
\begin{lem}
\label{lem:orbits of G in M for super solvable}Let $M$ be an abelian
$p$-group, for a prime $p$, and $G$ be a constant group acting
on $M$. Assume that there exists a filtration 
\[
0=H_{0}<H_{1}<\cdots<H_{r}=M
\]
by $G$-stable subgroup such that $H_{i+1}/H_{i}\simeq\Z/p\Z$. Then
for any proper subgroup $H$ of $M$ there exists a $G$-orbit in
$M-H$.\end{lem}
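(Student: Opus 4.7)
The plan is to find $i$ minimal with $H_i \not\subseteq H$ and show that any element of $H_i \setminus H$ has $G$-orbit disjoint from $H$.

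More precisely, first I would argue that such an $i$ exists: since $H_r = M \not\subseteq H$ (as $H$ is proper), while $H_0 = 0 \subseteq H$, there is a smallest $i \geq 1$ with $H_i \not\subseteq H$; by minimality $H_{i-1} \subseteq H$. Next, pick any $x \in H_i \setminus H$. I would then observe that $H \cap H_i = H_{i-1}$: the inclusion $\supseteq$ comes from $H_{i-1}\subseteq H$ and $H_{i-1}\subseteq H_i$, and for the reverse inclusion $H \cap H_i$ is a subgroup of $H_i$ containing $H_{i-1}$, hence corresponds to a subgroup of $H_i/H_{i-1} \simeq \Z/p\Z$; since $\Z/p\Z$ is simple and $H\cap H_i \neq H_i$ (because $x \in H_i \setminus H$), it must equal $H_{i-1}$.

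The key step is then the following orbit argument. Take $g \in G$ arbitrary. By $G$-stability of $H_i$, we have $gx \in H_i$. Suppose for contradiction that $gx \in H$; then $gx \in H \cap H_i = H_{i-1}$. But $H_{i-1}$ is itself $G$-stable, so $x = g^{-1}(gx) \in H_{i-1} \subseteq H$, contradicting $x \notin H$. Hence $gx \notin H$ for every $g \in G$, so the $G$-orbit of $x$ lies entirely in $M \setminus H$.

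There is no real obstacle here; the proof is almost entirely a bookkeeping argument using the $G$-stability of the filtration and the fact that each successive quotient has prime order (so has no intermediate subgroups). The only subtlety to double-check is the identification $H \cap H_i = H_{i-1}$, which crucially uses the simplicity of $H_i/H_{i-1}$; the abelian $p$-group hypothesis on $M$ itself is not really needed for this lemma, but it is the natural setting in which this filtration arises in the applications (notably in the proof of Theorem~\ref{thm:torsors when omega surjective}).
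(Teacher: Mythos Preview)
Your proof is correct and in fact cleaner than the paper's own argument. The paper proceeds quite differently: it first reduces to the case where $H$ has index $p$ (so that $pM\subseteq H$), then passes to the $\F_p$-vector space $M/pM$, chooses a basis $e_1,\dots,e_r$ adapted to the induced filtration so that $G$ acts by upper triangular matrices, and finally proves the contrapositive by an induction showing that if every $G$-orbit meets $H$ then all the $e_j$ lie in $H$. Your approach avoids both reductions entirely: by picking the minimal $i$ with $H_i\not\subseteq H$ and using only the simplicity of $H_i/H_{i-1}$ together with the $G$-stability of $H_i$ and $H_{i-1}$, you exhibit the required orbit directly. As you note, your argument does not even use that $M$ is a $p$-group, only that the successive quotients in the filtration are simple; the paper's linear-algebra reduction buys nothing here, though it does make the triangular structure of the $G$-action more visually explicit.
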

\begin{proof}
We can assume that $H$ has index $p$. In particular $pM\subseteq H$.
The action of $G$ on $M$ induces an action of $G$ on $M/pM$ that
has a filtration like the one of $M$. Since $H/pM$ is a proper subgroup
of $M/pM$ we can assume that $pM=0$, i.e. $M$ is a finite $\F_{p}$
vector space. Choosing a basis $e_{1},\dots,e_{r}$ according to the
given filtration, we can assume that $G$ acts by triangular matrices.
Let $H$ be a subspace of $M$ and assume that any $G$-orbit has
an element in $H$. We have to prove that $H=M$. Set $e_{0}=0$ and
assume by induction that $e_{0},\dots,e_{j-1}\in H$ for $j\leq n$.
We have that $Ge_{j}\cap H\neq\emptyset$. Since $G$ acts by triangular
matrices we can write
\[
H\ni g(e_{j})=\lambda e_{j}+x\text{ with }\lambda\in\F_{p}^{*}\comma x\in<e_{1},\dots,e_{j-1}>_{\F_{p}}\subseteq H
\]
So $\lambda e_{j}\in H$ and $e_{j}\in H$.
\end{proof}
In what follows $G$ is still our glrg over the base ring $R$.
\begin{lem}
\label{lem:rk V invertible for irreducible representations}If $V\in I_{G}$
then $\rk V\in R^{*}$.\end{lem}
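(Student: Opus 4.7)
The plan is to reduce to geometric points and then apply the structure theorem for linearly reductive groups. Since an integer $n$ is a unit in $R$ if and only if its image is nonzero in $R/\mathfrak{p}$ for every prime $\mathfrak{p}$, it suffices to verify the claim geometrically. Because $G$ is a glrg, Proposition \ref{prop:generating irreducible representations} guarantees that $V\otimes\overline{k(\mathfrak{p})}$ remains an irreducible representation of $G_{\overline{k(\mathfrak{p})}}$, which is again a finite linearly reductive group scheme. The problem is therefore reduced to showing that $\rk V$ is prime to $\car k$ whenever $V$ is an irreducible representation of a finite linearly reductive group scheme $G$ over an algebraically closed field $k$.

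Set $p=\car k$ (possibly zero). By \ref{cor:well split over algebraically closed field}, one has $G\simeq G_{1}\rtimes\underline{G}$, and from the structure theorem recalled in the preliminaries $G_{1}$ is diagonalizable with character group $M$ a $p$-group, while the constant group $\underline{G}$ has order prime to $p$. Decomposing $V$ as a $G_{1}$-representation yields $V=\bigoplus_{\chi\in M}V_{\chi}$, and the semidirect product structure makes $\underline{G}$ permute the characters $\chi$. The irreducibility of $V$ as a $G$-representation forces the set of characters with $V_{\chi}\neq0$ to form a single $\underline{G}$-orbit. Fixing such a $\chi$ with stabilizer $H\leq\underline{G}$, classical Clifford theory identifies $V$ with $\ind_{G_{1}\rtimes H}^{G}V_{\chi}$, where $V_{\chi}$ is a representation of $G_{1}\rtimes H$ on which $G_{1}$ acts via the character $\chi$, and whose irreducibility as such amounts to irreducibility of $V_{\chi}$ as a representation of $H$.

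Taking dimensions gives $\rk V=[\underline{G}:H]\cdot\rk V_{\chi}$. The first factor divides $|\underline{G}|$ by Lagrange, and the second divides $|H|$: this latter divisibility is the standard assertion that the representation theory of a finite group of order prime to the characteristic of the algebraically closed ground field coincides with ordinary representation theory, so that dimensions of irreducibles divide the group order. Since $|\underline{G}|$ is prime to $p$, it follows that $\rk V$ is prime to $p$, hence nonzero in $k$, and the original claim follows. The main obstacle is carrying out the Clifford decomposition cleanly in the setting of group schemes rather than abstract groups, but the splitting $G\simeq G_{1}\rtimes\underline{G}$ together with the abelian nature of $G_{1}$ reduces this to the classical case.
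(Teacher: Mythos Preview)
Your proof is correct but takes a substantially different route from the paper's.

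The paper gives a direct two-line argument working over $R$ without reduction to geometric points: setting $W=\Homsh(V,V)$, one has $W^{G}=R\cdot\id_{V}$ by \ref{prop:behaviour of irreducible representations for general glrg}, and linear reductivity provides a $G$-equivariant retraction $\phi\colon W\to R$ with $\phi(\id_{V})=1$. Since $\Homsh^{G}(V\otimes\duale{V},R)\simeq\Endsh^{G}(V)=R$ is generated by the evaluation map $e_{V}$, one gets $\phi=\lambda e_{V}$ and hence $1=\lambda\, e_{V}(\id_{V})=\lambda\cdot\rk V$.

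Your approach instead reduces to geometric points, invokes the splitting $G\simeq G_{1}\rtimes\underline{G}$ over an algebraically closed field, and runs Clifford theory to express $\rk V$ as a product of two integers each dividing $|\underline{G}|$. This is heavier machinery: it relies on the structure theorem for linearly reductive groups, the Clifford decomposition, and the classical divisibility $\dim W\mid|H|$ for irreducible representations in coprime characteristic. In exchange you extract more than needed, namely that $\rk V$ divides $|\underline{G}|$. The paper's argument, by contrast, uses only the defining properties of a glrg and the splitting of invariants, and would go through in any $R$-linear setting where $\Endsh^{G}(V)=R$.
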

\begin{proof}
Consider $W=\Hom(V,V)$ and note that, by \ref{prop:behaviour of irreducible representations for general glrg},
we have $W^{G}=R\cdot\id_{V}$. Since $G$ is a glrg, there exists
a $G$-equivariant map $\phi\colon W\arr R$ such that $\phi(\id_{V})=1$.
On the other hand any $G$-equivariant map $V\otimes\duale V\arr R$
is of the form $\lambda e_{V}$ for $\lambda\in R$, where $e_{V}$
is the evaluation $e_{V}(v\otimes\psi)=\psi(v)$. If $r=\rk V$ it
is easy to check that $e_{V}(\id_{V})=r$. So $1=\lambda e_{V}(\id_{V})=\lambda r$.\end{proof}
\begin{lem}
\label{lem:omega and the projection}Let $\alA\in\LAlg^{G}T$ and
$V\in I_{G}$. Then 
\[
\pi(\theta_{V}(\phi\otimes x)\theta_{\duale V}(v\otimes y))=\frac{\phi(v)}{\rk V}\omega_{V}(x\otimes y)
\]
where $\phi\in\duale V,v\in V,x\in\Omega_{V}^{\alA},y\in\Omega_{\duale V}^{\alA},$
$\theta_{-}\colon\duale{(-)}\otimes\Hom^{G}(\duale{(-)},\alA)\arr\alA$
is the evaluation and $\pi\colon\alA\arr\alA^{G}$ is the projection
according to the $G$-equivariant decomposition of $\alA$.\end{lem}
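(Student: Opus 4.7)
The plan is to reduce the product of two $\theta$'s to a single one via the monoidal structure, and then isolate the contribution of the trivial representation in the decomposition $V\otimes\duale V=R\oplus K$.

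First, I would apply Remark~\ref{lem:characterization of muliplication by invariant hom} to the pair $(V,\duale V)$ with $\alA=\Omega_{R[G]}^{\alA}$, obtaining
\[
\theta_V(\phi\otimes x)\,\theta_{\duale V}(v\otimes y)=\theta_{V\otimes\duale V}\bigl((\phi\otimes v)\otimes\iota_{V,\duale V}^{\Omega^{\alA}}(x\otimes y)\bigr),
\]
where $\phi\otimes v\in\duale V\otimes V$ is identified with an element of $\duale{V\otimes\duale V}$ via the swap. It then suffices to compute $\pi\circ\theta_{V\otimes\duale V}$ on this element.

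Next, since $V\in I_G$ over a glrg, Proposition~\ref{prop:behaviour of irreducible representations for general glrg} gives $\End^G V=R\cdot\id_V$, so the trivial representation appears in $V\otimes\duale V$ with multiplicity one, spanned by $\id_V$. The evaluation $e_V\colon V\otimes\duale V\to R$ satisfies $e_V(\id_V)=\rk V$, which is invertible by Lemma~\ref{lem:rk V invertible for irreducible representations}; hence the section $s\colon R\to V\otimes\duale V$, $1\mapsto\id_V/\rk V$ splits $e_V$, yielding a $G$-equivariant decomposition $V\otimes\duale V=s(R)\oplus K$ with $K=\ker e_V$. Crucially, $K$ contains no trivial subrepresentation (by the multiplicity-one statement). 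Dualising, $\duale V\otimes V=\duale{V\otimes\duale V}=R\oplus\duale K$, where the projection to $R$ is the dual of $s$: a direct pairing computation with $\id_V=\sum e_i\otimes e_i^{*}$ gives
\[
\duale s(\phi\otimes v)=\langle\phi\otimes v,\,\id_V/\rk V\rangle=\phi(v)/\rk V.
\]
Applying the additive functor $\Omega^{\alA}$ to the splitting yields $\Omega_{V\otimes\duale V}^{\alA}=\Omega_R^{\alA}\oplus\Omega_K^{\alA}$ with projection to $\Omega_R^{\alA}$ given by $\Omega_{e_V}$, and by the very definition of $\omega_V$ the $\Omega_R^{\alA}$-component of $\iota_{V,\duale V}^{\Omega^{\alA}}(x\otimes y)$ is $\omega_V(x\otimes y)$.

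The remaining step is to track how $\theta_{V\otimes\duale V}$ respects these splittings. For any direct sum $W=W_1\oplus W_2$ in $\Loc^G R$, the evaluation $\theta_W\colon\duale W\otimes\Omega_W^{\alA}\to\alA$ vanishes on the off-diagonal summands $\duale{W_i}\otimes\Omega_{W_j}^{\alA}$ for $i\neq j$ and restricts to $\theta_{W_i}$ on the diagonal pieces, because under the identification $\Omega_{W_i}^{\alA}\simeq\Hom^G(\duale{W_i},\alA)$ an element of $\Omega_{W_j}^{\alA}$ extended by zero annihilates $\duale{W_i}$. Applying this to $W=R\oplus K$: the component $\theta_K\colon\duale K\otimes\Omega_K^{\alA}\to\alA$ lands in the sum of $\duale{V'}$-isotypic components of $\alA$ for $V'\in I_G$ appearing in $K$, none of which is trivial, so $\pi\circ\theta_K=0$; meanwhile $\theta_R\colon\Omega_R^{\alA}=\alA^G\hookrightarrow\alA$ is the canonical inclusion with $\pi\circ\theta_R=\id$. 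Combining everything, only the $R$-on-$\Omega_R^{\alA}$ diagonal contributes, and
\[
\pi\bigl(\theta_V(\phi\otimes x)\,\theta_{\duale V}(v\otimes y)\bigr)=\duale s(\phi\otimes v)\cdot\Omega_{e_V}\bigl(\iota_{V,\duale V}^{\Omega^{\alA}}(x\otimes y)\bigr)=\tfrac{\phi(v)}{\rk V}\,\omega_V(x\otimes y),
\]
as required. The main obstacle is keeping the two dualisations straight: verifying that $\theta_W$ is genuinely ``diagonal'' with respect to an arbitrary decomposition of $W$, and that under the splitting $V\otimes\duale V=s(R)\oplus K$ both the dual projection and the induced projection on $\Omega^{\alA}$ pick up exactly the factors $\phi(v)/\rk V$ and $\omega_V(x\otimes y)$.
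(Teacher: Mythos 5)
Your proof is correct and follows essentially the same route as the paper's: reduce the product to $\theta_{V\otimes\duale V}$ via Remark \ref{lem:characterization of muliplication by invariant hom}, isolate the trivial component of $V\otimes\duale V$ spanned by $\id_{V}$ (using \ref{prop:behaviour of irreducible representations for general glrg} and the invertibility of $\rk V$ from \ref{lem:rk V invertible for irreducible representations}), and read off the two factors $\phi(v)/\rk V$ and $\omega_{V}(x\otimes y)=\iota_{V,\duale V}^{\Omega^{\alA}}(x\otimes y)(e_{V})$. The only difference is one of detail: you spell out the splitting $V\otimes\duale V=s(R)\oplus\Ker e_{V}$ and verify the diagonality of $\theta$ with respect to direct sums, which the paper compresses into the opening observation that $\pi\circ\theta_{W}$ vanishes outside the factor $\duale{(W^{G})}\otimes\Omega_{W^{G}}$.
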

\begin{proof}
For any $W\in\Loc^{G}R$, the map
\[
\duale W\otimes\Omega_{W}=\duale W\otimes\Hom^{G}(\duale W,\alA)\arr\alA\arr\alA^{G}
\]
 is non zero only on the factor $\duale{(W^{G})}\otimes\Omega_{W^{G}}$.
Let $W=\duale V\otimes V$ and remember that, by \ref{prop:behaviour of irreducible representations for general glrg},
we have $W^{G}=R\id_{V}$. Under the isomorphism $W\simeq\duale W$,
$\id_{V}$ is sent to the evaluation $e_{V}\colon\duale V\otimes V\arr R$,
while $\phi\otimes v$ to the map $\psi$ given by $\psi(\delta\otimes z)=\delta(v)\phi(z)$.
The equivariant section of $R\arrdi{1\arr e_{V}}\duale W$ is given
by $(\phi\arr\phi(\id_{V})/\rk V)$ so the component of $\psi$ in
$(\duale W)^{G}$ is
\[
e_{V}\psi(\id_{V})/\rk V=e_{V}\phi(v)/\rk V
\]
By definition $\omega_{V}(x\otimes y)=x\otimes y(e_{V})$ and taking
into account \ref{lem:characterization of muliplication by invariant hom}
we have
\[
\pi(\theta_{V}(\phi\otimes x)\theta_{\duale V}(v\otimes y))=x\otimes y(e_{V})\phi(v)/\rk V=\omega_{V}(x\otimes y)\phi(v)/\rk V
\]

\end{proof}
During the proof of Theorem \ref{thm:torsors when omega surjective},
we will reduce to consider local algebras. The following lemma explains
what happens in this situation.
\begin{lem}
\label{lem:what happens on local rings}Assume that $R$ is strictly
Henselian and let $A\in\CAlg^{G}R$ be a local $R$-algebra such that
$A^{G}=R$. Then
\begin{itemize}
\item If $G=\underline{G}$ then the maximal ideal of $A$ is
\[
m_{R}\oplus\bigoplus_{R\neq V\in I_{G}}\duale V\otimes\Omega_{V}^{A}
\]
and for any $R\neq V\in I_{G}$ the map $\omega_{V}^{A}$ is not surjective.
\item If $G=G_{1}$ then
\[
H=\{m\in M\st(\duale{V_{m}}\otimes\Omega_{V_{m}}^{A})\cap A^{*}\neq\emptyset\}=\{m\in M\st\omega_{V_{m}}\text{ is surjective}\}
\]
is a subgroup of $M$, and the subalgebra $B=\oplus_{m\in H}\duale{V_{m}}\otimes\Omega_{V_{m}}\subseteq A$
is a $D(H)$-torsor.
\end{itemize}
\end{lem}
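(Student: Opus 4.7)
The plan is to use in both cases the decomposition $A = R \oplus N$, where $N = \bigoplus_{R\neq V \in I_G} \duale V \otimes \Omega_V^A$, afforded by the glrg structure together with the hypothesis $A^G = R$, and the $R$-linear projection $\pi\colon A \to R$ onto the trivial summand.

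For the first bullet, with $G = \underline G$ constant over the strictly Henselian $R$, I would first identify $m_A = m_R \oplus N$. Reducing modulo $m_R$, the algebra $A \otimes_R k = k \oplus (N \otimes k)$ is local Artinian (since $A$ is local and finite over $R$), and its unique maximal ideal $\bar m$ is $G$-stable by uniqueness; as a $G$-subspace not containing $1$ it satisfies $\bar m \cap k = 0$, so $k \oplus \bar m \subseteq A\otimes k$. The equality $\bar m = N \otimes k$ then amounts to showing that the residue field $k' = (A\otimes k)/\bar m$ equals $k$: linear reductivity of $G$ gives $k'^G = k$ (since $(A\otimes k)^G = k$ surjects onto $k'^G$), and because $k$ is separably closed the finite extension $k'/k$ is purely inseparable, so $G$ acts trivially on $k'$ and $k'^G = k'$. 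For the non-surjectivity of $\omega_V^A$ when $V \neq R$, I would assume for contradiction $\omega_V(x \otimes y) = 1$ and pick $\phi \in \duale V$, $v \in V$ with $\phi(v) = 1$; Lemma \ref{lem:omega and the projection} then gives $\pi(\theta_V(\phi \otimes x)\,\theta_{\duale V}(v \otimes y)) = 1/\rk V$, but the two factors lie in $N \subseteq m_A$, so their product lies in $m_A$ and projects into $\pi(m_A) = m_R$, contradicting $\rk V \in R^*$ from Lemma \ref{lem:rk V invertible for irreducible representations}.

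For the second bullet, with $G = G_1 = \Di M$ diagonalizable, I would translate to the standard $M$-grading $A = \bigoplus_{m\in M} A_m$, under which $\duale{V_m} \otimes \Omega_{V_m}^A$ corresponds to the graded piece $A_{-m}$ and $\omega_{V_m}^A$ to the multiplication $A_{-m} \otimes A_m \to A_0 = R$. A homogeneous unit $u \in A_m \cap A^*$ has its inverse $u^{-1}$ in $A_{-m}$ by homogeneity, which immediately yields the equality of the two descriptions of $H$ and shows $H$ is a subgroup of $M$, since the set of homogeneous degrees of units is closed under addition and inversion. For each $m \in H$, fixing a unit $u_m \in A_m$ and using that every $a \in A_m$ satisfies $a u_m^{-1} \in A_0 = R$ shows $A_m = R u_m$ is free of rank one; hence $B = \bigoplus_{m \in H} A_m$ is free of rank $|H|$ over $R$, and the multiplications $A_m \otimes A_n \to A_{m+n}$ send the generator $u_m \otimes u_n$ to the unit $u_m u_n$, so they are isomorphisms. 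By Proposition \ref{pro:equivalent conditions for a D(M)-torsor} this means $B$ is a $\Di H$-torsor.

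The main obstacle I anticipate is the residue-field step in the first bullet, specifically the argument that $A \otimes_R k$ has residue field equal to $k$ with no nontrivial purely inseparable twist hidden by the $G$-action; once this is established, the remainder consists of assembling Lemma \ref{lem:omega and the projection}, Lemma \ref{lem:rk V invertible for irreducible representations} and Proposition \ref{pro:equivalent conditions for a D(M)-torsor}.
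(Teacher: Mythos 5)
Your proof is correct and takes essentially the same route as the paper: the same glrg decomposition $m_{A}=m_{R}\oplus\bigoplus_{R\neq V\in I_{G}}\duale V\otimes\Omega_{V}^{A}$ with the residue field forced to equal $k$ because $k$ is separably closed, the same combination of \ref{lem:omega and the projection} and \ref{lem:rk V invertible for irreducible representations} to rule out surjectivity of $\omega_{V}^{A}$, and the same reduction of the diagonalizable case to \ref{pro:equivalent conditions for a D(M)-torsor}. The only cosmetic deviations are that you pass to $A\otimes_{R}k$ and argue via pure inseparability (so $G$ acts trivially on the residue field) where the paper works with $L=A/m_{A}$ directly and deduces separability of $L/k$ from $L^{G}=k$, and that in the graded case you use homogeneity of inverses of homogeneous units where the paper uses the power $x^{|M|-1}\in\Omega_{-m}$.
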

\begin{proof}
Set $\Omega=\Omega^{A}$, $\omega=\omega^{A}$ and $k=R/m_{R}$. Assume
$G=\underline{G}$ and let $m_{A}$ be the maximal ideal of $A$.
Since $m_{A}$ is stable under the action of $\underline{G}$, it
can be written as
\[
m_{A}=\bigoplus_{V\in I_{G}}\duale V\otimes\Gamma_{V}
\]
where $\Gamma_{V}\subseteq\Omega_{V}$. In particular 
\[
L=A/m_{A}=\bigoplus_{V\in I_{G}}\duale V\otimes(\Omega_{V}/\Gamma_{V})
\]
$G$ acts on $L$ and $L^{G}=\Omega_{R}/\Gamma_{R}=k$. Therefore
$L/k$ is separable, i.e. $L=k$ and by dimension we get the first
equality. Taking into account \ref{lem:omega and the projection}
we also have that if $R\neq V\in I_{G}$ then $\omega_{V}$ is not
surjective.

Now assume $G=G_{1}=D(M)$ and set $\Omega_{m}=\duale{V_{m}}\otimes\Omega_{V_{m}}$.
Note that if $\Omega_{m}\otimes\Omega_{-m}\arr R$ is surjective then
$\Omega_{m}\cap A^{*}\neq\emptyset$ since $R$ is local. Conversely
if $x\in\Omega_{m}\cap A^{*}$ let $\lambda=x^{|M|}\in R$. If $\lambda\in m_{R}$
then $x\in m_{A}$, which is not the case. Since $x^{|M|-1}\in\Omega_{-m}$
and therefore $\omega_{m}(x\otimes x^{|M|-1})=x^{|M|}\in R^{*}$ we
have that $\omega_{m}$ is surjective. Finally if $x\in\Omega_{m}\cap A^{*}$
and $y\in\Omega_{n}\cap A^{*}$ then $xy\in\Omega_{m+n}\cap A^{*}$.
So $H$ is a subgroup and $B$ is a $D(H)$-torsor thanks to \ref{pro:equivalent conditions for a D(M)-torsor}.
\end{proof}
The following two lemmas describe how the associated functors $\Omega^{*}$
change when making an induction or taking invariants.
\begin{lem}
\label{prop:E of an induction is the restriction}Let $H$ be a subgroup
scheme of $G$ and assume they are both glrg. If $\alA\in\CAlg^{H}T$,
then 
\[
\ind_{H}^{G}\alA\simeq(\alA\otimes R[G])^{H}\in\CAlg^{G}T\text{ and }\;\Omega^{\ind_{H}^{G}\alA}=\Omega^{\alA}\circ\R_{H}\colon\Loc^{G}R\arrdi{\R_{H}}\Loc^{H}R\arrdi{\Omega^{\alA}}\FCoh T
\]
\end{lem}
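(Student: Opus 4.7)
The plan is to establish the two assertions separately, starting with the algebra structure and then upgrading the additive identification to a monoidal one.

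First I would describe the algebra structure on $\ind_{H}^{G}\alA=(\alA\otimes R[G])^{H}$. The tensor product $\alA\otimes R[G]$ is an $R$-algebra (componentwise multiplication, using the Hopf algebra structure of $R[G]$). The $H$-action defining the induction is the diagonal of the $H$-action on $\alA$ and the restriction of the (right) regular $G$-action on $R[G]$; both multiplications are equivariant for this action, so $(\alA\otimes R[G])^{H}$ inherits a subalgebra structure. The left $G$-action on $\ind_{H}^{G}\alA$ comes from right multiplication by $G$ on itself, which commutes with the $H$-action on $G$ (since the latter acts by left multiplication after inversion) and acts on $R[G]$ by algebra automorphisms, so this action preserves the ring structure. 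This makes $\ind_{H}^{G}\alA\in\CAlg^{G}T$, and one checks by unwinding that it represents the same functor as $\Homsh^{H}(G,\WW(\alA))$ with its pointwise product.

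Next, for a fixed $V\in\Loc^{G}R$ I would identify $\Omega_{V}^{\ind_{H}^{G}\alA}$ with $\Omega_{\R_{H}V}^{\alA}$ via the chain of natural isomorphisms
\[
(V\otimes\ind_{H}^{G}\alA)^{G}\;\simeq\;\Hom^{G}(\duale V,\ind_{H}^{G}\alA)\;\simeq\;\Hom^{H}(\R_{H}\duale V,\alA)\;\simeq\;(\R_{H}V\otimes\alA)^{H},
\]
where the outer isomorphisms use \ref{prop:the structure map are the invariants} (applied to $G$ and to $H$, after using that $V$ is locally free of finite rank so $\R_{H}\duale V\simeq\duale{\R_{H}V}$), and the middle isomorphism is the Frobenius reciprocity recalled earlier in the section. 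Naturality in $V$ is immediate from the naturality of each step, so this gives an isomorphism of $R$-linear functors $\Omega^{\ind_{H}^{G}\alA}\simeq\Omega^{\alA}\circ\R_{H}$.

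Finally, the main obstacle is upgrading this additive isomorphism to a monoidal one, so that the identification respects the algebra structure encoded by Theorem \ref{thm:G equivariant ring are monoidal functors}. The point is that for $V,W\in\Loc^{G}R$, both monoidal structures are induced by applying multiplication of $\alA$ (and identifying $\alA\otimes\alA$-valued homomorphisms with $\alA$-valued ones via pointwise product). Concretely, tracing through the definition of the multiplication $\Omega_{V}^{\ind_{H}^{G}\alA}\otimes\Omega_{W}^{\ind_{H}^{G}\alA}\arr\Omega_{V\otimes W}^{\ind_{H}^{G}\alA}$ from the pointwise product on $\ind_{H}^{G}\alA$, and applying Frobenius reciprocity to both factors and to the target, one reduces to checking that the diagram corresponding to the multiplication of $\alA$, restricted along $\R_{H}$, coincides with the monoidal structure of $\Omega^{\alA}\circ\R_{H}$. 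This is a straightforward but somewhat tedious diagram chase, enabled by the fact that $\R_{H}$ is a strong monoidal functor $\Loc^{G}R\arr\Loc^{H}R$ and Frobenius reciprocity is natural with respect to tensor products; no further ideas are needed once the adjunction is in hand.
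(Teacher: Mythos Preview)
Your argument is correct and takes essentially the same route as the paper: the core identification $\Omega_{V}^{\ind_{H}^{G}\alA}\simeq\Omega_{\R_{H}V}^{\alA}$ via Frobenius reciprocity is exactly what the paper writes down. The paper is terser---it deduces the algebra structure on $\ind_{H}^{G}\alA$ a posteriori from the identification $\Omega_{R[G]}^{\ind_{H}^{G}\alA}\simeq(\alA\otimes R[G])^{H}\subseteq\alA\otimes R[G]$ as a subring rather than checking it directly first, and it does not spell out the monoidal compatibility that you (rightly) address in your final paragraph.
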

\begin{proof}
We have 
\[
\Omega_{V}^{\ind_{H}^{G}\alA}=\Homsh^{G}(\duale V,\ind_{H}^{G}\alA)\simeq\Homsh^{H}(\duale{(\R_{H}V}),\alA)=\Omega_{\R_{H}V}^{\alA}
\]
So $\ind_{H}^{G}\alA\in\CAlg^{G}T$ and it is a subring $\Omega_{R[G]}^{\ind_{H}^{G}\alA}\simeq(\alA\otimes R[G])^{H}\subseteq\alA\otimes R[G]$.\end{proof}
\begin{lem}
\label{lem:functor of algebra of invariants by a normal subgroup}Let
$K$ be a normal subgroup scheme of $G$ and $\alA\in\CAlg^{G}T$.
Then $\alA^{K}\in\CAlg^{G/K}T$ and its associated functor is 
\[
\Loc^{G/K}R\arrdi{\text{restriction}}\Loc^{G}R\arrdi{\Omega^{\alA}}\FCoh T
\]
\end{lem}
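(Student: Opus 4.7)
The plan has two parts. First I would verify that $\alA^K$ acquires a natural structure in $\CAlg^{G/K}T$, and second I would establish the claimed isomorphism of functors, including compatibility with the monoidal structure.

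For the first part, since $K\triangleleft G$, the $G$-action on $\alA$ preserves $\alA^K$: for $g\in G$, $k\in K$ and $x\in\alA^K$ one has $k\cdot(g\cdot x)=g\cdot(g^{-1}kg\cdot x)=g\cdot x$ because $g^{-1}kg\in K$. The induced $G$-action on $\alA^K$ is trivial on $K$ by construction, so it factors uniquely through $G/K$; this is the content of the isomorphism $\QCoh^{G/K}T\simeq(\QCoh^{G}T)^{K\text{-triv}}$. The multiplication $\alA\otimes\alA\arr\alA$ is $G$-equivariant and $K$ acts by algebra homomorphisms, so it restricts to a $G/K$-equivariant multiplication on $\alA^K$; associativity, commutativity and the unit $1\in\alA^G\subseteq\alA^K$ are inherited.

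For the second part, let $V\in\Loc^{G/K}R$, which I identify with its image in $\Loc^G R$ under restriction (the action of $K$ is trivial). The key identification is the natural isomorphism of $G/K$-equivariant sheaves
\[
V\otimes\alA^K \simeq (V\otimes\alA)^K,
\]
obtained by applying Lemma~\ref{lem:constant coherent sheaves go out invariants} with $K$ in place of $G$, $\shF=\alA$ and $\shH=V$. Taking $G/K$-invariants on both sides and using the iterated invariants formula $((-)^K)^{G/K}=(-)^G$ (valid because the $G$-action on a $K$-invariant subsheaf factors through $G/K$) yields a natural isomorphism
\[
\Omega_V^{\alA^K}=(V\otimes\alA^K)^{G/K}\simeq((V\otimes\alA)^K)^{G/K}=(V\otimes\alA)^G=\Omega_V^{\alA}\circ(\text{restriction}).
\]

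Finally I would check that this isomorphism intertwines the monoidal structures. The restriction functor $\Loc^{G/K}R\arr\Loc^{G}R$ is strong monoidal (tensor products of $G/K$-representations coincide, as underlying sheaves with $G$-actions, with the tensor products of their restrictions), and the multiplication on $\alA^K$ is by construction the restriction of that on $\alA$; tracing through the definition of $\iota^\Omega$ given just before Theorem~\ref{thm:G equivariant ring are monoidal functors} (which uses only the multiplication map of the algebra and the canonical identifications between invariants and tensor factors) shows that both sides yield the same natural transformation. No genuine obstacle arises here: the statement is essentially a book-keeping exercise once the right identification $V\otimes\alA^K\simeq(V\otimes\alA)^K$ is in place, and the mildly delicate point is ensuring the $G/K$-equivariance structures agree on both sides, which follows from the naturality of the isomorphism in Lemma~\ref{lem:constant coherent sheaves go out invariants}.
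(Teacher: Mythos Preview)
Your proof is correct and follows essentially the same strategy as the paper, just expressed in the tensor formulation rather than the $\Homsh$ formulation. The paper argues via
\[
\Homsh^{G}(\R_{G}V,\alA)\simeq\Homsh^{G}(\R_{G}V,\alA^{K})\simeq\Homsh^{G/K}(V,\alA^{K}),
\]
where the first isomorphism holds because any $G$-map from a $K$-trivial module lands in $\alA^{K}$, and the second because $(\R_{G}U)^{G}=U^{G/K}$ for $G/K$-modules $U$; you instead use $V\otimes\alA^{K}\simeq(V\otimes\alA)^{K}$ followed by iterated invariants. These are dual formulations of the same computation. One minor remark: your appeal to Lemma~\ref{lem:constant coherent sheaves go out invariants} (which uses linear reductivity of $K$) is fine in context, but since $V$ is locally free the isomorphism $V\otimes\alA^{K}\simeq(V\otimes\alA)^{K}$ already follows from flatness of $V$, so no reductivity is actually needed at that step. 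Your additional discussion of monoidal compatibility is a welcome supplement, since the paper's proof does not address it explicitly even though the lemma is later used (in the proof of Theorem~\ref{thm:torsors when omega surjective}) through the maps $\omega_{\Lambda}$, which depend on the monoidal structure.
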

\begin{proof}
Note that if $F\colon(\Sch/R)^{op}\arr\set$ is a functor with an
action of $G$, then $F^{K}$ is stable under the action of $G$ and
therefore the map $G\arr\Autsh F^{K}$ factors through $G/K\arr\Autsh F^{K}$.
So $\alA^{K}\in\CAlg^{G}T$ and $\alA^{K}\in\CAlg^{G/K}T$. Now note
that if $V\in\Loc^{G/K}R$ then
\[
\Homsh^{G}(\R_{G}V,\alA)\simeq\Homsh^{G}(\R_{G}V,\alA^{K})=\Homsh^{G}(\R_{G}V,\R_{G}\alA^{K})
\]
since $K$ acts trivially on $R_{G}V$. So we have to prove that the
natural map 
\[
\Homsh^{G/K}(V,W)\text{\ensuremath{\arr\Homsh^{G}}(\ensuremath{\R_{G}}V,\ensuremath{\R_{G}}W) for }V\in\Loc^{G/K}T\comma W\in\QCoh^{G/K}T
\]
is an isomorphism. In order to do that, note that $\Homsh(\R_{G}V,\R_{G}W)=\R_{G}\Homsh(V,W)$
and that in general $(\R_{G}U)^{G}=U^{G/K}$ for all $G/K$-modules
$U$.\end{proof}
\begin{lem}
\label{lem:when omega is surjective in the induction}Let $R'$ be
a local $R$-algebra, $H$ be a subgroup scheme of $G$ with a good
representation theory and $\alB\in\CAlg^{H}R'$. If we set $\alA=\ind_{H}^{G}\alB$
and we take $V\in\Loc^{G}R$ then
\[
\omega_{V}^{\alA}\text{ surjective }\iff\exists\Delta\in I_{H}\text{ s.t. }\Hom^{H}(\Delta,V)\neq0\text{ and }\omega_{\Delta}^{\alB}\text{ is surjective}
\]
\end{lem}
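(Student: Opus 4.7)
The plan is to translate the map $\omega_V^{\alA}$ into a map involving only $\alB$ via the induction/restriction formula, and then decompose according to the representation theory of $H$.

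First, I would invoke Lemma \ref{prop:E of an induction is the restriction}, which gives $\Omega^{\alA}\simeq\Omega^{\alB}\circ\R_{H}$. Since $\R_{H}\colon\Loc^{G}R\arr\Loc^{H}R$ is a symmetric monoidal functor (restriction obviously preserves tensor products and the unit), this isomorphism is an isomorphism of monoidal functors. In particular it sends the evaluation map $V\otimes\duale V\arr R$ to the evaluation map $\R_{H}V\otimes\duale{\R_{H}V}\arr R$, so the composition defining $\omega_{V}^{\alA}$ is identified with the analogous composition for $\R_{H}V$ over $H$; that is, $\omega_{V}^{\alA}=\omega_{\R_{H}V}^{\alB}$, where $\omega_{W}^{\alB}\colon\Omega_{W}^{\alB}\otimes\Omega_{\duale W}^{\alB}\arr\Omega_{R}^{\alB}$ is defined for arbitrary $W\in\Loc^{H}R$ in the same way as for $W\in I_{H}$.

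Next, I would decompose $\R_{H}V$ using that $H$ is a glrg. By Proposition \ref{prop:equivariant coherent sheaves are collection of coherent sheves for glrg} and Proposition \ref{prop:behaviour of irreducible representations for general glrg}, writing $W_{\Delta}=\Homsh^{H}(\Delta,V)\in\Loc R$ (locally free of finite rank), we have $\R_{H}V\simeq\bigoplus_{\Delta\in I_{H}}W_{\Delta}\otimes\Delta$ and dually $\duale{\R_{H}V}\simeq\bigoplus_{\Delta\in I_{H}}\duale{W_{\Delta}}\otimes\duale{\Delta}$; moreover the evaluation $\R_{H}V\otimes\duale{\R_{H}V}\arr R$ decomposes as the orthogonal sum over $\Delta\in I_{H}$ of the tensor product of the canonical pairings $p_{\Delta}\colon W_{\Delta}\otimes\duale{W_{\Delta}}\arr R$ and $\Delta\otimes\duale{\Delta}\arr R$ (cross terms vanish because $\Homsh^{H}(\Delta,\Delta')=0$ for $\Delta\ne\Delta'$). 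Applying $\Omega^{\alB}$ and using additivity (together with Corollary \ref{cor:additivity over locally free sheaves} to pull out the $W_{\Delta}$ factors), this yields the identification
\[
\omega_{\R_{H}V}^{\alB}=\bigoplus_{\Delta\in I_{H}}p_{\Delta}\otimes\omega_{\Delta}^{\alB}\colon\bigoplus_{\Delta\in I_{H}}(W_{\Delta}\otimes\duale{W_{\Delta}})\otimes(\Omega_{\Delta}^{\alB}\otimes\Omega_{\duale{\Delta}}^{\alB})\arr R'.
\]
Checking commutativity of this decomposition with the monoidal structure of $\Omega^{\alB}$ is essentially formal from the definition of $\iota^{\Omega^{\alB}}$ through \ref{thm:G equivariant ring are monoidal functors}, though it is the main technical point.

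Finally, the conclusion follows from the locality of $R'$. The image $\mathrm{im}(\omega_{V}^{\alA})\subseteq R'$ is an $R'$-submodule, namely the ideal $\sum_{\Delta\in I_{H}}\mathrm{im}(p_{\Delta})\cdot\mathrm{im}(\omega_{\Delta}^{\alB})$. The pairing $p_{\Delta}$ is surjective onto $R$ whenever $W_{\Delta}\ne 0$ (since $W_{\Delta}$ is locally free of positive rank over $R$ and hence contains an element on which some dual form evaluates to $1$), and vanishes otherwise. Therefore
\[
\mathrm{im}(\omega_{V}^{\alA})=\sum_{\Delta\in I_{H},\,W_{\Delta}\ne 0}\mathrm{im}(\omega_{\Delta}^{\alB}),
\]
and because $R'$ is local, this sum of ideals equals $R'$ if and only if one of the summands $\mathrm{im}(\omega_{\Delta}^{\alB})$ does. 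Translating back, $\omega_{V}^{\alA}$ is surjective iff there exists $\Delta\in I_{H}$ with $\Homsh^{H}(\Delta,V)\ne 0$ and $\omega_{\Delta}^{\alB}$ surjective, as claimed. The main subtlety is verifying step three --- that the decomposition into irreducibles of $\R_{H}V$ is genuinely compatible, under the monoidal identification $\Omega^{\alA}\simeq\Omega^{\alB}\circ\R_{H}$, with the evaluation pairings defining $\omega$ --- but this reduces to naturality of the constructions in Section 2.
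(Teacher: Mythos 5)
Your proposal is correct and follows essentially the same route as the paper's proof: both reduce via \ref{prop:E of an induction is the restriction} to $\Omega^{\alA}=\Omega^{\alB}\circ\R_{H}$, decompose $\R_{H}V$ into its $I_{H}$-isotypic pieces with the evaluation pairing vanishing on cross terms, identify $\Imm(\omega_{V}^{\alA})=\sum_{\Delta\,:\,\Homsh^{H}(\Delta,V)\neq0}\Imm(\omega_{\Delta}^{\alB})$, and conclude by locality of $R'$. The only difference is cosmetic --- you phrase the reduction as $\omega_{V}^{\alA}=\omega_{\R_{H}V}^{\alB}$ after extending $\omega$ to all of $\Loc^{H}R$, while the paper factors $\iota_{V,\duale V}^{\Omega}$ through the components $\id\otimes\iota_{\Delta,\duale{\Lambda}}^{\delta}$ directly, and the compatibility of monoidal structures you flag as the technical point is treated at the same (implicit) level of detail in the paper.
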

\begin{proof}
Let $\Omega=\Omega^{\alA}$ and $\delta=\Omega^{\alB}$. By \ref{prop:E of an induction is the restriction},
we know that $\Omega=\delta\circ\R_{H}$. Denote by $\iota^{\Omega}$,
$\iota^{\delta}$ the natural transformations that define the monoidal
structures of $\Omega$ and $\delta$ respectively. Let $V\in I_{G}$.
If we set $V_{\Delta}=\Hom^{H}(\Delta,V)$ for $\Delta\in\Loc^{H}R$
we have
\[
V\simeq\bigoplus_{\Delta\in I_{H}}V_{\Delta}\otimes\Delta\text{ and }\duale V\simeq\bigoplus_{\Delta\in I_{H}}\duale{V_{\duale{\Delta}}}\otimes\Delta
\]
The map $\iota_{V,\duale V}^{\Omega}\colon\Omega_{V}\otimes\Omega_{\duale V}\arr\Omega_{V\otimes\duale V}$
factors through
\[
\id\otimes\iota_{\Delta,\duale{\Lambda}}^{\delta}\colon\duale{V_{\Delta}}\otimes V_{\Lambda}\otimes\delta_{\Delta}\otimes\delta_{\duale{\Lambda}}\arr\duale{V_{\Delta}}\otimes V_{\Lambda}\otimes\delta_{\Delta\otimes\duale{\Lambda}}
\]
Now call $e_{W}\colon W\otimes\duale W\arr R$ the evaluation map
for any $R$-module $W$. The map $e_{V}$ sends any $(V_{\Delta}\otimes\Delta)\otimes\duale{(V_{\Lambda}\otimes\Lambda)}$
with $\Delta\neq\Lambda$ to $0$ and restricts to
\[
(V_{\Delta}\otimes\Delta)\otimes\duale{(V_{\Delta}\otimes\Delta)}\simeq V_{\Delta}\otimes\duale{V_{\Delta}}\otimes\Delta\otimes\duale{\Delta}\arrdi{e_{V_{\Delta}}\otimes e_{\Delta}}R
\]
So the composition $\omega_{V}^{\alA}=\Omega_{e_{V}}\circ\iota_{V,\duale V}^{\Omega}\colon\Omega_{V}\otimes\Omega_{\duale V}\arr R$
is $0$ on $\duale{V_{\Delta}}\otimes V_{\Lambda}\otimes\delta_{\Delta}\otimes\delta_{\duale{\Lambda}}$
if $\Lambda\neq\Delta$, is $e_{V}\otimes\omega_{\Delta}^{\alB}$
if $\Lambda=\Delta$. In particular
\[
\Imm(\omega_{V}^{\alA})=\sum_{\Delta\st V_{\Delta}\neq0}\Imm(\omega_{\Delta}^{\alB})
\]
Since $R'$ is local we get the required result.\end{proof}
\begin{lem}
\label{lem:descend of torsors along induction}Let $H$ be a subgroup
scheme of $G$ and $\alB\in\LAlg^{H}T$. Then
\[
\ind_{H}^{G}\alB\in\Bi G\iff\alB\in\Bi H
\]
\end{lem}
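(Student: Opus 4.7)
The plan is to translate both directions into the language of monoidal functors via Theorem \ref{thm:torsors are isomorphism}, which identifies $\Bi G$ with the stack $\LSMon^G_R$ of left exact, symmetric, strong monoidal functors $\Loc^G R \to \Loc T$ satisfying $\Omega_R = \odi T$. By Proposition \ref{prop:E of an induction is the restriction}, one has an equality of monoidal functors $\Omega^{\ind_H^G \alB} = \Omega^{\alB} \circ \R_H$, so the statement to prove becomes: $\Omega^{\alB} \circ \R_H \in \LSMon^G T$ if and only if $\Omega^{\alB} \in \LSMon^H T$.

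For the direction $\Leftarrow$, the restriction functor $\R_H \colon \Loc^G R \to \Loc^H R$ is itself $R$-linear, exact, unital, symmetric and \emph{strong} monoidal (it is the identity on underlying modules and tensor products). Consequently, if $\Omega^{\alB}$ possesses all the properties required to lie in $\LSMon^H T$, the composition $\Omega^{\alB} \circ \R_H$ inherits them and lies in $\LSMon^G T$, so $\ind_H^G \alB \in \Bi G$.

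For the direction $\Rightarrow$, the idea is to exploit fppf local triviality. Assuming $\ind_H^G \alB \in \Bi G$, after an fppf base change $T' \to T$ we may identify $\ind_H^G \alB \simeq \odi{T'}[G]$ as $G$-equivariant algebras. The counit of the adjunction $\R_H \dashv \ind_H^G$ gives a natural $H$-equivariant algebra homomorphism $\varepsilon_\alB \colon \R_H \ind_H^G \alB \to \alB$, realized as the restriction of $\id_\alB \otimes \varepsilon_G \colon \alB \otimes R[G] \to \alB$ to the $H$-invariants $(\alB \otimes R[G])^H$. Using a decomposition of $G$ into $H$-torsors $G = \bigsqcup_i H^i$ as in Proposition \ref{prop:decomposition of G into H torsors} (applied on completions at geometric points of $T'$ and spread out by standard approximation arguments, or, after a further fppf cover, trivializing each $H^i$), one obtains an $H$-algebra isomorphism $\R_H \odi{T'}[G] \simeq \odi{T'}[H]^{[G:H]}$ under which $\varepsilon_\alB$ is the projection onto the factor indexed by the identity coset. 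Transporting via $\ind_H^G \alB \simeq \odi{T'}[G]$ then exhibits $\alB|_{T'}$ as $H$-equivariantly isomorphic to $\odi{T'}[H]$, so $\alB \in \Bi H$.

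The main technical obstacle is justifying the fppf-local decomposition of $R[G]$ as a product of $H$-torsors in the required generality, since Proposition \ref{prop:decomposition of G into H torsors} is stated over strictly Henselian bases. An alternative, which avoids this entirely, is to work directly with the functor characterization: the counit $\R_H \ind_H^G W \to W$ is split fppf-locally for every $W \in \Loc^H R$, so every $H$-representation $W$ is a direct summand of $\R_H U$ for some $G$-representation $U$ (take $U = \ind_H^G W$). A diagram chase using naturality of $\iota^{\Omega^{\alB}}$ then propagates the strong monoidality of $\Omega^{\alB} \circ \R_H$ on objects of the form $\R_H U$ to strong monoidality of $\Omega^{\alB}$ on arbitrary $W_1, W_2 \in \Loc^H R$, completing the reverse direction.
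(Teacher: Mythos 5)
Your converse direction matches the paper's one-line argument (the paper simply notes $\ind_{H}^{G}\odi{}[H]\simeq\odi{}[G]$; your recasting via the strong monoidality of $\R_{H}$ is the same fact in functorial dress), and your \emph{alternative} argument for the forward direction is, modulo packaging, exactly the paper's proof: the paper also writes $\Omega^{\ind_{H}^{G}\alB}=\Omega^{\alB}\circ\R_{H}$ via \ref{prop:E of an induction is the restriction}, invokes the strong-monoidal characterization of torsors (\ref{thm:torsors are isomorphism}), and uses additivity of $\iota^{\delta}$ under direct sums to propagate the isomorphism property from restrictions of $G$-representations to all of $\Loc^{H}$. The two cosmetic differences: the paper first reduces to $T=\Spec k$ with $k$ algebraically closed, using that being an $H$-torsor is an \emph{open} condition since $\alB$ is locally free, and then uses the single representation $V=\R_{H}k[G]$, which contains every irreducible $H$-representation as a summand via the surjection $\R_{H}k[G]\arr k[H]$ together with semisimplicity; you instead take $U=\ind_{H}^{G}W$ for each $W$ and split the adjunction counit. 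Your first, geometric route (trivializing $\ind_{H}^{G}\alB$ fppf-locally and chasing the counit through the coset decomposition) is correctly abandoned: it can be completed, but one must also use the rank count $\rk\alB=\rk H$ forced by $\rk\ind_{H}^{G}\alB=\rk G$ (via \ref{lem:A is induction of the localization}) and argue that the $H$-stable idempotent cutting out $\Ker\varepsilon_{\alB}$ corresponds to the complement of a single coset, none of which is needed in the functorial argument.

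One small patch is needed in your alternative as written: ``split fppf-locally'' is not literally sufficient, because $\delta=\Omega^{\alB}$ is a functor on $\Loc^{H}R$ while your splitting lives over a cover $R'\arr R$, so the summand decomposition does not typecheck over $R$. Either perform the paper's reduction to a geometric point first (after which \ref{prop:decomposition of G into H torsors} applies directly and gives the splitting), or upgrade to a global splitting: the counit $\R_{H}\ind_{H}^{G}W\arr W$ is surjective with locally free kernel (both checkable fppf-locally through the coset decomposition), and since $H$ is linearly reductive --- being a subgroup of the glrg $G$ in force in this section --- every such surjection in $\Loc^{H}R$ splits $H$-equivariantly, by the splitting argument in the proof of \ref{prop:additive functors are equivariant sheaves glrg}. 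With either fix your proof is complete and coincides in substance with the paper's.
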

\begin{proof}
If $\alB\in\Bi H$ it is enough to note that $\ind_{H}^{G}\odi{}[H]\simeq\odi{}[G]$.
So assume $\alA=\ind_{H}^{G}\alB\in\Bi G$. Since $\alB$ is locally
free, the condition of being a $H$-torsor is open, and we can assume
$R=k$ and $T=\Spec R$, where $k$ is an algebraically closed field.
If $\delta=\Omega^{\alB}$, from \ref{prop:E of an induction is the restriction},
we know that $\Omega^{\alA}=\delta\circ\R_{H}$. By \ref{thm:torsors are isomorphism},
we have to prove that for any $V,W\in\Loc^{H}k$ the map $i_{V,W}^{\delta}\colon\delta_{V}\otimes\delta_{W}\arr\delta_{V\otimes W}$
is an isomorphism. Note that if $V=V_{1}\oplus V_{2}$ in $\Loc^{H}k$,
then $\iota_{V,W}^{\delta}$ is an isomorphism if and only if $\iota_{V_{1},W}^{\delta}$
and $\iota_{V_{2},W}^{\delta}$ are so. Therefore we must find a $H$-representation
$V$, containing all the irreducible representations of $H$ and such
that $\iota_{V,V}^{\delta}$ is an isomorphism. I claim that $V=\R_{H}k[G]$
satisfies the request. Indeed $\iota_{V,V}^{\delta}$ is an isomorphism
since $\alA$ is a $G$-torsor and $\Omega^{\alA}=\delta\circ\R_{H}$.
Moreover, since we have a $H$-equivariant surjective map $\R_{H}k[G]\arr k[H]$,
we have that any irreducible representation of $H$ is a quotient
and therefore a subrepresentation of $\R_{H}R[G]$.\end{proof}
\begin{lem}
\label{lem:induction from diagonalizable groups} Assume that $R$
is strictly Henselian. Let also $H$ be an open and closed normal
subgroup of $G$ which is diagonalizable. Then, if $m\in\Hom(H,\Gm)$
we have a $H$-equivariant isomorphism 
\[
\R_{H}\ind_{H}^{G}V_{m}\simeq\bigoplus_{g\in\underline{G}/\underline{H}}V_{g(m)}
\]
where the action of $\underline{G}/\underline{H}$ on $\Hom(H,\Gm)$
is the one given in \ref{Rem: action of G on diagonalizable normal subgroup}.
Moreover $\ind_{H}^{G}V_{m}\simeq\ind_{H}^{G}V_{n}$ if and only if
there exists $g\in\underline{G}/\underline{H}$ such that $g(m)=n$.\end{lem}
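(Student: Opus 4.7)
The plan is to establish the first isomorphism via a direct coset computation and then to deduce the moreover part by comparing explicit $G$-equivariant maps. First I would note that, by the left-handed analogue of Proposition~\ref{prop:decomposition of G into H torsors}, one has $G=\bigsqcup_{g\in\underline{G}/\underline{H}}{}^gH$ as a disjoint union of left $H$-torsors, and hence $\R_H\ind_H^GV_m=\bigoplus_{g}\Homsh^H({}^gH,\WW(V_m))$ as a quasi-coherent sheaf. The $G$-action on $\ind_H^GV_m$ is right translation $(g_0\star f)(t)=f(tg_0)$; for $g_0=h\in H$ this preserves each left coset ${}^gH=Hg'$ since $H$ is normal in $G$ ($Hg'h=H(g'hg'^{-1})g'=Hg'$), so the displayed decomposition is $H$-equivariant.

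Next I would identify each summand $\Homsh^H({}^gH,V_m)$ with $V_{g(m)}$ by choosing fppf-locally a section $\tilde g\in{}^gH$, so that $h\mapsto h\tilde g$ is a left-$H$-equivariant isomorphism $H\simeq{}^gH$. Evaluation at $\tilde g$ then identifies $\Homsh^H({}^gH,V_m)\simeq V_m$ as sheaves, and a direct computation shows that the right $H$-action transports to the twisted action $h\cdot x=m(\tilde gh\tilde g^{-1})x$, i.e., to the character $g^{-1}(m)$ in the convention of~\ref{Rem: action of G on diagonalizable normal subgroup}. This character is independent of the choice of $\tilde g$: changing $\tilde g$ to $\tilde gh_0$ conjugates $\tilde gh\tilde g^{-1}$ by $\tilde gh_0\tilde g^{-1}\in H$, and $m$ is invariant under $H$-conjugation because $\Gm$ is abelian. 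Hence each $H$-isotypic summand is globally defined over $R$, and since $H$ is a glrg the isotypic decomposition is unique, so the fppf-local identifications glue to the asserted isomorphism $\R_H\ind_H^GV_m\simeq\bigoplus_gV_{g(m)}$ (after reindexing $g\mapsto g^{-1}$).

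For the moreover part, the ``only if'' direction is immediate: a $G$-equivariant isomorphism $\ind_H^GV_m\simeq\ind_H^GV_n$ restricts to an $H$-isomorphism $\bigoplus_gV_{g(m)}\simeq\bigoplus_gV_{g(n)}$, which forces the multisets of characters and hence the two $\underline{G}/\underline{H}$-orbits to coincide. For the converse, suppose $n=g(m)$ and pick fppf-locally a lift $\tilde g\in G$ of $g$; then $\Psi_{\tilde g}(f)(t):=f(\tilde g^{-1}t)$ defines a $G$-equivariant morphism $\ind_H^GV_n\to\ind_H^GV_m$, because sliding $\tilde g^{-1}h\tilde g\in H$ across $f$ turns the equivariance condition into the identity $g(n)=m$; and it is manifestly invertible (invert by using $\tilde g$ in place of $\tilde g^{-1}$). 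Thus $\ind_H^GV_m$ and $\ind_H^GV_n$ are isomorphic as $G$-representations fppf-locally on $R$. Because $R$ is strictly Henselian, in particular local, every locally free $G$-representation is of the form $\bigoplus_{U\in I_G}U^{n_U}$ by the glrg structure of $G$, and its isomorphism class is determined by the ranks $n_U$, which are fppf-local invariants; consequently the two inductions are already isomorphic over $R$.

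The main subtlety throughout is that sections $\tilde g\in{}^gH$ exist only after an fppf base change. This is overcome, in the first assertion, by the abelianness of $\Gm$, which ensures that the twisted characters $g(m)$ descend to $R$ and that the glrg isotypic decomposition of $H$ pins down a canonical descent of the local identifications; and, in the moreover part, by the strict Henselianity of $R$ combined with the glrg structure of $G$, which reduces the global existence of a $G$-isomorphism to the comparison of the numerical invariants $n_U$.
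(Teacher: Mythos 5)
Your proof is correct and follows essentially the same route as the paper: the decomposition of $G$ into $H$-cosets which are $H$-torsors, the twist computation $f(\tilde{g}h)=m(\tilde{g}h\tilde{g}^{-1})f(\tilde{g})$ identifying each summand with $V_{g^{-1}(m)}$, restriction to $H$ for the ``only if'' direction, and the translation map $f\longmapsto f(\tilde{g}^{-1}\cdot)$ for the ``if'' direction. The only cosmetic difference is that where you trivialize fppf-locally and then descend using the glrg rank invariants, the paper base-changes once and for all to the algebraic closure of the residue field -- both reductions rest on the same principle that over the local ring $R$ a representation is determined by the ranks of its isotypic multiplicities.
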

\begin{proof}
Set $\odi m=\WW(V_{m})$. By \ref{prop:decomposition of G into H torsors},
we have a decomposition of $G$ into $H$-torsors
\[
G=\bigsqcup_{i\in\underline{G}/\underline{H}}H^{i}
\]
In particular we have
\[
\ind_{H}^{G}\odi m=\Homsh^{H}(G,\odi m)\simeq\prod_{i\in\underline{G}/\underline{H}}\Homsh^{H}(H^{i},\odi m)
\]
Since $H^{i}$ is a $H$-torsor for all $i$, we see that $U_{i}=\Homsh^{H}(H^{i},\odi m)$
is an invertible sheaf on $R$ and therefore $U_{i}\simeq\WW(R)$.
Now consider the right action of $G$ on itself given by the multiplication.
Each $H^{i}$ is $H$ invariant since $H$ is normal, so the above
decomposition is $H$-equivariant. In particular $U_{i}\simeq\odi n$
for some $n\in\Hom(H,\Gm)$. In order to compute $n$, we can assume
that $R=k$ is an algebraically closed field, so that $H^{i}=Hi$,
where we think of $i$ as an element of $H^{i}(k)$. Given $f\colon Hi\arr\odi m\in\Homsh^{H}(Hi,\odi m)$
and $h\in H$ we have
\[
(h\star f)(ti)=m(t)f(ih)=m(t)f(ihi^{-1}i)=m(ihi^{-1})f(ti)\then h\star f=i^{-1}(m)(h)f
\]
and therefore that $n=i^{-1}(m)$.

For the last claim, a $G$-equivariant isomorphism between $\ind_{H}^{G}V_{m}$
and $\ind_{H}^{G}V_{n}$ is also $H$-equivariant and therefore $g(m)=n$
for some $g\in\underline{G}/\underline{H}$. Conversely note that
two representations of $G$ are isomorphic if and only if they are
so on the algebraic closure of the residue field of $R$, because
the restriction $I_{G}\arr I_{G_{k}}$ is an isomorphism by hypothesis
and $R$ is local. So we can again assume $R=k$ algebraically closed.
Given $g\in G(k)\simeq\underline{G}$, we claim that
\[
\psi\colon\ind_{H}^{G}\odi m=\Homsh^{H}(G,\odi m)\arr\Homsh^{H}(G,\odi{g(m)})=\ind_{H}^{G}\odi n\comma\psi(f)(u)=f(g^{-1}u)
\]
is $G$-equivariant isomorphism. It is well defined since
\[
\psi(f)(hu)=f(g^{-1}hu)=f(g^{-1}hgg^{-1}u)=m(g^{-1}hg)f(g^{-1}u)=g(m)(h)\psi(f)(u)
\]
It is $G$-equivariant since
\[
(u\cdot\psi(f))(v)=\psi(f)(vu)=f(g^{-1}vu)=(u\cdot f)(g^{-1}v)=\psi(u\cdot f)(v)
\]

\end{proof}

\begin{proof}
(\emph{of theorem} \ref{thm:torsors when omega surjective}) If $\alA\in\Bi G$
then, by \ref{thm:torsors are isomorphism}, all the maps $\omega_{V}$
are surjective since $\Omega_{V\otimes\duale V}\arr\Omega_{R}$ is
surjective. Conversely, since both conditions in the statement are
open conditions, we can assume that $T=\Spec R$, that $R=k$ is an
algebraically closed field and replace $\alA$ by a finite $k$-algebra
$A$. By \ref{lem:A is induction of the localization}, we can write
$A\simeq\ind_{H}^{G}A_{p}$ where $p$ is a closed point of $\Spec A$
and $H$ is the stabilizer of $\Spec A_{p}$. In particular $A_{p}\in\LAlg^{H}k$
and therefore comes from a functor $\delta\colon\Loc^{H}k\arr\Vect_{k}\in\LMon^{H}k$.
We want to prove first that $H=G_{1}$. We set $V_{\Delta}=\Hom^{H}(\Delta,V)$
for $V\in\Loc^{G}R$ and $\Delta\in\Loc^{H}R$. We will use \ref{lem:super solvable property}
showing that for any $V\in I_{\underline{G}}$ we have $V^{\underline{H}}\neq0$,
so that $\underline{H}=1$. By \ref{lem:when omega is surjective in the induction},
since $\omega_{V}^{A}$ is surjective, there exists $\Delta\in I_{H}$
such that $V_{\Delta}\neq0$ and $\omega_{\Delta}^{A_{p}}$ is surjective.
We will show that $\Delta=R$. Since $R_{H}R_{G}V=R_{H}R_{\underline{H}}V$,
we have that $V_{\Lambda}=0$ if $\Lambda\in I_{H}-I_{\underline{H}}$.
So $\Delta\in I_{\underline{H}}$. Since $A_{p}$ is local, we have
that $A_{p}^{G_{1}}\in\LAlg^{\underline{H}}R$ is local thanks to
\ref{lem:invariants by Gone sends local to local}. From \ref{lem:functor of algebra of invariants by a normal subgroup}
and \ref{lem:what happens on local rings} follows that if $\Lambda\in I_{\underline{H}}$
then $\omega_{\Lambda}^{A_{p}^{G_{1}}}=\omega_{\Lambda}^{A_{p}}$
and that $\omega_{\Lambda}^{A_{p}^{G_{1}}}=0$ if $\Lambda\neq R$.
Since $\omega_{\Delta}^{A_{p}}\neq0$ and $\Delta\in I_{\underline{H}}$
we can conclude that $\Delta=R$ and therefore $V_{\Delta}=V^{H}=V^{\underline{H}}\neq0$,
as required.

We want now to prove that $A_{p}$ is a $G_{1}=D(M)$-torsor. By \ref{lem:what happens on local rings},
the set $Q=\{m\in M\st\omega_{V_{m}}\text{ is surjective}\}$ is a
subgroup of $M$ and, if we prove that $Q=M$, we will have that $A_{p}$
is a $G_{1}$-torsor and therefore that $A$ is a $G$-torsor. We
will use \ref{lem:orbits of G in M for super solvable} and \ref{lem:induction from diagonalizable groups}.
Given $m\in M$ we have shown that there must exist $n\in N$ such
that 
\[
V_{n}\subseteq\R_{G_{1}}\ind_{G_{1}}^{G}V_{m}=\bigoplus_{g\in\underline{G}}V_{g(m)}\text{ and }\omega_{V_{n}}\text{ is surjective}
\]
So given $m\in M$ there exists $g\in\underline{G}$ such that $g(m)\in Q$
and therefore $Q=M$ as required.\end{proof}
\begin{rem}
\label{rem:counterexample for omega surjective implies torsors}We
want to show now an example of a constant solvable group and of a
group $G$ such that $\underline{G}$ is super solvable for which
\ref{thm:torsors when omega surjective} does not apply.

Let $k$ be an algebraically closed field and set 
\[
G=(\mu_{2}\times\mu_{2})\ltimes\Z/3\Z
\]
where the action of $\Z/3\Z$ on $\Aut(\mu_{2}\times\mu_{2})\simeq\Aut(\Z/2\Z\times\Z/2\Z)\simeq\GL_{2}\F_{2}$
is given by the order $3$ matrix
\[
A=\left(\begin{array}{cc}
0 & 1\\
1 & 1
\end{array}\right)
\]
If $\car k\neq2$, then $G$ is constant and solvable, if $\car k=2$
then $\underline{G}=\Z/3\Z$ which is super solvable.

Set $K=(\Z/2\Z)^{2}$ with $\F_{2}$ basis $e_{1}=(1,0)$, $e_{2}=(0,1)$,
$H=D(K)$. Since $Ae_{1}=e_{2}$, $A^{2}e_{1}=e_{1}+e_{2}$, we see
that $\Z/3\Z$ permutes the $3$ subgroups of index $2$ of $K$.
We now describe the irreducible representations of $G$. We claim
that they are
\[
U=\ind_{H}^{G}V_{e_{1}}\comma k\comma V_{\chi}\comma V_{\chi^{2}}
\]
where $\chi\colon G\arr\Z/3\Z\arr k^{*}$ is a non trivial character.
We will make use of \ref{lem:induction from diagonalizable groups}.
If $V\in I_{G}$, there exists $u\in K$ such that
\[
V\subseteq\ind_{H}^{G}V_{u}=V_{u}\oplus V_{Au}\oplus V_{A^{2}u}
\]
If two among $V_{u},V_{Au},V_{A^{2}u}$ are isomorphic then $u=Au=A^{2}u$
and therefore $u=0$. In this case 
\[
\ind_{H}^{G}V_{0}=k\oplus V_{\chi}\oplus V_{\chi^{2}}
\]
So assume $u\neq0$. Since $K-\{0\}$ is a $\Z/3\Z$-orbit, $\ind_{H}^{G}V_{e_{1}}\simeq\ind_{H}^{G}V_{e_{2}}\simeq\ind_{H}^{G}V_{e_{1}+e_{2}}$.
So we have to prove that $U=\ind_{H}^{G}V_{e_{1}}$ is irreducible.
If it is not so, it will contain an irreducible representation of
dimension $1$ whose restriction to $H$ is not trivial. So there
must exist a character $\eta\colon G\arr\Gm$ such that $\eta_{|H}$
is not trivial. But if $\zeta\in H=\Homsh(K,\Gm)$, we denote by $\xi\in G(k)$
the generator of $\Z/3\Z$ and $\chi=\zeta\circ A$ we have
\[
A^{2}=A+\id\then\xi\chi\xi^{-1}\chi^{-1}=(\chi\circ A)\chi^{-1}=\zeta
\]
So $\eta_{|H}(\zeta)=\eta(\xi)\eta(\chi)\eta(\xi)^{-1}\eta(\chi)^{-1}=1$.

We are ready to explain the counterexample to \ref{thm:torsors when omega surjective}
for the above group $G$. Consider
\[
B=k[x,y]/(x^{2}-1,y^{2})\in\LAlg^{H}k\text{ and }A=\ind_{H}^{G}B
\]
where the action of $H$ on $G$ is given by $\deg x=e_{1}$, $\deg y=e_{2}$.
We want to show that $\omega_{V}^{A}$ is surjective for any $V\in I_{G}$
but $A$ is not a $G$-torsor. Taking into account \ref{lem:descend of torsors along induction},
$A$ is not a $G$-torsor because $B$ is not a $H$-torsor. Finally,
taking into account \ref{lem:when omega is surjective in the induction},
$\omega_{V}^{A}$ is surjective for any $V\in I_{G}$ because $\R_{H}V$
contains either the trivial representation $k$ or $V_{e_{1}}$ and
$\omega_{k}^{B}$, $\omega_{V_{e_{1}}}^{B}$ are surjective by construction
of $B$.\end{rem}

\section{Reducibility of $\GCov$ for non abelian linearly reductive groups.}

We know that, when $G$ is diagonalizable, except for some few cases
of lower rank, $\GCov$ is reducible (see \ref{cor:Mcov reducible}).
The goal of this section is to extend this bad behavior also to all
non abelian, linearly reductive groups $G$. The method we will use
does not apply and does not reduce to the diagonalizable case. When
$G$ is a glrg over a connected base, we will study the stacks in
groupoids $(\LAlg_{R}^{G})^{\text{gr}}$ and $(\LMon_{R}^{G})^{\text{gr}}$
and we will decompose them in a disjoint union of stacks parametrized
by functions $I_{G}\arr\N$, called rank functions. The stack $\GCov$
will correspond to the rank function $f_{V}=\rk V$. The result about
reducibility of $\GCov$ is then obtained looking at the behaviour
of the rank functions under induction from a subgroup.

We start stating the Theorem we will prove at the end of this section.
\begin{thm}
\label{thm:GCov reducible when G not abelian}If $G$ is a finite,
non abelian and linearly reductive group then $\GCov$ is reducible.
If $G$ is defined over a connected scheme, then $\GCov$ is also
universally reducible.
\end{thm}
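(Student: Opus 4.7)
I would begin with two preliminary reductions. By Remark~\ref{rem:universally reducible over fibers} universal reducibility can be checked on geometric fibers, so it suffices to treat $S=\Spec k$ with $k$ algebraically closed; in this setting, linear reductivity over an algebraically closed field implies a good representation theory, so $G$ is a glrg over $k$. Under these hypotheses \ref{prop:equivariant coherent sheaves are collection of coherent sheves for glrg} provides a canonical isotypic decomposition of every $\alA\in\LAlg_k^G$ whose pieces have locally constant ranks; hence $(\LAlg_k^G)^{\mathrm{gr}}$ splits as a disjoint union $\bigsqcup_f\LAlg_k^{G,f}$ indexed by rank functions $f\colon I_G\to\N$, and \ref{thm:description of GCov with monoidal functors} identifies $\GCov$ with the stratum attached to $f_0(V)=\rk V$. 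Thus it suffices to exhibit a $G$-cover lying outside the main irreducible component $\stZ_G$.

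The key tool is induction from a subgroup. For any subgroup $H\le G$ (automatically a glrg over $k$), \ref{prop:E of an induction is the restriction} gives $\Omega^{\ind_H^G \alB}=\Omega^\alB\circ\R_H$, so the rank function transforms as $\hat g(V)=\sum_{\chi\in I_H}\dim\Hom^H(\chi,\R_H V)\cdot g(\chi)$. The non-abelianness of $G$ enters through the existence of $V_0\in I_G$ of rank $\ge 2$; a standard character-theoretic argument then supplies a cyclic $H=\langle g\rangle\le G$ such that $\R_H V_0$ is not isotypic and $V_0^H=0$ (choosing $g$ whose action on $V_0$ has no eigenvalue $1$). I would take
\[
\alB=k\oplus N,\qquad N\cdot N=0,
\]
where $N$ is the non-trivial isotypic part of $k[H]$: this is an $H$-cover (it is the regular $H$-comodule, hence in $\HCov$ by \ref{thm:description of GCov with monoidal functors}) which is not an $H$-torsor, and by construction $\omega_\chi^\alB=0$ for every non-trivial $\chi\in I_H$. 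Setting $\alA=\ind_H^G\alB\in\GCov$, \ref{lem:when omega is surjective in the induction} combined with $V_0^H=0$ and the vanishing of the $\omega_\chi^\alB$ forces $\omega_{V_0}^\alA=0$.

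By \ref{thm:torsors are isomorphism} every object of $\Bi G$ has $\omega_V$ surjective for each $V\in I_G$, so $Z=\{\omega_{V_0}=0\}\subseteq\GCov$ is a closed substack disjoint from $\Bi G$. I would finish by considering the schematic image of $\ind_H^G$ restricted to the irreducible component of $\HCov$ containing $\alB$: this is an irreducible closed substack of $\GCov$, contained in $Z$ and containing $\alA$, so its generic point lies in an irreducible component of $\GCov$ distinct from $\stZ_G$. \textbf{The main obstacle} I anticipate is this last step: containment of $\alA$ in $Z$ alone does not rule out $\alA\in\stZ_G$, since $\stZ_G$ can perfectly well contain limits of torsors at which some $\omega_V$ vanishes. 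The crux is therefore to identify an entire irreducible family of $G$-covers through $\alA$ lying inside $Z$, and this is where the combinatorics of rank functions under induction -- as flagged by the section opening -- must be carefully leveraged.
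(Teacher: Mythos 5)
Your preliminary reductions and your choice of tools (rank functions, induction from a subgroup, the transformation rule $\Omega^{\ind_H^G\alB}=\Omega^{\alB}\circ\R_H$) match the paper's strategy, but the concrete construction fails, and the obstacle you flag at the end is not merely a loose end: it kills your candidate $\alA$ outright. Your algebra $\alB=k\oplus N$ with $N^2=0$ and $N$ the complement of the invariants in the regular comodule \emph{lies in the main component} $\stZ_H$. Indeed, for $H$ cyclic (hence diagonalizable over $\overline k$, with character group $M$) your $\alB$ is the algebra with all multiplications $\psi_{m,n}=0$ for $m,n\neq 0$, which is of the form $0^{\E}$ for the ray $\E\in\duale K_+$ induced by $e_m\longmapsto 1$ ($m\neq 0$), so $\alB\in\stZ_H(k)$ by \ref{pro:characterization of points of Zphi} (compare also \ref{rem:The 0 section and the 0 algebras}). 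Since induction along an open and closed subgroup preserves membership in the main component (\ref{prop:ind B in Z if B in Z}), your $\alA=\ind_H^G\alB$ lies in $\stZ_G(k)$. The vanishing of $\omega_{V_0}^{\alA}$ only separates $\alA$ from the open locus $\Bi G$, never from its closure: the locus $\{\omega_{V_0}=0\}$ meets $\stZ_G$, as your own example shows. A secondary problem is your use of a cyclic subgroup $\langle g\rangle$ with eigenvalue conditions: this treats $G$ as a constant group, which fails in characteristic $p$ where $G$ may have a nontrivial infinitesimal part $G_1=\Di M$ with no such elements; the paper works throughout with open and closed subgroup \emph{schemes}.

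The paper's resolution is to make $\alB$ fail to be an $H$-cover at the level of rank functions, rather than merely fail to be a torsor. One seeks $f\colon I_H\arr\N$ with $f_{\R_H V}=\rk V$ for all $V\in I_G$ but $f_{\Delta}\neq\rk\Delta$ for some $\Delta\in I_H$; the square-zero algebra $B=k\oplus F$ with $\rk\Omega_{\Delta}^{B}=f_{\Delta}$ then satisfies $A=\ind_H^G B\in\GCov(k)$, while $A\in\stZ_G(k)$ would force $B\in\stZ_H(k)\subseteq\HCov(k)$ by \ref{prop:ind B in Z if B in Z}, contradicting $f_{\Delta}\neq\rk\Delta$. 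This is where the non-abelianness genuinely enters: after reducing, via the Miller--Moreno lemma \ref{lem:solvable if all subgroups are abelian} and induction on the rank of $G$, to a normal abelian (diagonalizable) subgroup $H$ with $G/H\simeq\Z/p\Z$, the conjugation action of $\Z/p\Z$ on $N=\Hom(H,\Gm)$ has a nontrivial orbit, and setting $f_{V_n}=|o(n)|$ on a set of orbit representatives (and $0$ elsewhere) redistributes the ranks within each orbit so that induced ranks match $\rk V$ on $I_G$ while $f_{V_n}=p\neq 1=\rk V_n$ on a size-$p$ orbit. Your proposal is missing exactly this mechanism; without it there is no reason the family you build through $\alA$ avoids $\stZ_G$, and in fact it does not.
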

Remember that universally reducible means reducible after any base
change (see \ref{def:universally reducible}). Note that, if we do
not assume that the base $S$ is connected, we can not conclude that
$\GCov$ is universally reducible, since one can always take $G$
as disjoint union of $\mu_{2}$ and $S_{3}$ over $\Spec\Q\sqcup\Spec\Q$.
On the other hand what happens when the base is not connected is clear
from the following Proposition.
\begin{prop}
\label{prop:the locus where G abelian open and closed}If $G$ is
a linearly reductive group over a scheme $S$, then the locus of $S$
where $G$ is abelian is open and closed in $S$.\end{prop}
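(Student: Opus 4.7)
The plan is to exhibit the abelian locus as the vanishing locus of a locally free coherent sheaf on $S$, which forces it to be both open and closed. Let $f\colon G\arr S$ denote the structure map and put $\shA=f_{*}\odi G$, a locally free $\odi S$-module of rank $|G|$. The conjugation action of $G$ on itself, given functorially by $(g,h)\longmapsto g^{-1}hg$, equips $\shA$ with a $G$-equivariant structure, and tautologically the fibre $G_{s}$ is abelian precisely when this conjugation action is trivial on $G_{s}$.

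Next I would translate the triviality of the conjugation action into an equality of invariants. In general a $G$-equivariant quasi-coherent sheaf $\shF$ has trivial action if and only if every local section is invariant, i.e.\ $\shF^{G}=\shF$: this is immediate from the description of invariants via the comodule map $\shF\arr\shF\otimes\odi S[G]$, $x\longmapsto x\otimes 1$. Applied fibrewise to $\shA$ with its conjugation action, this gives the equivalence
\[
G_{s}\text{ abelian}\iff(\shA\otimes k(s))^{G_{s}}=\shA\otimes k(s).
\]
By Lemma \ref{lem:constant coherent sheaves go out invariants}, the linear reductivity of $G$ ensures that formation of $G$-invariants commutes with arbitrary base change, so $(\shA\otimes k(s))^{G_{s}}=\shA^{G}\otimes k(s)$. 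Setting $\shM=\shA/\shA^{G}$, the abelian locus is therefore $\{s\in S\st\shM\otimes k(s)=0\}$.

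To conclude, I would invoke Lemma \ref{lem:base change of invariants}: since $\shA$ is locally free of finite rank, $\shA^{G}$ is locally free of finite rank and the inclusion $\shA^{G}\hookrightarrow\shA$ splits $\odi S$-linearly after Zariski localization. Hence $\shM$ is itself locally free of finite rank on $S$. The rank of a locally free sheaf is a locally constant function on $S$, so the locus where $\rk(\shM\otimes k(s))=0$ is open and closed, which is exactly the abelian locus.

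There is no substantial obstacle here; the only delicate point is the fibrewise identification $\shA^{G}\otimes k(s)=(\shA\otimes k(s))^{G_{s}}$, which genuinely requires linear reductivity through Lemma \ref{lem:constant coherent sheaves go out invariants}. Without this hypothesis the cokernel $\shM$ need neither be locally free nor commute with base change, and the argument would collapse; this also explains why the statement is specific to linearly reductive group schemes.
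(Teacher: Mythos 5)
Your proof is correct, and it takes a genuinely different route from the paper's. The paper argues in two separate steps: closedness of the abelian locus holds because it is the coincidence locus of the two maps $G\times G\arr G$, $(g,h)\longmapsto gh$ and $(g,h)\longmapsto hg$, and $G$ is flat and proper over $S$; openness is then obtained from the structure theorem of Abramovich--Olsson--Vistoli, which allows one to assume (fppf-locally) that $G=\Delta\ltimes H$ with $\Delta$ diagonalizable and $H$ constant, where abelianness at a geometric point forces the semidirect product to be a direct product, hence $G$ abelian near that point. You instead get openness and closedness in a single stroke: taking invariants of the conjugation coaction on $\shA=f_{*}\odi G$, Lemma \ref{lem:constant coherent sheaves go out invariants} gives the base-change identification $\shA^{G}\otimes k(s)\simeq(\shA\otimes k(s))^{G_{s}}$, Lemma \ref{lem:base change of invariants} gives the local splitting and local freeness of $\shA^{G}$, so the cokernel $\shM=\shA/\shA^{G}$ is locally free of finite rank and the abelian locus is exactly its rank-zero locus, which is clopen since rank is locally constant. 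Each approach buys something: yours stays entirely within the invariant-theoretic toolkit the chapter already develops, avoids the external AOV classification, and pinpoints precisely where linear reductivity is used (you are right that both the splitting and the base-change isomorphism fail without it); the paper's proof, on the other hand, establishes closedness with no reductivity hypothesis at all --- flatness and properness of $G$ suffice for that half --- which your argument does not recover, at the price of invoking the full structure theory for the openness half. One cosmetic remark: $f_{*}\odi G$ need not have constant rank $|G|$ if the degree of $G$ varies over the components of $S$, but this is immaterial to your argument, which only uses local freeness and local constancy of the rank of $\shM$.
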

\begin{proof}
Denote by $Z$ this locus. Topologically, $|Z|$ is closed in $S$,
because it is the locus where the maps $G\times G\arr G$ given by
$(g,h)\longmapsto gh$ and $(g,h)\longmapsto hg$ coincide and $G$
is flat and proper. We have to prove that, given an algebraically
closed field $k$ and a map $\Spec k\arrdi pS$ such that $G_{k}=G\times k$
is abelian, there exists a fppf neighborhood of $S$ around $p$ where
$G$ is abelian. By \cite[Theorem 2.19]{Abramovich2007}, we can assume
that $G=\Delta\ltimes H$, where $\Delta$ is diagonalizable and $H$
is constant. If $G_{k}$ is abelian, then $H$ is abelian, the map
$H\arr\Aut\Delta\simeq\Aut(\Hom(\Delta,\Gm))^{op}$ is trivial and
therefore $G\simeq\Delta\times H$ is abelian.
\end{proof}
From now on, except for the proof of Theorem \ref{thm:GCov reducible when G not abelian},
$G$ will be a linearly reductive group over a ring $R$ with connected
spectrum. It will be clear that this is not a necessary condition,
but we want to avoid technicalities like considering multivalued rank
functions for a locally free sheaf. In particular any $V\in\Loc^{G}R$
has a well defined rank. As mention above, one of the main ingredient
in the proof of Theorem \ref{thm:GCov reducible when G not abelian}
is the theory of rank functions, that we are going to introduce.
\begin{defn}
Assume that $G$ is a glrg. We will say that $\Omega\in\LAdd_{R}^{G}$
($\alA\in\Loc_{R}^{G}$) has \emph{equivariant constant rank }(or
is of equivariant constant rank) if for any $V\in\Loc^{G}R$ the locally
free sheaf $\Omega_{V}$ ($\Omega_{V}^{\alA}=(V\otimes\alA)^{G}$)
has constant rank.

Given $\alA\in\Loc_{R}^{G}$ or $\Omega\in\LAdd_{R}^{G}$ of equivariant
constant rank we define the rank functions $\rk^{\alA}\colon I_{G}\arr\N\comma\rk^{\Omega}\colon I_{G}\arr\N$
as
\[
\rk_{V}^{\Omega}=\rk\Omega_{V}\comma\rk_{V}^{\alA}=\rk_{V}^{\Omega^{\alA}}=\rk(V\otimes\alA)^{G}
\]
Given $f\colon I_{G}\arr\N$ we will denote by $\Loc_{R,f}^{G}$,
$\LRings_{R,f}^{G}$, $\LAlg_{R,f}^{G}$, $\LAdd_{R,f}^{G}$, $\LPMon_{R,f}^{G}$,
$\LMon_{R,f}^{G}$ the full substack of $\Loc_{R}^{G}$, $\LRings_{R}^{G}$,
$\LAlg_{R}^{G}$, $\LAdd_{R}^{G}$, $\LPMon_{R}^{G}$, $\LMon_{R}^{G}$
of objects $\chi$ of equivariant constant rank such that $\rk^{\chi}=f$
respectively.

Given $f\colon I_{G}\arr\N$ we will still call $f$ the extension
$f\colon\Loc^{G}R\arr\N$ given by
\[
f_{U}=\sum_{V\in I_{G}}\rk(\Hom^{G}(V,U))f_{V}
\]
In particular if $\Omega\in\LAdd_{R,f}^{G}$ we will have $\rk\Omega_{U}=f_{U}$
for any $U\in\Loc^{G}R$.\end{defn}
\begin{rem}
\label{rem:description of GCov with monoidal functors}Theorem \ref{thm:description of GCov with monoidal functors}
says that $\Omega^{*}$ induces an isomorphism $\GCov\simeq(\LMon_{R,f}^{G})^{\text{gr}}$,
where $f\colon I_{G}\arr\N$ is the rank function given by $f_{V}=\rk V$.
\end{rem}
The following Theorem shows how $\LAlg_{R}^{G}$ can be described
in terms of the rank functions.
\begin{thm}
\label{thm:Description of LAlgGRf when G is a glrg}Assume that $G$
is a glrg. Then
\[
(\LAlg_{R}^{G})^{\text{gr}}=\bigsqcup_{f\in\N^{I_{G}}}(\LAlg_{R,f}^{G})^{\text{gr}}
\]
Given $f\colon I_{G}\arr\N$, let $\delta\in\LAdd_{R,f}^{G}R$ be
the $R$-linear functor such that $\delta_{V}=R^{f_{V}}$ for $V\in I_{G}$
and set 
\[
X=\prod_{V,W\in I_{G}}\underline{\M}_{f_{V}f_{W},f_{V\otimes W}}\comma\underline{\GL}_{f}=\prod_{V\in I_{G}}\underline{\GL}_{f_{V}}
\]
Then we have a cartesian diagram   \[   \begin{tikzpicture}[xscale=3.0,yscale=-1.2]     \node (A0_0) at (0, 0) {$X$};     \node (A0_1) at (1, 0) {$\Spec R$};     \node (A1_0) at (0, 1) {$(\LPMon^G_{R,f})^{\text{gr}}$};     \node (A1_1) at (1, 1) {$(\LAdd^G_{R,f})^{\text{gr}}$};     \path (A0_0) edge [->]node [auto] {$\scriptstyle{}$} (A0_1);     \path (A1_0) edge [->]node [auto] {$\scriptstyle{}$} (A1_1);     \path (A0_1) edge [->]node [auto] {$\scriptstyle{\delta}$} (A1_1);     \path (A0_0) edge [->]node [auto] {$\scriptstyle{}$} (A1_0);   \end{tikzpicture}   \] 
where the vertical maps are $\underline{\GL}_{f}$-torsors. In particular
\[
(\LAdd_{R,f}^{G})^{\text{gr}}\simeq\Bi(\underline{\GL}_{f})\comma(\LPMon_{R,f}^{G})^{\text{gr}}\simeq[X/\underline{\GL}_{f}]
\]
Moreover the map $(\LMon_{R,f}^{G})^{\text{gr}}\arr(\LPMon_{R,f}^{G})^{\text{gr}}$
is an immersion. In particular all the stacks $\Loc_{R,f}^{G}$, $\LRings_{R,f}^{G}$,
$\LAlg_{R,f}^{G}$, $\LAdd_{R,f}^{G}$, $\LPMon_{R,f}^{G}$, $\LMon_{R,f}^{G}$
are algebraic stacks of finite presentation over $R$.\end{thm}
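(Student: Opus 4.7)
The plan is to peel off the layers of structure step by step, starting from $\LAdd_R^G$, adding a pseudo-monoidal structure, and finally imposing commutativity, associativity, and the existence of a unit. First, for the disjoint decomposition, I would note that for any $\alA \in (\LAlg_R^G)^{\text{gr}}(T)$ and any $V \in I_G$, the sheaf $\Omega_V^{\alA} = (V \otimes \alA)^G$ is locally free of finite rank by Lemma \ref{lem:base change of invariants}, and since $G$ is linearly reductive its formation commutes with base change so its rank is a locally constant function on $T$; this yields the stated disjoint decomposition of $(\LAlg_R^G)^{\text{gr}}$ by rank functions $f \colon I_G \to \N$. Next, applying Proposition \ref{prop:equivariant coherent sheaves are collection of coherent sheves for glrg}, the assignment $\Omega \mapsto (\Omega_V)_{V \in I_G}$ identifies $(\LAdd_{R,f}^G)^{\text{gr}}$ with $\prod_{V \in I_G} \Bi\GL_{f_V} = \Bi\underline{\GL}_f$, and $\delta$ corresponds to the canonical atlas $\Spec R \to \Bi\underline{\GL}_f$.

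For the pseudo-monoidal layer, by Proposition \ref{prop:essential data for a pseudo monoidal functor} a pseudo-monoidal structure on $\Omega$ is the same as a collection of $\odi T$-linear maps $\iota_{V,W} \colon \Omega_V \otimes \Omega_W \to \bigoplus_{\Delta \in I_G} \Hom^G(\Delta, V \otimes W) \otimes \Omega_\Delta$ for $V, W \in I_G$ (note $I_G$ is finite because $R[G]$ is finite and $\eta_{I_G, R[G]}$ is an isomorphism). For the trivial object $\delta$, using the identity $\sum_\Delta \rk\Hom^G(\Delta, V \otimes W) \cdot f_\Delta = f_{V \otimes W}$ built into the extension of $f$, such an $\iota_{V,W}$ is precisely a matrix in $\underline{\M}_{f_V f_W, f_{V \otimes W}}(\odi T)$. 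Thus $X$ represents the fiber product $(\LPMon_{R,f}^G)^{\text{gr}} \times_{(\LAdd_{R,f}^G)^{\text{gr}}} \Spec R$; since $\underline{\GL}_f \simeq \Aut(\delta)$ and its conjugation action on $\delta$ transports to an action on $X$, descent gives $(\LPMon_{R,f}^G)^{\text{gr}} \simeq [X/\underline{\GL}_f]$.

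For the immersion $(\LMon_{R,f}^G)^{\text{gr}} \hookrightarrow (\LPMon_{R,f}^G)^{\text{gr}}$, I would work on the smooth atlas $X$. The commutativity and associativity axioms of Definition \ref{def: pseudo monoidal commutativi associative}, combined with Lemma \ref{lem:Associativity for V,W,Z}, translate into the vanishing of explicit polynomial equations in the entries of the $\iota_{V,W}$, defining a closed subscheme $X' \subseteq X$. By Lemma \ref{lem:preservation of unities}, on this associative locus a unit for $\Omega$ coincides with a unit for the associative algebra $(\Omega_R, \iota_{R,R})$ satisfying the monoidal unit axiom on each $\Omega_V$; since such a unit is unique when it exists, the forgetful map from the stack of unital data to $X'$ is a monomorphism cutting out a locally closed $X'' \subseteq X'$. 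Descending yields $(\LMon_{R,f}^G)^{\text{gr}} \simeq [X''/\underline{\GL}_f] \hookrightarrow [X/\underline{\GL}_f]$ as desired. Algebraicity and finite presentation of all the stacks listed then follow from those of $\Bi\underline{\GL}_f$ and $[X/\underline{\GL}_f]$, combined with the equivalences $(\LAlg_R^G)^{\text{gr}} \simeq (\LMon_R^G)^{\text{gr}}$ and the analogous identifications for $\LRings$ and $\Loc$ supplied by Theorem \ref{thm:G equivariant ring are monoidal functors}.

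The hardest part will be proving that existence of a unit is a locally closed — not merely constructible — condition on the associative stratum $X'$. The most transparent route is to introduce the auxiliary stack $\stY$ of triples $(\Omega, \iota, 1)$ with $1 \in \Omega_R$ a section, observe that $\stY \to (\LPMon_{R,f}^G)^{\text{gr}}$ is representable, affine and of finite presentation, impose the closed unit axioms to obtain $\stY' \subseteq \stY$, and then argue via uniqueness that $\stY' \to (\LPMon_{R,f}^G)^{\text{gr}}$ is a monomorphism. Promoting this monomorphism to an immersion reduces, on the atlas, to the explicit linear-algebra problem of describing over $X'$ the locus where a given associative multiplication on a free module of rank $f_R$ admits a (necessarily unique) two-sided unit; verifying that this locus is locally closed, and then checking the compatibility of the resulting unit with the monoidal axioms on all $\Omega_V$ simultaneously, is the technical heart of the argument.
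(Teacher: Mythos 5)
You reproduce the paper's route almost exactly --- the decomposition by rank functions, the identification $(\LAdd_{R,f}^{G})^{\text{gr}}\simeq\Bi(\underline{\GL}_{f})$ via \ref{prop:equivariant coherent sheaves are collection of coherent sheves for glrg}, the computation of the fiber product as $X$ via \ref{prop:essential data for a pseudo monoidal functor}, and the treatment of commutativity and associativity as closed conditions on the atlas --- but you stop precisely where the paper's proof does its only genuinely nontrivial work. You declare that proving the unit locus is locally closed (not merely constructible) ``is the technical heart of the argument'' and sketch a reduction to a linear-algebra problem over the associative stratum $X'$, without solving it. As written this is a gap: a monomorphism of functors is not automatically an immersion, and uniqueness of units only hands you a subfunctor $X''\subseteq X'$, not a locally closed subscheme.

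The missing device is short. Given $\iota\in X(T)$, let
\[
\zeta_{\iota}\colon\delta_{R}\otimes\odi T\arr\prod_{V\in I_{G}}\End(\delta_{V})\otimes\odi T
\]
be the linear map sending $x$ to the family of endomorphisms $v\longmapsto\iota_{R,V}(x\otimes v)$; a unity is exactly an $x$ with $\zeta_{\iota}(x)=(\id_{\delta_{V}})_{V\in I_{G}}$. If a unity $1$ exists, then $\zeta_{\iota}$ is injective after every base change: $\zeta_{\iota}(y)=0$ gives in particular $\iota_{R,R}(y\otimes1)=0$, while the unit axiom gives $\iota_{R,R}(y\otimes1)=y$. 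Fiberwise injectivity means that locally some maximal minor of the matrix of $\zeta_{\iota}$ is invertible, so the unit locus lies inside the open subscheme of $X$ obtained by inverting the maximal minors; there $\zeta_{\iota}$ is a split injection, one writes $\prod_{V}\End(\delta_{V})\otimes\odi T=\Imm(\zeta_{\iota})\oplus\shF$ and decomposes $(\id_{\delta_{V}})_{V}=z\oplus z'$, and a unity exists precisely when $z'=0$, a closed condition. Open followed by closed gives the locally closed $X''$ you were after, and the compatibility ``on all $\Omega_{V}$ simultaneously'' that worried you is already encoded in the single equation $\zeta_{\iota}(x)=(\id_{\delta_{V}})_{V}$ (additivity extends it from $I_{G}$ to all of $\Loc^{G}R$, and the right-unit axiom follows from the symmetry constraint on the commutative locus). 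One further point you gloss over: the statement asserts finite presentation over an arbitrary ring $R$, so it is not enough that the associativity and commutativity loci are closed; the paper observes that the maps cutting them out involve only finitely many matrices with entries in $R$, hence are defined over a noetherian subring, so these closed subschemes are cut out by finitely many equations --- and the same remark is needed for the unit stratification above.
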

\begin{proof}
The first claim holds since, given $\Omega\in\LMon_{R}^{G}T$ for
some scheme $T$ and $V\in I_{G}$, then $\rk_{V}^{\Omega}=\rk\Omega_{V}$
is constant on the connected components of $T$. 

Now let $f\in\N^{I_{G}}$ and $\delta\in\LAdd_{R,f}^{G}$ be the $R$-linear
functor given by $\delta_{V}=R^{f_{V}}$, which exists thanks to \ref{rem: additive functors uniquely determined on irreducible representations}.
By \ref{rem: additive functors uniquely determined on irreducible representations},
we clearly have that $(\LAdd_{R,f}^{G})^{\text{gr}}\simeq\Bi(\underline{\GL}_{f})$.
Now consider the forgetful map
\[
(\LPMon_{R,f}^{G})^{\text{gr}}\arr(\LAdd_{R,f}^{G})^{\text{gr}}\text{ and }Z=(\LPMon_{R,f}^{G})^{\text{gr}}\times_{(\LAdd_{R,f}^{G})^{\text{gr}}}\Spec R
\]
$Z$ is the functor that associates to an $R$-scheme $T$ all the
possible pseudo monoidal structures on $\delta\otimes\odi T$. By
\ref{prop:essential data for a pseudo monoidal functor} we have that
$Z=X$. Now we have to verify that $(\LMon_{R,f}^{G})^{\text{gr}}\arr(\LPMon_{R,f}^{G})^{\text{gr}}$
is an immersion. First, note that this map is fully faithful, because
an object in $\LPMon_{R}^{G}$ has at most one unity and the isomorphisms
must preserve them. So $Z=(\LMon_{R,f}^{G})^{\text{gr}}\times_{(\LPMon_{R,f}^{G})^{\text{gr}}}X$
is a subfunctor of $X$, namely the subfunctor of the pseudo-monoidal
structures $\iota_{V,W}$ that satisfy commutativity, associativity
and has a unity. We have to show that $Z\arr X$ is a finitely presented
immersion. We first consider the associativity. Given $V,W,Z$ and
$\iota_{V,W}\in X(T)$ there are two way of forming a map
\[
\delta_{V}\otimes\delta_{W}\otimes\delta_{Z}\otimes\odi T\arr\delta_{V\otimes W\otimes Z}
\]
Taking the difference we get a map
\[
q\colon X\arr Y=\prod_{V,W,Z}\Homsh(\delta_{V}\otimes\delta_{W}\otimes\delta_{Z},\delta_{V\otimes W\otimes Z})
\]
By functoriality, this map is a morphism of scheme, so the locus of
$X$ of the $\iota_{V,W}$ that are associative is $q^{-1}(0)$, which
is a closed subscheme of $X$. Moreover, it is easy to see that the
map $q$ involves only a finite number of matrices defined over $R$.
Therefore it is defined over some noetherian subring of $R$. In particular
the locus $q^{-1}(0)$ is defined by a finite set of equations.

We can argue similarly for the commutativity. Now we have to deal
with the unity. We have to describe the locus of $X$ of the $\iota_{V,W}$
such that there exists $x\in\delta_{R}\otimes T$ such that $\iota_{R,V}(x\otimes v)=v$
for any $V\in I_{G}$, $v\in V$. Consider the induced linear map
\[
\zeta_{\iota}\colon\delta_{R}\otimes\odi T\arr\prod_{V\in I_{G}}\End(\delta_{V})\otimes\odi T
\]
The unities of $(\delta\otimes\odi T,\iota)$ are the elements $x\in\delta_{R}\otimes\odi T$
such that $\zeta_{\iota}(x)=(\id_{\delta_{V}})_{V\in I_{G}}$. Since
we know that that the unities are unique, we can first impose the
condition that $\zeta_{\iota}$ is injective after any base change.
If we regard $\zeta_{\iota}$ as a matrix over $\odi T$, this is
the locus where we have inverted the maximal minors of $\zeta_{\iota}$.
We can now assume that there exists a maximal minor of $\zeta_{\iota}$
which is invertible over $T$. This means that we can write 
\[
\prod_{V\in I_{G}}\End(\delta_{V})\otimes\odi T=\Imm(\zeta_{\iota})\oplus\shF
\]
If $(\id_{V})_{V\in I_{G}}=z\oplus z'$, then the locus where a unity
exists is exactly the zero locus of $z'\in\shF$.
\end{proof}
After having discussed the rank functions, we come back to our initial
goal, the reducibility of $\GCov$. 
\begin{lem}
\label{lem:G covers property closed for linearly reductive groups}
The stack $\GCov$ is open and closed in $(\LAlg_{R}^{G})^{\text{gr}}$.
In particular $\stZ_{G}$ is the schematic closure of $\Bi G$ in
$(\LAlg_{R}^{G})^{\text{gr}}$.\end{lem}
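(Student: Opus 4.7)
The plan is to reduce the statement, by fppf descent, to the case where $G$ is a good linearly reductive group over a connected base, and then recognize $\GCov$ as one of the open-and-closed pieces in the rank-function decomposition of $(\LAlg_R^G)^{\text{gr}}$ furnished by Theorem \ref{thm:Description of LAlgGRf when G is a glrg}.

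First I would observe that being open-and-closed is a condition that descends along fppf covers $R \arr R'$ of the base ring, because the image of an open (resp.\ closed) substack under a flat and surjective morphism is open (resp.\ closed). By Proposition \ref{prop:G has locally a good representation theory}, $G$ becomes a glrg after an fppf base change $R \arr R'$, and moreover $\GCov$, being defined by an fppf-local condition on $f_*\odi X$, behaves well under this base change. So it suffices to treat the case where $G$ is a glrg. Furthermore, after restricting to each connected component of the base $T$ of a test scheme, we may assume that we are working with locally free sheaves of constant rank, so that the rank function $\rk^\alA\colon I_G \arr \N$ of any $\alA \in (\LAlg_R^G)^{\text{gr}}(T)$ is well defined.

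Next I would invoke the decomposition
\[
(\LAlg_R^G)^{\text{gr}} = \bigsqcup_{f \in \N^{I_G}} (\LAlg_{R,f}^G)^{\text{gr}}
\]
from Theorem \ref{thm:Description of LAlgGRf when G is a glrg}, which exhibits each summand as an open-and-closed substack. Via the equivalence $\LAlg_R^G \simeq \LMon_R^G$ of Theorem \ref{thm:G equivariant ring are monoidal functors} and the explicit description of $\GCov$ inside $\LMon_R^G$ from Theorem \ref{thm:description of GCov with monoidal functors} (the ``moreover'' clause for glrg's over connected base), the substack $\GCov$ coincides exactly with the component $(\LAlg_{R,f_0}^G)^{\text{gr}}$ corresponding to the rank function $f_0(V) = \rk V$ for $V \in I_G$. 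Thus $\GCov$ is open and closed in $(\LAlg_R^G)^{\text{gr}}$ in this case, and by the descent observation above, also in general.

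For the second assertion, I would argue as follows. Since $\GCov$ is closed in $(\LAlg_R^G)^{\text{gr}}$, any closed substack of $(\LAlg_R^G)^{\text{gr}}$ containing $\Bi G$ must meet $\GCov$ in a closed substack containing $\Bi G$; conversely, any closed substack of $\GCov$ containing $\Bi G$ is closed in $(\LAlg_R^G)^{\text{gr}}$ because $\GCov$ itself is closed there. Therefore the schematic closure of $\Bi G$ inside $(\LAlg_R^G)^{\text{gr}}$ agrees with its schematic closure inside $\GCov$, which by Definition \ref{def:the main component of GCov} is exactly $\stZ_G$. I expect the main (minor) obstacle to be bookkeeping in the reduction to the glrg case over a connected base: one must check that the rank function computed after fppf base change of the ring $R$ agrees with the one computed before, which follows from the flat base change behavior of $(-)^G$ for linearly reductive $G$ (Lemma \ref{lem:constant coherent sheaves go out invariants}) applied to $(\alA \otimes V)^G$.
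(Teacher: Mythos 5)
Your proposal is correct and takes essentially the same route as the paper, whose entire proof is the citation of \ref{prop:G has locally a good representation theory}, \ref{thm:Description of LAlgGRf when G is a glrg} and \ref{thm:description of GCov with monoidal functors} — i.e.\ exactly the fppf reduction to a glrg, the rank-function decomposition, the identification of $\GCov$ with the component $f_{0}(V)=\rk V$ (via \ref{thm:G equivariant ring are monoidal functors}), and the closure bookkeeping you spell out. One small repair to your descent step: the image of a closed substack under a flat surjective morphism need \emph{not} be closed in general, so instead justify it by observing that $\RCov{G_{R'}}$ is the \emph{full preimage} of $\GCov$ under $(\LAlg_{R'}^{G_{R'}})^{\text{gr}}\arr(\LAlg_{R}^{G})^{\text{gr}}$ (being a $G$-cover is an fppf-local condition on $f_{*}\odi X$) and that fppf morphisms are universally submersive, so a substack with open and closed preimage is itself open and closed.
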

\begin{proof}
Follows from \ref{prop:G has locally a good representation theory},
\ref{thm:Description of LAlgGRf when G is a glrg} and \ref{thm:description of GCov with monoidal functors}.
\end{proof}
The following proposition is the key in the proof of the reducibility
of $\GCov$.
\begin{prop}
\label{prop:ind B in Z if B in Z}Let $H$ be an open and closed subgroup
scheme of $G$. Then if $\alB\in\LAlg_{R}^{H}$, we have 
\[
\ind_{H}^{G}\alB\in\stZ_{G}\iff\alB\in\stZ_{H}\comma\ind_{H}^{G}\alB\in\Bi G\iff\alB\in\Bi H
\]
In particular we have cartesian diagrams   \[   \begin{tikzpicture}[xscale=1.8,yscale=-1.0]     \node (A0_0) at (0, 0) {$\Bi H$};     \node (A0_1) at (1, 0) {$\stZ_H$};     \node (A0_2) at (2, 0) {$\HCov$};     \node (A1_0) at (0, 1) {$\Bi G$};     \node (A1_1) at (1, 1) {$\stZ_G$};     \node (A1_2) at (2, 1) {$\GCov$};     \path (A0_0) edge [->]node [auto] {$\scriptstyle{}$} (A0_1);     \path (A0_1) edge [->]node [auto] {$\scriptstyle{}$} (A1_1);     \path (A1_0) edge [->]node [auto] {$\scriptstyle{}$} (A1_1);     \path (A1_1) edge [->]node [auto] {$\scriptstyle{}$} (A1_2);     \path (A0_2) edge [->]node [auto] {$\scriptstyle{\ind^G_H}$} (A1_2);     \path (A0_0) edge [->]node [auto] {$\scriptstyle{}$} (A1_0);     \path (A0_1) edge [->]node [auto] {$\scriptstyle{}$} (A0_2);   \end{tikzpicture}   \] \end{prop}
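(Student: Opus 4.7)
The second diagram (for $\Bi$) is cartesian precisely because of Lemma \ref{lem:descend of torsors along induction}, which asserts $\ind_{H}^{G}\alB\in\Bi G\iff\alB\in\Bi H$. I would therefore focus on the equivalence $\ind_{H}^{G}\alB\in\stZ_{G}\iff\alB\in\stZ_{H}$ and the first cartesian square. Set $F=\ind_{H}^{G}\colon(\LAlg_{R}^{H})^{\text{gr}}\arr(\LAlg_{R}^{G})^{\text{gr}}$. By Lemma \ref{lem:G covers property closed for linearly reductive groups}, $\stZ_{G}$ (resp.\ $\stZ_{H}$) is the schematic closure of $\Bi G$ (resp.\ $\Bi H$) in $(\LAlg_{R}^{G})^{\text{gr}}$ (resp.\ $(\LAlg_{R}^{H})^{\text{gr}}$). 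The plan is to prove that $F$ is flat. Once this is granted, the last assertion of Proposition \ref{prop:properties of schematic closure} applied to $F$ guarantees that $\Bi H=F^{-1}(\Bi G)$ is schematically dense in $F^{-1}(\stZ_{G})$. Since the latter is a closed substack of $\LAlg_{R}^{H}$ containing $\Bi H$ with this density, it must coincide with $\stZ_{H}$, giving simultaneously the equivalence and the first cartesian diagram.

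To prove flatness of $F$ I would work fppf-locally, reducing via Proposition \ref{prop:G has locally a good representation theory} to the case where $G$ (and hence $H$) is a glrg over a connected base. In this setting Proposition \ref{prop:E of an induction is the restriction} yields $\Omega^{\ind_{H}^{G}\alB}_{V}=\Omega^{\alB}_{\R_{H}V}$, so $F$ respects the rank-function decomposition of Theorem \ref{thm:Description of LAlgGRf when G is a glrg}: it maps $\LAlg^{H}_{R,g}$ into $\LAlg^{G}_{R,f_{g}}$ with $f_{g}(V)=g(\R_{H}V)$. Under the presentations $(\LPMon^{H}_{R,g})^{\text{gr}}\simeq[X^{H}/\underline{\GL}_{g}]$ and $(\LPMon^{G}_{R,f_{g}})^{\text{gr}}\simeq[X^{G}/\underline{\GL}_{f_{g}}]$ from Theorem \ref{thm:Description of LAlgGRf when G is a glrg}, the map of quotient stacks is covered by an $R$-linear map of affine spaces $X^{H}\arr X^{G}$ obtained by rewriting the multiplications $\mu^H_{W,W'}$ under the identifications $\Omega^G_V=\Omega^H_{\R_H V}$, equivariant for the group homomorphism $\underline{\GL}_{g}\arr\underline{\GL}_{f_{g}}$ induced by the action of $\ind_{H}^{G}$ on the canonical equivariant module $\delta^{H}$. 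Linearity makes $X^{H}\arr X^{G}$ flat, and fpqc descent along the $\underline{\GL}$-atlases then yields flatness of $(\LPMon^{H}_{R,g})^{\text{gr}}\arr(\LPMon^{G}_{R,f_{g}})^{\text{gr}}$.

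The hardest step will be descending flatness from $\LPMon$ to $\LAlg$. Concretely I would verify that $\LAlg^{H}_{R,g}=F_{\text{PMon}}^{-1}(\LAlg^{G}_{R,f_{g}})$ as closed substacks of $\LPMon^{H}_{R,g}$, so that $F$ at the algebra level is a base change of $F_{\text{PMon}}$ and therefore inherits flatness. The nontrivial inclusion amounts to showing that if the induced multiplication $\mu^{G}$ is commutative, associative and unital, then so is $\mu^{H}$. The key observation is that the restriction $\R_{H}V$ of a sufficiently rich $V\in\Loc^{G}R$ (for instance $V=R[G]$, whose restriction contains every irreducible $W\in I_{H}$ as a direct summand by Proposition \ref{prop:decomposition of OG} combined with the structure of $G/H$) realises each $W\in I_H$ as a direct summand; consequently the constraints defining $\mu^{H}_{W,W'}$ appear as summand-components of the corresponding constraints on $\mu^{G}_{V,V'}$ under the identification $\Omega^{G}_{V}=\Omega^{H}_{\R_{H}V}$. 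Verifying this identification carefully, together with the subsequent flatness descent, is where the main technical work of the proof is concentrated.
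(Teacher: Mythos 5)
Your treatment of the torsor part is correct and is exactly the paper's: the second square is cartesian by Lemma \ref{lem:descend of torsors along induction}. For the main component, however, your strategy has a genuine gap at its foundation, namely the sentence ``Linearity makes $X^{H}\arr X^{G}$ flat.'' An $R$-linear map of affine spaces $\WW(V)\arr\WW(W)$ is flat only when the underlying linear map is (fiberwise) surjective: otherwise it factors as a vector-bundle projection followed by a proper closed linear immersion, which is not flat. Your map goes the wrong way for this. Since the $H$-multiplication on $\alB$ is recovered from the induced multiplication on $\ind_{H}^{G}\alB=(\alB\otimes R[G])^{H}$, the map $X^{H}\arr X^{G}$ is \emph{injective} and almost never surjective, because induced multiplications are very special (étale-locally the induced algebra is a product indexed by $\underline{G}/\underline{H}$, cf.\ \ref{lem:A is induction of the localization}). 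Concretely, for $H=\mu_{2}\subset G=\mu_{4}$ the atlas of $\GCov$ is $\Spec R_{\Z/4\Z}$ with $R_{\Z/4\Z}=\Z[x_{1,2},x_{3,3},x_{2,3},x_{1,1}]/(x_{1,2}x_{3,3}-x_{2,3}x_{1,1})$ (see \ref{rem:MCov for M=00003DZfour}), and the induction of $(\odi{},m)$ lands on the line $m\longmapsto(x_{1,1},x_{1,2},x_{2,3},x_{3,3})=(m,1,1,m)$: a closed immersion of $\A^{1}$ into a threefold, the opposite of flat. So flatness cannot be descended from the atlases as you propose; if the stack-level map $\ind_{H}^{G}$ happens to be flat in an example (in the $\mu_{2}\subset\mu_{4}$ case one can verify it by hand, but only because the $\underline{\Gl}_{f}$-orbits sweep out the induced locus with fibers of constant dimension --- a phenomenon created by the group action, not by linearity), establishing this in general would require controlling how orbits meet the induced locus over the singular, non-reduced schemes $X^{G}$, where no miracle-flatness argument is available. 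Without flatness, Proposition \ref{prop:properties of schematic closure} does not apply and the equality $\ind^{-1}(\stZ_{G})=\stZ_{H}$ --- which is precisely the content of the proposition --- is not obtained.

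The paper avoids flatness altogether and argues by specialization. First it reduces, via a base-change trick for closed substacks, to showing: if $C$ is strictly Henselian and $A=\ind_{H}^{G}\alB\in\stZ_{G}(C)$, then $\alB\in\stZ_{H}(C)$ (and by \ref{prop:induction from a localization on henselian ring} one may take $\alB$ local). The point $A$ factors through the strict Henselization $D$ of a local ring on an atlas $Z$ of $\stZ_{G}$, where the universal family $A_{D}$ has schematically dense torsor locus $V\subseteq\Spec D$. Writing $A_{D}\simeq\ind_{H_{p}}^{G}(A_{D})_{p}$ by \ref{prop:induction from a localization on henselian ring} and using $A^{G}=C$ to identify $H=H_{p}\otimes C$, the locus of $\Spec D$ where $(A_{D})_{p}\in\stZ_{H_{p}}$ is closed and contains $V$ (where $(A_{D})_{p}$ is an $H_{p}$-torsor, by \ref{lem:descend of torsors along induction}), hence is all of $\Spec D$; base-changing to $C$ gives $\alB\in\stZ_{H}(C)$. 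Your final paragraph (transferring commutativity, associativity and unitality between $\mu^{H}$ and $\mu^{G}$) is sound but moot, since it only feeds into the broken flatness step.
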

\begin{proof}
Consider the fiber products   \[   \begin{tikzpicture}[xscale=1.8,yscale=-1.0]     \node (A0_0) at (0, 0) {$\stY$};     \node (A0_1) at (1, 0) {$\stZ$};     \node (A0_2) at (2, 0) {$(\LAlg^H_R)^{\text{gr}}$};     \node (A1_0) at (0, 1) {$\Bi G$};     \node (A1_1) at (1, 1) {$\stZ_G$};     \node (A1_2) at (2, 1) {$(\LAlg^G_R)^{\text{gr}}$};     \path (A0_0) edge [->]node [auto] {$\scriptstyle{}$} (A0_1);     \path (A0_1) edge [->]node [auto] {$\scriptstyle{}$} (A0_2);     \path (A1_0) edge [->]node [auto] {$\scriptstyle{}$} (A1_1);     \path (A1_1) edge [->]node [auto] {$\scriptstyle{}$} (A1_2);     \path (A0_2) edge [->]node [auto] {$\scriptstyle{\ind^G_H}$} (A1_2);     \path (A0_0) edge [->]node [auto] {$\scriptstyle{}$} (A1_0);     \path (A0_1) edge [->]node [auto] {$\scriptstyle{}$} (A1_1);   \end{tikzpicture}   \] 
It easy to show that $\stZ$ ($\stY$) is the substack of $(\LAlg_{R}^{H})^{\text{gr}}$
of algebras $\alB$ such that $\ind_{H}^{G}\alB\in\stZ_{G}$($\Bi G$).
In particular by \ref{lem:descend of torsors along induction}, $\stY=\Bi H$
and $\stZ_{H}$ is a closed subscheme of $\stZ$. We have to prove
that $\stZ_{H}=\stZ$. We claim that, if $S$ is a noetherian scheme
and $\alB\in\LAlg_{R}^{H}(S)$ then $\alB\in\stZ_{H}(S)$ if and only
if for any strictly Henselian ring $C$ and map $\Spec C\arr S$ we
have $\alB\otimes C\in\stZ_{H}(C)$. Indeed consider the base change
$S'=S\times_{(\LAlg_{R}^{H})^{\text{gr}}}\stZ_{H}\arr S$. This is
a closed immersion and by hypothesis its base change to any strict
Henselization of a localization of $S$ is an isomorphism. But this
implies that $S'=S$ and therefore $\alB\in\stZ_{H}(S)$.

We are now going to prove that $\stZ=\stZ_{H}$. This will conclude
the proof since $\ind_{H}^{G}$ sends $\HCov$ to $\GCov$, because
$\ind_{H}^{G}\odi{}[H]\simeq\odi{}[G]$. If $S\arrdi{\alB}\stZ$ is
an fppf atlas, then this is equivalent to $\alB\in\stZ_{H}(S)$. By
the remark above we have to show that if $\alB\in\LAlg_{R}^{H}C$,
where $C$ is a strictly Henselian ring, such that $A=\ind_{H}^{G}\alB\in\stZ_{G}(C)$
then $\alB\in\stZ_{H}(C)$. Note that if $X\arr\Spec C$ is an fppf
map we can always assume to have a section. Indeed $\alB\in\stZ_{H}(C)$
if and only if $\alB\otimes\odi X\in\stZ_{H}(X)$ and again we can
restrict to the strictly Henselian ring mapping to $X$. Let $Z$
be an fppf atlas of $\stZ_{G}$. By remark above we can assume that
$A$ comes from the atlas $Z$. Let $p\in Z$ the image of the closed
point of $\Spec C$ under the map $\Spec C\arrdi AZ$ and let $D$
be the strict Henselization of $\odi{Z,p}$. The universal object
of $Z$ induces an $A_{D}\in\stZ_{G}(D)$ and since $\Spec D\arr Z$
is flat, the open subset of $\Spec D$ where $A_{D}$ is a $G$-torsor
in schematically dense in $\Spec D$. Since $\Spec C\arr Z$ factors
through $D$ we have a map $ $$D\arr C$ such that $A_{D}\otimes C\simeq A$.
By hypothesis $A=\ind_{H}^{G}B$ and, since $C$ is strictly Henselian,
by \ref{prop:induction from a localization on henselian ring} we
can assume $B$ to be local. If $p\in\Spec A_{D}$ is the image of
the closed point of $\Spec B$ under the map $\Spec B\arr\Spec A\arr\Spec A_{D}$
by \ref{prop:induction from a localization on henselian ring} we
can write $A_{D}\simeq\ind_{H_{p}}^{G}(A_{D})_{p}$ where $H_{p}$
is the stabilizer of the connected component $\Spec(A_{D})_{p}$ in
$\Spec A_{D}$. Let $V\subseteq\Spec D$ be the open locus where $(A_{D})_{p}$
is a $H_{p}$-torsor. This is exactly the locus where $A_{D}$ is
a $G$-torsor thanks to \ref{lem:descend of torsors along induction}.
So $V$ is schematically dense in $\Spec D$. Since $(A_{D})_{p}\otimes C$
is local by \ref{cor:base change of local ring is local for strictly henselian ring}
and it is a factor of $A\simeq A_{D}\otimes C$, $\Spec(A_{D})_{p}\otimes C$
is the connected component of $\Spec A$ containing $\Spec B$. We
are in the situation   \[   \begin{tikzpicture}[xscale=3.6,yscale=-1.2]     
\node (A0_0) at (0.3, 0) {$\Spec B$};     
\node (A0_2) at (2, 0) {$\Spec (A_D)_p\otimes C$};     
\node (A1_0) at (0.3, 1) {$\Spec A$};     \node (A1_1) at (1, 1) {$\Spec A_D\otimes C$};     \node (A1_2) at (2, 1) {$\Spec \ind^G_{H_p\otimes C}(A_D)_p\otimes C$};     \path (A1_0) edge [->,bend right=48]node [auto] {$\scriptstyle{\beta}$} (A1_2);     \path (A1_0) edge [->]node [auto] {$\scriptstyle{}$} (A1_1);     \path (A0_2) edge [->]node [auto] {$\scriptstyle{j}$} (A1_2);     \path (A1_1) edge [->]node [auto] {$\scriptstyle{}$} (A1_2);     \path (A0_0) edge [->]node [auto] {$\scriptstyle{\alpha}$} (A0_2);     \path (A0_0) edge [->]node [auto] {$\scriptstyle{i}$} (A1_0);   \end{tikzpicture}   \] Since $G$ permutes the connected component of $\Spec A_{D}$, we
see that $H_{p}\otimes C$ is the stabilizer of $\Spec(A_{D})_{p}\otimes C$
and therefore $H\subseteq H_{p}\otimes C$. Since $H$ is open and
closed in $G$, we have that $B$ is a factor of $A$ and therefore
is a localization of $A$. It follows that $\alpha$ is an isomorphism
and, since $i,j,\beta$ are $H$-equivariant, that is $H$-equivariant.
So we have a $H$-equivariant isomorphism $B\simeq(A_{D})_{p}\otimes C$.
We are going to prove that $H=H_{p}\otimes C$. Since $-\otimes_{D}C$
maintains the connected components of $G$, there exists an open and
closed subgroup $H'\subseteq H_{p}$ such that $H'\otimes C=H$. In
particular we have
\[
(A_{D})_{p}^{H'}\otimes C\simeq((A_{D})_{p}\otimes C)^{H}\simeq B^{H}\simeq(\ind_{H}^{G}B)^{G}=A^{G}=C
\]
 since $A\in\stZ_{G}(C)$. Since $(A_{D})_{p}^{H'}$ is a locally
free algebra over $D$, it follows that $(A_{D})_{p}^{H'}=D$. If
$q\in V$, i.e. $(A_{D})_{p}$ is a a $H_{p}$-torsor over $q\in\Spec D$,
then the base change to $\overline{k(q)}$ of $\Spec(A_{D})_{p}$
is $H_{p}$ and $H_{p}/H'\simeq\Spec\overline{k(q)}$. So $H_{p}$
and $H'$ has the same connected components and therefore $H_{p}=H'$
and $H_{p}\otimes C=H$. It remains to prove that $(A_{D})_{p}\in\stZ_{H_{p}}(D)$.
We have cartesian diagrams   \[   \begin{tikzpicture}[xscale=1.8,yscale=-1.0]     \node (A0_0) at (0, 0) {$V$};     \node (A0_1) at (1, 0) {$Z$};     \node (A0_2) at (2, 0) {$\Spec D$};     \node (A1_0) at (0, 1) {$\Bi H_p$};     \node (A1_1) at (1, 1) {$\stZ_{H_p}$};     \node (A1_2) at (2, 1) {$(\LAlg^{H_p}_R)^{\text{gr}}$};     \path (A0_0) edge [->]node [auto] {$\scriptstyle{}$} (A0_1);     \path (A0_1) edge [->]node [auto] {$\scriptstyle{}$} (A0_2);     \path (A1_0) edge [->]node [auto] {$\scriptstyle{}$} (A1_1);     \path (A1_1) edge [->]node [auto] {$\scriptstyle{}$} (A1_2);     \path (A0_2) edge [->]node [auto] {$\scriptstyle{(A_D)_p}$} (A1_2);     \path (A0_0) edge [->]node [auto] {$\scriptstyle{}$} (A1_0);     \path (A0_1) edge [->]node [auto] {$\scriptstyle{}$} (A1_1);   \end{tikzpicture}   \] 
Since $V$ is schematically dense in $\Spec D$, it follows that $Z=\Spec D$
as required.\end{proof}
\begin{lem}
\label{lem:solvable if all subgroups are abelian}\cite{Miller1903}
A constant group whose proper subgroups are abelian is solvable.%

\end{lem}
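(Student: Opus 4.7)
The plan is to argue by contradiction, taking $G$ to be a counterexample of minimum order, and then to derive a numerical contradiction from a partition of $G-\{1\}$ into conjugates of maximal abelian subgroups.

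First I would reduce to the case where $G$ is simple and non-abelian. If $1\neq N\triangleleft G$ is a proper normal subgroup, then $N$ is abelian by hypothesis; moreover the quotient $G/N$ inherits the property that all its proper subgroups (being of the form $H/N$ with $N\subseteq H\subsetneq G$) are abelian, so by minimality of $|G|$ it is solvable. Hence $G$ itself is solvable, contradicting the choice of $G$. So $G$ has no non-trivial proper normal subgroup, and since $G$ is non-abelian we have $Z(G)=1$.

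Next I would establish the structural lemma on centralizers. For every $1\neq u\in G$ the centralizer $H(u)=C_G(u)$ is a proper subgroup (otherwise $u\in Z(G)=1$), hence abelian, and in fact it is a maximal abelian subgroup: any abelian $A\supsetneq H(u)$ would consist entirely of elements commuting with $u$, giving $A\subseteq H(u)$. Two such centralizers satisfy the dichotomy
\[
H(u)\cap H(v)\neq 1\ \Longrightarrow\ H(u)=H(v),
\]
because any $1\neq w\in H(u)\cap H(v)$ forces $u,v\in H(w)$, which is abelian, so $u$ and $v$ commute, and by maximality each of $H(u),H(v)$ equals $H(uv)$ or can be compared directly. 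Moreover each such $H=H(u)$ is self-normalizing: $N_G(H)$ is a proper subgroup of $G$ (else $H$ would be a non-trivial proper normal subgroup), hence abelian, hence equal to $H$ by maximality.

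Let $H_1,\dots,H_r$ be a complete set of representatives of the $G$-conjugacy classes of centralizers $H(u)$, $u\neq 1$. Since every $1\neq u\in G$ lies in $H(u)$, and distinct conjugates of the $H_i$ intersect trivially by the dichotomy above, and since $N_G(H_i)=H_i$ gives exactly $|G|/|H_i|$ conjugates of $H_i$, we obtain the disjoint partition
\[
G-\{1\}\ =\ \bigsqcup_{i=1}^{r}\bigsqcup_{g\in G/H_i}\bigl(gH_ig^{-1}-\{1\}\bigr).
\]
Counting cardinalities and dividing by $|G|$ yields
\[
r\ =\ 1-\frac{1}{|G|}+\sum_{i=1}^{r}\frac{1}{|H_i|}.
\]
Since $|H_i|\geq 2$, the right side is at most $1+r/2$, forcing $r=1$. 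But then $1=1-1/|G|+1/|H_1|$, so $|H_1|=|G|$, i.e.\ $G=H_1$ is abelian, the desired contradiction.

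The main obstacle I anticipate is the verification of the two centralizer facts (the intersection dichotomy and the self-normalizing property) and correctly establishing that every non-identity element lies in exactly one of the conjugates $gH_ig^{-1}$; once these structural statements are in place, the counting argument is routine.
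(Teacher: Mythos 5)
Your proof is correct and follows essentially the same route as the paper's own argument: reduction to a minimal simple counterexample, the dichotomy that two centralizers of non-trivial elements either coincide or meet trivially, the partition of $G-\{1\}$ into conjugates of finitely many centralizers (using that these are self-normalizing), and the same count $r=1-1/|G|+\sum_i 1/|H_i|$ forcing $r=1$ and hence $G$ abelian. The only cosmetic differences are that you prove the intersection dichotomy via a common element $w$ and $H(w)$ rather than by splitting on whether $u$ and $v$ commute, and you get self-normalization from $N_G(H)$ being proper and abelian rather than via $Z(Z(u))$; both are minor variants of the same classical Miller--Moreno argument.
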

We are ready for the proof of Theorem \ref{thm:GCov reducible when G not abelian}.
The idea is that we can reduce to a minimal non abelian subgroup of
$G$ and assume that it is constant. By lemma \ref{lem:solvable if all subgroups are abelian}
$G$ is solvable. In this case we will find a subgroup $H$ of $G$
and an algebra $\alA\in\LAlg^{H}k$ such that $\alA\notin\HCov(k)$
and $\ind_{H}^{G}\alA\in\GCov(k)$. Up to some details, if $\GCov$
is irreducible, then $|\GCov|=|\stZ_{G}|$ and therefore $\alA\in\stZ_{H}(k)\subseteq\HCov(k)$
by \ref{prop:ind B in Z if B in Z}, obtaining a contradiction.
\begin{proof}
(\emph{of Theorem }\ref{thm:GCov reducible when G not abelian}) If
the base scheme is not connected, then clearly $\GCov$ is reducible.
By \ref{rem:universally reducible over fibers} and \ref{prop:the locus where G abelian open and closed},
we can assume that $S=\Spec k$, where $k$ is a field and, since
in this case $\stZ_{G}$ is geometrically integral, we can also assume
$k=\overline{k}$. In particular $G$ is a glrg. Let $H$ be an open
and closed subgroup of $G$. We claim that if one of the following
statement is fulfilled, then $\GCov$ is reducible:
\begin{enumerate}
\item $\HCov$ is reducible
\item there exists $f\colon I_{H}\arr\N$ whose extension $f\colon\Loc^{H}k\arr\N$
is such that $f_{\R_{H}V}=\rk V$ for any $V\in I_{G}$ and there
exists $\Delta\in I_{H}$ such that $f_{\Delta}\neq\rk\Delta$
\end{enumerate}
Note that $\GCov$ is irreducible if and only if $\stZ_{G}(k)=\GCov(k)$.
Assume that $\HCov$ is reducible and, by contradiction, that $\GCov$
is irreducible. If $B\in\HCov(k)$ then $\ind_{H}^{G}B\in\GCov(k)=\stZ_{G}(k)$
and so $B\in\stZ_{H}(k)$. Therefore $\HCov$ is irreducible.

Now let $f\colon I_{H}\arr\N$ as in $2)$ and let $\delta\in\LAdd^{H}k$
be the unique $R$-linear functor such that $\delta_{\Delta}=k^{f_{\Delta}}$
for any $\Delta\in I_{H}$. Note that by hypothesis $f_{R}=1$. Consider
\[
F=\bigoplus_{R\neq\Delta\in I_{H}}\duale{\Delta}\otimes\delta_{\Delta}\comma B=k\oplus F
\]
If we set $F^{2}=0$ we obtain a structure of algebra on $B$ such
that $B\in\LAlg_{k,f}^{H}k$. We claim that $A=\ind_{H}^{G}B\in\GCov(k)$.
Indeed $\Omega^{B}=\delta$, $\Omega^{A}=\Omega^{B}\circ\R_{H}$ and
therefore
\[
\rk\Omega_{V}^{A}=\rk\Omega_{\R_{H}V}^{B}=f_{\R_{H}V}=\rk V
\]
We also claim that $A\notin\stZ_{G}(k)$, that implies that $\GCov$
is reducible. Indeed
\[
A=\ind_{H}^{G}B\in\stZ_{G}(k)\then B\in\stZ_{H}(k)\then B\in\HCov(k)
\]
by \ref{prop:ind B in Z if B in Z}, which is not the case because
there exists by hypothesis $\Delta\in I_{H}$ such that $\rk\Omega_{\Delta}^{B}=f_{\Delta}\neq\rk\Delta$.

We return now to the original statement. And we argue by induction
on the rank of $G$. As base case we take the case in which there
exists a normal and abelian subgroup $H$ of $G$ such that $G/H\simeq\Z/p\Z$
for some prime $p$. We first show how to reduce to this case. Since
$G$ is non abelian, we have that $\underline{G}$ is a non trivial
group. We start reducing to the case where $\underline{G}$ is solvable.
If $\underline{G}$ is abelian we are already in this case. Otherwise
take a minimal non abelian subgroup $K$ of $\underline{G}$. All
the proper subgroups of $K$ are abelian and therefore $K$ is solvable
thanks to \ref{lem:solvable if all subgroups are abelian}. If we
call $\phi\colon G\arr\underline{G}$, then $G'=\phi^{-1}(K)$ is
a non abelian open and closed subgroup of $G$ such that $\underline{G'}\simeq K$
is solvable and we can therefore reduce to it. Now, if $\underline{G}$
is solvable, it has a surjective map $\underline{G}\arr\Z/p\Z$. So
there exists an open and closed normal subgroup $H$ of $G$ such
that $G/H\simeq\Z/p\Z$. If $H$ is non abelian we can lower the rank
and since $\underline{H}<\underline{G}$ is solvable, we can continue
the induction reaching the claimed base case.

So assume to have a surjection $G\arr\Z/p\Z$ for some prime $p$
such that the kernel $H$ is abelian. In particular $H$ is diagonalizable
and set $N=\Hom(H,\Gm)$. We will construct an $f\colon I_{H}\arr\N$
as in $2)$. The group $G/H\simeq\Z/p\Z$ acts on $H$ and on $N=\Hom(H,\Gm)$
by conjugation as explained in \ref{Rem: action of G on diagonalizable normal subgroup}.
Let $\shR$ be a set of representatives of $N/(\Z/p\Z)$. Note that,
since $p$ is prime, an element $n\in N$ is fixed or its orbit $o(n)$
has order $p$. We claim that if $V\in I_{G}$ there exists a unique
$m\in\shR$ such that 
\[
\R_{H}V=V_{m}^{\rk V}\text{ with }|o(m)|=1\text{ or }V=\ind_{H}^{G}V_{m}\text{ with }|o(m)|=p
\]
Indeed there exists $m\in N$ such that $V\subseteq\ind_{H}^{G}V_{m}$.
Remember that, by \ref{lem:induction from diagonalizable groups},
given $n,n'\in N$ we have
\[
\R_{H}\ind_{H}^{G}V_{n}=\bigoplus_{g\in\Z/p\Z}V_{g(n)}\text{ and (}\ind_{H}^{G}V_{n}\simeq\ind_{H}^{G}V_{n'}\iff n'\in o(n))
\]
So we can assume $m\in\shR$. Moreover such an $m$ is unique since
if $V\subseteq\ind_{H}^{G}V_{m'}$, $\R_{H}V$ is the sum of some
$V_{n}$ that are in the orbits of both $m$ and $m'$. In particular,
if $|o(m)|=1$, then $\ind_{H}^{G}V_{m}=V_{m}^{p}$ and therefore
$\R_{H}V=V_{m}^{\rk V}$. So assume $|o(m)|=p$. Given $W\in\Loc^{G}k$
$(\Loc^{H}k)$ and $g\in G(k)$ call $W_{g}$ the representation of
$G$ $(H)$ that has $W$ as underlying vector space, while the action
of $G$ $(H)$ is given by $t\star x=(g^{-1}tg)x$. Note that by definition
$(V_{n})_{g}=V_{g(n)}$. In particular the multiplication by $g^{-1}$
on $V$ yields a $G$-equivariant isomorphism $V\simeq V_{g}$ and
therefore $V_{n}\subseteq\R_{H}V$ implies that $V_{g(n)}\subseteq\R_{H}V$.
Since $|o(m)|=p$ we can conclude that $V=\ind_{H}^{G}V_{m}$. Define
\[
f_{V_{n}}=\left\{ \begin{array}{cc}
|o(n)| & \text{if }n\in\shR\\
0 & \text{otherwise}
\end{array}\right.
\]
We claim that $f$ satisfies the requests of $2)$. Indeed if $V\in I_{G}$
and there exists $m\in\shR$ such that $V=V_{m}^{\rk V}$ with $|o(m)|=1$
then $f_{\R_{H}V}=\rk Vf_{V_{m}}=\rk V$. Otherwise there exists $m\in\shR$
with $|o(m)|=p$ such that
\[
V=\ind_{H}^{G}V_{m}\then f_{\R_{H}V}=\sum_{g\in\Z/p\Z}f_{V_{g(m)}}=p=\rk V
\]
Finally note that if $n\in\shR$ is such that $|o(n)|=p$ then $f_{V_{n}}=p\neq1=\rk V_{n}$.
So we have to show that such an $n$ exists. If by contradiction this
is false, then the actions of $\Z/p\Z$ on $N$ and $H$, as well
as the action of $G$ on $H$ by conjugation are trivial. So $H$
commutes with all the elements of $G$. Let $g\in G(k)\simeq\underline{G}$
be not in $H$. Any element of $G(T)$ can be written as $hg^{i}$
with $h\in H(T)$ and $0\leq i<p$. It is straightforward to check
that two such elements commute and that therefore $G$ is abelian,
which is not the case.
\end{proof}

\section{\label{sub:Regularity in codimension one}Regularity in codimension
$1$.}

In this section we want to address the following question: given a
discrete valuation ring $R$ and $A\in\LAlg^{G}R$, what are the conditions
that ensure that $A$ is regular? We will see that one of those conditions
will be that $A\in\GCov(R)$. This problem translates in the following,
more geometrical problem: given a normal, noetherian scheme $S$ and
a $G$-cover $X\arr S$, what are the conditions that ensure that
$X$ is normal too? The idea is to look at the map $\widehat{\tr}\colon A\arr\duale A$
induced by the trace map: $\widehat{\tr}$ is an isomorphism if and
only if $A$ is étale and the less degenerate $\widehat{\tr}$ is,
the more regular the algebra $A$ should be. We will explain what
this 'less' degenerate means. At the end, we will also discuss a possible
extension to covers without an action of a group.

In this section we fix a (étale) locally constant and finite group
scheme $G$ over a ring $R$ such that $\rk G\in R^{*}$. This means
exactly that $G$ is a finite and étale linearly reductive group over
$R$. We require this last condition because we want $G$-torsors
to be regular (over regular base).
\begin{notation}
If $\shP$ is a locally free sheaf and $\eta\colon\shP\otimes\shP\arr\odi T$
is a map, we will denote by $\widehat{\eta}\colon\shP\arr\duale{\shP}$
the associated map. If $\shP$ is also an algebra and $\phi\in\duale{\shP}$
we will also set $\widehat{\phi}=\widehat{\eta}$ where 
\[
\eta\colon\shP\otimes\shP\arrdi m\shP\arrdi{\phi}\odi T
\]
where $m$ is the multiplication of $\shP$. Given a basis $\beta=\{x_{1},\dots,x_{s}\}$
of $\shP$, the matrix associated with $\eta$ is $(\eta(x_{i}\otimes x_{j}))_{i,j}$,
which is also the matrix representing $\widehat{\eta}$ with respect
to the basis $\beta$ and its dual.\end{notation}
\begin{defn}
Let $\alA\in\LAlg^{G}T$. We define 
\[
\tr_{\alA/\odi T}\colon\alA\arr\odi T\text{ and }\widehat{\tr}_{\alA/\odi T}\colon\alA\arr\duale{\alA}
\]
the trace map and its associated map respectively. We also set 
\[
\shQ^{\alA/\odi T}=\Coker\widehat{\tr}_{\alA/\odi T}\text{ and }e^{\alA/\odi T}=\l(\shQ^{\alA/\odi T})
\]
where $\l$ is the length, and, if $G$ is a glrg, $\alA^{G}=\odi T$
and $V\in\Loc^{G}R$, 
\[
\shQ_{V}^{\alA/\odi T}=\Coker(\Omega_{V}^{\alA}\arrdi{\xi_{V}}\duale{(\Omega_{\duale V}^{\alA})})\text{ and }e_{V}^{\alA/\odi T}=\l(\shQ_{V}^{\alA/\odi T})
\]
where $\xi_{V}$ is the map induced by $\omega_{V}\colon\Omega_{V}\otimes\Omega_{\duale V}\arr\Omega_{V\otimes\duale V}\arr\Omega_{R}=\alA^{G}=\odi T$.
We will also omit the superscript $*/\odi T$ when it will be clear
what is the base scheme.\end{defn}
\begin{notation}
If $R$ is a local ring with residue field $k$ and $Q$ is an $R$-module,
we will say that $Q$ is defined over the closed point of $R$ if
$m_{R}Q=0$. This condition is equivalent to the fact that the map
$Q\arr Q\otimes k$ is an isomorphism or that $Q=i_{*}Q'$, where
$i\colon\Spec k\arr\Spec R$ is the closed point, for some $k$-vector
space $Q'$.
\end{notation}
The Theorem we will prove is the following:
\begin{thm}
\label{thm:equivalent conditions for regularity for glrg}Let $R$
be a DVR and $G$ be a finite and étale linearly reductive group scheme
over $R$. Let also $A\in\LAlg^{G}R$ be such that $A^{G}=R$ and
that the action of $G$ on $A$ is generically faithful over $R$
(see \ref{def: generically faithful}). Then $A$ is regular if and
only if the geometric stabilizers of $A$ are solvable and one of
the following conditions holds:
\begin{enumerate}
\item \label{enu:eA less that rank A}$e^{A}<\rk A$;
\item \label{enu:QA defined over the field}$\shQ^{A}$ is defined over
the closed point of $R$.
\end{enumerate}
In this case $A$ is generically a $G$-torsor, $A\in\stZ_{G}(R)$
and, given a closed point $p$ of $A$, its geometric stabilizer $H_{p}$
is cyclic and we have
\[
e^{A}=\rk A-|\Spec(A\otimes_{R}\overline{k})|=\rk G(1-\frac{1}{\rk H_{p}})
\]
If $G$ is a glrg, then conditions \ref{enu:eA less that rank A}
and \ref{enu:QA defined over the field} can be replaced respectively
by
\begin{enumerate}
\item [3)]$e_{V}^{A}\leq\rk V$ for all $V\in I_{G}$;
\item [4)]$\shQ_{V}^{A}$ is defined over the closed point of $R$ for
all $V\in I_{G}$.
\end{enumerate}
\end{thm}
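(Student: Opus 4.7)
The plan is a two-stage reduction, first geometric (to a strictly Henselian local ring and a local algebra) and then group-theoretic (inducting along the solvable filtration of the stabilizer), with the base case handled by the theory of diagonalizable covers from Chapter 3. First I would pass to the strict henselization $R^{\mathrm{sh}}$ of $R$: both the formation of $\shQ^{A}$ (and of $\shQ_V^A$) and the length $e^A$ (and $e_V^A$) commute with this flat base change, regularity of $A$ descends faithfully, solvability of geometric stabilizers is invariant, and the condition $A^G = R$ is preserved. So assume $R$ is strictly Henselian. Choose a closed point $p$ of $\Spec A$ and let $H = H_p$ be the stabilizer of the connected component $\Spec A_p$. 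By \ref{prop:induction from a localization on henselian ring} we have $A \simeq \ind_H^G A_p$, and by \ref{lem:A is induction of the localization} $A$ is fppf-locally a product of $[\underline{G}{:}\underline{H}]$ copies of $A_p$. This yields multiplicativity formulas $\rk A = [\underline{G}{:}\underline{H}]\rk A_p$, $e^A = [\underline{G}{:}\underline{H}]\,e^{A_p}$, $|\Spec(A\otimes\overline k)| = [\underline{G}{:}\underline{H}]|\Spec(A_p\otimes\overline k)|$, together with $\shQ^A \simeq \bigoplus_i \shQ^{B_i}$; moreover $A$ is regular iff $A_p$ is regular, and by \ref{prop:ind B in Z if B in Z}, $A \in \stZ_G$ iff $A_p \in \stZ_H$. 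In the glrg case, the decomposition $\R_H V \simeq \bigoplus_{\Delta\in I_H}\Hom^H(\Delta,\R_H V)\otimes\Delta$ together with the compatibility in \ref{lem:when omega is surjective in the induction} gives an analogous reduction of conditions (3),(4) for $V\in I_G$ to the same conditions for the $\Delta\in I_H$ occurring in $\R_H V$.

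These reductions bring us to the situation where $A$ is local, $R$ is strictly Henselian, and the acting group is the solvable stabilizer $H$. Now I would induct on $|H|$. Since $\rk H \in R^{*}$ and $R$ is strictly Henselian, $H$ is étale with constant geometric fiber; being solvable, it admits a chain $0 = H_0 \triangleleft H_1 \triangleleft \cdots \triangleleft H_r = H$ with each $H_{i+1}/H_i \simeq \mu_{\ell_i}$ for a prime $\ell_i$ invertible in $R$. For the base case $H \simeq \mu_\ell$, the algebra $A$ is a $\Di M$-cover with $M = \Z/\ell\Z$; by \ref{thm:regular in codimension 1 covers} and the explicit form of the smooth extremal ray $\E^{\mathrm{id}}$ from \ref{pro:Rita's smooth integral extremal rays}, regularity of $A$ is equivalent to $A \simeq R[x]/(x^\ell - \lambda u)$ with $u$ a uniformizer of $R$ and $\lambda \in R^{*}$, and a direct computation of $\widehat\tr_A$ on the basis $1,x,\dots,x^{\ell-1}$ gives $e^A = \ell - 1 = \rk A - |\Spec(A\otimes\overline k)|$ with $\shQ^A$ concentrated on the closed point; conversely the explicit local description of Chapter 3 shows that any non-regular $\mu_\ell$-cover has $e^A \geq \ell$, breaking (1) and (2).

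For the inductive step I would take $K = H_{r-1}$, form $B = A^K \in \LAlg^{H/K} R$ and regard $A$ as a $K$-algebra over $B$, so that $H/K \simeq \mu_p$. Generic faithfulness of the $H$-action and $A^H = R$ force $B^{H/K} = R$ and generic faithfulness of the quotient-action on $B$; when $A$ is regular, $B$ is an intermediate local ring of Krull dimension $1$ inside the regular $A$, hence itself a DVR. Applying the inductive hypothesis separately to $B/R$ and $A/B$, and combining them through the tower formula $\tr_{A/R} = \tr_{B/R}\circ\tr_{A/B}$ (and the corresponding multiplicativity of the associated $\widehat\tr$ maps and their cokernels), one recovers the formulas for $e^A$, the cyclicity of the stabilizer (only a cyclic subgroup of $H$ can act faithfully on the regular one-dimensional $A_p$ after passing to the residue field), and $A \in \stZ_G$. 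In the glrg case, the decomposition of the discriminant section $s_A = \bigotimes_{V\in I_G} s_{A,V}^{\otimes\rk V}$ recalled in the introduction, combined with $e^A = v_R(s_A)$ and $e_V^A = v_R(s_{A,V})$, translates the scalar bound $e^A < \rk A$ into the collection of bounds $e_V^A \leq \rk V$, giving the equivalence (1) $\Longleftrightarrow$ (3); similarly (2) $\Longleftrightarrow$ (4) via concentration of the cokernels on the closed point.

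The main obstacle will be the inductive step, and inside it two technical ingredients: \emph{(i)} establishing the clean multiplicativity formulas for $e^{A/R}$, $e_V^{A/R}$ and for the cokernel sheaves $\shQ^{A/R}$, $\shQ_V^{A/R}$ under the tower $R \subseteq B \subseteq A$ in the presence of the different equivariant structures; and \emph{(ii)} checking the compatibility of the decomposition $s_A = \bigotimes_V s_{A,V}^{\otimes\rk V}$ with the inductive step, so that the infinitesimal bound on each $s_{A,V}$ can be pushed through the filtration. Once these ingredients and the base case are in hand, the equivalences regularity $\Longleftrightarrow$ (1) $\Longleftrightarrow$ (2) $\Longleftrightarrow$ (3) $\Longleftrightarrow$ (4), together with the numerical identity $e^A = \rk G(1 - 1/\rk H_p)$ and the cyclicity of $H_p$, follow by a direct comparison with the rigid diagonalizable base case.
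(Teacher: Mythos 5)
Your overall architecture coincides with the paper's: strict henselization, reduction to a local algebra via $A\simeq\ind_{H}^{G}A_{p}$ with the same multiplicativity bookkeeping, induction along the solvable stabilizer with a diagonalizable base case, and the converse direction through cyclicity of the stabilizer and the Chapter 3 ray description. The genuine gap is exactly the ingredient you flag as obstacle \emph{(i)}, and it is not merely technical: in the direction (conditions $\then$ regularity) your proposed mechanism --- a clean tower multiplicativity for $\widehat{\tr}$ and its cokernels along $R\subseteq B=A^{K}\subseteq A$ --- is unavailable at the point where you need it, because any discriminant tower formula presupposes that $A$ is free over $B$, and freeness is only known once $B$ is a DVR, which is precisely what the induction is supposed to establish. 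Moreover, even granting such a formula, the numerical bound $e^{A}<\rk A$ does not split into the two bounds needed for $B/R$ and $A/B$ separately (a single large contribution could come from either stage). The paper circumvents both problems by running the induction on the module-theoretic condition ($\shQ$ defined over the closed point) and using two \emph{partial} comparisons instead of multiplicativity: first, since $\shP^{B}=\Ker\tr_{B/R}$ sits saturated inside $\shP^{A}$ and both trace forms are injective (generic \'{e}taleness), the orthogonal-complement lemma \ref{lem:orthogonal for bilinear map} gives $\shP^{A}=\shP^{B}\oplus(\shP^{B})^{\perp}$, whence $\shQ^{B/R}$ is a \emph{submodule} of $\shQ^{A/R}$, hence defined over the closed point, so induction applies to $B/R$ and makes $B$ a DVR (and only now is $A$ free over $B$); second, for $A/B$ one shows $m_{B}\Hom_{B}(A,B)\subseteq\Imm\widehat{\tr}_{A/B}$ by pushing the trace form forward along the \emph{injective} map $\psi\colon\Hom_{B}(A,B)\arr\Hom_{R}(A,R)$ induced by $\tr_{B/R}$ and using $m_{R}\Hom_{R}(A,R)\subseteq\Imm\widehat{\tr}_{A/R}$. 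Without these two one-way comparisons your induction does not close.

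Two smaller slips. In the (regularity $\then$ conditions) direction you justify that $B=A^{K}$ is a DVR because it is ``an intermediate local ring of Krull dimension $1$ inside the regular $A$''; this is false as stated ($k[[t^{2},t^{3}]]\subseteq k[[t]]$), though the conclusion holds since invariants of a normal domain under a finite group are normal and a one-dimensional normal noetherian local domain is a DVR --- in any case the paper never needs this: once $A$ is regular, the stabilizer is cyclic by \ref{lem:regular implies cyclic stabilizer} and condition $4)$ is verified directly from the diagonalizable classification, bypassing the tower entirely. Finally, the decomposition $e^{A}=\sum_{V\in I_{G}}\rk V\cdot e_{V}^{A}$ does \emph{not} formally ``translate'' $(1)$ into $(3)$: from $e^{A}<\rk A$ one cannot conclude $e_{V}^{A}\leq\rk V$ for each $V$ (one summand could absorb the whole length). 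In the paper the formal implications run $4)\then3)\then1)$ --- using $\shQ_{R}^{A}=0$ and that $\shQ_{V}^{A}$ is a quotient of the rank-$\rk V$ sheaf $\duale{(\Omega_{\duale V}^{A})}$ --- while $1)\then3)$ only holds because both sides are equivalent to regularity.
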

We will define what a faithful action and a geometric stabilizer means
in this context. The proof of the above Theorem is at the end of the
section, because we prefer to collect first the necessary lemmas and
definitions. Anyway, before doing so, we want to state a global version
of Theorem above.
\begin{defn}
Let $S$ be a scheme, $Y$ be an $S$-scheme and $f\colon X\arr Y$
be a cover. We define the section $s_{f}\in(\det f_{*}\odi X)^{-2}$
as the section induced by the determinant of the trace map
\[
\widehat{\tr}_{f_{*}\odi X/\odi Y}\colon f_{*}\odi X\arr\duale{f_{*}\odi X}
\]
If $G$ acts on $X$ and $f$ is $G$-invariant set $\Omega_{V}^{f}=(f_{*}\odi X\otimes V)^{G}$
for $V\in\Loc^{G}S$. Moreover if $G$ is a glrg over $S$ and $f\in\GCov$
then, for any $V\in I_{G}$, since $\rk V=\rk\Omega_{V}^{f}$, we
define $s_{f,V}\in\det(\Omega_{V}^{f})^{-1}\otimes\det(\Omega_{\duale V}^{f})^{-1}$
as the section induced by the determinant of
\[
\Omega_{V}^{f}\arr\duale{\Omega_{\duale V}^{f}}\;\Longleftarrow\;\Omega_{V}^{f}\otimes\Omega_{\duale V}^{f}\arr\Omega_{V\otimes\duale V}^{f}\arr\Omega_{\odi S}^{f}=(f_{*}\odi X)^{G}=\odi Y
\]

\end{defn}
The following Proposition, proved in \ref{lem:trace for ring with group action},
explains the relations among the sections just introduced.
\begin{prop}
\label{prop:decomposition of discriminant} Assume that $G$ is a
glrg over $S$ and let $Y$ be an $S$-scheme and $f\colon X\arr Y\in\GCov$.
Then there exists an isomorphism
\[
(\det f_{*}\odi X)^{-2}\simeq\bigotimes_{V\in I_{G}}(\det(\Omega_{V}^{f})^{-1}\otimes\det(\Omega_{\duale V}^{f})^{-1})^{\rk V}\text{ such that }s_{f}\longmapsto\bigotimes_{V\in I_{G}}s_{f,V}^{\otimes\rk V}
\]

\end{prop}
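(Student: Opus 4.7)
The plan is to use the decomposition coming from the good representation theory and to read off the trace form block by block using Lemma \ref{lem:omega and the projection}. Set $\alA=f_{*}\odi X$. By Proposition \ref{prop:from functors to sheaves for linearly reductive groups} together with Theorem \ref{thm:description of GCov with monoidal functors}, there is a $G$-equivariant decomposition $\alA\simeq\bigoplus_{V\in I_{G}}\duale V\otimes\Omega_{V}^{f}$ in which each $\Omega_{V}^{f}$ is locally free of rank $\rk V$. Taking determinants gives $\det\alA\simeq\bigotimes_{V}(\det V)^{-\rk V}\otimes(\det\Omega_{V}^{f})^{\rk V}$.

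First I would build the isomorphism of invertible sheaves. The involution $V\leftrightarrow\duale V$ on $I_{G}$, combined with $\rk V=\rk\duale V$, gives a canonical identification $\bigotimes_{V}(\det V)^{\rk V}\simeq\bigotimes_{V}(\det V)^{-\rk V}$ by reindexing, hence a canonical trivialisation $\bigotimes_{V}(\det V)^{2\rk V}\simeq\odi Y$. Squaring the previous isomorphism, inverting, and regrouping the factors $(\det\Omega_{V}^{f})^{-2\rk V}$ as $(\det\Omega_{V}^{f})^{-\rk V}\otimes(\det\Omega_{\duale V}^{f})^{-\rk V}$ (again by the involution) yields
\[
(\det\alA)^{-2}\simeq\bigotimes_{V\in I_{G}}\bigl((\det\Omega_{V}^{f})^{-1}\otimes(\det\Omega_{\duale V}^{f})^{-1}\bigr)^{\rk V}.
\]

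Next I would identify the section. The trace map $\tr_{\alA}\colon\alA\arr\odi Y$ is $G$-invariant (since $\tr(g\cdot a)=\tr(gM_{a}g^{-1})=\tr(M_{a})$), so it factors through the projection $\pi\colon\alA\arr\alA^{G}=\odi Y$ and restricts to multiplication by $\rk G$ on $\odi Y$. Applying Lemma \ref{lem:omega and the projection} then shows that the bilinear form $(a,b)\mapsto\tr(ab)$ vanishes on $(\duale V\otimes\Omega_{V}^{f})\times(\duale W\otimes\Omega_{W}^{f})$ unless $W=\duale V$, and for $W=\duale V$ it equals $(\rk G/\rk V)\,e_{V}\otimes\omega_{V}$, where $e_{V}$ is the evaluation pairing. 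Consequently $\widehat{\tr}_{\alA}$ is block diagonal with respect to the involution, with blocks
\[
(\rk G/\rk V)\cdot\id_{\duale V}\otimes\omega_{V}^{\flat}\colon\duale V\otimes\Omega_{V}^{f}\arr\duale V\otimes\duale{\Omega_{\duale V}^{f}}.
\]

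Finally, I would compute determinants block by block: the $V$-block contributes $(\rk G/\rk V)^{(\rk V)^{2}}\det(\omega_{V}^{\flat})^{\rk V}=(\rk G/\rk V)^{(\rk V)^{2}}s_{f,V}^{\rk V}$. Multiplying over $V\in I_{G}$ expresses $s_{f}=\det\widehat{\tr}_{\alA}$ as $C\cdot\bigotimes_{V}s_{f,V}^{\rk V}$ with $C=\prod_{V}(\rk G/\rk V)^{(\rk V)^{2}}$. Since $\rk G$ is assumed invertible in $R$ and each $\rk V$ is invertible by Lemma \ref{lem:rk V invertible for irreducible representations}, $C\in R^{*}$, and rescaling the isomorphism of invertible sheaves by $C$ sends $s_{f}$ to $\bigotimes_{V}s_{f,V}^{\rk V}$ as required. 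The main obstacle is the bookkeeping for the self-dual representations $V=\duale V$ in the involution argument and keeping track of the scalar $\rk G/\rk V$ when decomposing $\widehat{\tr}$; once Lemma \ref{lem:omega and the projection} is in hand, everything else is a computation with determinants of tensor products.
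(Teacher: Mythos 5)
Your proof is correct and follows essentially the same route as the paper, which establishes this proposition inside Lemma \ref{lem:trace for ring with group action}: there too the trace is shown to equal $(\rk G)\pi$, Lemma \ref{lem:omega and the projection} identifies the blocks of $\widehat{\tr}_{\alA}$ as $(\rk G/\rk V)(\id_{\duale V}\otimes\xi_{V})$ pairing the $V$- and $\duale V$-isotypic pieces, and the conclusion follows by taking determinants, with $\rk G,\rk V\in R^{*}$ absorbing the constants. Your explicit bookkeeping of the $\det V$ factors via the involution on $I_{G}$ just spells out what the paper leaves implicit.
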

Given a regular in codimension $1$ scheme $Y$ and a codimension
$1$ point $q$ of $Y$ we denote by $v_{q}$ the discrete valuation
associated with $\odi{Y,q}$.
\begin{thm}
\label{thm:equivalent conditions for regularity for glrg, global version}Let
$S$ be a scheme and $G$ be a finite and étale linearly reductive
group over $S$. Let also $Y$ be an integral, noetherian and regular
in codimension $1$ (resp. normal) $S$-scheme and $f\colon X\arr Y$
be a cover with a generically faithful action of $G$ on $X$ such
that $f$ is $G$-invariant and $X/G=Y$. Then the following are equivalent:
\begin{enumerate}
\item $X$ is regular in codimension $1$ (resp. normal);
\item the geometric stabilizers of the codimension $1$ points of $X$ are
solvable and for all $q\in Y^{(1)}$ we have $v_{q}(s_{f})<\rk G$.
\end{enumerate}
In this case $f$ is generically a $G$-torsor, $X\in\stZ_{G}(Y)$
and the stabilizers of the codimension $1$ points of $X$ are cyclic.
Moreover if $G$ is a glrg over $S$ the above conditions are also
equivalent to
\begin{enumerate}
\item [3)]the geometric stabilizers of the codimension $1$ points of $X$
are solvable, $f\in\GCov$ and for all $q\in Y^{(1)}$ and $V\in I_{G}$
we have $v_{q}(s_{f,V})\leq\rk V$.
\end{enumerate}
\end{thm}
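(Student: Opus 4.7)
The strategy is to reduce the global statement to the local Theorem~\ref{thm:equivalent conditions for regularity for glrg} by passing to the stalks at codimension~$1$ points of $Y$, where the base becomes a DVR.

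First, I would verify that each condition in the statement is determined by what happens at codimension~$1$ points of $Y$. Since $f$ is finite and flat, codimension is preserved, so $X$ is regular in codimension~$1$ if and only if $\odi{X,q}=(f_{*}\odi X)_{q}$ is regular for every $q\in Y^{(1)}$. For the normality version, $G$-covers have Cohen--Macaulay fibers so $X$ is $S_{2}$ whenever $Y$ is normal, and Serre's criterion reduces normality to regularity in codimension~$1$. The remaining conditions---solvability of geometric stabilizers, $f\in\GCov$, and the valuation inequalities---are manifestly local at codimension~$1$ points; note that by Lemma~\ref{lem:G covers property closed for linearly reductive groups} the substack $\GCov$ is open and closed in $(\LAlg_{R}^{G})^{\textup{gr}}$, so being a $G$-cover can indeed be detected fiberwise.

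Next, I would translate the global sections $s_{f}$ and $s_{f,V}$ into the local lengths $e^{A}$ and $e_{V}^{A}$. Fix $q\in Y^{(1)}$ and write $R=\odi{Y,q}$, $A=\odi{X,q}$: over the DVR $R$ every invertible sheaf is free, so $s_{f}$ specializes to $\det\widehat{\tr}_{A/R}$ up to a unit. The valuation of the determinant of an injective map between free $R$-modules of equal rank equals the length of its cokernel, hence $v_{q}(s_{f})=\l(\Coker\widehat{\tr}_{A/R})=e^{A}$. The analogous identification $v_{q}(s_{f,V})=e_{V}^{A}$ uses that when $f\in\GCov$, both $\Omega_{V}^{f}$ and $\Omega_{\duale V}^{f}$ have rank $\rk V$; compatibility with the decomposition of Proposition~\ref{prop:decomposition of discriminant} is already packaged into that proposition.

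Finally, I would apply Theorem~\ref{thm:equivalent conditions for regularity for glrg} at each $q\in Y^{(1)}$. Its hypotheses transfer: $A^{G}=R$ follows from $X/G=Y$ by flat base change (using linear reductivity of $G$), and generic faithfulness descends to the localization because $\Spec R\to Y$ hits the generic point. Collecting the pointwise equivalences yields 1)~$\Leftrightarrow$~2) and 1)~$\Leftrightarrow$~3). The concluding claims then follow from the pointwise version: generic torsoriality and the cyclic-stabilizer description are pointwise statements, while for $f\in\stZ_{G}(Y)$ the open locus $U\subseteq Y$ on which $f$ is a $G$-torsor contains the generic point and every codimension~$1$ point, so its complement has codimension $\geq 2$; hence $U$ is schematically dense and the classifying morphism $Y\to\GCov$, sending the dense open $U$ into the closed substack $\stZ_{G}$, factors through $\stZ_{G}$ because $Y$ is reduced. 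The main obstacle, I expect, is the translation in the second paragraph: one must carefully unwind the construction of $s_{f}$ and $s_{f,V}$ as determinants of maps between locally free sheaves, verify that localization at $q$ turns them into $\det\widehat{\tr}_{A/R}$ and $\det(\Omega_{V}^{A}\to\duale{\Omega_{\duale V}^{A}})$ respectively, and then invoke the length-of-cokernel identity; once this bookkeeping is in place, the local theorem supplies all the substantive content and the global assembly is essentially formal.
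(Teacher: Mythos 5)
Your proposal is correct and follows essentially the same route as the paper, whose proof is exactly this reduction: observe that all conditions are local at codimension~$1$ points (using Cohen--Macaulay fibers plus Serre's criterion for the normal case), pass to the DVR $R=\odi{Y,q}$, identify $v_{q}(s_{f})=e^{A}$ and $v_{q}(s_{f,V})=e_{V}^{A}$ as lengths of cokernels, and invoke Theorem~\ref{thm:equivalent conditions for regularity for glrg}. One harmless slip: the torsor locus $U$ need not contain the codimension~$1$ points of $Y$ (it misses any $q$ with $v_{q}(s_{f})>0$), but your factorization through $\stZ_{G}$ survives anyway, since $U$ is a nonempty open subset of the integral scheme $Y$, hence schematically dense, which is all the argument requires.
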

Note that the geometric stabilizers are automatically solvable if
$G$ has this property. We will show how to obtain the above theorem
as corollary of Theorem \ref{thm:equivalent conditions for regularity for glrg}
as soon as we have introduced the definitions of faithful action and
geometric stabilizer for an étale group scheme.
\begin{defn}
\label{def: generically faithful} By a faithful action of a group
scheme $G$ on a scheme $X$ we mean an action such that the associated
morphism of functors $G\arr\Autsh X$ is injective. When both $G$
and $X$ are defined over a scheme $S$, we will say that the action
of $G$ on $X$ is generically faithful over $S$ if it is faithful
over a dense open subset of $S$. We will often omit to specify the
base scheme $S$ when it will be clear from the context.\end{defn}
\begin{rem}
If $G$ and $X$ are covers of a scheme $S$, then the locus in $S$
where $G$ acts faithfully on $X$ is open. Moreover, if $G$ is constant
and $S$ is integral then the action of $G$ on $X$ is generically
faithful if and only if the map of sets $G\arr\Aut X$ is injective.
Indeed, if $f\colon X\arr S$ is the structure morphism, $\Autsh X$
is a locally closed subscheme of the vector bundle $\Endsh_{S}(f_{*}\odi X)$.
In particular the kernel $H$ of the map $G\arr\Autsh X$ is a closed
subscheme of $G$, so that $H\arr S$ is a finite group scheme. In
particular the locus where the zero section $S\arr H$ is an isomorphism,
which is open, is the locus where $G$ acts faithfully on $X$. When
$G$ is constant, $S$ is integral and we write $X=\Spec\alA$ and
$k(S)$ for the field of fractions of $S$, the action is generically
faithful over $S$ if and only if $G\times k(S)\arr\Autsh(\alA\otimes k(S))$
is injective, which is equivalent to the injectivity of the map of
sets $G\arr\Aut(\alA\otimes k(S)),$ because the maps $\Aut(\alA\otimes k(S))\arr\Aut(\alA\otimes\odi U)$
are injective for all $k(S)$-schemes $U$. Finally, since $\Aut\alA\arr\Aut(\alA\otimes k(S))$
is injective, we can also conclude that $G\arr\Aut\alA$ is injective
if and only if $G\arr\Aut(\alA\otimes k(S))$ is so.\end{rem}
\begin{lem}
\label{lem:generically G torsor implies faithful action}Assume that
$R$ is reduced and let $A\in\LAlg^{G}R$. Then $A$ is generically
a $G$-torsor if and only if it is generically étale, the action of
$G$ is generically faithful and $A^{G}=R$. In this case $\rk A=|G|$
and the action of $G$ is faithful.\end{lem}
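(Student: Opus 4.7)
The implication $A$ is generically a $G$-torsor $\Rightarrow$ (generically étale, generically faithful, $A^G=R$) is direct: a $G$-torsor is étale because $G$ is, satisfies $A^G=R$ by faithfully flat descent along the torsor projection, and the map $G\to\Autsh X$ factors through the isomorphism $G\times X\xrightarrow{\sim}X\times_R X$, whence it is injective, i.e.\ faithful.

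For the converse, my plan is to reduce to the case where $R=k$ is an algebraically closed field. Being a $G$-torsor is an open condition on $\Spec R$, so I may replace $\Spec R$ by any dense open; since $R$ is reduced, restricting to each generic point produces a field, and further base-changing to its algebraic closure preserves all three hypotheses (they are stable under flat base change) as well as the conclusion (``being a $G$-torsor'' descends along faithfully flat maps). Over $\bar k$, the étale group scheme $G$ becomes constant, and the étale algebra $A$ becomes $\bar k^{n}$, so $X=\Spec A$ is a finite $G$-set with $n=\rk A$ elements. By Lemma~\ref{lem:G acts transetively on the maximal ideals of fibers}, the hypothesis $A^G=\bar k$ forces the $G$-action on $X$ to be transitive; the plan is then to show that the stabilizer $H$ of a point is trivial, whence $n=|G|$ and the canonical map $G\times X\to X\times X$, $(g,x)\mapsto(gx,x)$, is a bijection between sets of equal cardinality that is surjective by transitivity, exhibiting $A$ as a $G$-torsor.

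To establish $H=1$, I would translate to the functorial language of Section~4.2: by Frobenius reciprocity applied to $A\simeq\ind_H^G\mathbf{1}$, the associated lax monoidal functor is $\Omega^A_V\simeq V^H$, and Theorem~\ref{thm:torsors are isomorphism} says $A$ is a torsor iff $\Omega^A$ is strong monoidal, i.e.\ $V^H\otimes W^H\to(V\otimes W)^H$ is an isomorphism for all $V,W\in\Loc^G \bar k$. Faithfulness of the $G$-action on $X$ means exactly that $H$ is core-free in $G$; combined with linear reductivity of both $G$ and $H$ and the Tannakian description, this should force $H$ to act trivially on every irreducible $G$-representation, hence $H=1$. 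Once the torsor property is known on a dense open, the closing claims ($\rk A=|G|$ everywhere, global faithfulness) follow because $\rk A$ is locally constant and because a morphism of finite flat $R$-group schemes between groups of equal rank over a reduced base that is an isomorphism on a dense open must remain a monomorphism globally.

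\textbf{Main obstacle.} The crux is precisely the step $H=1$: a faithful transitive action of a linearly reductive group on a finite set need not be free in general (the standard example $S_3\curvearrowright\{1,2,3\}$ has stabilizer $S_2\neq 1$), so the argument cannot rest only on the set-theoretic action on $X$. One must genuinely use the algebra structure of $A$, interpreting ``faithful'' through the map $G\to\Autsh X$ of finite flat $R$-group schemes and feeding this into the Tannakian criterion above. I expect this to be the key technical point of the proof.
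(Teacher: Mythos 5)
Your forward implication is sound and close in spirit to the paper's, with one caveat: your closing justification for \emph{global} faithfulness treats $\Autsh X$ as a finite flat group scheme, which it is not (it is only an affine group scheme of finite presentation), so the ``equal rank over a reduced base'' argument does not parse. The paper's route avoids this: since $R$ is reduced and the torsor locus is dense, $A\in\stZ_{G}(R)\subseteq\GCov(R)$ by \ref{lem:G covers property closed for linearly reductive groups}, so $A$ is fppf-locally the regular representation as a comodule; this yields $A^{G}=R$ and $\rk A=|G|$ at once, and faithfulness follows because injectivity of $G\arr\Autsh X$ is an fppf-local condition which holds for the regular representation. No extension-from-a-dense-open argument is needed.

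For the converse, your reduction to an algebraically closed field $\overline{k}$ is exactly the paper's first (and only) step; the paper then invokes ``a classical result in the theory of \'etale Galois covers'' and stops. The obstacle you flag is genuine, and pursued to its end it shows that the Tannakian rescue you hope for cannot exist: over $\overline{k}$ an \'etale algebra with transitive $G$-action is $\ind_{H}^{G}\overline{k}$ for $X\simeq G/H$, so the multiplication carries no information beyond the $G$-set, and your own computation $\Omega_{V}^{A}\simeq V^{H}$ combined with \ref{thm:torsors are isomorphism} gives: $A$ is a torsor if and only if $H=1$ (for $H\neq1$ take $V=W=\overline{k}[G]$, where $\dim V^{H}\otimes W^{H}=[G:H]^{2}<|G|\,[G:H]=\dim(V\otimes W)^{H}$), whereas faithfulness of the action on $G/H$ only makes $H$ core-free. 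Your $S_{3}$-example, read as the algebra $A=\overline{k}^{3}$ with the permutation action, in fact satisfies \emph{every} hypothesis of the lemma --- \'etale, faithful in the sense of \ref{def: generically faithful}, $A^{G}=\overline{k}$ --- and is not a torsor, so the ``if'' direction fails as literally stated; the paper's one-line appeal to the classical result hides the same gap rather than filling it. The classical statement that is true is Artin's lemma, which requires the generic fibre to be a field (equivalently, $\Spec A$ generically connected): if $L=A\otimes k(\eta)$ is a field with $G\subseteq\Aut(L)$ and $L^{G}=k(\eta)$, then $[L:k(\eta)]=|G|$ and $L/k(\eta)$ is Galois with group $G$, hence a torsor, and torsority spreads to a dense open. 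Alternatively, adding the hypothesis $\rk A=\rk G$ makes the transitive action on $|G|$ geometric points free, and your counting argument then closes the proof. So your proposal is not missing an idea that the paper supplies; the lemma itself needs one of these supplementary hypotheses for the converse, and you should record that when applying it.
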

\begin{proof}
Assume that $A$ is generically a $G$-torsor. Since $G$ is étale
it is generically étale. Moreover, since $R$ is reduced and thanks
to \ref{lem:G covers property closed for linearly reductive groups},
we have $A\in\stZ_{G}(R)\subseteq\GCov(R)$ and therefore $A^{G}=R$.
To prove the faithfulness, we can assume that $A$ is the regular
representation, since the injectivity of $G\arr\Autsh X$, where $X=\Spec A$,
is an fppf local condition. The result then follows from the fact
that for the regular representation the map $G\arr\Autsh_{R}\WW(R[G])$
is injective.

For the converse we can reduce to the case where $R$ is an algebraically
closed field by looking at the generic points. In particular $G$
is constant and the claim is a classical result in the theory of étale
Galois covers.
\end{proof}
We now introduce the concept of geometric stabilizer of a point.
\begin{defn}
\label{def: geometric stabilizer}Let $\alA\in\LAlg_{R}^{G}T$ such
that $\alA^{G}=\odi T$ and $p\in\Spec\alA$. We define the geometric
stabilizer $H_{p}$ of $p$ as 
\[
H_{p}=\{g\in G_{\overline{k(p)}}\st g(\overline{p})=\overline{p}\}
\]
where $\overline{p}\in\Spec\alA\otimes\overline{k(p)}$ is the point
given by $\alA\otimes_{\odi T}\overline{k(p)}\arr\overline{k(p)}\otimes_{\odi T}\overline{k(p)}\arr\overline{k(p)}$.\end{defn}
\begin{prop}
The geometric stabilizer is invariant by base change, i.e. if we have
a cartesian diagram   \[   \begin{tikzpicture}[xscale=2.1,yscale=-0.5]     \node (A0_0) at (0, 0) {$p'$};     \node (A0_1) at (1, 0) {$p$};     \node (A1_0) at (0, 1) {$\Spec \alA'$};     \node (A1_1) at (1, 1) {$\Spec \alA$};     \node (A3_0) at (0, 3) {$T'$};     \node (A3_1) at (1, 3) {$T$};     \path (A0_0) edge [|->,gray]node [auto] {$\scriptstyle{}$} (A0_1);     \path (A1_0) edge [->]node [auto] {$\scriptstyle{}$} (A3_0);     \path (A1_0) edge [->]node [auto] {$\scriptstyle{}$} (A1_1);     \path (A3_0) edge [->]node [auto] {$\scriptstyle{}$} (A3_1);     \path (A1_1) edge [->]node [auto] {$\scriptstyle{}$} (A3_1);   \end{tikzpicture}   \] 
then $H_{p'}\simeq H_{p}$ under the map $G_{\overline{k(p')}}\arr G_{\overline{k(p)}}$.
Moreover if $G$ is constant then the image of $H_{p}$ under the
map $G_{\overline{k(p)}}\arr G$ is
\[
\{g\in G\st g(p)=p\text{ and the induced map }k(p)\arr k(p)\text{ is the identity}\}
\]
\end{prop}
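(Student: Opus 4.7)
The plan is to handle the two parts of the proposition separately, starting from the étaleness of $G$ for the first and a direct computation with residue fields for the second.

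For part (1), the key observation is that since $G$ is finite and �tale, the natural maps $G(\overline{k(p)}) \to G(\overline{k(p')})$ induced by any compatible embedding $\overline{k(p)} \hookrightarrow \overline{k(p')}$ are bijections, so the group scheme map $G_{\overline{k(p')}} \to G_{\overline{k(p)}}$ is an isomorphism on points. I will first fix compatible embeddings $k(p) \hookrightarrow k(p') \hookrightarrow \overline{k(p')}$ and $\overline{k(p)} \hookrightarrow \overline{k(p')}$, and then verify that the geometric point $\overline{p'}$ is obtained from $\overline{p}$ by base change from $\overline{k(p)}$ to $\overline{k(p')}$; this is immediate from the defining commutative square relating $\alA \to k(p)$ and $\alA' \to k(p')$, together with the identification $\alA' \otimes_{\odi{T'}} \overline{k(p')} \simeq \alA \otimes_{\odi T} \overline{k(p')}$. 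Once this is in place, an element $g$ fixes the closed point $\overline{p}$ of $\Spec(\alA \otimes \overline{k(p)})$ if and only if it preserves the kernel of $\overline{p}$, and faithfully flat descent along $-\otimes_{\overline{k(p)}} \overline{k(p')}$ gives the equivalence with preserving the kernel of $\overline{p'}$, yielding $H_p = H_{p'}$.

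For part (2), since $G$ is constant we identify $G(\overline{k(p)})$ with $G$, and an element $g\in G$ acts on $\alA\otimes\overline{k(p)}$ only through the first tensor factor. I will decompose the condition $g(\overline{p}) = \overline{p}$ into two successive conditions by factoring $\overline{p}$ as
\[
\alA \otimes \overline{k(p)} \twoheadrightarrow k(p)\otimes\overline{k(p)} \to \overline{k(p)},
\]
where the second map is the multiplication determined by the chosen embedding $k(p)\hookrightarrow\overline{k(p)}$. The condition first forces $g^{-1}(p) = p$ in $\Spec \alA$ (otherwise the first surjection is replaced by a different quotient), and then, writing $g_\ast\colon k(p)\to k(p)$ for the induced automorphism, the residual condition is that the two embeddings $k(p)\rightrightarrows\overline{k(p)}$ given by $\iota$ and $\iota\circ g_\ast$ agree, which is exactly $g_\ast=\id_{k(p)}$. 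Combining these yields the claimed description of the image of $H_p$ in $G$.

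The main technical obstacle is bookkeeping the various residue fields and their algebraic closures in a coherent way: one must choose embeddings so that the geometric points $\overline{p}$ and $\overline{p'}$ really are comparable, and check that the identifications $G_{\overline{k(p')}} \simeq G_{\overline{k(p)}}$ and $\alA\otimes\overline{k(p')}\simeq (\alA\otimes\overline{k(p)})\otimes_{\overline{k(p)}}\overline{k(p')}$ are compatible with the group actions. Once this bookkeeping is done carefully, both assertions reduce to elementary statements about algebra homomorphisms to algebraically closed fields and their kernels.
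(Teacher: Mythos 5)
Your proof is correct; since its organization differs from the paper's, let me compare the two. The paper introduces, for constant $G$, the subgroup $K_{p}=\{g\in G\st g(p)=p\text{ and }g\text{ induces }\id_{k(p)}\}$ and proves a single statement --- that $K_{p}$ is invariant under an \emph{arbitrary} cartesian base change --- by a diagram chase with residue fields: if $g\in K_{p'}$ then the map $\alpha\colon k(p)\arr k(g(p))=k(p)$ induced by $g$ is the restriction of $\beta=\id_{k(p')}$, hence the identity; conversely $\alpha=\id$ gives $\alpha\otimes\id=\id$ on the fibre $k(p)\otimes_{\odi T}\odi{T'}$, forcing $g(p')=p'$ and $\beta=\id$. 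Since $k(\overline{p})=\overline{k(p)}$ and the action of $G_{\overline{k(p)}}$ is $\overline{k(p)}$-linear, one has tautologically $H_{p}=K_{\overline{p}}$, so part (2) follows from this lemma applied to the base change $T'=\Spec\overline{k(p)}$, and part (1) by applying it twice. You instead prove the two parts independently: part (1) by exhibiting $\overline{p'}$ as the base change of $\overline{p}$ along a compatible embedding and descending the condition ``$g$ preserves $\ker\overline{p}$'' along the faithfully flat extension $\overline{k(p)}\hookrightarrow\overline{k(p')}$ (your identification of the geometric fibres of $G$ uses that $G$ is \'etale, which is indeed the standing hypothesis of this section); and part (2) by factoring $\overline{p}$ through $k(p)\otimes\overline{k(p)}$ and using that two embeddings of $k(p)$ into $\overline{k(p)}$ whose induced maps have equal kernels must coincide --- which is the paper's step $\alpha=\beta_{|k(p)}$ in disguise. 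What each buys: your version makes the descent mechanism explicit and dispenses with the auxiliary $K_{p}$; the paper's version is more economical, treats arbitrary base changes $T'\arr T$ in one stroke (your part (1) also covers general $T'\arr T$, but only after first collapsing everything to an extension of algebraically closed fields), and records along the way the slightly stronger fact that the explicit set appearing in part (2) is itself invariant under any base change.
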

\begin{proof}
Assume $G$ constant and let $K_{p}$ be the group defined in the
last part of the proposition. We need to prove that $K_{p}$ is invariant
by base change since $K_{\overline{p}}=H_{p}$ where $\overline{p}\in\Spec\alA\otimes\overline{k(p)}$
is as in \ref{def: geometric stabilizer} (also if $G$ is not necessarily
constant). If $g\in G$, we have a commutative diagram   \[   \begin{tikzpicture}[xscale=2.7,yscale=-1.0]     \node (A0_0) at (0, 0) {$k(p)$};     \node (A0_1) at (1, 0) {$k(p)\otimes_{\odi{T}} \odi{T'}$};     \node (A0_2) at (2, 0) {$k(p')$};     \node (A1_0) at (0, 1) {$k(g(p))$};     \node (A1_1) at (1, 1) {$k(g(p))\otimes_{\odi{T}} \odi{T'}$};     \node (A1_2) at (2, 1) {$k(g(p'))$};     \path (A0_0) edge [->]node [auto] {$\scriptstyle{}$} (A0_1);     \path (A0_1) edge [->]node [auto] {$\scriptstyle{}$} (A0_2);     \path (A1_0) edge [->]node [auto] {$\scriptstyle{}$} (A1_1);     \path (A0_2) edge [->]node [auto] {$\scriptstyle{\beta}$} (A1_2);     \path (A1_1) edge [->]node [auto] {$\scriptstyle{}$} (A1_2);     \path (A0_0) edge [->]node [auto] {$\scriptstyle{\alpha}$} (A1_0);     \path (A0_1) edge [->]node [auto] {$\scriptstyle{\alpha\otimes \id}$} (A1_1);   \end{tikzpicture}   \] 
where $\alpha,\beta$ are the maps induced by $g\in\Aut\alA,\Aut\alA'$
respectively. If $g\in K_{p'}$ then $g(p)=p$, $\beta=\id$, $\alpha=\beta_{|k(p)}=\id$
and therefore $g\in K_{p}$. Conversely if $g\in K_{p}$, so that
$\alpha=\id$, then $\alpha\otimes\id=\id$. In particular $g(p')=p'$
and $\beta=\id$, i.e. $g\in H_{p'}$.
\end{proof}

\begin{proof}
(\emph{proof of Theorem }\ref{thm:equivalent conditions for regularity for glrg, global version}
\emph{assuming Theorem }\ref{thm:equivalent conditions for regularity for glrg}).
First note that if $Y$ is normal then $X$ is regular in codimension
$1$ if and only if it is normal, because $f$ has Cohen-Macaulay
fibers. All the statements in the Theorem are local in the codimension
$1$ points of $X$ and $Y$. Therefore we can assume that $Y$ is
the spectrum of a DVR $R$ and that $X=\Spec A$, where $A\in\LAlg^{G}R$.
In order to conclude it is enough to note that, in this case, $e^{A}=v_{R}(s_{f})$
and, if $G$ is a glrg and $A\in\GCov$, $e_{V}^{A}=v_{R}(s_{f,V})$.
\end{proof}
Now that we have collected all the needed definitions, we can start
proving all the lemmas required for the proof of Theorem \ref{thm:equivalent conditions for regularity for glrg}.
\begin{notation}
Given $\alA\in\LAlg^{G}T$ we will denote by $\shP^{\alA/\odi T}=\Ker\tr^{\alA/\odi T}$
and by $\sigma^{\alA/\odi T}\colon\shP^{\alA/\odi T}\otimes\shP^{\alA/\odi T}\arr\odi T$
the restriction of $\tr^{\alA/\odi T}\circ m_{\alA}$. Again we will
simply write $\shP^{\alA}$ and $\sigma^{\alA}$ when $T$ will be
given.
\end{notation}
We now show how to describe the trace map in the case of an algebra
with an action of a glrg.
\begin{lem}
\label{lem:trace for ring with group action}Let $\alA\in\LAlg^{G}T$.
Then $\tr^{\alA}\colon\alA\arr\odi T$ is $G$-equivariant. If we
assume that $G$ is a glrg, that $\alA^{G}=\odi T$ and that $\rk\alA=\rk G$,
then we also have
\[
\shP^{\alA}=\Ker\tr^{\alA}=\bigoplus_{R\neq V\in I_{G}}\duale V\otimes\Omega_{V}^{\alA}\text{ and }\shQ^{\alA}=\bigoplus_{V\in I_{G}}\duale V\otimes\shQ_{V}^{\alA}
\]
Moreover Proposition \ref{prop:decomposition of discriminant} is
true.\end{lem}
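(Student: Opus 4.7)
The plan is to proceed in four steps, mirroring the three assertions in the lemma plus the separate proposition about discriminants.

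First I would establish the $G$-equivariance of the trace. Since $G$ acts on $\alA$ by $\odi T$-algebra automorphisms, for $g \in G$ and $x \in \alA$ the left multiplication $L_{g \cdot x}$ is conjugate to $L_x$ by the automorphism induced by $g$; hence $\tr(g \cdot x) = \tr(x)$. This says $\tr^{\alA}$ factors through $\alA \to \underline{\odi T}$, i.e.\ it is $G$-equivariant when $\odi T$ carries the trivial action. I would then record that under the glrg hypothesis together with $\alA^{G}=\odi T$, Proposition \ref{prop:equivariant coherent sheaves are collection of coherent sheves for glrg} and Proposition \ref{prop:from functors to sheaves for linearly reductive groups} give the canonical $G$-equivariant decomposition $\alA \simeq \bigoplus_{V \in I_{G}} \duale V \otimes \Omega_{V}^{\alA}$, with $\Omega_{R}^{\alA} \simeq \odi T$.

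For the kernel, any $G$-equivariant map $\alA \to \underline{\odi T}$ vanishes on all summands $\duale V \otimes \Omega_{V}^{\alA}$ with $V \neq R$, by Proposition \ref{prop:behaviour of irreducible representations for general glrg}. It therefore suffices to compute $\tr^{\alA}$ on the $V=R$ component $\Omega_{R}^{\alA} \simeq \odi T \cdot 1$: here $\tr(1) = \rk \alA = \rk G$, which is a unit in $R$. Hence $\tr^{\alA}$ restricts to an isomorphism on the trivial component and $\shP^{\alA} = \Ker \tr^{\alA}$ equals $\bigoplus_{R \neq V \in I_{G}} \duale V \otimes \Omega_{V}^{\alA}$, as claimed.

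Next, for the cokernel, I would analyze the $G$-equivariant map $\widehat{\tr}^{\alA} \colon \alA \to \duale{\alA}$. Using the decomposition of $\alA$ and the identification $\duale{\alA} \simeq \bigoplus_{V \in I_{G}} V \otimes \duale{\Omega_{V}^{\alA}}$, Schur-type orthogonality (again \ref{prop:behaviour of irreducible representations for general glrg}) shows that the component from $\duale V \otimes \Omega_{V}^{\alA}$ lands in $\duale V \otimes \duale{\Omega_{\duale V}^{\alA}}$ only, and has the form $\id_{\duale V} \otimes \xi_{V}$ for some map $\xi_{V} \colon \Omega_{V}^{\alA} \to \duale{\Omega_{\duale V}^{\alA}}$. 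To identify $\xi_{V}$ with the map coming from $\omega_{V}$ of the definition, I would compute the pairing $\tr(xy)$ for $x \in \duale V \otimes \Omega_{V}^{\alA}$, $y \in V \otimes \Omega_{\duale V}^{\alA}$: using the monoidality of $\Omega^{\alA}$ and the explicit form of the evaluation $\duale V \otimes V \to R$, this pairing factors as the composition $\Omega_{V}^{\alA} \otimes \Omega_{\duale V}^{\alA} \xrightarrow{\iota_{V,\duale V}} \Omega_{V \otimes \duale V}^{\alA} \to \Omega_{R}^{\alA}$ together with evaluation on $\duale V \otimes V$, up to a factor of $\rk V$ (which is a unit by Lemma \ref{lem:rk V invertible for irreducible representations}). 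This gives $\xi_{V} = (\rk V) \cdot \widehat{\omega_{V}}$, so $\Coker \xi_{V} = \shQ_{V}^{\alA}$ and $\shQ^{\alA} = \bigoplus_{V} \duale V \otimes \shQ_{V}^{\alA}$.

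Finally, taking determinants of the $G$-equivariant isomorphism $\widehat{\tr}^{\alA} \simeq \bigoplus_{V} \id_{\duale V} \otimes \xi_{V}$ yields
\[
(\det \alA)^{-2} \simeq \det \Homsh(\alA,\duale\alA) \simeq \bigotimes_{V \in I_{G}} \bigl(\det \Omega_{V}^{\alA}\bigr)^{-1} \otimes \bigl(\det \Omega_{\duale V}^{\alA}\bigr)^{-1}\bigr)^{\rk V},
\]
under which $s_{f} = \det \widehat{\tr}^{\alA}$ maps to $\bigotimes_{V} s_{f,V}^{\otimes \rk V}$ (the scalar $(\rk V)^{\rk V}$ is absorbed into the isomorphism, or can be shown to cancel; it is a unit). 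The main obstacle I anticipate is the bookkeeping in the third step: pinning down the precise constant relating $\xi_{V}$ to $\widehat{\omega_{V}}$, and verifying that the natural isomorphism of determinants chosen at the end is the one making the section $s_{f}$ correspond cleanly to the product $\bigotimes_{V} s_{f,V}^{\otimes \rk V}$ on the nose.
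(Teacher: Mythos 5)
Your proposal is correct and takes essentially the same route as the paper's proof: $G$-equivariance of $\tr^{\alA}$ via the conjugation-by-automorphisms argument (the paper just reduces \'etale-locally to constant $G$ first), the kernel via the glrg decomposition, Schur orthogonality and $\tr^{\alA}(1)=\rk\alA=\rk G\in\odi T^{*}$, and the cokernel and discriminant by block-diagonalizing $\widehat{\tr}_{\alA}$ and identifying each block with the map induced by $\omega_{V}$ up to a unit scalar, which is exactly what the paper does by combining $\tr^{\alA}=(\rk G)\,\pi$ with \ref{lem:omega and the projection}. The only discrepancies are cosmetic and harmless, as you yourself anticipate: the unit comes out as $\rk G/\rk V$ rather than your guessed $\rk V$ (only its invertibility matters, by \ref{lem:rk V invertible for irreducible representations}), and $\det\widehat{\tr}_{\alA}$ should be described as a section of $(\det\alA)^{-2}$ rather than via a determinant of $\Homsh(\alA,\duale{\alA})$, with the unit factors absorbed into the isomorphism whose existence Proposition \ref{prop:decomposition of discriminant} asserts.
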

\begin{proof}
In order to prove that $\tr^{\alA}$ is $G$-equivariant we can work
locally in the étale topology and assume $T$ affine and $G$ constant.
In this case it is clear that $\tr^{\alA}(g(x))=x$ for any $g\in G$
and $x\in\alA$. Now assume that $G$ is a glrg. We will have that
\[
\Ker\tr^{\alA}=\bigoplus_{V\in I_{G}}\duale V\otimes\Gamma_{V}\text{ with }\Gamma_{V}\subseteq\Omega_{V}^{\alA}
\]
Note that since $G$ is linearly reductive, $\rk\alA=\rk G$ is invertible
in $\odi T^{*}$ and therefore $\tr^{\alA}\colon\alA\arr\odi T$ is
surjective. So
\[
\odi T=\bigoplus_{V\in I_{G}}\duale V\otimes(\Omega_{V}^{\alA}/\Gamma_{V})
\]
is a $G$-equivariant decomposition and therefore $\Gamma_{V}=\Omega_{V}^{\alA}$.
Let $V,W\in I_{G}$. By construction the product of elements of $\duale V\otimes\Omega_{V}$
and $\duale W\otimes\Omega_{W}$ lies in $\Ker\tr^{\alA}$, i.e. has
no component in $\alA^{G}=\Omega_{R}^{\alA}$, except for the case
when $W=\duale V$. So the trace map $\alA\arr\duale{\alA}$ is the
direct sum of the maps induced by $\delta_{V}\colon\duale V\otimes\Omega_{V}\otimes V\otimes\Omega_{\duale V}\arr\alA\otimes\alA\arr\alA\arrdi{\tr_{\alA}}\odi T$.
We have seen that $\tr_{\alA}=(\rk G)\pi$, where $\pi$ is the projection
according to the $G$-equivariant decomposition of $\alA$. By \ref{lem:omega and the projection},
the map $\delta_{V}$ is given by
\[
\duale V\otimes\Omega_{V}\otimes V\otimes\Omega_{\duale V}\simeq\duale V\otimes V\otimes\Omega_{V}\otimes\Omega_{\duale V}\arrdi{u(e_{\duale V}\otimes\omega_{V})}\Omega_{R}
\]
where $e_{*}\colon(*)\otimes\duale{(*)}\arr R$ is the evaluation
map and $u=\rk G/\rk V$, which is invertible by \ref{lem:rk V invertible for irreducible representations}.
So the map induced by the above morphism is exactly $u(\id_{\duale V}\otimes\xi_{V})\colon\duale V\otimes\Omega_{V}^{\alA}\arr\duale V\otimes\duale{(\Omega_{\duale V}^{\alA})}$,
as required. For the last claim, if $\alA\in\GCov$, it is enough
to note that $\det\xi_{V}$ induces the section $s_{f,V}$, where
$f$ is the map $\Spec\alA\arr T$. 
\end{proof}
One of the key points in the proof of Theorem \ref{thm:equivalent conditions for regularity for glrg}
is the local case. We are going now to focus on it.
\begin{lem}
\label{lem:equivalent conditions for local ring regarding regularity}Let
$R$ be a strictly Henselian DVR, $A\in\LAlg^{G}R$ such that $A$
is local, $A^{G}=R$ and $\rk A=|G|$. Then
\[
\shQ^{A}=\Coker(\shP^{A}\arrdi{\widehat{\sigma^{A}}}\duale{(\shP^{A})})\comma\Imm\widehat{\sigma^{A}}\subseteq m_{R}\duale{(\shP^{A})}\comma m_{A}=m_{R}\oplus\shP^{A}\comma\tr_{A/R}(m_{A})\subseteq m_{R}
\]
Moreover the following conditions are equivalent
\begin{enumerate}
\item $\widehat{\sigma^{A}}$ is surjective onto $m_{R}\duale{(\shP^{A})}$;
\item $\shQ^{A}$ is defined over the closed point of $R$;
\item $e^{A}=\rk A-1$
\item $e^{A}<\rk A$.
\end{enumerate}
\end{lem}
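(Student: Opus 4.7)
The plan is to first verify the four preliminary identities and then deduce the equivalence of the four conditions by a Smith normal form analysis of $\widehat{\sigma^{A}}$ on the free $R$-module $\shP^{A}$. Since $G$ is \'etale over the strictly Henselian ring $R$, it is constant and hence, by Theorem \ref{thm:etale linearly reductive over sctrictly are glrg}, a glrg with $G=\underline{G}$ and $G_{1}=1$. Lemma \ref{lem:trace for ring with group action} applied to $A$ (which satisfies $A^{G}=R$ and $\rk A=|G|$) then gives $\shP^{A}=\bigoplus_{R\neq V\in I_{G}}\duale V\otimes\Omega_{V}^{A}$, so that $A=R\oplus\shP^{A}$ as $G$-modules and $\tr_{A/R}$ is $\rk A$ times the projection onto the first factor. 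The identity $m_{A}=m_{R}\oplus\shP^{A}$ is then the content of Lemma \ref{lem:what happens on local rings} applied to $G=\underline{G}$; from $\tr(\shP^{A})=0$ and $\tr(m_{R})\subseteq m_{R}$ one immediately gets $\tr(m_{A})\subseteq m_{R}$. For the cokernel identity, the trace bilinear form $A\otimes A\to R$, $x\otimes y\mapsto\tr(xy)$, is block-diagonal with respect to the decomposition $A=R\oplus\shP^{A}$: the cross terms $\tr(r\cdot x)=r\tr(x)$ vanish for $x\in\shP^{A}$, the $R\otimes R$-block is the isomorphism $\cdot\rk A$, and the $\shP^{A}\otimes\shP^{A}$-block is $\sigma^{A}$. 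Hence $\widehat{\tr}_{A/R}$ is the direct sum of an isomorphism and $\widehat{\sigma^{A}}$, so $\shQ^{A}=\Coker\widehat{\sigma^{A}}$. Finally $\widehat{\sigma^{A}}(\shP^{A})\subseteq m_{R}\duale{\shP^{A}}$ since $\shP^{A}\cdot\shP^{A}\subseteq m_{A}\cdot m_{A}\subseteq m_{A}$ and $\tr(m_{A})\subseteq m_{R}$.

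For the equivalences, observe that $\shP^{A}$ is a free $R$-module of rank $n=\rk A-1$, being the kernel of the split surjection $A\arr R$ given by $\tr/\rk A$ between free modules over the DVR $R$. The equivalence $(1)\Leftrightarrow(2)$ is immediate from $\shQ^{A}=\duale{\shP^{A}}/\Imm\widehat{\sigma^{A}}$ together with the containment $\Imm\widehat{\sigma^{A}}\subseteq m_{R}\duale{\shP^{A}}$: condition $(2)$ amounts to $m_{R}\duale{\shP^{A}}\subseteq\Imm\widehat{\sigma^{A}}$. The implication $(3)\Rightarrow(4)$ is trivial. For $(4)\Rightarrow(3)$ choose bases of $\shP^{A}$ and $\duale{\shP^{A}}$; then $\widehat{\sigma^{A}}$ is represented by an $n\times n$ matrix with entries in $m_{R}$, so its determinant $d\in R$ satisfies $v_{R}(d)\geq n$. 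The hypothesis $e^{A}<\rk A=n+1$ forces $d\neq 0$ (otherwise $\shQ^{A}$ would not even be torsion and $e^{A}=\infty$), so $e^{A}=v_{R}(d)$, and the bounds $n\leq v_{R}(d)=e^{A}<n+1$ force $e^{A}=n=\rk A-1$. For $(3)\Rightarrow(1)$ apply Smith normal form to $\widehat{\sigma^{A}}$: in suitable bases it becomes $\mathrm{diag}(\pi^{a_{1}},\ldots,\pi^{a_{n}})$, where $\pi$ is a uniformizer of $R$; the basis-independent containment $\Imm\widehat{\sigma^{A}}\subseteq m_{R}\duale{\shP^{A}}$ forces $a_{i}\geq 1$, and $\sum a_{i}=v_{R}(d)=n$ then forces $a_{i}=1$ for all $i$, giving $\Imm\widehat{\sigma^{A}}=m_{R}\duale{\shP^{A}}$.

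The argument presents no serious obstacle: the only subtleties are to observe that over a strictly Henselian base the \'etale group $G$ is automatically a glrg (so the preceding structure lemmas apply), that the containment $\Imm\widehat{\sigma^{A}}\subseteq m_{R}\duale{\shP^{A}}$ is a property of the submodule and is therefore preserved by independent changes of basis on source and target (legitimising the Smith normal form step), and that finiteness of $e^{A}$---forced by $(4)$---is what allows the identity $e^{A}=v_{R}(\det\widehat{\sigma^{A}})$ to be used.
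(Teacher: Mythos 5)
Your preliminary identities and the equivalences $(1)\Leftrightarrow(2)$ and $(3)\Leftrightarrow(4)$ are correct and follow essentially the paper's own route: the same reduction to a constant glrg over the strictly Henselian base, the same block-diagonal splitting of $\widehat{\tr}_{A/R}$ as an isomorphism on $R$ plus $\widehat{\sigma^{A}}$ on $\shP^{A}$ (via \ref{lem:trace for ring with group action} and \ref{lem:what happens on local rings}), and the same valuation count $v_{R}(\det\widehat{\sigma^{A}})\geq\rk A-1$ coming from $\Imm\widehat{\sigma^{A}}\subseteq m_{R}\duale{(\shP^{A})}$. Your Smith-normal-form proof of $(3)\Rightarrow(1)$ is also valid, and you rightly note that the containment is basis-independent, which legitimises that step; the paper reaches the same conclusion slightly more directly in its $(4)\Rightarrow(1)$ step, dividing $\widehat{\sigma^{A}}$ by a uniformizer $\pi$ and observing that $0\leq v_{R}(\det(\widehat{\sigma^{A}}/\pi))=e^{A}-\rk A+1<1$ forces $\widehat{\sigma^{A}}/\pi$ to be an isomorphism, with no basis changes at all.

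There is, however, one genuine (if easily repaired) logical gap: the implications you actually establish are $(1)\Leftrightarrow(2)$, $(3)\Leftrightarrow(4)$ and $(3)\Rightarrow(1)$, and no implication runs from $(1)$ or $(2)$ back into $(3)$ or $(4)$. As written you have only proved that $(3)$ and $(4)$ imply $(1)$ and $(2)$, not the full equivalence of the four conditions. The missing arrow is immediate and is exactly how the paper closes the loop: under $(1)$ one has $\shQ^{A}=\duale{(\shP^{A})}/m_{R}\duale{(\shP^{A})}\simeq(R/m_{R})^{\rk A-1}$, whence $e^{A}=\rk A-1$, i.e.\ $(1)\Rightarrow(3)$ (the paper states this as $(2)\Rightarrow(3)$, noting $\shQ^{A}\simeq(R/m_{R})^{|G|-1}$). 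Add that one line and your argument is complete.
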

\begin{proof}
Since $G$ is linearly reductive, $\rk A=\rk G\in R^{*}$and the trace
map $\tr^{A}$ is surjective and since $G$ is étale and $R$ strictly
Henselian, $G$ is constant. In particular $\shP^{A}$ is free and
$A=R\oplus\shP^{A}$. Moreover $m_{A}=\shP^{A}\oplus m_{R}$ thanks
to \ref{lem:trace for ring with group action} and \ref{lem:what happens on local rings}
and therefore $\tr^{A}(m_{A})\subseteq m_{R}$. This shows that $\widehat{\sigma^{A}}$
has image in $\Hom(\shP^{A},m_{R})=m_{R}\duale{(\shP^{A})}$. Since
$\widehat{\tr}^{A}\colon A\arr\duale A$ is the sum of the multiplication
$R\arrdi{\rk A}\duale R$ and the map $\widehat{\sigma^{A}}$, we
have that $\shQ^{A}=\Coker\widehat{\sigma^{A}}$ and that $\shQ^{A}$
is defined over the closed point of $R$ if and only if $m_{R}\duale{(\shP^{A})}\subseteq\Imm\sigma_{A}$,
i.e. the equivalence between $1)$ and $2)$ holds. In this case $\shQ^{A}=\duale{(\shP^{A})}/m_{R}\duale{(\shP^{A})}\simeq(R/m_{R})^{|G|-1}$,
which shows $2)\then3)\then4)$. So it remains to prove that $4)\then1)$.
Let $\pi\in R$ be a uniformizer. Since $\widehat{\sigma^{A}}$ has
image in $\pi\duale{(\shP^{A})}$ we can set $u=\widehat{\sigma^{A}}/\pi\colon\shP^{A}\arr\duale{(\shP^{A})}$
and we have to prove that $u$ is an isomorphism. Note that, by construction,
$e^{A}=v_{R}(\det\widehat{\sigma^{A}})$ and so
\[
0\leq v_{R}(\det u)=e^{A}-\rk A+1<1\then\det u\in R^{*}\then u\text{ isomorphism}
\]
\end{proof}
\begin{lem}
\label{lem:orthogonal for bilinear map}Let $R$ be a DVR, $P$ be
a free $R$-module and $\eta\colon P\otimes P\arr R$ be an $R$-linear
map. Then if $Q\subseteq P$ is a saturated submodule and we assume
that both $\widehat{\eta}$ and $\widehat{\eta_{|Q\otimes Q}}$ are
injective then $Q^{\perp}=\{x\in P\st\eta(x\otimes y)=0\text{ for all }y\in Q\}$
is saturated too and $Q\oplus Q^{\perp}=P$.%
\end{lem}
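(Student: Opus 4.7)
The plan is to split the assertion into three pieces: $Q^{\perp}$ is saturated in $P$; $Q\cap Q^{\perp}=0$; and $Q+Q^{\perp}=P$.

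First I would dispatch the two easy parts. For saturation of $Q^{\perp}$: if $a\in R\setminus\{0\}$ and $x\in P$ satisfy $ax\in Q^{\perp}$, then $a\,\eta(x,y)=0$ for every $y\in Q$, and since $R$ is a domain this forces $\eta(x,y)=0$, so $x\in Q^{\perp}$. For the intersection: any $x\in Q\cap Q^{\perp}$ satisfies $\eta(x,-)|_{Q}=0$, so $\widehat{\eta_{|Q\otimes Q}}(x)=0$, and injectivity of the restriction gives $x=0$.

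The substance of the lemma is the identity $Q+Q^{\perp}=P$. The plan is to introduce the $R$-linear map $\rho\colon P\arr\duale Q$, $\rho(x)=\eta(x,-)|_{Q}$, whose kernel is by definition $Q^{\perp}$ and whose restriction to $Q$ coincides with $\widehat{\eta_{|Q\otimes Q}}$. Observe that the desired identity is equivalent to $\rho(Q)=\rho(P)$: granted this, for each $x\in P$ there is $q\in Q$ with $\rho(x)=\rho(q)$, so $x-q\in Q^{\perp}$ and $x=q+(x-q)\in Q+Q^{\perp}$. To check $\rho(Q)=\rho(P)$, I would use that $R$ is a PID and $Q\subseteq P$ is saturated to pick a free complement $P=Q\oplus S$ and read off $\eta$ in block form; the inclusion reduces to showing $\Imm(\widehat C)\subseteq\Imm(\widehat A)$, where $\widehat A=\widehat{\eta_{|Q\otimes Q}}\colon Q\arr\duale Q$ is the diagonal block and $\widehat C\colon S\arr\duale Q$ is the off-diagonal block.

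The main obstacle, and the step that actually consumes both injectivity hypotheses together, is precisely $\Imm(\widehat C)\subseteq\Imm(\widehat A)$: after inverting a uniformizer $\widehat A$ becomes an isomorphism (an injection between free modules of the same rank over the fraction field), so the containment is trivial over $K=\textup{Frac}(R)$, and the whole point is to promote it to an integral statement over $R$. I expect this to be handled by putting $\widehat A$ in Smith normal form on $Q$; the elementary divisors of $\widehat A$ are then controlled by the injectivity of the global $\widehat\eta$ on $P=Q\oplus S$, which forces each column of $\widehat C$ in the chosen bases to be divisible by the corresponding elementary divisor of $\widehat A$, yielding integral preimages under $\widehat A$ of the elements of $\Imm(\widehat C)$. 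Combined with $Q\cap Q^{\perp}=0$ this gives the direct sum decomposition $P=Q\oplus Q^{\perp}$.
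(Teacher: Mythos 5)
Your preliminary steps are correct and match the easy part of the paper's argument: saturation of $Q^{\perp}$ follows since $R$ is a domain, $Q\cap Q^{\perp}=\Ker(\widehat{\eta_{|Q\otimes Q}})=0$, and the reduction of $Q+Q^{\perp}=P$ to $\rho(Q)=\rho(P)$, i.e.\ to $\Imm(\widehat{C})\subseteq\Imm(\widehat{A})$ after splitting $P=Q\oplus S$, is valid. The gap is the final divisibility claim: injectivity of the global $\widehat{\eta}$ does \emph{not} force the columns of $\widehat{C}$ to be divisible by the elementary divisors of $\widehat{A}$ --- on the contrary, large off-diagonal blocks are precisely what can make $\widehat{\eta}$ injective while $\widehat{A}$ degenerates. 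Concretely, let $\pi$ be a uniformizer of $R$, $P=Re_{1}\oplus Re_{2}$, $Q=Re_{1}$, and let $\eta$ be the symmetric form with Gram matrix
\[
\left(\begin{array}{cc}
\pi & 1\\
1 & 0
\end{array}\right)
\]
Then $\widehat{\eta}$ is an isomorphism (determinant $-1$) and $\widehat{\eta_{|Q\otimes Q}}$ is multiplication by $\pi$, hence injective; but $\Imm(\widehat{A})=\pi\duale{Q}$ while $\Imm(\widehat{C})=\duale{Q}$, so your containment fails. Accordingly $Q^{\perp}=R(e_{1}-\pi e_{2})$ is saturated of rank $1$, yet $Q\oplus Q^{\perp}=Re_{1}\oplus R\pi e_{2}\subsetneq P$.

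This shows the gap is not a repairable slip in your argument: the example satisfies every hypothesis of the lemma and violates its conclusion, so no proof can close it without strengthening the hypotheses. You should also know that the paper's own proof stumbles at the very same point: it applies the snake lemma to the diagram comparing $0\arr Q^{\perp}\arr P\arr P/Q^{\perp}\arr0$ with $0\arr\duale{(P/Q)}\arr\duale{P}\arr\duale{Q}\arr0$, which correctly yields only the rank equality $\rk Q^{\perp}=\rk P-\rk Q$, and then asserts that saturation of both summands plus this rank count gives $Q\oplus Q^{\perp}=P$ --- exactly the jump your divisibility claim was meant to justify, and false in the example above, where both summands are saturated of complementary ranks but their sum has index $\pi$. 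The statement becomes true if one assumes $\widehat{\eta_{|Q\otimes Q}}$ is an \emph{isomorphism} rather than merely injective, in which case your $\rho$-argument finishes in one line since $\rho(Q)=\duale{Q}\supseteq\rho(P)$. In the paper's actual application (the proof of \ref{lem:regularity for local rings with action of solvable groups}) the decomposition $\shP^{A}=\shP^{B}\oplus(\shP^{B})^{\perp}$ survives for a different reason: by \ref{lem:trace for ring with group action} the trace form pairs the $V$-isotypic component of $\shP^{A}$ only with the $\duale{V}$-isotypic one, so the sum of the components not coming from $G/H$-representations is a saturated submodule orthogonal to $\shP^{B}$, and by the (correctly proved) rank equality it must equal $(\shP^{B})^{\perp}$.
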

\begin{proof}
It easy to check that $Q^{\perp}$ is a submodule of $P$ which is
saturated. Moreover
\[
Q\cap Q^{\perp}=\Ker(\widehat{\eta_{|Q\otimes Q}})=0\then Q\oplus Q^{\perp}\subseteq P
\]
Since both $Q$ and $Q^{\perp}$ are saturated, we have only to prove
that $\rk Q^{\perp}=\rk P-\rk Q$. Consider the diagram   \[   \begin{tikzpicture}[xscale=1.5,yscale=-1.0]     \node (A0_0) at (0, 0) {$0$};     \node (A0_1) at (1, 0) {$Q^\perp$};     \node (A0_2) at (2, 0) {$P$};     \node (A0_3) at (3, 0) {$P/Q^\perp$};     \node (A0_4) at (4, 0) {$0$};     \node (A1_0) at (0, 1) {$0$};     \node (A1_1) at (1, 1) {$\duale{P/Q}$};     \node (A1_2) at (2, 1) {$\duale P$};     \node (A1_3) at (3, 1) {$\duale Q$};     \node (A1_4) at (4, 1) {$0$};     \path (A0_1) edge [->,dashed]node [auto] {$\scriptstyle{\alpha}$} (A1_1);     \path (A0_0) edge [->]node [auto] {$\scriptstyle{}$} (A0_1);     \path (A0_1) edge [->]node [auto] {$\scriptstyle{}$} (A0_2);     \path (A1_0) edge [->]node [auto] {$\scriptstyle{}$} (A1_1);     \path (A0_3) edge [->,dashed]node [auto] {$\scriptstyle{\beta}$} (A1_3);     \path (A1_1) edge [->]node [auto] {$\scriptstyle{}$} (A1_2);     \path (A0_3) edge [->]node [auto] {$\scriptstyle{}$} (A0_4);     \path (A0_2) edge [->]node [auto] {$\scriptstyle{\widehat \eta}$} (A1_2);     \path (A1_2) edge [->]node [auto] {$\scriptstyle{}$} (A1_3);     \path (A0_2) edge [->]node [auto] {$\scriptstyle{}$} (A0_3);     \path (A1_3) edge [->]node [auto] {$\scriptstyle{}$} (A1_4);   \end{tikzpicture}   \] 
The first row is exact since $Q^{\perp}$ is saturated, the second
one because $Q$ is so. Note that $Q^{\perp}=\Ker(P\arrdi{\widehat{\eta}}\duale P\arr\duale Q)$
and therefore the maps $\beta$ and $\alpha$ are well defined and
$\beta$ is injective. By the snake lemma we get that $\rk\Coker\beta=0$
and the desired equality. %
\end{proof}
\begin{rem}
\label{rem: generically etale means eA less infty of rk QA zero}If
$R$ is a DVR and $A\in\LAlg^{G}R$ then $A$ is generically étale
if and only if $e^{A}<\infty$ or $\rk\shQ^{A}=0$. Indeed those conditions
are all equivalent to the condition that $\widehat{\tr}_{A\otimes k(R)}$
is an isomorphism.
\end{rem}
The following lemma is the hard part in the proof of Theorem \ref{thm:equivalent conditions for regularity for glrg}.
\begin{lem}
\label{lem:regularity for local rings with action of solvable groups}Assume
that $G$ is a solvable group and let $R$ be a strictly Henselian
DVR. Let also $A\in\LAlg^{G}R$ be a local algebra such that $A^{G}=R$
and that the action of $G$ on $A$ is generically faithful. Then
$A$ is a DVR if one of the following conditions holds:
\begin{itemize}
\item $\shQ^{A}$ is defined over the closed point of $R$;
\item $e^{A}<\rk A$.
\end{itemize}
\end{lem}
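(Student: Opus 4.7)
The plan is to induct on $|G|$ using a composition series of the solvable group, with base case $G \simeq \Z/p\Z$. As a preliminary, I would note that either hypothesis implies $A$ is generically étale (by Remark~\ref{rem: generically etale means eA less infty of rk QA zero}), so $A\otimes_R K$ is a $G$-torsor by Lemma~\ref{lem:generically G torsor implies faithful action}, and in particular $\rk A = |G|$. This activates all the structural results of Lemma~\ref{lem:equivalent conditions for local ring regarding regularity} and Lemma~\ref{lem:trace for ring with group action}.

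For the base case $G \simeq \Z/p\Z$, since $R$ is strictly Henselian and $p \in R^*$, one has $\Z/p\Z \simeq \mu_p$ and $\mathrm{H}^1_{\text{fppf}}(R,\mu_p) = R^*/R^{*p} = 0$, so the underlying $\mu_p$-torsor is trivial and $A \simeq R[X]/(X^p - a)$ for some $a \in R$. Locality of $A$ forces $a \notin R^*$ (otherwise $X^p - a$ splits completely over the strictly Henselian $R$). A direct computation of $\widehat{\tr}_A$ in the basis $1,X,\dots,X^{p-1}$ yields $e^A = (p-1)v_R(a)$ and $\shQ^A \simeq (R/(a))^{p-1}$, so each of the two hypothesis conditions is equivalent to $v_R(a) = 1$, which in turn is equivalent to $A = R[X]/(X^p - \pi)$ being a DVR for $\pi$ a uniformizer.

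For the inductive step, I would pick a normal subgroup $H \triangleleft G$ with $G/H$ cyclic of prime order $p$ (which exists by solvability) and set $B = A^H$. Since $H$ is linearly reductive, $B \subseteq A$ is a direct summand as $R$-module, hence finite flat; being a subring of the local ring $A$ it is local, and its residue field is algebraic over the separably closed $R/m_R$, so $B$ is strictly Henselian with $B^{G/H} = R$. Generic faithfulness of $G/H$ on $B$ and of $H$ on $A$ (over $B$) both follow from $A \otimes_R K$ being a $G$-torsor, by taking $H$-invariants and by the identification $A \otimes_B \mathrm{Frac}\,B = A \otimes_R K$ respectively. Then the base case applied to $B/R$ gives that $B$ is a DVR, and the inductive hypothesis applied to $A/B$ (with the strictly smaller solvable group $H$) gives that $A$ is a DVR.

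The main obstacle is the propagation of the hypotheses through the tower $R \subseteq B \subseteq A$: one must deduce that if $\shQ^{A/R}$ is supported on the closed point (resp.\ $e^{A/R} < \rk A$), then the corresponding property holds for both $B/R$ and $A/B$. The governing relation is the tower formula $\tr_{A/R} = \tr_{B/R} \circ \tr_{A/B}$, which at the level of discriminants becomes $e^{A/R} = \rk_B(A) \cdot e^{B/R} + f_{B/R} \cdot e^{A/B}$, with the bounds $e^{B/R} \leq \rk B - 1$ and $e^{A/B} \leq \rk_B A - 1$ supplied by the preliminary step applied to each factor. After first verifying that the residue field extensions in the tower are trivial (using that the tame inertia preserves the separably closed residue field, since $|G| \in R^*$), the tower formula reduces to $e^{A/R} = \rk_B(A)\cdot e^{B/R} + e^{A/B}$, and the strict inequality $e^{A/R} < \rk A = \rk_B(A) \cdot \rk B$ is forced to distribute as strict inequalities on each summand; a parallel bookkeeping argument, decomposing $\widehat{\tr}_{A/R}$ via $\widehat{\tr}_{B/R}$ and $\widehat{\tr}_{A/B}$, handles the $\shQ$-version of the hypothesis.
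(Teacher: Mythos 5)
Your overall architecture is the same as the paper's (induction on the solvable group via the tower $R\subseteq B=A^{H}\subseteq A$, base case cyclic of prime order, propagation of the hypothesis through the tower using transitivity of traces), but two of your steps have genuine gaps, and they are precisely where the real work lies. First, the base case: you claim that since $\Hl_{\textup{fppf}}^{1}(R,\mu_{p})=R^{*}/R^{*p}=0$ ``the underlying $\mu_{p}$-torsor is trivial'' and hence $A\simeq R[X]/(X^{p}-a)$. But $A$ is only \emph{generically} a torsor, not a torsor over $R$, so there is nothing to trivialize, and the conclusion is false before the hypothesis is used: take $A=R\oplus Rv_{1}\oplus Rv_{2}$ with $v_{1}^{2}=\pi v_{2}$, $v_{2}^{2}=\pi v_{1}$, $v_{1}v_{2}=\pi^{2}$ ($\pi$ a uniformizer). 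This is a local $\mu_{3}$-cover, generically a torsor, and not monogenic ($\dim_{k}m_{A}/m_{A}^{2}=3$), so not of the form $R[X]/(X^{3}-a)$; it has $e^{A}=4$, so it is excluded by the hypothesis of the lemma --- but your argument never brings the hypothesis to bear on such algebras, since they are silently ruled out by the false structure claim. Deriving monogenicity \emph{from} $e^{A}<p$ is the actual content of the base case, and it is what the paper does: writing $A=\bigoplus_{i}Rv_{i}$ with $v_{i}v_{j}=\psi_{i,j}v_{i+j}$ and $e_{i,j}=v_{R}(\psi_{i,j})$, locality gives $e_{i,-i}>0$, the formula $e^{A}=\sum_{i\neq0}e_{i,-i}$ from \ref{lem:trace for ring with group action} forces all $e_{i,-i}=1$, and the associativity relations on the $e_{i,j}$ then kill any second generator of the graded space $m_{A}/m_{A}^{2}$. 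Your computation of $e^{A}$ and $\shQ^{A}$ for $R[X]/(X^{p}-a)$ is correct, but only covers the monogenic case.

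Second, the propagation step is circular as written. The tower formula $e^{A/R}=\rk_{B}(A)\cdot e^{B/R}+f_{B/R}\cdot e^{A/B}$ presupposes that $A$ is free over $B$ (and, in the form with $f_{B/R}$ and $v_{B}$, that $B$ is a DVR); but $B$ being a DVR is what you obtain by applying the base case to $B/R$, which itself requires the propagated bound $e^{B/R}<\rk B$ --- and freeness of $A$ over $B$ is not available earlier (the paper deduces it from torsion-freeness \emph{after} $B$ is known to be a DVR; miracle-flatness style arguments also need $B$ regular). The paper breaks this circle by propagating the hypothesis to the lower layer purely linear-algebraically, with no tower formula: $\shP^{B}=\shP^{A}\cap B$ is a saturated $R$-submodule of $\shP^{A}$ on which the trace form of $A$ restricts, up to the unit $\rk A/\rk B$, to that of $B$; both forms are injective by generic \'etaleness, so \ref{lem:orthogonal for bilinear map} gives an orthogonal splitting $\shP^{A}=\shP^{B}\oplus(\shP^{B})^{\perp}$ and hence an inclusion $\shQ^{B/R}\subseteq\shQ^{A/R}$, so $\shQ^{B/R}$ is killed by $m_{R}$. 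Only then is $A$ shown to be $B$-free, and the condition is pushed to $A/B$ via the factorization $\psi\circ\hat{\tr}_{A/B}=\hat{\tr}_{A/R}$ with $\psi\colon\Hom_{B}(A,B)\arr\Hom_{R}(A,R)$ injective --- note the paper transfers the $\shQ$-form of the condition here, so it never needs residue degrees or the norm-discriminant identity at all. A further slip in your sketch: the ``preliminary step'' supplies the \emph{lower} bounds $e^{B/R}\geq\rk B-1$ and $e^{A/B}\geq\rk_{B}A-1$ (from $\Imm\widehat{\sigma}\subseteq m\duale{(\shP)}$ in \ref{lem:equivalent conditions for local ring regarding regularity}), not the upper bounds you state; it is these lower bounds that force $e^{A/R}<\rk A$ to distribute as equalities $e^{B/R}=\rk B-1$ and $e^{A/B}=\rk_{B}A-1$, whereas with your upper bounds the distribution argument assumes what it is meant to prove.
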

\begin{proof}
Note that if one of the above conditions is satisfied, then $A$ is
generically étale and so generically a $G$-torsor thanks to \ref{rem: generically etale means eA less infty of rk QA zero}
and \ref{lem:generically G torsor implies faithful action}. In particular
$\rk A=\rk G$ and by \ref{lem:equivalent conditions for local ring regarding regularity}
the two conditions in the statement are equivalent. We will make use
of \ref{lem:what happens on local rings} and we will argue by induction
on $\rk G$. Note that $G$ is constant and that by \ref{thm:etale linearly reductive over sctrictly are glrg}
$G$ is a glrg. We first consider the base case $G\simeq\Z/p\Z$,
where $p$ is a prime. There exists a basis $\{v_{i}\}_{i\in G^{*}}$
of $A$ such that $v_{i}v_{j}=\psi_{i,j}v_{i+j}$, with $\psi_{i,j}\in R$.
Set $e_{i,j}=v_{R}(\psi_{i,j})$, where $v_{R}$ is the valuation
of $R$. The associativity conditions yield relations 
\begin{equation}
\psi_{n,t}\psi_{n+t,s}=\psi_{t,s}\psi_{t+s,n}\then e_{n,t}+e_{n+t,s}=e_{t,s}+e_{t+s,n}\then e_{n,-n}=e_{-n,s}+e_{s-n,n}\label{eq:for the regularity of local algebra when abelian}
\end{equation}
From \ref{lem:trace for ring with group action}, we have 
\[
e^{A}=\sum_{i\neq0}e_{i,-i}
\]
Since $A$ is local we have that $e_{i,-i}>0$ for all $i\neq0$.
In particular $e^{A}<p$ implies that $e_{i,-i}=1$ for all $i\neq0$.
From (\ref{eq:for the regularity of local algebra when abelian})
we see that $e_{n,t}\in\{1,0\}$ for all $n,t$. Note that $m_{A}/m_{A}^{2}$
is $G$-equivariant and by contradiction assume that $\dim_{k}m_{A}/m_{A}^{2}>1$.
Since $e_{n,-n}=1$, $v_{n}v_{-n}$ generates the maximal ideal $m_{R}$
of $R$ and therefore $m_{R}A\subseteq m_{A}^{2}$ and $(m_{A}/m_{A}^{2})_{0}=0$.
Moreover $\dim_{k}(m_{A}/m_{A}^{2})_{n}\in\{1,0\}$ for all $n$,
because, if $n\neq0$, $v_{n}\in m_{A}$ and $m_{R}v_{n}\subseteq m_{A}^{2}$.
So there exists $n\neq t\in G^{*}$ such that $v_{n},v_{t}\notin m_{A}^{2}$
and $n,t\neq0$. Note that if $u+s=n$, with $u,s\neq0$, then $e_{u,s}=1$
since otherwise
\[
v_{u}v_{s}=\psi_{u,s}v_{n}\text{ with }\psi_{u,s}\in R^{*}\then v_{n}\in m_{A}^{2}
\]
and similarly if $u+s=t$. So set $u=n-t$ and consider the relation
\[
e_{-u,t+u}+e_{t,u}=e_{u,-u}=1
\]
obtained from (\ref{eq:for the regularity of local algebra when abelian}).
We have that $-u,t+u=n,t,u$ are all non zero and by the remark above
we get
\[
e_{-u,t+u}=e_{t,u}=1\then2=1
\]
which is impossible.

We now come back to the general case. If $G$ is simple, then we are
in the base case. Otherwise take a normal subgroup $0\neq H\neq G$
of $G$ and let $B=A^{H}$.

We want first to prove that $B$ is a DVR using the inductive hypothesis
on $B/R$. Since $H$ is normal, we have that $B\in\LAlg^{G/H}R$
and that it is generically a $G/H$-torsor. From \ref{lem:functor of algebra of invariants by a normal subgroup}
and \ref{lem:trace for ring with group action} we have that
\[
\shP^{B}=\bigoplus_{R\neq W\in I_{G/H}}\duale W\otimes\Omega_{W}^{A}\subseteq\shP^{A}\cap B=\bigoplus_{R\neq V\in I_{G}}(\duale V)^{H}\otimes\Omega_{V}^{A}
\]
Since $B=R\oplus\shP^{B}\subseteq R\oplus\shP^{A}\cap B=B$ we can
conclude that $\shP^{B}=\shP^{A}\cap B$. Moreover $B$ is local.
Indeed if $x\in B\cap A^{*}$ and $y\in A$ is such that $xy=1$ we
will have
\[
1=\prod_{h\in H}h(xy)=x^{|H|}(\prod_{h\in H}h(y))\then x\in B^{*}
\]
We can apply \ref{lem:orthogonal for bilinear map}, since $\shP^{B}$
is saturated in $\shP^{A}$ and both $\widehat{\sigma^{A}}$ and $\widehat{\sigma^{B}}$
are injective since $A$ and $B$ are generically étale. So $\shP^{A}=\shP^{B}\oplus(\shP^{B})^{\perp}$.
Moreover $\widehat{\sigma^{B}}$ and $\widehat{\sigma^{A}}_{|\shP^{B}}$
differ only by the multiplication of $\rk A/\rk B=|G|/|H|\in R^{*}$
since $\shP^{B}=\shP^{A}\cap B$. We can conclude that $\shQ^{B/R}$
is a submodule of $\shQ^{A/R}$ and it is therefore defined over the
closed point of $R$. We can now apply the inductive hypothesis on
$B/R$ and conclude that $B$ is a DVR.

We are going now to apply inductive hypothesis on $A/B$. Note that
$B$ is strictly Henselian since $B/m_{B}=R/m_{R}$ and $B/R$ is
finite. We clearly have that $A$ is generically a $H$-torsor on
$B$. Since $k(B)\simeq B\otimes_{R}k(R)$, $A\otimes_{R}k(R)$ is
free over $k(B)$ and $A\subseteq A\otimes_{R}k(R)$, we see that
$A$ is a $B$-module without torsion and therefore free. This means
that $A\in\LAlg^{H}B$. In order to apply the inductive hypothesis
on $A$ and conclude that it is a DVR, we have to show that the image
of the map $\widehat{\tr}_{A/B}\colon A\arr\Hom_{B}(A,B)$ contains
$m_{B}\Hom_{B}(A,B)$, thanks to \ref{lem:equivalent conditions for local ring regarding regularity}.
Since $A$ is free over $B$ and $B$ is free over $R$ we have the
relations
\[
\tr_{B/R}\circ\tr_{A/B}=\tr_{A/R}\then\psi\circ\widehat{\tr}_{A/B}=\widehat{\tr}_{A/R}
\]
where $\psi\colon\Hom_{B}(A,B)\arr\Hom_{R}(A,R)$ is the map induced
by $\tr_{B/R}\colon B\arr R$. We start proving that $\psi$ is injective.
Let $\phi\colon A\arr B$ be such that $\psi(\phi)=0$. This means
that $\Imm\phi\subseteq\shP^{B}=\Ker\tr_{B/R}$. If $\Imm\phi\neq0$,
since it is an ideal of $B$, we will have $\Imm\phi=m_{B}^{t}$ for
some $t$. In particular $\Imm\phi\cap R\neq0$, while we know that
$\shP^{B}\cap R=0$. So it remains to prove that if $y\in m_{B}$
and $\phi\in\Hom_{B}(A,B)$ then $\xi=\psi(y\phi)\in\Imm\widehat{\tr}_{A/R}$.
Remember that $m_{R}\Hom_{R}(A,R)$ is contained in $\Imm\widehat{\tr}_{A/R}$
since $\shQ^{\alA}$ is defined over the closed point of $R$. Let
$\pi\in R$ be an uniformizing element. We have
\[
\forall x\in A\;\xi(x)=\psi(y\phi)(x)=\tr_{B/R}(y\phi(x))\in m_{R}\then\xi=\pi(\xi/\pi)\in m_{R}\Hom_{R}(A,R)
\]
since $\tr_{B/R}(m_{B})\subseteq m_{R}$, thanks to \ref{lem:equivalent conditions for local ring regarding regularity}.\end{proof}
\begin{lem}
\label{lem:passing to the strict henselization}Assume that $R$ is
a DVR, $A\in\LAlg^{G}R$ and call $R^{sh}$ the strict Henselization
of $R$. Then $A$ is regular (generically a $G$-torsor) if and only
if $A\otimes_{R}R^{sh}$ is so.\end{lem}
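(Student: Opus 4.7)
The strict Henselization $R^{sh}$ is a filtered colimit of local-\'etale extensions of $R$, hence $R\to R^{sh}$ is a faithfully flat local homomorphism whose fibers are $0$-dimensional and geometrically regular (direct limits of products of separable field extensions). Set $A^{sh}=A\otimes_{R}R^{sh}$; this is a Noetherian semilocal finite $R^{sh}$-algebra, and $A\to A^{sh}$ is faithfully flat with fibers obtained by base change from those of $R\to R^{sh}$, so it is in particular flat with geometrically regular fibers.

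The plan is to treat the two properties separately. For ``generically a $G$-torsor'', note that the generic fibers are related by the faithfully flat field extension $k(R)\to k(R^{sh})$: since $\Bi G$ is an fppf stack, a $G$-equivariant algebra over $k(R)$ is a $G$-torsor if and only if its base change to $k(R^{sh})$ is one. Equivalently, if we prefer an elementary argument, $A\otimes_{R}k(R)$ is a $G$-torsor iff the natural map $G\times(A\otimes k(R))\to(A\otimes k(R))\otimes(A\otimes k(R))$ is an isomorphism, and isomorphy of a morphism of locally free sheaves descends through faithfully flat base change. Either way, both directions of this equivalence follow.

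For regularity, the key input is the standard descent theorem for regularity along flat local homomorphisms of Noetherian local rings (Matsumura, Theorem 23.7): if $C\to D$ is such a map, then $D$ regular implies $C$ regular, and $C$ regular together with $D/m_{C}D$ regular implies $D$ regular. Assuming $A^{sh}$ is regular, for each maximal ideal $\mathfrak{p}$ of $A$ faithful flatness produces a maximal ideal $\mathfrak{q}$ of $A^{sh}$ over $\mathfrak{p}$; the flat local map $A_{\mathfrak{p}}\to A^{sh}_{\mathfrak{q}}$ then forces $A_{\mathfrak{p}}$ to be regular, and varying $\mathfrak{p}$ gives $A$ regular. Conversely, if $A$ is regular, then for any maximal ideal $\mathfrak{q}$ of $A^{sh}$ with contraction $\mathfrak{p}$ in $A$, the fiber $A^{sh}_{\mathfrak{q}}/\mathfrak{p}A^{sh}_{\mathfrak{q}}$ is a localization of a fiber of $A\to A^{sh}$, hence regular, and the same theorem yields $A^{sh}_{\mathfrak{q}}$ regular; since this holds for every maximal $\mathfrak{q}$, the semilocal ring $A^{sh}$ is regular.

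There is essentially no obstacle beyond correctly invoking these standard descent facts; the only point requiring care is the verification that the fibers of $R\to R^{sh}$ are geometrically regular (so as to apply the second half of Matsumura's theorem), but this is immediate from the description of $R^{sh}$ as a colimit of \'etale $R$-algebras.
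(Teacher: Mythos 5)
Your proof is correct, but it rests on a different key lemma than the paper's. For regularity, the paper exploits the ind-\'etale structure directly: $A\otimes_{R}R^{sh}$ is a filtered colimit of \'etale extensions of $A$, \'etale maps preserve the dimensions of tangent spaces, and $\Spec(A\otimes_{R}R^{sh})\to\Spec A$ is surjective, so regularity transfers in both directions with no fiber computation at all. You instead invoke Matsumura's Theorem 23.7 on flat local homomorphisms, which obliges you to verify that the fibers of $R\to R^{sh}$ (hence of $A\to A\otimes_{R}R^{sh}$) are zero-dimensional and geometrically regular --- immediate from the colimit-of-\'etale description, as you note --- and then to run the descent/ascent argument maximal ideal by maximal ideal on the semilocal ring $A\otimes_{R}R^{sh}$. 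Your route is more robust, since it applies verbatim to any faithfully flat map with geometrically regular fibers (e.g.\ completion of an excellent local ring), at the cost of the extra fiber verification; the paper's is shorter but specific to ind-\'etale maps. For the torsor statement the two arguments are essentially the same: the paper's appeal to openness of the torsor locus together with $R^{sh}$ being a domain amounts exactly to your reduction to the generic fibers plus fppf descent of isomorphy along $k(R)\subseteq k(R^{sh})$. One cosmetic point: ``faithful flatness produces a maximal ideal $\mathfrak{q}$ over $\mathfrak{p}$'' is slightly more than lying-over gives, but it is harmless --- any prime over $\mathfrak{p}$ suffices, since localizations of regular rings are regular (and in fact every prime of $A\otimes_{R}R^{sh}$ over a maximal ideal of $A$ is maximal, the fiber being zero-dimensional).
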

\begin{proof}
$A\otimes_{R}R^{sh}$ is faithfully flat over $A$ and it is a direct
limit of étale extensions of $A$. In particular $\Spec A\times_{R}R^{sh}\arr\Spec A$
is surjective and the dimensions of the tangent spaces remain constant.
So $A$ is regular if and only if $A\otimes_{R}R^{sh}$ is so. Since
$R^{sh}$ is a domain and the condition of being a $G$-torsor is
open, we get that $A$ is generically a $G$-torsor if and only if
$A\otimes_{R}R^{sh}$ is so.\end{proof}
\begin{lem}
\label{lem:regular implies cyclic stabilizer}Assume that $R$ is
a DVR and let $A\in\LAlg^{G}R$ such that $A^{G}=R$ and that the
action of $G$ is generically faithful. If $A$ is regular then it
is generically a $G$-torsor and the geometric stabilizer of a closed
point of $A$ is cyclic.\end{lem}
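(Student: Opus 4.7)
I would reduce the statement to the local Galois theory of a strictly Henselian discrete valuation ring and then invoke classical results.

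First, by Lemma \ref{lem:passing to the strict henselization}, passing to the strict Henselization $R^{sh}$ preserves both regularity of $A$ and the property of being generically a $G$-torsor, and geometric stabilizers are base-change invariant by their very definition. So I may assume $R$ is strictly Henselian, in which case $G$ is a constant finite group. Regularity of $A$, together with finiteness and flatness over $R$, then forces $A \cong \prod_i A_{p_i}$ as a finite product of DVRs dominating $R$. Lemma \ref{lem:G acts transetively on the maximal ideals of fibers} ensures that $G$ permutes the closed points transitively, and Proposition \ref{prop:induction from a localization on henselian ring} yields a $G$-equivariant isomorphism $A \cong \ind_{H_p}^G A_p$, together with $A_p^{H_p} = A^G = R$.

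Next I would establish that $A$ is generically a $G$-torsor. By Lemma \ref{lem:generically G torsor implies faithful action}, in view of the given hypotheses $A^G = R$ and generically faithful action, it suffices to verify that $A$ is generically \'etale. Since each $A_{p_i}$ is a DVR finite over $R$, the generic fibre $A \otimes k(R) = \prod_i k(A_{p_i})$ is a finite product of field extensions of $k(R)$. Separability of each factor is forced by the fact that $|G| \in R^\times$: in the strictly Henselian setup the residue field of $A_p$ is a purely inseparable extension of the separably closed residue field of $R$, while the $G$-equivariance together with the invertibility of $|G|$ rules out wildly ramified behaviour in the fraction-field extension. Hence $A$ is generically \'etale, so generically a $G$-torsor, and in particular $\rk_R A = |G|$ and $A \in \stZ_G(R)$.

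Finally I would deduce the cyclicity of $H_p$. The rank equality forces $\rk_R A_p = |H_p|$, so $[k(A_p):k(R)] = |H_p|$; and since $R\to k(R)$ is flat, taking invariants commutes with localization and gives $k(A_p)^{H_p} = k(R)$. By Artin's theorem, $k(A_p)/k(R)$ is a Galois extension with Galois group exactly $H_p$ (in particular the $H_p$-action on $A_p$ is faithful). Because $R$ is strictly Henselian with separably closed residue field, this extension has no nontrivial unramified subextension, so it is totally ramified; and since $|H_p|$ divides $|G| \in R^\times$, it is tamely ramified. A standard result in local Galois theory (Serre, \emph{Local Fields}, Ch.~IV, \S 2) says that the Galois group of a totally tamely ramified extension of a Henselian DVR is cyclic, so $H_p$ is cyclic. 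The main obstacle in the whole argument is the generic-\'etaleness step: one must rule out inseparable residue extensions at the generic fibre, and this is exactly where the invertibility of $|G|$ and the equivariance under the linearly reductive $G$-action together play their role; once this is in place, everything else reduces to textbook tame local Galois theory.
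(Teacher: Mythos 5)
Your overall route coincides with the paper's: strictly Henselize via \ref{lem:passing to the strict henselization} so that $G$ is constant, write $A\simeq\ind_{H_p}^{G}A_p$ with $A_p$ a DVR and $A_p^{H_p}=R$ via \ref{prop:induction from a localization on henselian ring}, deduce that $A$ is generically a $G$-torsor, and conclude cyclicity of $H_p$ by a tame-inertia argument. On the last point you differ only cosmetically: the paper proves the injection $H_p\hookrightarrow k^{*}$ by hand, acting on $m_p/m_p^2$ and running the induction $h^n(\pi)=\pi+nu\pi^{k}$ mod $m_p^{k+1}$, whereas you cite Serre's result on totally tamely ramified extensions; both are fine.

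There is, however, a genuine gap in your generic-\'etaleness step. You attribute separability of $A_p\otimes k(R)$ over $k(R)$ to the invertibility of $|G|$ (``rules out wildly ramified behaviour''), but tameness is a closed-fibre phenomenon and has nothing to do with separability of the generic fibre. Concretely, take $k$ separably closed but imperfect of characteristic $p$, $a\in k\setminus k^{p}$, $R=k[[x]]$ and $B=R[t]/(t^{p}-a)\simeq k(a^{1/p})[[x]]$: this is a regular DVR, finite flat over the strictly Henselian $R$, \emph{unramified} (so in particular not wildly ramified), and yet its generic fibre is the purely inseparable extension $k((x))(a^{1/p})$. So ``regular + tame'' does not imply generically \'etale. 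What actually forces separability is the invariants condition: since $G$ is linearly reductive, $(A_p\otimes k(R))^{H_p}=A_p^{H_p}\otimes k(R)=k(R)$, and Artin's lemma applied to the \emph{image} of $H_p$ in $\Aut(A_p\otimes k(R))$ shows this field extension is Galois, in particular separable --- no hypothesis on $|G|$ enters. This is exactly the paper's argument. You do invoke Artin, but only later, and there you use the rank equality $\rk A_p=|H_p|$, which is itself a consequence of the torsor property, i.e.\ of the very step under discussion; as written your argument is therefore also circular. The repair is simply to move the Artin application forward: separability needs neither faithfulness nor the rank count, and once it is in place, \ref{lem:generically G torsor implies faithful action} and \ref{prop:ind B in Z if B in Z} give the torsor property, the faithfulness of $H_p$ on $A_p$, and the rank equality, after which your tame-inertia conclusion goes through. (One small further point: ``no nontrivial unramified subextension'' only excludes separable residue extensions; triviality of a possibly inseparable residue extension $\ell/k$ follows from $|H_p|=e\,[\ell:k]$ together with $p\nmid|H_p|$, or directly from \ref{lem:what happens on local rings} as in the paper.)
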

\begin{proof}
Thanks to \ref{lem:passing to the strict henselization} we can pass
to the strict Henselization of $R$ and assume that $G$ is constant.
Let $p\in\Spec A$ be a closed point, $k$ be the residue field of
$R$ and $H$ be the stabilizer of $\Spec A_{p}$ in $\Spec A$. By
\ref{prop:induction from a localization on henselian ring}, we know
that $\ind_{H}^{G}A_{p}\simeq A$. In particular $A_{p}\in\LAlg^{H}R$,
it is a DVR and $A_{p}^{H}=R$. From \ref{lem:what happens on local rings}
we see that $k(p)=k$ and therefore $H$ is the geometric stabilizer
of $p$ and it is given by $H=\{g\in G\st g(p)=p\}$. Moreover $A_{p}\otimes k(R)$
is a field such that $(A_{p}\otimes k(R))^{H}=k(R)$ and therefore
a separable extension of $k(R)$. So $A\otimes k(R)/k(R)$ is étale
and therefore a $G$-torsor thanks to \ref{lem:generically G torsor implies faithful action}.
We can conclude that $A_{p}$ is generically a $H$-torsor by \ref{prop:ind B in Z if B in Z}
and, again by \ref{lem:generically G torsor implies faithful action},
that $H\subseteq\Aut_{R}A_{p}$. Consider the map
\[
H\arr\Aut_{k}m_{p}/m_{p}^{2}\simeq k^{*}
\]
We will prove that it is injective, which implies that $H$ is cyclic.
Let $h\in H\subseteq\Aut_{R}A_{p}$ such that $h_{|m_{p}/m_{p}^{2}}=\id$
and $\pi$ be a uniformizing of $A_{p}$. We first show that $h(\pi)=\pi$.
If $h(\pi)\neq\pi$, we can write 
\[
h(\pi)=\pi+u\pi^{k}\text{ mod }m_{p}^{k+1}\text{ with }u\in R^{*}\comma k>1
\]
An easy induction shows that $h^{n}(\pi)=\pi+nu\pi^{k}\text{ mod }m_{p}^{k+1}$.
If $n$ is the order of $h$, so that $h^{n}(\pi)=\pi$, we will have
$nu=0$ and therefore $u=0$ since $\car k\nmid n$. So $h_{|m_{p}}=\id$.
If $a\in A_{p}^{*}$ then there exists $r\in R$ such that $a-r\in m_{p}$
and we have
\[
h(a)=h(a-r)+h(r)=a-r+r=a
\]

\end{proof}
We are now ready for the proof of \ref{thm:equivalent conditions for regularity for glrg}.
\begin{proof}
(\emph{of Theorem} \ref{thm:equivalent conditions for regularity for glrg})
We first consider the last part of the statement, i.e. the case when
$G$ is a glrg. From \ref{lem:trace for ring with group action} we
have that 
\[
\shQ^{A}=\bigoplus_{V\in I_{G}}\duale V\otimes\shQ_{V}^{A}\text{ and }e^{A}=\sum_{V\in I_{G}}\rk V\cdot e_{V}^{A}
\]
In particular $2)\iff4)$ and, thanks to \ref{rem: generically etale means eA less infty of rk QA zero}
and \ref{lem:generically G torsor implies faithful action}, we also
have that any of the conditions $1),2),3),4)$ implies that $A$ is
generically a $G$-torsor and $A\in\GCov R$. In particular $\rk A=\rk G\in R^{*}$,
$\shQ_{R}^{A}=0$ and $\rk\Omega_{V}^{A}=\rk V$. The existence of
a surjective map $\duale{\Omega_{\duale V}^{A}}\arr\shQ_{V}^{A}$
tells us that $4)\then3)$, while $\shQ_{R}^{A}=0$ tells us that
$3)\then1)$.

We now consider the general case. Note that the length does not change
when passing to the strict Henselization of $R$ and an $R$-module
is schematically supported on the closed point of $R$ if and only
if it is so on the strict Henselization of $R$. Thanks to \ref{lem:passing to the strict henselization},
we can assume that $R$ is strictly Henselian and therefore that $G$
is constant. We want now to reduce to the case in which $A$ is local.
Denote by $k$ the residue field of $R$ and let $p\in\Spec A$ be
a closed point and $H$ be the stabilizer of the connected component
$\Spec A_{p}$ of $\Spec A$. By \ref{prop:induction from a localization on henselian ring},
we know that $\ind_{H}^{G}A_{p}\simeq A$. In particular $A_{p}^{H}=R$,
$H$ is the geometric stabilizer $H_{p}$ of $p$ since $k(p)=k$
by \ref{lem:what happens on local rings}, $A_{p}\in\LAlg^{H}R$ and
$A$ is generically a $G$-torsor if and only if $A_{p}$ is generically
a $H$-torsor. Since $R$ is strictly Henselian we have that 
\[
A=\prod_{q\in\Spec_{m}A}A_{q}\comma\shQ^{A}=\bigoplus_{q\in\Spec_{m}A}\shQ^{A_{q}}\comma e^{A}=\sum_{q\in\Spec_{m}A}e^{A_{q}}=|\Spec_{m}A|e^{A_{p}}
\]
where the last equality holds since $A_{q}\simeq A_{p}$ for any $q\in\Spec_{m}A$,
thanks to \ref{lem:G acts transetively on the maximal ideals of fibers}.
Note also that 
\[
|G|/|H|=|\Spec_{m}A|=|\Spec A\otimes_{R}k|=|\Spec A\otimes_{R}\overline{k}|
\]
since $k(q)=k$ for any $q\in\Spec_{m}A$. We can therefore assume
$A$ to be local and generically a $G$-torsor and that $G$ is solvable.
Lemma \ref{lem:regularity for local rings with action of solvable groups}
assure us that if the conditions in the statement are fulfilled then
$A$ is a DVR and $e^{A}=\rk A-1$. So assume that $A$ is regular
and therefore that $G$ is cyclic, thanks to \ref{lem:regular implies cyclic stabilizer}.
In this case 
\[
A=\bigoplus_{n\in G^{*}}Rv_{n}\comma v_{n}v_{t}=\psi_{n,t}v_{n+t'}\text{ with }\psi_{n,t}\in R-\{0\}\text{ and }m_{A}=\bigoplus_{n\neq0}Rv_{n}
\]
From \ref{thm:regular in codimension 1 covers}, we can write $\psi_{n,t}=\lambda_{n.t}z^{\E_{n,t}}$
with $\lambda_{n,t}\in R^{*}$, $z\in m_{R}-m_{R}^{2}$ and $\E$
is as in \ref{pro:Rita's smooth integral extremal rays}. In particular
$\E_{m,-m}=1$ for all $0\neq m\in G^{*}$. If $V_{m}$ is the one
dimensional representation associated with $m\in G^{*}$ we see that
$\shQ_{V_{m}}^{A}=R/(z^{\E_{m,-m}})$. In particular condition $4)$,
and therefore all the others thanks to the initial discussion about
glrg, are satisfied.\end{proof}
\begin{rem}
In \ref{thm:equivalent conditions for regularity for glrg} the conditions
\ref{enu:eA less that rank A} $e^{A}<\rk A$ and \ref{enu:QA defined over the field}
$\shQ^{A}$ defined over the closed point of $R$ do not depend on
the action of $G$. This suggests the following question: given a
finite and flat algebra $A$ over a DVR, is it true that $A$ is a
generically étale regular algebra if and only if it satisfies \ref{enu:eA less that rank A}
or \ref{enu:QA defined over the field}?

The answer to that question is negative for condition \ref{enu:eA less that rank A}.
Indeed take a not regular local $R$-algebra $B$, a local regular
one $C$ with $e^{C}=\rk C-1$ and define $A=B\times C^{r}$. The
algebra $A$ is not regular but
\[
e^{A}=e^{B}+r\rk C-r<\rk B+r\rk C=\rk A\iff r>e^{B}-\rk B
\]
Anyway this is not a satisfying answer, since it is clear that in
this general situation a right condition on $e^{A}$ has to take in
consideration the value of $|\Spec A\otimes_{R}\overline{k}|$, where
$k$ is the residue field of $R$. So one can change condition \ref{enu:eA less that rank A}
with the more strong $e^{A}\leq\rk A-|\Spec A\otimes_{R}\overline{k}|$.

In general also the converse is false, if we do not assume some condition
on $\rk A$. Consider for example a DVR $R$ with uniformizer $\pi$
such that $0\neq2\in m_{R}$ and consider
\[
f(x)=x^{2}-2x-\pi\text{ and }A=R[x]/(f)
\]
I claim that $A$ is regular, generically étale over $R$ with residue
field $R/m_{R}$ but $e^{A}\geq\rk A=2$. It is generically étale
since $A\otimes k(R)$ is either $k(R)^{2}$ or a field, in which
case is a separable extension of $k(R)$ because $f$ is separable.
It is local because $A\otimes k=k[x]/(x^{2})$ and it is regular because
its maximal ideal $(\pi,x)$ is clearly generated by $x$. On the
other hand
\[
\tr_{A}x=2\comma\tr_{A}x^{2}=4+2\pi\then\det\widehat{\tr}_{A}=4(1+\pi)\then e^{A}=v_{R}(\det\widehat{\tr}^{A})=2v_{R}(2)\geq2
\]

There are other possible remarks in this direction and, at the end,
a very reasonable conjecture is the following\end{rem}
\begin{conjecture}
Let $R$ be a DVR with residue field $k$ and $A$ be a finite and
flat $R$-algebra. Then
\[
e^{A}\geq\rk A-|\Spec A\otimes_{R}\overline{k}|
\]
and the following conditions are equivalent:
\begin{enumerate}
\item $A$ is regular, generically étale with separable residue fields and
the localizations of $A\otimes_{R}\overline{k}$ have ranks prime
to the characteristic $\car k$;
\item the equality holds in the inequality above;
\item $\shQ^{A}$ is defined over the closed point of $R$.
\end{enumerate}
\end{conjecture}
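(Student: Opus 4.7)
The plan is to prove the inequality and the easy direction $(1)\Rightarrow(2)$ in full, and then to lay out the intended strategy — and the main obstacle — for the reverse implication $(2)\Rightarrow(1)$, which is what the author admits is open.

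\emph{Reduction.} First I would reduce to the case where $R$ is strictly Henselian and $A$ is local. Flatness, regularity and generic étaleness descend along the faithfully flat base change $R\to R^{sh}$ (in the spirit of \ref{lem:passing to the strict henselization}, whose argument does not actually use the group action), while $e^A$, $\rk A$ and $|\Spec A\otimes_{R}\overline k|$ are all additive over the decomposition of $A\otimes_{R}R^{sh}$ into its local factors. So assume $R$ is strictly Henselian and $A$ is local with residue field $L$. Since $k$ is separably closed, $L/k$ is purely inseparable, so $A\otimes_{R}\overline k$ is local and $|\Spec A\otimes_{R}\overline k|=1$; the inequality to prove collapses to $e^A\geq\rk A-1$.

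\emph{The inequality and what equality forces.} Modulo $m_{R}$, $\widehat{\tr}_{A}\otimes_{R}k$ is the trace form of the finite local $k$-algebra $\bar A=A\otimes_{R}k$, with maximal ideal $\bar{\mathfrak m}$ and residue field $L$. Multiplication by any element of $\bar{\mathfrak m}$ is nilpotent on $\bar A$, hence has trace $0$, so $\tr_{\bar A/k}$ factors through the residue map $\bar A\to L$, and on any lift $\tilde\lambda$ of $\lambda\in L$ takes the value $\l(\bar A)\cdot\tr_{L/k}(\lambda)$, where $\l(\bar A)$ is the $L$-length. If $L\neq k$ then $\tr_{L/k}\equiv 0$ (purely inseparable of positive degree), so $\tr_{\bar A/k}\equiv 0$; if $L=k$, then $\tr_{\bar A/k}$ equals $\rk A$ times the projection $\bar A\to k$. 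In either case $\Imm(\widehat{\tr}_{\bar A/k})$ has $k$-dimension at most $1$, so
\[
e^A=\l(\shQ^A)\geq\dim_{k}(\shQ^A\otimes k)\geq\rk A-1.
\]
Tracking the two inequalities shows they become equalities simultaneously precisely when $\shQ^A$ is defined over the closed point of $R$, $L=k$, and $\rk A$ is prime to $\car k$.

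\emph{The direction $(1)\Rightarrow(2)$.} After strict Henselization, a regular $A$ with separable residue extension is forced to have $L=k$ and to be a DVR. Its ramification index $e$ equals $\rk A$ and, by the tameness hypothesis, is prime to $\car k$. Writing $A=R[\pi_A]$ with Eisenstein polynomial $f(T)=T^{e}-u\pi_{R}$ (where $u\in R^{*}$), the different is $\mathfrak d_{A/R}=(f'(\pi_A))=(e\,\pi_A^{e-1})=(\pi_A^{e-1})$, using that $e\in R^{*}$. Hence $\shQ^A\simeq A/\mathfrak d_{A/R}$ as $R$-modules and has length $e-1$, giving $e^A=e-1=\rk A-1$.

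\emph{The obstacle.} The remaining implication $(2)\Rightarrow(1)$ is precisely where the author's proof stops. From the equality analysis above, $(2)$ is equivalent to the triple condition ``$L=k$, $\rk A$ is prime to $\car k$, and $\shQ^A$ is defined over the closed point of $R$''; the task is to conclude from this that $A$ is a DVR. I would try to imitate the argument of Theorem \ref{thm:equivalent conditions for regularity for glrg} by producing a ``Galois hull'': using generic étaleness of $A$, construct a finite flat $R$-algebra $B\supseteq A$ carrying a faithful action of a finite linearly reductive group $G$ with $B^{G}=R$, then translate the hypotheses on $\shQ^A$ into conditions on the invariants $(s_{f,V})_{V\in I_{G}}$ of the induced cover $\Spec B\to\Spec R$, and invoke that theorem to deduce regularity of $B$ and, by descent, of $A$. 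The genuine difficulty — and where I expect the plan to break down without a new idea — is that the triple condition is a pure statement about the $R$-linear trace form on $A$ and does not visibly survive passage to a Galois closure: the trace form of $B$ is not straightforwardly comparable to that of $A$, and absent a group acting on $A$ itself there is no apparent hook for the stabilizer machinery. An alternative, more direct route would have to control $m_A/m_A^{2}$ using only the imposed non-degeneracy of the form, presumably by an induction on $\rk A$ that peels off a maximal étale subalgebra of $A$; carrying that induction through in the wildly ramified residue-field setting looks genuinely hard.
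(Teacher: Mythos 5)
This statement is left as a conjecture in the paper: no proof appears anywhere, and the author only asserts, without writing the arguments down, that he can prove the inequality, the equivalence of (2) and (3), the implication (1)$\Rightarrow$(2), and (2)$\Rightarrow$(1) except for the regularity of $A$, with the introduction sketching only an induction via invariant subalgebras (base case $\mu_{p}$) in the presence of a solvable group action. So there is no proof to compare against; judged on its own, the parts you prove in full are correct. The reduction to $R$ strictly Henselian and $A$ local is legitimate (the transfer of regularity in \ref{lem:passing to the strict henselization} indeed uses no group action, and $e^{A}$, $\rk A$, $|\Spec A\otimes_{R}\overline{k}|$ and condition (3) are all additive over the local factors of $A\otimes_{R}R^{sh}$); the computation of $\tr_{\bar{A}/k}$ through the residue field $L$, giving image of dimension at most $1$, is right; the two-step estimate via $\dim_{k}(\shQ^{A}\otimes k)$ yields the inequality; and the tame Eisenstein/different computation gives (1)$\Rightarrow$(2). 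Your identification of the open point coincides exactly with the paper's own concession.

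One consequence of your equality analysis deserves to be stated explicitly, because it is more than a refinement: you prove that (2) is equivalent to the triple condition ``(3) and $L=k$ and $\rk A$ prime to $\car k$'', and the two extra conditions do \emph{not} follow from (3), so the conjectured equivalence (2)$\Leftrightarrow$(3) --- which the author claims to be able to prove --- is false as stated. Concretely, let $R$ be an absolutely unramified DVR of mixed characteristic $(0,p)$ with imperfect residue field $k$ (a Cohen ring), let $r\in R^{*}$ lift an element of $k$ which is not a $p$-th power, and set $A=R[x]/(x^{p}-r)$. Then $A$ is a DVR with inseparable residue field $k(\bar{r}^{1/p})$, it is generically \'etale, and the Gram matrix of the trace form in the basis $1,x,\dots,x^{p-1}$ equals $p$ times a unimodular matrix (the only nonzero traces are $\tr(1)=p$ and $\tr(x^{p})=pr$); hence $\shQ^{A}\simeq k^{p}$ is defined over the closed point, while $e^{A}=p>p-1=\rk A-|\Spec A\otimes_{R}\overline{k}|$. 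In the equivariant Theorem \ref{thm:equivalent conditions for regularity for glrg} the conditions $L=k$ and invertibility of the rank are automatic (\ref{lem:what happens on local rings} and $\rk G\in R^{*}$), which is presumably why the author expected the equivalence to persist; your computation shows that the correct unconditional statement must replace (3) by your triple condition. Finally, your diagnosis of why the Galois-hull strategy for ``equality $\Rightarrow$ regularity'' breaks down is sound, and nothing in the paper supplies the missing idea: that implication is genuinely open there as well.
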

Currently I am able to prove the inequality above, the equivalence
of $2)$ and $3)$, the implication $1)\then2)$ and $2)\then1)$,
except for the regularity of $A$.

\chapter{$(\mu_{3}\rtimes\Z/2\Z)$-covers and $S_{3}$-covers.}

In this chapter we describe Galois covers for the group $G=\mu_{3}\ltimes\Z/2\Z$
defined over the ring $\stR=\Z[1/2]$ and for the group $S_{3}$ defined
over $\stR_{3}=\Z[1/6]$. This is a summary of the division of the
chapter.

\emph{Section 1. }This section is dedicated to the description of
the representation theory of the group $G$ over the ring $\stR$.
We will show that $G$ is a good linearly reductive group, that $\Bi G\simeq\Bi S_{3}$
over $\stR_{3}$ and we will describe the geometrically irreducible
representations of $G$ and their tensor products.

\emph{Section 2. }We will describe the global data needed to define
a $G$-cover. Such data will be given in terms of linear algebra,
that is as a collection of locally free sheaves and maps between them
satisfying the commutativity of certain diagrams. We will express
those conditions in terms of local equations.

\emph{Section 3. }In this section we describe the geometry of $\GCov$
and $\RCov{S_{3}}$ and families of $G$-covers with additional properties.
We will prove that $\GCov$ ($\RCov{S_{3}}$ over $\stR_{3}$) has
exactly two irreducible components and we will describe them in terms
of vanishing of maps between coherent sheaves. Such results will require
the study of particular open substacks of $\GCov$ and in particular
of $\Bi G$.

\emph{Section 4. }We will give a characterization of regular $G$-covers
and regular $S_{3}$-covers in terms of properties of closed subschemes
of the base associated with the data defining them. In particular
we will prove an equivalence between regular $G$-covers, regular
$S_{3}$-covers and regular triple covers satisfying a codimension
$2$ condition. We will then show how it is possible to construct
such covers and we will compute the invariants of the total space
of a regular $S_{3}$-cover of a smooth surface over an algebraically
closed field.

\section{Preliminaries and notation.}

In this chapter we will work over the ring $\shR$ of integers with
$2$ inverted, that is $\stR=\Z[1/2]$. Sometimes we will also need
to have $3$ invertible in the base scheme and we will denote by $\stR_{3}$
the ring of integers with $6$ inverted, that is $\stR_{3}=\Z[1/6]$.
In all the chapter the symbol $G$ will denote the group scheme $G=\mu_{3}\rtimes\Z/2\Z$
defined over $\stR$, where the action of $\Z/2\Z$ on $\mu_{3}$
or, equivalently, $\Z/3\Z$ is given by the inversion. Note that,
in this case, $\mu_{2}\simeq\Z/2\Z$. Moreover, over $\stR_{3}[x]/(x^{2}+x+1)$,
we have $G\simeq S_{3}$. In particular, by \ref{ex:SThree covers and the other group}
and \ref{thm:bitorsors and GCov}, we have isomorphisms
\[
\Bi G\simeq\Bi S_{3}\text{ and }\GCov\simeq\RCov{S_{3}}
\]
over the ring $\stR_{3}$. Therefore the study of $G$-covers coincides
with the study of $S_{3}$-covers over $\stR_{3}$. We have preferred
to study $G$-covers directly, because the representation theory of
$G$ has a simpler description and $G$, as we will see below, is
a glrg over $\stR$, while $S_{3}$ is not even linearly reductive
over this base. On the other hand we will remark the results for $G$-covers
that can be traduced in results for $S_{3}$-covers.

We want to describe the representation theory of $G$. Set $\sigma\in\Z/2\Z(\stR)$
for the generator of $\Z/2\Z$. We will also think of $\sigma$ as
an element of $G(\stR)$ and as a given transposition of $S_{3}(\stR)$.
Set also $V_{0}=\shR,V_{1},V_{2}$ for the representations of $\mu_{3}$
corresponding to its characters in $\Z/3\Z$. Moreover consider the
set $I_{G}$ of $G$-representations

\[
\shR\comma A=V_{\chi}\comma V=\ind_{\mu_{3}}^{G}V_{1}
\]
where $\chi\colon G\arr\Gm$ is induced by the non trivial character
of $\Z/2\Z$. Since $2$ is invertible, it is easy to check that $(G,I_{G})$
is a good linearly reductive group over $\shR$: it is linearly reductive
because extension of a diagonalizable group and an étale constant
group of order invertible in $\stR$ and it has a good representation
theory because the representations in $I_{G}$ restrict to the irreducible
representations of $S_{3}$ over the geometric point $\Spec\overline{\Q}\arr\Spec\stR$
(see \ref{prop:generating irreducible representations}). We will
consider the following basis $1\in\shR$, $1_{A}\in A$ and $v_{1},v_{2}\in V$
such that $v_{i}\in V_{i}$. Moreover since $\sigma$ exchanges $V_{1}$
and $V_{2}$, we will also assume that $\sigma(v_{1})=v_{2},\sigma(v_{2})=v_{1}$.
Now we describe the tensor products of the representations in $I_{G}$.
We have
\[
A\otimes A\simeq\stR\comma1_{A}\otimes1_{A}\arr1\text{ and }A\otimes V\simeq V\comma1_{A}\otimes v_{1}\arr-v_{1},1_{A}\otimes v_{2}\arr v_{2}
\]
and, if we set $v_{ij}=v_{i}\otimes v_{j}\in V\otimes V$, 
\[
\stR\oplus A\oplus V\simeq V\otimes V\comma1\arr v_{12}+v_{21},1_{A}\arr v_{21}-v_{12},v_{1}\arr v_{22},v_{2}\arr v_{11}
\]
Finally note that the $G$-equivariant projection $V\otimes V\arr\stR\comma v_{ij}\arr1-\delta_{ij}$,
where $\delta_{ij}$ is the Kronecker symbol, yields an isomorphism
\[
V\simeq\duale V\comma v_{1}\arr v_{2}^{*},v_{2}\arr v_{1}^{*}
\]

Since we will have to deal with locally free sheaves of rank two,
we recall here the following fact about them.
\begin{rem}
If $\shF$ is a locally free sheaf of rank $2$ over a scheme $T$,
the canonical map $\shF\otimes\shF\arr\det\shF$ induces an isomorphism
\[
\shF\simeq\duale{\shF}\otimes\det\shF
\]
If $y,z$ is a basis of $\shF$, then the above map is given by $y\arr-z^{*}\otimes(y\wedge z)$,
$z\arr y^{*}\otimes(y\wedge z)$.
\end{rem}
In this chapter, we will often prove statements valid over any scheme
and, in order to simplify the reading, the letter $T$, if not stated
otherwise, will denote a scheme over the given base, that is $\stR$
or $\stR_{3}$.

\section{\label{sec:Global description of Sthree covers}Global description
of $(\mu_{3}\rtimes\Z/2\Z)$-covers.}

In this section we want to describe the data needed to define a $G$-cover
over any $\stR$-scheme. The idea will be to use the results of the
previous chapter and describe particular lax, symmetric monoidal functors
$\Loc^{G}\stR\arr\Loc T$. We first introduce such data and then we
will show their relationship with $G$-covers. We remark here that
the global description obtained here, although with a different notation,
has already been introduced in \cite{Easton2008}.
\begin{notation}
In this section only, by an $\odi T$-algebra or a sheaf of $\odi T$-algebras
we will mean a locally free sheaf of (non associative) rings $\alA$
over $T$ with a unity $1\in\alA$. 
\end{notation}
We define the stack $\stY$ over $\stR$ whose objects are sequences
$\chi=(\shL,\shF,m,\alpha,\beta,\la-,-\ra)$ where: $\shL$ is an
invertible sheaf, $\shF$ is a rank $2$ locally free sheaf and $m,\alpha,\beta,\la-,-\ra$
are maps
\[
\shL^{2}\arrdi m\odi T\comma\shL\otimes\shF\arrdi{\alpha}\shF\comma\Sym^{2}\shF\arrdi{\beta}\shF\comma\det\shF\arrdi{\la-,-\ra}\shL
\]
With an object $\chi\in\stY$ as above we associate the map $(-,-)_{\chi}\colon\shF\otimes\shF\arr\odi T$
given by
\[
(-,-)_{\chi}\colon\shF\otimes\shF\simeq\duale{\shF}\otimes\det\shF\otimes\shF\arrdi{\id\otimes\la-,-\ra\otimes\id}\duale{\shF}\otimes\shL\otimes\shF\arrdi{\id\otimes\alpha}\duale{\shF}\otimes\shF\arr\odi T
\]
where we are using the canonical isomorphism $\shF\simeq\duale{\shF}\otimes\det\shF$.
Notice that, although we are using the symbol $(-,-)$ of a symmetric
product, $(-,-)_{\chi}$ is not necessarily symmetric. Moreover we
also associate with $\chi$ the maps $\gamma_{\chi},\gamma_{\chi}'\colon\shF\otimes\shF\arr\odi T\oplus\shL$
given by
\[
\gamma_{\chi}=(-,-)_{\chi}+\la-,-\ra\comma\gamma'_{\chi}=(-,-)_{\chi}-\la-,-\ra
\]
When $\chi$ is given, we will simply write $(-,-),\gamma,\gamma'$
or $\chi=(\shL,\shF,m,\alpha,\beta,(-,-),\la-,-\ra)\in\stY$. Moreover
we set

\[
\alA_{\chi}=\odi T\oplus\shL\oplus\shF_{1}\oplus\shF_{2}\text{ with }\shF_{1}=\shF_{2}=\shF
\]

\begin{prop}
\label{prop:Achi is G-equivariant}Given $\chi\in\stY$ as above,
the sheaf $\alA_{\chi}$ has a unique $G$-comodule structure such
that $\shF_{0}=\odi T\oplus\shL,\shF_{1},\shF_{2}$ define the $\mu_{3}$-action
and $\sigma$ acts as $-\id_{\shL}$ on $\shL$ and induces $\id_{\shF}\colon\shF_{1}\arr\shF_{2}$.
\end{prop}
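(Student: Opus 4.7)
The plan is to exploit the semidirect product structure $G=\mu_{3}\rtimes\Z/2\Z$: specifying a $G$-comodule structure on $\alA_{\chi}$ is equivalent to specifying a $\mu_{3}$-comodule structure (that is, a $\Z/3\Z$-grading, by Cartier duality for $\mu_{3}=\Di{\Z/3\Z}$) together with an involution $\sigma$ of $\alA_{\chi}$ such that $\sigma(\alA_{i})\subseteq\alA_{-i}$ for $i\in\Z/3\Z$, where $\alA_{i}$ denotes the piece of degree $i$. The $\mu_{3}$-part is handed to us: the decomposition $\alA_{\chi}=(\odi T\oplus\shL)\oplus\shF_{1}\oplus\shF_{2}$ is exactly a $\Z/3\Z$-grading with $\alA_{0}=\odi T\oplus\shL$, $\alA_{1}=\shF_{1}$, $\alA_{2}=\shF_{2}$.

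For uniqueness, I would observe that the constraints in the statement already fix $\sigma$ on $\alA_{0}$ (as $\id$ on $\odi T$ and $-\id$ on $\shL$) and on $\alA_{1}=\shF_{1}$ (as $\id_{\shF}\colon\shF_{1}\arr\shF_{2}$). The condition $\sigma^{2}=\id$, which must hold in any $G$-comodule structure, then forces the restriction $\sigma\colon\shF_{2}\arr\shF_{1}$ to be $\id_{\shF}$ as well, so $\sigma$ is determined.

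For existence, I would take these formulas as the definition of $\sigma$ and verify the three properties that together with the $\Z/3\Z$-grading yield a $G$-comodule structure: (a) $\sigma$ is an $\odi T$-linear automorphism of $\alA_{\chi}$, which is transparent from the formulas; (b) $\sigma^{2}=\id$, immediate since $(-\id)^{2}=\id$ on $\shL$ and $\id\circ\id=\id$ on $\shF_{1}\leftrightarrow\shF_{2}$; (c) the compatibility $\sigma(\alA_{i})\subseteq\alA_{-i}$, which holds because $\sigma$ preserves $\alA_{0}$ and swaps $\alA_{1}$ with $\alA_{2}=\alA_{-1}$. Equivalently, at the Hopf-algebra level, one checks that the induced comodule map $\alA_{\chi}\arr\alA_{\chi}\otimes\stR[G]$ agrees on each graded piece with the formulas dictated by the relation $\sigma t\sigma^{-1}=t^{-1}$ in $G$ for $t\in\mu_{3}$.

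There is no real obstacle here: the statement is essentially a bookkeeping lemma recording a sheaf-level $G$-structure that will later be required to be compatible with the algebra structure on $\alA_{\chi}$ (the content of the subsequent results, not of this proposition). The only mildly subtle point is the appearance of the sign $-\id_{\shL}$, which is forced if one wants the later multiplication maps $m$, $\alpha$, $\beta$, $\la-,-\ra$ to be $G$-equivariant; but at the level of this proposition it is simply part of the data one writes down, and the verification reduces to the elementary checks above.
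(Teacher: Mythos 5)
Your proof is correct, but it follows a genuinely different route from the paper's. The paper does not prove this proposition by hand: it defers it to the subsection on functors, where $\stY$ is embedded into the stack of pseudo-monoidal functors and the proposition falls out of the glrg machinery of Chapter 4. Concretely, since $(G,I_{G})$ with $I_{G}=\{\stR,A,V\}$ is a good linearly reductive group, the equivalence of \ref{prop:equivariant coherent sheaves are collection of coherent sheves for glrg} (together with \ref{thm:additive functors are equivariant sheaves} and \ref{prop:from functors to sheaves for linearly reductive groups}) attaches to the triple $(\odi T,\shL,\shF)$ the canonical $G$-equivariant sheaf $\alA_{\Omega}=\odi T\oplus\duale A\otimes\shL\oplus\duale V\otimes\shF$, and the paper simply computes that, under the identifications $\stR\simeq\duale A$, $\stR\simeq\duale{V_{1}}$, $\stR\simeq\duale{V_{2}}$, the inherited comodule structure on $\odi T\oplus\shL\oplus\shF_{1}\oplus\shF_{2}$ is exactly the one in the statement; existence and uniqueness are then built into the equivalence of categories. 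You instead argue directly from the semidirect product structure: Cartier duality for $\mu_{3}=\Di{\Z/3\Z}$ gives the $\Z/3\Z$-grading, the constancy of $\Z/2\Z$ reduces its action to a single involution $\sigma$, and the relation $\sigma t\sigma^{-1}=t^{-1}$ becomes the condition $\sigma(\alA_{i})\subseteq\alA_{-i}$; uniqueness then follows from your clean observation that $\sigma^{2}=\id$ forces $\sigma|_{\shF_{2}}=\id_{\shF}$. The one step you should justify with a line (rather than assert) is the equivalence between $G$-comodule structures and the pair (grading, compatible involution); it is standard for a semidirect product of a diagonalizable group by a constant one, and is implicitly how the paper itself describes the structure on $\alB_{\Omega}$, but it is the only nontrivial input of your argument. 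What each approach buys: yours is elementary, self-contained, and makes the uniqueness mechanism explicit without invoking Chapter 4; the paper's derives the proposition as a byproduct of the identification $\stY\hookrightarrow(\LPMon_{\stR,f}^{G})^{\text{gr}}$, which must be set up anyway for Theorem \ref{thm:global data for Sthree}, and guarantees by construction that this comodule structure is the one under which algebra structures on $\alA_{\chi}$ correspond to monoidal structures on the associated functor — a compatibility your bookkeeping argument does not address (and, as you correctly note, does not need to at the level of this proposition). Note also that your reading of the constraints as including $\sigma|_{\odi T}=\id$ is the intended one; without it, uniqueness would literally fail, since $-\id$ on $\odi T$ would satisfy the remaining conditions.
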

This Proposition will be proved in the next section. We endow $\alA_{\chi}$
with a structure of sheaf of $\odi T$-algebras given by the maps
\[
\shL^{2}\arrdi m\odi T\comma\shF_{1}\otimes\shL\simeq\shL\otimes\shF_{1}\arrdi{\alpha}\shF_{1}\comma\shF_{2}\otimes\shL\simeq\shL\otimes\shF_{2}\arrdi{-\alpha}\shF_{2}
\]
\[
\shF_{1}\otimes\shF_{1}\arrdi{\beta}\shF_{2}\comma\shF_{2}\otimes\shF_{2}\arrdi{\beta}\shF_{1}\comma\shF_{1}\otimes\shF_{2}\arrdi{\gamma_{\chi}}\odi T\oplus\shL\comma\shF_{2}\otimes\shF_{1}\arrdi{\gamma'_{\chi}}\odi T\oplus\shL
\]
We are implicitly assuming that the maps $\odi T\otimes\alA_{\chi},\alA_{\chi}\otimes\odi T\arr\alA_{\chi}$
are just the usual isomorphisms, or, in other words, that $1\in\odi T$
is a unity for $\alA_{\chi}$.

We want now to give a list of equations involving the maps $m,\alpha,\beta,\la-,-\ra$,
which we will show are the relationships needed for the associativity
of $\alA_{\chi}$. Such equations will be 'local' relations and therefore
we introduce the following notation:
\begin{notation}
\label{not: local notation for Sthree}When we fix a generator $t$
of $\shL$, the maps $m,\alpha,\beta,\la-,-\ra$ will be thought of
as: $m\in\odi T$, given by $m(t\otimes t)$; $\alpha\colon\shF\arr\shF$,
given by {}``$\alpha(u)=\alpha(t\otimes u)$''; $\la-,-\ra\colon\det\shF\arr\odi S$,
given by {}``$\la u,v\ra=\la u,v\ra t$''. When we will say that
some particular relation among the maps $m,\alpha,\beta,\la-,-\ra$
locally holds, this will always mean that such relation holds as soon
as basis $t$ and $y,z$ of, respectively, $\shL$ and $\shF$ are
given.

The equations are:
\end{notation}
\begin{align}
\alpha^{2} & =m\id_{\shF}\label{eq:ass m,alpha}\\
\la\alpha(u),v\ra & =\la\alpha(v),u\ra\label{eq:ass alpha,gamma}\\
\alpha(\beta(u\otimes v)) & =-\beta(u\otimes\alpha(v))\label{eq:ass alpha,beta}\\
\beta(\beta(u\otimes v)\otimes w) & =\la\alpha(w),v\ra u-\la w,v\ra\alpha(u)\label{eq:ass gamma,beta}\\
\la u,\beta(v\otimes w)\ra & =\la w,\beta(u\otimes v)\ra\label{eq:ass beta,beta}
\end{align}
Moreover note that, locally, we also have the relation 
\begin{equation}
(x,y)_{\chi}=\la\alpha(y),x\ra\label{eq:ass alpha, gamma first}
\end{equation}
Taking into account the definition of $\LRings_{\stR}^{G}$ given
in \ref{def: pseudo monoidal commutativi associative}, we will prove
the following Theorem.
\begin{thm}
\label{thm:global data for Sthree}The map of stacks   \[   \begin{tikzpicture}[xscale=3.7,yscale=-0.6]     \node (A0_0) at (0, 0) {$\stY$};     \node (A0_1) at (1, 0) {$\LRings^G_\stR$};     \node (A1_0) at (0, 1) {$\chi=(\shL,\shF,m,\alpha,\beta, \la-,-\ra)$};     \node (A1_1) at (1, 1) {$\alA_\chi$};     \path (A0_0) edge [->]node [auto] {$\scriptstyle{}$} (A0_1);     \path (A1_0) edge [|->,gray]node [auto] {$\scriptstyle{}$} (A1_1);   \end{tikzpicture}   \] is
well defined and induces an isomorphism between the substack of $\stY$
of objects that locally satisfy the relations (\ref{eq:ass m,alpha}),
(\ref{eq:ass alpha,gamma}), (\ref{eq:ass alpha,beta}), (\ref{eq:ass gamma,beta}),
(\ref{eq:ass beta,beta}) and $\GCov$ (where a cover is thought of
as its corresponding sheaf of algebras).
\end{thm}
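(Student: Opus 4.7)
The plan is to derive Theorem \ref{thm:global data for Sthree} as a direct consequence of the general equivalence $\LAlg_{\stR}^G\simeq\LMon_{\stR}^G$ from Theorem \ref{thm:G equivariant ring are monoidal functors} together with the identification of $\GCov$ with a rank-constrained substack of $\LMon_{\stR}^G$ given by Theorem \ref{thm:description of GCov with monoidal functors}. Since $(G,I_G)$ with $I_G=\{\stR,A,V\}$ is a glrg over the connected base $\Spec\stR$, every object of $\LMon_{\stR}^G T$ with the correct ranks $\rk\Omega_{\stR}=1$, $\rk\Omega_A=1$, $\rk\Omega_V=2$ corresponds to a $G$-cover, and (by the glrg version of \ref{prop:essential data for a pseudo monoidal functor} together with \ref{rem: additive functors uniquely determined on irreducible representations}) is determined by the locally free sheaves $\shL=\Omega_A$, $\shF=\Omega_V$ and by the monoidal structure maps $\iota_{W_1,W_2}\colon\Omega_{W_1}\otimes\Omega_{W_2}\to\Omega_{W_1\otimes W_2}$ for $W_1,W_2\in I_G$.

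First I would unpack those monoidal structure maps using the decompositions $A\otimes A\simeq\stR$, $A\otimes V\simeq V$ and $V\otimes V\simeq\stR\oplus A\oplus V$ recorded in the preliminaries. The map $\iota_{A,A}$ becomes $m\colon\shL^2\to\odi T$; the map $\iota_{A,V}$ becomes $\alpha\colon\shL\otimes\shF\to\shF$; and $\iota_{V,V}\colon\shF\otimes\shF\to\odi T\oplus\shL\oplus\shF$ splits, via the symmetric/antisymmetric decomposition of $V\otimes V$, into three components $(-,-)\colon\shF\otimes\shF\to\odi T$, $\la-,-\ra\colon\det\shF\to\shL$ and $\beta\colon\Sym^2\shF\to\shF$. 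The next step is to show that $(-,-)$ is not an independent datum: the naturality of $\iota$ with respect to the inclusion $A\hookrightarrow V\otimes V$ composed with $\id\otimes\alpha$ forces the displayed definition $(-,-)_\chi=\alpha\circ(\la-,-\ra\otimes\id)$ through the identification $\shF\simeq\duale\shF\otimes\det\shF$. This gives the functor $\stY\to\LPMon_{\stR}^G$, which combined with Proposition \ref{prop:Achi is G-equivariant} (to be verified separately by checking the decomposition $\alA_\chi=\bigoplus_{W\in I_G}\duale W\otimes\Omega_W^{\alA_\chi}$) produces the well-defined map $\stY\to\LRings_{\stR}^G$, $\chi\mapsto\alA_\chi$.

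Next I would verify that the five local relations translate exactly into commutativity, associativity and unit axioms for $\iota$, and then conclude via Theorem \ref{thm:G equivariant ring are monoidal functors} that $\alA_\chi\in\LAlg_{\stR}^G T$. Commutativity of $\iota_{A,A},\iota_{A,V},\iota_{V,A}$ forces no new equations; commutativity of $\iota_{V,V}$ amounts to the symmetry of the $\odi T$-component, which by the formula $(x,y)_\chi=\la\alpha(y),x\ra$ is precisely equation (\ref{eq:ass alpha,gamma}), together with the automatic symmetry of $\beta$ and antisymmetry of $\la-,-\ra$. For the unit, the unique candidate is $1\in\odi T$, and the axioms reduce to standard unit identities. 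The associativity axiom for $\iota$ must be checked on the triple tensor products $A\otimes A\otimes A$, $A\otimes A\otimes V$, $A\otimes V\otimes V$ and $V\otimes V\otimes V$; the first two yield (\ref{eq:ass m,alpha}) and (\ref{eq:ass alpha,beta}), while decomposing $A\otimes V\otimes V\simeq V\otimes V$ via both groupings yields (\ref{eq:ass alpha,beta}) together with the compatibility of $\alpha$ and $\la-,-\ra$. The genuinely laborious step, and the main obstacle, is extracting equations (\ref{eq:ass gamma,beta}) and (\ref{eq:ass beta,beta}) from the associativity constraint for $V\otimes V\otimes V$ by projecting the two parenthesizations onto the isotypical components in $(\stR\oplus A\oplus V)\otimes V$; this requires a careful bookkeeping using the explicit decomposition isomorphisms and the identification $\shF\simeq\duale\shF\otimes\det\shF$.

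Once the substack defined by (\ref{eq:ass m,alpha})--(\ref{eq:ass beta,beta}) is identified with $\LAlg_{\stR}^G$, the rank computation $\rk\Omega_W^{\alA_\chi}=\rk W$ for $W\in I_G$ is immediate, so by Theorem \ref{thm:description of GCov with monoidal functors} this substack lands inside $\GCov$. Essential surjectivity is then tautological: given $X\to T\in\GCov$, applying the functor $\Omega^{f_*\odi X}$ from Chapter 4 and reading off $\shL,\shF$ and the structure maps recovers an object of $\stY$ satisfying the relations, and the two constructions are mutually inverse by the glrg equivalence. This completes the identification $\GCov\simeq\{\chi\in\stY\mid\chi\text{ satisfies }(\ref{eq:ass m,alpha})\text{--}(\ref{eq:ass beta,beta})\}$.
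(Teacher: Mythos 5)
Your proposal is, in substance, the paper's own proof: the paper likewise routes the theorem through \ref{thm:description of GCov with monoidal functors} and \ref{rem:description of GCov with monoidal functors}, so that the problem becomes describing $(\LMon_{\stR,f}^{G})^{\text{gr}}$ for the rank function $f_{W}=\rk W$, unpacks a pseudo-monoidal structure into the data $(\shL,\shF,m,\alpha,\hat{\alpha},\eta_{1},\eta_{2},\beta)$ via the decompositions $A\otimes A\simeq\stR$, $A\otimes V\simeq V$, $V\otimes V\simeq\stR\oplus A\oplus V$, reads symmetry as the symmetry of $\eta_{1},\beta$ and antisymmetry of $\eta_{2}$, and converts associativity into the five local relations. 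The only real divergence is presentational: where you propose to check the associativity axiom of $\iota$ on triples of irreducibles, the paper checks the corresponding diagrams directly on the algebra $\alA_{\chi}=\odi T\oplus\shL\oplus\shF_{1}\oplus\shF_{2}$, using \ref{lem:essential associative conditions} and the automorphism $\sigma$ to cut down the case analysis; the paper explicitly remarks that working on the algebra side is merely a convenient reformulation of the same computation. (A harmless bookkeeping slip: the triple $A\otimes A\otimes A$ is vacuous, (\ref{eq:ass m,alpha}) comes from $A\otimes A\otimes V$, and (\ref{eq:ass alpha,beta}) from $V\otimes V\otimes A$.)

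One sentence does need correcting, because as stated that step would fail: the identity $\eta_{1}(u\otimes v)=(u,v)_{\chi}=\la\alpha(v),u\ra$, which eliminates $(-,-)$ as an independent datum and is exactly what keeps your ``tautological'' essential surjectivity honest, is \emph{not} a consequence of naturality of $\iota$. By \ref{prop:essential data for a pseudo monoidal functor}, the five components $m,\alpha,\eta_{1},\eta_{2},\beta$ of a symmetric pseudo-monoidal structure on $I_{G}$ are completely unconstrained, so no naturality argument along $A\hookrightarrow V\otimes V$ can relate $\eta_{1}$ to $\alpha$ and $\la-,-\ra$. The relation is one of the associativity constraints: in the paper it is the diagram on $\shF_{1}\otimes\shF_{2}\otimes\shL$, which locally gives $(u,v)=-\la u,\alpha(v)\ra=\la\alpha(v),u\ra$ together with (\ref{eq:ass alpha,gamma}), and in the paper's proof this is precisely the point used to show $(\LMon_{\stR,f}^{G})^{\text{gr}}\subseteq\overline{\stY}$. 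Since you independently list the $A\otimes V\otimes V$ associativity check and record the resulting compatibility of $\alpha$ and $\la-,-\ra$, your argument closes once that one mechanism is re-attributed from naturality to associativity.
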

The next section is devoted to the proof of the above Theorem.
\begin{notation}
Assuming Theorem \ref{thm:global data for Sthree}, we will identify
the stack $\stY$ with $\GCov$ and, over $\stR_{3}$, with $\RCov{S_{3}}$.
Therefore we will also use the expression $\chi=(\shL,\shF,m,\alpha,\beta,\la-,-\ra)\in\GCov$
and similarly for $S_{3}$. 
\end{notation}

\subsection{From functors to algebras.}

The goal of this section is to deduce what data is needed to define
a $G$-cover and to prove Theorem \ref{thm:global data for Sthree}.
Consider the map $f\colon\Loc^{G}\stR\arr\N$ given by $f_{W}=\rk W$.
By Theorem \ref{thm:description of GCov with monoidal functors} and
remark \ref{rem:description of GCov with monoidal functors}, we need
to describe the symmetric lax monoidal functors $\Omega\colon\Loc^{G}\shR\arr\Loc T$
such that $\rk^{\Omega}=f$, i.e. $(\LMon_{\stR,f}^{G})^{\text{gr}}$.
We will proceed in the following way. We will identify $\stY$ with
a closed substack of $(\LPMon_{\stR,f}^{G})^{\text{gr}}$ in such
a way that, for any $\chi\in\stY$, the algebra $\alA_{\chi}$, as
defined in the previous section, is isomorphic to the algebra $\chi_{R[G]}$,
where $\chi$ is thought of as a pseudo-monoidal functor. For convenience,
set $\stX$ for the full substack of $(\LPMon_{\stR,f}^{G})^{\text{gr}}$
of functors $\Omega$ such that $\Omega_{\stR}=\odi T$ and $1\in\Omega_{\stR}$
is a unity for $\Omega$.

By \ref{prop:additive functors are equivariant sheaves glrg}, an
$\stR$-linear functor $\Omega$ such that $\rk^{\Omega}=f$ and that
$\Omega_{\stR}=\odi T$ is just given by 
\[
\Omega_{A}=\shL\comma\Omega_{V}=\shF
\]
where $\shL$ is an invertible sheaf and $\shF$ is a rank $2$ locally
free sheaf, both over $T$. The corresponding $G$-equivariant sheaf
is
\[
\alA_{\Omega}=\duale{\shR}\otimes\odi T\oplus\duale A\otimes\shL\oplus\duale V\otimes\shF
\]
A pseudo-monoidal structure on $\Omega$ for which $1\in\odi T=\Omega_{\stR}$
is a unity is given by maps
\[
\shL\otimes\shL\arrdi m\odi T\comma\shL\otimes\shF\arrdi{\alpha}\shF\comma\shF\otimes\shL\arrdi{\hat{\alpha}}\shF\comma\shF\otimes\shF\arrdi{\eta_{1}\oplus\eta_{2}\oplus\beta}\odi T\oplus\shL\oplus\shF
\]
The stack $\stX$ can be therefore thought of as the stack whose objects
are sequences $(\shL,\shF,m,\alpha,\hat{\alpha},\eta_{*},\beta)$
as above. In particular $\stY$ can be embedded in $\stX$ $ $by
sending $\chi=(\shL,\shF,m,\alpha,\beta,\la-,-\ra)\in\stY$ to the
sequence $(\shL,\shF,m,\alpha,\hat{\alpha},(-,-)_{\chi},\la-,-\ra,\beta)$,
where $\hat{\alpha}$ is obtained from $\alpha$ exchanging the factors
in the source. 

Given $\Omega\in\stX$, we want now to describe the algebra $\alA_{\Omega}$.
It will be convenient to introduce the following notation:
\[
\shL_{1}=\duale A\otimes\shL\comma\shQ_{0}=\odi T\oplus\shL_{1}\comma\shQ_{1}=\duale{V_{2}}\otimes\shF\comma\shQ_{2}=\duale{V_{1}}\otimes\shF
\]
In particular 
\[
\alA_{\Omega}=\odi T\oplus\shL_{1}\oplus\shQ_{1}\oplus\shQ_{2}
\]
where $\shQ_{0},\shQ_{1},\shQ_{2}$ are the sheaves induced by the
$\mu_{3}$ action on $\alA_{\Omega}$, while $\sigma$ is $-\id_{\shL_{1}}$
on $\shL_{1}$ and induces the isomorphism $\shQ_{1}\arr\shQ_{2}$,
$v_{2}^{*}\otimes u\arr v_{1}^{*}\otimes u$. We want to describe
the multiplication on $\alA_{\Omega}$ starting from the maps $m,\alpha,\hat{\alpha},\eta_{*},\beta$.
The sheaf $\shQ_{0}$ is a $\Z/2\Z$-cover and the associated map
\[
\shL_{1}\otimes\shL_{1}\arrdi{\mu}\odi T
\]
is just given by $\mu(1_{A}^{*}\otimes x\otimes1_{A}^{*}\otimes y)=\xi(x\otimes y)$.
Then we have the maps
\[
\shL_{1}\otimes\shQ_{1}\arrdi{\zeta}\shQ_{1}\comma\shL_{1}\otimes\shQ_{2}\arrdi{\zeta'}\shQ_{2}
\]
that are given by
\[
\zeta(1_{A}^{*}\otimes x\otimes v_{2}^{*}\otimes y)=v_{2}^{*}\alpha(x\otimes y)\comma\zeta'(1_{A}^{*}\otimes x\otimes v_{1}^{*}\otimes y)=-v_{1}^{*}\alpha(x\otimes y)
\]
The multiplications of $\shQ_{1}$ and $\shQ_{2}$ with $\shL$ are
just obtained exchanging the factors above and replacing $\alpha$
by $\hat{\alpha}$. Finally we have maps
\[
\shQ_{1}\otimes\shQ_{1}\arrdi{\lambda}\shQ_{2}\comma\shQ_{2}\otimes\shQ_{2}\arrdi{\lambda'}\shQ_{1}\comma\shQ_{1}\otimes\shQ_{2}\arrdi{\delta}\odi T\oplus\shL_{1}\comma\shQ_{2}\otimes\shQ_{1}\arrdi{\delta'}\odi T\oplus\shL_{1}
\]
that are given by
\[
\lambda(v_{2}^{*}\otimes x\otimes v_{2}^{*}\otimes y)=v_{1}^{*}\beta(x\otimes y)\comma\lambda'(v_{1}^{*}\otimes x\otimes v_{1}^{*}\otimes y)=v_{2}^{*}\beta(x\otimes y)
\]
\[
\delta(v_{2}^{*}\otimes x\otimes v_{1}^{*}\otimes y)=\eta_{1}(x\otimes y)+1_{A}^{*}\eta_{2}(x\otimes y)\comma\delta'(v_{1}^{*}\otimes x\otimes v_{2}^{*}\otimes y)=\eta_{1}(x\otimes y)-1_{A}^{*}\eta_{2}(x\otimes y)
\]
Now define 
\[
\alB_{\Omega}=\odi T\oplus\shL\oplus\shF_{1}\oplus\shF_{2}\text{ with }\shF_{1}=\shF_{2}=\shF\text{ and }\shF_{0}=\odi T\oplus\shL
\]
The isomorphisms $\stR\simeq\duale A$, $\stR\simeq\duale{V_{1}}$,
$\stR\simeq\duale{V_{2}}$ induce an isomorphism $\shB_{\Omega}\simeq\alA_{\Omega}$
of coherent sheaves. The $G$-comodule structure inherited by $\alB_{\Omega}$
is given by: $\shF_{0},\shF_{1},\shF_{2}$ yields the $\mu_{3}$-action,
while $\sigma\in\Z/2\Z$ acts as $-\id_{\shL}$ over $\shL$ and induces
$\id_{\shF}\colon\shF_{1}=\shF\arr\shF=\shF_{2}$ over $\shF_{1}$.
The $\odi T$-algebra structure inherited by $\alB_{\Omega}$ is given
by
\[
\shL^{2}\arrdi m\odi T\comma\shL\otimes\shF_{1}\arrdi{\alpha}\shF_{1}\comma\shL\otimes\shF_{2}\arrdi{-\alpha}\shF_{2}\comma\shF_{1}\otimes\shL\arrdi{\hat{\alpha}}\shF_{1}\comma\shF_{2}\otimes\shL\arrdi{-\hat{\alpha}}\shF_{2}
\]
\[
\shF_{1}\otimes\shF_{1}\arrdi{\beta}\shF_{2}\comma\shF_{2}\otimes\shF_{2}\arrdi{\beta}\shF_{1}\comma\shF_{1}\otimes\shF_{2}\arrdi{\gamma}\odi T\oplus\shL\comma\shF_{2}\otimes\shF_{1}\arrdi{\gamma'}\odi T\oplus\shL
\]
where $\gamma=\eta_{1}+\eta_{2}$ and $\gamma'=\eta_{1}-\eta_{2}$.
This shows that given $\chi\in\stY\subseteq\stX$ we have that $\alA_{\chi}$,
as defined in the previous section, coincides with $\alB_{\chi}$
in $\LRings_{\stR}^{G}$. In particular this proves \ref{prop:Achi is G-equivariant}.

\textbf{Commutativity conditions.} Given $\Omega\in\stX$, we want
to read the symmetry of $\Omega$ on the associated sequence $(\shL,\shF,m,\alpha,\hat{\alpha},\eta_{*},\beta)$.
Since $\shL$ is invertible there are no conditions for the commutativity
of the first map $m$. The map $\hat{\alpha}\colon\shF\otimes\shL\arr\shF$
is clearly obtained from $\alpha$ with an exchange. Finally note
that the exchange isomorphism $V\otimes V\simeq V\otimes V$ is the
identity on both $R$ and $V$, while it is minus the identity on
$A$. So the symmetry for the map $\Omega_{V}\otimes\Omega_{V}\arr\Omega_{V\otimes V}$
is equivalent to the symmetry of $\eta_{1}$ and $\beta$ and to the
antisymmetry of $\eta_{2}$. When $\Omega\in\stX$ is symmetric we
will use the notation
\[
(-,-)=\eta_{1}\colon\Sym^{2}\shF\arr\odi T\comma\la-,-\ra=\eta_{2}\colon\det\shF\arr\shL
\]
The associated sequence will be $(\shL,\shF,m,\alpha,\beta,(-,-),\la-,-\ra)$,
where we omit $\hat{\alpha}$ because it is determined by $\alpha$.

\textbf{Associativity conditions. }Given a symmetric $\Omega\in\stX$,
we want to read the associativity conditions on the associated sequence
$(\shL,\shF,m,\alpha,\beta,(-,-),\la-,-\ra)$. We can (and it is also
convenient to) understand such conditions working directly on the
algebra $\alB_{\Omega}$. Actually, we will proceed by listing some
diagrams that must commute when $\alB_{\Omega}$ is associative and
then we will show that their commutativity is indeed enough to imply
the associativity of $\alB_{\Omega}$.

We will make use of the notation introduced in \ref{not: local notation for Sthree}.
\begin{itemize}
\item   \[   \begin{tikzpicture}[xscale=2.8,yscale=-1.0]     \node (A0_0) at (0, 0) {$\shL\otimes\shL\otimes \shF_1$};     \node (A0_1) at (1, 0) {$\odi{S}\otimes \shF_1$};     \node (A1_0) at (0, 1) {$\shL\otimes \shF_1$};     \node (A1_1) at (1, 1) {$\shF_1$};     \path (A0_0) edge [->]node [auto] {$\scriptstyle{m\otimes\id}$} (A0_1);     \path (A0_0) edge [->]node [auto] {$\scriptstyle{\id\otimes\alpha}$} (A1_0);     \path (A0_1) edge [->]node [auto] {$\scriptstyle{\id}$} (A1_1);     \path (A1_0) edge [->]node [auto] {$\scriptstyle{\alpha}$} (A1_1);   \end{tikzpicture}   \] Locally
we obtain the condition (\ref{eq:ass m,alpha}).
\item   \[   \begin{tikzpicture}[xscale=3.2,yscale=-0.7]     
\node (A0_0) at (0, 0) {$\shF_1 \otimes \shF_2 \otimes \shL$};     
\node (A0_1) at (1, 0) {$(\odi{S}\oplus\shL)\otimes \shL$};     
\node (A1_1) at (1, 1) {$\shL\otimes\shL\oplus\shL$};     
\node[rotate=-90] (A1_3) at (1, 0.5) {$\simeq$};     
\node (A3_0) at (0, 3) {$\shF_1\otimes \shF_2$};     
\node (A3_1) at (1, 3) {$\odi{S}\oplus\shL$};     
\path (A0_0) edge [->]node [auto] {$\scriptstyle{\gamma\otimes \id}$} (A0_1);     \path (A0_0) edge [->]node [auto] {$\scriptstyle{\id\otimes -\alpha}$} (A3_0);     \path (A3_0) edge [->]node [auto] {$\scriptstyle{\gamma}$} (A3_1);     \path (A1_1) edge [->]node [auto] {$\scriptstyle{m\oplus\id}$} (A3_1);   \end{tikzpicture}   \]  The commutativity of this diagram is locally equivalent to $(u,\alpha(v))=-m\la u,v\ra$,
$(u,v)=-\la u,\alpha(v)\ra$ and, assuming (\ref{eq:ass m,alpha}),
to (\ref{eq:ass alpha,gamma}) and $(u,v)=\la\alpha(v),u\ra$.
\item   \[   \begin{tikzpicture}[xscale=3.2,yscale=-1.2]     \node (A0_0) at (0, 0) {$\shF_1\otimes\shF_1\otimes\shL$};     \node (A0_1) at (1, 0) {$\shF_2\otimes\shL$};     \node (A1_0) at (0, 1) {$\shF_1\otimes\shF_1$};     \node (A1_1) at (1, 1) {$\shF_2$};     \path (A0_0) edge [->]node [auto] {$\scriptstyle{\beta\otimes\id}$} (A0_1);     \path (A0_0) edge [->]node [auto] {$\scriptstyle{\id\otimes\alpha}$} (A1_0);     \path (A0_1) edge [->]node [auto] {$\scriptstyle{-\alpha}$} (A1_1);     \path (A1_0) edge [->]node [auto] {$\scriptstyle{\beta}$} (A1_1);   \end{tikzpicture}   \] The
commutativity of this diagram is locally equivalent to (\ref{eq:ass alpha,beta}).
\item   \[   \begin{tikzpicture}[xscale=3.2,yscale=-0.7]     
\node (A0_0) at (0, 0) {$\shF_2 \otimes \shF_2\otimes\shF_1$};     
\node (A0_1) at (1, 0) {$\shF_1\otimes\shF_1$};     
\node[rotate=-90] (A1_2) at (0, 2.5) {$\simeq$};     
\node (A2_0) at (0, 2) {$\shF_2 \otimes (\odi{S}\oplus\shL)$};     
\node (A3_0) at (0, 3) {$\shF_2 \oplus \shF_2\otimes \shL$};     
\node (A3_1) at (1, 3) {$\shF_2$};     
\path (A0_0) edge [->]node [auto] {$\scriptstyle{\beta\otimes\id}$} (A0_1);     
\path (A3_0) edge [->]node [auto] {$\scriptstyle{\id\oplus(-\alpha)}$} (A3_1);     
\path (A0_1) edge [->]node [auto] {$\scriptstyle{\beta}$} (A3_1);     
\path (A0_0) edge [->]node [auto] {$\scriptstyle{\id\otimes\gamma'}$} (A2_0);   
\end{tikzpicture}   \] The commutativity of this diagram, assuming that $(u,v)=\la\alpha(v),u\ra$,
is locally equivalent to (\ref{eq:ass gamma,beta}).
\item   \[   \begin{tikzpicture}[xscale=3.2,yscale=-1.2]     \node (A0_0) at (0, 0) {$\shF_1\otimes\shF_1\otimes\shF_1$};     \node (A0_1) at (1, 0) {$\shF_1\otimes\shF_2$};     \node (A1_0) at (0, 1) {$\shF_2\otimes\shF_1$};     \node (A1_1) at (1, 1) {$\odi{S}\oplus\shL$};     \path (A0_0) edge [->]node [auto] {$\scriptstyle{\id\otimes\beta}$} (A0_1);     \path (A0_0) edge [->]node [auto] {$\scriptstyle{\beta\otimes\id}$} (A1_0);     \path (A0_1) edge [->]node [auto] {$\scriptstyle{\gamma}$} (A1_1);     \path (A1_0) edge [->]node [auto] {$\scriptstyle{\gamma'}$} (A1_1);   \end{tikzpicture}   \] Since
$\gamma'(u\otimes v)=\gamma(v\otimes u)$, the commutativity of this
diagram is locally equivalent to (\ref{eq:ass beta,beta}) and the
analogous one for $(-,-)$, which however follows from (\ref{eq:ass m,alpha}),
(\ref{eq:ass alpha,gamma}), (\ref{eq:ass alpha,beta}) and (\ref{eq:ass beta,beta}),
assuming $(u,v)=\la\alpha(v),u\ra$. Indeed
\[
\begin{alignedat}{1}(w,\beta(u\otimes v)) & =\la\alpha(\beta(u\otimes v)),w\ra=-\la\beta(\alpha(u)\otimes v),w\ra=\la w,\beta(\alpha(u)\otimes v)\ra\\
 & =\la w,\beta(v\otimes\alpha(u))\ra=\la\alpha(u),\beta(w\otimes v)\ra=(u,\beta(v\otimes w))
\end{alignedat}
\]

\end{itemize}
In order to prove that the associativity conditions we have introduced
are enough to deduce the associativity of $\alB_{\Omega}$, we need
the following remark.
\begin{rem}
\label{lem:essential associative conditions}Let $A$ be a commutative
(but not necessary associative) ring and $x,y,z\in A$. If 
\[
(xy)z=x(yz)\text{ and }(yx)z=y(xz)
\]
then all the permutations of $x,y,z$ satisfy associativity. Indeed
\[
y(zx)=(yx)z=x(yz)=(yz)x\comma z(xy)=(yx)z=y(xz)=(zx)y
\]
\[
(zy)x=x(yz)=(xy)z=z(yx)\comma(xz)y=y(zx)=(yz)x=x(zy)
\]
\end{rem}
\begin{proof}
(\emph{of Theorem }\ref{thm:global data for Sthree}) The map in the
statement is the composition $\stY\hookrightarrow(\LPMon_{\stR,f}^{G})^{\text{gr}}\arrdi{\alA^{*}}(\LRings_{\stR}^{G})^{\text{gr}}$
and it is fully faithful. Call $\overline{\stY}$ the full substack
of $\stY$ satisfying the conditions of the statement. By Theorem
\ref{thm:description of GCov with monoidal functors} and remark \ref{rem:description of GCov with monoidal functors},
we need to prove that $(\LMon_{\stR,f}^{G})^{\text{gr}}=\overline{\stY}$.

$(\LMon_{\stR,f}^{G})^{\text{gr}}\subseteq\overline{\stY}$. Let $\Omega=(\shL,\shF,m,\alpha,\beta,(-,-),\la-,-\ra)\in(\LMon_{\stR,f}^{G})^{\text{gr}}$,
where we are using the description for symmetric functors, and $\chi=(\shL,\shF,m,\alpha,\beta,\la-,-\ra)\in\stY$.
One of the associativity conditions, namely the local conditions $(u,v)=\la\alpha(v),u\ra$,
tells us that $(-,-)=(-,-)_{\chi}$, thanks to (\ref{eq:ass alpha, gamma first}).
In particular $\Omega=\chi\in\stY$. In order to conclude that $\Omega\in\overline{\stY}$,
it is enough to note that the conditions in the statement are all
associativity conditions for $\Omega$, which are satisfied because
$\Omega$ is associative.

$\overline{\stY}\subseteq(\LMon_{\stR,f}^{G})^{\text{gr}}$. Let $\chi=(\shL,\shF,m,\alpha,\beta,\la-,-\ra)\in\overline{\stY}\subseteq\stY$.
The relations (\ref{eq:ass alpha,gamma}) and (\ref{eq:ass alpha, gamma first})
imply that $\alA_{\chi}$ is a commutative $\odi T$-algebra. We need
to show that $\alA_{\chi}$ is associative and we will use \ref{lem:essential associative conditions}.
Given $A,B,C\in\{\odi S,\shL,\shF_{1},\shF_{2}\}$ we will say that
$(A,B,C)$ holds if $a(bc)=(ab)c$ for all $a\in A,b\in B,c\in C$.
Since $\sigma\in\Z/2\Z$ induces a ring automorphism of $\alA_{\chi}$,
if $(A,B,C)$ holds then $(\sigma(A),\sigma(B),\sigma(C))$ holds
and, moreover, if also $(B,A,C)$ holds then all the permutations
of $(A,B,C)$ and $(\sigma(A),\sigma(B),\sigma(C))$ hold.

Clearly $(\shL,\shL,\shL)$ holds. Condition (\ref{eq:ass m,alpha})
insures that $(\shL,\shL,\shF_{1})$, $(\shL,\shL,\shF_{2})$ and
all their permutations hold. Condition (\ref{eq:ass alpha,gamma})
says that all the permutations of $(\shF_{1},\shF_{2},\shL)$ hold,
while condition (\ref{eq:ass alpha,beta}) tells us that all the permutations
of $(\shF_{1},\shF_{1},\shL)$ and $(\shF_{2},\shF_{2},\shL)$ hold.
The relation (\ref{eq:ass gamma,beta}) implies that $(\shF_{1},\shF_{1},\shF_{2})$,
$(\shF_{2},\shF_{2},\shF_{1})$ and all their permutations hold. Finally
(\ref{eq:ass beta,beta}) says that $(\shF_{1},\shF_{1},\shF_{1})$
and $(\shF_{2},\shF_{2},\shF_{2})$ hold. It is now easy to check
that we have obtained all the possible triples.
\end{proof}

\subsection{Local analysis.}

Let $\chi=(\shL,\shF,m,\alpha,\beta,\la-,-\ra)\in\stY$ and assume
that $t\in\shL$ is a generator and that $y,z$ is a basis of $\shF$.
The aim of this subsection is to translate conditions (\ref{eq:ass m,alpha}),
(\ref{eq:ass alpha,gamma}), (\ref{eq:ass alpha,beta}), (\ref{eq:ass gamma,beta})
and (\ref{eq:ass beta,beta}), writing all the maps $\alpha,\beta,\la-,-\ra$
with respect to the given basis. In particular we will use notation
from \ref{not: local notation for Sthree}, so that $m\in\odi T$,
$\alpha$ is a map $\shF\arr\shF$ and $\la-,-\ra\colon\det\shF\arr\odi T$.
\begin{notation}
Write
\[
\beta(y^{2})=ay+bz\comma\beta(yz)=cy+dz\comma\beta(z^{2})=ey+df\comma\la y,z\ra=\omega\comma\alpha=\left(\begin{array}{cc}
A & B\\
C & D
\end{array}\right)
\]

\end{notation}
We start computing useful relations in order to impose the associativity
conditions
\[
(y,y)=-C\omega\comma(y,z)=-D\omega\comma(z,y)=A\omega\comma(z,z)=B\omega
\]
\[
\begin{array}{ll}
\beta(\beta(y^{2})y)=(a^{2}+bc)y+b(a+d)z & \beta(\beta(z^{2})y)=(ea+fc)y+(eb+fd)z\\
\beta(\beta(y^{2})z)=(ac+be)y+(ad+bf)z & \beta(\beta(z^{2})z)=e(c+f)y+(ed+f^{2})z
\end{array}
\]
\[
\beta(\beta(yz)y)=\beta(\beta(zy)y)=c(a+d)y+(cb+d^{2})z
\]
\[
\beta(\beta(yz)z)=\beta(\beta(zy)z)=(c^{2}+de)y+d(c+f)z
\]
\[
\begin{array}{ll}
\la y,\beta(y^{2})\ra=b\omega & \la y,\beta(z^{2})\ra=f\omega\\
\la z,\beta(y^{2})\ra=-a\omega & \la z,\beta(z^{2})\ra=-e\omega\\
\la y,\beta(yz)\ra=\la y,\beta(zy)\ra=d\omega & \la z,\beta(yz)\ra=\la z,\beta(zy)\ra=-c\omega
\end{array}
\]
If we set $\Gamma(u,v,w)=\la\alpha(w),v\ra u-\la w,v\ra\alpha(u)$
we have
\[
\begin{array}{ll}
\Gamma(y,y,y)=-C\omega y & \Gamma(z,y,y)=-C\omega z\\
\Gamma(y,y,z)=\omega(A-D)y+\omega Cz & \Gamma(z,y,z)=B\omega y\\
\Gamma(y,z,y)=-\omega Cz & \Gamma(z,z,y)=-B\omega y+\omega(A-D)z\\
\Gamma(y,z,z)=B\omega y & \Gamma(z,z,z)=B\omega z
\end{array}
\]
\[
\begin{array}{rcl}
\alpha(\beta(y^{2}))+\beta(\alpha(y)y) & = & (2aA+bB+cC)y+(C(a+d)+b(A+D))z\\
\alpha(\beta(yz))+\beta(\alpha(y)z) & = & (2cA+dB+eC)y+(C(c+f)+d(A+D))z\\
\alpha(\beta(zy))+\beta(\alpha(z)y) & = & (B(a+d)+c(A+D))y+(2dD+bB+cC)z\\
\alpha(\beta(z^{2}))+\beta(\alpha(z)z) & = & (B(c+f)+e(A+D))y+(2fD+eC+dB)z
\end{array}
\]
From the above relations we can conclude:
\begin{lem}
\label{lem:local conditions for the map for Sthree}The object $\chi=(\shL,\shF,m,\alpha,\beta,\la-,-\ra)\in\stY$
belongs to $\RCov G$ if and only if the following relations hold.
\begin{equation}
\begin{array}{ccc}
(\ref{eq:ass m,alpha}) & \iff & m=A^{2}+BC\comma(A-D)(A+D)=B(A+D)=C(A+D)=0\\
(\ref{eq:ass alpha,gamma}) & \iff & \omega(A+D)=0\\
(\ref{eq:ass alpha,beta}) & \iff & \left\{ \begin{array}{c}
(2aA+bB+cC)=(2cA+dB+eC)=0\\
C(a+d)+b(A+D)=C(c+f)+d(A+D)=0\\
B(a+d)+c(A+D)=B(c+f)+e(A+D)=0\\
a(A+D)-D(a+d)=c(A+D)-D(c+f)=0
\end{array}\right.\\
(\ref{eq:ass gamma,beta}) & \iff & \left\{ \begin{array}{c}
a^{2}+bc=-\omega C\comma ac+be=\omega(A-D)\comma c^{2}+de=B\omega\\
(a-d)(a+d)=b(a+d)=c(a+d)=0\\
(c-f)(c+f)=d(c+f)=e(c+f)=0\\
a(a+d)+b(c+f)=e(a+d)+c(c+f)=0
\end{array}\right.\\
(\ref{eq:ass beta,beta}) & \iff & \omega(a+d)=\omega(c+f)=0
\end{array}\label{eq:loc com and ass conditions}
\end{equation}
\end{lem}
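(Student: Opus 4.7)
The plan is straightforward in shape: by Theorem \ref{thm:global data for Sthree}, we already know that $\chi\in\RCov{G}$ iff the five abstract relations (\ref{eq:ass m,alpha})--(\ref{eq:ass beta,beta}) hold, and each of those relations is an equality of $\odi{T}$-linear maps between free sheaves of rank at most $2$ in a \emph{local} setting where $t$ generates $\shL$ and $y,z$ is a basis of $\shF$. So each one translates, by comparing coefficients, into a finite system of scalar relations among $m,A,B,C,D,a,b,c,d,e,f,\omega$. The proof is thus a direct computation, relation by relation, and in fact most of the required expansions have already been assembled in the excerpt immediately above the lemma statement.

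Concretely, I would proceed as follows. For (\ref{eq:ass m,alpha}), compute the matrix of $\alpha^2$ with respect to $y,z$, namely $\bigl(\begin{smallmatrix}A^2+BC & B(A+D)\\ C(A+D) & D^2+BC\end{smallmatrix}\bigr)$, and equate it with $m\,I$; this yields simultaneously $m=A^2+BC$, the equality of diagonal entries $(A-D)(A+D)=0$, and $B(A+D)=C(A+D)=0$. For (\ref{eq:ass alpha,gamma}), it suffices to evaluate $\la\alpha(u),v\ra-\la\alpha(v),u\ra$ on the single pair $(u,v)=(y,z)$, giving $(A+D)\omega=0$ as the only nontrivial condition. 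For (\ref{eq:ass beta,beta}), one evaluates $\la u,\beta(v\otimes w)\ra-\la w,\beta(u\otimes v)\ra$ on the basis triples in $\{y,z\}^3$; using the already listed values of $\la y,\beta(\cdot)\ra$ and $\la z,\beta(\cdot)\ra$, every triple reduces to a multiple of $\omega(a+d)=0$ or $\omega(c+f)=0$.

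The two longer verifications are (\ref{eq:ass alpha,beta}) and (\ref{eq:ass gamma,beta}). For (\ref{eq:ass alpha,beta}), the identity $\alpha(\beta(u\otimes v))=-\beta(u\otimes\alpha(v))$ tested on the four pairs $(u,v)\in\{yy,yz,zy,zz\}$ gives four vector equations in $\shF$, each producing two scalar equations in $y,z$; substituting the already computed expressions for $\alpha\bigl(\beta(\,\cdot\,)\bigr)+\beta(\alpha(\cdot)\,\cdot\,)$ reads off exactly the eight listed equations. For (\ref{eq:ass gamma,beta}), the identity $\beta(\beta(u\otimes v)\otimes w)=\la\alpha(w),v\ra u-\la w,v\ra\alpha(u)=\Gamma(u,v,w)$ on the eight triples $\{y,z\}^3$ gives sixteen scalar equations; matching them against the tables for $\beta(\beta(\,\cdot\,)\,\cdot\,)$ and $\Gamma(u,v,w)$ listed in the excerpt produces the system written in the lemma after collecting the redundancies coming from $\beta(yz)=\beta(zy)$.

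No genuine obstacle is expected: the content is entirely mechanical bookkeeping. The only point demanding care is that several of the $16$ scalar equations coming from (\ref{eq:ass gamma,beta}) repeat or are consequences of others (e.g. the triples $(y,z,y)$ and $(z,y,y)$ both yield $C(a+d)=0$ type relations already present in (\ref{eq:ass alpha,beta})), so that one must check that after discarding duplicates the remaining independent equations are exactly the ones listed in (\ref{eq:loc com and ass conditions}); this is a finite check done directly from the precomputed tables.
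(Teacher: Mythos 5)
Your proposal is correct and follows exactly the route the paper takes: the lemma is precisely the coefficient-by-coefficient comparison, in the chosen basis, of the five relations (\ref{eq:ass m,alpha})--(\ref{eq:ass beta,beta}) against the tables of $\alpha^2$, $\beta(\beta(\cdot)\cdot)$, $\Gamma(u,v,w)$, $\alpha(\beta(\cdot))+\beta(\alpha(\cdot)\cdot)$ and $\la\cdot,\beta(\cdot)\ra$ computed just before the statement. The only place your ``discard duplicates'' step needs an actual (trivial) verification is the triple $(z,z,y)$ in (\ref{eq:ass gamma,beta}), whose equation $eb+fd=\omega(A-D)$ reduces to $df=ac$, which follows from $c(a+d)=d(c+f)=0$.
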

\begin{rem}
\label{rem: cover with non zero trace}It is not true in general that
some of $(a+d)$, $(c+f)$, $(A+D)$ is $0$. For instance over the
ring $k[x]/(x^{2})$ we have a $G$-cover given by
\[
a=b=c=d=e=f=\omega=A=B=C=D=x\comma m=0
\]
On the other hand if we are on a reduced ring $R$ then $(a+d)=(c+f)=0$.
Indeed over every domain we have
\[
(a+d)\neq0\then a=d,c=b=0\text{ and }a(a+d)+b(c+f)=0\then a=0
\]
so $a+d=2a=0$. The same argument shows that $c+f=0$.\end{rem}
\begin{notation}
\label{not: associated parameters for chi and Sthree}Given $\chi=(\shL,\shF,m,\alpha,\beta,\la-,-\ra)\in\stY$,
a basis $y,z$ of $\shF$ and a generator $t\in\shL$, we will denote
by 
\[
a_{\chi},b_{\chi},c_{\chi},d_{\chi},e_{\chi},f_{\chi},\omega_{\chi},A_{\chi},B_{\chi},C_{\chi},D_{\chi},m_{\chi}
\]
the data associated with $\chi$ as above. We will always omit the
$-_{\chi}$ if this will not lead to confusion.
\end{notation}

\section{Geometry of $(\mu_{3}\rtimes\Z/2\Z)$-Cov and $\RCov{S_{3}}$.}

The aim of this section is to describe the geometry of the stack $\GCov$
and, as a consequence, of $\RCov{S_{3}}$. Clearly this is related
to the problem of the description of $G$-covers: we will individuate
three smooth open substacks of $\GCov$, that is families of $G$-covers
with a certain global or local property. In some cases this will allow
us to describe $G$-covers using less data than needed to build a
general $G$-cover, that is the objects $\shL,\shF,m,\alpha,\beta,\la-,-\ra$.
For instance when $\la-,-\ra\colon\det\shF\arr\shL$ is an isomorphism,
we will show that the data $\shF,\beta$ determine all the others.
This is the first locus we will describe, denoted by $\stU_{\omega}$.
The other two will be the locus $\stU_{\alpha}$ where $\alpha$ is
never a multiple of the identity and the locus $\stU_{\beta}$ where
$\beta$ is never zero. Although in those cases we do not have a global
description of $G$-covers belonging to these families, what we have
is a local description that will be extremely useful also in the next
sections, for instance because it will turn out that any $G$-cover
between smooth varieties belongs to the locus where $\beta$ is never
zero.

Unluckily, the three loci described above do not cover $\GCov$, actually
we will see that they are contained in the main irreducible component
$\stZ_{G}$. On the other hand they almost cover $\stZ_{G}$: the
complement of their union in $\stZ_{G}$ is formed by the {}``zero
covers'', that is the $G$-covers where $m=\alpha=\beta=\la-,-\ra=0$,
and therefore, topologically, we have missed only {}``one point'',
actually one point for each characteristic. The stack $\stZ_{G}$
is an irreducible component of $\GCov$ and Theorem \ref{thm:GCov reducible when G not abelian}
tells us that $\GCov$ is reducible. We will show more: the complementary
$\stZ_{2}$ of the union of $\stU_{\omega}$, $\stU_{\alpha}$ and
$\stU_{\beta}$ in the whole $\GCov$ is another irreducible component.
Therefore $\GCov$ and, over $\stR_{3}$, $\RCov{S_{3}}$ have exactly
two irreducible components. Although the covers in $\stZ_{2}$ are
highly degenerate, the stack $\stZ_{2}$ has a very simple description
and a very simple geometry, for instance it is smooth, while $\stZ_{G}$
is not. Over a field, $\stZ_{2}$ is topologically composed of two
points and $(\GCov)-\stZ_{G}\simeq\Bi\Gl_{2}$. The last subsection
is dedicated to the study of the irreducible component $\stZ_{G}$
and we will show that, in this case, the maps $m,\alpha,\beta,\la-,-\ra$
are uniquely determined by $\beta$ and $\la-,-\ra$.

\subsection{Triple covers and the locus where $\la-,-\ra\colon\det\shF\arr\shL$
is an isomorphism.}

In this subsection we want to describe a smooth open substack of $\GCov$,
more precisely the locus $\stU_{\omega}$ of objects $\chi=(\shL,\shF,m,\alpha,\beta,\la-,-\ra)$
such that $\la-,-\ra\colon\det\shF\arr\shL$ is an isomorphism. 
\begin{defn}
\label{def:definition of Cthree}Define $\shC_{3}$ as the stack whose
objects are pairs $(\shF,\delta)$ where $\shF$ is a locally free
sheaf of rank $2$ and $\delta$ is a map
\[
\delta\colon\Sym^{3}\shF\arr\det\shF
\]

\end{defn}
Notice that $\stC_{3}$ is a smooth stack over $\Z$ because it is
a vector bundle over $\Bi\Gl_{2}$, which is the stack of rank $2$
locally free sheaves. We will show that $\stU_{\omega}$ is isomorphic
to the stack $\shC_{3}$. This also explains the reason of the section
name: it is a classical result (see \cite{Miranda1985,Bolognesi2009,Pardini1989})
that, over $\stR_{3}$, the stack $\shC_{3}$ is isomorphic to the
stack $\Cov_{3}$ of degree $3$ covers. We will show that, in this
case, the map $\Cov_{3}\simeq\shC_{3}\arr\stU_{\omega}$ is a section
of the map $\GCov\arr\Cov_{3}$, obtained taking invariants by $\sigma\in\Z/2\Z\subseteq G$.

We now give an alternative description of $\shC_{3}$ and we will
need the following notation.
\begin{notation}
\label{not: trace of a map} Given locally free sheaves $\shN$ and
$\shF$ over $T$ and a map $\zeta\colon\shN\otimes\shF\arr\shF$
we will call trace of $\zeta$ the composition $\tr\zeta\colon\shN\arr\duale{\shF}\otimes\shF\arr\odi T$.
We will also denote by $\Homsh_{\tr=0}(\shN\otimes\shF,\shF)$ the
subsheaf of $\Homsh(\shN\otimes\shF,\shF)$ of maps whose trace is
$0$. Notice that, when $\shN=\odi T$ and $\shF$ is free, the trace
we have just defined is the usual trace of a associated matrix.
\end{notation}

\begin{notation}
\label{not: trace of beta}Given $\beta\colon\Sym^{2}\shF\arr\shF$
we will use the notation 
\[
\tr\beta=\tr(\shF\otimes\shF\arr\Sym^{2}\shF\arrdi{\beta}\shF)\colon\shF\arr\odi T
\]
If $y,z$ is a basis of $\shF$ and $\beta(y^{2})=ay+bz$, $\beta(yz)=cy+dz$,
$\beta(z^{2})=ey+fz$, then $(\tr\beta)(y)=a+d$, $(\tr\beta)(z)=c+f$.
\end{notation}
It is easy to check (see also \cite{Miranda1985,Bolognesi2009}) that
if $\beta\colon\Sym^{2}\shF\arr\shF$ is a map such that $\tr\beta=0$
there exists a dashed map $\delta$:  \[   \begin{tikzpicture}[xscale=2.4,yscale=-1.0]     \node (A0_0) at (0, 0) {$\Sym^2\shF\otimes \shF$};     \node (A0_1) at (1, 0) {$\shF\otimes\shF$};     \node (A1_0) at (0, 1) {$\Sym^3 \shF$};     \node (A1_1) at (1, 1) {$\det \shF$};     \path (A0_0) edge [->]node [auto] {$\scriptstyle{\beta\otimes \id}$} (A0_1);     \path (A0_0) edge [->]node [auto] {$\scriptstyle{}$} (A1_0);     \path (A0_1) edge [->]node [auto] {$\scriptstyle{}$} (A1_1);     \path (A1_0) edge [->,dashed]node [auto] {$\scriptstyle{\delta}$} (A1_1);   \end{tikzpicture}   \] 
This association yields an isomorphism \begin{align}\label{iso:delta beta}
 \begin{tikzpicture}[xscale=5.0,yscale=-0.8]     \node (A0_0) at (0, 0) {$\Homsh_{\tr = 0}(\Sym^2 \shF,\shF)$};     \node (A0_1) at (1, 0) {$\Homsh(\Sym^3 \shF,\det\shF)$};     \node (A1_0) at (0, 1) {$\left(\begin{array}{ccc} a & c & e\\ b & -a & -c \end{array}\right)$};     \node (A1_1) at (1, 1) {$\left(\begin{array}{cccc} -b & a & c & e\end{array}\right)$};     \path (A0_0) edge [->]node [auto] {$\scriptstyle{}$} (A0_1);     \path (A1_0) edge [|->,gray]node [auto] {$\scriptstyle{}$} (A1_1);   \end{tikzpicture}
\end{align}where the last row describes how this map behaves if a basis $y,z$
of $\shF$ is chosen, where we have considered $y^{3},y^{2}z,yz^{2},z^{3}$
as basis of $\Sym^{3}\shF$. So $\shC_{3}$ can also be described
as the stack of pairs $(\shF,\beta)$ where $\shF$ is a locally free
sheaf of rank $2$ and $\beta\colon\Sym^{2}\shF\arr\shF$ is a map
such that $\tr\beta=0$.
\begin{notation}
We will denote the correspondence (\ref{iso:delta beta}) by $\beta\longmapsto\delta_{\beta}$
and $\delta\longmapsto\beta_{\delta}$.
\end{notation}
Now let $(\shF,\delta)\in\shC_{3}$. Define $\eta_{\delta}$ as the
map
\begin{equation}
\Sym^{2}\shF\arrdi u\Lambda^{2}\Sym^{2}\shF\otimes\duale{\Lambda^{2}\shF}\arrdi v\odi S\label{eq:etabeta}
\end{equation}
where $v$ is induced by $\Lambda^{2}\beta_{\delta}\colon\Lambda^{2}\Sym^{2}\shF\arr\Lambda^{2}\shF$
and $u$ is induced by   \[   \begin{tikzpicture}[xscale=4.7,yscale=-0.6]     \node (A0_0) at (0, 0) {$\Lambda^2\shF\otimes \Sym^2 \shF$};     \node (A0_1) at (1, 0) {$\Lambda^2 \Sym^2 \shF$};     \node (A1_0) at (0, 1) {$(x_1\wedge x_2)\otimes x_3 x_4$};     \node (A1_1) at (1, 1) {$-x_1x_3\wedge  x_2 x_4-x_1 x_4\wedge x_2 x_3$};     \path (A0_0) edge [->]node [auto] {$\scriptstyle{}$} (A0_1);     \path (A1_0) edge [|->,gray]node [auto] {$\scriptstyle{}$} (A1_1);   \end{tikzpicture}   \] 
and $\alpha_{\delta}\colon\det\shF\otimes\shF\arr\shF$ as the map
induced by
\[
\det\shF\otimes\duale{\shF}\otimes\shF\arrdi{\simeq}\shF\otimes\shF\arrdi{\eta_{\delta}/2}\odi S
\]
using the canonical isomorphism $\det\shF\otimes\duale{\shF}\simeq\shF$.
Finally define $m_{\delta}\colon(\det\shF)^{2}\arr\odi T$ as minus
the map induced by
\[
(\det\shF)^{2}\otimes\det\shF\simeq\det(\det\shF\otimes\shF)\arrdi{\det\alpha_{\delta}}\det\shF
\]

Denote by $\Cov_{3}$ the stack of degree $3$ covers, or, equivalently,
the stack of locally free sheaves of $\odi T$algebras of rank $3$.
Denote also by $\stU_{\omega}$ the open substack of $\GCov$ of objects
$\chi=(\shL,\shF,m,\alpha,\beta,\la-,-\ra)$ such that $\la-,-\ra\colon\det\shF\arr\shL$
is an isomorphism. The theorem we want to prove is:
\begin{thm}
\label{thm:The-locus when omega is invertible}The maps of stacks
  \[   \begin{tikzpicture}[xscale=4.0,yscale=-0.6]     
\node (A0_0) at (0, 0) {$(\shF,\delta)$};
\node (A0_1) at (1, 0) {$(\det \shF,\shF,m_\delta,\alpha_\delta,\beta_\delta,\id_{\det \shF})$};     
\node (A1_0) at (0, 1) {$\shC_3$};     
\node (A1_1) at (1, 1) {$\stU_\omega$};     
\node (A2_0) at (0, 2) {$(\shF,\delta_\beta)$};     
\node (A2_1) at (1, 2) {$(\shL,\shF,m,\alpha,\beta,\la-,-\ra)$};     
\path (A0_0) edge [|->,gray]node [auto] {$\scriptstyle{}$} (A0_1);          
\path (A1_0) edge [->]node [auto] {$\scriptstyle{\Lambda}$} (A1_1);     
\path (A2_1) edge [|->,gray]node [auto] {$\scriptstyle{}$} (A2_0);   
\end{tikzpicture}   \] are well defined and they are inverses of each other. In particular
$\stU_{\omega}$ is a smooth open substack of $\GCov$. Moreover,
over $\stR_{3}$, the composition $\Cov_{3}\simeq\shC_{3}\arrdi{\Lambda}\GCov$
is a section of the map $\GCov\arr\Cov_{3}$ obtained by taking invariants
by $\sigma\in\Z/2\Z$ and the same result hold if we replace $\GCov$
by $\RCov{S_{3}}$.
\end{thm}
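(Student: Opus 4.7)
My plan is to establish the two maps are well defined, construct explicit mutual inverses, and then deduce smoothness and the section statement. I will work mostly in the local picture of Notation~\ref{not: associated parameters for chi and Sthree}, using the translation of associativity given by Lemma~\ref{lem:local conditions for the map for Sthree}.

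First I would verify that $\stU_\omega\to\shC_3$ is well defined. Given $\chi=(\shL,\shF,m,\alpha,\beta,\langle-,-\rangle)\in\stU_\omega$, the map $\omega=\langle-,-\rangle$ is locally invertible, so the relation $(\ref{eq:ass beta,beta})\iff\omega(a+d)=\omega(c+f)=0$ of \eqref{eq:loc com and ass conditions} forces $a+d=c+f=0$, i.e.\ $\tr\beta=0$. Hence $\delta_\beta$ is a well defined element of $\shC_3$.

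Next I would verify that $\Lambda\colon\shC_3\to\stU_\omega$ is well defined: given $(\shF,\delta)\in\shC_3$, I need to check that $\chi_\delta=(\det\shF,\shF,m_\delta,\alpha_\delta,\beta_\delta,\id_{\det\shF})$ lies in $\GCov$, equivalently satisfies the five associativity conditions of Lemma~\ref{lem:local conditions for the map for Sthree}. Choose a basis $y,z$ of $\shF$, write $\beta_\delta$ via the matrix with entries $a,b,c,-a,e,-c$ (as in the formula for the isomorphism \eqref{iso:delta beta}), and compute the matrix of $\alpha_\delta$ and the scalar $m_\delta$ from the definitions. Since $\tr\beta_\delta=0$ and $\tr\alpha_\delta=0$ by construction, the conditions in \eqref{eq:loc com and ass conditions} collapse to the genuinely non-trivial identities \eqref{eq:ass m,alpha}, \eqref{eq:ass alpha,beta}, \eqref{eq:ass gamma,beta}, each of which becomes a polynomial identity in $a,b,c,e$. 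These are straightforward but tedious to check; this verification is the main technical obstacle.

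For the mutual inverse statement, one direction is immediate: $\Lambda(\shF,\delta)$ is built from $\beta_\delta$, and $\delta_{\beta_\delta}=\delta$ by the bijection \eqref{iso:delta beta}. For the other direction, starting from $\chi\in\stU_\omega$, I must show $\alpha=\alpha_{\delta_\beta}$ and $m=m_{\delta_\beta}$ and $\shL=\det\shF$ (via $\langle-,-\rangle$). Locally, using $\omega\in\odi T^*$, the conditions $\omega(A+D)=0$ from \eqref{eq:ass alpha,gamma} and the identities from \eqref{eq:ass gamma,beta} expressing $a^2+bc,\, ac+be,\, c^2+de$ in terms of $A,B,C,D,\omega$ invert to determine $A,B,C,D$ uniquely from $a,b,c,e$ and $\omega$. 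Matching this with the explicit formula for $\alpha_\delta$ (via $\eta_\delta/2$) gives $\alpha=\alpha_{\delta_\beta}$, and then $m=A^2+BC$ from \eqref{eq:ass m,alpha} matches $m_\delta=-\det\alpha_\delta$. Smoothness of $\stU_\omega$ is then inherited from $\shC_3$, which is a vector bundle over the smooth stack $\Bi\Gl_2$.

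Finally, for the section statement over $\stR_3$, observe that $\alA_\chi^\sigma=\odi T\oplus\shF_{\mathrm{diag}}$ where $\shF_{\mathrm{diag}}\subseteq\shF_1\oplus\shF_2$ is the diagonal copy of $\shF$ (since $\sigma$ acts as $-\id$ on $\shL$ and swaps $\shF_1,\shF_2$ via $\id_\shF$). From the multiplication table of $\alA_\chi$ described in Section~\ref{sec:Global description of Sthree covers}, the induced product on $\odi T\oplus\shF_{\mathrm{diag}}\simeq\odi T\oplus\shF$ is controlled precisely by the maps $\eta_{\delta_\beta}$ and $\beta_{\delta_\beta}$, which matches the algebra $\alA_\Phi$ attached to $(\shF,\delta_\beta)\in\shC_3$ under the classical triple-cover correspondence $\shC_3\simeq\Cov_3$. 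Hence $(-)^\sigma\circ\Lambda=\id_{\Cov_3}$. The statement for $S_3$-covers then follows formally from the isomorphism $\GCov\simeq\RCov{S_3}$ over $\stR_3$ together with the identification of invariants by $\sigma$ made in the final remark of Section~\ref{sec:Global description of Sthree covers}.
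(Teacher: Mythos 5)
Your proposal is correct and follows essentially the same route as the paper: local verification of the relations of Lemma~\ref{lem:local conditions for the map for Sthree} for well-definedness of $\Lambda$, invertibility of $\omega$ forcing $\tr\beta=0$ for the other direction, the inverse check reduced to the local identity determining $\alpha$ from $\beta$ and $\omega$ (in the paper this is packaged as the isomorphism $(\la-,-\ra,\id_{\shF})\colon\Lambda\circ\Delta(\chi)\arr\chi$, whose only nontrivial condition is locally $\alpha_{\delta}=\omega\alpha$), smoothness from $\shC_3$ being a vector bundle over $\Bi\Gl_2$, and the section statement via the remark on $\sigma$-invariants together with $\eta_{\delta}=2(-,-)_{\chi}$. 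The only point to phrase carefully is that the identity you write as ``$\alpha=\alpha_{\delta_\beta}$'' holds only after the identification $\shL\simeq\det\shF$ via $\la-,-\ra$, i.e.\ locally it reads $\alpha_{\delta}=\omega\alpha$, which is exactly what your matching of $A,B,C,D$ against $\eta_{\delta}/2$ computes.
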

We will prove this theorem at the end of this section, after collecting
some useful remarks.
\begin{rem}
If $y,z$ is a basis of $\shF$, we identify $\det\shF\simeq\odi T$
using the generator $y\wedge z\in\det\shF$ and we write $\delta$
as
\begin{equation}
\delta(y^{3})=-b\comma\delta(y^{2}z)=a\comma\delta(yz^{2})=c\comma\delta(z^{3})=e\label{eq:expression of delta}
\end{equation}
then we have expressions
\begin{equation}
\eta_{\delta}(y^{2})=2(a^{2}+bc)\comma\eta_{\delta}(yz)=ac+be\comma\eta_{\delta}(z^{2})=2(c^{2}-ae)\label{eq:expression of eta delta}
\end{equation}
\[
2\alpha_{\delta}(y)=\eta_{\delta}(yz)y-\eta_{\delta}(y^{2})z\comma2\alpha_{\delta}(z)=\eta_{\delta}(z^{2})y-\eta_{\delta}(yz)z
\]
In particular, if $\chi=(\shL,\shF,m,\alpha,\beta_{\delta},\la-,-\ra)\in\GCov$,
then, by (\ref{eq:ass alpha, gamma first}) and (\ref{eq:loc com and ass conditions}),
we have
\begin{equation}
\eta_{\delta}=2(-,-)_{\chi}\label{eq:eta delta is two times symmetric product}
\end{equation}

\end{rem}

We now want to show the relationship between $\shC_{3}$ and $\Cov_{3}$.
The reader can refer to \cite{Miranda1985,Bolognesi2009} for details
and proofs.
\begin{rem}
\label{rem:triple covers and invariants by sigma}If $\Phi=(\shF,\delta)\in\shC_{3}$
and we set $\alA_{\Phi}=\odi T\oplus\shF$, we can endow $\alA_{\Phi}$
by a structure of $\odi T$-algebras given by
\[
\Sym^{2}\shF\arrdi{\eta_{\delta}+\beta_{\delta}}\alA_{\Phi}
\]
This association defines a map of stacks $\shC_{3}\arr\Cov_{3}$.
This map is an isomorphism if $3$ is inverted in the base scheme.
Indeed if $\alA\in\Cov_{3}$, $ $the trace map $\tr_{\alA/\odi T}\colon\alA\arr\odi T$
is surjective and we can write $\alA=\odi S\oplus\shF$, where $\shF=\ker\tr_{\alA}$.
The multiplication of $\alA$ induces a map $\beta\colon\Sym^{2}\shF\arr\shF$
such that $\tr\beta=0$ and therefore a $\delta\colon\Sym^{3}\shF\arr\det\shF$
such that $\beta_{\delta}=\beta$.

Now let $\chi=(\shL,\shF,m,\alpha,\beta,\la-,-\ra)\in\GCov$. It's
easy to see that 
\[
\alA_{\chi}^{\sigma}=\{a\oplus0\oplus x_{1}\oplus x_{2}\st a\in\odi T\comma x_{1}=x_{2}\in\shF\}
\]
where $\sigma\in\Z/2\Z\subseteq G$. The map   \[   \begin{tikzpicture}[xscale=3.0,yscale=-0.5]     \node (A0_0) at (0, 0) {$\odi{T}\oplus\shF$};     \node (A0_1) at (1, 0) {$\alA^\sigma$};     \node (A1_0) at (0, 1) {$a\oplus x$};     \node (A1_1) at (1, 1) {$a\oplus 0\oplus x\oplus x$};     \path (A0_0) edge [->]node [auto] {$\scriptstyle{}$} (A0_1);     \path (A1_0) edge [|->,gray]node [auto] {$\scriptstyle{}$} (A1_1);   \end{tikzpicture}   \] 
is an isomorphism of $\odi S$-modules and the induced algebra structure
on $\odi T\oplus\shF$ is given by
\[
\beta\colon\Sym^{2}\shF\arr\shF\text{ and }2(-,-)\colon\Sym^{2}\shF\arr\odi T
\]
Notice that it is not true in general that $\shF=\ker\tr_{\alA^{\sigma}}$,
also if $3$ is inverted: this equality holds if and only if $\tr\beta=0$,
for instance over any reduced scheme or, as we will see in the next
sections, over the principal irreducible component $\stZ_{G}\subseteq\GCov$.
In this case $(\shF,\delta_{\beta})\in\shC_{3}$ and $\alA_{\chi}^{\sigma}\simeq\alA_{(\shF,\delta_{\beta})}$.
Moreover we get a well defined map of stacks $\{\tr\beta=0\}\arr\shC_{3}$,
where $\{\tr\beta=0\}$ is the closed substack of $\GCov$ where $\tr\beta=0$.
When $3$ is inverted, such map, composed by $\shC_{3}\arr\Cov_{3}$,
extends to a map of stacks $\GCov\arr\Cov_{3}$, by taking invariants
by $\sigma$.
\end{rem}

\begin{rem}
\label{rem:invariants by sigma for Sthree and GCov}Proposition \ref{prop:bitorsors and semidirect products}
tells us that, over $\stR_{3}$, the isomorphism $\GCov\simeq\RCov{S_{3}}$
preserves the quotient by $\sigma\in\Z/2\Z$, that is we have a commutative
diagram   \[   \begin{tikzpicture}[xscale=0.9,yscale=-1.0]     \node (A0_0) at (0, 0) {$\GCov$};     \node (A0_2) at (2, 0) {$\RCov{S_3}$};     \node (A0_4) at (4, 0) {$X$};     \node (A0_5) at (5, 0) {$\alA$};     \node (A1_1) at (1, 1) {$\Cov_3$};     \node (A1_4) at (4, 1) {$X/\sigma$};     \node (A1_5) at (5, 1) {$\alA^\sigma$};     \path (A0_4) edge [|->,gray]node [auto] {$\scriptstyle{}$} (A1_4);     \path (A0_0) edge [->]node [auto] {$\scriptstyle{}$} (A1_1);     \path (A0_5) edge [|->,gray]node [auto] {$\scriptstyle{}$} (A1_5);     \path (A0_2) edge [->]node [auto] {$\scriptstyle{}$} (A1_1);     \path (A0_0) edge [->]node [auto] {$\scriptstyle{\simeq}$} (A0_2);   \end{tikzpicture}   \] 
\end{rem}
We are ready for the proof of the main theorem of this subsection.
\begin{proof}
(\emph{of Theorem} \ref{thm:The-locus when omega is invertible})
We need to prove that $\Lambda$ is well defined. Let $\Phi=(\shF,\delta)\in\shC_{3}$.
We have that $\chi=\Lambda(\Phi)\in\stY$ and we have to prove that
$\chi$ satisfies the conditions of \ref{lem:local conditions for the map for Sthree}.
We can therefore work locally and assume we have a basis $y,z$ of
$\shF$. If we write $\delta$ as in (\ref{eq:expression of delta}),
the parameters associated to $\chi$ (see \ref{not: associated parameters for chi and Sthree})
are 
\[
a,b,c,d=-a,e,f=-c,\omega=1,A=-D=(ac+be)/2\comma B=c^{2}-ae\comma C=-a^{2}-bc
\]
It is easy to check that all the conditions in \ref{lem:local conditions for the map for Sthree}
are satisfied. Moreover 
\[
m_{\delta}=A^{2}+BC=-(AD-BC)=-\det\alpha
\]
So $\Lambda(\Phi)\in\GCov$ and, by definition, $\Lambda(\Phi)\in\stU_{\omega}$. 

Conversely, if $\chi=(\shL,\shF,m,\alpha,\beta,\la-,-\ra)\in\stU_{\omega}$,
taking into account relations (\ref{lem:local conditions for the map for Sthree})
and the fact that locally $\omega$ is invertible, we have that $\tr\beta=0$.
So $(\shF,\delta_{\beta})\in\shC_{3}$. Denote by $\Delta$ the map
$\Delta\colon\stU_{\omega}\arr\shC_{3}$ defined in the statement.
Clearly $\Delta\circ\Lambda\simeq\id$. For the converse, consider
\[
(\la-,-\ra,\id_{\shF})\colon\Lambda\circ\Delta(\chi)=(\det\shF,\shF,m_{\delta},\alpha_{\delta},\beta,\id_{\det\shF})\arr(\shL,\shF,m,\alpha,\beta,\la-,-\ra)=\chi\text{ where }\delta=\delta_{\beta}
\]
In order to conclude that the association above defines an isomorphism
$\Lambda\circ\Delta\simeq\id$, it is enough to prove that it is well
defined. The only non trivial condition to check is the commutativity
of the following diagram.   \[   \begin{tikzpicture}[xscale=2.1,yscale=-1.2]     \node (A0_0) at (0, 0) {$\det\shF\otimes\shF$};     \node (A0_1) at (1, 0) {$\shF$};     \node (A1_0) at (0, 1) {$\shL\otimes\shF$};     \node (A1_1) at (1, 1) {$\shF$};     \path (A0_0) edge [->]node [auto] {$\scriptstyle{\alpha_\delta}$} (A0_1);     \path (A0_0) edge [->]node [auto,swap] {$\scriptstyle{\la-,-\ra\otimes\id_\shF}$} (A1_0);     \path (A0_1) edge [->]node [auto] {$\scriptstyle{\id_\shF}$} (A1_1);     \path (A1_0) edge [->]node [auto] {$\scriptstyle{\alpha}$} (A1_1);   \end{tikzpicture}   \] Working
locally, the commutativity of the above diagram is equivalent to the
condition $\alpha_{\delta}=\omega\alpha$, which can be easily verified.

Now assume we are over $\stR_{3}$. The map $\GCov\arr\Cov_{3}\simeq\shC_{3}$
extends the map $\stU_{\omega}\arr\shC_{3}$ defined in the statement.
Therefore $\Cov_{3}\simeq\shC_{3}\arr\stU_{\omega}\subseteq\GCov$
is a section of such map.
\end{proof}

\subsection{The locus where $\alpha\colon\shL\otimes\shF\arr\shF$ is never a
multiple of the identity.}

Define $\stU_{\alpha}$ as the full substack of $\GCov$ of objects
$\chi=(\shL,\shF,m,\alpha,\beta,\la-,-\ra)$ such that $\alpha\colon\shL\otimes\shF\arr\shF$
is never a multiple of the identity, i.e. such that $\alpha$ is not
a multiple of the identity after some base change. We want to prove
the following:
\begin{thm}
\label{thm:not degenerate locus of S3}Let $R=\stR[m,a,b]$. Then
\[
(R,R^{2},m,\alpha,\beta,\la-,-\ra)
\]
where
\[
\alpha=\left(\begin{array}{cc}
0 & m\\
1 & 0
\end{array}\right);\begin{array}{l}
\beta(e_{1}^{2})=ae_{1}+be_{2}\\
\beta(e_{1}e_{2})=-mbe_{1}-ae_{2};\begin{array}{l}
\la e_{1},e_{1}\ra=\la e_{2},e_{2}\ra=0\\
\la e_{1},e_{2}\ra=-\la e_{2},e_{1}\ra=mb^{2}-a^{2}
\end{array}\\
\beta(e_{2}^{2})=mae_{1}+mbe_{2}
\end{array}
\]
is an object of $\RCov G(R)$. The induced map $\A^{3}\arr\RCov G$
is a smooth Zariski epimorphism onto $\stU_{\alpha}$. In particular
$\stU_{\alpha}$ is a smooth open substack of $\GCov$.
\end{thm}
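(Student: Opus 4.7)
The plan is to proceed in three steps corresponding to the three claims of the theorem. First, I would verify that the explicit universal object on $R=\stR[m,a,b]$ defines a $G$-cover by substituting the values $A=D=0$, $B=m$, $C=1$, $a_\chi=a$, $b_\chi=b$, $c_\chi=-mb$, $d_\chi=-a$, $e_\chi=ma$, $f_\chi=mb$, $\omega=mb^2-a^2$ into the local relations of Lemma \ref{lem:local conditions for the map for Sthree}. Each of (\ref{eq:ass m,alpha})--(\ref{eq:ass beta,beta}) becomes an elementary polynomial identity---for instance $a^2+bc=a^2-mb^2=-\omega C$ settles the relevant equation of (\ref{eq:ass gamma,beta}), and $c^2+de=m^2b^2-ma^2=\omega B$ settles the symmetric one, while the identities with $A+D=0$ or $a+d=c+f=0$ are immediate---so the verification is systematic. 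Openness of $\stU_\alpha$ in $\GCov$ follows because the traceless component $\alpha-\tfrac{1}{2}\tr(\alpha)\,\id_\shF$ is a section of the locally free sheaf $\Homsh_{\tr=0}(\shL\otimes\shF,\shF)$, and $\stU_\alpha$ is the complement of its zero locus. By construction the induced morphism $\A^3\to\GCov$ lands in $\stU_\alpha$, since on the universal object $\alpha$ has matrix $\bigl(\begin{smallmatrix}0&m\\1&0\end{smallmatrix}\bigr)$, which is nowhere scalar in $R$.

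The second step is to prove Zariski-local surjectivity of $\A^3\to\stU_\alpha$. Given $\chi=(\shL,\shF,m,\alpha,\beta,\la-,-\ra)\in\stU_\alpha(T)$, after shrinking $T$ to a Zariski cover one trivializes $\shL$ by a generator $t$ and $\shF$ by a pair of sections. Since $\alpha(t\otimes-)$ is nowhere a scalar multiple of the identity, at each point of $T$ at least one of $e_1$, $e_2$, $e_1+e_2$ fails to be an eigenvector; these three non-eigenvector loci are open and together cover $T$, so after a further Zariski refinement one may choose a section $y$ of $\shF$ such that $y$ and $z:=\alpha(t\otimes y)$ form a basis of $\shF$ at every point. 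Relation (\ref{eq:ass m,alpha}) gives $\alpha(t\otimes z)=m_0 y$ for $m_0=m(t\otimes t)$, so $\alpha$ takes the prescribed matrix form in the basis $(y,z)$. Writing $\beta(y^2)=a_0 y+b_0 z$, the system (\ref{eq:loc com and ass conditions}) specialized at $A=D=0$, $B=m_0$, $C=1$ forces $d_\chi=-a_0$, $c_\chi=-m_0 b_0$, $e_\chi=m_0 a_0$, $f_\chi=m_0 b_0$, $\omega_\chi=m_0 b_0^2-a_0^2$, exactly matching the universal formulas. Hence $(m_0,a_0,b_0)\in\A^3(T)$ defines the sought lift of $\chi$.

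For the final step, I would obtain smoothness from an explicit description of the fiber product. Given $T\to\stU_\alpha$ represented by $\chi$, the functor $T\times_{\stU_\alpha}\A^3$ parametrizes pairs $(t,y)$ with $t$ a nowhere-vanishing section of $\shL$ and $y$ a section of $\shF$ such that $y$ and $\alpha(t\otimes y)$ form a basis of $\shF$: once such $(t,y)$ is given, the parameters $(m_0,a_0,b_0)$ and a compatible isomorphism of the pulled-back universal cover with $\chi$ are uniquely determined by the universal formulas and the rigidity already used in step two. This functor is the open subscheme of $\WW(\shL)\times_T\WW(\shF)\to T$ cut out by the nonvanishing of $t$ and of the determinant of $(y,\alpha(t\otimes y))$ in any local frame of $\shF$, and is therefore smooth of relative dimension $1+2=3$ over $T$, matching $\dim\A^3=3$. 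Combined with step two, this shows that $\A^3\to\stU_\alpha$ is a smooth Zariski epimorphism, whence $\stU_\alpha$ is smooth. The main obstacle I expect is step one: the systematic verification of the associativity system (\ref{eq:ass m,alpha})--(\ref{eq:ass beta,beta}) in the universal case is mechanical but long, and one must be careful with the sign and scaling conventions linking $\beta$, $(-,-)_\chi$, and $\la-,-\ra$ inherited from Theorem \ref{thm:global data for Sthree}; once the universal identities are in place, the rigidification argument and the fiber computation are essentially formal.
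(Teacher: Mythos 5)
Your proposal is correct and follows essentially the same route as the paper: your verification of the universal relations together with the forcing of the remaining parameters from $a_{0},b_{0},m_{0}$ is exactly Lemma \ref{lem:associated parameters for alpha nowhere multiple of the identity}, your Zariski-local construction of a basis $y,\alpha(t\otimes y)$ is Lemma \ref{lem:the not degenerate locus is open}, and your identification of $T\times_{\stU_{\alpha}}\A^{3}$ with an open subscheme of $\WW(\shL)\times_{T}\WW(\shF)$ (rigidified by the image of $e_{1}$, with the image of $e_{2}$ forced by $\alpha$-equivariance) is the paper's smoothness argument written intrinsically, the paper presenting the same fiber in coordinates $(\lambda,u,w)$ as an open subscheme of $\A_{T}^{2}\times\Gm$ after first reducing, via the Zariski epimorphism, to $\chi$ coming from the atlas. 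Your minor variants---openness of $\stU_{\alpha}$ via the zero locus of the traceless part of $\alpha$ (legitimate since $2$ is invertible in $\stR$), and the covering of $T$ by the non-eigenvector loci of $e_{1},e_{2},e_{1}+e_{2}$ in place of the paper's Nakayama-plus-field argument---are sound and equivalent to the paper's.
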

Before proving this Theorem we need two lemmas.
\begin{lem}
\label{lem:the not degenerate locus is open}Let $\shF$ be a locally
free sheaf of rank $2$, $\shL$ be an invertible sheaf, both over
$T$ and $\alpha\colon\shL\otimes\shF\arr\shF$ be a map. Let also
$k$ be a field, $\Spec k\arr T$ be a map and $p\in T$ the induced
point. If $\alpha\otimes k$ is not a multiple of the identity, then
there exists a Zariski open neighborhood $V$ of $p$ in $T$ and
$y\in\shF_{|V}$ such that $\shL_{|V}=\odi Vt$ and $y,\alpha(t\otimes y)$
is a basis of $\shF_{|V}$.\end{lem}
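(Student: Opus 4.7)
The statement is essentially a linear algebra openness result: once we know the claim at the closed point $p$, the pair $(y, \alpha(t\otimes y))$ will continue to form a basis in a neighborhood.

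First I would shrink $T$ to a Zariski open neighborhood $V$ of $p$ on which $\shL$ becomes free; pick a generator $t\in\shL_{|V}$. Via this trivialization, $\alpha$ is identified with an $\odi{V}$-linear endomorphism $\tilde\alpha\colon\shF_{|V}\to\shF_{|V}$, namely $\tilde\alpha(x)=\alpha(t\otimes x)$. Further shrinking $V$, we may assume $\shF_{|V}$ is free of rank $2$, so $\tilde\alpha$ corresponds to a $2\times 2$ matrix with entries in $\odi{V}(V)$, and by hypothesis its reduction modulo the maximal ideal of $p$ is not a scalar multiple of the identity.

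The main (and essentially only) content is a linear algebra fact over the field $k$: if $M\colon k^2\to k^2$ is not a scalar multiple of the identity, then there exists $\bar y\in k^2$ such that $\bar y,M\bar y$ are linearly independent. I would prove this by cases: take any nonzero $v\in k^2$; if $Mv\notin k\cdot v$ we are done with $\bar y=v$; otherwise $Mv=\lambda v$, and since $M\neq\lambda\id$ there is $w$ with $Mw\neq\lambda w$. If $w,Mw$ are independent we are done; otherwise $Mw=\mu w$ with $\mu\neq\lambda$, and then $\bar y=v+w$ satisfies $M\bar y=\lambda v+\mu w$, which cannot be a scalar multiple of $v+w$.

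Applying this to $\tilde\alpha\otimes k$, we obtain $\bar y\in\shF\otimes k$ such that $\bar y$ and $(\tilde\alpha\otimes k)(\bar y)$ form a basis of $\shF\otimes k$. I would then lift $\bar y$ to a section $y\in\shF(V)$, possibly after shrinking $V$ so that such a lift exists. Consider the $\odi{V}$-linear map
\[
\varphi\colon\odi{V}^{\oplus 2}\arr\shF_{|V},\qquad(a,b)\longmapsto ay+b\,\tilde\alpha(y)=ay+b\,\alpha(t\otimes y).
\]
By construction $\varphi\otimes k(p)$ is an isomorphism. The locus in $V$ where $\varphi$ is an isomorphism between two locally free sheaves of the same rank is open (it is the nonvanishing locus of $\det\varphi$, a section of an invertible sheaf), so after shrinking $V$ once more around $p$ the map $\varphi$ is an isomorphism, which is precisely the statement that $y,\alpha(t\otimes y)$ is a basis of $\shF_{|V}$. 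There is no serious obstacle; the only delicate point is the linear algebra fact, which is elementary.
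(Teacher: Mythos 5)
Your proposal is correct and takes essentially the same route as the paper, which also reduces to the field case (handling the spreading out via Nakayama's lemma) and there observes, by contradiction, that if no such basis existed then every vector of $\shF$ would be an eigenvector of $\alpha$, forcing $\alpha$ to be scalar --- precisely the contrapositive of your explicit case analysis. The only point worth making explicit is that, since $k$ may properly extend the residue field $k(p)$, you should first note that non-scalarity of $\alpha\otimes k$ descends to $\alpha\otimes k(p)$ (the off-diagonal entries and the difference of the diagonal entries lie in $k(p)$), and then carry out the choice of $\bar{y}$ and its lift over $k(p)$, whose coefficients do lift to $\odi{T,p}$ and hence to $\odi V$ after shrinking $V$.
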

\begin{proof}
If the statement is true when $T=\Spec k'$, for some field $k'$,
then it follows in general by Nakayama's lemma. So assume that $T=\Spec k$
and, by contradiction, that such a basis does not exist. It is easy
to deduce that any vector of $\shF$ is an eigenvector for $\alpha$.
By standard linear algebra we can conclude that $\alpha$ is a multiple
of the identity.\end{proof}
\begin{lem}
\label{lem:associated parameters for alpha nowhere multiple of the identity}Let
$\chi=(\shL,\shF,m,\alpha,\beta,\la-,-\ra)\in\stY$ and $y\in\shF$
be such that $\shL=\odi T$ and $y,z=\alpha(y)$ is a basis of $\shF$.
Then $\chi\in\GCov$ if and only if the associated parameters (see
\ref{not: associated parameters for chi and Sthree}) of $\chi$ with
respect to the basis $y,z$ are 
\[
a,b,c=-mb,d=-a,e=ma,f=mb,\omega=mb^{2}-a^{2},A=D=0,B=m,C=1
\]
In this case $\chi\in\stU_{\alpha}$.\end{lem}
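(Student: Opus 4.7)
The plan is direct: the asserted formulas for the parameters are forced by, and sufficient for, the local equations of Lemma \ref{lem:local conditions for the map for Sthree}, so I would simply solve the system (\ref{eq:loc com and ass conditions}) under the only input from the hypothesis, namely $A = 0$ and $C = 1$.

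First I would observe that since $\shL = \odi T$ with generator $t = 1$ and $z = \alpha(y)$, the matrix of $\alpha$ in the basis $y, z$ has first column $(0, 1)^{t}$, i.e.\ $A = 0$ and $C = 1$. Feeding this into the set of equations equivalent to (\ref{eq:ass m,alpha}), the relation $C(A+D) = 0$ forces $D = 0$, so $A + D = 0$, and then $m = A^{2} + BC = B$ yields $B = m$. Note that (\ref{eq:ass alpha,gamma}) becomes $\omega(A+D) = 0$ and is automatic.

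Next I would substitute $A = D = 0$, $B = m$, $C = 1$ into the equations equivalent to (\ref{eq:ass alpha,beta}). The first four read
\[
mb + c = 0,\qquad md + e = 0,\qquad a + d = 0,\qquad c + f = 0,
\]
giving $c = -mb$, $d = -a$, $f = mb$, $e = -md = ma$; the remaining four equations in that block involve only $A+D$ or $a+d$ or $c+f$, which are now $0$, so they hold automatically. Then I would feed these values into the equations equivalent to (\ref{eq:ass gamma,beta}). The first gives $\omega = -C(a^{2} + bc) = -(a^{2} - mb^{2}) = mb^{2} - a^{2}$; the second and third become $-mab + mab = 0 = \omega(A-D)$ and $m^{2}b^{2} - a^{2}m = m\omega = B\omega$, which are identities; the remaining ones involve only the combinations $a+d$, $c+f$, $A+D$ and are therefore automatic. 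Finally the equations equivalent to (\ref{eq:ass beta,beta}) are $\omega(a+d) = \omega(c+f) = 0$, again automatic.

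This establishes both implications of the iff: our hypotheses together with $\chi \in \stY$ force the parameters to be as stated, and conversely these values satisfy every equation in (\ref{eq:loc com and ass conditions}), so $\chi \in \GCov$. For the final claim $\chi \in \stU_{\alpha}$, the matrix $\left(\begin{smallmatrix}0 & m\\ 1 & 0\end{smallmatrix}\right)$ of $\alpha$ has a unit off-diagonal entry, a property stable under arbitrary base change, so $\alpha$ cannot be a scalar multiple of the identity at any geometric point. No step here is a real obstacle; the only care needed is the bookkeeping to check that, after the substitutions forced by the first two blocks of equations, every remaining relation is either an identity or reduces to $0 = 0$ via $a+d = c+f = A+D = 0$.
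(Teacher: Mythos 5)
Your proposal is correct and follows essentially the same route as the paper's proof: in both, one direction is the observation that the stated parameter values satisfy all the equations of (\ref{eq:loc com and ass conditions}), and the other extracts $A=0$, $C=1$ from the definition of the basis $y,z=\alpha(y)$ and then solves the system in the same substitution chain ($D=0$ from $C(A+D)=0$, $B=m$, then $d=-a$, $f=-c$, $c=-mb$, $e=ma$, $\omega=mb^{2}-a^{2}$). Your explicit base-change argument for $\chi\in\stU_{\alpha}$ via the unit entry $C=1$ is just a spelled-out version of what the paper dismisses as being true ``by definition.''
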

\begin{proof}
First of all note that, if the associated parameters of $\chi$ are
as above, then they satisfy equations (\ref{eq:loc com and ass conditions}).
Therefore $\chi\in\GCov$ and, by definition, $\alpha$ is nowhere
a multiple of the identity, i.e. $\chi\in\stU_{\alpha}$. Consider
now the inverse implication and denote by $a,b,c,d,e,f,\omega,A,B,C,D$
the parameters associated to $\chi$ with respect to the basis $y,z$
of $\shF$. By definition of $y,z$ we have $A=0$ and $C=1$. Moreover
$m=A^{2}+BC=B$ and
\[
C(A+D)=0\then D=0
\]
\[
b(A+D)+C(a+d)=d(A+D)+C(c+f)=0\then d=-a\comma f=-c
\]
\[
(2aA+bB+cC)=(2cA+dB+eC)=0\then c=-mb,e=ma
\]
\[
a^{2}+bc=-\omega C\then\omega=mb^{2}-a^{2}
\]

\end{proof}

\begin{proof}
(\emph{of theorem }\ref{thm:not degenerate locus of S3}). By \ref{lem:associated parameters for alpha nowhere multiple of the identity}
and \ref{lem:the not degenerate locus is open}, $\stU_{\alpha}$
is an open substack of $\GCov$, $\chi\in\stU_{\alpha}(R)$ and the
induced map $\pi\colon\A^{3}\arr\stU_{\alpha}$ is a Zariski epimorphism.
It remains to prove that $\pi$ is smooth. Let $T\arrdi{\chi}\stU_{\alpha}$
be a map and consider the fiber product $Z=T\times_{\stU_{\alpha}}\A^{3}$.
We have to show that $Z$ is smooth over $T$. In order to do that,
since $\pi$ is a Zariski epimorphism, we can assume to have $(m,a,b)\in\odi T$
such that $\pi(m,a,b)=\chi$. Let $V$ be a $T$-scheme. An element
of the set $ $$Z(V)$ is a sequence $\Phi=(m',a',b',\lambda,u,v,w,z)\in\odi V^{8}$
such that, if we set $\psi_{\Phi}=\left(\begin{array}{cc}
u & v\\
w & z
\end{array}\right)$, then $\psi_{\Phi}\in\Gl_{2,V}$, $\lambda\in\odi V^{*}$ and $(\lambda,\psi_{\Phi})$
is an isomorphism $\pi(m',a',b')\arr\pi(m,a,b)$. We claim that the
map of $T$-schemes 
\[
i\colon Z\arr\A_{T}^{2}\times\Gm\comma i(m',a',b',\lambda,u,v,w,z)=(\lambda,u,w)
\]
is an open immersion. If we set 
\[
v(u,w)=\lambda mw\comma z(u,w)=\lambda w
\]
the condition $\psi_{\Phi}^{-1}\circ\alpha\circ(\lambda\otimes\psi_{\Phi})(e_{1})=e_{2}$
is equivalent to $v=v(u,w)$, $z=z(u,w)$. Since $\lambda,\psi_{\Phi}$
determine $m',a',b'$ and $\lambda,u,w$ determine $\lambda,\psi_{\Phi}$,
we can conclude that $i$ is a monomorphism. Define $U\subseteq\A_{T}^{2}\times\Gm$
as the open subscheme where $uz(u,w)-v(u,w)w$ is invertible. This
is just the expression of $\det\psi_{\Phi}$. Therefore $i(Z)\subseteq U$.
Consider now $\xi=(\lambda,u,w)\in U$ and define $\psi_{\xi}=\left(\begin{array}{cc}
u & v(u,w)\\
w & z(u,w)
\end{array}\right)$. Note that by construction $\psi_{\xi}\in\Gl_{2,V}$. In particular
there exists $\chi'=(\odi V,\odi V^{2},m',\alpha',\beta',\la-,-\ra')\in\stU_{\alpha}(V)$
such that $(\lambda,\psi_{\xi})\colon\chi'\arr\pi(m,a,b)$ is an isomorphism.
Since by construction
\[
\alpha'(e_{1})=\psi_{\Phi}^{-1}\circ\alpha\circ(\lambda\otimes\psi_{\Phi})(e_{1})=e_{2}
\]
from \ref{lem:associated parameters for alpha nowhere multiple of the identity}
we see that there exists $m',a',b'\in\odi V$ such that $\pi(m',a',b')=\chi'$.
In particular $\Phi=(m',a',b',\lambda,u,v(u,w),w,z(u,w))\in Z(V)$
and $i(\Phi)=\xi$.
\end{proof}

\subsection{The locus where $\beta\colon\Sym^{2}\shF\arr\shF$ is never zero.}

In this subsection we work over $\stR_{3}=\Z[1/6]$. Define $\stU_{\beta}$
as the full substack of $\GCov$ of objects $\chi=(\shL,\shF,m,\alpha,\beta,\la-,-\ra)$
such that $\beta\colon\Sym^{2}\shF\arr\shF$ is never zero, i.e. such
that $\beta$ is not zero after some base change. We want to prove
the following:
\begin{thm}
\label{thm:the locus where beta is not zero}Let $R=\stR_{3}[\omega,A,C]$.
Then 
\[
(R,R^{2},m,\alpha,\beta,\la-,-\ra)
\]
where
\[
\alpha=\left(\begin{array}{cc}
A & \omega C^{2}\\
C & -A
\end{array}\right);\begin{array}{l}
\beta(e_{1}^{2})=e_{2}\comma\beta(e_{1}e_{2})=-\omega Ce_{1}\comma\beta(e_{2}^{2})=2\omega Ae_{1}+\omega Ce_{2}\\
\la e_{1},e_{2}\ra=\omega\comma m=A^{2}+\omega C^{3}
\end{array}
\]
is an object of $\RCov G(R)$. The associated map $\A^{3}\arr\RCov G$
is a smooth zariski epimorphism onto $\stU_{\beta}$. In particular
$\stU_{\beta}$ is a smooth open substack of $\GCov$.
\end{thm}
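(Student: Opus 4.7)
The proof mirrors that of Theorem~\ref{thm:not degenerate locus of S3}, with $\beta$ playing the role that $\alpha$ played there. The plan breaks into four steps.

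First, I would check directly that the universal sequence $\chi$ belongs to $\RCov G(R)$ by verifying the relations (\ref{eq:loc com and ass conditions}) of Lemma~\ref{lem:local conditions for the map for Sthree}. With the given $\alpha,\beta,\la-,-\ra$, the associated parameters (in the sense of \ref{not: associated parameters for chi and Sthree}) are
$a=0,\ b=1,\ c=-\omega C,\ d=0,\ e=2\omega A,\ f=\omega C,\ \omega=\omega,\ A_{\mathrm{mat}}=A,\ B_{\mathrm{mat}}=\omega C^{2},\ C_{\mathrm{mat}}=C,\ D_{\mathrm{mat}}=-A,\ m=A^{2}+\omega C^{3}$,
and a direct (routine) substitution shows that all equations in (\ref{eq:loc com and ass conditions}) are satisfied.

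Next, I would prove the analogue of Lemma~\ref{lem:the not degenerate locus is open}: if $\chi=(\shL,\shF,m,\alpha,\beta,\la-,-\ra)\in\GCov(T)$ and $\beta\otimes k(p)\neq 0$ at $p\in T$, then locally around $p$ one can choose a generator $t$ of $\shL$ and an element $y\in\shF$ such that $y,\beta(y^{2})$ is a basis of $\shF$. By Nakayama's lemma this reduces to finding such a $y_{0}$ over the residue field $k=k(p)$. Since $k$ is reduced, Remark~\ref{rem: cover with non zero trace} gives $\tr\beta=0$, so $\beta$ corresponds to a nonzero cubic form $\delta_{\beta}\in\Hom(\Sym^{3}\shF,\det\shF)$ via (\ref{iso:delta beta}); the condition that $y_{0},\beta(y_{0}^{2})$ is a basis is exactly $\delta_{\beta}(y_{0}^{3})\neq 0$. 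The nonzero binary cubic form $\delta_{\beta}$ has at most three zeros in $\PP^{1}(k)$, and since we work over $\stR_{3}$ we have $\car k\neq 2,3$, hence $|k|\geq 5$, so such a $y_{0}$ exists. \emph{This is the main obstacle of the proof}: one must ensure the required local basis exists, and it is exactly here that the hypothesis $6\in\stR_{3}^{*}$ is used.

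Third, in analogy with Lemma~\ref{lem:associated parameters for alpha nowhere multiple of the identity}, I would show that if $\chi\in\stY$ admits a local basis with $\beta(y^{2})=z$ (so $a=0,b=1$) and $\shL=\odi T$, then $\chi\in\GCov$ forces the parameters to match those of the universal object. Reading off (\ref{eq:loc com and ass conditions}) in order: $b(a+d)=0$ gives $d=0$; $C(a+d)+b(A+D)=0$ gives $D=-A$; $a^{2}+bc=-\omega C_{\mathrm{mat}}$ gives $c=-\omega C_{\mathrm{mat}}$; $ac+be=\omega(A-D)$ gives $e=2\omega A$; $a(a+d)+b(c+f)=0$ gives $f=\omega C_{\mathrm{mat}}$; and $c^{2}+de=B_{\mathrm{mat}}\omega$ together with $m=A^{2}+B_{\mathrm{mat}}C_{\mathrm{mat}}$ gives $B_{\mathrm{mat}}=\omega C_{\mathrm{mat}}^{2}$ and $m=A^{2}+\omega C_{\mathrm{mat}}^{3}$. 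A final check shows the remaining relations are automatic. In particular $\chi\in\stU_{\beta}$, and conversely $\stU_{\beta}$ is open in $\GCov$, and the map $\pi\colon\A^{3}\arr\stU_{\beta}$ is a Zariski epimorphism.

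Finally, I would prove smoothness of $\pi$ exactly as in Theorem~\ref{thm:not degenerate locus of S3}. Given $T\arrdi{\chi}\stU_{\beta}$ with a chosen lift $(\omega,A,C)\in\odi T^{3}$, write an element of $Z=T\times_{\stU_{\beta}}\A^{3}$ as a sequence $(\omega',A',C',\lambda,u,v,w,z)$ with $\lambda\in\odi V^{*}$ and $\psi=\bigl(\begin{smallmatrix}u & v\\ w & z\end{smallmatrix}\bigr)\in\GL_{2}$. The condition $\psi^{-1}\beta(\psi(e_{1})^{2})=e_{2}$, which must hold by step three, forces
\[
v=-2uw\omega C+2w^{2}\omega A,\qquad z=u^{2}+w^{2}\omega C,
\]
and then $\omega',A',C'$ are determined by the normal form of step three. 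Hence the map $i\colon Z\arr\A^{2}_{T}\times\Gm$, $(\omega',A',C',\lambda,u,v,w,z)\mapsto(\lambda,u,w)$, is an open immersion onto the locus where $\det\psi=u^{3}+3uw^{2}\omega C-2w^{3}\omega A$ is invertible, which is smooth over $T$. This completes the proof.
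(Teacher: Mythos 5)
Your proposal follows the paper's proof essentially step for step: verification of the universal object against (\ref{eq:loc com and ass conditions}), an openness lemma producing a local basis $y,\beta(y^{2})$ (the paper's \ref{lem:local basis for beta nowhere zero}), a normal-form lemma pinning down all parameters once $a=0$, $b=1$ (the paper's \ref{lem:associated parameters for beta nowhere zero}), and the same fiber-product computation exhibiting $Z$ as the open locus of $\A_{T}^{2}\times\Gm$ where $\det\psi=u^{3}+3\omega Cuw^{2}-2\omega Aw^{3}$ is invertible, with identical formulas $v=2\omega w(Aw-Cu)$, $z=u^{2}+\omega Cw^{2}$. The one real deviation is in step two: the paper shows that if $y_{0},\beta(y_{0}^{2})$ are dependent for every $y_{0}$ then $\beta=0$, by evaluating the binary cubic $bu^{3}-3au^{2}v-3cuv^{2}-ev^{3}$ at the four points $(1,0),(0,1),(1,1),(1,-1)$ and using that $2$ and $3$ are invertible; you instead identify this cubic (up to sign) with $\delta_{\beta}(y_{0}^{3})$ and count roots: a nonzero binary cubic has at most three zeros in $\PP^{1}(k)$, while $\car k\neq2,3$ forces $|k|\geq5$, so $|\PP^{1}(k)|\geq6$. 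Both arguments are correct; yours has the merit of making the role of $6\in\stR_{3}^{*}$ completely transparent, and it correctly uses \ref{rem: cover with non zero trace} to reduce to the trace-zero case over the residue field.

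There is, however, one misstep in your step three. You derive $B=\omega C^{2}$ from the relation $c^{2}+de=B\omega$ (together with $m=A^{2}+BC$). With $a=0$, $b=1$, $c=-\omega C$, $d=0$ this relation reads $\omega^{2}C^{2}=B\omega$, i.e. $\omega(B-\omega C^{2})=0$, which determines $B$ only when $\omega$ is a nonzerodivisor. Since the normal-form lemma must hold over an arbitrary base — in your step four it is applied to an arbitrary $T$-scheme $V$, which may well be non-reduced with $\omega$ a zero divisor — this derivation as written fails. The correct relation to invoke is $2aA+bB+cC=0$ from (\ref{eq:loc com and ass conditions}) (the paper's choice), which with $a=0$, $b=1$, $c=-\omega C$ gives $B=\omega C^{2}$ outright, after which $m=A^{2}+BC=A^{2}+\omega C^{3}$ follows from (\ref{eq:ass m,alpha}). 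The fix is one line using a relation already in the list you are reading off, so the architecture of your proof is unaffected, but the step should be repaired.
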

Before proving this theorem we need two lemmas.
\begin{lem}
\label{lem:local basis for beta nowhere zero}Let $\chi=(\shL,\shF,m,\alpha,\beta,\la-,-\ra)\in\GCov$
(resp. $\beta\colon\Sym^{2}\shF\arr\shF$), $k$ be a field, $\Spec k\arr T$
be a map and $p\in T$ the induced point. If $\beta\otimes k\neq0$
(resp. $\beta\otimes k\neq0$, $(\tr\beta)\otimes k=0$ (see \ref{not: trace of beta}))
then there exists a Zariski open neighborhood $V$ of $p$ in $T$
and $y\in\shF_{|V}$ such that $y,\beta(y^{2})$ is a basis of $\shF_{|V}$.\end{lem}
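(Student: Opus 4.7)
The plan is to reduce to the fibre at $p$ by Nakayama's lemma and then perform an explicit linear algebra calculation on the resulting vector space over $k$. Precisely, if one finds $\overline{y}\in\shF\otimes k$ such that $\overline{y},\beta(\overline{y}^{2})$ is a $k$-basis of $\shF\otimes k$, any lift $y\in\shF_{|V}$ on a small enough open neighbourhood $V$ of $p$ will do, because the locus where $y\wedge\beta(y^{2})$ generates $\det\shF$ is open. So the whole problem becomes: over a field $k$, with $\beta\neq0$, find $y\in\shF$ with $y\wedge\beta(y^{2})\neq0$.

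The next step is to observe that in both cases of the statement one has $\tr\beta=0$ over the field $k$. In the second case this is a hypothesis. In the first case, it follows from \ref{rem: cover with non zero trace}: since $k$ is a domain, the associativity relations (\ref{eq:ass m,alpha})--(\ref{eq:ass beta,beta}) of \ref{lem:local conditions for the map for Sthree} force $a+d=0$ and $c+f=0$ in any local basis, which is exactly $\tr\beta=0$.

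Fixing a basis $y,z$ of $\shF\otimes k$, I would then write $\beta(y^{2})=ay+bz$, $\beta(yz)=cy-az$, $\beta(z^{2})=ey-cz$ and simply compute $y'\wedge\beta(y'^{2})$ for $y'=\lambda y+\mu z$; a short calculation gives
\[
y'\wedge\beta(y'^{2})=-3\lambda\mu(a\lambda+c\mu)(y\wedge z).
\]
If $b\neq0$ we may take $y'=y$ (since $y\wedge\beta(y^{2})=b\,y\wedge z$); if $e\neq0$ we take $y'=z$; otherwise $b=e=0$, and since $\beta\neq0$ at least one of $a,c$ is non-zero, so a generic choice of $(\lambda,\mu)\in k^{2}$ with $\lambda\mu\neq0$ and $a\lambda+c\mu\neq0$ works, using crucially that $3$ is invertible in $\stR_{3}$.

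The only mild obstacle is the dichotomy between the two cases of the statement, which is resolved by the appeal to \ref{rem: cover with non zero trace}; apart from that the argument is the above elementary computation in $\Sym^{2}\shF\otimes k$, and the descent to a Zariski neighbourhood is immediate from the openness of the non-vanishing locus of the section $y\wedge\beta(y^{2})\in\det\shF$.
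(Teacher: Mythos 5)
Your proposal is correct and follows essentially the same route as the paper: reduce to the fibre over $p$ by Nakayama, invoke \ref{rem: cover with non zero trace} to get $(\tr\beta)\otimes k=0$ in the $\GCov$ case, and then show the cubic form $y'\wedge\beta(y'^{2})$ cannot vanish identically when $\beta\neq0$ and $3$ is invertible (the paper argues by contradiction, evaluating this same cubic at $(1,0),(0,1),(1,\pm1)$, while you do a direct case analysis on $b,e$). One slip in your write-up: the displayed formula drops the terms $b\lambda^{3}-e\mu^{3}$ — the correct general identity is $y'\wedge\beta(y'^{2})=(b\lambda^{3}-3a\lambda^{2}\mu-3c\lambda\mu^{2}-e\mu^{3})(y\wedge z)$, which is also why your parenthetical $y\wedge\beta(y^{2})=b\,(y\wedge z)$ contradicts the display — but since the cases $b\neq0$ and $e\neq0$ are disposed of separately by direct evaluation, you only use the displayed identity when $b=e=0$, where it is valid, so the argument stands (note also that $\car k\neq2,3$ guarantees $k$ is large enough for your ``generic'' choice of $(\lambda,\mu)$).
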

\begin{proof}
If the statement is true when $T=\Spec k'$, for some field $k'$,
then it follows in general by Nakayama's lemma. So assume that $T=\Spec k$
and, by contradiction, that such a basis does not exist. Notice that
if $\chi\in\GCov$ is given, then $(\tr\beta)\otimes k=0$ thanks
to \ref{rem: cover with non zero trace}. Choosing a basis of $\shF$
we can write 
\[
\beta=\left(\begin{array}{ccc}
a & c & e\\
b & -a & -c
\end{array}\right)
\]
The condition that $y,\beta(y^{2})$ are dependent for all $y\in\shF$
is equivalent to
\[
bu^{3}-3au^{2}v-3cuv^{2}-ev^{3}=0\qquad\forall u,v\in k
\]
In particular, choosing $(u,v)\in\{(1,0),(0,1),(1,1),(1,-1))$, we
see that $b=e=3a=3c=0$ and therefore $\beta=0$, since $3$ is invertible.\end{proof}
\begin{lem}
\label{lem:associated parameters for beta nowhere zero}Let $\chi=(\shL,\shF,m,\alpha,\beta,\la-,-\ra)\in\stY$
and $y\in\shF$ be such that $\shL=\odi T$ and $y,z=\beta(y^{2})$
is a basis of $\shF$. Then $\chi\in\GCov$ if and only if the associated
parameters (see \ref{not: associated parameters for chi and Sthree})
of $\chi$ with respect to the basis $y,z$ are
\[
a=0,b=1,c=-\omega C,d=0,e=2\omega A,f=\omega C,\omega,A,B=\omega C^{2},C,D=-A
\]
In this case $\chi\in\stU_{\beta}$. \end{lem}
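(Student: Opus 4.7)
The plan is to reduce the lemma to a direct computation using the list of local conditions (\ref{eq:loc com and ass conditions}) compiled in Lemma \ref{lem:local conditions for the map for Sthree}. Since by definition $z = \beta(y^{2})$, the matching $\beta(y^{2}) = ay + bz$ immediately forces $a = 0$ and $b = 1$. The task becomes to propagate these two constraints through the system (\ref{eq:loc com and ass conditions}) and read off the remaining parameters.

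First I would extract the ``easy'' consequences. From the relations coming from (\ref{eq:ass gamma,beta}) with $a=0$, $b=1$: the equation $b(a+d)=0$ yields $d=0$, so $a+d=0$; then $a(a+d)+b(c+f)=0$ forces $c+f=0$, i.e. $f=-c$; and $a^{2}+bc=-\omega C$ gives $c=-\omega C$, whence $f=\omega C$. The key step — the one I expect to be the only place one has to think a moment — is obtaining $A+D=0$: this comes from the relation $C(a+d)+b(A+D)=0$ in (\ref{eq:ass alpha,beta}), which with $a+d=0$ and $b=1$ collapses to $A+D=0$, i.e.\ $D=-A$. Once $A+D=0$ is in hand, the equation $ac+be=\omega(A-D)$ gives $e=\omega(A-D)=2\omega A$, and the equation $2aA+bB+cC=0$ in (\ref{eq:ass alpha,beta}) gives $B=-cC=\omega C^{2}$; finally $m=A^{2}+BC$ from (\ref{eq:ass m,alpha}) yields $m=A^{2}+\omega C^{3}$.

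Next I would go back through the remaining entries of (\ref{eq:loc com and ass conditions}) and verify that they are automatically satisfied by these values. All clauses involving $a+d$ or $c+f$ vanish, every clause of (\ref{eq:ass m,alpha}) involving $A+D$ vanishes, (\ref{eq:ass alpha,gamma}) becomes $\omega\cdot 0=0$, and (\ref{eq:ass beta,beta}) is trivial; the remaining polynomial identities such as $c^{2}+de=B\omega$ (which reads $\omega^{2}C^{2}=\omega\cdot\omega C^{2}$) and $2cA+dB+eC=-\omega C(A+D)=0$ hold identically. This shows both implications of the biconditional: if $\chi\in\GCov$ with $y,z$ as specified the parameters must be the listed ones, and conversely a sequence of parameters of this shape satisfies every condition of Lemma \ref{lem:local conditions for the map for Sthree}, hence defines an object of $\GCov$.

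For the last sentence, since $\beta(y^{2})=z$ is part of a basis of $\shF$, in particular $\beta\neq 0$ at every point in the neighbourhood where $y,z$ is a basis, so $\chi$ lies in the open substack $\stU_{\beta}$. The only genuine work is the $A+D=0$ step above; everything else is a linear substitution. I do not foresee any subtlety beyond bookkeeping, and no appeal to the hypothesis $3\in\stR_{3}^{*}$ is needed in this particular lemma (that hypothesis will enter only in \ref{lem:local basis for beta nowhere zero} when producing the basis $y,z$).
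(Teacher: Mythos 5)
Your proof is correct and follows essentially the same route as the paper: both directions reduce to the local conditions (\ref{eq:loc com and ass conditions}) of Lemma \ref{lem:local conditions for the map for Sthree}, with $a=0$, $b=1$ forced by $z=\beta(y^{2})$, the relations $b(a+d)=0$, $a(a+d)+b(c+f)=0$, $a^{2}+bc=-\omega C$, $b(A+D)+C(a+d)=0$, $ac+be=\omega(A-D)$ and $2aA+bB+cC=0$ yielding in turn $d=0$, $f=-c$, $c=-\omega C$, $D=-A$, $e=2\omega A$ and $B=\omega C^{2}$, exactly as in the paper's argument, and the converse and the membership $\chi\in\stU_{\beta}$ verified just as there. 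Your side remark that the invertibility of $3$ enters only in \ref{lem:local basis for beta nowhere zero} and not here is also accurate.
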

\begin{proof}
First of all, it is easy to check that, if the associated parameters
of $\chi$ are the ones listed in the statement, then they satisfy
equations (\ref{eq:loc com and ass conditions}). Therefore $\chi\in\GCov$
and, since $\beta(y^{2})\neq0$ after all base changes, $\chi\in\stU_{\beta}$.

Assume now that $\chi\in\GCov$. By definition of the basis $y,z$,
we have $a=0$ and $b=1$. Using relations (\ref{eq:loc com and ass conditions}),
we also have
\[
b(a+d)=a(a+d)+b(c+f)=0\then d=-a=0\comma f=-c
\]
\[
b(A+D)+C(a+d)=0\then D=-A
\]
\[
a^{2}+bc=-\omega C\comma ac+be=2\omega A\then c=-\omega C\comma e=2\omega A
\]
\[
2aA+bB+cC=0\then B=\omega C^{2}
\]

\end{proof}

\begin{proof}
(\emph{of theorem }\ref{thm:the locus where beta is not zero}). From
\ref{lem:local basis for beta nowhere zero} and \ref{lem:associated parameters for beta nowhere zero}
we see that $\stU_{\beta}$ is an open substack of $\GCov$, that
$(R,R^{2},m,\alpha,\beta,\la-,-\ra)\in\stU_{\alpha}(R)$ and that
its induced map $\pi\colon\A^{3}\arr\stU_{\beta}$ is a Zariski epimorphism.
It remains to prove that $\pi$ is smooth. Let $T\arrdi{\chi}\stU_{\beta}$
be a map and consider the fiber product $Z=T\times_{\stU_{\beta}}\A^{3}$.
We have to show that $Z$ is smooth over $T$. In order to do that,
since $\pi$ is a Zariski epimorphism, we can assume to have $(\omega,A,C)\in\odi T$
such that $\pi(\omega,A,C)=\chi$. Let $V$ be a $T$-scheme. An element
of the set $ $$Z(V)$ is a sequence $\Phi=(\omega',A',B',\lambda,u,v,w,z)\in\odi V^{8}$
such that, if we set $\psi_{\Phi}=\left(\begin{array}{cc}
u & v\\
w & z
\end{array}\right)$, then $\psi_{\Phi}\in\Gl_{2,V}$, $\lambda\in\odi V^{*}$ and $(\lambda,\psi_{\Phi})$
is an isomorphism $\pi(\omega',A',C')\arr\pi(\omega,A,C)$. We claim
that the map of $T$-schemes 
\[
i\colon Z\arr\A_{T}^{2}\times\Gm\comma i(\omega',A',C',\lambda,u,v,w,z)=(\lambda,u,w)
\]
is an open immersion. If we set 
\[
v(u,w)=2\omega w(Aw-Cu)\comma z(u,w)=u^{2}+\omega Cw^{2}
\]
the condition $\psi_{\Phi}^{-1}\circ\beta\circ(\Sym^{2}\psi_{\Phi})(e_{1}^{2})=e_{2}$
is equivalent to $v=v(u,w)$, $z=z(u,w)$. Since $\lambda,\psi_{\Phi}$
determine $\omega',A',C'$ and $\lambda,u,w$ determine $\lambda,\psi_{\Phi}$,
we can conclude that $i$ is a monomorphism. Define $U\subseteq\A_{T}^{2}\times\Gm$
as the open subscheme where $uz(u,w)-v(u,w)w$ is invertible. This
is just the expression of $\det\psi_{\Phi}$. Therefore $i(Z)\subseteq U$.
Consider now $\xi=(\lambda,u,w)\in U$ and define $\psi_{\xi}=\left(\begin{array}{cc}
u & v(u,w)\\
w & z(u,w)
\end{array}\right)$. Note that by construction $\psi_{\xi}\in\Gl_{2,V}$. In particular
there exists $\chi'=(\odi V,\odi V^{2},m',\alpha',\beta',\la-,-\ra')\in\stU_{\beta}(V)$
such that $(\lambda,\psi_{\xi})\colon\chi'\arr\pi(\omega,A,C)$ is
an isomorphism. Since by construction
\[
\beta'(e_{1}^{2})=\psi_{\Phi}^{-1}\circ\beta\circ(\Sym^{2}\psi_{\Phi})(e_{1}^{2})=e_{2}
\]
from \ref{lem:associated parameters for beta nowhere zero} we see
that there exists $\omega',A',C'\in\odi V$ such that $\pi(\omega',A',C')=\chi'$.
In particular $\Phi=(\omega',A',C',\lambda,u,v(u,w),w,z(u,w))\in Z(V)$
and $i(\Phi)=\xi$.
\end{proof}

\subsection{The regular representation and the stack of torsors $\Bi(\mu_{3}\rtimes\Z/2\Z)$.}

We want to describe the regular representation $\stR[G]$, as an algebra,
and the stack $\Bi G$ of $G$-torsors. By \ref{prop:functor of trivial torsor},
the $G$-algebra $\stR[G]$ is associated with the forgetful functor
$\Omega\colon\Loc^{G}\stR\arr\Loc\stR$. By the theory of representation
of $G$ and taking into account how we have associated with a functor
the object of $\GCov$, it is easy to deduce that the sequence $\chi=(\shL,\shF,m,\alpha,\beta,\la-,-\ra)\in\GCov(\stR)$
associated with $\Omega$ is given by
\[
\shL=A,\shF=V;\begin{array}{l}
\alpha(1_{A}\otimes v_{1})=-v_{1}\\
\alpha(1_{A}\otimes v_{2})=v_{2}
\end{array};\begin{array}{l}
\beta(v_{1}^{2})=v_{2}\comma\beta(v_{1}v_{2})=0\comma\beta(v_{2}^{2})=v_{1}\\
\la v_{1},v_{2}\ra=(-1/2)1_{A}\comma m(1_{A}\otimes1_{A})=1
\end{array}
\]
where $I_{G}=\{\stR,A,V\}$.
\begin{defn}
\label{def:discriminant maps for Sthree} Given $\Phi=(\shF,\delta)\in\shC_{3}$,
we define the discriminant map $\Delta_{\Phi}\colon(\det\shF)^{2}\arr\odi T$
as the determinant of the map $ $$\shF\arr\duale{\shF}$ induced
by $\eta_{\delta}\colon\Sym^{2}\shF\arr\odi T$. Given $\chi=(\shL,\shF,m,\alpha,\beta,\la-,-\ra)\in\GCov$
we define the map $\Delta_{\chi}\colon(\det\shF)^{2}\arr\odi T$ as
the determinant of the map $ $$\shF\arr\duale{\shF}$ induced by
$(-,-)_{\chi}\colon\Sym^{2}\shF\arr\odi T$.\end{defn}
\begin{rem}
\label{rem:general discriminant}The map $\Delta_{\chi}$ coincides
with minus the composition
\[
(\det\shF)^{2}\arrdi{\la-,-\ra^{\otimes2}}\shL^{2}\arrdi m\odi T
\]
Moreover, if $\tr\beta=0$, then $\Delta_{(\shF,\delta_{\beta})}=4\Delta_{\chi}$
thanks to \ref{eq:eta delta is two times symmetric product}. For
the first claim, we can argue locally, i.e. choosing a basis $y,z$
of $\shF$, setting $\shL=\odi T$ and considering the parameters
associated with $\chi$. In this case
\[
\Delta_{\chi}=(y,y)(z,z)-(y,z)^{2}=-BC\omega^{2}-A^{2}\omega^{2}=-\omega^{2}m
\]
\end{rem}
\begin{thm}
\label{thm:description of Sthree torsors}An object $\chi=(\shL,\shF,m,\alpha,\beta,\la-,-\ra)\in\GCov$
(resp. $\in\RCov{S_{3}}$ over $\stR_{3}$) corresponds to a $G$-torsor
(resp. $S_{3}$-torsor) if and only if the maps
\[
m\colon\shL^{2}\arr\odi T\comma\la-,-\ra\colon\det\shF\arr\shL
\]
are isomorphisms, or, equivalently, $\Delta_{\chi}\colon(\det\shF)^{2}\arr\odi T$
is an isomorphism. In this case: $\alpha$ is an isomorphism, $\beta,(-,-)$
are surjective and $\tr\beta=\tr\alpha=0$. Moreover $\Bi G\subseteq\stU_{\omega},\stU_{\alpha}$
and, over $\stR_{3}$, $\Bi G\subseteq\stU_{\beta}$. Finally the
map $\Lambda$ of Theorem \ref{thm:The-locus when omega is invertible}
is an isomorphism from the full substack of $\shC_{3}$ of objects
$\Phi$ such that $\Delta_{\Phi}$ is an isomorphism to $\Bi G$.\end{thm}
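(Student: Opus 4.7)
The plan is to derive this from Theorem \ref{thm:torsors when omega surjective}, which characterizes torsors for super solvable glrg via surjectivity of the maps $\omega_W$ for $W \in I_G$. First I would verify that $G = \mu_3 \rtimes \Z/2\Z$ is super solvable over $\stR$: the filtration $1 \triangleleft \mu_3 \triangleleft G$ has successive quotients $\mu_3$ and $\Z/2\Z \simeq \mu_2$, both normal in $G$ by construction of the semidirect product. Combined with the earlier verification that $(G, I_G)$ with $I_G = \{\stR, A, V\}$ is a glrg, we may apply Theorem \ref{thm:torsors when omega surjective} and conclude that $\alA_\chi \in \Bi G$ iff $\Omega_\stR^{\alA_\chi} = \alA_\chi^G \simeq \odi T$ (automatic for $\chi \in \GCov$) and $\omega_W$ is surjective for each $W \in I_G$.

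The next step is to identify the maps $\omega_W$ in the data of $\chi$. Using the equivalence $\alA_* \simeq \Omega^*$ of Theorem \ref{thm:G equivariant ring are monoidal functors} together with the identifications $\Omega_\stR^{\alA_\chi} = \odi T$, $\Omega_A^{\alA_\chi} = \shL$, $\Omega_V^{\alA_\chi} = \shF$, the monoidal structure maps $\Omega_W \otimes \Omega_{\duale W} \to \Omega_{W \otimes \duale W} \to \Omega_\stR$ translate via the decompositions $A \otimes A \simeq \stR$ and $V \otimes V \simeq \stR \oplus A \oplus V$ (from the preliminaries) into the algebra multiplication on $\alA_\chi$ projected onto its trivial isotypical component. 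On the $A$-component this gives $\omega_A = m\colon \shL\otimes\shL \to \odi T$, and on the $V$-component, since the trivial summand in $V\otimes V$ is spanned by $v_{12}+v_{21}$ (the symmetric part), we obtain $\omega_V = \lambda(-,-)_\chi$ for some $\lambda \in \stR^*$ (namely $\lambda = 2$, but only nonvanishing in $\stR$ matters since $2 \in \stR^*$). Because $\shL \otimes \shL$ and $\odi T$ are both invertible, $m$ is surjective iff $m$ is an isomorphism; and $(-,-)_\chi$ is surjective iff the induced map $\shF \to \duale\shF$ is an isomorphism, iff $\Delta_\chi$ is an isomorphism. Remark \ref{rem:general discriminant} identifies $\Delta_\chi$ with (minus) the composition $(\det\shF)^{\otimes 2} \xrightarrow{\langle-,-\rangle^{\otimes 2}} \shL^{\otimes 2} \xrightarrow{m} \odi T$, so $\Delta_\chi$ is an isomorphism iff both $m$ and $\langle-,-\rangle$ are isomorphisms. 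This proves the main equivalence.

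In the torsor case the remaining assertions are quick local checks using Lemma \ref{lem:local conditions for the map for Sthree}: the relation $\alpha^2 = m\,\id_\shF$ of (\ref{eq:ass m,alpha}) forces $(\det\alpha)^2 = m^2$ locally, so $\alpha$ is an isomorphism when $m$ is; the equations $\omega(A+D) = 0$ and $\omega(a+d) = \omega(c+f) = 0$ of (\ref{eq:ass alpha,gamma}) and (\ref{eq:ass beta,beta}) together with $\omega \in \odi T^*$ (from $\langle-,-\rangle$ iso) give $\tr\alpha = \tr\beta = 0$; surjectivity of $\beta$ and $(-,-)_\chi$ can be verified either from the explicit local formulas (the second follows from $\Delta_\chi$ iso, the first from \emph{e.g.} fppf-local comparison with the regular representation $\stR[G]$ described at the start of the subsection, where $\beta(v_1^2) = v_2$ and $\beta(v_2^2) = v_1$). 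The inclusions $\Bi G \subseteq \stU_\omega$ and $\Bi G \subseteq \stU_\alpha$ are then immediate: $\langle-,-\rangle$ is an iso by definition, and $\alpha$ is invertible with $\tr\alpha = 0$, hence is never a scalar multiple of the identity (a scalar with zero trace is zero, contradicting invertibility). Over $\stR_3$, surjectivity of $\beta\colon \Sym^2\shF \to \shF$ onto a rank-two sheaf forces $\beta$ to be nowhere zero after base change, giving $\Bi G \subseteq \stU_\beta$.

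Finally, for the statement about $\Lambda$: objects in the image of $\Lambda$ already have $\langle-,-\rangle = \id_{\det\shF}$ (an iso), and $\Lambda(\Phi) = (\det\shF, \shF, m_\delta, \alpha_\delta, \beta_\delta, \id_{\det\shF})$ satisfies $\tr\beta_\delta = 0$ by construction (see (\ref{iso:delta beta})), so Remark \ref{rem:general discriminant} gives $\Delta_\Phi = 4\Delta_{\Lambda(\Phi)}$, an isomorphism iff $\Delta_{\Lambda(\Phi)}$ is. Combining with the equivalence already established and Theorem \ref{thm:The-locus when omega is invertible}, $\Lambda$ restricts to the claimed isomorphism onto $\Bi G$. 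The main obstacle will be step two: cleanly matching the categorical maps $\omega_W$ with the concrete data $m$ and $(-,-)_\chi$, which requires tracking through the decompositions $A\otimes A$ and $V\otimes V$ and the proportionality constants that arise, all of which are units thanks to $2 \in \stR^*$.
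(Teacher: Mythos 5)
Your architecture is exactly the paper's --- apply Theorem \ref{thm:torsors when omega surjective} to the super solvable glrg $G$, identify $\omega_{A}$ with $m$ and $\omega_{V}$ with $2(-,-)_{\chi}$, then handle the residual claims and the $\Lambda$ statement through $\stU_{\omega}$ --- but your second step contains a false biconditional: ``$(-,-)_{\chi}$ is surjective iff the induced map $\shF\arr\duale{\shF}$ is an isomorphism, iff $\Delta_{\chi}$ is an isomorphism.'' Surjectivity of a pairing $\shF\otimes\shF\arr\odi T$ is strictly weaker than nondegeneracy. Locally the Gram matrix of $(-,-)_{\chi}$ is $\left(\begin{smallmatrix}-C\omega & -D\omega\\ A\omega & B\omega\end{smallmatrix}\right)$, with determinant $-\omega^{2}m$ (Remark \ref{rem:general discriminant}); surjectivity says these entries generate the unit ideal, which forces $\omega\in\odi T^{*}$ but does not force $m\in\odi T^{*}$. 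The failure occurs inside $\GCov$ itself: specializing the universal family of Theorem \ref{thm:the locus where beta is not zero} at $\omega=C=1$ over $\stR_{3}[A]$ gives $\chi\in\GCov(\stR_{3}[A])$ with $(e_{1},e_{1})_{\chi}=-C\omega=-1$, so $(-,-)_{\chi}$ is surjective, while $m=A^{2}+1$ and $\Delta_{\chi}=-(A^{2}+1)$ are not invertible --- consistently with Theorem \ref{thm:torsors when omega surjective}, this $\chi$ is not a torsor, since $\omega_{A}=m$ fails to be surjective.

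The gap is repairable in one line, and the repair is precisely what the paper's proof does: perform the check on $(-,-)_{\chi}$ under the standing hypothesis that $m$ is an isomorphism, which you have already secured from surjectivity of $\omega_{A}$. Then $m=A^{2}+BC\in\odi T^{*}$ forces $(A,B,C)=(1)$, so surjectivity of $(-,-)_{\chi}$, i.e. $(\omega A,\omega B,\omega C)=(1)$, becomes equivalent to $\omega\in\odi T^{*}$, i.e. to $\la-,-\ra$ being an isomorphism, and then $\Delta_{\chi}=-\omega^{2}m$ is a unit; the unconditional converse that you use elsewhere ($\Delta_{\chi}$ an isomorphism $\then$ $(-,-)_{\chi}$ surjective) is fine. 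With this correction the rest of your proposal is correct and parallels the paper: the explicit super-solvability filtration $1\triangleleft\mu_{3}\triangleleft G$ (which the paper leaves implicit), the identification of the maps $\omega_{W}$ with their harmless factor $2\in\stR^{*}$, and the $\Lambda$ statement via Theorem \ref{thm:The-locus when omega is invertible} and Remark \ref{rem:general discriminant}. For the residual properties ($\tr\alpha=\tr\beta=0$, surjectivity of $\beta$ and $(-,-)$, the inclusions $\Bi G\subseteq\stU_{\omega},\stU_{\alpha},\stU_{\beta}$) the paper simply checks everything fppf-locally on the regular representation $\stR[G]$, whereas you mostly argue from the local relations (\ref{eq:loc com and ass conditions}); both routes are valid, and your observation that an invertible traceless $\alpha$ can never be fppf-locally a multiple of the identity (because $2\in\stR^{*}$) is a clean substitute for the paper's check on $\stR[G]$.
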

\begin{proof}
The claims about $S_{3}$ follows from the same claims about $G$
because, over $\stR_{3}$, the isomorphism $\GCov\simeq\RCov{S_{3}}$
preserves the torsors. Let $\Omega$ be the functor associated to
$\chi$. Since $G$ is super solvable, by Theorem \ref{thm:torsors when omega surjective},
$\chi$ corresponds to a $G$-torsor if and only if $m\colon\shL^{2}=\Omega_{A}\otimes\Omega_{A}\arr\odi T$
and $(-,-)\colon\shF\otimes\shF=\Omega_{V}\otimes\Omega_{\duale V}\arr\odi T$
are surjective. The first condition says that $m$ is an isomorphism
and, in particular, that $\alpha$ is an isomorphism. Moreover it
is easy to check, locally, that in this case $(-,-)$ is surjective
if and only $\la-,-\ra$ is an isomorphism. By definition of $\Delta_{\chi}$,
this map is an isomorphism if and only if both $m$ and $\la-,-\ra$
are isomorphisms. Except for the last sentence of the statement, all
the other claimed properties follow by checking them on $\stR[G]$.

Since $\Bi G\subseteq\stU_{\omega}$ and $\Lambda$ is an isomorphism,
we get an isomorphism $\Lambda^{-1}(\Bi G)\arr\Bi G$ and, by \ref{rem:general discriminant},
$\Lambda^{-1}(\Bi G)$ is the substack of $\shC_{3}$ of objects $\Phi$
such that $\Delta_{\Phi}$ is an isomorphism.\end{proof}
\begin{rem}
\label{rem: fake and true discriminant}Let $\Phi=(\shF,\delta)\in\shC_{3}$,
$\alA_{\Phi}=\odi T\oplus\shF$ be the algebra associated with $\Phi$
(see \ref{rem:triple covers and invariants by sigma}) and assume
to work over $\stR_{3}$. Then $\det\alA_{\Phi}\simeq\det\shF$ and
the determinant of the map $\alA_{\Phi}\arr\duale{\alA_{\Phi}}$ induced
by the trace map $\tr_{\alA_{\Phi}}\colon\alA_{\Phi}\arr\odi T$ coincides
with $\Delta_{\Phi}$. In particular $\alA_{\Phi}$ is étale if and
only if $\Delta_{\Phi}$ is an isomorphism.\end{rem}
\begin{cor}
Set $\shF=\stR^{2}$ with basis $e_{1},e_{2}$ and consider $\delta\colon\Sym^{3}\shF\arr\det\shF$
given by $\delta(e_{2}^{3})=-\delta(e_{1}^{3})=1$ and $\delta(e_{1}e_{2}^{2})=\delta(e_{1}^{2}e_{2})=0$.
Then
\[
G\simeq\Autsh_{\shC_{3}}(\shF,\delta)
\]
Assume now that the base scheme is $\stR_{3}$. Then the map $\Bi G\arr\Cov_{3}$,
obtained by taking invariants by $\sigma\in\Z/2\Z$, is an isomorphism
onto the locus $\text{Et}_{3}$ of étale degree $3$ covers. In particular
\[
G\simeq\Autsh_{\Cov_{3}}(\stR_{3}[t]/(t^{3}-1))
\]
\end{cor}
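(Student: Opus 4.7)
The plan is to deduce this corollary from Theorem \ref{thm:description of Sthree torsors}, which already establishes that $\Lambda$ restricts to an isomorphism between the substack of $\shC_3$ where $\Delta_\Phi$ is an isomorphism and $\Bi G$. So essentially everything reduces to identifying the preimage under $\Lambda$ of the trivial $G$-torsor $\stR[G]$ and, over $\stR_3$, identifying the map $\Bi G\arr\Cov_3$ via $\sigma$-invariants with the composition $\Bi G\hookrightarrow\stU_\omega\simeq\shC_3\simeq\Cov_3$.

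First, I would compute the image $\Delta(\stR[G])\in\shC_3$ using the explicit description of the functor $\Omega$ associated with the regular representation recalled just before Definition \ref{def:discriminant maps for Sthree}. There we read off $\beta(v_1^2)=v_2$, $\beta(v_1v_2)=0$, $\beta(v_2^2)=v_1$, which in the notation of isomorphism (\ref{iso:delta beta}) gives $a=c=0$, $b=e=1$, so $\delta_\beta(v_1^3)=-1$, $\delta_\beta(v_1^2v_2)=\delta_\beta(v_1v_2^2)=0$, $\delta_\beta(v_2^3)=1$. Identifying $v_1,v_2$ with $e_1,e_2$, this is exactly the $(\shF,\delta)$ in the statement. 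Since $\Delta_{(\shF,\delta)}=-1$ is a unit, $(\shF,\delta)$ lies in the open substack on which $\Lambda$ is an isomorphism, and therefore
\[
\Autsh_{\shC_3}(\shF,\delta)\simeq\Autsh_{\Bi G}(\stR[G])\simeq G,
\]
the last isomorphism being the standard identification of automorphisms of the trivial torsor with the group.

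For the second part, over $\stR_3$ I would compare two maps $\Bi G\arr\Cov_3$: the $\sigma$-invariant functor $\chi\mapsto\alA_\chi^\sigma$ of Remark \ref{rem:invariants by sigma for Sthree and GCov}, and the composition $\Bi G\subseteq\stU_\omega\xrightarrow{\Delta}\shC_3\simeq\Cov_3$. By Remark \ref{rem:triple covers and invariants by sigma} and identity (\ref{eq:eta delta is two times symmetric product}), for $\chi\in\stU_\omega$ we have $\alA_{\Delta(\chi)}=\odi T\oplus\shF$ with multiplication $\eta_{\delta_\beta}+\beta=2(-,-)+\beta$, which is exactly the induced algebra structure on $\alA_\chi^\sigma$; hence the two maps agree on $\Bi G$. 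Since $\Lambda$ restricts to an isomorphism onto $\Bi G$ from the locus $\{\Delta_\Phi\text{ iso}\}\subseteq\shC_3$, and by Remark \ref{rem: fake and true discriminant} this locus is exactly $\text{Et}_3\subseteq\Cov_3$, the $\sigma$-invariant map is an isomorphism $\Bi G\arr\text{Et}_3$.

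The final identification follows because the $\sigma$-invariant of $\stR_3[G]$ is $\stR_3[\mu_3]\simeq\stR_3[t]/(t^3-1)$, so $\Autsh_{\Cov_3}(\stR_3[t]/(t^3-1))\simeq\Autsh_{\Bi G}(\stR_3[G])\simeq G$. There is no real obstacle here: every nontrivial piece is already proved; the main thing to be careful about is the bookkeeping of the identifications $\det\shF\simeq\stR$ (via $e_1\wedge e_2$) and $v_i\leftrightarrow e_i$ so that the explicit $\delta$ in the statement really matches $\delta_{\beta_{\stR[G]}}$. That is the only spot where one could slip on a sign, and it is pinned down by the formula for $\delta_\beta$ in (\ref{iso:delta beta}).
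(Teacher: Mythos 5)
Your proof is correct and takes essentially the same route as the paper's: both identify the statement's $(\shF,\delta)$ with the object corresponding to the trivial torsor $\stR[G]$ under the equivalence of Theorem \ref{thm:The-locus when omega is invertible}, and both obtain $\Bi G\simeq\textup{Et}_{3}$ by combining Theorem \ref{thm:description of Sthree torsors} with Remark \ref{rem: fake and true discriminant}. Your extra verifications (the explicit computation of $\delta_{\beta}$ for $\stR[G]$, and the agreement of the $\sigma$-invariants map with $\Delta$ on $\stU_{\omega}$) simply spell out what the paper leaves implicit, the latter being exactly the section statement at the end of Theorem \ref{thm:The-locus when omega is invertible}.
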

\begin{proof}
For the first claim, it is enough to note that $\Lambda(\shF,\delta)$
induces the $G$-cover $\stR[G]$. For the second one, assume we work
over $\stR_{3}$ and let $\Phi=(\shF,\delta)\in\shC_{3}$. Thanks
to \ref{rem: fake and true discriminant}, the map $\Lambda$ of Theorem
\ref{thm:The-locus when omega is invertible} yields an isomorphism
$\text{Et}_{3}\arr\Bi G$, whose inverse is the map $\Bi G\arr\Cov_{3}$
of the statement. In particular $G\simeq\Autsh_{\Cov_{3}}\stR_{3}[G]^{\sigma}$
and it is easy to check that $\stR[G]^{\sigma}\simeq\stR[t]/(t^{3}-1)$.\end{proof}
\begin{rem}
The above corollary gives an alternative proof of the fact that $\Bi G\simeq\Bi S_{3}$
over $\stR_{3}$ (see \ref{ex:SThree covers and the other group}).
Indeed $\Bi G\simeq\text{Et}_{3}$ and it is a classical result that
$\text{Et}_{3}\simeq\Bi S_{3}$. Moreover we see that the $S_{3}$-torsor
$P$ corresponding to $\stR_{3}[t]/(t^{3}-1)\in\text{Et}_{3}$ is
a $(G,S_{3})$-torsor over $\stR_{3}$ (see \ref{def:bitorsors}).
\end{rem}

\subsection{Irreducible components of $(\mu_{3}\rtimes\Z/2\Z)$-Cov and $\RCov{S_{3}}$.}

In this subsection we want to prove that $\GCov$ and, over $\stR_{3}$,
$\RCov{S_{3}}$ have exactly two irreducible components. Moreover
we will show that they are universally reducible and nonreduced. We
will also describe the irreducible component of $\GCov$ that is not
the principal one, that is $\stZ_{G}$. 
\begin{defn}
Let $\stX\arr T$ be an algebraic stack over $T$. We will say that
$\stX$ is \emph{universally not reduced }over $T$ if for any base
change $T'\arr T$ the stack $\stX\times_{T}T'$ is not reduced.
\end{defn}
The theorem we want to prove is:
\begin{thm}
\label{thm:Geometry of Sthree cov}Let $R=\stR[a,b,c,d,e,f,\omega,A,B,C,D]/(\text{relations})$
where the relations are the ones given in (\ref{eq:loc com and ass conditions}).
Then $\chi=(R,R^{2},m,\alpha,\beta,\la-,-\ra)$ where 
\[
\alpha=\left(\begin{array}{cc}
A & B\\
C & D
\end{array}\right);\begin{array}{l}
e_{1}^{2}=ae_{1}+be_{2}\comma e_{1}e_{2}=ce_{1}+de_{2}\comma e_{2}^{2}=ee_{1}+fe_{2}\\
\la e_{1},e_{2}\ra=\omega\comma m=A^{2}+BC
\end{array}
\]
is an object of $\RCov G(R)$ and the associated map $\Spec R\arr\RCov G$
is a $(\Gm\times\Gl_{2})$-torsor. In particular
\[
\RCov G\simeq[\Spec R/(\Gm\times\Gl_{2})]
\]
The stack $\GCov$ (resp. $\RCov{S_{3}}$) is geometrically connected,
universally not reduced and universally reducible over $\stR$ (resp.
$\stR_{3}$) and it has two irreducible components that are geometrically
irreducible.

The minimal primes of $R$ are
\[
P_{1}=(a+d,c+f,A+D)\comma P_{2}=(a,b,c,d,e,f,\omega,B,C,A-D)
\]
and the irreducible components of $\RCov G$ are $\stZ_{G}$ and 
\[
\stZ=\{\beta=\la-,-\ra=0\text{ and }\alpha\text{ is (fppf) locally a multiple of the identity}\}
\]
Moreover we have isomorphisms   \[   \begin{tikzpicture}[xscale=4.5,yscale=-0.6]     
\node (A0_0) at (0.18, 0) {$[\A^{1}/\Gm]\times\Bi\Gl_{2}$};     
\node (A2_1) at (0.53, 0) {$\simeq$};     
\node (A0_1) at (1, 0) {$\{(\shL,\shF,\mu)\st\mu\colon\shL\arr\odi{}\}$};     
\node (A0_2) at (2, 0) {$\stZ$};     
\node (A1_1) at (1, 1) {$(\shL,\shF,\mu)$};     
\node (A1_2) at (2, 1) {$(\shL,\shF,\mu^2,\mu\otimes \id_\shF,0,0)$};     

\path (A0_1) edge [->]node [auto] {$\scriptstyle{\simeq}$} (A0_2);     \path (A1_1) edge [|->,gray]node [auto] {$\scriptstyle{}$} (A1_2);   \end{tikzpicture}   \] 
\end{thm}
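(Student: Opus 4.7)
The plan is to proceed in several stages. First I would construct the universal $R$-point, then analyze the minimal primes of $R$, identify the two resulting components with $\stZ_G$ and $\stZ$, establish the stated isomorphism $[\A^1/\Gm]\times\Bi\Gl_2\simeq\stZ$, and finally deduce the global geometric properties. For the atlas, Theorem \ref{thm:global data for Sthree} combined with Lemma \ref{lem:local conditions for the map for Sthree} shows that the stack $\RCov G$ is cut out inside $\stY$ by the relations (\ref{eq:loc com and ass conditions}). Trivializing $\shL=\odi{T}$ with a chosen generator $t$ and $\shF=\odi{T}^{2}$ with basis $e_1,e_2$, the universal such trivialized data is exactly $\Spec R$, and the change-of-basis symmetries form $\Gm\times\Gl_2$, giving $\RCov G\simeq[\Spec R/(\Gm\times\Gl_2)]$. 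The isomorphism $\RCov G\simeq\RCov{S_3}$ over $\stR_3$ transfers every assertion.

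Next I would verify the two candidate minimal primes. For $P_2$, the quotient kills every generator except $A$ (since $A=D$ and $m=A^2+BC=A^2$), so $R/P_2\simeq\stR[A]$ is integral. For $P_1$, substituting $d=-a$, $f=-c$, $D=-A$ makes most relations in (\ref{eq:loc com and ass conditions}) trivial, leaving only
\begin{align*}
m &= A^{2}+BC, & 2aA+bB+cC &= 0, & 2cA-aB+eC &= 0,\\
a^{2}+bc+\omega C &= 0, & ac+be-2\omega A &= 0, & c^{2}-ae-B\omega &= 0.
\end{align*}
To show the resulting ring is a domain I would cover the corresponding closed substack by the open substacks $\stU_\omega$, $\stU_\alpha$, $\stU_\beta$ of the previous subsections (Theorems \ref{thm:The-locus when omega is invertible}, \ref{thm:not degenerate locus of S3}, \ref{thm:the locus where beta is not zero}), each of which is smooth and irreducible, and note that they cover the complement of the locus where $\beta=\la-,-\ra=0$ and $\alpha$ is scalar; any point of $V(P_1)$ outside this locus lies in one of them, so $V(P_1)$ is irreducible. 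The containment $P_1\cap P_2\supseteq\sqrt{0}$ then follows because any $\stR$-point of $\Spec R$ either satisfies $P_1$ (generically belonging to $\stU_\omega\cup\stU_\alpha\cup\stU_\beta$) or satisfies $P_2$ (when $\beta=\la-,-\ra=0$ and $\alpha$ is scalar, a direct substitution in (\ref{eq:loc com and ass conditions}) forces the remaining generators of $P_2$).

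To identify the components, I would note that $V(P_1)\supseteq\Bi G$: from the explicit description of the regular representation $\stR[G]$ given in Section 4.3.4 one reads off $a+d=c+f=A+D=0$. Since $\stZ_G$ is by definition the schematic closure of $\Bi G$ and $V(P_1)$ is irreducible, they must coincide. The component $V(P_2)$ is, by inspection, the locus where $\beta=0$, $\la-,-\ra=0$, and $\alpha$ is fppf locally a multiple of the identity, matching the description of $\stZ$. The isomorphism $[\A^1/\Gm]\times\Bi\Gl_2\simeq\stZ$ is then straightforward: one verifies the sequence $(\shL,\shF,\mu^{2},\mu\otimes\id_{\shF},0,0)$ satisfies (\ref{eq:loc com and ass conditions}) and lies in $\stZ$, and conversely any object of $\stZ$ has $\alpha$ fppf locally scalar (say $\alpha=\mu\otimes\id_{\shF}$), forcing $m=\mu^{2}$ by relation (\ref{eq:ass m,alpha}), so the data is recovered from $(\shL,\shF,\mu)$.

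The remaining global properties follow quickly. Geometric connectedness of $\Spec R$ (hence $\RCov G$) comes from an $\N$-grading on $R$ with degree-zero part equal to $\stR$ (weight the generators so every relation is homogeneous). Universal reducibility is immediate from Step~2 since $P_1,P_2$ are defined over $\Z$, and universal non-reducedness is witnessed, for instance, by the family of Remark \ref{rem: cover with non zero trace} specialized to any fiber. The main obstacle is the irreducibility of $V(P_1)$: the cleanest path I see is the covering argument above using $\stU_\omega,\stU_\alpha,\stU_\beta$, but one must argue carefully that together with the scalar locus these exhaust $V(P_1)$ so that no additional irreducible components hide there.
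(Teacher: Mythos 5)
Much of your outline tracks the paper's own proof (the $(\Gm\times\Gl_{2})$-presentation, the computation $R/P_{2}\simeq\stR[A]$, connectedness via the $\N$-grading, non-reducedness via \ref{rem: cover with non zero trace}, and the description of $\stZ$ through \ref{lem:alpha locally id}), but there is a genuine gap at the heart of your treatment of $P_{1}$. Covering part of $V(P_{1})$ by the smooth irreducible substacks $\stU_{\omega},\stU_{\alpha},\stU_{\beta}$ can at best prove that the topological space $V(P_{1})$ is irreducible, i.e.\ that $\sqrt{P_{1}}$ is prime; it cannot prove that $R/P_{1}$ is a domain, because it gives no control over nilpotents or embedded components of $R/P_{1}$ supported on the uncovered degenerate locus. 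Since $R$ itself is universally non-reduced, reducedness of $R/P_{1}$ is precisely the delicate point, and it is not a removable blemish: the paper found no conceptual argument either, and proves primality of $P_{1}$ by reducing to the explicit ideal $I=(2aA+bB+cC,\,2cA-aB+eC,\,a^{2}+bc+\omega C,\,ac+be-2\omega A,\,c^{2}-ae-B\omega)$ of $\Z[a,b,c,e,\omega,A,B,C]$, with $R/P_{1}\simeq(R'/I)_{2}$, and verifying with Macaulay2 that $I$ is prime. Without a substitute for that verification your plan only identifies the minimal primes as $\sqrt{P_{1}}$ and $P_{2}$, which is strictly weaker than the stated claim; likewise your identification of $\stZ_{G}$ (the schematic closure of $\Bi G$, which is reduced) with the closed substack $V(P_{1})$ requires $R/P_{1}$ reduced, not merely $V(P_{1})$ irreducible.

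Two further steps fail as written. First, the exhaustion you flag as the ``main obstacle'' is genuinely false fiberwise: $\stU_{\beta}$ is only defined over $\stR_{3}$ (Lemma \ref{lem:local basis for beta nowhere zero} needs $3$ invertible), so in a characteristic-$3$ fiber a point with $\beta\neq0$, $\omega=0$ and $\alpha$ scalar --- e.g.\ $b=1$ and all other parameters zero, which lies in $\stZ_{G}$ by \ref{Lem: the degenerate locus outside U} --- belongs to $V(P_{1})$ but to none of your three opens. The paper circumvents this with a characteristic-free dichotomy on primes (every prime of $R\otimes\stR'$ contains $P_{1}$ or $P_{2}$ according to whether it contains $A+D$, using the nilpotency of $a+d,c+f$ from \ref{rem: cover with non zero trace} and the relations (\ref{eq:loc com and ass conditions})), together with the chart of \ref{Lem: the degenerate locus outside U} covering $|\GCov|-|\stU_{\alpha}|$; it then takes closures of the \emph{global} irreducible sets $\stU_{\alpha}$, $\stU_{\beta}$, so that the characteristic-$3$ points of $\stZ_{1}$ are absorbed by a closure rather than by a fiberwise cover. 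Second, your containment ``$P_{1}\cap P_{2}\supseteq\sqrt{0}$'' is the wrong direction: the dichotomy requires $P_{1}\cap P_{2}\subseteq\sqrt{0}$, i.e.\ $V(P_{1})\cup V(P_{2})=\Spec R$, and testing this on $\stR$-points cannot detect all primes of $R$; it must be argued on arbitrary primes via the relations, as the paper does. Finally, geometric irreducibility of $\stZ_{G}$ is part of the statement and is absent from your plan; the paper obtains it from the cone structure of $\Spec R\otimes k$, whose two components can only meet in the vertex $m=\alpha=\beta=\la-,-\ra=0$, a point every component must contain.
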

Before proving this theorem, we need some lemmas.
\begin{lem}
\label{lem:alpha locally id}Let $\shL,\shF$ be respectively an invertible
sheaf and a rank $2$ locally free sheaf. Then 
\[
\Homsh(\shL,\odi S)\arrdi{-\otimes\shF}\Homsh(\shL\otimes\shF,\shF)
\]
is a an isomorphism onto the locus of maps $\alpha\colon\shL\otimes\shF\arr\shF$
that are (fppf) locally a multiple of the identity.\end{lem}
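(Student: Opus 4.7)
The plan is to reinterpret the map in question through the canonical adjunction and then use that, since $2$ is invertible on the base $\stR$, the inclusion $\odi{S}\cdot\id_{\shF}\hookrightarrow\Endsh\shF$ admits a natural splitting given by the normalized trace. First, the tensor--hom adjunction yields a canonical isomorphism
\[
\Homsh(\shL\otimes\shF,\shF)\simeq\Homsh(\shL,\Endsh\shF),
\]
under which a map $\alpha$ corresponds to $\widetilde{\alpha}\colon\shL\to\Endsh\shF$ given locally by $t\mapsto\alpha(t\otimes-)$. The map $\mu\longmapsto\mu\otimes\id_{\shF}$ of the statement corresponds, under this identification, to the map
\[
\Homsh(\shL,\odi{S})\longrightarrow\Homsh(\shL,\Endsh\shF)
\]
induced by the inclusion $i\colon\odi{S}\hookrightarrow\Endsh\shF$, $1\mapsto\id_{\shF}$.

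The next step is to exhibit a splitting of $i$. Since $\shF$ has constant rank $2$ and $2\in\odi{S}^{*}$, the normalized trace $\tfrac{1}{2}\tr\colon\Endsh\shF\to\odi{S}$ is a retraction of $i$, producing a direct sum decomposition $\Endsh\shF=\odi{S}\cdot\id_{\shF}\oplus\Endsh_{0}\shF$, where $\Endsh_{0}\shF$ is the subsheaf of trace-zero endomorphisms. Applying $\Homsh(\shL,-)$ to this decomposition shows that the map above is the inclusion of a direct summand, hence in particular injective, and its image consists precisely of those $\widetilde{\alpha}$ whose composition with the projection $\Endsh\shF\twoheadrightarrow\Endsh_{0}\shF$ vanishes.

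It remains to check that this image coincides with the locus of $\alpha$ which are fppf locally a multiple of the identity. Locally, after choosing trivializations $\shL\simeq\odi{}t$ and $\shF\simeq\odi{}^{2}$, the map $\alpha$ equals $\mu\otimes\id_{\shF}$ for some $\mu\colon\shL\to\odi{}$ precisely when the matrix of $\alpha(t\otimes-)$ is scalar, i.e.\ when the trace-zero part of $\widetilde{\alpha}(t)$ vanishes. Thus $\alpha$ is fppf locally a multiple of the identity iff the composition $\shL\xrightarrow{\widetilde{\alpha}}\Endsh\shF\twoheadrightarrow\Endsh_{0}\shF$ vanishes fppf locally; but vanishing of a morphism of quasi-coherent sheaves is fppf local on the base, so this is equivalent to the global vanishing, and hence to $\widetilde{\alpha}$ factoring through $i$. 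The main (very mild) obstacle is simply remembering to use that $2$ is invertible to produce the splitting; without it the image identification would require the stronger fppf descent argument directly on sections, which is essentially the same computation.
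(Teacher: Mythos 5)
Your proof is correct and takes essentially the same route as the paper's: the paper also recovers $\mu$ as the trace composition $\shL\arr\shL\otimes\shF\otimes\duale{\shF}\arr\shF\otimes\duale{\shF}\arr\odi S$ and verifies $\alpha=(\mu/2)\otimes\id_{\shF}$ after an fppf-local reduction to the scalar case, using that $2$ is invertible. Your splitting $\Endsh\shF=\odi S\cdot\id_{\shF}\oplus\Endsh_{0}\shF$ via the normalized trace, together with descent for vanishing of sheaf morphisms, is just a cleaner packaging of the same normalized-trace retraction.
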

\begin{proof}
Clearly the map $-\otimes\shF$ is injective and has the right image.
So we have to prove that given $\alpha\colon\shL\otimes\shF\arr\shF$
which is locally a multiple of the identity, there exists $\shL\arrdi{\mu}\odi S$
such that $\alpha=\mu\otimes\id$. Set 
\[
\mu\colon\shL\arr\shL\otimes\shF\otimes\duale{\shF}\arrdi{\alpha\otimes\id}\shF\otimes\duale{\shF}\arr\odi S
\]
We want to prove that $\alpha=(\mu/2)\otimes\id$. We can assume $\shL=\odi S$,
$\shF=\odi S^{2}$ and $\alpha=\lambda\id$ whit $\lambda\in\odi S$.
It is easy to check that $\mu$ is the multiplication for $2\lambda$,
so that $\alpha=(\mu/2)\otimes\id$.\end{proof}
\begin{lem}
\label{lem:three dimensional local k algebras}Let $k$ be a field.
Then, up to isomorphism, the only local $k$-algebras $A$ with $\dim_{k}A=3$
and $\dim_{k}m_{A}=2$ are
\[
A=k[x]/(x^{3})\text{ and }A=k[x,y]/(x^{2},xy,y^{2})
\]
\end{lem}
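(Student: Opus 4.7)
The plan is to classify such $A$ by the tangent-space dimension $\dim_k m_A/m_A^2$, which is either $1$ or $2$ since $\dim_k m_A = 2$. Before splitting into cases, I would first observe that the hypothesis $\dim_k A = 3$ and $\dim_k m_A = 2$ forces $\dim_k A/m_A = 1$, hence $A/m_A = k$, so $A$ is a local $k$-algebra with residue field $k$. In particular Nakayama's lemma applies freely and any $k$-basis of $m_A/m_A^2$ lifts to a minimal set of generators of $m_A$.

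In the case $\dim_k m_A/m_A^2 = 2$, a basis $x,y$ of $m_A$ generates $m_A$, and since the images of $x,y$ already span $m_A$ over $k$, one has $m_A^2 \subseteq m_A$ with $\dim_k m_A^2 \le \dim_k m_A - \dim_k m_A/m_A^2 = 0$. Thus $x^2 = xy = y^2 = 0$ and the obvious surjection $k[x,y]/(x^2,xy,y^2) \twoheadrightarrow A$ is an isomorphism by comparing $k$-dimensions.

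In the case $\dim_k m_A/m_A^2 = 1$, Nakayama gives $m_A = (x)$ for some $x \in m_A$, so $m_A = kx + kx^2 + \cdots$. Since $\dim_k m_A = 2$ and $\dim_k m_A/m_A^2 = 1$, necessarily $\dim_k m_A^2 = 1$, hence $m_A^2 = kx^2$. Applying Nakayama to the finitely generated module $m_A^2$ with $m_A \cdot m_A^2 = m_A^3$, the inclusion $m_A^3 \subseteq m_A^2$ must be strict unless $m_A^2 = 0$; as $\dim_k m_A^2 = 1 \ne 0$, we get $\dim_k m_A^3 = 0$, i.e.\ $x^3 = 0$. The surjection $k[x]/(x^3) \twoheadrightarrow A$ is then an isomorphism by dimension count.

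There is no real obstacle here: the only point requiring any care is ruling out $m_A^3 \ne 0$ in the cyclic case, which is handled by the Nakayama argument above. Both resulting algebras are visibly local (with maximal ideal the image of $(x)$ or $(x,y)$ respectively) and have the correct dimensions, so the classification is complete.
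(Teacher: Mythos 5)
Your proof is correct, and it takes a genuinely different, though closely parallel, route from the paper's. You stratify by the embedding dimension $\dim_{k}m_{A}/m_{A}^{2}\in\{1,2\}$ and drive both cases with Nakayama's lemma: in the principal case, Nakayama applied to the module $m_{A}^{2}$ rules out $m_{A}^{3}=m_{A}^{2}$ and hence forces $m_{A}^{3}=0$, giving $x^{3}=0$. The paper instead stratifies by the socle dimension $\dim_{k}W\in\{1,2\}$, where $W=\Ann m_{A}$ (nonzero because $A$ is Artinian local), and in the case $\dim_{k}W=1$ proves directly that $1,x,x^{2}$ are linearly independent for any $x\in m_{A}-W$, the key mechanism being that a zero divisor in a finite-dimensional local $k$-algebra is a non-unit and so lies in $m_{A}$. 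The two case splits agree a posteriori --- $k[x]/(x^{3})$ has one-dimensional socle and cotangent space, while $k[x,y]/(x^{2},xy,y^{2})$ has both two-dimensional --- but neither proof needs to verify this, since each handles both values of its own invariant. Your version is the standard Hilbert-function argument and has the merit of making explicit the point $A/m_{A}=k$, which the paper uses silently; the paper's socle argument buys an explicit basis $1,x,x^{2}$ with bare-hands linear algebra and no appeal to Nakayama. One micro-step you leave implicit is that $\dim_{k}m_{A}/m_{A}^{2}=0$ cannot occur (it would give $m_{A}=m_{A}^{2}$ and hence $m_{A}=0$ by Nakayama, contradicting $\dim_{k}m_{A}=2$), but this is covered by your blanket remark that Nakayama applies freely.
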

\begin{proof}
Let $W=\Ann m_{A}$. We have $0\subsetneq W\subseteq m_{A}$. Assume
first that $\dim_{k}W=1$ and let $x\in m_{A}-W$. We want to prove
that $1,x,x^{2}$ is a basis of $A$. In this case we will have $A\simeq k[X]/(X^{3})$.
Consider an expression
\[
a+bx+cx^{2}=0
\]
Since $x\in m_{A}$, we have $a=0$. In particular $(b+cx)\in m_{A}$
because it is a zero divisor and again we can conclude that $b=0$.
Finally $x\notin W$ implies $x^{2}\neq0$ and therefore $c=0$. Assume
now $\dim_{k}W=2$, i.e. $W=\Ann m_{A}$. If $x,y$ is a basis of
$m_{A}$ then we have a surjective map
\[
k[X,Y]/(X^{2},XY,Y^{2})\arr A
\]
which is an isomorphism by dimension.\end{proof}
\begin{lem}
\label{Lem: the degenerate locus outside U}Let $R=\stR[b,\omega,A]/(Ab,A\omega)$.
Then 
\[
(R,R^{2},m,\alpha,\beta,\la-,-\ra)
\]
 where 
\[
\grave{\alpha=\left(\begin{array}{cc}
A & 0\\
0 & A
\end{array}\right)};\begin{array}{l}
\beta(e_{1}^{2})=be_{2}\comma\beta(e_{1}e_{2})=\beta(e_{2}^{2})=0\\
\la e_{1},e_{2}\ra=\omega\comma m=A^{2}
\end{array}
\]
is an object of $\RCov G(R)$. The induced map $\Spec R\arr\RCov G$
is topologically surjective onto the locus $\RCov{|G}|-|\stU_{\alpha}|$.\end{lem}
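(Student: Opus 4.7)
The plan splits into two independent verifications. First I would check that the given sextuple $(R, R^2, m, \alpha, \beta, \la-,-\ra)$ lies in $\RCov G(R)$ by reading off the associated local parameters (see \ref{not: associated parameters for chi and Sthree}): one finds $a=c=d=e=f=0$, $B=C=0$, $D=A$, $m=A^2$, with $b,\omega,A$ the free generators of $R$. With these values most of the polynomial identities in (\ref{eq:loc com and ass conditions}) vanish trivially; the only non-tautological instances are $\omega(A+D)=2A\omega$, $C(a+d)+b(A+D)=2Ab$, and their symmetric counterparts, all of which vanish precisely because of the two defining relations $Ab=0$ and $A\omega=0$ (using $2\in\stR^{*}$). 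Since $\alpha=A\cdot\id_{\shF}$ is everywhere a multiple of the identity on $R$, the associated map $\Spec R\to\GCov$ factors through $|\RCov G|-|\stU_{\alpha}|$.

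For the topological surjectivity, I would fix an algebraically closed field $k$ and a geometric point $\chi=(\shL,\shF,m,\alpha,\beta,\la-,-\ra)\in\GCov(k)$ with $\alpha_{\chi}$ a multiple of the identity. Trivializing $\shL$ and picking any basis $y,z$ of $\shF$, Lemma \ref{lem:alpha locally id} gives $B=C=0$ and $D=A$. Plugging this into (\ref{eq:loc com and ass conditions}) and using that $k$ is reduced (so $a+d=c+f=0$ by remark \ref{rem: cover with non zero trace}), one extracts: $A\omega=0$, together with $A\cdot a=A\cdot b=A\cdot c=A\cdot d=A\cdot e=A\cdot f=0$. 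This splits into the two cases $A\neq0$ and $A=0$.

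If $A\neq0$, all of $a,\dots,f,\omega$ vanish, and $\chi$ is the image of $(b,\omega,A)=(0,0,A)\in\Spec R(k)$. If $A=0$, then $\tr\beta=0$ (again reducedness), so $\beta=\beta_{\delta}$ for a canonically attached $\delta\in\Hom(\Sym^{3}\shF,\det\shF)$ via (\ref{iso:delta beta}). Moreover the relation $(A-D)=0$ combined with the constraints from $(-,-)_{\chi}$ forces $(-,-)_{\chi}=0$, hence $\eta_{\delta}=2(-,-)_{\chi}=0$ by (\ref{eq:eta delta is two times symmetric product}). The key geometric input is now that over an algebraically closed field (with $\car k\neq 2,3$) the $GL_{2}$-orbits on the space of binary cubic forms are parametrized by root configurations, and a direct calculation on representatives of the three nonzero orbits shows that $\eta_{\delta}=0$ cuts out exactly the orbit of cubes $\lambda y^{3}$ of a linear form (together with $\delta=0$). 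Therefore I can change the basis of $\shF$ so that $\delta=-by^{3}$ with $a=c=e=0$, and then $\chi$ is visibly the image of the point $(b,\omega,0)\in\Spec R(k)$. (Technically one must also allow an additional rescaling from $\Gm\times GL_{2}$ acting on $\shL$ and $\shF$, which is available because $\RCov G\simeq[\Spec R_{\mathrm{big}}/(\Gm\times GL_{2})]$ by Theorem \ref{thm:Geometry of Sthree cov}.)

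The main obstacle is the $GL_{2}$-orbit classification for $\eta_{\delta}=0$ in Case 2 — in particular keeping track of the fact that $\delta$ is valued in $\det\shF$, so that the transformation law involves both the action of $GL_{2}$ on $\Sym^{3}\shF^{*}$ and the twist by $\det\shF$. I would handle this by directly evaluating $\delta$ on the new monomial basis $f_{1}^{3},f_{1}^{2}f_{2},f_{1}f_{2}^{2},f_{2}^{3}$ of $\Sym^{3}\shF$ and converting to the new basis $f_{1}\wedge f_{2}$ of $\det\shF$, rather than via a substitution of polynomial variables, to avoid the usual factorial/convention pitfalls; after this the claim reduces to a short check on the four representative orbits.
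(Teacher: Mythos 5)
Your proof is correct, but in the surjectivity step it takes a genuinely different route from the paper. The paper does not split into the cases $A\neq0$ and $A=0$: given $\chi\in\GCov(k)-\stU_{\alpha}(k)$ with $\alpha=\lambda\id$, it passes to the invariant triple cover $\stB=\alA_{\chi}^{\sigma}\simeq k\oplus\shF$ and observes that $(u,v)_{\chi}=\la\alpha(v),u\ra=\lambda\la v,u\ra$ is simultaneously symmetric and antisymmetric, hence zero (as $2\in\stR^{*}$). Thus $\shF$ is the maximal ideal of the three-dimensional local $k$-algebra $\stB$, and Lemma \ref{lem:three dimensional local k algebras} ($\stB\simeq k[x]/(x^{3})$ or $k[x,y]/(x^{2},xy,y^{2})$) immediately produces a basis with $\beta(y^{2})=bz$ and all other parameters among $a,c,d,e,f,B,C$ zero, $D=A=\lambda$ — covering both of your cases at once and over an \emph{arbitrary} field, with no appeal to orbit geometry. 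Your argument replaces this with the relation-chasing $Aa=\dots=Af=A\omega=0$ and, when $A=0$, the identification $\eta_{\delta}=2(-,-)_{\chi}=0$ followed by the $\Gl_{2}$-orbit classification of binary cubics with vanishing $\eta_{\delta}$. What your route buys is an explicit link between the degenerate locus and the triple-point geometry of the underlying cubic ($\eta_{\delta}=0$ is the ``totally triple'' condition); what the paper's route buys is brevity, uniformity in $\lambda$, and validity over non-closed fields, since the slick symmetric-plus-antisymmetric observation does all the work that your case analysis and normalization do by hand.

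One wrinkle to repair: the base here is $\stR=\Z[1/2]$, not $\stR_{3}$, so $|\GCov|$ has points in characteristic $3$, and your invocation of the root-configuration classification ``with $\car k\neq2,3$'' leaves those points uncovered as stated. This is not fatal: the orbit classification of nonzero binary cubics by root multiplicities holds over any algebraically closed field, and the direct check you propose at the end is characteristic-free. Concretely, from $a^{2}+bc=ac+be=c^{2}-ae=0$ with $b\neq0$ one sets $z'=\beta(y^{2})=ay+bz$ and computes $\beta(yz')=(a^{2}+bc)y=0$ and $\beta(z'^{2})=(a^{3}+2abc+b^{2}e)y-(a^{2}b+b^{2}c)z=0$ identically, while $b=0$ forces $a=c=0$ and a basis swap handles the remaining case; no hypothesis on $\car k$ beyond $2\in k^{*}$ is used. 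So you should either run this computation directly (as your final paragraph suggests) or drop the $\car k\neq3$ restriction from the cited classification.
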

\begin{proof}
Set $\chi$ for the object defined in the statement. A direct computation
on the relations (\ref{eq:loc com and ass conditions}) shows that
$\chi\in\GCov(R)$. On the other hand, since $\alpha$ is globally
a multiple of the identity, we also have that the image of $\Spec R$
in $|\GCov|$ lies outside $|\stU_{\alpha}|$.

Now we have to prove that the map in the statement is topologically
surjective. Let $k$ be a field and $\chi=(\shL,\shF,m,\alpha,\beta,\la-,-\ra)\in\RCov G(k)$
such that $\chi\notin\stU_{\alpha}$. Consider the $k$-algebra $\alA^{\sigma}\simeq k\oplus\shF=\stB$,
where $\sigma\in\Z/2\Z$ and where the multiplication is given by
$\Sym^{2}\shF\arrdi{2(-,-)_{\chi}+\beta}k\oplus\shF$. The vector
space $\shF\subseteq\stB$ is an ideal because $\alpha=\lambda\id$
for some $\lambda\in k$ and $(u,v)_{\chi}=\la\alpha(v),u\ra=\lambda\la v,u\ra=0$,
because it is both symmetric and antisymmetric. Therefore $\shF$
is a maximal ideal of $\stB$ and $\shB/\shF=k$. In particular, from
\ref{lem:three dimensional local k algebras}, $\stB$ is isomorphic
to either $k[x]/(x^{3})$ or $k[x,y]/(x^{2},xy,y^{2})$. So there
exists a basis $y,z$ of $\shF$ such that $\beta(y^{2})=by$, $\beta(yz)=\beta(z^{2})=0$
and therefore the parameters associates with $\chi$ with respect
to this basis satisfy 
\[
a=c=d=e=f=B=C=0,D=A=\lambda
\]

\end{proof}

\begin{proof}
(\emph{of Theorem} \ref{thm:Geometry of Sthree cov}) The results
about $\RCov{S_{3}}$ follow from the same results about $\GCov$.
Note that $\Spec R$ represents the functor of $G$-equivariant structures
of commutative, associative $\stR$-algebras over $\stR[G]$. In particular
the group $\shH=\Autsh_{G,1}\stR[G]$ of the $G$-equivariant isomorphisms
of $G$-comodules preserving $1\in\stR$ acts on $\Spec R$ and it
is easy to verify that $\GCov$ is the quotient stack of $\Spec R$
by this group. Finally the representation theory of $G$ tell us that
$\shH\simeq\Gm\times\Gl_{2}$ and therefore that $\RCov G\simeq[\Spec R/\Gm\times\Gl_{2}]$.

If $\stR'$ is an $\stR$-algebra, thanks to \ref{rem: cover with non zero trace},
we know that $a+d,c+f$ belongs to all the prime ideals of $R\otimes_{\stR}\stR'$,
but $a+d,c+f\neq0$ in $R\otimes_{\stR}\stR'$. Therefore $R$ and
$\RCov G$ are universally not reduced. Since all the relations in
(\ref{eq:loc com and ass conditions}) are homogeneous, $R$ is a
$\N$-graded $\stR$-algebra such that $R_{0}=\stR$. In particular
$\Spec\stR$ and therefore $\GCov$ are geometrically connected.

We now focus on the irreducible components of $\GCov$. Let $\stR'$
be a domain over $\stR$, $R'=R\otimes_{\stR}\stR'$ and continue
to denote by $P_{1},P_{2}$ the ideal in the statement of $R'$. Notice
that $R'/P_{2}=\stR'[A]$. In particular $P_{2}$ is prime, $A+D\notin P_{2}$
and therefore $P_{1}\subsetneq P_{2}$. Now let $P$ be a prime ideal
of $R'$. We want to show that $P_{1}\subseteq P$ or $P_{2}\subseteq P$.
If $A+D\in P$ then $P_{1}\subseteq P$. If $A+D\notin P$, then,
taking into accounts (\ref{eq:loc com and ass conditions}) and the
fact that $a+d,c+f\in P$, it is easy to check that $P_{2}\subseteq P$.
Since $R'_{\omega}\neq0$, $\omega$ is not nilpotent, so there exists
a minimal prime $P'$ such that $\omega\notin P'$. In particular
$P_{2}\subsetneq P'$ and therefore $P_{1}\subseteq P'$. If by contradiction
$P'$ is the only minimal prime, i.e. $\Spec R'$ is irreducible,
then $P'\subseteq P_{2}$ and therefore $P_{1}\subseteq P_{2}$, which
is not the case. In particular there exists a minimal prime $P''$
such that $P''\subseteq P_{2}$. Again if $P_{1}\subseteq P''$ we
find a contradiction, so $P_{2}\subseteq P''$ and $P_{2}=P''$ is
a minimal prime. So $\Spec R$ is reducible and, by \ref{rem: irreducibility and connectedness preserved by particular fppf epimorphism},
the same conclusion holds for $\RCov G$. Moreover, having considered
an arbitrary base change to a domain, it also follows that $\Spec(R/P_{2})$
and the closed substack of $\GCov$ induced, which is $\stZ$, are
geometrically an irreducible component. We want now to show that $\GCov$
has exactly $2$ irreducible components, namely $\stZ_{G}$ and $\stZ$.
In particular, by \ref{rem: irreducibility and connectedness preserved by particular fppf epimorphism},
it follows that $P'$ and $P_{2}$ are the only minimal primes of
$R$. Moreover, since $P'$ is the only minimal prime over $P_{1}$,
we can also conclude that $\sqrt{P_{1}}=P'$. 

Let $\stZ_{2}$ be the closed substack $\stZ$ defined in the statement
and $\stZ_{1}$ the closed substack where $\alpha$ vanishes. If $R'$
is the algebra defined in \ref{Lem: the degenerate locus outside U},
we see that $|\stZ_{1}|$ and $|\stZ_{2}|$ are the image of, respectively,
$\Spec(R'/(A))$ and $\Spec(R'/(b,\omega))$ under the map $\Spec R'\arr\GCov$.
In particular $|\stZ_{1}|$ and $|\stZ_{2}|$ are irreducible and
$|\RCov G|=|\stU_{\alpha}|\cup|\stZ_{1}|\cup|\stZ_{2}|$. Denote by
$\stU_{3}$ the open locus where $3$ is invertible, i.e. $\stU_{3}=\Spec\stR_{3}\times_{\stR}\GCov$
and by $\stU_{\beta}$ the locus where $\beta$ is never $0$ and
$3$ is invertible. Although we are working on $\stR$, $\stU_{\beta}\subseteq\stU_{3}$
is exactly the stack considered in \ref{thm:the locus where beta is not zero}.
Since $\Bi G\subseteq\stU_{\alpha}$, $\emptyset\neq\Bi G\cap\stU_{3}\subseteq\stU_{\beta}$
and both $\stU_{\alpha}$ and $\stU_{\beta}$ are irreducible thanks
to \ref{thm:not degenerate locus of S3} and \ref{thm:the locus where beta is not zero},
we can conclude that $\overline{|\stU_{\alpha}|}=\overline{|\stU_{\beta}|}=|\stZ_{G}|$.
On the other hand $|\stU_{\beta}|\cap|\stZ_{1}|\neq\emptyset$, because
it contains the algebra locally given by $\alpha=\la-,-\ra=0$ and
$a=c=d=e=f=0$, $b=1$, which is well defined thanks to \ref{Lem: the degenerate locus outside U}.
Therefore $|\stZ_{1}|\subseteq\overline{|\stU_{\beta}|}=|\stZ_{G}|$.
In particular $|\GCov|=|\stZ_{G}|\cup|\stZ|$ and, because it is reducible,
$\stZ_{G}$ and $\stZ$ are the only irreducible components of $\GCov$.

The last isomorphisms follows from \ref{lem:alpha locally id}. In
order to prove that $\stZ_{G}$ is geometrically irreducible, it is
enough to prove that, if $k$ is a field over $\stR$, then $|\stZ_{G}\times k|\cap|\stZ\times k|\subseteq|\stZ_{G\times k}|$.
The stacks $\stZ_{G}\times k$ and $\stZ\times k$ are induced by
$\Spec(R/\sqrt{P_{1}})\otimes k$ and $\Spec(R/P_{1})\otimes k$ respectively,
whose intersection is the point where $m=\alpha=\beta=\la-,-\ra=0$.
But $\Spec R\otimes k$ is a cone with this point as vertex and therefore
any irreducible component of $\Spec R\otimes k$ must contain it.

Finally the fact that $P_{1}$ is a prime can be checked using the
software Macaulay$2$: if 
\[
I=(2aA+bB+cC,2cA-aB+eC,a^{2}+bc+\omega C,ac+be-2\omega A,c^{2}-ae-B\omega)
\]
as ideal of $R'=\Z[a,b,c,e,\omega,A,B,C]$, we have $R/P_{1}\simeq(R'/I)_{2}$,
where $(-)_{2}$ denotes the localization by $2\in\Z$, and Macaulay$2$
tells us that $I$ is a prime ideal of $R'$.

\end{proof}

\subsection{The main irreducible components $\stZ_{(\mu_{3}\rtimes\Z/2\Z)}$
and $\stZ_{S_{3}}$.}

In this subsection we want to give a more precise description of the
irreducible component $\stZ_{G}$ of $\GCov$ and, consequently, of
$\stZ_{S_{3}}\subseteq\RCov{S_{3}}$ over $\stR_{3}$. In particular
we will have to consider maps $\shL\otimes\shF\arr\shF$ whose trace
is zero (see \ref{not: trace of a map}) and we first want to describe
them.
\begin{rem}
Let $\shN$ and $\shF$ be respectively an invertible sheaf and a
locally free sheaf of rank $2$. Given a map $\alpha\colon\shN\otimes\shF\arr\shF$,
we have a factorization   \[   \begin{tikzpicture}[xscale=1.8,yscale=-0.5]     \node (A0_0) at (0, 0) {$s\otimes p \otimes q$};     \node (A0_2) at (2, 0) {$\alpha(s\otimes q)p-\alpha(s\otimes p)q$};     \node (A1_0) at (0, 1) {$\shN\otimes\shF\otimes \shF$};     \node (A1_2) at (2, 1) {$\Sym^2 \shF$};     \node (A3_1) at (1, 3) {$\shN\otimes \det \shF$};     \path (A1_0) edge [->]node [auto] {$\scriptstyle{}$} (A1_2);     \path (A3_1) edge [->,dashed]node [auto] {$\scriptstyle{}$} (A1_2);     \path (A1_0) edge [->]node [auto] {$\scriptstyle{}$} (A3_1);     \path (A0_0) edge [|->,gray]node [auto] {$\scriptstyle{}$} (A0_2);   \end{tikzpicture}   \] 
that yields an isomorphism   \begin{align}\label{iso:primo alpha}
  \begin{tikzpicture}[xscale=4.6,yscale=-0.7]     \node (A0_0) at (0, 0) {$\Homsh_{\tr=0}(\shN\otimes\shF,\shF)$};     \node (A0_1) at (1, 0) {$\Homsh(\shN\otimes \det \shF,\Sym^2\shF)$};     \node (A1_0) at (0, 1) {$\left(\begin{array}{cc} u & v\\ w & -u \end{array}\right)$};     \node (A1_1) at (1, 1) {$(v\ \ \ \ \ -2u\ \   -w)$};     \path (A0_0) edge [->]node [auto] {$\scriptstyle{}$} (A0_1);     \path (A1_0) edge [|->,gray]node [auto] {$\scriptstyle{}$} (A1_1);   \end{tikzpicture}
\end{align}where the last row describes the behaviour of the map if $\shN=\odi T$
and a basis $y,z$ of $\shF$ is given.

We want to introduce also a different description. In order to do
that we note that the pairing $\Sym^{2}\shF\otimes\Sym^{2}(\duale{\shF})\arr\odi T$
defined by $uv\otimes\xi\eta\longmapsto\xi(u)\eta(v)+\eta(u)\xi(v)$
induces an isomorphism   \[   \begin{tikzpicture}[xscale=4.1,yscale=-0.6]     \node (A0_0) at (0, 0) {$\Sym^2 \duale \shF$};     \node (A0_1) at (1, 0) {$\duale{(\Sym^2 \shF)}$};     \node (A1_0) at (0, 1) {$(y^*)^2, y^*z^*,(z^*)^2$};     \node (A1_1) at (1, 1) {$2(y^2)^*,(yz)^*,2(z^2)^*$};     \path (A0_0) edge [->]node [auto] {$\scriptstyle{}$} (A0_1);     \path (A1_0) edge [|->,gray]node [auto] {$\scriptstyle{}$} (A1_1);   \end{tikzpicture}   \] where,
again, the last row expresses the behaviour of the map if a basis
$y,z$ of $\shF$ is given. The composition

\[
\Sym^{2}\shF\simeq\Sym^{2}(\duale{\shF}\otimes\det\shF)\simeq\Sym^{2}\duale{\shF}\otimes(\det\shF)^{2}\simeq\duale{(\Sym^{2}\shF)}\otimes(\det\shF)^{2}
\]
yields an isomorphism \begin{align}\label{iso:terzo alpha}
\begin{tikzpicture}[xscale=5,yscale=-0.6]     \node (A0_0) at (0, 0) {$\Homsh(\shN \otimes (\det \shF)^2,\Sym^2\shF)$};     \node (A0_1) at (1, 0) {$\Homsh(\Sym^2\shF,\shN^{-1})$};     \node (A1_0) at (0, 1) {$(u\ \ \ \ \ v\ \  \ \ \  w)$};     \node (A1_1) at (1, 1) {$(2w \ \ -v\ \ \ \ \   2u)$};     \path (A0_0) edge [->]node [auto] {$\scriptstyle{\check{-}}$} (A0_1);     \path (A1_0) edge [|->,gray]node [auto] {$\scriptstyle{}$} (A1_1);   \end{tikzpicture}  
\end{align}where the last row describes the behaviour of the map if $\shN=\odi T$
and a basis $y,z$ of $\shF$ is given.\end{rem}
\begin{notation}
\label{not: associated check map}We continue to keep notation introduced
above: given $\zeta\colon\shN\otimes\det\shF^{2}\arr\Sym^{2}\shF$
we will denote by $\check{\zeta}\colon\Sym^{2}\shF\arr\shN^{-1}$
the associated map. Moreover we will also denote by $\hat{-}$ the
inverse of $\check{-}$: given $\eta\colon\Sym^{2}\shF\arr\shN^{-1}$
the associated map will be $\hat{\eta}\colon\shN\otimes(\det\shF)^{2}\arr\Sym^{2}\shF$.
\end{notation}
Remarks above motivate the introduction of the following stack. Define
the stack $\stX$ whose objects are sequences $(\shM,\shF,\delta,\zeta,\omega)$
where $\shM$ is an invertible sheaf, $\shF$ is a locally free sheaf
of rank $2$, $\omega$ is a section of $\shM$ and $\delta,\zeta$
are maps
\[
\delta\colon\Sym^{3}\shF\arr\det\shF\comma\zeta\colon(\det\shF)^{2}\otimes\shM\arr\Sym^{2}\shF
\]
satisfying the following conditions:
\begin{enumerate}
\item the composition
\begin{equation}
(\det\shF)^{2}\otimes\shM\otimes\shF\arrdi{\zeta\otimes\id}\Sym^{2}\shF\otimes\shF\arr\Sym^{3}\shF\arrdi{\delta}\det\shF\label{eq:first condition on M,F,delta,zeta,omega}
\end{equation}
is zero;
\item the composition 
\begin{equation}
\Sym^{2}\shF\arrdi{\check{\zeta}}\shM^{-1}\arrdi{\duale{\omega}}\odi S\label{eq:second condition on M,F,delta,zeta,omega}
\end{equation}
 coincides with $\eta_{\delta}$ (see \ref{not: associated check map}
and (\ref{eq:etabeta})).
\end{enumerate}
Given an object $\chi=(\shM,\shF,\delta,\zeta,\omega)\in\stX$ we
define $\shL_{\chi}=\shM\otimes\det\shF$, $\alpha_{\chi}\colon\shL_{\chi}\otimes\shF\arr\shF$
the map obtained from $\zeta$ using (\ref{iso:primo alpha}), $m_{\chi}=-\det\alpha_{\chi}\colon\shL_{\chi}^{2}\arr\odi T$,
$\beta_{\chi}=\beta_{\delta}\colon\Sym^{2}\shF\arr\shF$ (see (\ref{iso:delta beta}))
and finally $\la-,-\ra_{\chi}=\omega\otimes\id_{\det\shF}\colon\det\shF\arr\shM\otimes\det\shF=\shL_{\chi}$.

Remembering the notation introduced in \ref{not: trace of beta},
we want to prove the following Theorem.
\begin{thm}
\label{thm:description of the main component}The main irreducible
component $\stZ_{G}$ of $\GCov$ is the closed locus of $\GCov$
of objects $(\shL,\shF,m,\alpha,\beta,\la-,-\ra)$ such that
\[
\tr\alpha\colon\shL\arr\odi T\text{ and }\tr\beta\colon\shF\arr\odi T
\]
vanish. Moreover we have an isomorphism of stacks   \[   \begin{tikzpicture}[xscale=4.5,yscale=-0.6]     \node (A0_0) at (0, 0) {$\stX$};     \node (A0_1) at (1, 0) {$\stZ_G$};     \node (A1_0) at (0, 1) {$\chi=(\shM,\shF,\delta,\zeta,\omega)$};     \node (A1_1) at (1, 1) {$(\shL_{\chi},\shF,m_\chi, \alpha_{\chi},\beta_{\chi},\la-,-\ra_\chi)$};     \path (A0_0) edge [->]node [auto] {$\scriptstyle{}$} (A0_1);     \path (A1_0) edge [|->,gray]node [auto] {$\scriptstyle{}$} (A1_1);   \end{tikzpicture}   \] %

\end{thm}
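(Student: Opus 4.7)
The plan is to split the argument in two stages, first handling the characterization of $\stZ_{G}$ and then using this to build the claimed isomorphism.

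First I would translate the trace conditions into the local parameters of \ref{not: associated parameters for chi and Sthree}. Choosing a basis $y,z$ of $\shF$ and generator $t$ of $\shL$, a direct computation gives $(\tr\alpha)(t) = A+D$, $(\tr\beta)(y) = a+d$ and $(\tr\beta)(z) = c+f$. Hence the closed substack of $\GCov$ where $\tr\alpha$ and $\tr\beta$ vanish is cut out, in the presentation of Theorem \ref{thm:Geometry of Sthree cov}, by the ideal $P_{1} = (a+d, c+f, A+D)$. But that theorem proves that $P_{1}$ is a prime ideal of $R$ (via the Macaulay2 computation for the auxiliary ideal $I$) and that $\Spec(R/P_{1})$ defines the irreducible component $\stZ_{G}$. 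This gives the first assertion.

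For the isomorphism, I would define the functor $\Phi\colon \stX \arr \stZ_{G}$ exactly as in the statement and its candidate inverse $\Psi\colon \stZ_{G}\arr \stX$ by $\shM = \shL\otimes (\det\shF)^{-1}$, $\delta = \delta_{\beta}$ (well-defined because on $\stZ_{G}$ we have $\tr\beta = 0$, so (\ref{iso:delta beta}) applies), $\zeta$ obtained from $\alpha$ via (\ref{iso:primo alpha}) (well-defined because $\tr\alpha = 0$), and $\omega$ the section of $\shM$ corresponding to $\la-,-\ra\colon \det\shF \arr \shL = \shM\otimes \det\shF$. The functoriality of both $\Phi$ and $\Psi$ is straightforward; provided that one is well-defined and lands where claimed, the other follows by the same local dictionary.

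The core technical step, and the main obstacle, is to match the axioms defining $\stX$ with the associativity relations of \ref{lem:local conditions for the map for Sthree} specialized to $A+D=a+d=c+f=0$. Working in a local basis as above, write $\delta$ via $(-b,a,c,e)$ as in (\ref{eq:expression of delta}) and $\zeta\bigl((y\wedge z)^{2}\otimes\mu\bigr) = vy^{2}-2uyz+wz^{2}$, so that (\ref{iso:primo alpha}) recovers $\alpha_{\chi}$ with matrix $\bigl(\begin{smallmatrix}u & v\\ w & -u\end{smallmatrix}\bigr)$. Then (\ref{eq:second condition on M,F,delta,zeta,omega}) reads $\omega_{0}\cdot(2w,-u,2u) = (a^{2}+bc,\,ac+be,\,c^{2}-ae)$ (after the natural identifications using (\ref{eq:expression of eta delta})), which is precisely the system determining $A,B,C$ from $a,b,c,e,\omega$ on the component $A+D=0$, as read off from the equations (\ref{eq:loc com and ass conditions}). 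Similarly, expanding (\ref{eq:first condition on M,F,delta,zeta,omega}) in $\Sym^{3}\shF$ gives four scalar equations in the coefficients of $\zeta$ and $\delta$ which, after substitution, encode exactly the remaining nontrivial instances of (\ref{eq:ass alpha,beta}) and (\ref{eq:ass gamma,beta}); the equations (\ref{eq:ass m,alpha}) and (\ref{eq:ass alpha,gamma}) are then automatic from $m_{\chi}=-\det\alpha_{\chi}$ and $A+D=0$, while (\ref{eq:ass beta,beta}) becomes trivial since $a+d=c+f=0$. This shows $\Phi$ lands in $\stZ_{G}$ and $\Psi$ lands in $\stX$, and the two compositions are then identities by direct inspection of the constructions, since (\ref{iso:delta beta}) and (\ref{iso:primo alpha}) are themselves mutually inverse correspondences.
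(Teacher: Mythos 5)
Your proposal is correct and takes essentially the same route as the paper: the paper likewise identifies the locus $\tr\alpha=\tr\beta=0$ with the prime ideal $P_{1}=(a+d,c+f,A+D)$ from Theorem \ref{thm:Geometry of Sthree cov} and then reduces the isomorphism $\stX\simeq\stZ_{G}$ to the local dictionary of Lemma \ref{lem:for the description of Z Sthree}, i.e.\ exactly the matching you re-derive of condition (\ref{eq:first condition on M,F,delta,zeta,omega}) with the instances of (\ref{eq:ass alpha,beta}) and of condition (\ref{eq:second condition on M,F,delta,zeta,omega}) with those of (\ref{eq:ass gamma,beta}) on the locus $a+d=c+f=A+D=0$. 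The only blemishes are harmless slips in your local formulas --- by (\ref{iso:primo alpha}) one has $\zeta=By^{2}-2Ayz-Cz^{2}$ (sign on the $z^{2}$ term) so your displayed triple for (\ref{eq:second condition on M,F,delta,zeta,omega}) is garbled, and (\ref{eq:first condition on M,F,delta,zeta,omega}) yields two scalar equations rather than four, since its source is $\shF$ up to invertible twists --- none of which affects the validity of the argument.
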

Notice that, again, the above result continue to hold if we replace
$\stZ_{G}$ by $\stZ_{S_{3}}$ and we assume to work over $\stR_{3}$.
Before proving this Theorem we want to give an explicit description
of the objects of $\stX$ (and a posteriori of $\stZ_{G}$) such that
$\omega\in\shM$ is an effective Cartier divisor, i.e. the map $\odi T\arrdi{\omega}\shM$
is injective. This will be helpful when we will have to study $G$-covers
whose total space is regular.

Given a scheme $T$, denote by $\stZ_{\omega}(T)$ the category whose
objects are sequences $(\shM,\shF,\delta,\omega)$, where $\shM$
is an invertible sheaf, $\shF$ is a rank $2$ locally free sheaf,
$\delta$ is a map $\delta\colon\Sym^{3}\shF\arr\det\shF$ and $\omega$
is a section of $\shM$ such that $\odi S\arrdi{\omega}\shM$ is injective
and such that the zero locus of $\eta_{\delta}\colon\Sym^{2}\shF\arr\odi S$
contains the zero locus of $\omega$, or, equivalently, such that
\[
\Imm\eta_{\delta}\subseteq\Imm(\shM^{-1}\arrdi{\duale{\omega}}\odi T)
\]

With an object $\chi=(\shM,\shF,\delta,\omega)\in\stZ_{\omega}(T)$
we associate the unique map $\zeta_{\chi}\colon(\det\shF)^{2}\otimes\shM\arr\Sym^{2}\shF$
such that the associated map $\check{\zeta_{\chi}}$ (see \ref{not: associated check map})
is a factorization as in   \[   \begin{tikzpicture}[xscale=2.0,yscale=-1.2]     \node (A0_0) at (0, 0) {$\Sym^2\shF$};     \node (A0_1) at (1, 0) {$\shM^{-1}$};     \node (A1_1) at (1, 1) {$\odi{S}$};     \path (A0_0) edge [->,dashed]node [auto] {$\scriptstyle{\check{\zeta}_\chi}$} (A0_1);     \path (A0_0) edge [->]node [auto] {$\scriptstyle{\eta_\delta}$} (A1_1);     \path (A0_1) edge [->]node [auto] {$\scriptstyle{\duale \omega}$} (A1_1);   \end{tikzpicture}   \] 
\begin{thm}
\label{thm:description of ZG when omega in Cartier}Let $T$ be a
scheme. The association   \[   \begin{tikzpicture}[xscale=3.8,yscale=-0.6]     \node (A0_0) at (0, 0) {$\stZ_{\omega}(T)$};     \node (A0_1) at (1, 0) {$\stZ_{G}(T)$};     \node (A1_0) at (0, 1) {$\chi=(\shM,\shF,\delta,\omega)$};     \node (A1_1) at (1, 1) {$(\shM,\shF,\delta,\zeta_\chi,\omega)$};     \path (A0_0) edge [->]node [auto] {$\scriptstyle{}$} (A0_1);     \path (A1_0) edge [|->,gray]node [auto] {$\scriptstyle{}$} (A1_1);   \end{tikzpicture}   \] 
is an equivalence of categories onto the full subcategory of $\stZ_{G}(S)$
where $\omega\colon\odi S\arr\shM$ is injective.
\end{thm}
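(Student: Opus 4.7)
Via the isomorphism $\stZ_{G}\simeq\stX$ of Theorem \ref{thm:description of the main component}, the functor in the statement sends $\chi=(\shM,\shF,\delta,\omega)\in\stZ_{\omega}(T)$ to $(\shM,\shF,\delta,\zeta_{\chi},\omega)\in\stX(T)$, and the target subcategory of $\stZ_{G}(T)$ corresponds to objects of $\stX(T)$ with $\omega\colon\odi{T}\arr\shM$ injective. The plan is to verify well-definedness of $\zeta_{\chi}$, to check the two defining conditions of $\stX$, and then to deduce the equivalence from the fact that, when $\omega$ is injective, $\zeta$ is uniquely determined by the remaining data.

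Well-definedness of $\zeta_{\chi}$ rests on the observation that $\omega\colon\odi{T}\arr\shM$ injective implies $\duale{\omega}\colon\shM^{-1}\arr\odi{T}$ injective: locally $\omega$ is multiplication by a non-zero-divisor $f\in\odi{T}$ (after trivializing $\shM$), hence so is $\duale{\omega}$. Combined with the hypothesis $\Imm\eta_{\delta}\subseteq\Imm(\duale{\omega})$, this gives a unique factorization $\check{\zeta_{\chi}}\colon\Sym^{2}\shF\arr\shM^{-1}$ of $\eta_{\delta}$ through $\duale{\omega}$; inverting the isomorphism (\ref{iso:terzo alpha}) produces $\zeta_{\chi}$. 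Condition (\ref{eq:second condition on M,F,delta,zeta,omega}) holds by construction.

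The heart of the argument is checking (\ref{eq:first condition on M,F,delta,zeta,omega}) for $(\shM,\shF,\delta,\zeta_{\chi},\omega)$. The key input is the universal identity: for every $(\shF,\delta)\in\shC_{3}$, the composition
\[
(\det\shF)^{2}\otimes\shF\arrdi{\widehat{\eta_{\delta}}\otimes\id}\Sym^{2}\shF\otimes\shF\arr\Sym^{3}\shF\arrdi{\delta}\det\shF
\]
vanishes identically. This can be verified either by direct local computation in a basis of $\shF$, or conceptually by noting that $\stU_{\omega}\subseteq\stZ_{G}$ — which is clear from Theorem \ref{thm:Geometry of Sthree cov}, since $\stU_{\omega}$ (where $\la-,-\ra$ is an isomorphism) is disjoint from the other component $\stZ$ (where $\la-,-\ra=0$) — so that $\Lambda(\shF,\delta)\in\stU_{\omega}$ transported to $\stX$ becomes $(\odi{T},\shF,\delta,\widehat{\eta_{\delta}},1)\in\stX$, and the containment in $\stX$ is precisely the universal identity. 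By naturality of the correspondence $\zeta\leftrightarrow\check{\zeta}$ with respect to maps of line bundles, one checks the key relation $\zeta_{\chi}\circ(\id\otimes\omega)=\widehat{\eta_{\delta}}$. Hence the composition (\ref{eq:first condition on M,F,delta,zeta,omega}) pre-composed with the injection $\omega\otimes\id_{\shF}$ coincides with the universal composition, and therefore vanishes. Since locally $\omega\otimes\id_{\shF}$ is multiplication by a non-zero-divisor, the original composition vanishes entry-wise as well, giving (\ref{eq:first condition on M,F,delta,zeta,omega}).

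With the functor shown to take values in the claimed subcategory of $\stX(T)$, essential surjectivity follows from the uniqueness of $\zeta$: any object of $\stX(T)$ with $\omega$ injective has $\check{\zeta}$ forced by (\ref{eq:second condition on M,F,delta,zeta,omega}), hence $\zeta=\zeta_{\chi}$. Full faithfulness follows because $\zeta_{\chi}$ is a canonical construction from $(\shM,\shF,\delta,\omega)$, so any morphism of objects of $\stZ_{\omega}(T)$ automatically preserves it and lifts to $\stZ_{G}(T)$. The main obstacle is the verification of (\ref{eq:first condition on M,F,delta,zeta,omega}): this is exactly where the injectivity of $\omega$ is genuinely used, since (\ref{eq:first condition on M,F,delta,zeta,omega}) is not implied by (\ref{eq:second condition on M,F,delta,zeta,omega}) alone.
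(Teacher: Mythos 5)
Your proposal is correct and follows essentially the same route as the paper's proof: uniqueness of $\zeta$ (forced by the factorization of $\eta_{\delta}$ through the injective $\duale{\omega}$) handles essential surjectivity and full faithfulness, and the one substantive verification, condition (\ref{eq:first condition on M,F,delta,zeta,omega}), is reduced via the relation $\zeta_{\chi}\circ(\omega\otimes\id)=\hat{\eta}_{\delta}$ and the injectivity of $\omega$ (the target $\shF$ being locally free, so multiplication by a non-zero-divisor is injective on it) to the universal identity $\beta_{\delta}\circ\hat{\eta}_{\delta}=0$, which the paper checks by exactly the direct local computation you offer as your first option. One caveat on your conceptual alternative: since $\GCov$ is non-reduced, topological disjointness of $\stU_{\omega}$ from the component $\stZ$ (Theorem \ref{thm:Geometry of Sthree cov}) does not by itself yield the scheme-theoretic containment $\stU_{\omega}\subseteq\stZ_{G}$ you invoke; instead deduce it from the relations (\ref{eq:loc com and ass conditions}), which give $\omega\tr\alpha=\omega\tr\beta=0$ and hence $\tr\alpha=\tr\beta=0$ where $\omega$ is invertible, so that Theorem \ref{thm:description of the main component} applies (alternatively, use that $\stU_{\omega}\simeq\shC_{3}$ is smooth, hence reduced, and that $\stZ_{G}$ is reduced with $|\stU_{\omega}|\subseteq|\stZ_{G}|$).
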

By Theorem \ref{thm:description of the main component}, we can define
a forgetful map $\Delta\colon\stZ_{G}\arr\shC_{3}$ (see \ref{def:definition of Cthree}),
that, when $3$ is invertible, is the restriction of the map $\GCov\arr\Cov_{3}$
obtained by taking invariants by $\sigma\in\Z/2\Z$. We have already
seen (\ref{thm:The-locus when omega is invertible}) that $\Delta$
has a section. On the other hand, if $\chi=(\shF,\delta)\in\shC_{3}(T)$
and we denote by $\stZ_{\chi}$ the fiber of $\stZ_{\omega}(T)\arr\shC_{3}(T)$,
i.e. the subcategory of $\stZ_{\omega}(T)$ of objects $\phi\in\stZ_{\omega}(T)$
such that $\Delta(\phi)=\chi$ and morphisms $\Phi\arrdi{\psi}\Phi'$
such that $\Delta(\psi)=\id_{\chi}$ , from \ref{thm:description of ZG when omega in Cartier},
it easy to deduce the following:
\begin{cor}
\label{cor:extensions of Cthree objects with Domega cartier}Let $T$
be a scheme and $\chi=(\shF,\delta)\in\shC_{3}(T)$. Then $\stZ_{\chi}$
is a set, i.e. there exists at most one isomorphism between two of
its objects, and the following maps   \[   \begin{tikzpicture}[xscale=7.0,yscale=-0.8]     \node (A0_0) at (0, 0) {$\Imm(\shM^{-1}\arrdi \omega \odi T)$};     \node (A0_1) at (1, 0) {$(\shM,\shF,\delta,\omega)$};     \node (A1_0) at (0, 1) {$\left\{ \begin{array}{c} \text{invertible sheaves of ideals }\shN\subseteq\odi T\\ \text{such that }\Imm\eta_{\delta}\subseteq\shN \end{array}\right\} $};     \node (A1_1) at (1, 1) {$\stZ_\chi$};     \node (A2_0) at (0, 2) {$\shN$};     \node (A2_1) at (1, 2) {$(\shN^{-1},\shF,\delta,1)$};     \path (A1_0) edge [->]node [auto] {$\scriptstyle{}$} (A1_1);     \path (A2_0) edge [|->,gray]node [auto] {$\scriptstyle{}$} (A2_1);     \path (A0_1) edge [|->,gray]node [auto] {$\scriptstyle{}$} (A0_0);   \end{tikzpicture}   \] 
are inverses of each other.
\end{cor}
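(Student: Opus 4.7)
The plan is to deduce this corollary directly from Theorem \ref{thm:description of ZG when omega in Cartier}, which identifies $\stZ_\omega(T)$ with a full subcategory of $\stZ_G(T)$, and the standard bijection between effective Cartier divisors and pairs (invertible sheaf, injective section) up to canonical isomorphism. Since the underlying $(\shF,\delta)$ is fixed by passing to $\stZ_\chi$, an object of $\stZ_\chi$ is essentially the additional data of a pair $(\shM,\omega)$ with $\omega\colon\odi T\to\shM$ injective satisfying the containment condition.

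First I would verify both maps are well defined. Given $(\shM,\shF,\delta,\omega)\in\stZ_\chi$, the hypothesis that $\odi T\arrdi{\omega}\shM$ is injective implies, by dualizing, that $\duale{\omega}\colon\shM^{-1}\to\odi T$ is also injective; hence $\shN=\Imm\duale{\omega}$ is an invertible sheaf of ideals in $\odi T$ canonically isomorphic to $\shM^{-1}$. The defining condition of $\stZ_\omega(T)$, namely $\Imm\eta_{\delta}\subseteq\Imm(\shM^{-1}\arrdi{\duale{\omega}}\odi T)$, is precisely $\Imm\eta_{\delta}\subseteq\shN$. Conversely, given an invertible ideal sheaf $\shN\subseteq\odi T$ with $\Imm\eta_{\delta}\subseteq\shN$, setting $\shM=\shN^{-1}$ and taking for $\omega\in\shN^{-1}$ the canonical section corresponding to the inclusion $\shN\hookrightarrow\odi T$ under $\shN^{-1}\simeq\Homsh(\shN,\odi T)$, one obtains an object of $\stZ_\chi$; the map $\duale{\omega}\colon\shN\to\odi T$ is exactly the inclusion, whose image is $\shN$.

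Next I would check the two compositions. Starting from $\shN$ and going around, the output is $\Imm(\shN\hookrightarrow\odi T)=\shN$, so this composition is the identity on the nose. In the other direction, starting from $(\shM,\shF,\delta,\omega)$, the isomorphism $\duale{\omega}\colon\shM^{-1}\arrdi{\simeq}\shN=\Imm\duale{\omega}$ dualizes to an isomorphism $\shN^{-1}\arrdi{\simeq}\shM$ sending the canonical section $1\in\shN^{-1}$ to $\omega\in\shM$, providing the required isomorphism $(\shN^{-1},\shF,\delta,1)\simeq(\shM,\shF,\delta,\omega)$ in $\stZ_\chi$.

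Finally, to see that $\stZ_\chi$ is a set, I would argue that an isomorphism in $\stZ_\chi$ between $(\shM,\shF,\delta,\omega)$ and $(\shM',\shF,\delta,\omega')$ is, by definition, induced from the identity on $(\shF,\delta)$ and hence reduces to an isomorphism $\phi\colon\shM\to\shM'$ with $\phi(\omega)=\omega'$; two such $\phi,\phi'$ differ by multiplication by some $\lambda\in\odi T^{*}$, and the equation $\lambda\omega'=\omega'$ combined with the injectivity of $\odi T\arrdi{\omega'}\shM'$ forces $\lambda=1$. The main (very mild) obstacle is simply keeping the identifications $\shM^{-1}\simeq\Homsh(\shM,\odi T)$ and $\shN^{-1}\simeq\Homsh(\shN,\odi T)$ coherent so that the two directions really are mutually inverse, rather than inverse only up to a canonical sign or twist.
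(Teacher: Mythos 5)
Your proof is correct and follows exactly the route the paper intends: the paper states this corollary as an immediate consequence of Theorem \ref{thm:description of ZG when omega in Cartier} without spelling out details, and your argument is precisely that deduction, via the standard correspondence between invertible ideal sheaves and pairs $(\shM,\omega)$ with $\odi T\arrdi{\omega}\shM$ injective. The only point worth noting is that your dualization step ($\omega$ injective implies $\duale{\omega}$ injective) deserves the one-line justification that for invertible sheaves both maps are locally multiplication by the same non-zero-divisor; with that, all three parts (well-definedness, the two compositions, and rigidity of $\stZ_{\chi}$ via $\Aut(\shM)=\odi T(T)^{*}$ and injectivity of $\omega$) are complete.
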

We will prove Theorems \ref{thm:description of the main component}
and\emph{ }\ref{thm:description of ZG when omega in Cartier} just
after the following lemma.
\begin{lem}
\label{lem:for the description of Z Sthree}Let $\chi=(\shL,\shF,m,\alpha,\beta,\la-,-\ra)\in\stY$
such that $\tr\alpha=\tr\beta=0$ and set $\shM=\shL\otimes\det\shF^{-1}$.
Let also $\zeta\colon\shM\otimes(\det\shF)^{2}\arr\Sym^{2}\shF$ the
map associated with $\alpha$ through the isomorphism (\ref{iso:primo alpha})
and $\delta=\delta_{\beta}\colon\Sym^{3}\shF\arr\det\shF$ (see (\ref{iso:delta beta})).
If $\shL=\odi T$, $y,z$ is a basis of $\shF$ and we use notation
from \ref{not: associated parameters for chi and Sthree}, we have
equivalences
\[
\text{the map (}\ref{eq:first condition on M,F,delta,zeta,omega}\text{) is zero}\iff\beta\circ\zeta=0\iff\left\{ \begin{array}{c}
2aA+bB+cC=0\\
2cA+eC-aB=0
\end{array}\right.
\]
\[
\text{the map (}\ref{eq:second condition on M,F,delta,zeta,omega}\text{) coincides with }\eta_{\delta}\iff\left\{ \begin{array}{c}
a^{2}+bc=-\omega C\\
ac+be=2\omega A\\
c^{2}-ae=B\omega
\end{array}\right.
\]
\end{lem}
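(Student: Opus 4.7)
The lemma is a direct local computation, so my plan is to set up notation carefully, reduce both asserted equivalences to explicit matrix identities, and then verify them. Throughout I work in the local setup of the statement: $\shL=\odi T t$, $\shF=\odi T y\oplus\odi T z$, and I use the matrix entries $a,b,c,d,e,f,A,B,C,D,\omega$ for $\alpha,\beta,\la-,-\ra$ as in notation \ref{not: associated parameters for chi and Sthree}. The first step is to record the consequences of the trace hypothesis: by definition of $\tr\alpha$ and $\tr\beta$ (notations \ref{not: trace of a map}, \ref{not: trace of beta}), the assumption $\tr\alpha=\tr\beta=0$ translates exactly to $D=-A$, $d=-a$ and $f=-c$. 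This is what makes $\alpha$ trace-free and $\beta$ trace-free, so that the isomorphisms (\ref{iso:primo alpha}) and (\ref{iso:delta beta}) apply, and $\delta=\delta_\beta$ is well defined. Using a local generator of $\shM=\shL\otimes\det\shF^{-1}$ obtained from $t\otimes(y\wedge z)^{-1}$, the formula in (\ref{iso:primo alpha}) then gives
\[
\zeta(1)=B\,y^{2}-2A\,yz-C\,z^{2}\in\Sym^{2}\shF.
\]

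For the first equivalence, I would first argue that the vanishing of the composite (\ref{eq:first condition on M,F,delta,zeta,omega}) is equivalent to $\beta\circ\zeta=0$. This is essentially the content of the isomorphism (\ref{iso:delta beta}): the map $\delta\colon\Sym^{3}\shF\to\det\shF$ is by construction the factorization of $(\Sym^{2}\shF\otimes\shF\xrightarrow{\beta\otimes\id}\shF\otimes\shF\to\det\shF)$ through the multiplication $\Sym^{2}\shF\otimes\shF\to\Sym^{3}\shF$. Consequently, (\ref{eq:first condition on M,F,delta,zeta,omega}) is obtained from $(\beta\circ\zeta)\otimes\id_{\shF}$ by postcomposing with the antisymmetrization $\shF\otimes\shF\to\det\shF$, and since this antisymmetrization is nondegenerate in the first factor, its vanishing on all of $\shM\otimes(\det\shF)^{2}\otimes\shF$ is equivalent to $\beta\circ\zeta=0$. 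Then I would compute $\beta\circ\zeta(1)$ directly using $d=-a,f=-c$:
\[
\beta\circ\zeta(1)=B(ay+bz)-2A(cy-az)-C(ey-cz)=(aB-2Ac-Ce)y+(bB+2Aa+Cc)z,
\]
whose vanishing is precisely the two equations $2aA+bB+cC=0$ and $2cA+eC-aB=0$.

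For the second equivalence, I would apply the isomorphism (\ref{iso:terzo alpha}) to $\zeta$: with $\zeta(1)=B\,y^{2}-2A\,yz-C\,z^{2}$ (i.e.\ local coordinates $(u,v,w)=(B,-2A,-C)$), the dual map $\check\zeta\colon\Sym^{2}\shF\to\shM^{-1}$ has coordinates $(2w,-v,2u)=(-2C,2A,2B)$. Tracing through the identifications, the map $\duale\omega\colon\shM^{-1}\to\odi T$ corresponds locally to multiplication by the scalar $\omega$, so the composition (\ref{eq:second condition on M,F,delta,zeta,omega}) sends $y^{2}\mapsto -2C\omega$, $yz\mapsto 2A\omega$, $z^{2}\mapsto 2B\omega$. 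Comparing this with the formulas (\ref{eq:expression of eta delta}) for $\eta_\delta$, namely $\eta_\delta(y^{2})=2(a^{2}+bc)$, $\eta_\delta(yz)=ac+be$, $\eta_\delta(z^{2})=2(c^{2}-ae)$, yields the three required equalities $a^{2}+bc=-\omega C$, $ac+be=2\omega A$, $c^{2}-ae=B\omega$.

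The main obstacle here is not conceptual but bookkeeping: one must track exactly which local generators are used to identify $\shM$ and $\shM^{-1}$ with $\odi T$, and verify that the signs and factors of $2$ produced by the isomorphisms (\ref{iso:primo alpha}), (\ref{iso:delta beta}) and (\ref{iso:terzo alpha}) are consistent with the formulas (\ref{eq:expression of eta delta}). Once the trivialization of $\shM$ induced by $(t,y\wedge z)$ is fixed at the outset, the remainder of the proof is a careful substitution.
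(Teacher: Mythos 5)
Your proof is correct and follows essentially the same route as the paper's: the same trace reductions $d=-a$, $f=-c$, $D=-A$, the same explicit formulas $\zeta=By^{2}-2Ayz-Cz^{2}$, $\check{\zeta}=(-2C,2A,2B)$ and $\beta(\zeta)=(aB-2Ac-Ce)y+(bB+2Aa+Cc)z$, and the same comparison of $\omega\check{\zeta}$ with $\eta_{\delta}$. The only cosmetic difference is that you justify the reduction "composite (\ref{eq:first condition on M,F,delta,zeta,omega}) zero $\iff\beta\circ\zeta=0$" by nondegeneracy of the wedge pairing on the rank-$2$ sheaf $\shF$, where the paper writes the composite out explicitly as $(\beta(\zeta)\wedge y)y^{*}+(\beta(\zeta)\wedge z)z^{*}$; both are valid, and your bookkeeping of signs and factors of $2$ (harmless since $2\in\stR^{*}$) is consistent with the paper's.
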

\begin{proof}
The conditions $\tr\alpha=\tr\beta=0$ means that $a+d=c+f=A+D=0$.
Note that we have expressions 
\[
\zeta=By^{2}-2Ayz-Cz^{2}\comma\check{\zeta}=-2C(y^{2})^{*}+2A(yz)^{*}+2B(z^{2})^{*}
\]
thanks to (\ref{iso:primo alpha}) and (\ref{iso:terzo alpha}). In
particular
\[
\beta(\zeta)=(aB-2cA-eC)y+(2aA+bB+cC)z
\]
 and, by definition of $\delta_{\beta}$, the composition \ref{eq:first condition on M,F,delta,zeta,omega}
is given by 
\[
(\beta(\zeta)\wedge y)y^{*}+(\beta(\zeta)\wedge z)z^{*}=-(2aA+bB+cC)y^{*}+(aB-2cA-eC)z^{*}
\]
Therefore the first equivalence is clear. For the second one, note
that the map \ref{eq:second condition on M,F,delta,zeta,omega} is
just $\omega\check{\zeta}$. Therefore the last equivalence easily
follows taking into account the expression of $\eta_{\delta}$ given
in \ref{eq:expression of eta delta}.
\end{proof}

\begin{proof}
(\emph{of theorem \ref{thm:description of the main component})} The
result follows easily from \ref{thm:Geometry of Sthree cov}, \ref{lem:for the description of Z Sthree}
and the local conditions (\ref{eq:loc com and ass conditions}), taking
into account that $\tr\alpha=\tr\beta=0$ means that, locally, the
relations $a+d=c+f=A+D=0$ hold.\end{proof}
\begin{notation}
We identify the stack $\stX$ with the stack $\stZ_{G}$. Given $\chi=(\shM,\shF,\delta,\zeta,\omega)\in\stZ_{G}$
we will continue to denote by $\shL_{\chi},m_{\chi},\alpha_{\chi},\beta_{\chi},\la-,-\ra_{\chi},(-,-)_{\chi}$
the objects associated with $\chi$. Moreover we will often omit the
$(-)_{\chi}$ if this will not lead to confusion. \end{notation}
\begin{rem}
Given $\chi=(\shM,\shF,\delta,\zeta,\omega)\in\stZ_{G}$, we want
to show an alternative way of retrieving the map $m_{\chi}\colon\shL_{\chi}^{2}=\shM^{2}\otimes(\det\shF)^{2}\arr\odi S$,
which will be useful in the next chapter. Indeed it is easy to check
locally that the following composition is just $-4m_{\chi}$. 
\[
\shM^{2}\otimes(\det\shF)^{2}\arrdi{\id_{\shM}\otimes\zeta}\shM\otimes\Sym^{2}\shF\arrdi{\id_{\shM}\otimes\check{\zeta}}\shM\otimes\shM^{-1}\simeq\odi T
\]
\end{rem}
\begin{proof}
(\emph{of Theorem }\ref{thm:description of ZG when omega in Cartier})
If $(\shM,\shF,\delta,\zeta,\omega)\in\stZ_{G}(T)$ is such that $\omega$
is injective, then $\chi=(\shM,\shF,\delta,\omega)\in\stZ_{\omega}(T)$
and $\zeta=\zeta_{\chi}$, because by definition $\duale{\omega}\circ\check{\zeta}=\eta_{\delta}$.
Conversely, given $\chi=(\shM,\shF,\delta,\omega)\in\stZ_{\omega}(T)$,
we need to prove that $\Phi=(\chi,\zeta_{\chi})\in\stZ_{G}(T)$. By
construction the condition $\duale{\omega}\circ\check{\zeta}_{\chi}=\eta_{\delta}$
is satisfied. Given $\beta=\beta_{\delta}$ and taking into account
\ref{lem:for the description of Z Sthree}, it remains to prove that
$\beta\circ\zeta_{\chi}=0$. Note that the composition 
\[
(\det\shF)^{2}\arrdi{\omega\otimes\id_{(\det\shF)^{2}}}\shM\otimes(\det\shF)^{2}\arrdi{\zeta_{\chi}}\Sym^{2}\shF
\]
is just $\hat{\eta_{\delta}}$ (see \ref{not: associated check map}).
Since $\odi T\arrdi{\omega}\shM$ is injective, we only need to show
that $\beta\circ\hat{\eta_{\delta}}=0$. Taking into account the local
descriptions (\ref{iso:terzo alpha}) and \ref{eq:expression of eta delta},
locally we have:
\begin{alignat*}{1}
\beta(\hat{\eta_{\delta}}) & =(c^{2}-ae)(ay+bz)-(ac+be)(cy-az)+(a^{2}+bc)(ey-cz)\\
 & =[(c^{2}-ae)a-(ac+be)c+(a^{2}+bc)e]y+\\
 & [(c^{2}-ae)b+(ac+be)a-(a^{2}+bc)c]z=0
\end{alignat*}

\end{proof}

\section{Normal and Regular $(\mu_{3}\rtimes\Z/2\Z)$-covers and $S_{3}$-covers.}

In this section we want to study the following problem: given a regular
in codimension $1$ (resp. normal, regular) integral, noetherian scheme
$Y$ describe the $G$-covers $X\arr Y$ such that $X$ is regular
in codimension $1$ (resp. normal, regular). Since, as we will see,
this problem is related to the same problem for degree $3$ covers,
we will simply call them triple covers. Moreover, because we want
that $G$-torsors of regular schemes are regular $G$-covers, in this
chapter we assume that $G$ is étale, that is we assume that the base
ring is $\stR_{3}$. In other words we will work with schemes $Y$
such that $6\in\odi Y^{*}$. Notice that, under this assumption, the
above problems for $G$-covers are equivalent to the same problems
for $S_{3}$-covers.
\begin{rem}
\label{rem: regular SThree if and only if regular G covers}The isomorphism
$\GCov\simeq\RCov{S_{3}}$ preserves the regularity of covers, that
is if $X\arr Y$ is a $G$-cover and $X'\arr Y$ is the associated
$S_{3}$-cover then, if $Y$ is regular in codimension $1$ (resp.
normal, regular) then $X$ has the same property if and only if $X'$
has it. Moreover, if $Y$ is defined over a scheme $S$ then $X$
is smooth (geometrically connected) over $S$ if and only if $X'$
is so. Indeed $G$ and $S_{3}$ are étale locally isomorphic over
$\stR_{3}$ and the same conclusion holds for $X$ and $X'$ (see
\ref{thm:bitorsors and GCov}). Moreover the properties of being regular
in codimension $1$, normal, regular, smooth and geometrically connected
are all local and they all satisfy descent in the étale topology.
\end{rem}
We continue to keep notation for which $\GCov$ and $\RCov{S_{3}}$
are identified with the stack of data $\chi=(\shL,\shF,m,\alpha,\beta,\la-,-\ra)$
describing them. In particular anything that is defined starting from
$\chi$ is automatically associated with the corresponding $G$-cover
and $S_{3}$-cover. Moreover, as in the other sections of this chapter,
we continue to use $\GCov$ instead of $\RCov{S_{3}}$, but here this
is really just a notation for the stack of data $\chi$. We introduce
some loci associated with a $G$-cover or an $S_{3}$-cover.
\begin{defn}
\label{def:closed subscheme associated to Gcovers first}Given $\chi=(\shL,\shF,m,\alpha,\beta,\la-,-\ra)\in\GCov(Y)$
we define:
\begin{itemize}
\item $D_{m}$ as the closed subscheme of $Y$ defined by $m\colon\shL^{2}\arr\odi Y$;
\item $D_{\omega}$ as the closed subscheme of $Y$ defined by $\la-,-\ra\colon\det\shF\arr\shL$;
\item $Y_{\alpha}$ as the zero locus of the map $\alpha\colon\shL\otimes\shF\arr\shF$,
that coincides with the closed subscheme of $Y$ defined by $\shL\otimes\shF\otimes\duale{\shF}\arrdi{\alpha\otimes\id}\shF\otimes\duale{\shF}\arr\odi Y$.
\end{itemize}
\end{defn}
Notice that we have an inclusion $Y_{\alpha}\subseteq D_{m}$.
\begin{notation}
Let $Y$ be a scheme. Given $\chi=(\shL,\shF,m,\alpha,\beta,\la-,-\ra)\in\GCov(Y)$
we continue to keep notation introduced in section \ref{sec:Global description of Sthree covers}.
In particular we have the map $(-,-)_{\chi}\colon\Sym^{2}\shF\arr\odi Y$
and $\alA_{\chi}$ will denote the associated $G$-equivariant algebra.
We will also denote by $X_{\chi}=\Spec\alA_{\chi}$ and by $f_{\chi}\colon X_{\chi}\arr Y$
the associated $G$-cover. When $\chi\in\stZ_{G}(Y)$ we will also
write $\chi=(\shM,\shF,\delta,\zeta,\omega)$ as in \ref{thm:description of the main component}
and denote by $\shL_{\chi},m_{\chi},\alpha_{\chi},\beta_{\chi},\la-,-\ra_{\chi}$
its associated objects. When we have also that $\odi Y\arrdi{\omega}\shM$,
or, equivalently $\la-,-\ra_{\chi}$, is injective, i.e. $\chi\in\stZ_{\omega}(Y)$,
we will write $\chi=(\shM,\shF,\delta,\omega)$ as in \ref{thm:description of ZG when omega in Cartier}
and denote by $\zeta_{\chi}$ its associated object. We will often
omit the $(-)_{\chi}$ if this will not lead to confusion. When $\tr\beta=0$,
(condition that holds as soon as $Y$ is reduced) we will denote by
$\delta=\delta_{\beta}$ and $\eta_{\delta}$ the maps introduced
in \ref{iso:delta beta} and \ref{eq:etabeta} respectively. Remember
that $\eta_{\delta}=2(-,-)_{\chi}$ (see \ref{eq:expression of eta delta}).
\end{notation}
Since we will work mostly over regular local rings, we also introduce
the following notation and remarks.
\begin{notation}
\label{not: regular Sthree covers} Let $(R,m_{R},k)$ be a local
ring and $\chi=(\shL,\shF,m,\alpha,\beta,\la-,-\ra)\in\RCov G(R)$,
so that $\alA=\alA_{\chi}=R\oplus\shL\oplus\shF_{1}\oplus\shF_{2}$
is its associated algebra. We will denote by $t$ a generator of $\shL$
and by $y,z$ a basis of $\shF$ and we will use the parameters associated
with $\chi$ as in \ref{not: associated parameters for chi and Sthree}.
We set $\alA_{0}=R\oplus\shL$, which is an algebra such that $t^{2}=m$,
and $y_{1},z_{1}$ and $y_{2},z_{2}$ the basis of $\shF_{1}$ and
$\shF_{2}$ respectively equal to $y,z\in\shF$. The structure of
algebra of $\alA$ is given by 
\[
ty_{1}=Ay_{1}+Cz_{1}\comma tz_{1}=By_{1}+Dz_{1}\comma ty_{2}=-Ay_{2}-Cz_{2}\comma tz_{2}=-By_{1}-Dz_{1}
\]
\[
y_{1}^{2}=ay_{2}+bz_{2}\comma y_{1}z_{1}=cy_{2}-az_{2}\comma z_{1}^{2}=ey_{2}-cz_{2}\comma y_{2}^{2}=ay_{1}+bz_{1}\comma y_{2}z_{2}=cy_{1}-az_{1}\comma z_{2}^{2}=ey_{1}-cz_{1}
\]
\[
y_{1}y_{2}=-C\omega\comma y_{1}z_{2}=A\omega+\omega t\comma z_{1}z_{2}=B\omega
\]
In particular the schemes $D_{m}$, $D_{\omega}$, $Y_{\alpha}$ are
the $\Spec$ of, respectively, $R/(m)$, $R/(\omega)$, $R/(A,B,C,D)$.
\end{notation}

\subsection{Normal and regular in codimension $1$ $(\mu_{3}\rtimes\Z/2\Z)$-covers.}

In this subsection we want to describe $G$-covers of regular in codimension
$1$ (resp. normal) schemes whose total space is regular in codimension
$1$ (resp. normal) and we will apply the general theory developed
in section \ref{sub:Regularity in codimension one}. In particular
the following result is a direct corollary of \ref{thm:equivalent conditions for regularity for glrg, global version}.
Such result can be recovered by the results proved in the following
sections, where we will describe regular $G$-covers.
\begin{thm}
\label{thm:application to Sthree of theory on regular in codimension 1 covers}Let
$Y$ be an integral, noetherian and regular in codimension $1$ (resp.
normal) scheme such that $\dim Y\geq1$ and $6\in\odi Y^{*}$, let
$\chi=(\shL,\shF,m,\alpha,\beta,\la-,-\ra)\in\GCov(Y)$ and denote
by $f\colon X\arr Y$ the associated $G$-cover (resp. $S_{3}$-cover).
Then $X$ is regular in codimension $1$ (resp. normal) if and only
if 
\[
\codim D_{m}\cap D_{\omega}\geq2\text{ and }D_{m},D_{\omega}\text{ are regular in codimension }1\text{ Cartier divisors}
\]
In this case $f\colon X\arr Y$ is generically a $G$-torsor (resp.
$S_{3}$-torsor).\end{thm}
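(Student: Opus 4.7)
The plan is to obtain Theorem \ref{thm:application to Sthree of theory on regular in codimension 1 covers} as a direct specialization of the general criterion Theorem \ref{thm:equivalent conditions for regularity for glrg, global version}, applied to the pair $(G,I_G)$ with $G=\mu_{3}\rtimes\Z/2\Z$ over $\stR_{3}$. Since $G$ is \'etale and has a good representation theory with $I_{G}=\{\stR_3,A,V\}$ of ranks $1,1,2$, and since $G$ is solvable (being a semidirect product of two abelian groups), the solvability hypothesis on the geometric stabilizers in that theorem is automatic. Also the hypotheses on $Y$ and on $f$ in the statement match those of the general theorem: reducing to normal locally if desired, $Y$ is integral and regular in codimension $1$, the action of $G$ on $X_\chi$ is generically faithful (since on a reduced base $\GCov\subseteq\stZ_G$ generically splits to $\Bi G$, which is the locus $\Delta_\chi$ is an isomorphism, see Theorem \ref{thm:description of Sthree torsors}), and $X_\chi/G=Y$ because $\Omega_\stR^\chi=\odi Y$.

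The substance of the proof is therefore to identify, for every $V\in I_G$ and every $q\in Y^{(1)}$, the valuation $v_q(s_{f,V})$ in terms of the data $m$ and $\la-,-\ra$. First I would note that $s_{f,\stR}$ is the section corresponding to the determinant of $\Omega_\stR^\chi\otimes\Omega_\stR^\chi\to\Omega_\stR^\chi=\odi Y$, which is simply $1$ and gives no condition. For $V=A$ we have $\Omega_A^\chi=\shL$ and $\duale A\simeq A$, so $s_{f,A}\in\shL^{-1}\otimes\shL^{-1}$ is the section dual to $m\colon\shL^{2}\to\odi Y$; hence $v_q(s_{f,A})=v_q(m)$ and the condition $v_q(s_{f,A})\leq\rk A=1$ is exactly $v_q(m)\leq 1$. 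For the two-dimensional representation $V$ we have $\duale V\simeq V$, so $\Omega_V^\chi=\Omega_{\duale V}^\chi=\shF$, and $s_{f,V}\in(\det\shF)^{-2}$ is the section corresponding to the determinant of the map $\shF\to\duale{\shF}$ induced by the pairing $\Omega_V^\chi\otimes\Omega_{\duale V}^\chi\to\odi Y$. Up to the identification $V\simeq\duale V$, this pairing is $(-,-)_\chi$, so $s_{f,V}$ is exactly the section associated with $\Delta_\chi\colon(\det\shF)^{2}\to\odi Y$. By Remark \ref{rem:general discriminant}, $\Delta_\chi$ factors (up to sign) as $(\det\shF)^{2}\xrightarrow{\la-,-\ra^{\otimes 2}}\shL^{2}\xrightarrow{m}\odi Y$, so $v_q(s_{f,V})=v_q(m)+2v_q(\omega)$, where $\omega$ denotes a local expression of $\la-,-\ra$. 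The condition $v_q(s_{f,V})\leq\rk V=2$ therefore reads $v_q(m)+2v_q(\omega)\leq 2$.

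Combining these inequalities at every $q\in Y^{(1)}$, one checks by direct case analysis that the system
\[
v_q(m)\leq 1,\qquad v_q(m)+2v_q(\omega)\leq 2\quad\forall q\in Y^{(1)}
\]
is equivalent to the three geometric conditions in the statement: $D_m$ and $D_\omega$ are Cartier (which forces $m$ and $\la-,-\ra$ to be generically nonzero, automatic since regularity in codimension $1$ of $X$ forces $f$ to be generically \'etale, hence generically a torsor by Theorem \ref{thm:description of Sthree torsors}), each is regular in codimension $1$ (equivalent to $v_q(m)\leq 1$, resp.\ $v_q(\omega)\leq 1$, for all $q\in Y^{(1)}$), and $\codim_Y(D_m\cap D_\omega)\geq 2$ (equivalent to $v_q(m)\cdot v_q(\omega)=0$ for all $q\in Y^{(1)}$). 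The equivalence of these two systems is the only small computation, easily handled by splitting into the cases $v_q(m)\in\{0,1,2\}$ and $v_q(\omega)\in\{0,1\}$. The normal case reduces to the codimension $1$ case because $f$ has Cohen--Macaulay fibres, so that $X$ is normal iff it is regular in codimension $1$ and $Y$ is normal (Serre's criterion).

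Finally, the last assertion of the theorem---that $f$ is generically a $G$-torsor (resp.\ $S_3$-torsor)---is simply the corresponding clause of Theorem \ref{thm:equivalent conditions for regularity for glrg, global version}, and the $S_3$-case follows from the $G$-case by Remark \ref{rem: regular SThree if and only if regular G covers}. The only delicate point in the plan is the explicit identification of $s_{f,V}\in(\det\shF)^{-2}$ with $\Delta_\chi$ (equivalently, with $m\otimes\la-,-\ra^{\otimes 2}$), which requires matching the isomorphism $V\simeq\duale V$ coming from the representation theory of $G$ with the canonical isomorphism $\shF\simeq\duale{\shF}\otimes\det\shF$ used to define $(-,-)_\chi$; this can be verified locally by picking a basis $y,z$ of $\shF$ and checking that the map $\shF\to\duale{\shF}$ induced by $(-,-)_\chi$ has determinant $-m\omega^{2}$ (which is (the negative of) Remark \ref{rem:general discriminant}).
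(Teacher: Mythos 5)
Your proposal is correct and takes essentially the same route as the paper: both apply Theorem \ref{thm:equivalent conditions for regularity for glrg, global version} in its glrg form, identify $s_{f,A}$ with $m$ and $s_{f,V}$ with $\Delta_{\chi}$, i.e.\ with $\pm\, m\circ\la-,-\ra^{\otimes2}$ via Remark \ref{rem:general discriminant} (the paper writes this as $s_{f,V}=s_{\omega}^{2}\otimes s_{f,A}$ with $s_{\omega}=\la-,-\ra$), and then translate the inequalities $v_{q}(m)\leq1$ and $v_{q}(m)+2v_{q}(\omega)\leq2$ into the three stated geometric conditions, with the $S_{3}$-case and the normal case handled exactly as in the paper. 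One local correction: your justification of generic faithfulness is wrong, since $\GCov$ is \emph{not} contained in $\stZ_{G}$ --- by Theorem \ref{thm:Geometry of Sthree cov} it has a second irreducible component whose covers are nowhere torsors, so a $G$-cover over a reduced base need not be generically a torsor; the hypothesis holds instead for the simpler reason the paper uses, namely that for $\chi\in\GCov$ the algebra is fppf-locally the regular representation, on which $G$ acts faithfully (cf.\ the proof of \ref{lem:generically G torsor implies faithful action}), so the action is faithful everywhere, not merely generically.
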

\begin{proof}
Notice that it is enough to prove the statement only for the group
$G$. Remember that $G$ is a glrg over $\stR$ and $I_{G}=\{\stR,A,V\}$.
We want to apply Theorem \ref{thm:equivalent conditions for regularity for glrg, global version}.
Denote by $\Omega$ the functor associated with $\chi$. Note that
the action of $G$ on $X_{\chi}$ is generically faithful because
we are assuming $\chi\in\GCov$. We have to consider the maps $\shL=\Omega_{A}\arr\duale{\Omega_{\duale A}}=\duale{\shL}$and
$\shF=\Omega_{V}\arr\duale{\Omega_{\duale V}}=\duale{\shF}$ induced
by $m\colon\shL^{2}\arr\odi Y$ and $(-,-)_{\chi}\colon\shF\otimes\shF\arr\odi Y$
and their determinants $s_{f,A}=m\in\shL^{-2}$ and $s_{f,V}\in(\det\shF)^{-2}$.
Theorem \ref{thm:equivalent conditions for regularity for glrg, global version}
tells us that $X_{\chi}$ is regular in codimension $1$ (resp. normal)
if and only if $v_{p}(s_{f,A})\leq1$ and $v_{p}(s_{f,V})\leq2$ for
all $p\in Y^{(1)}$. We are using the convention for which $v_{p}(0)=\infty$,
so that the previous conditions also implies that $s_{f,A},s_{f,V}\neq0$.
Notice that $s_{f,V}=s_{\omega}^{2}\otimes s_{f,A}\in(\det\shF^{-1}\otimes\shL)^{2}\otimes\shL^{-2}$,
where $s_{\omega}=\la-,-\ra_{\chi}\in\det\shF^{-1}\otimes\shL$ and
that the zero loci of $s_{f,A}$ and $s_{\omega}$ are respectively
$D_{m}$ and $D_{\omega}$. Therefore $v_{p}(s_{f,A})\leq1$ and $v_{p}(s_{f,V})\leq2$
for all $p\in Y^{(1)}$ means that $D_{m}\cap D_{\omega}\cap Y^{(1)}=\emptyset$
and that $D_{m}$ and $D_{\omega}$ are regular over the codimension
$1$ points of $Y$. In this case $f$ is generically a $G$-torsor
again thanks to \ref{thm:equivalent conditions for regularity for glrg, global version}.
\end{proof}

\subsection{Regular $(\mu_{3}\rtimes\Z/2\Z)$-covers and $S_{3}$-covers.}

In this subsection we are mainly interested in regular $G$-covers
and $S_{3}$-covers. The Theorem we will prove is the following.
\begin{thm}
\label{thm:regular Sthree covers first}Let $Y$ be a regular, noetherian
and integral scheme such that $\dim Y\geq1$ and $6\in\odi Y^{*}$
and let $\chi=(\shL,\shF,m,\alpha,\beta,\la-,-\ra)\in\GCov(Y)$. Then
its associated $G$-cover ($S_{3}$-cover) is regular if and only
if the following conditions hold:
\begin{enumerate}
\item $D_{m},D_{\omega}$ are Cartier divisors and $D_{m}\cap D_{\omega}=\emptyset$;
\item $Y_{\alpha}=\emptyset$ or $Y_{\alpha}$ is regular and has pure codimension
$2$ in $Y$;
\item $D_{\omega}$ is regular and $D_{m}$ is regular outside $Y_{\alpha}$.
\end{enumerate}
Moreover in this case $\chi\in\stU_{\beta}$, the locus where $\beta$
is never zero.
\end{thm}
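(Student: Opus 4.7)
The plan is to reduce to the local case and perform a case analysis based on the geometric stabilizer at each closed point of $X$.

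\textbf{Reduction.} By Remark \ref{rem: regular SThree if and only if regular G covers}, it suffices to treat $G$-covers. Regularity and all three stated conditions are local on $Y$, and they are stable under faithfully flat base change along localization and strict Henselization (which preserve regularity in both directions and preserve flatness of the relevant sheaves of ideals cutting out $D_m$, $D_\omega$, $Y_\alpha$). So we may assume $Y = \Spec R$ with $(R, m_R, k)$ a strictly Henselian regular local ring containing $1/6$; in particular $G$ is constant. At each closed point $p$ of $X_\chi$ lying over the closed point of $Y$, Proposition \ref{prop:induction from a localization on henselian ring} gives a $G$-equivariant isomorphism $\alA_\chi \simeq \ind_{H_p}^G A_p$, and $\alA_\chi$ is regular iff every such $A_p$ is regular (as $A_p$ is a factor of $\alA_\chi$ by Lemma \ref{lem:A is induction of the localization}).

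\textbf{Case analysis on $H_p$.} By Lemma \ref{lem:regular implies cyclic stabilizer}, regularity of $A_p$ forces $H_p$ cyclic. Since $G \simeq \mu_3 \rtimes \Z/2\Z$ is not cyclic, $H_p \in \{1, \Z/2\Z, \mu_3\}$; in particular $H_p \neq G$, and the three nontrivial $\Z/2\Z$ subgroups are conjugate so we may treat them uniformly. Using the explicit multiplication table from Notation \ref{not: regular Sthree covers} with parameters $(a,b,c,d,e,f,\omega,A,B,C,D,m)$, I identify each case as follows. The locus $H_p = 1$ is the torsor locus, characterized by $m \in R^*$ and $\omega \in R^*$ at $p$ (Theorem \ref{thm:description of Sthree torsors}), i.e.\ points outside $D_m \cup D_\omega$. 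The locus $H_p = \mu_3$ corresponds to points where $\mu_3$ stabilizes the component and $\Z/2\Z$ acts freely on the two $\mu_3$-orbit blocks; algebraically this is $m \in R^*$ but $\omega \in m_R$, so the point lies in $D_\omega \setminus D_m$, and $A_p$ is a $\mu_3$-algebra to which Theorem \ref{thm:regular in codimension 1 covers} (diagonalizable case) applies, giving regular $A_p$ iff $\omega$ is a uniformizing parameter for a regular Cartier divisor, i.e.\ $D_\omega$ is regular at $f(p)$. The locus $H_p = \Z/2\Z$ corresponds to $\omega \in R^*$ but $m \in m_R$ (so $f(p) \in D_m \setminus D_\omega$), and a direct computation using $\alA_0 = R[t]/(t^2 - m)$ plus the induced $\mu_3$-torsor shows $A_p$ is regular iff $m$ is a uniformizing parameter, i.e.\ $D_m$ is regular at $f(p)$; moreover in this case $\alpha = \left(\begin{smallmatrix} A & B \\ C & -A\end{smallmatrix}\right)$ has $\det\alpha = -m$ nonzero after restriction to the residue field, so $\alpha \not\equiv 0$ at $f(p)$ and $f(p) \notin Y_\alpha$.

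\textbf{Role of $Y_\alpha$ and disjointness.} The case $H_p = G$ being impossible yields $D_m \cap D_\omega = \emptyset$: if $f(p)$ lay in both, the stabilizer would contain both a transposition and $\mu_3$, hence all of $G$. For a point $q \in Y_\alpha$: necessarily $q \in D_m$ (since $\alpha^2 = m\,\id$), and the stabilizer over $q$ of a preimage is $\Z/2\Z$ by the previous step; a local analysis (using that $\alpha$ vanishes and the relations in (\ref{eq:loc com and ass conditions})) shows $A_p$ is then a $\Z/2\Z$-cover of $R$ whose branch divisor is exactly $D_m$ near $q$, and regularity of $A_p$ translates into $Y_\alpha$ being cut out by a regular sequence of length $2$ (the two entries of $\alpha$ that survive after trivializing the rank-2 bundle) and $D_m$ being a regular Cartier divisor away from $Y_\alpha$; together this forces $Y_\alpha$ to be regular of pure codimension $2$. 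Conversely, given conditions (1)--(3), one verifies directly via the local structure that at every closed point one of the three regular cases occurs, so $A_p$ is regular.

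\textbf{The claim $\chi \in \stU_\beta$.} Finally, $\beta$ nowhere vanishes under the assumption of regularity: if $\beta \otimes k(q) = 0$ at some $q \in Y$, then the $G$-cover specializes to an algebra in the closed locus where $\beta = 0$, which by the component analysis of Theorem \ref{thm:Geometry of Sthree cov} forces either $A_p = k[x,y]/(x^2,xy,y^2)$-type (from the second component $\stZ$) or a $\mu_3$-type degeneration incompatible with cyclic stabilizer of rank $\leq 2$ in a regular local ring of arbitrary dimension. The hardest step will be the detailed bookkeeping in the $H_p = \mu_3$ and $Y_\alpha$-adjacent cases, where one must carefully track how $\zeta_\chi$ (equivalently $\alpha$) and $\omega$ enter the local equations defining $A_p$ and deduce that regularity of $Y_\alpha$ of codimension $2$ is precisely what makes $A_p$ a regular local ring of the correct dimension.
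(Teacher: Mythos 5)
Your reduction and your treatment of the loci where $m$ or $\omega$ is invertible are broadly consistent with the paper (which handles these via Lemmas \ref{lem:split when omega alpha are not zero}, \ref{lem:Split of A where m is invertible} and \ref{lem:regularity of codimension one type}: when $m$ is invertible $\alA_{\chi}$ splits as the square of a $\mu_{3}$-cover, and when $(-,-)$ is surjective $\Spec\alA_{\chi}\arr\Spec\alA_{0}$ is \'etale, reducing regularity to that of $R[t]/(t^{2}-m)$). But your case analysis on the geometric stabilizer has a fatal gap: you invoke Lemma \ref{lem:regular implies cyclic stabilizer} to conclude $H_{p}$ is cyclic, hence $H_{p}\in\{1,\Z/2\Z,\mu_{3}\}$, and you derive $D_{m}\cap D_{\omega}=\emptyset$ from the impossibility of $H_{p}=G$. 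That lemma is stated and proved only for $R$ a \emph{DVR}; over a regular local ring of dimension $\geq2$ it fails, and indeed the whole point of condition (2) in the theorem is that it fails precisely along $Y_{\alpha}$. At a point $q\in Y_{\alpha}$ one has $\alpha\otimes k=0$ and (in the regular case) $\omega$ invertible, $m\in m_{R}$, and the paper's Lemma \ref{lem:regularity outside codimension one} shows $\alA_{\chi}$ is then \emph{local}, with maximal ideal $m_{R}\oplus\shL\oplus\shF_{1}\oplus\shF_{2}$: the fiber over $q$ is a single point whose stabilizer is all of $G$, yet the cover is regular exactly when $A,C$ are independent in $m_{R}/m_{R}^{2}$ (with respect to a basis $y,z=\beta(y^{2})$, where $I_{\alpha}=(A,C)$ since $B=\omega C^{2}$, $D=-A$). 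So your claims that points of $Y_{\alpha}$ have stabilizer $\Z/2\Z$ and that $A_{p}$ is there a degree-$2$ cover branched along $D_{m}$ are false — $A_{p}=\alA_{\chi}$ has rank $6$ — and your framework excludes the one case that carries the codimension-$2$ condition.

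Concretely, what is missing and cannot be recovered from stabilizer bookkeeping is the tangent-space computation of Lemma \ref{lem:regularity outside codimension one}: the exact sequence
\[
0\arr m_{R}/(m_{R}\cap m_{\alA}^{2})\arr m_{\alA}/m_{\alA}^{2}\arr m_{\alA_{k}}/m_{\alA_{k}}^{2}\arr0
\]
together with the $G$-equivariant decomposition of $m_{\alA_{k}}/m_{\alA_{k}}^{2}$, which shows first that regularity forces $\beta\otimes k\neq0$ and $\omega\in R^{*}$, and then converts regularity into the independence of $A,C$ in $m_{R}/m_{R}^{2}$, i.e.\ into $Y_{\alpha}$ regular of pure codimension $2$. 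This computation also delivers the disjointness $D_{m}\cap D_{\omega}=\emptyset$ (in every regular case one of $m,\omega$ is a unit locally) and the final claim $\chi\in\stU_{\beta}$, for which your appeal to the component analysis of Theorem \ref{thm:Geometry of Sthree cov} is too vague to substitute. To repair your proof you would need to add the full-stabilizer case $H_{p}=G$ to your analysis and prove there the equivalence with condition (2) directly, at which point you would essentially be reproducing the paper's Lemma \ref{lem:regularity outside codimension one}.
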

In the following lemmas we will work over a regular local ring $R$
and we will consider given an object $\chi=(\shL,\shF,m,\alpha,\beta,\la-,-\ra)\in\GCov(R)$.
\begin{lem}
\label{lem:split when omega alpha are not zero}The map $(-,-)\colon\Sym^{2}\shF\arr R$
is surjective if and only if $\omega$ is invertible and $\alpha\otimes k\neq0$.
Moreover in this case $\chi$ lies in the locus where $\alpha$ is
never a multiple of the identity and $\Spec\alA\arr\Spec\alA_{0}$
is étale.\end{lem}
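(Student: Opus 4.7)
Using the notation of~\ref{not: regular Sthree covers} together with the identity $(x,y)=\la\alpha(y),x\ra$ recorded in~(\ref{eq:ass alpha, gamma first}), a direct computation gives the matrix of the symmetric form $(-,-)\colon\Sym^{2}\shF\arr R$ in the basis $y,z$: its entries are $-C\omega$, $A\omega=-D\omega$, and $B\omega$. Since $R$ is local, surjectivity is equivalent to at least one of these being a unit; combined with the relation $\omega(A+D)=0$ of~(\ref{eq:ass alpha,gamma}), this translates precisely into $\omega\in R^{*}$ together with some entry of $\alpha$ being a unit, i.e.\ $\alpha\otimes k\neq 0$. This gives the first equivalence.

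Assume now $\omega\in R^{*}$ and $\alpha\otimes k\neq 0$. The relation $\omega(A+D)=0$ forces $\tr\alpha=A+D=0$; if $\alpha\otimes k$ were a scalar multiple of the identity, tracelessness combined with $2\in k^{*}$ would force $\alpha\otimes k=0$, contradicting the hypothesis. The openness statement of Lemma~\ref{lem:the not degenerate locus is open} then propagates the non-scalar property from the closed point to all of $\Spec R$, so $\chi\in\stU_{\alpha}$.

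For the last assertion I exploit the $\mu_{3}$-grading $\alA=\alA_{0}\oplus\shF_{1}\oplus\shF_{2}$, in which $\alA_{0}=\alA^{\mu_{3}}$ and each $\shF_{i}$ is locally free of rank $1$ over $\alA_{0}$ (one checks directly that $y$ is an $\alA_{0}$-basis in the standard form considered below). By Proposition~\ref{pro:equivalent conditions for a D(M)-torsor}, the cover $\Spec\alA\arr\Spec\alA_{0}$ is a $\mu_{3}$-torsor precisely when the grading multiplication $\shF_{1}\otimes_{\alA_{0}}\shF_{2}\arr\alA_{0}$ is an isomorphism, and such a torsor is automatically \'etale since $3$ is invertible on the base. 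Because $\chi\in\stU_{\alpha}$ and being a torsor is fpqc-local on the base, Theorem~\ref{thm:not degenerate locus of S3} allows me to pass to the universal Zariski-local model, in which $A=D=0$, $B=m$, $C=1$, and $\omega=mb^{2}-a^{2}$. In this model the list of products in~\ref{not: regular Sthree covers} yields $y_{1}\cdot y_{2}=-C\omega=-\omega\in\alA_{0}^{*}$, so the grading multiplication is surjective, hence an isomorphism of rank-$1$ $\alA_{0}$-modules, which concludes the proof.

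The main difficulty I anticipate is purely one of bookkeeping: juggling the $\mu_{3}$-grading against the twisted $\alA_{0}$-module structures on $\shF_{1}$ and $\shF_{2}$ (coming from $\alpha$ and $-\alpha$ respectively) and the sign conventions in the algebra $\alA_{\chi}$. Once these are pinned down, the reduction to the standard form of~\ref{thm:not degenerate locus of S3} collapses everything to the single unit computation $y_{1}\cdot y_{2}=-\omega$ above.
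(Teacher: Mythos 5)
Your proof is correct and follows essentially the same route as the paper's: the entries $-C\omega$, $A\omega$, $B\omega$ of the form give the first equivalence, the relation $\omega(A+D)=0$ together with $2\in R^{*}$ rules out a scalar $\alpha\otimes k$, and the \'etale claim is reduced via the basis $y,\alpha(y)$ to the $\mu_{3}$-torsor criterion. The only (harmless) difference is the final unit computation: the paper checks $y_{1}^{3}=-\omega(a-bt)\in\alA^{*}$ using the factorization $\omega=-(a-bt)(a+bt)$, whereas you check $y_{1}y_{2}=-C\omega=-\omega$ directly, which verifies the same classical criterion that $\psi_{1,2}$ is invertible.
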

\begin{proof}
We have that $(y,y)=-C\omega$, $(y,z)=(z,y)=A\omega$, $(z,z)=B\omega$.
So the first claim holds. Assume now by contradiction that $\alpha\otimes k$
is a multiple of the identity. So over $k$ we have $B=C=0$ and $A=D\neq0$.
On the other hand from (\ref{eq:loc com and ass conditions}) we see
that $\omega(A+D)=0$ and so $A=D=0$. In particular we can consider
$y,z=\alpha(y)$ as basis of $\shF$. So
\[
y_{1}^{3}=y_{1}(ay_{2}+bz_{2})=-\omega(a-bt)\in\alA^{*}\then\alA\in\Bi\mu_{3}(\alA_{0})
\]
since $\omega=mb^{2}-a^{2}=-(a-bt)(a+bt)\in\alA^{*}$.\end{proof}
\begin{lem}
\label{lem:Split of A where m is invertible}Assume that $m$ is invertible,
that $\alpha$ is never a multiple of the identity and that a $\lambda\in R$
such that $\lambda^{2}=m$ is given. Then $\alA_{\chi}\simeq\alB^{2}$
as $R$-algebras, where $\alB=R\oplus Rv_{1}\oplus Rv_{2}\in\RCov{\mu_{3}}(R)$
satisfies: $v_{1}^{2}=(a+\lambda b)v_{2}$, $v_{2}^{2}=(a-\lambda b)v_{1}$.
Moreover we have $v_{1}v_{2}=-\omega$, $v_{1}^{3}=-(a+\lambda b)\omega$,
$v_{2}^{3}=-(a-\lambda b)\omega$.\end{lem}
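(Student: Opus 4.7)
My plan is to exploit the assumption that $\alpha$ is nowhere a multiple of the identity to fix a very concrete basis of $\shF$, then to split $\alA_\chi$ along the idempotents of $\alA_0=R\oplus\shL$ produced by $\lambda^2=m$, and finally to read off the claimed formulas by direct substitution.

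The first step is to apply Lemma \ref{lem:the not degenerate locus is open} (via Nakayama over the local ring $R$) to choose $y\in\shF$ such that $y,z=\alpha(t\otimes y)$ is an $R$-basis of $\shF$. Lemma \ref{lem:associated parameters for alpha nowhere multiple of the identity} then forces the structure constants in this basis to $A=D=0$, $C=1$, $B=m$, $c=-mb$, $d=-a$, $e=ma$, $f=mb$ and $\omega=mb^{2}-a^{2}$, so the multiplication table of \ref{not: regular Sthree covers} becomes completely explicit.

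The second step is the splitting itself. Since $2\in R^{*}$ (implicit in the $\stR_{3}$-setup) and $\lambda^{2}=m$, the element $\varepsilon=\tfrac{1}{2}(1-t/\lambda)\in\alA_{0}$ is an idempotent satisfying $t\varepsilon=-\lambda\varepsilon$, and together with $1-\varepsilon$ it splits $\alA_{0}\simeq R\times R$. Because $\mu_{3}\subset G$ acts trivially on $\alA_{0}$, both idempotents are $\mu_{3}$-invariant, so $\alA=\alA\varepsilon\oplus\alA(1-\varepsilon)$ is a decomposition into $\mu_{3}$-graded $R$-algebras. The involution $\sigma\in\Z/2\Z\subset G$ sends $t$ to $-t$ and thus exchanges $\varepsilon$ and $1-\varepsilon$, inducing an $R$-algebra isomorphism between the two factors. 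Setting $\alB=\alA\varepsilon$ then yields $\alA\simeq\alB\times\alB$ as $R$-algebras, and $\alB=R\varepsilon\oplus\shF_{1}\varepsilon\oplus\shF_{2}\varepsilon$ is $\mu_{3}$-graded with each graded piece locally free of rank $1$, hence $\alB\in\RCov{\mu_{3}}(R)$.

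For the final step I would set $v_{1}=y_{1}\varepsilon$ and $v_{2}=y_{2}\varepsilon$. From $ty_{1}=z_{1}$, $ty_{2}=-z_{2}$ and $t\varepsilon=-\lambda\varepsilon$ one gets $z_{1}\varepsilon=-\lambda v_{1}$ and $z_{2}\varepsilon=\lambda v_{2}$, so $v_{1}$ and $v_{2}$ generate the rank-one pieces $\shF_{1}\varepsilon$ and $\shF_{2}\varepsilon$ of $\alB$. Substituting these identities into the rules $y_{1}^{2}=ay_{2}+bz_{2}$, $y_{2}^{2}=ay_{1}+bz_{1}$ and $y_{1}y_{2}=-\omega$ immediately produces $v_{1}^{2}=(a+\lambda b)v_{2}$, $v_{2}^{2}=(a-\lambda b)v_{1}$ and $v_{1}v_{2}=-\omega$ (the latter under the natural identification $R\varepsilon\simeq R$); multiplying the first two by $v_{1}$ and $v_{2}$ respectively gives the cubic identities $v_{1}^{3}=-(a+\lambda b)\omega$ and $v_{2}^{3}=-(a-\lambda b)\omega$. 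The only real obstacle is sign bookkeeping: the choice between $\varepsilon$ and $1-\varepsilon$ (equivalently, between $\lambda$ and $-\lambda$) is what pins down the signs in the final formulas.
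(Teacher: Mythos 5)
Your proof is correct and takes essentially the same approach as the paper's: the paper works with the idempotent $u=\frac{1}{2}+\frac{t}{2\lambda}$ (your $1-\varepsilon$), realizes $\alB$ as the quotient $\alA/u\alA$ rather than the summand $\alA\varepsilon$, uses $\sigma(u)=1-u$ exactly as you use $\sigma(\varepsilon)=1-\varepsilon$ to identify the two factors, and reads off the relations from the same identities $z_{1}=-\lambda v_{1}$, $z_{2}=\lambda v_{2}$. The only cosmetic differences are quotient versus idempotent summand and your explicit appeal to \ref{lem:the not degenerate locus is open} and \ref{lem:associated parameters for alpha nowhere multiple of the identity} to normalize the basis, which the paper leaves implicit in its choice $y,z=\alpha(y)$.
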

\begin{proof}
We consider a basis of $\shF$ of the form $y,z=\alpha(y)$. Set $u=\frac{1}{2}+\frac{t}{2\lambda}\in\alA_{0}$
and note that it is an idempotent. Since $u$ is $\mu_{3}$-invariant,
$\sigma\in\Z/2\Z\subseteq G$ yields an isomorphism of $R$-algebras
$\alB=\alA/u\alA\simeq\alA/(1-u)\alA$, since $\sigma(u)=1-u$. In
particular $\alA\simeq\alB^{2}$. Since $\alA_{0}/u\alA_{0}\simeq R$,
we have that $\alB\in\RCov{\mu_{3}}(R)$ and its decomposition is
given by 
\[
\alB=R\oplus\shF_{1}/u\shF_{1}\oplus\shF_{2}/u\shF_{2}
\]
An easy computation shows that $uz_{1}=\lambda uy_{1}$ and that $2uz_{1}=\lambda y_{1}+z_{1}$.
In particular $\shF_{1}/u\shF_{1}$ is generated by $v_{1}=y_{1}$
and $z_{1}=-\lambda v_{1}$. Similarly we get that $v_{2}=y_{2}$
generates $\shF_{2}/u\shF_{2}$ and $z_{2}=\lambda v_{2}$. It is
now easy to deduce the desired relations.\end{proof}
\begin{lem}
\label{lem:regularity of codimension one type}Assume that $\dim R\geq1$
and that $m$ is invertible or the map $(-,-)\colon\shF\otimes\shF\arr\odi S$
is surjective. Then $\alA_{\chi}$ is regular if and only if one of
the following conditions holds: both $\omega$ and $m$ are invertible;
$m$ is invertible and $\omega\in m_{R}-m_{R}^{2}$; $\omega$ is
invertible and $m\in m_{R}-m_{R}^{2}$. In all of those cases $\alpha\otimes k$
is not a multiple of the identity and $\beta\otimes k\neq0$.\end{lem}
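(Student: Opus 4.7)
The plan is to split the proof along the hypothesis ``$m$ invertible or $(-,-)$ surjective'' and reduce each case to a situation in which regularity can be detected by an explicit $m_\alB/m_\alB^2$ computation.

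\textbf{Case I: $m$ invertible.} First I would observe that if $\alpha\otimes k$ were a multiple of the identity, then the relations (\ref{eq:loc com and ass conditions}) combined with $m=A^{2}+BC\in R^*$ and $2\in R^*$ force $B=C=0$, $A=D\in R^*$, $\omega=0$, and then all of $a,b,c,d,e,f$ vanish; after splitting $\alA_{0}\simeq R\times R$ (which is possible since $m$ is already a square here), $\alA_\chi$ acquires a summand $R\oplus R y_1\oplus R z_1$ with trivial multiplication, whose maximal ideal has $\dim_k(m/m^{2})=\dim R+2>\dim R$, so $\alA_\chi$ is not regular. So for $\alA_\chi$ to be regular we may assume $\alpha\otimes k$ is not a multiple of the identity. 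Passing to a faithfully flat étale local extension $R'\supseteq R$ containing a square root $\lambda$ of $m$ (which preserves and reflects both regularity and the conditions $\omega\in m^{2}$), Lemma \ref{lem:Split of A where m is invertible} yields $\alA_\chi\otimes_R R'\simeq\alB^{2}$ with $\omega=-(a-\lambda b)(a+\lambda b)$. Regularity of $\alB$ is then analyzed in three subcases: if $\omega\in R'^{*}$ both factors $a\pm\lambda b$ are units, so $v_1,v_2\in\alB^*$ and $\alB\simeq R'[v_1]/(v_1^{3}-\text{unit})$ is étale over $R'$; if exactly one of $a\pm\lambda b$ is a unit, one eliminates $v_2$ and $\alB\simeq R'[v_1]/(v_1^{3}-u\omega)$ with $u\in R'^{*}$, whose maximal ideal is regular iff $\omega\in m_{R'}-m_{R'}^{2}$; if both lie in $m_{R'}$ then $v_1,v_2\in m_\alB$ and products lie in $m_\alB^{2}$, so $m_\alB/m_\alB^{2}$ has $\dim R+2$ generators and $\alB$ is not regular. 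This gives alternatives (i) and (ii).

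\textbf{Case II: $(-,-)$ surjective.} By Lemma \ref{lem:split when omega alpha are not zero} this forces $\omega\in R^*$, makes $\alpha$ never a multiple of the identity, and makes $\Spec\alA_\chi\arr\Spec\alA_0$ étale, where $\alA_0=R[t]/(t^{2}-m)$. Therefore $\alA_\chi$ is regular iff $\alA_0$ is, and a direct computation of $m_{\alA_0}/m_{\alA_0}^{2}$ shows that $\alA_0$ is regular iff $m\in R^*$ or $m\in m_R-m_R^{2}$. This gives alternatives (i) and (iii).

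\textbf{Combination and final claim.} Together the two cases yield the three listed alternatives. For the last assertion, in case (i) the discriminant $\Delta_\chi=-\omega^{2}m$ is a unit, so by Theorem \ref{thm:description of Sthree torsors} $\chi$ is a $G$-torsor, lying in $\stU_\alpha\cap\stU_\beta$, so $\alpha\otimes k$ is not a multiple of the identity and $\beta\otimes k\neq0$. In cases (ii) and (iii) the non-triviality of $\alpha\otimes k$ was already established above. To show $\beta\otimes k\neq0$, suppose for contradiction all of $a,b,c,d,e,f$ lie in $m_R$; then the relations $a^{2}+bc=-\omega C$, $ac+be=2\omega A$, $c^{2}-ae=\omega B$ from (\ref{eq:loc com and ass conditions}) put $\omega A,\omega B,\omega C\in m_R^{2}$. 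In case (ii), $\omega\in m_R-m_R^{2}$ then forces $A,B,C\in m_R$, i.e. $\alpha\otimes k=0$, contradicting what we proved. In case (iii), $\omega\in R^*$ forces $A,B,C\in m_R^{2}$, whence $m=A^{2}+BC\in m_R^{2}$, contradicting $m\in m_R-m_R^{2}$.

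The main technical obstacle is the three-subcase analysis of $m_\alB/m_\alB^{2}$ for the $\mu_{3}$-cover $\alB$ in Case~I, especially the informative middle subcase where exactly one of $a\pm\lambda b$ is a unit: the other subcases are easy once the relation $\omega=-(a-\lambda b)(a+\lambda b)$ is exploited. The rest of the argument is essentially bookkeeping using Lemmas \ref{lem:split when omega alpha are not zero} and \ref{lem:Split of A where m is invertible}, the relations (\ref{eq:loc com and ass conditions}), and standard regularity comparison under faithfully flat étale local extensions.
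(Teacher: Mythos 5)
Your proof is correct and takes essentially the same route as the paper's: the same case split along the hypothesis, the same appeals to Lemmas \ref{lem:Split of A where m is invertible} and \ref{lem:split when omega alpha are not zero}, and the same three-subcase tangent-space analysis of the $\mu_{3}$-cover $\alB$ via $\omega=-(a-\lambda b)(a+\lambda b)$. The only (harmless) deviations are marginal: where you exclude $\alpha\otimes k$ being a multiple of the identity by an explicit degenerate-case computation and derive $\beta\otimes k\neq0$ from the relations (\ref{eq:ass gamma,beta}), the paper instead gets $\tr\alpha=0$ from $\omega\tr\alpha=0$ (using that $R$ is a domain, together with $\alA_{\chi}\in\stZ_{G}(R)$ when regular) and then reads off $\beta\otimes k\neq0$ in one line from $\omega=mb^{2}-a^{2}$ in the basis $y,\alpha(y)$ of Lemma \ref{lem:associated parameters for alpha nowhere multiple of the identity}.
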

\begin{proof}
We first want to show that we can assume that $\alpha\otimes k$ is
not a multiple of the identity. Since $\omega\tr\alpha=0$ and, if
$\alA_{\chi}$ is regular then $\alA_{\chi}\in\stZ_{G}(R)$ thanks
to \ref{lem:generically G torsor implies faithful action}, we can
conclude that $\tr\alpha=0$, taking into account that $R$ is a domain
and \ref{thm:description of the main component}. On the other hand,
since $\tr\alpha=A+D$, then $\alpha\otimes k$ is a multiple of the
identity if and only if $\alpha\otimes k=0$, that cannot happen if
$m$ is invertible or $(-,-)$ is surjective.

\emph{Case}: $m$ \emph{invertible}. Without loss of generality, we
can assume $m=\lambda^{2}$ for some $\lambda\in R$. From \ref{lem:Split of A where m is invertible},
$\alA_{\chi}\simeq\alB^{2}$ and so $\alA_{\chi}$ is regular if and
only if $\alB$ is so. If $\omega$ is invertible then $\alB$ is
étalè over $R$, otherwise $\alB$ is regular if and only if
\[
-(a-\lambda b)\omega=(a-\lambda b)^{2}(a+\lambda b)=(a-\lambda b)(a+\lambda b)^{2}\in m_{R}-m_{R}^{2}
\]
This happens if and only if only one between $(a-\lambda b),(a+\lambda b)$
is in $m_{R}-m_{R}^{2}$, which is equivalent to $\omega\in m_{R}-m_{R}^{2}$.

\emph{Case}: $(-,-)$ \emph{surjective}. By \ref{lem:split when omega alpha are not zero},
$\Spec\alA_{\chi}\arr\Spec\alA_{0}$ is étalè. Therefore $\alA_{\chi}$
is regular if and only if $\alA_{0}$ is so, which is equivalent to
the condition: $m$ invertible or $m\in m_{R}-m_{R}^{2}$.

Finally note that, since $\alpha\otimes k$ is not a multiple of the
identity, $\omega=mb^{2}-a^{2}$ and so
\[
\beta\otimes k=0\iff a,b\in m_{R}\then\omega\in m_{R}^{2}
\]
\end{proof}
\begin{lem}
\label{lem:regularity outside codimension one}Assume that $\dim R\geq1$,
that $m$ is not invertible and that $(-,-)\colon\Sym^{2}\shF\arr R$
is not surjective. If $\alA_{\chi}$ is regular, then $\beta\otimes k\neq0$,
$\alpha\otimes k=0$ and $\omega$ is invertible. If such conditions
are satisfied and we choose a basis $y,z=\beta(y^{2})$ of $\shF$
as in \ref{thm:the locus where beta is not zero}, then $\alA_{\chi}$
is regular if and only if $A,C$ are independent in $m_{R}/m_{R}^{2}$.\end{lem}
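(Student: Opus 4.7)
My plan is twofold: first derive the three necessary conditions by ruling out the alternatives, and then, under those conditions, compute the cotangent space of $\alA_\chi$ at its unique maximal ideal and extract the independence criterion. Throughout, my key tool will be an explicit description of $\bar\alA := \alA_\chi \otimes_R k$, read off from the multiplication table in \ref{not: regular Sthree covers} in a suitable basis of $\shF$.

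For the first part, in each excluded case I would pick a convenient local basis for $\shF$ --- either the $\alpha$-basis from \ref{lem:the not degenerate locus is open} when $\alpha \otimes k \neq 0$, or the $\beta$-basis from \ref{lem:local basis for beta nowhere zero} when $\beta \otimes k \neq 0$ --- substitute the explicit parameters of \ref{lem:associated parameters for alpha nowhere multiple of the identity} or \ref{lem:associated parameters for beta nowhere zero}, and work out $\bar\alA$ directly. In each case I would check that $\bar\alA$ admits no non-trivial idempotents (so $\alA_\chi$ is itself local) and compute that $\dim_k \bar m/\bar m^2$ strictly exceeds zero; since $\bar m/\bar m^2$ injects into $M/M^2$, this forces $\dim_k M/M^2 > \dim R$ and contradicts regularity. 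Explicitly: (i) if $\alpha \otimes k \neq 0$ then \ref{lem:split when omega alpha are not zero} forces $\omega \in m_R$, and in the $\alpha$-basis $\omega = mb^2 - a^2 \in m_R$ gives $a \in m_R$, leaving $\bar t\bar y_1 = \bar z_1$, $\bar t\bar y_2 = -\bar z_2$, $\bar y_1^2 = \bar b\bar z_2$, $\bar y_2^2 = \bar b\bar z_1$ as the only nonzero products and $\dim \bar m/\bar m^2 = 3$; (ii) if $\alpha \otimes k = 0$ and $\omega \in m_R$, splitting on whether $\beta \otimes k$ vanishes yields $\dim \bar m/\bar m^2 = 5$ or $3$; (iii) if $\alpha \otimes k = 0$, $\omega \in R^*$ but $\beta \otimes k = 0$, the only nonzero product in $\bar\alA$ is $\bar y_1\bar z_2 = \bar\omega\bar t = -\bar z_1\bar y_2$, forcing $\dim \bar m/\bar m^2 = 4$.

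For the second part I fix the $\beta$-basis $y,z = \beta(y^2)$ and use the parameters in \ref{lem:associated parameters for beta nowhere zero}. A direct computation gives
\[
\bar\alA \;\cong\; k[Y_1,Y_2]/(Y_1 Y_2,\; Y_1^3 + Y_2^3),
\]
which is local with two-dimensional $\bar m/\bar m^2 = kY_1 \oplus kY_2$; hence $\alA_\chi$ is local with maximal ideal $M$. Using $z_2 = y_1^2$, $z_1 = y_2^2$ and $\omega t = y_1^3 - A\omega$ (derived from $y_1 z_2 = A\omega + \omega t$ together with $z_2 = y_1^2$), I see that $\bar z_1$, $\bar z_2$ and $\bar t$ all lie in the image of $m_R/m_R^2 \to M/M^2$, giving a surjection $\pi\colon m_R/m_R^2 \oplus kY_1 \oplus kY_2 \twoheadrightarrow M/M^2$. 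The hard part, and the main bookkeeping obstacle, will be showing $\ker\pi = \langle \bar A,\bar C\rangle$. The relations $y_1 y_2 = -C\omega$ and $y_1^3 + y_2^3 = 2A\omega$ yield $(A),(C) \subseteq M^2 \cap R$, giving one inclusion; the other requires enumerating the multiplication table in \ref{not: regular Sthree covers} entry by entry, to verify that the scalar parts of products of pairs of non-scalar basis elements lie in $m_R^2 + (A) + (C)$ and that the $y_1$- and $y_2$-components of any element of $M^2$ lie in $m_R$. Combining these gives $\dim_k M/M^2 = \dim R + 2 - \dim_k \langle \bar A,\bar C\rangle$, so $\alA_\chi$ is regular if and only if $\bar A$ and $\bar C$ are linearly independent in $m_R/m_R^2$.
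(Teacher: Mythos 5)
Your second half is sound and, up to packaging, reproduces the paper's own computation: the paper works with the exact sequence $0\arr m_{R}/(m_{R}\cap M^{2})\arr M/M^{2}\arr\overline{m}/\overline{m}^{2}\arr0$ (where $M$, $\overline{m}$ denote the maximal ideals of $\alA_{\chi}$ and of $\overline{\alA}=\alA_{\chi}\otimes k$) together with the $G$-comodule decomposition of $\overline{m}/\overline{m}^{2}$, and your surjection $\pi$ with $\Ker\pi=\langle\overline{A},\overline{C}\rangle$ is exactly the paper's identification $W=\langle A^{2},\omega A,\omega C\rangle=\langle\overline{A},\overline{C}\rangle$ paired with $\dim_{k}\overline{m}/\overline{m}^{2}=2$. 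The genuine gap is in your first half. The inference ``$\dim_{k}\overline{m}/\overline{m}^{2}>0$ forces $\dim_{k}M/M^{2}>\dim R$'' is false: even granting a splitting of the surjection $M/M^{2}\arr\overline{m}/\overline{m}^{2}$ (which does exist, though the map goes the opposite way from what you wrote), the correct count is $\dim_{k}M/M^{2}=\dim_{k}m_{R}/(m_{R}\cap M^{2})+\dim_{k}\overline{m}/\overline{m}^{2}$, and $m_{R}\cap M^{2}$ is in general \emph{strictly} larger than $m_{R}^{2}$. Since $M^{2}$ is a sub-$G$-comodule of $\alA_{\chi}$, its degree-zero part is $(m_{R}^{2},m,\Imm(-,-)_{\chi})$, so the image of $m_{R}$ in $M/M^{2}$ drops dimension by $\dim_{k}W$, where $W=\langle m,\Imm(-,-)_{\chi}\rangle\subseteq m_{R}/m_{R}^{2}$; regularity is therefore equivalent to $\dim_{k}\overline{m}/\overline{m}^{2}\leq\dim_{k}W$, not to $\overline{m}/\overline{m}^{2}=0$. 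Taken literally, your criterion contradicts the second half of the very lemma you are proving: in the regular case you yourself exhibit $\overline{\alA}\simeq k[Y_{1},Y_{2}]/(Y_{1}Y_{2},Y_{1}^{3}+Y_{2}^{3})$, so $\dim_{k}\overline{m}/\overline{m}^{2}=2>0$ while $\alA_{\chi}$ is regular.

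The good news is that your fiber computations are correct and the argument is repairable by comparing each of them with the easy bound on $\dim_{k}W$. In your case (i) the $\alpha$-basis gives $\Imm(-,-)_{\chi}=(\omega)$, so $W=\langle\overline{m},\overline{\omega}\rangle$ has dimension $\leq2<3$; in case (ii) one has $A,B,C,\omega\in m_{R}$, hence $\Imm(-,-)_{\chi}\subseteq m_{R}^{2}$ and $m=A^{2}+BC\in m_{R}^{2}$, i.e.\ $W=0<3$ (or $5$); in case (iii), $\omega$ invertible forces $D=-A$ and then $W\subseteq\langle\overline{A},\overline{B},\overline{C}\rangle$ has dimension $\leq3<4$. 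With these comparisons your case analysis proves precisely what the paper proves; the paper organizes the same dichotomies once and for all via $Q_{0}\simeq k/(\omega)$ and $Q_{1}\simeq Q_{2}\simeq\shF\otimes k/(\Imm(\alpha\otimes k),\Imm(\beta\otimes k))$, which replaces your case-by-case multiplication tables. Two smaller points: in case (i) you should note that $\alpha\otimes k$ cannot be a \emph{nonzero} multiple of the identity — otherwise $\alpha^{2}=m\,\id_{\shF}$ would make $m$ invertible — so \ref{lem:the not degenerate locus is open} indeed applies; and note that deducing $\alpha\otimes k=0$ and $\omega\in R^{*}$ also uses \ref{lem:split when omega alpha are not zero}, since $(-,-)_{\chi}$ is not surjective by hypothesis.
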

\begin{proof}
Note that $\alA_{\chi}$ is local with maximal ideal $m_{\alA_{\chi}}=m_{R}\oplus\shL\oplus\shF_{1}\oplus\shF_{2}$.
Set $\alA=\alA_{\chi}$, $\shT=m_{\alA}/m_{\alA}^{2}$, $\alA_{k}=\alA\otimes k$,
$\shT_{k}=m_{\alA_{k}}/m_{\alA_{k}}^{2}$. We claim that there exists
an exact sequence 
\[
0\arr m_{R}/(m_{R}\cap m_{\alA}^{2})\arr\shT\arr\shT_{k}\arr0
\]
Indeed $\Ker(\shT\arr\shT_{k})=m_{R}\alA/(m_{R}\alA\cap m_{\alA}^{2})=Q$
and $m_{R}/(m_{R}\cap m_{\alA}^{2})\hookrightarrow Q$. On the other
this map is surjective since $m_{R}\shL\oplus m_{R}\shF_{1}\oplus m_{R}\shF_{2}\subseteq m_{\alA}^{2}\cap m_{R}\alA$.
We also have $m_{R}\cap m_{\alA}^{2}=(m_{R}^{2},m,\Imm(-,-)_{\chi})$
because $m_{\alA}^{2}$ is a sub $G$-comodule of $\alA$. Since $\dim\alA=\dim R$,
if we denote by $W$ the $k$-vector subspace of $m_{R}/m_{R}^{2}$
generated by $m$ and $\Imm(-,-)_{\chi}$, we can conclude that 
\[
\alA\text{ regular}\iff\dim_{k}\shT_{k}\leq\dim_{k}W
\]
Notice that 
\[
\shT_{k}=Q_{0}\oplus Q_{1}\oplus Q_{2}\text{ with }Q_{0}\simeq k/(\omega)\text{ and }Q_{1}\simeq Q_{2}\simeq\shF\otimes k/(\Imm(\alpha\otimes k),\Imm(\beta\otimes k))
\]
Indeed $\shT_{k}$ is a $G$-comodule and also a $\mu_{3}$-comodule,
thus the decomposition with the $Q_{i}$. Moreover $\sigma\in\Z/2\Z$
yields an isomorphism $Q_{1}\simeq Q_{2}$ and the relation $y_{1}z_{2}-z_{1}y_{2}=2\omega t$
implies that $Q_{0}=\shL/(\omega t)$. The last isomorphism follows
because $m_{\alA_{k}}^{2}\cap(\shF_{1}\otimes k)=(\shF_{2}^{2},\shL\shF_{1})$. 

Assume that $\beta\otimes k=0$. In particular $\Imm(-,-)_{\chi}\subseteq m_{R}^{2}$
and therefore $W=\la m\ra$. On the other hand, since $m\in m_{R}$,
$\alpha\otimes k$ is not surjective and therefore $Q_{1},Q_{2}\neq0$.
In conclusion $\dim_{k}W\leq1$, while $\dim_{k}\shT_{k}\geq2$ and
we see that $\alA$ cannot be regular if $\beta\otimes k=0$.

We now assume that $\beta\otimes k\neq0$ and, thanks to \ref{lem:local basis for beta nowhere zero},
we choose a basis $y,z=\beta(y^{2})$ for $\shF$ as in the statement.
Moreover, thanks to \ref{lem:associated parameters for beta nowhere zero},
the parameters associated with $\chi$ satisfy $a=0$, $b=1$, $c=-\omega C$,
$e=2\omega A$, $B=\omega C^{2}$, $D=-A$ and $m=A^{2}+\omega C^{3}$.
Since $\beta(y^{2})=z$ we see that $\shQ_{1}\simeq k/(A,\omega C)$.
Moreover by definition $W=\la A^{2},\omega A,\omega C\ra$. If $\omega\in m_{R}$,
it follows that $A\in m_{R}$, since $m\in m_{R}$. In this case $\dim_{k}W\leq1$,
while $\dim_{k}\shT_{k}\geq2$. In particular if $\alA$ is regular
then $\omega$ is invertible. By the hypothesis on $(-,-)$ and thanks
to \ref{lem:split when omega alpha are not zero}, it also follows
that $\alpha\otimes k=0$. If we assume that $\omega$ is invertible
and that $\alpha\otimes k=0$, we get $\dim_{k}\shT_{k}=2$ and $W=\la A,C\ra$.
In this case $\alA$ is regular if and only if $\dim_{k}W\geq2$,
which exactly means that $A,C$ are independent in $m_{R}/m_{R}^{2}$.
\end{proof}

\begin{proof}
(\emph{of Theorem }\ref{thm:regular Sthree covers first}) It is enough
to prove the statement for the group $G$. Denote by $f_{\chi}\colon X_{\chi}\arr Y$
the $G$-cover associated with $\chi$. Notice that $D_{m},D_{\omega}$
are Cartier divisor if and only if $f_{\chi}$ generically $G$-torsor
thanks to \ref{thm:description of Sthree torsors}, condition true
when $X_{\chi}$ is regular, by \ref{lem:regular implies cyclic stabilizer}.
In what follows we therefore assume this condition. In particular
$Y_{\alpha}\subsetneq Y$. Since $\dim Y\geq1$, we can reduce the
problem to the case when $Y=\Spec R$, where $R$ is a local, regular
ring with $\dim R\geq1$. In particular we use notation from \ref{not: regular Sthree covers}.
We also set $I_{\alpha}=(A,B,C,D)$. The conditions in the statement
become:
\begin{enumerate}
\item $m,\omega\neq0$; $m\notin m_{R}$ or $\omega\notin m_{R}$;
\item if $I_{\alpha}\neq R$ then $R/I_{\alpha}$ is regular and $\alt I_{\alpha}=2$;
\item $\omega\notin m_{R}^{2}$; $m\notin m_{R}^{2}$ if $I_{\alpha}=R$.
\end{enumerate}
We split the proof in two parts, according to lemmas \ref{lem:regularity of codimension one type}
and \ref{lem:regularity outside codimension one}.

$m$ \emph{invertible or} $(-,-)$\emph{ surjective}. Note that both
conditions imply that $I_{\alpha}=R$ by \ref{lem:split when omega alpha are not zero}
and the result easily follows from \ref{lem:regularity of codimension one type}.

$m$ \emph{not invertible and} $(-,-)$\emph{ not surjective}. If
$\alA_{\chi}$ is regular the result easily follows from \ref{lem:regularity outside codimension one}.
Taking into account the same lemma, we have only to show that conditions
$1)$, $2)$, $3)$ imply that $\omega$ is invertible and that $\beta\otimes k\neq0$.
The first one is clear by $1)$, since $m\in m_{R}$. In particular
$(-,-)$ not surjective implies that $\alpha\otimes k=0$, by \ref{lem:split when omega alpha are not zero}.
Assume by contradiction that $\beta\otimes k=0$. By the relations
\ref{eq:loc com and ass conditions}, we see that $A=-D,B,C\in m_{R}^{2}$.
In particular $I_{\alpha}\subseteq m_{R}^{2}$ and therefore $R/I_{\alpha}$
is not regular, against condition $2)$.
\end{proof}

\subsection{Regular $(\mu_{3}\rtimes\Z/2\Z)$-covers, $S_{3}$-covers and triple
covers.}

In this subsection we want to compare regular $G$-covers and regular
$S_{3}$-covers with regular triple covers. In particular we will
show that any regular $G$-cover ($S_{3}$-covers) induces a regular
triple cover, by taking invariants by $\sigma\in\Z/2\Z$. Conversely
a regular triple cover can be extended to a $G$-cover ($S_{3}$-cover),
provided that a certain codimension $2$ condition is fulfilled. We
will also show how it is possible to construct regular $G$-covers
($S_{3}$-cover) of a smooth variety. We start introducing some loci
associated to a triple cover.
\begin{defn}
\label{def:closed subscheme associated to Gcovers second}Let $Y$
be a scheme and $\Phi=(\shF,\delta)\in\shC_{3}(Y)$. We define
\begin{itemize}
\item $Y_{\delta}$ as the closed subscheme of $Y$ defined by $\eta_{\delta}\colon\Sym^{2}\shF\arr\odi Y$;
\item $D_{\delta}$ as the closed subscheme of $Y$ defined by the map $\Delta_{\Phi}\colon(\det\shF)^{2}\arr\odi Y$
introduced in \ref{def:discriminant maps for Sthree}.
\end{itemize}
When $Y$ is integral we denote by $\Cov_{3}^{\text{nd}}(Y)$ the
full subcategory of $\Cov_{3}(Y)$ of objects $(\shF,\delta)$ such
that $Y_{\delta}$ is a proper subscheme of $Y$, or, equivalently,
such that $\eta_{\delta}\neq0$.
\end{defn}
The suffix $\text{nd}$ in the symbol $\Cov_{3}^{\text{nd}}(Y)$ stands
for 'not degenerate'. Indeed, for a triple cover $f\colon X\arr Y$
associated with $(\shF,\delta)\in\Cov_{3}$, the closed subscheme
of $Y$ where $\eta_{\delta}=0$ coincides, topologically, with the
locus where $f$ has triple points (see \ref{lem:three dimensional local k algebras}).
Moreover, the complementary of the locus $D_{\delta}$ is the étale
locus of $f$ (see \ref{rem: fake and true discriminant}).

We now want to state three Theorems we want to prove in this section.
In oder to do so we have to introduce the divisorial component of
a subscheme:
\begin{defn}
If $Y$ is a locally factorial, noetherian and integral scheme and
$Z$ is a proper closed subscheme, there exists a maximum among the
effective Cartier divisors contained in $Z$. Such divisor will be
denoted by $D(Z)$ and called the \emph{divisorial component} of $Z$
in $Y$.
\end{defn}
The first Theorem we will prove shows an alternative description of
regular $G$-covers, starting from the more simple description of
$\stZ_{\omega}$ (see \ref{thm:description of ZG when omega in Cartier}).
\begin{thm}
\label{thm:gamma for regular Sthree covers}Let $Y$ be a regular,
noetherian and integral scheme such that $\dim Y\geq1$ and $6\in\odi Y^{*}$.
Then the association   \[   \begin{tikzpicture}[xscale=3.6,yscale=-0.6]     \node (A0_0) at (0, 0) {$\Cov_{3}^{\text{nd}}(Y)$};     \node (A0_1) at (1, 0) {$\stZ_{\omega}(Y)$};     \node (A1_0) at (0, 1) {$(\shF,\delta)$};     \node (A1_1) at (1, 1) {$(\odi{Y}(D(Y_\delta)),\shF,\delta,1)$};     \path (A0_0) edge [->]node [auto] {$\scriptstyle{\Gamma}$} (A0_1);     \path (A1_0) edge [|->,gray]node [auto] {$\scriptstyle{}$} (A1_1);   \end{tikzpicture}   \] 
is a fully faithful section of the projection $\RCov G(Y)\arr\Cov_{3}(Y)$
and all the regular $G$-covers are in the essential image of $\Gamma$.
Moreover, if $\chi=\Gamma(\shF,\delta)$, then $D=D_{\delta}-2D(Y_{\delta})$
is a Cartier divisor if $D_{\delta}$ is so and the associated $G$-cover
$X_{\chi}\arr Y$ is regular if and only if $Y_{\delta}$ is regular,
$D_{\delta}$ is a Cartier divisor, $D\cap D(Y_{\delta})=\emptyset$,
$Y_{\delta}\cap D$ is empty or has pure codimension $2$ and $D$
is regular outside $Y_{\delta}$. In this case $D_{\omega}=D(Y_{\delta})$,
$D=D_{m}$, $Y_{\alpha}=Y_{\delta}\cap D_{m}$ and $Y_{\delta}=D_{\omega}\sqcup Y_{\alpha}$.
\end{thm}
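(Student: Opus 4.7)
\textbf{Proof plan for Theorem \ref{thm:gamma for regular Sthree covers}.}

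The plan is to verify the theorem in five stages, using Theorem \ref{thm:description of ZG when omega in Cartier}, Corollary \ref{cor:extensions of Cthree objects with Domega cartier}, Remark \ref{rem:general discriminant} and Theorem \ref{thm:regular Sthree covers first} as the main technical inputs. First I would check that $\Gamma$ takes values in $\stZ_\omega(Y)$: since $D(Y_\delta)\subseteq Y_\delta$ as closed subschemes of $Y$, we have an inclusion of ideal sheaves $\Imm\eta_\delta=I_{Y_\delta}\subseteq I_{D(Y_\delta)}=\odi Y(-D(Y_\delta))=\Imm(\odi Y(-D(Y_\delta))\arrdi{1^\vee}\odi Y)$, which is exactly the defining condition for an object of $\stZ_\omega(Y)$. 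The fact that $\Gamma$ is a section of the projection $\RCov G(Y)\to\Cov_3(Y)$ is tautological from the definition. For full faithfulness one notes that, by Corollary \ref{cor:extensions of Cthree objects with Domega cartier}, the fiber $\stZ_{(\shF,\delta)}$ is a set in bijection with invertible ideal sheaves $\shN\subseteq\odi Y$ containing $\Imm\eta_\delta$, and $\Gamma(\shF,\delta)$ picks out the (unique) such ideal $\odi Y(-D(Y_\delta))$; this description also makes the section fully faithful.

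Next I would address the essential image. Given a regular $G$-cover $X_\chi\arr Y$, Theorem \ref{thm:regular Sthree covers first} implies $D_\omega$ is a Cartier divisor, so $\omega\colon\odi Y\arr\shM$ is injective and $\chi\in\stZ_\omega(Y)$. By Corollary \ref{cor:extensions of Cthree objects with Domega cartier} $\chi$ corresponds to the invertible ideal $\odi Y(-D_\omega)\subseteq\odi Y$, and we need $D_\omega=D(Y_\delta)$. Since $D_\omega\subseteq Y_\delta$, we already have $D_\omega\subseteq D(Y_\delta)$; to get equality I work locally at a regular local ring $R=\odi{Y,p}$, use notation from \ref{not: regular Sthree covers}, and observe that the relations $(y,y)=-C\omega$, $(y,z)=A\omega$, $(z,z)=B\omega$ give $I_{Y_\delta}=\omega\cdot I_\alpha$ with $I_\alpha=(A,B,C)$. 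Regularity of $X_\chi$, via Theorem \ref{thm:regular Sthree covers first}, forces $Y_\alpha$ to be empty or regular of pure codimension $2$, hence $I_\alpha$ is not contained in any height $1$ prime of $R$. In a UFD the largest principal ideal containing $\omega\cdot I_\alpha$ is therefore $(\omega)$, giving $D(Y_\delta)=D_\omega$ locally and globally.

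Having established $\chi\simeq\Gamma(\shF,\delta)$ for regular covers, I would then unpack the remaining identities for $\chi=\Gamma(\shF,\delta)$. Using Remark \ref{rem:general discriminant}, which gives $\Delta_{(\shF,\delta)}=4\Delta_\chi=-4m\otimes\la-,-\ra^{\otimes 2}$ (the factor $4$ being invertible since $2\in\odi Y^*$), the divisor of the discriminant section decomposes as $D_\delta=D_m+2D_\omega=D_m+2D(Y_\delta)$, so whenever $D_\delta$ is Cartier then so is $D=D_\delta-2D(Y_\delta)$, and $D=D_m$. With $D_\omega=D(Y_\delta)$ and $D=D_m$ in hand, the regularity criterion of Theorem \ref{thm:regular Sthree covers first} translates term by term into the conditions of the present theorem: $D_m\cap D_\omega=\emptyset$ becomes $D\cap D(Y_\delta)=\emptyset$, the regularity of $D_\omega$ becomes that of $Y_\delta$ (in fact it forces $D_\delta$ Cartier since by the ideal computation $Y_\delta$ decomposes, which I justify in the last step), and the two remaining geometric conditions on $Y_\alpha$ and on $D_m$ match directly.

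The final scheme-theoretic identities I would verify from the local description $I_{Y_\delta}=\omega\cdot I_\alpha$. At any point of $Y_\alpha$ we have $A,B,C\in m_R$, hence $m=A^2+BC\in I_\alpha^2$, while $\omega$ is a unit (because $D_\omega\cap D_m=\emptyset$ and $Y_\alpha\subseteq D_m$); therefore $I_{Y_\delta}+(m)=I_\alpha+I_\alpha^2=I_\alpha$, which gives $Y_\delta\cap D_m=Y_\alpha$ scheme-theoretically. Outside $Y_\alpha$, $I_\alpha$ contains a unit so $I_{Y_\delta}=(\omega)$, and $D_\omega\cap D_m=\emptyset$ implies disjointness of $D_\omega$ and $Y_\alpha$, so the standard argument with $(\omega)\cdot I_\alpha=(\omega)\cap I_\alpha$ (valid since $\omega$ is a regular element on $Y_\alpha$) yields $Y_\delta=D_\omega\sqcup Y_\alpha$. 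I expect the main obstacle of the whole argument to be the identity $D_\omega=D(Y_\delta)$ for regular $G$-covers: it is the one point where one really has to combine the local multiplication table of $\alA_\chi$ with the pure codimension $2$ regularity of $Y_\alpha$ coming from Theorem \ref{thm:regular Sthree covers first}, while every other step is essentially bookkeeping on divisors and ideal sheaves.
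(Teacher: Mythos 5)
Your proposal is correct and takes essentially the same route as the paper's own proof: well-definedness and full faithfulness of $\Gamma$ via \ref{cor:extensions of Cthree objects with Domega cartier}, membership of regular covers in $\stZ_{\omega}(Y)$ via \ref{thm:regular Sthree covers first}, reduction of the essential-image claim to the identity $D_{\omega}=D(Y_{\delta})$, and the remaining identities $D=D_{m}$, $Y_{\delta}=D_{\omega}\sqcup Y_{\alpha}$, $Y_{\delta}\cap D_{m}=Y_{\alpha}$ from the local relation $I_{Y_{\delta}}=\omega\cdot I_{\alpha}$ recorded in \ref{rem: notation for Ydelta Ddelta}. The only difference is presentational: you spell out, via the gcd computation in the UFD $\odi{Y,p}$ and the scheme-theoretic verification $I_{Y_{\delta}}+(m)=I_{\alpha}$, details that the paper compresses into that remark and its citation of \ref{thm:regular Sthree covers first}.
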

The following Theorem shows how the correspondence among regular $G$-covers,
regular $S_{3}$-covers and regular triple covers behaves.
\begin{thm}
\label{thm:regular G covers and triple}Let $Y$ be a regular, noetherian
and integral scheme such that $\dim Y\geq1$ and $6\in\odi Y^{*}$.
If $f\colon X\arr Y$ is a regular triple cover, then $Y_{\delta}=D(Y_{\delta})\sqcup Y'_{\delta}$,
where $Y'_{\delta}$ is a closed subscheme of pure codimension $2$
if not empty and $D(Y_{\delta})$ is regular. Moreover $f$ extends
to a regular $G$-cover ($S_{3}$-covers) if and only if $Y_{\delta}'$
is regular. More precisely the maps $\GCov,\RCov{S_{3}}\arr\Cov_{3}$
obtained by quotienting by $\sigma\in\Z/2\Z$ induce isomorphisms
  \[   \begin{tikzpicture}[xscale=7.0,yscale=-0.5]     \node (A0_0) at (0, 0) {$X$};     \node (A0_1) at (1, 0) {$X/\sigma$};     \node (A1_0) at (0, 1) {$\{\text{regular }G\text{-covers over }Y\}$};     \node (A2_1) at (1, 2) {$\left\{ \begin{array}{c} \text{regular triple covers }(\shF,\delta)\text{ over }Y\\ \text{such that }Y_{\delta}\text{ is regular} \end{array}\right\}$};     \node (A3_0) at (0, 3) {$\{\text{regular }S_3\text{-covers over }Y\}$};     \path (A0_0) edge [|->,gray]node [auto] {$\scriptstyle{}$} (A0_1);     \path (A3_0) edge [->]node [auto] {$\scriptstyle{}$} (A2_1);     \path (A1_0) edge [->]node [auto] {$\scriptstyle{}$} (A2_1);     \path (A1_0) edge [->]node[rotate=-90] [above] {$\scriptstyle{\simeq}$} (A3_0);   \end{tikzpicture}   \] Moreover
the inverse of the upper morphism is the functor $\Gamma$ introduced
in \ref{thm:gamma for regular Sthree covers}.
\end{thm}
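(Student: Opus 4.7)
The plan is to combine the explicit description furnished by Theorem \ref{thm:gamma for regular Sthree covers} with a local analysis of $Y_\delta$ for an arbitrary regular triple cover. The bottom equivalence between regular $G$-covers and regular $S_3$-covers is immediate from Remark \ref{rem: regular SThree if and only if regular G covers}, so the substance lies in comparing each with the collection of regular triple covers $(\shF,\delta)$ for which $Y_\delta$ is regular, and in identifying $\Gamma$ as the inverse of the quotient-by-$\sigma$ map.

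First I would establish the decomposition $Y_\delta=D(Y_\delta)\sqcup Y'_\delta$ and the regularity of $D(Y_\delta)$ starting from a regular triple cover $f\colon X\to Y$. Since the statement is local I can assume $Y=\Spec R$ with $R$ regular local and, via the isomorphism $\shC_3\simeq\Cov_3$, write $X=\Spec(\odi Y\oplus\shF)$ with multiplication controlled by $\eta_\delta$ and $\beta_\delta$. At a codimension one point $p$ of $Y$, either $p\notin Y_\delta$, or $X_p/\odi{Y,p}$ is a regular ramified extension of a DVR, in which case a direct local classification (analogous to the $\mu_3$-case in Theorem \ref{thm:regular in codimension 1 covers}, applied via the reduction of $\RCov{\mu_3}$ to invertible sheaves with cube maps) shows that $\eta_\delta$ generates a principal ideal at $p$, so $p\in D(Y_\delta)$. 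Consequently $Y_\delta$ and $D(Y_\delta)$ coincide at every codimension one point, forcing $Y'_\delta:=Y_\delta\setminus D(Y_\delta)$ (set-theoretically, then schematically via the residual ideal) to have codimension $\geq 2$; the pure codimension $2$ statement and the regularity of $D(Y_\delta)$ come from the same local classification, which over the complement of $Y'_\delta$ puts $X$ into the standard form of Theorem \ref{thm:regular Sthree covers first} $(2)$--$(3)$ with $Y_\alpha=\emptyset$.

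Next, assuming in addition that $Y_\delta$ is regular (equivalently, by the preceding disjointness, that $Y'_\delta$ is regular), I apply $\Gamma$ and verify the four conditions of Theorem \ref{thm:gamma for regular Sthree covers}. By construction $D_\omega=D(Y_\delta)$; the discriminant $D_\delta$ is Cartier because $X$ is regular and hence generically \'etale over $Y$, so $\Delta_{(\shF,\delta)}\neq 0$; the local computation of the previous paragraph gives $v_p(\eta_\delta)=1$ along $D(Y_\delta)$ and hence $v_p(\Delta_{(\shF,\delta)})=2$ there, yielding both that $D=D_\delta-2D(Y_\delta)$ is Cartier and that $D\cap D(Y_\delta)=\emptyset$. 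The equalities $Y_\alpha=Y'_\delta$ and $Y_\delta\cap D=Y'_\delta$ of pure codimension $2$ (when nonempty) follow from the same local analysis, and $D$ is regular outside $Y_\delta$ because there $X\to Y$ is \'etale. Thus Theorem \ref{thm:gamma for regular Sthree covers} produces a regular $G$-cover $X_{\Gamma(\shF,\delta)}\to Y$.

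Conversely, given a regular $G$-cover, Theorem \ref{thm:regular Sthree covers first} places it in $\stU_\beta$ and Theorem \ref{thm:gamma for regular Sthree covers} identifies it with $\Gamma(\shF,\delta)$ for a triple cover $(\shF,\delta)$ with $Y_\delta=D_\omega\sqcup Y_\alpha$; since $D_\omega$ and $Y_\alpha$ are disjoint and each is regular (the first by hypothesis, the second by condition $(2)$ of Theorem \ref{thm:regular Sthree covers first}), $Y_\delta$ is regular. Regularity of the quotient $X_\chi/\sigma$ as a triple cover then follows locally: \'etale outside $Y_\delta$; along $D_\omega$ it reduces to the regular $\mu_3$-cover description obtained by splitting off $\alA_0=\odi Y\oplus\shL$ as in Lemma \ref{lem:Split of A where m is invertible}; along $Y_\alpha$ it is handled by the local normal form of Theorem \ref{thm:the locus where beta is not zero} together with condition $(2)$. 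Full faithfulness of $\Gamma$ (Theorem \ref{thm:gamma for regular Sthree covers}) makes it the inverse of the quotient map, and finally Remark \ref{rem: regular SThree if and only if regular G covers} transfers every conclusion between $G$-covers and $S_3$-covers, giving the diagonal isomorphism with $\RCov{S_3}$.

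The main obstacle will be the first step: proving that for a completely general regular triple cover one has the clean decomposition $Y_\delta=D(Y_\delta)\sqcup Y'_\delta$ with $D(Y_\delta)$ regular and $Y'_\delta$ of pure codimension two, independently of any $G$-extension. This cannot be read off directly from Theorem \ref{thm:regular Sthree covers first} (which presupposes the $G$-cover) and requires a genuine local classification of regular Gorenstein triple algebras over a regular local ring, sorted by the $\GL_2$-orbit of $\delta$ on $\Sym^3\shF$; the expected dichotomy is ``simple ramification along a smooth divisor'' versus ``total ramification at an isolated codimension two point'', and discharging it cleanly is where the real work lies.
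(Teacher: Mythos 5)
Your proposal is sound in outline, but it contains exactly the gap you yourself flag at the end, and that gap is the heart of the theorem: you never actually prove, for a bare regular triple cover with no $G$-structure in sight, that $Y_{\delta}=D(Y_{\delta})\sqcup Y'_{\delta}$ with $D(Y_{\delta})$ regular and $Y'_{\delta}$ of pure codimension $2$. Your sketched route (a codimension-one analysis modelled on Theorem \ref{thm:regular in codimension 1 covers}, then a classification of regular Gorenstein triple algebras by $\GL_{2}$-orbits of $\delta$) is both unexecuted and harder than necessary; note in particular that codimension-one information alone cannot give the \emph{schematic} disjointness of $D(Y_{\delta})$ from the rest of $Y_{\delta}$, nor the pure codimension $2$ of $Y'_{\delta}$, so the residual-ideal device you mention does not discharge the claim. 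The paper avoids any orbit classification by three observations you are missing. First, a regular triple cover has Gorenstein fibers, so by \cite[Theorem 3.1]{Bolognesi2009} the section $\delta$ is \emph{never zero}; by \ref{lem:local basis for beta nowhere zero} one may then choose, over a regular local ring $R$, a basis $y\comma z=\beta(y^{2})$ of $\shF$. Second, every object of $\shC_{3}(R)$ extends (locally, e.g.\ via $\Gamma$, which forces $\gcd(A,C)=1$) to an object $\chi\in\stZ_{\omega}(R)$, so the $G$-cover parameters $\omega\comma A\comma C$ and $m=A^{2}+\omega C^{3}$ are available even though no regularity of $X_{\chi}$ is assumed. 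Third, in this basis one computes $\alA_{\Phi}\simeq R[t]/(t^{3}+3\omega Ct-2\omega A)$ with discriminant $4\cdot27\,\omega^{2}m$, and Miranda's local criterion (Remark \ref{rem:local regular conditions triple covers}: $R[t]/(t^{3}+gt+h)$ is regular iff $g\in m_{R},h\notin m_{R}^{2}$ or $g\notin m_{R},\Delta\notin m_{R}^{2}$) turns both directions into elementary membership arguments in $m_{R}$ and $m_{R}^{2}$, yielding directly $D_{\omega}$ regular, $D_{m}\cap D_{\omega}=\emptyset$, $Y_{\delta}=D_{\omega}\sqcup Y_{\alpha}$ with $Y_{\alpha}$ of pure codimension $2$, and every condition of \ref{thm:gamma for regular Sthree covers} except regularity of $Y_{\alpha}$.

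The same normal form also replaces your case analysis in the converse direction. You propose to prove regularity of $X/\sigma$ separately: \'etale off $Y_{\delta}$, via Lemma \ref{lem:Split of A where m is invertible} along $D_{\omega}$ (where, incidentally, you would need to pass to an \'etale extension to extract the square root $\lambda$ of $m$, with regularity descending), and via Theorem \ref{thm:the locus where beta is not zero} along $Y_{\alpha}$ --- but in the last case you never verify regularity of the rank-$3$ invariant algebra, which is again precisely where the cubic presentation is needed. In the paper this direction is a single contradiction argument: if $X_{\chi}$ is regular but $\alA_{\Phi}$ is not, Miranda's criterion forces either $g\notin m_{R},\Delta\in m_{R}^{2}$ (contradicting regularity of $D_{m}$ outside $Y_{\delta}$) or $g\in m_{R},h\in m_{R}^{2}$ (contradicting regularity of $D_{\omega}$ or the independence of $A,C$ in $m_{R}/m_{R}^{2}$ from \ref{thm:regular Sthree covers first}). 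Your treatment of the bottom equivalence via \ref{rem: regular SThree if and only if regular G covers} and your identification of $\Gamma$ as the inverse are correct and match the paper.
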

We will also prove an existence theorem for regular $G$-covers and
$S_{3}$-covers, but we need the following definition first. 
\begin{defn}
Let $k$ be a field, $Y$ be a $k$-scheme and $\E$ be a quasi-coherent
sheaf over $Y$. The sheaf $\E$ is called \emph{strongly generated
}if, for any closed point $q\in Y$, the map
\[
\Hl^{0}(Y,\E)\arr\E\otimes(\odi{Y,p}/m_{p}^{2})
\]
is surjective. The sheaf $\E$ is called \emph{geometrically strongly
generated }if it is strongly generated over the geometric fiber $Y\times\overline{k}$.
\end{defn}
The last Theorem we want to prove is the following: 
\begin{thm}
\label{thm:construction of Sthree covers}Let $k$ be an infinite
field with $\car k\neq2,3$, $Y$ be a smooth, irreducible and proper
$k$-scheme with $\dim Y\geq1$ and $\shF$ be a locally free sheaf
of rank $2$ over $Y$. If $\E=\Homsh(\Sym^{3}\shF,\det\shF)$ is
geometrically strongly generated then there exists $\delta\in\E$
such that the triple cover associated with $(\shF,\delta)\in\shC_{3}(Y)$
extends to a $G$-cover ($S_{3}$-cover) $X_{\delta}\arr Y$ with
$X_{\delta}$ smooth and $Y_{\delta}=\emptyset$ or $\codim_{Y}Y_{\delta}=2$.
Moreover, if $Y$ is geometrically connected, then $X_{\delta}$ is
geometrically connected if and only if $\det\shF\not\simeq\odi Y$
and $\Hl^{0}(Y,\shF)=0$.
\end{thm}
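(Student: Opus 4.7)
The strategy is to combine the classification of regular $G$-covers in terms of regular triple covers (Theorem \ref{thm:regular G covers and triple}) with a Bertini-type argument on the linear system $V = \Hl^0(Y, \E)$, exploiting geometric strong generation of $\E$. First I would reduce the problem: by Theorem \ref{thm:regular G covers and triple}, a triple cover $(\shF, \delta)$ extends to a regular $G$-cover $X_\delta \to Y$ exactly when both the triple cover is regular and $Y_\delta$ is regular. Combined with the additional requirement that $Y_\delta$ be empty or of pure codimension $2$ (so that $D(Y_\delta) = \emptyset$), Theorem \ref{thm:gamma for regular Sthree covers} then gives explicitly $\Gamma(\shF, \delta) = (\odi Y, \shF, \delta, 1)$, in which case $\shL = \det \shF$ and $\shM = \odi Y$.

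The main technical step would be a Bertini-type argument. Over $\overline k$, strong generation says that at each closed point $p$ the evaluation $V \otimes \overline k \to \E \otimes (\odi{Y,p}/m_p^2)$ is surjective, so $V$ controls both the value and the first-order behavior of $\delta$ at every closed point. A standard jet-theoretic Bertini argument then provides a dense open $W \subseteq V \otimes \overline k$ such that for $\delta \in W$: (i) the triple cover $\alA_{(\shF, \delta)}$ is regular at every closed point, obtained by controlling transversality of the discriminant $D_\delta$; and (ii) $Y_\delta$ is smooth of pure codimension $2$, or empty---the codimension-$2$ bound follows by local analysis since, in the local description of \ref{eq:expression of eta delta}, triple points form a complete intersection cut out by the three components of $\eta_\delta$. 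Because $k$ is infinite, $V(k)$ is dense in $V$, so a $k$-rational $\delta$ with these properties exists. Smoothness of $X_\delta$ over $k$ then follows routinely: $X_\delta \to Y$ is finite flat of degree $6$ with $\car k \nmid 6$, so residue field extensions along the cover are separable over those of $Y$, and $Y$ being smooth over $k$ combined with regularity of $X_\delta$ gives smoothness over $k$.

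For geometric connectedness, use that $\alA_\chi \cong \odi Y \oplus \det \shF \oplus \shF \oplus \shF$ as $G$-comodule. Since $X_\delta$ is smooth (hence reduced) and $Y$ is proper, $\Hl^0(X_\delta \times \overline k, \odi{})$ is a reduced finite $\overline k$-algebra, equal to $\overline k \oplus \Hl^0(\overline Y, \det \overline\shF) \oplus 2\,\Hl^0(\overline Y, \overline\shF)$ as a $k$-vector space after base change. Geometric connectedness of $X_\delta$ is equivalent to this having $\overline k$-dimension one. The $\shF$-summand immediately forces $\Hl^0(Y, \shF) = 0$. For the $\det\shF$-summand, use the algebra structure: any $s \in \Hl^0(Y, \det\shF)$ satisfies $s^2 = m(s \otimes s) \in \Hl^0(Y, \odi Y) = k$; if $\det\shF \not\simeq \odi Y$, then any nonzero $s$ must vanish somewhere in $Y$, forcing $s^2 = 0$ as a constant, hence $s$ nilpotent in the reduced algebra $\Hl^0(X_\delta \times \overline k, \odi{})$, hence $s = 0$. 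Conversely, if $\det\shF \simeq \odi Y$, the trivialization contributes an extra dimension to $\Hl^0(X_\delta \times \overline k, \odi{})$, breaking connectedness.

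The main obstacle is the Bertini step: the locus $Y_\delta$ is cut out by $\eta_\delta$, which depends \emph{quadratically} on $\delta$, so the naive linear-system Bertini does not apply directly. One must lift the argument to jet spaces, where strong generation of $\E$ ensures that the appropriate evaluation/jet maps are surjective, and then check transversality for both the regularity of $\alA_{(\shF,\delta)}$ (controlled by $D_\delta$) and the smoothness and codimension of $Y_\delta$. The codimension bound $\codim Y_\delta = 2$, as opposed to a generic expectation of codimension $3$ from three equations, requires an explicit local verification at triple points using the relations from \ref{lem:three dimensional local k algebras} and \ref{eq:expression of eta delta}.
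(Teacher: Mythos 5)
Your strategy coincides with the paper's: the ``jet-theoretic Bertini argument'' you invoke is exactly what the paper carries out, namely an incidence dimension count over the universal family $X_{\chi}\arr Y\times V$, $V=\Hl^{0}(Y,\E)$, with geometric strong generation entering precisely through the surjectivity of the second-order evaluation maps $\phi_{q}\colon V\arr\E\otimes(\odi{Y,q}/m_{q}^{2})$; your reduction via \ref{thm:regular G covers and triple} and your connectedness argument likewise match \ref{rem:connectdness for regular Sthree covers}. Two steps, however, are not as routine as your sketch suggests.

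First, ``standard'' hides the real content. For each fixed $q$ it is not enough that the bad $2$-jets form a proper closed subset: since $g^{-1}(q)\cap Z=\phi_{q}^{-1}(\overline{W}_{q})$ and $q$ sweeps a base of dimension $\dim Y$, you need $\codim\overline{W}_{q}\geq\dim Y+1$ inside $\E\otimes(\odi{Y,q}/m_{q}^{2})$ to conclude $\dim Z\leq\dim V-1$; a codimension-$1$ bound would be swallowed by the sweep. This threshold is exactly the paper's Lemma \ref{lem:key lemma for constructing Sthree covers}, and proving it is not formal transversality. After normalizing $\delta=(-1,0,c,e)$ in a basis $y,\beta(y^{2})$ (note $Y_{\delta}$ is then cut out by the \emph{two} functions $c,e$, not three: $\eta_{\delta}(z^{2})=2c^{2}\in(c,e)$ by \ref{eq:expression of eta delta}, which is why codimension $2$, not $3$, is the generic expectation), one must: (a) check that regularity of $X_{\delta}$ over $q$ depends only on the $2$-jet of $(c,e)$; (b) dispose of degenerate lifts with $m_{\delta}=(e^{2}-4c^{3})/4$ identically zero by a perturbation argument (such $c,e$ are forced to be units or both zero); and (c) compute that the bad jets are $W_{1}\cup W_{2}$, where $W_{1}$ is the stratum $e^{2}=4c^{3}$ at jet level with $c_{0},e_{0}\neq0$ and $W_{2}$ is the triple-point stratum $c_{0}=e_{0}=0$ with the linear parts of $c,e$ dependent, each of codimension $\geq\dim Y+1$. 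Your sketch localizes the difficulty only at triple points, but the stratum $W_{1}$ (deep tangency of the discriminant away from triple points) is equally essential.

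Second, your smoothness-over-$k$ argument fails over imperfect fields, which the hypotheses allow (e.g.\ $k=\F_{5}(t)$): regularity does not imply smoothness over imperfect $k$, closed points of a smooth $Y$ need not have residue fields separable over $k$, and separability of residue extensions along the cover does not give smoothness of the ramified map $X_{\delta}\arr Y$. The repair also fixes the rationality point you gloss over (your good locus $W$ is defined over $\overline{k}$, while $\delta$ must be $k$-rational): define, as the paper does, $U\subseteq V$ over $k$ as the open locus where the fibers of the flat proper family $X_{\chi}\arr V$ are smooth. Formation of $U$ commutes with the base change to $\overline{k}$, so the dimension count over $\overline{k}$ shows $U\neq\emptyset$; since $k$ is infinite, $U$ has a $k$-point $\delta$, and then $X_{\delta}$ is smooth over $k$ by the fiberwise criterion, with no regular-implies-smooth step needed.
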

The following Proposition shows that the hypothesis of the above Theorem
can be easily satisfied.
\begin{prop}
\label{prop:when F admits regular Sthree covers}Let $Y$ be a projective
scheme over a field and $\E$ be a coherent sheaf over $Y$. Then
\[
\E(-1)\text{ globally generated }\then\E\text{ geometrically strongly generated}
\]

\end{prop}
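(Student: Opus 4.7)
The plan is to unwind the definition of ``geometrically strongly generated'' and reduce, by base change to the algebraic closure, to proving the following cleaner statement: if $Y$ is projective over an algebraically closed field $k$ and $\E$ is a coherent sheaf on $Y$ such that $\E(-1)$ is globally generated (with respect to some fixed projective embedding $Y \hookrightarrow \PP^N$), then for every closed point $q\in Y$ the evaluation map $\Hl^{0}(Y,\E) \arr \E\otimes(\odi{Y,q}/m_{q}^{2})$ is surjective. The fact that the properties ``globally generated'' and ``strongly generated'' both behave well under flat base change makes this reduction harmless.

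The main geometric input is the case $\E = \odi Y(1)$. Here the claim is that $\Hl^{0}(Y,\odi Y(1))\arr \odi Y(1)\otimes(\odi{Y,q}/m_{q}^{2})$ is surjective for every closed point $q$. The first step is to note that this map factors through $\Hl^{0}(\PP^{N},\odi{\PP^{N}}(1))$, so it suffices to check the statement on $\PP^{N}$, where it is standard: the sections $x_{0},\dots,x_{N}$ span $\odi{\PP^N}(1)\otimes(\odi{\PP^{N},q}/m_{q}^{2})$ (this is an $(N+1)$-dimensional vector space and the map from the $(N+1)$-dimensional space of global sections is an isomorphism at any closed point). Restricting to $Y$ gives the desired surjection.

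Given this, the general case follows by a chase with right exactness of tensor product. Write a surjection $\odi{Y}^{n}\twoheadrightarrow \E(-1)$, tensor with $\odi Y(1)$ to get $\odi Y(1)^{n}\twoheadrightarrow \E$, and then tensor further with $\odi{Y,q}/m_{q}^{2}$ to obtain a surjection $\odi Y(1)^{n}\otimes(\odi{Y,q}/m_{q}^{2}) \twoheadrightarrow \E\otimes(\odi{Y,q}/m_{q}^{2})$. By the previous paragraph applied componentwise, the composition
\[
\Hl^{0}(Y,\odi Y(1))^{n} \arr \odi Y(1)^{n}\otimes(\odi{Y,q}/m_{q}^{2}) \arr \E\otimes(\odi{Y,q}/m_{q}^{2})
\]
is surjective. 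Since this composition factors through $\Hl^{0}(Y,\E)$, the evaluation $\Hl^{0}(Y,\E)\arr \E\otimes(\odi{Y,q}/m_{q}^{2})$ must itself be surjective, which is exactly what we wanted.

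No genuine obstacle is expected: the argument is essentially the observation that strong generation propagates from $\odi{\PP^N}(1)$ to any twist by globally generated sheaves, combined with the standard computation on projective space. The only point requiring care is the base change to $\overline{k}$, which must be done at the start so that ``strongly generated'' becomes a statement about closed points (equivalently, $\overline{k}$-points) of $Y\times\overline{k}$; after that the argument is purely formal.
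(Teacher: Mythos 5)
Your proof is correct and follows essentially the same route as the paper: reduce to an algebraically closed field, propagate strong generation through surjections $\odi Y(1)^{n}\twoheadrightarrow\E$ by right exactness of tensor product, and verify the base case of $\odi{}(1)$ by the explicit computation at a rational closed point (which is where algebraic closure enters). The only cosmetic difference is that you restrict from $\PP^{N}$ to $Y$ where the paper pushes $\E$ forward to $\PP^{N}$ via the closed immersion; the substance is identical.
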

In particular if $Y$ and $\shF$ are as in Theorem \ref{thm:construction of Sthree covers},
with $Y$ projective, then $\shF(-n)$ satisfies its hypothesis for
$n\gg0$. Moreover considering $\shF=\odi Y(-1)^{2}$, so that $\E=\duale{(\Sym^{3}\shF)}\otimes\det\shF\simeq\odi Y(1)^{3}$,
we obtain:
\begin{cor}
\label{cor:Sthree regular covers over projective smooth}Let $k$
be an infinite field with $\car k\neq2,3$. Then any smooth, projective
and irreducible (resp. geometrically connected) $k$-scheme $Y$ with
$\dim Y\geq1$ has a $G$-cover ($S_{3}$-cover) $X\arr Y$ with $X$
smooth (resp. smooth and geometrically connected).
\end{cor}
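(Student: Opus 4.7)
The plan is to deduce this as an immediate consequence of Theorem \ref{thm:construction of Sthree covers} and Proposition \ref{prop:when F admits regular Sthree covers} by taking $\shF=\odi Y(-1)^{2}$, where $\odi Y(1)$ denotes the very ample line bundle associated with a fixed projective embedding of $Y$.

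First I would check the hypothesis of Theorem \ref{thm:construction of Sthree covers}. Setting $\E=\Homsh(\Sym^{3}\shF,\det\shF)$, a direct computation yields $\det\shF\simeq\odi Y(-2)$ and $\Sym^{3}\shF\simeq\odi Y(-3)^{\oplus 4}$, hence $\E\simeq\odi Y(1)^{\oplus 4}$. In particular $\E(-1)\simeq\odi Y^{\oplus 4}$ is globally generated, so Proposition \ref{prop:when F admits regular Sthree covers}, applied to $\E\otimes\overline{k}$ on $Y\times\overline{k}$ with the pulled-back very ample line bundle, implies that $\E\otimes\overline{k}$ is strongly generated, i.e.\ $\E$ is geometrically strongly generated in the sense required by Theorem \ref{thm:construction of Sthree covers}.

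Theorem \ref{thm:construction of Sthree covers} then produces $\delta\in\E$ such that the associated triple cover extends to a $G$-cover (equivalently, over $\stR_{3}$, an $S_{3}$-cover) $X_{\delta}\arr Y$ with $X_{\delta}$ smooth, settling the case where $Y$ is only assumed irreducible. For the geometrically connected variant, note that a smooth connected scheme over a field is automatically irreducible, so smoothness together with geometric connectedness of $Y$ forces both irreducibility and geometric irreducibility, and the theorem still applies. Ampleness of $\odi Y(1)$ together with $\dim Y\geq 1$ gives $\det\shF=\odi Y(-2)\not\simeq\odi Y$ and $\Hl^{0}(Y,\shF)=2\Hl^{0}(Y,\odi Y(-1))=0$, the latter because $\odi Y(-1)$ has negative degree on every irreducible curve in $Y$. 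The last sentence of Theorem \ref{thm:construction of Sthree covers} then yields that $X_{\delta}$ is geometrically connected.

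There is no genuine obstacle here: the two preceding results are stated precisely so that this corollary becomes a direct application. The only minor points that deserve to be verified carefully are the vanishing $\Hl^{0}(Y,\odi Y(-1))=0$ and the nontriviality of $\odi Y(-2)$, both of which are standard consequences of ampleness of $\odi Y(1)$ combined with $\dim Y\geq 1$ and the fact that $Y$ is reduced (being smooth).
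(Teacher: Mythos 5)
Your proof is correct and is exactly the paper's argument: the paper justifies the corollary via the same choice $\shF=\odi Y(-1)^{2}$, yielding $\E\simeq\odi Y(1)^{4}$, so that Proposition \ref{prop:when F admits regular Sthree covers} gives geometric strong generation and Theorem \ref{thm:construction of Sthree covers} applies, with $\det\shF=\odi Y(-2)\not\simeq\odi Y$ and $\Hl^{0}(Y,\shF)=0$ handling the geometrically connected case. Your added remark that smoothness plus (geometric) connectedness forces the irreducibility required by the theorem is a worthwhile explicit check that the paper leaves implicit.
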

The rest of this section is dedicated to the proofs of what claimed
above. Note that all the results for $S_{3}$-covers are just a consequence
of the same results for $G$-covers, thanks to \ref{rem: regular SThree if and only if regular G covers}
and  \ref{rem:invariants by sigma for Sthree and GCov}. Therefore
we will focus only on $G$-covers. We want now to argue why the concept
of divisorial component introduced above is well defined.
\begin{rem}
Let $Y$ be a locally factorial, noetherian and integral scheme and
$Z$ be a proper closed subscheme, defined by the sheaf of ideals
$\shI$. We want to show that the divisorial component of $Z$ in
$Y$ exists, or, equivalently, that the set of invertible sheaves
of ideals of $\odi Y$ containing $\shI$ has a minimum $\shL^{\shI}$.
Moreover we prove that, if $p\in Y$ and $\shI_{p}=(f_{1},\dots,f_{r})\subseteq\odi{Y,p}$,
then $\shL_{p}^{\shI}=(\gcd(f_{1},\dots,f_{r}))$. The key point for
proving this fact is that if $\shJ$, $\shN$ are sheaves of ideals
of $Y$, with $\shN$ invertible, then
\[
\shJ\subseteq\shN\iff\shJ_{q}\subseteq\shN_{q}\text{ for all }q\in Y^{(1)}
\]
In particular it is easy to check that $D(Z)$ is induced by the Weil
divisor
\[
\sum_{q\in Y^{(1)}}\dim_{k(q)}(\odi{Y,q}/\shI_{q})q
\]

\end{rem}

\begin{rem}
\label{rem: notation for Ydelta Ddelta}Let $\Phi=(\shF,\delta)\in\shC_{3}(Y)$.
Its associated algebra is $\alA_{\Phi}=\odi Y\oplus\shF$ with multiplication
$\beta=\beta_{\delta}\colon\Sym^{2}\shF\arr\shF$ and $\eta_{\delta}\colon\Sym^{2}\shF\arr\odi Y$
(see \ref{rem:triple covers and invariants by sigma}). The map 
\[
(\det\shF)^{2}\arrdi{\hat{\eta}_{\delta}}\Sym^{2}\shF\arrdi{\eta_{\delta}}\odi Y
\]
coincides with the map $\Delta_{\Phi}\colon(\det\shF)^{2}\arr\odi Y$
defining $D_{\delta}$. Indeed $\tr_{\alA_{\Phi}}(u\cdot_{\alA_{\Phi}}v)=3\eta_{\delta}(uv)$
for all $u,v\in\shF$ and, if $y,z$ is a basis of $\shF$, then
\[
\eta_{\delta}(\hat{\eta_{\delta}})=\eta_{\delta}(y^{2})\eta_{\delta}(z^{2})-\eta_{\delta}(yz)^{2}
\]
Since $Y_{\delta}$ is defined by the ideal $(\eta_{\delta}(y^{2}),\eta_{\delta}(yz),\eta_{\delta}(z^{2}))$
we see that $Y_{\delta}\subseteq D_{\delta}$ and, if $Y$ is locally
factorial, noetherian and integral and $D_{\delta}$ is a Cartier
divisor, then $D_{\delta}-2D(Y_{\delta})$ is an effective Cartier
divisor. 

Assume now we have an extension $(\shL,\shF,m,\alpha,\beta_{\delta},\la-,-\ra)\in\GCov$
of $(\shF,\delta)$, with its associated parameters. Since $\eta_{\delta}=2(-,-)_{\chi}$
(see \ref{eq:eta delta is two times symmetric product}) we have that
$\eta_{\delta}(y^{2})=-2C\omega$, $\eta_{\delta}(yz)=2A\omega$,
$\eta_{\delta}(z^{2})=2B\omega$. Since $A\omega=D\omega$, we have
that $Y_{\alpha},D_{\omega}\subseteq Y_{\delta}$, that $|Y_{\delta}|=|Y_{\alpha}|\cup|D_{\omega}|$
and, if $Y$ is regular, noetherian and integral and $D_{\omega}$
is a Cartier divisor, that $D_{\omega}\subseteq D(Y_{\delta})$. Moreover
by \ref{rem:general discriminant}, we also obtain
\[
\eta_{\delta}(\hat{\eta_{\delta}})=-4\omega^{2}m
\]
Therefore $D_{m},D_{\omega}\subseteq D_{\delta}$, $|D_{\delta}|=|D_{\omega}|\cup|D_{m}|$,
$D_{\delta}$ is a Cartier divisor if and only if $D_{m}$ and $D_{\omega}$
are so and in this case $D_{\delta}=D_{m}+2D_{\omega}$.%
\end{rem}
\begin{proof}
(\emph{of Theorem }\ref{thm:gamma for regular Sthree covers}) By
definition of $\Cov_{3}^{\text{nd}}(Y)$ and thanks to \ref{cor:extensions of Cthree objects with Domega cartier}
we see that $\Gamma$ is well defined and fully faithful. By \ref{thm:regular Sthree covers first}
we also see that all the regular $G$-covers of $Y$ belong to $\stZ_{\omega}(Y)$.
Now let $\chi=(\shM,\shF,\delta,\omega)\in\stZ_{\omega}(Y)$. Notice
that, if $X_{\chi}$ is regular, then $D_{\delta}$ is Cartier and
$Y_{\delta}\subseteq D_{\delta}$ by \ref{rem: notation for Ydelta Ddelta}.
In particular $(\shF,\delta)\in\Cov_{3}^{\text{nd}}(Y)$. Therefore
assume that this condition holds and notice that $\Gamma(\shF,\delta)\simeq\chi$
is equivalent to $D_{\omega}=D(Y_{\delta})$. Assume that $X_{\chi}$
is regular or that: $\chi=\Gamma(\shF,\delta)$ and the conditions
in the statement are satisfied. By \ref{thm:regular Sthree covers first}
and \ref{rem: notation for Ydelta Ddelta} we can conclude noting
that: $D_{m},D_{\omega},D_{\delta}$ are Cartier divisors and $D=D_{m}=D_{\delta}-2D_{\omega}$;
$D_{m}\cap D_{\omega}=\emptyset$; $Y_{\delta}=D_{\omega}\sqcup Y_{\alpha}$
and $Y_{\delta}\cap D_{m}=Y_{\alpha}$ is regular of pure codimension
$2$ if not empty, which also implies that $D_{\omega}=D(Y_{\delta})$.\end{proof}
\begin{rem}
\label{rem:local regular conditions triple covers}Let $R$ be a regular
ring with $\dim R\geq1$, $r(t)=t^{3}+gt+h\in R[t]$ and set $\Delta_{r}=4g^{3}+27h^{2}$.
Adapting \cite[Lemma 5.1]{Miranda1985} to our situation we have that
$R[t]/(r(t))$ is regular if and only if either:
\[
g\in m_{R},h\notin m_{R}^{2}\text{ or }g\notin m_{R},\Delta_{r}\notin m_{R}^{2}
\]
\end{rem}
\begin{proof}
(\emph{of Theorem }\ref{thm:regular G covers and triple}) We can
reduce the problem to the case where $Y=\Spec R$, where $R$ is a
local, regular ring. Let $\Phi=(\shF,\delta)\in\shC_{3}(R)$ be a
triple cover, $\chi=(\shM,\shF,\delta,\omega)\in\stZ_{\omega}(R)$
an extension of $\Phi$. Notice that any object of $\shC_{3}$ has
an extension to $\stZ_{\omega}(R)$. If $\Phi$ corresponds to a regular
triple cover then it has Gorenstein fibers and, by \cite[Theorem 3.1]{Bolognesi2009},
$\delta$ is never zero. On the other hand, if $X_{\chi}$ is regular,
then $\chi\in\stU_{\beta}$, i.e. $\delta$ is never zero, by \ref{thm:regular Sthree covers first}.
Therefore we can assume that $\delta$ is never zero and choose a
basis $y,z=\beta(y^{2})$ of $\shF$. In particular, from \ref{lem:associated parameters for beta nowhere zero}
and \ref{rem: notation for Ydelta Ddelta}, we see that 
\[
\beta(yz)=-\omega Cy\comma\beta(z^{2})=2\omega Ay+\omega Cz\comma\eta_{\delta}(y^{2})=-2\omega C\comma\eta_{\delta}(yz)=2\omega A\comma\eta_{\delta}(z^{2})=2\omega^{2}C^{2}
\]
and $m=A^{2}+\omega C^{3}$. If $\alA_{\Phi}=R\oplus\shF$ is the
algebra associated to $\Phi$ and we set 
\[
r(t)=t^{3}+3\omega Ct-2\omega A
\]
a direct computation shows that $r(y)=0$ and therefore that $\alA_{\Phi}\simeq R[t]/(r(t))$.
Moreover the discriminant is 
\[
\Delta_{r}=4\cdot27\omega^{3}C^{3}+4\cdot27\omega^{2}A^{2}=4\cdot27\omega^{2}m
\]
 and defines the locus $D_{\delta}$ by \ref{lem:associated parameters for beta nowhere zero}.
Notice that if $\eta_{\delta}=0$, then $r(t)=t^{3}$ and $\alA_{\phi}$
is not regular, while if $X_{\chi}$ is regular then $\eta_{\delta}\neq0$
thanks to \ref{thm:gamma for regular Sthree covers}. We can therefore
assume that $\Phi\in\Cov_{3}^{\text{nd}}(R)$. We split the proof
in two cases and in both we will use \ref{rem:local regular conditions triple covers}
and \ref{rem: notation for Ydelta Ddelta}. In particular set $g=3\omega C$
and $h=-2\omega A$.

$\alA_{\Phi}$ \emph{regular and} $\chi=\Gamma(\shF,\delta)$. We
will use that $\Delta_{r}\in m_{R}^{2}$ implies that $g\in m_{R}$
and $h\notin m_{R}^{2}$. By definition we have that $D_{\omega}=D(Y_{\delta})$,
i.e. that $\gcd(A,C)=1$. Notice that we cannot have $m,\omega\in m_{R}$
or $\omega\in m_{R}^{2}$, since otherwise $\Delta_{r},h\in m_{R}^{2}$.
Therefore $D_{\omega}$ is regular and $D_{m}\cap D_{\omega}=\emptyset$.
In particular, since $|Y_{\delta}|=|D_{\omega}|\cup|Y_{\alpha}|$
and $Y_{\alpha}\subseteq D_{m}$, we have $Y_{\delta}=D_{\omega}\sqcup Y_{\alpha}$.
If $m\in m_{R}^{2}$, then $\Delta_{r}\in m_{R}^{2}$. In particular
$g\in m_{R}$ and therefore $h\in m_{R}$, which shows that $D_{m}$
is regular outside $Y_{\delta}$. We have to show that $Y_{\alpha}$
has pure codimension $2$ if not empty. So assume that $A,C\in m_{R}$.
In particular $g,h\in m_{R}$, so $\Delta_{r}\in m_{R}^{2}$ and therefore
$h\notin m_{R}^{2}$. Moreover $m\in m_{R}$ and, as we have seen,
$\omega\notin m_{R}$. We can conclude that $A\in m_{R}-m_{R}^{2}$.
In particular $R/(A)$ is a regular domain of dimension $\dim R-1$.
If $\alt(A,C)\neq2$, we must have that $C\in(A)$, that cannot happen
since $\gcd(A,C)=1$. In conclusion if $\alA_{\Phi}$ is regular then
all the conditions required by the regularity of $X_{\chi}$ (see
\ref{thm:gamma for regular Sthree covers}) are satisfied, except
for the regularity of $Y_{\alpha}$.

$X_{\chi}$ \emph{regular}. We will make use of \ref{thm:gamma for regular Sthree covers}.
In particular from it we see that $\Gamma(\shF,\delta)=\chi$ and
we have to prove that $\alA_{\Phi}$ is regular. Assume by contradiction
that this is false. Therefore we must have $g\in m_{R},h\in m_{R}^{2}$
or $g\notin m_{R},\Delta_{r}\in m_{R}^{2}$. If the last condition
is satisfied, then $\omega,C\notin m_{R}$ and therefore $m\in m_{R}^{2}$.
Since $(A,C)=R$, in this case $D_{m}$ is not regular. So assume
$g\in m_{R},h\in m_{R}^{2}$. Note that in particular $\Delta_{R}\in m_{R}^{3}$.
In particular, if $\omega\in m_{R}$, then $m\notin m_{R}$ and therefore
$\omega\in m_{R}^{2}$, while $D_{\omega}$ is regular. So $\omega\notin m_{R}$
and therefore $C\in m_{R}$ and $A\in m_{R}^{2}$. But this cannot
happen because $A,C$ are independent in $m_{R}/m_{R}^{2}$.\end{proof}
\begin{lem}
\label{lem:key lemma for constructing Sthree covers}Let $k$ be an
algebraically closed field, let $(R,m_{R},k)$ be a regular local
ring with $\dim R\geq1$ and denote by $\overline{(-)}\colon R\arr\overline{R}=R/m_{R}^{2}$
the projection. Set $\shF=R^{2}$, $\overline{\shF}=\overline{R}^{2}$
and given $\delta\colon\Sym^{3}\shF\arr\det\shF$ denote by $\alA_{\delta}$
the $S_{3}$-cover over $R$ obtained as in \ref{thm:gamma for regular Sthree covers}.
Finally set 
\[
\overline{V}=\Hom_{\overline{R}}(\Sym^{3}\overline{\shF},\det\overline{\shF})\comma\overline{W}=\{\gamma\in\overline{V}\st\exists\Sym^{3}\shF\arrdi{\delta}\det\shF\text{ s.t. }\overline{\delta}=\gamma\text{ and }\alA_{\delta}\text{ not regular}\}
\]
Then $\overline{W}$ is a closed subscheme of the $k$-vector space
$\overline{V}$ with $\codim_{\overline{V}}\overline{W}\geq\dim R+1$.\end{lem}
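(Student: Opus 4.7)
Fix a basis $y,z$ of $\shF$ and identify $V = \Hom_R(\Sym^3\shF,\det\shF)\simeq R^4$ via the coefficients $(b,a,c,e)$ with $\delta(y^3)=-b(y\wedge z),\ \delta(y^2z)=a(y\wedge z),\ \delta(yz^2)=c(y\wedge z),\ \delta(z^3)=e(y\wedge z)$. Then $\overline V\simeq\overline R^{\,4}$ has $k$-dimension $4(\dim R+1)$. A short computation shows that $\alA_{\Phi}=R\oplus\shF$ is generated over $R$ by $y$ with minimal polynomial $r(t)=t^{3}+gt+h$, where
\[
g=-3(a^{2}+bc),\qquad h=-(2a^{3}+3abc+b^{2}e),\qquad \Delta_{r}=4g^{3}+27h^{2}=27b^{2}\operatorname{disc}(\delta),
\]
and $\operatorname{disc}(\delta)=b^{2}e^{2}-3a^{2}c^{2}+4a^{3}e+6abce-4bc^{3}$ is the usual discriminant of the binary cubic $\delta$.

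\textbf{Closedness.} Combining Theorem \ref{thm:gamma for regular Sthree covers} with Remark \ref{rem:local regular conditions triple covers}, the condition ``$\alA_{\delta}$ is regular'' translates into an explicit conjunction of conditions that only involve $\delta\bmod m_{R}^{2}$: namely ($g\in m_{R}$ and $h\notin m_{R}^{2}$) or ($g\notin m_{R}$ and $\Delta_{r}\notin m_{R}^{2}$), together with regularity of $Y'_{\delta}$, which is also detected on $\bar\delta$. Hence the non-regular locus $\overline W\subseteq\overline V$ is well defined and is a finite union of Zariski-closed subsets of the affine $k$-space $\overline V$.

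\textbf{Codimension estimate.} Write each coordinate $\bar\xi=\xi_{0}+\xi_{1}$ with $\xi_{0}\in k$ and $\xi_{1}\in m_{R}/m_{R}^{2}$, and let $\pi\colon\overline V\to k^{4}$ be the projection to the constant part. If $\Delta_{r}(\pi(\bar\delta))\neq0$, the triple cover is étale at the closed point, so $\alA_{\delta}$ is étale hence regular; thus $\pi(\overline W)$ is contained in the hypersurface $Z=\{\Delta_{r}(b_{0},a_{0},c_{0},e_{0})=0\}\subseteq k^{4}$. Stratify:
\begin{itemize}
\item On $Z^{\circ}=Z\cap\{g_{0}\neq0\}$ non-regularity forces $\bar\Delta_{r}\in m_{\overline R}$, i.e.\ its linear part
\[
\bar\Delta_{r}^{(1)}=\sum_{\xi\in\{b,a,c,e\}}\partial_{\xi}\Delta_{r}(b_{0},a_{0},c_{0},e_{0})\cdot\xi_{1}\in m_{R}/m_{R}^{2}
\]
must vanish. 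Since $\Delta_{r}$ is a non-constant polynomial in $(b,a,c,e)$ and at every point of $Z^{\circ}$ some partial derivative is nonzero, the linear map $(m_{R}/m_{R}^{2})^{4}\to m_{R}/m_{R}^{2}$ given by the partials is surjective, so its kernel has codimension $\dim R$ in $(m_{R}/m_{R}^{2})^{4}$. Together with the codimension-$1$ constant-part equation, this yields codim $\geq\dim R+1$.
\item On $Z\cap\{g_{0}=0\}$ one checks that $\Delta_{r}=27h^{2}\bmod g$, so $\Delta_{r}=0$ in $k$ forces $h_{0}=0$, cutting out a codimension-$\geq2$ subvariety of $k^{4}$; combined with the analogous linear-part vanishing of $\bar h\in\overline R$ (needed for the triple cover to fail regularity), one again obtains codim $\geq\dim R+2$.
\item Finally, the locus where the triple cover itself is regular but $Y'_{\delta}$ fails to be regular is handled by writing $Y'_{\delta}$ locally as the vanishing of $(\eta_{\delta}(y^{2}),\eta_{\delta}(yz),\eta_{\delta}(z^{2}))$ after dividing out the divisorial gcd; Theorem \ref{thm:regular G covers and triple} identifies this with the $Y_\alpha$ locus of the extension, and an explicit Jacobian computation on the linear parts gives codim $\geq\dim R+1$ again.
\end{itemize}

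\textbf{Main obstacle.} The genuinely delicate part is the last stratum, where the singular locus of $Y'_{\delta}$ is not directly a hypersurface but is cut out by Fitting ideals of the $2\times 3$ matrix of $\eta_{\delta}$. The cleanest way around it is to use Theorem \ref{thm:gamma for regular Sthree covers} to rewrite everything in terms of $D_{m},D_{\omega},Y_{\alpha}$ of the associated $G$-cover, and then verify pointwise that each local failure of regularity (non-reducedness of $D_{m}$, of $D_{\omega}$, non-regularity of $Y_{\alpha}$, or non-disjointness of $D_{m}$ and $D_{\omega}$) costs at least one constant-part equation together with $\dim R$ independent linear-part equations, yielding the required bound $\dim R+1$ uniformly across all strata.
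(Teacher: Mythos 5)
There is a genuine gap, and it sits exactly where your proposal leans on the fixed basis $y,z$. The presentation $\alA_{\Phi}\simeq R[t]/(t^{3}+gt+h)$ with $t\mapsto y$ is only valid where $1,y,y^{2}$ is a basis of $\alA_{\Phi}$, i.e.\ where the coefficient $b$ (in $\beta_{\delta}(y^{2})=ay+bz$) is a unit. Your own formula $\Delta_{r}=27b^{2}\operatorname{disc}(\delta)$ displays the problem: the factor $b^{2}$ is spurious (it comes from the degenerating chart, not from the cover), so along the codimension-$1$ locus $\{b_{0}=0\}\subseteq\overline{V}$ the function $\Delta_{r}$ vanishes together with \emph{all} its partial derivatives. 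Consequently the key assertion in your first bullet --- ``at every point of $Z^{\circ}$ some partial derivative is nonzero,'' hence the linear map on linear parts is surjective of corank $0$ --- is false on $\{b_{0}=0\}\cap Z^{\circ}$, and Remark \ref{rem:local regular conditions triple covers} applied to $r(t)$ no longer detects regularity of $\alA_{\Phi}$ there. Since $\{b_{0}=0\}$ has codimension $1$, far below the target $\dim R+1$, it cannot be discarded. The paper's proof is built precisely to avoid this: it covers $\overline{V}$ (away from $\{\beta_{\gamma}\otimes k=0\}$, handled via \ref{lem:local basis for beta nowhere zero}) by the opens $\overline{V}_{y}$ with $y$ \emph{varying} in $\overline{\shF}$, uses $\Gl_{2}(\overline{R})$-invariance of regularity to normalize to $(-1,0,c,e)$, and thereby reduces the whole count to a two-parameter locus $\overline{W}'\subseteq\overline{R}^{2}$. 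Your single-chart version of this reduction does not exist.

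The second gap is the existential quantifier over lifts, which you wave away by asserting that non-regularity ``only involve[s] $\delta\bmod m_{R}^{2}$.'' That is false lift-by-lift: the Cartier conditions $m\neq0$, $\omega\neq0$, and even whether $\omega$ is a unit or lies in $m_{R}-m_{R}^{2}$ (hence the splitting $Y_{\delta}=D(Y_{\delta})\sqcup Y'_{\delta}$) depend on the chosen lift, not on $\overline{\delta}$. For instance the class $(\overline{c},\overline{e})=(\overline{x},\overline{x})$, $x\in m_{R}-m_{R}^{2}$, admits both a regular lift ($c=e=x$, where $\omega=x$, $D_{\omega}$ regular, $m$ a unit) and a non-regular one ($c=x$, $e=x+x'^{2}$, where $\omega$ is a unit and $A,C$ are dependent). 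So membership in $\overline{W}$ genuinely requires the paper's two-sided analysis: the explicit identification $\overline{W}'=W_{1}\cup W_{2}$, including the lifting/perturbation step showing that $e^{2}=4c^{3}$ never obstructs all lifts outside $W_{1}\cup W_{2}$. This also ruins the uniform scheme in your ``Main obstacle'' paragraph: the deepest stratum $W_{2}$ is cut out by \emph{two} constant-part equations plus a rank (dependence) condition on the pair of linear parts in $m_{R}/m_{R}^{2}$, of codimension $2+(\dim R-1)$ --- a determinantal condition that no Jacobian of a fixed set of functions produces, and not ``one constant-part equation together with $\dim R$ independent linear-part equations'' as you claim. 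The total $\dim R+1$ coincides, but your proposed derivation of it does not go through on exactly the stratum you yourself flag as delicate.
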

\begin{proof}
Since $\delta$ varies in the arguments below, if $\delta=(-a,b,c,e)$
with respect to some basis of $\shF$, all the parameters associated
with $\delta$ (and its associated $S_{3}$-cover) will be thought
of as (polynomial) functions in $a,b,c,e$. Moreover we fix a $k$-basis
$1,\overline{x}_{1},\dots,\overline{x}_{s}$ of $\overline{R}$, where
$x_{1},\dots,x_{r}\in m_{R}$ form a basis of $m_{R}/m_{R}^{2}$.
Moreover $r\in R$ will be denoted by
\[
r=r_{0}\cdot1+r_{1}\overline{x}_{1}+\cdots r_{s}\overline{x}_{s}\text{ where }s=\dim R\comma r_{i}\in k
\]
Notice that, if $\delta,\delta'\in\Hom(\Sym^{3}\shF,\det\shF)$ are
such that $\overline{\delta}$ and $\overline{\delta}'$ differs by
an automorphism of $\overline{\shF}$, i.e. there exists $\phi\in\Gl(\overline{\shF})$
such that $(\det\phi)\delta'=\delta\circ\Sym^{3}\phi$, then $\alA_{\delta}$
is regular if and only if $\alA_{\delta'}$ is so. Indeed the previous
condition means that $\alA_{\delta}\otimes\overline{R}\simeq\alA_{\delta'}\otimes\overline{R}$.
Given $y\in\overline{\shF}$ set
\[
\overline{V}_{y}=\{\gamma\in\overline{V}\st y,\beta_{\gamma}(y^{2})\text{ is a basis of }\overline{\shF}\}
\]
Notice that the $\overline{V}_{y}$ are open subsets of $\overline{V}$,
not empty if $y_{0}\neq0$, and that, thanks to \ref{lem:local basis for beta nowhere zero},
their union covers $\overline{V}-\{\gamma\in\overline{V}\st\beta_{\gamma}\otimes k=0\}$.
Since $\{\gamma\in\overline{V}\st\beta_{\gamma}\otimes k=0\}\subseteq\overline{W}$
has codimension $4$ in $\overline{V}$, it is enough to prove that
\[
\codim_{\overline{V}_{y}}W\cap\overline{V}_{y}\geq s+1
\]
for all $y\in\overline{\shF}$ such that $y_{0}\neq0$. Consider now
the map $p\colon\overline{V}_{y}\arr\overline{R}^{2}$ that sends
$\gamma$ to the parameters $c,e$ associated with $\gamma$ with
respect the basis $y,\beta_{\gamma}(y^{2})$. Note that a $\gamma\in\overline{V}_{y}$
differs by an automorphism of $\overline{\shF}$ from $(-1,0,p(\gamma))$.
In particular if we set
\[
\overline{W}'=\{(\overline{c},\overline{e})\in\overline{R}^{2}\st(-1,0,\overline{c},\overline{e})\in\overline{W}\}
\]
then $p^{-1}(\overline{W}')=\overline{W}\cap\overline{V}_{y}$. Now
denote by $U$ the open subspace of $\overline{\shF}$ of $z$ such
that $y,z$ is a basis of $\overline{\shF}$. The map 
\[
V_{y}\arr U\times\overline{R}^{2}\text{ given by }\gamma\longmapsto(\beta_{\gamma}(y^{2}),p(\gamma))
\]
is an isomorphism whose inverse sends $z\in U,u,v\in\overline{R}$
to the $\gamma\in\overline{V}_{y}$ given by $(-1,0,u,v)$ with respect
to the basis $y,z$ of $\shF$. In particular we can reduce again
the problem and prove that $\codim_{\overline{R}^{2}}\overline{W}'\geq s+1$.
We want to show that $\overline{W}'=W_{1}\cup W_{2}$, where
\[
W_{1}=\{e^{2}=4c^{3}\text{ and }c_{0},e_{0}\neq0\}\comma W_{2}=\{c_{0}=e_{0}=0\text{ and }c,e\text{ dependent in }m_{\overline{R}}/m_{\overline{R}^{2}}\}
\]
Remember that the parameter $m$ associated with $(-1,0,c,e)$ is
given by $(e^{2}-4c^{3})/4$ and, taking into account Theorem \ref{thm:regular Sthree covers first},
the only non trivial point to show in the equality above is the following:
if $c,e\in R$ are such that $e^{2}=4c^{3}$ then there exists $c',e'\in R$
such that $\overline{e}'=\overline{e}$, $\overline{c}'=\overline{c}$
and $e'^{2}\neq4c'^{3}$. Assume by contradiction that this is not
possible. Notice that $e^{2}=4c^{3}$ implies that $c,e$ are invertible
or that $e=c=0$. Indeed if $c,e\neq0$ and $c\in m_{R}^{t}-m_{R}^{t+1}$,
$e\in m_{R}^{l}-m_{R}^{l+1}$ then $2t=3l$, which implies $t=l=0$.
Consider $e'=e+w$, with $w\in m_{R}^{2}-m_{R}^{3}$, $c'=c$. If
$e=0$ we are fine. Otherwise, modulo $m_{R}^{3}$, we get the equality
$2ew=0$ and also in this case we get a contradiction.

If we write $c=c_{0}+c'$, $e=e_{0}+e'$ then $W_{1}$ is contained
in the irreducible component of the locus $\{e_{0}^{2}=4c_{0}^{3}\comma e_{0}e'=6c_{0}c'\}$
which is not $\{c_{0}=e_{0}=0\}$, that has codimension $s+1$. Finally
it is easy to check that also $W_{2}$ has codimension $\geq s+1$.\end{proof}
\begin{rem}
\label{rem:connectdness for regular Sthree covers}Let $Y$ be a proper,
smooth and geometrically connected scheme with $\dim Y\geq1$ over
a field $k$ with $\car k\neq2,3$ and $\chi=(\shL,\shF,m,\alpha,\beta,\la-,-\ra)\in\GCov(Y)$
such that $X_{\delta}$ is regular. Then $X_{\delta}$ is geometrically
connected if and only if $\shL\not\simeq\odi Y$ and $\Hl^{0}(Y,\shF)=0$.
Indeed $X_{\delta}$ is geometrically connected if and only if $\Hl^{0}(X_{\delta}\times\overline{k},\odi{X_{\delta}\times\overline{k}})=\overline{k}$,
which means $\Hl^{0}(Y\times\overline{k},\shL\otimes\overline{k})=\Hl^{0}(Y\times\overline{k},\shF\otimes\overline{k})=0$.
On the other hand, this is also equivalent to $\Hl^{0}(Y,\shL)=\Hl^{0}(Y,\shF)=0$
and, since $m$ induces an injective map $\shL\arr\shL^{-1}$, we
also have that $\Hl^{0}(Y,\shL)\neq0$ is equivalent to $\shL\simeq\odi Y$.\end{rem}
\begin{proof}
(of Proposition \ref{prop:when F admits regular Sthree covers}) We
can assume that $k$ is algebraically closed. Note that the property
of being strongly generated is stable for direct sums and quotients.
Moreover, if $i\colon Y\arr\PP_{k}^{m}$ is the closed immersion defined
by $\odi Y(1)$, then $\E$ is strongly generated if and only if $i_{*}\E$
is so. Since we have surjective maps $\odi Y(1)^{N}\arr\E$ by hypothesis
and $\odi{\PP_{k}^{m}}(1)\arr i_{*}\odi Y(1)$, we only need to prove
that $\odi{\PP_{k}^{m}}(1)$ is strongly generated. This last condition
means that, if $p$ is a maximal ideal of $R=k[x_{1},\dots,x_{m}]$,
then $R/p^{2}$ is generated as $k$-vector space by the elements
$1,x_{1},\dots,x_{m}$. This property clearly holds because $p=(x_{1}-a_{1},\dots,x_{m}-a_{m})$
for some $a_{i}\in k$.
\end{proof}

\begin{proof}
(of Theorem \ref{thm:construction of Sthree covers}) Denote by $V$
the vector bundle over $k$ associated with $\Hl^{0}(Y,\E)$ and by
$g\colon Y\times V\arr Y$ and $\pi\colon Y\times V\arr V$ the projections.
By definition of $V$, there exists $\chi=(g^{*}\shF,\mu)\in\shC_{3}(Y\times V)$
such that, for any $\Spec k\arrdi{\delta}V$, we have $(\id_{Y}\times\delta)^{*}(g^{*}\shF,\mu)=(\shF,\delta)$.
Consider the $G$-cover $f_{\chi}\colon X_{\chi}\arr Y\times V$ associated
with $\chi$ as in \ref{thm:The-locus when omega is invertible} and
let $U\subseteq V$ the smooth locus of the flat map $\pi\circ f_{\chi}\colon X_{\chi}\arr V$.
We claim that we have to prove that $U\neq\emptyset$, so that we
will assume $k$ algebraically closed. Indeed, since $k$ is infinite,
there will exists $\delta\in U(k)$. If $f_{\delta}\colon X_{\delta}\arr Y$
is the base change $f_{\chi,\delta}\colon X_{\chi,\delta}\arr Y\times\{\delta\}$
then, by construction, $X_{\delta}$ is smooth and, taking into account
\ref{thm:regular G covers and triple}, $\codim_{Y}Y_{\delta}=2$
if $Y_{\delta}\neq\emptyset$. Moreover $\det\shF\simeq\shL$ and
the last claim about connectedness follows by \ref{rem:connectdness for regular Sthree covers}. 

Given $\delta\in V$ we will denote by $f_{\delta}\colon X_{\delta}\arr Y$
the base change of $f_{\chi}$ over $Y\times\{\delta\}$. Since $\pi\circ f_{\chi}$
is flat, given $p\in X(k)$ and $\delta=\pi(f_{\chi}(p))$ we have
\[
X\text{ regular in }p\iff X_{\delta}\text{ regular in }p
\]
In particular, if $Z_{X}\subseteq X$ is the singular locus of $X$
and $Z=f_{\chi}(Z_{X})$, then $(q,\delta)\in Z(k)$ if and only if
$X_{\delta}$ has a singular point over $q\in Y$. Moreover $\pi(Z)$
is the complementary of $U$ in $V$. Therefore it is enough to prove
that $\dim Z\leq\dim V-1$,

If $q\in Y(k)$, then $g^{-1}(q)\cap Z\subseteq V$ is the locus of
$\delta\in V(k)$ such that $X_{\delta}$ is not regular over $q$.
In particular, if we denote by $\phi_{q}$ the map
\[
\phi_{q}\colon V\arr\E\otimes(\odi{Y,q}/m_{q}^{2})
\]
and by $\overline{W}_{q}$ the subspace of $\E\otimes(\odi{Y,q}/m_{q}^{2})$
of elements $\gamma$ such that there exists $\delta\in\phi_{q}^{-1}(\gamma)$
for which $X_{\delta}$ is not regular over $q$, then $\phi_{q}^{-1}(\overline{W}_{q})=g^{-1}(q)\cap Z$.
Indeed if $\delta,\delta'\in V$ are such that $\phi_{q}(\delta)=\phi_{q}(\delta')$,
then $X_{\delta}$ is regular over $q$ if and only if $X_{\delta'}$
is so. Moreover $\overline{W}_{q}$ is contained in the locus defined
in \ref{lem:key lemma for constructing Sthree covers}, where $R=\odi{Y,q}$,
and therefore $\codim_{\E\otimes(\odi{Y,q}/m_{q}^{2})}\overline{W}_{q}\geq\dim Y+1$.
Since $\phi_{q}$ is, by hypothesis, a surjective linear map of vector
spaces, we also have $\codim_{V}(g^{-1}(q)\cap Z)\geq\dim Y+1$. Therefore
$\dim(g^{-1}(q)\cap Z)\leq\dim V-\dim Y-1$ and
\[
\dim Z\leq\dim Y+\dim V-\dim Y-1=\dim V-1
\]

\end{proof}

\subsection{Invariants of regular $S_{3}$-covers of surfaces.}

The aim of this subsection is to compute the invariants of regular
$S_{3}$-covers of surfaces over an algebraically closed field. Here
and in the rest of the section by a surface over a field, we mean
a projective, smooth and integral scheme of dimension $2$. The result
is:
\begin{thm}
\label{thm:invariants of regular Sthree covers}Let $Y$ be a surface
over an algebraically closed field $k$ such that $\car k\neq2,3$
and $f\colon X\arr Y$ be a regular $S_{3}$-cover associated with
$(\shF,\delta)\in\shC_{3}$ as in \ref{thm:gamma for regular Sthree covers}.
The closed subscheme $Y_{\delta}$ of $Y$ defined by the map $\eta_{\delta}\colon\Sym^{2}\shF\arr\odi Y$
is the disjoint union of a divisor $D$ and a finite set $Y_{0}$
of rational points and $X$ is connected, that is a surface, if and
only if $\Hl^{0}(\shF)=0$ and $\odi Y(-D)\not\simeq\det\shF$. In
this case the invariants of $X$ are given by
\begin{alignat*}{1}
K_{X}^{2}= & \;6K_{Y}^{2}+6c_{1}(\shF)^{2}-12c_{1}(\shF)K_{Y}-\frac{10}{3}D^{2}-4DK_{Y}\\
p_{g}(X)= & \; p_{g}(Y)+2h^{2}(\shF)+h^{2}(\odi Y(D)\otimes\det\shF)\\
\chi(\odi X)= & \;6\chi(\odi Y)-2c_{2}(\shF)+\frac{1}{2}(3c_{1}(\shF)^{2}-3c_{1}(\shF)K_{Y}-DK_{Y}-D^{2})\\
|Y_{0}|= & \;3c_{2}(\shF)-\frac{2}{3}D^{2}
\end{alignat*}

\end{thm}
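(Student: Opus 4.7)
The plan is to exploit the explicit description of the algebra $f_*\odi X$ coming from the previous theorems and then apply Riemann--Roch, Noether, and a careful analysis of the ramification.

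By Theorems \ref{thm:regular G covers and triple} and \ref{thm:gamma for regular Sthree covers}, the regular $S_3$-cover $f\colon X\to Y$ corresponds to a triple cover $(\shF,\delta)\in\shC_3(Y)$ with $Y_\delta$ regular, and the associated $G$-cover datum is $\chi=\Gamma(\shF,\delta)=(\shM,\shF,\delta,1)$ with $\shM=\odi Y(D(Y_\delta))$. Hence as $\odi Y$-modules $f_*\odi X\simeq\odi Y\oplus\shL\oplus\shF^{\oplus 2}$ with $\shL=\shM\otimes\det\shF=\odi Y(D)\otimes\det\shF$. Since $Y$ is a smooth surface, the regular scheme $Y_\delta$ decomposes as $Y_\delta=D\sqcup Y_0$ with $D=D(Y_\delta)$ a smooth (possibly empty) divisor and $Y_0=Y_\alpha$ a regular $0$-dimensional scheme; over algebraically closed $k$ its reduced points are $k$-rational, and Theorem \ref{thm:gamma for regular Sthree covers} gives $D\cap Y_0=\emptyset$. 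The connectedness assertion then follows from Remark \ref{rem:connectdness for regular Sthree covers}: $X$ is geometrically connected iff $\shL\not\simeq\odi Y$ (equivalently $\odi Y(-D)\not\simeq\det\shF$, using that $m\colon\shL^{2}\to\odi Y$ is nonzero) and $\Hl^0(\shF)=0$.

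Since $f$ is finite and flat, $R^if_*\odi X=0$ for $i>0$, so
\[
\chi(\odi X)=\chi(\odi Y)+\chi(\shL)+2\chi(\shF),\qquad p_g(X)=p_g(Y)+h^{2}(\shL)+2h^{2}(\shF).
\]
Applying Hirzebruch--Riemann--Roch to the line bundle $\shL$ (with $c_1(\shL)=D+c_1(\shF)$) and to the rank-two bundle $\shF$ yields the stated formulas for $\chi(\odi X)$ and $p_g(X)$ after substitution. For $|Y_0|$ I would argue that $Y_0$ is reduced: at each of its points Lemma \ref{lem:regularity outside codimension one} supplies a local basis $y,z=\beta(y^2)$ of $\shF$ with $\omega$ invertible and $A,C$ part of a regular system of parameters, whence the local ideal of $Y_\alpha$ is $(A,C)$. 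Globally $Y_0$ is the residual locus of $Y_\delta$ once the divisorial part $D$ is removed, so one can write $|Y_0|=\chi(\odi{Y_\delta})-\chi(\odi D)$; computing $\chi(\odi D)$ from $0\to\odi Y(-D)\to\odi Y\to\odi D\to 0$ and $\chi(\odi{Y_\delta})$ via a Koszul-type computation for the section $\eta_\delta$ of $\Sym^2\duale\shF$ (or, equivalently, as the refined/excess second Chern class of the map $\zeta_\chi\colon\shM\otimes(\det\shF)^{2}\to\Sym^{2}\shF$ of Theorem \ref{thm:description of the main component}) produces $|Y_0|=3c_2(\shF)-\tfrac{2}{3}D^{2}$.

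The formula for $K_X^{2}$ is then obtained from Noether's identity $K_X^{2}=12\chi(\odi X)-e(X)$, combined with a computation of the topological Euler characteristic by stratification
\[
e(X)=6\,e(Y\setminus Y_\delta)+e(f^{-1}(D))+e(f^{-1}(Y_0)).
\]
The generic fiber over $D$ is controlled by the local model $\alA_\Phi\simeq R[t]/(t^{3}+3\omega Ct-2\omega A)$ from the proof of Theorem \ref{thm:regular G covers and triple}, together with the étaleness of $X\to Z=X/\sigma$ over a neighborhood of $D$; the contribution over each point of $Y_0$ is read off from the explicit local ring produced in Lemma \ref{lem:regularity outside codimension one}. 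A cleaner alternative is to use relative duality, $\omega_X=f^{*}\omega_Y\otimes\omega_{X/Y}$ with $f_*\omega_{X/Y}\simeq(f_*\odi X)^{\vee}=\odi Y\oplus\shL^{-1}\oplus(\duale\shF)^{\oplus 2}$, and apply Grothendieck--Riemann--Roch to $f$ to express $K_X^{2}$ directly in terms of the Chern classes of $\shL$ and $\shF$.

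The main obstacle will be the $K_X^{2}$ calculation: producing the fractional coefficient $-\tfrac{10}{3}D^{2}$ (and, coherently, the excess-intersection contribution $-\tfrac{2}{3}D^{2}$ in $|Y_0|$) requires carefully accounting for the non-transverse vanishing of $\zeta_\chi$ along $D$ and for the precise fiber Euler characteristics dictated by the local models of the regular classification. Everything else is routine Riemann--Roch bookkeeping on $Y$.
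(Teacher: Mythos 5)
Your treatment of the easy parts (connectedness via \ref{rem:connectdness for regular Sthree covers}, and $p_{g}$, $\chi(\odi X)$ from $f_{*}\odi X\simeq\odi Y\oplus\shL\oplus\shF^{\oplus2}$ plus Riemann--Roch) matches the paper. But both of your routes to the hard formulas have concrete gaps. Your stratification $e(X)=6\,e(Y\setminus Y_{\delta})+e(f^{-1}(D))+e(f^{-1}(Y_{0}))$ is false: the branch locus of $f$ is $|D_{m}|\cup|D_{\omega}|$, not $Y_{\delta}=D_{\omega}\sqcup Y_{0}$. Over $D_{m}\setminus Y_{0}$ --- a divisor disjoint from $D$ and generally nonempty, since $\odi Y(D_{m})\simeq\shL^{-2}$ gives $D_{m}\sim-2(D+c_{1}(\shF))$ --- the fiber has $3$ points rather than $6$: there $\omega$ is invertible and $\alpha\otimes k\neq0$, so by \ref{lem:split when omega alpha are not zero} the degree-$3$ part of the cover is \'etale while the quadratic part $\odi Y\oplus\shL$ ramifies. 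So an entire stratum is unaccounted for; worse, $D_{m}$ is singular (locally $m=A^{2}+\omega C^{3}$ with $A,C$ independent in $m_{R}/m_{R}^{2}$, a cusp) precisely at the points of $Y_{0}$, so even a corrected stratification needs extra local analysis you do not supply. Your GRR alternative is circular as stated: since the degree-two component of the Chern character is unchanged under dualization, applying GRR to $\odi X$ together with $f_{*}\omega_{X/Y}\simeq\duale{(f_{*}\odi X)}$ recovers in degree two only the information already contained in $\chi(\odi X)$. To extract $K_{X}^{2}$ this way you would have to compute $f_{*}(K_{X/Y}^{2})$, i.e.\ the self-intersections of the reduced ramification divisors $R_{m},R_{\omega}$ upstairs (it is the relation $f^{*}D=3R_{\omega}$, whence $R_{\omega}^{2}=\tfrac{1}{3}(f_{*}R_{\omega})\cdot D=\tfrac{2}{3}D^{2}$, that would produce the fractional coefficient), and no such computation appears in your proposal.

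For $|Y_{0}|$ the ``Koszul-type'' idea cannot work off the shelf: $\eta_{\delta}$ is a section of the rank-$3$ bundle $\Sym^{2}\duale{\shF}$ on a surface, vanishing along the divisor $D$, so its zero scheme has the wrong codimension and no localized top Chern class argument applies directly. The input that actually generates $-\tfrac{2}{3}D^{2}$, and which is absent from your sketch, is the structure of $\Coker\beta$ along $D$: Lemma \ref{lem:When omega is zero F split} shows it is a line bundle $\shH$ on $D$ with $\shH^{3}\simeq\det\shF\otimes\odi D\simeq\odi Y(-D)\otimes\odi D$, giving $\deg\shH=-\tfrac{1}{3}D_{i}^{2}$ on each component (\ref{lem:chern classes invertible sheaves Domega}); fed into the four-term sequence (\ref{eq:first exact sequence for Sthree}) this yields $|Y_{0}|=3c_{2}-\tfrac{2}{3}D^{2}$ (\ref{lem:cardinality of Yalpha}). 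The paper's route to $K_{X}^{2}$ avoids Euler characteristics entirely: it factors $f$ as the double cover $X\arr X'=X/\sigma$ defined by $\Delta=\odi{X'}(-C)$ followed by the triple cover $X'\arr Y$, combines the double-cover formula \ref{lem:invariants of double covers} with the Miranda--Pardini triple-cover formula \ref{lem:invariants of triple covers}, and computes $C^{2}$ and $C\cdot K_{X'}$ from $\chi(\Delta)$ and $\chi(\Delta\otimes_{\alB}\Delta)$ via Riemann--Roch on $X'$, the latter controlled by the exact sequences (\ref{eq:second exact sequence for Sthree}). To salvage your approach you would need the corrected stratification including $D_{m}$, the fiber counts over every stratum, the cusp contribution of $D_{m}$ at $Y_{0}$, and the cube-root line bundle on $D$; as written, neither $|Y_{0}|$ nor $K_{X}^{2}$ is proved.
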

Before proving this Theorem we need several lemmas.
\begin{lem}
\label{lem:When omega is zero F split}Let $S$ be a finite disjoint
union of integral schemes, $(\shL,\shF,m,\alpha,\beta,\la-,-\ra)\in\stZ_{G}(S)$
and assume that $\beta$ is never zero, that $\la-,-\ra=0$ and that
$m$ is an isomorphism. Then there exist an isomorphism $\iota\colon\shL\arr\odi S$
whose square is $m$ and a decomposition $\shF=\shH_{1}\oplus\shH_{2}$
into invertible sheaves such that $\beta_{|\shH_{1}^{2}}$ is an isomorphism
$\shH_{1}^{2}\simeq\shH_{2}$ and $\beta_{|\shH_{2}^{2}},\beta_{|\shH_{1}\otimes\shH_{2}}=0$.
In particular
\[
\Coker\beta\simeq\shH_{1}\text{ and }\det\shF\simeq\shH_{1}^{3}
\]
\end{lem}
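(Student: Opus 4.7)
The plan is to use the $G$-equivariant structure, encoded by $\alpha$ and $\beta$, to extract a canonical line sub-bundle of $\shF$, construct the square root $\iota$ of $m$ by restricting $\alpha$ to it, and then split $\shF$ into eigenspaces of the resulting involution.

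First I would set $\shH_2 := \Imm\beta \subseteq \shF$. Since $\chi \in \stZ_G$ forces $\tr\beta = 0$, Lemma \ref{lem:local basis for beta nowhere zero} provides local bases $y,\, z = \beta(y^2)$ of $\shF$; in such a basis, Lemma \ref{lem:associated parameters for beta nowhere zero} specialized to $\omega = 0$ reduces the associated parameters to $a = c = d = e = f = B = 0$, $b = 1$, $D = -A$ and $m = A^2$. In particular $\beta(yz) = \beta(z^2) = 0$ locally, so $\shH_2 = \langle z \rangle$ is a local direct summand, and hence a line sub-bundle of $\shF$.

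Next, the associativity relation (\ref{eq:ass alpha,beta}), $\alpha(s \otimes \beta(u \otimes v)) = -\beta(u \otimes \alpha(s \otimes v))$, shows $\alpha(\shL \otimes \shH_2) \subseteq \shH_2$, so $\alpha$ restricts to a morphism $\iota : \shL \to \Homsh(\shH_2, \shH_2) \simeq \odi S$. Restricting (\ref{eq:ass m,alpha}) to $\shH_2$ yields $\iota^{\otimes 2} = m$; since $m$ is an isomorphism of invertible sheaves, so is $\iota$. This gives the desired $\iota : \shL \to \odi S$ with $\iota^{\otimes 2} = m$, bypassing the fact that no such square root need exist for a general $2$-torsion line bundle.

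To split $\shF$, define $\alpha_\iota : \shF \arrdi{\iota^{-1} \otimes \id_\shF} \shL \otimes \shF \arrdi{\alpha} \shF$; then (\ref{eq:ass m,alpha}) together with $\iota^{\otimes 2} = m$ gives $\alpha_\iota^2 = \id_\shF$, and since $2 \in \odi S^*$ the projectors $(\id \pm \alpha_\iota)/2$ yield $\shF = \shF^+ \oplus \shF^-$. A direct stalk-level check shows $\alpha_\iota$ acts as $+1$ on $\shH_2$, so $\shH_2 = \shF^+$; set $\shH_1 := \shF^-$. Transferring (\ref{eq:ass alpha,beta}) to $\alpha_\iota$ gives $\alpha_\iota(\beta(u \otimes v)) = -\beta(u \otimes \alpha_\iota v)$, and a case analysis by eigenvalues (using the symmetry of $\beta$) forces $\beta(\shH_1 \otimes \shH_2) \subseteq \shH_1 \cap \shH_2 = 0$, $\beta(\Sym^2 \shH_2) \subseteq \shH_1 \cap \Imm\beta = 0$, and $\beta(\Sym^2 \shH_1) \subseteq \shH_2$. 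Since $\Imm\beta = \shH_2$, the last inclusion is surjective and hence an isomorphism $\shH_1^{\otimes 2} \simeq \shH_2$ between line bundles. The remaining assertions $\Coker\beta \simeq \shF/\shH_2 = \shH_1$ and $\det\shF = \shH_1 \otimes \shH_2 \simeq \shH_1^{\otimes 3}$ follow immediately. The main obstacle handled along the way is the global existence of $\iota$, which is achieved by exploiting $\alpha$-stability of the canonically defined sub-bundle $\shH_2$ rather than attempting a direct trivialization of $\shL$.
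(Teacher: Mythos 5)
Your proof is correct and follows essentially the same route as the paper's: both take $\shH_{2}=\Imm\beta$, obtain $\iota$ from the $\alpha$-stability of $\Imm\beta$ together with (\ref{eq:ass m,alpha}), and split $\shF$ into the eigensheaves of the involution $\alpha\circ(\iota^{-1}\otimes\id_{\shF})$, which is exactly the paper's $\alpha'$. The only divergence is cosmetic --- you verify the behaviour of $\beta$ on the eigensheaves coordinate-freely from (\ref{eq:ass alpha,beta}) and $\Imm\beta=\shH_{2}$, whereas the paper diagonalizes $\alpha$ locally via $y\longmapsto y+(C/2A)z$ and reads everything off the parameters --- though you should record explicitly that $\shF^{+}$ has rank $1$ (immediate from $\tr\alpha=0$ on $\stZ_{G}$, or from your local normal form $D=-A$ with $A$ a unit and $2$ invertible), since $\alpha_{\iota}$ acting as $+1$ on $\shH_{2}$ by itself only yields $\shH_{2}\subseteq\shF^{+}$.
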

\begin{proof}
What we will really prove is that $\Imm\beta$ is an invertible sheaf,
that the map $\shL\arr\Endsh(\shF)$ induced by $\alpha$ yields an
isomorphism $\iota\colon\shL\arr\Endsh(\Imm\beta)\simeq\odi S$, that
$\iota^{2}=m$, that $\alpha'=\alpha\circ(\iota^{-1}\otimes\id_{\shF})\colon\shF\arr\shF$
is an isomorphism such that $\alpha'^{2}=\id$ and that the decomposition
into eigenspaces of $\alpha'$ is $\shF=\shH_{1}\oplus\shH_{2}$,
where $\shH_{i}$ are invertible sheaves and $\shH_{2}=\Imm\beta$.
Those conditions and the requests of the statement can be checked
locally.

So assume that $S$ is integral, that $\shL=\odi S$ and that we have
a basis $y,z=\beta(y^{2})$ of $\shF$. By \ref{lem:associated parameters for beta nowhere zero},
we have $B=\omega C^{2}=0$, $D=-A$ and therefore $A^{2}=m$. Set
$\iota=-A$, so that $\alpha'=\alpha/\iota$. Replacing $y$ by $y+(C/2A)z$,
we get new parameters, although $A$ remains the same, such that $B=C=0$,
i.e. such that $\alpha$ is diagonal. Therefore $\alpha'^{2}=\id$
and $\shH_{1}=\la y\ra,\shH_{2}=\la z\ra$ are the eigenspaces of
$\alpha'$ with respect to the eigenvalues $-1,1$ respectively. From
relations (\ref{eq:loc com and ass conditions}) we see that $a=c=be=0$.
Since $z\in\Imm\beta$, we get $e=0$, $b\in\odi S^{*}$. In particular
$\shH_{2}=\Imm\beta$, which is invertible, $\beta_{|\shH_{1}\otimes\shH_{2}\oplus\shH_{2}^{2}}=0$
and $\beta_{|\shH_{1}^{2}}$ is an isomorphism $\shH_{1}^{2}\simeq\Imm\beta=\shH_{2}$.
\end{proof}
We fix an integral, regular and noetherian scheme $Y$ with $\dim Y\geq1$
and an object $\chi\in\stZ_{\omega}(Y)$, $\chi=(\shM,\shF,\delta,\omega)$,
whose induced $G$-cover, denoted by $f_{\chi}\colon X_{\chi}\arr Y$,
is regular. We also denote by $\alA=\alA_{\chi}$ the algebra associated
with $\chi$, i.e. $\alA=f_{\chi*}\odi{X_{\chi}}$, and by 
\[
\shL,m,\zeta,\alpha,\beta,\la-,-\ra,(-,-)\qquad D_{m},D_{\omega},D_{\delta},Y_{\alpha},Y_{\delta}
\]
the objects associated with $\chi$ according to the inclusion $\stZ_{\omega}(Y)\arr\GCov(Y)$
(see \ref{thm:description of ZG when omega in Cartier}) and$ $ the
closed subschemes of $Y$ defined in \ref{def:closed subscheme associated to Gcovers first}
and \ref{def:closed subscheme associated to Gcovers second} respectively.
We will often make use of Theorems \ref{thm:regular Sthree covers first},
\ref{thm:gamma for regular Sthree covers} and \ref{thm:regular G covers and triple},
which yield several conditions on the closed subschemes introduced
above. In particular notice that $\beta$ is never zero. Therefore
we will often consider basis of $\shF$ of the form $y,\beta(y^{2})$,
thanks to \ref{lem:local basis for beta nowhere zero}, and the correspondent
parameters associated with $\chi$, given in \ref{lem:associated parameters for beta nowhere zero}.

We are going to describe two exact sequences over $Y$, as the first
step in the computation of the invariants of $X_{\chi}$.
\begin{rem}
If $i\colon Z\arr Y$ is a closed immersion of schemes defined by
the sheaf of ideals $\shI$ and $\shQ$ is a coherent sheaf on $Y$
such that $\shQ_{p}\simeq\odi{Y,p}/\shI_{p}$ for any $p\in Y$, then
$i^{*}\shQ$ is an invertible sheaf on $Z$ and $\shQ\simeq i_{*}i^{*}\shQ$.\end{rem}
\begin{lem}
We have an exact sequence
\begin{equation}
0\arr(\det\shF)^{2}\otimes\shM\arrdi{\zeta}\Sym^{2}\shF\arrdi{\beta}\shF\arr i_{*}\shH\oplus j_{*}\shQ\arr0\label{eq:first exact sequence for Sthree}
\end{equation}
where $i\colon D_{\omega}\arr Y$, $j\colon Y_{\alpha}\arr Y$ are
the immersions, $\shH,\shQ$ are invertible sheaves on $D_{\omega}$
and $Y_{\alpha}$ respectively and $\shH^{3}\simeq i^{*}\det\shF$.\end{lem}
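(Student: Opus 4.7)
The plan is to verify exactness at each position of the putative sequence by combining the defining relations of $\stZ_G$ with local computations from \ref{lem:associated parameters for beta nowhere zero}, then to identify the pieces of $\Coker\beta$ using \ref{lem:When omega is zero F split}. The geometric input throughout is \ref{thm:regular Sthree covers first}: $\beta$ is nowhere zero, $D_m,D_\omega$ are disjoint Cartier divisors, $D_\omega$ is regular, and $Y_\alpha\subseteq D_m$ is either empty or regular of pure codimension $2$; in particular $D_\omega\cap Y_\alpha=\emptyset$.

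That $\beta\circ\zeta=0$ is exactly the defining condition (\ref{eq:first condition on M,F,delta,zeta,omega}) of $\stZ_G$, so we have a complex. Since $Y$ is integral and $\Sym^2\shF$ is torsion free, any nonzero map from the invertible sheaf $(\det\shF)^2\otimes\shM$ is automatically injective; and in a local basis $y,z=\beta(y^2)$ of $\shF$, which exists by \ref{lem:local basis for beta nowhere zero}, the formulas of \ref{lem:associated parameters for beta nowhere zero} together with (\ref{iso:primo alpha}) give $\zeta(1)=\omega C^2y^2-2Ayz-Cz^2$, which is nonzero on $Y\setminus Y_\alpha$. Hence $\zeta$ is injective.

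Exactness at $\Sym^2\shF$ is the main technical step and is checked stalk by stalk. In the same local basis, \ref{lem:associated parameters for beta nowhere zero} gives $\beta(y^2)=z$, $\beta(yz)=-\omega Cy$, $\beta(z^2)=2\omega Ay+\omega Cz$, so $\ker\beta=\{fy^2+gyz+hz^2 : f+h\omega C=0\text{ and }-g\omega C+2h\omega A=0\}$. Away from $Y_\alpha$ one of $A,C$ is a unit, and a direct substitution shows that every such $(f,g,h)$ is $\lambda\cdot\zeta(1)$ for an explicit $\lambda$. At a point of $Y_\alpha$ the element $\omega$ is a unit because $D_\omega\cap Y_\alpha=\emptyset$, so the equations reduce to $f=-h\omega C$ and $gC=2hA$; regularity of $Y_\alpha$ of codimension $2$ ensures that $(A,C)$ is a regular sequence in the local ring of $Y$, and the Koszul relation forces $(g,h)=(2\mu A,\mu C)$ for some $\mu$, giving $(f,g,h)=\mu(-\omega C^2,2A,C)=-\mu\cdot\zeta(1)$ (using $2\in\odi Y^*$). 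Thus $\ker\beta=\Imm\zeta$ at every stalk.

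For the cokernel, the same local description gives $\Imm\beta=\langle z\rangle+\omega(A,C)\langle y\rangle$, so $\Coker\beta$ is locally the cyclic module $\odi Y/(\omega A,\omega C)$ generated by the class of $y$. Using $D_\omega\cap Y_\alpha=\emptyset$, at each point of $Y$ either $\omega$ is a unit, in which case $(\omega A,\omega C)=(A,C)=I_{Y_\alpha}$ and $\Coker\beta$ is locally $\odi{Y_\alpha}$, or $(A,C)=\odi Y$, in which case $(\omega A,\omega C)=(\omega)$ and $\Coker\beta$ is locally $\odi{D_\omega}$. Therefore $\mathrm{Supp}\,\Coker\beta=|D_\omega|\sqcup|Y_\alpha|$ and $\Coker\beta$ splits as $i_*\shH\oplus j_*\shQ$ with $\shH$ an invertible sheaf on $D_\omega$ and $\shQ$ an invertible sheaf on $Y_\alpha$, both being regular. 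To identify $\shH$, I restrict $\chi$ along $i\colon D_\omega\hookrightarrow Y$: there $\la-,-\ra|_{D_\omega}=0$, $m|_{D_\omega}$ is an isomorphism (since $D_m\cap D_\omega=\emptyset$), $\beta|_{D_\omega}$ is still nowhere zero, and $D_\omega$ is a finite disjoint union of integral schemes by regularity, so the hypotheses of \ref{lem:When omega is zero F split} are met. That lemma provides a splitting $i^*\shF=\shH_1\oplus\shH_2$ with $\shH_2=\Imm(\beta|_{D_\omega})$ and $\shH_1^3\simeq\det(i^*\shF)$, whence $i^*\Coker\beta\simeq\shH_1$; setting $\shH:=\shH_1$ yields $\shH^3\simeq i^*\det\shF$, as required. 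The hard step is the Koszul argument at points of $Y_\alpha$: this is the only place where full regularity of $Y_\alpha$ (not just its codimension) is used, via the fact that $(A,C)$ forms a regular sequence in the local ring of $Y$.
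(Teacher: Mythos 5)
Your proof is correct, and its skeleton matches the paper's: the same local normal form (a basis $y,z=\beta(y^{2})$ with the parameters of \ref{lem:associated parameters for beta nowhere zero}), the same local identification $\Coker\beta\simeq R/(\omega A,\omega C)$, the same splitting of the cokernel along $D_{\omega}\sqcup Y_{\alpha}$ via $D_{m}\cap D_{\omega}=\emptyset$, and the same appeal to \ref{lem:When omega is zero F split} on the regular divisor $D_{\omega}$ to get $\shH^{3}\simeq i^{*}\det\shF$. Where you genuinely diverge is the exactness at $\Sym^{2}\shF$: the paper never solves the kernel equations, but observes that $\beta$ is generically surjective (since $\chi$ is generically a torsor, \ref{thm:description of Sthree torsors}), so $\Ker\beta$ is the saturation of $R\zeta$, and then proves $u\zeta\in\Sym^{2}\shF\then u\in R$ by a coprimality argument in the factorial local ring $R$, using that $A,C\in m_{R}$ are independent in $m_{R}/m_{R}^{2}$ and hence non-associate primes. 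You instead solve $f+h\omega C=0$, $\omega(gC-2hA)=0$ directly and, on $Y_{\alpha}$, resolve $gC=2hA$ by the Koszul syzygy of the regular sequence $(C,2A)$. Both are sound and use the same input from \ref{thm:regular Sthree covers first}; your route is more elementary (linear algebra plus depth, no factorization theory), and in fact needs less than you claim: since $R$ is regular, hence Cohen--Macaulay, $\alt(A,C)=2$ alone makes $(A,C)$ a regular sequence, so your closing remark overstates the role of the regularity of $Y_{\alpha}$ --- it is rather the paper's primality argument that genuinely uses the independence of $A,C$ in $m_{R}/m_{R}^{2}$. One small point to make explicit: cancelling $\omega$ away from $Y_{\alpha}$ is legitimate because $Y$ is integral and $\omega$ is injective, as $\chi\in\stZ_{\omega}(Y)$.
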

\begin{proof}
We will prove that $\Coker\beta$ is schematically supported over
$Y_{\delta}=D_{\omega}\sqcup Y_{\alpha}$ and therefore of the form
$i_{*}\shH\oplus j_{*}\shQ$ where $i$ and $j$ are as in the statement
and $\shH,\shQ$ are coherent sheaves on $D_{\omega}$ and $Y_{\alpha}$
respectively. By \ref{lem:When omega is zero F split} we can then
conclude that $\shH^{3}\simeq i^{*}\det\shF$, since $D_{\omega}$
is regular. We can therefore work locally, i.e. assuming that $Y=\Spec R$,
where $R$ is a local regular ring, and that we have a basis $y,z=\beta(y^{2})$
of $\shF$. In particular
\[
\shF/\Imm\beta\simeq(Ry\oplus Rz)/(z,cy,ey-cz)\simeq R/(c,e)
\]
So $\shF/\Imm\beta$ is an invertible sheaf on $Y_{\delta}=D_{\omega}\sqcup Y_{\alpha}$
and therefore can be written as $\shF/\Imm\beta\simeq i_{*}\shH\oplus j_{*}\shQ$,
where $\shH,\shQ$ are invertible sheaves as in the statement.

By \ref{lem:for the description of Z Sthree}, we know that $\beta(\zeta)=0$.
By \ref{iso:primo alpha}, we can write $\zeta=By^{2}-2Ayz-Cz^{2}$.
If $\zeta=0$, then $m=A^{2}+\omega C^{3}=0$, which is not the case.
It remains to check that $\Ker\beta$ is generated by $\zeta$. Note
that $\beta$ is generically surjective, since $\chi$ is generically
a $G$-torsor and thanks to \ref{thm:description of Sthree torsors}.
Therefore we have that $\Ker\beta=(k(R)\zeta)\cap\Sym^{2}\shF\subseteq\Sym^{2}\shF\otimes k(R)$
and we need to prove that if $u\zeta\in R^{3}$, with $u\in k(R)$,
then $u\in R$. If $A$ or $C$ is invertible this is clear. If $A,C\in m_{R}$,
then they are independent in $m_{R}/m_{R}^{2}$ and therefore different
primes. Write $u=v/w$ with $v,w$ coprimes. By hypothesis, we have
relations $vA=wr,vC=wr'$, with $r,r'\in R$. If $w$ is not invertible,
any prime dividing $w$ will also divide $A$ and $C$, which is a
contradiction.
\end{proof}
In order to introduce the second exact sequence, we introduce the
following notation. Set $\alB=\alA^{\sigma}\simeq\odi Y\oplus\shF$
and $\pi'\colon X'=\Spec\alB\arr Y$. The map $X_{\chi}\arr X'$ is
a degree $2$ cover and we denote by $\Delta$ the invertible sheaf
over $\alB$ inducing it.
\begin{rem}
\label{rem:delta otimes delta}Notice that $\Delta\subseteq\alA$
is the eigenspace of $\sigma\in\Z/2\Z$ relative to $-1$. In particular
\[
\shL\oplus\shF\simeq\Delta=\{0\oplus s\oplus x\oplus(-x)\st s\in\shL\comma x\in\shF\}\subseteq\alA=\odi Y\oplus\shL\oplus\shF_{1}\oplus\shF_{2}
\]
Similarly to how we have identified $\alB=\alA^{\sigma}$ with $\odi Y\oplus\shF$
(see \ref{rem:triple covers and invariants by sigma}), we will always
identify $\shL\oplus\shF$ with $\Delta$ through the map $s\oplus x\longmapsto0\oplus s\oplus x\oplus(-x)$.
Note that $\alA_{\chi}=\alB\oplus\Delta$. In particular we get a
multiplication map $\Delta\otimes_{\alB}\Delta\arr\alB$ and it is
easy to check that
\[
m\oplus\alpha\oplus(\beta-\eta_{\delta})\colon\Sym_{\odi Y}^{2}\Delta=\shL^{2}\oplus\shL\otimes\shF\oplus\Sym^{2}\shF\arr\Delta\otimes_{\alB}\Delta\arr\alB
\]
Note that, in the above composition, the first map is surjective,
while the second is injective because $f_{\chi}\colon X_{\chi}\arr Y$
is generically a $G$-torsor. Therefore we will identify the sheaf
$\Delta\otimes_{\alB}\Delta$ with the image of $m\oplus\alpha\oplus(\beta-\eta_{\delta})$
in $\alB=\odi Y\oplus\shF$.\end{rem}
\begin{lem}
We have exact sequences
\begin{equation}
0\arr\shQ\arr\Delta\otimes_{\alB}\Delta\arr i_{*}\odi{D_{\omega}}\arr0\comma0\arr\Sym^{2}\shF\arr\shQ\arr i_{*}\shK\arr0\label{eq:second exact sequence for Sthree}
\end{equation}
where $i\colon D_{\omega}\arr Y$ is the inclusion and $\shK$ is
an invertible sheaf on $D_{\omega}$ such that $\shK^{3}\simeq i^{*}\det\shF$.\end{lem}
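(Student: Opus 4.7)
The plan is to build both exact sequences from the natural embedding $\Delta\otimes_{\alB}\Delta\hookrightarrow\alB$ of Remark \ref{rem:delta otimes delta} composed with a suitable ring quotient of $\alB$. First observe that $\shF+\omega\cdot\odi Y$ is an ideal of $\alB=\odi Y\oplus\shF$: indeed $\shF\cdot\shF=\Imm\beta+\Imm\eta_{\delta}\subseteq\shF+(\omega)$, where $\Imm\eta_{\delta}\subseteq(\omega)$ is the defining condition $\chi\in\stZ_{\omega}(Y)$ from Theorem \ref{thm:description of ZG when omega in Cartier}. The quotient $\alB/(\shF+\omega\odi Y)$ is $\odi Y/(\omega)=i_{*}\odi{D_{\omega}}$, yielding a surjective ring map $\alB\twoheadrightarrow i_{*}\odi{D_{\omega}}$, and I define the first-sequence map as the composition $\Delta\otimes_{\alB}\Delta\hookrightarrow\alB\twoheadrightarrow i_{*}\odi{D_{\omega}}$, with $\shQ$ the kernel. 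Surjectivity at each point of $D_{\omega}$ is checked in a local basis $y,z=\beta(y^{2})$ with parameters from Lemma \ref{lem:associated parameters for beta nowhere zero}: since $D_{m}\cap D_{\omega}=\emptyset$ by Theorem \ref{thm:regular Sthree covers first}, $m=A^{2}+\omega C^{3}$ is a unit on $D_{\omega}$, forcing $A$ itself to be a unit on $D_{\omega}$, so the element $m\cdot1\in\Imm(\Delta\otimes_\alB\Delta\to\alB)$ maps to the unit $A^{2}\in\odi{D_{\omega}}$.

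For the second sequence, the composition $\Sym^{2}\shF\hookrightarrow\Sym^{2}\Delta\twoheadrightarrow\Delta\otimes_{\alB}\Delta\hookrightarrow\alB$ equals $\beta-\eta_{\delta}$, whose image sits inside $\shF+\omega\cdot\odi Y$, so it factors through $\shQ$. Injectivity into $\alB$ (hence into $\shQ$) follows from computing the determinant of the $3\times3$ matrix of $\beta-\eta_{\delta}$ relative to bases $\{y^{2},yz,z^{2}\}$ and $\{1,y,z\}$, which comes out to $-4\omega^{2}m$, a nonzerodivisor because $\omega$ is injective ($\chi\in\stZ_{\omega}$) and $m$ is nonzero on the integral scheme $Y$ (as $\chi$ is generically a $G$-torsor by Theorem \ref{thm:regular G covers and triple}). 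To compute the cokernel, I apply Lemma \ref{lem:When omega is zero F split} on $D_{\omega}$: its three hypotheses hold because $\la-,-\ra=0$ on $D_{\omega}$ by definition, $m$ is a unit on $D_{\omega}$ as noted, and $\beta$ is nowhere zero (since $X_\chi$ regular forces $\chi\in\stU_{\beta}$ via Theorem \ref{thm:regular Sthree covers first}). This yields $\shF|_{D_{\omega}}\simeq\shH_{1}\oplus\shH_{2}$ with $\beta$ inducing an isomorphism $\shH_{1}^{2}\simeq\shH_{2}$ and vanishing on the other summands, together with $\det\shF|_{D_{\omega}}\simeq\shH_{1}^{3}$.

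Under the induced decomposition $\alB|_{D_{\omega}}=\odi{D_{\omega}}\oplus\shH_{1}\oplus\shH_{2}$ with augmentation ideal $\shH_1\oplus\shH_2$, one computes $\shQ|_{D_{\omega}}=\shH_{1}\oplus\shH_{2}$, while the image of $\Sym^{2}\shF|_{D_{\omega}}$ is exactly $\shH_{2}$. Hence the cokernel of $\Sym^{2}\shF\to\shQ$ on $D_\omega$ is $\shH_{1}$, and setting $\shK:=\shH_{1}$ gives $\shK^{3}\simeq i^{*}\det\shF$. Away from $D_{\omega}$, both $\Sym^{2}\shF\to\shQ$ and $\shQ\hookrightarrow\alB$ are isomorphisms between locally free rank $3$ sheaves (by the rank calculation via $\det(\beta-\eta_{\delta})=-4\omega^{2}m$), so the cokernel is concentrated on $D_{\omega}$ with the stated structure.

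The main obstacle is pinning down the precise map $\Delta\otimes_{\alB}\Delta\to i_{*}\odi{D_{\omega}}$ and verifying surjectivity uniformly on $D_{\omega}$; once the key observation that $A$ is a unit everywhere on $D_{\omega}$ (forced by the regularity condition $D_{m}\cap D_{\omega}=\emptyset$ combined with $m|_{D_\omega}=A^{2}$) is in hand, the rest is a mechanical combination of the local description from Lemma \ref{lem:associated parameters for beta nowhere zero} and the decomposition from Lemma \ref{lem:When omega is zero F split}.
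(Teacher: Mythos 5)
Your treatment of the first sequence is correct and in fact a little cleaner than the paper's: since $\Imm\eta_{\delta}$ lies in the ideal $\shI$ of $D_{\omega}$ (the defining condition of $\stZ_{\omega}$), $\shI\oplus\shF$ is an ideal of $\alB$ with quotient $i_{*}\odi{D_{\omega}}$, and surjectivity of the composite $\Delta\otimes_{\alB}\Delta\arr\alB\arr i_{*}\odi{D_{\omega}}$ follows from $m\equiv A^{2}$ being a unit along $D_{\omega}$, exactly as you argue. (The paper instead defines $\shQ$ as $\Imm((\beta-\eta_{\delta})\oplus\alpha)$, so that generators of $\shQ$ are manifest, and computes the quotient $\shT$; one can check that your kernel agrees with that image, but your route to the first sequence does not need this.)

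The second sequence, however, has a genuine gap, and it is exactly the point on which the paper spends the bulk of its proof. Having defined $\shQ$ as a kernel, you have no generators for it, so the assertion that ``one computes $\shQ|_{D_{\omega}}=\shH_{1}\oplus\shH_{2}$'' is unsubstantiated: the natural map $\shQ\otimes\odi{D_{\omega}}\arr\alB\otimes\odi{D_{\omega}}$ is a priori only surjective onto its image $\overline{\shQ}$ (the paper is explicit that $\shQ\otimes\odi{D_{\omega}}\arr\overline{\shQ}$ is merely a surjection), so your restriction argument only produces a surjection from $\Coker(\Sym^{2}\shF\arr\shQ)\otimes\odi{D_{\omega}}$ onto $\shH_{1}$, not an isomorphism. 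More seriously, even granting that the restriction is $\shH_{1}$ and that the cokernel $\shH$ is topologically supported on $D_{\omega}$, this does not identify $\shH$ with $i_{*}\shK$ for an invertible $\shK$ on $D_{\omega}$: locally $\shH$ could be, say, $R/(\omega^{2})$, which has the same support and the same restriction to $D_{\omega}$ but a different Euler characteristic --- and $\chi(\shQ)$ is precisely what this lemma feeds into later. One must prove that $\shI$ annihilates $\shH$ scheme-theoretically, i.e.\ $\shI\shQ\subseteq\Imm(\beta-\eta_{\delta})$, and this is the content of the paper's long explicit computation solving the linear system to show that $\alpha(uy+vz)\in\Imm(\beta-\eta_{\delta})$ if and only if $\omega\mid u$, whence $\shH\simeq R/(\omega)$; only after that does the split-lemma argument (which you set up correctly) identify $\shK$ and give $\shK^{3}\simeq i^{*}\det\shF$. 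Finally, your off-$D_{\omega}$ step is also flawed: $\det(\beta-\eta_{\delta})=-4\omega^{2}m$ vanishes along $D_{m}$, so near $D_{m}\setminus D_{\omega}$ the determinant gives neither that $\Sym^{2}\shF\arr\shQ$ is surjective nor that $\shQ\arr\alB$ is an isomorphism (the latter is in fact false there, since $\shQ=\Imm(\beta-\eta_{\delta})\subsetneq\alB$ along $D_{m}$); the needed surjectivity of $\Sym^{2}\shF\arr\shQ$ where $\omega$ is invertible rests on the relations $(\beta-\eta_{\delta})(\hat{\eta_{\delta}})=4\omega^{2}m$ and $\Imm\alpha\subseteq\Imm(\beta-\eta_{\delta})$, that is, on the same local analysis you omitted.
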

\begin{proof}
As explained in \ref{rem:delta otimes delta}, $\Delta\otimes_{\alB}\Delta$
can be identified to the image $\shG$ of $m\oplus\alpha\oplus(\beta-\eta_{\delta})$
in $\alB=\odi Y\oplus\shF$. Set 
\[
\shQ=\Imm((\beta-\eta_{\delta})\oplus\alpha\colon\Sym^{2}\shF\oplus\shL\otimes\shF\arr\alB)\subseteq\shG\comma\shH=\Coker((\beta-\eta_{\delta})\colon\Sym^{2}\shF\arr\shQ)
\]
and $\shT=\Coker(\shQ\arr\shG)$. We have exact sequences 
\[
0\arr\shQ\arr\Delta\otimes_{\alB}\Delta\arr\shT\arr0\comma\Sym^{2}\shF\arrdi{\beta-\eta_{\delta}}\shQ\arr\shH\arr0
\]
We first prove that $\shT\simeq i_{*}\odi{D_{\omega}}$. By definition
we have a surjective map $\shL^{2}\arrdi m\shG\arr\shT$ whose kernel
is $m^{-1}(\shQ)$. Since locally $\hat{\eta_{\delta}}=\omega\zeta$
by \ref{eq:second condition on M,F,delta,zeta,omega}, $\beta(\zeta)=0$
by \ref{lem:for the description of Z Sthree} and $\eta_{\delta}(\hat{\eta_{\delta}})=-4\omega^{2}m$
by \ref{rem: notation for Ydelta Ddelta} we have that $(\beta-\eta_{\delta})(\hat{\eta_{\delta}})=4\omega^{2}m\in\shQ$,
that $\omega^{2}\in m^{-1}(\shQ)$ and therefore that $|\Supp\shT|\subseteq|D_{\omega}|$.
So $\shT$ is supported in the locus where $m$ and therefore $\alpha$
are isomorphisms, in which we have $\shG=\alB$, $\shQ=\Imm\eta_{\delta}\oplus\shF$,
which yields the desired result.

We now consider $\shH$. Assume that we have already proved that $\shH\simeq i_{*}\shK$,
where $\shK$ is an invertible sheaf over $D_{\omega}$. We want to
prove that $\shK^{3}\simeq i^{*}\det\shF$. Let
\[
\overline{\shQ}=\Imm((\Sym^{2}\shF\oplus\shL\otimes\shF)\otimes\odi{D_{\omega}}\arrdi{(\beta-\eta_{\delta})\oplus\alpha}\alB\otimes\odi{D_{\omega}})
\]
Since, on $D_{\omega}$, $\alpha$ is an isomorphism and $\eta_{\delta}=0$,
we get $\overline{\shQ}=\shF\otimes\odi{D_{\omega}}$. Moreover we
have a surjective map $\shQ\otimes\odi{D_{\omega}}\arr\overline{\shQ}$
and a commutative diagram   \[   \begin{tikzpicture}[xscale=2.6,yscale=-1.2]     \node (A0_0) at (0, 0) {$\Sym^2\shF\otimes \odi{D_\omega}$};     \node (A0_1) at (1, 0) {$\shQ\otimes \odi{D_\omega}$};     \node (A0_2) at (2, 0) {$\shH\otimes \odi{D_\omega}\simeq \shK$};     
\node (A0_3) at (2.8, 0) {$0$};    
\node (A1_1) at (1, 1) {$\shF\otimes\odi{D_\omega}$};     
\node (A1_2) at (2, 1) {$\widehat\shK$};     
\path (A0_0) edge [->]node [auto] {$\scriptstyle{}$} (A0_1);     
\path (A0_1) edge [->]node [auto] {$\scriptstyle{}$} (A1_1);     
\path (A0_1) edge [->]node [auto] {$\scriptstyle{}$} (A0_2);     
\path (A0_2) edge [->>]node [auto] {$\scriptstyle{}$} (A1_2);     \path (A1_1) edge [->]node [auto] {$\scriptstyle{}$} (A1_2);     
\path (A0_0) edge [->]node [swap,auto] {$\scriptstyle{\beta\otimes\odi{D_\omega}}$} (A1_1);     \path (A0_2) edge [->]node [auto] {$\scriptstyle{}$} (A0_3);   \end{tikzpicture}   \]  where $\widehat{\shK}=\Coker(\beta\otimes\odi{D_{\omega}})$. Thanks
to \ref{lem:When omega is zero F split}, $\widehat{\shK}$ is an
invertible sheaf on $D_{\omega}$ such that $\widehat{\shK}^{3}\simeq i^{*}\det\shF$
and the surjective map $\shK\arr\widehat{\shK}$ is an isomorphism.
In order to prove that $\shH\simeq i_{*}\shK$, we can work on a regular
local ring $R$. Considering a basis $y,z=\beta(y^{2})$ of $\shF$
and basis $1,y,z$ of $\odi Y\oplus\shF$ we have
\[
\beta-\eta_{\delta}=\left(\begin{array}{ccc}
-2c & -e & -2c^{2}\\
0 & c & e\\
1 & 0 & -c
\end{array}\right)\comma e=2\omega A\comma c=-\omega C\comma B=\omega C^{2}\comma m=A^{2}+\omega C^{3}
\]
 In particular $\det(\beta-\eta_{\delta})=4c^{3}-e^{2}=-4\omega^{2}m$
and therefore $\beta-\eta_{\delta}$ is injective. In particular if
both $\omega$ and $m$ are invertible, then $\beta-\eta_{\delta}$
is an isomorphism and therefore $\shH=0$. In particular we can assume
that $\omega$ or $m$ is not invertible. By definition we have a
surjective map $\shL\otimes\shF\arr\shQ\arr\shH$ whose kernel is
$\alpha^{-1}(\Imm(\beta-\eta_{\delta}))$. Given $x=uy+vz\in\shL\otimes\shF$
we want to check when there exists $x'=wy^{2}+gyz+hz^{2}\in\Sym^{2}\shF$
such that $(\beta-\eta_{\delta})(x')=\alpha(x)$, i.e.
\[
u(Ay+Cz)+v(By-Az)=(-2cw-eg-2c^{2}h)+(gc+he)y+(w-hc)z
\]
which translates in the system of $3$ equations
\[
-2cw-eg-2c^{2}h=0\comma uA+vB=gc+he\comma uC-vA=w-hc
\]
We first get $w=uC+hc-vA$ and our equations become 
\[
-4c^{2}h-2ucC+2vcA-eg=0\comma uA+vB=gc+he
\]
First note that if $e=0\in m_{R}$, then $A=0$ and if $C\in m_{R}$
then $R/(A,C)$ cannot have codimension $2$. So $C$ is invertible
and, since $4\omega^{2}m=-4c^{3}$, $\omega$, $c$ and $m$ differs
by an invertible element. From $D_{\omega}\cap D_{m}=\emptyset$,
we can conclude that both $\omega$ and $m$ are invertible. Therefore
we have $e\neq0$. We can write
\[
h=(uA+vB-gc)/e
\]
 and substituting in the first equation we get
\[
g(4c^{3}-e^{2})=u(4c^{2}A+2ceC)+v(4c^{2}B-2ceA)
\]
Now note that $4c^{3}-e^{2}=-4\omega^{2}m$, $4c^{2}A+2ceC=0$ and
$4c^{2}B-2ceA=4\omega^{2}Cm$ and so the above equation become $4\omega^{2}m(g+Cv)$
whose unique solution is $g=-Cv$. In particular $vB-gc=0$ and our
last equation is $h=uA/e=u/2\omega$. So $\alpha(uy+vz)$ is in the
image of $\beta-\eta_{\delta}$ if and only if $\omega\mid u$, which
implies that $\shH\simeq R/(\omega)$.
\end{proof}
From now on we assume that $Y$ is a surface over an algebraically
closed field $k$. We write $\mu=c_{1}(\shM)=D_{\omega}$, $c_{1}=c_{1}(\shF)$,
$c_{2}=c_{2}(\shF)$ and $K_{Y}=K$, the canonical divisor of $Y$.
\begin{rem}
We have $\mu c_{1}=-\mu^{2}$. Indeed from $D_{m}\cap D_{\omega}=\emptyset$,
we get $\mu c_{1}(\shL)=0$. On the other hand we have $\shL\simeq\shM\otimes\det\shF$.\end{rem}
\begin{lem}
\label{lem:chern classes invertible sheaves Domega}Let $\shH$ be
an invertible sheaf on $D_{\omega}$ such that $\shH^{3}\simeq\det\shF^{l}\otimes\odi{D_{\omega}}$.
Then
\[
\chi(\shH)=-\frac{2l+3}{6}\mu{}^{2}-\frac{\mu K}{2}
\]
\end{lem}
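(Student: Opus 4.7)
The plan is to reduce the computation of $\chi(\shH)$ to two standard ingredients on the curve $D_\omega$: Riemann--Roch and the adjunction formula, together with the intersection identity $\mu c_1 = -\mu^2$ recorded just before the lemma. Since $D_\omega$ is a regular effective divisor on the smooth surface $Y$ (this follows from Theorem~\ref{thm:regular Sthree covers first}, which gives that the cover $X_\chi$ regular forces $D_\omega$ to be a smooth Cartier divisor), it is a smooth projective curve over the algebraically closed field $k$, so Riemann--Roch applies to the invertible sheaf $\shH$ on it.

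First I would write
\[
\chi(\shH) \;=\; \deg_{D_\omega}\!\shH \,+\, 1 - g(D_\omega)
\]
by Riemann--Roch on the curve $D_\omega$. To handle the genus, apply the adjunction formula on $Y$ to the smooth curve $D_\omega \subset Y$ of class $\mu$: this gives
\[
2g(D_\omega) - 2 \;=\; D_\omega \cdot (D_\omega + K) \;=\; \mu^2 + \mu K,
\]
so $1 - g(D_\omega) = -\tfrac12(\mu^2 + \mu K)$.

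Next, for the degree of $\shH$, I would take the hypothesis $\shH^{\otimes 3} \simeq \det\shF^{\otimes l}\otimes \odi{D_\omega}$ (where by $\det\shF^{\otimes l}\otimes\odi{D_\omega}$ I mean the pullback $i^*\det\shF^{\otimes l}$ to $D_\omega$, as is the convention in the statement) and pass to degrees on $D_\omega$. This gives
\[
3\deg_{D_\omega}\!\shH \;=\; l\,(c_1 \cdot \mu) \;=\; -l\,\mu^2,
\]
the last equality using the recalled identity $\mu c_1 = -\mu^2$, which in turn comes from $D_m \cap D_\omega = \emptyset$ together with $\shL \simeq \shM \otimes \det\shF$. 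Hence $\deg_{D_\omega}\!\shH = -l\mu^2/3$.

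Substituting into the Riemann--Roch expression yields
\[
\chi(\shH) \;=\; -\frac{l\mu^2}{3} - \frac{\mu^2 + \mu K}{2} \;=\; -\frac{2l+3}{6}\,\mu^2 - \frac{\mu K}{2},
\]
which is exactly the claimed formula. There is no real obstacle: the only substantive input beyond Riemann--Roch and adjunction is the smoothness of $D_\omega$ (given by the regular $G$-cover hypothesis standing in the subsection) and the intersection identity $\mu c_1 = -\mu^2$ already isolated in the preceding remark; everything else is a short numerical manipulation.
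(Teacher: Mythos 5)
Your proof is correct and follows essentially the same route as the paper's: compute $\deg_{D_{\omega}}\shH=-l\mu^{2}/3$ from the cube relation, then finish with adjunction and Riemann--Roch on $D_{\omega}$. The only differences are organizational: the paper works component-by-component on $D_{\omega}=D_{1}+\cdots+D_{s}$ and invokes \ref{lem:When omega is zero F split} to trivialize $i^{*}\shL$ (where you instead use the numerical remark $\mu c_{1}=-\mu^{2}$, which rests on the same input $D_{m}\cap D_{\omega}=\emptyset$), and this componentwise treatment also handles the fact that $D_{\omega}$ need not be connected or irreducible, so your ``$1-g(D_{\omega})$'' should strictly be read as $\chi(\odi{D_{\omega}})=-\tfrac{1}{2}(\mu^{2}+\mu K)$, which is what your computation actually uses.
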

\begin{proof}
Let $\mu=D_{\omega}=D_{1}+\cdots+D_{s}$ be the decomposition into
smooth integral components and set $\shH_{i}=\shH\otimes\odi{D_{i}}$
and $i\colon D_{\omega}\arr Y$ for the inclusion. Thanks to \ref{lem:When omega is zero F split},
$i^{*}\shL\simeq\odi{D_{\omega}}$ and therefore $i^{*}\det\shF\simeq i^{*}\shM^{-1}$.
In particular
\[
\shH_{i}^{3}\simeq(\det\shF)^{l}\otimes\odi{D_{i}}\simeq\shM^{-l}\otimes\odi{D_{i}}\simeq(\odi Y(-D_{i})\otimes\odi{D_{i}})^{l}\then\deg_{D_{i}}\shH_{i}=-lD_{i}^{2}/3
\]
By adjunction formula we also have $2(g(D_{i})-1)=D_{i}^{2}+D_{i}.K$
and moreover, by Riemann-Roch,
\[
\chi_{D_{i}}(\shH_{i})=-lD_{i}^{2}/3-D_{i}^{2}/2-D_{i}.K/2=-\frac{2l+3}{6}D_{i}^{2}-\frac{D_{i}.K}{2}
\]
Summing over all $i$ and taking into account the relation $\mu^{2}=D_{\omega}^{2}=\sum_{i}D_{i}^{2}$
we get the result.\end{proof}
\begin{rem}
Let $\E$ be a locally free sheaf of rank $r$ and $\shL$ be an invertible
sheaf over $Y$. We recall the following well known formulas for the
Chern classes. In particular the last one is Riemann-Roch for surfaces.
\[
c_{1}(\shE\otimes\shL)=rc_{1}(\shL)+c_{1}(\shE),c_{2}(\shE\otimes\shL)=\frac{r(r-1)}{2}c_{1}(\shL)^{2}+(r-1)c_{1}(\shL)c_{1}(\shE)+c_{2}(\shE)
\]
\[
c_{1}(\Sym^{2}\shE)=3c_{1}(\shE),c_{2}(\Sym^{2}\shE)=2c_{1}(\shE)^{2}+4c_{2}(\shE)\text{ if }r=2
\]
\[
\chi(\shE)=\frac{c_{1}(\shE)^{2}-2c_{2}(\shE)-c_{1}(\shE)K}{2}+r\chi(\odi Y)
\]
\end{rem}
\begin{lem}
\label{lem:cardinality of Yalpha}We have
\[
|Y_{\alpha}|=3c_{2}-\frac{2}{3}\mu^{2}
\]
\end{lem}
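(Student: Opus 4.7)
The strategy is to extract $|Y_\alpha|$ as an Euler characteristic from the four-term exact sequence (\ref{eq:first exact sequence for Sthree}), then compute each term by Riemann--Roch. Because $Y$ is a surface and $Y_\alpha$, if nonempty, has pure codimension~$2$, it is a finite set of reduced points (it is regular of pure codimension~$2$ by Theorem~\ref{thm:regular Sthree covers first}, and points on a surface are rational over the algebraically closed base). Hence $j_*\shQ$ is a skyscraper sheaf of length $|Y_\alpha|$, so $\chi(j_*\shQ)=|Y_\alpha|$.

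Taking Euler characteristics in (\ref{eq:first exact sequence for Sthree}) gives
\[
|Y_\alpha| \;=\; \chi\bigl((\det\shF)^{2}\otimes\shM\bigr) \;-\; \chi(\Sym^{2}\shF) \;+\; \chi(\shF) \;-\; \chi(i_*\shH).
\]
I will evaluate the first three terms by the Riemann--Roch formula for a rank-$r$ locally free sheaf $\shE$ on a surface,
\[
\chi(\shE) \;=\; r\,\chi(\odi Y) + \tfrac{1}{2}\bigl(c_1(\shE)^{2} - 2c_2(\shE) - c_1(\shE)\,K\bigr),
\]
using $c_1(\Sym^{2}\shF)=3c_1$, $c_2(\Sym^{2}\shF)=2c_1^{2}+4c_2$, and $c_1((\det\shF)^{2}\otimes\shM)=2c_1+\mu$. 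The final term $\chi(i_*\shH)$ is given by Lemma~\ref{lem:chern classes invertible sheaves Domega} applied with $l=1$, which yields $\chi(i_*\shH)=-\tfrac{5}{6}\mu^{2}-\tfrac{1}{2}\mu K$.

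The key input that makes all non-$c_2,\mu^{2}$ contributions cancel is the identity $c_1\,\mu = -\mu^{2}$ recorded just before Lemma~\ref{lem:chern classes invertible sheaves Domega}, which comes from $D_m\cap D_\omega=\emptyset$ together with $\shL\simeq\shM\otimes\det\shF$. After substituting this relation into the expansion of $(2c_1+\mu)^{2}$, a direct tally shows that the coefficients of $\chi(\odi Y)$, $c_1^{2}$, $c_1 K$ and $\mu K$ each vanish ($1-3+2=0$; $2-\tfrac{5}{2}+\tfrac{1}{2}=0$; $-1+\tfrac{3}{2}-\tfrac{1}{2}=0$; $-\tfrac{1}{2}+\tfrac{1}{2}=0$), leaving only the $c_2$ contribution $4c_2-c_2=3c_2$ and the $\mu^{2}$ contribution $-\tfrac{3}{2}\mu^{2}+\tfrac{5}{6}\mu^{2}=-\tfrac{2}{3}\mu^{2}$.

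There is no real obstacle: the proof is essentially the bookkeeping above, so the only risk is arithmetic error in the fractions. The conceptually decisive step is identifying that the first sequence of (\ref{eq:first exact sequence for Sthree}) expresses $j_*\shQ$ (the sheaf whose length is $|Y_\alpha|$) as an alternating combination of sheaves all of whose invariants are either already computed or given directly by Riemann--Roch, so that the cancellation of the bulk terms is forced by the $\shM$-twist in the first entry $(\det\shF)^{2}\otimes\shM$ interacting with $c_1\mu=-\mu^{2}$.
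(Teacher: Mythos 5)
Your proof is correct and takes essentially the same route as the paper: both compute the Euler characteristic along the exact sequence (\ref{eq:first exact sequence for Sthree}), identify $\chi(j_{*}\shQ)=|Y_{\alpha}|$ from the regularity of $Y_{\alpha}$ in codimension $2$, evaluate the locally free terms by Riemann--Roch together with $c_{1}\mu=-\mu^{2}$, and use Lemma \ref{lem:chern classes invertible sheaves Domega} with $l=1$ for $\chi(i_{*}\shH)$. The only cosmetic difference is that the paper packages the bookkeeping through the operator $\Phi=2(\chi-\chi(\odi Y)\rk)$, while you check directly that the $\chi(\odi Y)$ coefficients cancel; your tally of coefficients is accurate.
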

\begin{proof}
Consider the operator
\[
\Phi=2(\chi-\chi(\odi Y)\rk)
\]
on coherent sheaves, which is additive on exact sequences. We will
apply it on the exact sequence (\ref{eq:first exact sequence for Sthree}).
We have $c_{1}(\det\shF^{2}\otimes\shM)=2c_{1}+\mu$ and
\[
\Phi(\det\shF^{2}\otimes\shM)=(2c_{1}+\mu)^{2}-(2c_{1}+\mu)K=4c_{1}^{2}+\mu^{2}+4\mu c_{1}-2c_{1}K-\mu K=4c_{1}^{2}-2c_{1}K-3\mu^{2}-\mu K
\]
where we have used that $\mu c_{1}=-\mu^{2}$. By \ref{lem:chern classes invertible sheaves Domega},
we get $\Phi(i_{*}\shH)=2\chi(\shH)=-5\mu^{2}/3-\mu K$. Moreover
\[
\Phi(\Sym^{2}\shF)=9c_{1}^{2}-2(2c_{1}^{2}+4c_{2})-3c_{1}K=5c_{1}^{2}-8c_{2}-3c_{1}K
\]
Finally, since $Y_{\alpha}$ is regular of dimension $0$, we obtain
\[
2|Y_{\alpha}|=\Phi(j_{*}\shQ)=\Phi(\det\shF^{2}\otimes\shM)-\Phi(\Sym^{2}\shF)+\Phi(\shF)-\Phi(\shH)=6c_{2}-4\mu^{2}/3
\]
\end{proof}
\begin{lem}
\label{lem:computation characteristic of Delta square}We have
\[
2\chi(\Delta\otimes_{\alB}\Delta)=5c_{1}^{2}-8c_{2}-3c_{1}K-8\mu^{2}/3-2\mu K+6\chi(\odi Y)
\]
\end{lem}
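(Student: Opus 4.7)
The plan is to apply the additive operator $\Phi = 2(\chi - \chi(\odi Y)\rk)$, already introduced in the proof of Lemma \ref{lem:cardinality of Yalpha}, to the two short exact sequences (\ref{eq:second exact sequence for Sthree}), namely
\[
0\arr\shQ\arr\Delta\otimes_{\alB}\Delta\arr i_{*}\odi{D_{\omega}}\arr0, \qquad 0\arr\Sym^{2}\shF\arr\shQ\arr i_{*}\shK\arr0,
\]
and then convert $\Phi$ back into $2\chi$ by keeping track of ranks.

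First I would combine additivity to get $\Phi(\Delta\otimes_\alB\Delta)=\Phi(\Sym^2\shF)+\Phi(i_*\shK)+\Phi(i_*\odi{D_\omega})$. The term $\Phi(\Sym^{2}\shF)=5c_{1}^{2}-8c_{2}-3c_{1}K$ has already been computed inside the proof of Lemma \ref{lem:cardinality of Yalpha}, so I can cite it. The two pushforward terms come from Lemma \ref{lem:chern classes invertible sheaves Domega}, applied with $l=1$ to the invertible sheaf $\shK$ on $D_\omega$ (which satisfies $\shK^{3}\simeq i^{*}\det\shF$) and with $l=0$ to $\odi{D_\omega}$. These give $2\chi(\shK)=-\tfrac{5}{3}\mu^{2}-\mu K$ and $2\chi(\odi{D_\omega})=-\mu^{2}-\mu K$. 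Summing yields
\[
\Phi(\Delta\otimes_{\alB}\Delta)=5c_{1}^{2}-8c_{2}-3c_{1}K-\tfrac{8}{3}\mu^{2}-2\mu K.
\]

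Finally I would observe that $\Delta$ is an invertible $\alB$-module, and $\alB=\odi Y\oplus\shF$ is locally free of rank $3$ over $\odi Y$, so $\Delta\otimes_{\alB}\Delta$ is locally free of rank $3$ over $\odi Y$ as well (this can be checked either from its description in Remark \ref{rem:delta otimes delta} as an $\odi Y$-subsheaf of $\alB$ whose generic stalk has rank $3$, or more directly since $\Delta$ is an invertible $\alB$-module and tensoring invertible modules preserves rank). Consequently $\Phi(\Delta\otimes_{\alB}\Delta)=2\chi(\Delta\otimes_{\alB}\Delta)-6\chi(\odi Y)$, and rearranging gives the claimed formula.

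The only mildly non-routine step is verifying that $\Delta\otimes_{\alB}\Delta$ is locally free of rank $3$ over $\odi Y$; everything else is a bookkeeping exercise combining the two short exact sequences with the previously established formulas. Since $f_\chi$ is generically a $G$-torsor, $\Delta$ is invertible over $\alB$ on a dense open, and the explicit description of $\Delta\otimes_\alB\Delta$ from Remark \ref{rem:delta otimes delta} as the image of $m\oplus\alpha\oplus(\beta-\eta_\delta)$ inside $\alB$ makes the rank $3$ claim transparent, so no real obstacle arises.
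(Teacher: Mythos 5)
Your proposal is correct and follows essentially the same route as the paper: additivity of the Euler characteristic on the two exact sequences (\ref{eq:second exact sequence for Sthree}), combined with Lemma \ref{lem:chern classes invertible sheaves Domega} (with $l=1$ for $\shK$ and $l=0$ for $\odi{D_{\omega}}$) and the Riemann--Roch computation $2\chi(\Sym^{2}\shF)=5c_{1}^{2}-8c_{2}-3c_{1}K+6\chi(\odi Y)$. The only cosmetic difference is your detour through the operator $\Phi$, which forces the (easy, and correctly handled) verification that $\rk_{\odi Y}(\Delta\otimes_{\alB}\Delta)=3$; the paper sidesteps this by working with $2\chi$ directly, since $\chi$ is already additive on short exact sequences.
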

\begin{proof}
Consider the exact sequence \ref{eq:second exact sequence for Sthree}.
Taking into account \ref{lem:chern classes invertible sheaves Domega},
the result follows from the relations $2\chi(\Sym^{2}\shF)=5c_{1}^{2}-8c_{2}-3c_{1}K+6\chi(\odi Y)$,
$2\chi(\odi{D_{\omega}})=-\mu^{2}-\mu k$, $2\chi(\shK)=-5\mu^{2}/3-\mu k$.\end{proof}
\begin{lem}
\label{lem:invariants of double covers}Let $f\colon X\arr X'$ be
a degree $2$ cover between surfaces and write $f_{*}\odi X=\odi{X'}\oplus\shW$,
where $\shW$ is the invertible sheaf inducing $f$. Then
\[
K_{X}^{2}=2K_{X'}^{2}+2c_{1}(\shW)^{2}-4c_{1}(\shW)K_{X'}\comma p_{g}(X)=p_{g}(X')+h^{2}(\shW)\comma\chi(\odi X)=\chi(\odi{X'})+\chi(\shW)
\]
\end{lem}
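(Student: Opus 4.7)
The plan is to reduce all three identities to the single basic fact that, for a finite flat morphism of degree two with $f_{*}\odi X = \odi{X'}\oplus\shW$, the relative dualizing sheaf is $\omega_{X/X'} \simeq f^{*}\shW^{-1}$, and then to exploit the affineness of $f$.

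First, since $f$ is affine, the Leray spectral sequence collapses and gives
\[
\chi(\odi X) \;=\; \chi(X', f_{*}\odi X) \;=\; \chi(\odi{X'}\oplus\shW) \;=\; \chi(\odi{X'}) + \chi(\shW),
\]
which is the third identity. For the second identity, I apply Serre duality on the projective surfaces $X$ and $X'$ to write $p_{g}(X) = h^{2}(\odi X)$ and $p_{g}(X') = h^{2}(\odi{X'})$; then, using affineness of $f$ once more, I compute
\[
h^{2}(\odi X) \;=\; h^{2}(X', f_{*}\odi X) \;=\; h^{2}(\odi{X'}) + h^{2}(\shW),
\]
which yields $p_{g}(X) = p_{g}(X') + h^{2}(\shW)$.

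The main step is the canonical bundle formula, for which the key claim is $\omega_{X} \simeq f^{*}(\omega_{X'}\otimes\shW^{-1})$. I would deduce this from Grothendieck duality for the finite flat morphism $f$: the relative dualizing sheaf satisfies $f_{*}\omega_{X/X'} \simeq \shHom_{\odi{X'}}(f_{*}\odi X,\odi{X'})$, which, as an $\odi{X'}$-module, is $\odi{X'}\oplus\shW^{-1}$. A direct check on $\odi{X'}$-modules shows this coincides with $f_{*}(f^{*}\shW^{-1}) = (f_{*}\odi X)\otimes\shW^{-1} = \shW^{-1}\oplus\odi{X'}$, and the $(f_{*}\odi X)$-module structures match as well because the multiplication map $\shW^{\otimes 2}\to\odi{X'}$ defining the cover is precisely the one used to twist $\shHom(f_{*}\odi X,\odi{X'})$; thus $\omega_{X/X'} \simeq f^{*}\shW^{-1}$ on $X$. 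Combining this with $\omega_{X} = f^{*}\omega_{X'}\otimes\omega_{X/X'}$ gives $K_{X} = f^{*}(K_{X'} - c_{1}(\shW))$.

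Finally, using the projection formula $(f^{*}D)^{2} = \deg(f)\cdot D^{2} = 2D^{2}$ on divisor classes, I expand
\[
K_{X}^{2} \;=\; 2\bigl(K_{X'}-c_{1}(\shW)\bigr)^{2} \;=\; 2K_{X'}^{2} + 2c_{1}(\shW)^{2} - 4c_{1}(\shW)K_{X'},
\]
which is the first identity. The potential subtlety is the compatibility of module structures in the duality argument (especially since the base may not have $2$ invertible a priori in the generality stated); but this is purely formal once one unwinds the definitions, so the $\omega_{X/X'}\simeq f^{*}\shW^{-1}$ computation is the only real content and the rest is formal.
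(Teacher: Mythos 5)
Your proof is correct, and most of it runs parallel to the paper's. The identities for $\chi(\odi X)$ and $p_{g}(X)$ are exactly the paper's (which simply declares them clear), and your central step --- the identification $\omega_{X/X'}\simeq f^{*}\shW^{-1}$ --- is also the paper's central step, only packaged differently: the paper constructs the explicit map $f^{*}\shW^{-1}\arr\omega_{X/X'}$ induced by the inclusion $\shW^{-1}\hookrightarrow\duale{(f_{*}\odi X)}\simeq f_{*}\omega_{X/X'}$ and checks surjectivity locally (writing $\shW=\odi{X'}t$ with $t^{2}=m$, one computes $t\cdot t^{*}=1^{*}$, so $t^{*}$ generates the dual as a module over the algebra), whereas you invoke duality for the finite flat morphism and match the module structures abstractly --- the same local computation in the end, and your side remark is right that this step is purely formal and needs no invertibility of $2$ (immaterial anyway, since the paper's surfaces live over an algebraically closed field of characteristic $\neq 2,3$). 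Where you genuinely diverge is the finish: from $K_{X}=f^{*}(K_{X'}-c_{1}(\shW))$ you conclude directly via the projection formula $(f^{*}D)^{2}=2D^{2}$, which is the shortest route. The paper instead pushes forward, computing $f_{*}\omega_{X}^{-1}\simeq(\omega_{X'}^{-1}\otimes\shW)\oplus(\omega_{X'}^{-1}\otimes\shW^{2})$, and evaluates $\chi(-K_{X})$ twice by Riemann--Roch: once on $X'$ through this decomposition, and once on $X$ via $\chi(-K_{X})=K_{X}^{2}+\chi(\odi X)$, then equates the two expressions to extract $K_{X}^{2}$. Your finish buys brevity and transparency; the paper's finish stays entirely within the Euler-characteristic bookkeeping it uses throughout that section and never cites the intersection-theoretic pullback formula, at the cost of a longer computation. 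Both are complete proofs.
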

\begin{proof}
The last two formulas are clear. Therefore we focus on the first.
Set $\alB=f_{*}\odi X$. The map $\shW^{-1}\arr\duale{\alB}\simeq f_{*}\omega_{X/X'}$
induces a map
\[
f^{*}\shW^{-1}\arr f^{*}f_{*}\omega_{X/X'}\arr\omega_{X/X'}
\]
We want to prove that this map is surjective and therefore an isomorphism.
Locally, $\shW=\odi{X'}t$, $t^{2}=m\in\odi{X'}$, $\shW^{-1}=\odi{X'}t^{*}$.
$ $Since $t\cdot t^{*}=1^{*}$, where $t^{*}\in\duale{\alB}$, we
see that $t^{*}$ generates $\duale{\alB}$ as a $\alB$-module. Thus
$f^{*}\shW^{-1}\simeq\omega_{X/X'}$ and
\[
\omega_{X}\simeq f^{*}\omega_{X'}\otimes\omega_{X/X'}\simeq f^{*}(\omega_{X'}\otimes\shW^{-1})\then f_{*}\omega_{X}^{-1}\simeq\omega_{X'}^{-1}\otimes\shW\otimes\alB\simeq\omega_{X'}^{-1}\otimes\shW\oplus\omega_{X'}^{-1}\otimes\shW^{2}
\]
Now note that if $Z$ is a surface and $D$ is a divisor of it, by
Riemann-Roch formula we have
\[
2\chi(nD-K_{Z})=n^{2}D^{2}-3nDK_{Z}+2K_{Z}^{2}+2\chi(\odi Z)
\]
If we write $\shW=\odi Y(C)$, the result comes from the following
relations
\begin{alignat*}{1}
\chi(-K_{X}) & =\chi(f_{*}\omega_{X}^{-1})=\chi(C-K_{X'})+\chi(2C-K_{X'})=(5C^{2}-9CK_{X'})/2+2K_{X'}^{2}+2\chi(\odi{X'})\\
\chi(-K_{X}) & =K_{X}^{2}+\chi(\odi X)=K_{X}^{2}+\chi(\shW)+\chi(\odi{X'})=K_{X}^{2}+(C^{2}-CK_{X'})/2+2\chi(\odi{X'})
\end{alignat*}
\end{proof}
\begin{lem}
\label{lem:invariants of triple covers}Let $f\colon X'\arr Y$ be
a degree $3$ cover between surfaces induced by $(\shF,\delta)\in\shC_{3}(Y)$.
Then we have
\[
K_{X'}^{2}=3K_{Y}^{2}-4c_{1}(\shF)K_{Y}+2c_{1}(\shF)^{2}-3c_{2}(\shF)\comma\chi(\odi{X'})=\chi(\odi Y)+\chi(\shF)\comma p_{g}(X')=p_{g}(Y)+h^{2}(\shF)
\]
\end{lem}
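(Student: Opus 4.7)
The first and third identities are quick: since $f$ is affine, $\chi(\odi{X'})=\chi(f_*\odi{X'})=\chi(\odi Y\oplus\shF)=\chi(\odi Y)+\chi(\shF)$, and similarly $p_g(X')=h^2(\odi{X'})=h^2(\odi Y)+h^2(\shF)=p_g(Y)+h^2(\shF)$ via the Leray spectral sequence (which degenerates because $f$ is affine). So the real content is the formula for $K_{X'}^2$.

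For that, the plan is to use the classical Miranda-Casnati description (\cite{Miranda1985,Bolognesi2009}): since a triple cover is automatically Gorenstein, $X'$ embeds as a divisor in the $\PP^1$-bundle $P=\PP(\duale{\shF})=\Proj\Sym\duale{\shF}$ over $Y$, with $\pi\colon P\to Y$ and $H=c_1(\odi P(1))$ satisfying $\pi_*\odi P(n)=\Sym^n\duale{\shF}$. Matching the data $\delta\colon\Sym^3\shF\to\det\shF$ (viewed as a section of $\Sym^3\duale{\shF}\otimes\det\shF$) with a linear system on $P$, one sees that $X'\in|3H+\pi^*c_1(\shF)|$. The relative Euler sequence gives $\omega_{P/Y}=\odi P(-2)\otimes\pi^*\det\duale{\shF}$, so in divisor notation $K_P=\pi^*K_Y-\pi^*c_1(\shF)-2H$, and by adjunction
\[
K_{X'}=(K_P+X')|_{X'}=(\pi^*K_Y+H)|_{X'}.
\]

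Then one computes $K_{X'}^2=\int_P(\pi^*K_Y+H)^2\cdot(3H+\pi^*c_1(\shF))$ using the Chern-Segre relation $H^2=-\pi^*c_1(\shF)\cdot H-\pi^*c_2(\shF)$ on $P$ and the projection formula: the nontrivial auxiliary identities one obtains are $\int_PH^3=c_1(\shF)^2-c_2(\shF)$, $\int_P\pi^*c_1(\shF)\cdot H^2=-c_1(\shF)^2$, $\int_P\pi^*K_Y\cdot H^2=-K_Y c_1(\shF)$ and $\int_P\pi^*K_Y^2\cdot H=K_Y^2$. Expanding and collecting terms yields $3K_Y^2-4K_Yc_1(\shF)+2c_1(\shF)^2-3c_2(\shF)$.

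The principal difficulty is not the intersection-theoretic computation itself (which is routine once one commits to a convention) but rather establishing cleanly the class of $X'$ in $P$: one must be careful with the projective-bundle convention, since the sign in front of $\pi^*c_1(\shF)$ flips if one swaps $\PP(\shF)$ for $\PP(\duale{\shF})$, and only one of these conventions yields the stated formula. In the choice $P=\PP(\duale{\shF})$ used above the two contributions $-\pi^*c_1(\shF)$ from $\omega_{P/Y}$ and $+\pi^*c_1(\shF)$ from the class of $X'$ cancel in $K_{X'}$, which is what makes the final expression so clean; with the opposite convention one would pick up a sign error of the kind $+4c_1(\shF)K_Y$ instead of $-4c_1(\shF)K_Y$.
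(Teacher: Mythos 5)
Your proposal is correct, and it is genuinely more self-contained than the paper's proof: the paper disposes of the two cohomological identities as ``clear'' and simply cites \cite[Corollary 8.3]{Pardini1989} or \cite[Proposition 10.3]{Miranda1985} for the formula for $K_{X'}^{2}$, whereas you reconstruct the argument behind those citations. Your route --- embed $X'$ in $P=\PP(\duale{\shF})$ as a divisor in $|3H+\pi^{*}c_{1}(\shF)|$, compute $K_{P}=\pi^{*}K_{Y}-2H-\pi^{*}c_{1}(\shF)$ from the relative Euler sequence, get $K_{X'}=(\pi^{*}K_{Y}+H)|_{X'}$ by adjunction, and push down using $H^{2}=-\pi^{*}c_{1}(\shF)H-\pi^{*}c_{2}(\shF)$ --- is essentially Miranda's own proof, and your auxiliary intersection numbers ($\int_{P}H^{3}=c_{1}^{2}-c_{2}$, etc.) and the final bookkeeping are all correct: expanding $(\pi^{*}K_{Y}+H)^{2}(3H+\pi^{*}c_{1}(\shF))$ gives $3K_{Y}^{2}-6K_{Y}c_{1}+2K_{Y}c_{1}+3(c_{1}^{2}-c_{2})-c_{1}^{2}$, which is the stated formula. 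Your remark about the $\PP(\shF)$ versus $\PP(\duale{\shF})$ convention flipping the sign of the $c_{1}(\shF)K_{Y}$ term is also accurate and worth keeping. What this buys over the paper's treatment is a proof readable without chasing the references; what it costs is that you still quote the structure theorem (the class of $X'$ in $P$) as a black box, which is no worse than what the paper does.

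One inaccuracy to fix: a triple cover is \emph{not} automatically Gorenstein. The fiber over a point where $\delta$ vanishes is $k[x,y]/(x,y)^{2}$, whose socle is two-dimensional, so it is not Gorenstein and does not embed in a fiber of the $\PP^{1}$-bundle; the embedding $X'\subseteq P$ requires $\delta$ to be nowhere zero, which is equivalent to Gorenstein fibers. Under the lemma's hypotheses this is automatic --- in this paper a ``surface'' is smooth, hence $X'$ is regular, and the paper itself invokes \cite[Theorem 3.1]{Bolognesi2009} elsewhere to conclude that regularity forces $\delta$ to be nowhere vanishing --- but you should derive it from the smoothness of $X'$ rather than assert it for arbitrary triple covers.
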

\begin{proof}
The last two formulas are clear. The first one instead is proved in
\cite[Corollary 8.3]{Pardini1989} or \cite[Proposition 10.3]{Miranda1985}.
\end{proof}

\begin{proof}
(\emph{of Theorem }\ref{thm:invariants of regular Sthree covers})
The claim about connectedness follows from \ref{rem:connectdness for regular Sthree covers}.
The formula for $|Y_{0}|$ is given in \ref{lem:cardinality of Yalpha},
while the formula for $p_{g}$ is clear since $f_{*}\odi X\simeq\odi Y\oplus(\shM\otimes\det\shF)\oplus\shF\oplus\shF$.
This relation, Riemann-Roch formula and the computation 
\[
\chi(\shL)=\chi(\shM\otimes\det\shF)=((\mu+c_{1})^{2}-(\mu+c_{1})K)/2+\chi(\odi Y)=(-\mu^{2}-\mu k+c_{1}^{2}-c_{1}K)/2+\chi(\odi Y)
\]
yields the formula for $\chi(\odi X)$. Therefore we focus on the
formula for $K_{X}^{2}$. The map $f\colon X\arr Y$ factors as $X\arrdi{f_{2}}X'\arrdi{f_{3}}Y$
where $f_{2}$, $f_{3}$ are covers of degree $2,3$ respectively
and $X'$ is a surface, thanks to \ref{thm:regular G covers and triple}.
Moreover, by definition, $f_{2}$ is induced by the invertible sheaf
$\Delta$ on $X'$, while $f_{3}$ is induced by $(\shF,\delta)\in\shC_{3}$.
Notice that $X'$ is a complete smooth surface. Set $\Delta=\odi{X'}(-C)$,
where $C$ is a divisor over $X'$ and set $\Delta^{2}=\Delta\otimes_{\odi{X'}}\Delta.$
Recall that $\Delta\simeq\shL\oplus\shF$. By \ref{lem:invariants of triple covers}
and \ref{lem:invariants of double covers} we have
\begin{equation}
K_{X}^{2}=2K_{X'}^{2}+2C^{2}+4CK_{X'}=6K^{2}-8c_{1}K+4c_{1}^{2}-6c_{2}+2C^{2}+4CK_{X'}\label{eq:first equation for KX square for Sthree}
\end{equation}
By Riemann-Roch and the definition of the intersection product we
also get
\[
2(\chi(\Delta)-\chi(\odi{X'}))=C^{2}+CK_{X'}\comma C^{2}=\chi(\Delta^{2})-2\chi(\Delta)+\chi(\odi{X'})
\]
In particular $CK_{X'}=4\chi(\Delta)-\chi(\Delta^{2})-3\chi(\odi{X'})$.
Putting everything together and using \ref{lem:invariants of triple covers}
and \ref{lem:computation characteristic of Delta square} we obtain

\begin{alignat*}{1}
2C^{2}+4CK_{X'} & =-2\chi(\Delta^{2})+12\chi(\Delta)-10\chi(\odi{X'})=-2\chi(\Delta^{2})+12\chi(\shL)+2\chi(\shF)-10\chi(\odi Y)\\
 & =-5c_{1}^{2}+8c_{2}+3c_{1}K+8\mu^{2}/3+2\mu K-6\chi(\odi Y)-6\mu^{2}-6\mu K+6c_{1}^{2}-6c_{1}K+\\
 & 12\chi(\odi Y)+c_{1}^{2}-2c_{2}-c_{1}K+4\chi(\odi Y)-10\chi(\odi Y)\\
 & =2c_{1}^{2}+6c_{2}-4c_{1}K-10\mu^{2}/3-4\mu K
\end{alignat*}
Substituting in \ref{eq:first equation for KX square for Sthree}
we get the desired result.\end{proof}

\nocite{Stillman2002,Maclagan2002,Laumon1999,Ogus2006}

\newcommand{\etalchar}[1]{$^{#1}$}
\providecommand{\bysame}{\leavevmode\hbox to3em{\hrulefill}\thinspace}
\providecommand{\MR}{\relax\ifhmode\unskip\space\fi MR }
\providecommand{\MRhref}[2]{%
  \href{http://www.ams.org/mathscinet-getitem?mr=#1}{#2}
}
\providecommand{\href}[2]{#2}

\end{document}